\def\eq{\begin{equation}}
\def\endeq{\end{equation}}
\def\bbm{\begin{bmatrix}}
\def\ebm{\end{bmatrix}}
\def\bpm{\begin{pmatrix}}
\def\epm{\end{pmatrix}}
\def\bvm{\begin{vmatrix}}
\def\evm{\end{vmatrix}}
\newtheorem{rhp}{Riemann-Hilbert Problem}
\newtheorem{proposition}{Proposition}
\newtheorem{theorem}{Theorem}
\newtheorem{corollary}{Corollary}
\newtheorem{lemma}{Lemma}
\theoremstyle{definition}
\newtheorem{definition}{Definition}
\newtheorem{remark}{Remark}
\newtheorem{example}{Example}
\numberwithin{equation}{section}
\renewcommand*\env@matrix[1][c]{\hskip -\arraycolsep
  \let\@ifnextchar\new@ifnextchar
  \array{*\c@MaxMatrixCols #1}}
\newcommand{\bo}{\mathcal{O}}
\newcommand{\ee}{\mathrm{e}}
\newcommand{\ii}{\mathrm{i}}
\newcommand{\dd}{\mathrm{d}}
\newcommand{\critclosure}{K}
\newcommand{\circledomain}{\mathcal{D}_\circ}
\newcommand{\rectangle}{\mathcal{B}_\medsquare}
\newcommand{\TR}{\mathcal{B}_\medtriangleright}
\newcommand{\TI}{\mathcal{B}_\medtriangleup}
\newcommand{\OkamotoExterior}{\mathcal{E}_\mathrm{gO}}
\newcommand{\HermiteExterior}{\mathcal{E}_\mathrm{gH}}
\newcommand{\tallstrut}{$\vphantom{\Bigg[\Bigg]}$}
\newcommand{\shortstrut}{$\vphantom{\Big[\Big]}$}
\newcommand{\tw}{{\scriptsize\smash{\substack{\rcurvearrownw \\[-0.7ex]\rcurvearrowse}}}}
\newcommand{\PreserveBackslash}[1]{\let\temp=\\#1\let\\=\temp}
\newcolumntype{C}[1]{>{\PreserveBackslash\centering}p{#1}}
\newcolumntype{R}[1]{>{\PreserveBackslash\raggedleft}p{#1}}
\newcolumntype{L}[1]{>{\PreserveBackslash\raggedright}p{#1}}
\title[Rational Solutions of Painlev\'e-IV]{Large-degree asymptotics of rational Painlev\'e-IV solutions by the isomonodromy method}
\author{Robert J. Buckingham}
\address[R. J. Buckingham]{Department of Mathematical Sciences\\ University of Cincinnati\\ PO Box 210025\\ Cincinnati, OH 45221.}
\email{buckinrt@uc.edu}
\urladdr{http://homepages.uc.edu/~buckinrt/}
\author{Peter D. Miller}
\address[P. D. Miller]{Department of Mathematics\\University of Michigan\\East Hall\\ 530 Church St.\\Ann Arbor, MI 48109.}
\email{millerpd@umich.edu}
\urladdr{http://www.math.lsa.umich.edu/~millerpd/}
\begin{document}
\begin{abstract}
The Painlev\'e-IV equation has two families of rational solutions generated respectively by the generalized Hermite polynomials and the generalized Okamoto polynomials.  We apply the isomonodromy method to represent all of these rational solutions by means of two related Riemann-Hilbert problems, each of which involves two integer-valued parameters related to the two parameters in the Painlev\'e-IV equation.  We then use the steepest-descent method to analyze the rational solutions in the limit that at least one of the parameters is large.  Our analysis provides rigorous justification for formal asymptotic arguments that suggest that in general solutions of Painlev\'e-IV with large parameters behave either as an algebraic function or an elliptic function.  Moreover, the results show that the elliptic approximation holds on the union of a curvilinear rectangle and, in the case of the generalized Okamoto rational solutions, four curvilinear triangles each of which shares an edge with the rectangle; the algebraic approximation is valid in the complementary unbounded domain.  We compare the theoretical predictions for the locations of the poles and zeros with numerical plots of the actual poles and zeros obtained from the generating polynomials, and find excellent agreement.
\end{abstract}

\keywords{Painlev\'e-IV equation; rational solutions; generalized Hermite and Okamoto polynomials; isomonodromy method; Riemann-Hilbert problem; steepest-descent method}

\subjclass[2010]{Primary 34M55; Secondary 34M50; 33E17; 34E05; 34M56; 34M60}

\maketitle

\tableofcontents

\section{Introduction}
\subsection{Overview}

The Painlev\'e-IV equation 
\eq
u''=\frac{(u')^2}{2u}+\frac{3}{2}u^3+4xu^2+2(x^2+1-2\Theta_\infty)u-\frac{8\Theta_0^2}{u}, \quad u:\mathbb{C}\to\mathbb{C} \text{ with parameters } \Theta_0,\Theta_\infty\in\mathbb{C},
\label{p4}
\endeq
for a function $u=u(x)$ is a fundamental equation of mathematical physics with 
applications ranging from nonlinear wave equations \cite{BassomCH:1996} and 
quantum gravity \cite{FokasIK:1991} to orthogonal polynomials \cite{DaiK:2009} 
and random matrix theory \cite{ChenF:2006,ForresterW:2001,OsipovSZ:2010}.  
Equation \eqref{p4} is well known to have a (unique) rational solution if 
$\Theta_0$ and $\Theta_\infty$ belong to certain real discrete sets described 
precisely in Section \ref{sec:rational-PIV-solutions}.  These rational 
solutions have attracted attention both for their wide number of applications 
(see Section \ref{sec:rational-PIV-applications} for references) and for the 
intriguing structure of their zeros and poles.  There are two distinct 
\emph{families} of rational solutions:  those that can be expressed via 
generalized Hermite polynomials (the \emph{gH family}), and those that can be 
expressed via generalized Okamoto polynomials (the \emph{gO family}).  In the 
complex $x$-plane, the zeros and poles of a gH solution appear to form a 
quasi-rectangular grid (see Figure \ref{fig:theta-map-gH}).  
The aspect ratio of the quasi-rectangle depends on the angle in the real 
$(\Theta_0,\Theta_\infty)$-plane, while the number of zeros and poles grows 
with $|\Theta_0|$ and $|\Theta_\infty|$.  The gH solutions are naturally 
divided into \emph{types} 1, 2, and 3 depending on the angle in the 
$(\Theta_0,\Theta_\infty)$-plane.  The microstructure of zeros and poles is 
different for each type.  We label the gH solutions 
$u_\mathrm{gH}^{[j]}(x;m,n)$, where $m$ and $n$ are non-negative integers.  
The zeros and poles of the gO solutions are more complicated, appearing to 
form a quasi-rectangle with quasi-triangles attached to each edge (see 
Figure \ref{fig:theta-map-gO}).  We divide the gO solutions into types 1, 2, 
and 3, and label solutions as $u_\mathrm{gO}^{[j]}(x;m,n)$.  There is an 
important difference from the gH family:  the gO solutions of each type fill 
\emph{two} sectors in the $(\Theta_0,\Theta_\infty)$-plane.  Using our 
conventions, the integers $m$ and $n$ will both be nonnegative in one of 
these sectors and both nonpositive in the other.  

For either family, if one fixes an angle in the 
$(\Theta_0,\Theta_\infty)$-plane and writes $T:=|\Theta_0|$ and 
$y:=T^{-\frac{1}{2}}x$, the region in which zeros and poles lie becomes more 
clearly defined in the $y$-plane as $T\to\infty$.  In this work  we 
analytically determine the boundaries of the quasi-rectangles and (for the gO 
family) the quasi-triangles in the $y$-plane.  We prove that in the exterior 
of the rectangular and triangular regions, the scaled rational Painlev\'e-IV 
functions $T^{-\frac{1}{2}}u_\mathrm{F}^{[j]}(T^\frac{1}{2}y;m,n)$ are 
asymptotically approximated by algebraic equilibrium solutions of the 
autonomous approximating equation 
\eq
\dot{U}_{\zeta\zeta}=\frac{(\dot{U}_\zeta)^2}{2\dot{U}}+\frac{3}{2}\dot{U}^3+4y\dot{U}^2+(2y^2+4\kappa)\dot{U}-\frac{8}{\dot{U}}
\label{eq:Approximating-ODE-Second-Order1}
\endeq
(see Theorems \ref{thm:HermiteExterior} and \ref{thm:OkamotoExterior}).
Furthermore, for each $y$ in the rectangular and triangular domains, we show 
that, as a function of $\zeta$, 
$T^{-\frac{1}{2}}u_\mathrm{F}^{[j]}(T^\frac{1}{2}y+T^{-\frac{1}{2}}\zeta;m,n)$
is asymptotically approximated by a classical elliptic function solution of 
\eqref{eq:Approximating-ODE-Second-Order1} (see Theorems 
\ref{thm:Hermite-elliptic} and \ref{thm:Okamoto-elliptic}).  All of these 
results are new, with the exception of the exterior asymptotics for the gH 
family, which were obtained in \cite{Buckingham18}.  Our results assume that 
$y$ does not lie on the boundaries of the rectangular or triangular regions.  
We furthermore exclude the angles $\pm\tfrac{1}{4}\pi$, $\pm\pi$, and 
$\pm\tfrac{3}{4}\pi$ in the $(\Theta_0,\Theta_\infty)$-plane, i.e. those 
angles for which the zero/pole region collapses to a line segment (for gH 
solutions) or two triangles (for gO solutions). 

Our analysis begins with the derivation of two Riemann-Hilbert problems 
encoding the gO and gH solutions, respectively (see Section~\ref{sec:intro-Methodology} and 
Appendix \ref{app:Isomonodromy}).  The gO Riemann-Hilbert problem has not 
appeared in the literature previously\footnote{The Stokes data, but not the 
connection matrices, were computed in \cite{NovokshenovS14}.  See Remark 
\ref{rem:NovokshenovS14} in 
Appendix \ref{sec:RHPgO} for more details.}, while we show that the gH 
Riemann-Hilbert problem can be transformed to the problem in 
\cite{Buckingham18}.  In order to cut down on the number of cases studied, we 
use the Boiti-Pempinelli symmetry \eqref{eq:rotation-symmetry} to 
immediately write type-2 solutions (of either family) in terms of type-1 
solutions.  We furthermore use the Lukashevich-Gromak B\"acklund 
transformation \eqref{eq:Baecklund-3-to-1} to express type-1 solutions in 
terms of type-3 solutions, so it is only necessary to carry out the 
Riemann-Hilbert analysis for type-3 parameters.  However, this 
simplification comes at a price: our analysis of type-1 solutions is carried 
out using different (i.e. type-3) parameters.  For each type-1 result we 
carry out the additional analysis necessary to rewrite the formulas in terms 
of the natural type-1 parameters.  

In the remainder of the introduction we introduce the rational Painlev\'e-IV 
solutions, their applications, and the basic B\"acklund transformation we 
will use (Section \ref{sec:rational-PIV-solutions}), formally derive the 
autonomous approximating equation (Section \ref{sec:scaling}), state our 
results (Section~\ref{sec:Results}), and introduce the Riemann-Hilbert problems 
(Section \ref{sec:intro-Methodology}).  In Section~\ref{sec:Basic-Principles}, we perform the 
preliminary steps of the Riemann-Hilbert analysis that are common to all the 
cases we study.  In Sections \ref{sec:Exterior} and \ref{sec:Exterior-gH}, 
we prove the exterior asymptotics for the gO and GH families, respectively.  
In Section \ref{sec:OutsideDomain}, we determine the boundaries of the 
different asymptotic domains in the $y$-plane.  In Sections \ref{sec:Boutroux} 
and \ref{sec:G1}, we prove the asymptotic behavior in the regions where the 
Painlev\'e-IV solutions are described by elliptic functions (see also 
Appendices \ref{app:DiagramsAndTables} and \ref{app:Effective} for more 
details).  Appendix 
\ref{app:Isomonodromy} contains the derivations of the Riemann-Hilbert 
problems we study.  In Appendix \ref{app:Equilateral}, we prove that the corner points 
of the quasi-triangular regions define an equilateral triangle.  
Finally, Appendix~\ref{app:Alternate} introduces an alternate method 
for analyzing type-1 solutions.

\subsection{Rational solutions of Painlev\'e-IV}
\label{sec:rational-PIV-solutions}
The Painlev\'e-IV equation in the form \eqref{p4} has a rational solution if the parameters satisfy either $(\Theta_0,\Theta_\infty)\in\Lambda_\mathrm{gH} = \Lambda_\mathrm{gH}^{[1]-}\sqcup\Lambda_\mathrm{gH}^{[2]-}\sqcup\Lambda_\mathrm{gH}^{[3]+}$ (the generalized Hermite family), where
\eq
\begin{split}
\Lambda_\mathrm{gH}^{[1]-}&:=\left\{(\Theta_0,\Theta_\infty)\in\mathbb{C}^2:  (\Theta_0+\Theta_\infty,\Theta_0-\Theta_\infty)\in\mathbb{Z}^2, \Theta_0<0, \Theta_\infty>-\Theta_0\right\}\\
\Lambda_\mathrm{gH}^{[2]-}&:=\left\{(\Theta_0,\Theta_\infty)\in\mathbb{C}^2:  (\Theta_0+\Theta_\infty,\Theta_0-\Theta_\infty)\in\mathbb{Z}^2, \Theta_0<0, \Theta_\infty\le \Theta_0\right\}\\
\Lambda_\mathrm{gH}^{[3]+}&:=\left\{(\Theta_0,\Theta_\infty)\in\mathbb{C}^2:  (\Theta_0+\Theta_\infty,\Theta_0-\Theta_\infty)\in\mathbb{Z}^2, \Theta_0>0, -\Theta_0<\Theta_\infty\le\Theta_0\right\},
\end{split}
\label{eq:gH-lattice}
\endeq
or $(\Theta_0,\Theta_\infty)\in\Lambda_{\mathrm{gO}}$ (the generalized Okamoto family), where
\eq
\Lambda_{\mathrm{gO}}:=\left\{(\Theta_0,\Theta_\infty)\in\mathbb{C}^2: 
(\Theta_0+\Theta_\infty-\tfrac{2}{3},\Theta_0-\Theta_\infty + \tfrac{1}{3})
\in\mathbb{Z}^2\right\}.
\label{eq:gO-lattice}
\endeq
Note that $\Lambda_\mathrm{gH}\cap\Lambda_\mathrm{gO}=\emptyset$.  These sufficient conditions fail to be necessary\footnote{We discount the case $\Theta_0=0$ for which \eqref{p4} multiplied by $u(x)$ admits the zero solution $u(x)\equiv 0$.} only because whenever \eqref{p4} admits a rational solution for parameters $(\Theta_0,\Theta_\infty)\in\Lambda_\mathrm{gH}\sqcup\Lambda_\mathrm{gO}$ then it obviously admits the same rational solution also for parameters $(-\Theta_0,\Theta_\infty)$, which do not yield a point of either $\Lambda_\mathrm{gH}$ or $\Lambda_\mathrm{gO}$, because the Painlev\'e-IV equation only involves $\Theta_0^2$.  
The sets $\Lambda_\mathrm{gH}$ and $\Lambda_\mathrm{gO}$ are in one-to-one correspondence under the mapping $(\Theta_0,\Theta_\infty)\mapsto(\Theta_0^2,\Theta_\infty)$ with their images, which are sets of actual parameters of the Painlev\'e-IV equation; modulo the kernel of this mapping, the condition $(\Theta_0,\Theta_\infty)\in\Lambda_\mathrm{gH}\sqcup\Lambda_\mathrm{gO}$ is also necessary for existence of a rational solution of \eqref{p4}.  The sets $\Lambda_\mathrm{gH}$ and $\Lambda_\mathrm{gO}$ are shown in Figure~\ref{fig:ThetasPlane}.
\begin{figure}[h]
\begin{center}
\includegraphics{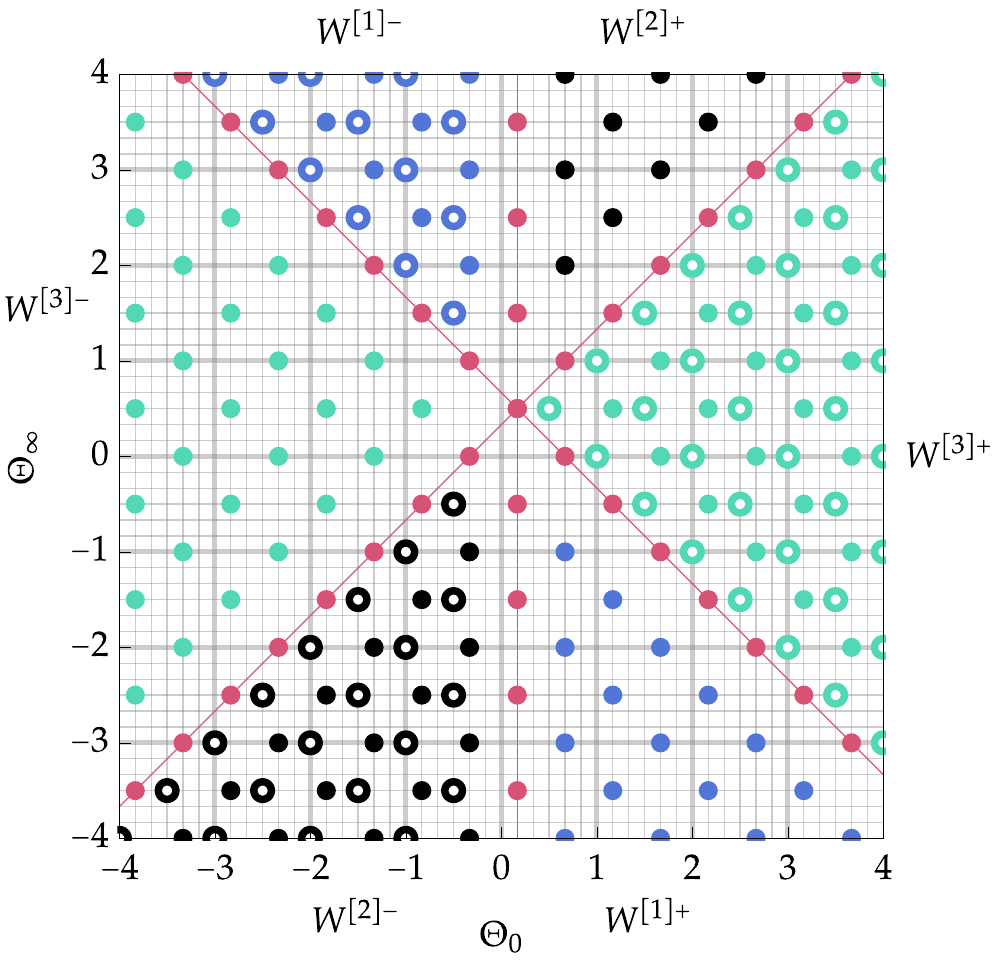}
\end{center}
\caption{Unfilled dots:  the points in the set $\Lambda_\mathrm{gH}$.  Filled dots:  the points in the set $\Lambda_\mathrm{gO}$.   Also shown are the disjoint open sectors $W^{[1]\pm}$, $W^{[2]\pm}$, and $W^{[3]\pm}$.  The complementary set $L$ is the union of the three red lines intersecting at $(\Theta_0,\Theta_\infty)=(\tfrac{1}{6},\tfrac{1}{2})$.}
\label{fig:ThetasPlane}
\end{figure}

By substituting appropriate Taylor or Laurent series into \eqref{p4} one easily sees that for any solution $u(x)$, all poles must be simple with residue $\pm 1$ and for $\Theta_0\neq 0$ all zeros $x=x_0$ must be simple with $u'(x_0)=\pm 4\Theta_0$. For rational solutions, the latter can be written as $u'(x_0)=\pm 4|\Theta_0|$ because $\Theta_0\in\mathbb{R}$.

\subsubsection{Representation in terms of special polynomials}
\label{sec:special-polynomials}
For given $(\Theta_0,\Theta_\infty)\in\Lambda_\mathrm{gH}\cup\Lambda_\mathrm{gO}$, the rational solution of \eqref{p4} is unique \cite{Gromak:1987,Lukashevich:1967,Murata:1985}.
This unique rational solution can be constructed by iteration of elementary B\"acklund transformations such as those described at the beginning of Section~\ref{sec:intro-Baecklund} below (see also Section~\ref{sec:sub-Baecklund} of Appendix~\ref{app:Isomonodromy}); such transformations allow one to generate the whole parameter space step-by-step, starting from a given point where the rational solution is known.  In general, B\"acklund transformations can be applied to any solution of \eqref{p4}, however since the solutions of interest are rational, the iterated B\"acklund transformations can also be represented as recurrence relations for certain special polynomial factors, as we will now explain.
First, suppose that $(\Theta_0,\Theta_\infty)\in\Lambda_\mathrm{gH}$.
It is natural to associate each of the three disjoint components of $\Lambda_\mathrm{gH}$ with a different \emph{type} $j=1,2,3$ of rational solution in the gH family.  Following \cite{Clarkson:2006} we index the points of the three disjoint components of $\Lambda_\mathrm{gH}$ 
as follows:
\eq
\begin{split}
\Lambda_\mathrm{gH}^{[1]-} &= \left\{(\Theta_0,\Theta_\infty)=(\Theta_{0,\mathrm{gH}}^{[1]}(m,n),\Theta_{\infty,\mathrm{gH}}^{[1]}(m,n)):  (m,n)\in\mathbb{Z}_{\ge 0}\times\mathbb{Z}_{>0}\right\}\\
\Lambda_\mathrm{gH}^{[2]-} &=  \left\{(\Theta_0,\Theta_\infty)=(\Theta_{0,\mathrm{gH}}^{[2]}(m,n),\Theta_{\infty,\mathrm{gH}}^{[2]}(m,n)):  (m,n)\in\mathbb{Z}_{> 0}\times\mathbb{Z}_{\ge 0}\right\},\\
\Lambda_\mathrm{gH}^{[3]+} &=  \left\{(\Theta_0,\Theta_\infty)=(\Theta_{0,\mathrm{gH}}^{[3]}(m,n),\Theta_{\infty,\mathrm{gH}}^{[3]}(m,n)):  (m,n)\in\mathbb{Z}_{\ge 0}\times\mathbb{Z}_{\ge 0}\right\},
\end{split}
\endeq
in which the parameters $(\Theta_{0,\mathrm{gH}}^{[j]}(m,n),\Theta_{\infty,\mathrm{gH}}^{[j]}(m,n))$ for each type are as indicated in Table~\ref{tab:gH}.
\begin{table}[h]
\caption{Representation of gH solutions of Painlev\'e-IV in terms of gH polynomials.}
\begin{tabular}{|l|c|c|c|c|}
\hline
Type $j$ & \tallstrut $\Theta_{0,\mathrm{gH}}^{[j]}(m,n)$ & \tallstrut $\Theta_{\infty,\mathrm{gH}}^{[j]}(m,n)$ & \tallstrut $\tau_\mathrm{gH}^{[j]}(x;m,n)$ & \tallstrut $u_\mathrm{gH}^{[j]}(x;m,n)$ \\
\hline\hline
$1$ & $-\tfrac{1}{2}n$ & $1+m+\tfrac{1}{2}n$ & \tallstrut $\displaystyle \frac{H_{m+1,n}(x)}{H_{m,n}(x)}$ & \tallstrut $\displaystyle 2n\frac{H_{m,n+1}(x)H_{m+1,n-1}(x)}{H_{m,n}(x)H_{m+1,n}(x)}$\\
\hline
$2$ & $-\tfrac{1}{2}m$ & $-\tfrac{1}{2}m-n$ & \tallstrut $\displaystyle \frac{H_{m,n}(x)}{H_{m,n+1}(x)}$ & \tallstrut $\displaystyle -2m\frac{H_{m-1,n+1}(x)H_{m+1,n}(x)}{H_{m,n}(x)H_{m,n+1}(x)}$\\
\hline
$3$ & $\tfrac{1}{2}+\tfrac{1}{2}(m+n)$ & $\tfrac{1}{2}+\tfrac{1}{2}(n-m)$ & \tallstrut $\displaystyle \ee^{-x^2}\frac{H_{m,n+1}(x)}{H_{m+1,n}(x)}$ & \tallstrut $\displaystyle -\frac{H_{m,n}(x)H_{m+1,n+1}(x)}{H_{m,n+1}(x)H_{m+1,n}(x)}$\\
\hline
\end{tabular}
\label{tab:gH}
\end{table}
The \emph{generalized Hermite (gH) polynomials} $\{H_{m,n}(x)\}_{(m,n)\in\mathbb{Z}_{\ge 0}\times\mathbb{Z}_{\ge 0}}$ are determined from the recurrence relations
\eq
\begin{split}
2mH_{m+1,n}(x)H_{m-1,n}(x)&=H_{m,n}(x)H_{m,n}''(x)-H'_{m,n}(x)^2+2mH_{m,n}(x)^2\\
2nH_{m,n+1}(x)H_{m,n-1}(x)&=-H_{m,n}(x)H_{m,n}''(x)+H_{m,n}'(x)^2+2nH_{m,n}(x)^2
\end{split}
\endeq
with initial conditions $H_{0,0}(x)=H_{1,0}(x)=H_{0,1}(x)=1$ and $H_{1,1}(x)=2x$.  In terms of these,
the unique rational solution $u(x)=u^{[j]}_{\mathrm{gH}}(x;m,n)$ of \eqref{p4} associated to the parameters $(\Theta_0,\Theta_\infty)=(\Theta_{0,\mathrm{gH}}^{[j]}(m,n),\Theta_{\infty,\mathrm{gH}}^{[j]}(m,n))$ can be written in logarithmic derivative form
\eq
u_\mathrm{gH}^{[j]}(x;m,n)=\frac{\dd}{\dd x}\log \left(\tau_\mathrm{gH}^{[j]}(x;m,n)\right)
\label{eq:gH-tau}
\endeq
for a suitable function $\tau_\mathrm{gH}^{[j]}(x;m,n)$ expressed in terms of a ratio of two gH polynomials, or alternately $u_\mathrm{gH}^{[j]}(x;m,n)$ can be written as a ratio of four gH polynomials, as shown in Table~\ref{tab:gH}.
$H_{m,n}(x)$ has degree $mn$, so it follows from \eqref{eq:gH-tau} that 
\eq
\begin{split}
u_\mathrm{gH}^{[1]}(x;m,n)&=\frac{n}{x}(1+o(1)) = -\frac{2\Theta_0}{x}(1+o(1)),\quad x\to\infty,\quad\Theta_0=\Theta_{0,\mathrm{gH}}^{[1]}(m,n),\\
u_\mathrm{gH}^{[2]}(x;m,n)&=-\frac{m}{x}(1+o(1)) = \frac{2\Theta_0}{x}(1+o(1)),\quad x\to\infty,\quad
\Theta_0=\Theta_{0,\mathrm{gH}}^{[2]}(m,n),\\
u_\mathrm{gH}^{[3]}(x;m,n)&=-2x(1+o(1)),\quad x\to\infty.
\end{split}
\label{eq:gHsolutionsLarge-x}
\endeq
See \cite{Buckingham18,Clarkson:2003,NoumiY:1999} for more details.  

To construct the rational solutions for $(\Theta_0,\Theta_\infty)\in\Lambda_\mathrm{gO}$, it is convenient to first divide the real $(\Theta_0,\Theta_\infty)$-plane into a disjoint union of six open sectors:
\eq
\begin{split}
W^{[1]\pm}&:=\left\{(\Theta_0,\Theta_\infty)\in\mathbb{R}^2:  \pm(\Theta_0-\tfrac{1}{6})>0, \pm(\Theta_0+\Theta_\infty-\tfrac{2}{3})<0\right\}\\
W^{[2]\pm}&:=\left\{(\Theta_0,\Theta_\infty)\in\mathbb{R}^2:  \pm(\Theta_0-\tfrac{1}{6})>0, =\pm(\Theta_\infty-\Theta_0-\tfrac{1}{3})>0\right\}\\
W^{[3]\pm}&:=\left\{(\Theta_0,\Theta_\infty)\in\mathbb{R}^2:  \pm(\Theta_0-\tfrac{1}{6})>0, |\Theta_\infty-\tfrac{1}{2}|<|\Theta_\infty-\tfrac{1}{6}|\right\},
\end{split}
\label{eq:gO-half-sectors}
\endeq
and the complement $L$ in $\mathbb{R}^2$ of their union; $L$ is the union of three lines through the point $(\tfrac{1}{6},\tfrac{1}{2})$:  $\Theta_\infty-\tfrac{1}{2}=\pm(\Theta_0-\tfrac{1}{6})$ and $\Theta_0=\tfrac{1}{6}$.  Note that $L$ contains points of $\Lambda_\mathrm{gO}$ but no points of $\Lambda_\mathrm{gH}$.  In fact, $\Lambda_\mathrm{gH}^{[1]-}\subset W^{[1]-}$, $\Lambda_\mathrm{gH}^{[2]-}\subset W^{[2]-}$, and $\Lambda_\mathrm{gH}^{[3]+}\subset W^{[3]+}$.  
See Figure~\ref{fig:ThetasPlane}.  Following a similar indexing scheme as for the components of $\Lambda_\mathrm{gH}$, we can describe the points of $\Lambda_\mathrm{gO}\setminus L$ and the associated rational solutions of \eqref{p4}.  First, consider the points in the same three sectors where the gH rational solutions live (we deal with the remainder of the lattice $\Lambda_\mathrm{gO}$ in the last paragraph of Section~\ref{sec:special-polynomials}).
\begin{table}[h]
\caption{Representation of gO solutions of Painlev\'e-IV in terms of gO polynomials.}
\begin{tabular}{|l|c|c|c|c|}
\hline
Type $j$ & \tallstrut $\Theta_{0,\mathrm{gO}}^{[j]}(m,n)$ & \tallstrut $\Theta_{\infty,\mathrm{gO}}^{[j]}(m,n)$ & \tallstrut $\tau_\mathrm{gO}^{[j]}(x;m,n)$ & \tallstrut $u_\mathrm{gO}^{[j]}(x;m,n)$ \\
\hline\hline
$1$ & $\tfrac{1}{6}-\tfrac{1}{2}n$ & $\tfrac{1}{2}+m+\tfrac{1}{2}n$ & \tallstrut $\displaystyle \ee^{-\frac{1}{3}x^2}\frac{Q_{m+1,n}(x)}{Q_{m,n}(x)}$ & \tallstrut $\displaystyle -\frac{\sqrt{2}}{3}\frac{Q_{m,n+1}(x)Q_{m+1,n-1}(x)}{Q_{m,n}(x)Q_{m+1,n}(x)}$\\
\hline
$2$ & $\tfrac{1}{6}-\tfrac{1}{2}m$ & $\tfrac{1}{2}-\tfrac{1}{2}m-n$ & \tallstrut $\displaystyle \ee^{-\frac{1}{3}x^2}\frac{Q_{m,n}(x)}{Q_{m,n+1}(x)}$ & \tallstrut $\displaystyle -\frac{\sqrt{2}}{3}\frac{Q_{m-1,n+1}(x)Q_{m+1,n}(x)}{Q_{m,n}(x)Q_{m,n+1}(x)}$\\
\hline
$3$ & $\tfrac{1}{6}+\tfrac{1}{2}(m+n)$ & $\tfrac{1}{2}+\tfrac{1}{2}(n-m)$ & \tallstrut $\displaystyle \ee^{-\frac{1}{3}x^2}\frac{Q_{m,n+1}(x)}{Q_{m+1,n}(x)}$ & \tallstrut $\displaystyle -\frac{\sqrt{2}}{3}\frac{Q_{m,n}(x)Q_{m+1,n+1}(x)}{Q_{m,n+1}(x)Q_{m+1,n}(x)}$\\
\hline
\end{tabular}
\label{tab:gO}
\end{table}
We index lattice points in the sectors $W^{[1]-}$, $W^{[2]-}$, and $W^{[3]+}$ by certain integers $(m,n)$ as follows:
\eq
\begin{split}
\Lambda_\mathrm{gO}\cap W^{[1]-} &=  \left\{(\Theta_0,\Theta_\infty)=(\Theta_{0,\mathrm{gO}}^{[1]}(m,n),\Theta_{\infty,\mathrm{gO}}^{[1]}(m,n)):  (m,n)\in\mathbb{Z}_{>0}\times\mathbb{Z}_{>0}\right\},\\
\Lambda_\mathrm{gO}\cap W^{[2]-} &=  \left\{(\Theta_0,\Theta_\infty)=(\Theta_{0,\mathrm{gO}}^{[2]}(m,n),\Theta_{\infty,\mathrm{gO}}^{[2]}(m,n)):  (m,n)\in\mathbb{Z}_{>0}\times\mathbb{Z}_{>0}\right\},\\
\Lambda_\mathrm{gO}\cap W^{[3]+} &=\left\{(\Theta_0,\Theta_\infty)=(\Theta_{0,\mathrm{gO}}^{[3]}(m,n),\Theta_{\infty,\mathrm{gO}}^{[3]}(m,n)):  (m,n)\in\mathbb{Z}_{>0}\times\mathbb{Z}_{>0}\right\}.
\end{split}
\endeq
The \emph{generalized Okamoto (gO) polynomials} $\{Q_{m,n}(x)\}_{(m,n)\in\mathbb{Z}^2}$ are defined by the recurrence relations
\eq
\begin{split}
Q_{m+1,n}(x)Q_{m-1,n}(x)&=\tfrac{9}{2}[Q_{m,n}(x)Q_{m,n}''(x)-Q_{m,n}'(x)^2]+[2x^2+3(2m+n-1)]Q_{m,n}(x)^2\\
Q_{m,n+1}(x)Q_{m,n-1}(x)&=\tfrac{9}{2}[Q_{m,n}(x)Q_{m,n}''(x)-Q_{m,n}'(x)^2]+[2x^2+3(1-m-2n)]Q_{m,n}(x)^2
\end{split}
\label{eq:gOrecurrence}
\endeq
with the initial conditions $Q_{0,0}(x)=Q_{1,0}(x)=Q_{0,1}(x)=1$ and $Q_{1,1}(x)=\sqrt{2}x$.
The unique rational solution $u(x)=u_\mathrm{gO}^{[j]}(x;m,n)$ of \eqref{p4} for parameters $(\Theta_0,\Theta_\infty)=(\Theta_{0,\mathrm{gO}}^{[j]}(m,n),\Theta_{\infty,\mathrm{gO}}^{[j]}(m,n))$ can then be expressed in terms of these polynomials either by a logarithmic derivative formula analogous to \eqref{eq:gH-tau}
\eq
u_\mathrm{gO}^{[j]}(x)=\frac{\dd}{\dd x}\log\left(\tau_\mathrm{gO}^{[j]}(x;m,n)\right)
\label{eq:gO-tau}
\endeq
or directly as a ratio of four polynomials as shown in Table~\ref{tab:gO}.
If both indices are positive, $Q_{m,n}(x)$ has degree $mn+m(m-1)+n(n-1)$ which exceeds the degree of $H_{m,n}(x)$ by twice the sum of two triangular numbers.  The gO polynomials were first studied by Noumi and Yamada \cite{NoumiY:1999}.  Note that, unlike the gH polynomials, the gO polynomials can be defined for negative $m$ and/or $n$.  From the recurrence relations \eqref{eq:gOrecurrence} it follows easily that 
\eq
Q_{m,n}(x) = Q_{n,1-m-n}(x) = Q_{1-m-n,m}(x), \quad (m,n)\in\mathbb{Z}^2.
\endeq
In particular, gO polynomials with indices of opposite signs can be easily identified with gO polynomials with either both nonnegative or both nonpositive indices.

The formul\ae\ \eqref{eq:gH-tau} and \eqref{eq:gO-tau} expressing
$u(x)$ in terms of logarithmic derivatives of ratios of special polynomials are common in the literature (see, e.g., \cite{Clarkson:2006}) and they lead quickly to the asymptotic relations \eqref{eq:gHsolutionsLarge-x} and likewise show that 
\eq
u(x)=-\tfrac{2}{3}x(1+o(1)),\quad x\to\infty,\quad \text{$u(x)$ a rational solution of \eqref{p4} for $(\Theta_0,\Theta_\infty)\in\Lambda_\mathrm{gO}$}.
\label{eq:gOsolutionsLarge-x}
\endeq  
On the other hand, the alternate formul\ae\ for $u(x)$ as ratios of four special polynomials are not as well-known but can be obtained by combining the implied large-$x$ asymptotics with the zero and pole locations for rational solutions tabulated in \cite[Table 2]{MasoeroR18} and the known leading coefficients of the four polynomial factors.

Now we discuss how to represent the rational solutions for the rest of the lattice $\Lambda_\mathrm{gO}$ in terms of special polynomials.  One important difference between the gH and gO rational solutions of \eqref{p4} is that while the gH solution formul\ae\ 
in Table~\ref{tab:gH}
are only valid for the indicated ranges of nonnegative values of $(m,n)$, each of the gO solution formul\ae\ 
in Table~\ref{tab:gO}
actually defines a rational solution of \eqref{p4} for all $(m,n)\in\mathbb{Z}^2$, and hence each reproduces the unique rational solution of \eqref{p4} for \emph{any} point of the gO parameter lattice $\Lambda_\mathrm{gO}$.  In other words, there are three different ways to express every gO rational solution in terms of different gO polynomials.  While on one hand this fact shows that the distinction of three types of gO rational solutions is somewhat artificial, an immediate application is that we can define the rational solution of \eqref{p4} for $(\Theta_0,\Theta_\infty)\in\Lambda_\mathrm{gO}\cap (W^{[1]+}\cup W^{[2]+}\cup W^{[3]-})$ simply by replacing the condition $(m,n)\in\mathbb{Z}_{>0}\times\mathbb{Z}_{>0}$ with $(m,n)\in\mathbb{Z}_{<0}\times\mathbb{Z}_{<0}$ in 
\eqref{eq:gO-half-sectors} and using the same solution formul\ae\ given in Table~\ref{tab:gO} but with negative indices on the gO polynomials.
Likewise, any of these formul\ae\ can be extended to $m=0$ or $n=0$ to yield representations of the rational solutions of \eqref{p4} for $(\Theta_0,\Theta_\infty)\in\Lambda_\mathrm{gO}\cap L$ (the red dots in Figure~\ref{fig:ThetasPlane}); these formul\ae\ necessarily involve either $Q_{0,n}(x)$ or $Q_{m,0}(x)$, special cases of $Q_{m,n}(x)$ that are simply called Okamoto polynomials and were first studied in \cite{Okamoto:1986} (likewise $H_{m,1}(x)$ are the classical Hermite polynomials).  

\subsubsection{Applications and qualitative properties of gH and gO polynomials and the associated rational functions}
\label{sec:rational-PIV-applications}
The gH and gO polynomials are of independent interest, 
and both families of polynomials are notable for their 
remarkably rich mathematical structure.  
Applications of these special 
polynomials and the rational solutions of \eqref{p4} generated from them include rational-oscillatory solutions of the 
defocusing nonlinear Schr\"odinger equation \cite{Clarkson:2006}, rational 
solutions of the Boussinesq equation \cite{Clarkson:2008} and the classical 
Boussinesq system \cite{Clarkson:2009}, rational-logarithmic solutions of the 
dispersive water wave equation and the modified Boussinesq equation 
\cite{ClarksonT:2009}, the point vortex equations with quadrupole background 
flow \cite{Clarkson:2009b}, the steady-state distribution of electric charges 
in a parabolic potential \cite{MarikhinSBP:2000}, and rational extensions of 
the harmonic oscillator and related exceptional orthogonal polynomials 
\cite{MarquetteQ:2016}.

From plots \cite{Clarkson:2003,NoumiY:1999}, one can see that the $mn$ zeros of the gH polynomial $H_{m,n}(x)$ are arranged in the 
complex $x$-plane in a quasi-rectangular $m\times n$ grid.  Likewise, if both $m$ and $n$ are positive, the zeros of the gO polynomial $Q_{m,n}(x)$ are arranged in a quasi-rectangular $m\times n$ grid, two quasi-triangular grids with a base of $m-1$ zeros, and two quasi-triangular grids with a base of $n-1$ zeros.  If both $m$ and $n$ are negative, then one has instead a $|n|\times |m|$ quasi-rectangular grid, two quasi-triangular grids with a base of $|n|$ zeros, and two quasi-triangular grids with a base of $|m|$ zeros.  See Figure~\ref{fig:rootplots}.  Note that, despite appearance of a qualitatively similar quasi-rectangular grid of the same dimensions in all three cases, in general $H_{m,n}(x)$ is not a factor of either $Q_{m,n}(x)$ or $Q_{-n,-m}(x)$.
\begin{figure}[h]
\begin{center}
\includegraphics{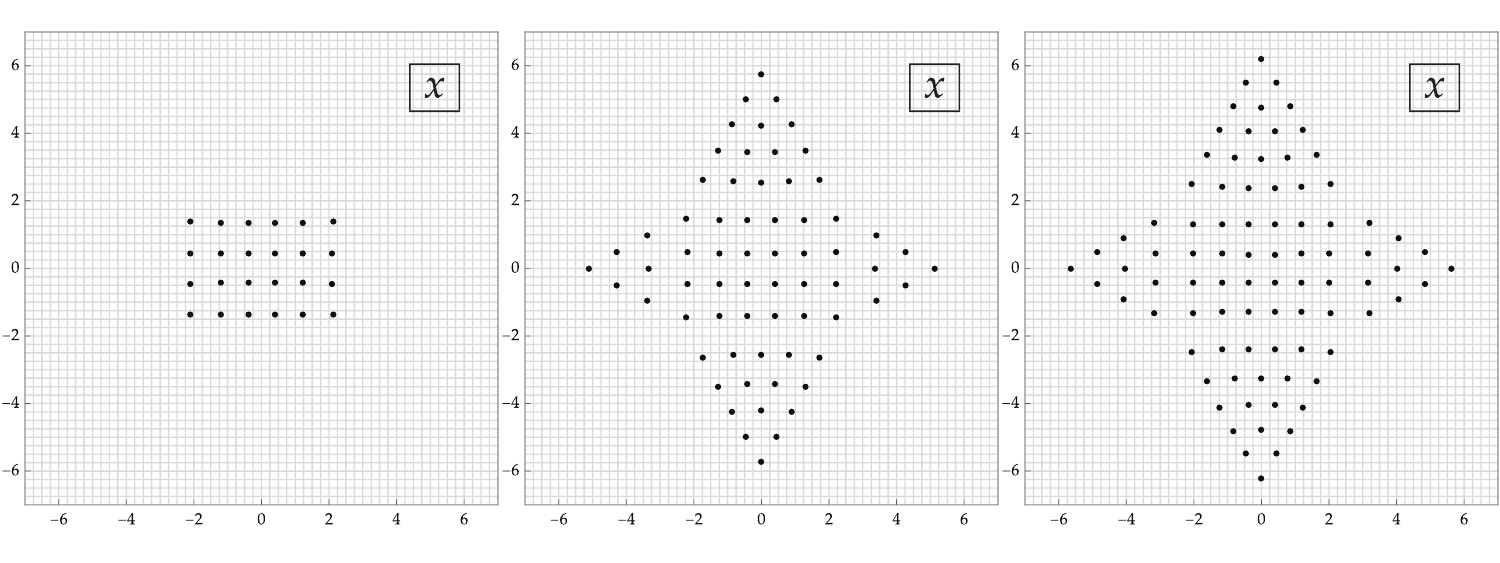}
\end{center}
\caption{Left:  the roots of $H_{6,4}(x)$ in the complex $x$-plane.  Center:  the roots of $Q_{6,4}(x)$ in the complex $x$-plane.  Right:  the roots of $Q_{-4,-6}(x)$ in the complex $x$-plane.}
\label{fig:rootplots}
\end{figure}
Since $H_{m,1}(x)$ is a classical Hermite polynomial of degree $m$, in this case the roots are exactly real and the quasi-rectangle degenerates to a line.  Likewise for $Q_{m,1}(x)$ one has a (non-generalized) Okamoto polynomial for which the quasi-rectangle again degenerates to a line and two of the quasi-triangles degenerate to points, while two quasi-triangles of base $m$ remain.  

Assembling the rational solutions from the gH polynomials using Table~\ref{tab:gH}, one can easily display the interaction between the poles and zeros contributed by different gH polynomial factors, and illustrate how the organization of the poles and zeros varies with the parameters in $\Lambda_\mathrm{gH}$, as shown in Figure~\ref{fig:theta-map-gH}.
\begin{figure}[h]
\hspace{-0.2in}
\begin{tabular}{c}
\includegraphics[height=1.2in]{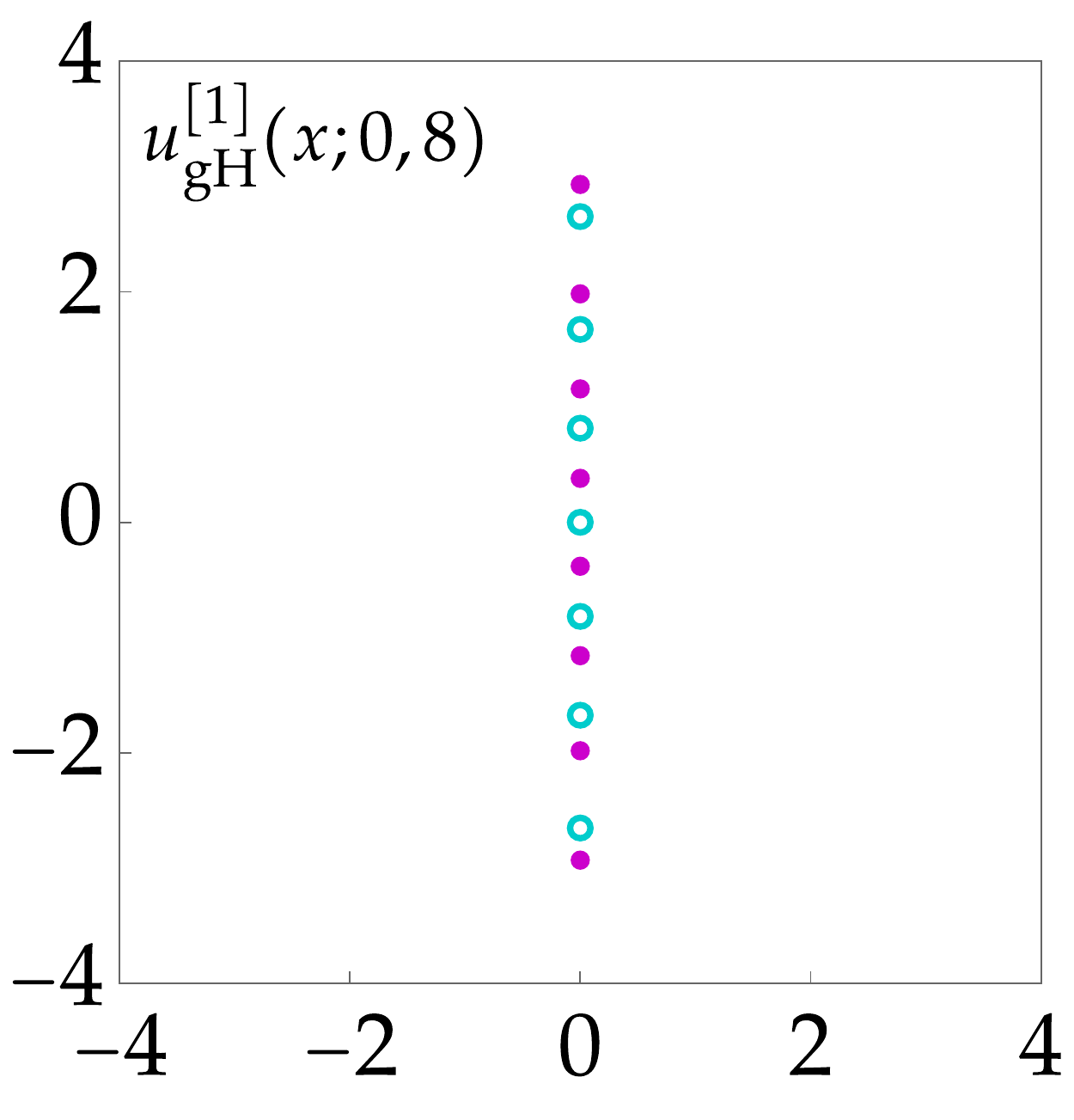} \vspace{3.75in} \\
\includegraphics[height=1.2in]{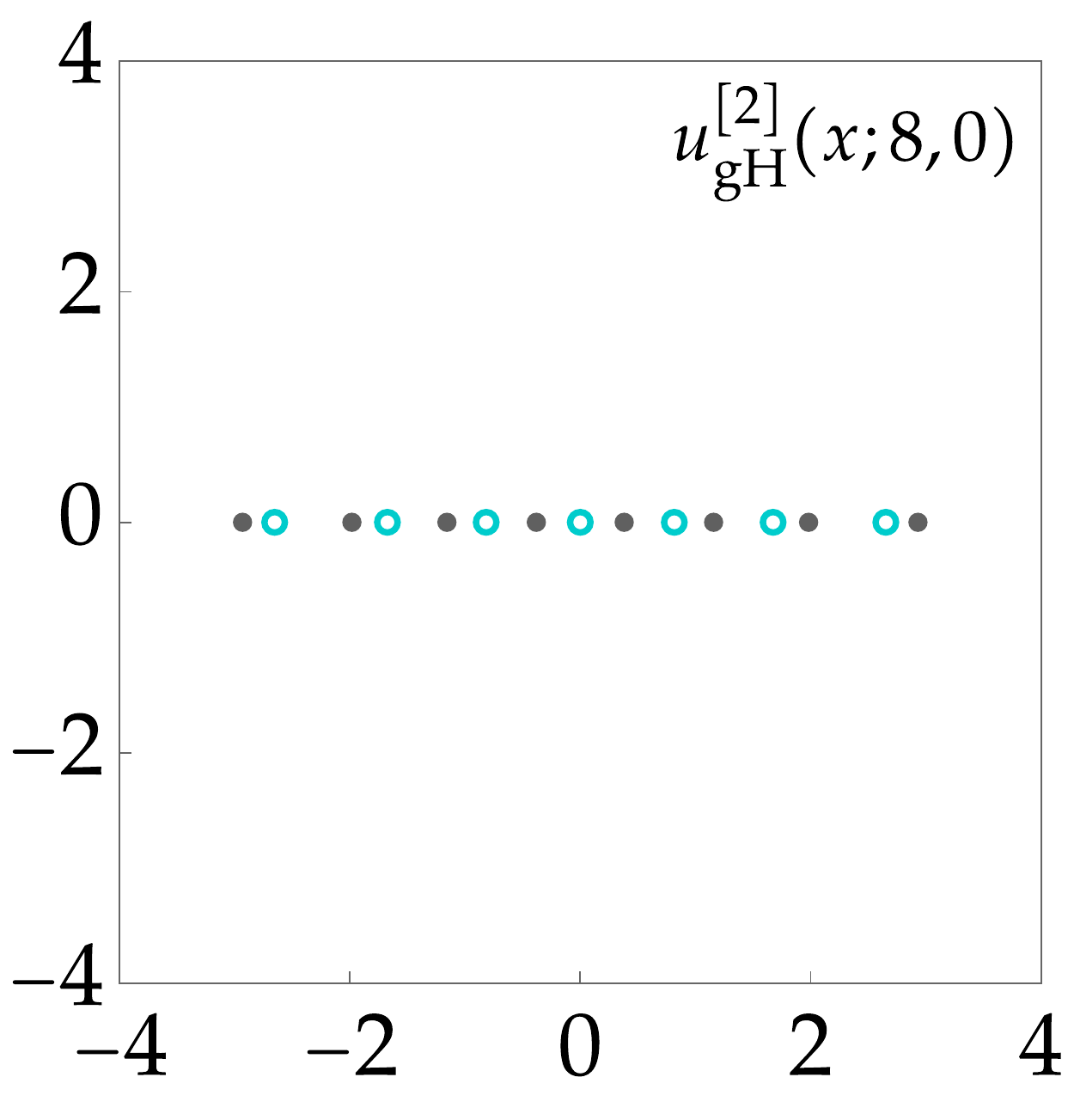}
\end{tabular}
\hspace{-.18in}
\begin{tabular}{c}
\includegraphics[height=1.2in]{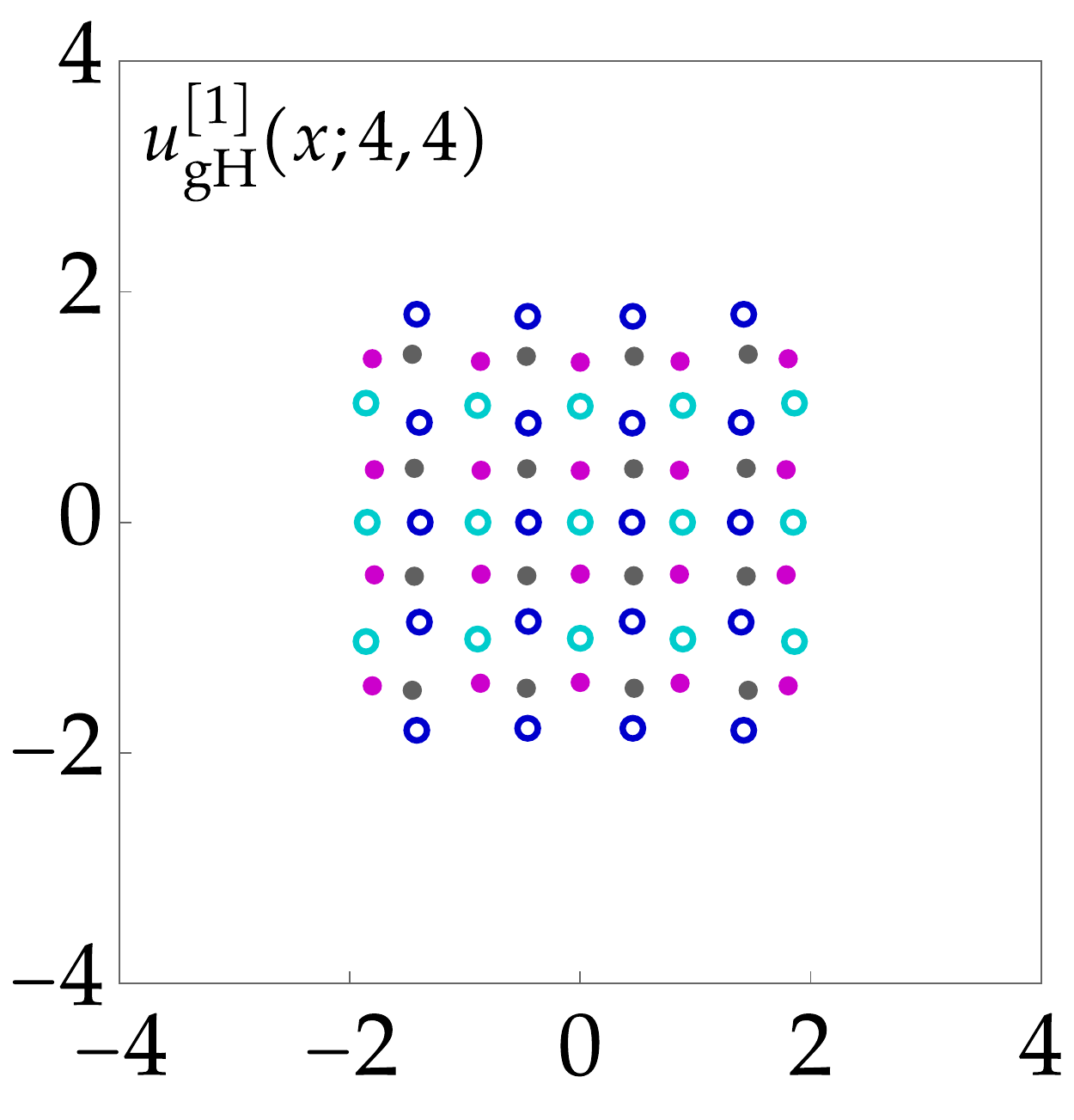}
\includegraphics[height=1.2in]{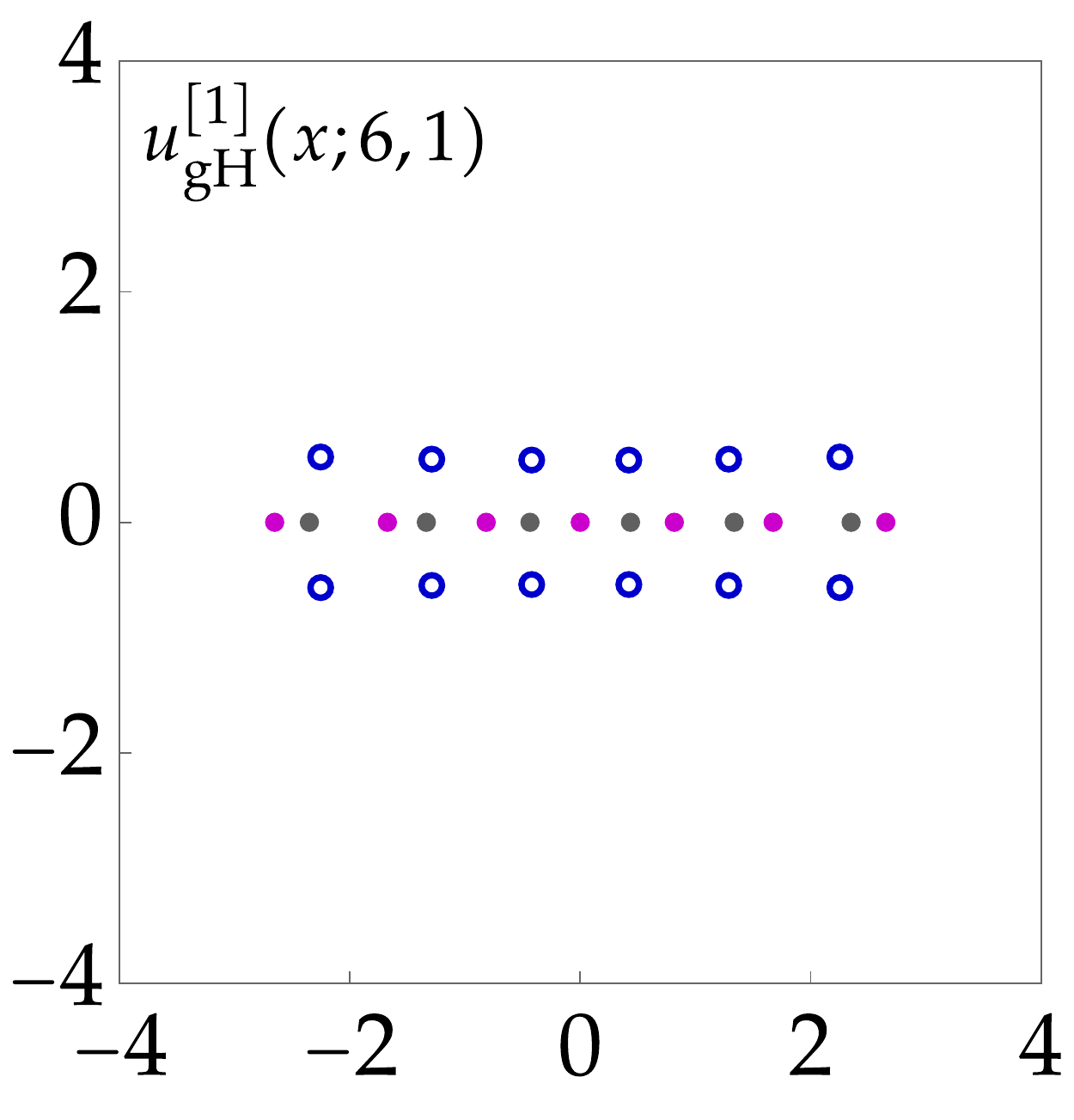}
\hspace{1.2in} \\
\includegraphics[width=3.7in]{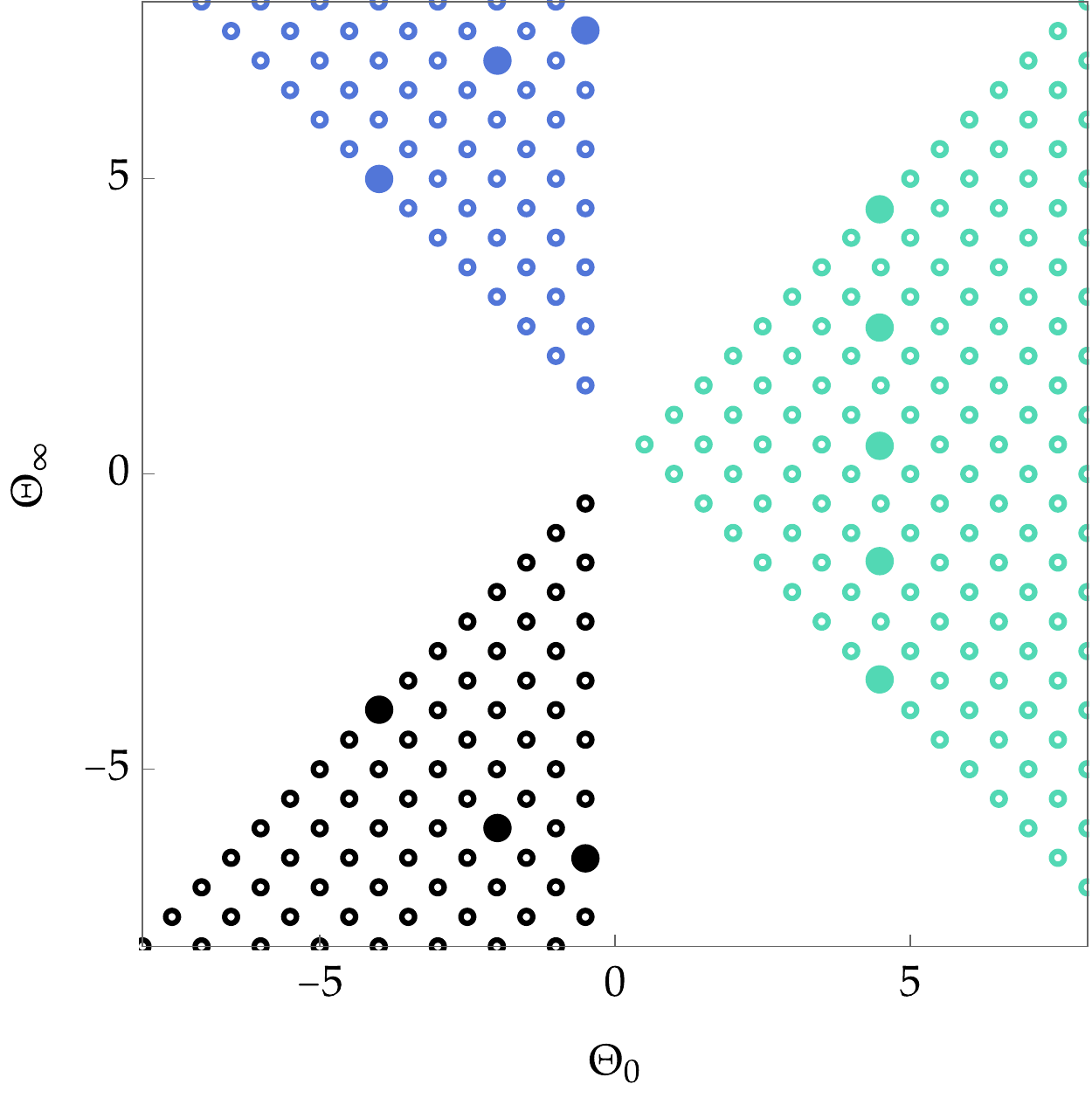}\\
\includegraphics[height=1.2in]{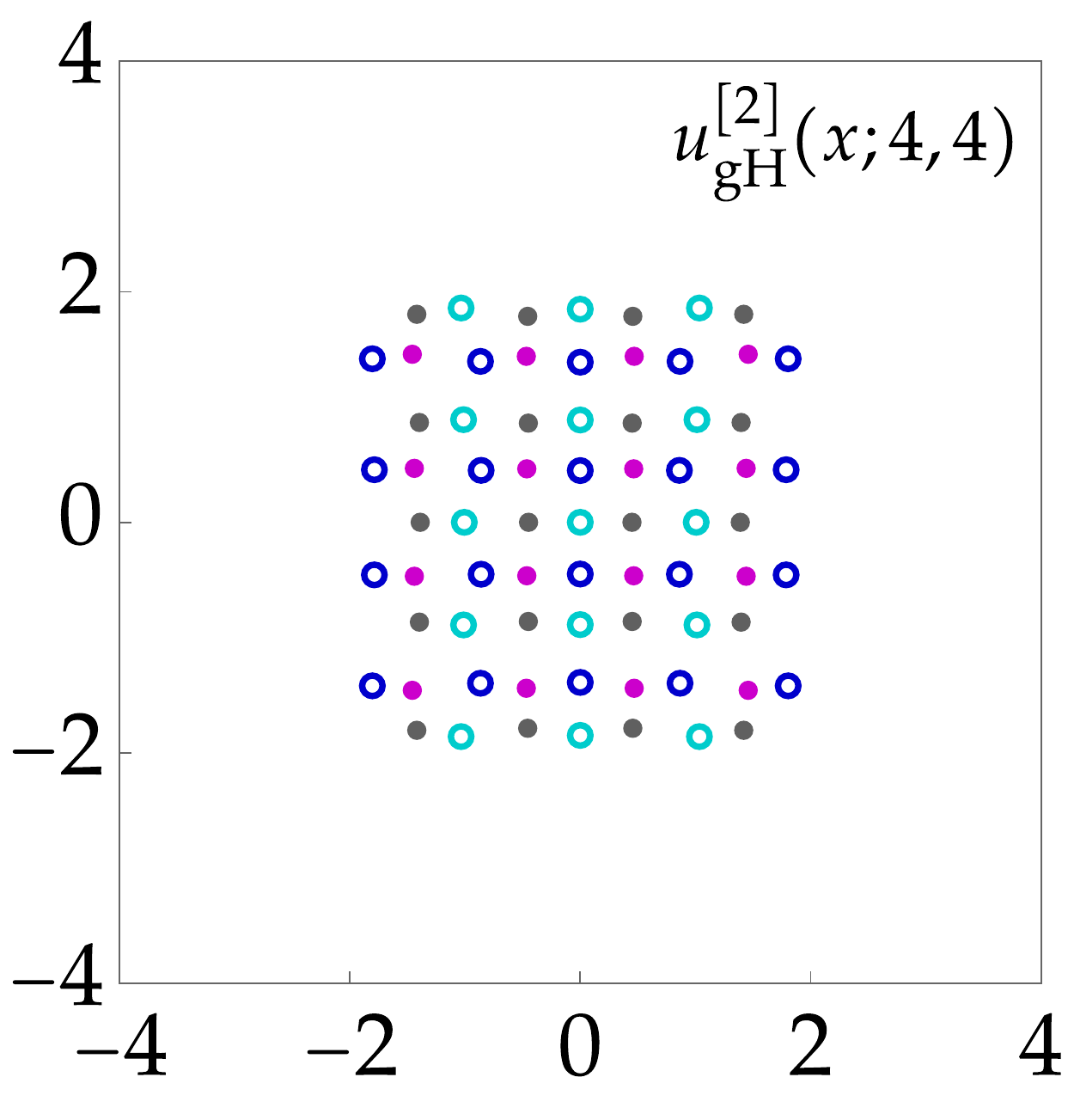}
\includegraphics[height=1.2in]{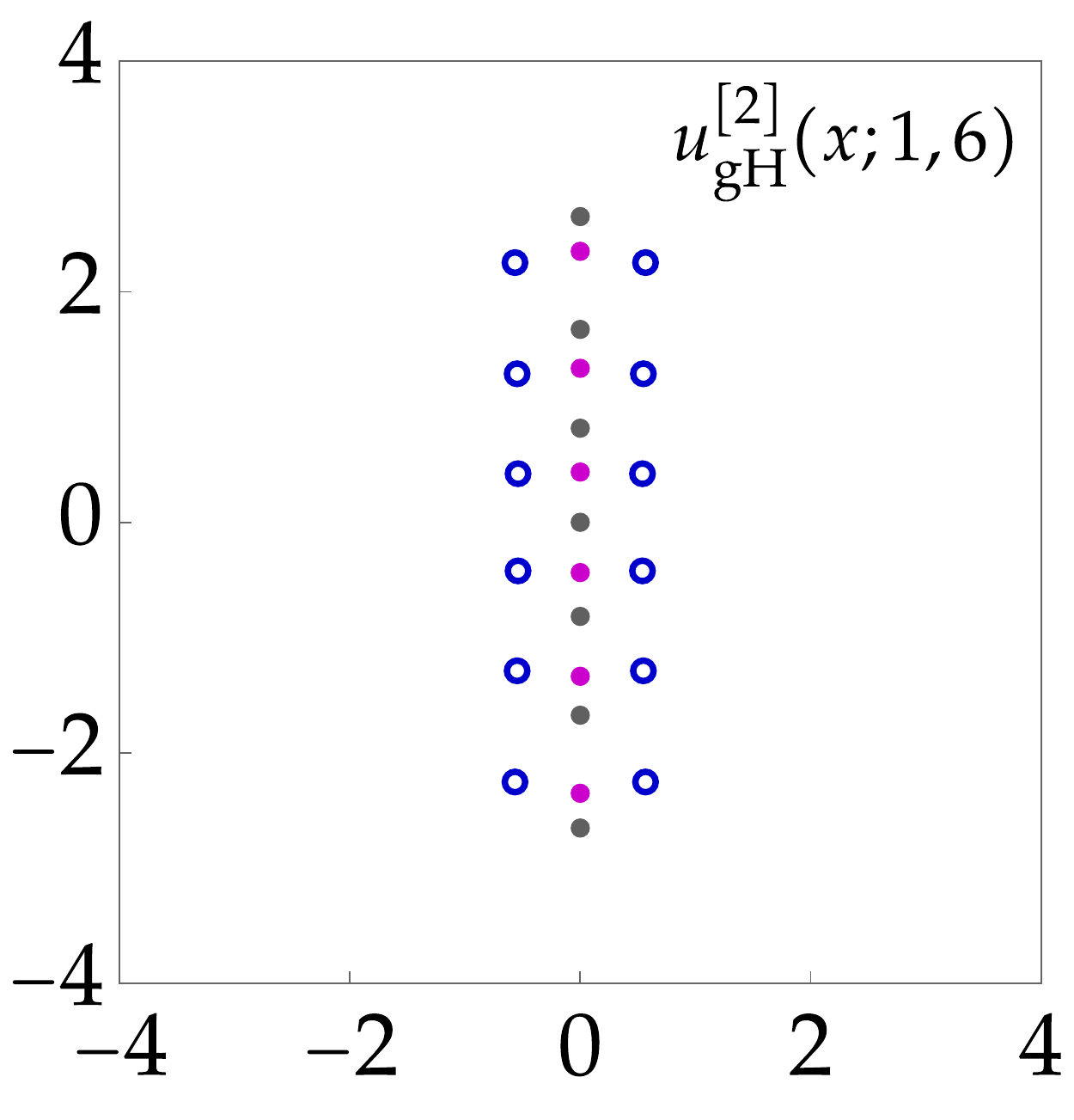}
\hspace{1.2in} \\
\end{tabular}
\hspace{-.18in}
\begin{tabular}{c}
\includegraphics[height=1.2in]{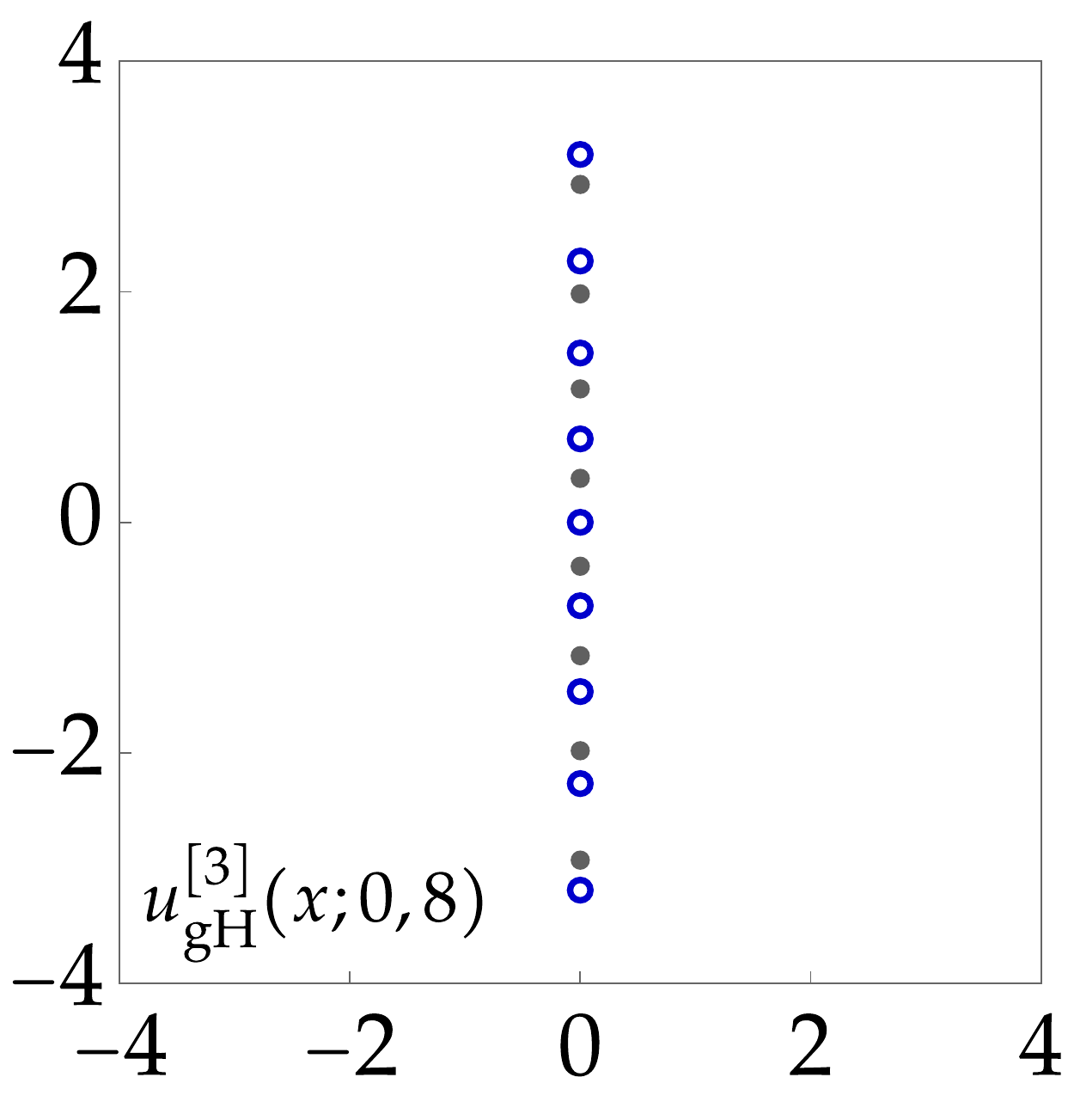}\\
\includegraphics[height=1.2in]{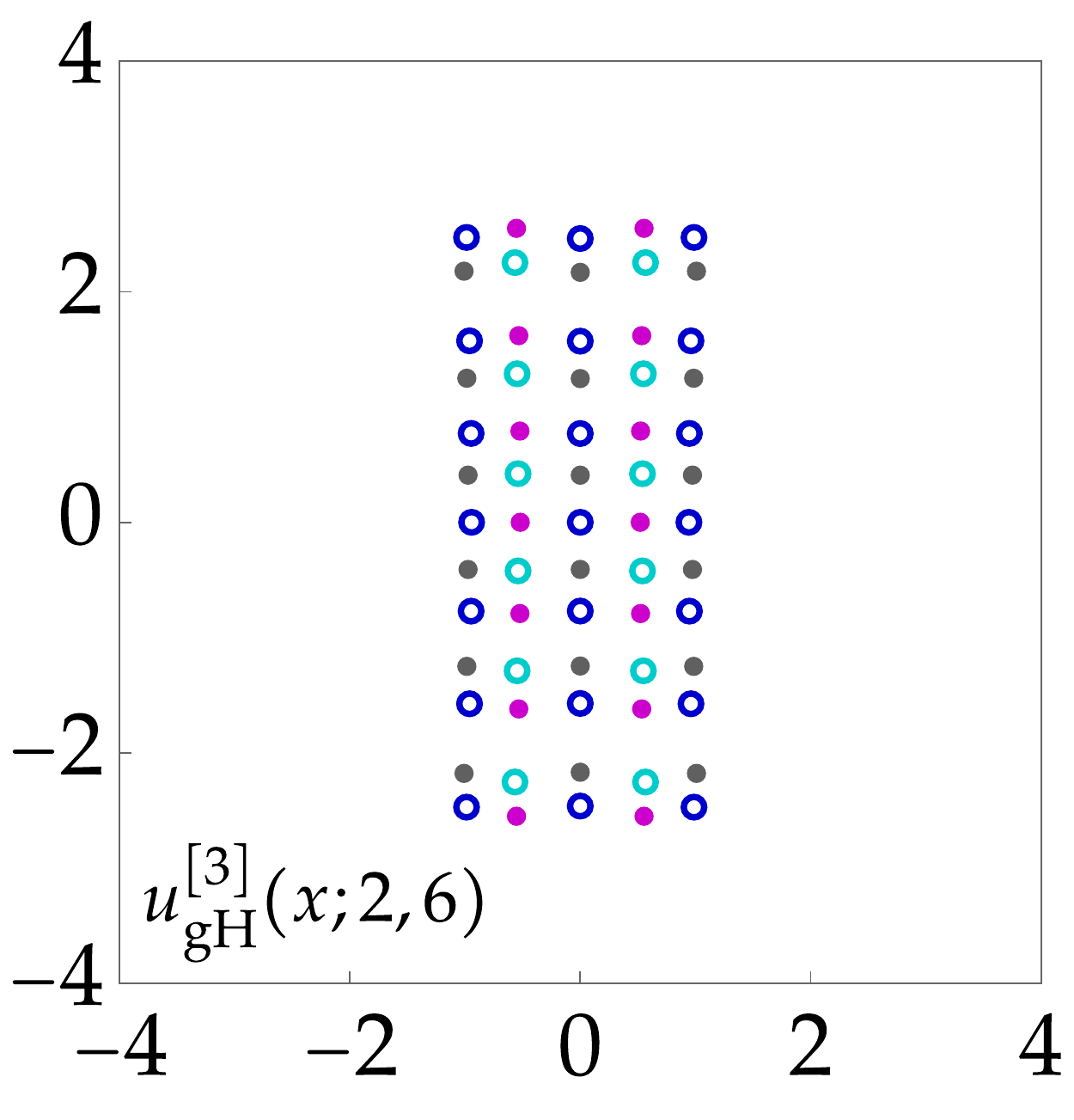} \\
\includegraphics[height=1.2in]{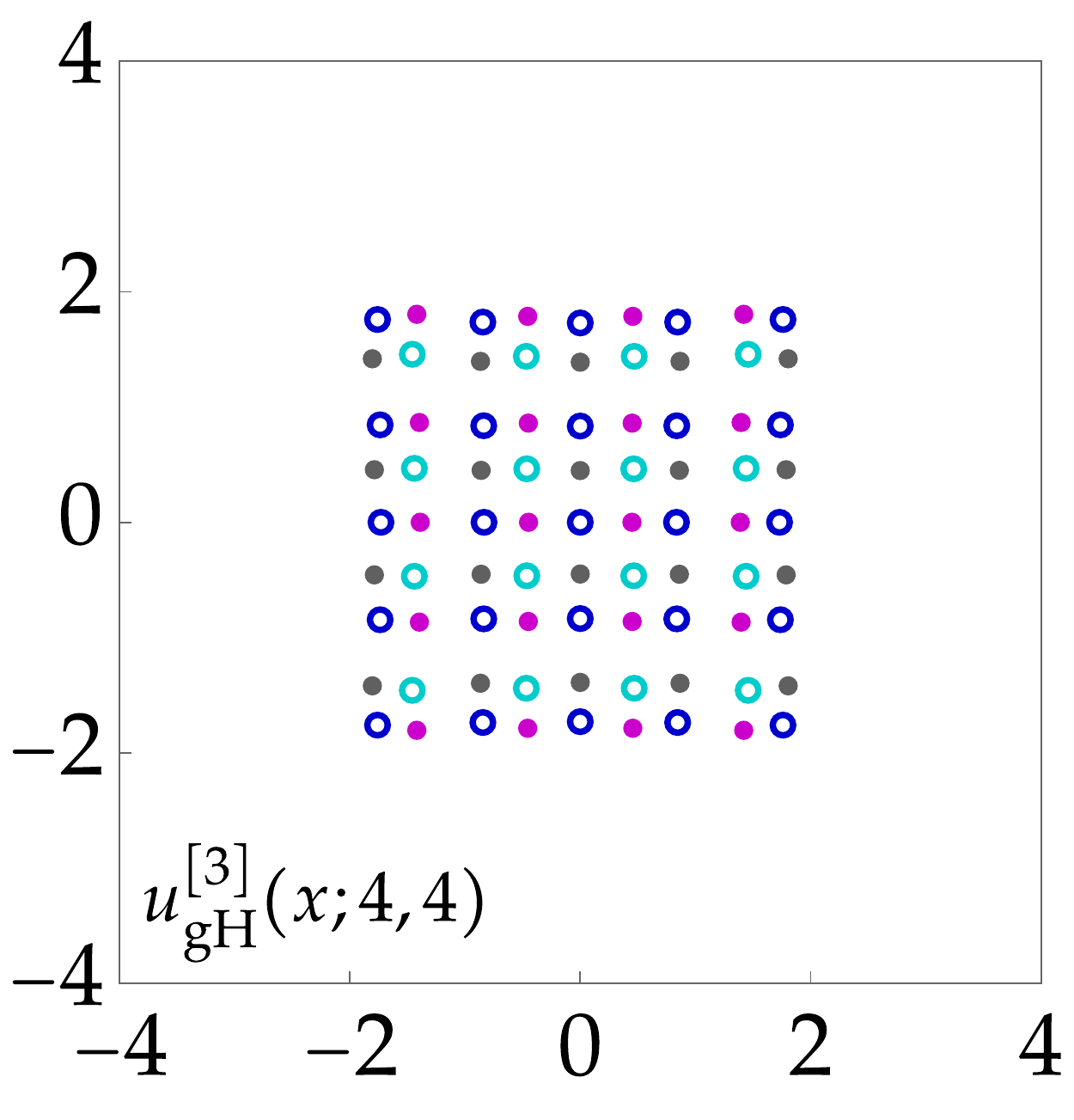}\\
\includegraphics[height=1.2in]{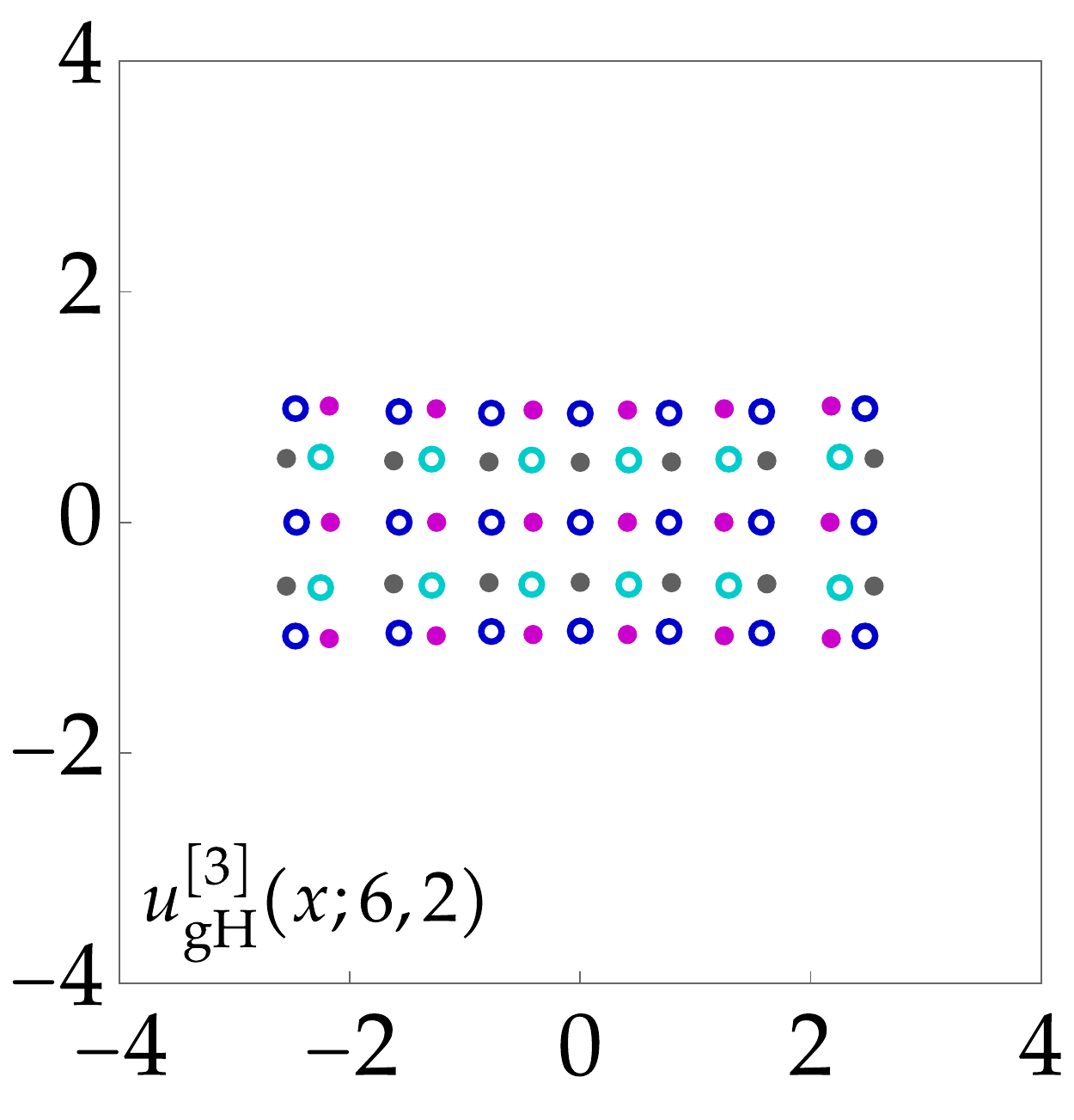}\\
\includegraphics[height=1.2in]{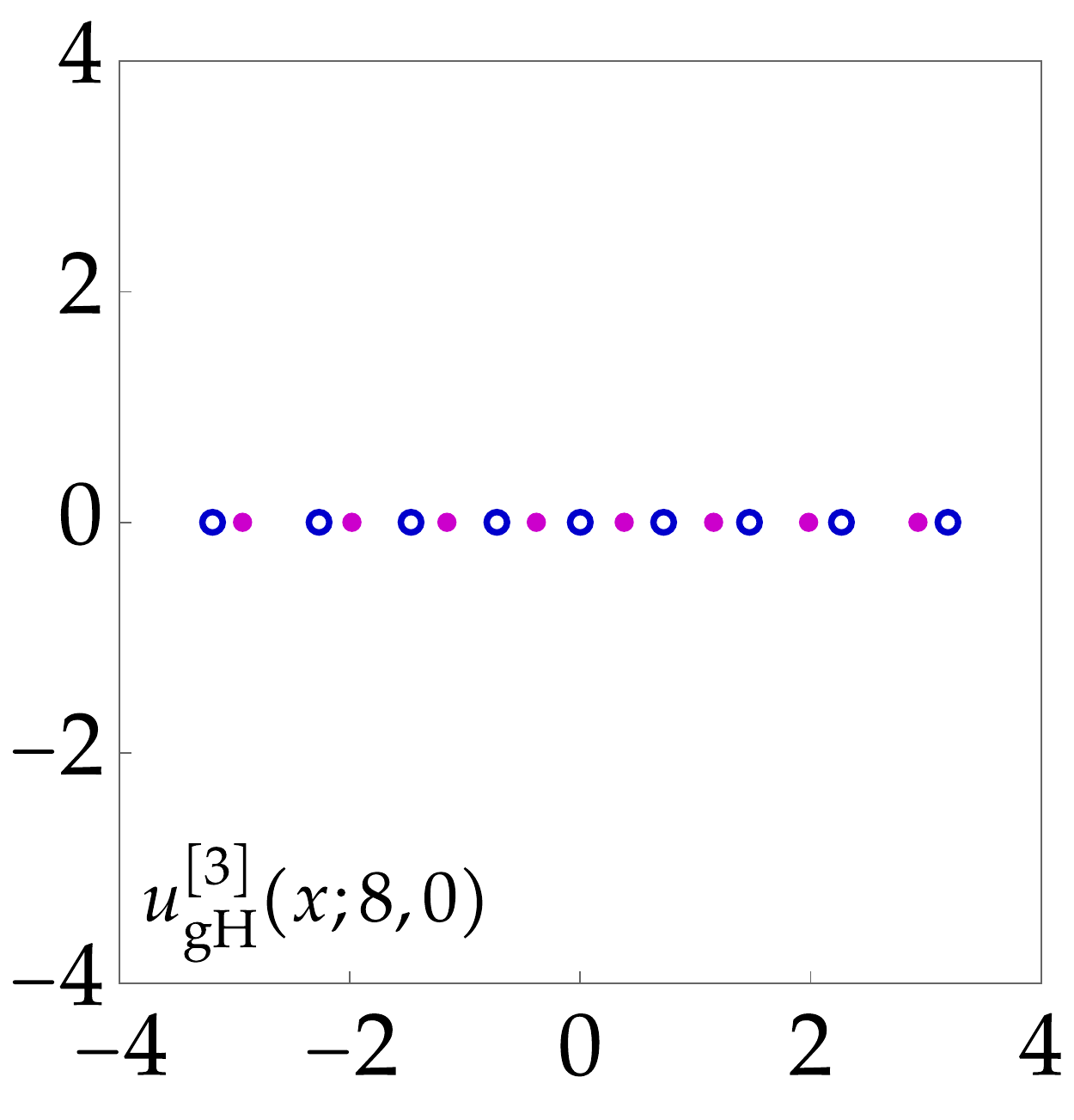}
\end{tabular}
\caption{Representative plots of poles (dots; magenta for residue $+1$ and gray for residue $-1$) and zeros (circles; cyan for positive derivative and blue for negative derivative) of the three types of rational solutions in the gH family.  The large dots in the central plot show the location of the corresponding parameters in the $(\Theta_0,\Theta_\infty)$-plane.}
\label{fig:theta-map-gH}
\end{figure}
The analogous information for the gO case is shown in Figure~\ref{fig:theta-map-gO}.
\begin{figure}[h]
\hspace{-0.2in}
\begin{tabular}{c}
\includegraphics[height=1.2in]{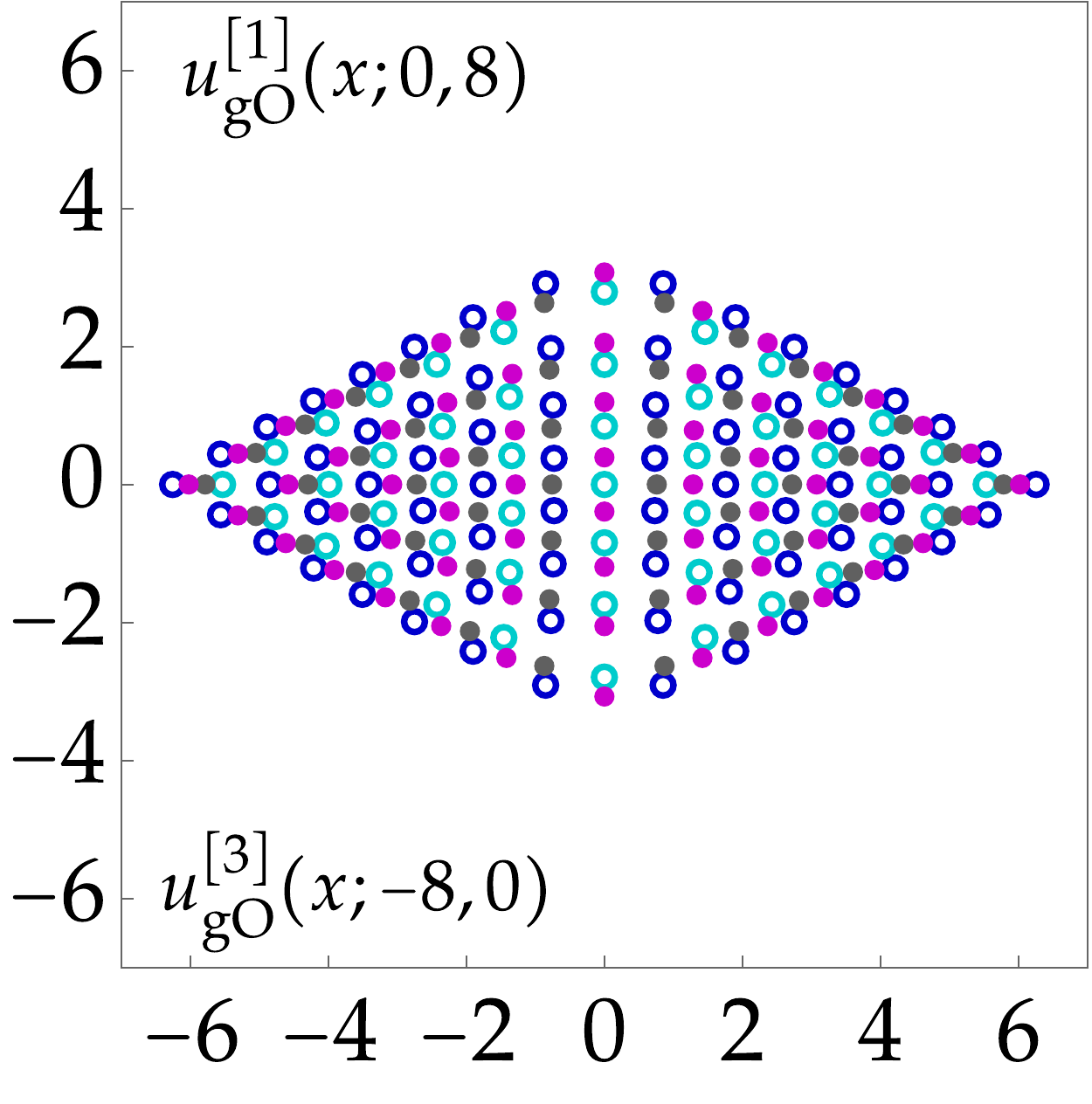}\\
\includegraphics[height=1.2in]{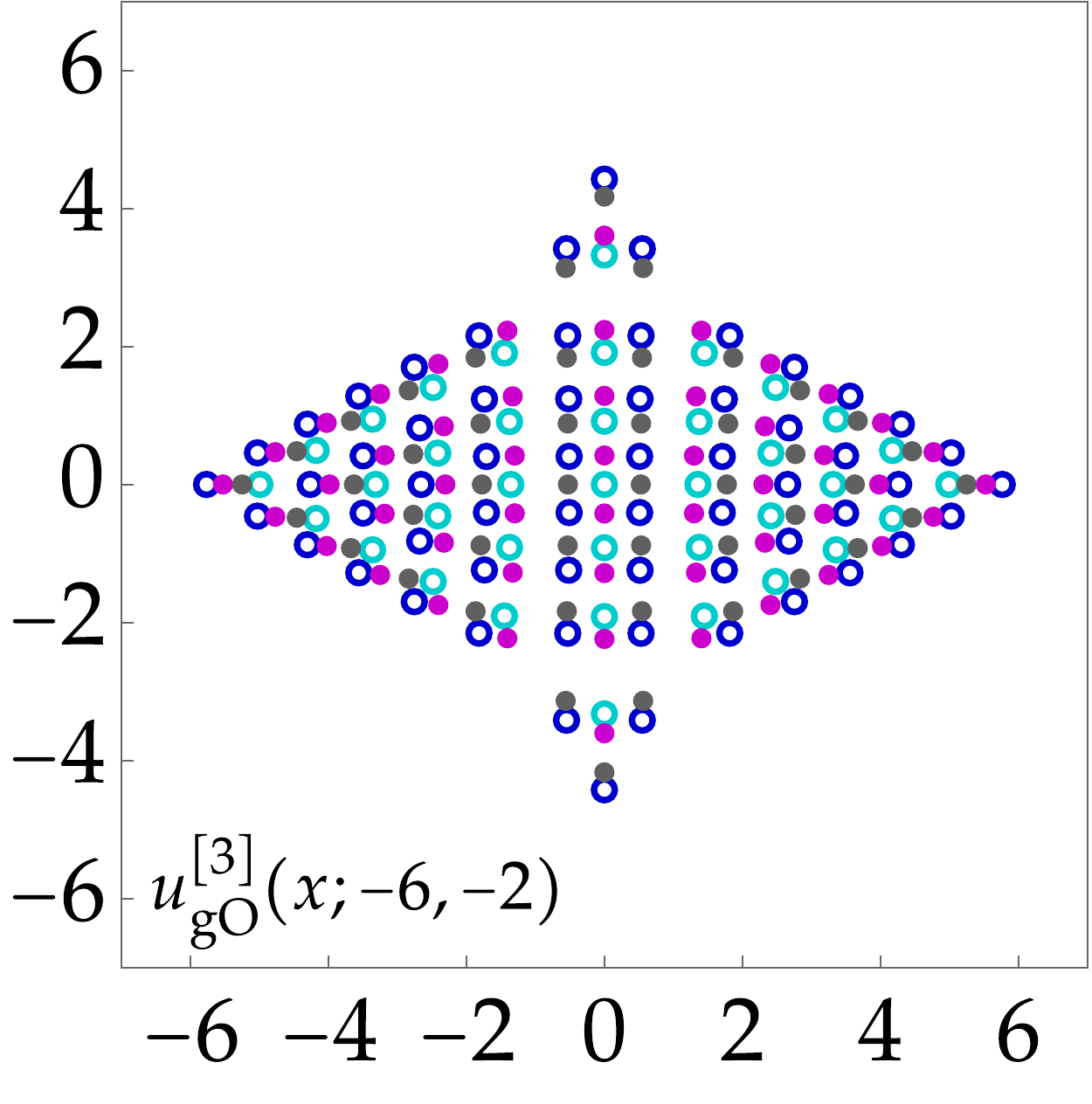}\\
\includegraphics[height=1.2in]{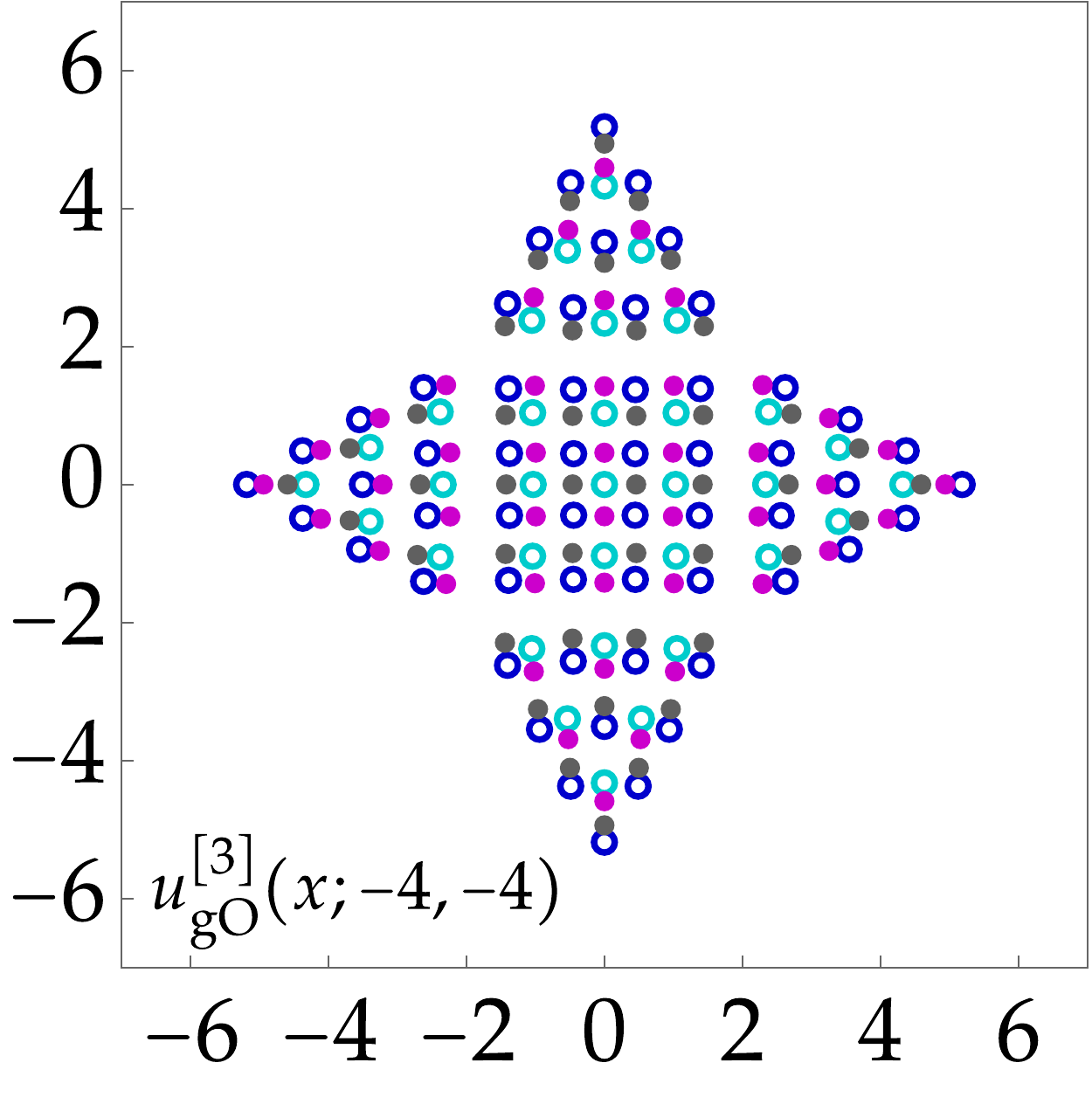}\\
\includegraphics[height=1.2in]{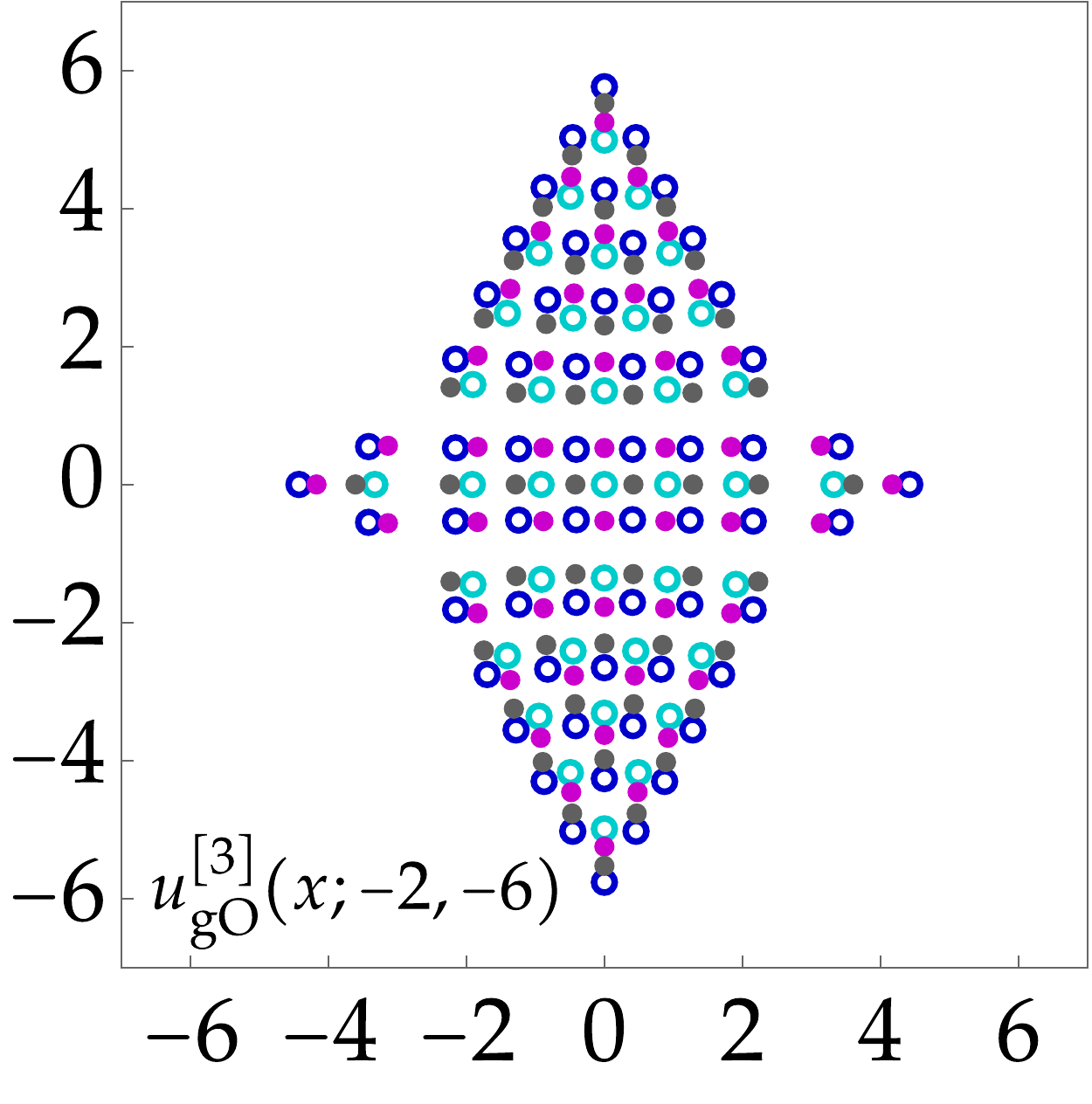}\\
\includegraphics[height=1.2in]{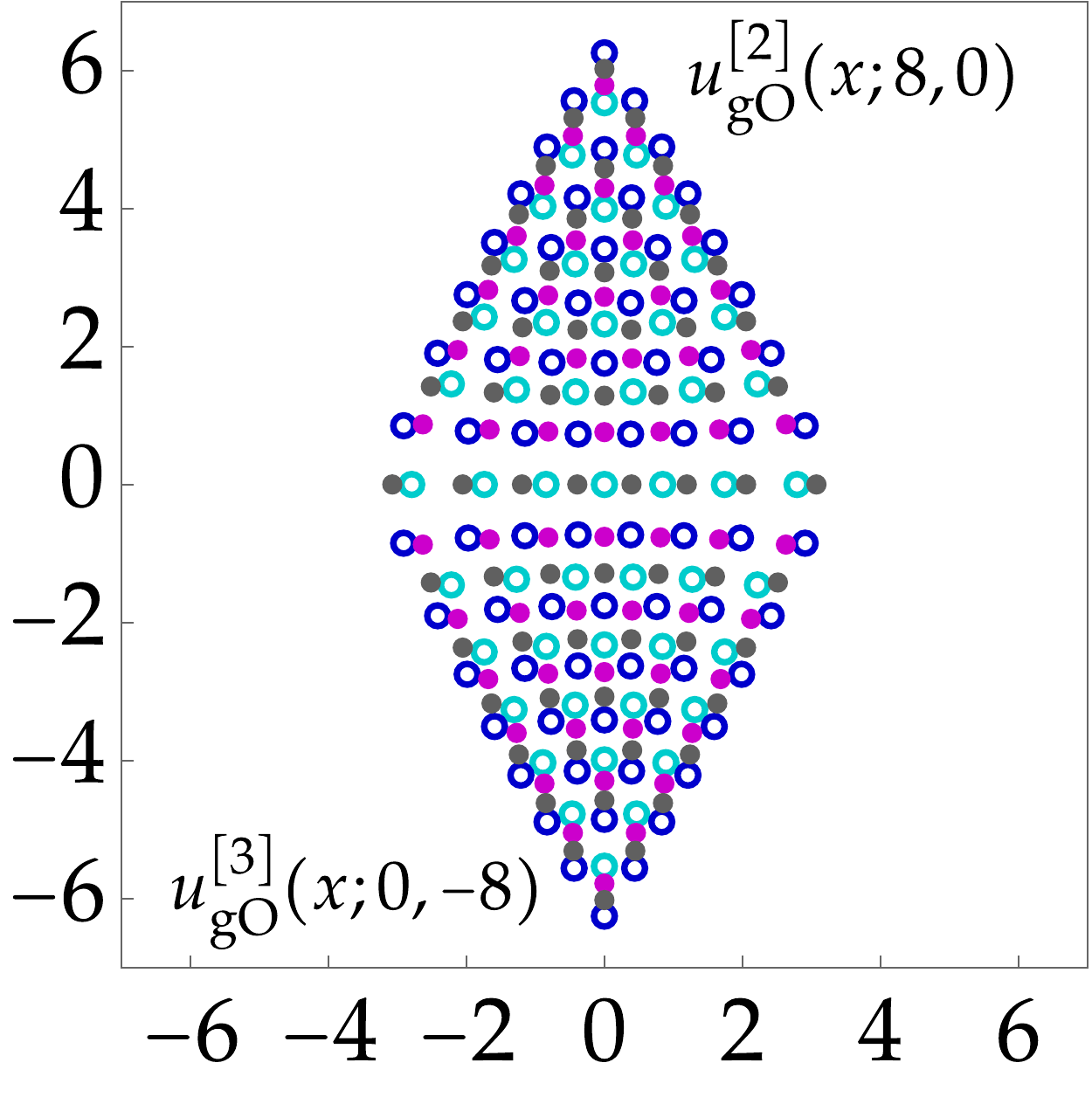}
\end{tabular}
\hspace{-.18in}
\begin{tabular}{c}
\includegraphics[height=1.2in]{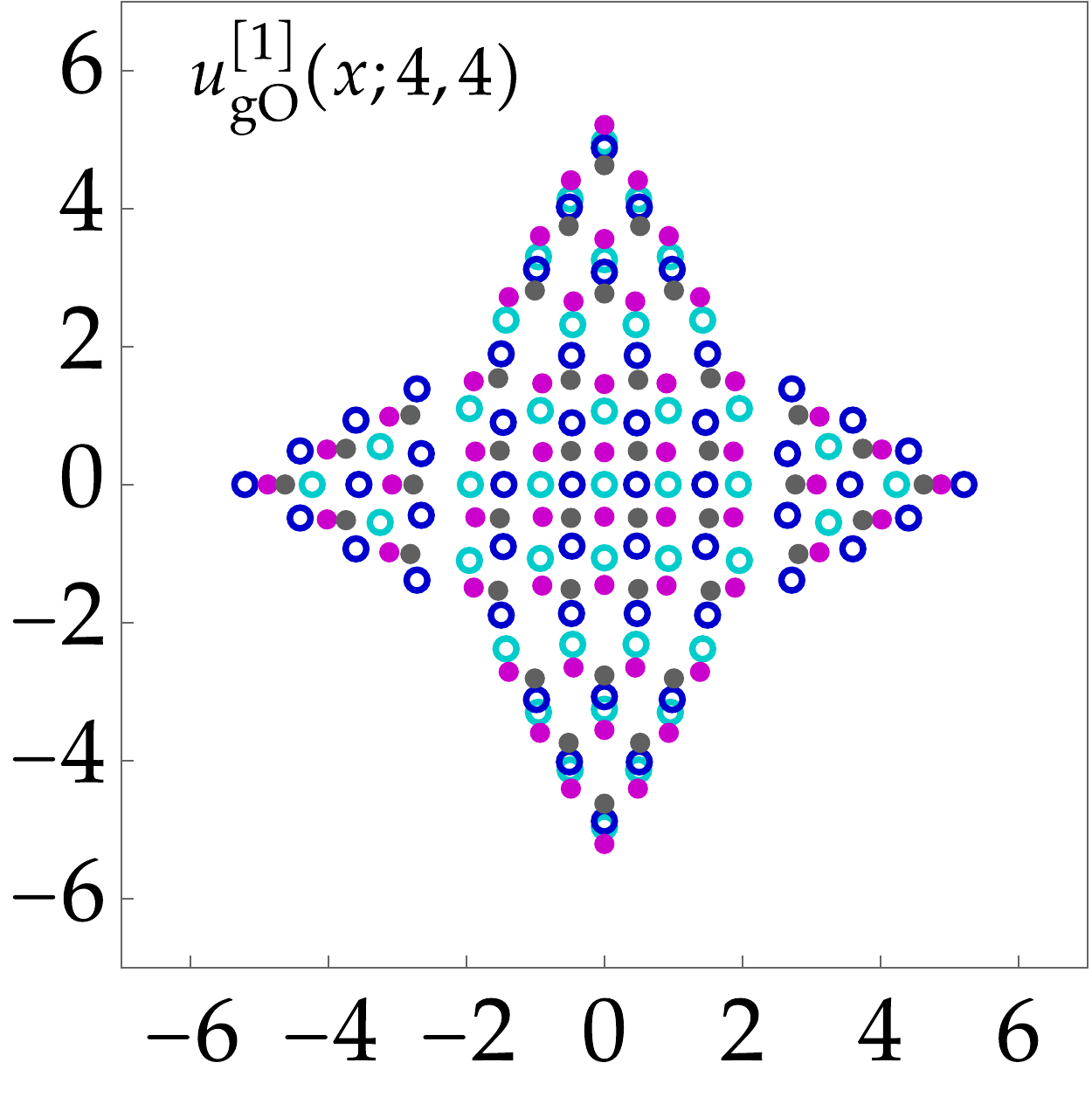}
\includegraphics[height=1.2in]{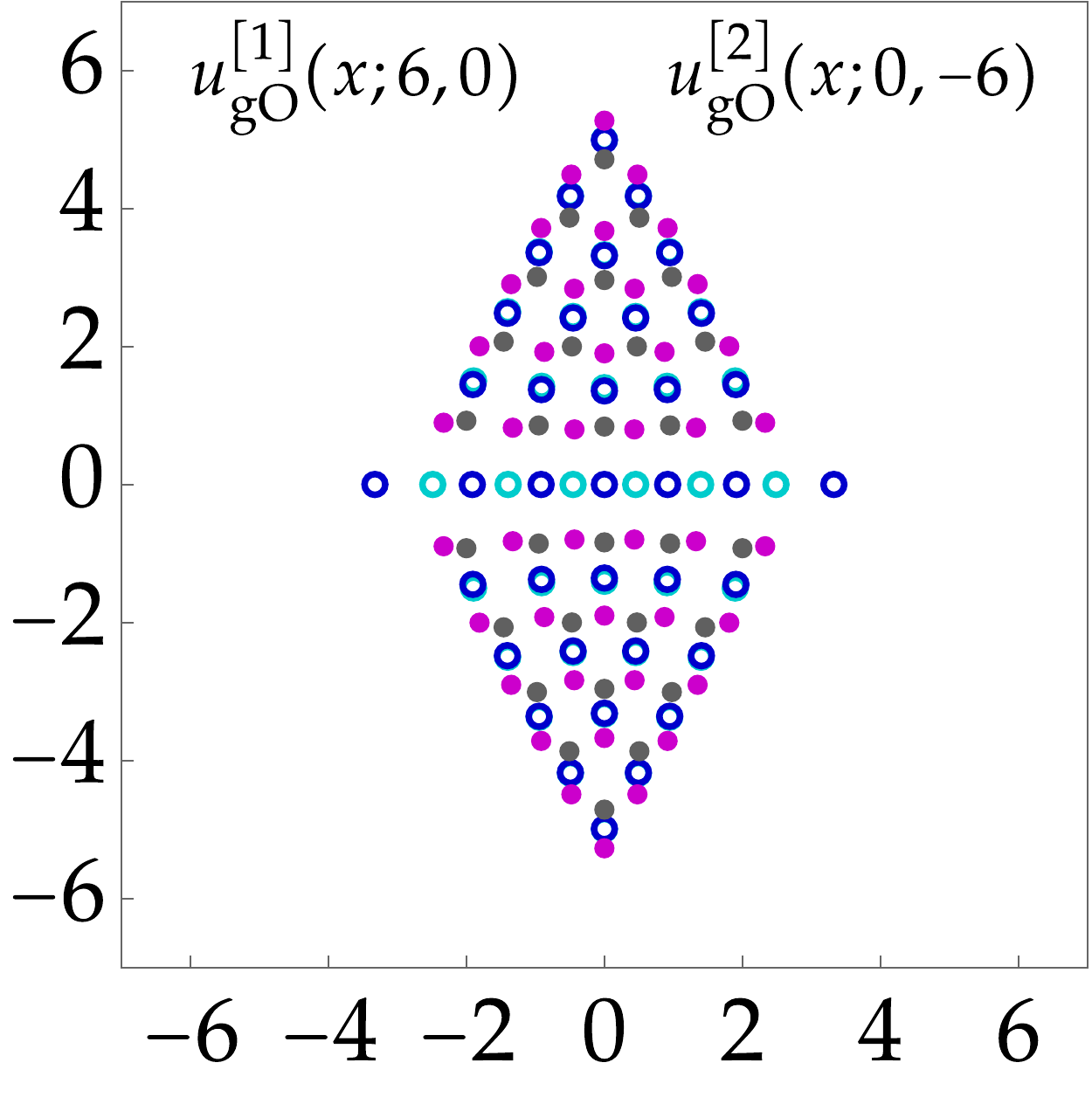}
\includegraphics[height=1.2in]{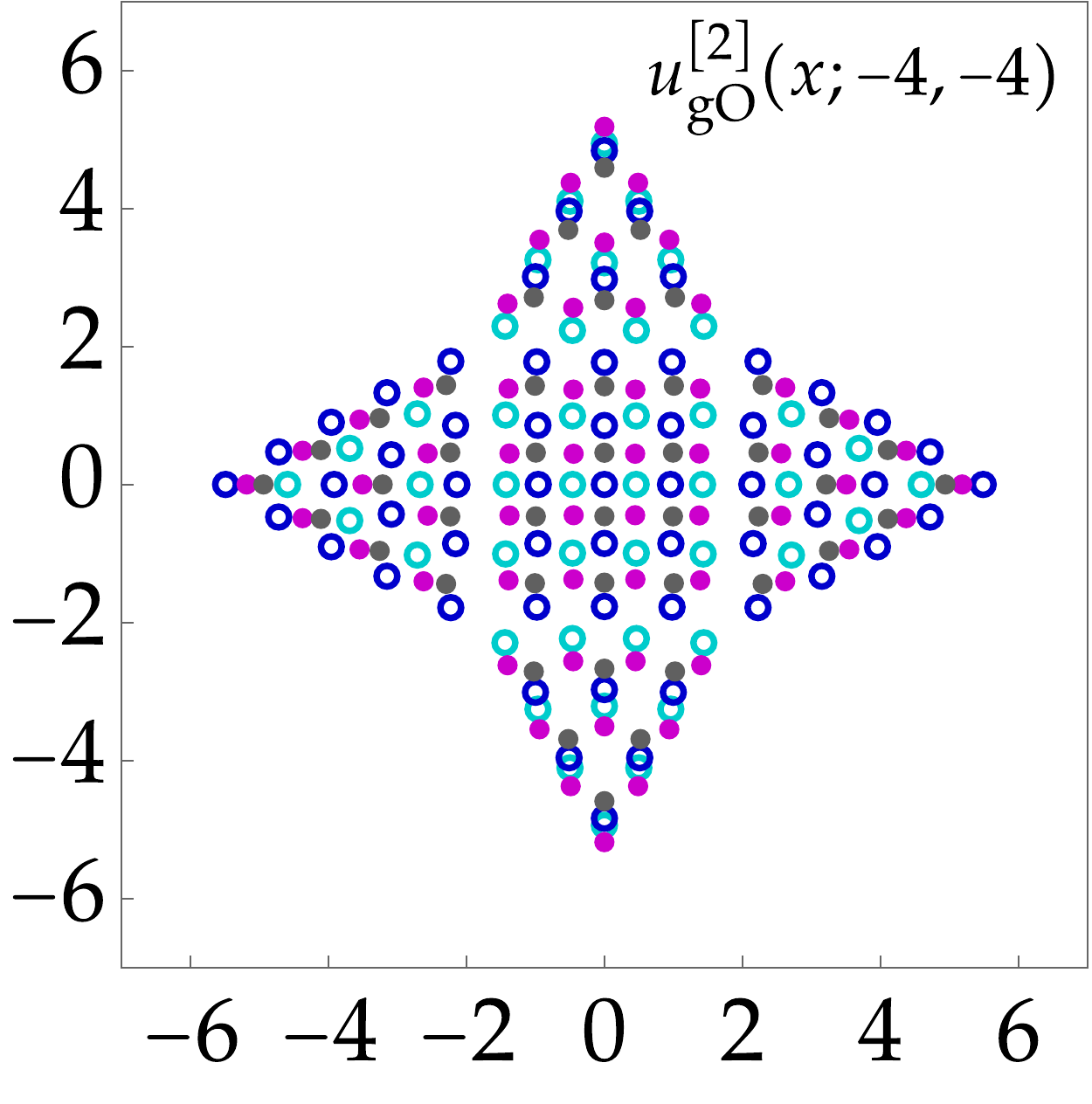}\\
\includegraphics[width=3.7in]{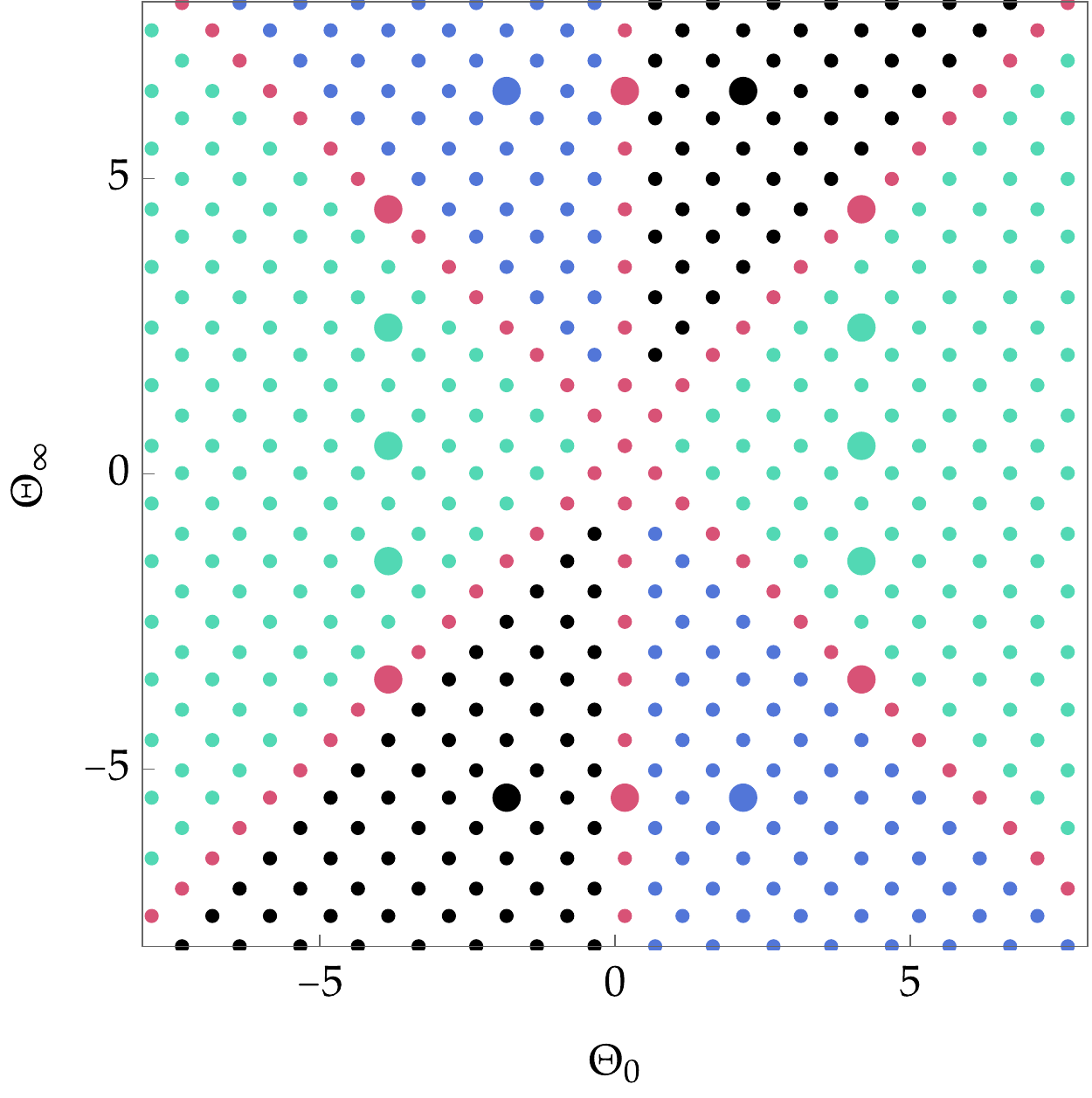}\\
\includegraphics[height=1.2in]{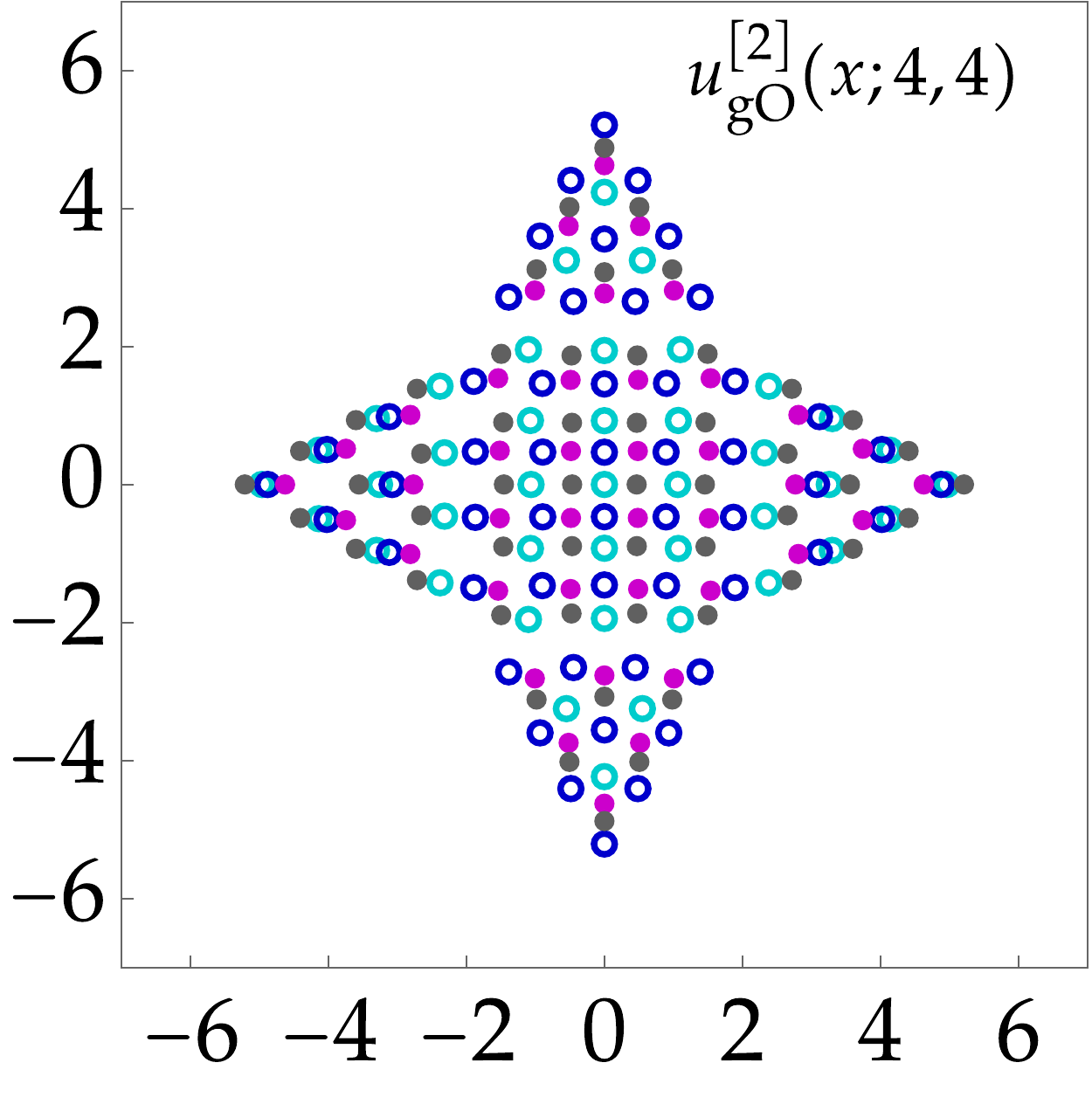}
\includegraphics[height=1.2in]{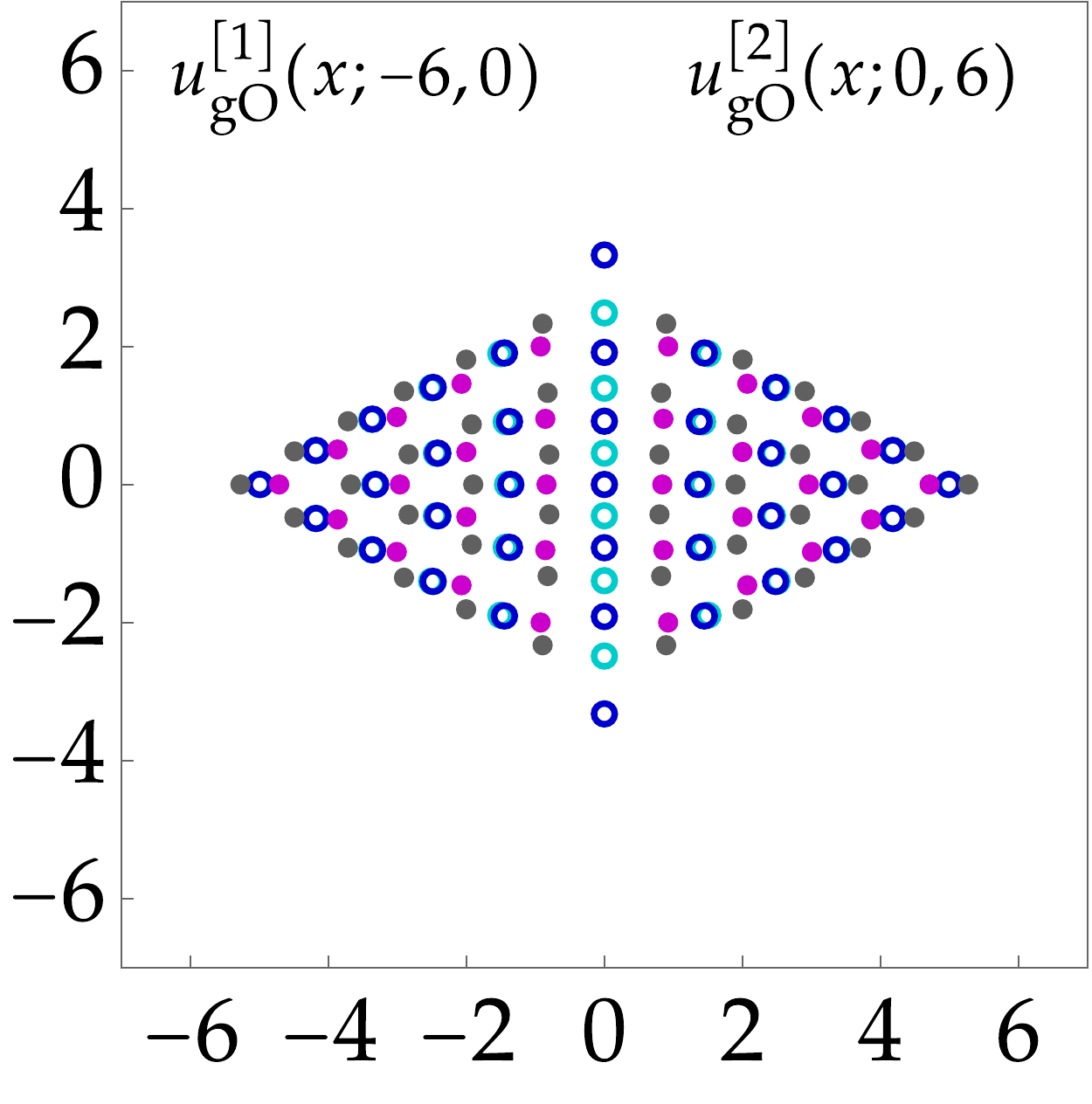}
\includegraphics[height=1.2in]{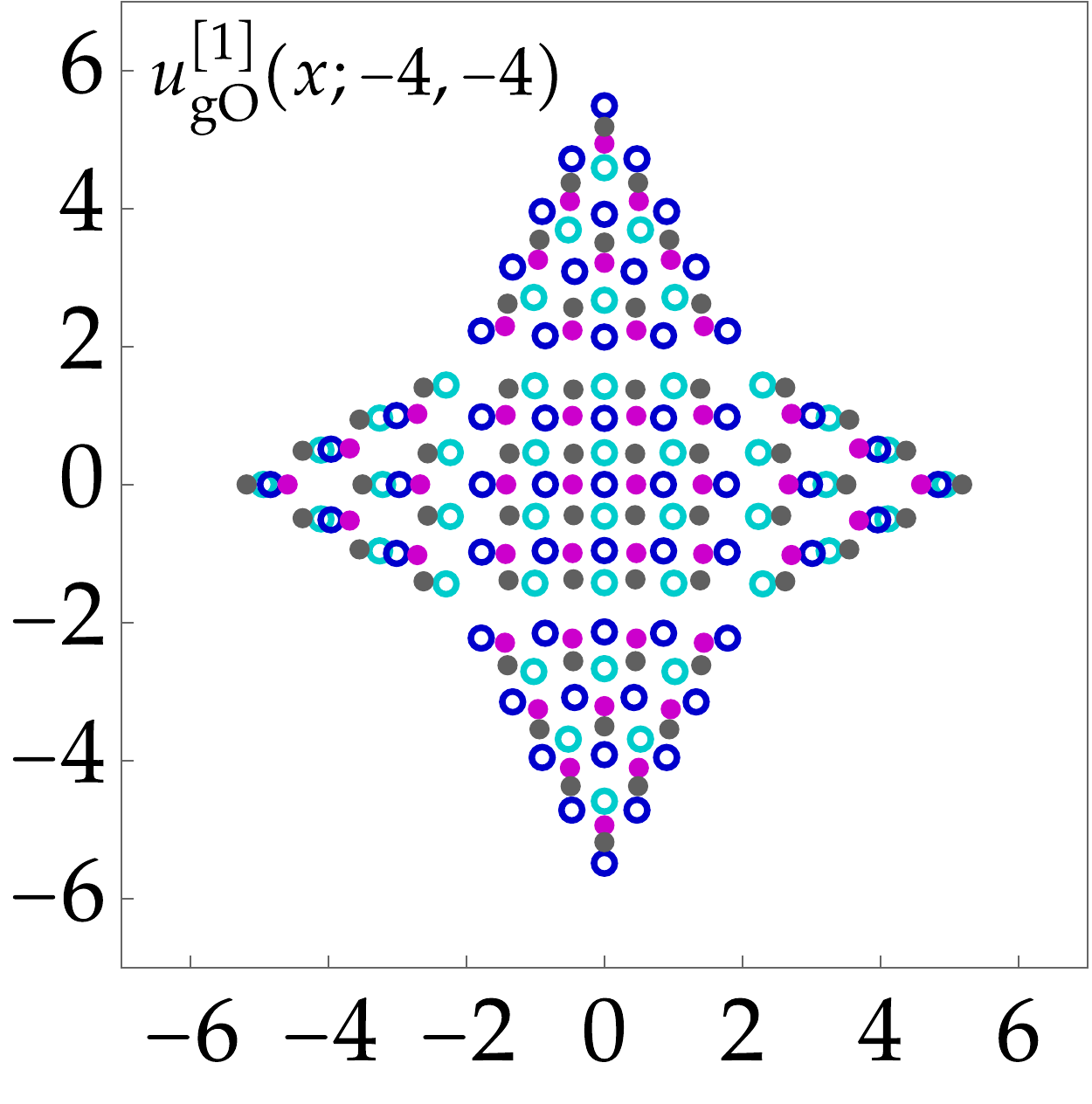}
\end{tabular}
\hspace{-.18in}
\begin{tabular}{c}
\includegraphics[height=1.2in]{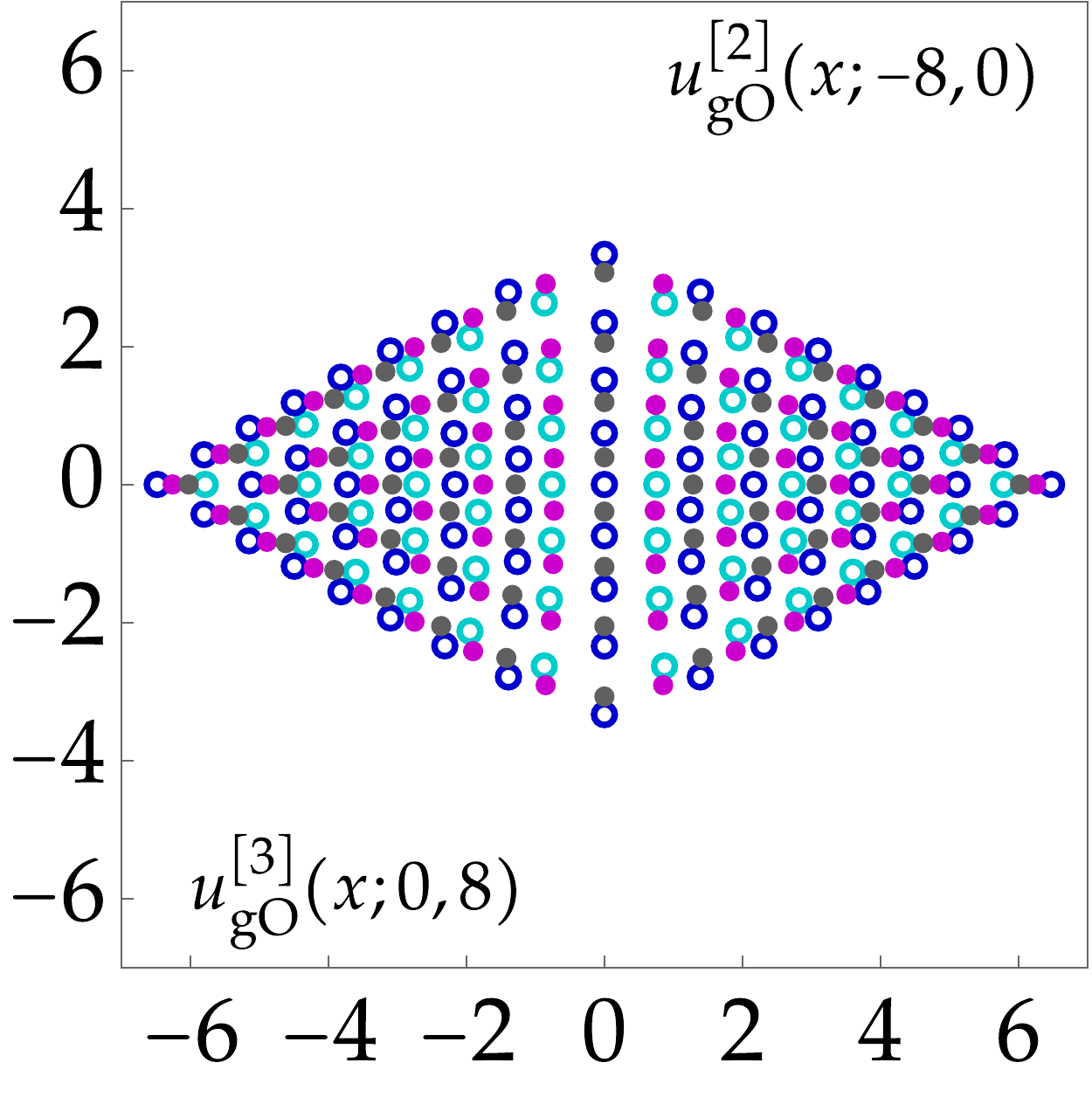}\\
\includegraphics[height=1.2in]{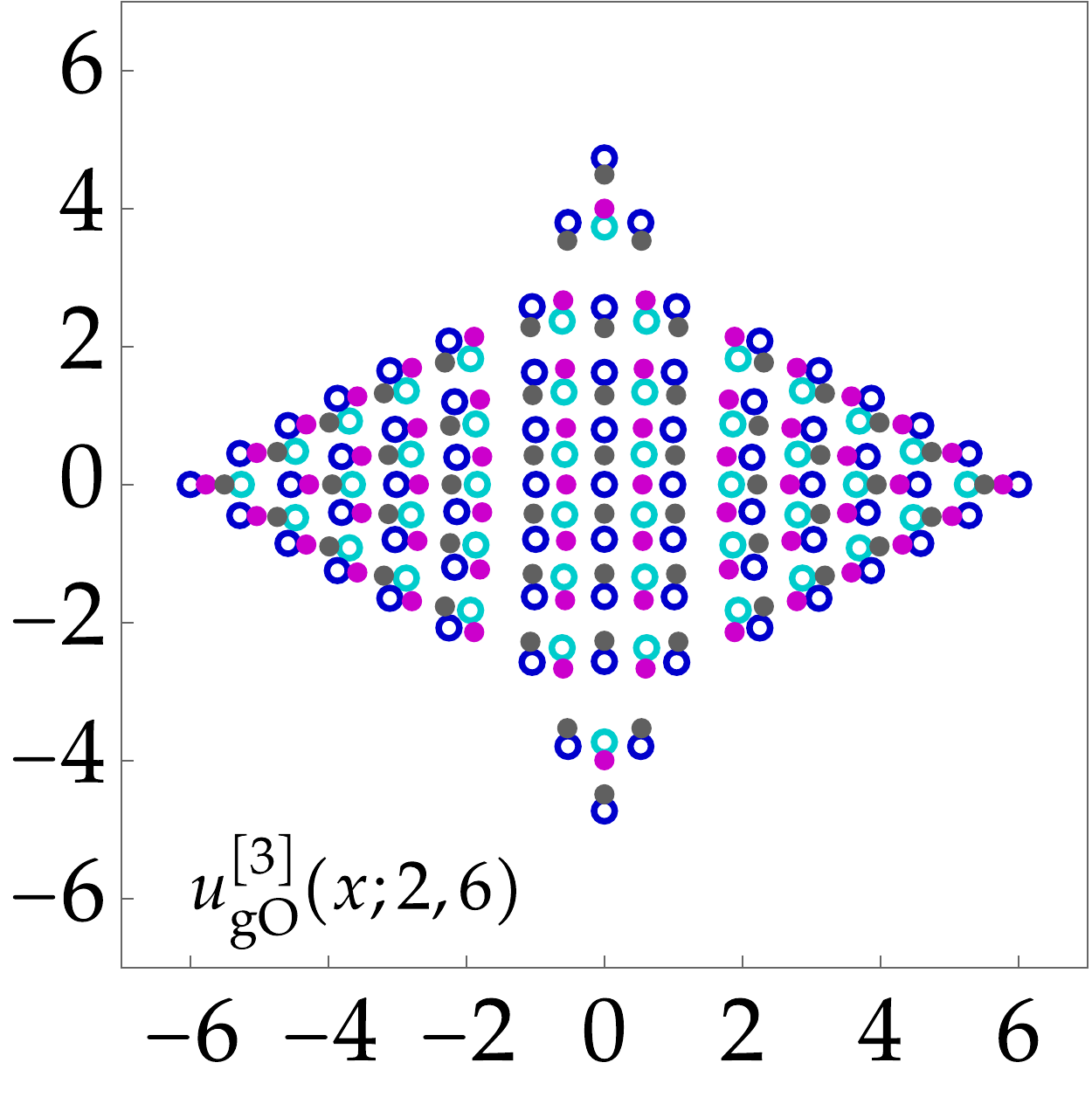}\\
\includegraphics[height=1.2in]{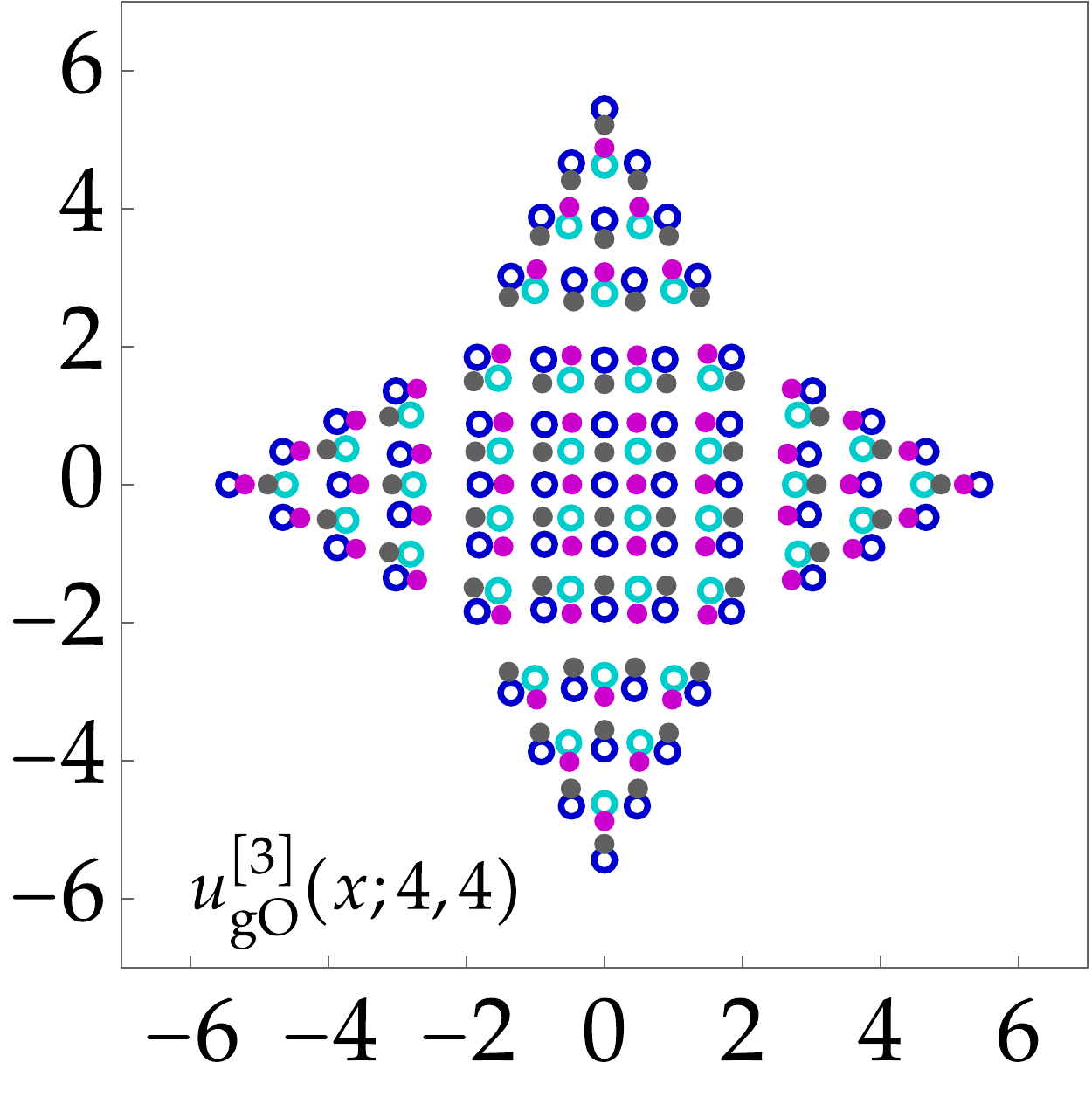}\\
\includegraphics[height=1.2in]{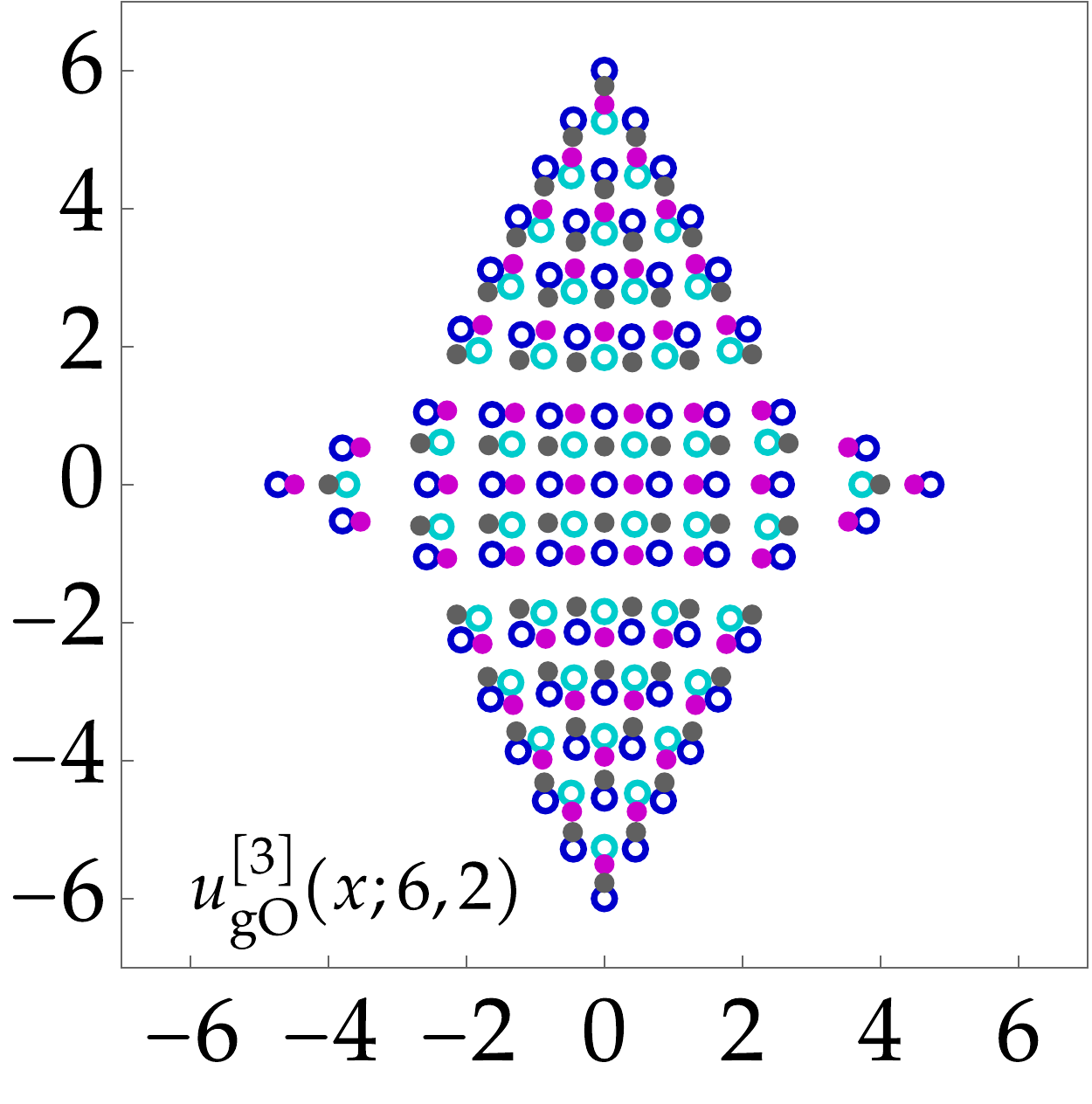}\\
\includegraphics[height=1.2in]{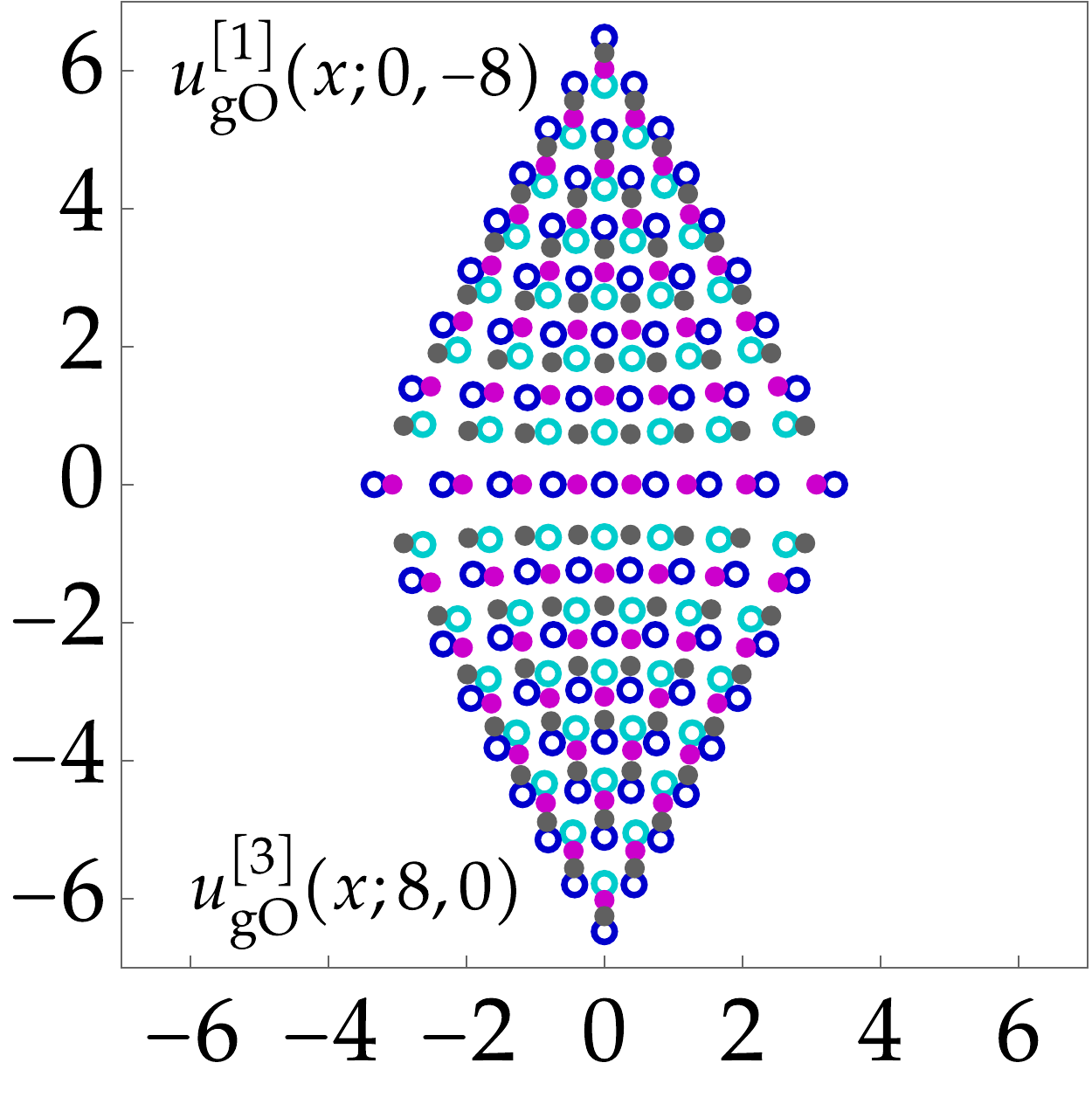}
\end{tabular}
\caption{As in Figure~\ref{fig:theta-map-gH} but for the gO family of rational solutions.}
\label{fig:theta-map-gO}
\end{figure}

\subsubsection{B\"acklund transformations and symmetries}
\label{sec:intro-Baecklund}
Each of the four parameter sets $\Lambda_\mathrm{gH}^{[1]-}$, $\Lambda_\mathrm{gH}^{[2]-}$, $\Lambda_\mathrm{gH}^{[3]+}$, and $\Lambda_\mathrm{gO}$, along with its corresponding rational solutions of \eqref{p4}, can be generated from a ``seed'' triple $(\Theta_0,\Theta_\infty,u(x))$ by iteratively applying certain B\"acklund transformations in order to increment or decrement the parameters $(\Theta_0,\Theta_\infty)$ by a basis (over $\mathbb{Z}$) of lattice vectors.  The basis can be chosen to correspond to integer increments of $m$ and $n$.  Such B\"acklund transformations are \emph{isomonodromic} and they lift to Schlesinger transformations of corresponding simultaneous solutions of the Lax pair for Painlev\'e-IV; see Section~\ref{sec:Schlesinger} for details.  For each type of rational solutions in the gH family, a basic isomonodromic B\"acklund transformation becomes indeterminate (i.e., producing an identically vanishing denominator) for the transformed rational solution if it is applied at a lattice point on the boundary of the parameter sector and would yield transformed parameters one step outside the sector.  For example, at the points $(\Theta_0,\Theta_\infty)=(\tfrac{1}{2}+\tfrac{1}{2}n,\tfrac{1}{2}+\tfrac{1}{2}n)$, $n=0,1,2,\dots$, that make up part of the boundary of $\Lambda_\mathrm{gH}^{[3]+}$, the isomonodromic B\"acklund transformation $(\Theta_0,\Theta_\infty,u(x))\mapsto (\Theta_0+\tfrac{1}{2},\Theta_\infty-\tfrac{1}{2},u_\searrow(x))$ (see \eqref{eq:u-sw}) is indeterminate as it leads to negative values of $m$.  The same transformation is however valid at all other points of $\Lambda_{\mathrm{gH}}^{[3]+}$.  The reason this occurs is that the three sets of monodromy data consisting of Stokes and connection matrices for the three types of gH rational solutions of \eqref{p4} are all different.  This phenomenon does not occur for the gO family, each point of which yields the same monodromy data (but different again from that of all three gH types).

Therefore, if it is desired to generate one type of rational solution of \eqref{p4} in the gH family from  another type of solution in the same family, one must apply a B\"acklund transformation that is \emph{not} isomonodromic.  We will find useful two such transformations, both of which can correspond to large leaps in the $(\Theta_0,\Theta_\infty)$-plane rather than incremental steps, and hence might be thought of as global symmetries of the Painlev\'e-IV equation.  Firstly, there is an elementary symmetry $\mathcal{S}_{\updownarrow}$ of \eqref{p4}
\eq
\mathcal{S}_{\updownarrow}(\Theta_0,\Theta_\infty,u(x))=(\Theta_{0,\updownarrow},\Theta_{\infty,\updownarrow},u_\updownarrow(x)):=(\Theta_0,1-\Theta_\infty,\ii u(-\ii x))
\label{eq:rotation-symmetry}
\endeq
due to Boiti and Pempinelli \cite{BoitiP80}.  The action of $\mathcal{S}_\updownarrow$ in the parameter space is merely a reflection through the horizontal line $\Theta_\infty=\tfrac{1}{2}$, which obviously preserves both lattices $\Lambda_\mathrm{gH}$ and $\Lambda_\mathrm{gO}$.  In particular, note that for either family $\text{F}=\text{gH}$ or $\text{F}=\text{gO}$,
\eq
\begin{split}
(\Theta_0,\Theta_\infty)=(\Theta^{[1]}_{0,\mathrm{F}}(m,n),\Theta^{[1]}_{\infty,\mathrm{F}}(m,n))&\quad\implies\quad
(\Theta_{0,\updownarrow},\Theta_{\infty,\updownarrow})=(\Theta^{[2]}_{0,\mathrm{F}}(n,m),\Theta^{[2]}_{\infty,\mathrm{F}}(n,m))\\
(\Theta_0,\Theta_\infty)=(\Theta^{[3]}_{0,\mathrm{F}}(m,n),\Theta^{[3]}_{\infty,\mathrm{F}}(m,n))&\quad\implies\quad (\Theta_{0,\updownarrow},\Theta_{\infty,\updownarrow})=(\Theta^{[3]}_{0,\mathrm{F}}(n,m),\Theta^{[3]}_{\infty,\mathrm{F}}(n,m)).
\end{split}
\endeq
Therefore, as the rational solution of \eqref{p4} for given parameters is unique, \eqref{eq:rotation-symmetry} implies that
\eq
\begin{split}
u^{[2]}_{\mathrm{F}}(x;m,n)&=\ii u^{[1]}_{\mathrm{F}}(-\ii x;n,m)\\
u^{[3]}_{\mathrm{F}}(x;m,n)&=\ii u^{[3]}_{\mathrm{F}}(-\ii x;n,m).
\end{split}
\label{eq:symmetry-1-2}
\endeq
Since this shows that the rational solutions $u_\mathrm{gH}^{[2]}(x;m,n)$ and $u_\mathrm{gO}^{[2]}(x;m,n)$ are trivially related to $u_\mathrm{gH}^{[1]}(x;n,m)$ and $u_\mathrm{gO}^{[1]}(x;n,m)$ respectively, it is sufficient to study the rational solutions of types $1$ and $3$ in the gH and gO families.

The rational solutions of type $1$ and $3$ are in turn related by a more complicated nonisomonodromic B\"acklund transformation that we denote by  
$\mathcal{S}_\tw$ with action
\eq
\mathcal{S}_\tw(\Theta_0,\Theta_\infty,u(x))=(\Theta_{0,\tw},\Theta_{\infty,\tw},u_\tw(x)):=\left(-\tfrac{1}{2}(\Theta_0+\Theta_\infty),\tfrac{3}{2}\Theta_0-\tfrac{1}{2}\Theta_\infty + 1,\frac{u'(x)}{2u(x)}-\frac{2\Theta_0}{u(x)}-x-\frac{1}{2}u(x)\right).
\label{eq:Baecklund-3-to-1}
\endeq
This is a version of the transformation of Lukashevich \cite{Lukashevich:1967} and Gromak \cite{Gromak:1987} denoted by $\widetilde{W}$ in \cite[Section 2]{Clarkson:2006} and by $\mathscr{T}_1^\pm$ in \cite[\S 32.7(iv)]{DLMF}.  It is interesting to note that the induced action on the $(\Theta_0,\Theta_\infty)$-plane has unit Jacobian; hence area and orientation are preserved.  In fact, setting
\eq
\mathbf{D}(3^{\frac{1}{4}}):=\bpm 3^{\frac{1}{4}} & 0\\ 0 & 3^{-\frac{1}{4}}\epm\quad \text{and}\quad\mathbf{R}(\varphi):= \bpm\cos(\varphi) & -\sin(\varphi)\\\sin(\varphi) & \cos(\varphi)\epm
\endeq
(so $\mathbf{D}(3^\frac{1}{4})$ is the matrix of stretching by $3^\frac{1}{4}$ along the $\Theta_0$ axis and compression by $3^{-\frac{1}{4}}$ along the $\Theta_\infty$ axis, and $\mathbf{R}(\varphi)$ is  the matrix of rigid rotation about the origin by $\varphi$ radians), we have 
\eq
\mathcal{S}_\tw:\left[\bpm\Theta_0\\\Theta_\infty\epm - \bpm \tfrac{1}{6}\\\tfrac{1}{2}\epm\right]\mapsto  \bpm-\tfrac{1}{2}\\\tfrac{1}{2}\epm+\mathbf{D}(3^{\frac{1}{4}})^{-1}\mathbf{R}(\tfrac{2}{3}\pi)\mathbf{D}(3^{\frac{1}{4}})\left[\bpm\Theta_0\\\Theta_\infty\epm-\bpm\tfrac{1}{6}\\\tfrac{1}{2}\epm\right].
\endeq
Since $(\tfrac{1}{6},\tfrac{1}{2})$ is the common vertex of all sectors $W^{[j]\pm}$,
from this form it is easy to see that $\mathcal{S}_\tw$ maps the sector $W^{[3]\pm}$ onto the translation by the lattice vector $(-\tfrac{1}{2},\tfrac{1}{2})$ of the sector $W^{[1]\mp}$.  Moreover, $\mathcal{S}_\tw$ is an isomorphism of $\Lambda_\mathrm{gH}^{[3]+}$ onto $\Lambda_\mathrm{gH}^{[1]-}$, as well as an isomorphism of $\Lambda_\mathrm{gO}$ onto itself.  For either family $\mathrm{F}=\mathrm{gH}$ or $\mathrm{F}=\mathrm{gO}$,
\eq
(\Theta_0,\Theta_\infty)=(\Theta_{0,\mathrm{F}}^{[3]}(m,n),\Theta_{\infty,\mathrm{F}}^{[3]}(m,n))\quad\implies\quad
(\Theta_{0,\tw},\Theta_{\infty,\tw})=(\Theta_{0,\mathrm{F}}^{[1]}(m,n+1),\Theta_{\infty,\mathrm{F}}^{[1]}(m,n+1))
\endeq
so again by uniqueness of the rational solution of \eqref{p4} for given parameters,
\eq
u(x)=u^{[3]}_\mathrm{F}(x;m,n)\quad\implies\quad u_\tw(x)=u^{[1]}_\mathrm{F}(x;m,n+1).
\label{eq:symmetry-1-from-3}
\endeq
Although the rational solutions of types $1$ and $3$ in each family are therefore related explicitly by $\mathcal{S}_\tw$, the explicit expression for $u_\tw(x)$ in terms of $u(x)$ written in \eqref{eq:Baecklund-3-to-1} is not convenient for the study of $u_\tw(x)$ when the parameters are large, even if $u(x)$ is understood with some detail (estimates of derivatives of error terms would be required, for instance).  Therefore we will not use $\mathcal{S}_\tw$ directly; however we will use it indirectly to show that one may extract $u(x)$ and $u_\tw(x)$ from the same Riemann-Hilbert problem by formul\ae\ of comparable complexity, neither of which requires differentiation (see \eqref{eq:u-ucirc} below).  

Another useful but elementary symmetry of \eqref{p4} is the Schwarz symmetry $\mathcal{S}_*$ defined by
\eq
\mathcal{S}_*(\Theta_0,\Theta_\infty,u(x)):=(\Theta_0^*,\Theta_\infty^*,u(x^*)^*).
\endeq
This symmetry combines with iteration of the Boiti-Pempinelli symmetry $\mathcal{S}_\updownarrow$ to yield the following.

\begin{proposition}
Every rational solution $u(x)$ of the Painlev\'e-IV equation \eqref{p4} satisfies
\eq
u(-x)=-u(x),\quad u(x^*)=u(x)^*,\quad\text{and}\quad u(-x^*)=-u(x)^*.
\label{eq:u-reflection-symmetries}
\endeq
In particular, every rational solution $u(x)$ has either a pole or a zero at $x=0$, is real for real $x$ and imaginary for imaginary $x$, and is determined by its values in the closed first quadrant $0\le\arg(x)\le\tfrac{1}{2}\pi$.
\label{prop:FirstQuadrant}
\end{proposition}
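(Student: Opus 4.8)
Each of the three functional equations in \eqref{eq:u-reflection-symmetries} will be obtained by the same device: compose the elementary symmetries $\mathcal{S}_\updownarrow$ and $\mathcal{S}_*$ of \eqref{p4} to produce a transformation that sends $u(x)$ to another rational solution of \eqref{p4} having the \emph{same} Painlev\'e-IV parameters, and then invoke uniqueness of the rational solution for given parameters (\cite{Gromak:1987,Lukashevich:1967,Murata:1985}; here we use that existence of a rational solution forces $(\Theta_0^2,\Theta_\infty)$ to correspond to a point of $\Lambda_\mathrm{gH}\sqcup\Lambda_\mathrm{gO}$). For the first identity I would iterate the Boiti-Pempinelli symmetry \eqref{eq:rotation-symmetry}. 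Writing $v(x):=\ii u(-\ii x)$, one application of $\mathcal{S}_\updownarrow$ yields the triple $(\Theta_0,1-\Theta_\infty,v(x))$, and a second application yields $(\Theta_0,1-(1-\Theta_\infty),\ii v(-\ii x))=(\Theta_0,\Theta_\infty,\ii v(-\ii x))$. Since $\ii v(-\ii x)=\ii\cdot\ii u(-\ii\cdot(-\ii x))=\ii^2u(-x)=-u(-x)$, the map $\mathcal{S}_\updownarrow^2$ sends $(\Theta_0,\Theta_\infty,u(x))$ to $(\Theta_0,\Theta_\infty,-u(-x))$. As $\mathcal{S}_\updownarrow$ obviously carries rational solutions to rational solutions and $\mathcal{S}_\updownarrow^2$ fixes $(\Theta_0,\Theta_\infty)$, the function $-u(-x)$ is a rational solution of \eqref{p4} with the same parameters as $u$, so uniqueness gives $u(x)=-u(-x)$, i.e. $u(-x)=-u(x)$.

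\textbf{The remaining two identities.} For $u(x^*)=u(x)^*$ I would use the Schwarz symmetry $\mathcal{S}_*$, for which $u(x^*)^*$ is again a rational solution of \eqref{p4} but now with parameters $(\Theta_0^*,\Theta_\infty^*)$. The point is that the parameters of any rational solution are real: membership in $\Lambda_\mathrm{gH}$ or $\Lambda_\mathrm{gO}$ forces $\Theta_0\pm\Theta_\infty$ (respectively these quantities shifted by a fixed rational) to be integers, hence $\Theta_0,\Theta_\infty\in\mathbb{R}$, so $(\Theta_0^*,\Theta_\infty^*)=(\Theta_0,\Theta_\infty)$. Uniqueness then gives $u(x^*)^*=u(x)$. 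The third identity is immediate: replacing $x$ by $x^*$ in $u(-x)=-u(x)$ and then using $u(x^*)=u(x)^*$ yields $u(-x^*)=-u(x^*)=-u(x)^*$.

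\textbf{Consequences and the delicate point.} Each stated qualitative property follows at once. Putting $x=0$ in $u(-x)=-u(x)$ gives $u(0)=-u(0)$, so if $u$ is analytic at $0$ then $u(0)=0$ and otherwise $x=0$ is a pole (simple, residue $\pm1$, as recorded earlier); reality on $\mathbb{R}$ follows from $u(x^*)=u(x)^*$ when $x^*=x$; for $x\in\ii\mathbb{R}$ one has $x^*=-x$, so combining $u(-x)=u(x^*)^*=u(x)^*$ with $u(-x)=-u(x)$ gives $u(x)^*=-u(x)$, i.e. $u(x)$ is purely imaginary; and since $\{x\mapsto x,\,-x,\,x^*,\,-x^*\}$ is a Klein four-group acting on $\mathbb{C}$ for which the closed first quadrant is a fundamental domain, the three functional equations reconstruct $u$ from its restriction there. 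The only point requiring care is the bookkeeping that makes the uniqueness appeal legitimate, namely checking that the transformed functions are rational solutions with \emph{exactly} the original Painlev\'e-IV parameters; this hinges on the explicit parameter actions already recorded for $\mathcal{S}_\updownarrow$ and $\mathcal{S}_*$, on the reality of the parameters of any rational solution, and on the convention (footnoted earlier) that discards the trivial case $\Theta_0=0$, where $u\equiv0$ satisfies all three identities automatically. I do not anticipate any substantive obstacle beyond this.
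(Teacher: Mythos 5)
Your proposal is correct and follows essentially the same route as the paper's proof: oddness from the fact that $\mathcal{S}_\updownarrow^2$ (an involution on parameter space) preserves rationality and fixes $(\Theta_0,\Theta_\infty)$, Schwarz symmetry from $\mathcal{S}_*$ together with the reality of the parameters, the third identity by composition, and uniqueness of the rational solution invoked throughout. The only difference is that you spell out the computation $\mathcal{S}_\updownarrow^2:(\Theta_0,\Theta_\infty,u(x))\mapsto(\Theta_0,\Theta_\infty,-u(-x))$ explicitly, which the paper leaves implicit.
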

\begin{proof}
Noting that $\mathcal{S}_\updownarrow$ is an involution on the parameter space that preserves rationality of $u(x)$, and  using uniqueness of the rational solution for each $(\Theta_0,\Theta_\infty)\in\Lambda_\mathrm{gH}\sqcup\Lambda_\mathrm{gO}$ we deduce that every rational solution $u(x)$ of \eqref{p4} is odd:  $u(-x)=-u(x)$.  On the other hand, $\mathcal{S}_*$ fixes the (real) parameters $(\Theta_0,\Theta_\infty)$ of any rational solution and preserves rationality so by uniqueness every rational solution is Schwarz-symmetric:  $u(x^*)=u(x)^*$; the relation $u(-x^*)=-u(x)^*$ then follows from odd symmetry.  
\end{proof}
It follows that only odd powers of $x$ appear in the power series expansion of any rational solution $u(x)$ about $x=0$.  This allows one to determine sufficiently many terms from the four possible leading terms $\pm x^{-1}$ and $\pm 4\Theta_0 x$ for given $(\Theta_0,\Theta_\infty)$ admitting a rational solution to apply the four elementary isomonodromic B\"acklund transformations $u(x)\mapsto u_\nearrow(x)$, $u(x)\mapsto u_\searrow(x)$, $u(x)\mapsto u_\nwarrow(x)$, and $u(x)\mapsto u_\swarrow(x)$ (see Appendix~\ref{sec:sub-Baecklund}) and deduce the leading term of the expansion at $x=0$ of the image function, also a rational solution of \eqref{p4} for a nearest-neighbor point in the same parameter lattice (depending on the family).  Therefore, starting from any one point in $\Lambda_\mathrm{gH}^{[1]-}$, $\Lambda_\mathrm{gH}^{[2]-}$, $\Lambda_\mathrm{gH}^{[3]+}$, or $\Lambda_\mathrm{gO}$, one can prove the following by induction.
\begin{proposition}
In the limit $x\to 0$, the leading terms of $u_\mathrm{F}^{[j]}(x;m,n)$, $\mathrm{F}=\mathrm{gH}$ or $\mathrm{F}=\mathrm{gO}$ and $j=1,2,3$, depend only on the type $j$ and the parity of the indices $(m,n)$ as follows:
\label{prop:BehaviorAtOrigin}
\end{proposition}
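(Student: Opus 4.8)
The plan is to prove the proposition by induction on the indices $(m,n)$, propagating the local structure of $u$ at $x=0$ through the four elementary isomonodromic B\"acklund transformations $u\mapsto u_\nearrow$, $u\mapsto u_\searrow$, $u\mapsto u_\nwarrow$, $u\mapsto u_\swarrow$ of Appendix~\ref{sec:sub-Baecklund}. First I would record the constraints on the expansion at $x=0$. By Proposition~\ref{prop:FirstQuadrant} every rational solution of \eqref{p4} is odd, so its Laurent or Taylor expansion about $x=0$ contains only odd powers of $x$; combined with the structural facts (recalled just after \eqref{eq:gO-lattice}) that poles are simple with residue $\pm1$ and zeros are simple with derivative $\pm4\Theta_0$, the leading term at $x=0$ is \emph{exactly} one of the four monomials $x^{-1}$, $-x^{-1}$, $4\Theta_0 x$, $-4\Theta_0 x$. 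Denote by $\epsilon(u)$ a label recording which of these four it is. Substituting the odd ansatz into \eqref{p4} then determines, for each of the four possibilities, as many subleading coefficients as will be needed, all fixed by $\Theta_0$, $\Theta_\infty$ and the leading coefficient.

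Next, for each family $\mathrm{F}\in\{\mathrm{gH},\mathrm{gO}\}$ and each relevant type $j$ I would fix one seed point. For gH one may take the ``corner'' solutions $u_\mathrm{gH}^{[1]}(x;0,1)$, $u_\mathrm{gH}^{[2]}(x;1,0)$, $u_\mathrm{gH}^{[3]}(x;0,0)$ read off from Table~\ref{tab:gH} (for instance $u_\mathrm{gH}^{[3]}(x;0,0)=-2x$, whose leading term at $0$ is $-4\Theta_0 x$ with $\Theta_0=\tfrac12$), and likewise for gO one may take seeds from Table~\ref{tab:gO} (for instance $u_\mathrm{gO}^{[3]}(x;0,0)=-\tfrac{2}{3}x=-4\Theta_0 x$ with $\Theta_0=\tfrac16$). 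For each seed the label $\epsilon$ is computed directly. Note that a single seed per family actually suffices, since the type labels for gO are related by the index symmetries of the $Q_{m,n}$ and those for gH by the symmetry \eqref{eq:symmetry-1-2}; but it is cleaner to seed each type separately.

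The main step is the inductive propagation. Each of $u\mapsto u_\nearrow,u_\searrow,u_\nwarrow,u_\swarrow$ is a fixed rational expression in $u$, $u'$, $x$, $\Theta_0$, $\Theta_\infty$; substituting the finitely many needed terms of the expansion of $u$ at $x=0$ produces the corresponding expansion of the image, hence its leading term. I would check, case by case over the four possible input labels and the four transformations, that (i) the predicted leading monomial of the image is genuinely nonzero, so that no more terms of $u$ are required than have been computed, i.e.\ no accidental cancellation occurs, and (ii) the output label $\epsilon$ depends only on the input label and on which transformation is applied. Since each transformation shifts $(m,n)$ by one of $(\pm1,0),(0,\pm1)$ and, away from the sector boundaries discussed in Section~\ref{sec:intro-Baecklund}, is non-indeterminate within the relevant parameter sector, the induction then runs: for each family and type, starting from the seed, one verifies that the claimed leading-term label at $(m,n)$ --- a prescribed function of $j$ and of $(m\bmod 2, n\bmod 2)$ --- is transported by the appropriate transformation to the claimed label at $(m\pm1,n)$ or $(m,n\pm1)$. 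Uniqueness of the rational solution for given parameters makes the outcome path-independent, so it suffices to treat the finitely many parity classes adjacent to each seed; assembling these cases yields the asserted table.

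The main obstacle is the bookkeeping in the third step: one must verify, for every pairing of an input label with a transformation, that the predicted leading monomial of the image does not vanish, which forces one to carry along one or two subleading coefficients of $u$ (themselves pinned down by \eqref{p4} and oddness). A secondary, organizational point is that the three gH types carry distinct monodromy data, so the indeterminacy of certain isomonodromic B\"acklund transformations at the sector boundaries of $\Lambda_\mathrm{gH}^{[1]-}$, $\Lambda_\mathrm{gH}^{[2]-}$, $\Lambda_\mathrm{gH}^{[3]+}$ must be respected and the induction set up to remain inside each sector; no such difficulty arises for gO, since every point of $\Lambda_\mathrm{gO}$ shares the same monodromy data.
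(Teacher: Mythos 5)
Your proposal is correct and matches the paper's own argument: the paper proves this proposition exactly by noting that oddness plus the local structure at $x=0$ restricts the leading term to one of $\pm x^{-1}$, $\pm 4\Theta_0 x$, determining sufficiently many subleading coefficients from \eqref{p4}, and then inducting over each parameter lattice from a seed by tracking how the four elementary isomonodromic B\"acklund transformations of Appendix~\ref{sec:sub-Baecklund} transport the leading term. The bookkeeping and sector-boundary caveats you flag are precisely the content of the paper's (sketched) verification.
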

\eq
\begin{tabular}{|c||c|c|}
\hline
\shortstrut$u^{[1]}_\mathrm{F}(x;m,n)$  & $m$ even & $m$ odd \\
\hline\hline
$n$ even & \shortstrut $-4\Theta_{0,\mathrm{F}}^{[1]}(m,n)x+\bo(x^3)$ & $4\Theta_{0,\mathrm{F}}^{[1]}(m,n)x+\bo(x^3)$\\
\hline
$n$ odd  & \shortstrut $x^{-1}+\bo(x)$ & $-x^{-1}+\bo(x)$ \\
\hline
\end{tabular}
\endeq
\eq
\begin{tabular}{|c||c|c|}
\hline
\shortstrut $u^{[2]}_\mathrm{F}(x;m,n)$  & $m$ even & $m$ odd \\
\hline\hline
$n$ even & \shortstrut $-4\Theta_{0,\mathrm{F}}^{[2]}(m,n)x+\bo(x^3)$ & $-x^{-1}+\bo(x)$\\
\hline
$n$ odd  & \shortstrut $4\Theta_{0,\mathrm{F}}^{[2]}(m,n)x+\bo(x^3)$ & $x^{-1}+\bo(x)$ \\
\hline
\end{tabular}
\endeq
\eq
\begin{tabular}{|c||c|c|}
\hline
\shortstrut $u^{[3]}_\mathrm{F}(x;m,n)$  & $m$ even & $m$ odd \\
\hline
\hline
$n$ even & \shortstrut $-4\Theta_{0,\mathrm{F}}^{[3]}(m,n)x+\bo(x^3)$ & $x^{-1}+\bo(x)$\\
\hline
$n$ odd  & \shortstrut $-x^{-1}+\bo(x)$ & $4\Theta_{0,\mathrm{F}}^{[1]}(m,n)x+\bo(x^3)$ \\
\hline
\end{tabular}
\endeq

\subsection{Scaling formalism}
\label{sec:scaling}
We consider the parameters $\Theta_0$ and $\Theta_\infty$ to be large, of proportional magnitude.  Therefore, we take $T>0$ to be a large parameter, and we assume that for $s=\pm 1$ and $\kappa\in\mathbb{R}$ fixed,
\eq
\Theta_0=sT\quad\text{and}\quad\Theta_\infty=-\kappa T,\quad T>0,\quad \kappa\in\mathbb{R}.
\label{eq:Thetas-scaling}
\endeq
With this scaling, we formally analyze the Painlev\'e-IV equation \eqref{p4} in the limit $T\to +\infty$.
To obtain a dominant balance, we will also scale $u$ and $x$ as follows:
\eq
u=T^{\frac{1}{2}}U\quad\text{and, for fixed $y_0\in\mathbb{C}$,}\quad x=T^{\frac{1}{2}}y_0+T^{-\frac{1}{2}}\zeta
\label{eq:BasicScalings}
\endeq
where we view $U$ and $\zeta$ as the new dependent and independent variables respectively.  Then it is easy to see that the Painlev\'e-IV equation takes the form
\eq
U_{\zeta\zeta}=\frac{(U_\zeta)^2}{2U}+\frac{3}{2}U^3 + 4y_0U^2 + (2y_0^2+4\kappa)U-\frac{8}{U} + \bo(T^{-1})
\endeq
Letting $T\to\infty$, we formally obtain an autonomous equation governing a formal approximation $\dot{U}(\zeta)$ of $U(\zeta)$, in which $y_0\in\mathbb{C}$ and $\kappa\in\mathbb{R}\setminus\{-1,1\}$ appear as parameters:
\eq
\dot{U}_{\zeta\zeta}=\frac{(\dot{U}_\zeta)^2}{2\dot{U}}+\frac{3}{2}\dot{U}^3+4y_0\dot{U}^2+(2y_0^2+4\kappa)\dot{U}-\frac{8}{\dot{U}}.
\label{eq:Approximating-ODE-Second-Order}
\endeq

\subsubsection{Equilibrium solutions of the autonomous approximating equation and their branch points}
\label{sec:equilibrium}
The approximating equation \eqref{eq:Approximating-ODE-Second-Order} has equilibrium solutions $\dot{U}(\zeta)=U_0$ (independent of $\zeta$) that are roots of the quartic equation
\eq
\frac{3}{2}U_0^4+4y_0U_0^3+(2y_0^2+4\kappa)U_0^2-8=0.
\label{eq:equilibrium}
\endeq
This relation is invariant under the cyclic group of order $4$ generated by an analogue of the Boiti-Pempenelli symmetry $\mathcal{S}_\updownarrow$ introduced in \eqref{eq:rotation-symmetry} with action
$\mathcal{S}_\updownarrow(\kappa,y_0,U_0)\mapsto (-\kappa,\ii y_0,\ii U_0)$.
When $y_0$ is large there are four distinct roots that we denote by $U_0=U_{0,\mathrm{gH}}^{[j]}(y_0;\kappa)$, $j=1,2,3$, and $U_0=U_{0,\mathrm{gO}}(y_0;\kappa)$.  These are all analytic functions of $y_0$ large with asymptotic behavior
\eq
\begin{split}
U_{0,\mathrm{gH}}^{[1]}(y_0;\kappa)&=2y_0^{-1}(1+o(1)),\\
U_{0,\mathrm{gH}}^{[2]}(y_0;\kappa)&=-2y_0^{-1}(1+o(1)),\\ 
U_{0,\mathrm{gH}}^{[3]}(y_0;\kappa)&=-2y_0(1+o(1)),\quad\text{and}\\
U_{0,\mathrm{gO}}(y_0;\kappa)&=-\tfrac{2}{3}y_0(1+o(1)),\quad y_0\to\infty.
\end{split}
\label{eq:equilibria-near-infinity}
\endeq
Since these functions are all distinct for large $y_0$, $\mathcal{S}_\updownarrow$ acts on them by
\eq
\begin{split}
\mathcal{S}_\updownarrow U_{0,\mathrm{gH}}^{[1]}(y_0;\kappa)&:=\ii U_{0,\mathrm{gH}}^{[1]}(-\ii y_0;-\kappa)=U_{0,\mathrm{gH}}^{[2]}(y_0;\kappa)\\
\mathcal{S}_\updownarrow U_{0,\mathrm{gH}}^{[2]}(y_0;\kappa)&:=\ii U_{0,\mathrm{gH}}^{[2]}(-\ii y_0;-\kappa)=U_{0,\mathrm{gH}}^{[1]}(y_0;\kappa)\\
\mathcal{S}_\updownarrow U_{0,\mathrm{gH}}^{[3]}(y_0;\kappa)&:=\ii U_{0,\mathrm{gH}}^{[3]}(-\ii y_0;-\kappa)=U_{0,\mathrm{gH}}^{[3]}(y_0;\kappa)\\
\mathcal{S}_\updownarrow U_{0,\mathrm{gO}}(y_0;\kappa)&:=\ii U_{0,\mathrm{gO}}(-\ii y_0;-\kappa)=U_{0,\mathrm{gO}}(y_0;\kappa).
\end{split}
\label{eq:equilibrium-symmetry}
\endeq
These relations also imply that, for fixed $\kappa\in\mathbb{R}\setminus\{-1,1\}$, all four equilibrium branches are odd functions of $y_0$.

If we select any of the equilibria and try to analytically continue the solution to finite values of $y_0$ we will only encounter any obstruction at branch points of $U_0$; these are precisely the values of $y_0$ for which there are double roots $U_0$ of \eqref{eq:equilibrium}.  So together with \eqref{eq:equilibrium} we consider its derivative with respect to $U_0$ and eliminate $U_0$ between the two equations to obtain a polynomial discriminant in $y_0$ whose roots are the desired branch points.  
By standard calculations, one finds that the discriminant is
\eq
B(y_0;\kappa):=y_0^8-24(\kappa^2+3)y_0^4-64\kappa(\kappa+3)(\kappa-3)y_0^2-48(\kappa^2+3)^2=0\quad\text{for branch points $y_0$ of equilibria.}
\label{eq:branch-points}
\endeq

Given $\kappa\in\mathbb{R}\setminus\{-1,1\}$, this equation has eight roots in the complex $y_0$-plane (the discriminant, i.e., the polynomial resultant of $B(\cdot;\kappa)$ and $B'(\cdot;\kappa)$, whose vanishing is equivalent to the existence of double roots of $B(\cdot;\kappa)$, is proportional to $(\kappa^2-1)^8(\kappa^2+3)^2$).  The branch point set is obviously symmetric with respect to
reflection through the real and imaginary axes.  Observe that $B(y_0;1)=(y_0^2-4)^3(y_0^2+12)$, which has a conjugate pair of simple purely imaginary roots at $y_0=\pm\ii\sqrt{12}$ and two triple real roots at $y_0=\pm 2$.  If we consider $\kappa=1+\epsilon$ for small positive $\epsilon$, then by symmetry there will be again a pair of purely imaginary simple roots of $B(y_0;1+\epsilon)$, and by appropriate rescaling of $y_0\mp 2$ with $\epsilon$ one finds that each triple root splits a triad of three nearby simple roots of the form $y_0=\pm 2 (1+\ee^{\frac{2\ii\pi k}{3}}\epsilon^{\frac{2}{3}}(864)^{\frac{1}{3}}+\bo(\epsilon))$ as $\epsilon\downarrow 0$, where $k=0,\pm 1$.  In particular, this shows that for $\kappa$ just greater than $1$, $B(y_0;\kappa)$ has eight simple roots comprising two opposite purely real and purely imaginary pairs in addition to a complex quartet of roots symmetric with respect to reflection through the real and imaginary axes.  Since non-simple roots of $B(\cdot;\kappa)$ can only occur for real $\kappa=\pm 1$, there can be no collisions of roots of $B(\cdot;\kappa)$ as $\kappa$ increases from $1$, and together with the reflection symmetry of the roots in the real and imaginary axes this implies that for all $\kappa>1$ the roots of $B(\cdot;\kappa)$ are all simple, with opposite real and imaginary pairs in addition to a symmetric complex quartet of roots, just as for $\kappa=1+\epsilon$ with $\epsilon>0$ small.  If instead we consider $-1< \kappa<1$, a very similar argument goes through; now one should replace $\epsilon$ with $-\epsilon<0$ small and negative to perturb from $\kappa=1$ in the negative direction, and the perturbation analysis produces an extra factor of $\ee^{\frac{2\ii\pi}{3}}$ on the subleading term $y_0\approx \pm 2$ which of course just means re-indexing $k$.  So we again have the same  triads of nearby roots, and since there can be no non-simple roots of $B(\cdot;\kappa)$ for $-1<\kappa<1$ the same picture persists throughout this interval as well.  
Finally, we observe that $B(y_0;\kappa)=B(\ii y_0;-\kappa)$, so the branch point configuration for $\kappa<-1$ follows immediately from that for $-\kappa>1$ by rotation in the complex $y_0$-plane by $\tfrac{1}{2}\pi$.

It follows in particular that for all $\kappa\in\mathbb{R}\setminus\{-1,1\}$, in each of the four open half-planes $\pm \mathrm{Re}(y_0)>0$, $\pm\mathrm{Im}(y_0)>0$, there is a triad of simple roots of $B(\cdot;\kappa)$ symmetric with respect to reflection through the real or imaginary axis bisecting the half-plane in question, and further characterized by the following additional remarkable property.
\begin{proposition}
Let $\kappa\in\mathbb{R}\setminus\{-1,1\}$.  The triad of roots $y_0$ of $B(y_0;\kappa)=0$ in each half-plane ($\pm\mathrm{Re}(y_0)>0$ or $\pm\mathrm{Im}(y_0)>0$) form the vertices of an equilateral triangle.
\label{prop:triangles}
\end{proposition}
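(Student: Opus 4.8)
The plan is to reduce the statement to a polynomial identity among the coefficients of the octic $B(y_0;\kappa)$ in \eqref{eq:branch-points}, using the classical criterion that three points $z_1,z_2,z_3\in\mathbb{C}$ are the vertices of an equilateral triangle if and only if $z_1^2+z_2^2+z_3^2=z_1z_2+z_2z_3+z_3z_1$. Since $B(y_0;\kappa)$ is even in $y_0$ with real coefficients and satisfies $B(y_0;\kappa)=B(\ii y_0;-\kappa)$, the reflections $y_0\mapsto-y_0$, $y_0\mapsto\overline{y_0}$, and $y_0\mapsto\ii y_0$ (the last also sending $\kappa$ to $-\kappa$) permute the root set and carry equilateral triads to equilateral triads; so it suffices to treat the triad in the open right half-plane $\mathrm{Re}(y_0)>0$ for every $\kappa\in\mathbb{R}\setminus\{-1,1\}$. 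Passing to $w=y_0^2$, write $B(y_0;\kappa)=P(y_0^2;\kappa)$ with $P(w;\kappa):=w^4-24(\kappa^2+3)w^2-64\kappa(\kappa^2-9)w-48(\kappa^2+3)^2$. The root configuration of $B$ established just above (a purely real pair, a purely imaginary pair, and a complex quartet, all simple) means $P$ has one positive real root $w_+$, one negative real root $w_-$, and one complex-conjugate pair $w_0,\overline{w_0}$ with $\mathrm{Im}(w_0)>0$. The three roots of $B$ in the right half-plane are then $a:=\sqrt{w_+}>0$, $b:=\sqrt{w_0}$ (principal branch, in the open first quadrant), and $\overline{b}=\sqrt{\overline{w_0}}$.

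By the criterion, $\{a,b,\overline b\}$ is equilateral exactly when $a^2+b^2+\overline b^2=ab+a\overline b+b\overline b$. Its left side equals $w_++w_0+\overline{w_0}=-w_-$ since the four roots of $P$ sum to zero, so the claim becomes $a(b+\overline b)+b\overline b=-w_-$. Let $W:=w_0+\overline{w_0}$ and $N:=w_0\overline{w_0}>0$ be the real symmetric functions of the conjugate pair. One checks $b\overline b=\sqrt N$ and $(b+\overline b)^2=W+2\sqrt N$ with $b+\overline b>0$, while Vieta's formulas for $P$ give $W=-(w_++w_-)$, $N=(w_++w_-)^2-w_+w_--24(\kappa^2+3)$, and $w_+w_-N=-48(\kappa^2+3)^2$.

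The key step is to prove $w_+\bigl(W+2\sqrt N\bigr)=(w_-+\sqrt N)^2$. Moving the non-radical terms across and substituting $W=-(w_++w_-)$ together with the expression for $N$, this equation becomes $2\sqrt N\,(w_+-w_-)=2N+24(\kappa^2+3)$, in which both sides are manifestly positive; squaring (an equivalence) and again substituting for $N$ collapses it to $12\,w_+w_-N+\bigl(24(\kappa^2+3)\bigr)^2=0$, which is precisely the product formula $w_+w_-N=-48(\kappa^2+3)^2$. Hence $a(b+\overline b)=\sqrt{w_+(W+2\sqrt N)}=|\,w_-+\sqrt N\,|$, and since $a(b+\overline b)>0$, if $w_-+\sqrt N<0$ then $a(b+\overline b)=-w_--\sqrt N=-w_--b\overline b$, which is exactly the identity $a(b+\overline b)+b\overline b=-w_-$ required above.

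The remaining, and main, difficulty is the sign: one must show $w_-+\sqrt N<0$, equivalently the strict inequality $w_0\overline{w_0}<w_-^2$; this does not follow from Vieta's relations alone, and indeed the computation above shows that if $|w_0|>|w_-|$ the triad fails to be equilateral. I would settle it by continuity. Because $a(b+\overline b)>0$, the real quantity $w_-+\sqrt N$ never vanishes, and it depends continuously on $\kappa$ on each of the three intervals $(-\infty,-1)$, $(-1,1)$, $(1,\infty)$, since on each of them the roots of $B(\cdot;\kappa)$ are simple with no collisions (as shown just above); hence its sign is constant on each interval. Evaluating at one convenient value per interval then finishes the proof: as $\kappa\downarrow1$, $B(y_0;1)=(y_0^2-4)^3(y_0^2+12)$ gives $w_-\to-12$ and $|w_0|\to4$; at $\kappa=0$, $P$ is biquadratic with $w_-=-\sqrt{36+24\sqrt3}$ and $|w_0|=\sqrt{24\sqrt3-36}$; and the interval $(-\infty,-1)$ is reduced to $(1,\infty)$ via $B(y_0;\kappa)=B(\ii y_0;-\kappa)$. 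In each case $w_-+\sqrt N<0$, so the right half-plane triad is equilateral, and the result for the other three half-planes follows from it by the symmetries noted at the start.
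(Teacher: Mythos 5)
Your reduction of the equilateral condition to the criterion $z_1^2+z_2^2+z_3^2=z_1z_2+z_2z_3+z_3z_1$, combined with Vieta's formulas for the quartic $P(w;\kappa)$ in $w=y_0^2$, is a genuinely different route from the paper (which instead matches the coefficients of $B(y_0;\kappa)$ against an ansatz $((y_0-\alpha)^3-\beta^3)((y_0+\alpha)^3+\beta^3)(y_0^2+\gamma^2)$), and the central computation is correct: the identity $w_+(W+2\sqrt N)=(w_-+\sqrt N)^2$ does collapse, after two legitimate squarings, to the product relation $w_+w_-N=-48(\kappa^2+3)^2$, and your non-vanishing argument for $w_-+\sqrt N$ via $|w_-+\sqrt N|=a(b+\overline b)>0$ is sound.

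The gap is in the last step, and it is a real one. The symmetry $B(y_0;\kappa)=B(\ii y_0;-\kappa)$ acts on $w=y_0^2$ as $P(w;\kappa)=P(-w;-\kappa)$, so it exchanges $w_+(\kappa)\leftrightarrow -w_-(-\kappa)$ while fixing $N$. Consequently your sign condition $\sqrt N<-w_-$ for $\kappa\in(-\infty,-1)$ translates into $\sqrt N<w_+$ for $-\kappa\in(1,\infty)$, which is a \emph{different} inequality from the one you verified on $(1,\infty)$, and whose limiting value as the parameter decreases to $1$ is the degenerate equality $4=4$ (from $P(w;1)=(w-4)^3(w+12)$, where $w_+\to 4$ and $N\to 16$). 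Equivalently, at the level of triads the rotation $y_0\mapsto\ii y_0$ carries the right-half-plane triad for $\kappa<-1$ onto the \emph{upper}-half-plane triad for $-\kappa>1$, not the right one; and your closing sentence deducing the upper and lower triads ``by the symmetries'' suffers from the same conflation, since the only symmetry relating an upper triad to a right triad also flips the sign of $\kappa$ — so the upper triad for $\kappa>1$ requires the right triad for $\kappa<-1$, which is exactly the case left open. (Indeed, running your criterion on the upper triad $\{\ii\sqrt{-w_-},\,b,\,-\overline b\}$ yields the same squared identity but with the sign condition $\sqrt N<w_+$ in place of $\sqrt N<-w_-$; the two conditions are genuinely distinct, which is why the paper needs the extra relation $\gamma^2=3\alpha^2$ to pass from the right/left triangles to the upper/lower ones.) The repair is short: verify $w_-+\sqrt N<0$ directly at one interior point of $(-\infty,-1)$, e.g.\ $\kappa=-3$, where the linear term of $P$ vanishes, $w^2=144\pm 96\sqrt 3$, so $-w_-=\sqrt{144+96\sqrt 3}\approx 17.6$ while $\sqrt N=\sqrt{96\sqrt 3-144}\approx 4.7$. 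With the right-half-plane result then in hand for \emph{all} $\kappa\in\mathbb{R}\setminus\{-1,1\}$, the rotation symmetry does legitimately deliver the upper and lower triads, and the proof closes.
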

We give the proof in Appendix~\ref{app:Equilateral}.

\subsubsection{Nonequilibrium solutions of the autonomous approximating equation}
\label{sec:nonequilibrium}
More generally we may consider nonequilibrium solutions of the model differential equation 
\eqref{eq:Approximating-ODE-Second-Order}.
Using the integrating factor $\dot{U}_\zeta/\dot{U}$, \eqref{eq:Approximating-ODE-Second-Order} implies 
\eq
\begin{split}
0&=
\frac{\dot{U}_\zeta\dot{U}_{\zeta\zeta}}{\dot{U}}-\frac{(\dot{U}_\zeta)^3}{2\dot{U}^2}-\frac{3}{2}\dot{U}^2\dot{U}_\zeta-4y_0\dot{U}\dot{U}_\zeta - (2y_0^2+4\kappa)\dot{U}_\zeta+\frac{8\dot{U}_\zeta}{\dot{U}^2}\\
&=
\frac{\dd}{\dd\zeta}\left[\frac{(\dot{U}_\zeta)^2}{2\dot{U}}-\frac{1}{2}\dot{U}^3-2y_0\dot{U}^2-(2y_0^2+4\kappa)\dot{U}-\frac{8}{\dot{U}}\right].
\end{split}
\endeq
Therefore, if $E$ denotes an integration constant
then $\dot{U}=f(\zeta)$ is a solution of the first-order second-degree equation
\eq
\left(\frac{\dd f}{\dd\zeta}\right)^2 = P(f),\quad\text{where}\quad P(f)=P(f;y_0,\kappa,E):=f^4+4y_0f^3 +4(y_0^2+2\kappa)f^2 + 2Ef + 16.
\label{eq:elliptic-ODE}
\endeq
Since $P(f)$ is a quartic polynomial, the nonequilibrium solutions $\dot{U}=f(\zeta)$ are clearly elliptic functions with modulus depending on $y_0$, $\kappa$, and $E$, having two fundamental periods 
\eq
Z_\mathfrak{a}:=\oint_\mathfrak{a}\frac{\dd f}{\sqrt{P(f)}}\quad\text{and}\quad
Z_\mathfrak{b}:=\oint_\mathfrak{b}\frac{\dd f}{\sqrt{P(f)}}
\label{eq:elliptic-periods}
\endeq
where $\mathfrak{a}$ and $\mathfrak{b}$ are independent cycles on the genus-$1$ Riemann surface of the equation $w^2=P(z)$, forming a basis for its homology group.  In other words, $f(\zeta+Z_\mathfrak{a})=f(\zeta+Z_\mathfrak{b})=f(\zeta)$, and $Z_\mathfrak{a}$ and $Z_\mathfrak{b}$ are linearly independent over the real numbers.  
It is easy to see that every nonconstant solution \eqref{eq:elliptic-ODE} is an elliptic function with only simple zeros (with derivative $\pm 4$) and at worst simple poles (with residue $\pm 1$).  Moreover, every solution has one zero with each sign of derivative and one pole of each residue within each period parallelogram.  Since \eqref{eq:elliptic-ODE} is a first-order second-degree autonomous equation, if $f(\zeta)$ denotes the unique solution of the differential equation \eqref{eq:elliptic-ODE} satisfying the initial conditions $f(0)=0$ and $f'(0)=4$, then every non-constant solution of \eqref{eq:elliptic-ODE} can be written in the form $f(\zeta-\zeta_0)$ for a uniquely determined constant phase shift $\zeta_0\in\mathbb{C}$ that one may specify modulo integer linear combinations of the periods $Z_\mathfrak{a}$ and $Z_\mathfrak{b}$.

\begin{remark}
The quartic polynomial $P(f)=P(f;y_0,\kappa,E)$ has been motivated here by formal asymptotic analysis of the Painlev\'e-IV equation and hence $f$ plays the role of the dependent variable.  However as will be seen in Section~\ref{sec:SpectralCurve} it also defines a \emph{spectral curve} in which $f$ plays instead the role of a rescaled auxiliary spectral parameter $z$ from the Lax pair representation of the Painlev\'e-IV equation; see \eqref{eq:spectral-curve}.  The same polynomial again appears in the anharmonic quantum oscillator theory of Masoero and Roffelsen \cite{MasoeroR18,MasoeroR19}, in which the leading term $V(\lambda)$ in the anharmonic potential is proportional to $\lambda^{-2}P(\lambda)$.  Here again the variable $\lambda$ plays an auxiliary role not directly tied to an approximate solution of the Painlev\'e-IV equation.
\label{rem:SamePolynomial}
\end{remark}

We prove in this paper the accuracy of the elliptic function approximation; in particular this applies in the special case that $y_0=0$ and $\zeta$ is bounded.  To prove this special case one must take $E=0$ for the integration constant (see Proposition~\ref{prop:E} below), and then it turns out that the approximation captures exactly the pole or zero of the rational solution $u(x)$ that necessarily lies at the origin ($x=0$ corresponds to $\zeta=0$ in the rescaled independent variable) according to Proposition~\ref{prop:FirstQuadrant}. See Remark~\ref{rem:exact-at-origin} below. Therefore, in this case we can determine the phase shift $\zeta_0$ by enforcing the property that $f(\zeta-\zeta_0)$ have a zero with the same sign of derivative or a pole with the same residue
at $\zeta=0$ as does the actual rational solution, as described in Proposition~\ref{prop:BehaviorAtOrigin}.
We start from \eqref{eq:elliptic-ODE} with $y_0=0$ and $E=0$, and we first solve this equation subject to $f(0)=0$ and $f'(0)=4$ to obtain $f(\zeta)$ explicitly in terms of Jacobi elliptic functions (in terms of the notation of \cite[Chapter 22]{DLMF} we prefer to write, eg., $\mathrm{sn}(z\vert\mathfrak{m})$ in place of $\mathrm{sn}(z,k)$ where $\mathfrak{m}=k^2$), and express its pole and zero lattices in terms of the complete elliptic integrals of the first kind
\eq
\mathbb{K}(\mathfrak{m}):=\int_0^{\tfrac{1}{2}\pi}\frac{\dd\theta}{\sqrt{1-\mathfrak{m}\sin(\theta)}}, \quad \mathbb{K}'(\mathfrak{m}):=\mathbb{K}(1-\mathfrak{m}).
\endeq
This calculation depends only on whether $\kappa<-1$, $\kappa\in (-1,1)$, or $\kappa>1$ holds, and yields the following results.
\begin{itemize}
\item If $\kappa<-1$ (i.e., for $(\Theta_0,\Theta_\infty)\in\Lambda_\mathrm{gH}^{[1]-}\cup(\Lambda_\mathrm{gO}\cap W^{[1]-})\cup(\Lambda_\mathrm{gO}\cap W^{[2]+})$), then 
\eq
f(\zeta) = 2\mathfrak{m}^\frac{1}{4}\mathrm{sn}\left(2\mathfrak{m}^{-\frac{1}{4}}\zeta \big\vert \mathfrak{m} \right),\quad \mathfrak{m}:=-1+2\kappa^2+2\kappa\sqrt{\kappa^2-1}\in (0,1).
\label{eq:f-m-kappa-lt-m1}
\endeq
It is known that $\mathrm{sn}(x|\mathfrak{m})$ has zeros at $x=2j\mathbb{K}(\mathfrak{m})+2k\ii\mathbb{K}^\prime(\mathfrak{m})$ 
and poles at $x=2j\mathbb{K}(\mathfrak{m})+(2k+1)\ii\mathbb{K}^\prime(\mathfrak{m})$ 
($j,k\in\mathbb{Z}$).  Since $\mathbb{K}(\mathfrak{m})$ and $\mathbb{K}^\prime(\mathfrak{m})$ 
are both purely real, as is the scaling factor 
$2\mathfrak{m}^{-\frac{1}{4}}$, $f(\zeta)$ has rows of zeros 
alternating with rows of poles parallel to the real axis.
\item If $\kappa>1$ (i.e., for $(\Theta_0,\Theta_\infty)\in\Lambda_\mathrm{gH}^{[2]-}\cup(\Lambda_\mathrm{gO}\cap W^{[1]+})\cup(\Lambda_\mathrm{gO}\cap W^{[2]-})$), then
\eq
f(\zeta) = 2(1-\mathfrak{m})^\frac{1}{4}\mathrm{sc}\left(2(1-\mathfrak{m})^{-\frac{1}{4}}\zeta \big\vert \mathfrak{m} \right),\quad \mathfrak{m}:=2-2\kappa^2+2\kappa\sqrt{\kappa^2-1}\in (0,1).
\label{eq:f-m-kappa-gt-p1}
\endeq
The function $\mathrm{sc}(x|\mathfrak{m})$ has zeros at $x=2j\mathbb{K}(\mathfrak{m})+2k\ii\mathbb{K}^\prime(\mathfrak{m})$ 
and poles at $x=(2j+1)\mathbb{K}(\mathfrak{m})+2k\ii\mathbb{K}^\prime(\mathfrak{m})$ ($j,k\in\mathbb{Z}$).  Since $\mathbb{K}(\mathfrak{m})$, 
$\mathbb{K}^\prime(\mathfrak{m})$, and the scaling factor $2(1-\mathfrak{m})^{-\frac{1}{4}}$ are all 
purely real, $f(\zeta)$ has columns of zeros alternating with columns 
of poles parallel to the imaginary axis.
\item If $\kappa\in (-1,1)$ (i.e., for $(\Theta_0,\Theta_\infty)\in\Lambda_\mathrm{gH}^{[3]+}\cup(\Lambda_\mathrm{gO}\cap W^{[3]+})\cup(\Lambda_\mathrm{gO}\cap W^{[3]-})$), then
\eq
f(\zeta) = 2\ee^{-\frac{\ii\pi}{4}}((1-\mathfrak{m})\mathfrak{m})^\frac{1}{4}\mathrm{sd}\left(2\ee^\frac{\ii\pi}{4}((1-\mathfrak{m})\mathfrak{m})^{-\frac{1}{4}}\zeta \big\vert \mathfrak{m} \right),\quad \mathfrak{m}:=\frac{1}{2}-\frac{\ii\kappa}{2\sqrt{1-\kappa^2}}=1-\mathfrak{m}^*.
\label{eq:f-m-kappa-0}
\endeq
This is the only case in which the elliptic modulus $\mathfrak{m}$ is complex, in which case we use the principal branch square roots to interpret $\mathbb{K}(\mathfrak{m})$ and $\mathbb{K}'(\mathfrak{m})$; therefore as $\mathfrak{m}^*=1-\mathfrak{m}$ we also have $\mathbb{K}'(\mathfrak{m})=\mathbb{K}(\mathfrak{m})^*$.
The elliptic function $\mathrm{sd}(x|\mathfrak{m})$ has 
zeros at $x=2j\mathbb{K}(\mathfrak{m})+2k\ii\mathbb{K}^\prime(\mathfrak{m})$ and poles at 
$x=(2j+1)\mathbb{K}(\mathfrak{m})+(2k+1)\ii\mathbb{K}^\prime(\mathfrak{m})$ ($j,k\in\mathbb{Z}$).
Since $\mathrm{arg}(\mathbb{K}(\mathfrak{m})+\ii\mathbb{K}^\prime(\mathfrak{m}))=\tfrac{1}{4}\pi$ and 
$\mathrm{arg}(-\mathbb{K}(\mathfrak{m})+\ii\mathbb{K}^\prime(\mathfrak{m}))=\tfrac{3}{4}\pi$, the zeros 
and poles of $f(\zeta)$ form a ``checkerboard'' pattern with respective lattices spanned by basis vectors 
parallel to the coordinate axes and shifted by a half-period in each direction with respect to one another.
\end{itemize}
In all three cases, the theoretically predicted pattern qualitatively matches what one sees near the origin in the respective plots shown in Figures~\ref{fig:theta-map-gH}--\ref{fig:theta-map-gO}.
It remains to determine the phase shift $\zeta_0$ by ensuring that the correct ``sign'' of pole or zero of $f(\zeta-\zeta_0)$ lies at $\zeta=0$.  Here the results depend not only on the sector of the parameter space shown in Figure~\ref{fig:ThetasPlane} but also on the parity of the indices $(m,n)$ used to parametrize the allowed values of $(\Theta_0,\Theta_\infty)$.  The results are as follows.
\begin{itemize}
\item
Let $\kappa<-1$ and take $\mathfrak{m}\in (0,1)$ as in \eqref{eq:f-m-kappa-lt-m1}. If $(\Theta_{0,\mathrm{F}}^{[1]}(m,n),\Theta_{\infty,\mathrm{F}}^{[1]}(m,n))\in W^{[1]-}$ (for either family $\mathrm{F}=\mathrm{gH}$ or $\mathrm{F}=\mathrm{gO}$), then $\zeta_0$ is given by
\eq
\begin{tabular}{|c||c|c|}
\hline
$\zeta_0$  & $m$ even & $m$ odd \\
\hline
\hline
$n$ even & 0 & \tallstrut $\displaystyle\frac{2\mathbb{K}(\mathfrak{m})}{2\mathfrak{m}^{-\frac{1}{4}}}$ \\
\hline
$n$ odd  & \tallstrut $\displaystyle\frac{\ii\mathbb{K}^\prime(\mathfrak{m})}{2\mathfrak{m}^{-\frac{1}{4}}}$ & $\displaystyle\frac{2\mathbb{K}(\mathfrak{m})+\ii\mathbb{K}^\prime(\mathfrak{m})}{2\mathfrak{m}^{-\frac{1}{4}}}$\\
\hline 
\end{tabular}
\endeq
If instead $(\Theta_{0,\mathrm{gO}}^{[2]}(m,n),\Theta_{\infty,\mathrm{gO}}^{[2]}(m,n))\in W^{[2]+}$, then $\zeta_0$ is given by
\eq
\begin{tabular}{|c||c|c|}
\hline
$\zeta_0$  & $m$ even & $m$ odd \\
\hline
\hline
$n$ even & \tallstrut $\displaystyle\frac{2\mathbb{K}(\mathfrak{m})}{2\mathfrak{m}^{-\frac{1}{4}}}$  & $\displaystyle\frac{2\mathbb{K}(\mathfrak{m})+\ii\mathbb{K}^\prime(\mathfrak{m})}{2\mathfrak{m}^{-\frac{1}{4}}}$ \\
\hline
$n$ odd  & 0 & \tallstrut $\displaystyle\frac{\ii\mathbb{K}^\prime(\mathfrak{m})}{2\mathfrak{m}^{-\frac{1}{4}}}$ \\
\hline
\end{tabular}
\endeq
\item
Let $\kappa>1$ and take $\mathfrak{m}\in (0,1)$ as in \eqref{eq:f-m-kappa-gt-p1}.  If $(\Theta_{0,\mathrm{gO}}^{[1]}(m,n),\Theta_{\infty,\mathrm{gO}}^{[1]}(m,n))\in W^{[1]+}$, then $\zeta_0$ is given by
\eq
\begin{tabular}{|c||c|c|}
\hline
$\zeta_0$  & $m$ even & $m$ odd \\
\hline
\hline
$n$ even & \tallstrut$\displaystyle\frac{2\ii\mathbb{K}^\prime(\mathfrak{m})}{2(1-\mathfrak{m})^{-\frac{1}{4}}}$ & 0 \\
\hline
$n$ odd  & \tallstrut $\displaystyle\frac{\mathbb{K}(\mathfrak{m})+2\ii\mathbb{K}^\prime(\mathfrak{m})}{2(1-\mathfrak{m})^{-\frac{1}{4}}}$ & $\displaystyle\frac{\mathbb{K}(\mathfrak{m})}{2(1-\mathfrak{m})^{-\frac{1}{4}}}$ \\
\hline
\end{tabular}
\endeq
If instead 
$(\Theta_{0,\mathrm{F}}^{[2]}(m,n),\Theta_{\infty,\mathrm{F}}^{[2]}(m,n))\in W^{[2]-}$ (for either family $\mathrm{F}=\mathrm{gH}$ or $\mathrm{F}=\mathrm{gO}$), then $\zeta_0$ is given by
\eq
\begin{tabular}{|c||c|c|}
\hline
$\zeta_0$  & $m$ even & $m$ odd \\
\hline
\hline
$n$ even & 0 & \tallstrut $\displaystyle\frac{\mathbb{K}(\mathfrak{m})}{2(1-\mathfrak{m})^{-\frac{1}{4}}}$ \\
\hline
$n$ odd  & \tallstrut $\displaystyle\frac{2\ii\mathbb{K}^\prime(\mathfrak{m})}{2(1-\mathfrak{m})^{-\frac{1}{4}}}$ & $\displaystyle\frac{\mathbb{K}(\mathfrak{m})+2\ii\mathbb{K}^\prime(\mathfrak{m})}{2(1-\mathfrak{m})^{-\frac{1}{4}}}$ \\
\hline
\end{tabular}
\endeq
\item
Let $\kappa\in (-1,1)$ and take $\kappa\in\mathbb{C}$ as in \eqref{eq:f-m-kappa-0}.  If 
$(\Theta_{0,\mathrm{F}}^{[3]}(m,n),\Theta_{\infty,\mathrm{F}}^{[3]}(m,n))\in W^{[3]+}$ (for either family $\mathrm{F}=\mathrm{gH}$ or $\mathrm{F}=\mathrm{gO}$), then $\zeta_0$ is given by
\eq
\begin{tabular}{|c||c|c|}
\hline
$\zeta_0$  & $m$ even & $m$ odd \\
\hline
\hline
$n$ even & \tallstrut $\displaystyle\frac{2\mathbb{K}(\mathfrak{m})}{2\ee^{\frac{\ii\pi}{4}}((1-\mathfrak{m})\mathfrak{m})^{-\frac{1}{4}}}$ & $\displaystyle\frac{-\mathbb{K}(\mathfrak{m})+\ii\mathbb{K}^\prime(\mathfrak{m})}{2\ee^{\frac{\ii\pi}{4}}((1-\mathfrak{m})\mathfrak{m})^{-\frac{1}{4}}}$ \\
\hline
$n$ odd  & \tallstrut $\displaystyle\frac{\mathbb{K}(\mathfrak{m})+\ii\mathbb{K}^\prime(\mathfrak{m})}{2\ee^{\frac{\ii\pi}{4}}((1-\mathfrak{m})\mathfrak{m})^{-\frac{1}{4}}}$ & 0 \\
\hline
\end{tabular}
\endeq
If instead $(\Theta_{0,\mathrm{gO}}^{[3]}(m,n),\Theta_{\infty,\mathrm{gO}}^{[3]}(m,n))\in W^{[3]-}$, then $\zeta_0$ is given by
\eq
\begin{tabular}{|c||c|c|}
\hline
$\zeta_0$  & $m$ even & $m$ odd \\
\hline
\hline
$n$ even & 0 & \tallstrut $\displaystyle\frac{-\mathbb{K}(\mathfrak{m})+\ii\mathbb{K}^\prime(\mathfrak{m})}{2\ee^{\frac{\ii\pi}{4}}((1-\mathfrak{m})\mathfrak{m})^{-\frac{1}{4}}}$ \\
\hline
$n$ odd  & \tallstrut $\displaystyle\frac{\mathbb{K}(\mathfrak{m})+\ii\mathbb{K}^\prime(\mathfrak{m})}{2\ee^{\frac{\ii\pi}{4}}((1-\mathfrak{m})\mathfrak{m})^{-\frac{1}{4}}}$ & $\displaystyle\frac{2\mathbb{K}(\mathfrak{m})}{2\ee^{\frac{\ii\pi}{4}}((1-\mathfrak{m})\mathfrak{m})^{-\frac{1}{4}}}$ \\
\hline
\end{tabular}
\endeq
\end{itemize}
These results are consistent in the gH case with \cite[Corollary 1]{MasoeroR18}, a rigorous result describing the zeros of $H_{m,n}(x)$ near the origin as a locally regular lattice with spacings determined from complete elliptic integrals.

\begin{remark}
Constant solutions of \eqref{eq:elliptic-ODE} corresponding to simple roots of $P(f)$ are not equilibrium solutions of \eqref{eq:Approximating-ODE-Second-Order}.  However,
the equilibrium condition \eqref{eq:equilibrium} can be rederived by insisting that $P(f)$ have a double root $f=U_0$ and hence eliminating $E$ between $P(U_0)=0$ and $P'(U_0)=0$, leading to
\eq
E=-2U_0^3-6y_0U_0^2-4(y_0^2+2\kappa)U_0.
\label{eq:E-for-double-roots}
\endeq
Another way of putting this is: if $U_0=U_0(y_0;\kappa)$ is a function of $y_0\in\mathbb{C}$ and $\kappa\in \mathbb{R}\setminus\{-1,1\}$ that solves \eqref{eq:equilibrium}, and $E$ is expressed in terms of $y_0$ and $\kappa$ by \eqref{eq:E-for-double-roots}, then $P(f)$ has a double root that persists over the whole domain of definition of $U_0(y_0;\kappa)$, namely $f=U_0(y_0;\kappa)$.
\label{rem:equilibrium-rederive}
\end{remark}

\subsubsection{Painlev\'e-I approximation near branch points}
Suppose now that $y_0$ is one of the eight branch points solving \eqref{eq:branch-points}, and that $U_0$ is a corresponding double root of the equilibrium problem \eqref{eq:equilibrium}.  We wish to examine solutions of the Painlev\'e-IV equation \eqref{p4} that are in a sense close to $T^{\frac{1}{2}}U_0$ for $x$ close to $T^{\frac{1}{2}}y_0$, where we recall that the parameters are large in the sense that \eqref{eq:Thetas-scaling} holds with $T\gg 1$ and $\kappa\in\mathbb{R}\setminus\{-1,1\}$.
It turns out that the correct scaling is to write
\eq
u=T^{\frac{1}{2}}(U_0+T^{-\frac{2}{5}}h)\quad\text{and}\quad x=T^{\frac{1}{2}}y_0+T^{-\frac{3}{10}}z
\endeq
for new dependent and independent variables $h$ and $z$, respectively.  We substitute these into \eqref{p4} and use the 
assumption that $U_0$ is a double root of the equilibrium equation \eqref{eq:equilibrium} to remove two suites of terms from the resulting equation.
The result is the formal asymptotic (assuming $h$ and $z$ bounded)
\eq
h_{zz}=\left(\frac{9}{2}U_0+4y_0-\frac{8}{U_0^3}\right)h^2 + 4U_0\left(U_0+y_0\right)z + 2T^{-\frac{1}{5}}U_0 + \bo(T^{-\frac{2}{5}}),\quad T\to\infty.
\endeq
This is essentially a perturbation of the Painlev\'e-I equation.  Indeed, if we rescale the variables by 
\eq
w=\gamma\left(z+\frac{T^{-\frac{1}{5}}}{2(U_0+y_0)}\right)\quad \text{and}\quad h=\delta H,
\endeq
then if $\delta$ and $\gamma$ are chosen so that 
%
\eq
\gamma^5 = \frac{9U_0^4+8y_0U_0^3-16}{12 U_0^3}\quad\text{and}\quad
\delta = 4U_0(U_0+y_0)\gamma^{-3},
\endeq
we obtain
\eq
H''(w)=6H(w)^2+w+\bo(T^{-\frac{2}{5}})
\endeq
which puts the Painlev\'e-I approximating equation into canonical form.  Note that $\gamma$ and $\delta$ are well-defined modulo the cyclic group of order $5$ generated by $(\gamma,\delta)\mapsto (\ee^{\frac{2\ii\pi}{5}}\gamma,\ee^{-\frac{6\ii\pi}{5}}\delta)$, for which it suffices to show that $U_0\neq 0$, $U_0+y_0\neq 0$, and $9U_0^4+8y_0U_0^3-16\neq 0$.  But $U_0\neq 0$ follows easily from the fact that \eqref{eq:equilibrium} has a nonzero constant term.  Writing $U_0=(U_0+y_0)-y_0$, we can rewrite \eqref{eq:equilibrium} as a quartic in $U_0+y_0$ with constant term $-\tfrac{1}{2}(y_0^4-8\kappa y_0^2+16)$.  Setting the latter constant term to zero and eliminating $y_0$ between this condition and the branch point condition $B(y_0;\kappa)=0$ (cf.\@ \eqref{eq:branch-points}) yields the condition that $\kappa$ should satisfy either $\kappa=\pm 1$ or $375\kappa^2+3721=0$, neither of which are possible for $\kappa\in\mathbb{R}\setminus\{-1,1\}$.  Hence it also follows that $U_0+y_0\neq 0$.  Finally, eliminating $U_0$ and $y_0$ between  \eqref{eq:equilibrium}, its derivative with respect to $U_0$ (for double equilibria), and $9U_0^4+8y_0U_0^3-16=0$ yields the condition $\kappa^2=1$, so for $\kappa\neq \pm 1$ and $U_0$ a double equilibrium we must also have that $9U_0^4+8y_0U_0^3-16\neq 0$.

This formal analysis suggests that solutions $u(x)$ of Painlev\'e-IV for large $T=|\Theta_0|$ and fixed $\kappa=-\Theta_\infty/T\in \mathbb{R}\setminus\{-1,1\}$ can behave like solutions of the Painlev\'e-I equation when $xT^{-\frac{1}{2}}$ is close to one of the eight branch points satisfying \eqref{eq:branch-points}, provided that also $u\approx T^{\frac{1}{2}}U_0$ for a branching equilibrium $U_0$ in some overlap domain.  The particular solution(s) of Painlev\'e-I that would be relevant is not clear from this formal analysis.  However, in \cite{MasoeroR19} one finds the conjecture that for the gH family of rational solutions one should select a tritronqu\'ee solution of Painlev\'e-I, and in light of the result of \cite{Buckingham18} that any poles or zeros of $u$ should be confined to a region that forms a sector with vertex at a complex branch point $y_0$ and opening angle $\tfrac{2}{5}\pi$ this is a very reasonable hypothesis.  Based on results we will obtain below in Theorem~\ref{thm:OkamotoExterior} one should also expect Painlev\'e-I tritronqu\'ee asymptotics near the branch points on the real and imaginary axes for the gO family of rational solutions.  While the proofs of these tritronqu\'ee convergence results have yet to be given, a similar result has been proven rigorously for rational solutions of the Painlev\'e-II equation in \cite{BuckinghamM15}.  Near the remaining branch points the pole-free sector of the gO rational solutions is smaller, and one can only reasonably anticipate the appearance of a tronqu\'ee solution of Painlev\'e-I.  Such solutions form a one-parameter family containing the tritronqu\'ee solutions as finitely many special cases, so just to formulate a precise conjecture one would need to single out a particular tronqu\'ee solution of Painlev\'e-I.  We note that the formal connection between Painlev\'e-IV and Painlev\'e-I is apparently not a direct link in the ``coalescence cascade" of Painlev\'e equations reported in \cite[\S 32.2(vi)]{DLMF}; in the latter, solutions of Painlev\'e-IV degenerate to solutions of the Painlev\'e-II equation, which in turn can degenerate into solutions of Painlev\'e-I.

\subsection{Results}
\label{sec:Results}
\subsubsection{Related literature}
Our objective is to use the integrable structure of 
\eqref{p4} to analytically prove many of these qualitative observations by computing 
the leading-order asymptotic behavior of the rational Painlev\'e functions 
built from both the generalized Hermite and the generalized Okamoto polynomials.  
Specifically, we use the isomonodromy approach adapted to a Lax pair representation of \eqref{p4} first found in \cite{Jimbo:1981a}.  An outline of how this method leads to Riemann-Hilbert representations of all rational solutions of \eqref{p4} can be found in Section~\ref{sec:intro-Methodology} below.  With these representations in hand, we apply elements of the Deift-Zhou steepest-descent method \cite{DeiftZ93} (in particular, we use the important mechanism of the so-called $g$-function first introduced in \cite{DeiftVZ94}).  This rigorous method of asymptotic analysis, carefully adapted to numerous cases depending on the region of parameter space illustrated in Figure~\ref{fig:ThetasPlane} and the region of the $y_0$-plane under consideration, allows the desired asymptotic formul\ae\ to be proved.  
Families of rational 
solutions to other Painlev\'e equations have recently been analyzed asymptotically, 
including rational solutions of the Painlev\'e-II equation 
\cite{BuckinghamM:2014,BuckinghamM15,BertolaB:2015}, 
rational solutions of the Painlev\'e-II hierarchy \cite{BaloghBB:2016}, and
rational solutions of the Painlev\'e-III equation 
\cite{BothnerMS18,BothnerM:2018}.  All of these works use some sort of Riemann-Hilbert representation and the steepest-descent method. However, in the papers \cite{BertolaB:2015,BaloghBB:2016} the representation used comes from a Hankel determinant identity and the Fokas-Its-Kitaev theory of pseudo-orthogonal polynomials \cite{FokasIK:1991}, while the papers \cite{BothnerMS18,BothnerM:2018,BuckinghamM:2014,BuckinghamM15} follow more the approach described below in Section~\ref{sec:intro-Methodology}.
As for the Painlev\'e-IV equation, the gH family of rational solutions has been studied in \cite{Buckingham18} using the Hankel determinant approach, but so far the gO family has resisted any representation convenient for that method.  The isomonodromy method has been applied to the gO family of rational solutions by Novokshenov and Shchelkonogov \cite{NovokshenovS14}, but only in the special case that $m=0$, i.e., the rational solutions $u_\mathrm{gO}^{[j]}(x;0,n)$ were analyzed for large $n$.  An attempt was made in \cite{NovokshenovS15} to use similar methods for the gH family, but that paper has been shown to contain errors that invalidate its results.  An explicit connection between the Hankel determinant approach and the isomonodromy method (for a suitable Lax pair) was explained for the Painlev\'e-II equation in \cite{MillerS:2017}.  We make a similar connection in this paper for the gH family for Painlev\'e-IV (see \eqref{eq:PIV-Hankel-isomonodromy} at the end of Section~\ref{sec:intro-Methodology}).  From the point of view of isomonodromy theory, it seems that it is the absence of nontrivial Stokes phenomenon in the Lax pair that is correlated with the existence of a Hankel determinant identity suitable for further asymptotic analysis.

We also want to mention here a third approach available to study the roots of the gH and gO polynomials themselves.  It is possible to encode the condition that a gH or gO polynomial vanish at a given point in a kind of eigenvalue condition on a quantum anharmonic oscillator equation in one dimension \cite{MasoeroR18}; see also Remark~\ref{rem:SamePolynomial}.  This method has been used to obtain detailed information about the roots of the gH polynomials \cite{MasoeroR19}, and work is underway to do the same for the gO polynomials \cite{MasoeroR20}.
\subsubsection{Equilibrium asymptotics of Painlev\'e-IV rational solutions}
Central to the asymptotic description of rational Painlev\'e-IV solutions are two particular families of Jordan curves that we denote $\partial\HermiteExterior(\kappa)$ and $\partial\OkamotoExterior(\kappa)$ respectively, with the families being parametrized by $\kappa\in\mathbb{R}\setminus\{-1,1\}$.  Given $\kappa$, the curves $\partial\HermiteExterior(\kappa)$ and $\partial\OkamotoExterior(\kappa)$ are finite unions of analytic arcs that can be described as follows.  Let $U_0(y_0;\kappa)$ be a solution of the equilibrium equation \eqref{eq:equilibrium} analytic for $y_0$ in some domain $\mathcal{D}$, and define $E=E(y_0;\kappa)$ in turn by \eqref{eq:E-for-double-roots}.  Then by Remark~\ref{rem:equilibrium-rederive}, for all $y_0\in \mathcal{D}$ the polynomial $P(f)$ in \eqref{eq:elliptic-ODE} has a double root $f=\gamma(y_0;\kappa):=U_0(y_0;\kappa)$ and two simple roots, one of which we denote by $f=\alpha(y_0;\kappa)$.  It follows that the equation 
\eq
\mathrm{Re}\left(\int_{\alpha(y_0;\kappa)}^{\gamma(y_0;\kappa)}\frac{\sqrt{P(f)}}{f}\,\dd f\right)=0
\label{eq:intro-arcs}
\endeq
defines an analytic arc (possibly empty) in $\mathcal{D}$.  On a given domain $\mathcal{D}$ there may be up to four analytic equilibria, and each choice of equilibrium gives different arcs on $\mathcal{D}$.  The arcs forming $\partial\HermiteExterior(\kappa)$ and $\partial\OkamotoExterior(\kappa)$ all arise in this way, although there are some extraneous arcs generated by \eqref{eq:intro-arcs} that are not contained within either $\partial\HermiteExterior(\kappa)$ or $\partial\OkamotoExterior(\kappa)$. 
A more precise description of $\partial\HermiteExterior(\kappa)$ and $\partial\OkamotoExterior(\kappa)$ that makes precise exactly which arcs produced by \eqref{eq:intro-arcs} are needed in each case will be given in Section~\ref{sec:OutsideDomain}, where it is also shown that the arcs arise by conformal mapping from the trajectories of a certain rational quadratic differential (see \eqref{eq:PhiPrimeSquared}).  However, we can formulate the following proposition that describes the most important properties of the Jordan curves $\partial\HermiteExterior(\kappa)$ and $\partial\OkamotoExterior(\kappa)$.  
\begin{proposition}
The families of curves $\partial\HermiteExterior(\kappa)$ and $\partial\OkamotoExterior(\kappa)$ have the following properties.
\begin{enumerate}
\item
For each $\kappa\in \mathbb{R}\setminus\{-1,1\}$:
\begin{enumerate}
\item
$\partial\HermiteExterior(\kappa)$ and $\partial\OkamotoExterior(\kappa)$ are Jordan curves enjoying Schwarz reflection symmetry in both the real and imaginary axes.  
\item
$\partial\HermiteExterior(\kappa)$ consists of four analytic arcs joining in pairs the four branch points $B(y_0;\kappa)=0$ lying in the four open quadrants of the $y_0$-plane, traversed in the direction of increasing $\arg(y_0)$.  
\item 
$\partial\OkamotoExterior(\kappa)$ consists of eight analytic arcs joining in pairs all eight branch points $B(y_0;\kappa)=0$, traversed in the direction of increasing $\arg(y_0)$.
\end{enumerate}
\item
The curves in a given family, $\partial\mathcal{E}_\mathrm{F}(\kappa)$, $\mathrm{F}=\mathrm{gH},\mathrm{gO}$, $\kappa\in\mathbb{R}\setminus\{-1,1\}$, are related to one another by a finite symmetry group of geometric transformations with the following three generators:
\eq
\begin{split}
\partial\mathcal{E}_\mathrm{F}(-\kappa)&=\ii\partial\mathcal{E}_\mathrm{F}(\kappa), \quad\text{(rotation by $\tfrac{1}{2}\pi$)},\\
\partial\mathcal{E}_\mathrm{F}(I^\pm(\kappa))&=\sqrt{\frac{2}{1\pm\kappa}}\partial\mathcal{E}_\mathrm{F}(\kappa),\quad I^\pm(\kappa):=-\frac{\kappa\mp 3}{1\pm\kappa}, \quad\text{(homothetic dilation)},
\end{split}
\label{eq:geometric-symmetries}
\endeq
all defined for $\kappa\in (-1,1)$.  In particular, since $I^+:(-1,1)\to (1,+\infty)$ and $I^-:(-1,1)\to (-\infty,-1)$, the curves $\partial\mathcal{E}_\mathrm{F}(\kappa)$ are determined for all $\kappa\in \mathbb{R}\setminus\{-1,1\}$ as dilations of those curves with $\kappa\in (-1,1)$.  Also, the curves for $\kappa=-3,0,3$ have additional symmetry, being invariant under rotation about the origin by $\tfrac{1}{2}\pi$ radians. 
\item
Letting $\HermiteExterior(\kappa)$ and $\OkamotoExterior(\kappa)$ denote the exterior of $\partial\HermiteExterior(\kappa)$ and $\partial\OkamotoExterior(\kappa)$ respectively, 
\begin{enumerate}
\item
for all $\kappa<-1$, the equilibrium $U_{0,\mathrm{gH}}^{[1]}(y_0;\kappa)$ is an analytic function of $y_0\in\HermiteExterior(\kappa)$;
\item
for all $\kappa>1$, the equilibrium $U_{0,\mathrm{gH}}^{[2]}(y_0;\kappa)$ is an analytic function of $y_0\in\HermiteExterior(\kappa)$;
\item
for all $\kappa\in (-1,1)$, the equilibrium $U_{0,\mathrm{gH}}^{[3]}(y_0;\kappa)$ is an analytic function of $y_0\in\HermiteExterior(\kappa)$.
\item
for all $\kappa\in\mathbb{R}\setminus\{-1,1\}$, the equilibrium $U_{0,\mathrm{gO}}(y_0;\kappa)$ is an analytic function of $y_0\in\OkamotoExterior(\kappa)$;
\end{enumerate}
\end{enumerate}
\label{prop:JordanCurves}
\end{proposition}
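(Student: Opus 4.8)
The plan is to reduce all three parts of the proposition to the study of the analytic arcs cut out by \eqref{eq:intro-arcs}, relying on the identification --- to be established in Section~\ref{sec:OutsideDomain} --- of these arcs with images, under a conformal mapping, of the critical trajectories of the rational quadratic differential \eqref{eq:PhiPrimeSquared}, whose simple zeros in the $y_0$-plane are precisely the branch points solving $B(y_0;\kappa)=0$. Granting that identification, each arc is analytic off the branch points (where the defining relation locally exhibits it as a level set of a nonconstant harmonic function), and at each branch point, being a simple zero of the quadratic differential, exactly three arcs meet at mutual angles $\tfrac{2}{3}\pi$.

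For part~(1)(a): the branch-point set is Schwarz-symmetric in both axes because $B(y_0;\kappa)$ is a real polynomial in $y_0^2$. For the arcs, since \eqref{eq:equilibrium} has real coefficients when $\kappa$ is real one has $U_0(y_0^*;\kappa)=U_0(y_0;\kappa)^*$, so by \eqref{eq:E-for-double-roots} the polynomial $P(f)$ and its roots $\alpha,\gamma$ Schwarz-reflect and the integral in \eqref{eq:intro-arcs} gets conjugated; and the substitution $y_0\mapsto-y_0$, $U_0\mapsto-U_0$, $f\mapsto-f$ sends $P(f;y_0,\kappa,E)$ to $P(f;-y_0,\kappa,-E)$ and leaves the integral invariant, using that the equilibria are odd in $y_0$. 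Either operation preserves the vanishing of the real part, and composing them gives symmetry in both axes. Parts~(1)(b)--(1)(c) then require a global analysis of the critical graph of \eqref{eq:PhiPrimeSquared}: count the trajectories emanating from its zeros and from the poles at $f=0$ and $\infty$, exclude recurrent trajectories (the critical graph of a rational quadratic differential is a finite union of analytic arcs), and use the reflection symmetries just found to determine which critical trajectories join which branch points. This will identify the four arcs joining the four quadrant branch points in one $4$-cycle (forming $\partial\HermiteExterior(\kappa)$) and the eight arcs joining all eight branch points in one $8$-cycle (forming $\partial\OkamotoExterior(\kappa)$); since distinct trajectories cannot cross and a non-closed trajectory cannot self-intersect, each such cyclic family is a Jordan curve, and the orientation claim is simply the cyclic order of the branch points in $\arg(y_0)$. \textbf{This global trajectory analysis is the main obstacle}, and is carried out over the course of Section~\ref{sec:OutsideDomain}.

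For part~(2), each generator is verified as an identity at the level of the quadratic differential (equivalently of \eqref{eq:equilibrium}, \eqref{eq:E-for-double-roots} and \eqref{eq:elliptic-ODE}). The rotation $\partial\mathcal{E}_\mathrm{F}(-\kappa)=\ii\partial\mathcal{E}_\mathrm{F}(\kappa)$ follows from $B(\ii y_0;-\kappa)=B(y_0;\kappa)$ and the action of $\mathcal{S}_\updownarrow$ on the equilibria recorded in \eqref{eq:equilibrium-symmetry}, whereby the substitution $y_0\mapsto-\ii y_0$, $\kappa\mapsto-\kappa$ (with a compensating rescaling of $f$) sends the integrand of \eqref{eq:intro-arcs} to a real multiple of itself. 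The two homothetic dilations arise as the scaling limits \eqref{eq:Thetas-scaling}--\eqref{eq:BasicScalings} of the symmetries $\mathcal{S}_\updownarrow$ and $\mathcal{S}_\tw$: a direct computation shows that, as $T\to\infty$, $\mathcal{S}_\tw$ induces $(\kappa,y_0)\mapsto(I^-(\kappa),\sqrt{2/(1-\kappa)}\,y_0)$ on scaled data, and since $\mathcal{S}_\tw$ carries rational solutions to rational solutions it carries their zero/pole regions --- which will be shown to be bounded by $\partial\mathcal{E}_\mathrm{F}$ --- to the correspondingly dilated regions; composing with the rotation produces the generator $I^+$. The images $I^\pm((-1,1))$ follow from elementary properties of these M\"obius maps, and the extra rotational symmetry at $\kappa=-3,0,3$ is immediate (e.g.\ $I^-(3)=3$ with dilation factor $\sqrt{2/(1-3)}=\ii$; $\kappa=0$ is fixed by $\kappa\mapsto-\kappa$).

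For part~(3), $U_{0,\mathrm{F}}$ is analytic wherever it is a simple root of the quartic \eqref{eq:equilibrium}, so the only obstructions to analytic continuation lie among the branch points solving $B(y_0;\kappa)=0$, while \eqref{eq:equilibria-near-infinity} shows that each branch has single-valued leading behavior as $y_0\to\infty$, hence trivial monodromy at infinity. For the gO family, part~(1)(c) places all eight branch points on $\partial\OkamotoExterior(\kappa)$, so $\OkamotoExterior(\kappa)$ contains none of them; since $\OkamotoExterior(\kappa)\cup\{\infty\}$ is the exterior of a Jordan curve on the Riemann sphere and therefore simply connected, $U_{0,\mathrm{gO}}$ continues to a single-valued analytic function on $\OkamotoExterior(\kappa)$. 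For the gH family the analogous statement is contained in \cite{Buckingham18}; alternatively one argues exactly as for gO once one checks (a finite computation at each point) that at the four axis branch points --- which lie in $\HermiteExterior(\kappa)$ rather than on $\partial\HermiteExterior(\kappa)$ --- the two coinciding roots of \eqref{eq:equilibrium} are not the branch $U_{0,\mathrm{gH}}^{[j]}$ relevant to the value of $\kappa$ at hand.
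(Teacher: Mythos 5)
Your overall architecture matches the paper's: rationally parametrize the Riemann surface of the quartic \eqref{eq:gamma-eqn} by $t$, identify the locus $\mathrm{Re}(h(\gamma))=0$ with the critical v-trajectories of $\Phi'(t)^2\,\dd t^2$ in \eqref{eq:PhiPrimeSquared}, carry out the global trajectory analysis there, and read off the symmetries of \eqref{eq:geometric-symmetries} from symmetries of the quadratic differential (for part (2), note that the paper actually \emph{defines} the curves for $|\kappa|>1$ by dilation in Definition~\ref{def:ExteriorDomains}, so the dilation generators are tautological there; your derivation via the scaling limit of $\mathcal{S}_\tw$ is the motivation for that definition rather than an independent proof obligation).

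The genuine gap is in part (1): you pass from non-crossing of trajectories in the $t$-plane to the Jordan property of $\partial\HermiteExterior(\kappa)$ and $\partial\OkamotoExterior(\kappa)$ in the $y_0$-plane by invoking ``a conformal mapping,'' but the map $t\mapsto y^2$ of \eqref{eq:ysquared-kappa-0} is a degree-four rational map of the sphere, and its global injectivity on the relevant domains is precisely the hard point --- indeed univalence \emph{fails} on the two complementary domains $S_\pm$, which each contain a critical point of $y^2(t)$. Trajectories that are disjoint in the $t$-plane can perfectly well have crossing or osculating images in the $y_0$-plane if the map is not injective, so ``distinct trajectories cannot cross'' does not deliver the Jordan curve claim. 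The paper closes this by a non-function-theoretic argument: univalence of $t\mapsto y^2$ on $S_\mathrm{gO}$ and $S_\mathrm{gH}$ is deduced from the already-established exterior asymptotic formulae \eqref{eq:genus-zero-Okamoto-u-asymp} and \eqref{eq:genus-zero-Hermite-u-asymp}, since two distinct values of $\gamma$ over the same $y$ would give inconsistent leading terms for the same rational solution (and absence of osculation points on the image of the boundary requires the analogous ``edge'' asymptotics). Your part (3) inherits the same dependence: the paper derives the single-valuedness of $U_{0,\mathrm{F}}$ on the exterior domains directly from this univalence, whereas your ``finite computation at each axis branch point'' for gH, while correct in substance, presupposes that $\HermiteExterior(\kappa)$ is the simply connected exterior of a Jordan curve, i.e.\ the very fact whose proof is missing.
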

We give the proof in Section~\ref{sec:ExteriorOkamotoUnivalence}.  Note that by composing the symmetry generators in \eqref{eq:geometric-symmetries} or their inverses several other interesting relations emerge.  For instance the rotation symmetry $\partial\mathcal{E}_\mathrm{F}(-\kappa)=\ii\partial\mathcal{E}_\mathrm{F}(\kappa)$ also holds for $\kappa<-1$ and $\kappa>1$, trivially relating the curves for these ranges of $\kappa$ by inversion of aspect ratio.  There is also a map relating two curves with $\kappa>1$ (or two curves with $\kappa<-1$) by a combination of dilation and rotation, and a pure dilation map relating curves with $\kappa>1$ to curves with $\kappa<-1$.  
In deriving these implied relations it is useful to note that the M\"obius transformations $I^\pm(\kappa)$ are both involutions of the Riemann sphere:  $I^\pm(I^\pm(\kappa))=\kappa$.  
Qualitatively, $\partial\HermiteExterior(\kappa)$ is a curvilinear rectangle symmetric in reflection through both real and imaginary axes, while for the same $\kappa$, $\partial\OkamotoExterior(\kappa)$ replaces each edge of $\partial\HermiteExterior(\kappa)$ with two curvilinear edges having a common vertex in the exterior $\HermiteExterior(\kappa)$ on the axis ray bisecting the original edge.  See Figure~\ref{fig:Domains}.
\begin{figure}[h]
\begin{center}
\includegraphics{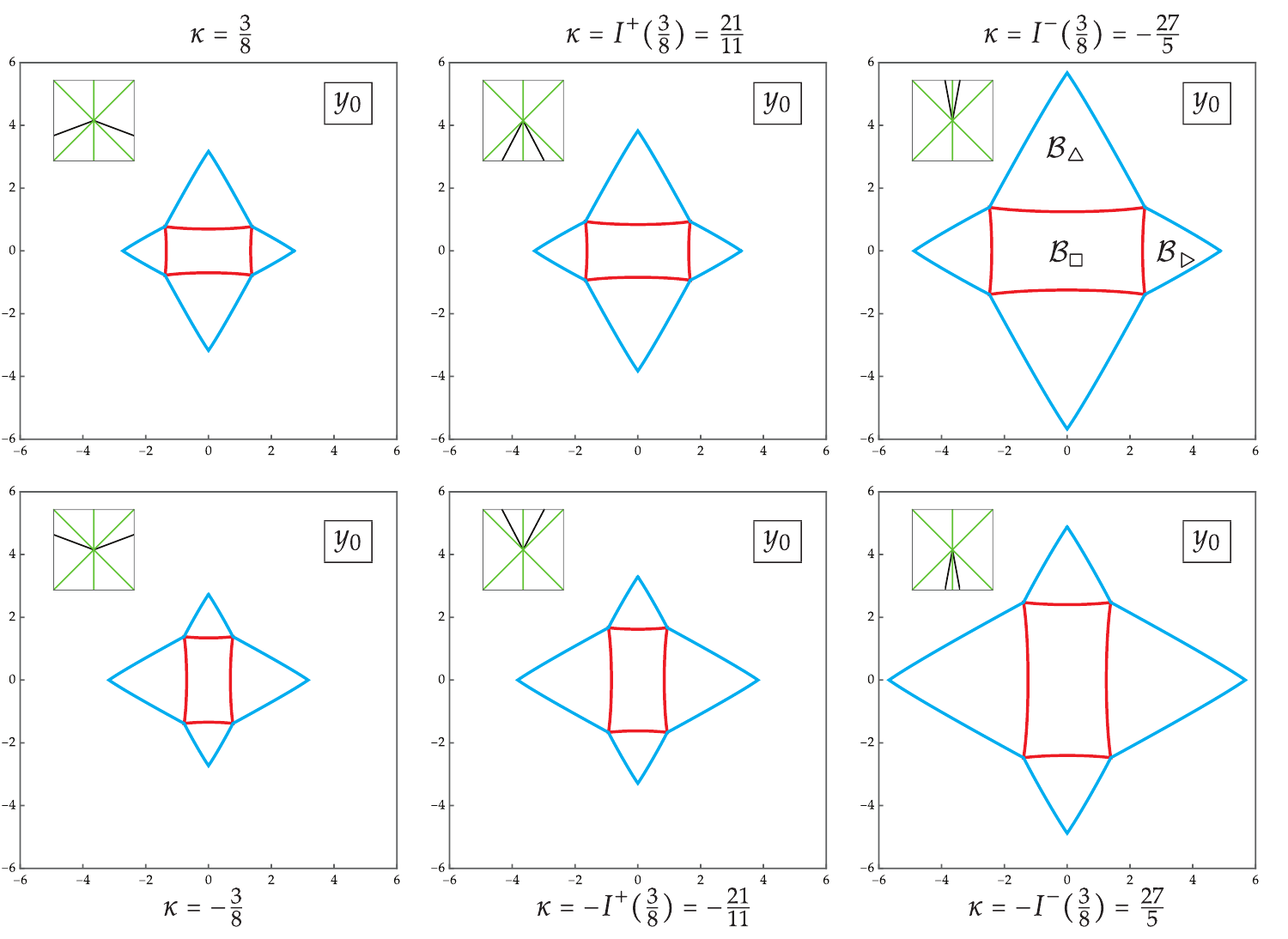}
\end{center}
\caption{The curves $\partial\HermiteExterior$ (red) and $\partial\OkamotoExterior$ (cyan) for six values of $\kappa$ related by the transformations generated by \eqref{eq:geometric-symmetries}, with insets showing the corresponding rays $\Theta_\infty=-\kappa |\Theta_0|$ (black) in the $(\Theta_0,\Theta_\infty)$ plane divided into sectors with green lines.  On the upper right-hand panel are also shown the domains $\rectangle=\rectangle(\kappa)$, $\TR=\TR(\kappa)$, and $\TI=\TI(\kappa)$.}
\label{fig:Domains}
\end{figure}

Our first results assert that the rational solutions are accurately approximated by equilibrium solutions of the formal approximating equation \eqref{eq:Approximating-ODE-Second-Order} (roots of the quartic  \eqref{eq:equilibrium}), provided that $y_0$ lies in the exterior of the relevant Jordan curve.
The following result was first proved in \cite{Buckingham18} using a Hankel determinant identity and techniques from the theory of pseudo-orthogonal polynomials.  We give a new proof in this paper based on 
the isomonodromy method.  For $\rho>0$, set
\eq
\kappa_\infty^{[1]}(\rho):= -1-2\rho^{-1}<-1,\quad \kappa_\infty^{[2]}(\rho):= 1+2\rho>1,\quad\text{and}\quad
\kappa_\infty^{[3]}(\rho):=\frac{1-\rho}{1+\rho}\in (-1,1).
\endeq
\begin{theorem}[Equilibrium asymptotics of gH rationals]
Let $\rho>0$ be a fixed rational aspect ratio, and recall that $\HermiteExterior(\kappa)$ denotes the unbounded exterior of the Jordan curve $\partial\HermiteExterior(\kappa)$.  Then for $j=1,2,3$, as $m,n\to +\infty$ with $n=\rho m$, 
\begin{multline}
u_\mathrm{gH}^{[j]}(x;m,n)=T^\frac{1}{2}\left(\dot{U}+\bo(T^{-1})\right), \quad \dot{U}=U_{0,\mathrm{gH}}^{[j]}\left(y_0;\kappa\right),\quad x=T^\frac{1}{2}y_0,\quad T=|\Theta_{0,\mathrm{gH}}^{[j]}(m,n)|,\\
y_0\in\HermiteExterior(\kappa),\quad\kappa=-\frac{\Theta_{\infty,\mathrm{gH}}^{[j]}(m,n)}{T}\to\kappa_\infty=\kappa_\infty^{[j]}(\rho).
\label{eq:HermiteExterior123}
\end{multline}
In each case (type $j=1,2,3$) the error term becomes uniform if $\HermiteExterior(\kappa)$ is replaced with a closed (possibly unbounded) subset thereof, and the error tends to zero as $y_0\to\infty$.
\label{thm:HermiteExterior}
\end{theorem}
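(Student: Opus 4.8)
The plan is to prove Theorem~\ref{thm:HermiteExterior} by the Deift--Zhou steepest-descent method applied to the Riemann--Hilbert problem encoding the gH rational solutions (introduced in Section~\ref{sec:intro-Methodology} and derived in Appendix~\ref{app:Isomonodromy}). By the Boiti--Pempinelli symmetry \eqref{eq:symmetry-1-2} it suffices to treat types $j=1$ and $j=3$, and by the B\"acklund transformation \eqref{eq:symmetry-1-from-3} together with the fact (to be established via \eqref{eq:u-ucirc}) that one may extract both a type-3 solution and its type-1 image from the same Riemann--Hilbert problem by formul\ae\ of comparable complexity, the entire analysis can be carried out with type-3 parameters; the type-1 statement then follows by translating the resulting formulas back into the natural type-1 coordinates. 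So the core of the proof is the asymptotic analysis of a single family of Riemann--Hilbert problems with a large parameter $T$ and an external parameter $y_0\in\HermiteExterior(\kappa)$.

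First I would introduce the $g$-function. Since the jump matrices in the undressed Riemann--Hilbert problem carry exponential factors of the form $\mathrm{e}^{T\vartheta(z;y_0,\kappa)}$ for an explicit function $\vartheta$ built from the Lax-pair data, the goal is to find a scalar $g(z)$ — with prescribed behavior at $z=0$ and $z=\infty$ dictated by the normalization and the residue/zero structure at the origin — such that after the substitution conjugating by $\mathrm{e}^{Tg(z)\sigma_3}$ the exponents $g_+(z)\pm g_-(z) - \vartheta(z)$ are either purely oscillatory (giving contours to be opened as lenses) or have a definite sign (giving exponentially small jumps). In the exterior regime the correct ansatz is that $g'(z)$ is the branch of $\sqrt{P(f)}/f$ (in the spectral variable, cf.~Remark~\ref{rem:SamePolynomial} and \eqref{eq:spectral-curve}) built from the \emph{double-root} configuration: $P(f)$ has the double root $\gamma(y_0;\kappa)=U_{0,\mathrm{gH}}^{[j]}(y_0;\kappa)$ and two simple roots. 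The condition that this $g$ produces a globally consistent sign structure is exactly the inequality version of \eqref{eq:intro-arcs}, i.e. that $y_0$ lie on the correct side of the arcs defined by $\mathrm{Re}\int_\alpha^\gamma \sqrt{P(f)}/f\,\dd f = 0$; by Proposition~\ref{prop:JordanCurves}(3) this holds precisely when $y_0\in\HermiteExterior(\kappa)$, and the relevant equilibrium is analytic there. This is the step I expect to be the main obstacle: verifying the global sign inequalities on all of $\HermiteExterior(\kappa)$ (not merely near a branch point or near infinity) requires a careful study of the trajectory structure of the quadratic differential in \eqref{eq:PhiPrimeSquared}, using the Schwarz symmetries, the large-$y_0$ asymptotics \eqref{eq:equilibria-near-infinity}, and a topological/continuation argument across the parameter range $\kappa\in\mathbb{R}\setminus\{-1,1\}$ reduced via the symmetries \eqref{eq:geometric-symmetries} to $\kappa\in(-1,1)$.

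After the $g$-function is in place, the remaining steps are standard but must be carried through with care. One opens lenses around the band(s) where the jump is oscillatory, obtaining a Riemann--Hilbert problem whose jumps are $I + (\text{exponentially small})$ away from a neighborhood of the double root $z=\gamma$ and of the singular point $z=0$; because the ``band'' degenerates (the double root means there is no genuine cut opening, only a single stationary point colliding with the branch points), the global parametrix is simply a rational/algebraic matrix solving the model problem with the prescribed $z=0$ and $z=\infty$ behavior, and one needs a local parametrix near $z=0$ built from elementary functions (no Airy or Bessel parametrix is needed in the exterior regime). One then shows the error (ratio) Riemann--Hilbert problem has jump $I + \bo(T^{-1})$ uniformly for $y_0$ in closed subsets of $\HermiteExterior(\kappa)$, with the estimate improving as $y_0\to\infty$ since the stationary point recedes; small-norm theory gives the solution as $I + \bo(T^{-1})$. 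Finally, tracing the chain of explicit substitutions back and reading off $u$ from the appropriate entry of the reconstructed matrix (via the formula relating $u$ to the recovered potential, as in \eqref{eq:u-ucirc}), one obtains $u_\mathrm{gH}^{[j]}(x;m,n) = T^{1/2}(\dot U + \bo(T^{-1}))$ with $\dot U = U_{0,\mathrm{gH}}^{[j]}(y_0;\kappa)$, and the uniformity and decay-at-infinity claims follow from the corresponding properties of the error estimate. The type-1 result is then recovered from the type-3 analysis by the coordinate change induced by \eqref{eq:symmetry-1-from-3}, using the explicit index shift $(m,n)\mapsto(m,n+1)$ and the relabeling of $\kappa_\infty$.
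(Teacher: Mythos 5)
Your overall architecture matches the paper's: reduce to types $1$ and $3$ via \eqref{eq:symmetry-1-2}, extract both $u$ and $u_\tw$ from the single type-$3$ Riemann--Hilbert problem via \eqref{eq:u-ucirc}, build $g$ from the class-$\{211\}$ spectral curve whose double root is the equilibrium $U_{0,\mathrm{gH}}^{[j]}(y_0;\kappa)$, control the sign structure of $\mathrm{Re}(h)$ by a trajectory analysis of quadratic differentials, and finish with small-norm theory. However, there is a concrete error in your parametrix construction that would make the error analysis fail as written. You assert that ``no Airy or Bessel parametrix is needed in the exterior regime'' because the double root means ``there is no genuine cut opening,'' and that instead a local parametrix is needed at $z=0$. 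This has the situation backwards. In class $\{211\}$ the quartic $P(z)$ has the double root $\gamma$ \emph{and two distinct simple roots} $\alpha,\beta$; the branch cut $\Sigma_\mathrm{c}$ of $R(z)=\sqrt{P(z)}$ joins $\alpha$ to $\beta$, the outer parametrix \eqref{eq:GenusZero-Outer-GH} is $\mathbf{S}_\mathrm{gH}\,j(z)^{\sigma_3}\mathbf{S}_\mathrm{gH}^{-1}$ with $j(z)^4=(z-\alpha)/(z-\beta)$ and hence blows up like $(z-p)^{-1/4}$ at $p=\alpha,\beta$, and the lens jump matrices lose their exponential decay precisely as $z\to\alpha,\beta$ where $h$ vanishes to order $\tfrac{3}{2}$. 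Consequently the ratio of $\mathbf{O}$ to the outer parametrix alone does \emph{not} have jumps uniformly close to $\mathbb{I}$ near $\alpha,\beta$, and small-norm theory cannot be applied. The paper installs Airy-function inner parametrices in fixed disks $D_\alpha,D_\beta$ (cf.\ \eqref{eq:GenusZero-inner-parametrices-define} and Table~\ref{tab:Genus-Zero-Hermite-Airy}), and it is the matching estimate \eqref{eq:GenusZero-Mismatch} on $\partial D_{\alpha,\beta}$ that produces the $\bo(T^{-1})$ jump for the error problem and hence the $\bo(T^{-1})$ error in \eqref{eq:HermiteExterior123}.

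Conversely, no local parametrix is needed at $z=0$: the residue $-s$ of $g'$ there, built into \eqref{eq:gprime-asymp} and the substitution \eqref{eq:NfromM}, removes the jump across $\Sigma_0$ and renders the singularity of $\mathbf{N}(z)$ at the origin removable, so the origin lies in the interior of the circle domain where nothing degenerates. The point $z=\gamma$ likewise needs no special treatment in the exterior regime, but for the reason that $\mathrm{Re}(h(\gamma))\neq 0$ there (which is exactly the condition defining $\HermiteExterior(\kappa)$), not because the branch-point structure has disappeared. One further small conflation: the sign chart of $\mathrm{Re}(h(z))$ for fixed $y_0$ is governed by the $z$-plane quadratic differential $h'(z)^2\,\dd z^2$; the rational quadratic differential \eqref{eq:PhiPrimeSquared} lives in the uniformizing $t$-plane and is used only to locate the set of $y_0$ where $\mathrm{Re}(h(\gamma))=0$, i.e.\ the boundary $\partial\HermiteExterior(\kappa)$. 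With the Airy parametrices restored at $\alpha$ and $\beta$ and the local analysis at $z=0$ deleted, your outline coincides with the paper's proof.
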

The pseudo-orthogonal polynomial method has not yet been successfully applied to the gO family of rational solutions, but the isomonodromy approach applies just as well, yielding the following result.
\begin{theorem}[Equilibrium asymptotics of gO rationals]
Let $\rho>0$ be a fixed rational aspect ratio, and recall that $\OkamotoExterior(\kappa)$ denotes the unbounded exterior of the Jordan curve $\partial\OkamotoExterior(\kappa)$.  Then for $j=1,2,3$, as $m,n\to \pm\infty$ with $n= \rho m$, 
\begin{multline}
u_\mathrm{gO}^{[j]}(x;m,n)=T^\frac{1}{2}\left(\dot{U}+\bo(T^{-1})\right),\quad\dot{U}=U_{0,\mathrm{gO}}\left(y_0;\kappa\right),\quad x=T^\frac{1}{2}y_0,\quad T=|\Theta_{0,\mathrm{gO}}^{[j]}(m,n)|,\\
y_0\in\OkamotoExterior(\kappa),\quad
\kappa=-\frac{\Theta_{\infty,\mathrm{gO}}^{[j]}(m,n)}{T}\to\kappa_\infty=\pm\kappa_\infty^{[j]}(\rho).
\label{eq:OkamotoExterior123}
\end{multline}
In each case (type $j=1,2,3$) the error term becomes uniform if $\OkamotoExterior(\kappa)$ is replaced with a closed (possibly unbounded) subset thereof, and the error tends to zero as $y_0\to\infty$.
\label{thm:OkamotoExterior}
\end{theorem}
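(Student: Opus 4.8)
The plan is to run the Deift-Zhou steepest-descent method on the Riemann-Hilbert problem encoding the gO rational solutions (constructed in Appendix~\ref{app:Isomonodromy}, outlined in Section~\ref{sec:intro-Methodology}), in close parallel with the isomonodromy proof of Theorem~\ref{thm:HermiteExterior}. Since the gO monodromy data is the same at every lattice point, since the Boiti-Pempinelli relation \eqref{eq:symmetry-1-2} expresses every type-$2$ solution through a type-$1$ solution, and since the Lukashevich-Gromak transformation \eqref{eq:symmetry-1-from-3} relates type-$1$ and type-$3$ solutions --- realized, as promised around \eqref{eq:u-ucirc}, by reading both off the \emph{same} Riemann-Hilbert problem without differentiation --- it suffices to analyze one gO Riemann-Hilbert problem with type-$3$ parameters. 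The three type statements then follow by rewriting $T=|\Theta_{0,\mathrm{gO}}^{[j]}(m,n)|$ and $\kappa=-\Theta_{\infty,\mathrm{gO}}^{[j]}(m,n)/T$ in the natural type-$j$ variables, using that the limiting equilibrium $U_{0,\mathrm{gO}}(y_0;\kappa)$ carries no type index; the two signs $m,n\to\pm\infty$, i.e.\ $\kappa\to\pm\kappa_\infty^{[j]}(\rho)$, are exchanged by the $\mathcal{S}_\updownarrow$-type symmetry together with $\partial\OkamotoExterior(-\kappa)=\ii\partial\OkamotoExterior(\kappa)$ and \eqref{eq:equilibrium-symmetry}, so one sign is treated directly and the other transferred.

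The core is the $g$-function. After rescaling $x=T^{\frac{1}{2}}y_0$ together with the auxiliary spectral parameter so that, following the preliminary transformations of Section~\ref{sec:Basic-Principles}, the jump exponents have size $T$, I would introduce the genus-zero $g$-function attached to the spectral curve of Section~\ref{sec:SpectralCurve}, namely with $g'(z)$ proportional to $\sqrt{P(z;y_0,\kappa,E)}/z$, where $E=E(y_0;\kappa)$ is given by \eqref{eq:E-for-double-roots}. By Remark~\ref{rem:equilibrium-rederive} this quartic $P$ has a persistent double root at $\gamma=U_{0,\mathrm{gO}}(y_0;\kappa)$, the branch of \eqref{eq:equilibrium} analytic throughout $\OkamotoExterior(\kappa)$ by Proposition~\ref{prop:JordanCurves}(3), and two simple roots; hence $g'$ has square-root branch points only at those two simple roots, joined by a single band, while $z=\gamma$ is a regular point and $g'$ has the prescribed singular behavior at the Lax-pair singularities $z=0$ and $z=\infty$. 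Conjugating by $g$ turns the jumps off the band into $\mathbf{I}+\bo(\ee^{-cT})$ on suitable lens contours provided the real part of the relevant antiderivative of $\sqrt{P(z)}/z$ (the integrand in \eqref{eq:intro-arcs}) has the correct sign along them; by the description in Section~\ref{sec:OutsideDomain} of $\partial\OkamotoExterior(\kappa)$ as conformal images of trajectories of the quadratic differential \eqref{eq:PhiPrimeSquared}, together with Proposition~\ref{prop:JordanCurves}, these inequalities hold simultaneously for all $y_0\in\OkamotoExterior(\kappa)$ and first degenerate exactly on $\partial\OkamotoExterior(\kappa)$, where the condition \eqref{eq:intro-arcs} is met.

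With the $g$-function in hand the remainder is routine: open lenses; build the global outer parametrix as an explicit algebraic function of $z$ from the genus-zero spectral curve, analytic away from the band and matching the required behavior at $z=0$ and $z=\infty$; and install Airy parametrices in small disks about the two simple branch points. The ratio of the transformed unknown and its parametrix then solves a small-norm problem whose jump is $\mathbf{I}+\bo(T^{-1})$ (the Airy mismatch on the disk boundaries) plus exponentially small contributions, so it equals $\mathbf{I}+\bo(T^{-1})$, uniformly for $y_0$ in closed subsets of $\OkamotoExterior(\kappa)$ and with the estimate improving as $y_0\to\infty$ because the effective potential grows there. Extracting $u_\mathrm{gO}^{[j]}(x;m,n)$ from the appropriate expansion coefficients of the Riemann-Hilbert solution at a singular point (no differentiation, cf.\ \eqref{eq:u-ucirc}) and using that the outer parametrix reproduces $T^{\frac{1}{2}}\gamma(y_0;\kappa)$ at leading order gives $u_\mathrm{gO}^{[j]}(x;m,n)=T^{\frac{1}{2}}(U_{0,\mathrm{gO}}(y_0;\kappa)+\bo(T^{-1}))$.

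The hard part, I expect, is the combinatorial design of the $g$-function transformation and the lens opening for the gO problem, whose Stokes structure is richer than in the gH case: one must choose the band together with the lens and gap contours so that the \emph{several} sign inequalities required for a near-identity deformation hold at once throughout $\OkamotoExterior(\kappa)$, and prove that the first of them to fail does so precisely on the eight-arc Jordan curve $\partial\OkamotoExterior(\kappa)$ (against four arcs for gH). This amounts to controlling the global trajectory structure of the quadratic differential \eqref{eq:PhiPrimeSquared} and matching it to Proposition~\ref{prop:JordanCurves}; once that global sign picture is in place the parametrix construction and small-norm estimates proceed as in the gH analysis, and the only remaining care is in translating the type-$3$ output into the natural type-$1$ (and thence type-$2$) parameters and in checking the two $\kappa$-sign cases.
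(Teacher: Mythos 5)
Your proposal is correct and follows essentially the same route as the paper: reduction to one gO Riemann--Hilbert problem with type-$3$ parameters (types $1$ and $3$ read off via $u$ and $u_\tw$ from \eqref{eq:u-ucirc}, type $2$ via \eqref{eq:symmetry-1-2}), a genus-zero $g$-function built on the class-$\{211\}$ spectral curve with persistent double root $\gamma=U_{0,\mathrm{gO}}(y_0;\kappa)$, trajectory analysis of the quadratic differentials to place the lenses and to identify $\partial\OkamotoExterior(\kappa)$ as the locus where $\mathrm{Re}(h(\gamma))=0$, Airy parametrices at the two simple roots, a small-norm estimate, and Lemma~\ref{lem:gamma-identity} to recast the type-$1$ output in its native parameters. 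You also correctly identify the genuinely delicate part, namely the global sign/trajectory bookkeeping for the richer gO Stokes structure (which in the paper requires the two ``leftward''/``downward'' contour configurations to reach the coordinate axes).
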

An elementary corollary of these results is the following, in which $\mathrm{F}=\mathrm{gH}$ or $\mathrm{F}=\mathrm{gO}$, and type $j=1,2,3$ is arbitrary.
\begin{corollary}[Pole- and zero-free regions for Painlev\'e-IV rational solutions]
Let $C\subset\mathcal{E}_\mathrm{F}(\kappa_\infty)$ be closed (note that $\kappa_\infty$ is a different function of aspect ratio $\rho$ for different types $j=1,2,3$).  Then $u_\mathrm{F}^{[j]}(x;m,n)$ has no poles or zeros for $y_0\in C$ if $m,n$ are sufficiently large and $n=\rho m$.
\label{cor:NoPolesZeros}
\end{corollary}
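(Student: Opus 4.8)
The plan is to combine the uniform equilibrium approximation of Theorems~\ref{thm:HermiteExterior} and~\ref{thm:OkamotoExterior} with two elementary algebraic observations about the quartic \eqref{eq:equilibrium}. Its leading coefficient $\tfrac32$ never vanishes, so for every $y_0\in\mathbb{C}$ and every $\kappa$ all four roots $U_0$ of \eqref{eq:equilibrium} are finite; and its constant term $-8$ never vanishes, so none of those roots is ever equal to $0$. Consequently each of the equilibrium branches $U_{0,\mathrm{gH}}^{[1]}$, $U_{0,\mathrm{gH}}^{[2]}$, $U_{0,\mathrm{gH}}^{[3]}$, $U_{0,\mathrm{gO}}$ appearing in Proposition~\ref{prop:JordanCurves}(3) takes finite, nonzero values at every point of its domain of analyticity $\mathcal{E}_\mathrm{F}(\kappa)$.

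Next I would set up the limit. Let $C\subset\mathcal{E}_\mathrm{F}(\kappa_\infty)$ be closed and let $|m|,|n|\to\infty$ with $n=\rho m$, so that the actual parameter $\kappa=-\Theta_{\infty,\mathrm{F}}^{[j]}(m,n)/T$ tends to $\kappa_\infty$. The Jordan curves $\partial\mathcal{E}_\mathrm{F}(\kappa)$ are bounded and vary continuously with $\kappa$ (by their construction in Section~\ref{sec:OutsideDomain} from trajectories of a rational quadratic differential whose coefficients depend continuously on $\kappa$), so for all sufficiently large $|m|,|n|$ one has $C\subset\mathcal{E}_\mathrm{F}(\kappa)$; moreover the root $\dot U(\cdot\,;\kappa)$ of \eqref{eq:equilibrium} picked out by the relevant branch converges uniformly on $C$ to $\dot U(\cdot\,;\kappa_\infty)$. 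Then Theorem~\ref{thm:HermiteExterior} or~\ref{thm:OkamotoExterior} applies on $C$ and gives
\[
u_\mathrm{F}^{[j]}\bigl(T^{\frac12}y_0;m,n\bigr)=T^{\frac12}\bigl(\dot U(y_0;\kappa)+\varepsilon(y_0)\bigr),\qquad \sup_{y_0\in C}|\varepsilon(y_0)|=\bo(T^{-1}),
\]
with, in addition, $\varepsilon(y_0)\to 0$ as $y_0\to\infty$ along $C$.

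The conclusion for poles is then immediate: at every finite $y_0\in C$ the right-hand side above is finite, so $u_\mathrm{F}^{[j]}$ has no pole at $x=T^{\frac12}y_0$. For zeros, a zero of $u_\mathrm{F}^{[j]}$ at $x=T^{\frac12}y_0$ would force $\dot U(y_0;\kappa)=-\varepsilon(y_0)$, hence $|\dot U(y_0;\kappa)|\le C_0T^{-1}$ for a fixed constant $C_0$; so it suffices to show $\dot U(\cdot\,;\kappa_\infty)$ is bounded away from $0$ on $C$, since then the uniform convergence $\dot U(\cdot\,;\kappa)\to\dot U(\cdot\,;\kappa_\infty)$ makes $|\dot U(\cdot\,;\kappa)|$ eventually exceed $C_0T^{-1}$ throughout $C$. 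When the relevant branch is $U_{0,\mathrm{gH}}^{[3]}$ or $U_{0,\mathrm{gO}}$, the asymptotics \eqref{eq:equilibria-near-infinity} give $|\dot U(y_0;\kappa_\infty)|\to\infty$ as $y_0\to\infty$, so together with continuity and the nonvanishing just established this positive lower bound holds whether or not $C$ is bounded, and the proof is complete in those cases.

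The one genuinely delicate point — the main (if modest) obstacle — is the case of the branches $U_{0,\mathrm{gH}}^{[1]}$ and $U_{0,\mathrm{gH}}^{[2]}$, which by \eqref{eq:equilibria-near-infinity} decay like $\pm 2y_0^{-1}$ at infinity: when $C$ is unbounded one has $\inf_{C}|\dot U(\cdot\,;\kappa_\infty)|=0$, and the crude bound $|\varepsilon|\le C_0T^{-1}$ no longer separates $u$ from $0$ on the part of $C$ where $|y_0|$ grows with $T$. To close this gap I would split $C$ at a large fixed radius $R$: on $C\cap\{|y_0|\le R\}$ the preceding argument applies verbatim, while on $C\cap\{|y_0|\ge R\}$ I would invoke the explicit large-$x$ behavior \eqref{eq:gHsolutionsLarge-x} together with the confinement of all zeros and poles of $u_\mathrm{gH}^{[1,2]}(\cdot;m,n)$ to a $y_0$-disk of radius $\bo(1)$ — itself a consequence of the curvilinear-rectangle description proved here, or alternatively of an elementary Cauchy-type bound on the roots of the gH polynomial factors in the numerators listed in Table~\ref{tab:gH} — so that, for $R$ large enough, $u_\mathrm{gH}^{[1,2]}$ has no zeros with $|y_0|\ge R$, uniformly in $m,n$. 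Everything else in the argument is routine.
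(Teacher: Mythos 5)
Most of your argument coincides with the paper's. Absence of poles from the uniform equilibrium approximation is exactly the paper's first step; for zeros the paper invokes the argument principle while you use a direct lower bound on $|\dot U|$, but on a set where the equilibrium is bounded away from zero these are interchangeable, and your preliminary observations (nonvanishing constant term of \eqref{eq:equilibrium}, continuity in $\kappa$ of the domains and of the selected root) are all sound. You have also correctly isolated the only genuinely delicate case, namely $U_{0,\mathrm{gH}}^{[1]}$ and $U_{0,\mathrm{gH}}^{[2]}$ on an unbounded $C$, where the equilibrium itself decays like $2y_0^{-1}$.

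It is in that case that your proposal has a real gap. Your first suggested fix — confinement of all zeros and poles of $u_{\mathrm{gH}}^{[1,2]}$ to a bounded $y_0$-region ``as a consequence of the curvilinear-rectangle description proved here'' — is circular: that confinement is precisely the content of the corollary you are proving. Your second fix, a Cauchy-type bound on the roots of the gH polynomial factors that is uniform in $(m,n)$ after the $T^{1/2}$ rescaling, would indeed work but is asserted rather than proved, and it is not an off-the-shelf estimate. The ingredient the paper actually uses is one you already wrote down but did not exploit at full strength: Theorem~\ref{thm:HermiteExterior} states not only uniformity of the $\bo(T^{-1})$ error on closed unbounded subsets but also that the error tends to zero as $y_0\to\infty$, and this decay is established precisely by matching \eqref{eq:gHsolutionsLarge-x} against \eqref{eq:equilibria-near-infinity} — a comparison of the form $u=-2\Theta_0 x^{-1}(1+o(1))$ versus $U_{0,\mathrm{gH}}^{[1]}=2y_0^{-1}(1+o(1))$, which shows the error is $o(y_0^{-1})$, i.e.\ small \emph{relative to} the equilibrium. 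With that relative bound your own estimate $\dot U(y_0;\kappa)=-\varepsilon(y_0)$ is immediately contradicted for $|y_0|\ge R$ as well (equivalently, in the paper's phrasing, the simple zero of the equilibrium at $y_0=\infty$ cannot be perturbed into the finite plane), and no auxiliary root localization for the gH polynomials is needed.
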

\begin{proof}
For the gO family, the uniform convergence on $C$ to an analytic equilibrium guaranteed by Theorem~\ref{thm:OkamotoExterior} proves the absence of poles.  Then the argument principle implies the absence of zeros, since the equilibrium $U_{0,\mathrm{gO}}(y_0;\kappa)$ is analytic and bounded away from zero on $F$ according to \eqref{eq:equilibrium} and the asymptotic $U_{0,\mathrm{gO}}(y_0;\kappa)\sim -\tfrac{2}{3}y_0$ as $y_0\to\infty$.

For the gH family, the proof is similar, except that the equilibria $U_{0,\mathrm{gH}}^{[1]}(y_0;\kappa)$ and $U_{0,\mathrm{gH}}^{[2]}(y_0;\kappa)$ vanish as $y_0\to\infty$, being proportional to $y_0^{-1}$.  In these cases, however, the fact that the error term vanishes as $y_0\to\infty$ shows that the simple zero of the equilibrium at infinity cannot be perturbed into the finite $y_0$-plane should $C$ be taken to be unbounded.
\end{proof}

The fact that the error terms in Theorems~\ref{thm:HermiteExterior} and \ref{thm:OkamotoExterior} vanish as $y_0\to\infty$ follows by comparing the known large-$x$ asymptotic behavior of the rational solution (see \eqref{eq:gHsolutionsLarge-x} and \eqref{eq:gOsolutionsLarge-x}) with the large-$y_0$ behavior of the leading terms (see \eqref{eq:equilibria-near-infinity}).  Note that combining the symmetries \eqref{eq:symmetry-1-2} and \eqref{eq:equilibrium-symmetry} with the fact following from Proposition~\ref{prop:JordanCurves} that 
$\HermiteExterior(-\kappa)=\ii\HermiteExterior(\kappa)$ and $\OkamotoExterior(-\kappa)=\ii\OkamotoExterior(\kappa)$ holds for all $\kappa\in\mathbb{R}\setminus\{-1,1\}$, formul\ae\ \eqref{eq:HermiteExterior123} and \eqref{eq:OkamotoExterior123} for $j=2$ follow from the same formul\ae\ with $j=1$.  For rational functions of types $j=1,3$, the asymptotic formul\ae\ in Theorems~\ref{thm:HermiteExterior} and \ref{thm:OkamotoExterior} are proved in Sections~\ref{sec:Exterior}--\ref{sec:Exterior-gH}.  The domains of validity of these formul\ae\ are obtained in Section~\ref{sec:OutsideDomain} by determining exactly which arcs generated by \eqref{eq:intro-arcs} are relevant.  There the domains $\HermiteExterior(\kappa)$ and $\OkamotoExterior(\kappa)$ are precisely specified in 
Definition~\ref{def:ExteriorDomains}, and uniformity of convergence is discussed in Section~\ref{sec:Exterior-Uniformity}.

\subsubsection{Nonequilibrium asymptotics of Painlev\'e-IV rational solutions}
\label{sec:intro-results-nonequilibrium}
The Jordan curve $\partial\HermiteExterior(\kappa)$ lies in the closure of the interior of $\partial\OkamotoExterior(\kappa)$, and it intersects the Jordan curve $\partial\OkamotoExterior(\kappa)$ only at the four branch points (solutions $y_0$ of $B(y_0;\kappa)=0$; see \eqref{eq:branch-points}) that do not lie on the real or imaginary axes.  The interior of $\partial\HermiteExterior(\kappa)$ is a domain that we call $\rectangle(\kappa)$, and the intersection of $\HermiteExterior$ with the interior of $\partial\OkamotoExterior(\kappa)$ is a disjoint union of four domains, each of which intersects just one of the four coordinate axes in the $y_0$-plane.  We call the domain intersecting the positive real (resp.\@ imaginary) axis $\TR(\kappa)$ (resp.\@ $\TI(\kappa)$).  The domain $\rectangle(\kappa)$ is a curvilinear rectangle, while the domains $\TR(\kappa)$ and $\TI(\kappa)$ are curvilinear triangles.  Note that although Proposition~\ref{prop:triangles} asserts that the vertices of $\TR(\kappa)$ and $\TI(\kappa)$ are those of exact equilateral triangles, their edges are analytic arcs that neither are straight-line segments, nor are symmetric under rotation about the center by $\tfrac{2}{3}\pi$ radians.  See the upper right-hand panel of Figure~\ref{fig:Domains}.  The remaining two domains are then $-\TR(\kappa)$ and $-\TI(\kappa)$ (reflections through the origin).  We now describe the gH rational solutions of Painlev\'e-IV for values of $x$ corresponding to $y_0\in\rectangle(\kappa)$ and the gO rational solutions for $x$ corresponding to $y_0\in\rectangle(\kappa)\cup\TR(\kappa)\cup\TI(\kappa)$.  In light of Theorems~\ref{thm:HermiteExterior}--\ref{thm:OkamotoExterior} and the odd and Schwarz-reflection symmetries of every rational solution $u(x)$ described in Proposition~\ref{prop:FirstQuadrant}, this exhausts the complex $y_0$-plane except for a neighborhood of the curve $\partial\HermiteExterior$ and (for the gO family only) of the curve $\partial\OkamotoExterior$.  

Recall the quartic polynomial $P(f)=P(f;y_0,\kappa,E)$ defined in \eqref{eq:elliptic-ODE}.  Given $\kappa\in\mathbb{R}\setminus\{-1,1\}$ and $y_0\in\rectangle(\kappa)\cup\TR(\kappa)\cup\TI(\kappa)$, there is a specific value of $E\in\mathbb{C}$ such that the conditions (compare with \eqref{eq:intro-arcs})
\eq
\mathrm{Re}\left(\oint_\mathfrak{a}\frac{\sqrt{P(f)}}{f}\,\dd f\right)=\mathrm{Re}\left(\oint_\mathfrak{b}\frac{\sqrt{P(f)}}{f}\,\dd f\right)=0
\label{eq:intro-Boutroux}
\endeq
both hold.  The conditions \eqref{eq:intro-Boutroux} also appear in the Masoero-Roffelsen theory of zeros of gH polynomials, where they serve to define a mapping $\mathcal{S}$ (see \cite[Eqn.\@ (9)]{MasoeroR19}) used to localized the zeros.  Note that these integrals are independent of path on the Riemann surface of $w^2=P(z)$ because the differential in the integrand, while singular over $z=0,\infty$ has purely real residues.  Taken together, they are also independent of specific choice of basis of cycles $\mathfrak{a}$ and $\mathfrak{b}$.  The specific value we use is denoted $E=E(y_0;\kappa)$ and is properly defined in Definition~\ref{def:E} below.
The most important properties of $E(y_0;\kappa)$ are the following.
\begin{proposition}
For each $\kappa\in \mathbb{R}\setminus\{-1,1\}$ and $y_0\in\rectangle(\kappa)\cup\TR(\kappa)\cup\TI(\kappa)$, the quartic $P(f;y_0,\kappa,E(y_0;\kappa))$ satisfies the conditions \eqref{eq:intro-Boutroux}.  If $\kappa$ is fixed, the function $y_0\mapsto E$ is smooth but not analytic on each component of its domain, and it extends continuously to $\partial\OkamotoExterior(\kappa)$.    The functions $y_0\mapsto E$ are related for different $\kappa$ by the following symmetries:
\eq
E(y_0;I^\pm(\kappa))=\left(\frac{2}{1\pm\kappa}\right)^\frac{3}{2}\left[E\left(\sqrt{\tfrac{1}{2}(1\pm\kappa)}y_0;\kappa\right)-4(\kappa\mp 1)\sqrt{\tfrac{1}{2}(1\pm\kappa)}y_0\right],\quad I^\pm(\kappa):=-\frac{\kappa\mp 3}{1\pm\kappa},
\label{eq:E-twist-symmetry}
\endeq
\eq
E(y_0;-\kappa)=\ii E(\ii y_0;\kappa).
\label{eq:E-flip-symmetry}
\endeq
In particular, the latter symmetry implies that $E(-y_0;\kappa)=-E(y_0;\kappa)$, i.e., $E$ is an odd function of $y_0$ for each $\kappa\in\mathbb{R}\setminus\{-1,1\}$.
\label{prop:E}
\end{proposition}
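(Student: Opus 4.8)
The first assertion will hold by the very construction of $E(y_0;\kappa)$ in Definition~\ref{def:E}; I address the remaining points in turn.

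\emph{Smoothness and boundary continuity.} I would apply the implicit function theorem to the real system coming from \eqref{eq:intro-Boutroux}. Write $I_\mathfrak{a}(y_0,E):=\oint_\mathfrak{a}f^{-1}\sqrt{P(f;y_0,\kappa,E)}\,\dd f$ and similarly $I_\mathfrak{b}$; for $y_0$ interior to $\rectangle(\kappa)\cup\TR(\kappa)\cup\TI(\kappa)$ the four roots of $P$ are distinct, so $w^2=P(z)$ has genus one and these integrals are holomorphic in $(y_0,E)$ along a fixed local choice of a basis $\mathfrak{a},\mathfrak{b}$. Since $\partial_EP=2f$ we get $\partial_EI_\mathfrak{a}=\oint_\mathfrak{a}\dd f/\sqrt P=Z_\mathfrak{a}$ and $\partial_EI_\mathfrak{b}=Z_\mathfrak{b}$, the two periods of the holomorphic differential of \eqref{eq:elliptic-periods}; as these generate the period lattice they are $\mathbb{R}$-linearly independent, so the $2\times2$ real Jacobian of $(\mathrm{Re}\,I_\mathfrak{a},\mathrm{Re}\,I_\mathfrak{b})$ in $(\mathrm{Re}\,E,\mathrm{Im}\,E)$ has determinant $-\mathrm{Im}(\overline{Z_\mathfrak{a}}Z_\mathfrak{b})\neq0$. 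The implicit function theorem then produces a real-analytic (hence smooth) local solution $y_0\mapsto E$, which is the function of Definition~\ref{def:E}. For the continuous extension to $\partial\OkamotoExterior(\kappa)$ I would track the degeneration: as $y_0$ tends to the boundary a pair of simple roots of $P$ coalesces to the double root $\gamma=U_0$, the vanishing-cycle period $I_\mathfrak{a}\to0$, and $I_\mathfrak{b}$ converges to $2\int_{\alpha}^{\gamma}f^{-1}\sqrt P\,\dd f$, so that in the limit the conditions \eqref{eq:intro-Boutroux} collapse to \eqref{eq:intro-arcs}, the defining relation of $\partial\OkamotoExterior(\kappa)$, which is solved by the equilibrium value of $E$ from Remark~\ref{rem:equilibrium-rederive}. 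The uniform estimates through the pinch needed to conclude that $E(y_0;\kappa)$ actually approaches this value come from the analysis of Section~\ref{sec:Boutroux}.

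\emph{Non-analyticity.} Suppose $y_0\mapsto E$ were holomorphic on some open subset of a component of its domain. Then $y_0\mapsto I_\mathfrak{a}$ and $y_0\mapsto I_\mathfrak{b}$ would be holomorphic there with identically vanishing real part, hence equal to purely imaginary constants, so $I_\mathfrak{a}'\equiv I_\mathfrak{b}'\equiv0$. Differentiating under the integral sign and using $\partial_{y_0}P=4f^3+8y_0f^2$ gives $I_\bullet'=\Pi_\bullet+Z_\bullet E'$, where $\Pi_\bullet:=\oint_\bullet(2f^2+4y_0f)P^{-1/2}\,\dd f$ is the $\bullet$-period of a differential $\eta$ of the second kind having double poles and no residue at the two points over $f=\infty$. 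Eliminating $E'$ yields $\Pi_\mathfrak{a}Z_\mathfrak{b}-\Pi_\mathfrak{b}Z_\mathfrak{a}=0$. But the Riemann bilinear relation identifies $\Pi_\mathfrak{a}Z_\mathfrak{b}-\Pi_\mathfrak{b}Z_\mathfrak{a}$ with $2\pi\ii$ times the sum of residues of $\eta$ against a local antiderivative of $\dd f/\sqrt P$ at those two points, and a short expansion there (with $\sqrt P\sim\pm f^2$) shows the sum equals $4$, so $\Pi_\mathfrak{a}Z_\mathfrak{b}-\Pi_\mathfrak{b}Z_\mathfrak{a}=8\pi\ii\neq0$, a contradiction. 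Hence $E(y_0;\kappa)$ is nowhere analytic in $y_0$.

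\emph{Symmetries.} Both symmetries will follow from transformation properties of the quartic together with uniqueness of the selected Boutroux solution. For \eqref{eq:E-flip-symmetry}, a direct substitution gives the polynomial identity $P(\ii g;\ii y_0,\kappa,E)=P(g;y_0,-\kappa,\ii E)$, and since $f=\ii g$ leaves $f^{-1}\sqrt P\,\dd f$ invariant it carries a basis of cycles for parameters $(\ii y_0,\kappa,E)$ to one for $(y_0,-\kappa,\ii E)$; because the pair of conditions \eqref{eq:intro-Boutroux} is independent of the choice of basis, $E$ solves the Boutroux system for $(\ii y_0,\kappa)$ if and only if $\ii E$ solves it for $(y_0,-\kappa)$. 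Checking that the rotation $y_0\mapsto\ii y_0$ matches the components $\rectangle,\TR,\TI$ for $\kappa$ with those for $-\kappa$ compatibly with the selection rule of Definition~\ref{def:E} then gives $E(y_0;-\kappa)=\ii E(\ii y_0;\kappa)$, and iterating produces $E(-y_0;\kappa)=-E(y_0;\kappa)$. For \eqref{eq:E-twist-symmetry} I would run the same scheme using the change of variable that realizes the Lukashevich--Gromak transformation $\mathcal{S}_\tw$ at the level of the rescaled spectral curve \eqref{eq:spectral-curve}: under this substitution $f^{-1}\sqrt{P(f;y_0',\kappa,E')}\,\dd f$ pulls back, up to the positive real factor $\sqrt{2/(1\pm\kappa)}$ and a meromorphic differential whose periods have vanishing real part (this extra piece accounting for the linear term $-4(\kappa\mp1)\sqrt{\tfrac12(1\pm\kappa)}\,y_0$), to the corresponding differential for parameters $(y_0,I^\pm(\kappa))$, so that \eqref{eq:intro-Boutroux} transfers; uniqueness of the selected $E$ then yields \eqref{eq:E-twist-symmetry}.

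\emph{Main obstacles.} I expect two sticking points. First, the continuous extension to $\partial\OkamotoExterior(\kappa)$: there the implicit-function Jacobian degenerates ($Z_\mathfrak{a}\to0$) and one genuinely needs uniform control of the period integrals as the genus-one curve pinches, which is why this step leans on Section~\ref{sec:Boutroux}. Second, pinning down the precise change of variable for $\mathcal{S}_\tw$ on the spectral curve and verifying that the extraneous differential it generates contributes exactly the shift $-4(\kappa\mp1)\sqrt{\tfrac12(1\pm\kappa)}\,y_0$ in \eqref{eq:E-twist-symmetry}; once that is in hand the remainder is bookkeeping with the selection rule and a routine residue computation.
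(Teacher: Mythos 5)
Your proposal is correct and, for the core items, follows the same route as the paper: the implicit-function-theorem argument with Jacobian $\tfrac{1}{16}\mathrm{Im}(Z_\mathfrak{a}Z_\mathfrak{b}^*)\neq 0$ is exactly Section~\ref{sec:BoutrouxDomains}, the boundary behavior via pinching of a cycle is Section~\ref{sec:DegenerateStokesGraphs}, and the flip symmetry via $f\mapsto\ii f$ plus uniqueness of continuation from the fixed base point is how Sections~\ref{sec:rectangle-domain}--\ref{sec:TI} obtain \eqref{eq:E-flip-symmetry}. Two points of genuine divergence are worth noting. First, the paper makes \eqref{eq:E-twist-symmetry} the \emph{definition} of $E(y_0;\kappa)$ for $|\kappa|>1$ (Definition~\ref{def:E}), so the burden shifts to showing the so-defined $E$ still satisfies \eqref{eq:intro-Boutroux}; that is precisely the identity \eqref{eq:twist-Boutroux-identity}, obtained from the explicit birational map \eqref{eq:dotUtwist-algebraic}, \eqref{eq:wtwist-algebraic} together with \eqref{eq:E-twist} (integration by parts kills the polynomial terms by Cauchy's theorem, and the $1/\dot U$ terms contribute only real residues, hence imaginary period contributions). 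This is the same computation you sketch as your second ``sticking point,'' just packaged so that nothing remains to prove about the symmetry itself; if you keep your formulation you must also establish existence and uniqueness of a Boutroux solution on the dilated domains for $|\kappa|>1$ before ``uniqueness of the selected $E$'' can be invoked. Second, your Riemann-bilinear-relations argument for non-analyticity (holomorphy of $E$ would force $\Pi_\mathfrak{a}Z_\mathfrak{b}-\Pi_\mathfrak{b}Z_\mathfrak{a}=0$, contradicting the residue count $8\pi\ii$) is a clean, self-contained proof of a claim the paper asserts but does not spell out; the computation checks out and is a worthwhile addition.
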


Setting $E=E(y_0;\kappa)$ in the differential equation \eqref{eq:elliptic-ODE}, let $f(\zeta)=f(\zeta;y_0,\kappa)$ denote the unique solution of this equation satisfying $f(0)=0$ and $f'(0)=4$, and let $\zeta_\infty^\pm$ denote any two poles of $f(\zeta)$ of residue $\pm 1$.  
The next results concern the approximation of the rational Painlev\'e-IV solutions by suitable phase shifts of the elliptic function $f$.  Define an exponent by
\eq
\chi(\zeta):=\begin{cases} 1,&\quad |f(\zeta)|\le 1\\
-1,&\quad |f(\zeta)|>1.
\end{cases}
\endeq
The purpose of $\chi(\zeta)$ is to allow a streamlined asymptotic description of rational functions near both zeros and poles.
\begin{theorem}[Elliptic asymptotics of gH rationals]
Let $\rho>0$ be a fixed rational aspect ratio.  Then for $j=1,2,3$ there exists a family of smooth but not analytic maps $y_0\mapsto\zeta_0=\zeta_{0,\mathrm{gH}}^{[j]}(y_0;m,n)$, $\rectangle(\kappa)\to\mathbb{C}$, 
such that as $m,n\to +\infty$ with $n=\rho m$, 
\begin{multline}
u_\mathrm{gH}^{[j]}(x;m,n)^{\chi}=T^{\frac{1}{2}\chi}\left(\dot{U}^\chi+\bo(T^{-1})\right),\quad \dot{U}=f(\zeta-\zeta_0),\quad x=T^\frac{1}{2}y_0+T^{-\frac{1}{2}}\zeta,\quad T=|\Theta_{0,\mathrm{gH}}^{[j]}(m,n)|,\\
y_0\in\rectangle(\kappa),\quad
\kappa=-\frac{\Theta_{\infty,\mathrm{gH}}^{[j]}(m,n)}{T}\to\kappa_\infty=\kappa_\infty^{[j]}(\rho),
\end{multline}
where $\chi=\chi(\zeta-\zeta_0)$, and where the error term is uniform for $y_0$ in a compact subset of $\rectangle(\kappa)$ and for $\zeta$ bounded. 
\label{thm:Hermite-elliptic}
\end{theorem}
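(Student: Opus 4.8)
The plan is to prove the $j=3$ case by Deift-Zhou steepest-descent analysis of the gH Riemann-Hilbert problem, and to deduce the cases $j=1,2$ from it. Using the Boiti-Pempinelli relation~\eqref{eq:symmetry-1-2}, the oddness of $f$ and of $\zeta_0$, and the facts $\rectangle(-\kappa)=\ii\rectangle(\kappa)$ and $E(y_0;-\kappa)=\ii E(\ii y_0;\kappa)$ (Propositions~\ref{prop:JordanCurves} and~\ref{prop:E}), the statement for $j=2$ is equivalent to that for $j=1$ with $(m,n)$ interchanged. For $j=1$ one uses the fact recorded in Section~\ref{sec:intro-Methodology} (see~\eqref{eq:u-ucirc}) that the type-$3$ solution $u_\mathrm{gH}^{[3]}(x;m,n)$ and its $\mathcal{S}_\tw$-image $u_\mathrm{gH}^{[1]}(x;m,n+1)$ (cf.~\eqref{eq:symmetry-1-from-3}) are extracted from the \emph{same} Riemann-Hilbert problem by formulae of comparable complexity, neither requiring differentiation; analyzing the RHP therefore yields both solutions simultaneously, and the only additional work for type $1$ is to re-express the result in the natural type-$1$ variables, for which the dilation symmetry~\eqref{eq:geometric-symmetries} and the $E$-symmetry~\eqref{eq:E-twist-symmetry} (which relate $\kappa\in(-1,1)$ to $\kappa<-1$) are the relevant tools. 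So the heart of the matter is the asymptotic analysis of the type-$3$ gH RHP for $y_0\in\rectangle(\kappa)$.

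Starting from that RHP I would carry out the preliminary normalizations common to all regimes (Section~\ref{sec:Basic-Principles}), producing a problem with a large parameter $T$ whose jump matrices carry exponentials of a large phase. The decisive step is the choice of $g$-function. In the exterior $\HermiteExterior(\kappa)$ the appropriate $g$-function lives on the genus-$0$ curve in which $P(f;y_0,\kappa,E)$ has a double root at the equilibrium $U_{0,\mathrm{gH}}^{[3]}(y_0;\kappa)$ (with $E$ given by~\eqref{eq:E-for-double-roots}); upon crossing into $\rectangle(\kappa)$ this double root splits into two simple roots and the $g$-function must instead be constructed on the genus-$1$ Riemann surface of $w^2=P(f)$. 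Here the integration constant is fixed to $E=E(y_0;\kappa)$ of Definition~\ref{def:E}, whose defining property is exactly the pair of Boutroux conditions~\eqref{eq:intro-Boutroux}: these two real conditions are precisely what makes the candidate $g$-function single-valued with purely imaginary periods on the genus-$1$ surface, so that after the $g$-substitution the transformed jumps are unimodular and oscillatory on the ``bands'' and exponentially small in $T$ on the ``gaps'' and on the boundaries of the lenses opened around the bands. Checking, uniformly on compact subsets of $\rectangle(\kappa)$, that the four roots of $P$ remain simple and separated, that the band/gap topology is the expected one, and that the required sign inequalities for $\mathrm{Re}$ of the exponent hold on the deformed contours is where the explicit description of $\rectangle(\kappa)$ from Section~\ref{sec:OutsideDomain}---via the trajectories of the rational quadratic differential~\eqref{eq:PhiPrimeSquared}---is used; I expect this to be the main obstacle.

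Next I would build the parametrices: an outer parametrix assembled from Riemann theta functions of the genus-$1$ surface (together with an auxiliary scalar factor accounting for the singularities at $f=0$ and $f=\infty$ of the differential $\sqrt{P(f)}\,\dd f/f$ appearing in~\eqref{eq:intro-Boutroux}), and local Airy parametrices at the four simple branch points of $P$. As $y_0\to\partial\rectangle(\kappa)$ two branch points coalesce and the Airy parametrix degenerates, which is why uniformity is claimed only on compact subsets. The standard small-norm argument then shows that the quotient of the true solution by the parametrix satisfies a Riemann-Hilbert problem with jump close to the identity, hence equals $\mathbf{I}+\bo(T^{-1})$ uniformly for $y_0$ in a compact subset of $\rectangle(\kappa)$ and $\zeta$ bounded. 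Substituting into the extraction formula~\eqref{eq:u-ucirc} yields $u_\mathrm{gH}^{[3]}(T^\frac{1}{2}y_0+T^{-\frac{1}{2}}\zeta;m,n)=T^\frac{1}{2}(\dot U+\bo(T^{-1}))$ with $\dot U$ a ratio of theta functions whose argument is affine in $\zeta$; near poles of $\dot U$ one runs the same argument with $u^{-1}$ in place of $u$, which is the role of the exponent $\chi=\chi(\zeta-\zeta_0)$ in the statement.

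It remains to identify $\dot U$ with $f(\zeta-\zeta_0)$. By the structure of the outer parametrix, $\dot U$ is an elliptic function of $\zeta$ that parametrizes the genus-$1$ surface $w^2=P(f;y_0,\kappa,E(y_0;\kappa))$ used in the $g$-function construction, and is therefore an exact solution of the first-order equation~\eqref{eq:elliptic-ODE} with $E=E(y_0;\kappa)$. Since every nonconstant solution of~\eqref{eq:elliptic-ODE} is a translate of the distinguished solution $f$ normalized by $f(0)=0$, $f'(0)=4$, we conclude $\dot U=f(\zeta-\zeta_0)$ for a $\zeta_0$ determined modulo the period lattice. The value of $\zeta_0$ is read off from the constant part of the theta argument, which is the Abel image of the base point prescribed by the RHP normalization; its smooth but not analytic dependence on $y_0$ is inherited directly from that of $E(y_0;\kappa)$ (Proposition~\ref{prop:E}). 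Since $E(0;\kappa)=0$ by oddness, the formula specializes at $y_0=0$ to the explicit expressions~\eqref{eq:f-m-kappa-lt-m1}--\eqref{eq:f-m-kappa-0} in terms of Jacobi elliptic functions and the tabulated phase shifts, which both provides a consistency check and, through the parity data of Proposition~\ref{prop:BehaviorAtOrigin}, pins down the correct representative of $\zeta_0$ modulo periods; for general $y_0$ the representative is obtained by continuity. This establishes $j=3$; then $j=1$ follows from the shared-RHP remark together with the reparametrization, and $j=2$ from~\eqref{eq:symmetry-1-2}. Besides the $g$-function and contour verification flagged above, the remaining delicate point is to keep every estimate uniform for $\zeta$ bounded while $y_0$ ranges over a compact subset of the open rectangle, so that the error is genuinely $\bo(T^{-1})$ there.
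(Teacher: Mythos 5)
Your architecture coincides with the paper's: reduce $j=2$ to $j=1$ by the Boiti--Pempinelli symmetry, extract $j=1$ and $j=3$ from the same Riemann--Hilbert problem via the two formulae in \eqref{eq:u-ucirc}, build the $g$-function on the genus-one Boutroux curve with $E=E(y_0;\kappa)$, open lenses, install a theta-function outer parametrix and Airy inner parametrices, run the small-norm argument, and identify the leading term with $f(\zeta-\zeta_0)$ by showing it solves \eqref{eq:elliptic-ODE} exactly. The one place where your argument has a genuine hole is the sentence ``near poles of $\dot U$ one runs the same argument with $u^{-1}$ in place of $u$.'' The small-norm step requires a uniform bound on the outer parametrix, and such a bound exists only when the theta factor $\Theta(a(\infty)+a(z_0)+K-\ii\varphi)$ in its denominator is bounded away from zero, i.e.\ only for $(y_0,\zeta,T)$ in the set $\mathcal{S}(\epsilon)$ bounded away from the Malgrange divisor of Riemann--Hilbert Problem~\ref{rhp:Pout}. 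By Lemma~\ref{lem:DistinctDivisors} only one of the two pole lattices of $f(\zeta-\zeta_0)$ lies on that divisor: near the complementary lattice the parametrix exists, the reciprocal approximation is analytic, and your ``$u^{-1}$'' argument does go through; but near the Malgrange lattice itself there is no direct estimate to run, and a priori nothing excludes extra pole--zero pairs of the true rational solution clustering at those points. Since the theorem asserts uniformity for \emph{all} bounded $\zeta$, this must be repaired.

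The paper closes the gap in Section~\ref{sec:MalgrangeResidues}. One first computes that the Malgrange divisor carries exactly the poles of residue $+1$ of $\dot U^{[3]}_\mathrm{F}$ (equivalently residue $-1$ of $\dot U^{[1]}_\mathrm{F}$). The exact symmetry $u^{[3]}_\mathrm{F}(x;m,n)=\ii u^{[3]}_\mathrm{F}(-\ii x;n,m)$ swaps the two residues, so the troublesome poles become, after a quarter rotation, poles sitting near the \emph{non}-Malgrange lattice of the rotated problem, where the direct estimate applies and shows that each such point attracts exactly one simple pole of the correct residue. With that counting in hand, $u^{-1}$ and $f(\zeta-\zeta_0)^{-1}$ are both analytic on a small disk about each Malgrange point, so the maximum modulus principle propagates the boundary estimate (valid on $\mathcal{S}(\epsilon)$) into the disk. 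You need this step, or some other device with the same effect, before the claimed $\bo(T^{-1})$ error is uniform for bounded $\zeta$; as written, your estimate holds only off $T$-dependent neighborhoods of one lattice in the $\zeta$-plane. The rest of your outline, including the identification of $\zeta_0$ modulo periods via the $y_0=0$ specialization and continuity, is consistent with the paper (which deliberately leaves $\zeta_0$ unspecified beyond existence).
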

\begin{theorem}[Elliptic asymptotics of gO rationals]
Let $\rho>0$ be a fixed rational aspect ratio, and let $\mathcal{B}(\kappa)$ denote either $\rectangle(\kappa)$, $\TR(\kappa)$, or $\TI(\kappa)$.  Then for $j=1,2,3$ there exists a family of smooth but not analytic maps $y_0\mapsto\zeta_0=\zeta_{0,\mathrm{gO}}^{[j]}(y_0;m,n)$, $\mathcal{B}(\kappa)\to\mathbb{C}$, 
such that as $m,n\to\pm\infty$ with $n=\rho m$,
\begin{multline}
u_\mathrm{gO}^{[j]}(x;m,n)^{\chi}=T^{\frac{1}{2}\chi}\left(\dot{U}^{\chi}+\bo(T^{-1})\right),\quad \dot{U}=f(\zeta-\zeta_0),\quad x=T^\frac{1}{2}y_0+T^{-\frac{1}{2}}\zeta,\quad T=|\Theta_{0,\mathrm{gO}}^{[j]}(m,n)|,\\
y_0\in\mathcal{B}(\kappa),\quad
\kappa=-\frac{\Theta_{\infty,\mathrm{gO}}^{[j]}(m,n)}{T}\to\kappa_\infty=\pm\kappa_\infty^{[j]}(\rho),
\label{eq:Okamoto-elliptic}
\end{multline}
where $\chi=\chi(\zeta-\zeta_0)$, and where the error term is uniform for $y_0$ in a compact subset of $\mathcal{B}(\kappa)$ and for $\zeta$ bounded.
\label{thm:Okamoto-elliptic}
\end{theorem}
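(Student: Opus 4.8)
The plan is to carry out a genus-one Deift--Zhou steepest-descent analysis of the gO Riemann--Hilbert problem. By the symmetry \eqref{eq:symmetry-1-2} it suffices to treat $j=1$ and $j=3$, and since the type-1 monodromy data are obtained from the type-3 data by the B\"acklund transformation $\mathcal{S}_\tw$ (cf.\ \eqref{eq:symmetry-1-from-3}), I would first run the analysis for type-3 parameters and afterwards convert the resulting formulae to the natural type-1 variables, exactly as is done for the exterior asymptotics in Theorem~\ref{thm:OkamotoExterior}. Throughout one fixes $\kappa=-\Theta_{\infty,\mathrm{gO}}^{[j]}(m,n)/T$ with $T=|\Theta_{0,\mathrm{gO}}^{[j]}(m,n)|$ and works with $y_0$ in a compact subset $K$ of $\mathcal{B}(\kappa)$ (one of $\rectangle(\kappa)$, $\TR(\kappa)$, $\TI(\kappa)$) and $\zeta$ bounded.

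After applying the preliminary normalizations of Section~\ref{sec:Basic-Principles} to put the RHP into a standard form whose jumps are controlled by the large parameter $T$, the heart of the argument is to introduce a $g$-function living on the genus-one Riemann surface of the spectral curve $w^2=P(z;y_0,\kappa,E)$ of \eqref{eq:spectral-curve} (the same quartic as in \eqref{eq:elliptic-ODE}), obtained by integrating a suitably normalized meromorphic differential with poles over $z=0,\infty$ matched to the behavior forced by the quantization/parity of $(m,n)$. The requirement that the real part of the resulting exponent be single-valued --- so that the jumps on the complementary ``gaps'' decay exponentially while those on the two ``bands'' joining the roots of $P$ are purely oscillatory of modulus one --- is precisely the Boutroux system \eqref{eq:intro-Boutroux}, which selects $E=E(y_0;\kappa)$ as in Definition~\ref{def:E}. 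I would then verify that for $y_0\in K$ this $E$ exists, is unique, and yields a Stokes graph (level set of the real part of the exponent) with the band/gap topology needed to open lenses, with the four roots of $P$ paired into bands in one way when $y_0\in\rectangle(\kappa)$ and in the complementary ways when $y_0\in\TR(\kappa)$ or $\TI(\kappa)$; here Proposition~\ref{prop:E} (continuity of $E$ up to $\partial\OkamotoExterior(\kappa)$, and the symmetries) together with the description of the domains in Section~\ref{sec:OutsideDomain} does the topological bookkeeping.

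With the sign structure in hand I would build the global parametrix from three ingredients: an outer parametrix expressed through Riemann theta functions of the elliptic curve, with arguments linear in $\zeta$ of slope the normalized holonomy of the $g$-differential; Airy parametrices at the four branch points of $P$; and local parametrices at $z=0$ and $z=\infty$ carrying the specific Painlev\'e-IV normalization and the parity data of $(m,n)$. Small-norm theory then shows that the ratio of the true solution to the parametrix solves a RHP with jump $I+\bo(T^{-1})$ uniformly on $K$ for $\zeta$ bounded, which yields the stated error. Extracting $u_\mathrm{gO}^{[j]}$ from the appropriate coefficient of the large-$z$ expansion and using the addition theorem for theta functions identifies the leading term $\dot U$ with $f(\zeta-\zeta_0)$, where $f$ solves \eqref{eq:elliptic-ODE} with this $E=E(y_0;\kappa)$; the phase shift $\zeta_0=\zeta_{0,\mathrm{gO}}^{[j]}(y_0;m,n)$ is read off from the theta arguments and inherits smoothness but not analyticity in $y_0$ from $E$ (Proposition~\ref{prop:E}). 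The exponent $\chi(\zeta-\zeta_0)$ enters because near a pole of $f$ the relevant RHP entry is large while near a zero it is small; raising to the power $\chi$ packages both behaviors into a single bounded quantity, which is what makes the error uniform across zeros and poles alike.

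The main obstacle I anticipate is the global topology of the Stokes graph: proving that, uniformly for $y_0$ in a compact subset of each of $\rectangle(\kappa)$, $\TR(\kappa)$, $\TI(\kappa)$, the Boutroux value $E(y_0;\kappa)$ produces exactly the band/gap configuration that permits a genuine genus-one reduction and a valid lens opening --- ruling out spurious critical trajectories and controlling the degenerations as $y_0$ approaches $\partial\OkamotoExterior(\kappa)$. Related technical difficulties are the precise matching of the $z=0$ local parametrix so that $\zeta_0$ comes out in closed form (the elliptic analogue of the phase-shift computation of Section~\ref{sec:scaling}) and, for type-1 parameters, propagating the elliptic approximation through the $\mathcal{S}_\tw$ change of variables without differentiating the approximation --- which is precisely why the analysis is carried out in type-3 variables and only afterwards rewritten in type-1 variables.
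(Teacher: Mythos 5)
Your overall strategy is the same as the paper's: reduce to types $1$ and $3$, run the genus-one steepest descent on the type-3 Riemann--Hilbert problem with the spectral curve pinned down by the Boutroux conditions \eqref{eq:intro-Boutroux}, build a theta-function outer parametrix plus Airy parametrices at the four simple roots of $P$, and close with small-norm theory; the obstacle you flag (establishing the band/gap topology of the Stokes graph uniformly on compact subsets of each Boutroux domain) is indeed where most of the paper's labor goes. One structural difference is harmless: you propose additional local parametrices at $z=0$ and $z=\infty$, whereas once the $g$-function is built with residues $-s$ at $z=0$ and $-\kappa$ at $z=\infty$ the singularity at the origin becomes removable and no local model is needed at either point.

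There is, however, a genuine gap in your argument for the uniformity of the error ``for $\zeta$ bounded.'' The outer parametrix exists only off the Malgrange divisor, i.e.\ only where the theta-function denominators $\Theta(a(\infty)\pm a(z_0)\pm K\mp\ii\varphi)$ are nonzero, and this divisor is exactly one of the two lattices of poles of $f(\zeta-\zeta_0)$ in the $\zeta$-plane. Near those points the small-norm machinery gives you nothing, because there is no parametrix to compare against; raising to the power $\chi$ does not rescue you, since $\chi$ only converts a large quantity into a small one \emph{after} you already control the RHP solution there. So your argument as written proves \eqref{eq:Okamoto-elliptic} only for $\zeta$ bounded away from an $\epsilon$-neighborhood of the Malgrange lattice, which is strictly weaker than the theorem. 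The missing step is an argument that extends the estimate across these exceptional points: one way (the paper's) is to apply the Boiti--Pempinelli symmetry $\mathcal{S}_\updownarrow$ (conjugated by $\mathcal{S}_\tw$ in the type-1 case), which rotates the $x$-plane and swaps the signs of all residues, so that the Malgrange-divisor poles of the original problem become the well-controlled, non-Malgrange poles of an equivalent problem with permuted indices; combining this with a residue/winding-number count and the maximum modulus principle applied to the analytic error of the \emph{reciprocal} on small disks around the Malgrange lattice then yields uniformity for all bounded $\zeta$. Without some such device your proof does not reach the stated conclusion.
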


These results therefore assert the existence of the phase shift $\zeta_0$ as a function of $y_0$ and large parameters $(m,n)$ for which the elliptic approximation formally described in Section~\ref{sec:nonequilibrium}, but with a specific value $E=E(y_0;\kappa)$ of the integration constant, is accurate to the indicated order.  We will not give formul\ae\ for the phase shift (but see Section~\ref{sec:nonequilibrium} for the special case of $y_0=0$), because in our proofs of these results we actually use a different representation of $f(\zeta-\zeta_0)$ in terms of theta functions.  This representation has the advantage of isolating the two different lattices of poles and zeros of $f(\zeta-\zeta_0)$, allowing for comparison with the roots of the four different polynomial factors in each exact rational solution according to the representations shown in the right-most columns of Tables~\ref{tab:gH} and \ref{tab:gO}.
Specifically, we prove that, for a suitable canonical homology basis underlying the periods $Z_\mathfrak{a}$ and $Z_\mathfrak{b}$ defined in \eqref{eq:elliptic-periods}, 
the leading term $f(\zeta-\zeta_0)$ in Theorems~\ref{thm:Hermite-elliptic} and \ref{thm:Okamoto-elliptic} can be written as
\eq
f(\zeta-\zeta_0)=\psi(y_0)\frac{\Theta(\frac{2\pi\ii}{Z_\mathfrak{a}}\zeta+\ii\xi(y_0;m,n)+\ii\mathfrak{z}_1(y_0)+K)\Theta(\frac{2\pi\ii}{Z_\mathfrak{a}}\zeta+\ii\xi(y_0;m,n)+\ii\mathfrak{z}_2(y_0)+K)}{\Theta(\frac{2\pi\ii}{Z_\mathfrak{a}}\zeta+\ii\xi(y_0;m,n)+\ii\mathfrak{p}_1(y_0)+K)\Theta(\frac{2\pi\ii}{Z_\mathfrak{a}}\zeta+\ii\xi(y_0;m,n)+\ii\mathfrak{p}_2(y_0)+K)},
\label{eq:intro-elliptic-theta}
\endeq
where only the common phase $\xi(y_0;m,n)$ contains terms proportional to the large parameters $(m,n)$,  $K:=\ii\pi(1+Z_\mathfrak{b}/Z_\mathfrak{a})$, and $\Theta(z)$ is the Riemann theta function for the homology basis $(\mathfrak{a},\mathfrak{b})$ of the elliptic curve $w^2=P(z)$.  The latter is an entire function of $z$ with simple zeros only at the points $z=K+2\pi\ii k + 2\pi\ii\ell Z_\mathfrak{b}/Z_\mathfrak{a}$ for $(k,\ell)\in\mathbb{Z}\times\mathbb{Z}$.  The bounded and nonvanishing factor $\psi(y_0)$ and the phases $\xi(y_0;m,n)$, $\mathfrak{z}_{1,2}(y_0)$, and $\mathfrak{p}_{1,2}(y_0)$ are different for each family and type of rational solution and, in the case of the gO family, for each of the three regions $\rectangle(\kappa)$, $\TR(\kappa)$, and $\TI(\kappa)$.  While the various ingredients in the formula \eqref{eq:intro-elliptic-theta} appear naturally as part of the proof of accuracy, for the reader's convenience we summarize in Appendix~\ref{app:Effective} how these ingredients are effectively computed for each family (gH or gO), type ($j=1,2,3$), and region $\mathcal{B}(\kappa)$.

A characteristic feature of the approximation formul\ae\ in Theorems~\ref{thm:Hermite-elliptic} and \ref{thm:Okamoto-elliptic} is that while the rational function approximated depends on only one variable say $y=T^{-\frac{1}{2}}x=y_0+T^{-1}\zeta$, the approximation $T^{\frac{1}{2}}f(\zeta-\zeta_0)$ essentially involves $y_0$ and $\zeta$ as independent variables, and while it is certainly meromorphic in $\zeta$, it has a nonzero $\overline{\partial}$ derivative with respect to $y_0$.  This means that for a given value of $y=T^{-\frac{1}{2}}x$, the approximation formula in each of these theorems offers a one-parameter family of different approximations of the same rational function.  Two particularly useful ways to make use of this freedom are (i) to fix $\zeta$ and vary $y_0$ within one of the regions 
$\rectangle(\kappa)$ or $\TR(\kappa)$ or $\TI(\kappa)$ or (ii) to fix $y_0$ and instead allow $\zeta$ to vary in a bounded set.  The approach (i) yields an approximation that is uniformly accurate over a given compact set in $y_0$ that corresponds to a large region of size $T^\frac{1}{2}$ in the $x$-plane, but the approximation fails to be meromorphic in $y_0$ (its $\overline{\partial}$ derivative is of course exactly cancelled by that of the error term, although that term is not known in any detail).  On the other hand, the approach (ii) yields an approximation that is an exact elliptic function, hence meromorphic, but the approximation is only accurate for bounded $\zeta$, which corresponds to a small region of size $T^{-\frac{1}{2}}$ in the $x$-plane.  Putting the two approaches together, one can think of $(y_0,\zeta)$ as coordinates on the complex tangent bundle over $\rectangle(\kappa)$, $\TR(\kappa)$, or $\TI(\kappa)$. These two interpretations of the asymptotic formul\ae\ in Theorems~\ref{thm:Hermite-elliptic} and \ref{thm:Okamoto-elliptic} are consistent with the simple idea that in a given region the rational Painlev\'e-IV functions are approximated by an elliptic function whose modulus and phase shift vary slowly on scales that are large compared to the periods $Z_\mathfrak{a}$ and $Z_\mathfrak{b}$.

Using the approach (i) allows one to combine the equilibrium asymptotics of Theorems~\ref{thm:HermiteExterior}--\ref{thm:OkamotoExterior} with the elliptic asymptotics of Theorems~\ref{thm:Hermite-elliptic}--\ref{thm:Okamoto-elliptic} and obtain an approximation $\dot{U}$ of $U=T^{-\frac{1}{2}}u_\mathrm{F}^{[j]}(T^\frac{1}{2}y_0;m,n)$, defined piecewise in the complex $y_0$-plane, the accuracy of which is guaranteed for all $y_0\in\mathbb{C}$ except for values near the Jordan curves $\partial\HermiteExterior(\kappa)$ and $\partial\OkamotoExterior(\kappa)$.  These approximations are remarkably accurate even for $(m,n)$ not too large, as can be seen in Figures~\ref{fig:gH-axes}, \ref{fig:gOpp-axes}, and \ref{fig:gOnn-axes}.
\begin{figure}[h]
\begin{center}
\begin{subfigure}{1.5 in}
\begin{center}
\includegraphics[height=1.0 in]{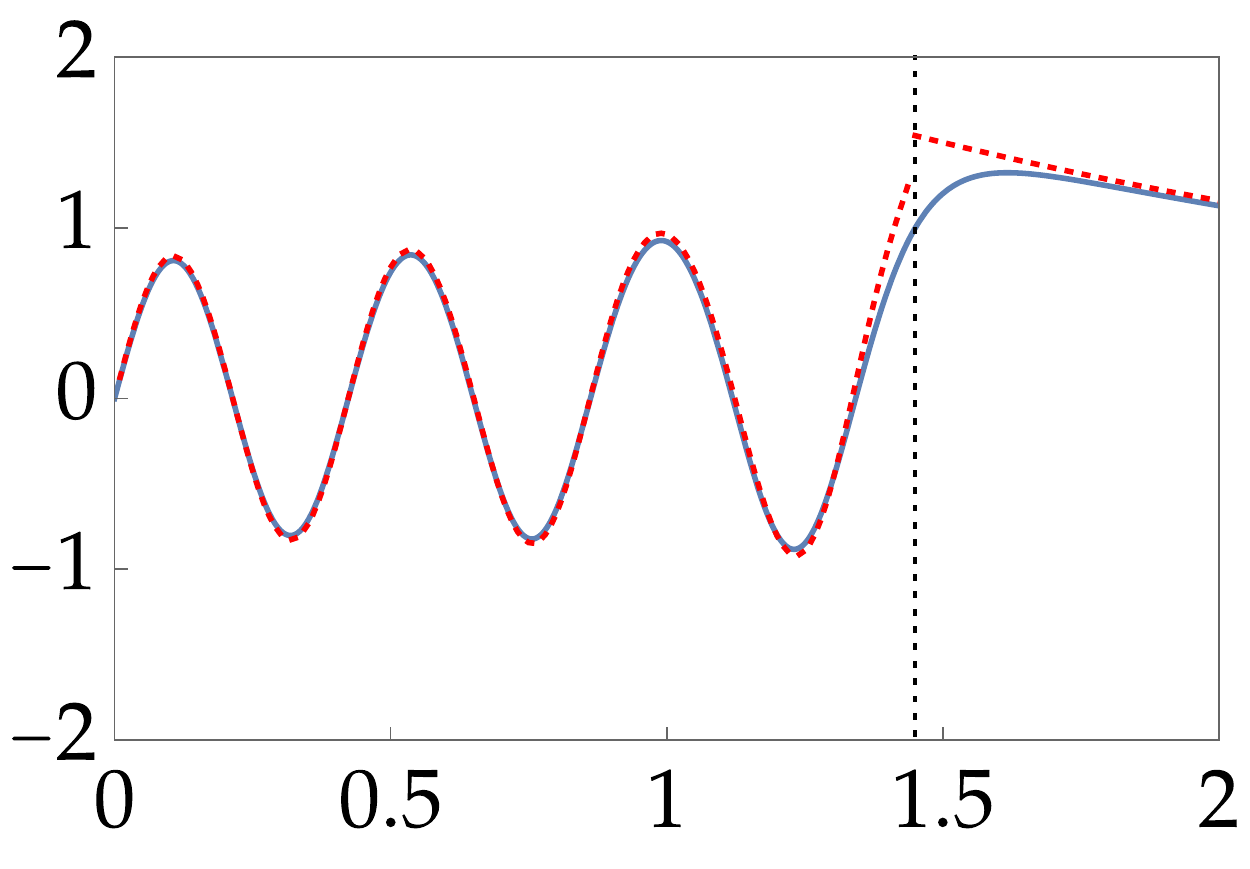}\\
\includegraphics[height=1.0 in]{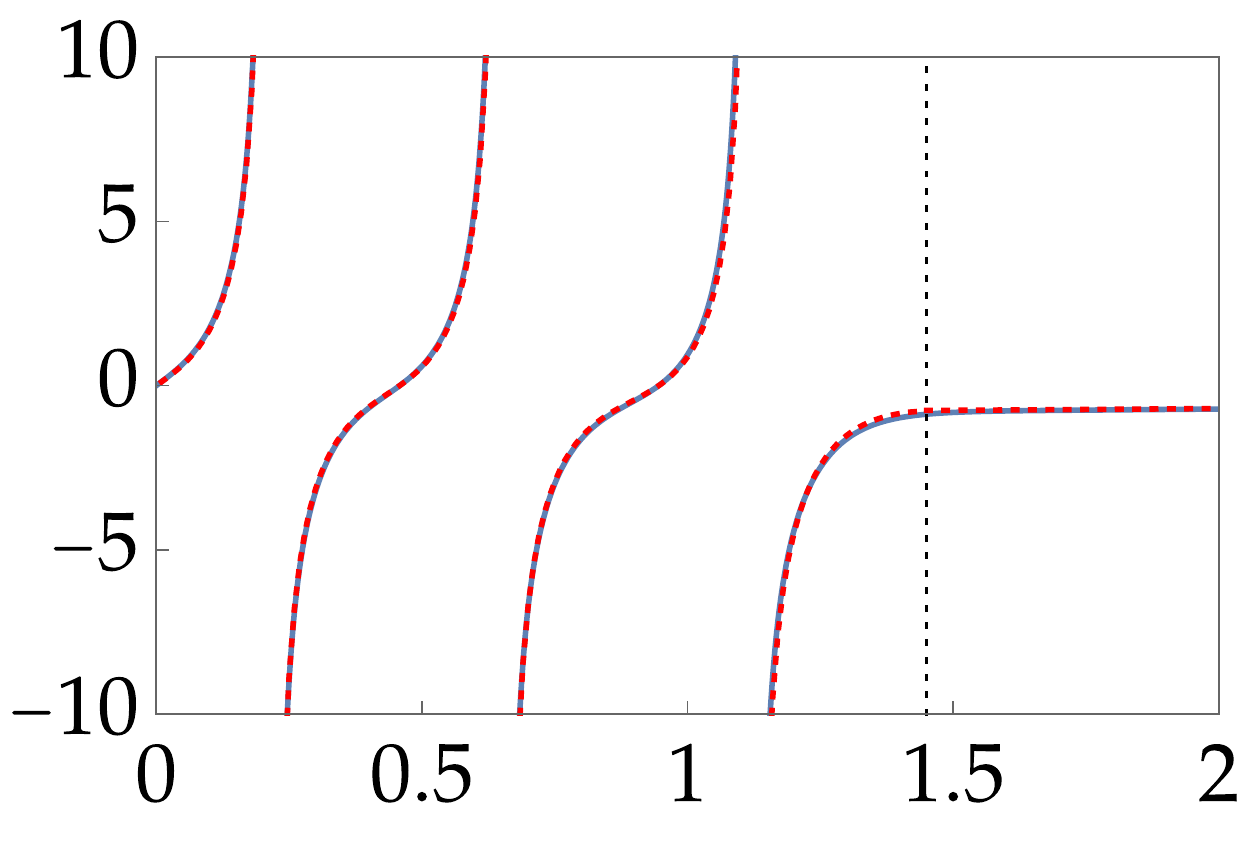}
\end{center}
\caption{$U=T^{-\frac{1}{2}}u^{[1]}_\mathrm{gH}(x;6,6)$; \\$\rho=1$; $\kappa=-\tfrac{10}{3}$.}
\end{subfigure}%
\begin{subfigure}{1.5 in}
\begin{center}
\includegraphics[height=1.0 in]{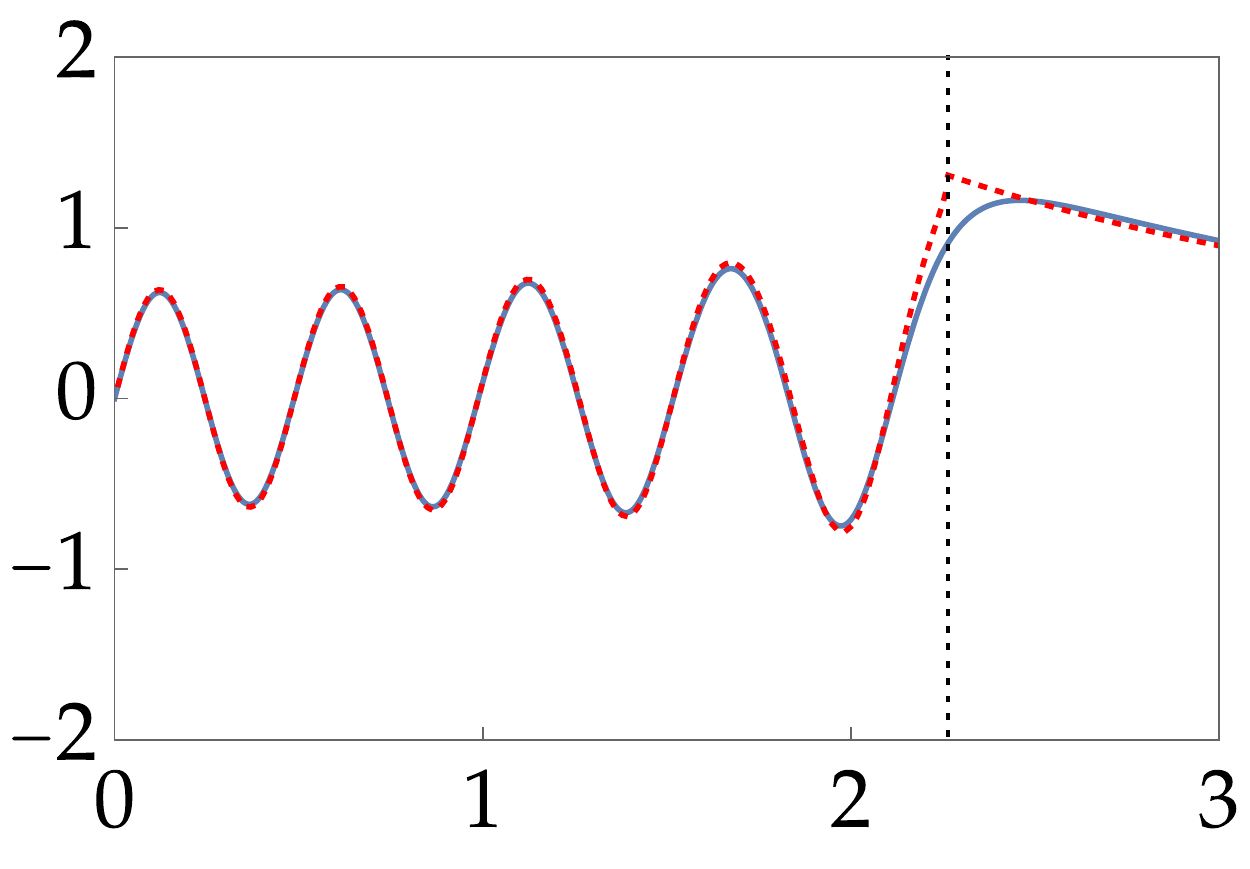}\\
\includegraphics[height=1.0 in]{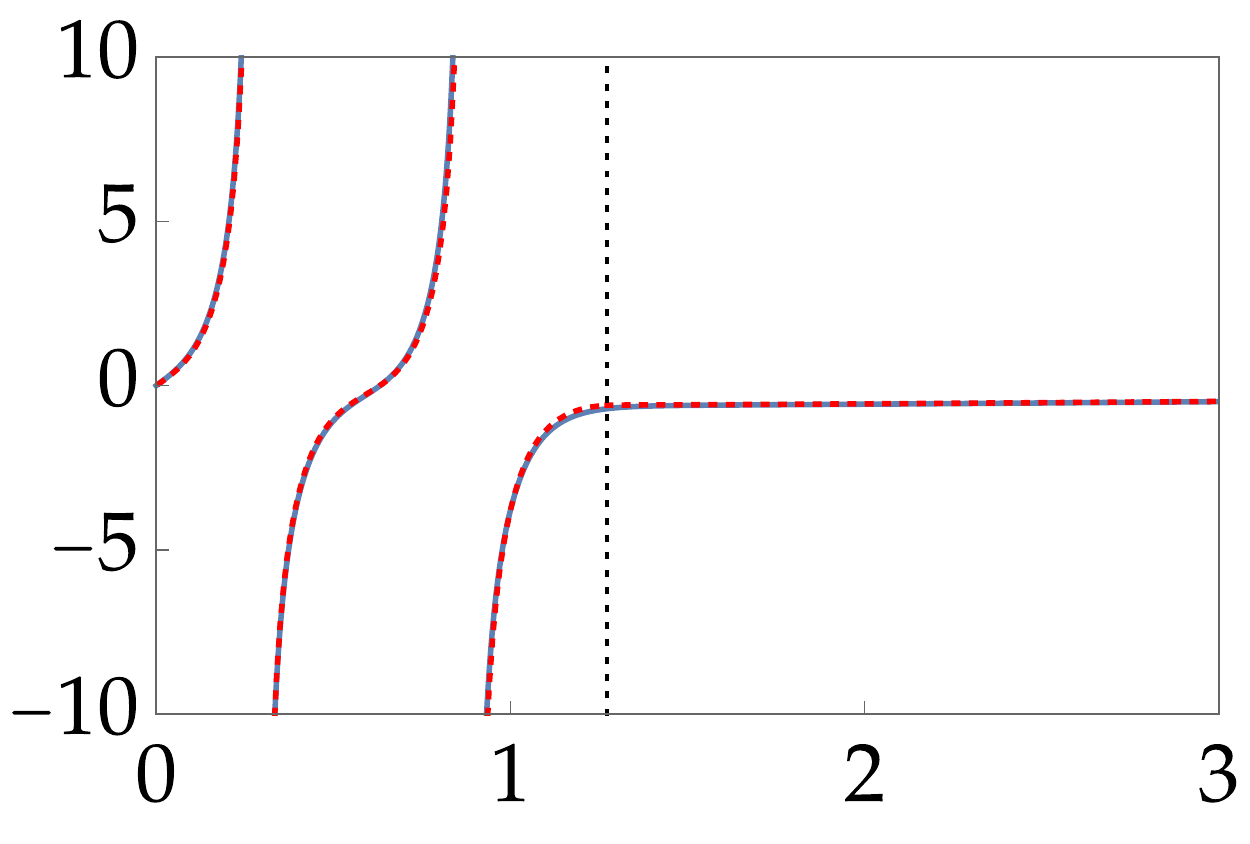}
\end{center}
\caption{$U=T^{-\frac{1}{2}}u^{[1]}_\mathrm{gH}(x;8,4)$; \\$\rho=\tfrac{1}{2}$; $\kappa=-\tfrac{11}{2}$.}
\end{subfigure}%
\begin{subfigure}{1.5 in}
\begin{center}
\includegraphics[height=1.0 in]{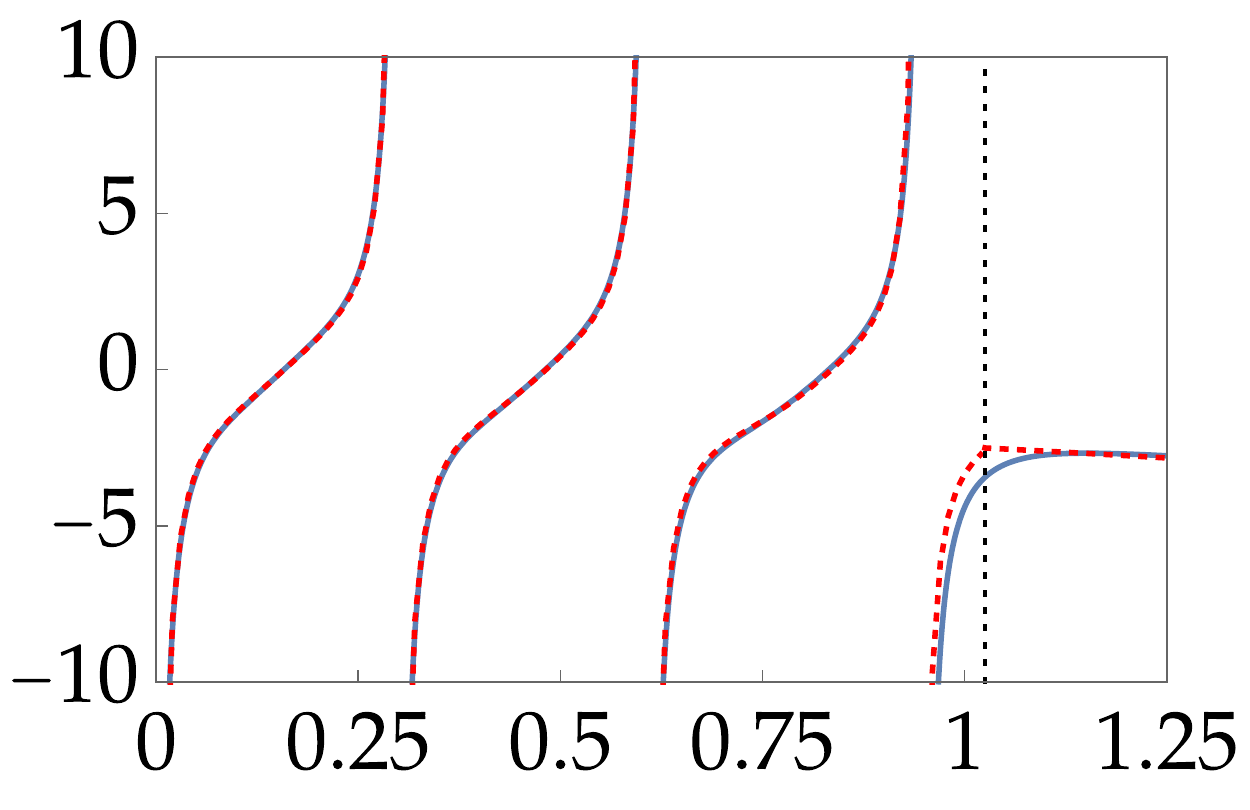}\\
\includegraphics[height=1.0 in]{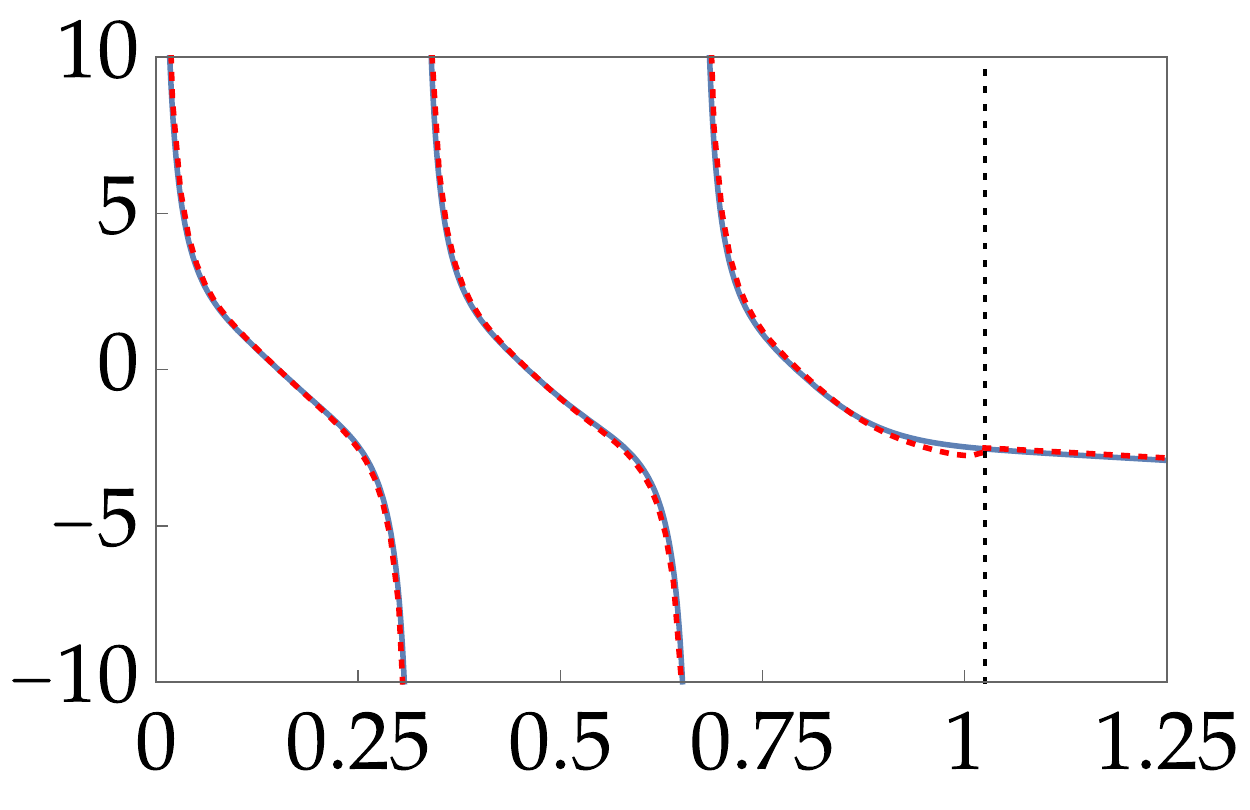}
\end{center}
\caption{$U=T^{-\frac{1}{2}}u^{[3]}_\mathrm{gH}(x;6,5)$; \\$\rho=\tfrac{5}{6}$; $\kappa=0$.}
\end{subfigure}%
\begin{subfigure}{1.5 in}
\begin{center}
\includegraphics[height=1.0 in]{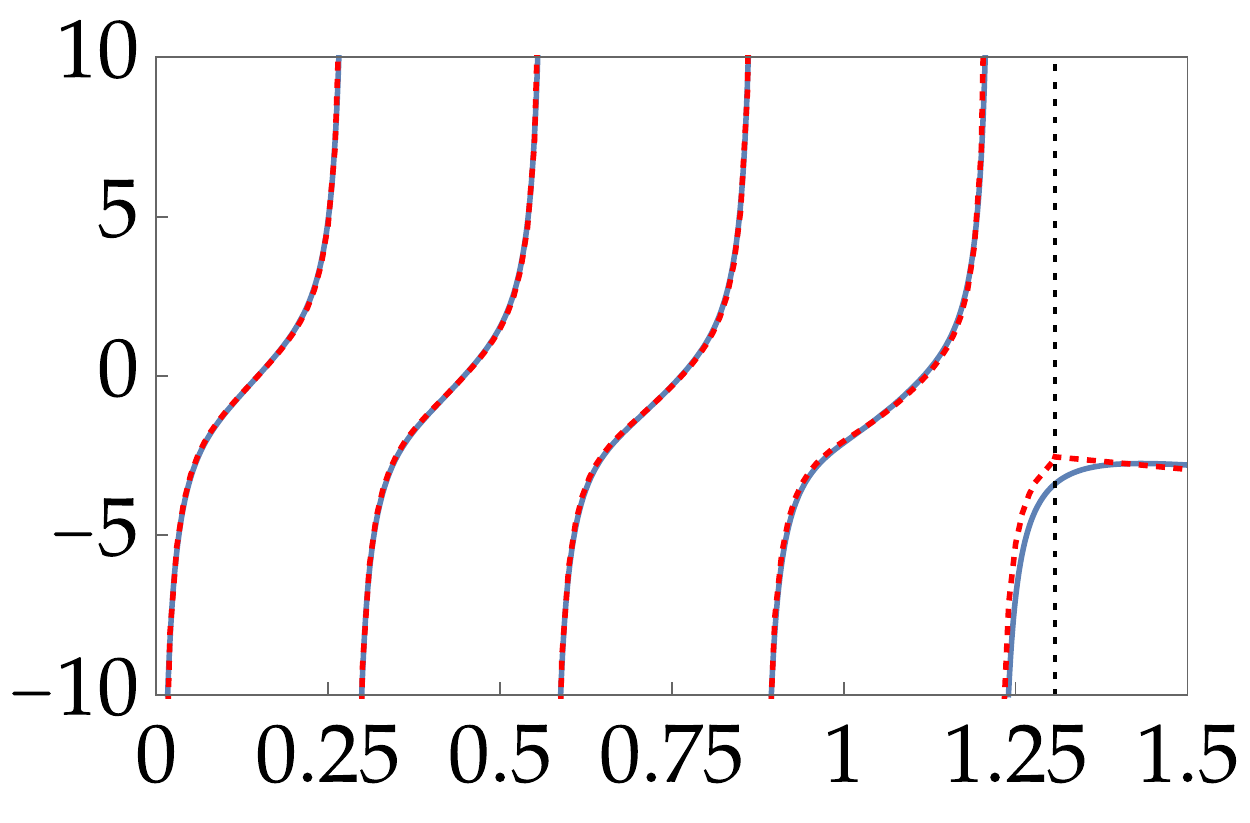}\\
\includegraphics[height=1.0 in]{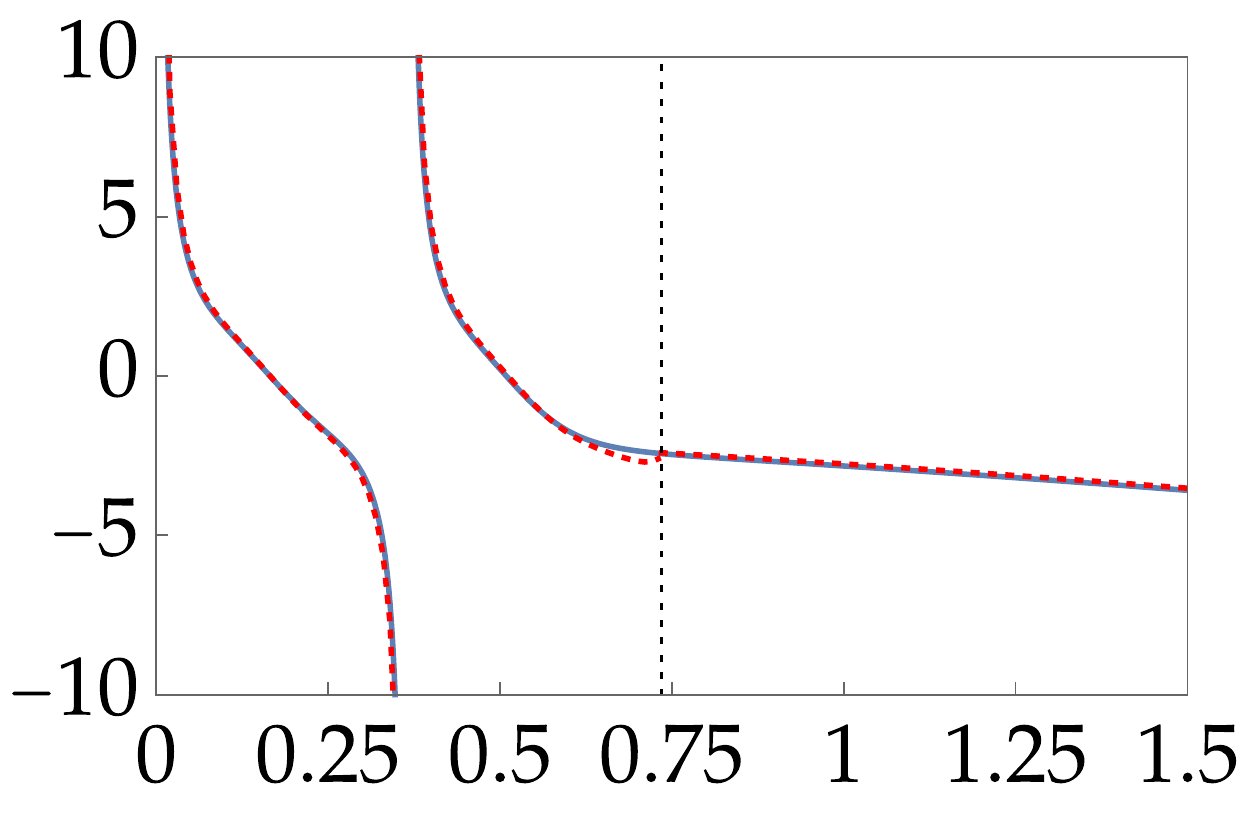}
\end{center}
\caption{$U=T^{-\frac{1}{2}}u^{[3]}_\mathrm{gH}(x;8,3)$; \\$\rho=\frac{3}{8}$; $\kappa=\tfrac{1}{3}$.}
\end{subfigure}
\end{center}
\caption{Quantitative comparison of scaled gH rational solutions $U$ (blue curves) with the leading terms $\dot{U}$ (dashed red curves) in Theorems~\ref{thm:HermiteExterior} and \ref{thm:Hermite-elliptic} for $x=T^\frac{1}{2}y_0$ (taking $\zeta=0$ fixed in the latter case).  Axes for top row of plots:  $U$ and $\dot{U}$ versus $y_0$.  Axes for bottom row of plots:  $-\ii U$ and $-\ii\dot{U}$ versus $-\ii y_0$.  Dotted vertical lines indicate the intersection points of $\partial\HermiteExterior(\kappa)$ with the coordinate axes, near which neither Theorem~\ref{thm:HermiteExterior} nor \ref{thm:Hermite-elliptic} provides a uniformly accurate approximation.}
\label{fig:gH-axes}
\end{figure}
\begin{figure}[h]
\begin{center}
\begin{subfigure}{1.5 in}
\begin{center}
\includegraphics[height=1.0 in]{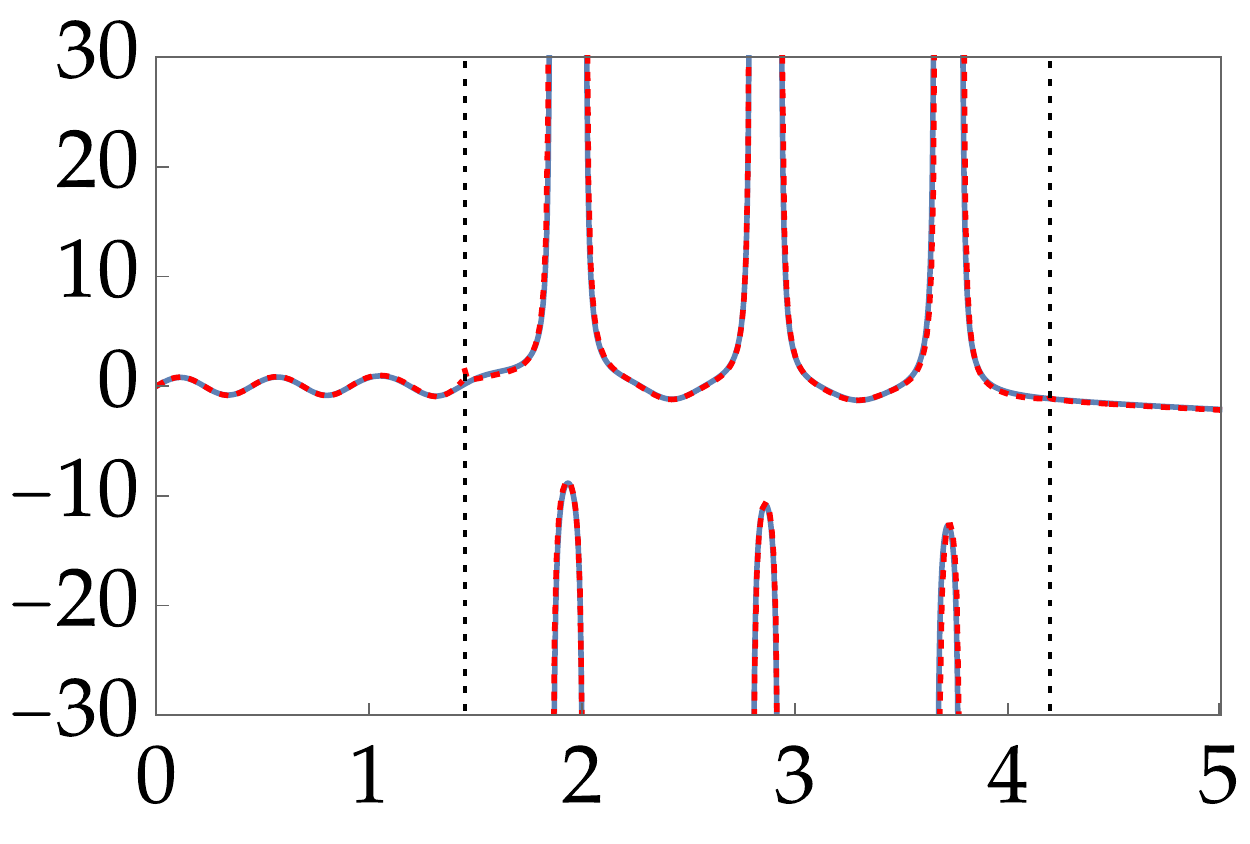}\\
\includegraphics[height=1.0 in]{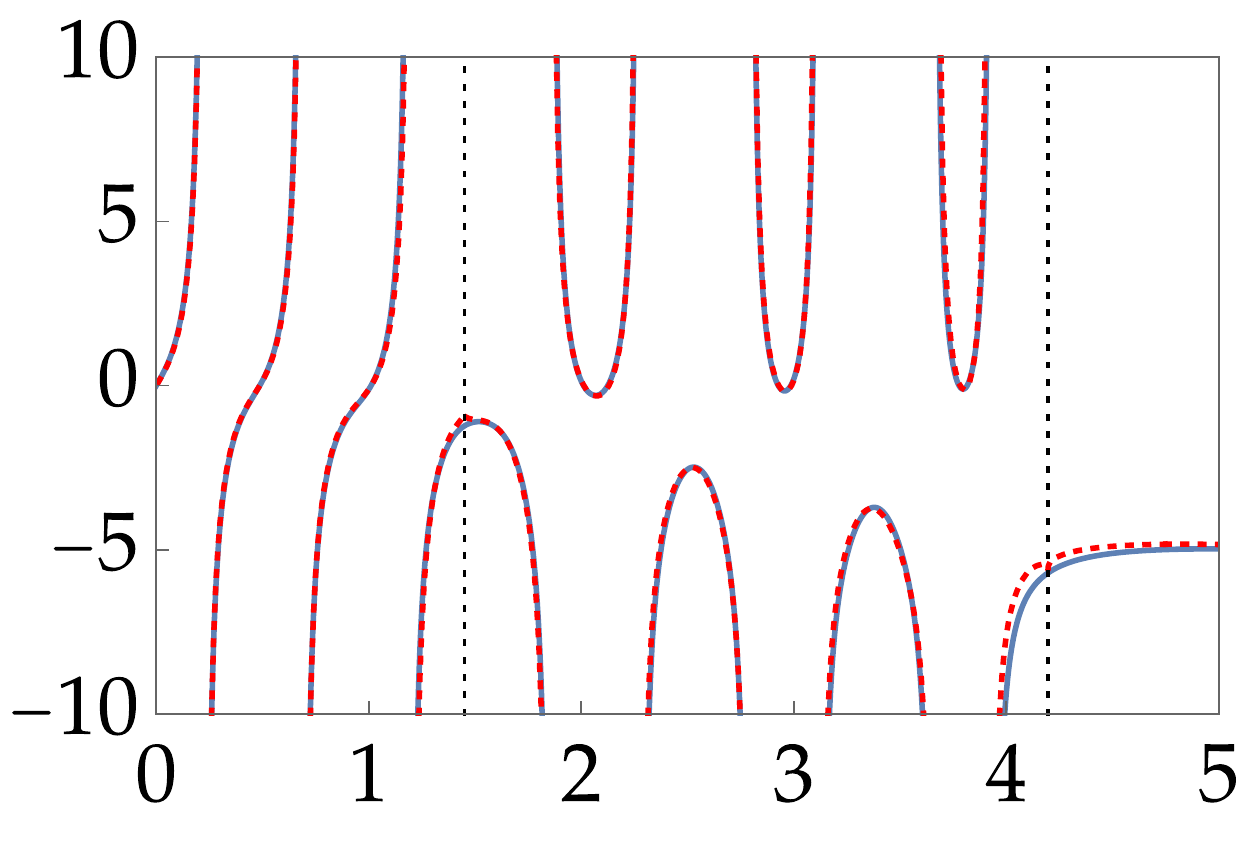}
\end{center}
\caption{$U=T^{-\frac{1}{2}}u^{[1]}_\mathrm{gO}(x;6,6)$; \\$\rho=1$; $\kappa=-\tfrac{57}{17}$.}
\end{subfigure}%
\begin{subfigure}{1.5 in}
\begin{center}
\includegraphics[height=1.0 in]{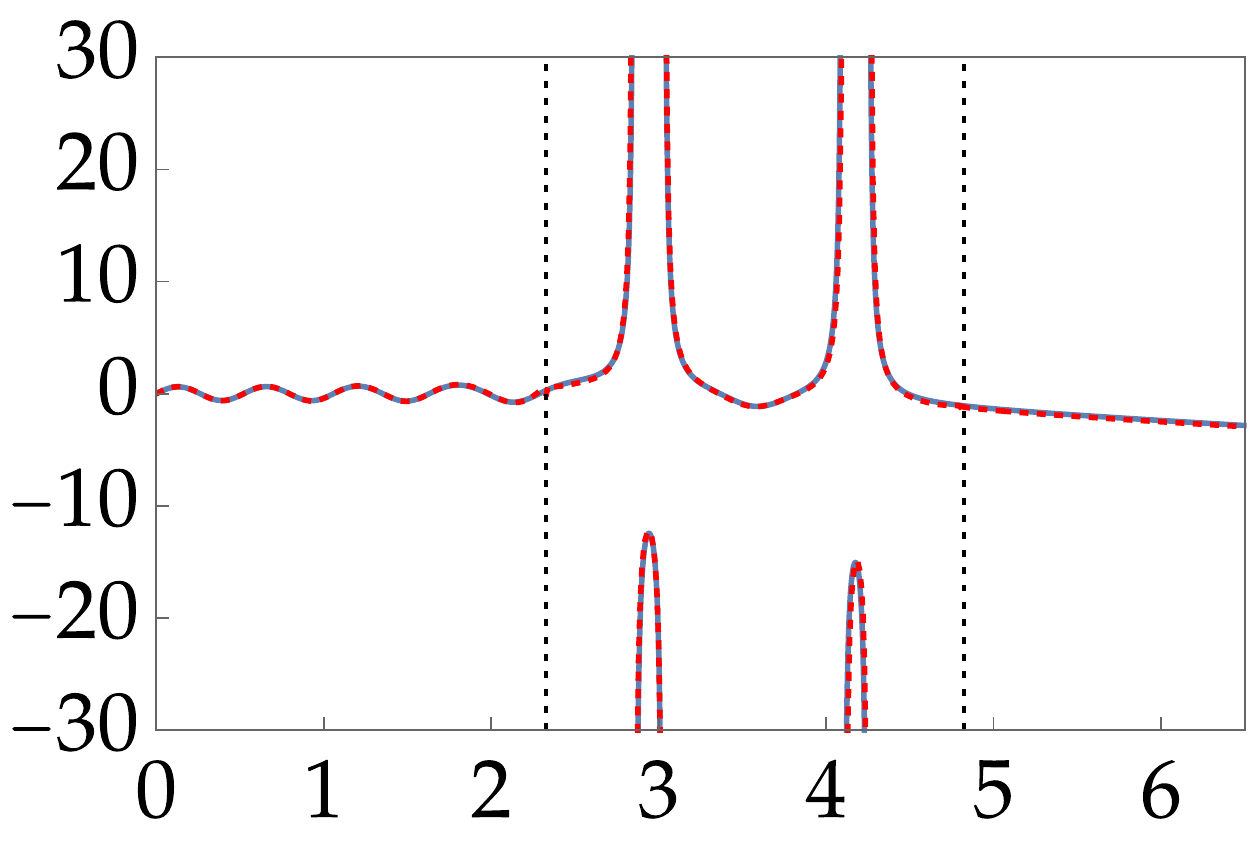}\\
\includegraphics[height=1.0 in]{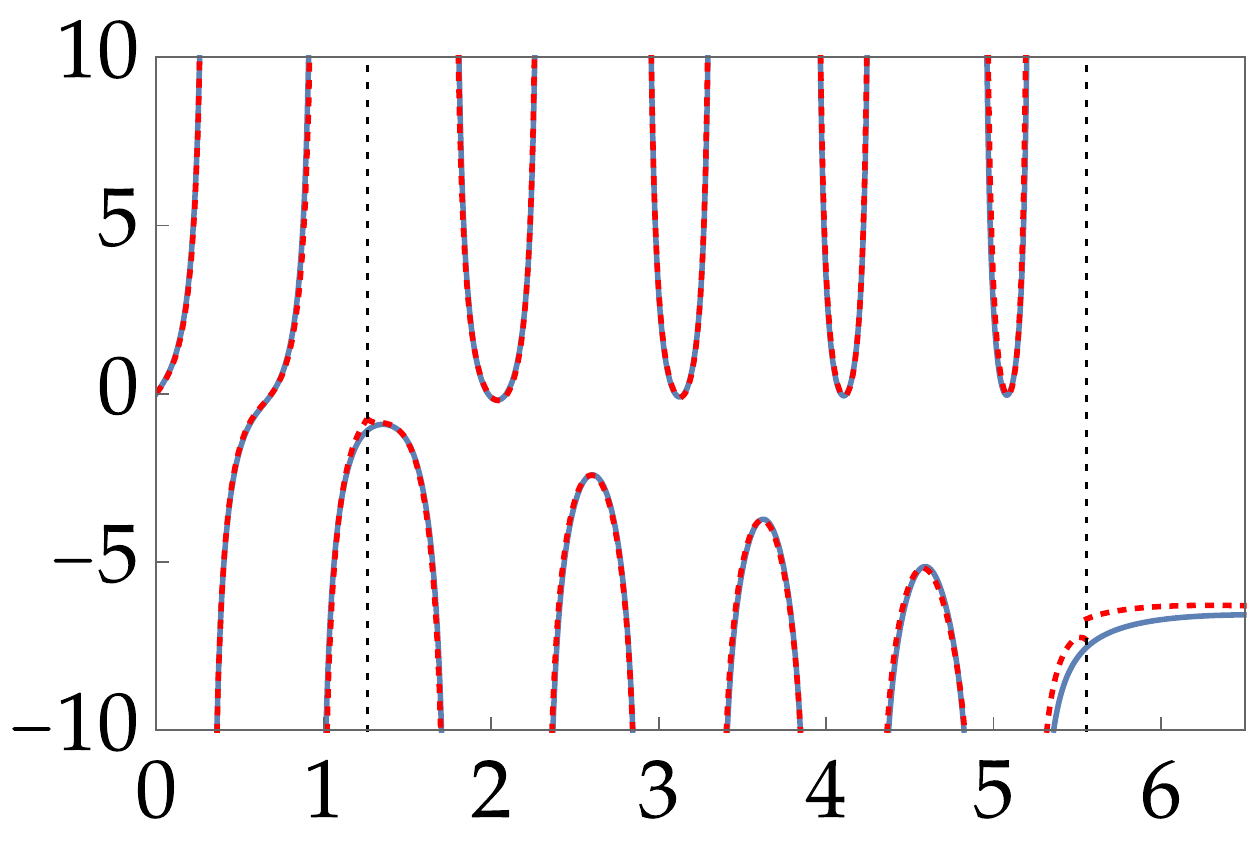}
\end{center}
\caption{$U=T^{-\frac{1}{2}}u^{[1]}_\mathrm{gO}(x;8,4)$; \\$\rho=\tfrac{1}{2}$; $\kappa=-\tfrac{63}{11}$.}
\end{subfigure}%
\begin{subfigure}{1.5 in}
\begin{center}
\includegraphics[height=1.0 in]{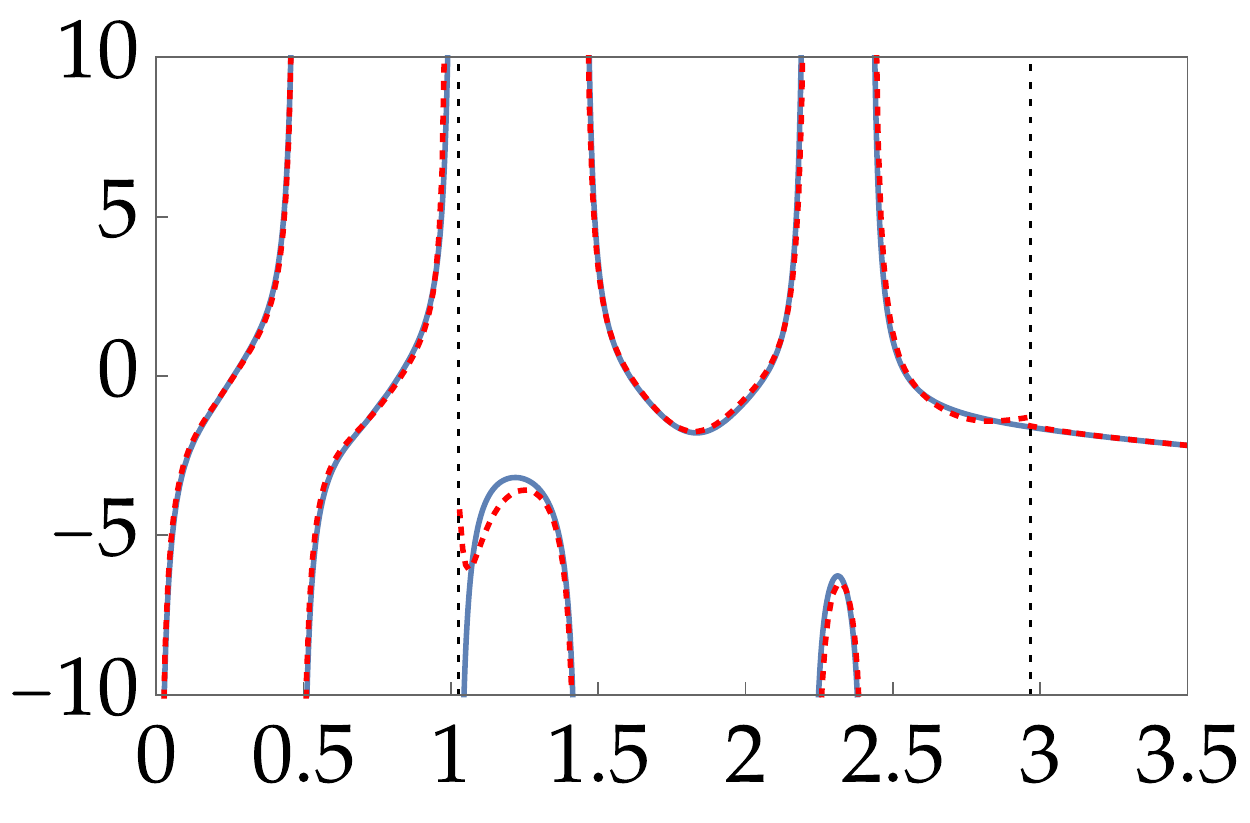}\\
\includegraphics[height=1.0 in]{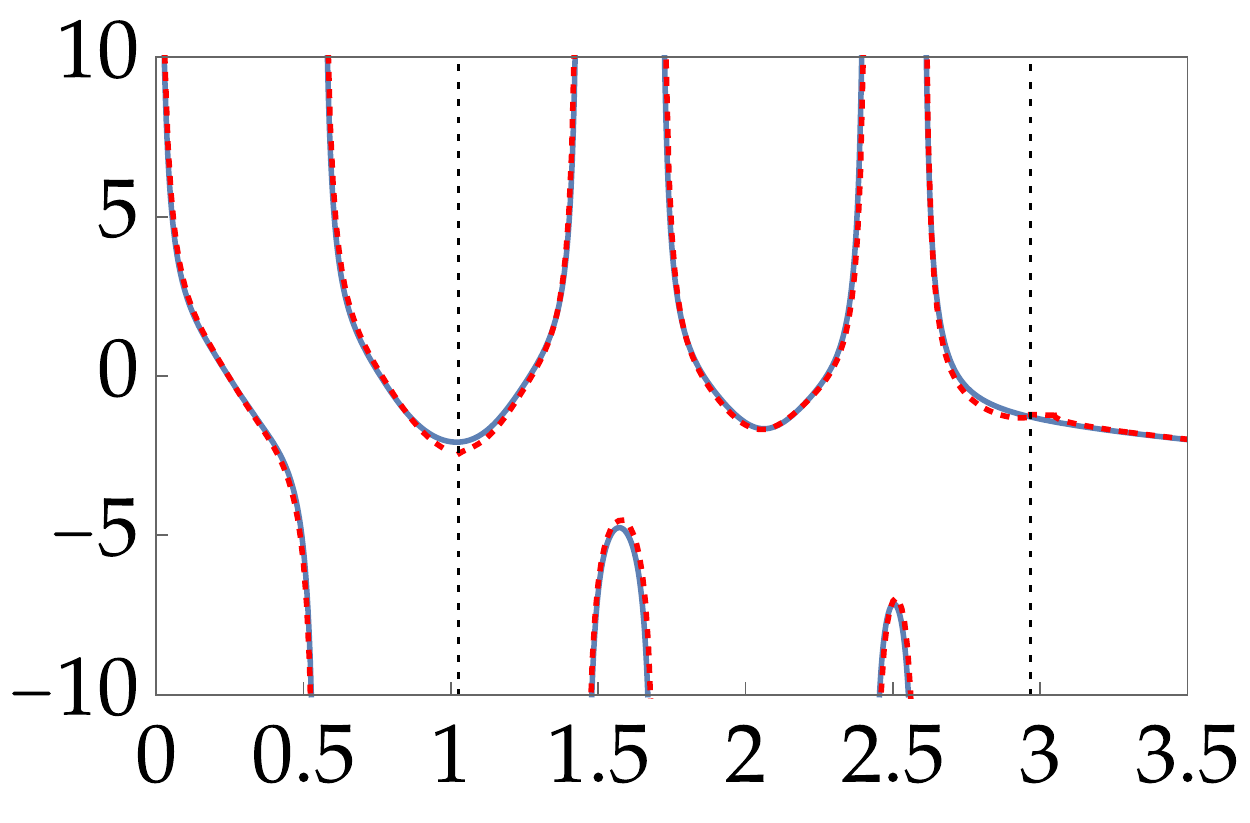}
\end{center}
\caption{$U=T^{-\frac{1}{2}}u^{[3]}_\mathrm{gO}(x;4,3)$; \\$\rho=\tfrac{3}{4}$; $\kappa=0$.}
\end{subfigure}%
\begin{subfigure}{1.5 in}
\begin{center}
\includegraphics[height=1.0 in]{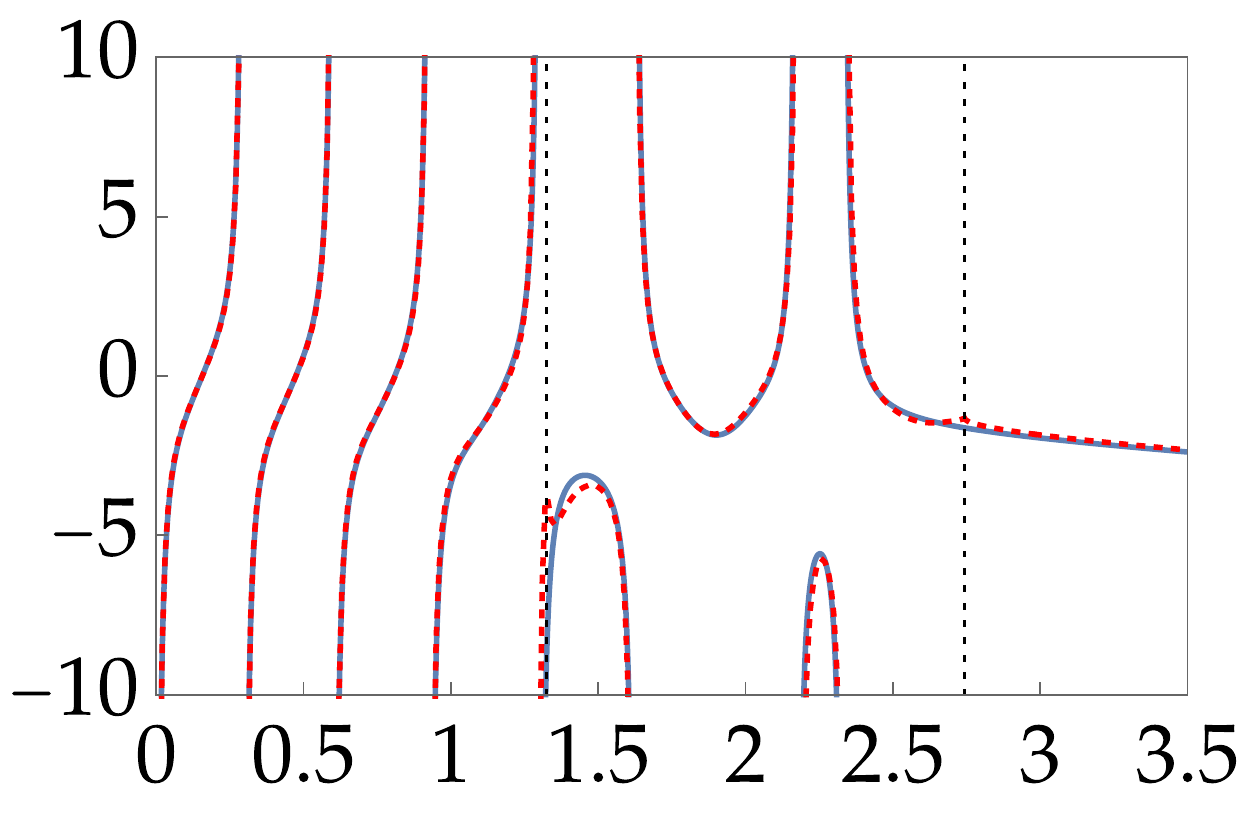}\\
\includegraphics[height=1.0 in]{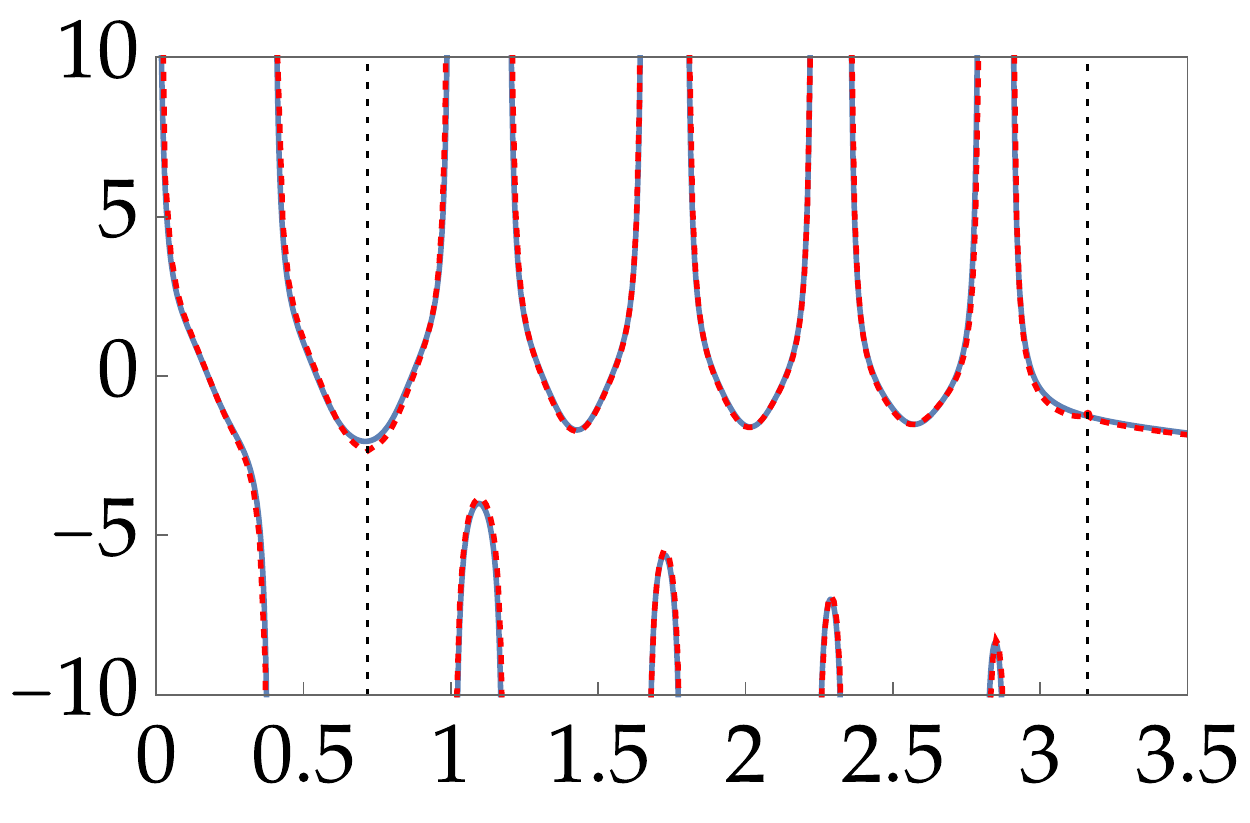}
\end{center}
\caption{$U=T^{-\frac{1}{2}}u^{[3]}_\mathrm{gO}(x;8,3)$; \\$\rho=\tfrac{3}{8}$; $\kappa=\tfrac{6}{17}$.}
\end{subfigure}
\end{center}
\caption{Quantitative comparison of scaled gO rational solutions $U$ (blue curves) for positive indices $(m,n)$ with the leading terms $\dot{U}$ (dashed red curves) in Theorems~\ref{thm:OkamotoExterior} and \ref{thm:Okamoto-elliptic} for $x=T^\frac{1}{2}y_0$ (taking $\zeta=0$ fixed in the latter case).  Axes for top row of plots:  $U$ and $\dot{U}$ versus $y_0$.  Axes for bottom row of plots:  $-\ii U$ and $-\ii\dot{U}$ versus $-\ii y_0$.  Dotted vertical lines indicate the intersection points of $\partial\HermiteExterior(\kappa)$ and $\partial\OkamotoExterior(\kappa)$ with the coordinate axes, near which neither Theorem~\ref{thm:OkamotoExterior} nor \ref{thm:Okamoto-elliptic} provides a uniformly accurate approximation.}
\label{fig:gOpp-axes}
\end{figure}
\begin{figure}[h]
\begin{center}
\begin{subfigure}{1.5 in}
\begin{center}
\includegraphics[height=1.0 in]{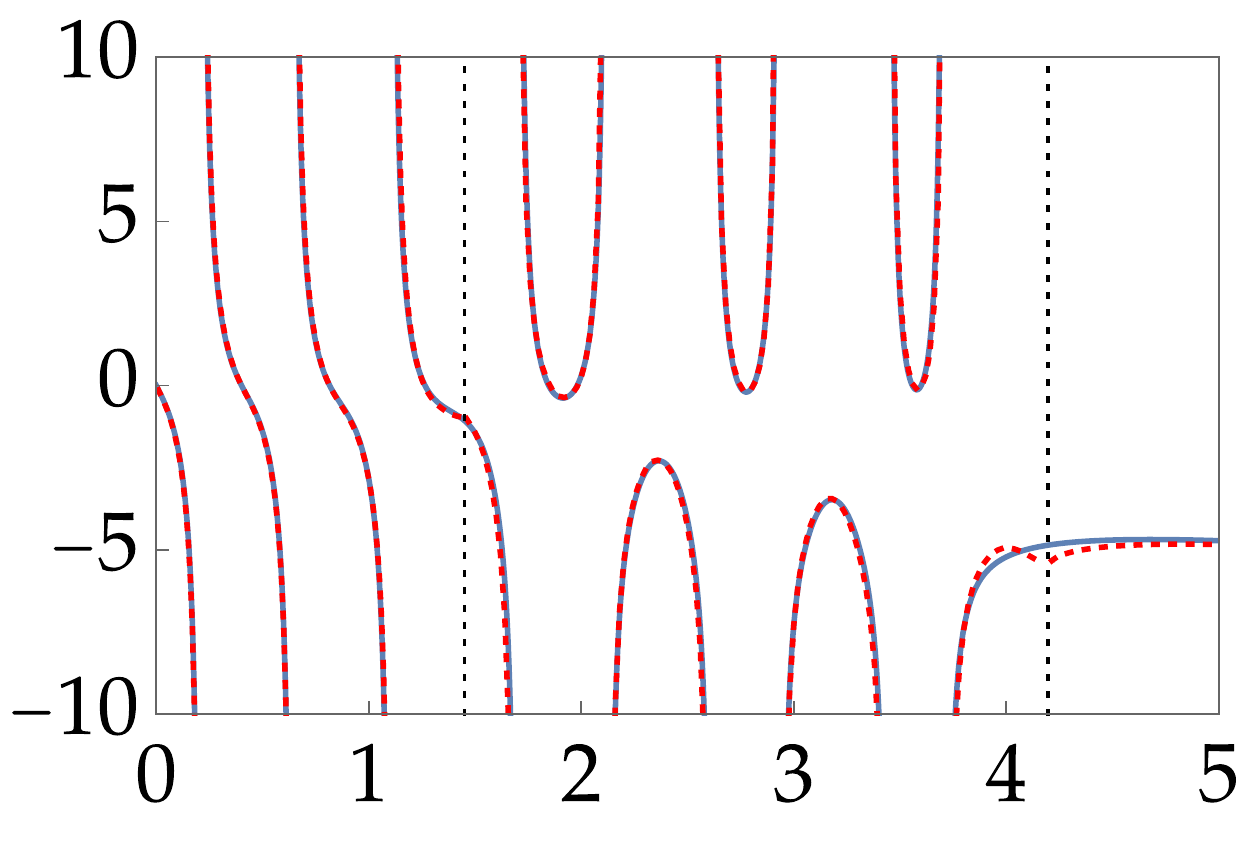}\\
\includegraphics[height=1.0 in]{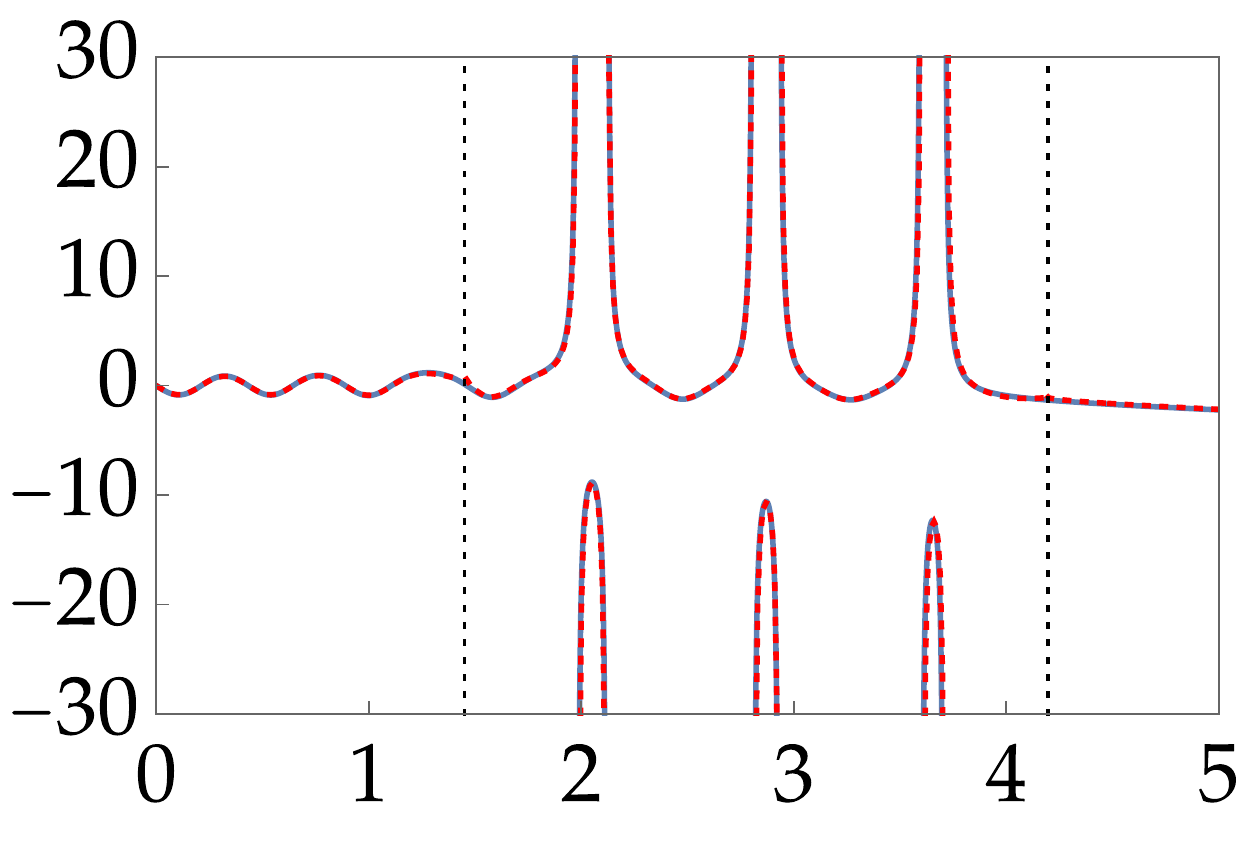}
\end{center}
\caption{$U=T^{-\frac{1}{2}}u^{[1]}_\mathrm{gO}(x;-6,-6)$; \\$\rho=1$; $\kappa=\tfrac{51}{19}$.}
\end{subfigure}%
\begin{subfigure}{1.5 in}
\begin{center}
\includegraphics[height=1.0 in]{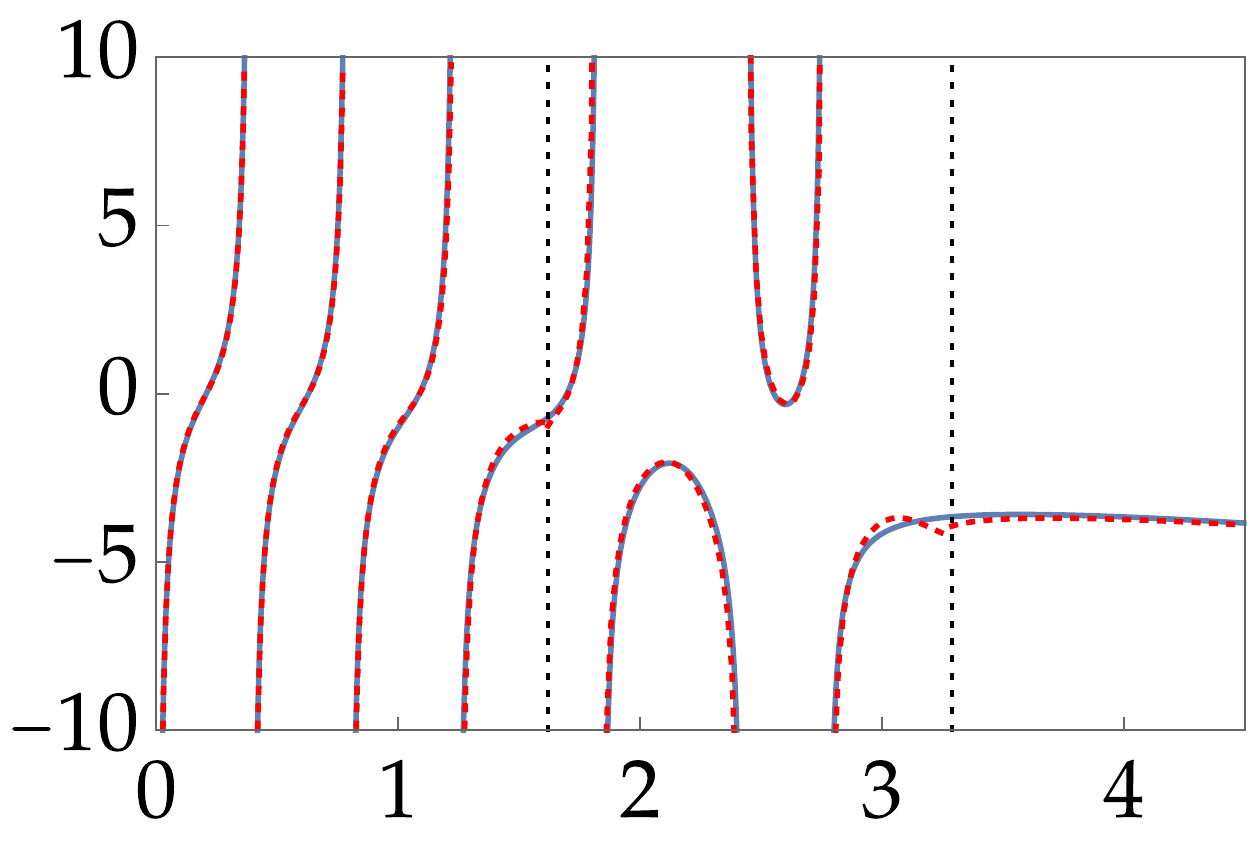}\\
\includegraphics[height=1.0 in]{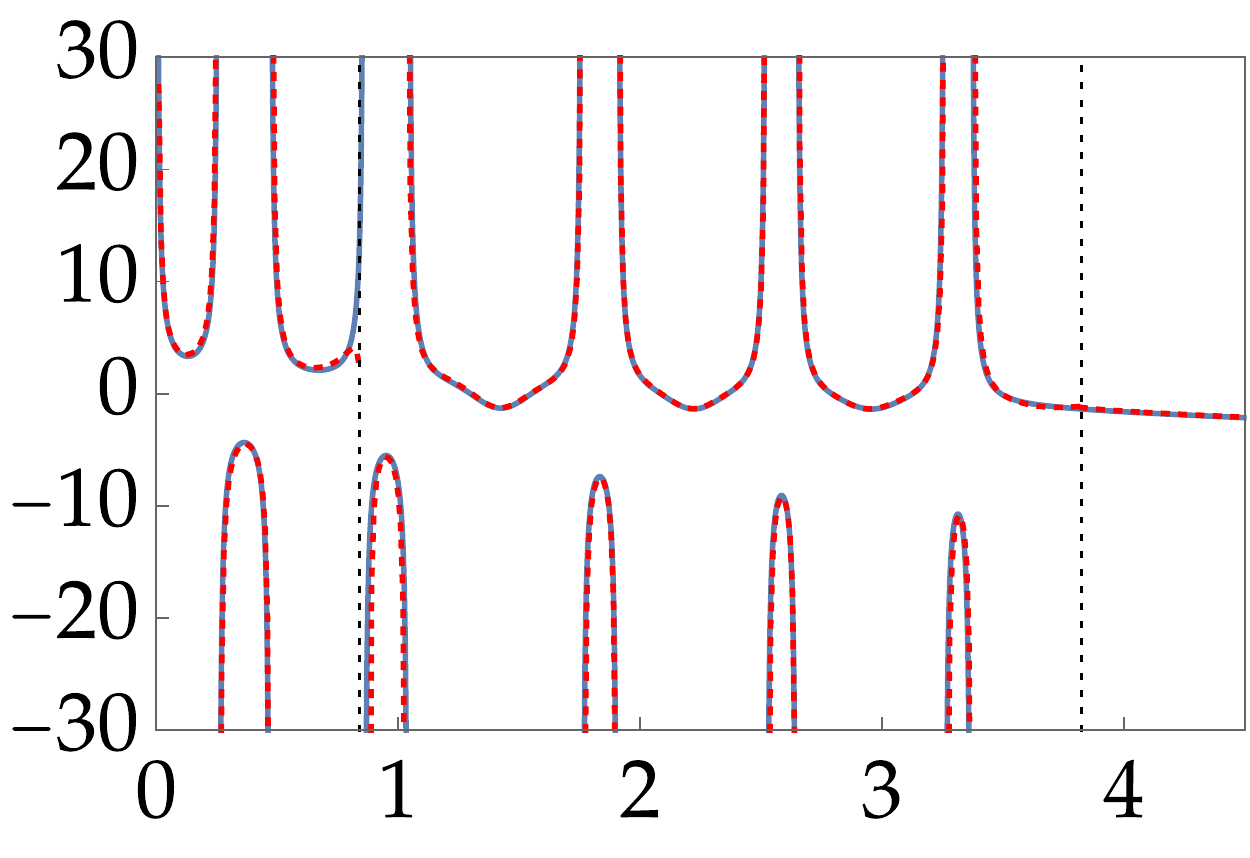}
\end{center}
\caption{$U=T^{-\frac{1}{2}}u^{[1]}_\mathrm{gO}(x;-3,-7)$; \\$\rho=\tfrac{7}{3}$; $\kappa=\tfrac{18}{11}$.}
\end{subfigure}%
\begin{subfigure}{1.5 in}
\begin{center}
\includegraphics[height=1.0 in]{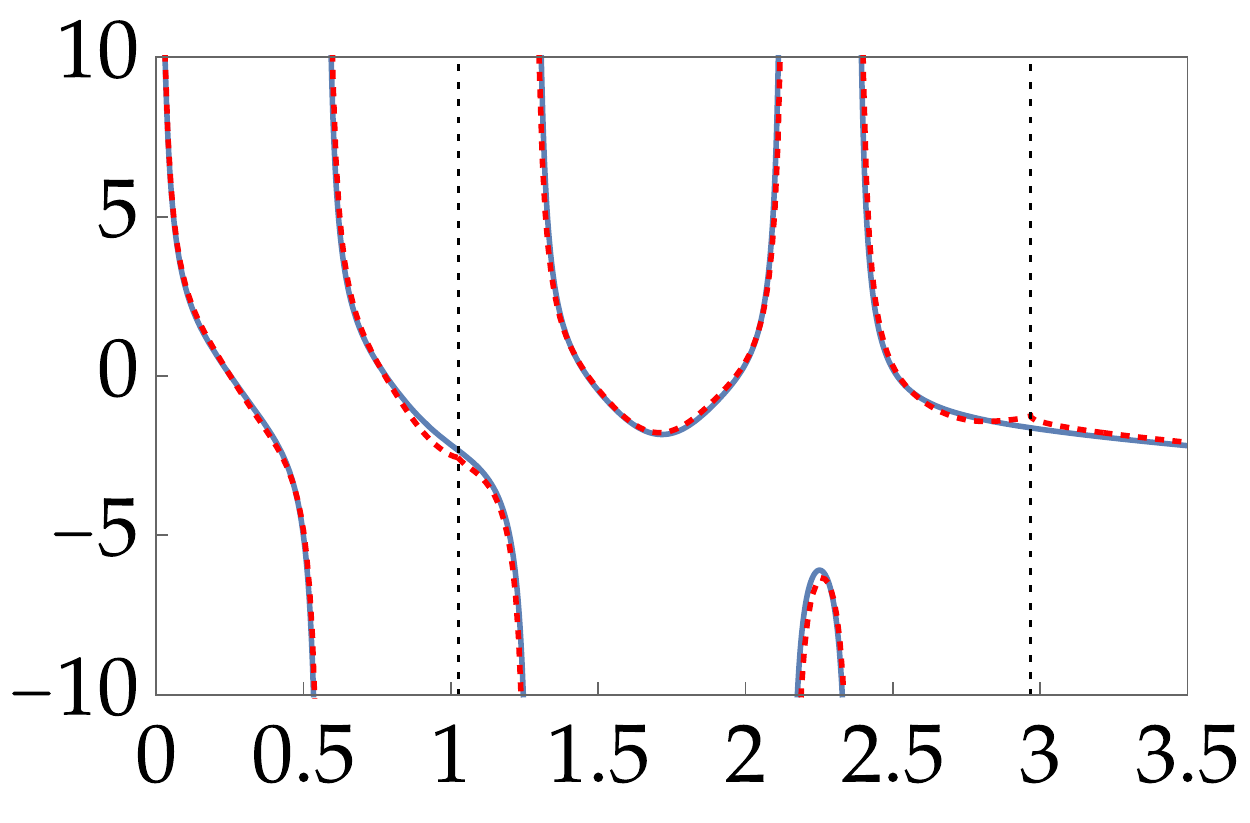}\\
\includegraphics[height=1.0 in]{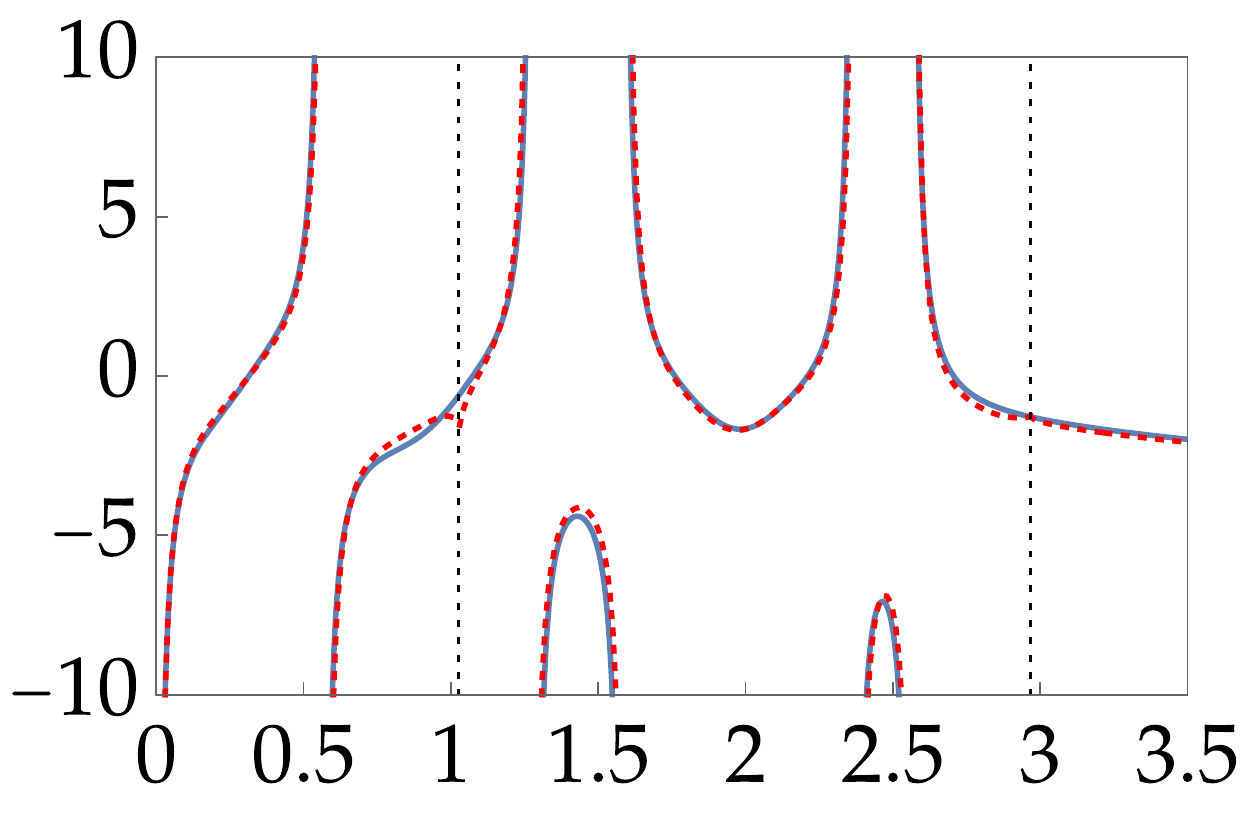}
\end{center}
\caption{$U=T^{-\frac{1}{2}}u^{[3]}_\mathrm{gO}(x;-3,-4)$; \\$\rho=\tfrac{4}{3}$; $\kappa=0$.}
\end{subfigure}
\begin{subfigure}{1.5 in}
\begin{center}
\includegraphics[height=1.0 in]{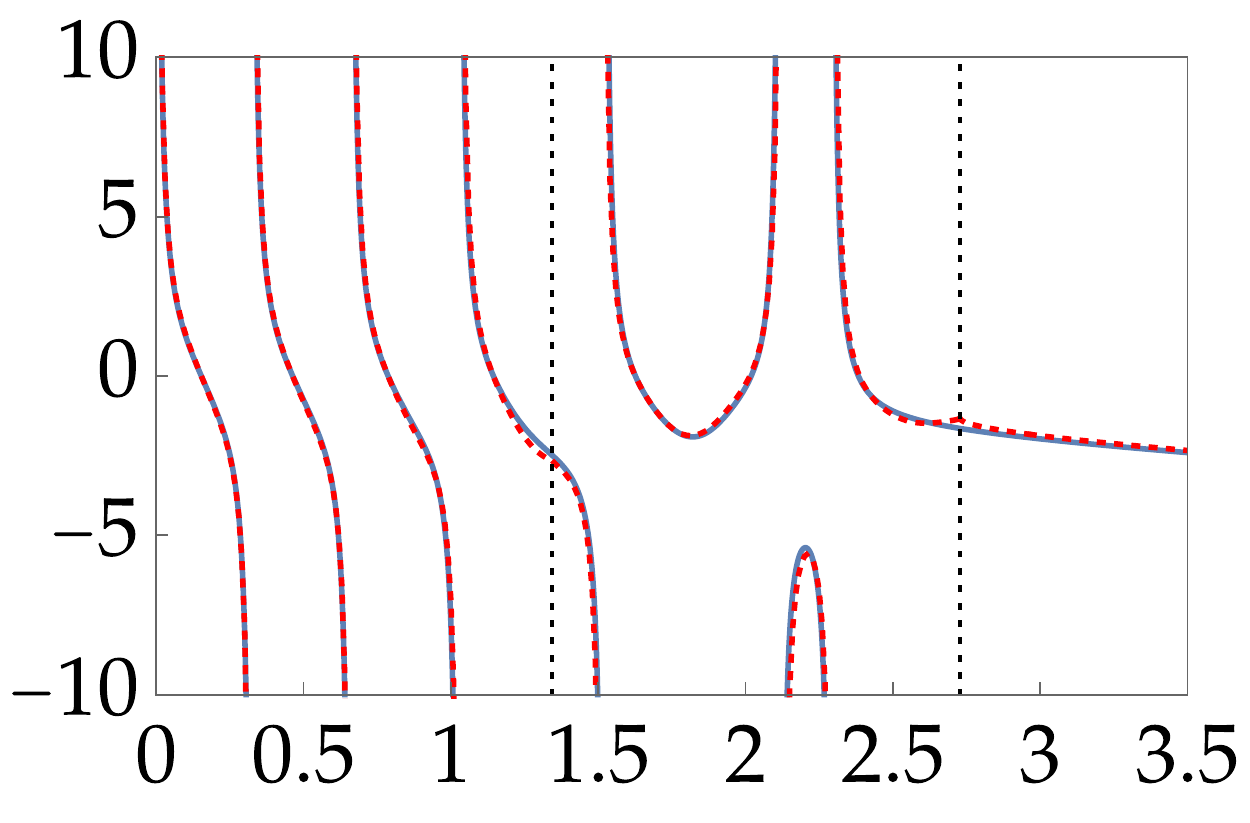}\\
\includegraphics[height=1.0 in]{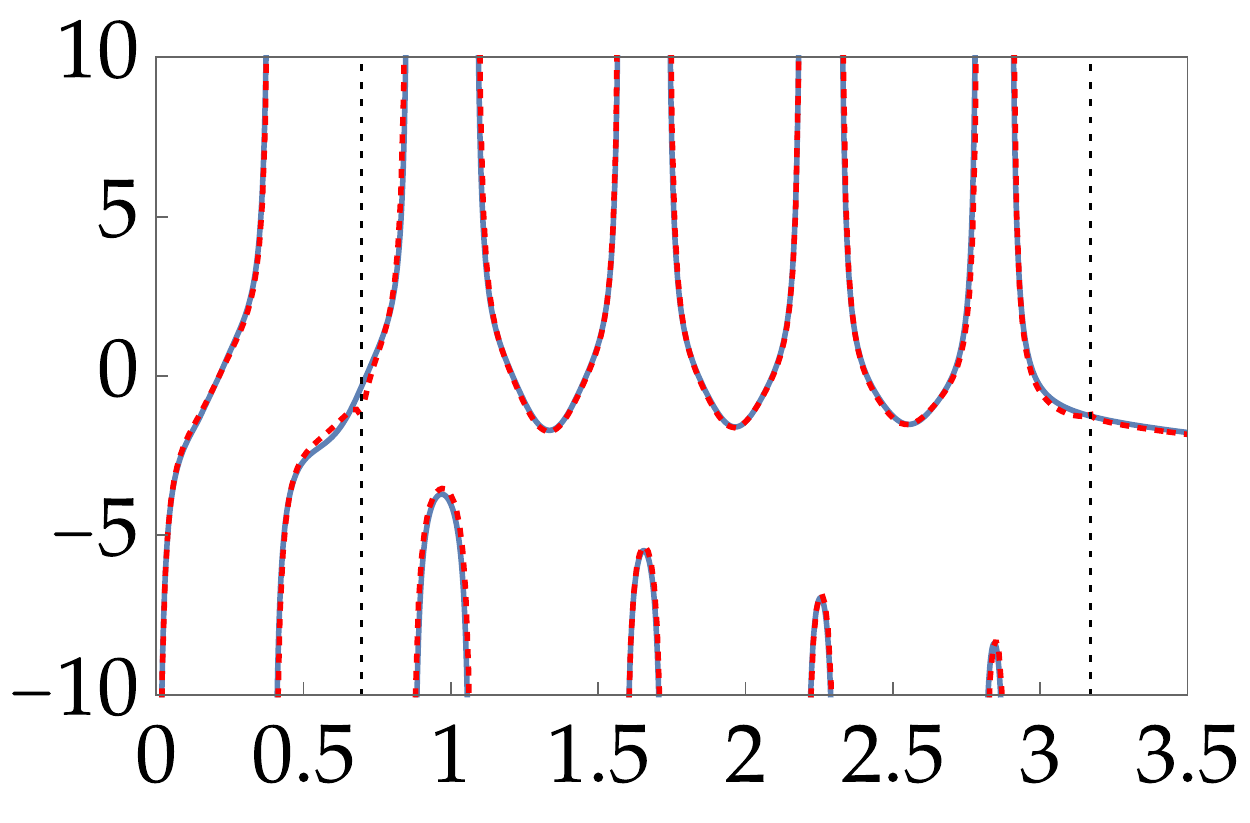}
\end{center}
\caption{$U=T^{-\frac{1}{2}}u^{[3]}_\mathrm{gO}(x;-3,-8)$; \\$\rho=\tfrac{8}{3}$; $\kappa=\tfrac{3}{8}$.}
\end{subfigure}%
\end{center}
\caption{As in Figure~\ref{fig:gOpp-axes} but for negative indices $(m,n)$.}
\label{fig:gOnn-axes}
\end{figure}
On the other hand, the approach (ii) allows one to accurately compare $U=T^{-\frac{1}{2}}u_\mathrm{F}^{[j]}(T^\frac{1}{2}y_0+T^{-\frac{1}{2}}\zeta;m,n)$ with an exact elliptic function $\dot{U}=f(\zeta-\zeta_0)$ of $\zeta$, by fixing a point $y_0$.  Theorems~\ref{thm:Hermite-elliptic}--\ref{thm:Okamoto-elliptic} predict the accuracy of such an approximation provided that $\zeta$ remains bounded.
 Figure~\ref{fig:gO-zeta} illustrates the nature of convergence to such an exact elliptic approximation.  For given $(m,n)$, the approach (ii) approximation fails as $\zeta$ increases, just as the tangent space fails to approximate a curved base manifold except near the given base point.
\begin{figure}[h]
\begin{center}
\begin{subfigure}{2 in}
\begin{center}
\includegraphics[width=2 in]{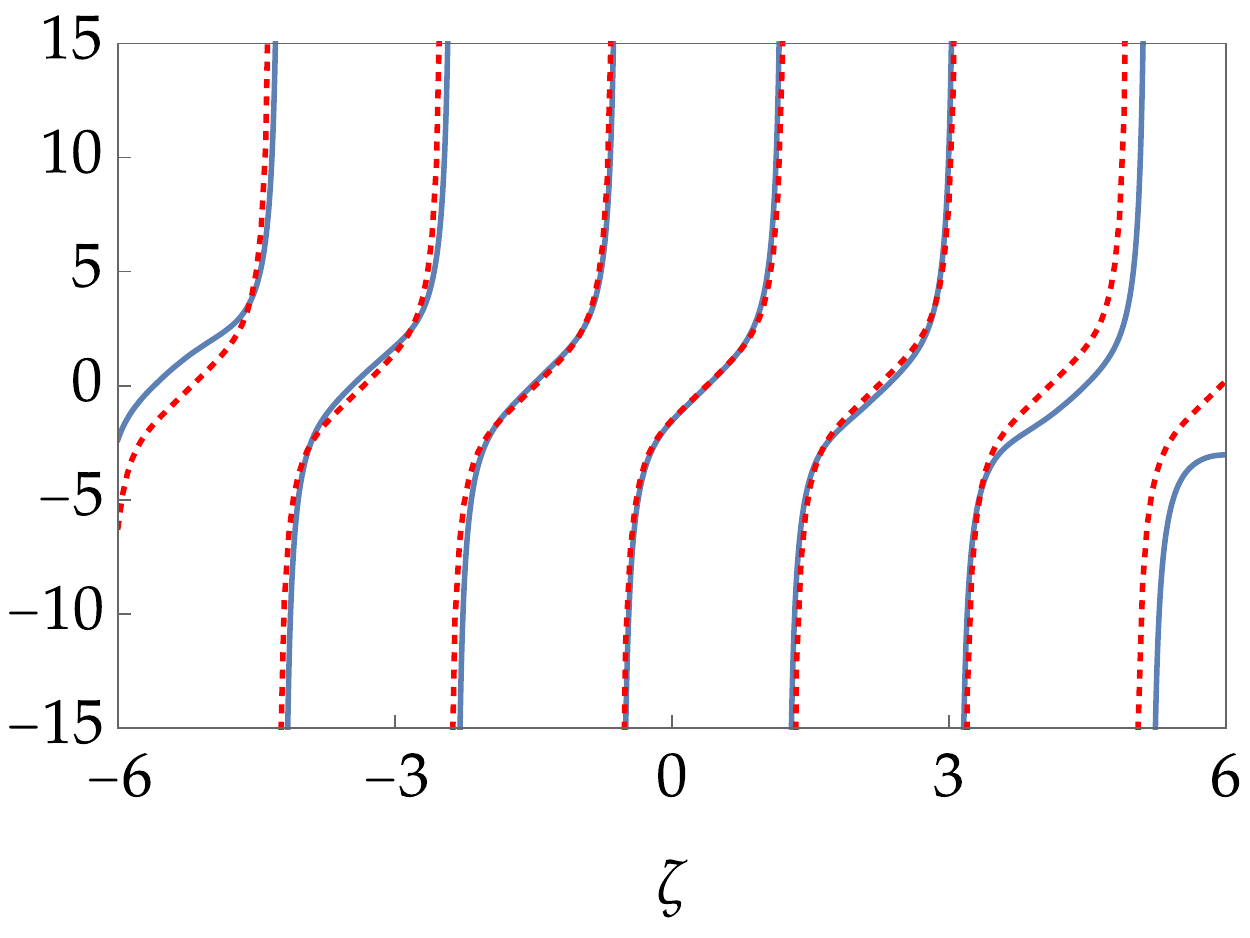}
\end{center}
\caption{$U=T^{-\frac{1}{2}}u_\mathrm{gO}^{[3]}(T^\frac{1}{2}(y_0+\frac{\zeta}{T});6,5).$}
\end{subfigure}%
\begin{subfigure}{2 in}
\begin{center}
\includegraphics[width=2 in]{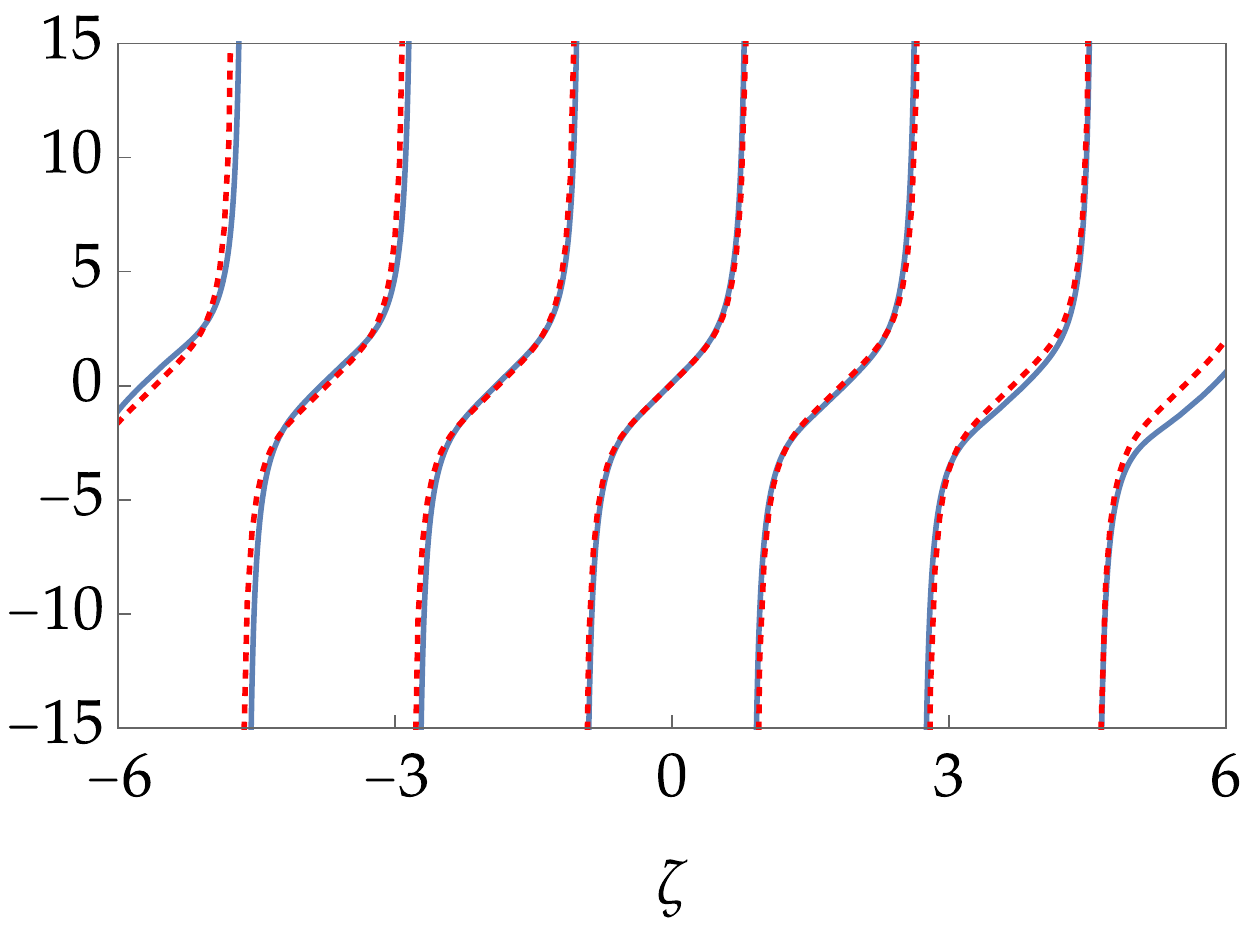}
\end{center}
\caption{$U=T^{-\frac{1}{2}}u_\mathrm{gO}^{[3]}(T^\frac{1}{2}(y_0+\frac{\zeta}{T});10,9).$}
\end{subfigure}%
\begin{subfigure}{2 in}
\begin{center}
\includegraphics[width=2 in]{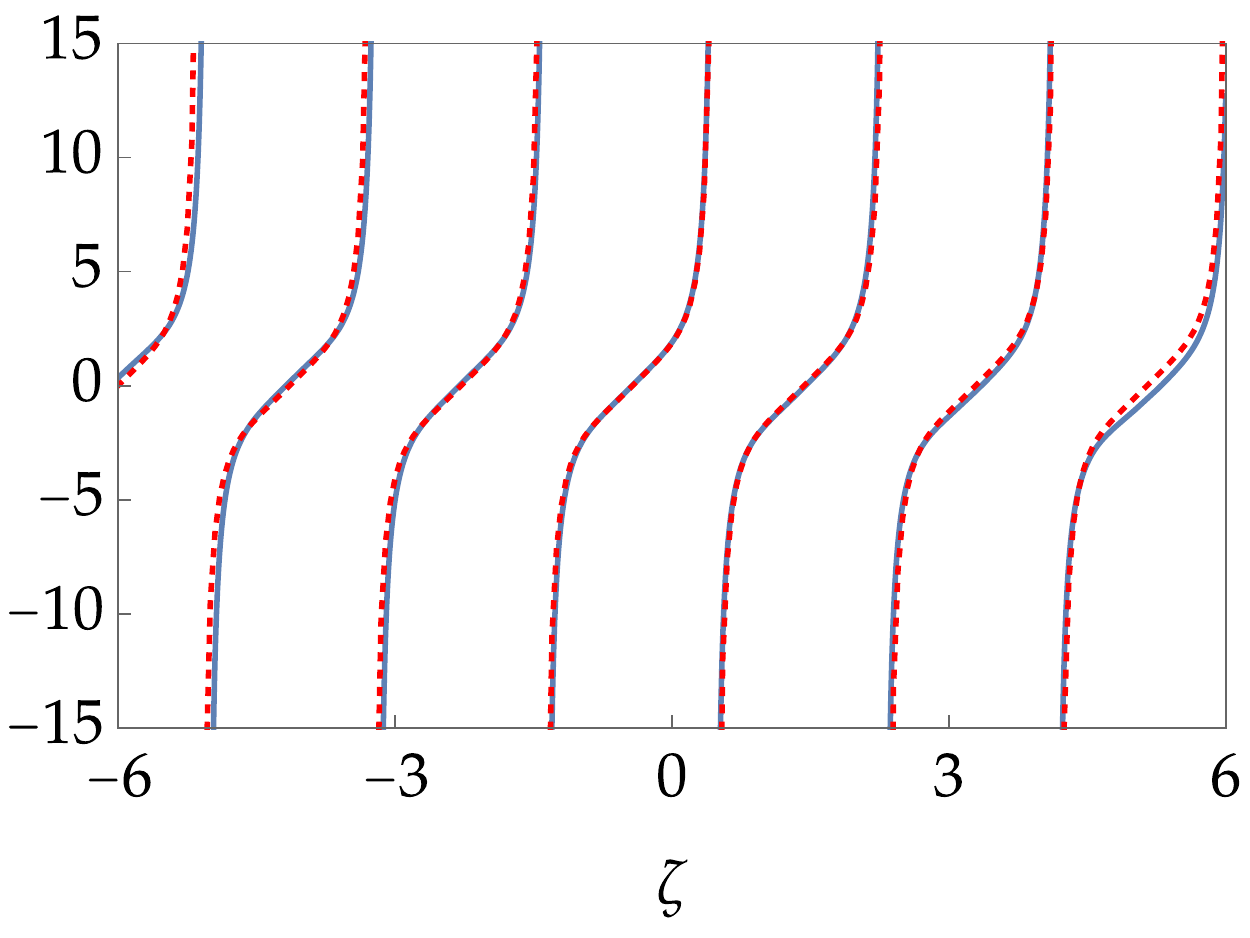}
\end{center}
\caption{$U=T^{-\frac{1}{2}}u_\mathrm{gO}^{[3]}(T^\frac{1}{2}(y_0+\frac{\zeta}{T});14,13).$}
\end{subfigure}%
\end{center}
\caption{Exact elliptic function approximations $\dot{U}=f(\zeta-\zeta_0)$ (dashed red curves) and the rational solutions $U$ they approximate (blue curves) near $y_0=\tfrac{1}{10}\in\rectangle(\kappa)\cap\mathbb{R}$ as functions of $\zeta\in\mathbb{R}$.  For all three plots, $\kappa=0$.}
\label{fig:gO-zeta}
\end{figure}

A good application of switching back and forth between the base manifold and its tangent space at a point is the proof of the following corollary of Theorems~\ref{thm:Hermite-elliptic} and \ref{thm:Okamoto-elliptic} (and of their proof, which specifies the various phases in \eqref{eq:intro-elliptic-theta}).  Note that according to \eqref{eq:intro-elliptic-theta}, 
the zeros of $f(\zeta-\zeta_0)$ are given by, for $k=1$ or $k=2$, the quantization conditions
\eq
\frac{\mathrm{Im}(Z_\mathfrak{b}^*\cdot(\zeta+\frac{Z_\mathfrak{a}}{2\pi}(\xi(y_0;m,n)+\mathfrak{z}_k(y_0))))}{\mathrm{Im}(Z_\mathfrak{a}^*Z_\mathfrak{b})}\in\mathbb{Z}\quad\text{and}\quad
-\frac{\mathrm{Im}(Z_\mathfrak{a}^*\cdot(\zeta+\frac{Z_\mathfrak{a}}{2\pi}(\xi(y_0;m,n)+\mathfrak{z}_k(y_0))))}{\mathrm{Im}(Z_\mathfrak{a}^*Z_\mathfrak{b})}\in\mathbb{Z}.
\label{eq:intro-zeros-quantize}
\endeq
Likewise, the poles of $f(\zeta-\zeta_0)$ are given by, for $k=1$ or $k=2$, the quantization conditions
\eq
\frac{\mathrm{Im}(Z_\mathfrak{b}^*\cdot(\zeta+\frac{Z_\mathfrak{a}}{2\pi}(\xi(y_0;m,n)+\mathfrak{p}_k(y_0))))}{\mathrm{Im}(Z_\mathfrak{a}^*Z_\mathfrak{b})}\in\mathbb{Z}\quad\text{and}\quad
-\frac{\mathrm{Im}(Z_\mathfrak{a}^*\cdot(\zeta+\frac{Z_\mathfrak{a}}{2\pi}(\xi(y_0;m,n)+\mathfrak{p}_k(y_0))))}{\mathrm{Im}(Z_\mathfrak{a}^*Z_\mathfrak{b})}\in\mathbb{Z}.
\label{eq:intro-poles-quantize}
\endeq
The phase $\xi(y_0;m,n)$ is given below in \eqref{eq:F1-U}, in which the dependence on the family (gH or gO), domain $\rectangle(\kappa)$, $\TR(\kappa)$, or $\TI(\kappa)$, and a sign $s$ ($s=-\mathrm{sgn}(\Theta_0)$ for type-$1$ solutions and $s=\mathrm{sgn}(\Theta_0)$ for type-$3$ solutions) enters via the data in Table~\ref{tab:outer-uniformize-phases}.  In particular, $\xi(y_0;m,n)$ contains terms proportional via $T\gg 1$ to real quantities $R_1$ and $R_2$ (see \eqref{eq:BoutrouxConstants}) that are essentially the imaginary parts of the integrals whose real parts vanish in \eqref{eq:intro-Boutroux}.  The phase shifts $\mathfrak{z}_k(y_0)$ and $\mathfrak{p}_k(y_0)$ depend on $y_0$ as well as the type of the rational solution.  They are written for type-$1$ solutions in \eqref{eq:dotUtwist-phases} and for type-$3$ solutions in \eqref{eq:dotU-phases}.
Define, for either family $\mathrm{F}=\mathrm{gH}$ or $\mathrm{F}=\mathrm{gO}$, and types $j=1,2,3$,
\eq
\begin{split}
\mathcal{Z}_\mathrm{F}^{[j]}(m,n)&:=\{y_0\in\mathbb{C}: u_\mathrm{F}^{[j]}(|\Theta_{0,\mathrm{F}}^{[j]}(m,n)|^\frac{1}{2}y_0;m,n)=0\}\\
\mathcal{P}_\mathrm{F}^{[j]}(m,n)&:=\{y_0\in\mathbb{C}: u_\mathrm{F}^{[j]}(|\Theta_{0,\mathrm{F}}^{[j]}(m,n)|^\frac{1}{2}y_0;m,n)=\infty\}
\end{split}
\label{eq:ExactRescaledZerosAndPoles}
\endeq
as the sets of rescaled zeros and poles of the indicated rational solution.  Likewise, set
\eq
\dot{\mathcal{Z}}_\mathrm{F}^{[j]}(m,n):=\{y_0\in\mathbb{C}: f(-\zeta_0)=0\}\quad\text{and}\quad
\dot{\mathcal{P}}_\mathrm{F}^{[j]}(m,n):=\{y_0\in\mathbb{C}: f(-\zeta_0)=\infty\}
\label{eq:ApproximateRescaledZerosAndPoles}
\endeq
where $f(\zeta-\zeta_0)$ is the approximation of the corresponding rational solution via Theorem~\ref{thm:Hermite-elliptic} or \ref{thm:Okamoto-elliptic}.  In other words, $\dot{\mathcal{Z}}_\mathrm{F}^{[j]}(m,n)$ (resp., $\dot{\mathcal{P}}_\mathrm{F}^{[j]}(m,n)$) is the set of all points $y_0$ satisfying both conditions in \eqref{eq:intro-zeros-quantize} (resp., in \eqref{eq:intro-poles-quantize}) with $\zeta=0$ fixed for either $k=1$ or $k=2$ and phases determined for the family, type, and region of interest.
\begin{corollary}[Poles and zeros of Painlev\'e-IV rational solutions]
Fix a rational aspect ratio $\rho>0$ and a compact set $C$ within one of the domains $\rectangle(\kappa)$, $\TR(\kappa)$, or $\TI(\kappa)$ (the latter two for the gO family only).  
Then, there is a constant $r>0$ (depending on $C$ and $\rho$) such that for 
$m,n$ sufficiently large 
with $n=\rho m$ the following statements hold with $T=|\Theta_{0,\mathrm{F}}^{[j]}(m,n)|$.
\begin{itemize} 
\item
For each point $\dot{y}_0\in \dot{\mathcal{Z}}^{[j]}_\mathrm{F}(m,n)\cap C$ there is a unique point $y_0\in \mathcal{Z}^{[j]}_\mathrm{F}(m,n)$ that satisfies $|y_0-\dot{y}_0|\le rT^{-2}$.  Likewise for each point $y_0\in\mathcal{Z}^{[j]}_\mathrm{F}(m,n)\cap C$ there is a unique point $\dot{y}_0\in\dot{\mathcal{Z}}^{[j]}_\mathrm{F}(m,n)$ that satisfies $|\dot{y}_0-y_0|\le r T^{-2}$.
\item
For each point $\dot{y}_0\in \dot{\mathcal{P}}^{[j]}_\mathrm{F}(m,n)\cap C$ there is a unique point $y_0\in \mathcal{P}^{[j]}_\mathrm{F}(m,n)$ that satisfies $|y_0-\dot{y}_0|\le rT^{-2}$.  Likewise for each point $y_0\in\mathcal{P}^{[j]}_\mathrm{F}(m,n)\cap C$ there is a unique point $\dot{y}_0\in\dot{\mathcal{P}}^{[j]}_\mathrm{F}(m,n)$ that satisfies $|\dot{y}_0-y_0|\le r T^{-2}$.
\end{itemize}
\label{cor:poles-and-zeros}
\end{corollary}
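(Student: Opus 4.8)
The plan is to localize everything in the rescaled variable $\zeta$ and run a Rouch\'e argument in $\zeta$, exploiting the fact that every zero of the elliptic function $f(\zeta-\zeta_0)$ is simple with derivative $\pm 4$ and every pole is simple with residue $\pm 1$. Thus a perturbation of size $\bo(T^{-1})$ of $f$ moves a zero or pole by $\bo(T^{-1})$ in $\zeta$, and since $x=T^{\frac12}y_0+T^{-\frac12}\zeta$ gives $y_0=T^{-\frac12}x$ and $\zeta=T^{\frac12}x-Ty_0$, a displacement $\bo(T^{-1})$ in $\zeta$ at fixed $y_0$ is a displacement $\bo(T^{-2})$ of the rescaled location $T^{-\frac12}x=y_0+T^{-1}\zeta$. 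This is exactly the ``one order of amplification'' that turns the $\bo(T^{-1})$ relative error of Theorems~\ref{thm:Hermite-elliptic}--\ref{thm:Okamoto-elliptic} into the claimed $\bo(T^{-2})$ estimate.

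First I would record the structure of the approximate sets. Fix $\kappa$ and a compact $C$ inside $\rectangle(\kappa)$, $\TR(\kappa)$, or $\TI(\kappa)$. On $C$ the periods $Z_\mathfrak{a},Z_\mathfrak{b}$ are bounded and bounded away from a degenerate configuration, so $f(\zeta;y_0,\kappa)$ has a uniform minimal separation $\delta_0>0$ between any two of its zeros and poles. The conditions \eqref{eq:intro-zeros-quantize} (resp.\ \eqref{eq:intro-poles-quantize}) with $\zeta=0$ realize $\dot{\mathcal{Z}}^{[j]}_\mathrm{F}(m,n)$ (resp.\ $\dot{\mathcal{P}}^{[j]}_\mathrm{F}(m,n)$) as the level set $\Phi(y_0)\in\mathbb{Z}^2$ of a smooth $\mathbb{R}^2$-valued map $\Phi$ assembled from $\xi(y_0;m,n)$, $\mathfrak{z}_k(y_0)$ or $\mathfrak{p}_k(y_0)$, and the periods. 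The only term of $\xi(y_0;m,n)$ that grows with the parameters is the one proportional via $T\gg 1$ to the real quantities $R_1,R_2$ of \eqref{eq:BoutrouxConstants}; consequently the Jacobian of $\Phi$ on $C$ is $T$ times the Jacobian of $y_0\mapsto(R_1(y_0),R_2(y_0))$ plus terms of lower order. Granting (see the obstacle below) that the latter Jacobian is nonsingular uniformly on $C$, we get $\|D\Phi\|\asymp T$ and $\|(D\Phi)^{-1}\|=\bo(T^{-1})$ uniformly on $C$, so $\dot{\mathcal{Z}}^{[j]}_\mathrm{F}(m,n)$ and $\dot{\mathcal{P}}^{[j]}_\mathrm{F}(m,n)$ are locally regular lattices in $y_0$ with spacing $\asymp T^{-1}$.

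With this in hand the two assertions follow symmetrically; I describe the zero statement, the pole statement being identical with $\chi=-1$ and with $1/g$, $1/f$ in place of $g$, $f$. \emph{Forward direction:} given $\dot{y}_0\in\dot{\mathcal{Z}}^{[j]}_\mathrm{F}(m,n)\cap C$, freeze $y_0=\dot{y}_0$ and view $g(\zeta):=u^{[j]}_\mathrm{F}(T^{\frac12}\dot{y}_0+T^{-\frac12}\zeta;m,n)$ as a meromorphic function of $\zeta$. On $|\zeta|\le\tfrac12\delta_0$, Theorem~\ref{thm:Hermite-elliptic} or \ref{thm:Okamoto-elliptic} at this fixed $y_0$ (with $\chi=1$ near the zero) gives $T^{-\frac12}g(\zeta)=f(\zeta-\zeta_0)+\bo(T^{-1})$ uniformly, while $f(\zeta-\zeta_0)$ has a simple zero at $\zeta=0$ by membership in $\dot{\mathcal{Z}}^{[j]}_\mathrm{F}(m,n)$ and neither a zero nor a pole elsewhere in that disk, hence $|f(\zeta-\zeta_0)|\ge c\delta_0$ on $|\zeta|=\tfrac12\delta_0$ for a uniform $c>0$. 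Rouch\'e's theorem then produces a unique zero $\zeta^\star$ of $g$ with $|\zeta^\star|<\tfrac12\delta_0$; from $|f(\zeta^\star-\zeta_0)|=\bo(T^{-1})$ and the slope $\pm4$ of $f$ at its zero we get $|\zeta^\star|=\bo(T^{-1})$, so $y_0^\star:=\dot{y}_0+T^{-1}\zeta^\star\in\mathcal{Z}^{[j]}_\mathrm{F}(m,n)$ with $|y_0^\star-\dot{y}_0|=\bo(T^{-2})$, and it is the only true zero with $|y_0-\dot{y}_0|<\tfrac12\delta_0 T^{-1}$, a fortiori the only one within $rT^{-2}$ for $T$ large. \emph{Converse:} given a true zero $y^\star\in\mathcal{Z}^{[j]}_\mathrm{F}(m,n)\cap C$, the same approximation at the base point $y_0=y^\star$ evaluated at $\zeta=0$ forces $f(-\zeta_0(y^\star))=\bo(T^{-1})$, so $\Phi(y^\star)$ lies within $\bo(T^{-1})$ of $\mathbb{Z}^2$; the quantitative inverse function theorem, using $\|(D\Phi)^{-1}\|=\bo(T^{-1})$, yields $\dot{y}_0$ with $\Phi(\dot{y}_0)\in\mathbb{Z}^2$, i.e.\ $\dot{y}_0\in\dot{\mathcal{Z}}^{[j]}_\mathrm{F}(m,n)$, and $|\dot{y}_0-y^\star|=\bo(T^{-2})$, unique within $rT^{-2}$ because the lattice spacing is $\asymp T^{-1}$.

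\textbf{Main obstacle.} The crux is the uniform non-degeneracy invoked above: that the Jacobian of $y_0\mapsto(R_1(y_0),R_2(y_0))$ --- equivalently the period map of the spectral curve $w^2=P(f;y_0,\kappa,E(y_0;\kappa))$ once the Boutroux conditions \eqref{eq:intro-Boutroux} have pinned down $E$ --- is invertible with $\bo(1)$ inverse, uniformly on compact subsets of $\rectangle(\kappa)\cup\TR(\kappa)\cup\TI(\kappa)$. This is what makes $\dot{\mathcal{Z}}$ and $\dot{\mathcal{P}}$ honest $\asymp T^{-1}$-lattices and what upgrades the $\bo(T^{-1})$ error to a $\bo(T^{-2})$ displacement. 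It should follow from the very construction of $E=E(y_0;\kappa)$ in Definition~\ref{def:E}: the pair of real equations \eqref{eq:intro-Boutroux} defines $E$ implicitly as a smooth function of $y_0$ (Proposition~\ref{prop:E}) precisely because the associated $2\times2$ real Jacobian in $E$ is nonsingular on these domains, and differentiating the period integrals in $y_0$ re-expresses the needed Jacobian through that same non-degenerate data. Making all constants uniform on $C$ and down to (but not including) the boundary curves $\partial\HermiteExterior(\kappa)$ and $\partial\OkamotoExterior(\kappa)$, where the configuration degenerates, is the delicate point.
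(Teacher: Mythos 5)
Your proposal runs on exactly the same engine as the paper's proof: freeze the base point, treat $\zeta$ as the free variable, use the fact that the elliptic approximation has a simple zero (derivative $\pm 4$) or simple pole (residue $\pm 1$) at the relevant point, and convert the uniform $\bo(T^{-1})$ error of Theorems~\ref{thm:Hermite-elliptic}--\ref{thm:Okamoto-elliptic} into an $\bo(T^{-1})$ displacement in $\zeta$, hence $\bo(T^{-2})$ in $y_0=T^{-\frac12}x-T^{-1}\zeta$. The only cosmetic difference in the forward direction is that the paper puts an artificial coefficient $\delta\in[0,1]$ on the error term and invokes the Analytic Implicit Function Theorem to track the persisting simple zero of $u^{\chi}$ up to $\delta=1$, whereas you use Rouch\'e on a circle of radius $\tfrac12\delta_0$; these are interchangeable here. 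The paper also handles the $(m,n)$-dependence of $\dot y_0$ exactly as you do, by running the local argument separately for each $(m,n)$ and appealing to uniformity of the error on $C$.

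Where you genuinely diverge is the converse (``likewise'') statements. The paper's proof is silent on them, implicitly treating the correspondence as symmetric; you instead realize $\dot{\mathcal{Z}}$ and $\dot{\mathcal{P}}$ as level sets $\Phi(y_0)\in\mathbb{Z}^2$ and use a quantitative inverse function theorem with $\|(D\Phi)^{-1}\|=\bo(T^{-1})$. This is the right way to make the converse precise, because (as you correctly observe) a zero of $f(\cdot-\zeta_0(y^\star))$ at $\zeta=\zeta^*$ does not literally place $y^\star+T^{-1}\zeta^*$ in $\dot{\mathcal{Z}}$ — the quantization condition must be re-solved at the shifted base point. The non-degeneracy of the Jacobian of $y_0\mapsto(R_1,R_2)$ that you flag as the main obstacle is indeed the substantive unproven ingredient: it is what makes $\dot{\mathcal{Z}}$ and $\dot{\mathcal{P}}$ honest lattices of spacing $\asymp T^{-1}$ (needed even for the uniqueness claims) and what closes the converse. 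The paper does not establish it in the proof of the corollary either — the closest related fact it proves is the non-vanishing of $\mathrm{Im}(Z_\mathfrak{a}Z_\mathfrak{b}^*)$, which is the Jacobian of the Boutroux conditions in $E$, not the Jacobian of the periods in $y_0$. So your proposal is faithful to the paper's argument where the paper gives one, and more explicit than the paper where it does not; to be fully self-contained you would still need to supply the uniform invertibility of $D(R_1,R_2)/D(\mathrm{Re}\,y_0,\mathrm{Im}\,y_0)$ on compact subsets of the Boutroux domains.
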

The proof of Corollary~\ref{cor:poles-and-zeros} is given at the end of Section~\ref{sec:MalgrangeResidues} below.  The accuracy of approximation of poles and zeros of the rational Painlev\'e-IV solutions $u_\mathrm{F}^{[j]}(x;m,n)$ for both families $\mathrm{F}=\mathrm{gH}$ and $\mathrm{F}=\mathrm{gO}$ and types $j=1$ and $j=3$ are shown in Figures~\ref{fig:gH-poles-zeros}--\ref{fig:gOnn-poles-zeros} for $y_0$ in the closed first quadrant $0\le\arg(y_0)\le\tfrac{1}{2}\pi$.  Again, the accuracy is remarkable even for $(m,n)$ not very large.  We do not show analogous plots for type $j=2$ since these can be immediately obtained from \eqref{eq:symmetry-1-2} and Proposition~\ref{prop:FirstQuadrant}.
\begin{figure}[h]
\begin{center}
\begin{subfigure}{1.5 in}
\begin{center}
\includegraphics[height=1.5 in]{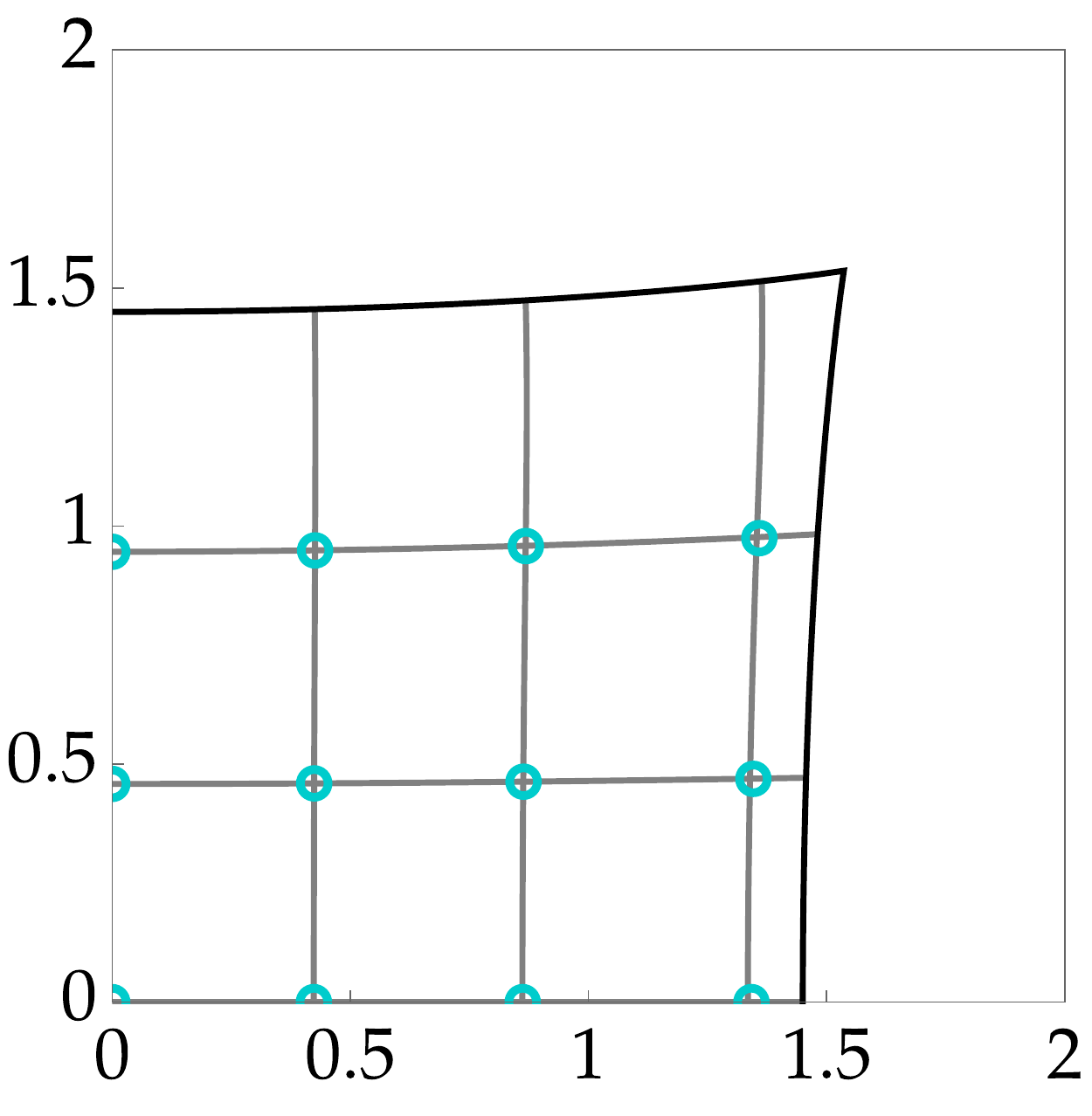}\\
\includegraphics[height=1.5 in]{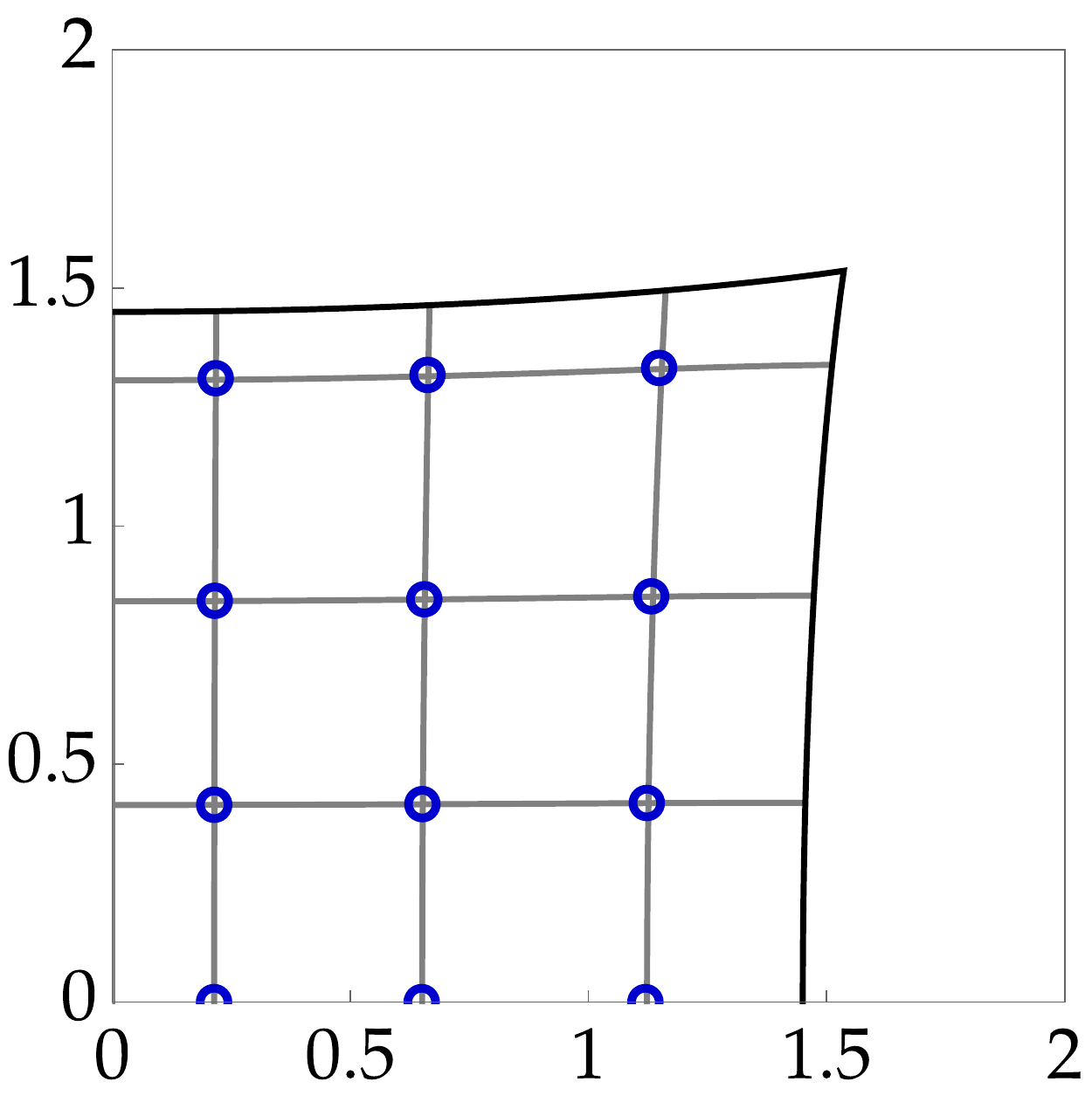}\\
\includegraphics[height=1.5 in]{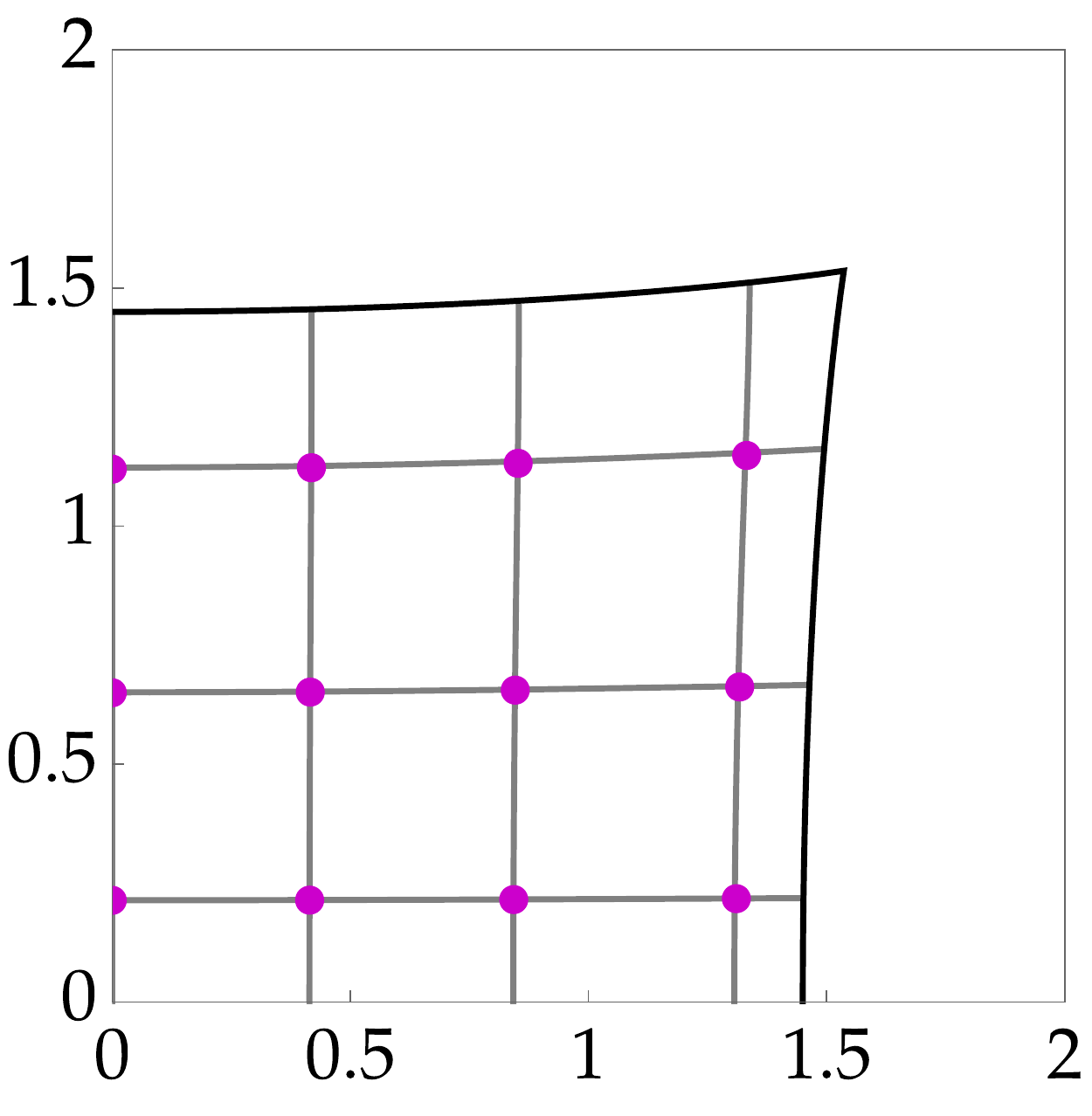}\\
\includegraphics[height=1.5 in]{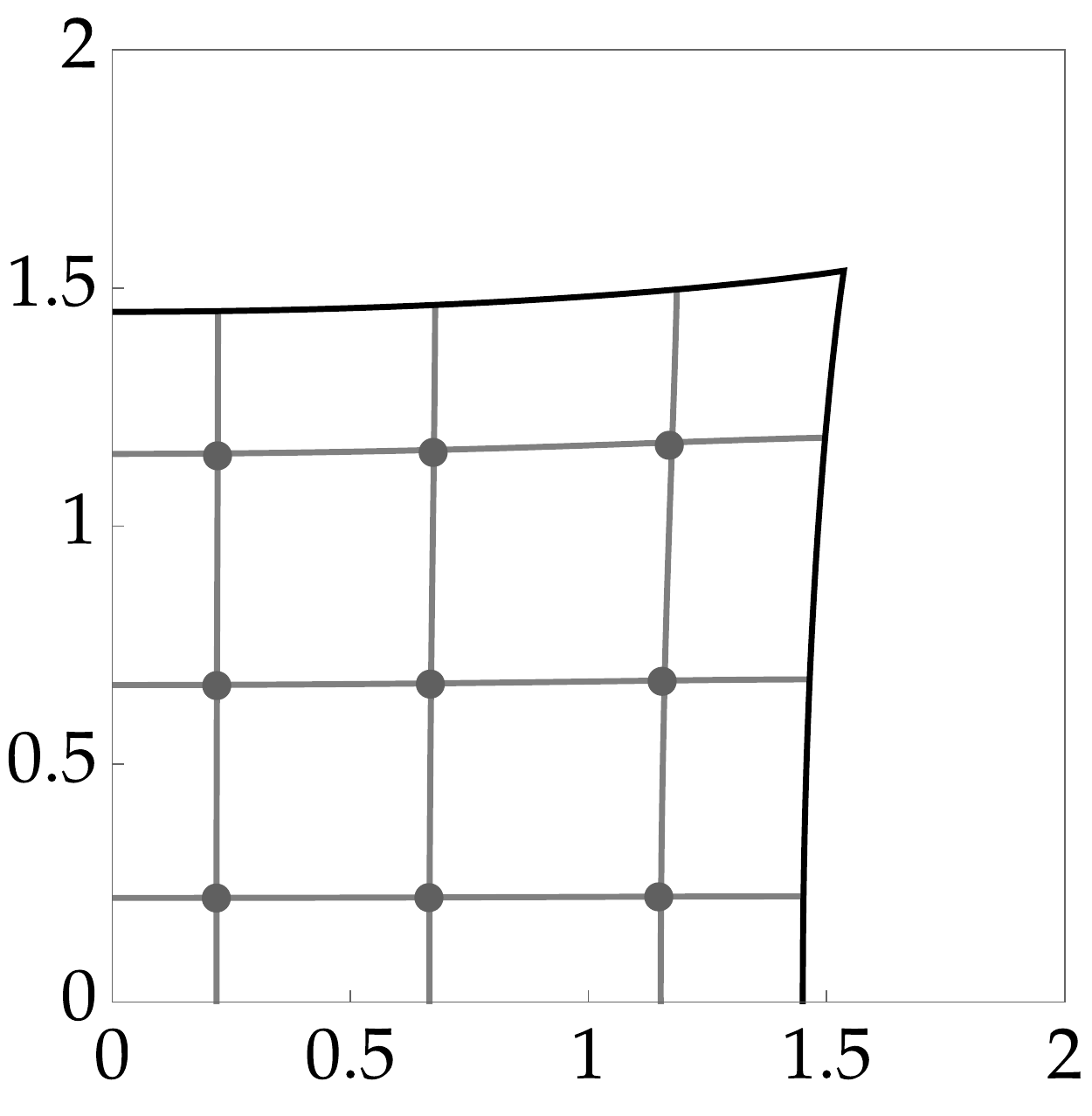}
\end{center}
\caption{$U=T^{-\frac{1}{2}}u^{[1]}_\mathrm{gH}(x;6,6)$; \\$\rho=1$; $\kappa=-\tfrac{10}{3}$.}
\end{subfigure}%
\begin{subfigure}{1.5 in}
\begin{center}
\includegraphics[height=1.5 in]{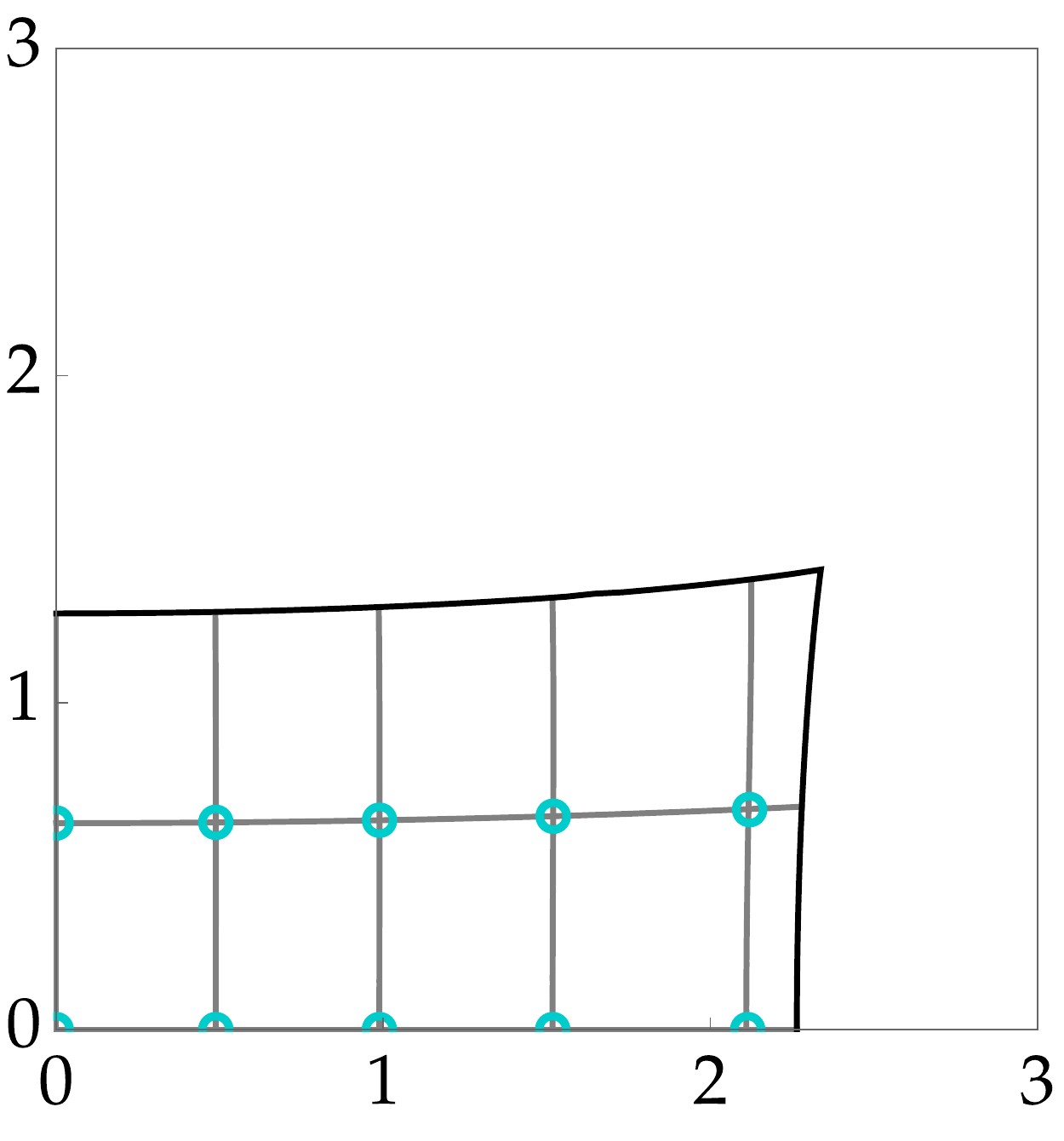}\\
\includegraphics[height=1.5 in]{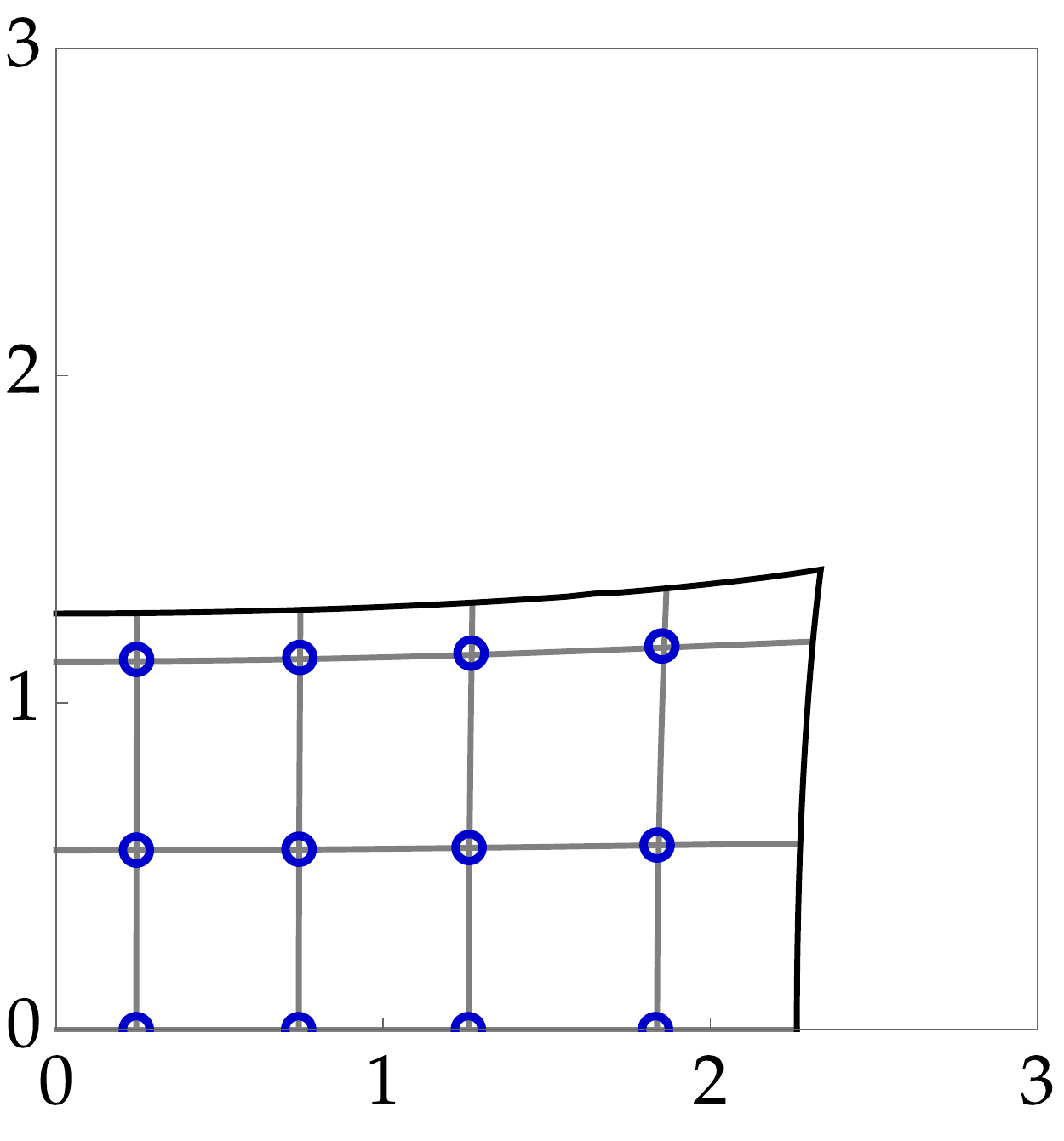}\\
\includegraphics[height=1.5 in]{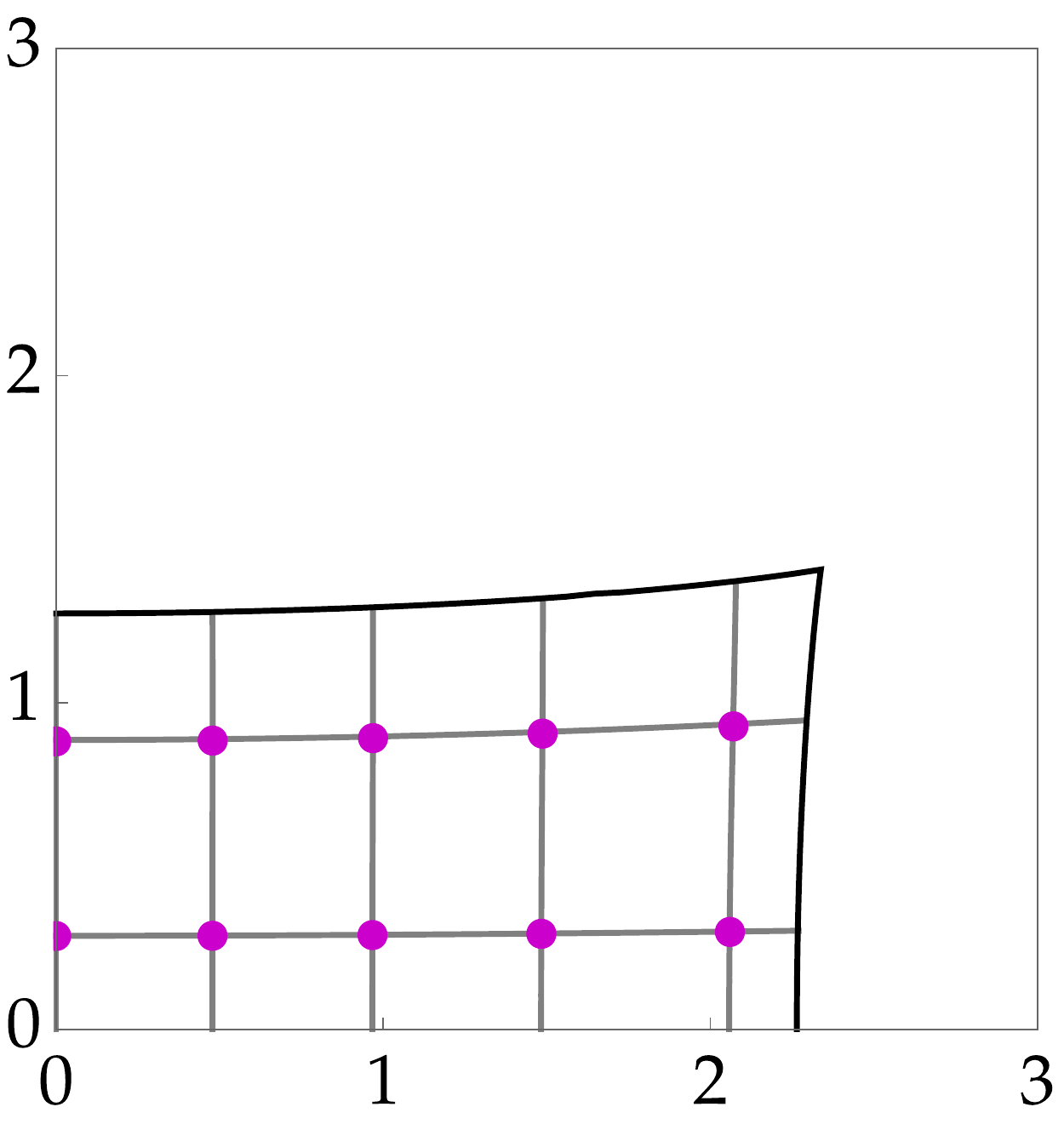}\\
\includegraphics[height=1.5 in]{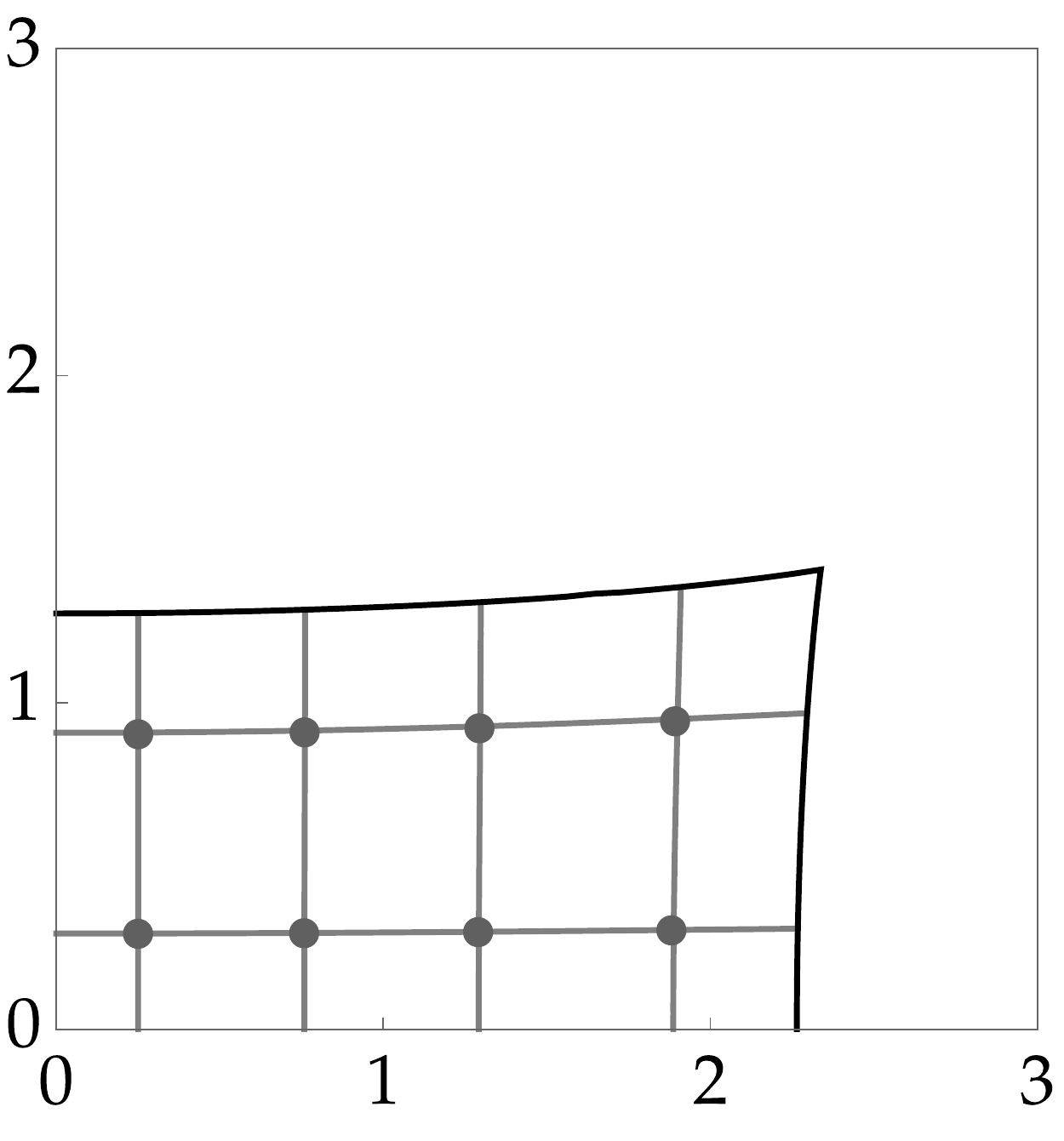}
\end{center}
\caption{$U=T^{-\frac{1}{2}}u^{[1]}_\mathrm{gH}(x;8,4)$; \\$\rho=\tfrac{1}{2}$; $\kappa=-\tfrac{11}{2}$.}
\end{subfigure}%
\begin{subfigure}{1.5 in}
\begin{center}
\includegraphics[height=1.5 in]{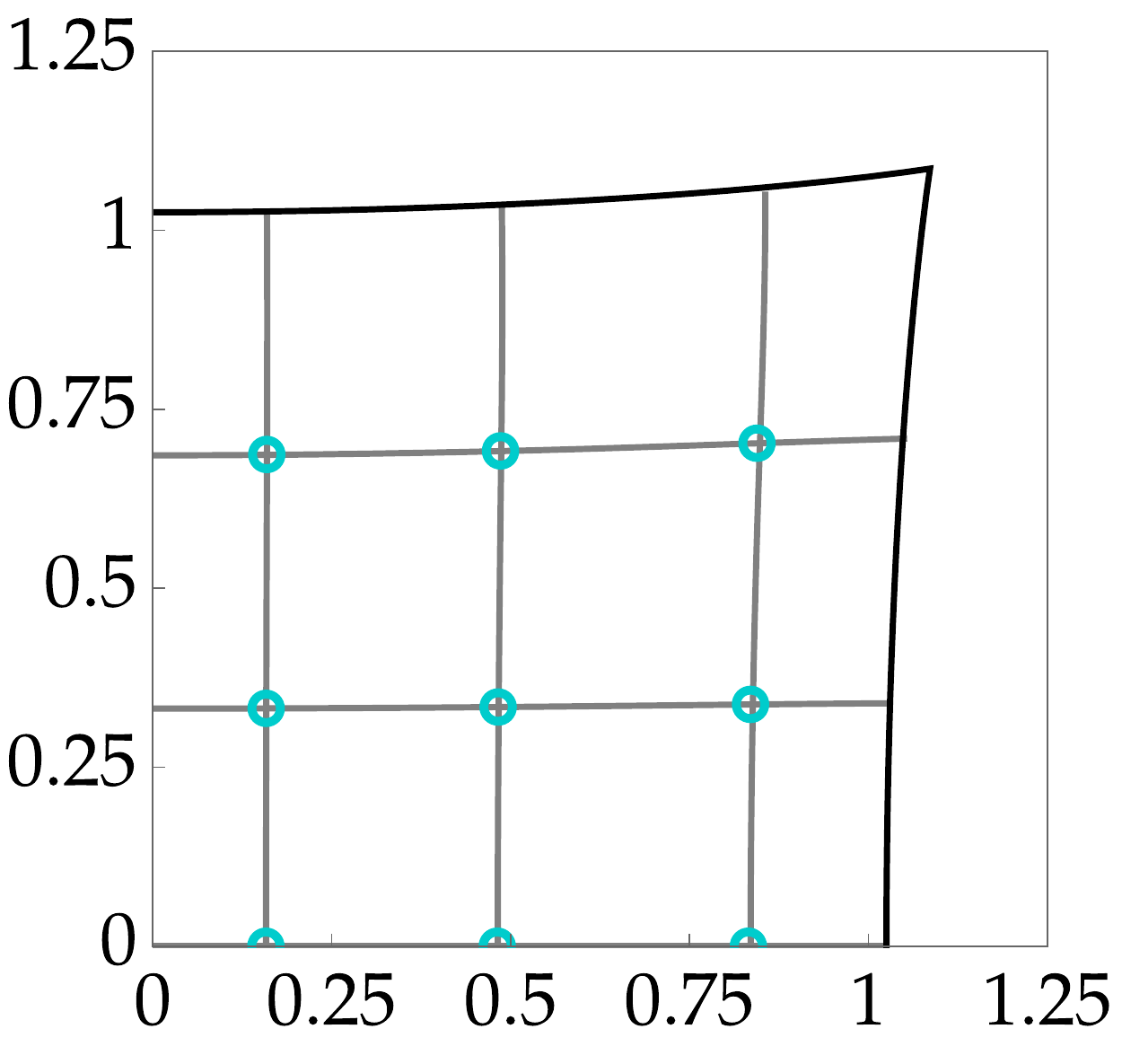}\\
\includegraphics[height=1.5 in]{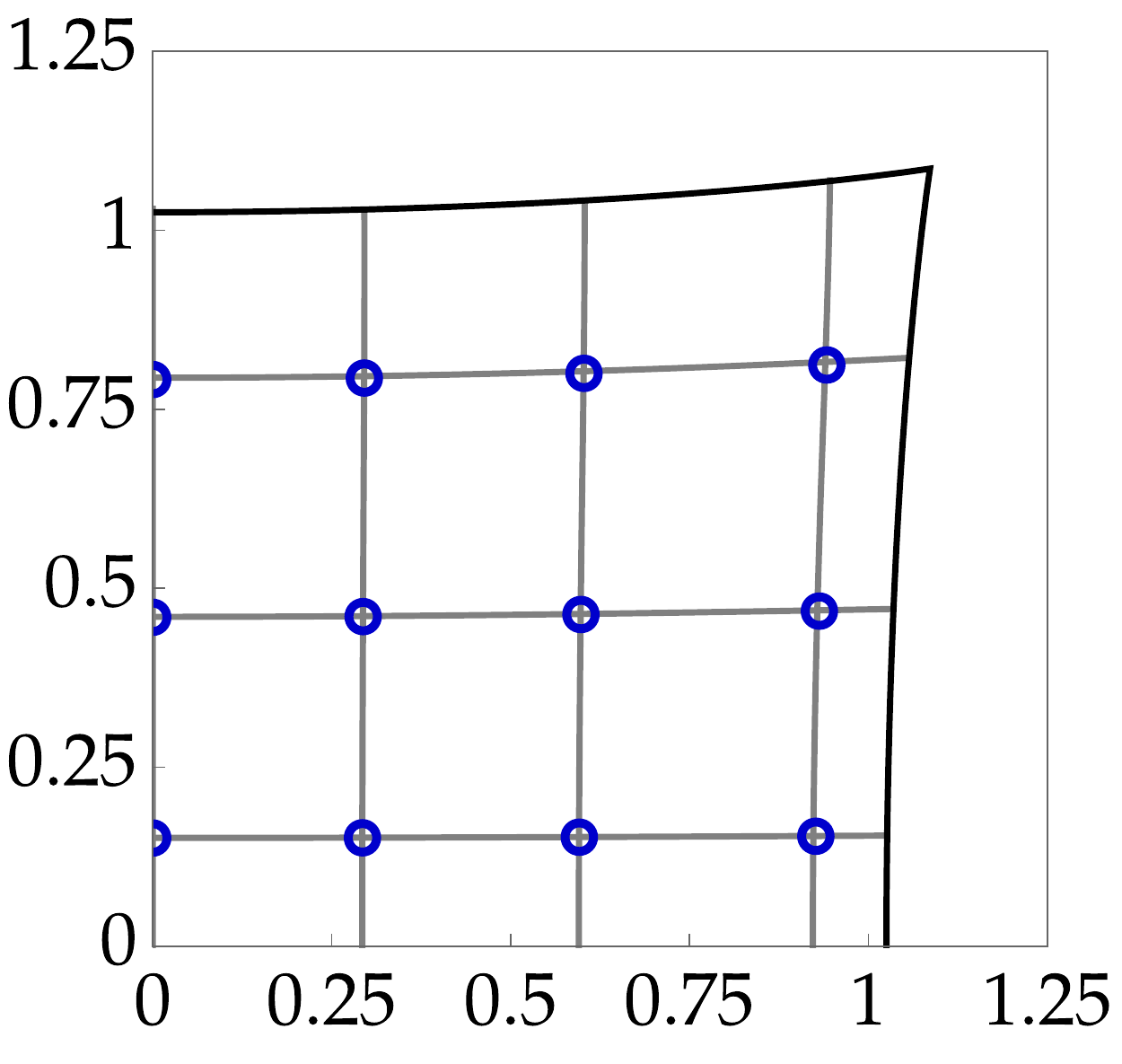}\\
\includegraphics[height=1.5 in]{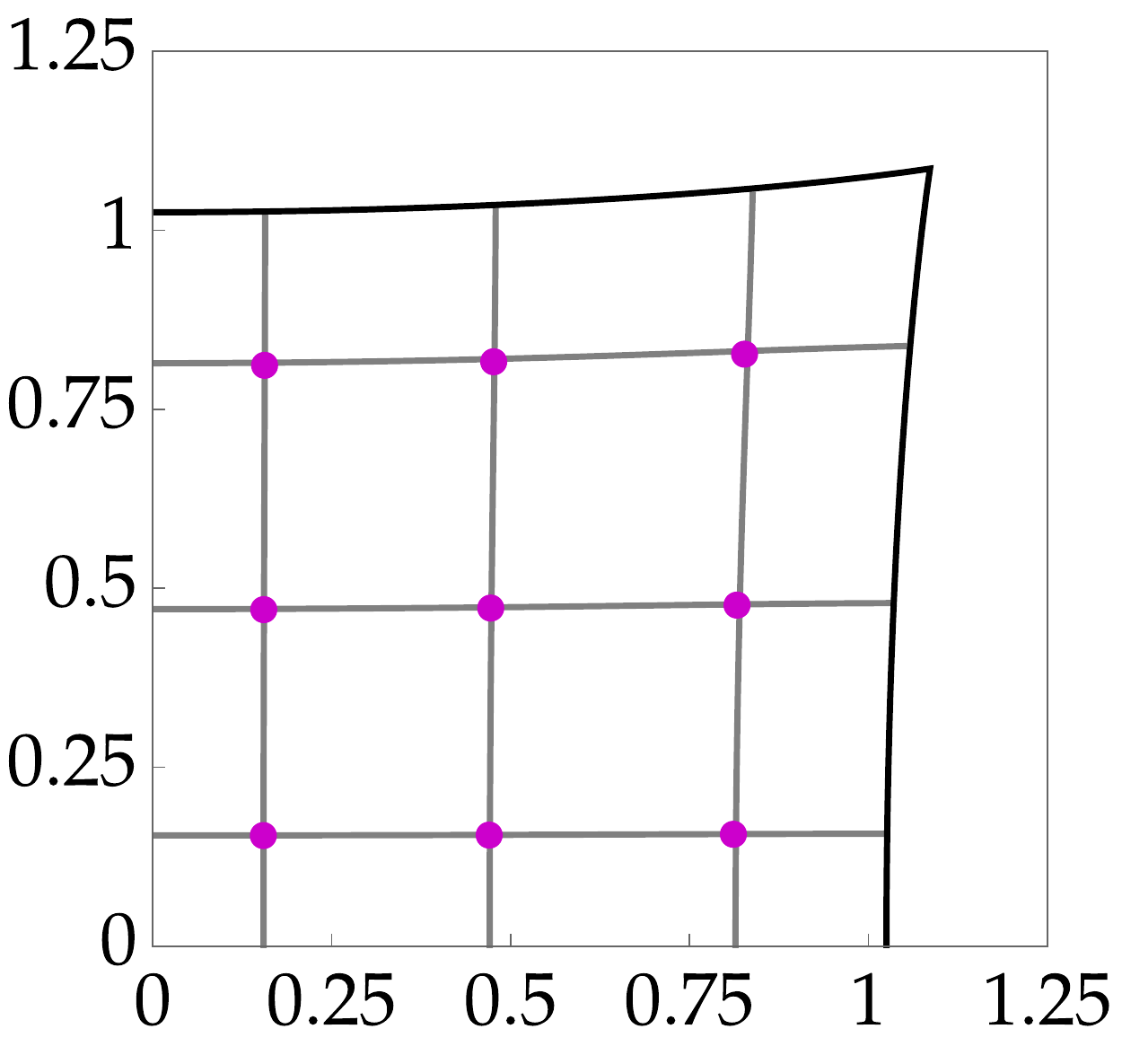}\\
\includegraphics[height=1.5 in]{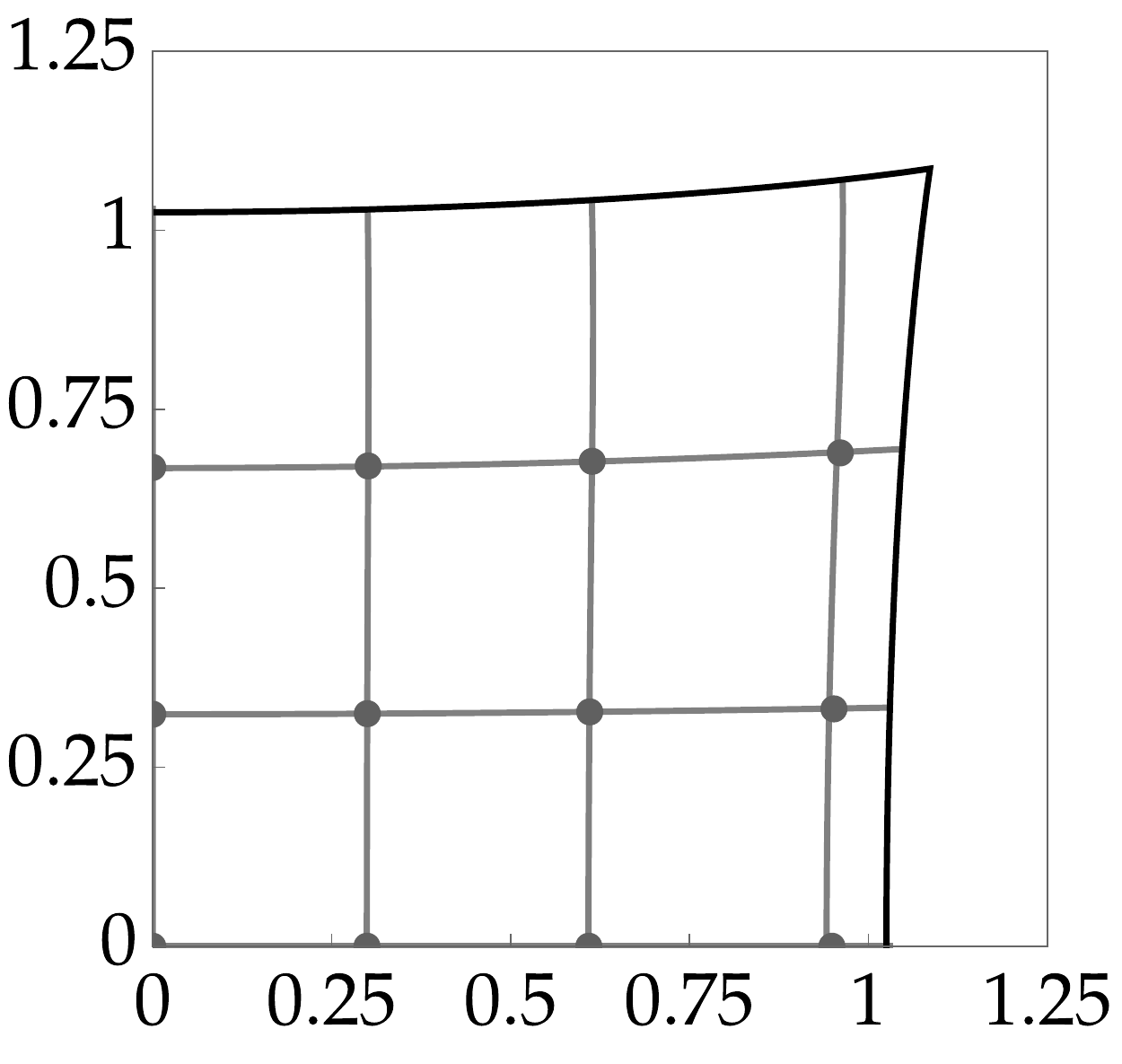}
\end{center}
\caption{$U=T^{-\frac{1}{2}}u^{[3]}_\mathrm{gH}(x;6,5)$; \\$\rho=\tfrac{5}{6}$; $\kappa=0$.}
\end{subfigure}%
\begin{subfigure}{1.5 in}
\begin{center}
\includegraphics[height=1.5 in]{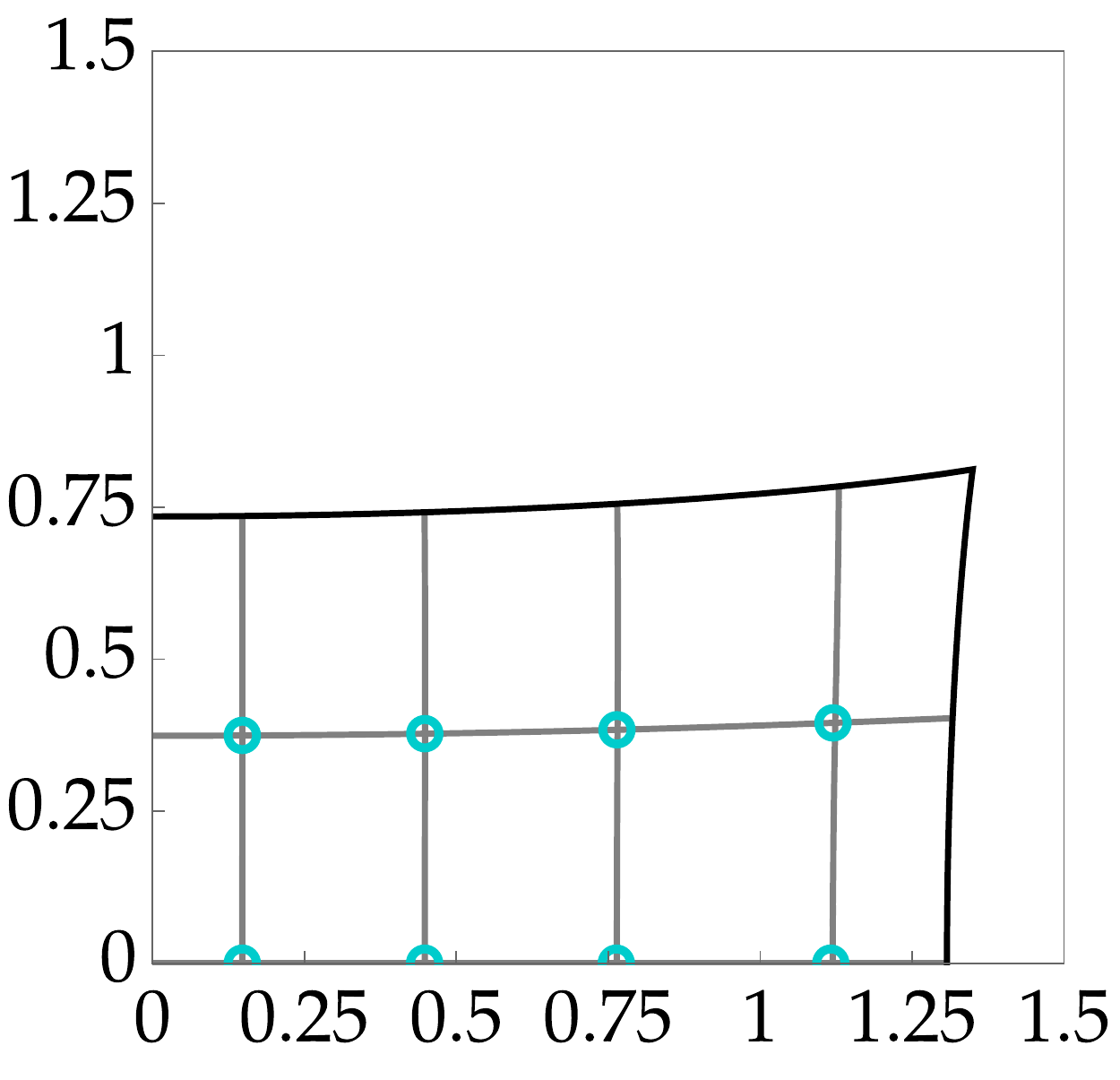}\\
\includegraphics[height=1.5 in]{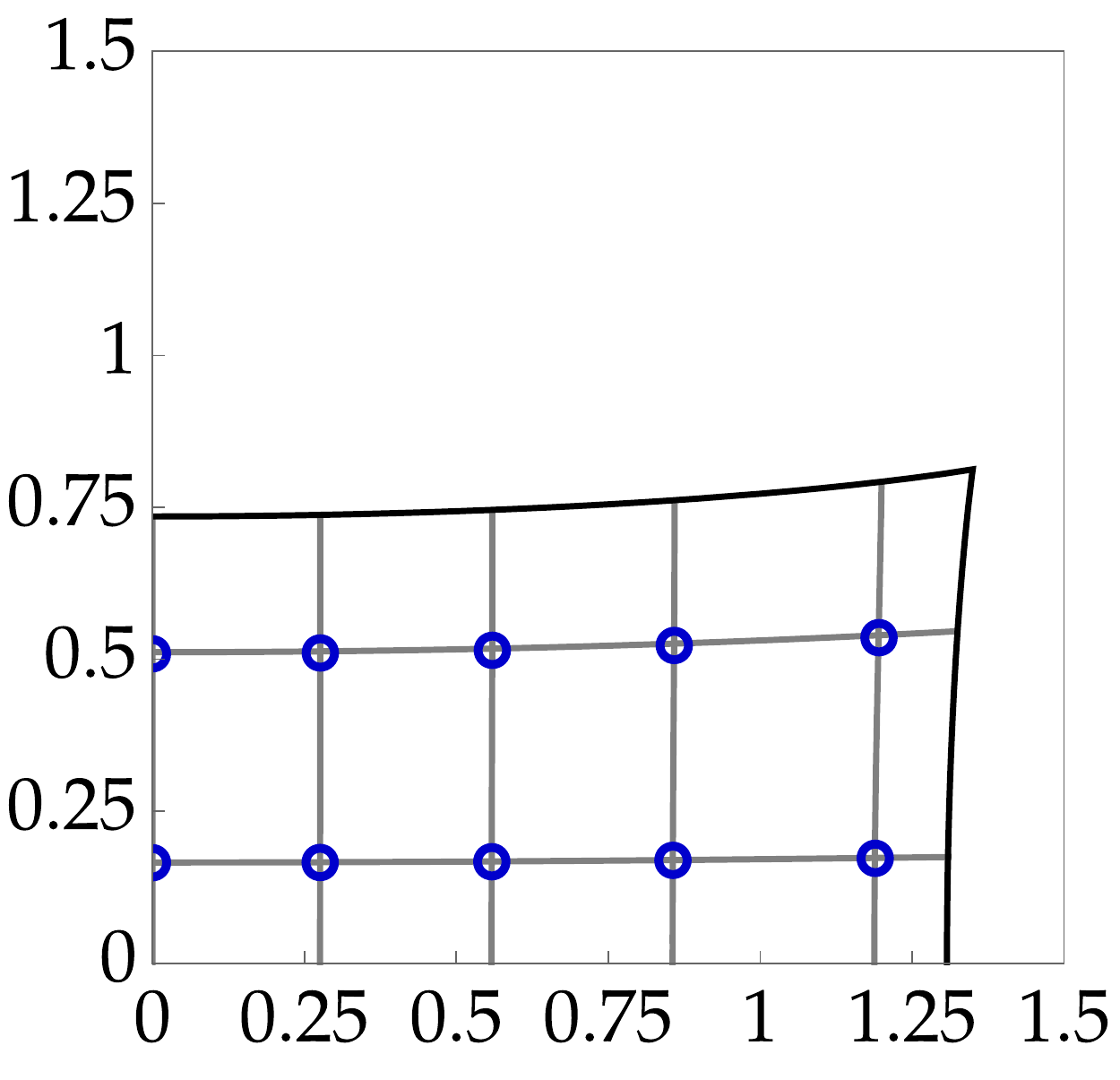}\\
\includegraphics[height=1.5 in]{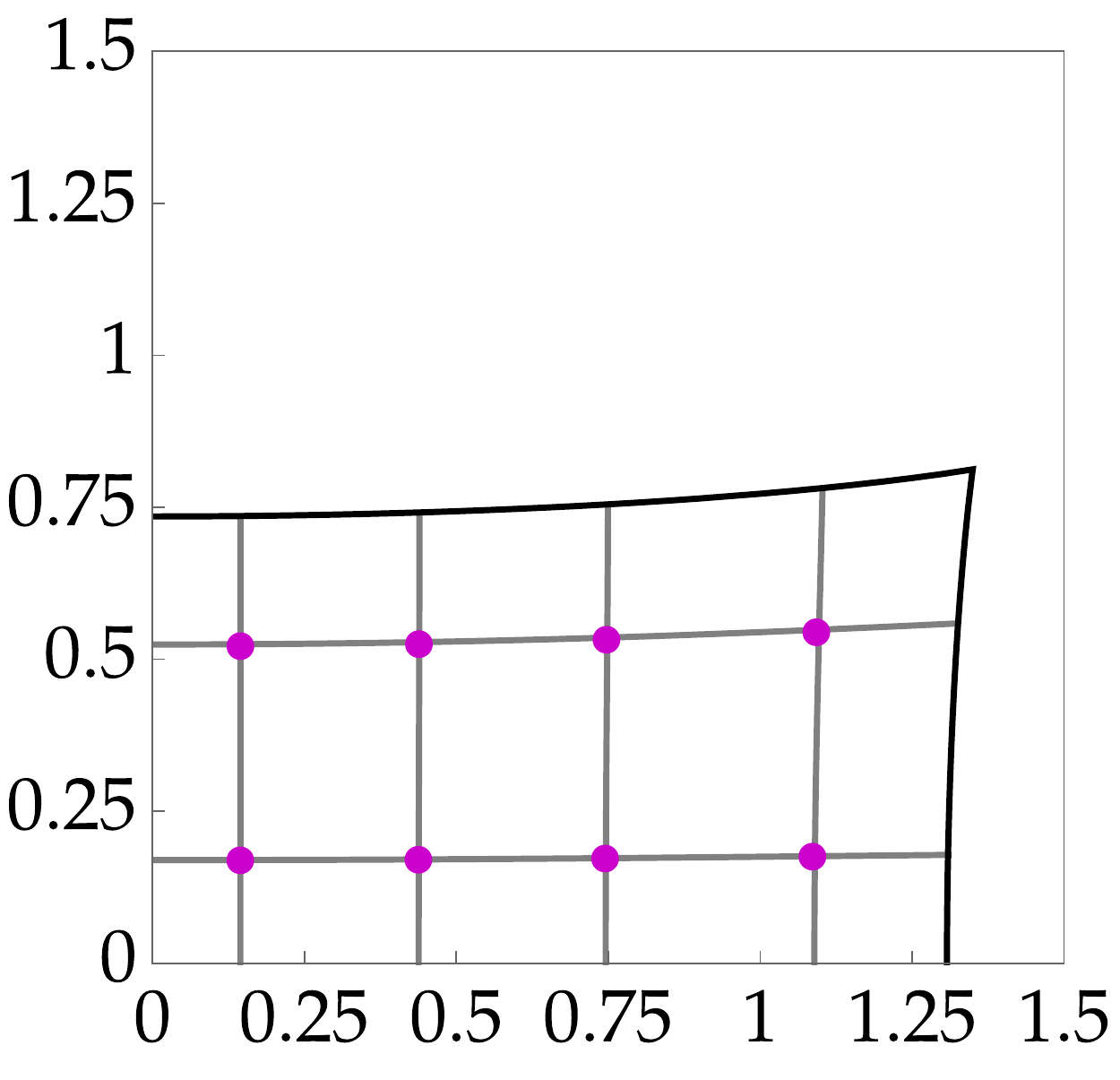}\\
\includegraphics[height=1.5 in]{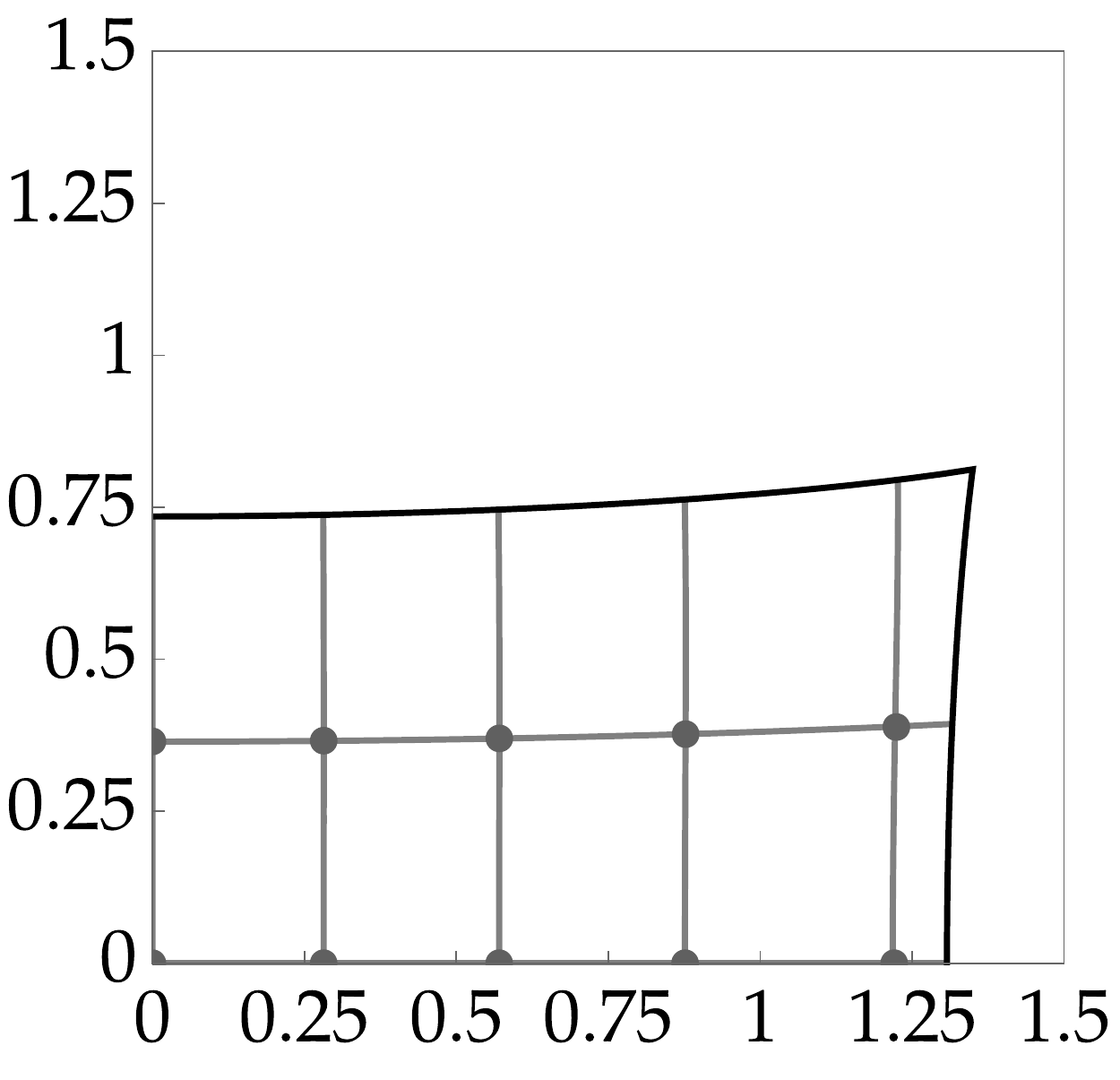}
\end{center}
\caption{$U=T^{-\frac{1}{2}}u^{[3]}_\mathrm{gH}(x;8,3)$; \\$\rho=\frac{3}{8}$; $\kappa=\tfrac{1}{3}$.}
\end{subfigure}
\end{center}
\caption{Quantitative comparison of zeros (circles, cyan for positive derivative and blue for negative derivative, top two rows of plots) and poles (dots, magenta for positive residue and gray for negative residue, bottom two rows of plots) of scaled gH rational solutions $U$ with the approximations given according to Corollary~\ref{cor:poles-and-zeros} by the intersection points of integer level curves (gray) of the two conditions in \eqref{eq:intro-zeros-quantize} or \eqref{eq:intro-poles-quantize} plotted in the $y=T^{-\frac{1}{2}}x$-plane.}
\label{fig:gH-poles-zeros}
\end{figure}
\begin{figure}[h]
\begin{center}
\begin{subfigure}{1.5 in}
\begin{center}
\includegraphics[height=1.5 in]{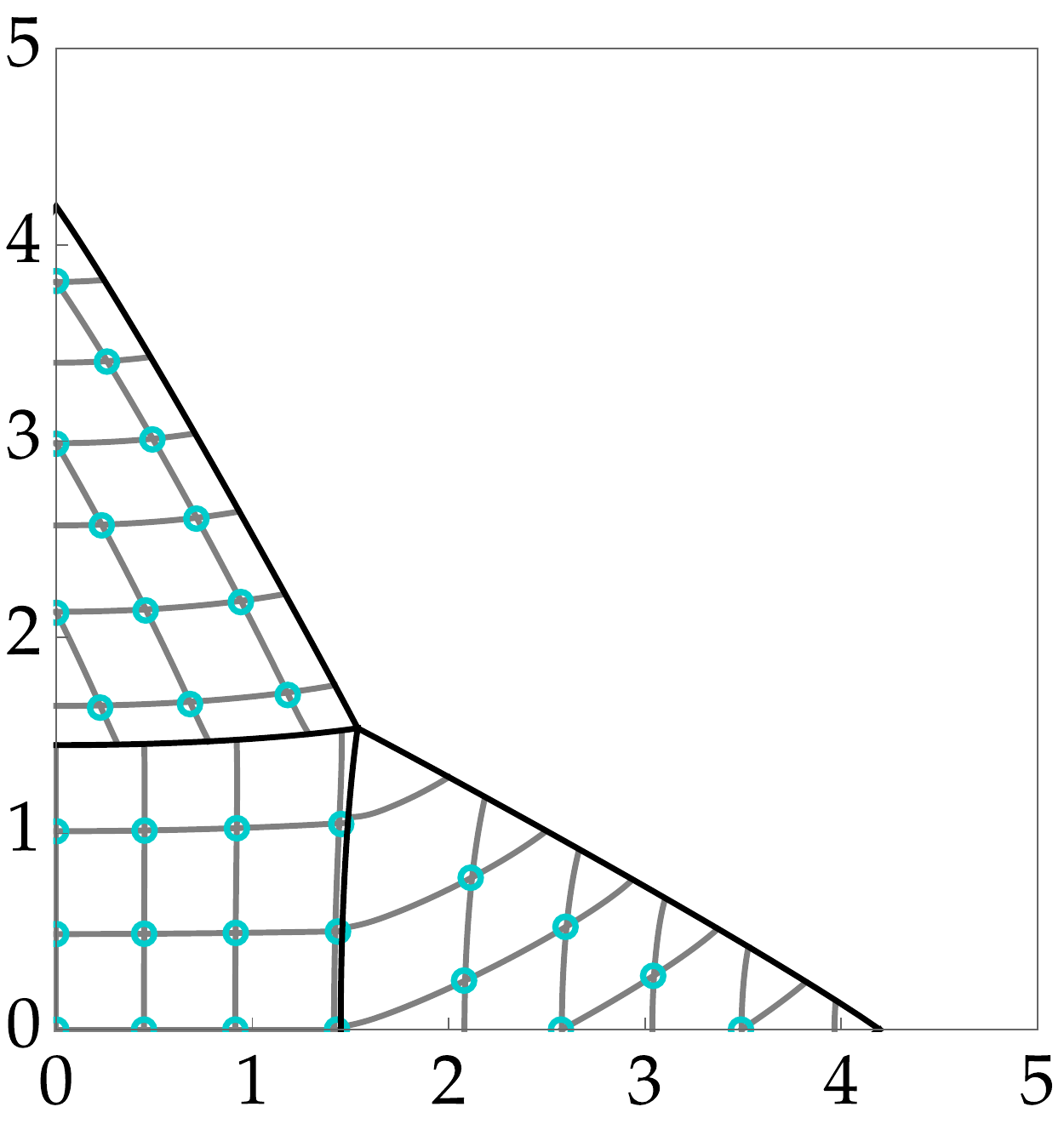}\\
\includegraphics[height=1.5 in]{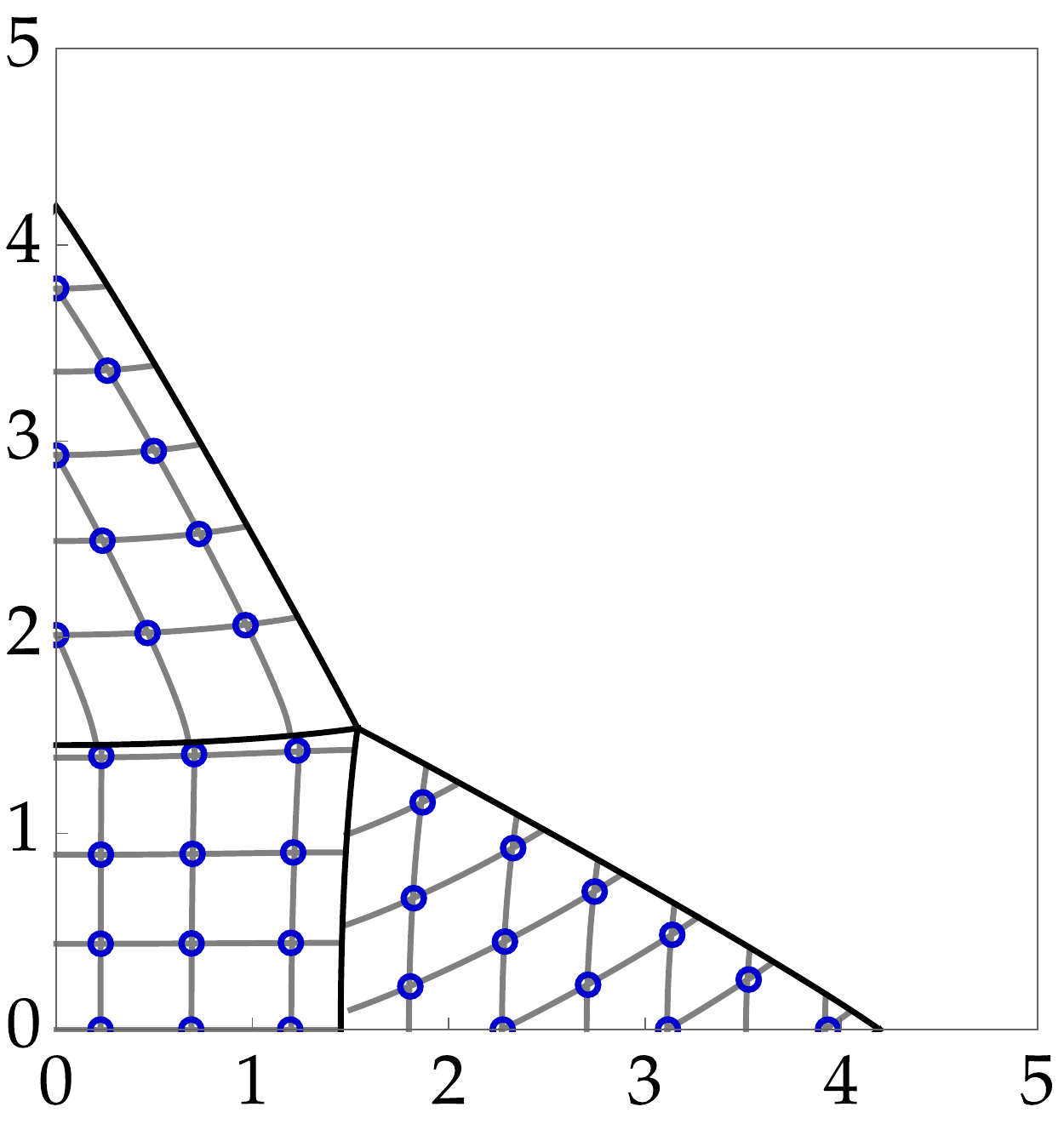}\\
\includegraphics[height=1.5 in]{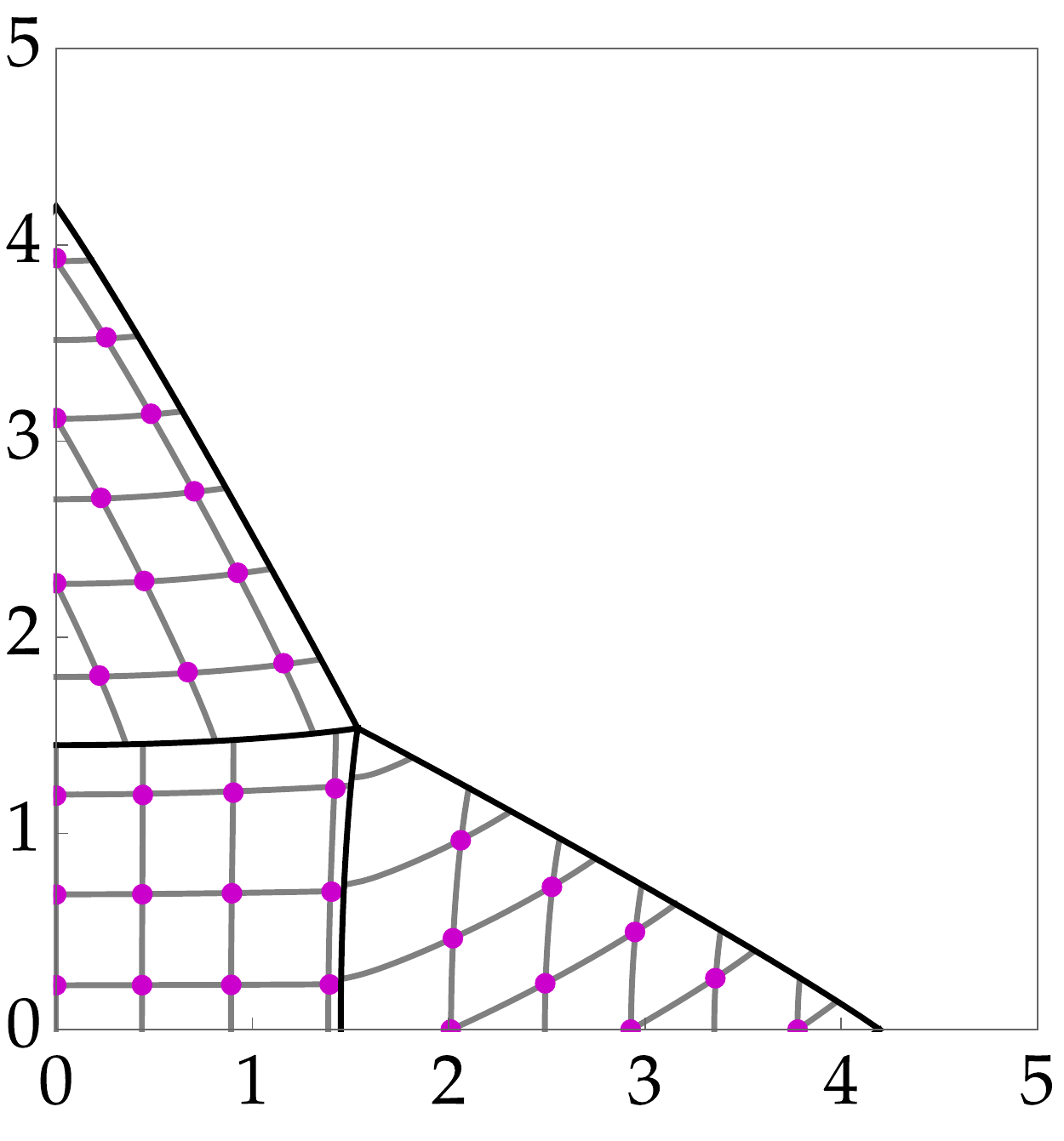}\\
\includegraphics[height=1.5 in]{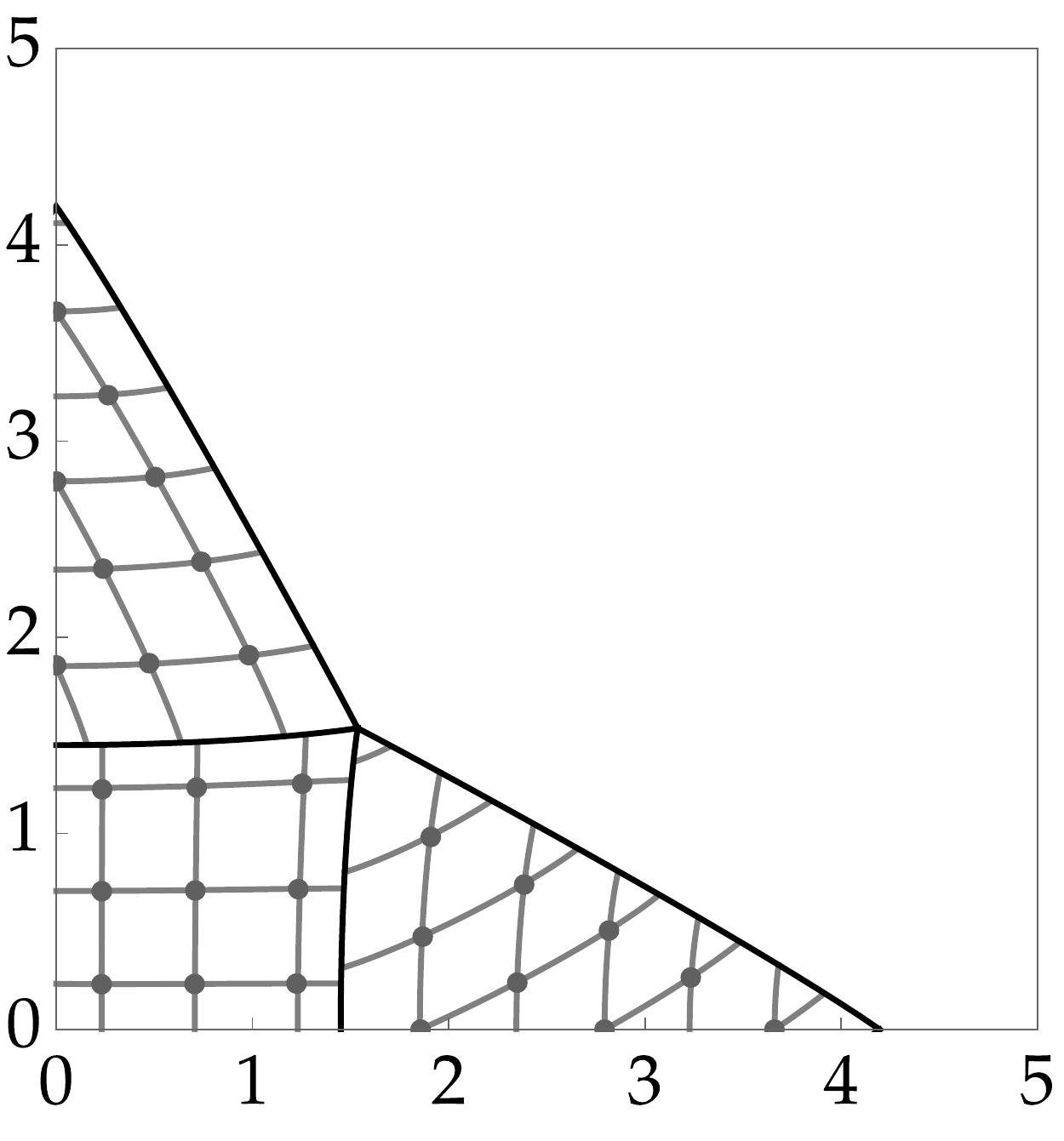}
\end{center}
\caption{$U=T^{-\frac{1}{2}}u^{[1]}_\mathrm{gO}(x;6,6)$; \\$\rho=1$; $\kappa=-\tfrac{57}{17}$.}
\end{subfigure}%
\begin{subfigure}{1.5 in}
\begin{center}
\includegraphics[height=1.5 in]{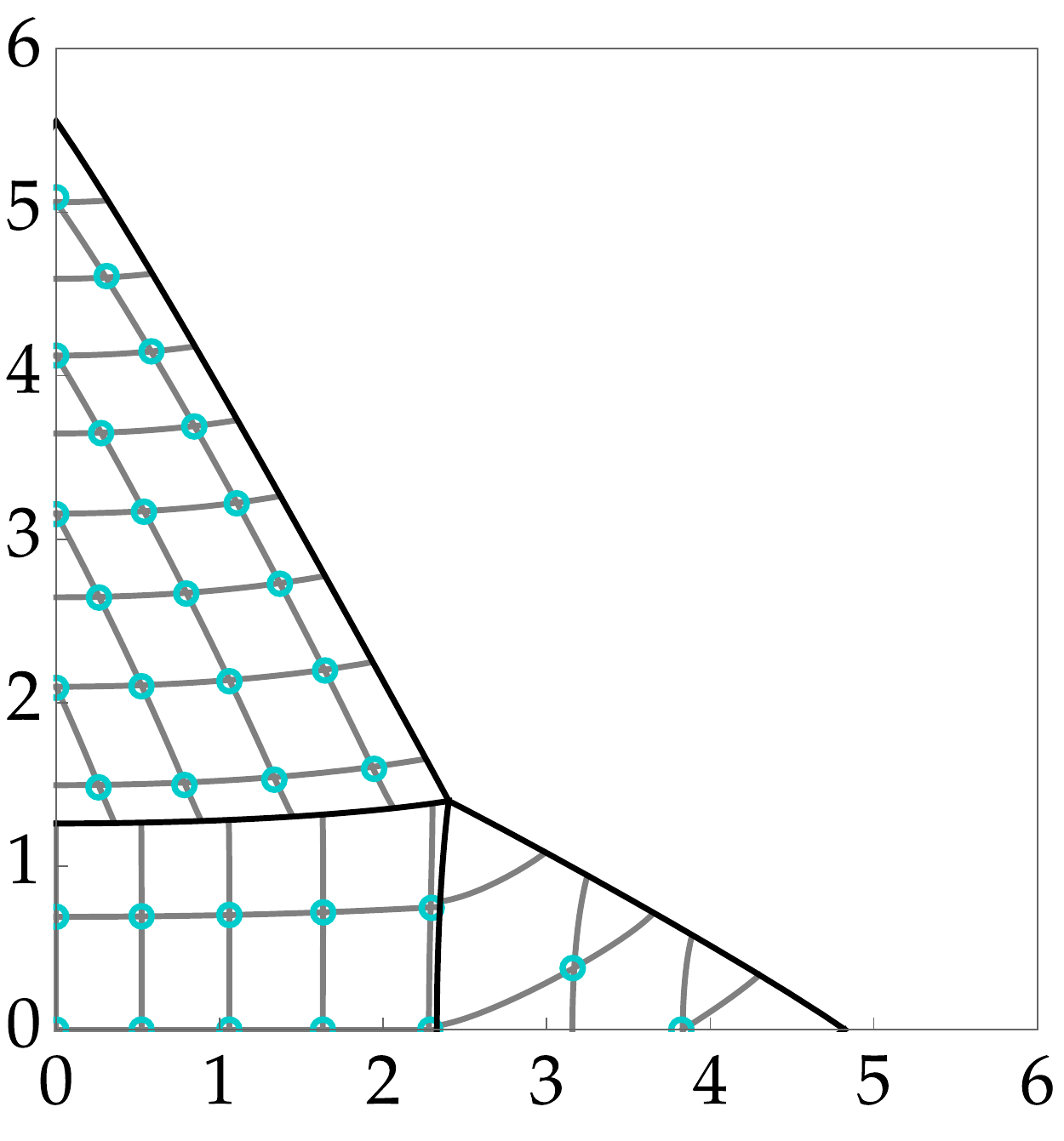}\\
\includegraphics[height=1.5 in]{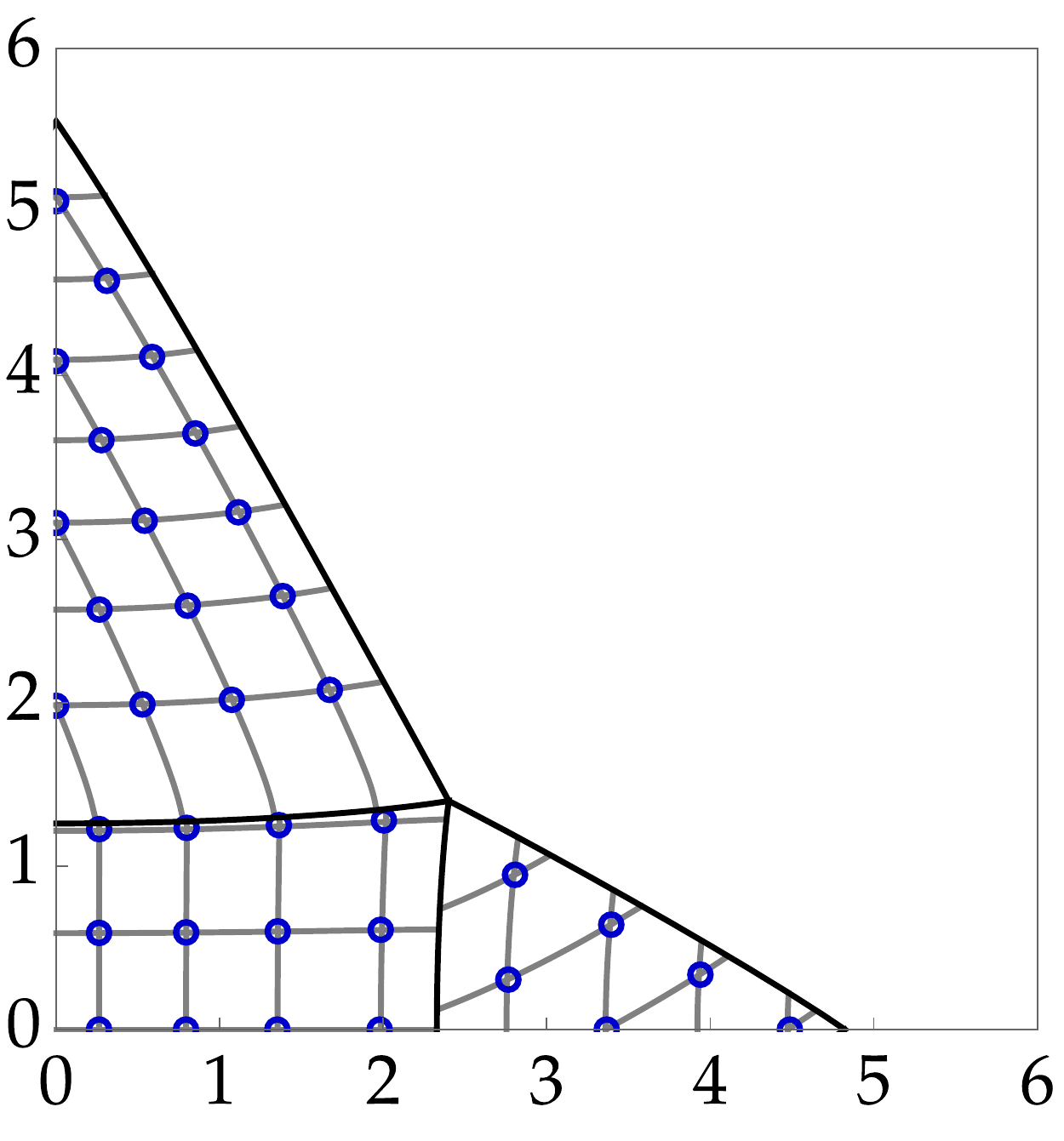}\\
\includegraphics[height=1.5 in]{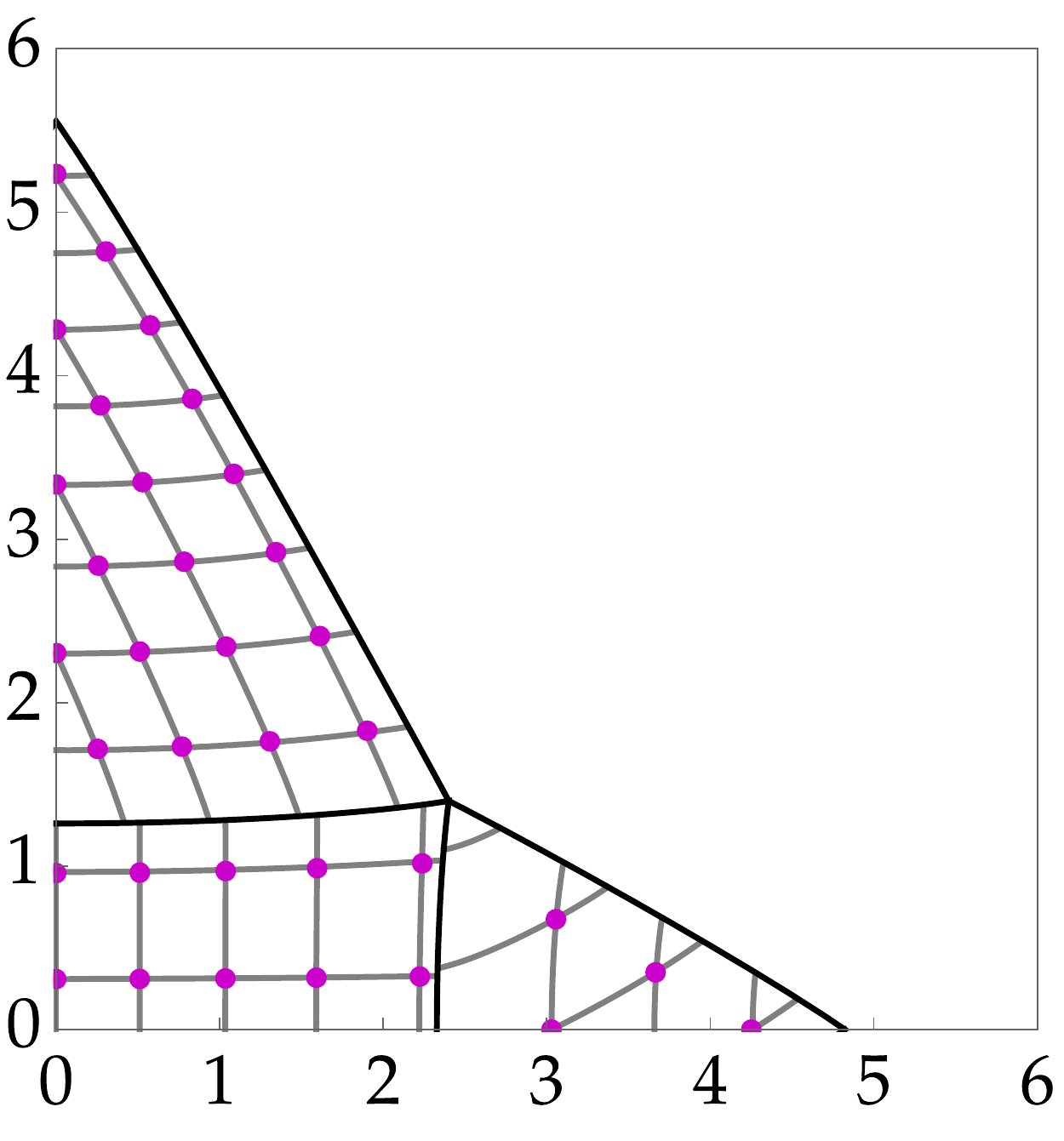}\\
\includegraphics[height=1.5 in]{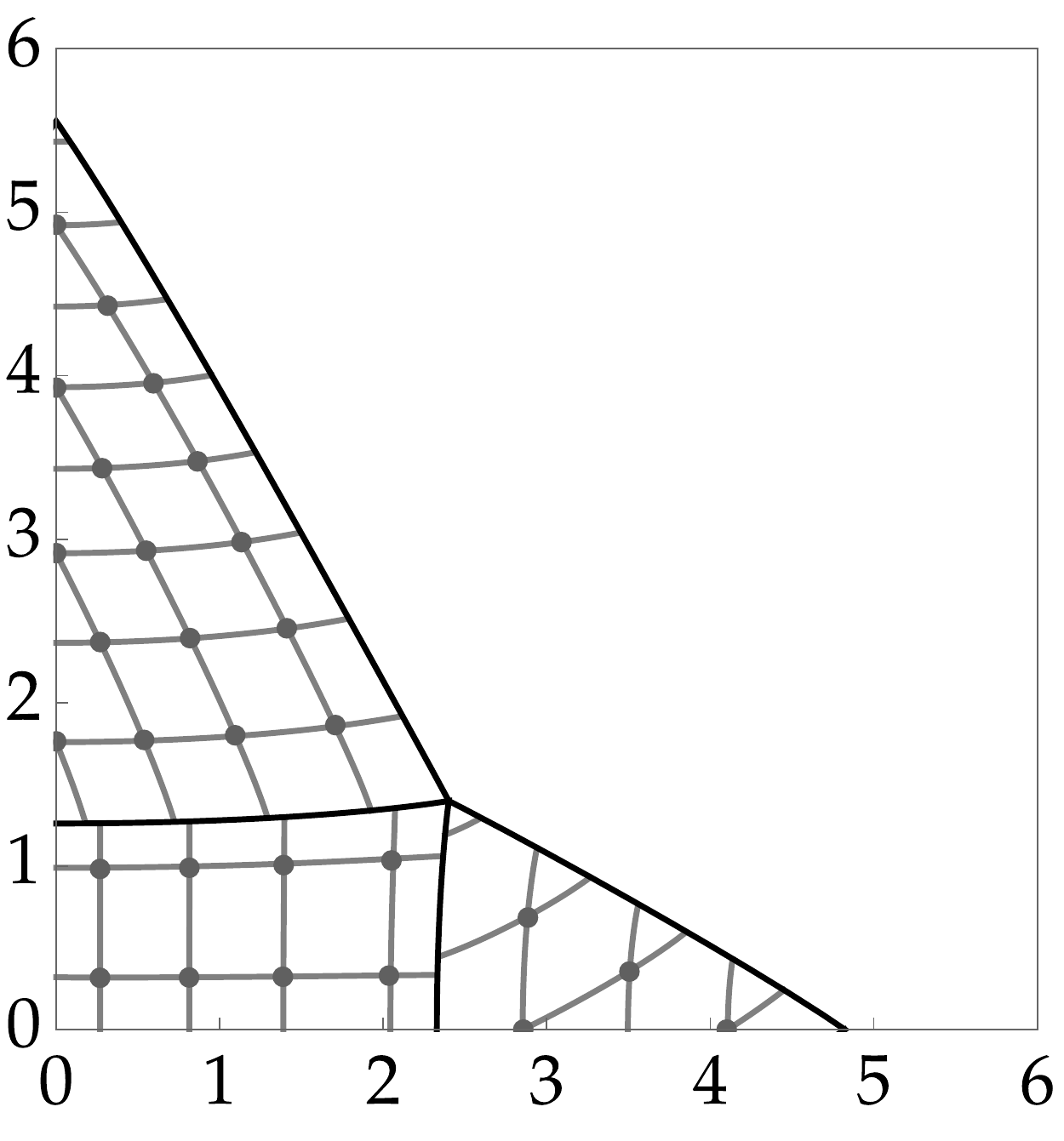}
\end{center}
\caption{$U=T^{-\frac{1}{2}}u^{[1]}_\mathrm{gO}(x;8,4)$; \\$\rho=\tfrac{1}{2}$; $\kappa=-\tfrac{63}{11}$.}
\end{subfigure}%
\begin{subfigure}{1.5 in}
\begin{center}
\includegraphics[height=1.5 in]{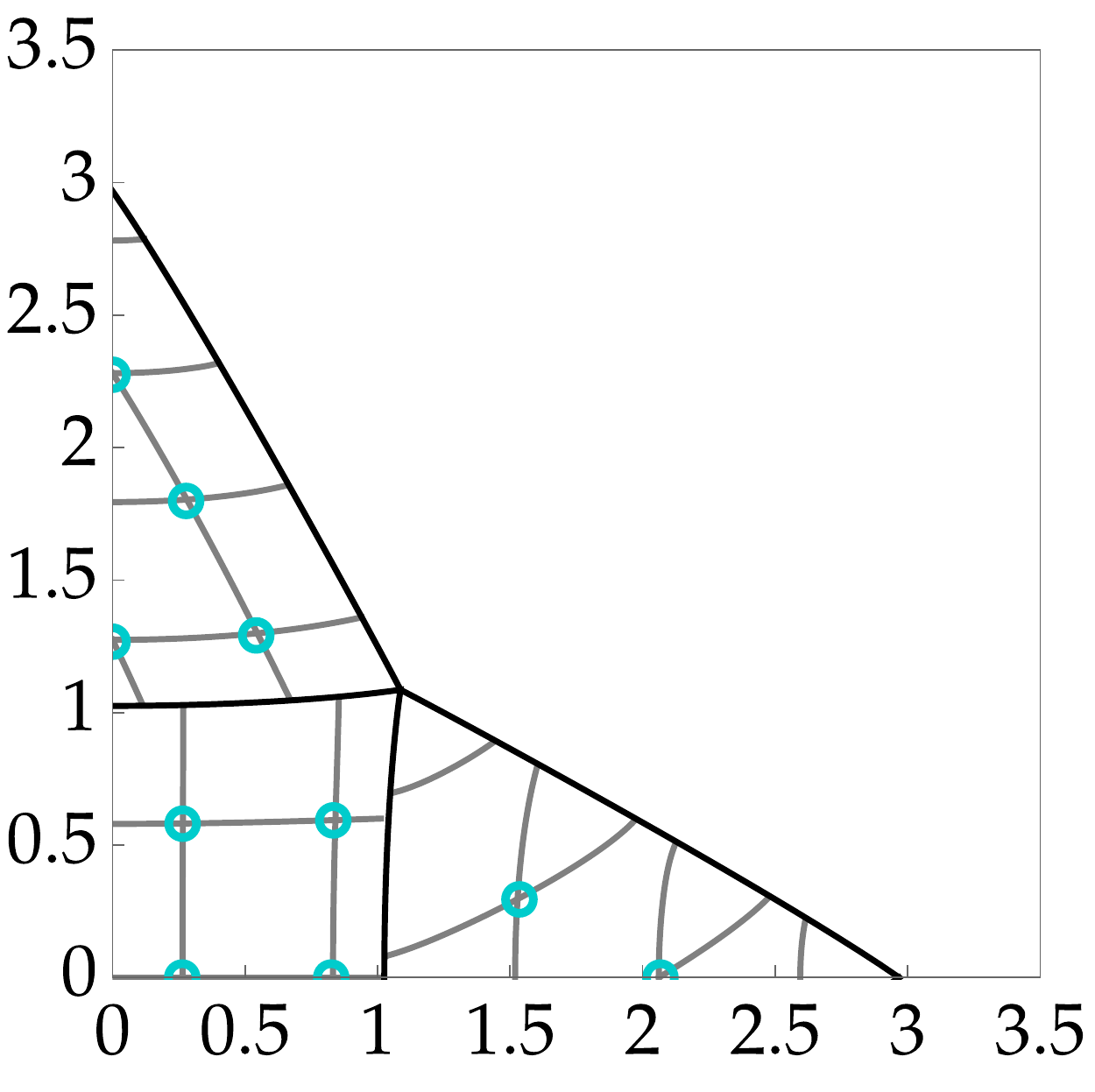}\\
\includegraphics[height=1.5 in]{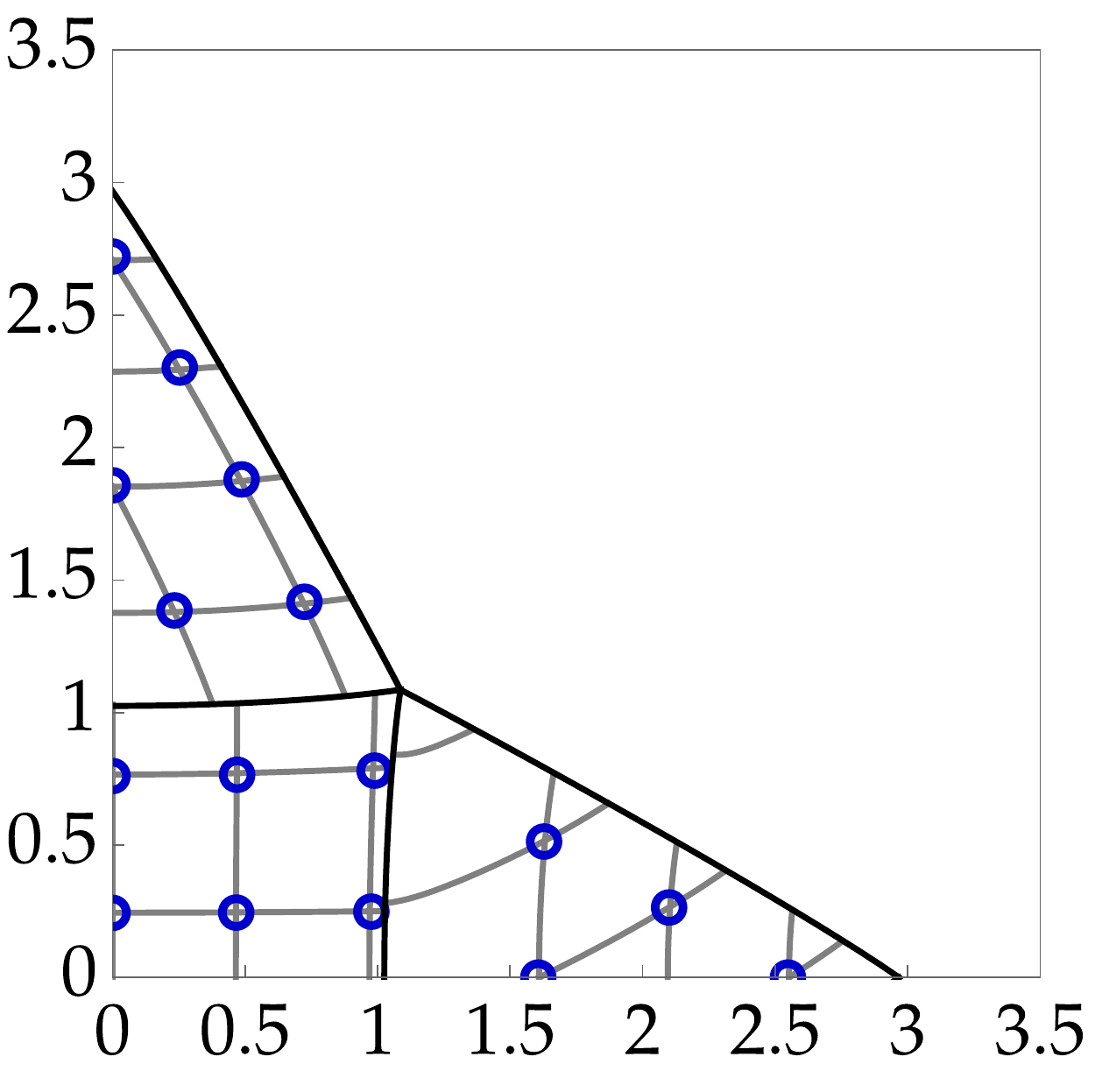}\\
\includegraphics[height=1.5 in]{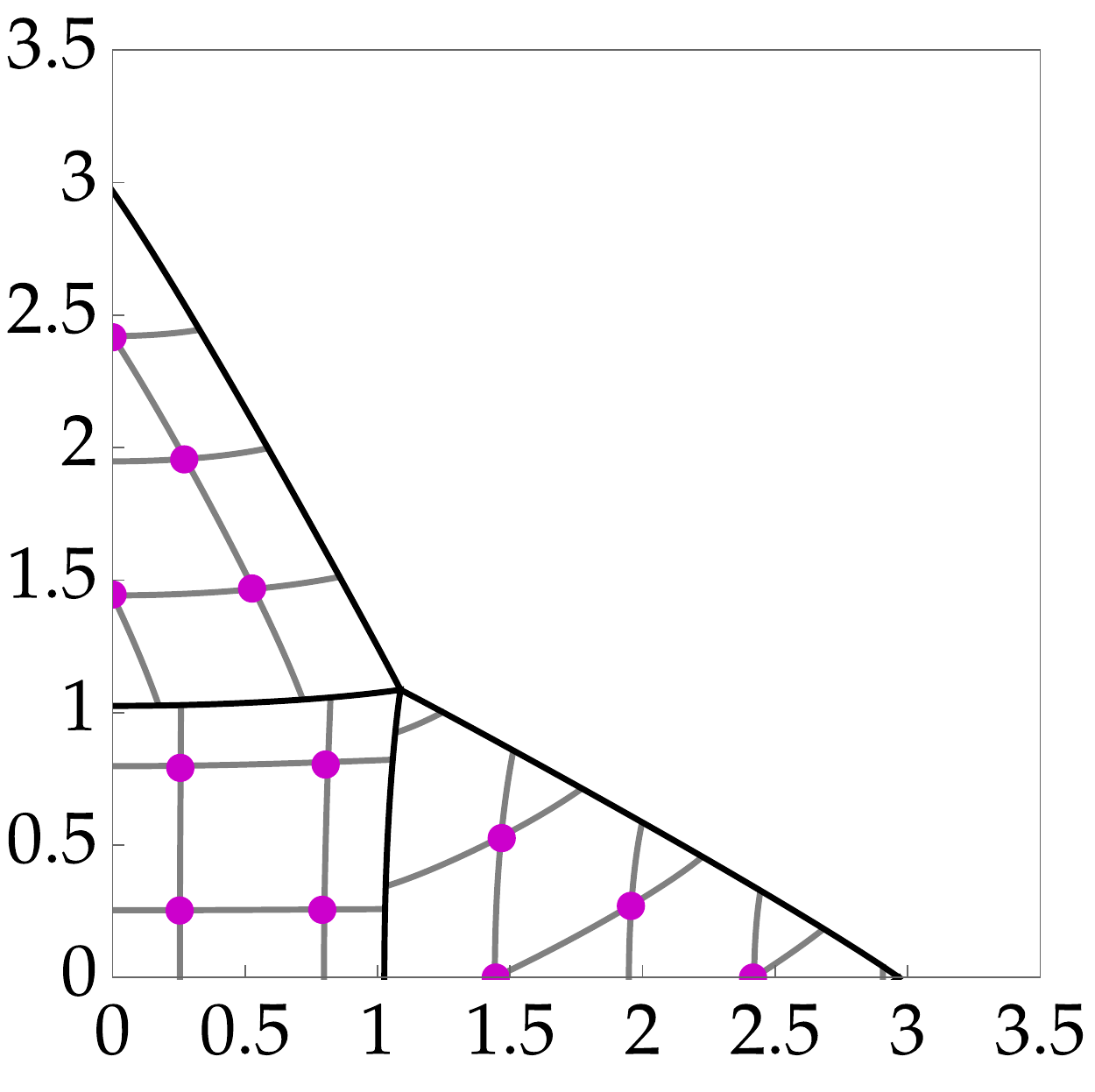}\\
\includegraphics[height=1.5 in]{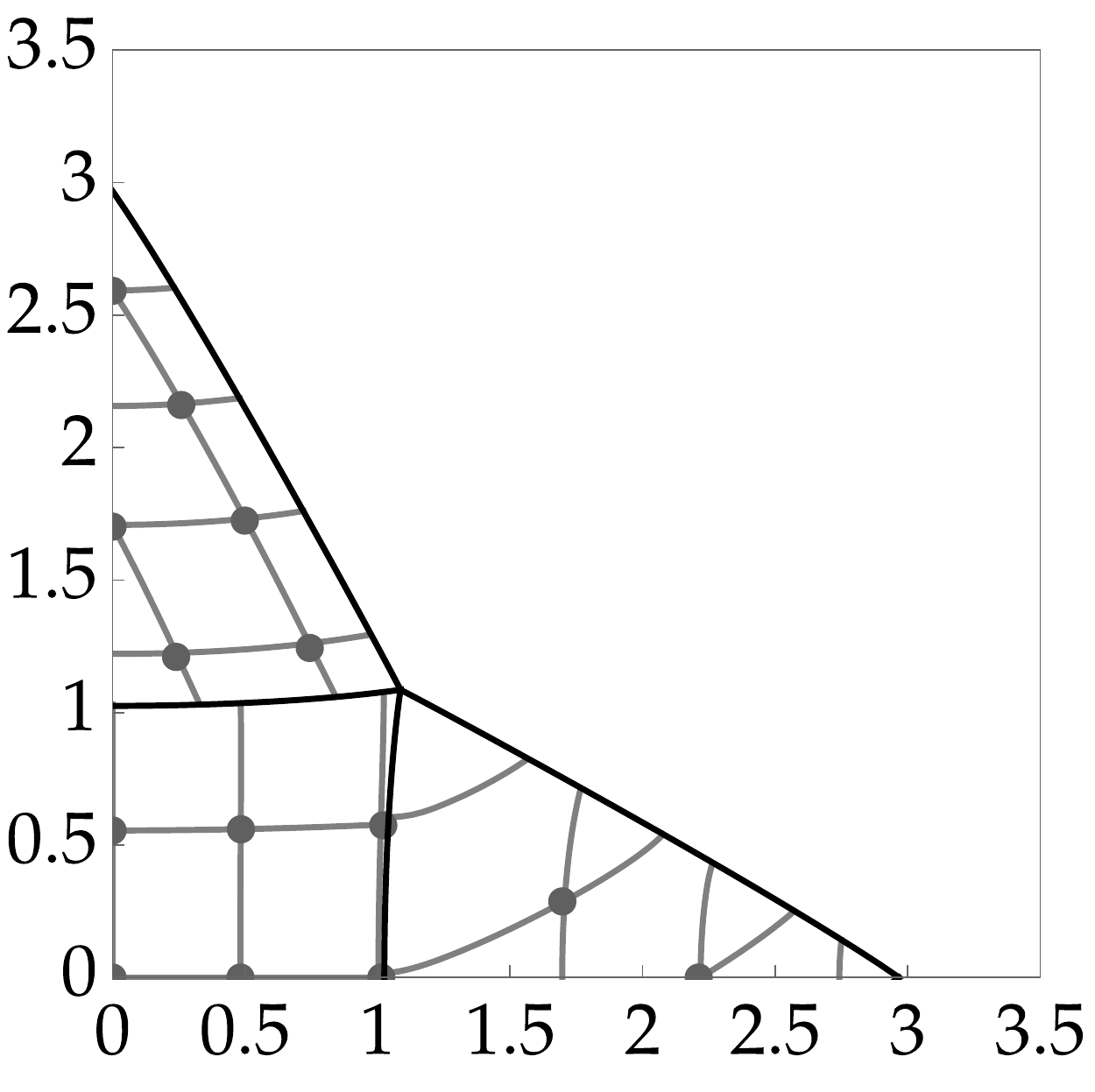}
\end{center}
\caption{$U=T^{-\frac{1}{2}}u^{[3]}_\mathrm{gO}(x;4,3)$; \\$\rho=\tfrac{3}{4}$; $\kappa=0$.}
\end{subfigure}%
\begin{subfigure}{1.5 in}
\begin{center}
\includegraphics[height=1.5 in]{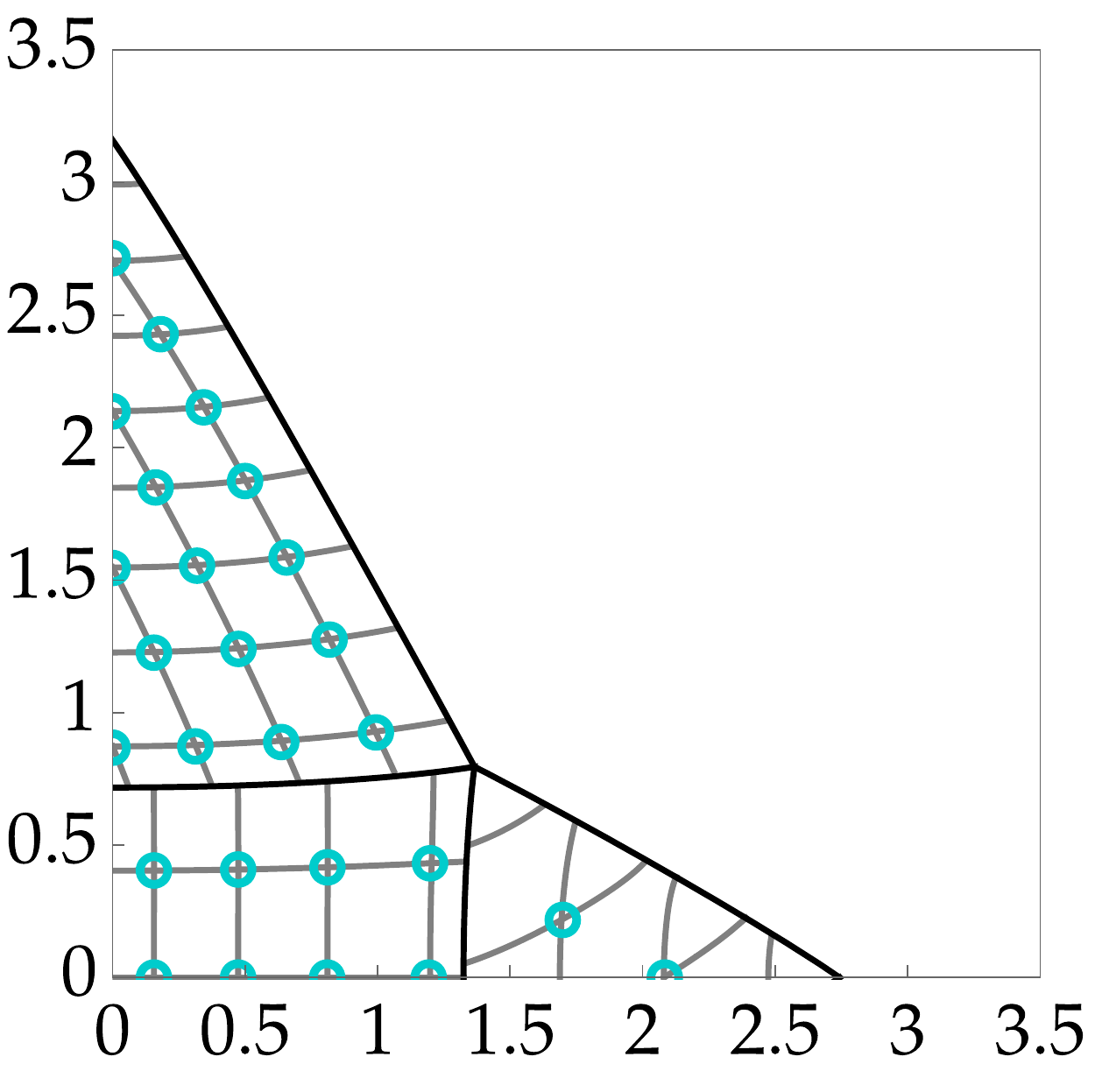}\\
\includegraphics[height=1.5 in]{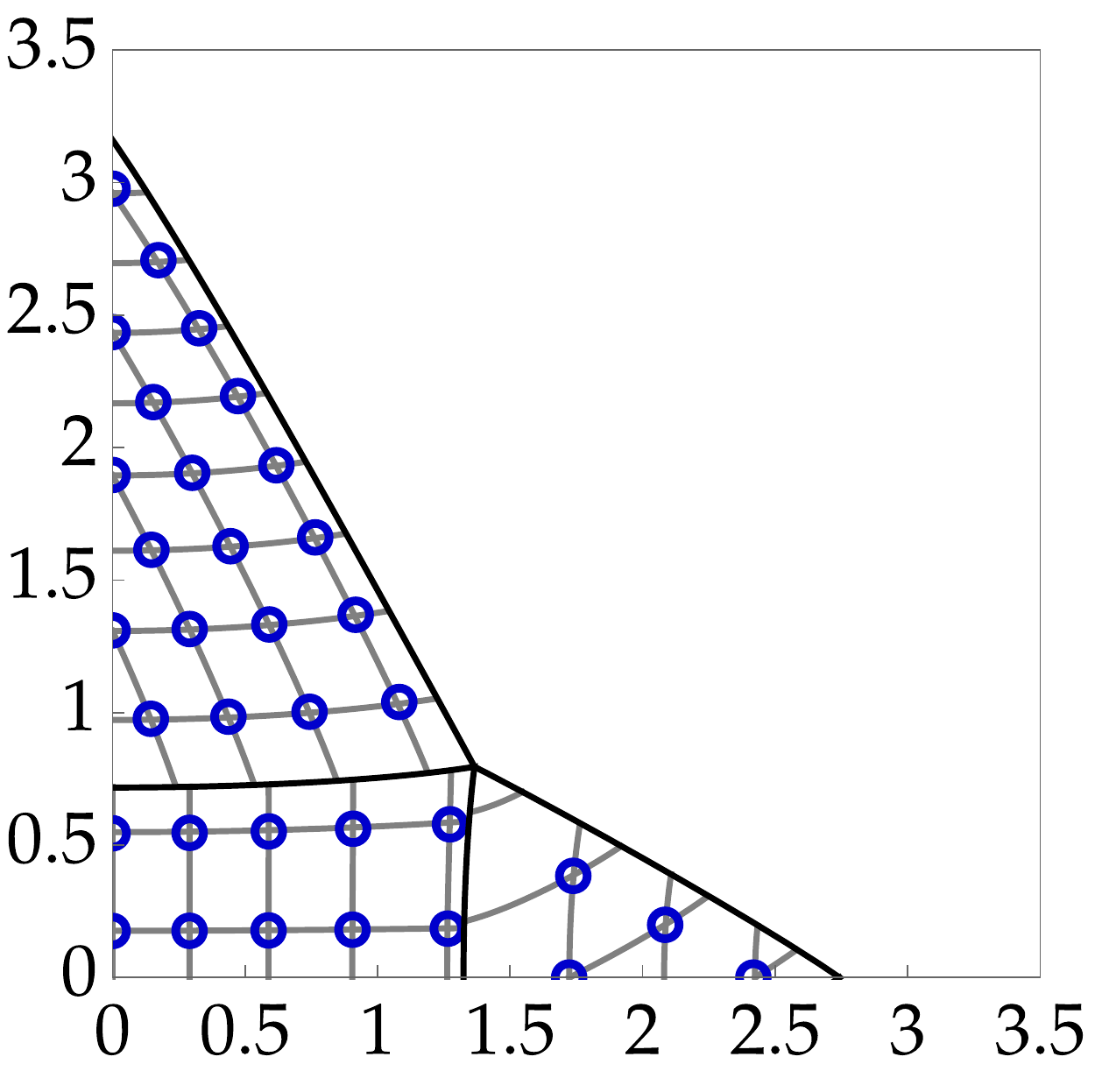}\\
\includegraphics[height=1.5 in]{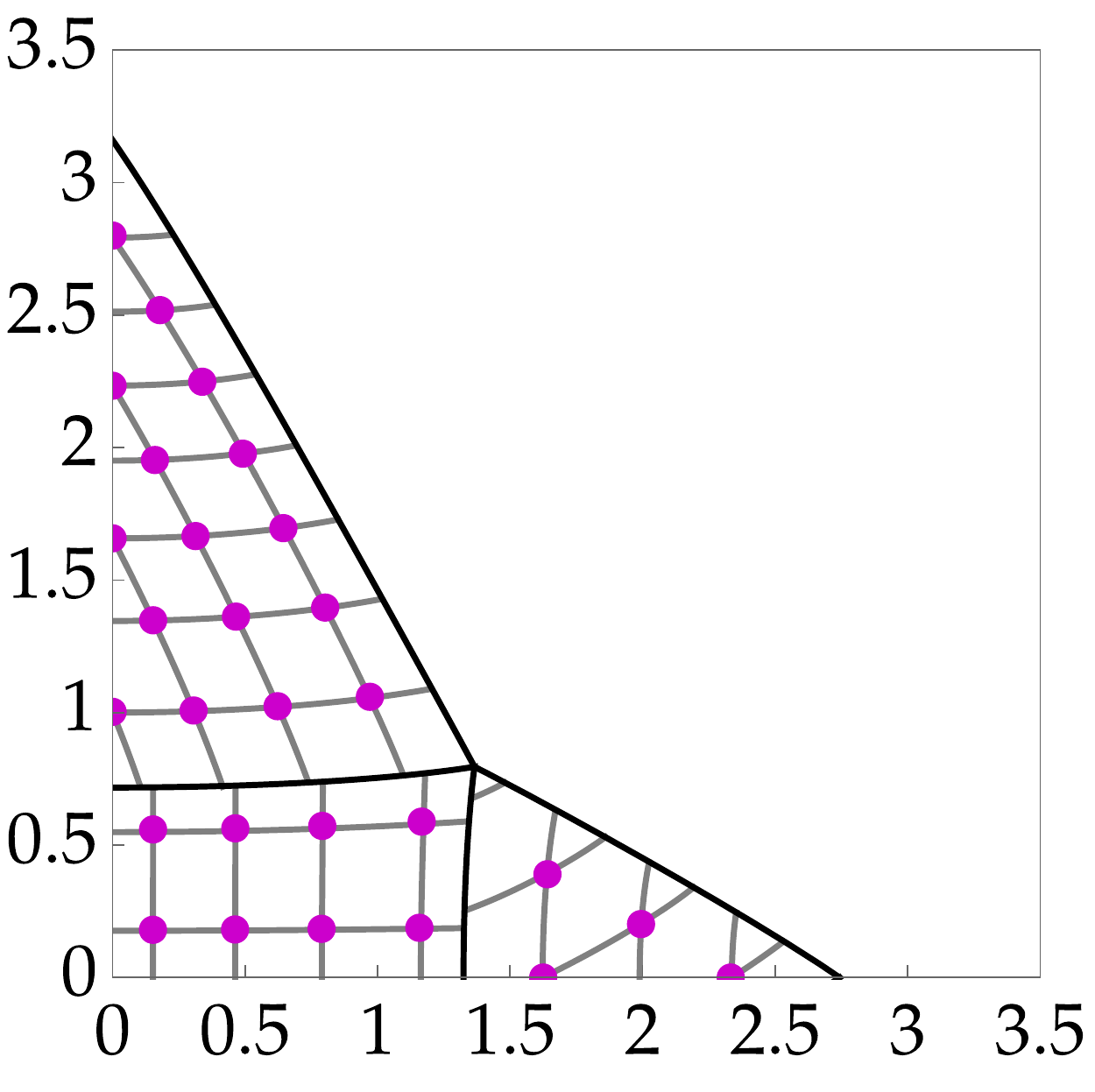}\\
\includegraphics[height=1.5 in]{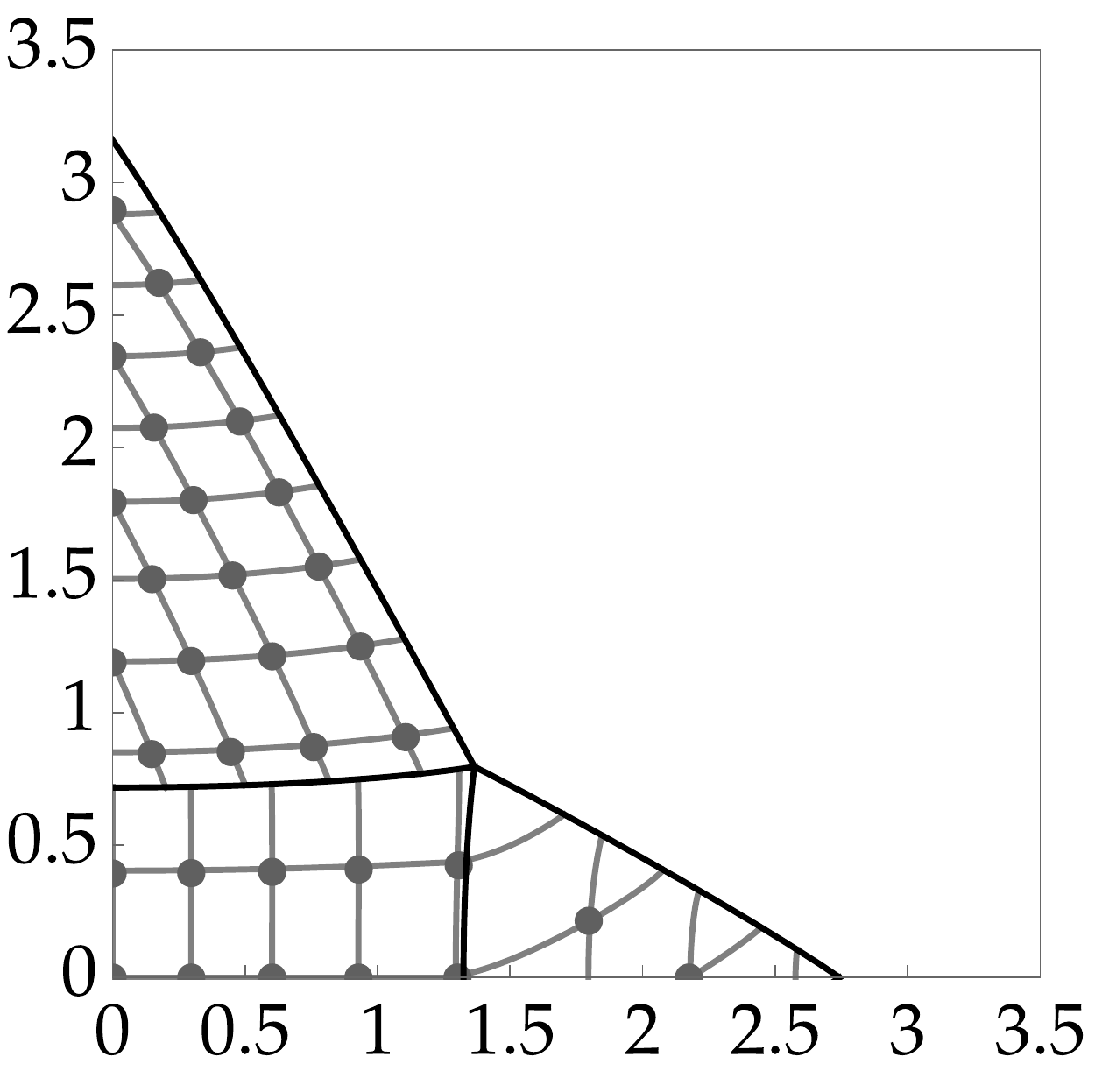}
\end{center}
\caption{$U=T^{-\frac{1}{2}}u^{[3]}_\mathrm{gO}(x;8,3)$; \\$\rho=\tfrac{3}{8}$; $\kappa=\tfrac{6}{17}$.}
\end{subfigure}
\end{center}
\caption{Quantitative comparison of zeros (circles, cyan for positive derivative and blue for negative derivative, top two rows of plots) and poles (dots, magenta for positive residue and gray for negative residue, bottom two rows of plots) of scaled gO rational solutions $U$ with the approximations given according to Corollary~\ref{cor:poles-and-zeros} by the intersection points of integer level curves (gray) of the two conditions in \eqref{eq:intro-zeros-quantize} or \eqref{eq:intro-poles-quantize} plotted in the $y=T^{-\frac{1}{2}}x$-plane.}
\label{fig:gOpp-poles-zeros}
\end{figure}
\begin{figure}[h]
\begin{center}
\begin{subfigure}{1.5 in}
\begin{center}
\includegraphics[height=1.5 in]{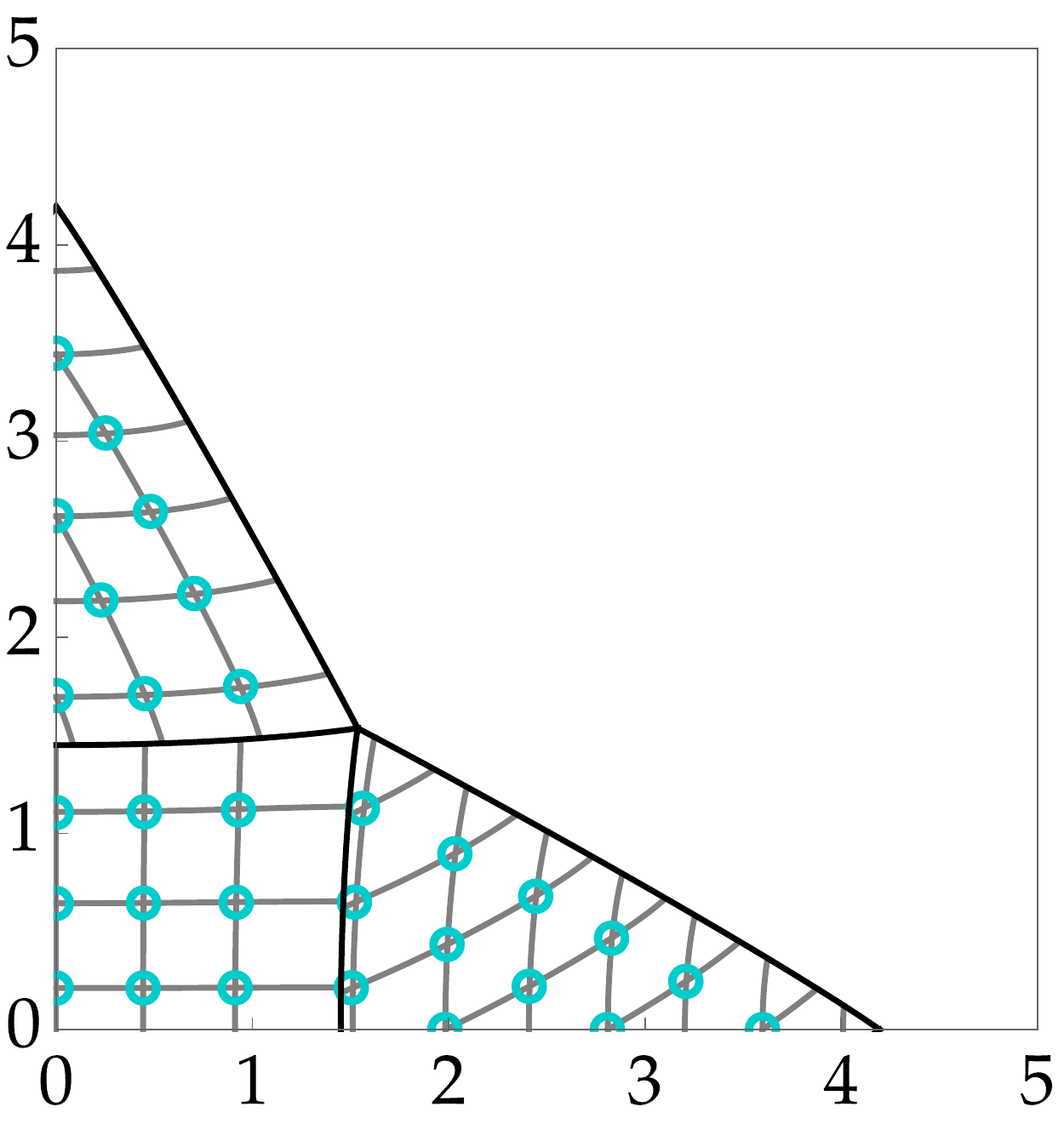}\\
\includegraphics[height=1.5 in]{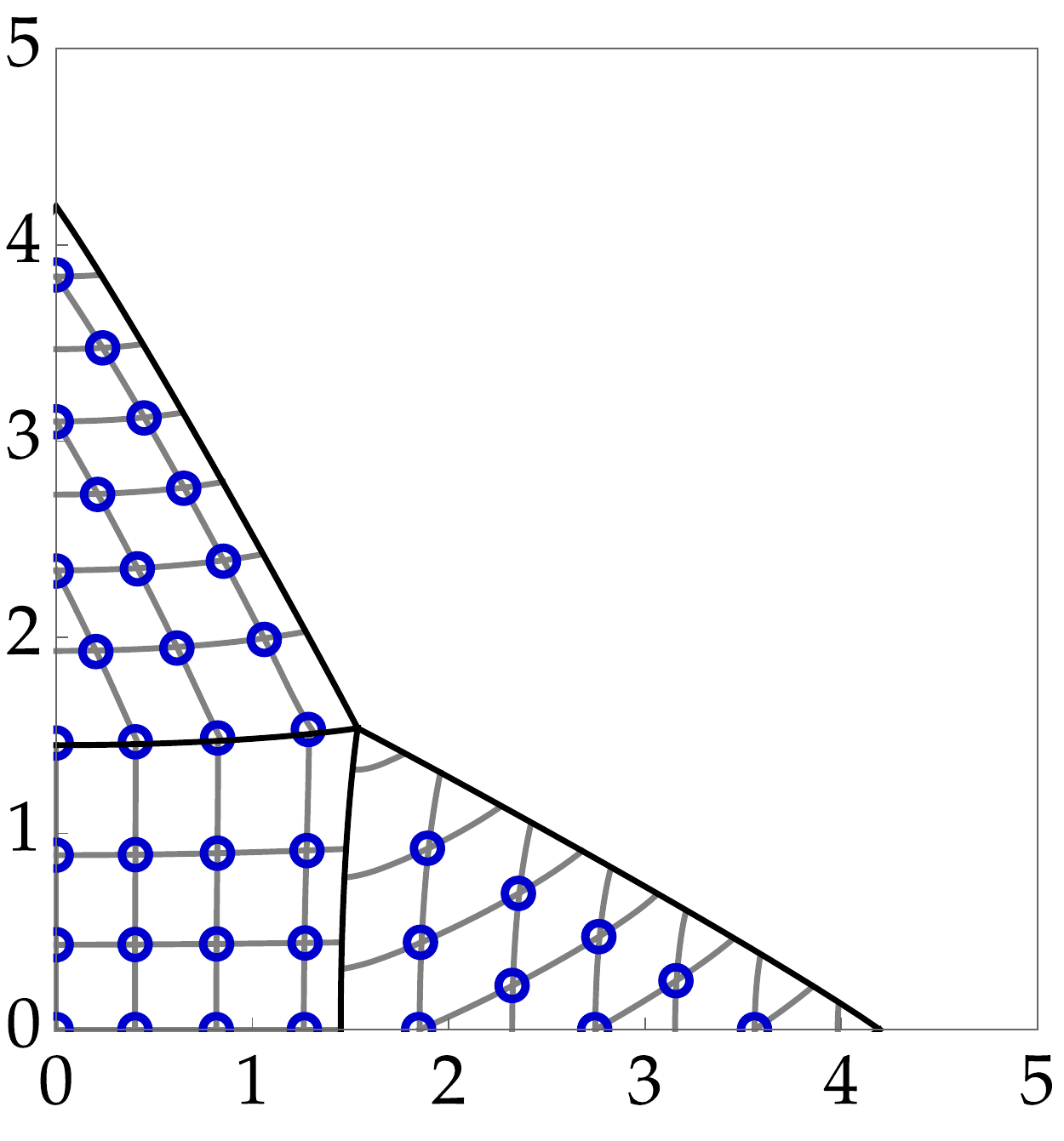}\\
\includegraphics[height=1.5 in]{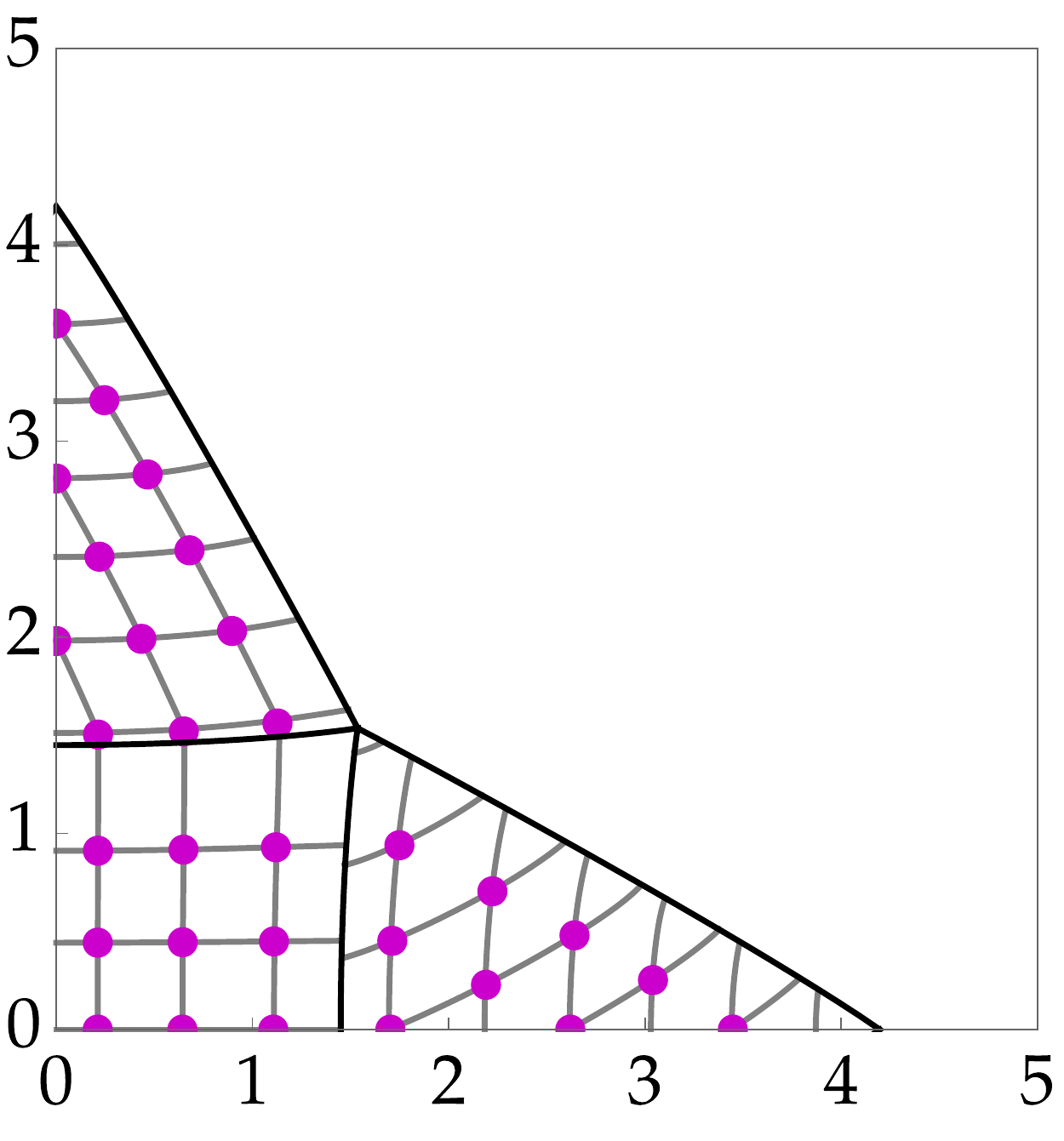}\\
\includegraphics[height=1.5 in]{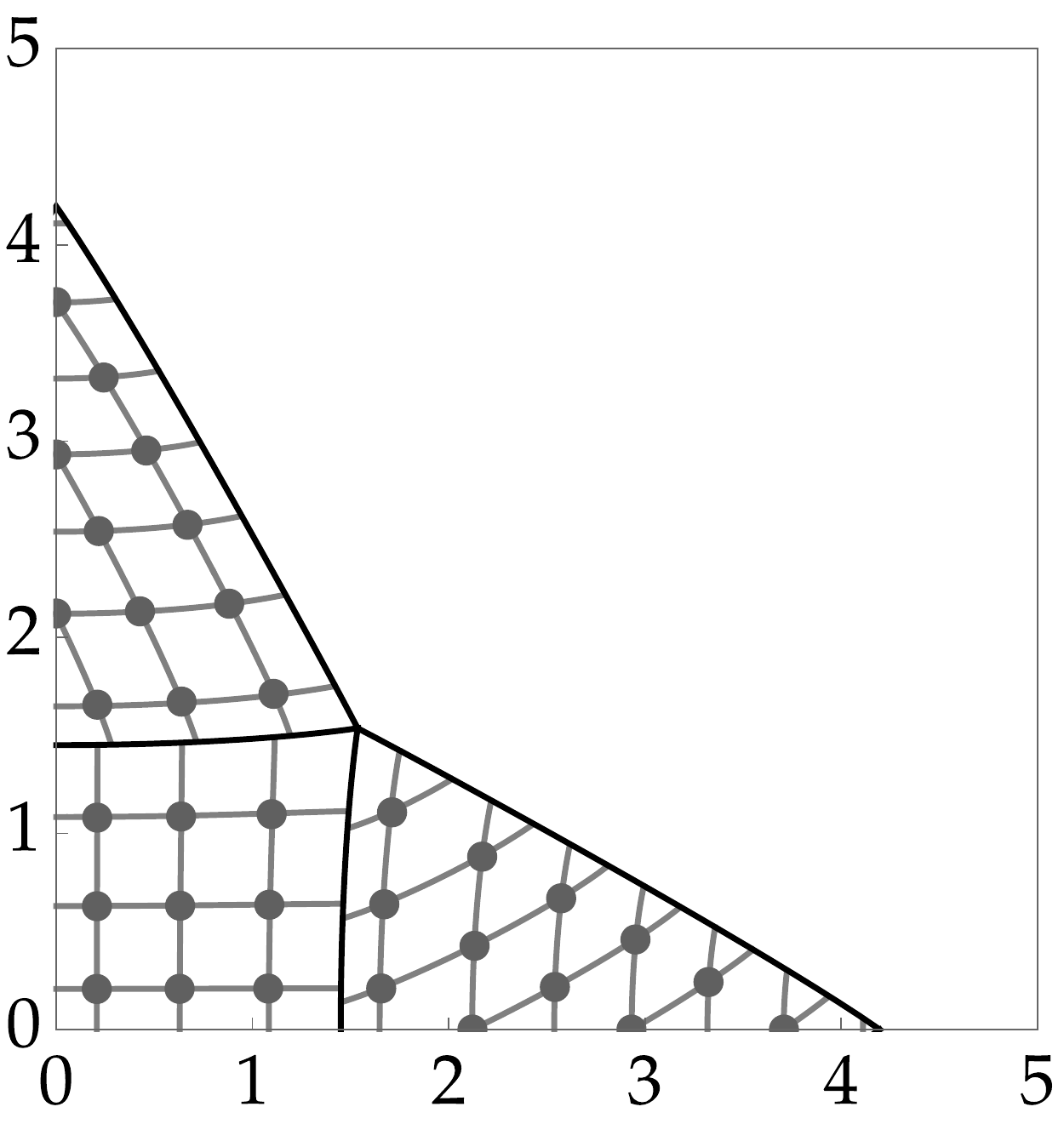}
\end{center}
\caption{$U=T^{-\frac{1}{2}}u^{[1]}_\mathrm{gO}(x;-6,-6)$; \\$\rho=1$; $\kappa=\tfrac{51}{19}$.}
\end{subfigure}%
\begin{subfigure}{1.5 in}
\begin{center}
\includegraphics[height=1.5 in]{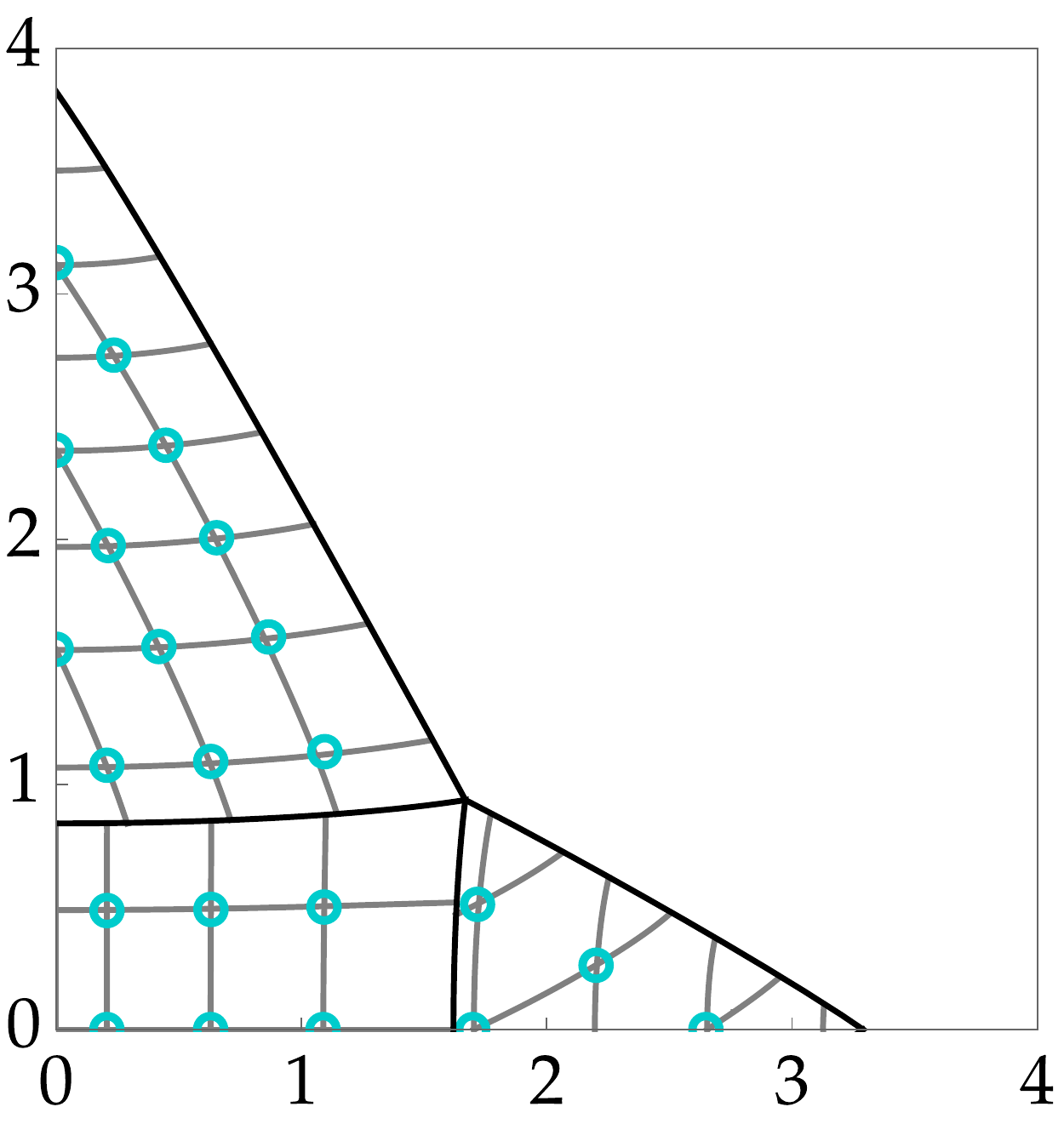}\\
\includegraphics[height=1.5 in]{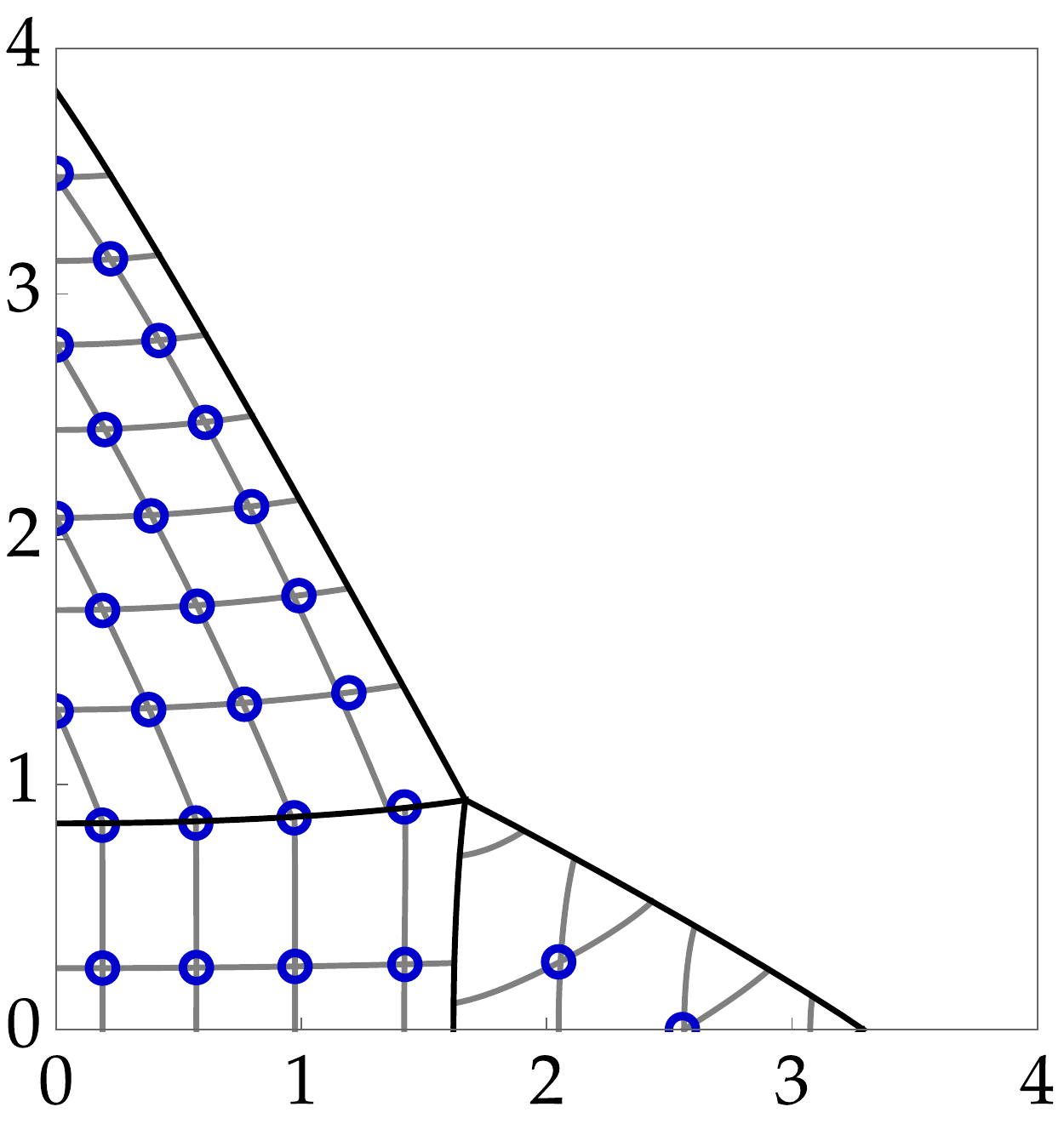}\\
\includegraphics[height=1.5 in]{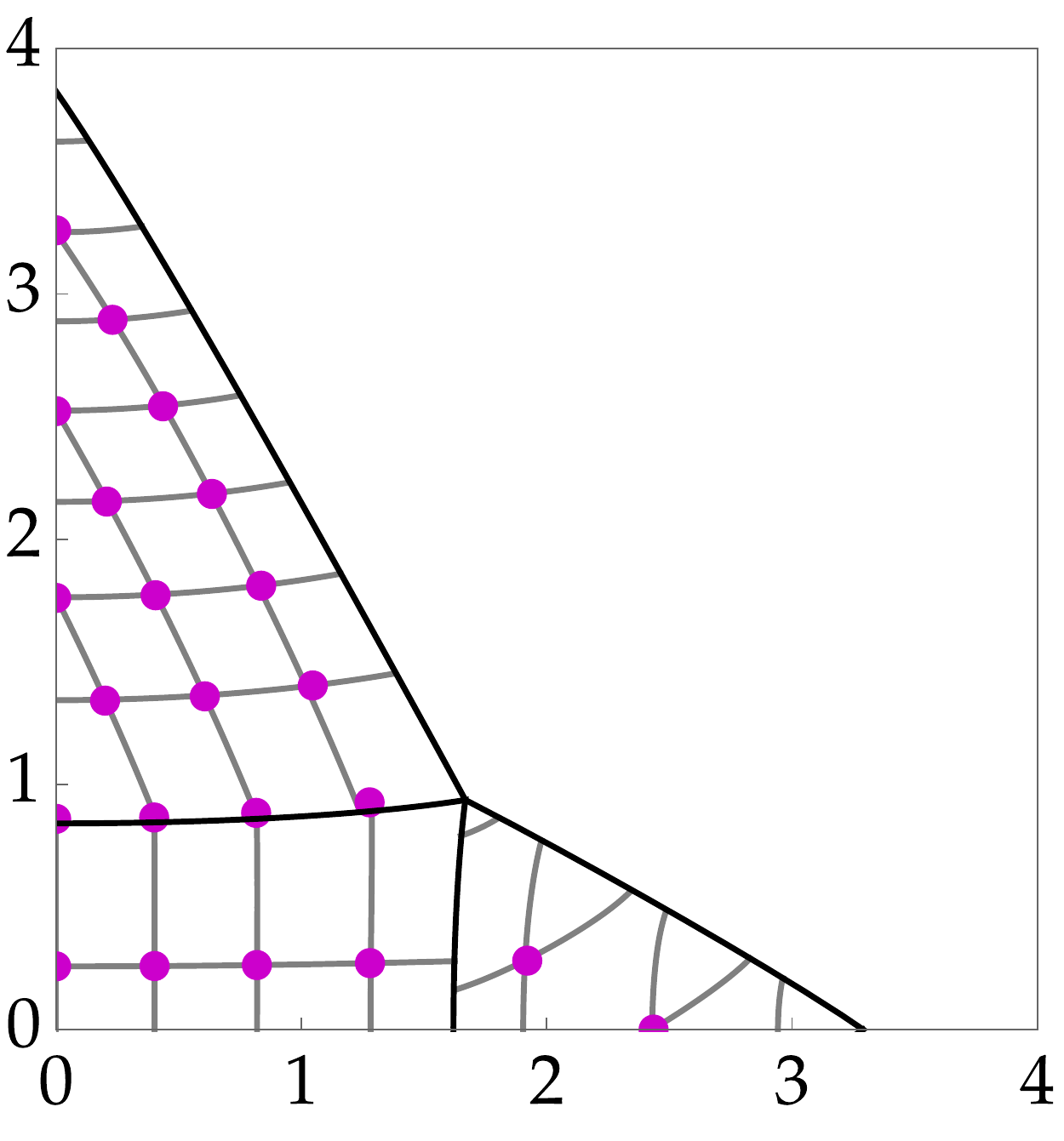}\\
\includegraphics[height=1.5 in]{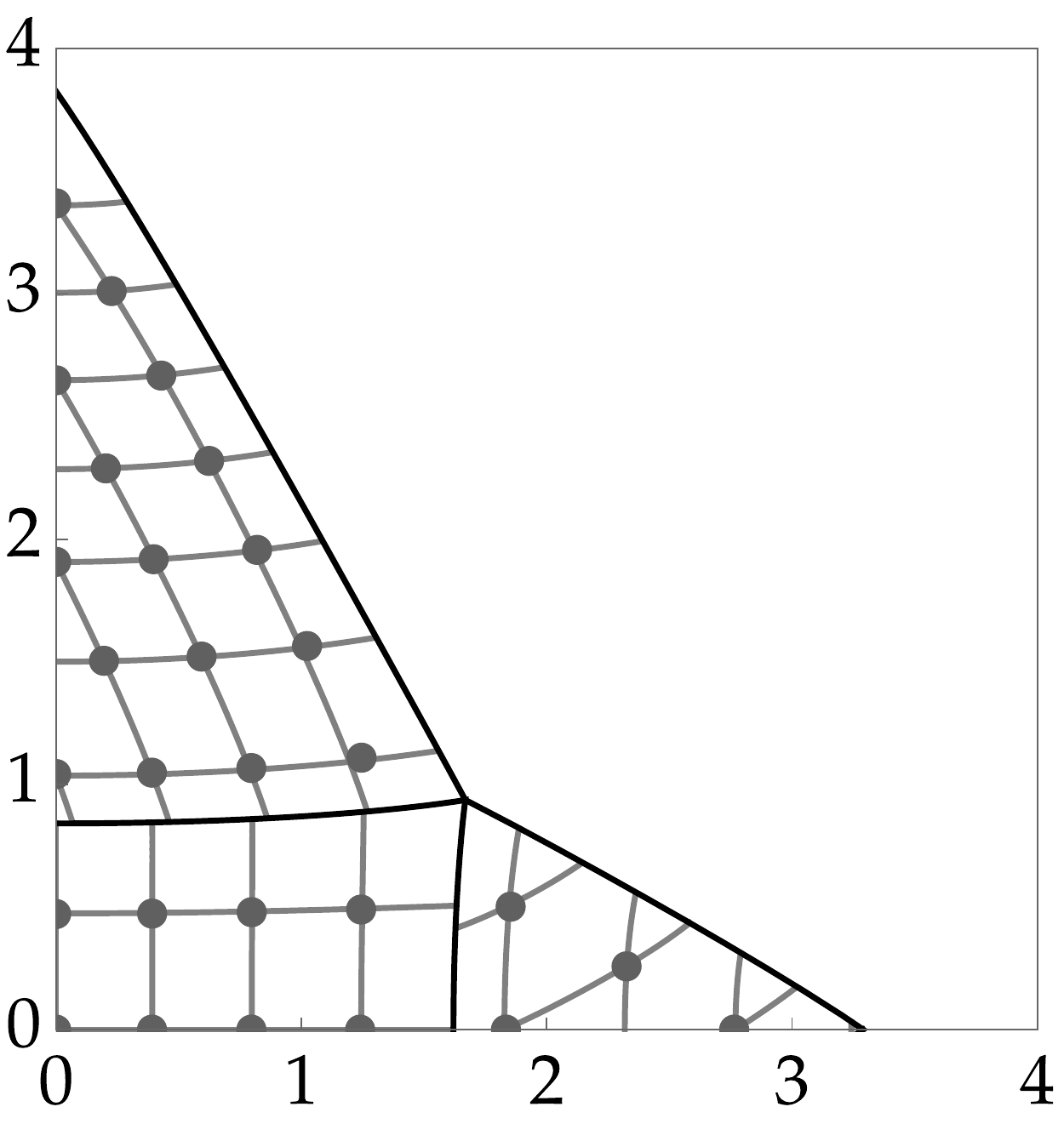}
\end{center}
\caption{$U=T^{-\frac{1}{2}}u^{[1]}_\mathrm{gO}(x;-3,-7)$; \\$\rho=\tfrac{7}{3}$; $\kappa=\tfrac{18}{11}$.}
\end{subfigure}%
\begin{subfigure}{1.5 in}
\begin{center}
\includegraphics[height=1.5 in]{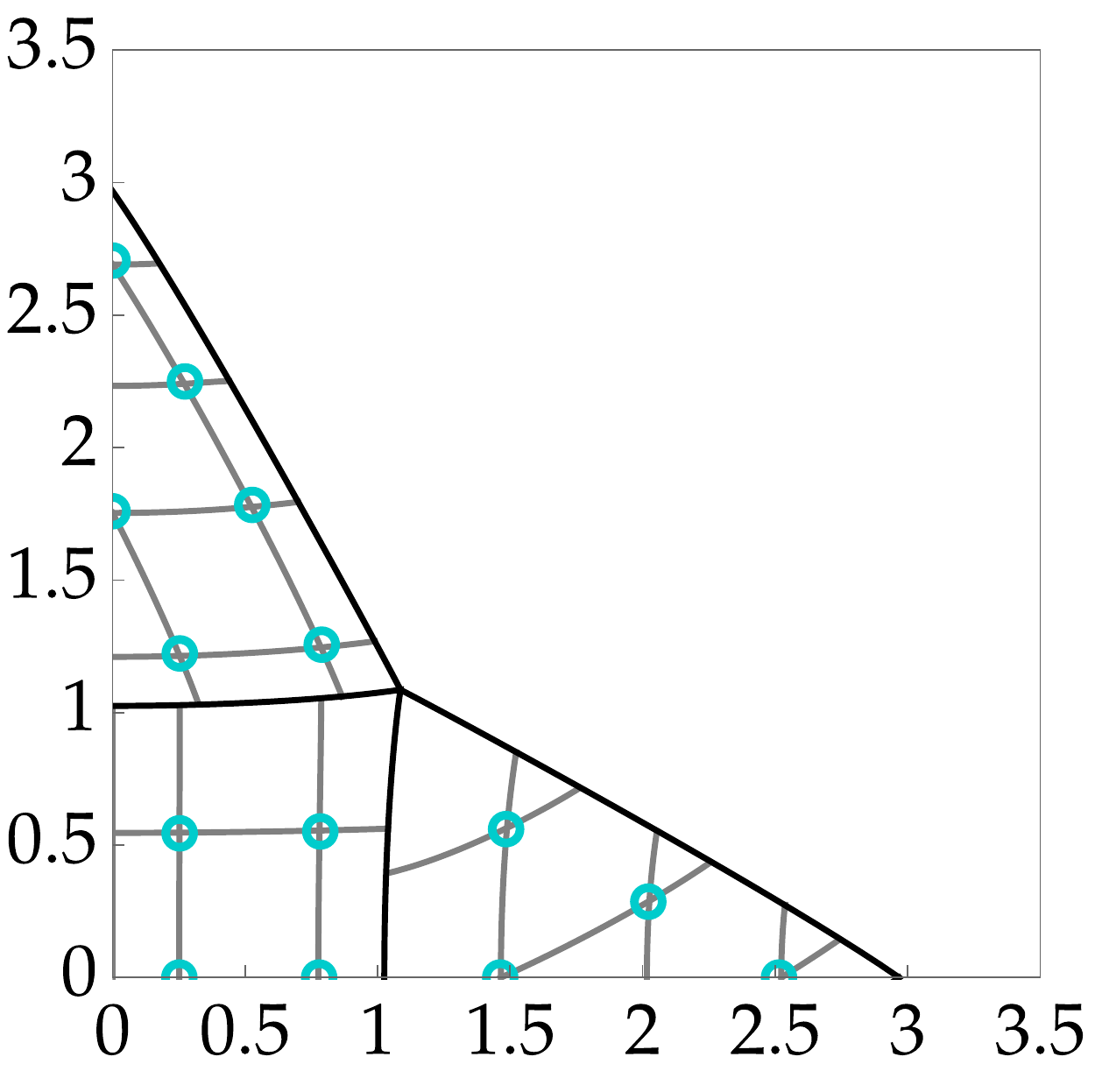}\\
\includegraphics[height=1.5 in]{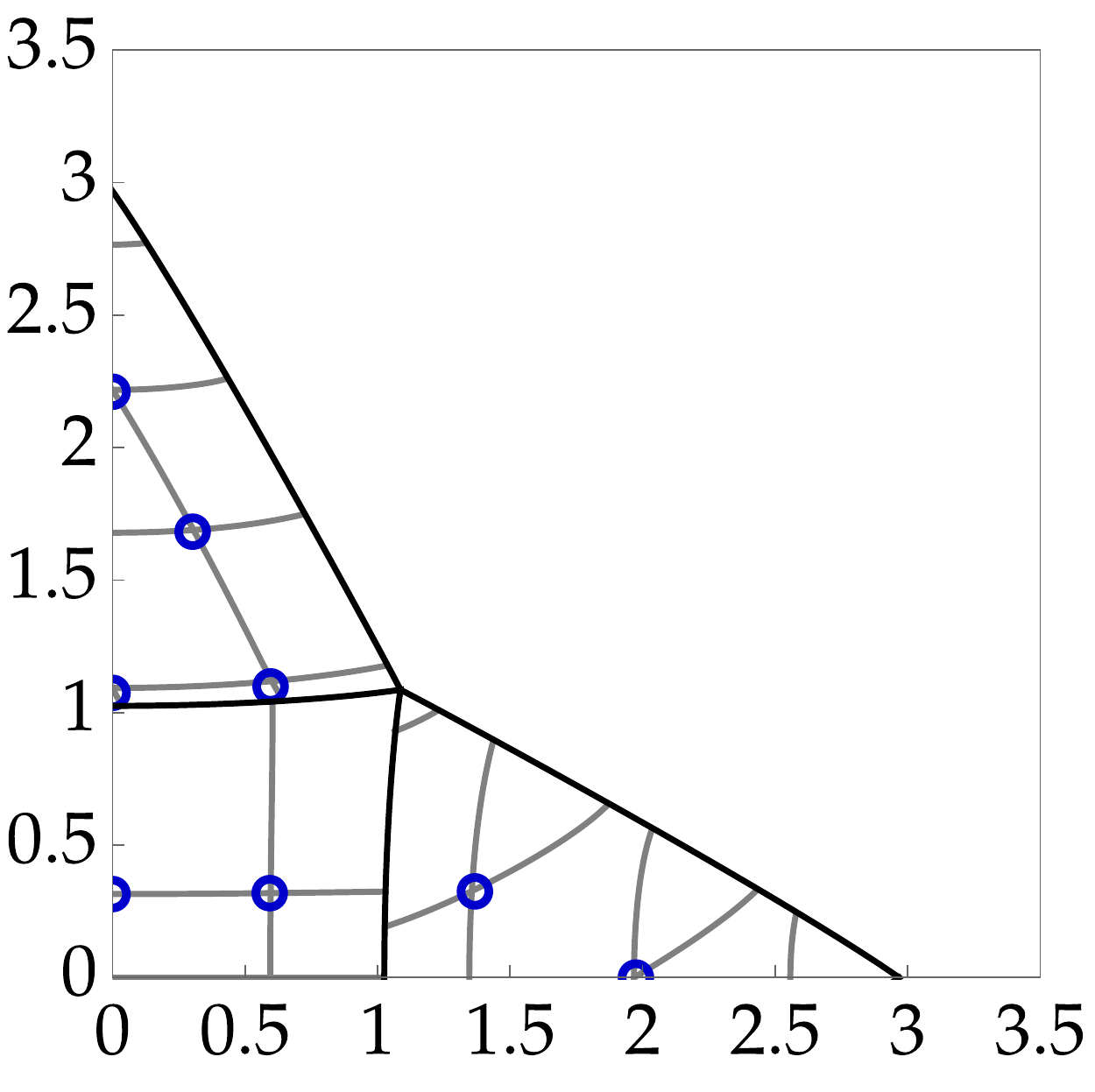}\\
\includegraphics[height=1.5 in]{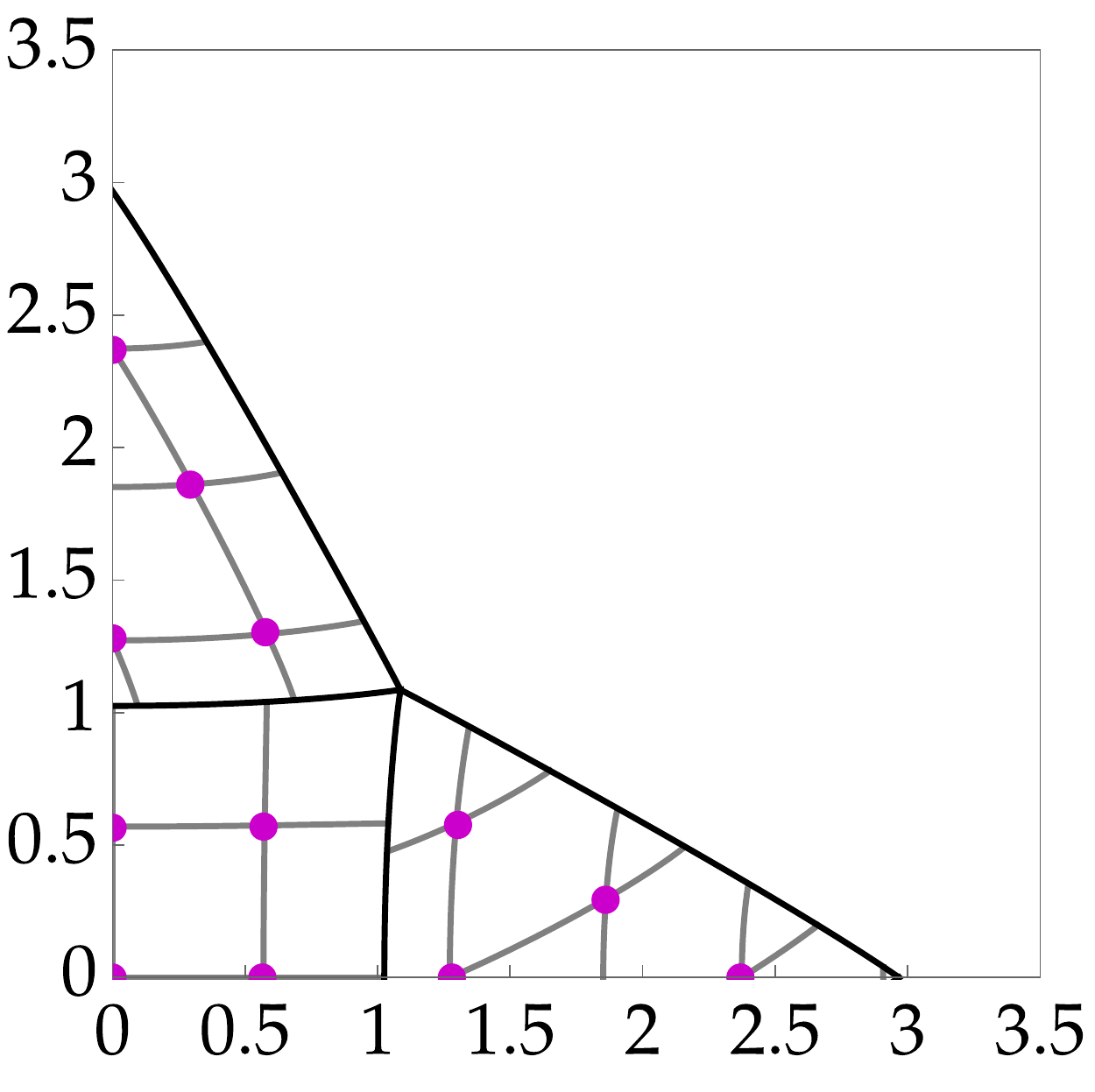}\\
\includegraphics[height=1.5 in]{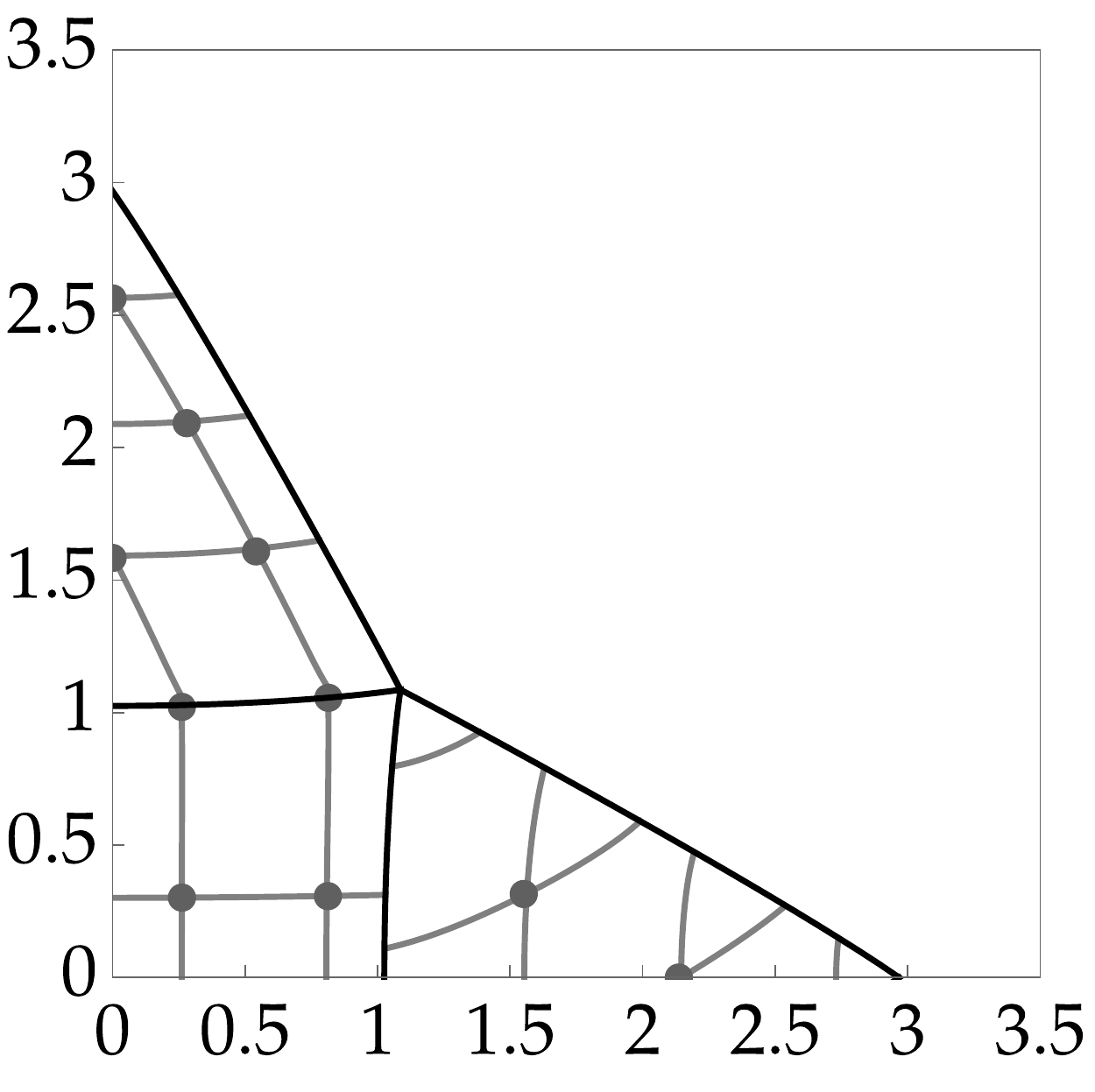}
\end{center}
\caption{$U=T^{-\frac{1}{2}}u^{[3]}_\mathrm{gO}(x;-3,-4)$; \\$\rho=\tfrac{4}{3}$; $\kappa=0$.}
\end{subfigure}
\begin{subfigure}{1.5 in}
\begin{center}
\includegraphics[height=1.5 in]{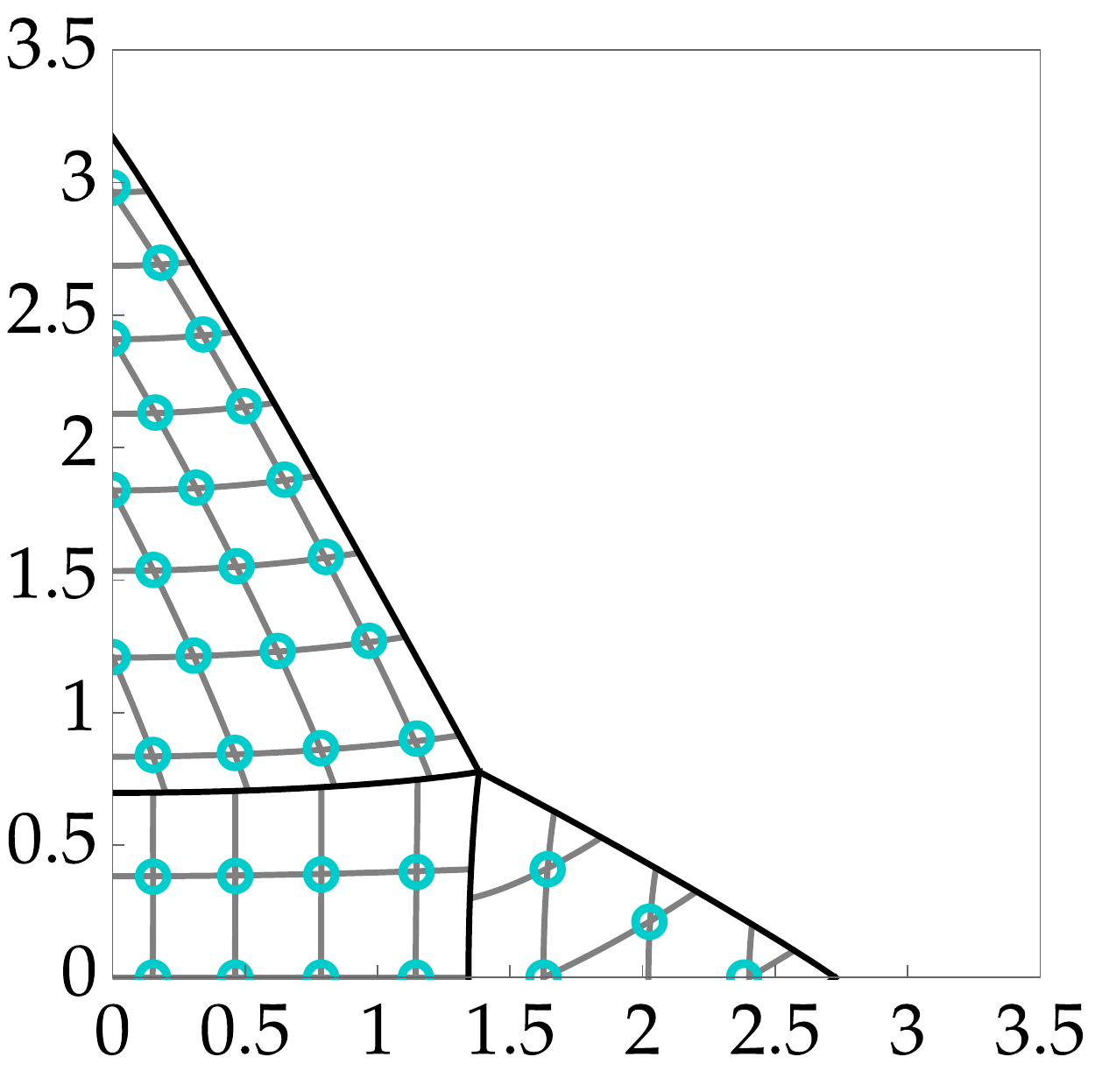}\\
\includegraphics[height=1.5 in]{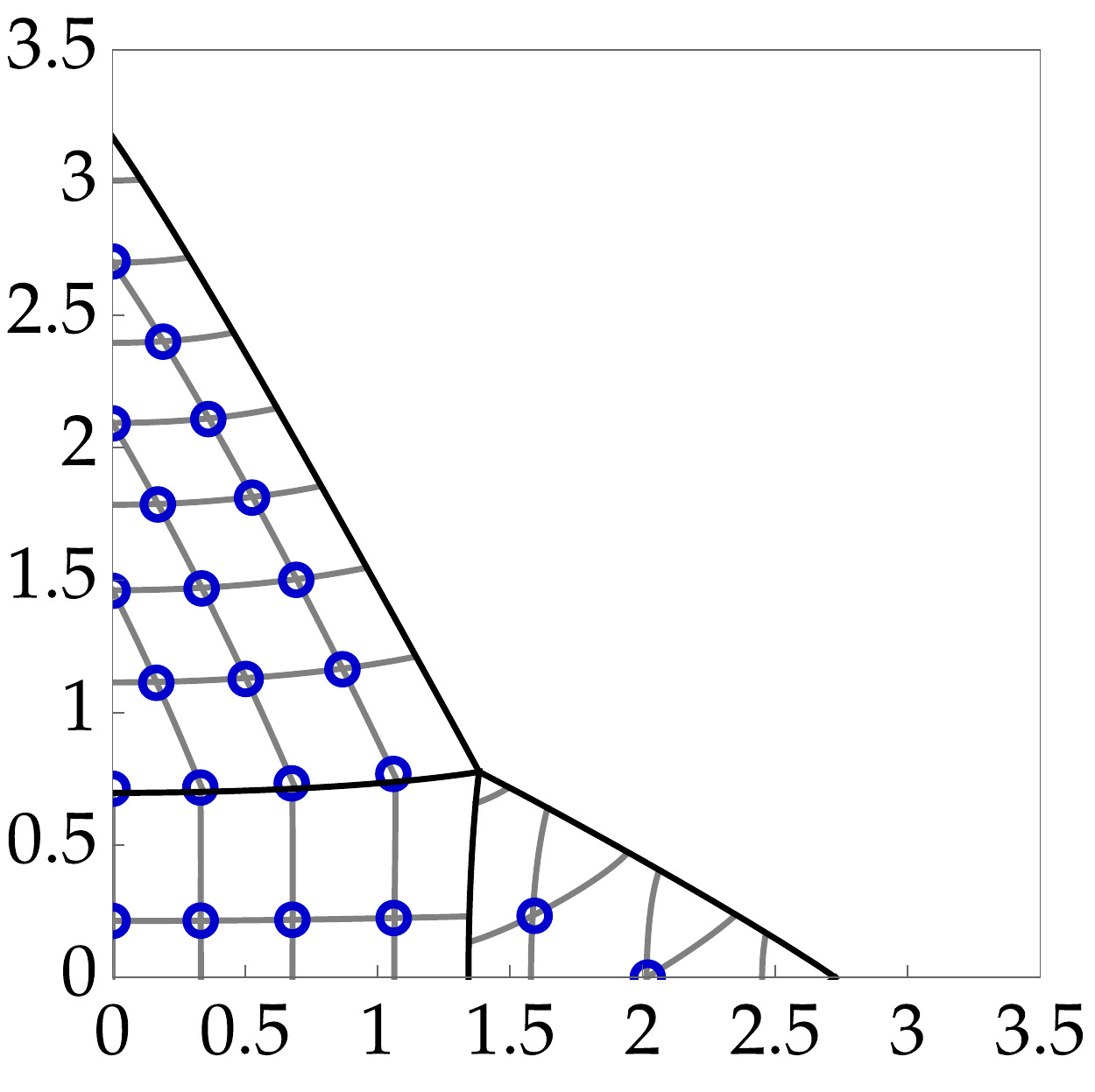}\\
\includegraphics[height=1.5 in]{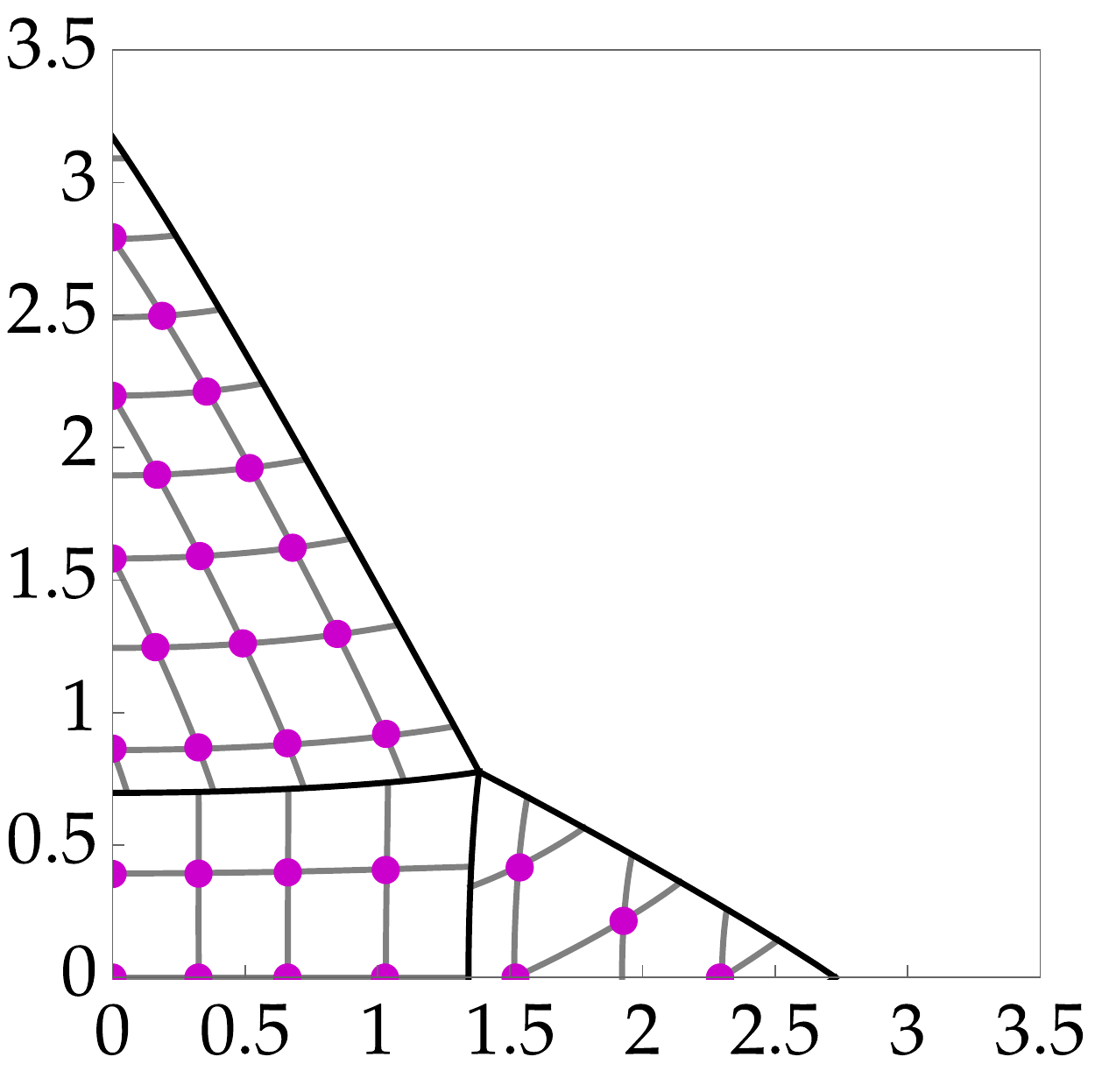}\\
\includegraphics[height=1.5 in]{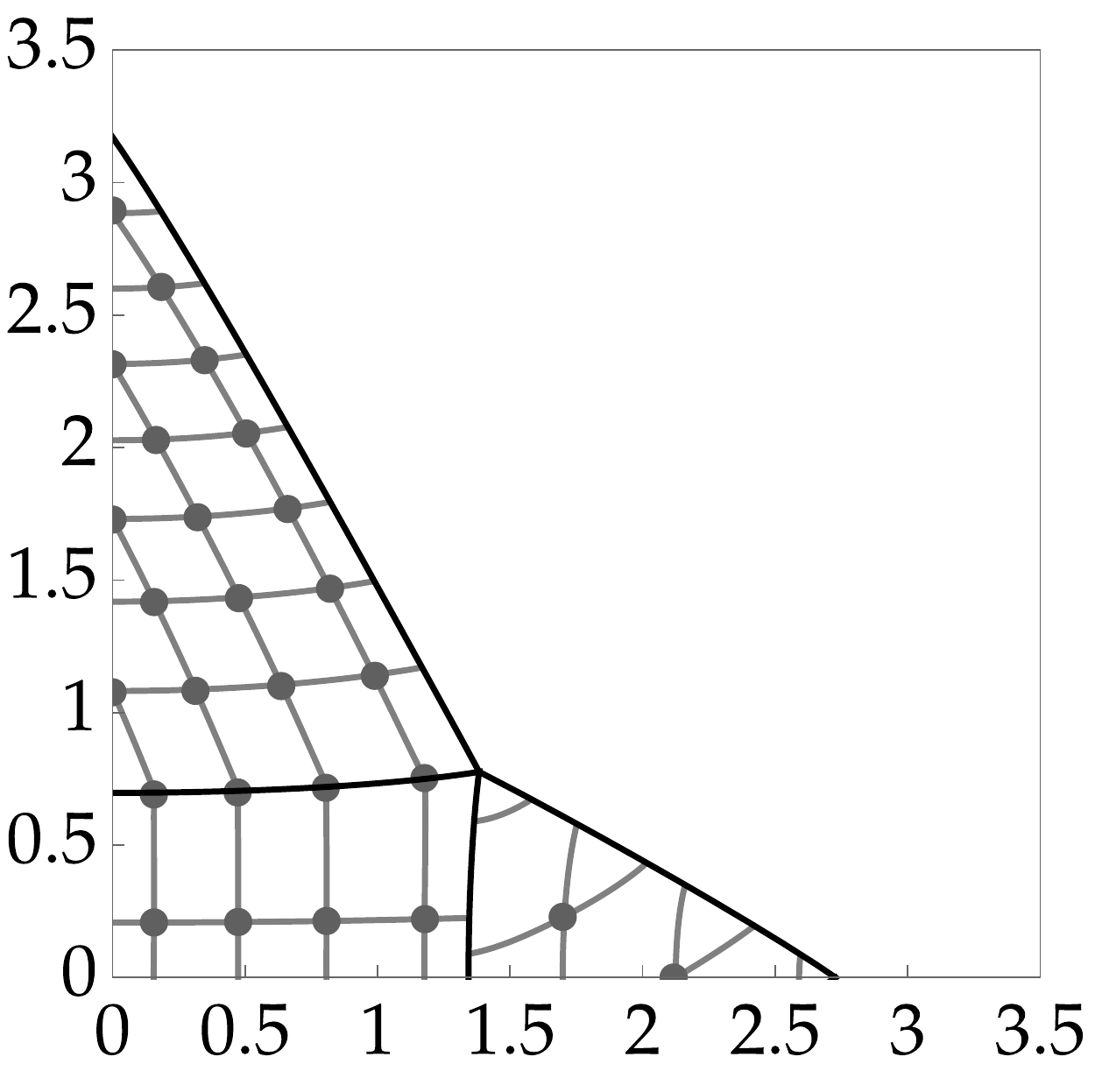}
\end{center}
\caption{$U=T^{-\frac{1}{2}}u^{[3]}_\mathrm{gO}(x;-3,-8)$; \\$\rho=\tfrac{8}{3}$; $\kappa=\tfrac{3}{8}$.}
\end{subfigure}%
\end{center}
\caption{As in Figure~\ref{fig:gOpp-axes} but for negative indices $(m,n)$.}
\label{fig:gOnn-poles-zeros}
\end{figure}

\begin{remark}
In the special case that $y_0=0\in\rectangle(\kappa)$, from Proposition~\ref{prop:E} we have that also $E=0$.  Furthermore, taking into account the identities \eqref{eq:R2-rectangle-y0-real} and \eqref{eq:R1-rectangle-y0-imaginary}, both of which hold if and only if $y_0=0$, as well as the condition that $(\Theta_0,\Theta_\infty)\in\Lambda_\mathrm{gH}$ or $(\Theta_0,\Theta_\infty)\in\Lambda_\mathrm{gO}$, one can show that the prediction of Corollary~\ref{cor:poles-and-zeros} is exact in this special case.  In other words, the pole or zero of $u_\mathrm{F}^{[j]}(x;m,n)$ that must lie at the origin according to Proposition~\ref{prop:FirstQuadrant} is captured exactly by the approximation formul\ae\ of Theorems~\ref{thm:Hermite-elliptic}--\ref{thm:Okamoto-elliptic}.
\label{rem:exact-at-origin}
\end{remark}
If we use the approach (ii) in which a rational solution of Painlev\'e-IV is approximated locally near a given point $y_0$ by an exact elliptic function of $\zeta$ with a uniform lattice of poles and zeros, then we can illustrate the attraction of the actual poles and zeros to the uniform lattice.  The accuracy of this type of approximation of poles and zeros is shown for the gH family with $y_0=0$ in Figures~\ref{fig:gH-poles-zeros-type-1}--\ref{fig:gH-poles-zeros-type-3}.
\begin{figure}
\begin{center}
\begin{subfigure}{1.5 in}
\includegraphics[width=1.5in]{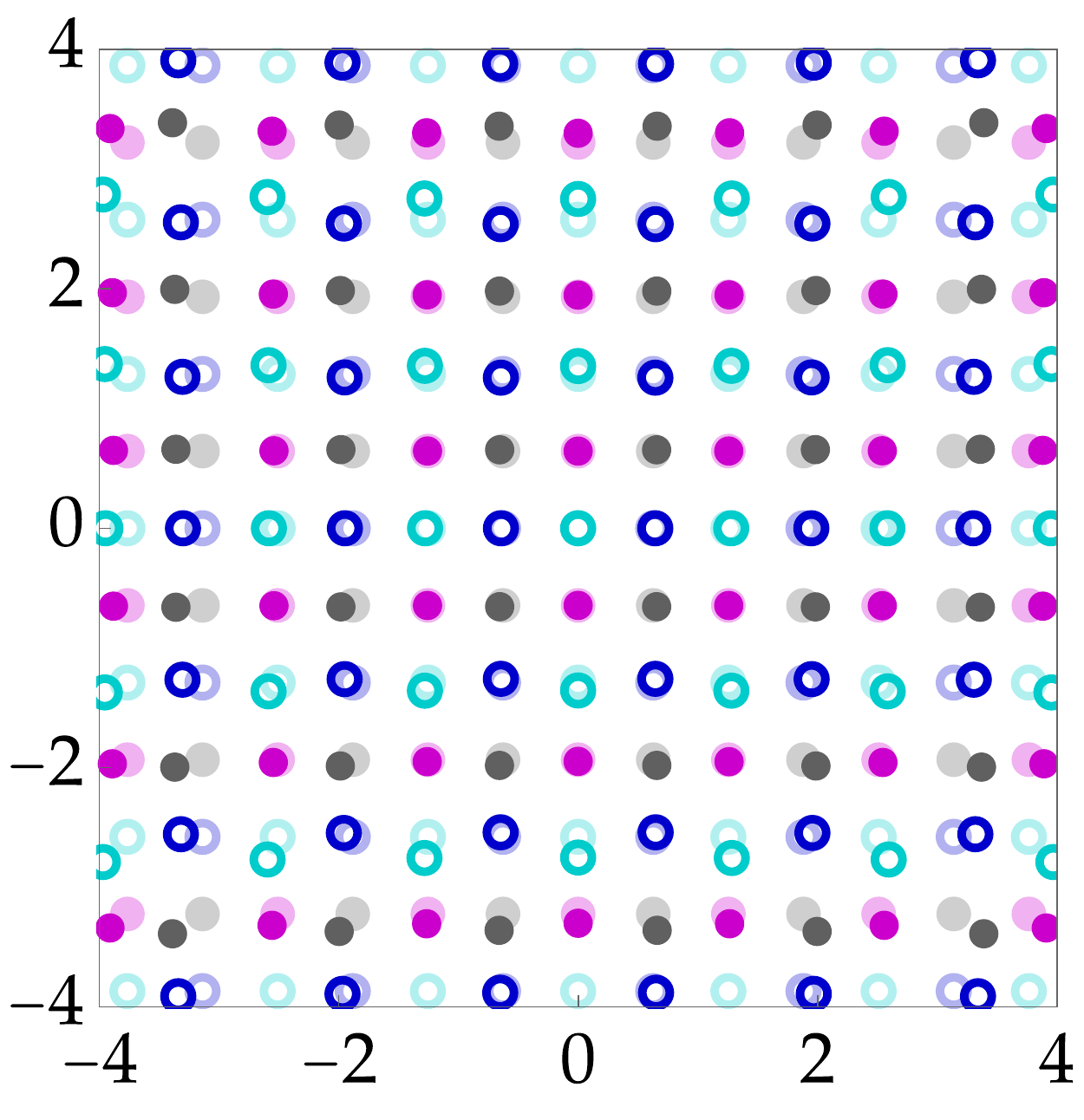}
\caption{$(m,n)=(8,8)$}
\end{subfigure}%
\begin{subfigure}{1.5 in}
\includegraphics[width=1.5in]{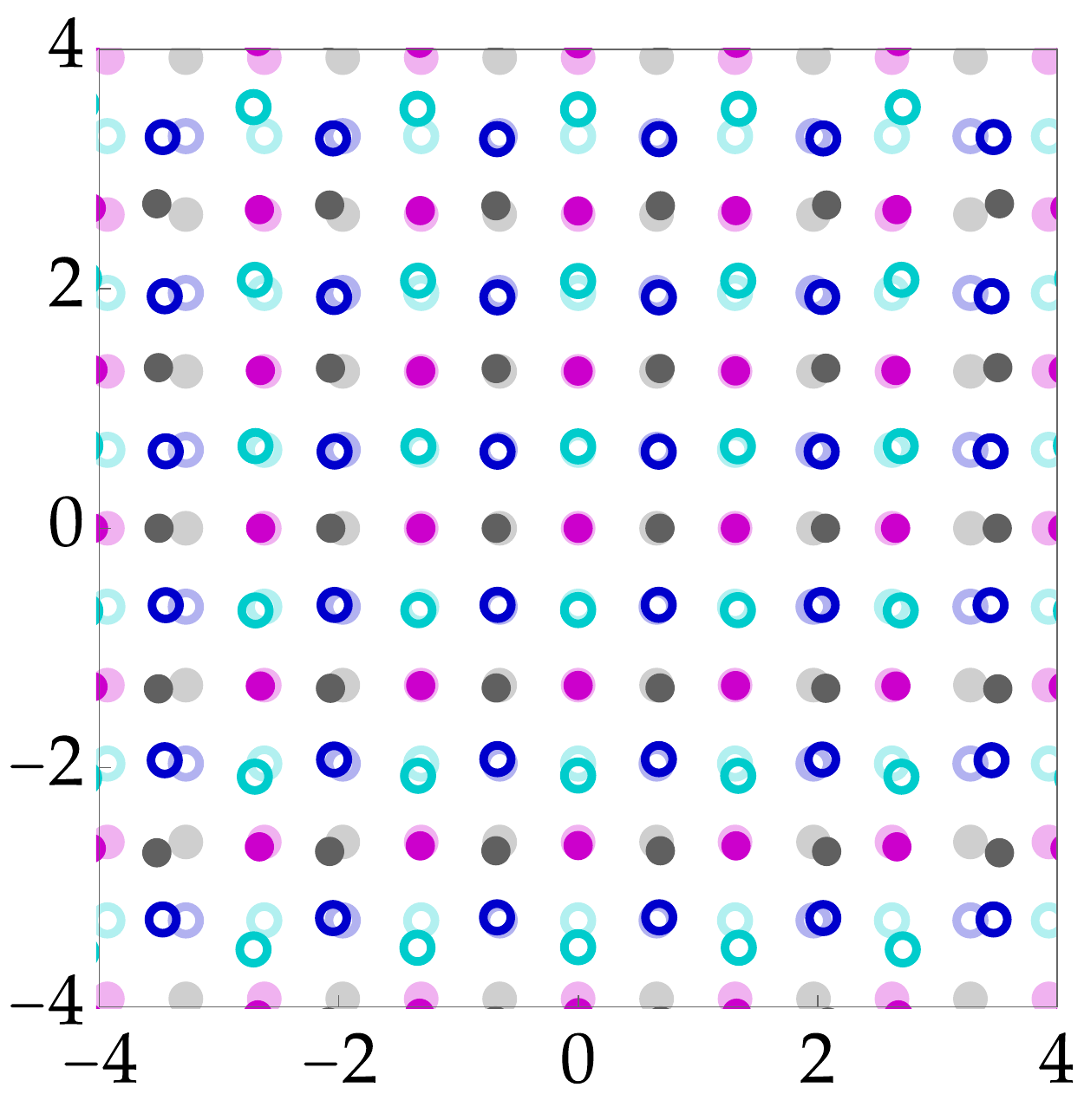}
\caption{$(m,n)=(8,9)$}
\end{subfigure}%
\begin{subfigure}{1.5 in}
\includegraphics[width=1.5in]{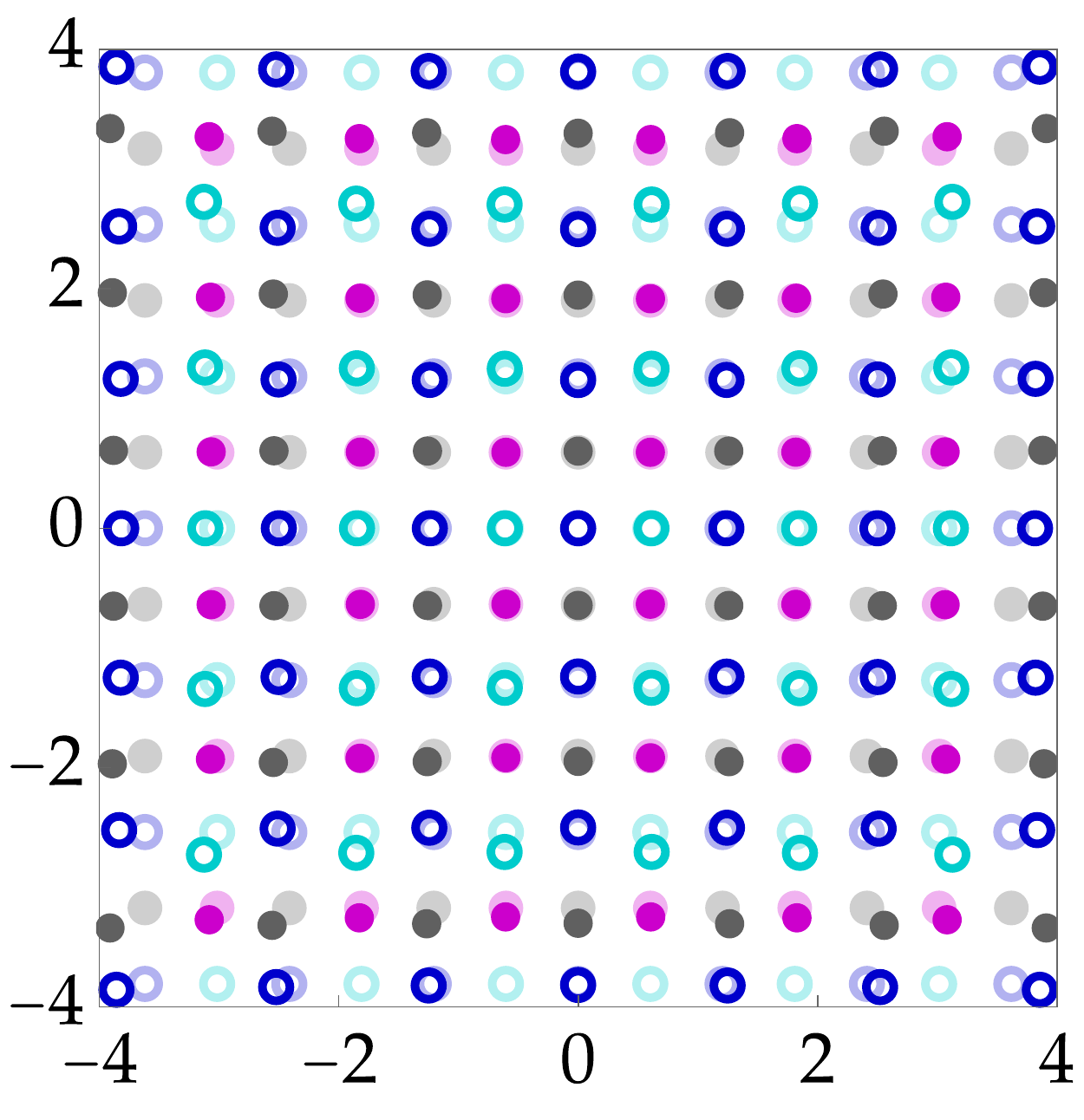}
\caption{$(m,n)=(9,8)$}
\end{subfigure}%
\begin{subfigure}{1.5 in} 
\includegraphics[width=1.5in]{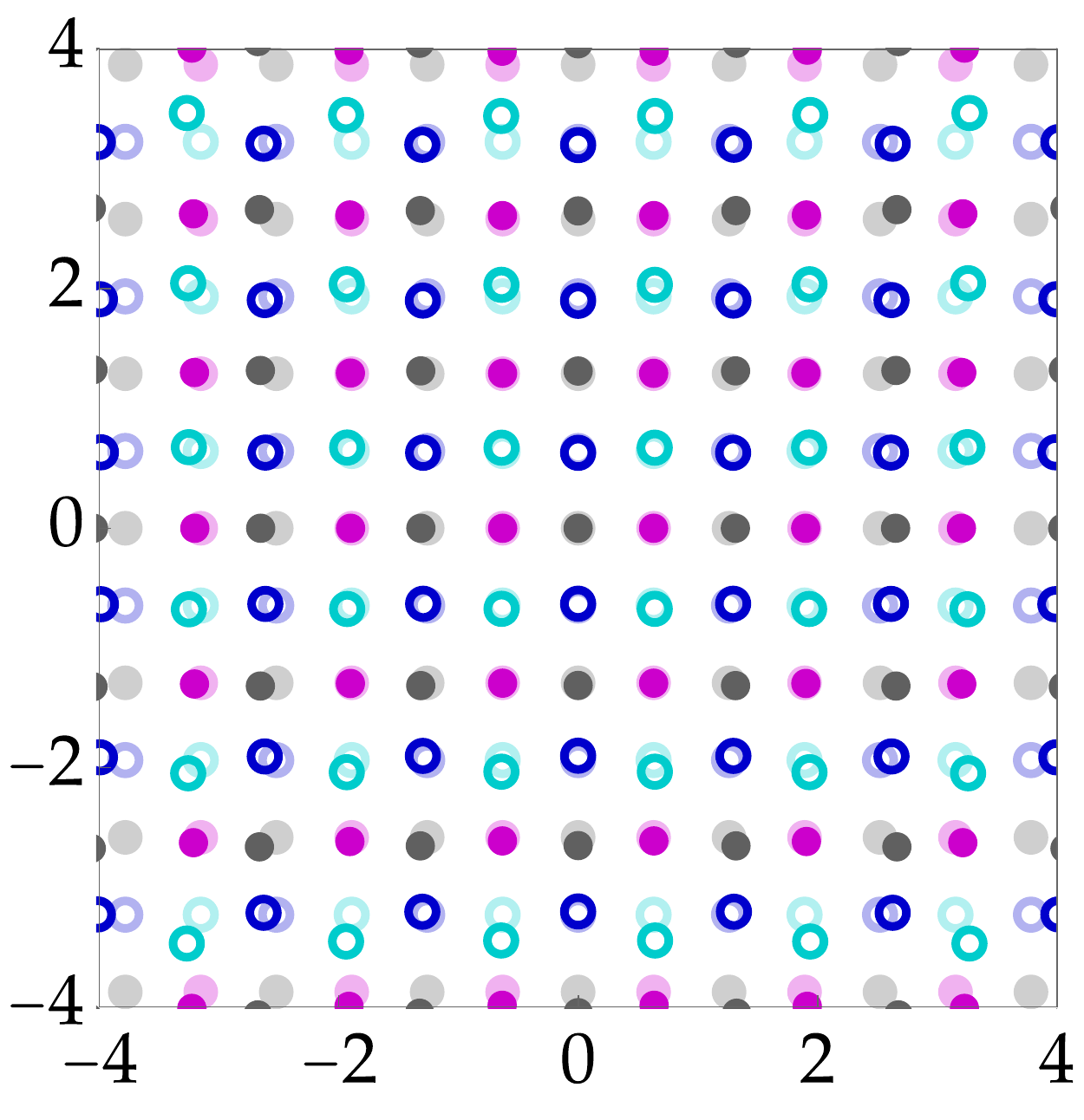}
\caption{$(m,n)=(9,9)$}
\end{subfigure}
\end{center}
\caption{Zeros and poles of $u^{[1]}_\mathrm{gH}(T^{-\frac{1}{2}}\zeta;m,n)$ in the $\zeta$-plane (bold colors) and their large-$T$ elliptic function approximations (faded colors).  Cyan circles:  zeros with positive derivative.  Blue circles:  zeros with negative derivative.  Magenta dots:  poles with positive residue.  Grey dots:  poles with negative residue.  For the point at $\zeta=0$ which corresponds to $x=0$, these match the theoretical predictions of Proposition~\ref{prop:BehaviorAtOrigin}.}
\label{fig:gH-poles-zeros-type-1}
\end{figure}
\begin{figure}
\begin{center}
\begin{subfigure}{1.5in}
\includegraphics[width=1.5in]{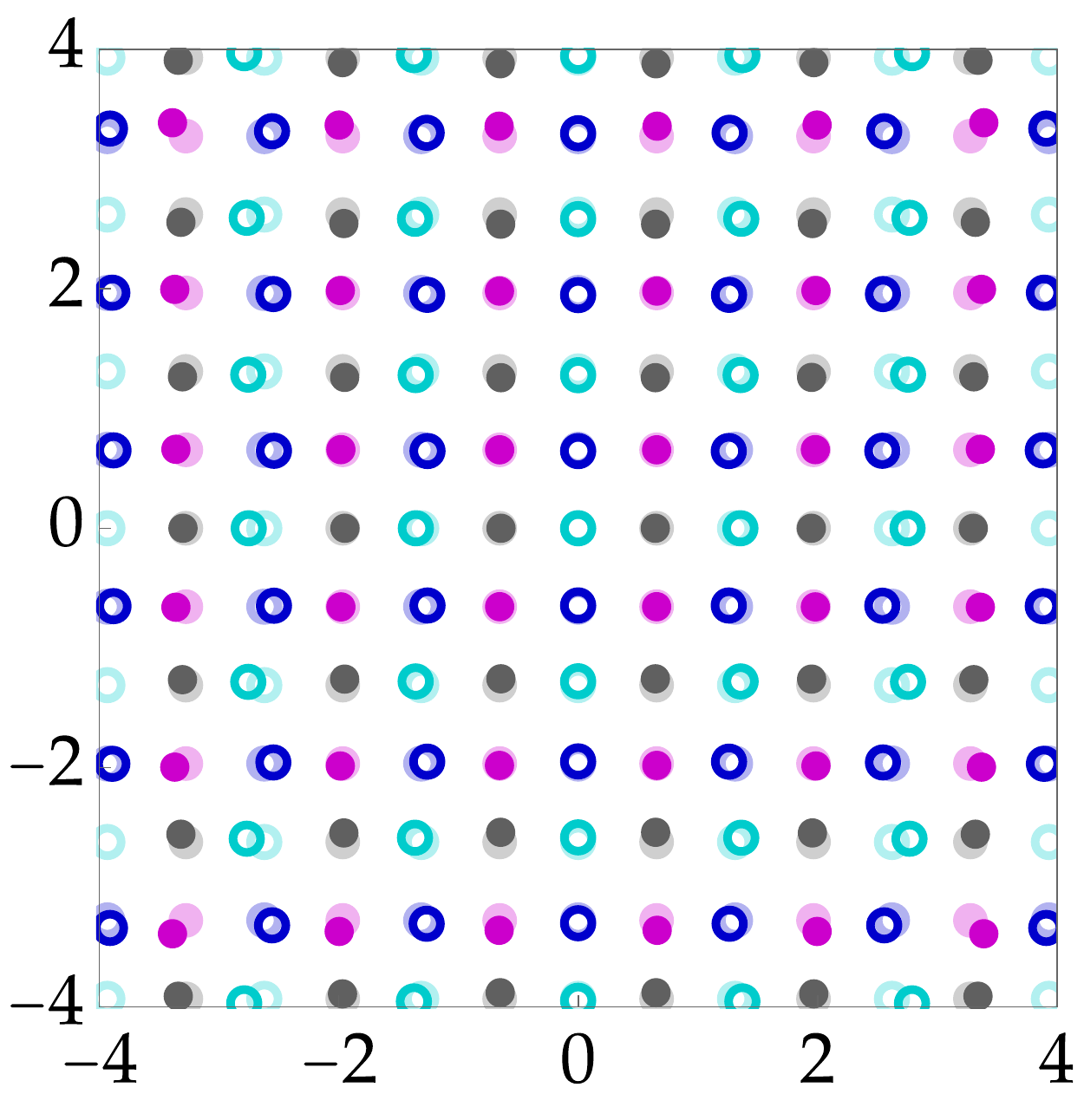}
\caption{$(m,n)=(8,8)$}
\end{subfigure}%
\begin{subfigure}{1.5 in}
\includegraphics[width=1.5in]{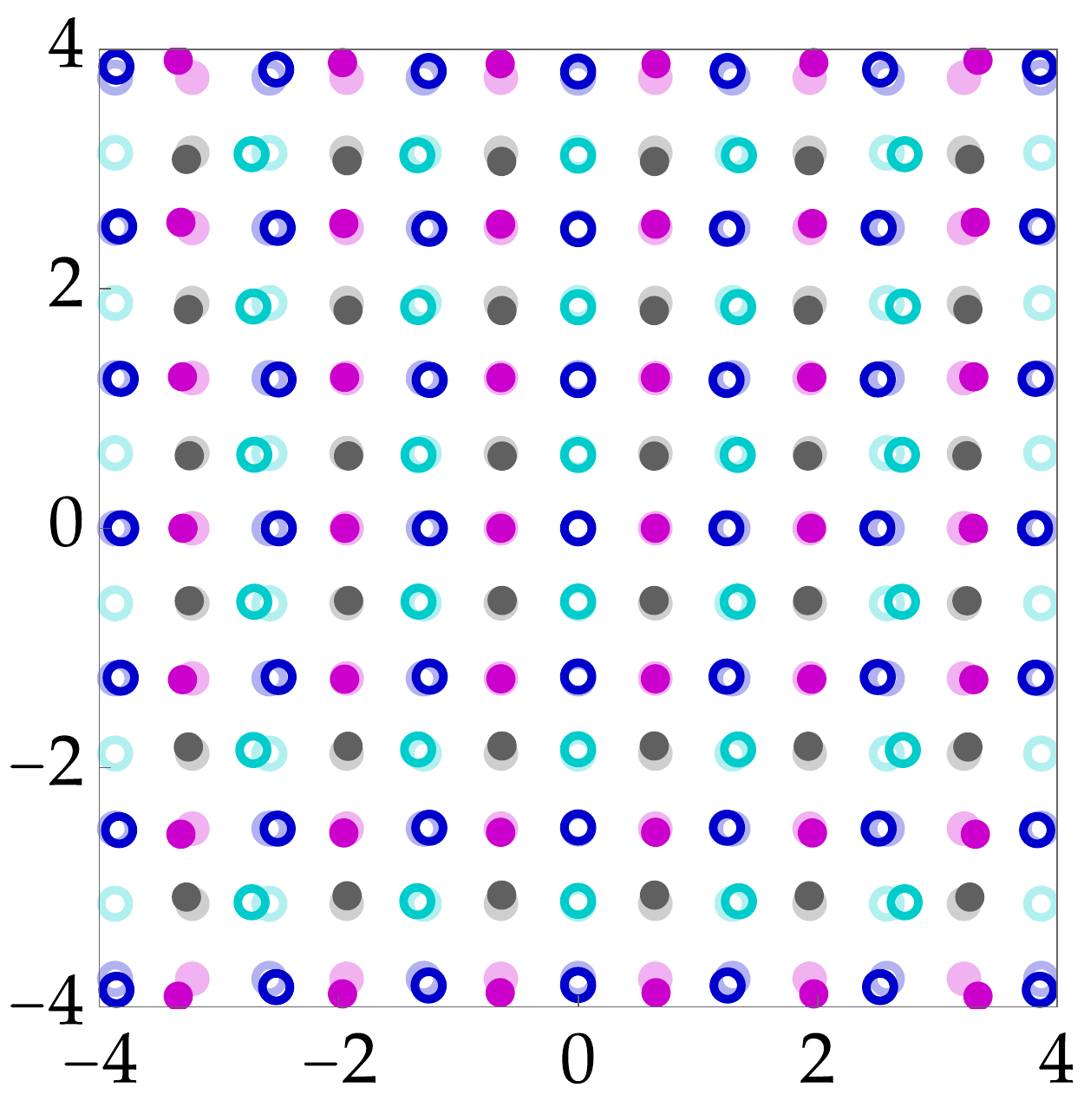}
\caption{$(m,n)=(8,9)$}
\end{subfigure}%
\begin{subfigure}{1.5in}
\includegraphics[width=1.5in]{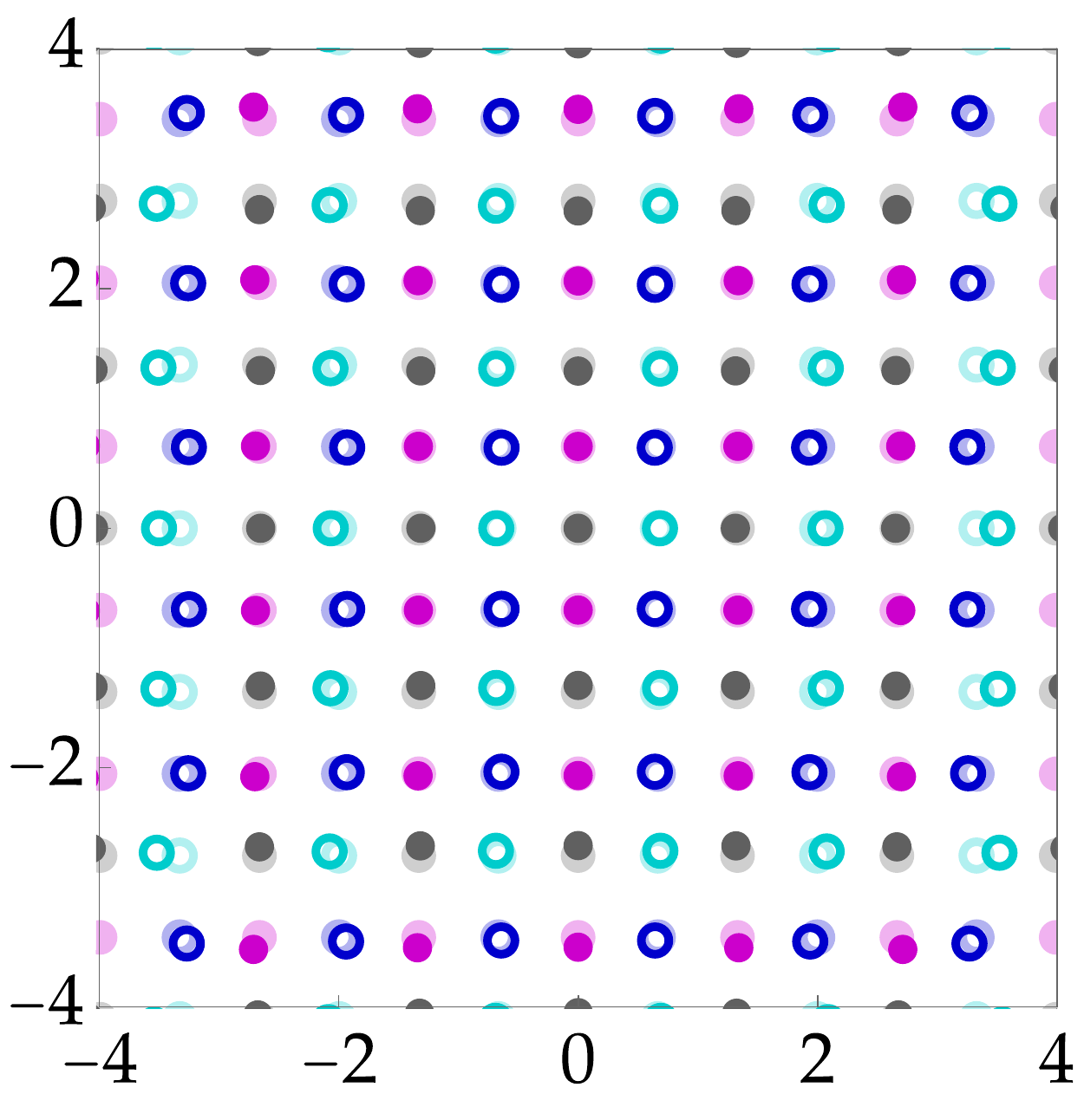}
\caption{$(m,n)=(9,8)$}
\end{subfigure}%
\begin{subfigure}{1.5in}
\includegraphics[width=1.5in]{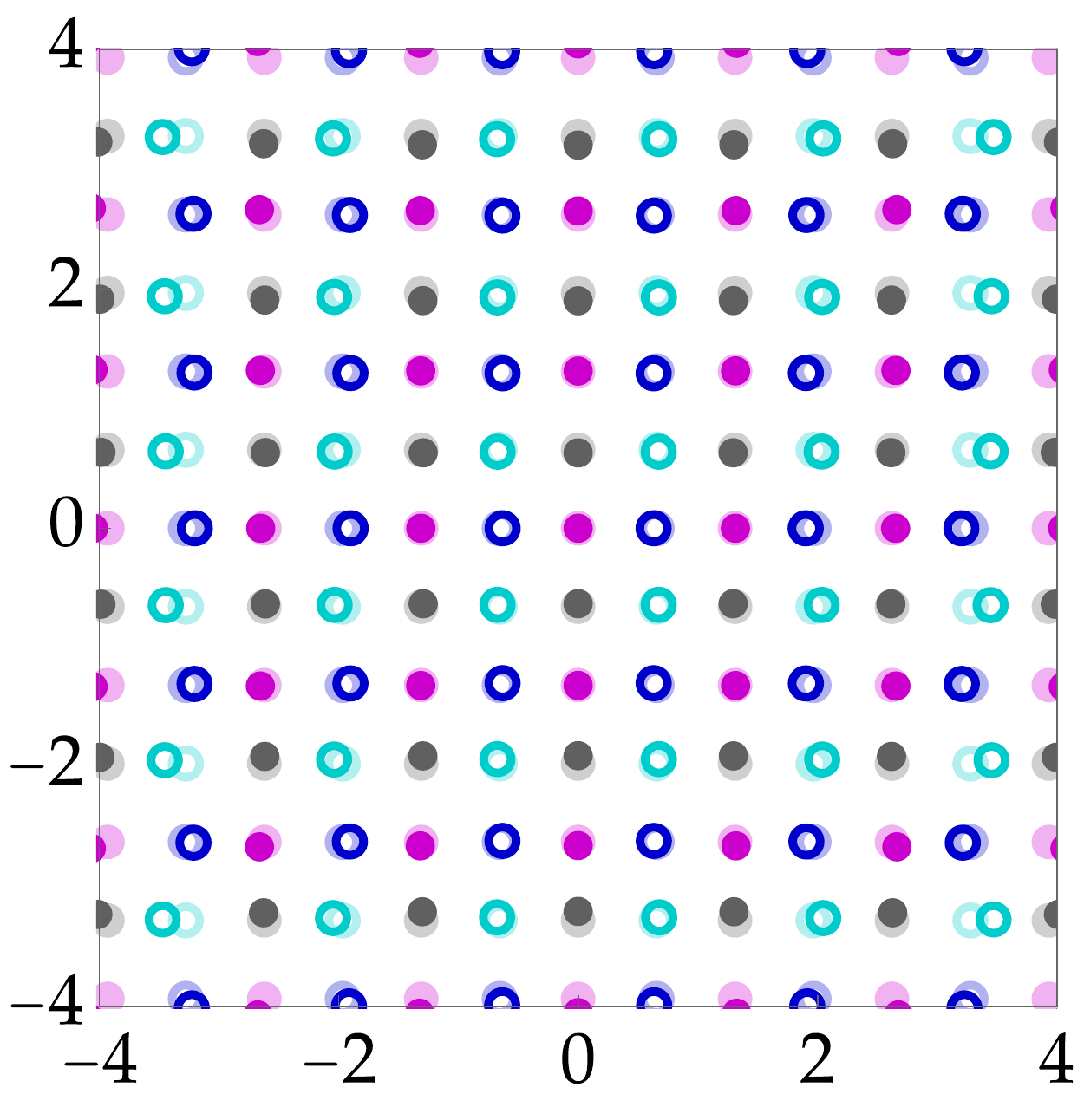}
\caption{$(m,n)=(9,9)$}
\end{subfigure}
\end{center}
\caption{As in Figure~\ref{fig:gH-poles-zeros-type-1} but for $u^{[2]}_\mathrm{gH}(T^{-\frac{1}{2}}\zeta;m,n)$.}
\label{fig:gH-poles-zeros-type-2}
\end{figure}
\begin{figure}
\begin{center}
\begin{subfigure}{1.5in}
\includegraphics[width=1.5in]{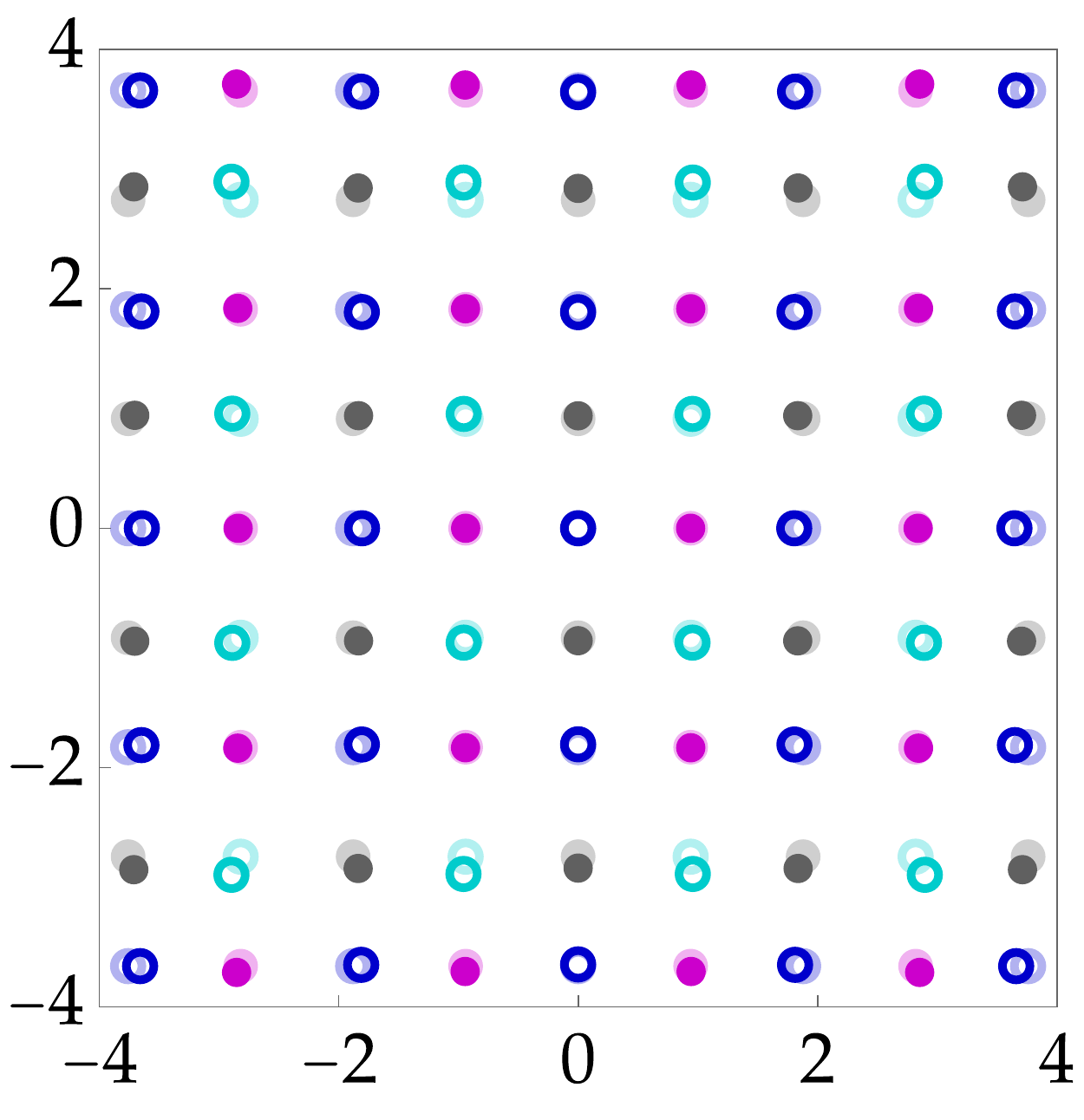}
\caption{$(m,n)=(8,8)$}
\end{subfigure}%
\begin{subfigure}{1.5 in}
\includegraphics[width=1.5in]{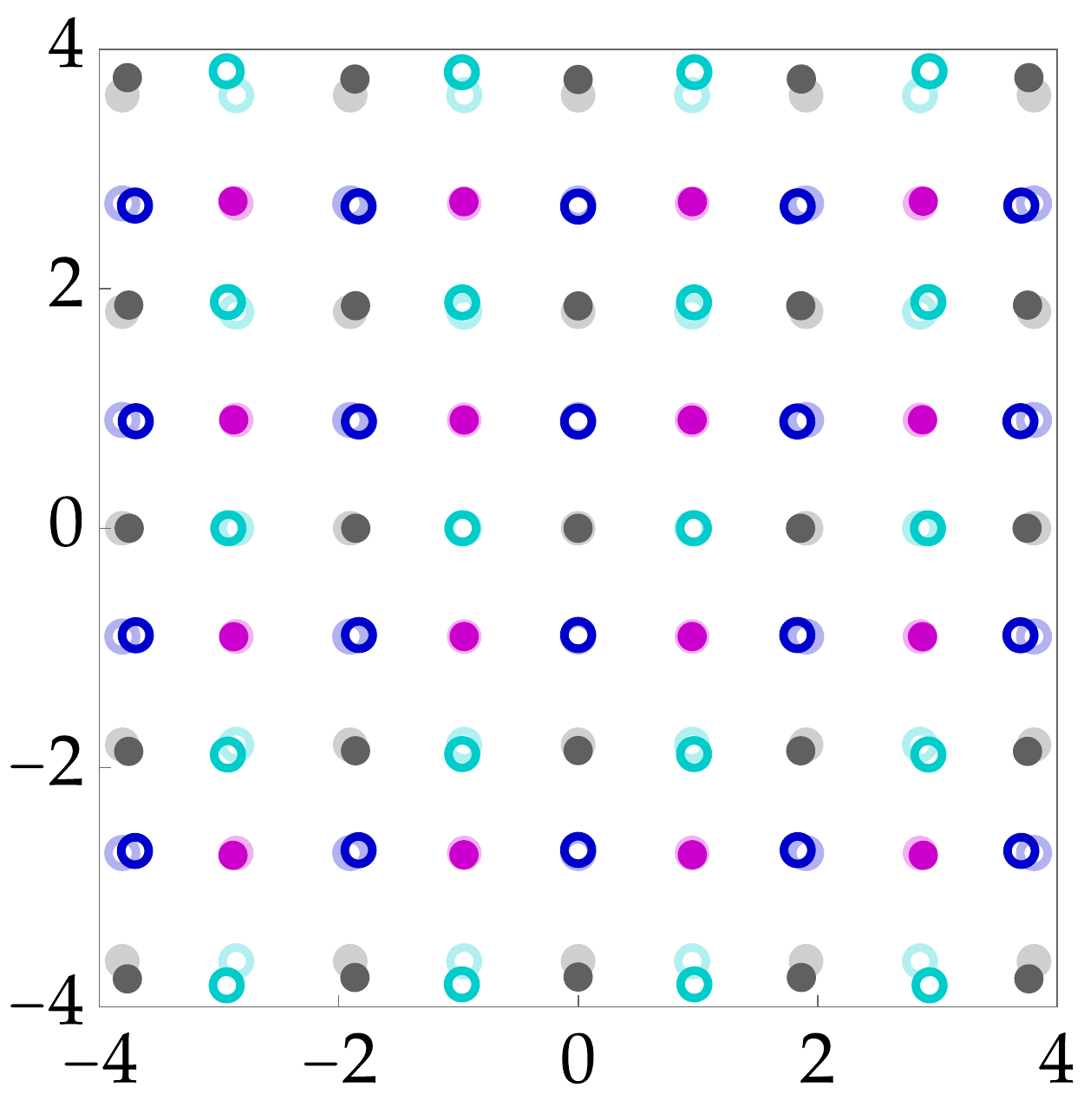}
\caption{$(m,n)=(8,9)$}
\end{subfigure}%
\begin{subfigure}{1.5in}
\includegraphics[width=1.5in]{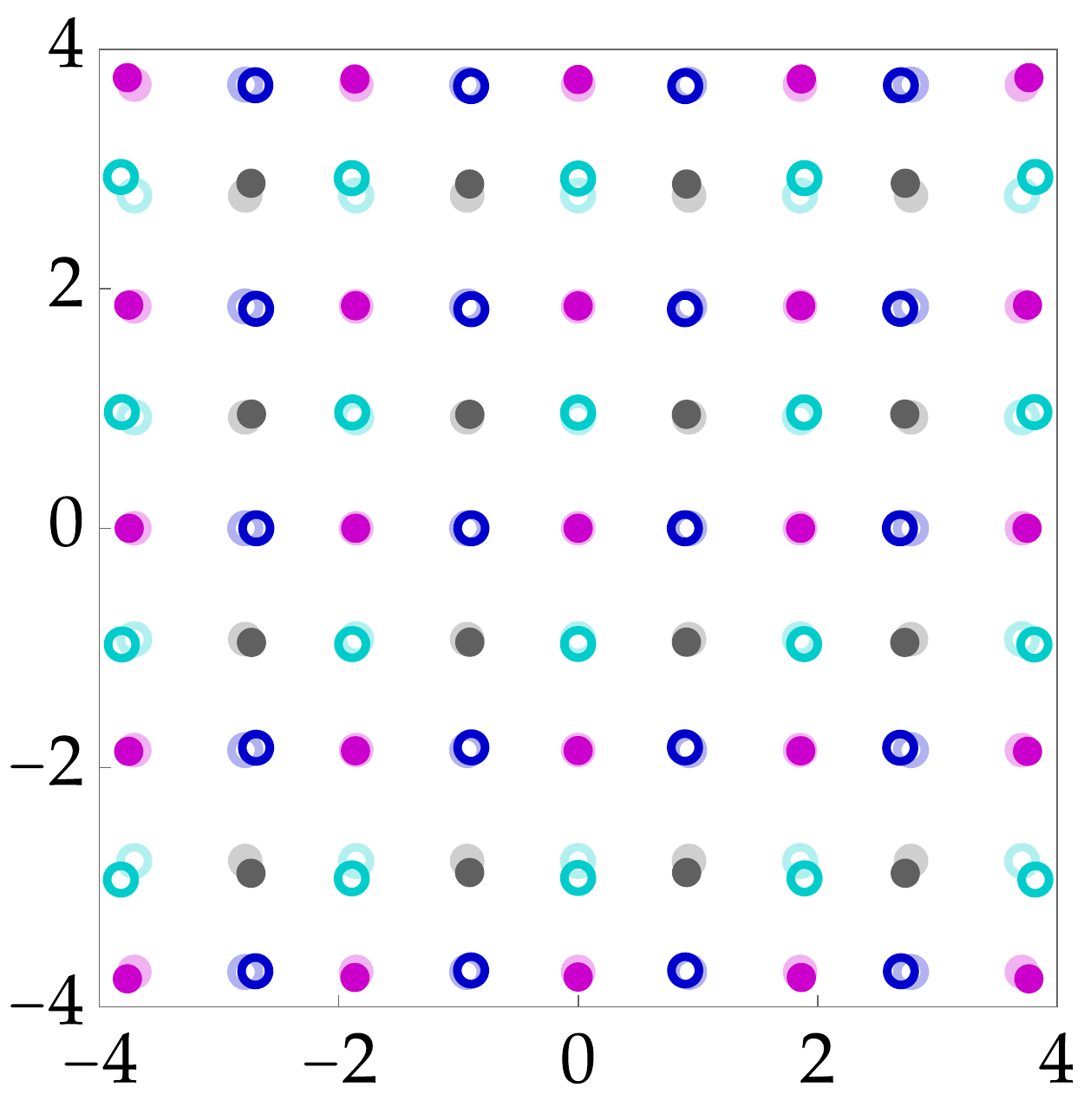}
\caption{$(m,n)=(9,8)$}
\end{subfigure}%
\begin{subfigure}{1.5in}
\includegraphics[width=1.5in]{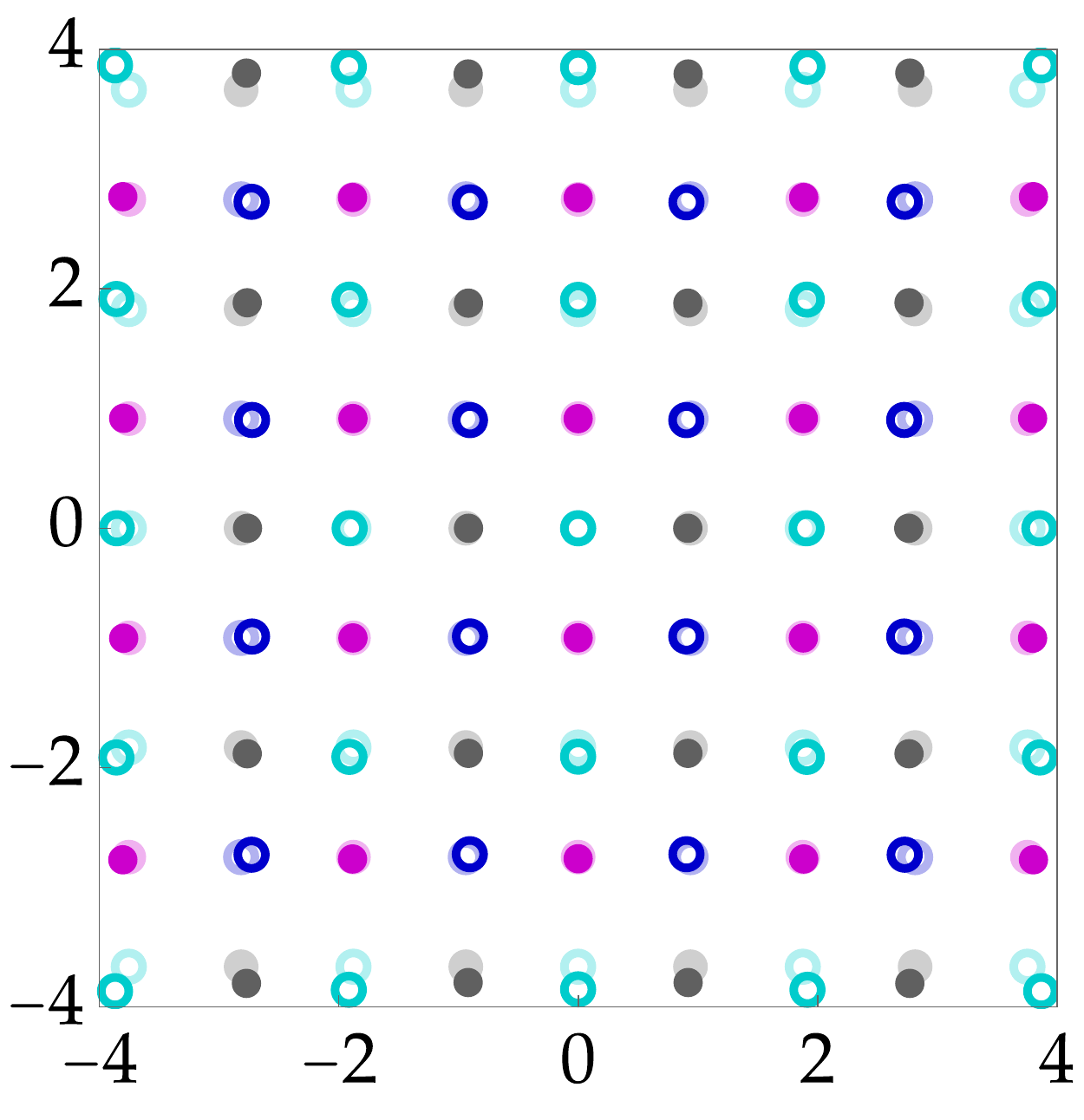}
\caption{$(m,n)=(9,9)$}
\end{subfigure}
\end{center}
\caption{As in Figures~\ref{fig:gH-poles-zeros-type-1} and \ref{fig:gH-poles-zeros-type-2} but for $u^{[3]}_\mathrm{gH}(T^{-\frac{1}{2}}\zeta;m,n)$.}
\label{fig:gH-poles-zeros-type-3}
\end{figure}

With just a bit more work, the analysis behind Corollary~\ref{cor:poles-and-zeros} allows one to extract the asymptotic behavior of the zeros of the special gH and gO polynomials themselves.  
For a uniform treatment of both families of polynomials, set $Q_\mathrm{gH}(x;m,n):=H_{m,n}(x)$ and $Q_\mathrm{gO}(x;m,n)=Q_{m,n}(x)$.  Then define
\eq
\mathcal{D}_\mathrm{F}(m,n):=\{y_0\in\mathbb{C}: Q_\mathrm{F}(|\Theta_{0,\mathrm{F}}^{[1]}(m,n)|^\frac{1}{2}y_0;m,n)=0\},\quad \mathrm{F}=\mathrm{gH},\mathrm{gO}
\endeq
as the set of all roots of the indicated polynomial, suitably rescaled.  Similarly, let $\dot{\mathcal{D}}_\mathrm{F}(m,n)$ denote the set of values of $y_0$ for which both conditions in \eqref{eq:intro-poles-quantize} hold with $\zeta=0$ fixed for type $j=1$ and $k=2$.  The selection of $k=2$ turns out to correspond to $\zeta=0$ being a pole of $f(\zeta-\zeta_0)$ of residue $-1$.
The following result was first proved in the gH case by Masoero and Roffelsen using the theory of a  family of quantum oscillators with anharmonic potentials.  The same authors are currently working on an analogous result for the gO polynomials \cite{MasoeroR20}.  Our proof is a consequence of the isomonodromy method and hence applies equally well in the gH and gO cases.

\begin{corollary}[Roots of gH and gO polynomials; cf.\@ \protect{\cite[Theorem 2]{MasoeroR19}} for the gH case]
Fix a rational aspect ratio $\rho>0$ and a compact set $C$ within one of the domains $\rectangle(\kappa)$, $\TR(\kappa)$, or $\TI(\kappa)$ (the latter two for the gO family only).  
Then, there is a constant $r>0$ (depending on $C$ and $\rho$) such that for $m,n$ sufficiently large with $n=\rho m$ the following statements hold with $T=|\Theta_{0,\mathrm{F}}^{[1]}(m,n)|$.  For each point $\dot{y}_0\in\dot{\mathcal{D}}_\mathrm{F}(m,n)\cap C$ there is a unique point $y_0\in\mathcal{D}_\mathrm{F}(m,n)$ that satisfies $|y_0-\dot{y}_0|\le rT^{-2}$.  Likewise for each point $y_0\in\mathcal{D}_\mathrm{F}(m,n)\cap C$ there is a unique point $\dot{y}_0\in\dot{\mathcal{D}}_\mathrm{F}(m,n)$ that satisfies $|\dot{y}_0-y_0|\le rT^{-2}$.
%
%
\label{cor:polynomial-zeros}
\end{corollary}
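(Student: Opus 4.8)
The plan is to deduce Corollary~\ref{cor:polynomial-zeros} from Corollary~\ref{cor:poles-and-zeros} by (i) identifying the roots of $Q_\mathrm{F}(x;m,n)$ with the \emph{residue-$(-1)$} poles of the type-$1$ rational solution $u_\mathrm{F}^{[1]}(x;m,n)$, (ii) identifying $\dot{\mathcal{D}}_\mathrm{F}(m,n)$ with the $k=2$ (equivalently, residue-$(-1)$) sublattice of $\dot{\mathcal{P}}_\mathrm{F}^{[1]}(m,n)$, and then (iii) running the localization argument behind Corollary~\ref{cor:poles-and-zeros} while tracking residues. For step (i): from Table~\ref{tab:gH} ($\mathrm{F}=\mathrm{gH}$) and Table~\ref{tab:gO} ($\mathrm{F}=\mathrm{gO}$), $Q_\mathrm{F}(x;m,n)$ is one of the two denominator factors of $u_\mathrm{F}^{[1]}(x;m,n)$, and in the logarithmic-derivative form \eqref{eq:gH-tau}--\eqref{eq:gO-tau} we have $u_\mathrm{F}^{[1]}(x;m,n)=\tfrac{\dd}{\dd x}\log\tau$ with $\tau$ (up to an entire Gaussian prefactor in the gO case, which is irrelevant to poles and residues) equal to the ratio of the \emph{other} denominator factor to $Q_\mathrm{F}(x;m,n)$. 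Hence the residue of $u_\mathrm{F}^{[1]}$ at a pole $x_0$ equals $\mathrm{ord}_{x_0}(\tau)$, so, granting that consecutive special polynomials of each family have only simple zeros and share no zeros (a known property; alternatively, absence of shared zeros within the relevant region follows a posteriori from the separation of pole lattices established in step (iii) below), the poles of $u_\mathrm{F}^{[1]}(x;m,n)$ of residue $-1$ are exactly the zeros of $Q_\mathrm{F}(x;m,n)$. Thus $\mathcal{D}_\mathrm{F}(m,n)$ is precisely the residue-$(-1)$ part of $\mathcal{P}_\mathrm{F}^{[1]}(m,n)$.

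For step (ii), one must confirm that $\dot{\mathcal{D}}_\mathrm{F}(m,n)$ — the $\zeta=0$ solutions of \eqref{eq:intro-poles-quantize} taken with the phase $\mathfrak{p}_2(y_0)$ — is the approximate locus of residue-$(-1)$ poles. The elliptic function $f(\zeta-\zeta_0)$ has exactly one pole of each residue per period parallelogram, and in the theta representation \eqref{eq:intro-elliptic-theta} these two poles correspond to the phases $\mathfrak{p}_1(y_0)$ and $\mathfrak{p}_2(y_0)$; tracing the sign conventions produced by the proof of Theorems~\ref{thm:Hermite-elliptic}--\ref{thm:Okamoto-elliptic} (the phase data of Table~\ref{tab:outer-uniformize-phases} together with \eqref{eq:dotUtwist-phases}) one verifies that the $\mathfrak{p}_2$-lattice carries residue $-1$ and the $\mathfrak{p}_1$-lattice residue $+1$. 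This is the assertion, made in the statement preceding the corollary, that the choice $k=2$ corresponds to $\zeta=0$ being a pole of $f(\zeta-\zeta_0)$ of residue $-1$.

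For step (iii), I would rerun the argument of Section~\ref{sec:MalgrangeResidues} underlying Corollary~\ref{cor:poles-and-zeros}, keeping track of residues. Near a predicted pole, the reciprocal form of Theorems~\ref{thm:Hermite-elliptic}--\ref{thm:Okamoto-elliptic} reads $u_\mathrm{F}^{[1]}(x;m,n)^{-1}=T^{-\frac12}\big(f(\zeta-\zeta_0)^{-1}+\bo(T^{-1})\big)$; expanding $f(\zeta-\zeta_0)^{-1}$ at a simple pole of $f$ of residue $\varepsilon\in\{\pm1\}$ shows that the actual pole of $u_\mathrm{F}^{[1]}$ nearby has residue equal to the limiting value $\varepsilon$ (the residue being exactly $\pm1$-valued, it cannot differ). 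Since the residue-$(+1)$ and residue-$(-1)$ poles of $f$ are separated by at least a fixed fraction of a period — that is, by a distance bounded below by a constant times $T^{-1}$ in the $y_0$-variable, which dominates the pairing radius $rT^{-2}$ of Corollary~\ref{cor:poles-and-zeros} — the bijective $rT^{-2}$-correspondence of that corollary between $\mathcal{P}_\mathrm{F}^{[1]}(m,n)$ and $\dot{\mathcal{P}}_\mathrm{F}^{[1]}(m,n)$ restricts to a bijective $rT^{-2}$-correspondence between their residue-$(-1)$ (resp.\ $k=2$) parts, i.e.\ between $\mathcal{D}_\mathrm{F}(m,n)$ and $\dot{\mathcal{D}}_\mathrm{F}(m,n)$ on the compact set $C$ (up to the same mild enlargement of $C$ on one side as in Corollary~\ref{cor:poles-and-zeros}). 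This is exactly Corollary~\ref{cor:polynomial-zeros}; in the gH case it recovers \cite[Theorem~2]{MasoeroR19}, and the gO case is new.

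Step (iii) is essentially a residue-refined rereading of an argument already in hand, and step (i) is bookkeeping once the simple-and-coprime property of consecutive special polynomials is cited. I expect the \textbf{main obstacle} to be step (ii): pinning down, once and for all, which of the two pole lattices in \eqref{eq:intro-elliptic-theta} is the residue-$(-1)$ one. This requires carefully propagating the branch cuts and sign choices made during the $g$-function and Boutroux analysis of Sections~\ref{sec:Boutroux}--\ref{sec:G1} into the theta-function parametrization, and these choices differ across the two families, the three types, and the three regions $\rectangle(\kappa)$, $\TR(\kappa)$, $\TI(\kappa)$, so the verification has to be carried out case by case.
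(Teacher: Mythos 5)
Your proposal is correct and follows essentially the same route as the paper: identify the roots of $Q_\mathrm{F}(x;m,n)$ with the residue-$(-1)$ poles of $u_\mathrm{F}^{[1]}(x;m,n)$ via the logarithmic-derivative/tau-function representations, observe that these are the poles attracted to the Malgrange divisor of Riemann-Hilbert Problem~\ref{rhp:Pout} (the $k=2$ quantization lattice), and invoke Corollary~\ref{cor:poles-and-zeros}. The step you flag as the main obstacle is not carried out case by case in the paper; it is settled once and for all in Section~\ref{sec:MalgrangeResidues}, where $\mathop{\mathrm{Res}}\dot{U}^{[3]}_\mathrm{F}=+1$ at the Malgrange divisor is computed by a homotopy to the symmetric configuration $(\alpha,\beta,\gamma,\delta)=(2\ee^{\ii\pi/4},\dots)$ and then $\mathop{\mathrm{Res}}\dot{U}^{[1]}_\mathrm{F}=-1$ follows from the differential relation \eqref{eq:dotU-dotUtwist}, uniformly over families and Boutroux domains.
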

The proof is given at the end of Section~\ref{sec:MalgrangeResidues}, and it is based on the fact that, according to Tables~\ref{tab:gH} and \ref{tab:gO}, the gH and gO polynomials $H_{m,n}(x)$ and $Q_{m,n}(x)$ give rise to poles of residue $-1$ of $u_\mathrm{gH}^{[1]}(x;m,n)$ and $u_\mathrm{gO}^{[1]}(x;m,n)$ respectively.  Therefore, the accuracy of the approximation of roots of the gH and gO polynomials given in Corollary~\ref{cor:polynomial-zeros} can also be seen in the bottom row of the first two columns of Figures~\ref{fig:gH-poles-zeros}--\ref{fig:gOnn-poles-zeros}.  
%

All of our results are obtained by rigorous asymptotic analysis of Riemann-Hilbert representations of the Painlev\'e-IV rational solutions (see Riemann-Hilbert Problem~\ref{rhp:general} and Theorems~\ref{thm:gO-RHP} and \ref{thm:gH-RHP} below).  We next describe these auxiliary results and the methodology behind them.

\subsection{Riemann-Hilbert representations of rational Painlev\'e-IV solutions}
\label{sec:intro-Methodology}
\subsubsection{A Lax pair for Painlev\'e-IV}
\label{sec:Lax-Pair}
If $y(x)$ is a nonzero solution of
\eq
y'(x)=-(u(x)+2x)y(x)
\label{eq:yODE}
\endeq
and $z$ is defined uniquely in terms of $u$ by 
\eq
4z(x)=-u'(x) + u(x)^2 + 2xu(x) + 4\Theta_0,
\label{eq:LaxPair-z-define}
\endeq
then the differential equations \eqref{p4} and \eqref{eq:yODE} for $u$ and $y$ are the compatibility conditions for the 
Garnier-Jimbo-Miwa Lax pair \cite{Jimbo:1981a,FokasIKN:2006}
\eq
{\bf\Psi}_\lambda = {\bf \Lambda}{\bf\Psi}, \quad {\bf\Psi}_x = {\bf X}{\bf\Psi},
\label{eq:PIV-Lax-Pair}
\endeq
with coefficient matrices
\eq
\begin{split}
\mathbf{\Lambda}:=\lambda\sigma_3 + \mathbf{\Lambda}_0(x) + \lambda^{-1}\mathbf{\Lambda}_1(x),\quad
\mathbf{\Lambda}_0(x)&:=\bpm x & y(x)\\ 2y(x)^{-1}(z(x)-\Theta_0-\Theta_\infty) & -x\epm,\\
\mathbf{\Lambda}_1(x)&:=\bpm \Theta_0-z(x) & -\tfrac{1}{2}u(x)y(x)\\
2y(x)^{-1}u(x)^{-1}z(x)(z(x)-2\Theta_0) & z(x)-\Theta_0\epm
\end{split}
\label{eq:JM-A}
\endeq
and 
\eq
\mathbf{X}:=\lambda\sigma_3 + \mathbf{X}_0(x),\quad\mathbf{X}_0(x):=\bpm
0 & y(x)\\
2y(x)^{-1}(z(x)-\Theta_0-\Theta_\infty) & 0\epm = \mathbf{\Lambda}_0(x)-x\sigma_3.
\label{eq:JM-U}
\endeq
Note that $\lambda=0$ is a Fuchsian (regular singular) point for the equation $\mathbf{\Psi}_\lambda=\mathbf{\Lambda\Psi}$, with exponents $\pm\Theta_0$.   
The only other singularity of $\mathbf{\Psi}_\lambda=\mathbf{\Lambda\Psi}$ is $\lambda=\infty$, an irregular singular point.  Formal expansions of solutions about $\lambda=\infty$ include a single-valued exponential factor and a sub-dominant factor proportional to $\lambda^{\pm\Theta_\infty}$.  Hence the utility of the Jimbo-Miwa parameters $(\Theta_0,\Theta_\infty)$ over other parameters common in the literature such as $(\alpha,\beta)=(2\Theta_\infty-1,-8\Theta_0^2)$ (see \cite[Eqn.\@ 32.2.4]{DLMF}) is that they explicitly encode the formal monodromy about $\lambda=0$ and $\lambda=\infty$ in the solution $\mathbf{\Psi}$ of the Lax system \eqref{eq:PIV-Lax-Pair}. 

\subsubsection{Rational solutions via isomonodromy theory of the Lax pair}
\label{sec:rational-isomonodromy}
Our approach to representing the rational solutions of \eqref{p4} in a form convenient for asymptotic analysis in the limit that the parameters $(\Theta_0,\Theta_\infty)$ are large consists of the following steps:
\begin{enumerate}
\item Select a family of rational solutions and isolate within that family a distinguished parameter pair $(\Theta_0,\Theta_\infty)$ and its corresponding unique rational solution to serve as a ``seed''.  Using the seed in the matrices defined by \eqref{eq:JM-A}--\eqref{eq:JM-U}, the Lax pair equations \eqref{eq:PIV-Lax-Pair} become compatible and admit simultaneous solutions.
\item Sow the seed, i.e., 
\begin{enumerate}
\item Find simultaneous fundamental solution matrices $\mathbf{\Psi}(\lambda,x)$ of the Lax pair suitably normalized for $\lambda$ in the different Stokes sectors near each irregular singular point of the ``spectral equation'' $\mathbf{\Psi}_\lambda = \mathbf{\Lambda\Psi}$ and in a full neighborhood of each regular singular point (Fuchsian singularity) of the same equation.  Compute explicitly the constants expressing the columns of each of these fundamental matrices as suitable linear combinations of the columns of the fundamental matrices for neighboring regions of the $\lambda$-plane (direct monodromy problem).  
\item Use this information to recast the fundamental matrices equivalently in terms of the solution of a matrix Riemann-Hilbert problem (inverse monodromy problem).
\end{enumerate}
In this step, we take full advantage of choice of seed solution to simplify the equation $\mathbf{\Psi}_x=\mathbf{X\Psi}$ and leverage this to obtain simultaneous solutions of \eqref{eq:PIV-Lax-Pair}.  This is in contrast to the usual approach in using the Lax pair \eqref{eq:PIV-Lax-Pair} to solve the initial-value problem for Painlev\'e equations, where only initial values are available and therefore one must instead start by solving the more complicated spectral equation $\mathbf{\Psi}_\lambda=\mathbf{\Lambda\Psi}$.
\item Reap the harvest, i.e., 
\begin{enumerate}
\item Apply isomonodromic Schlesinger transformations to increment/decrement the integer parameters of $(\Theta_0,\Theta_\infty)$, and hence obtain a Riemann-Hilbert problem for each pair $(\Theta_0,\Theta_\infty)$ in a certain lattice (for which the Schlesinger transformations are well-defined).
\item Show that the resulting lattice matches the full family of parameters for the rational solution family from which the seed was selected, and that the Riemann-Hilbert problem for given parameters in the family encodes a rational solution of the Painlev\'e equation at hand.
\end{enumerate}
\end{enumerate}
This method is general, and it has been applied before to characterize the rational solutions of the Painlev\'e-II equation \cite{BuckinghamM12,MillerS:2017}, the rational solutions of the Painlev\'e-III equation \cite{BothnerMS18}, and the gO rational solutions for the Painlev\'e-IV equation \cite{NovokshenovS14} (although the Riemann-Hilbert problem reported in that paper differs from the one we shall develop below).  The isomonodromy approach avoids completely the need for special determinantal representations of rational solutions having suitable analytic structure as has been used to study the rational solutions of Painlev\'e-II \cite{BertolaB:2015} and the gH rational solutions of Painlev\'e-IV \cite{Buckingham18}.  Hence it is useful in the study of rational solutions that are not known to have such representations, such as the rational solutions of Painlev\'e-III and the gO rational solutions of Painlev\'e-IV.  Even though such a determinantal representation is available for the gH rational solutions, the isomonodromy approach allows the gH and gO rational solutions of Painlev\'e-IV to be analyzed more-or-less on the same footing, which is a main point of our paper.

Now we give some more details about how the method applies to the Painlev\'e-IV equation, for which
the spectral equation $\mathbf{\Psi}_\lambda=\mathbf{\Lambda\Psi}$ has an irregular singular point at $\lambda=\infty$ and a regular singular point at $\lambda=0$ with Frobenius exponents $\pm\Theta_0$.  The irregular singular point has four Stokes sectors that we will label as (following the subscript notation of \cite{FokasIKN:2006}):
\eq
\begin{split}
S_1&:=\{\lambda\in\mathbb{C},\,\,\lambda\neq 0,\,\,-\tfrac{1}{2}\pi\le\arg(\lambda)\le 0\}\\
S_2&:=\{\lambda\in\mathbb{C},\,\,\lambda\neq 0,\,\,0\le\arg(\lambda)\le\tfrac{1}{2}\pi\}\\
S_3&:=\{\lambda\in\mathbb{C},\,\,\lambda\neq 0,\,\,\tfrac{1}{2}\pi\le\arg(\lambda)\le\pi\}\\
S_4&:=\{\lambda\in\mathbb{C},\,\,\lambda\neq 0,\,\,-\pi\le\arg(\lambda)\le -\tfrac{1}{2}\pi\}.
\end{split}
\endeq
Associated with each Stokes sector $S_j$, there is a simultaneous fundamental solution matrix $\mathbf{\Psi}=\mathbf{\Psi}^{(\infty)}_j(\lambda,x)$ of both equations of the Lax pair \eqref{eq:PIV-Lax-Pair} determined by the normalization condition
\eq
\mathbf{\Psi}_j^{(\infty)}(\lambda,x)\lambda^{\Theta_\infty\sigma_3}\mathrm{e}^{-(\frac{1}{2}\lambda^2+x\lambda)\sigma_3} = \mathbb{I}+\bo(\lambda^{-1}),\quad\lambda\to\infty,\quad \lambda\in S_j,\quad j=1,2,3,4,
\label{eq:infty-asymp}
\endeq
where the power functions $\lambda^{\pm\Theta_\infty}$ refer to the principal branches.
Applying Abel's Theorem to the simultaneous equations \eqref{eq:PIV-Lax-Pair} noting that $\mathrm{tr}(\mathbf{\Lambda})=\mathrm{tr}(\mathbf{X})=0$, it follows that the four solutions satisfy $\det(\mathbf{\Psi}_j^{(\infty)}(\lambda,x))=1$, $j=1,\dots,4$.

For simultaneous solutions of \eqref{eq:PIV-Lax-Pair} near $\lambda=0$, observe that whenever $(\Theta_0,\Theta_\infty)\in\Lambda_\mathrm{gO}$, the Frobenius exponents $\pm\Theta_0$ are unequal mod $\mathbb{Z}$, making the regular singular point \emph{nonresonant} and guaranteeing the existence of a basis of convergent Puiseux series solutions that can be found by the method of Frobenius.  On the other hand, when $(\Theta_0,\Theta_\infty)\in\Lambda_\mathrm{gH}$, the exponents always differ by integers making the singular point \emph{resonant}.  In general, the method of Frobenius fails to produce a basis of solutions near a resonant regular singular point, however we will see that such a basis indeed exists nonetheless when $(\Theta_0,\Theta_\infty)\in\Lambda_\mathrm{gH}$ and the coefficients in the Lax pair refer to the corresponding rational solution, making the resonant singular point an \emph{apparent singularity}.  Whether the singularity is nonresonant, or resonant but apparent, there exists a fundamental simultaneous solution matrix $\mathbf{\Psi}=\mathbf{\Psi}^{(0)}(\lambda,x)$ defined for $\lambda$ on a neighborhood of $\lambda=0$ with a branch cut on the negative real line omitted, such that
\eq
\mathbf{\Psi}^{(0)}(\lambda,x)\lambda^{-\Theta_0\sigma_3} \;\text{is analytic at $\lambda=0$}
\label{eq:zero-asymp}
\endeq
(and hence entire, since there are no other finite singular points) where the power functions $\lambda^{\pm\Theta_0}$ indicate principal branches. In the nonresonant case, $\mathbf{\Psi}^{(0)}(\lambda,x)$ is unique up to multiplication on the right by a constant invertible diagonal matrix, while in the resonant but apparent case there is additional freedom that enters via the ambiguity of adding an arbitrary multiple of the subdominant solution to the dominant solution.  Since Abel's Theorem implies that $\mathbf{\Psi}^{(0)}(\lambda,x)$ has constant determinant, we agree to partly resolve the ambiguity in this solution by insisting that $\det(\mathbf{\Psi}^{(0)}(\lambda,x))=1$.

The five simultaneous fundamental solutions are necessarily related pairwise on certain overlap domains by right-multiplication by constant matrices.  In particular, the following constant matrices are well-defined and have unit determinants:
\eq
\mathbf{V}_{2,1}:=\mathbf{\Psi}^{(\infty)}_1(\lambda,x)^{-1}\mathbf{\Psi}^{(\infty)}_2(\lambda,x),\quad \arg(\lambda)=0,
\label{eq:V21-def}
\endeq
\eq
\mathbf{V}_{2,3}:=\mathbf{\Psi}^{(\infty)}_3(\lambda,x)^{-1}\mathbf{\Psi}^{(\infty)}_2(\lambda,x),\quad
\arg(\lambda)=\tfrac{1}{2}\pi,
\label{eq:V23-def}
\endeq
\eq
\mathbf{V}_{4,3}:=\mathbf{\Psi}^{(\infty)}_3(\lambda,x)^{-1}\mathbf{\Psi}^{(\infty)}_4(\lambda,x),\quad
\arg(-\lambda)=0,
\label{eq:V43-def}
\endeq
\eq
\mathbf{V}_{4,1}:=\mathbf{\Psi}^{(\infty)}_1(\lambda,x)^{-1}\mathbf{\Psi}^{(\infty)}_4(\lambda,x),\quad\arg(\lambda)=-\tfrac{1}{2}\pi,
\label{eq:V41-def}
\endeq
\eq
\mathbf{V}_j:=\mathbf{\Psi}^{(\infty)}_j(\lambda,x)^{-1}\mathbf{\Psi}^{(0)}(\lambda,x),\quad\lambda\in S_j,\quad j=1,3,
\label{eq:V1V3-def}
\endeq
and
\eq
\mathbf{V}_j:=\mathbf{\Psi}^{(0)}(\lambda,x)^{-1}\mathbf{\Psi}^{(\infty)}_j(\lambda,x),\quad\lambda\in S_j,\quad j=2,4.
\label{eq:V2V4-def}
\endeq
The \emph{Stokes matrices} $\mathbf{V}_{j,k}$ are necessarily triangular (upper for $\mathbf{V}_{2,3}$ and $\mathbf{V}_{4,1}$, lower for $\mathbf{V}_{2,1}$ and $\mathbf{V}_{4,3}$), and their off-diagonal elements are \emph{Stokes multipliers} measuring the Stokes phenomenon associated with the irregular singular point of $\mathbf{\Psi}_\lambda=\mathbf{\Lambda\Psi}$ at $\lambda=\infty$.  The remaining four matrices $\mathbf{V}_j$ are called \emph{connection matrices}.  These matrices are always related by the following identities:
\eq
\mathbf{V}_{2,1}=\mathbf{V}_1\mathbf{V}_2,\quad
\mathbf{V}_{2,3}=\mathbf{V}_3\mathbf{V}_2,\quad
\mathbf{V}_{4,1}=\mathbf{V}_1\mathbf{V}_4,\quad
\mathbf{V}_{4,3}=\mathbf{V}_3\ee^{-2\pi\ii\Theta_0\sigma_3}\mathbf{V}_4,\quad
\mathbf{V}_{2,3}\mathbf{V}_{2,1}^{-1}\mathbf{V}_{4,1}\mathbf{V}_{4,3}^{-1}=\ee^{2\pi\ii\Theta_\infty\sigma_3}.
\label{eq:consistency}
\endeq
Modulo these identities, the Stokes and connection matrices constitute the \emph{monodromy data} for the seed solution.  It turns out that the monodromy data is the same for all rational solutions in the gO family, and is the same for each type of rational solution in the gH family.

Assuming existence of all five particular simultaneous solutions for a given value of $x\in\mathbb{C}$, and assuming that the Fuchsian singularity at $\lambda=0$ is either nonresonant, or resonant but apparent, 
the matrix function $\mathbf{Y}(\lambda;x)$ defined as follows:
\eq
\mathbf{Y}(\lambda;x):=\begin{cases}
\mathbf{\Psi}^{(\infty)}_j(\lambda,x)\ee^{-(\frac{1}{2}\lambda^2+x\lambda)\sigma_3},\quad & \lambda\in S_j,\quad |\lambda|>1,\quad j=1,2,3,4,\\
\mathbf{\Psi}^{(0)}(\lambda,x)\ee^{-(\frac{1}{2}\lambda^2+x\lambda)\sigma_3},&\quad |\lambda|<1,
\end{cases}
\label{eq:Y-Psi}
\endeq
solves the following Riemann-Hilbert problem relative to the jump contour $\Sigma$ shown in Figure~\ref{fig:PIV-Sigma}.  
\begin{figure}[h]
\begin{center}
\includegraphics{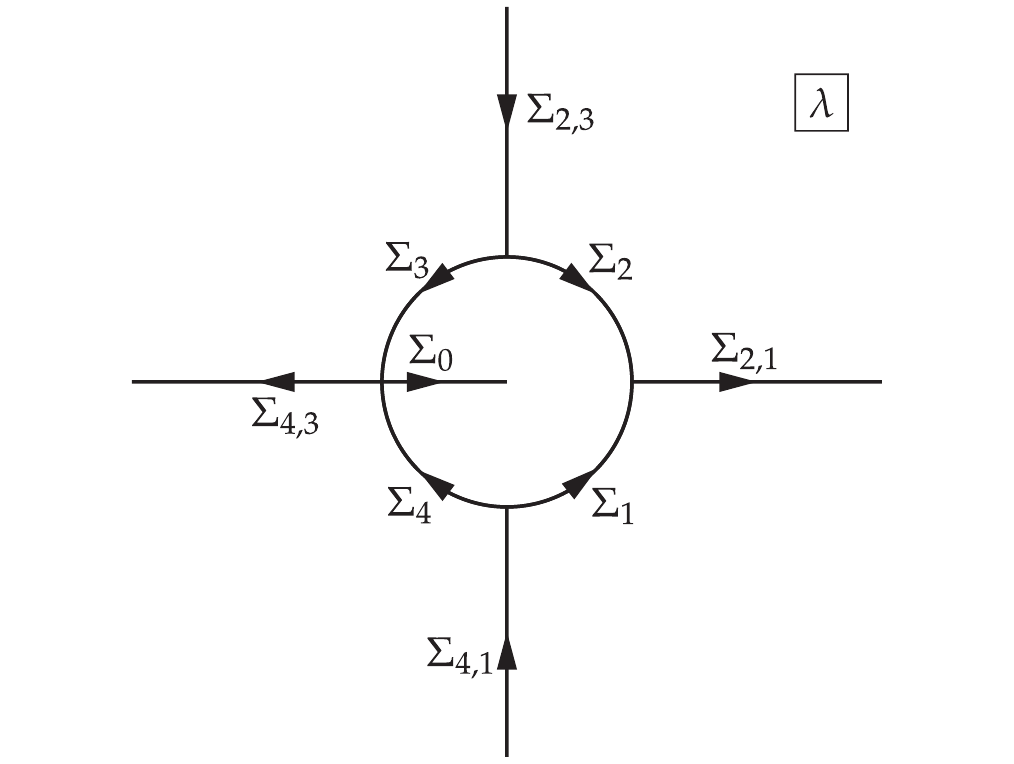}
\end{center}
\caption{The oriented contour $\Sigma$ consists of four rays, the oriented segment $\Sigma_0=(-1,0)$, and four oriented arcs of the unit circle in the $\lambda$-plane.}
\label{fig:PIV-Sigma}
\end{figure}
\begin{rhp}[Painlev\'e-IV Inverse Monodromy Problem]
Fix $(\Theta_0,\Theta_\infty)\in\mathbb{C}^2$.  Let Stokes matrices  $\mathbf{V}_{j,k}$, $(j,k)=(2,1), (2,3), (4,3), (4,1)$, and connection matrices $\mathbf{V}_j$, $j=1,\dots,4$ be given, and assume that they all have unit determinant, that the Stokes matrices have the correct triangular structure, and that the matrices are related by \eqref{eq:consistency}.
Seek a $2\times 2$ matrix function $\lambda\mapsto \mathbf{Y}(\lambda;x)$ with the following properties:
\begin{itemize}
\item\textbf{Analyticity:}  the function $\lambda\mapsto\mathbf{Y}(\lambda;x)$ is analytic for $\lambda\in\mathbb{C}\setminus\Sigma$.
\item\textbf{Jump conditions:} $\mathbf{Y}(\lambda;x)$ assumes continuous boundary values on $\Sigma$ from each component of $\mathbb{C}\setminus\Sigma$, except at the origin.  Using a subscript $+$ (resp., $-$) to indicate a boundary value taken from the left (resp., right) by orientation, the boundary values are related on each arc of $\Sigma$ by the jump condition
\eq
\mathbf{Y}_+(\lambda;x)=\mathbf{Y}_-(\lambda;x)\ee^{(\frac{1}{2}\lambda^2+x\lambda)\sigma_3}\mathbf{V}\ee^{-(\frac{1}{2}\lambda^2 + x\lambda)\sigma_3}
\endeq
where $\mathbf{V}$ is the arcwise-constant function defined on $\Sigma$ as follows: 
\eq
\mathbf{V}:=\mathbf{V}_{j,k},\quad \lambda\in\Sigma_{j,k},\quad (j,k)=(2,1), (2,3), (4,3), (4,1),
\endeq
\eq
\mathbf{V}:=\mathbf{V}_j,\quad\lambda\in\Sigma_j,\quad j=1,\dots,4,
\endeq
and
\eq
\mathbf{V}:=\mathbf{V}_0=\ee^{2\pi\ii\Theta_0\sigma_3},\quad\lambda\in\Sigma_0.
\endeq
\item\textbf{Behavior near the origin:} $\mathbf{Y}(\lambda;x)\lambda^{-\Theta_0\sigma_3}$ is bounded as $\lambda\to 0$.
\item\textbf{Normalization:}  $\mathbf{Y}(\lambda;x)\lambda^{\Theta_\infty\sigma_3}\to\mathbb{I}$ as $\lambda\to\infty$.
\end{itemize}
\label{rhp:general}
\end{rhp}
Assuming that this problem has a unique solution, one can see that the various fundamental simultaneous solution matrices of the differential equations \eqref{eq:PIV-Lax-Pair} can be obtained instead from their monodromy data by solving Riemann-Hilbert Problem~\ref{rhp:general} and using \eqref{eq:Y-Psi}.

In Appendix~\ref{sec:Okamoto-RHP-basic-properties} we describe some basic properties of Riemann-Hilbert Problem~\ref{rhp:general}, and we discuss Schlesinger transformations that preserve the monodromy data and act on solutions of Painlev\'e-IV via induced B\"acklund transformations in Appendix~\ref{sec:Schlesinger}.

\subsubsection{Universal Riemann-Hilbert representations of rational solutions of Painlev\'e-IV}
\label{sec:universal-RHPs}
From a solution $\mathbf{Y}(\lambda;x)$ of Riemann-Hilbert Problem~\ref{rhp:general}, we can try to define two matrix functions of $x\in\mathbb{C}$ by 
\eq
\mathbf{Y}^0_0(x):=\lim_{\lambda\to 0}\mathbf{Y}(\lambda;x)\lambda^{-\Theta_0\sigma_3}\quad\text{and}\quad
\mathbf{Y}^\infty_1(x):=\lim_{\lambda\to\infty}\lambda(\mathbf{Y}(\lambda;x)\lambda^{\Theta_\infty\sigma_3}-\mathbb{I})
\label{eq:Z0-Y1-define}
\endeq
and related scalar functions given by 
\eq
u(x):=-2\Theta_0\frac{Y^0_{0,11}(x)Y^0_{0,12}(x)}{Y^\infty_{1,12}(x)}\quad\text{and}\quad
u_\tw(x):=-2\frac{Y^0_{0,21}(x)Y^\infty_{1,12}(x)}{Y^0_{0,11}(x)},
\label{eq:u-ucirc}
\endeq
wherever these definitions make sense.  The following theorem is proved in Appendix~\ref{sec:RHPgO} by implementing the method described at the beginning of Section~\ref{sec:rational-isomonodromy} on a suitable seed in the gO family.
\begin{theorem}[Riemann-Hilbert representation of gO rational solutions]
Fix $(\Theta_0,\Theta_\infty)\in\Lambda_\mathrm{gO}$. 
Let Stokes matrices $\mathbf{V}_{j,k}$ be defined by 
\eq
\mathbf{V}_{2,1}=\bpm 1 & 0\\2\ii & 1\epm,\quad
\mathbf{V}_{2,3}=
\bpm 1 & -\tfrac{1}{2}\ii\\0 & 1\epm,\quad
\mathbf{V}_{4,3}=\ee^{2\pi\ii\Theta_\infty}
\bpm 1 & 0\\2\ii & 1\epm,\quad\text{and}\quad
\mathbf{V}_{4,1}=\bpm 1 & -\tfrac{1}{2}\ii\\0 & 1\epm,
\label{eq:gO-Stokes}
\endeq
and let connection matrices $\mathbf{V}_j$ be defined by 
\eq
\mathbf{V}_1=\bpm \tfrac{1}{\sqrt{3}} & -\tfrac{1}{2}\\\tfrac{2}{\sqrt{3}}\ee^{\frac{\ii\pi}{6}} & \ee^{-\frac{\ii\pi}{6}}\epm,\quad
\mathbf{V}_2 = \bpm \ee^{\frac{\ii\pi}{6}} & \tfrac{1}{2}\\ \tfrac{2}{\sqrt{3}}\ee^{\frac{5\ii\pi}{6}} & \tfrac{1}{\sqrt{3}}\epm,\quad
\mathbf{V}_3 = \bpm \tfrac{1}{\sqrt{3}}\ee^{-\frac{\ii\pi}{3}} & \tfrac{1}{2}\ee^{-\frac{2\ii\pi}{3}}\\
\tfrac{2}{\sqrt{3}}\ee^{-\frac{\ii\pi}{6}} & \ee^{\frac{\ii\pi}{6}}\epm,\quad\text{and}\quad
\mathbf{V}_4 = \bpm \ee^{-\frac{\ii\pi}{6}} & \tfrac{1}{2}\ee^{-\frac{\ii\pi}{3}}\\
\tfrac{2}{\sqrt{3}}\ee^{-\frac{5\ii\pi}{6}} & \tfrac{1}{\sqrt{3}}\ee^{\frac{\ii\pi}{3}}\epm,
\label{eq:gO-connection}
\endeq
which satisfy the consistency conditions \eqref{eq:consistency}.  Then
Riemann-Hilbert Problem~\ref{rhp:general} has a unique solution for all but finitely many values of $x\in\mathbb{C}$, and the functions $u(x)$ and $u_\tw(x)$ defined by \eqref{eq:u-ucirc}
are the unique (gO) rational solutions of the Painlev\'e-IV equation \eqref{p4} for parameters $(\Theta_0,\Theta_\infty)$ and for $(\Theta_{0,\tw},\Theta_{\infty,\tw})\in\Lambda_\mathrm{gO}$ defined in \eqref{eq:Baecklund-3-to-1} respectively.  
\label{thm:gO-RHP}
\end{theorem}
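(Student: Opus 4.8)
The plan is to carry out the three-stage ``plant the seed / sow / reap'' procedure outlined at the start of Section~\ref{sec:rational-isomonodromy}, with all computations deferred to Appendix~\ref{sec:RHPgO}. First I would choose the simplest point of $\Lambda_\mathrm{gO}$ to serve as the seed, namely $(\Theta_0,\Theta_\infty)=(\tfrac{1}{6},\tfrac{1}{2})$; by Table~\ref{tab:gO} (type $j=1$ with $m=n=0$, using $Q_{1,-1}(x)=\sqrt{2}\,x$) the corresponding rational solution is simply $u(x)=-\tfrac{2}{3}x$, which one may alternatively confirm by direct substitution into \eqref{p4} and which is consistent with \eqref{eq:gOsolutionsLarge-x}. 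For this seed the coefficient matrices in \eqref{eq:PIV-Lax-Pair} become fully explicit (with $y(x)=\ee^{-\frac{2}{3}x^2}$ and $z(x)=\tfrac{1}{3}-\tfrac{2}{9}x^2$ from \eqref{eq:yODE} and \eqref{eq:LaxPair-z-define}), so the spectral equation $\mathbf{\Psi}_\lambda=\mathbf{\Lambda\Psi}$ can be decoupled into a scalar second-order ODE solvable in closed form in terms of classical special functions of confluent hypergeometric (parabolic cylinder) type.

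Sowing the seed then means constructing the five canonical simultaneous solutions $\mathbf{\Psi}_j^{(\infty)}(\lambda,x)$, $j=1,2,3,4$, normalized by \eqref{eq:infty-asymp}, together with $\mathbf{\Psi}^{(0)}(\lambda,x)$ normalized by \eqref{eq:zero-asymp} (here the Frobenius exponents $\pm\Theta_0=\pm\tfrac{1}{6}$ are non-integer, so $\lambda=0$ is nonresonant and the method of Frobenius applies without obstruction, unlike in the gH case), and computing the Stokes matrices $\mathbf{V}_{j,k}$ and connection matrices $\mathbf{V}_j$ from the definitions \eqref{eq:V21-def}--\eqref{eq:V2V4-def} by invoking the known connection formulas for the special functions involved, carefully tracking the branch cuts. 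One then checks that the result is exactly the data in \eqref{eq:gO-Stokes}--\eqref{eq:gO-connection} (and that these satisfy \eqref{eq:consistency}). Defining $\mathbf{Y}(\lambda;x)$ by \eqref{eq:Y-Psi} and verifying directly the analyticity, jump, behavior-at-origin and normalization conditions shows that $\mathbf{Y}$ solves Riemann-Hilbert Problem~\ref{rhp:general} at the seed parameters; extracting $\mathbf{Y}^0_0(x)$ and $\mathbf{Y}^\infty_1(x)$ via \eqref{eq:Z0-Y1-define} and inserting them into \eqref{eq:u-ucirc} must reproduce $u(x)=-\tfrac{2}{3}x$ together with its image $u_\tw(x)$ under the Lukashevich--Gromak B\"acklund transformation \eqref{eq:Baecklund-3-to-1}; this last identification is the algebraic content of the claim that \eqref{eq:u-ucirc} extracts both $u$ and $u_\tw$ from the same $\mathbf{Y}$ without differentiation.

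Reaping the harvest means propagating the seed across the whole lattice $\Lambda_\mathrm{gO}$ by isomonodromic Schlesinger transformations (Appendix~\ref{sec:Schlesinger}): rational-in-$\lambda$ left multipliers $\mathbf{R}(\lambda;x)$ that increment or decrement $(\Theta_0,\Theta_\infty)$ along a $\mathbb{Z}$-basis of the lattice (integer steps in $m$ and $n$) while leaving the Stokes matrices unchanged and the connection matrices unchanged modulo the $\ee^{2\pi\ii\Theta_0\sigma_3}$ factor already built into \eqref{eq:consistency} and \eqref{eq:gO-Stokes}. A crucial simplification specific to the gO family is that $\Theta_0\notin\tfrac{1}{2}\mathbb{Z}$ at every point of $\Lambda_\mathrm{gO}$, so $\lambda=0$ remains nonresonant throughout and none of the elementary moves becomes indeterminate. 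Since $\Lambda_\mathrm{gO}$ is a translated copy of $\mathbb{Z}^2$, two mutually inverse pairs of generating moves suffice; iterating from the seed and arguing by induction, at each lattice point the transformed $\mathbf{Y}$ solves Riemann-Hilbert Problem~\ref{rhp:general} with the stated data, the transformed Lax-pair coefficients define a solution of \eqref{p4} for the new $(\Theta_0,\Theta_\infty)$, and that solution is rational because Schlesinger transformations act by B\"acklund transformations that preserve rationality of a rational seed. By uniqueness of the rational solution for given parameters \cite{Gromak:1987,Lukashevich:1967,Murata:1985}, the function recovered via the first formula in \eqref{eq:u-ucirc} is precisely the gO rational solution for $(\Theta_0,\Theta_\infty)$, and (using that $\mathcal{S}_\tw$ maps $\Lambda_\mathrm{gO}$ to itself) the second formula recovers the one for $(\Theta_{0,\tw},\Theta_{\infty,\tw})\in\Lambda_\mathrm{gO}$.

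It remains to handle solvability of Riemann-Hilbert Problem~\ref{rhp:general} itself. Existence of a solution is automatic from the construction for every $x$ outside the finite set consisting of the poles of $u$ and $u_\tw$ together with the finitely many zeros of the associated isomonodromic $\tau$-function (the Malgrange divisor), which is finite since $u$ and $u_\tw$ are rational; uniqueness, wherever a solution exists, follows from the standard Liouville argument: for any two solutions $\mathbf{Y}_1,\mathbf{Y}_2$ the ratio $\mathbf{Y}_1\mathbf{Y}_2^{-1}$ has no jump across $\Sigma$ (all jump matrices have unit determinant, and the conjugating exponentials cancel), while boundedness at $\lambda=0$ together with $\det\mathbf{Y}\equiv 1$ and the normalization at $\lambda=\infty$ force it to be entire and equal to $\mathbb{I}$. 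I expect the main obstacle to be the seed monodromy computation in Step two: solving the seed spectral equation in closed form and tracking its branch structure and connection formulas precisely enough to land on exactly the matrices \eqref{eq:gO-Stokes}--\eqref{eq:gO-connection}; a secondary delicate point is verifying the non-obvious algebraic identity showing that the second formula in \eqref{eq:u-ucirc} reproduces the transform $u_\tw$ of \eqref{eq:Baecklund-3-to-1}.
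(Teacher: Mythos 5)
Your overall strategy coincides with the paper's: seed at $(\Theta_0,\Theta_\infty)=(\tfrac{1}{6},\tfrac{1}{2})$ with $u(x)=-\tfrac{2}{3}x$, explicit solution of the Lax pair, computation of Stokes and connection matrices, propagation over $\Lambda_\mathrm{gO}$ by the four elementary Schlesinger transformations of Appendix~\ref{sec:Schlesinger}, and uniqueness of rational solutions to identify the output of \eqref{eq:u-ucirc}. Two details differ from the paper and deserve correction. First, you propose to attack the spectral equation $\mathbf{\Psi}_\lambda=\mathbf{\Lambda\Psi}$ and reduce it to a confluent-hypergeometric/parabolic-cylinder ODE; the paper deliberately inverts this and solves the much simpler $x$-equation $\mathbf{\Psi}_x=\mathbf{X\Psi}$ first, which after the gauge $\mathbf{\Psi}=\ee^{-\frac{1}{3}x^2\sigma_3}\mathbf{\Psi}^{[1]}$ and the substitution $w=(\tfrac{4}{3}\lambda)^{\frac{1}{3}}(x+\tfrac{3}{4}\lambda)$ is exactly Airy's equation; the $\lambda$-equation is then used only to fix the $\lambda$-dependence of the integration constants (Lemma~\ref{lem:gO-direct-general}). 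This ordering is not cosmetic: it is what produces genuinely simultaneous fundamental solutions in both variables, from which the connection matrices $\mathbf{V}_j$ can be read off. Solving the $\lambda$-equation at fixed $x$, as in \cite{NovokshenovS14} (see Remark~\ref{rem:NovokshenovS14}), yields only the Stokes data. Second, your justification that no Schlesinger move degenerates on $\Lambda_\mathrm{gO}$ --- nonresonance of $\lambda=0$ --- is not the operative criterion. Nonresonance guarantees existence of $\mathbf{\Psi}^{(0)}$ by Frobenius, but well-definedness of $\mathcal{T}_\nearrow,\mathcal{T}_\swarrow,\mathcal{T}_\searrow,\mathcal{T}_\nwarrow$ and determinacy of the induced B\"acklund transformations require $\Theta_0\neq 0$ and $\Theta_\infty\pm\Theta_0\notin\{0,1\}$ (Propositions~\ref{prop:nonzero} and~\ref{prop:Baecklund}); these hold on $\Lambda_\mathrm{gO}$ because $\Theta_0+\Theta_\infty\in\tfrac{2}{3}+\mathbb{Z}$ and $\Theta_\infty-\Theta_0\in\tfrac{1}{3}+\mathbb{Z}$, which is the check the paper actually performs. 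With those two adjustments your outline matches the paper's proof.
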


Similarly, the following theorem is proved in Appendix~\ref{sec:RHPgH} by applying the method to a suitable seed in the type-$3$ sector of the gH family.
\begin{theorem}[Riemann-Hilbert representation of gH rational solutions]
Fix $(\Theta_0,\Theta_\infty)\in\Lambda_\mathrm{gH}^{[3]+}$.  Let Stokes matrices $\mathbf{V}_{j,k}$ be defined by 
\eq
\mathbf{V}_{2,1}=\mathbf{V}_{2,3}=\mathbf{V}_{4,1}=\mathbb{I},\quad\mathbf{V}_{4,3}=\ee^{2\pi\ii\Theta_\infty}\mathbb{I},
\label{eq:gH-Stokes}
\endeq
and let connection matrices $\mathbf{V}_j$ be defined by 
\eq
\mathbf{V}_1=\mathbf{V}_3=\bpm 1 & 0\\1 & 1\epm\quad\text{and}\quad
\mathbf{V}_2=\mathbf{V}_4=\bpm 1 & 0\\-1 & 1\epm,
\label{eq:gH-connection}
\endeq
which satisfy the consistency conditions \eqref{eq:consistency}.  Then Riemann-Hilbert Problem~\ref{rhp:general} has a unique solution for all but finitely many values of $x\in\mathbb{C}$, and the functions $u(x)$ and $u_\tw(x)$ defined by \eqref{eq:u-ucirc} are the unique (gH) rational solutions of the Painlev\'e-IV equation \eqref{p4} for parameters $(\Theta_0,\Theta_\infty)\in\Lambda_\mathrm{gH}^{[3]+}$ and for $(\Theta_{0,\tw},\Theta_{\infty,\tw})\in\Lambda_\mathrm{gH}^{[1]-}$ defined in \eqref{eq:Baecklund-3-to-1} respectively.
\label{thm:gH-RHP}
\end{theorem}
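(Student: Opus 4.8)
The plan is to run the three-step program (choose a seed, solve the direct and inverse monodromy problems for the seed, then reap by Schlesinger iteration) laid out at the start of Section~\ref{sec:rational-isomonodromy}, exactly as for the companion Theorem~\ref{thm:gO-RHP}, but now inside the type-$3$ sector of the gH family. The natural seed is the bottom corner of $\Lambda_\mathrm{gH}^{[3]+}$, namely $(\Theta_0,\Theta_\infty)=(\tfrac12,\tfrac12)$, which according to Table~\ref{tab:gH} is the point $(m,n)=(0,0)$ whose unique rational solution is the elementary function $u(x)=-2x$.

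First I would compute the Lax-pair coefficients on this seed. From \eqref{eq:yODE} one has $y'(x)=-(u(x)+2x)y(x)=0$, so $y(x)\equiv 1$ is admissible, and \eqref{eq:LaxPair-z-define} then forces $z(x)\equiv 1$. Substituting into \eqref{eq:JM-A}--\eqref{eq:JM-U} (using $z-\Theta_0-\Theta_\infty=0$ and $z-2\Theta_0=0$) shows that both $\mathbf\Lambda$ and $\mathbf X$ become \emph{upper triangular}, with diagonal part $\pm(\lambda+x-\tfrac{1}{2\lambda})$. Hence the Lax system \eqref{eq:PIV-Lax-Pair} integrates in closed form: one column of any simultaneous solution is a scalar exponential times a constant vector, and the other is produced by a single quadrature in $\lambda$. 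Imposing the normalizations \eqref{eq:infty-asymp} and \eqref{eq:zero-asymp} pins down $\mathbf\Psi^{(\infty)}_j$ and $\mathbf\Psi^{(0)}$ (the latter modulo the apparent-singularity freedom, resolved by $\det\equiv 1$ as in the text), and the definitions \eqref{eq:V21-def}--\eqref{eq:V2V4-def} then yield the monodromy data by a direct computation. Because the system is triangular there is no genuine Stokes phenomenon, so I expect to recover precisely \eqref{eq:gH-Stokes}---the only nontrivial feature being the jump $\ee^{2\pi\ii\Theta_\infty}$ across the negative real axis inherited from the principal-branch factor $\lambda^{\Theta_\infty}$---together with the lower-triangular connection matrices \eqref{eq:gH-connection}. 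Checking \eqref{eq:consistency} and invoking the equivalence \eqref{eq:Y-Psi} shows that Riemann-Hilbert Problem~\ref{rhp:general} with this data is solvable at the seed and that the formul\ae\ \eqref{eq:u-ucirc} return $u(x)=-2x$ and the corresponding $\mathcal{S}_\tw$-image.

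The harvest step is to act by the isomonodromic Schlesinger transformations of Appendix~\ref{sec:Schlesinger}: the two elementary moves realizing $(\Theta_0,\Theta_\infty)\mapsto(\Theta_0+\tfrac12,\Theta_\infty\mp\tfrac12)$, which in the type-$3$ indexing of Table~\ref{tab:gH} are $m\mapsto m+1$ and $n\mapsto n+1$. Each such move left-multiplies $\mathbf Y$ by an explicit matrix rational in $\lambda$, leaves all jump matrices (hence the full monodromy data \eqref{eq:gH-Stokes}--\eqref{eq:gH-connection}) untouched, and preserves the property that $\lambda=0$ stays an apparent singularity. Since one only ever \emph{increments} from $(0,0)$, one never encounters the indeterminacy at sector boundaries flagged in Section~\ref{sec:intro-Baecklund}, so iteration reaches exactly the lattice $\Lambda_\mathrm{gH}^{[3]+}$ (all $(m,n)\in\mathbb{Z}_{\ge0}^2$) and supplies, for each such point, a solution of Riemann-Hilbert Problem~\ref{rhp:general} with the prescribed data, defined for all $x$ off a finite Malgrange divisor (finiteness follows from the polynomiality of the $x$-dependence of the Schlesinger factors on this orbit). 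The standard isomonodromy argument then shows that the quantities \eqref{eq:u-ucirc} solve Painlev\'e-IV for the stated parameters; being meromorphic on $\mathbb{C}$ with the large-$x$ behavior dictated by \eqref{eq:gHsolutionsLarge-x}, they are rational, hence---by uniqueness of rational solutions \cite{Gromak:1987,Lukashevich:1967,Murata:1985}---equal to $u_\mathrm{gH}^{[3]}(x;m,n)$ and, via \eqref{eq:Baecklund-3-to-1}--\eqref{eq:symmetry-1-from-3}, to $u_\mathrm{gH}^{[1]}(x;m,n+1)$ with $(\Theta_{0,\tw},\Theta_{\infty,\tw})\in\Lambda_\mathrm{gH}^{[1]-}$.

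The principal obstacle, and what genuinely distinguishes this theorem from Theorem~\ref{thm:gO-RHP}, is the \emph{resonance} at $\lambda=0$: whenever $(\Theta_0,\Theta_\infty)\in\Lambda_\mathrm{gH}$ the Frobenius exponents $\pm\Theta_0$ differ by an integer, so one must prove that no logarithmic term appears---i.e.\@ that apparentness of the Fuchsian point is an invariant of the Schlesinger orbit emanating from the seed---and must carry along the extra gauge freedom this creates (fixing it by $\det\equiv1$) consistently through the iteration. The remaining work is more routine: verifying that the Schlesinger factors compose to the advertised translation on the $(\Theta_0,\Theta_\infty)$-lattice, confirming that the RHP-extracted $u_\tw$ really coincides with $\mathcal{S}_\tw$ applied to the RHP-extracted $u$ (the content of the ``no differentiation needed'' remark around \eqref{eq:u-ucirc}), and establishing uniqueness of the RHP solution off a finite exceptional set by a vanishing-lemma/Liouville argument together with the unit-determinant normalization, exactly as in the gO case.
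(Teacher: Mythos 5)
Your proposal follows the same seed--sow--reap strategy as the paper's own proof (Appendix~\ref{sec:RHPgH}), with the same seed $(\Theta_0,\Theta_\infty,u)=(\tfrac12,\tfrac12,-2x)$, the same observation that the Lax pair becomes upper triangular and integrates by quadrature, and the same treatment of the resonant-but-apparent singularity at $\lambda=0$ via the unit-determinant normalization. One bookkeeping point: with your choice $y(x)\equiv1$ the connection matrices come out as $\mathbf{L}(\pm2)$ rather than the $\mathbf{L}(\pm1)$ asserted in \eqref{eq:gH-connection}; the paper takes $y(x)\equiv2$ (together with $a=1$, $b=0$ in $\mathbf{C}^{(0)}$) precisely so that the stated matrices emerge. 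The data are equivalent up to a constant diagonal conjugation, but to prove the theorem as stated you must fix the normalization accordingly.

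The genuine gap is in the harvest step. You assert that ``since one only ever increments from $(0,0)$, one never encounters the indeterminacy at sector boundaries,'' but this conflates two distinct obstructions. The sector-exit indeterminacy of Section~\ref{sec:intro-Baecklund} is indeed avoided; however, the \emph{existence} of the Schlesinger transformation $\mathbf{Y}\mapsto\mathbf{Y}_\searrow$ (the move $m\mapsto m+1$) is only guaranteed by Proposition~\ref{prop:nonzero} when $\Theta_0(\Theta_\infty-\Theta_0)\neq0$, and this fails on the entire diagonal $\Theta_\infty=\Theta_0$, which is part of $\Lambda_\mathrm{gH}^{[3]+}$ and contains the seed itself. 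So the very first $\searrow$ step you need is not covered by the generic criterion. The paper closes this by computing $\mathbf{Y}^0_0(x)=\left(\begin{smallmatrix}2x & -1\\ 1 & 0\end{smallmatrix}\right)$ explicitly at the seed (so $Y^0_{0,11}\not\equiv0$ and Proposition~\ref{prop:Schlesinger} applies anyway), and then routing every path as follows: diagonal targets are reached by iterating $\nearrow$ only, while all other targets are reached by one $\searrow$ step from $(\tfrac12,\tfrac12)$ to $(1,0)$ followed by a path confined to $\Theta_\infty<\Theta_0$, where Propositions~\ref{prop:nonzero} and~\ref{prop:Baecklund} guarantee both the Schlesinger maps and the determinacy of the induced B\"acklund formulas. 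Without this explicit verification and path selection, your iteration is not known to be well defined at the diagonal points, so you should add it. Conversely, the resonance issue you single out as the ``principal obstacle'' is already built into the construction: the conditions \eqref{eq:conditions-ne} and \eqref{eq:conditions-se} defining the Schlesinger factors impose the bounded behavior of $\mathbf{Y}\lambda^{-\Theta_0\sigma_3}$ at the origin directly, so apparentness propagates automatically along the orbit and requires no separate induction.
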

\begin{remark}
Since three of the Stokes matrices in \eqref{eq:gH-Stokes} are trivial and since the connection matrices in \eqref{eq:gH-connection} are either identical or related by matrix inversion on oppositely oriented arcs of the unit circle $|\lambda|=1$, in the case of gH rationals for $(\Theta_0,\Theta_\infty)\in\Lambda_\mathrm{gH}^{[3]+}$ we can use a simplified jump contour $\Sigma=\Sigma_\mathrm{gH}=S^1\cup\Sigma_0\cup\Sigma_{4,3}$ and jump matrix $\mathbf{Y}_-(\lambda;x)^{-1}\mathbf{Y}_+(\lambda;x)$ as shown in Figure~\ref{fig:PIV-Sigma-Hermite}.
\begin{figure}[h]
\begin{center}
\includegraphics{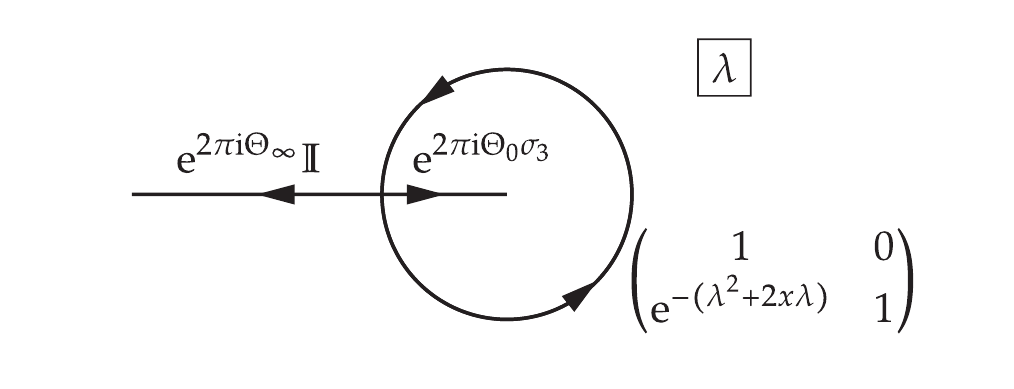}
\end{center}
\caption{The oriented contour $\Sigma_\mathrm{gH}=S^1\cup\Sigma_0\cup\Sigma_{4,3}$
and the jump matrix $\mathbf{Y}_-(\lambda;x)^{-1}\mathbf{Y}_+(\lambda;x)$ for gH rational solutions with $(\Theta_0,\Theta_\infty)\in\Lambda_\mathrm{gH}^{[3]+}$. 
}
\label{fig:PIV-Sigma-Hermite}
\end{figure}
\label{rem:gH-simplification}
\end{remark}
The simplification observed in Remark~\ref{rem:gH-simplification} suggests a connection between the gH rational solutions of Painlev\'e-IV and pseudo-orthogonal polynomials via the Riemann-Hilbert approach of Fokas, Its, and Kitaev \cite{FokasIK:1991}.  Indeed, setting $\mathbf{W}(\lambda;x):=\sigma_1\mathbf{Y}(\lambda;x)\lambda^{-\Theta_0\sigma_3}\sigma_1$ and
taking the parameters $(\Theta_0,\Theta_\infty)$ to be given by $(\Theta_{0,\mathrm{gH}}^{[3]}(m,n),\Theta_{\infty,\mathrm{gH}}^{[3]}(m,n))$ as defined in Table~\ref{tab:gH}, it is easy to see that $\mathbf{W}(\lambda;x)$ is analytic for $|\lambda|\neq 1$, obeys the jump condition
\eq
\mathbf{W}_+(\lambda;x)=\mathbf{W}_-(\lambda;x)\bpm 1 & w_M(\lambda;x)\\ 0 & 1\epm,\quad |\lambda|=1,\quad
w_M(\lambda;x):=\lambda^{-M}\ee^{-(\lambda^2+2x\lambda)},\quad M:=2\Theta_0,
\label{eq:W-jump}
\endeq
where the jump contour is given counterclockwise orientation, and satisfies the normalization condition 
\eq
\lim_{\lambda\to\infty}\mathbf{W}(\lambda;x)\lambda^{-N\sigma_3}=\mathbb{I},\quad N:=\Theta_\infty+\Theta_0.
\label{eq:W-norm}
\endeq
Noting that $M$ and $N$ are integers, this is the Fokas-Its-Kitaev Riemann-Hilbert problem. If $\mathbf{W}(\lambda;x)$ exists, then $\det(\mathbf{W}(\lambda;x))=1$.  It is well-known and easy to see that solvability requires $N\ge 0$, because otherwise the fact that the first column of $\mathbf{W}(\lambda;x)$ is entire, which follows from the jump condition \eqref{eq:W-jump}, together with 
the normalization condition \eqref{eq:W-norm} implies that the first column of $\mathbf{W}$ vanishes identically by Liouville's Theorem; but this is inconsistent with $\det(\mathbf{W})=1$.  If the solution $\mathbf{W}(\lambda;x)$ exists for $N=0,1,2,\dots$, the matrix element $W_{11}(\lambda;x)$ is the monic pseudo-orthogonal polynomial\footnote{\emph{Pseudo-orthogonality} of monic polynomials $\pi_m(\lambda;x)$ and $\pi_n(\lambda;x)$ of degrees $m$ and $n$ respectively means that for some norming constants $h_n$,
\[
\oint_{|\lambda|=1}\pi_m(\lambda;x)\pi_n(\lambda;x)w_M(\lambda;x)\,\dd\lambda = h_n\delta_{mn},
\]
which is not proper orthogonality because the left-hand side does not define a Hermitian inner product.} of degree $N$ with respect to the weight $w_M(\lambda;x)$ on the unit circle.  However such a polynomial can only exist if $M>0$ because otherwise the weight is analytic for $|\lambda|<1$ and hence every polynomial is pseudo-orthogonal to every monomial by Cauchy's Theorem.  Likewise, the matrix element $W_{21}(\lambda;x)$ is a polynomial in $\lambda$ of degree at most $N-1$ in terms of which the matrix element $W_{22}(\lambda;x)$ is expressed as a Cauchy integral against the weight; the condition \eqref{eq:W-norm} then requires that $W_{22}(\lambda;x)=\lambda^{-N}+\bo(\lambda^{-(N+1)})$ as $\lambda\to\infty$ which leads to a contradiction unless $N<M+1$ (in other words, given $M=1,2,3,\dots$ there can only be finitely many pseudo-orthogonal polynomials of degrees $N=0,1,\dots,M$).
In terms of $(\Theta_0,\Theta_\infty)$, the conditions $M=1,2,3,\dots$ and $N=1,2,\dots, M$ correspond precisely to the points of $\Lambda_\mathrm{gH}^{[3]+}$.  While $\mathbf{W}(\lambda;x)$ and hence $\mathbf{Y}(\lambda;x)$ can also exist for $N=0$ and $M=1,2,3,\dots$, these additional values do not yield a solution $u(x)$ of the Painlev\'e-IV equation via the formula \eqref{eq:u-ucirc} because $\mathbf{W}(\lambda;x)$ is upper-triangular for $N=0$, so $\mathbf{Y}(\lambda;x)$ is lower-triangular, and hence $Y^\infty_{1,12}(x)\equiv 0$.  

In \cite{Buckingham18}, the gH rational solutions of Painlev\'e-IV were studied by means of another system of pseudo-orthogonal polynomials obtained by further developing the method of Bertola and Bothner \cite{BertolaB:2015}.  It is easy to relate the Riemann-Hilbert representation used in \cite{Buckingham18} to the gH Riemann-Hilbert problem for $\mathbf{Y}(\lambda;x)$.  Indeed, from the solution $\mathbf{Y}(\lambda;x)$ of Riemann-Hilbert Problem~\ref{rhp:general} in the gH type-$3$ case, set
\eq
\mathbf{M}(\zeta;y):=\left(\frac{\ee^{\frac{1}{4}\ii\pi (2\Theta_\infty-1)}}{\sqrt{2\pi}}\right)^{\sigma_3}
\ii^{(\Theta_\infty+\Theta_0)\sigma_3}\sigma_1\mathbf{Y}^0_0(\ii y)^{-1}\mathbf{Y}(-\ii\zeta^{-1};\ii y)(-\ii\zeta^{-1})^{\Theta_\infty\sigma_3}\sigma_1\left(\frac{\ee^{\frac{1}{4}\ii\pi (2\Theta_\infty-1)}}{\sqrt{2\pi}}\right)^{-\sigma_3}.
\label{eq:PIV-Hankel-isomonodromy}
\endeq
It is straightforward to check that 
$\mathbf{M}(\zeta;y)$ solves \cite[Riemann-Hilbert Problem 1]{Buckingham18} with parameters 
$m=1-2\Theta_\infty$ and $n=\Theta_\infty+\Theta_0$, both integers.  In light of the variable transformation $\lambda=-\ii\zeta^{-1}$, the pseudo-orthogonal polynomials in \cite{Buckingham18} are related to the \emph{reciprocal} pseudo-orthogonal polynomials encoded in the matrix $\mathbf{W}(\lambda;x)$.
The connection between $\mathbf{Y}(\lambda;x)$ and $\mathbf{M}(\zeta;y)$ is analogous to an observation made in \cite{MillerS:2017}, namely that the Riemann-Hilbert problem encoding the Yablonskii-Vorob'ev polynomials found by Bertola and Bothner \cite{BertolaB:2015} is explicitly related to the inverse monodromy problem for the Flaschka-Newell Lax pair for the Painlev\'e-II rational solutions built from those polynomials.

\subsection{Notation}
\label{sec:notation}
We define the three Pauli matrices
\eq
\sigma_1:=\bpm 0&1\\1 & 0\epm,\quad\sigma_2:=\bpm 0 & -\ii\\\ii & 0\epm,\quad\sigma_3:=\bpm 1 & 0\\0 & -1\epm.
\label{eq:Pauli-notation}
\endeq
It will be convenient to have some compact notation for $2\times 2$ matrices having certain structure; thus given a complex number $a$ we define unit determinant lower triangular, upper triangular, diagonal, and ``twist'' matrices by
\eq
\mathbf{L}(a):=\bpm 1&0\\a & 1\epm,\quad \mathbf{U}(a):=\bpm 1 & a\\0 & 1\epm,\quad
\mathbf{D}(a):=a^{\sigma_3}=\bpm a & 0\\ 0 & a^{-1}\epm,\quad \mathbf{T}(a):=\bpm 0 & -a^{-1}\\a & 0\epm.
\label{eq:matrix-factors-notation}
\endeq
In terms of these elementary matrices we will frequently use the following factorizations, which assume $ad-bc=1$:
\eq
\begin{split}
\bpm a & b\\c & d\epm &= 
\mathbf{L}(ca^{-1})\mathbf{D}(a)\mathbf{U}(ba^{-1}),\quad a\neq 0,\quad\text{(``LDU'')},\\
&=\mathbf{L}(db^{-1})\mathbf{T}(-b^{-1})\mathbf{L}(ab^{-1}),\quad b\neq 0,\quad\text{(``LTL'')},\\
&=\mathbf{U}(bd^{-1})\mathbf{D}(d^{-1})\mathbf{L}(cd^{-1}),\quad d\neq 0,\quad\text{(``UDL'')},\\
&=\mathbf{U}(ac^{-1})\mathbf{T}(c)\mathbf{U}(dc^{-1}),\quad c\neq 0,\quad\text{(``UTU'')}.\\
\end{split}
\label{eq:general-factorizations}
\endeq

For a function $f$ defined on the complement of an oriented arc in the complex plane, we use subscripts ``$+$'' (resp., ``$-$'') to denote the boundary value taken at a given point on the arc from the left (resp., right):  $f_\pm$.  We sometimes abbreviate the average and difference of these boundary values by writing 
\eq
\langle f\rangle:=\tfrac{1}{2}(f_++f_-)\quad\text{and}\quad \Delta f:=f_+-f_-.
\endeq
Throughout this paper, for a quantity $Q$ we use a ``dot'' notation $\dot{Q}$ to indicate a corresponding approximation.
Finally, for expressions applicable to either family, gH or gO, we frequently use a generic subscript $\mathrm{F}$ (for ``family'').

\subsection*{Acknowledgements}
The authors thank Davide Masoero and Pieter Roffelsen for useful discussions and Guilherme Silva for information about trajectories of rational quadratic differentials and for suggesting the possibility of representing the arcs of the curves $\partial\HermiteExterior(\kappa)$ and $\partial\OkamotoExterior(\kappa)$ in the $y_0$-plane in terms of such trajectories.  R. J. Buckingham was supported by National Science Foundation (grant DMS-1615718).  P. D. Miller was supported by the National Science Foundation (grants DMS-1513054 and DMS-1812625).

\section{Asymptotic analysis of $\mathbf{Y}(\lambda;x)$ for $(\Theta_0,\Theta_\infty)$ large:  Basic principles}
\label{sec:Basic-Principles}
In light of the explicit and trivial relation \eqref{eq:symmetry-1-2} between $u^{[2]}_\mathrm{F}(x;m,n)$ and $u^{[1]}_\mathrm{F}(x;m,n)$ for both families $\mathrm{F}=\mathrm{gH}$ and $\mathrm{F}=\mathrm{gO}$, to prove our results it will be sufficient to 
consider only rational solutions of types $1$ and $3$.  Moreover, since \eqref{eq:symmetry-1-from-3} and \eqref{eq:u-ucirc} together imply that the rational solutions of both types $1$ and $3$ are simultaneously encoded in Riemann-Hilbert Problem~\ref{rhp:general}, it is only necessary to study the latter problem in the situation that the parameters $(\Theta_0,\Theta_\infty)$ correspond to a rational solution of type $3$ in either family.  

We therefore assume that the parameters $(\Theta_0,\Theta_\infty)$ are large in either $\Lambda_\mathrm{gO}\cap (W^{[3]+}\cup W^{[3]-})$ or $\Lambda_\mathrm{gH}^{[3]+}$ for the gO and gH families, respectively.  This implies that, in terms of the parameters $T,s,\kappa$ from \eqref{eq:Thetas-scaling}, $T>0$ will be the large parameter and $\kappa\in (-1,1)$.  For the gO family we allow both signs $s=\pm 1$ for $\Theta_0$ to access both sectors $W^{[3]\pm}$, while for the gH family it is enough to consider only $s=1$ since $\Lambda_\mathrm{gH}^{[3]+}\subset W^{[3]+}$.

\subsection{Scaling of Riemann-Hilbert Problem~\ref{rhp:general}}
In light of the parametrization \eqref{eq:Thetas-scaling} of $(\Theta_0,\Theta_\infty)$ by $T,s,\kappa$, it is convenient to introduce the following scalings into the solution $\mathbf{Y}(\lambda;x)$ of Riemann-Hilbert Problem~\ref{rhp:general}:
\eq
x=T^{\frac{1}{2}}y,\quad y=y_0+T^{-\frac{1}{2}}\zeta,\quad\text{and}\quad \lambda=\frac{1}{2}T^{\frac{1}{2}}z.
\label{eq:x-y-lambda-z}
\endeq
We set $\mathbf{M}^{(T,s,\kappa)}(z;y):=(\tfrac{1}{2}T^{\frac{1}{2}})^{-\kappa T\sigma_3}\mathbf{Y}(\lambda;x)$, and we write $\mathbf{M}(z)=\mathbf{M}^{(T,s,\kappa)}(z;y)$ when we wish to suppress the dependence on parameters.
Then under the scalings \eqref{eq:x-y-lambda-z} the exponent in the jump conditions of Riemann-Hilbert Problem~\ref{rhp:general} becomes
\eq
\lambda^2+2x\lambda=2T\phi(z;y),\quad\phi(z;y):=\frac{1}{8}z^2+\frac{1}{2}yz,
\label{eq:phi-exponent}
\endeq
and $\mathbf{M}(z)z^{-sT\sigma_3}$ is bounded as $z\to 0$ while $\mathbf{M}(z)z^{-\kappa T\sigma_3}\to\mathbb{I}$ as $z\to\infty$.  Because the jump matrices on all arcs of $\Sigma$ are all entire functions of $\lambda$ and are cyclically consistent at all self-intersection points in $\mathbb{C}\setminus\{0\}$ due to the consistency relations \eqref{eq:consistency}, by elementary substitutions in the four sectors between the circles of radius $|z|=\tfrac{1}{2}T^{\frac{1}{2}}$ and $|z|=1$ we may simply take the jump contour for $\mathbf{M}(z)$ to again be the original unscaled jump contour $\Sigma$, now in the $z$-plane.  Since the constant pre-factor $(\tfrac{1}{2}T^{\frac{1}{2}})^{-\kappa T\sigma_3}$ does not affect any jump conditions, the jump matrices for $\mathbf{M}(z)$ are precisely the same as those of $\mathbf{Y}(\lambda;x)$ on the same arcs of $\Sigma$ except that the exponents are replaced in each case according to \eqref{eq:phi-exponent}, and $\Theta_0$ and $\Theta_\infty$ are replaced with $sT$ and $-\kappa T$ respectively.  

\begin{remark}
Given a family $\mathrm{F}=\mathrm{gO}$ or $\mathrm{F}=\mathrm{gH}$, the parameters $T$, $s$, $\kappa$, and the auxiliary variable $y$ appearing in $\mathbf{M}(z)=\mathbf{M}^{(T,s,\kappa)}(z;y)$ are naturally related to the function $u(x)=u_\mathrm{F}^{[3]}(x;m,n)$ solving the Painlev\'e-IV equation \eqref{p4} for parameters $(\Theta_0,\Theta_\infty)=(\Theta_{0,\mathrm{F}}^{[3]}(m,n),\Theta_{\infty,\mathrm{F}}^{[3]}(m,n))$.  To study the function $u_\mathrm{gH}^{[3]}(x;m,n)$, we therefore relate these quantities to the integer parameters $(m,n)\in\mathbb{Z}_{\ge 0}^2$ by
\eq
T:=\tfrac{1}{2}+\tfrac{1}{2}(m+n),\quad s:=+1,\quad \kappa:=-\frac{1+n-m}{1+m+n},\quad x=\sqrt{\frac{1+m+n}{2}}y_0+\sqrt{\frac{2}{1+m+n}}\zeta,
\label{eq:gH-type3-RHP-parameters}
\endeq
and observe that as $m,n\to+\infty$ with $n=\rho m$ for a fixed aspect ratio $\rho>0$, $T\to\infty$ and $\kappa\to (1-\rho)/(1+\rho)\in (-1,1)$.  Likewise, to study the function $u_\mathrm{gO}^{[3]}(x;m,n)$, for integer parameters $(m,n)\in\mathbb{Z}_{>0}^2$ with $mn>0$ we use instead
\eq
T:=|\tfrac{1}{6}+\tfrac{1}{2}(m+n)|,\quad s:=\mathrm{sgn}(m+n),\quad\kappa:=-\frac{3+3n-3m}{|1+3m+3n|},\quad x=\sqrt{\frac{|1+3m+3n|}{6}}y_0+\sqrt{\frac{6}{|1+3m+3n|}}\zeta,
\label{eq:gO-type3-RHP-parameters}
\endeq
and observe that as $m,n\to\infty$ with $n=\rho m$ for a fixed aspect ratio $\rho>0$, $T\to\infty$ and $\kappa\to \mathrm{sgn}(m+n)(1-\rho)/(1+\rho)\in (-1,1)$.

However, the function $u_\mathrm{F}^{[1]}(x;m,n)$ satisfies \eqref{p4} for different parameters, namely for $(\Theta_{0,\tw},\Theta_{\infty,\tw})=(\Theta_{0,\mathrm{F}}^{[1]}(m,n),\Theta_{\infty,\mathrm{F}}^{[1]}(m,n))$ related to $(\Theta_0,\Theta_\infty)$ via the symmetry $\mathcal{S}_\tw$ defined in \eqref{eq:Baecklund-3-to-1}.  Writing $(\Theta_0,\Theta_\infty)$ in terms of the integer indices $(m,n)$ for this function therefore requires inverting the mapping $\mathcal{S}_\tw$ on the parameters as follows.  For the function $u_\tw(x)=u_\mathrm{gH}^{[1]}(x;m,n)$, the parameters in Riemann-Hilbert Problem~\ref{rhp:general} become
\eq
(\Theta_0,\Theta_\infty)=\mathcal{S}_\tw^{-1}\circ (\Theta_{0,\mathrm{gH}}^{[1]}(m,n),\Theta_{\infty,\mathrm{gH}}^{[1]}(m,n)) = (\tfrac{1}{2}m+\tfrac{1}{2}n,-\tfrac{1}{2}m+\tfrac{1}{2}n),
\label{eq:gH-index-twist}
\endeq
yielding for $\mathbf{M}(z)$ the parameters
\eq
T:=\tfrac{1}{2}(m+n),\quad s=+1,\quad \kappa:=\frac{m-n}{m+n},
\label{eq:gH-type1-RHP-parameters}
\endeq
in which $T\to +\infty$ and $\kappa= (1-\rho)/(1+\rho)\in (-1,1)$ as $m,n\to+\infty$ with $n=\rho m$,
while for the function $u_\tw(x)=u_\mathrm{gO}^{[1]}(x;m,n)$, we have instead
\eq
(\Theta_0,\Theta_\infty)=\mathcal{S}_\tw^{-1}\circ (\Theta_{0,\mathrm{gO}}^{[1]}(m,n),\Theta_{\infty,\mathrm{gO}}^{[1]}(m,n)) = (-\tfrac{1}{3}+\tfrac{1}{2}m+\tfrac{1}{2}n,-\tfrac{1}{2}m+\tfrac{1}{2}n),
\label{eq:gO-index-twist}
\endeq
yielding
\eq
T:=\tfrac{1}{2}|m+n-\tfrac{2}{3}|,\quad s=\mathrm{sgn}(m+n),\quad\kappa:=\frac{m-n}{|m+n-\tfrac{2}{3}|}
\label{eq:gO-type1-RHP-parameters}
\endeq
in which $T\to+\infty$ and $\kappa\to \mathrm{sgn}(m+n)(1-\rho)/(1+\rho)$ as $m,n\to\infty$ with $mn>0$ and $n=\rho m$.
We emphasize that in this case, $(\Theta_0,\Theta_\infty)$ are \emph{not} the parameters in \eqref{p4} for which the indicated type-$1$ function satisfies the Painlev\'e-IV equation, but they are the parameters in Riemann-Hilbert Problem~\ref{rhp:general} for which this function is encoded as $u_\tw(x)$ given by \eqref{eq:u-ucirc}.  

The analysis we present in the rest of this section and in Sections~\ref{sec:Exterior}--\ref{sec:G1} will refer to the quantities $T$, $s$, and $\kappa$ defined as above depending on which family and type of rational function is being considered.  A remaining issue in interpreting the results of a large-$T$ asymptotic analysis of $\mathbf{M}(z)$ is that while it is natural in light of the scalings \eqref{eq:Thetas-scaling} to write $x$ in the form $x=T^{\frac{1}{2}}y_0+T^{-\frac{1}{2}}\zeta$ as indicated above for the type-$3$ rational solutions, for the type-$1$ rational functions in the family F we need to use $|\Theta^{[1]}_{0,\mathrm{F}}(m,n)|$ in place of $|\Theta_0|$ in defining $y_0$ and $\zeta$.  This amounts to replacing 
\eq
y_0\;\text{with}\; \sqrt{\frac{|\Theta^{[1]}_{0,\mathrm{F}}(m,n)|}{|\Theta_0|}}y_0\quad\text{and}\quad
\zeta\;\text{with}\;\sqrt{\frac{|\Theta_0|}{|\Theta_{0,\mathrm{F}}^{[1]}(m,n)|}}\zeta
\label{eq:y0zeta-type1-substitutions}
\endeq
in all final formul\ae.  Note that using Tables~\ref{tab:gH} and \ref{tab:gO}, and taking $\Theta_0$ to be given by \eqref{eq:gH-index-twist} or \eqref{eq:gO-index-twist} respectively,
\eq
\frac{|\Theta_{0,\mathrm{gH}}^{[1]}(m,n)|}{|\Theta_0|} = \frac{n}{m+n}\quad\text{and}\quad
\frac{|\Theta_{0,\mathrm{gO}}^{[1]}(m,n)|}{|\Theta_0|} = \frac{n-\tfrac{1}{3}}{m+n-\tfrac{2}{3}}.
\label{eq:y0zeta-type1-substitutions-factors}
\endeq
Equivalently, since $\Theta_{0,\mathrm{F}}^{[1]}(m,n)=\Theta_{0,\tw}$, in terms of the parameters $s$ and $\kappa$ related to the indices $(m,n)$ for the $\mathrm{F}=\mathrm{gH}$ and $\mathrm{F}=\mathrm{gO}$ families by \eqref{eq:gH-type1-RHP-parameters} and \eqref{eq:gO-type1-RHP-parameters} respectively,
\eq
\frac{|\Theta_{0,\mathrm{F}}^{[1]}(m,n)|}{|\Theta_0|}=\frac{1}{2}(1-s\kappa).
\label{eq:theta0-ratio}
\endeq 
\label{rem:other-parameters}
\end{remark}

\subsection{Trivially equivalent Riemann-Hilbert problems for $\mathbf{M}(z)$}
\label{sec:trivially-equivalent-RHPs}
\subsubsection{The gO case}
We further observe that, by a similar argument using analyticity of jump matrices and cyclic consistency at nonzero self-intersection points, the jump contour $\Sigma$ can be replaced by 
a qualitatively similar jump contour consisting of 
\begin{itemize}
\item
an arbitrary Jordan curve $C$ enclosing the origin and divided into arcs $\Sigma_j$, $j=1,2,3,4$ (the indicated sub-arcs are homeomorphic in $\mathbb{C}\setminus\{0\}$ with the corresponding curves on the unit circle shown in Figure~\ref{fig:PIV-Sigma}), 
\item
an arbitrary simple arc $\Sigma_0$ in the interior of $C$ that connects the junction point of $\Sigma_3$ and $\Sigma_4$ to the origin, and
\item
four arbitrary disjoint simple arcs $\Sigma_{j,k}$, unbounded in one direction and connecting $z=\infty$ with the junction point of $\Sigma_j$ and $\Sigma_k$ such that the approach to $z=\infty$ is in the (vertical or horizontal) direction shown in Figure~\ref{fig:PIV-Sigma}.
\end{itemize}
In general, the union of $\Sigma_0$ and $\Sigma_{4,3}$ should be taken as the branch cut for the functions $z^{-sT\sigma_3}$ and $z^{-\kappa T\sigma_3}$, and the branches of these functions remain principal for sufficiently large $|z|$.  The formula for the jump matrix on each arc of $\Sigma$ after the deformation is exactly the same in each case as before the deformation.  

To study $\mathbf{M}(z)$ in the case that the monodromy data corresponds to the family of gO rational solutions of Painlev\'e-IV (see \eqref{eq:gO-Stokes}--\eqref{eq:gO-connection}), it will be useful to introduce two modifications of the Riemann-Hilbert conditions for $\mathbf{M}(z)$ that do not preserve the topology of $\Sigma$.
\begin{figure}[h]
\begin{center}
\includegraphics{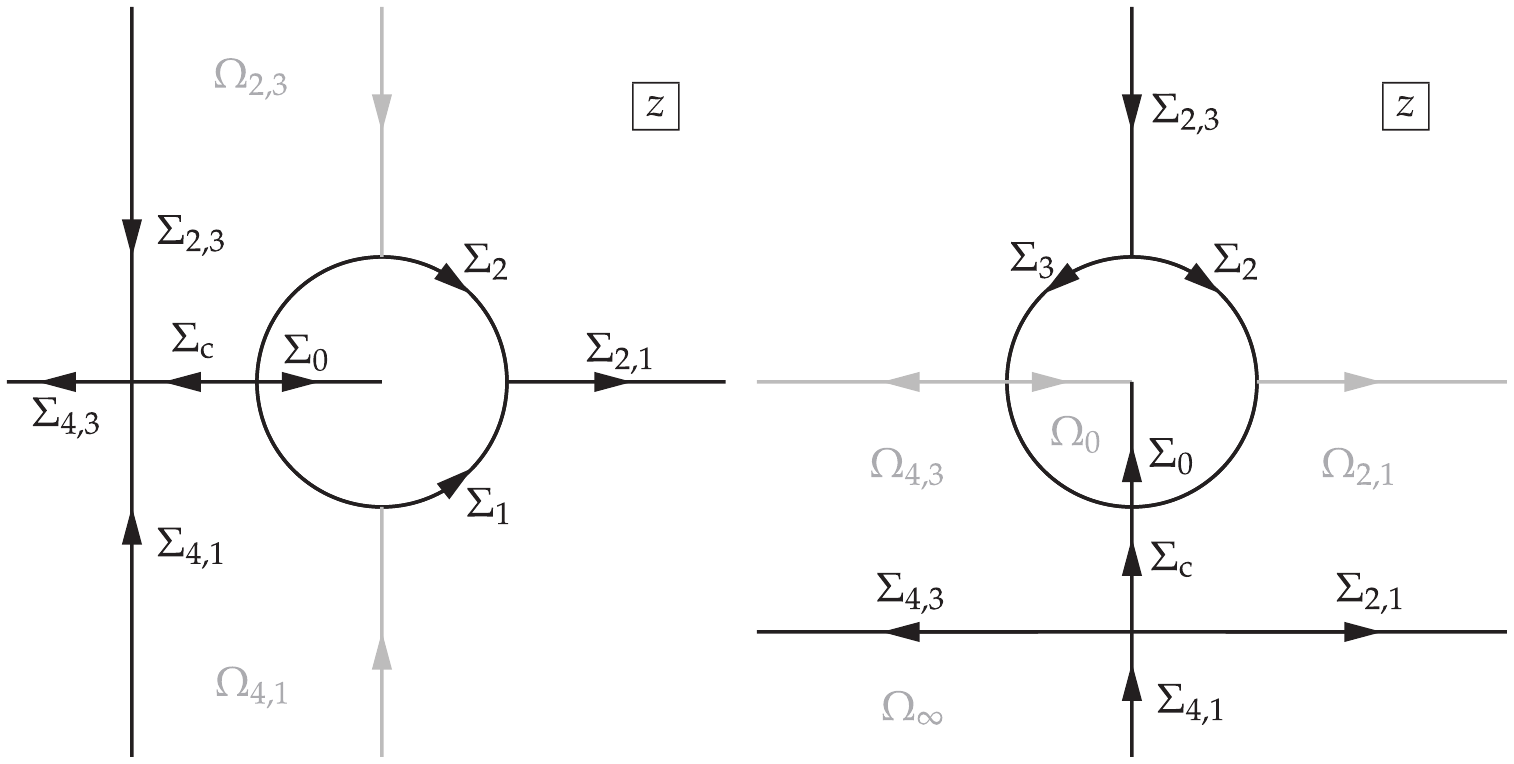}
\end{center}
\caption{Left:  the jump contour for the ``leftward'' deformation of $\mathbf{M}(z)$.  Right:  the jump contour for the ``downward'' deformation of $\mathbf{M}(z)$.}
\label{fig:PIV-Sigma-Deform}
\end{figure}
Beginning with the jump contour shown in Figure~\ref{fig:PIV-Sigma} and referring to the left-hand panel of Figure~\ref{fig:PIV-Sigma-Deform} we define a ``leftward'' deformation of the jump contour for $\mathbf{M}(z)$ by making the following piecewise analytic substitution in the domains $\Omega_{2,3}$ and $\Omega_{4,1}$ exterior to the unit circle in the $z$-plane (recall the notation \eqref{eq:matrix-factors-notation}):
\eq
\mathbf{M}(z)\mapsto \mathbf{M}(z)
\mathbf{U}(-\tfrac{1}{2}\ii \ee^{2T\phi(z;y)}),
\quad z\in\Omega_{2,3},
\endeq
\eq
\mathbf{M}(z)\mapsto \mathbf{M}(z)
\mathbf{U}(\tfrac{1}{2}\ii\ee^{2T\phi(z;y)}),\quad z\in\Omega_{4,1},
\endeq
and elsewhere we leave $\mathbf{M}(z)$ unchanged.  This results in the same jump conditions as indicated in Riemann-Hilbert Problem~\ref{rhp:general} with monodromy data taken from \eqref{eq:gO-Stokes}--\eqref{eq:gO-connection} on the corresponding labeled arcs (but in the $z$-plane, and with the exponents modified as indicated in \eqref{eq:phi-exponent}), and a new jump condition on the arc labeled $\Sigma_\mathrm{c}$ in the left-hand panel of Figure~\ref{fig:PIV-Sigma-Deform}, namely
\eq
\mathbf{M}_+(z)=\mathbf{M}_-(z)\ee^{-2\pi\ii T\kappa}
\mathbf{T}(2\ii\ee^{-2T\phi(z;y)}),
\quad
z\in\Sigma_\mathrm{c}.
\endeq
Likewise, referring to the right-hand panel of Figure~\ref{fig:PIV-Sigma-Deform} we can define a ``downward'' deformation of the jump contour for $\mathbf{M}(z)$ by making the following analytic substitutions in the indicated domains:
\eq
\mathbf{M}(z)\mapsto\mathbf{M}(z)\ee^{2\pi\ii sT\sigma_3},\quad z\in\Omega_0,
\endeq
\eq
\mathbf{M}(z)\mapsto\mathbf{M}(z)\ee^{2\pi\ii T\kappa}
\mathbf{L}(-2\ii\ee^{-2T\phi(z;y)}),
\quad z\in\Omega_{4,3},
\endeq
\eq
\mathbf{M}(z)\mapsto\mathbf{M}(z)
\mathbf{L}(2\ii\ee^{-2T\phi(z;y)}),
\quad z\in\Omega_{2,1},
\endeq
and
\eq
\mathbf{M}(z)\mapsto\mathbf{M}(z)
\ee^{2\pi\ii T\kappa},\quad z\in\Omega_\infty.
\label{eq:scalar-change}
\endeq
Once again, the resulting jump conditions on the arcs labeled as in Figure~\ref{fig:PIV-Sigma} correspond to those in Riemann-Hilbert Problem~\ref{rhp:general} with monodromy data given in \eqref{eq:gO-Stokes}--\eqref{eq:gO-connection}, except on the arcs $\Sigma_{4,3}$ and $\Sigma_{4,1}$ where we have instead
\eq
\mathbf{M}_+(z)=\mathbf{M}_-(z)
\mathbf{L}(2\ii\ee^{-2T\phi(z;y)}),
\quad z\in\Sigma_{4,3},
\endeq
\eq
\mathbf{M}_+(z)=\mathbf{M}_-(z)\ee^{2\pi\ii T\kappa}
\mathbf{U}(-\tfrac{1}{2}\ii\ee^{2T\phi(z;y)}),
\quad z\in\Sigma_{4,1},
\endeq
(as the substitution \eqref{eq:scalar-change} effectively moves the scalar factor $\ee^{-2\pi\ii T\kappa}$ from the jump across $\Sigma_{4,3}$ to that across $\Sigma_{4,1}$) and there is a new jump condition across the arc labeled $\Sigma_\mathrm{c}$ in the right-hand panel of Figure~\ref{fig:PIV-Sigma-Deform} that reads
\eq
\mathbf{M}_+(z)=\mathbf{M}_-(z)\ee^{2\pi\ii T\kappa}
\mathbf{T}(-2\ii\ee^{-2T\phi(z;y)}),
\quad z\in\Sigma_\mathrm{c}.
\endeq
In interpreting the conditions on $\mathbf{M}(z)$ as $z$ tends to $0$ and $\infty$, one should now replace the principal branch power functions $z^{-sT\sigma_3}$ and $z^{-T\kappa\sigma_3}$ by branches with $\arg(z)\in (-\tfrac{1}{2}\pi,\tfrac{3}{2}\pi)$.  

In the case of both deformations, one can subsequently replace the unit circle in the $z$-plane by any Jordan curve enclosing the origin, and employ similar contour deformations that respect the topology and direction of approach to $z=\infty$.  In the case of the leftward deformation, the union of the deformations of $\Sigma_0$, $\Sigma_\mathrm{c}$, and $\Sigma_{4,3}$ should be taken as the branch cut of $z^{-sT\sigma_3}$ and $z^{-T\kappa\sigma_3}$, which should then be interpreted via $\arg(z)\in (-\pi,\pi)$ near $z=\infty$.  In the case of the downward deformation, the union of the deformations of $\Sigma_0$, $\Sigma_\mathrm{c}$, and $\Sigma_{4,1}$ form the branch cut of these functions, which are to be interpreted via $\arg(z)\in (-\tfrac{1}{2}\pi,\tfrac{3}{2}\pi)$ near $z=\infty$.

Regardless of whether we use the original contour topology or the ``leftward'' or ``downward'' modifications, we will always refer to the jump contour for $\mathbf{M}(z)=\mathbf{M}^{(T,s,\kappa)}(z;y)$ as $\Sigma$.  We will take full advantage of the freedom of choice of $\Sigma$ consistent with the above discussion, allowing $\Sigma$ to depend on $s$, $\kappa$, and $y_0$.  \emph{Importantly however, we will insist that $\Sigma$ is independent of the large parameter $T\gg 1$.}  Nor will it depend on $\zeta$.

\subsubsection{The gH case}
When we consider the matrix $\mathbf{M}(z)$ connected to the solution of Riemann-Hilbert Problem~\ref{rhp:general} with gH monodromy data given in \eqref{eq:gH-Stokes}--\eqref{eq:gH-connection}, there is a corresponding dramatic simplification of the rescaled jump contour as described in Remark~\ref{rem:gH-simplification}.  Indeed, the rescaled version of $\Sigma$ can be taken to appear exactly as shown in Figure~\ref{fig:PIV-Sigma-Hermite}, but now in the $z$-plane.  Here there will be no need for deformations that change the topology of the jump contour, but as in the gO case we may always replace the unit circle in the $z$-plane with any Jordan curve $C$ enclosing the origin, and we may replace the contour arcs lying on the negative real line with arcs still denoted $\Sigma_0$ and $\Sigma_{4,3}$ making up any simple curve that connects $z=0$ with $z=-\infty$ and that intersects $C$ only at one point.  The branch cuts of $z^{-sT\sigma_3}$ and $z^{-\kappa T\sigma_3}$ are then taken to coincide with the latter curve, and the branches are chosen to be principal for large $|z|$.  Also as in the gO case, any modification of $\Sigma$ will be assumed to be independent of $T$ and $\zeta$.

\subsection{Spectral curve and $g$-function}
\label{sec:SpectralCurve}
The analysis in this section applies equally to the matrix $\mathbf{M}(z)$ in all configurations of $\Sigma$, regardless of whether Riemann-Hilbert Problem~\ref{rhp:general} describes gO or gH rational solutions of Painlev\'e-IV.  Indeed, the fact that one can use the same theory of spectral curves to study both families of rational solutions is one of the main advantages of putting both families on the same footing via Riemann-Hilbert Problem~\ref{rhp:general}.

Let $g:\mathbb{C}\setminus\Sigma\to\mathbb{C}$ be analytic with continuous boundary values such that $g'(z)$ is also such a function, and moreover, for some constants $g_0$ and $g_\infty$, 
\eq
g(z)=\begin{cases} -s\log(z)+g_0+\bo(z),&\quad z\to 0,\\
-\kappa\log(z)+g_\infty+\bo(z^{-1}),&\quad z\to\infty
\end{cases}
\quad\implies\quad g'(z)=\begin{cases}-sz^{-1}+\bo(1),&\quad z\to 0,\\
-\kappa z^{-1}+\bo(z^{-2}),&\quad z\to\infty.
\end{cases}
\label{eq:gprime-asymp}
\endeq
Here the branch of $\log(z)$ corresponds to the definition of the power functions $z^{-sT\sigma_3}$ and $z^{-T\kappa\sigma_3}$ as indicated in Section~\ref{sec:trivially-equivalent-RHPs}.
Given such a function, from $\mathbf{M}^{(T,s,\kappa)}(z;y)$ we define a new unknown $\mathbf{N}(z)=\mathbf{N}^{(T,s,\kappa)}(z;y)$ by the substitution
\eq
\mathbf{N}(z):=\ee^{-Tg_\infty\sigma_3}\mathbf{M}(z)\ee^{Tg(z)\sigma_3},\quad z\in\mathbb{C}\setminus\Sigma.
\label{eq:NfromM}
\endeq
Note that from \eqref{eq:gprime-asymp}, $\mathbf{N}(z)\to\mathbb{I}$ as $z\to\infty$.
The induced jump conditions for $\mathbf{N}(z)$ will involve exponentials on the diagonal elements with exponents $\pm T\Delta g(z)$ and on the off-diagonal elements with exponents $\pm 2T(\langle g\rangle(z)-\phi(z;y))$ where $\Delta g(z)$ and $\langle g\rangle(z)$ are defined in terms of the boundary values $g_\pm(z)$ taken on an arc of $\Sigma$ as in Section~\ref{sec:notation}.   Assuming that $\Sigma$ is partitioned into arcs in which either $\Delta g(z)= g_+(z)-g_-(z)$ or $2(\langle g\rangle(z)-\phi(z;y))=g_+(z)+g_-(z)-2\phi(z;y)$ is independent of $z$, we easily see that the function $(g'(z)-\phi'(z;y))^2$ has no jump across any of the arcs of $\Sigma$ and hence is a function analytic for $z\in\mathbb{C}\setminus\{0\}$.  To identify this function, we use \eqref{eq:phi-exponent} and \eqref{eq:gprime-asymp} to examine its behavior near $z=0$:
\eq
(g'(z)-\phi'(z;y))^2=(-sz^{-1}+\bo(1))^2=z^{-2}+\bo(z^{-1}),\quad z\to 0,
\label{eq:square-zero}
\endeq
and near $z=\infty$:
\eq
(g'(z)-\phi'(z;y))^2=\left(-\frac{1}{4}z-\frac{1}{2}y-\kappa z^{-1}+\bo(z^{-2})\right)^2=\frac{1}{16}z^2+\frac{1}{4}yz + \left(\frac{1}{4}y^2+\frac{1}{2}\kappa\right) + \bo(z^{-1}),\quad z\to\infty.
\label{eq:square-infinity}
\endeq
By Liouville's Theorem it therefore follows that if $\tfrac{1}{8}E$ denotes the common coefficient of $z^{-1}$ in \eqref{eq:square-zero} and \eqref{eq:square-infinity}, then
\eq
\begin{split}
(g'(z)-\phi'(z;y))^2&=\frac{1}{16}z^2+\frac{1}{4}yz+\left(\frac{1}{4}y^2+\frac{1}{2}\kappa\right)+\frac{1}{8}Ez^{-1}+z^{-2}\\
&=\frac{1}{16z^2}P(z),
\end{split}
\label{eq:spectral-curve}
\endeq
where \emph{$P(\cdot)$ is exactly the same quartic polynomial defined in \eqref{eq:elliptic-ODE} under the identification $y=y_0$}.

There are, in principle, five possible configurations for the quartic $P(z)$, only three of which are consistent with our assumptions:  
\begin{itemize}
\item[$\{31\}$:] \emph{Two distinct roots, one of multiplicity $3$ and one simple.}  Suppose that $P(z)=(z-\alpha)^3(z-\beta)$ for some $\alpha\neq\beta$.  Comparing the coefficients and eliminating $\alpha$ and $\beta$ shows that given $\kappa\approx\kappa_\infty$ with $\kappa_\infty\in (-1,1)$,
$y=y_0$ must be a root of the $8^\mathrm{th}$ degree polynomial \eqref{eq:branch-points} defining the branch points of equilibrium solutions $U_0$ (see \eqref{eq:equilibrium}, and Proposition~\ref{prop:triangles}; these are precisely the vertices visible in the plots in Figure~\ref{fig:Domains}).  Given $\kappa$, for each of these eight points, the values of $\alpha$, $\beta$, and $E$ are uniquely determined.
\item[$\{211\}$:] \emph{Three distinct roots, one double and two simple.}  Suppose that $P(z)=(z-\alpha)(z-\beta)(z-\gamma)^2$ for distinct values $\alpha$, $\beta$, and $\gamma$.  Comparing the coefficients yields the system of equations
\eq
\begin{split}\alpha+\beta+2\gamma&=-4y\\
\alpha\beta+2(\alpha+\beta)\gamma +\gamma^2&=4(y^2+2\kappa)\\
2\alpha\beta\gamma + (\alpha+\beta)\gamma^2&=-2E\\
\alpha\beta\gamma^2&=16.
\end{split}
\label{eq:case-iv-coefficient-match}
\endeq
Eliminating $\alpha$ and $\beta$ between the first, second, and fourth equations gives the following quartic equation for $\gamma$:
\eq
Q(\gamma,y;\kappa):=\gamma^4+\frac{8}{3}y\gamma^3+\frac{4}{3}(y^2+2\kappa)\gamma^2-\frac{16}{3}=0.
\label{eq:gamma-eqn}
\endeq
This is precisely the same as the equation \eqref{eq:equilibrium} under the substitutions $y\mapsto y_0$ and $\gamma\mapsto U_0$, and hence the discriminant defining the branch points for $\gamma$ is \eqref{eq:branch-points} with $y_0$ replaced by $y$.  From \eqref{eq:equilibria-near-infinity} we see
that there are four distinct values of $\gamma$ when $y$ is large, namely
\begin{equation}
\gamma=2y^{-1}+\bo(y^{-2}),\quad\gamma=-2y^{-1}+\bo(y^{-2}),\quad\gamma=-2y+\bo(1),\quad\text{and}\quad
\gamma=-\tfrac{2}{3}y+\bo(1),\quad y\to\infty.
\end{equation}
Given $y\in\mathbb{C}$, $\kappa\approx\kappa_\infty$ with $\kappa_\infty\in (-1,1)$, 
and any root $\gamma$ of \eqref{eq:gamma-eqn}, the values of $E$, $\alpha$, and $\beta$ (the latter up to permutation) are determined from 
\begin{equation}
E=-16\gamma^{-1}+2y\gamma^2+\gamma^3,\quad\alpha\beta=16\gamma^{-2},\quad\alpha+\beta=-4y-2\gamma.
\label{eq:alpha-beta-E}
\end{equation}
According to Theorems~\ref{thm:HermiteExterior}--\ref{thm:OkamotoExterior} in light of Remark~\ref{rem:equilibrium-rederive}, this configuration will turn out to be relevant for $y_0\in\mathcal{E}_\mathrm{F}(\kappa)$ depending on the family $\mathrm{F}=\mathrm{gH}$ or $\mathrm{F}=\mathrm{gO}$.
\item[$\{1111\}$:] \emph{Four distinct roots, all simple.}  Suppose that $P(z)=(z-\alpha)(z-\beta)(z-\gamma)(z-\delta)$ for distinct values $\alpha,\beta,\gamma,\delta$.  This configuration again places no conditions on $y$ or $\kappa$, but now the constant $E$ is also free, and some additional conditions need to be specified to relate it to $y$ and $\kappa$.  These will be the \emph{Boutroux conditions} to be introduced later (see also \eqref{eq:intro-Boutroux}). This configuration will turn out to be relevant for $y_0$ in the bounded regions $\rectangle(\kappa)$, $\pm\TR(\kappa)$, $\pm\TI(\kappa)$ introduced in Section~\ref{sec:intro-results-nonequilibrium}.
\item[$\{4\}$:] \emph{One root of multiplicity $4$.}  Suppose that $P(z)=(z-\alpha)^4$ for some $\alpha$.  Comparing the coefficients and eliminating $\alpha$ shows easily that this form is consistent only if $y^4=16$ and $\kappa=\tfrac{1}{4}y^2$.  But further eliminating $y^2=\pm 4$ gives $\kappa=\pm 1$ which is inconsistent in the limit with $\kappa\to\kappa_\infty$ with $\kappa_\infty\in (-1,1)$.
\item[$\{22\}$:] \emph{Two distinct double roots.}  Suppose that $P(z)=(z-\alpha)^2(z-\beta)^2$ for some $\alpha\neq\beta$.  Comparing the coefficients then yields, as in the case of a single root of multiplicity $4$, that $\kappa=\pm 1$ which is inconsistent for large $T$ with $\kappa_\infty\in (-1,1)$.  
\end{itemize}
Therefore, only cases $\{31\}$, $\{211\}$, and $\{1111\}$ will be relevant to our study going forward, and we will say that the spectral curve is of \emph{class} $\{31\}$, $\{211\}$, or $\{1111\}$.  In any of these cases, we can solve for $g'(z)$ by introducing suitable bounded branch cuts between pairs of distinct roots of the quartic $P(z)$ and defining a function $R(z)$ analytic except on these cuts that satisfies $R(z)^2=P(z)$ and $R(z)=z^2+\bo(z)$ as $z\to\infty$.  Then, in order to satisfy the necessary condition $g'(z)\to 0$ as $z\to\infty$ we need to take the square root in \eqref{eq:spectral-curve} precisely as follows:
\eq
g'(z)=\phi'(z;y)-\frac{1}{4}z^{-1}R(z).
\label{eq:gprime-R}
\endeq
That this formula also gives $g'(z)=-sz^{-1}+\bo(1)$ as $z\to 0$ then requires in addition that $R(0)=4s$, which we interpret as a condition on how the branch cuts of $R(z)$ must be placed relative to the origin in order to achieve the correct sign of $R(0)$.  In general, the function $g$ will depend parametrically on $y\in\mathbb{C}$ and the related parameters $s=\pm 1$ and $\kappa\approx\kappa_\infty$ with $\kappa_\infty\in (-1,1)$,
so when it is necessary to emphasize the parameter dependence we will write $g(z)=g^{(s,\kappa)}(z;y)$ going forward.  It will be convenient to define the related function $h(z)=h^{(s,\kappa)}(z;y)$ by
\eq
h^{(s,\kappa)}(z;y):=g^{(s,\kappa)}(z;y)-\phi(z;y)\quad\implies\quad
h^{(s,\kappa)\prime}(z;y)=-\frac{R(z)}{4z}.
\label{eq:h-g-phi}
\endeq

A key role in our analysis will be played by certain trajectories of the quadratic differential $h'(z)^2\,\dd z^2$, and how they depend on $y$ once the coefficient $E$ is suitably determined as a function of $y$ for given $\kappa$ and the family (gO or gH) of interest.  We will deduce all of the needed properties theoretically below, but it is also easy to compute them numerically, so as a preview of what will come, we present plots of the trajectories connected to simple roots of $P(z)$ in Figures~\ref{fig:Trajectories-gO-k0} and \ref{fig:Trajectories-gH-k0} (plots corresponding to exceptional values of $y_0$, indicated by a red dot on a curve in the central inset of each figure, show all trajectories connected to roots of $P(z)$ of any multiplicity).
\begin{figure}[h]
\hspace{-0.2in}
\begin{tabular}{c}
\includegraphics[height=1.2in]{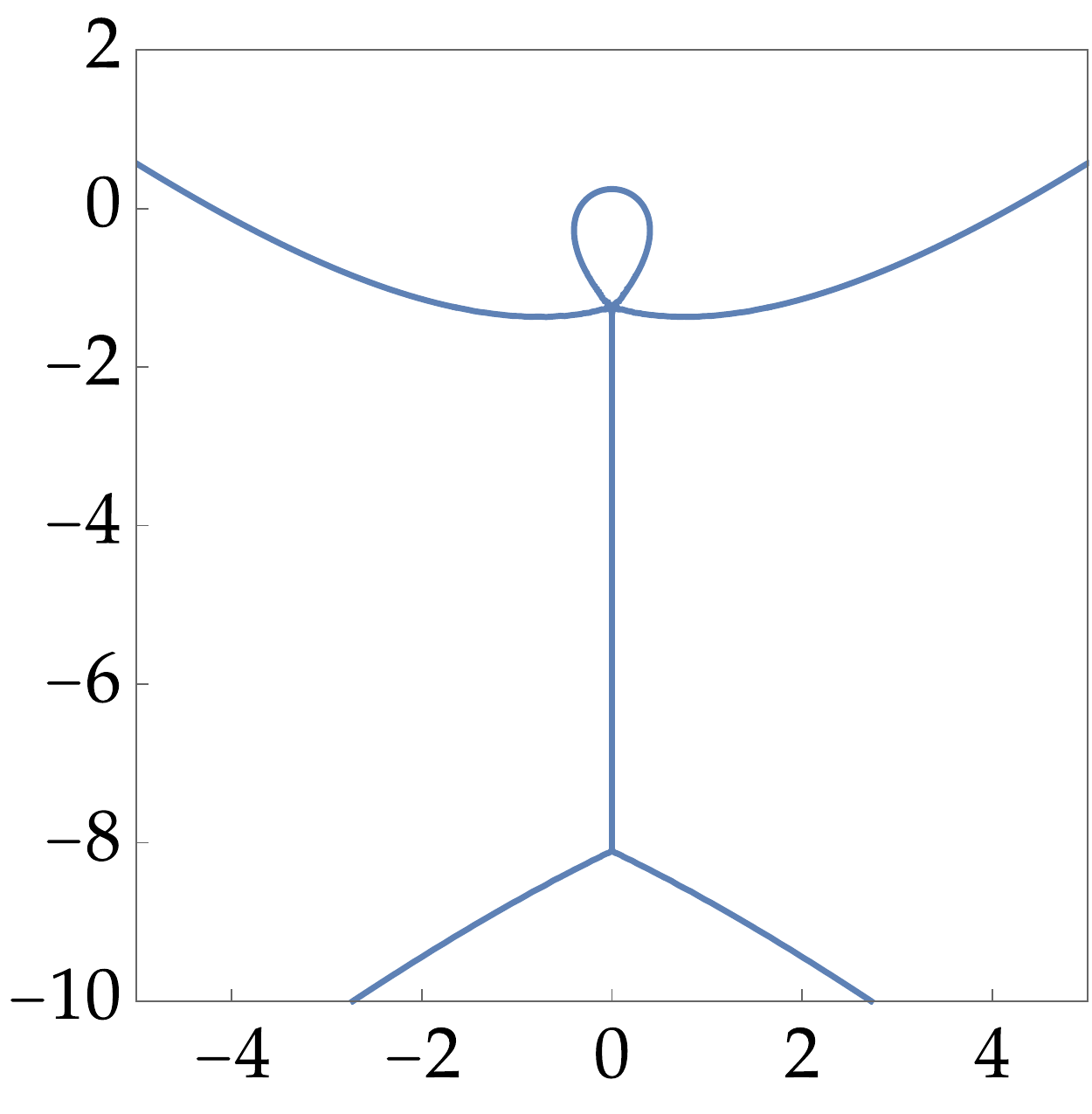}\\
\includegraphics[height=1.2in]{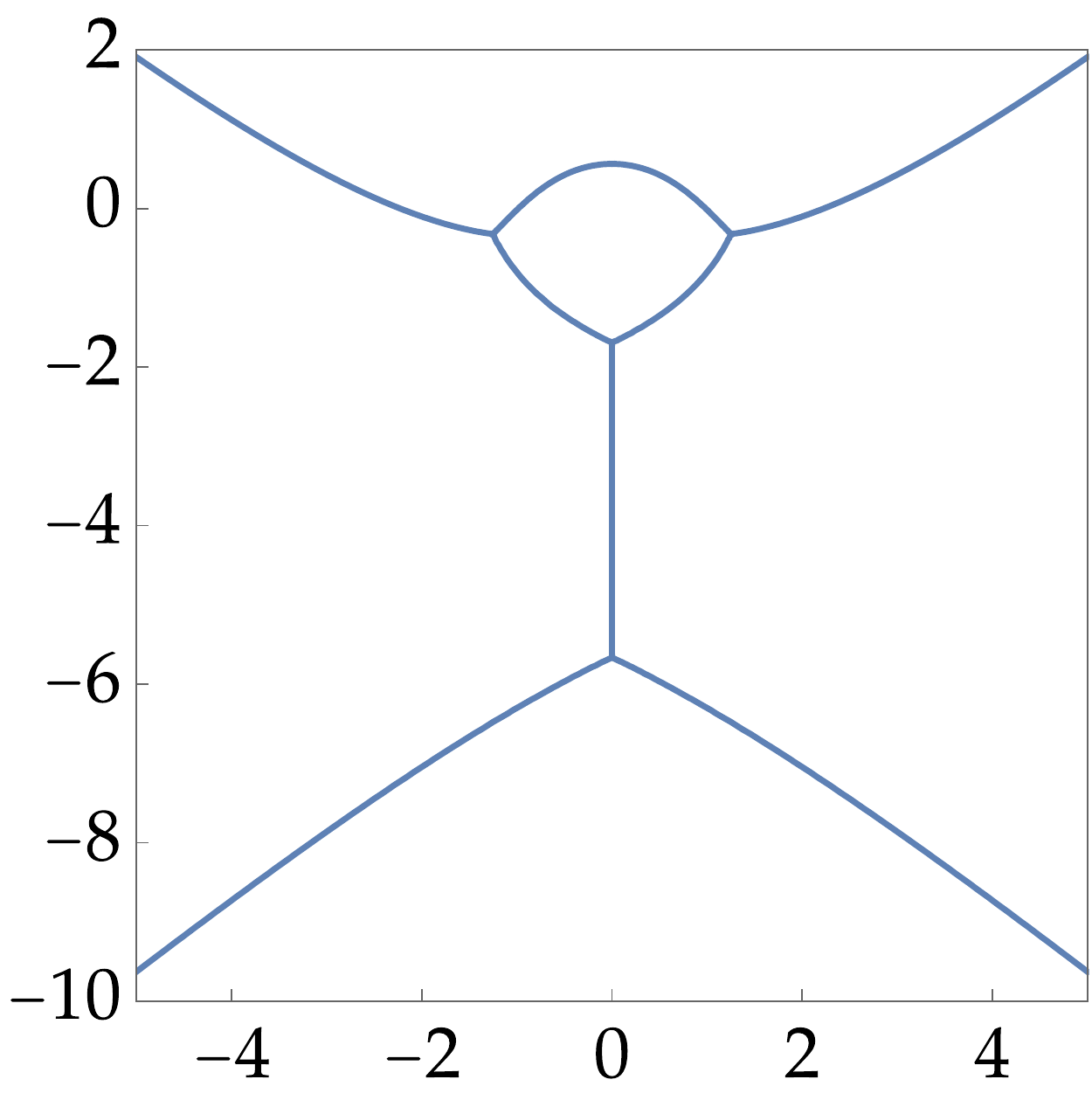}\\
\includegraphics[height=1.2in]{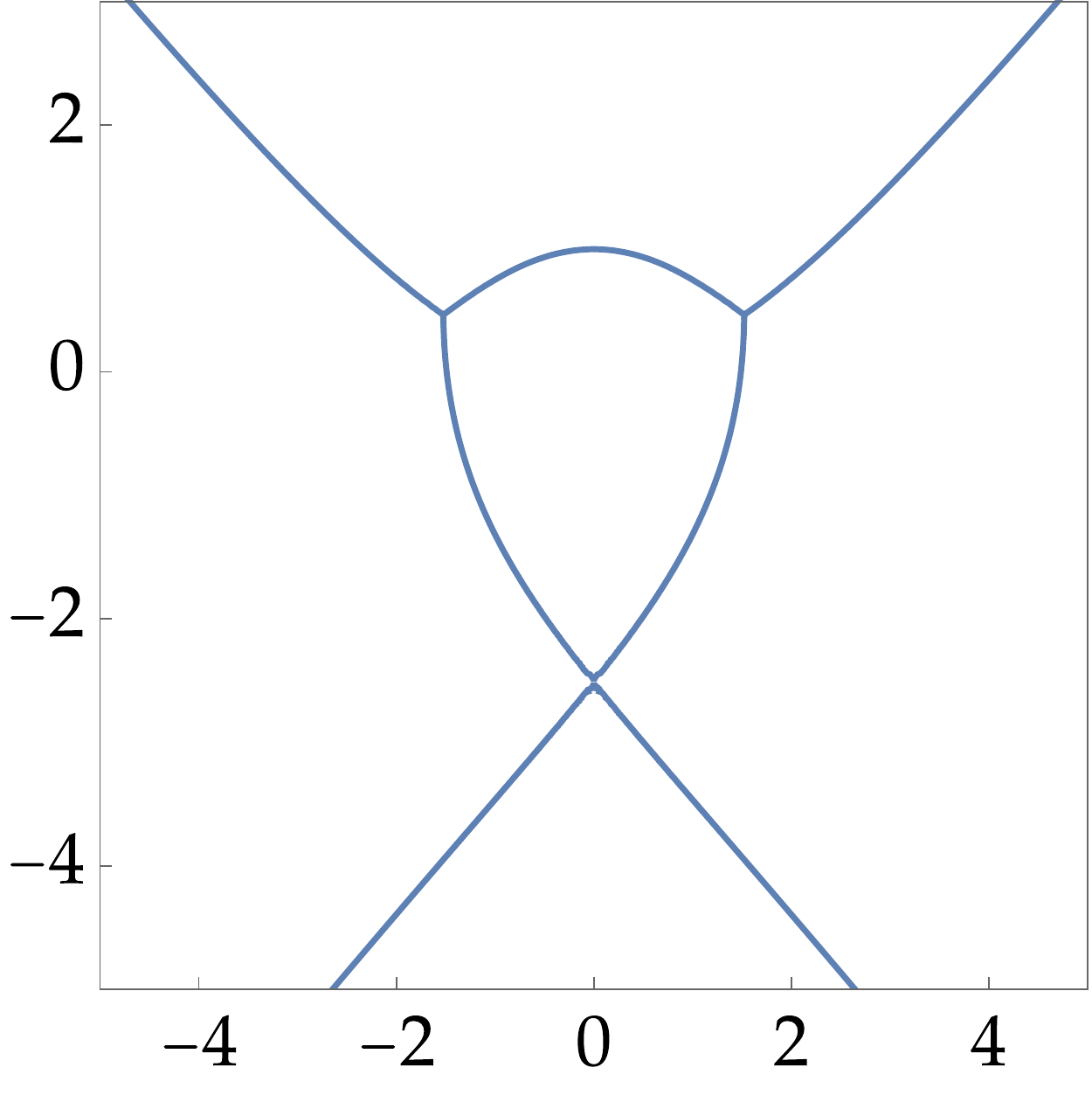}\\
\includegraphics[height=1.2in]{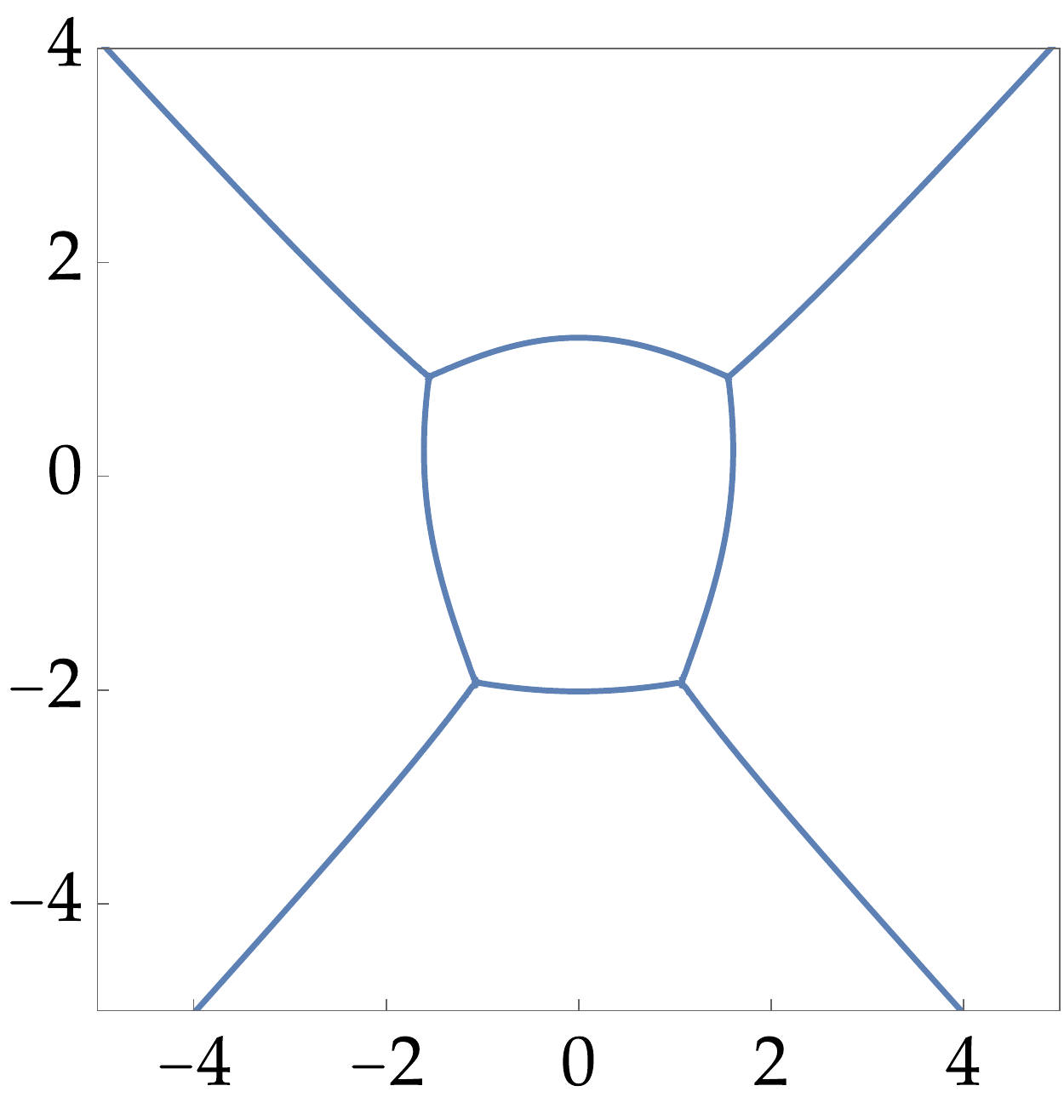}\\
\includegraphics[height=1.2in]{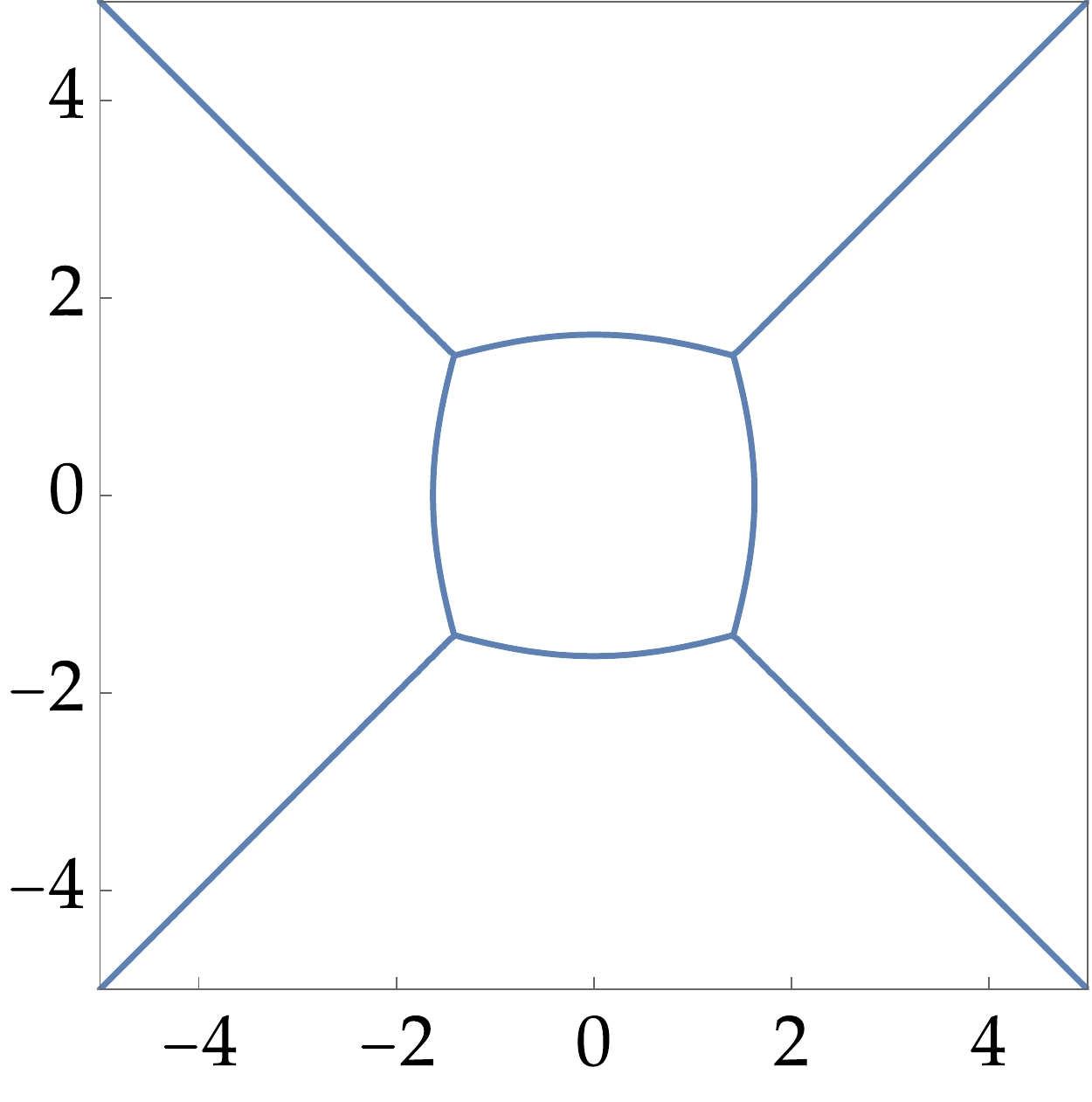}\\
\end{tabular}
\hspace{-.18in}
\begin{tabular}{c}
\includegraphics[height=1.2in]{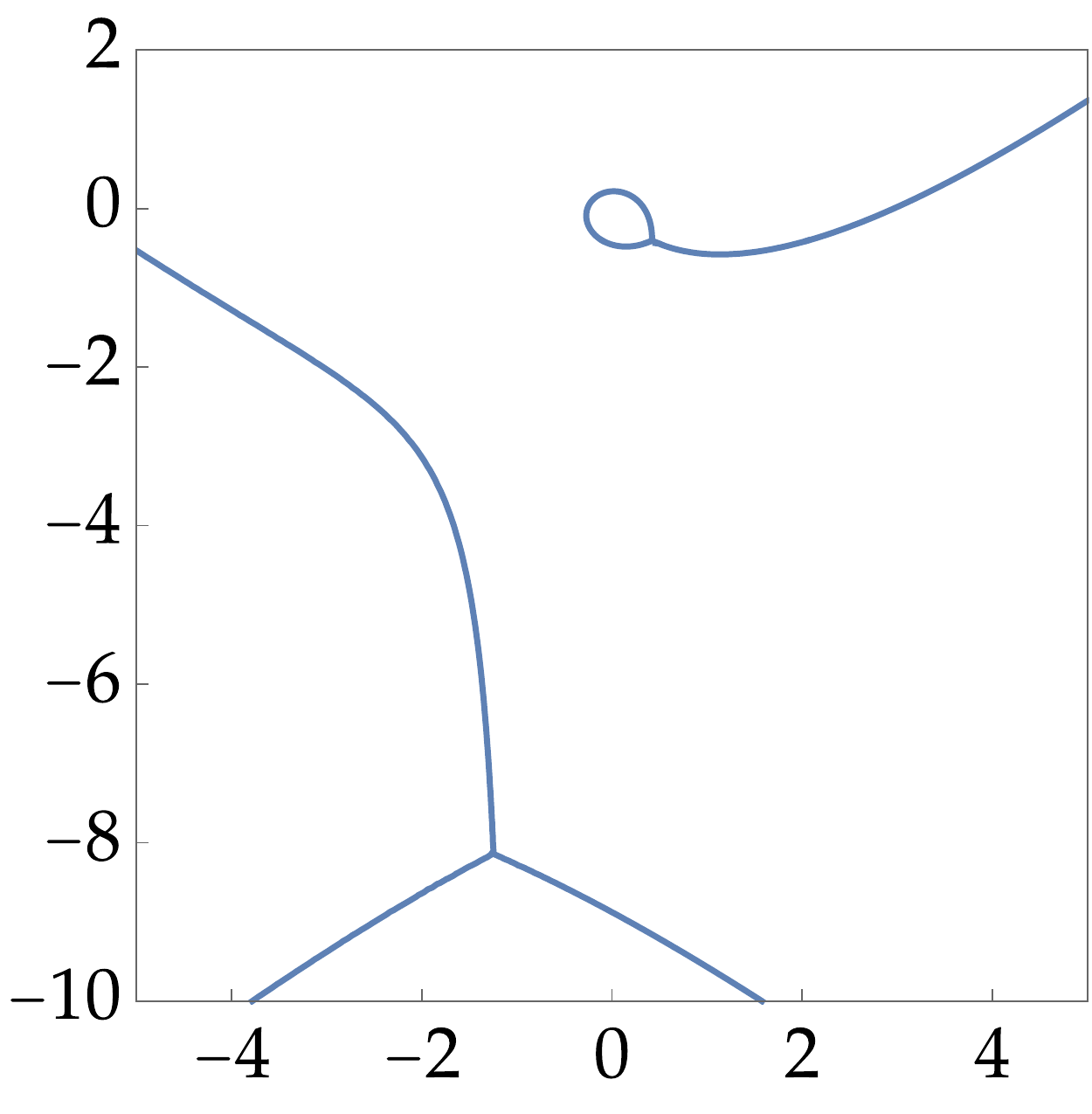}
\includegraphics[height=1.2in]{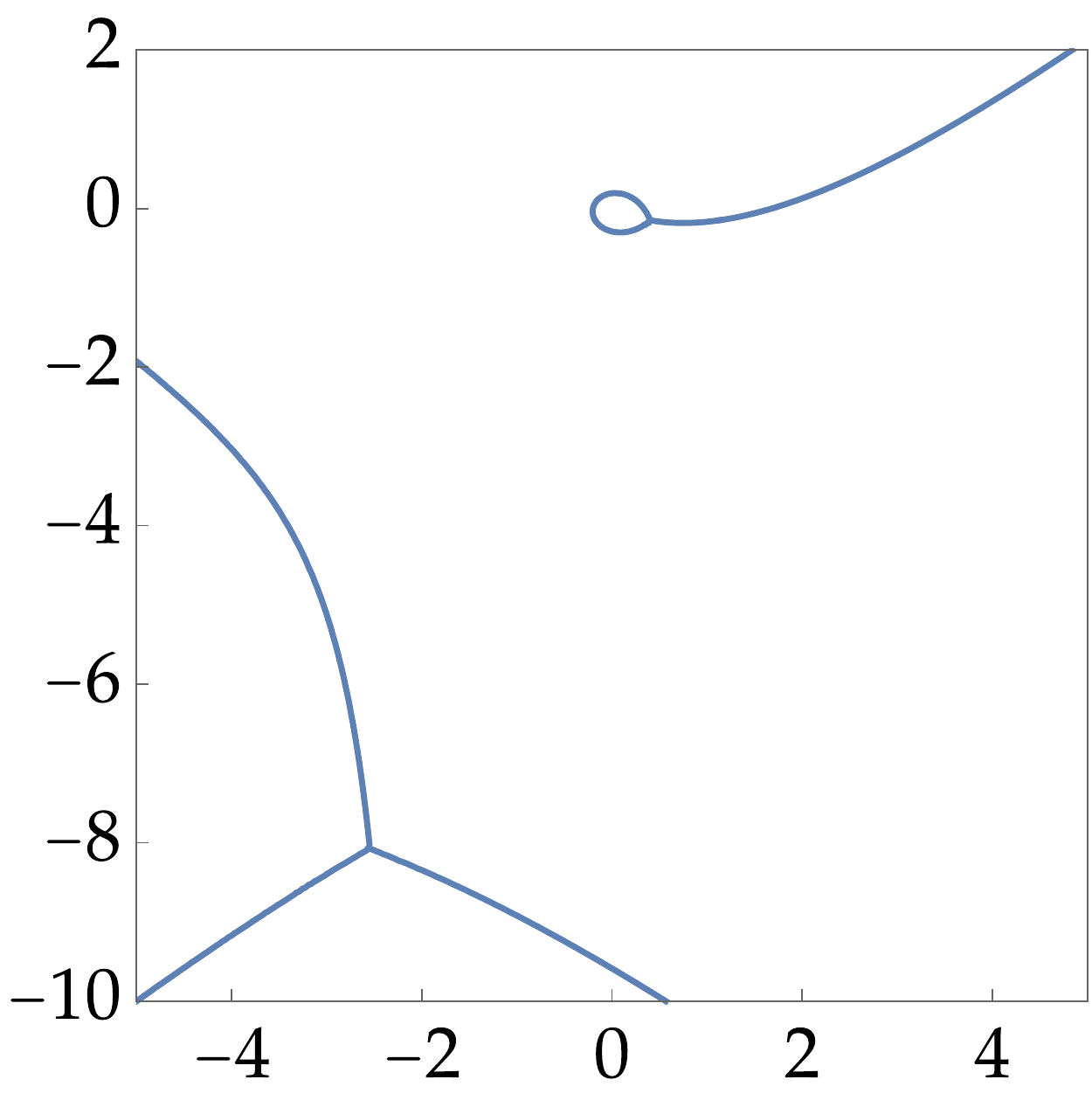}
\includegraphics[height=1.2in]{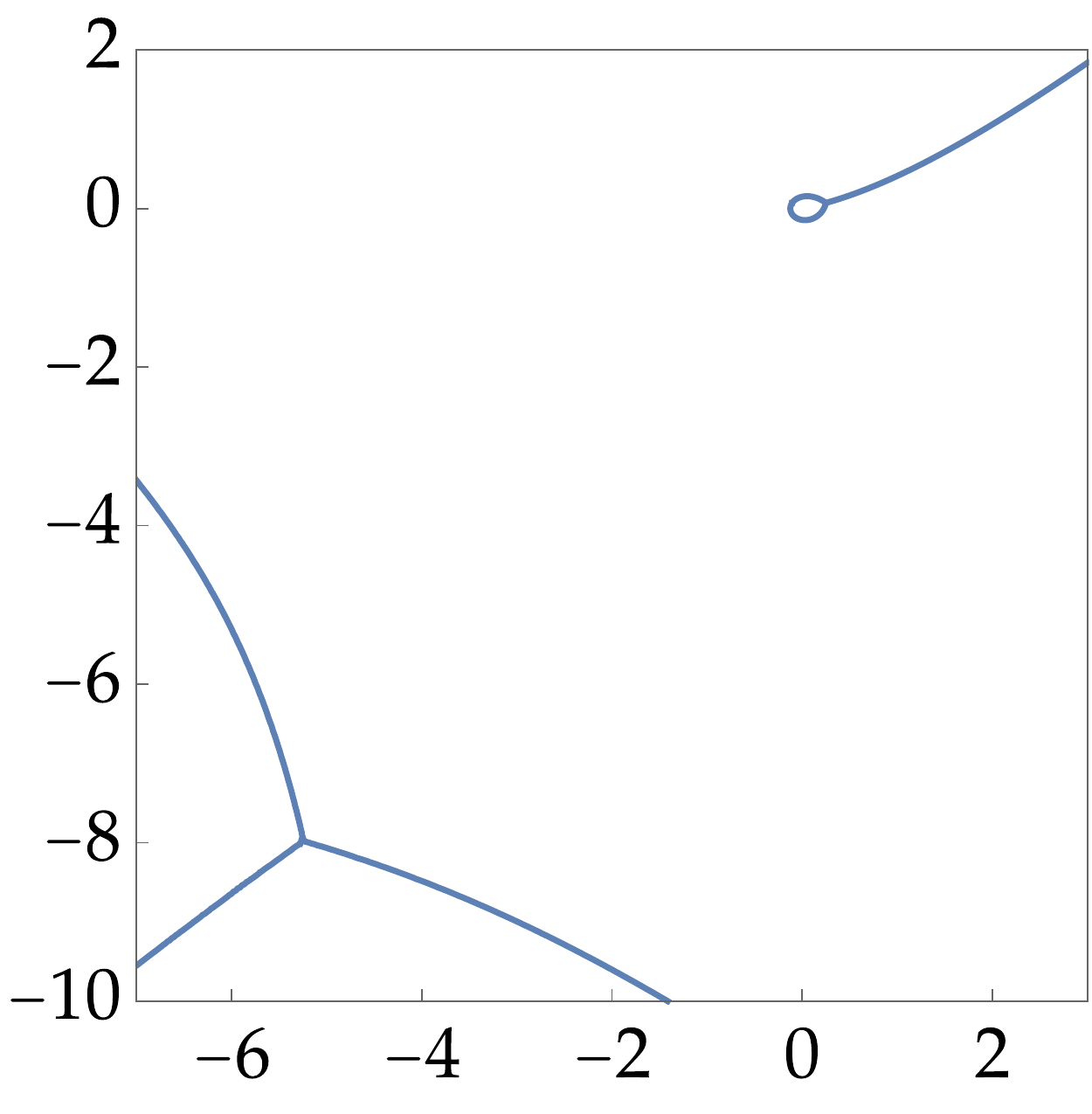}\\
\vspace{.18in}\\
\includegraphics[width=3in]{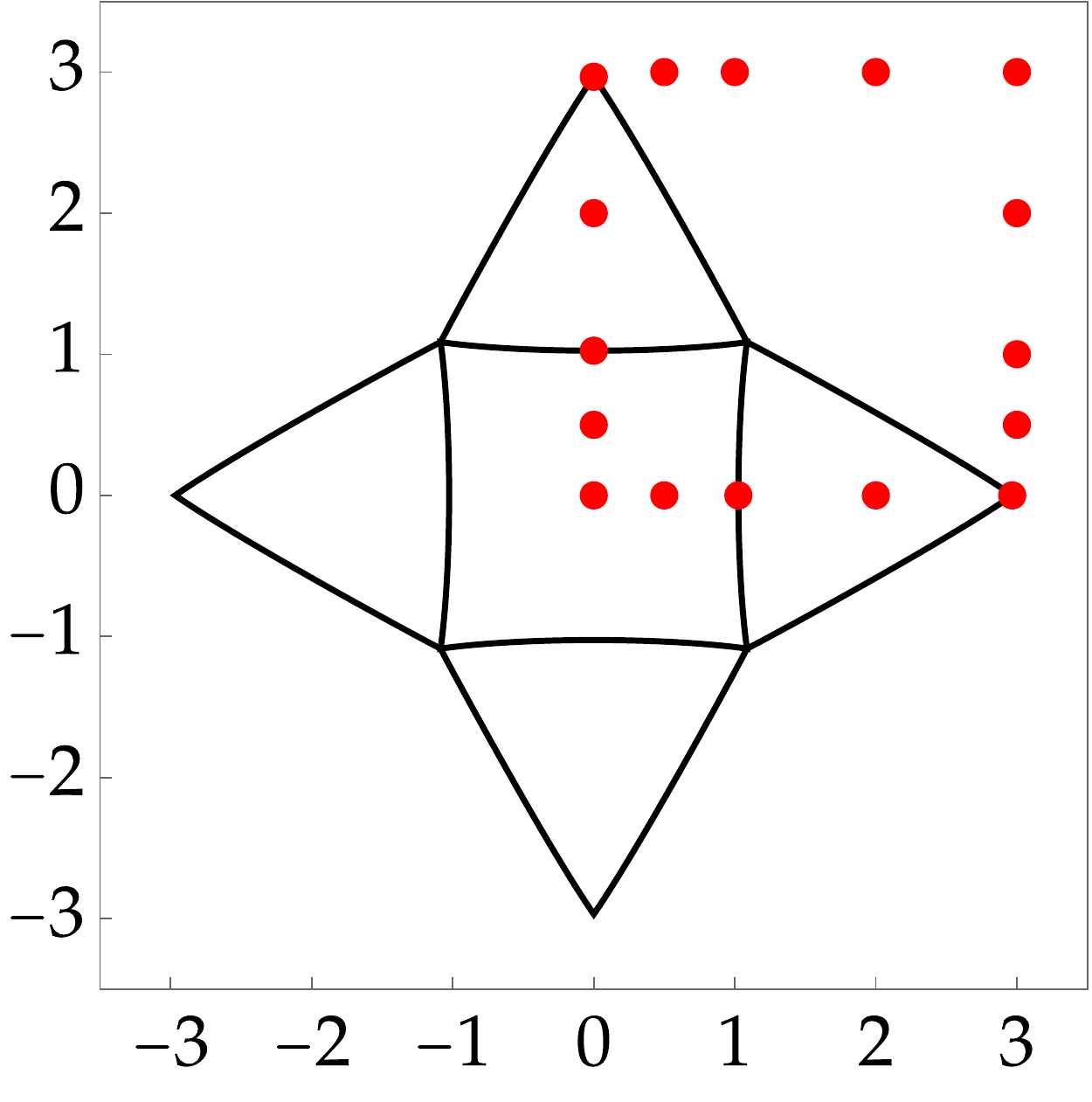}\\
\vspace{.18in}\\
\includegraphics[height=1.2in]{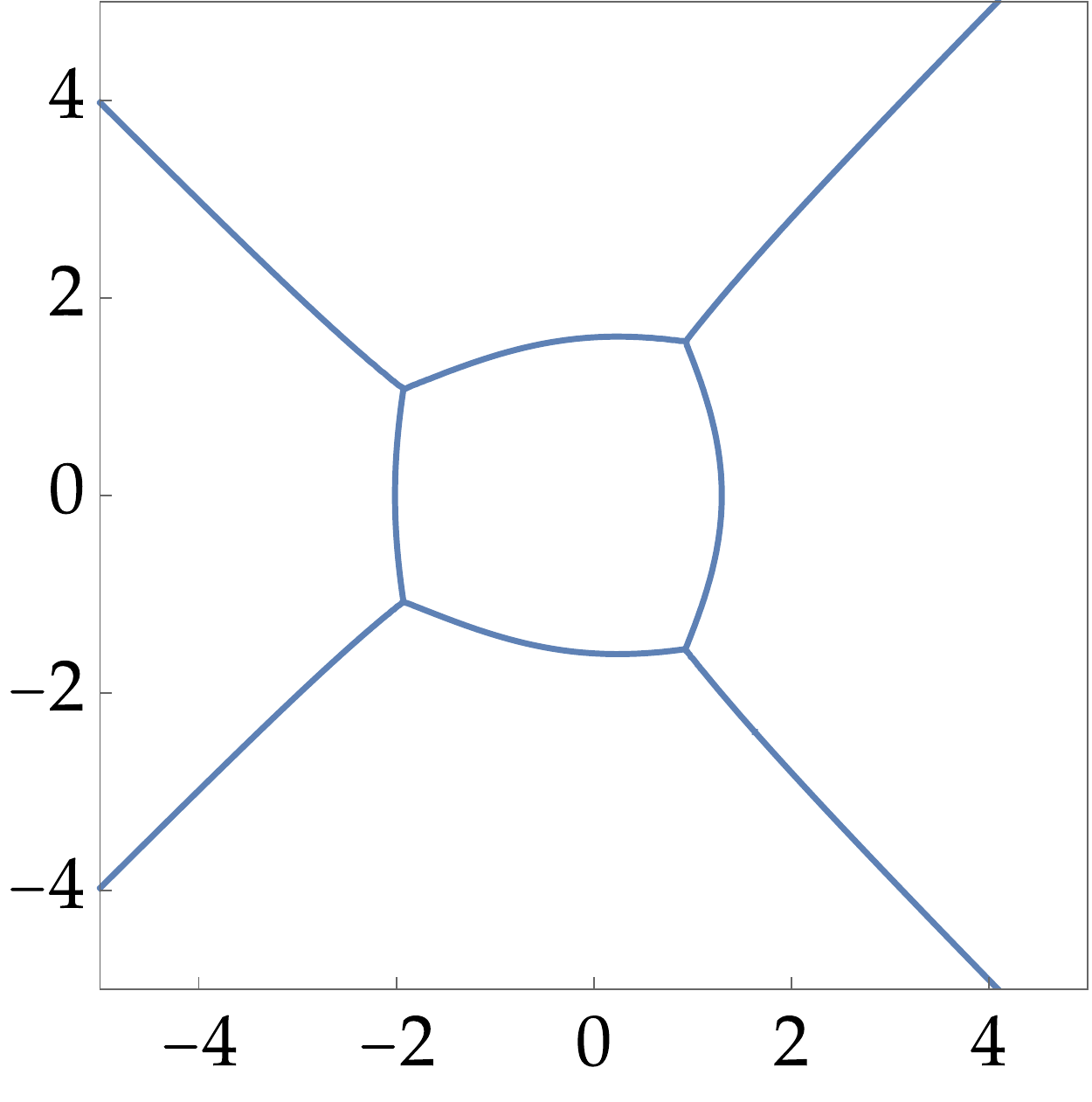}
\includegraphics[height=1.2in]{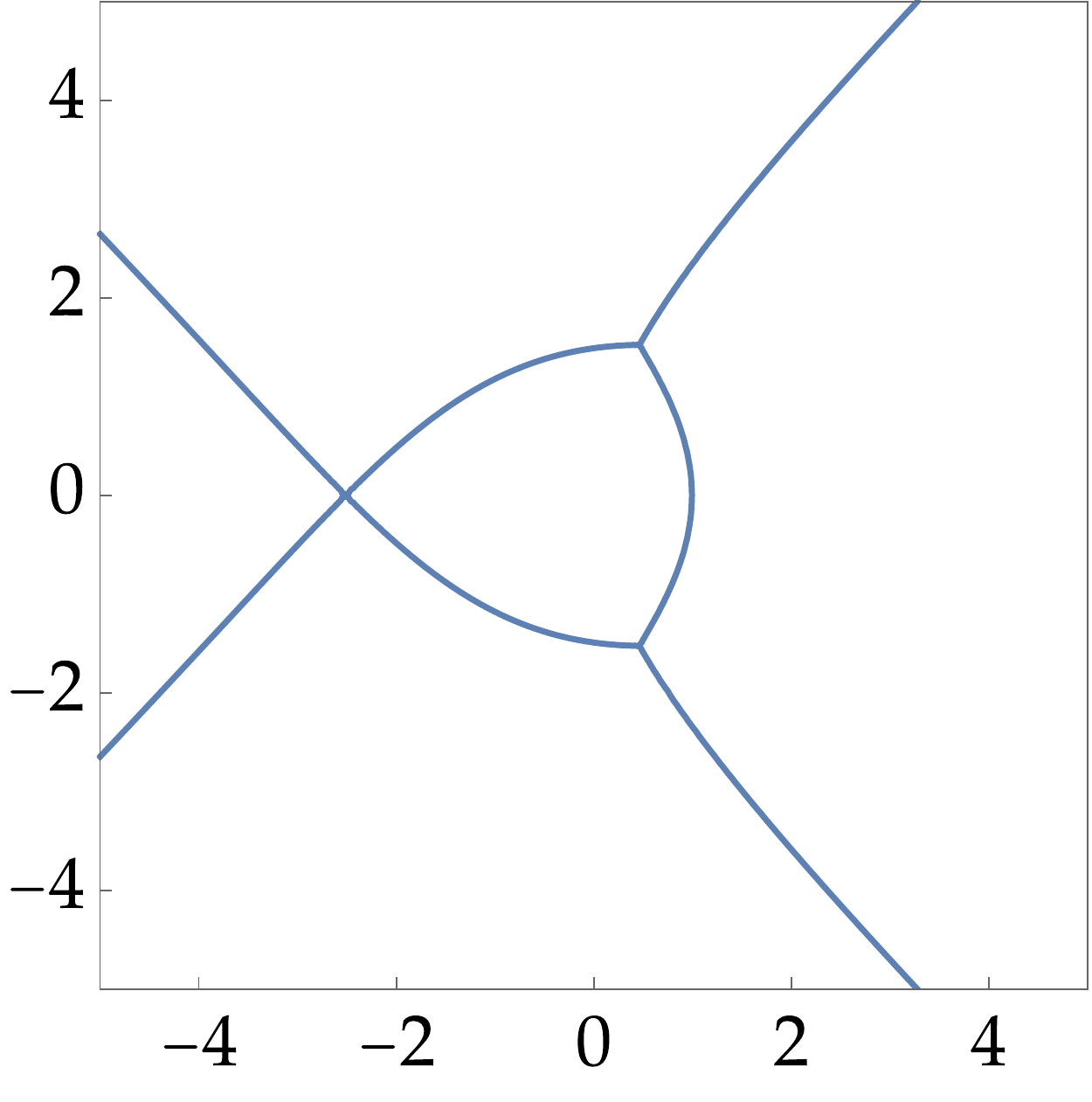}
\includegraphics[height=1.2in]{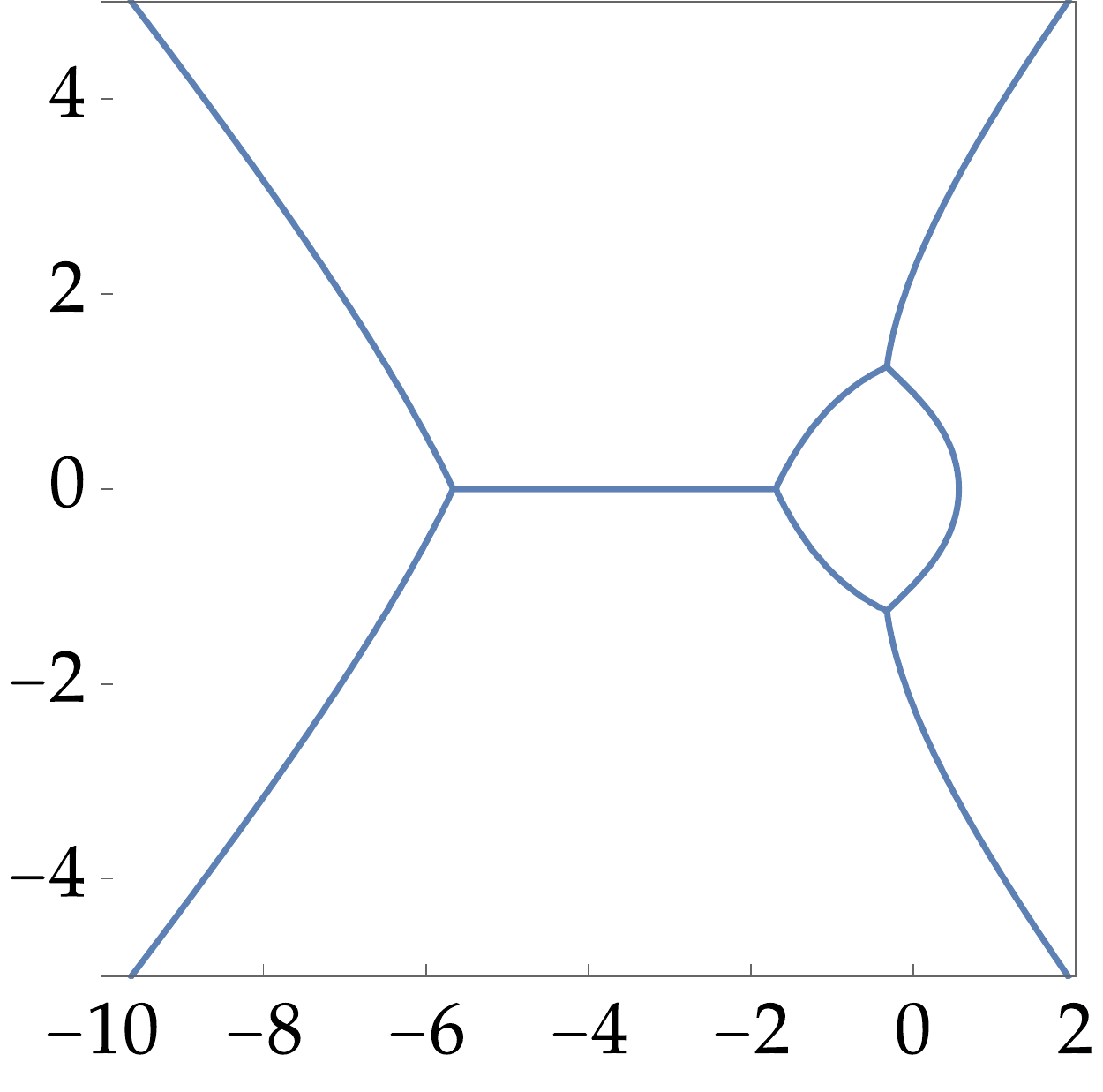}\\
\end{tabular}
\hspace{-.18in}
\begin{tabular}{c}
\includegraphics[height=1.2in]{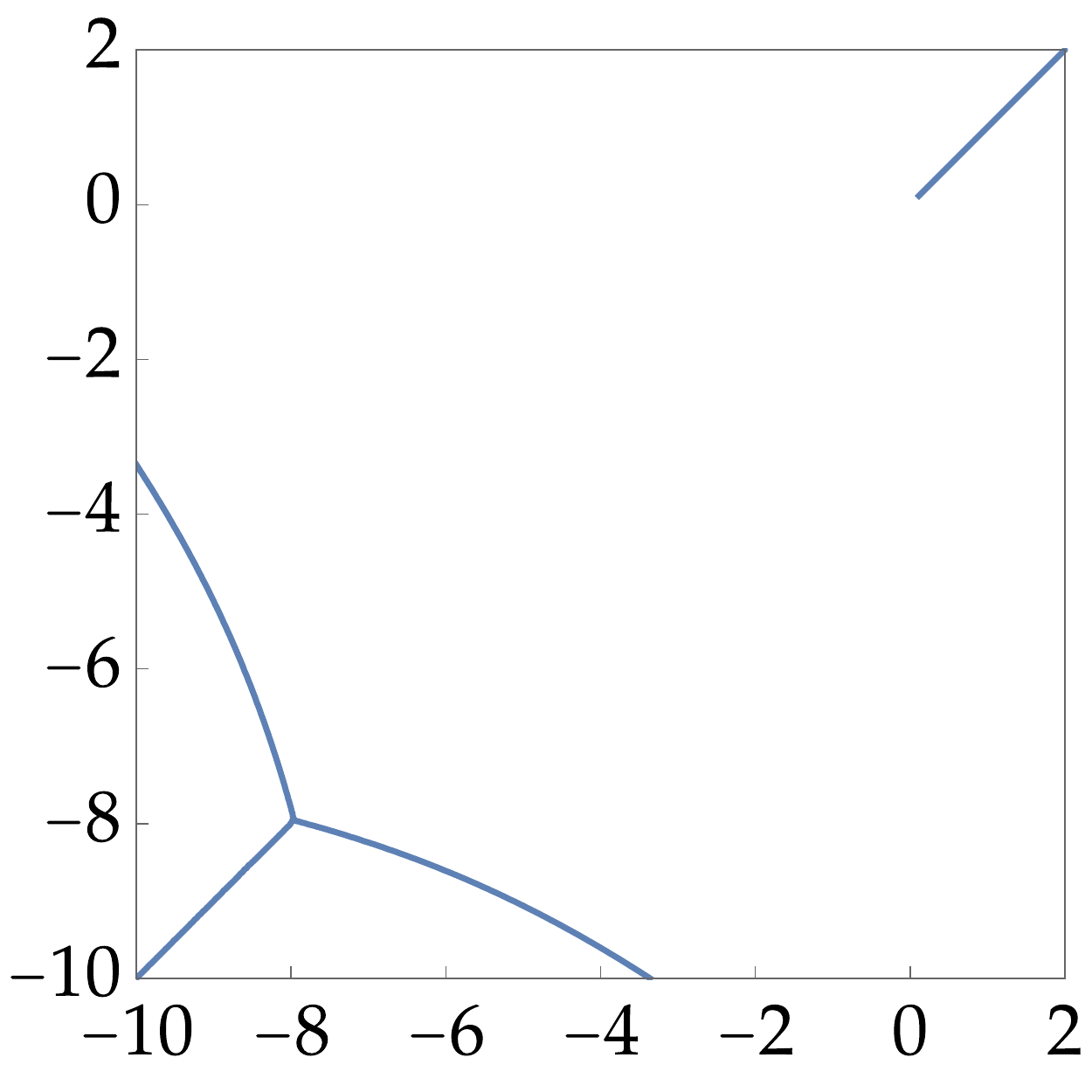}\\
\includegraphics[height=1.2in]{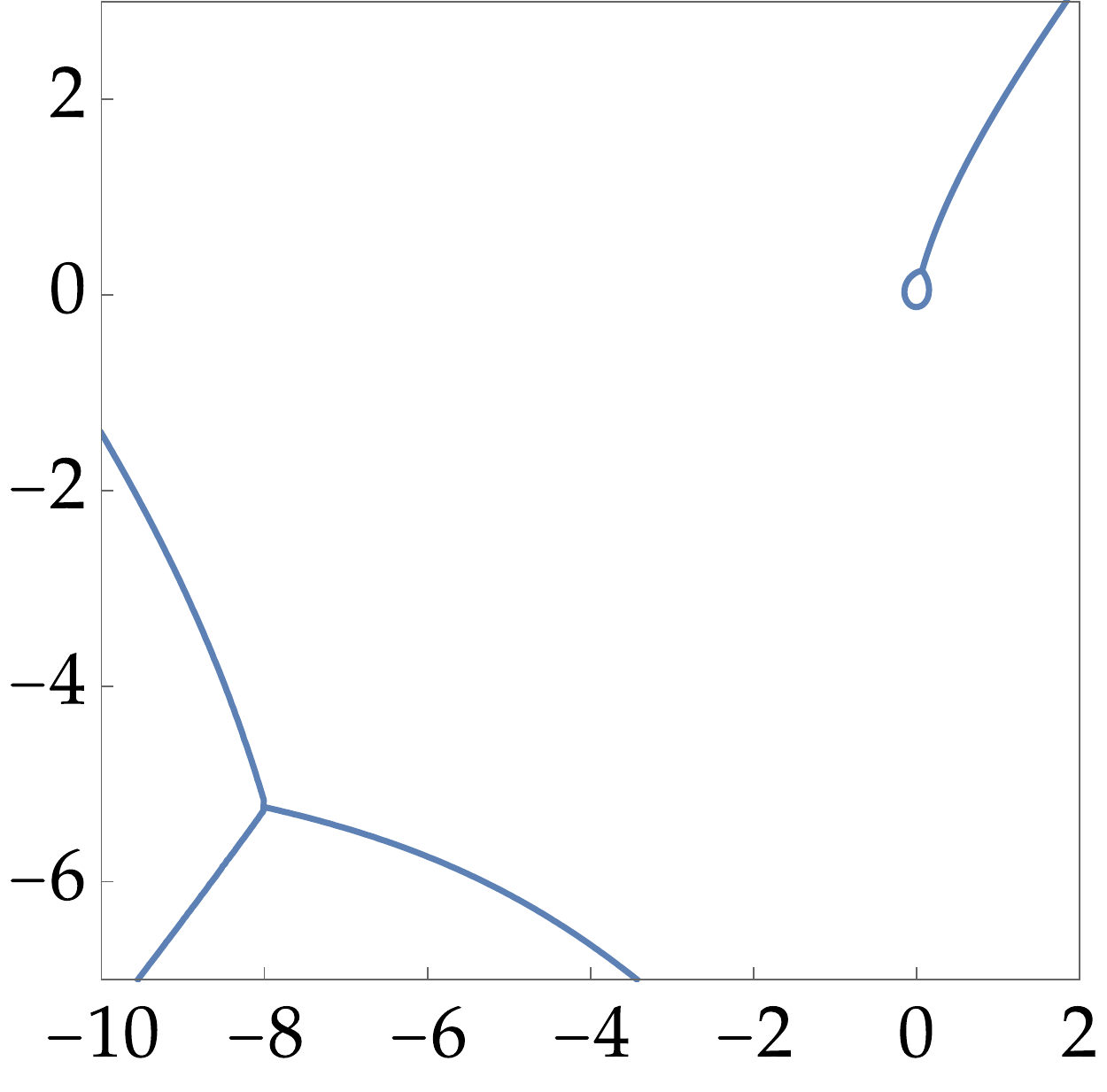}\\
\includegraphics[height=1.2in]{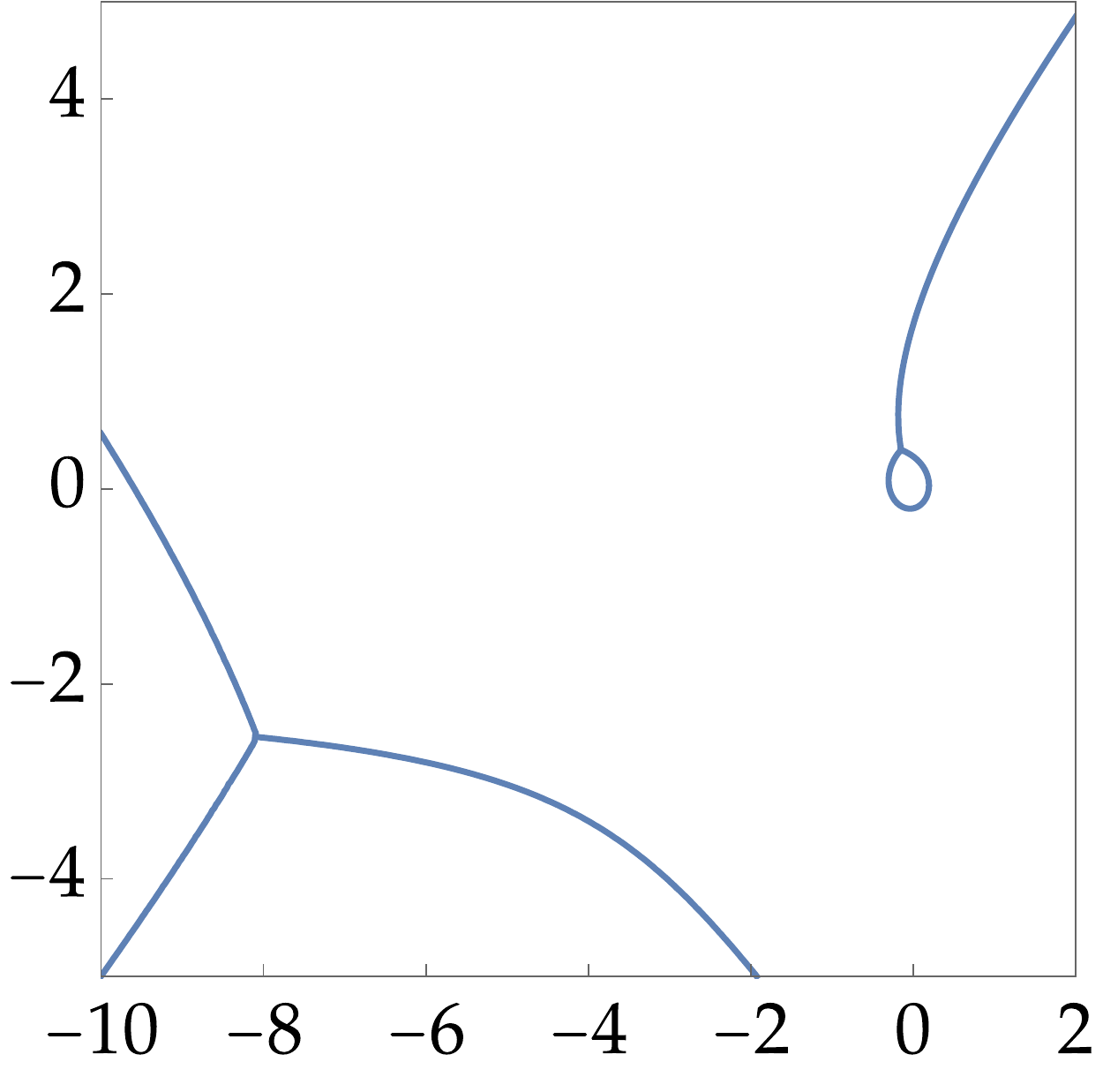}\\
\includegraphics[height=1.2in]{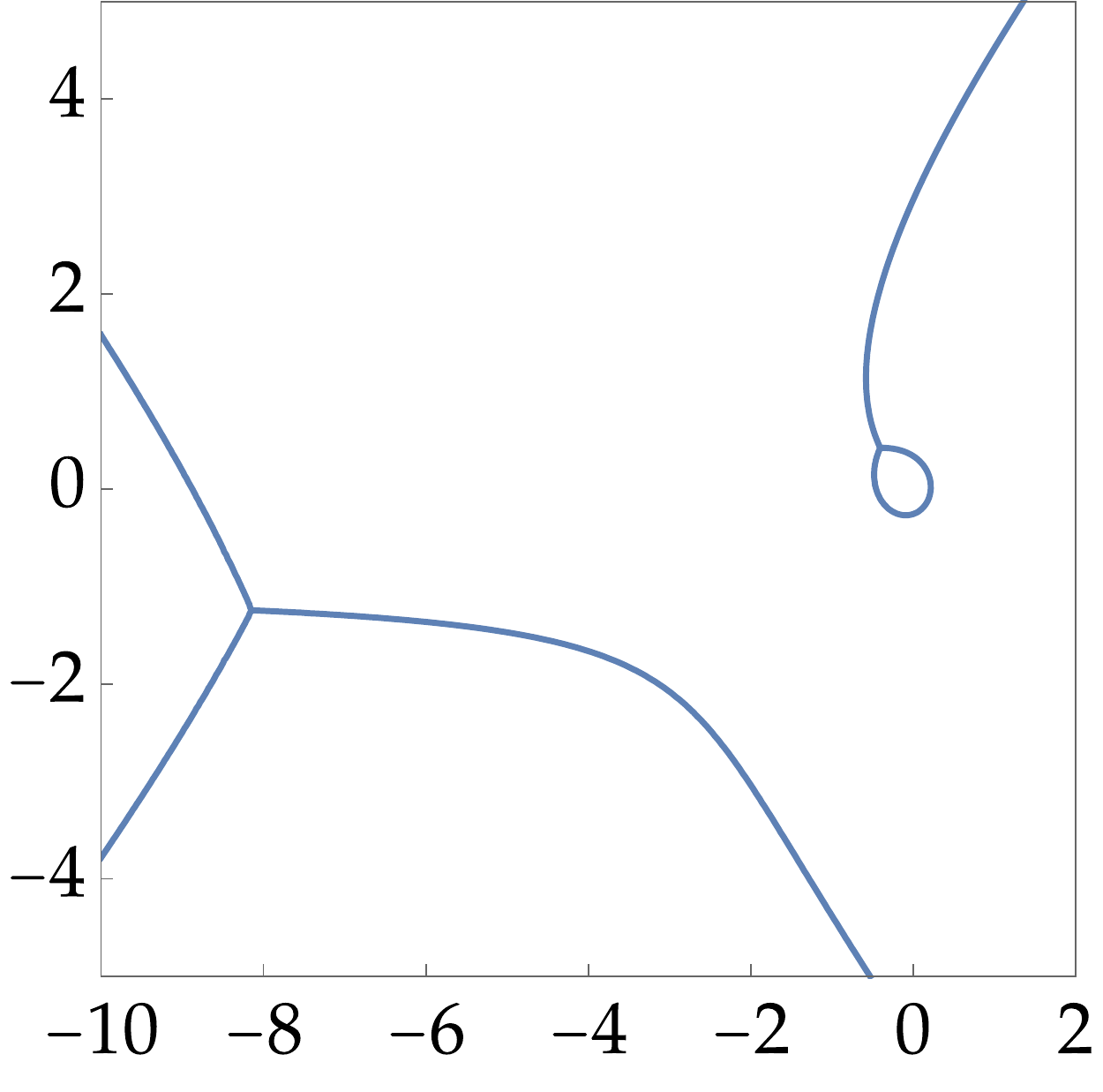}\\
\includegraphics[height=1.2in]{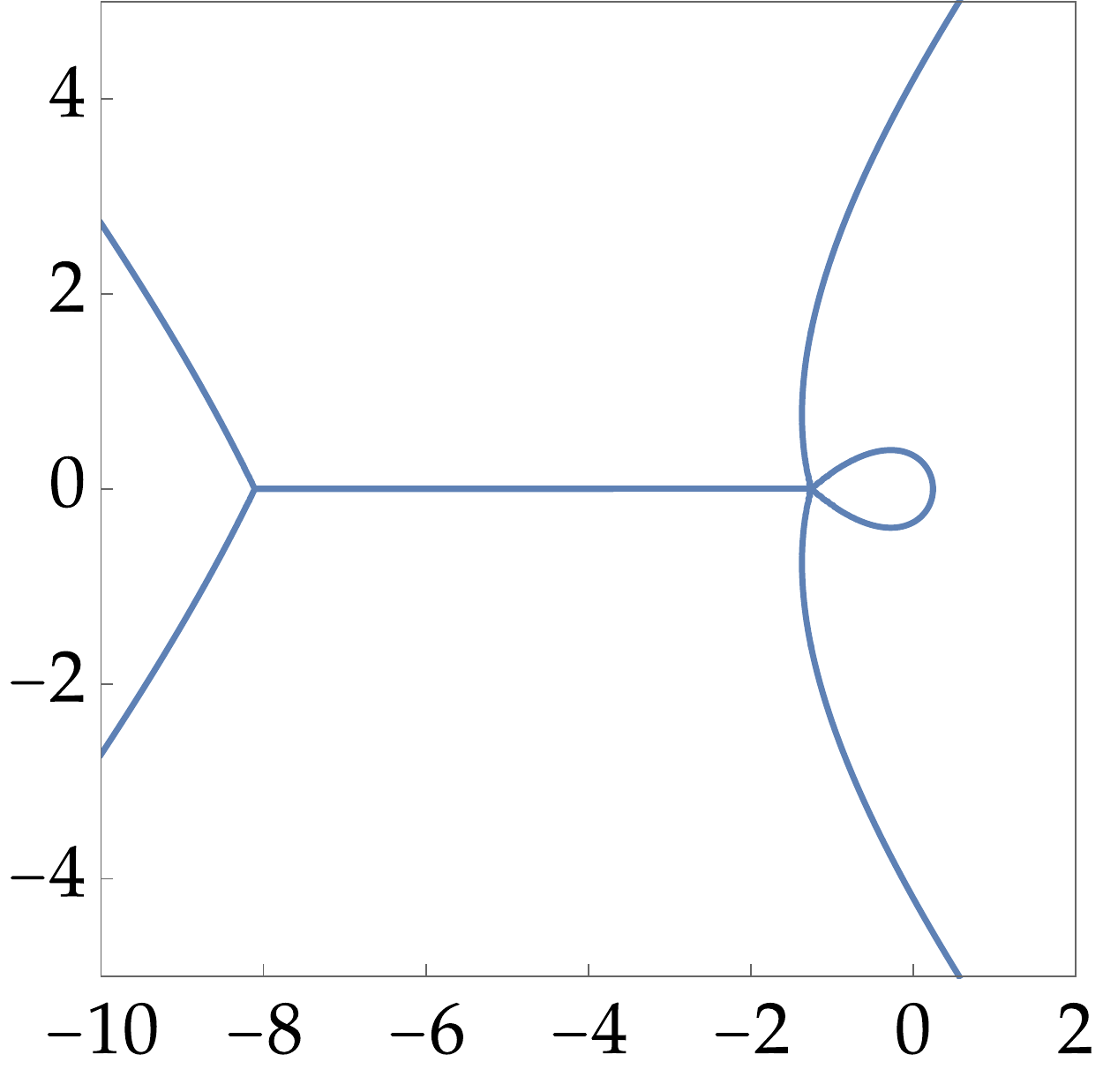}\\
\end{tabular}
\caption{Critical trajectories of $h'(z)^2\,\dd z^2$ emanating from (for generic $y_0$) simple roots of $P(z)$ for $\kappa=0$ for the gO family.  The same topological structure holds for $-1<\kappa<1$.  Counterclockwise from top left:  $y\approx 2.96773i$, $y=2i$, $y\approx 1.0253i$, $y=0.5i$, $y=0$, $y=0.5$, $y\approx 1.0253$, $y=2$, $y\approx 2.96773$, $y=3+0.5i$, $y=3+i$, $y=3+2i$, $y=3+3i$, $y=2+3i$, $y=1+3i$, $y=0.5+3i$.  Inset:  Boundaries of the regions $\rectangle(0)$, $\pm\TR(0)$, and $\pm\TI(0)$ in the $y$-plane.  The $y$-values corresponding to different trajectory plots are indicated by red dots.}
\label{fig:Trajectories-gO-k0}
\end{figure}

\begin{figure}[h]
\hspace{-0.2in}
\begin{tabular}{c}
\includegraphics[height=1.2in]{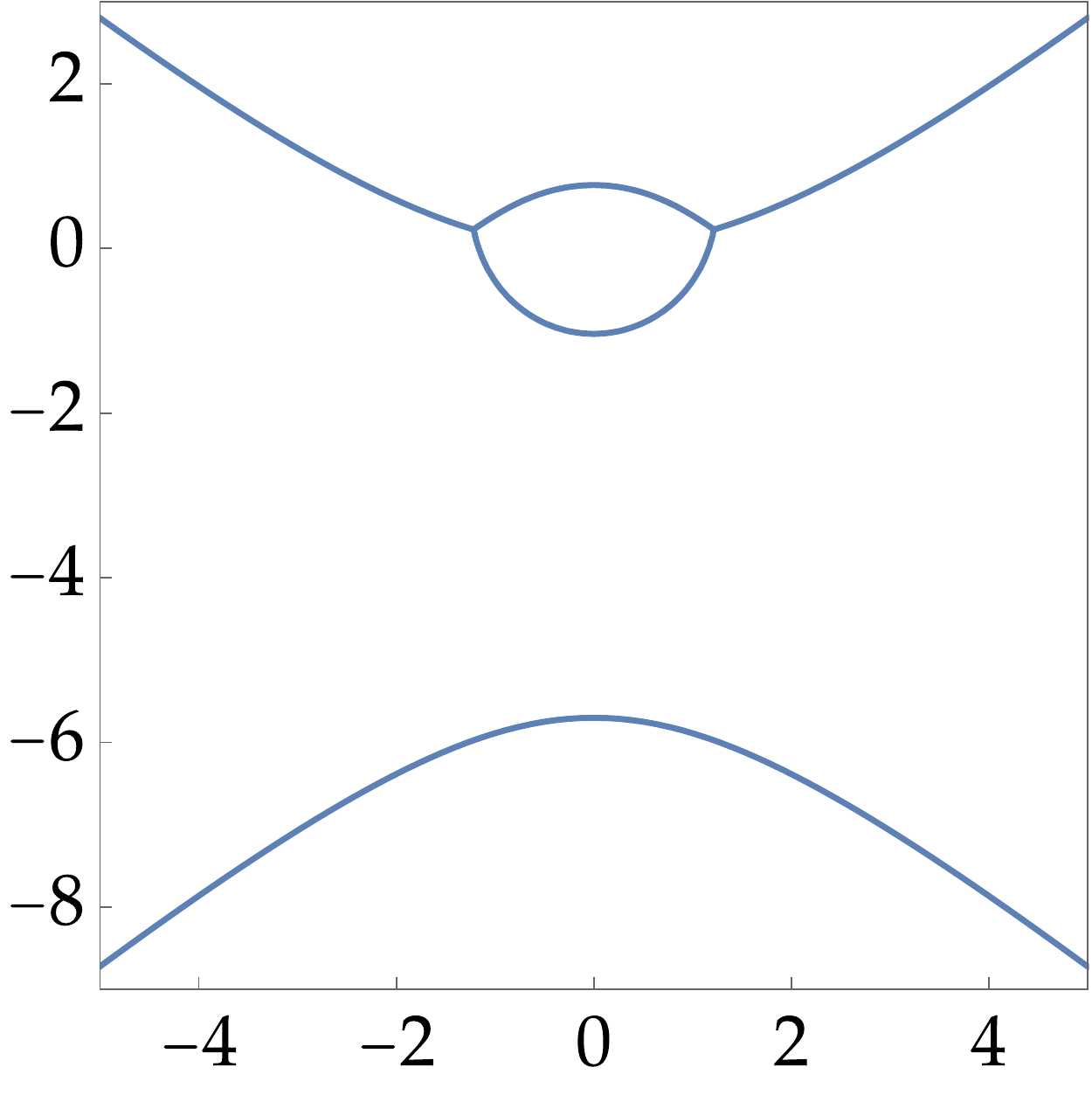}\\
\includegraphics[height=1.2in]{stokes-k0-y1p0253i.pdf}\\
\includegraphics[height=1.2in]{stokes-k0-y0p5i.pdf}\\
\includegraphics[height=1.2in]{stokes-k0-y0.pdf}\\
\end{tabular}
\hspace{-.18in}
\begin{tabular}{c}
\includegraphics[height=1.2in]{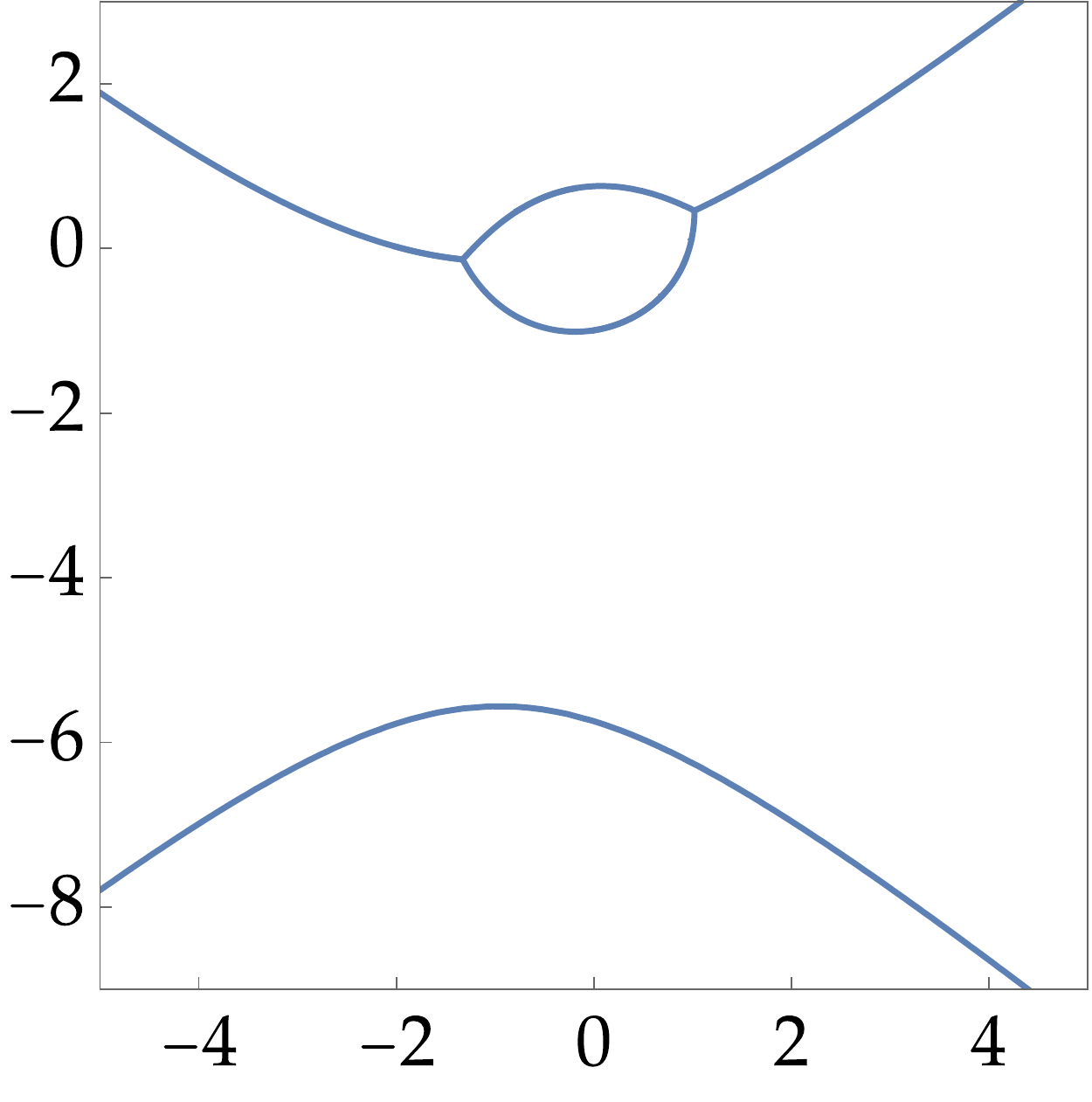}
\includegraphics[height=1.2in]{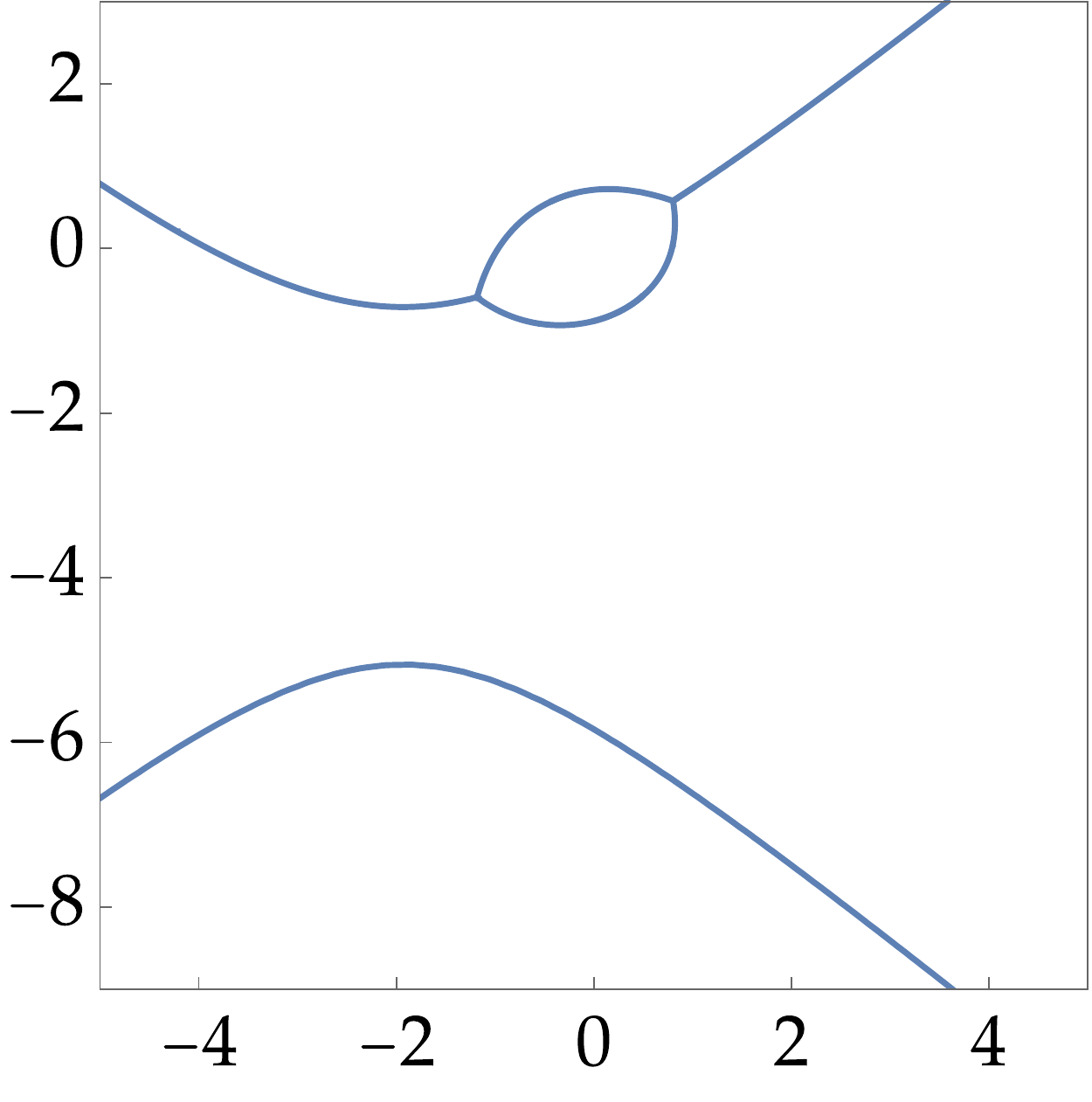}\\
\vspace{-.13in}\\
\includegraphics[width=2.3in]{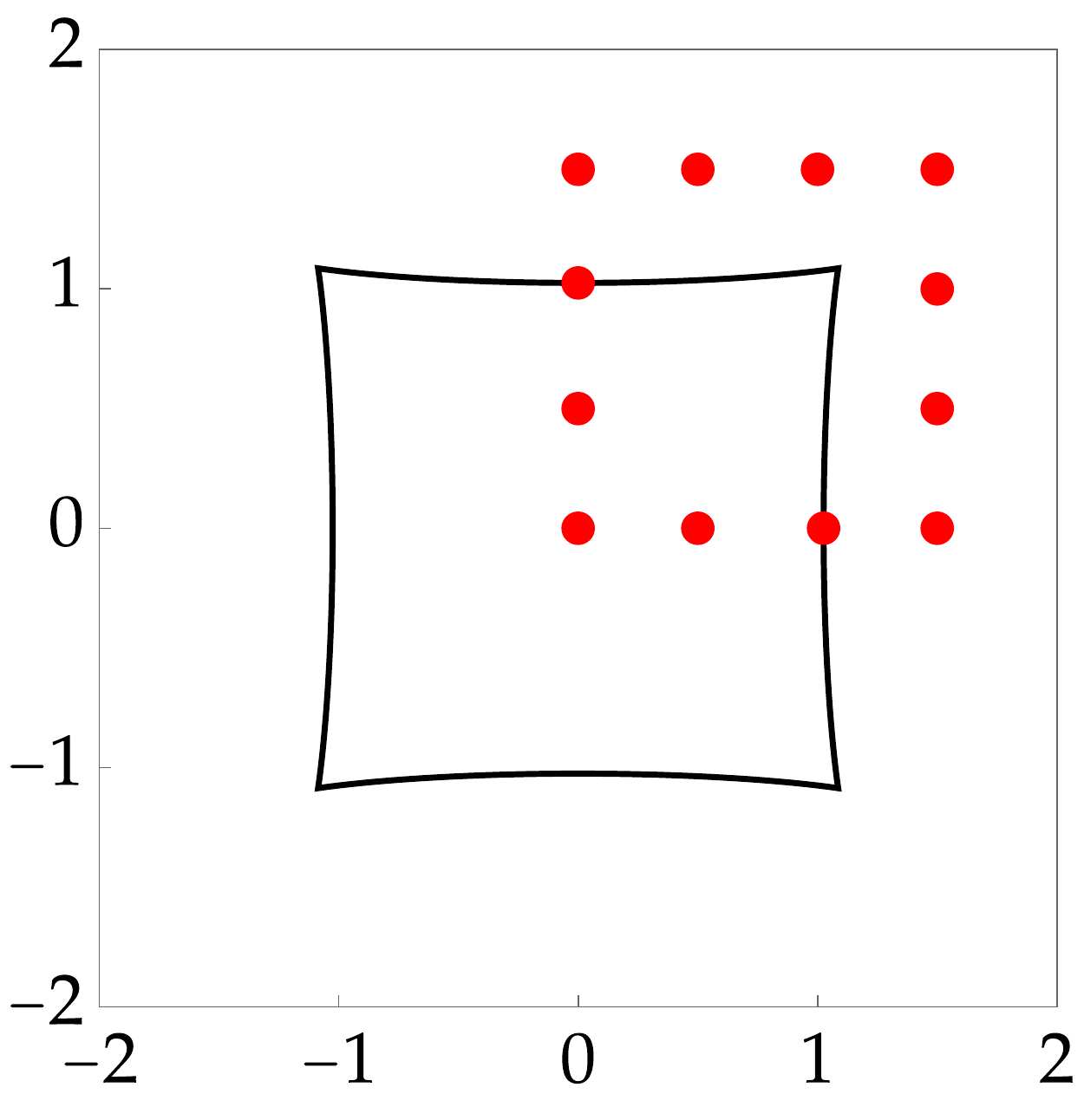}\\
\vspace{-.13in}\\
\includegraphics[height=1.2in]{stokes-k0-y0p5.pdf}
\includegraphics[height=1.2in]{stokes-k0-y1p0253.pdf}\\
\end{tabular}
\hspace{-.18in}
\begin{tabular}{c}
\includegraphics[height=1.2in]{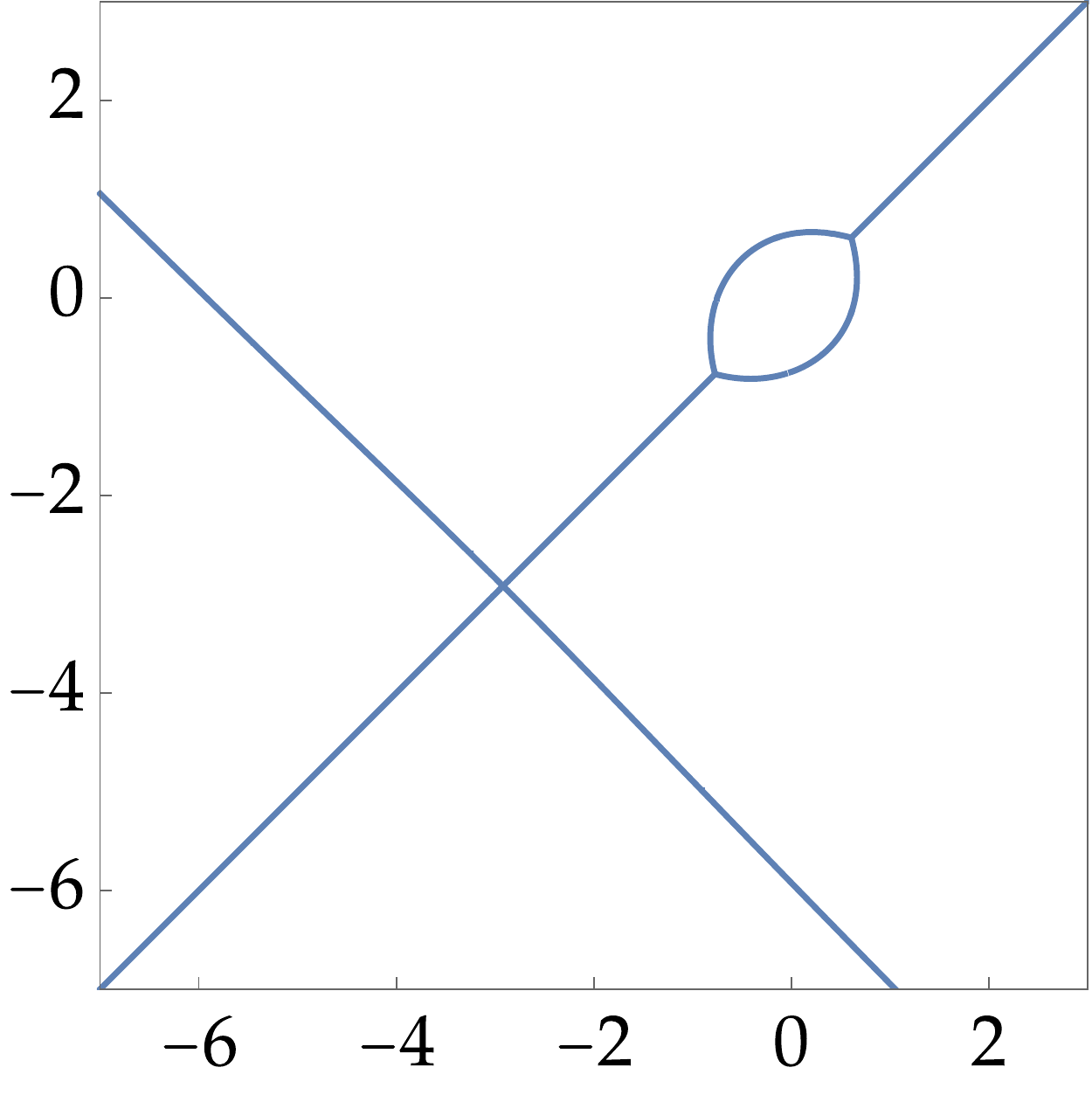}\\
\includegraphics[height=1.2in]{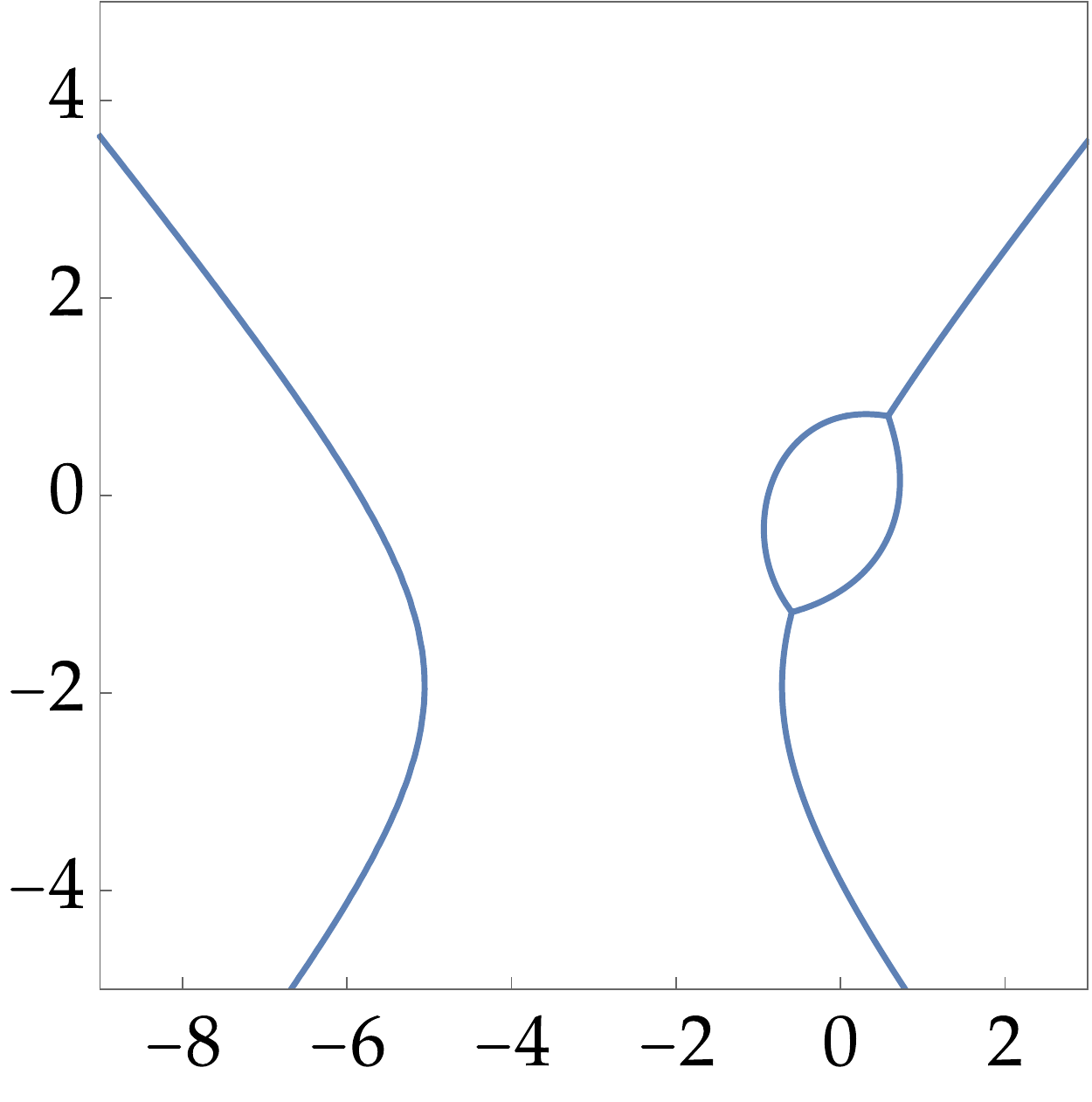}\\
\includegraphics[height=1.2in]{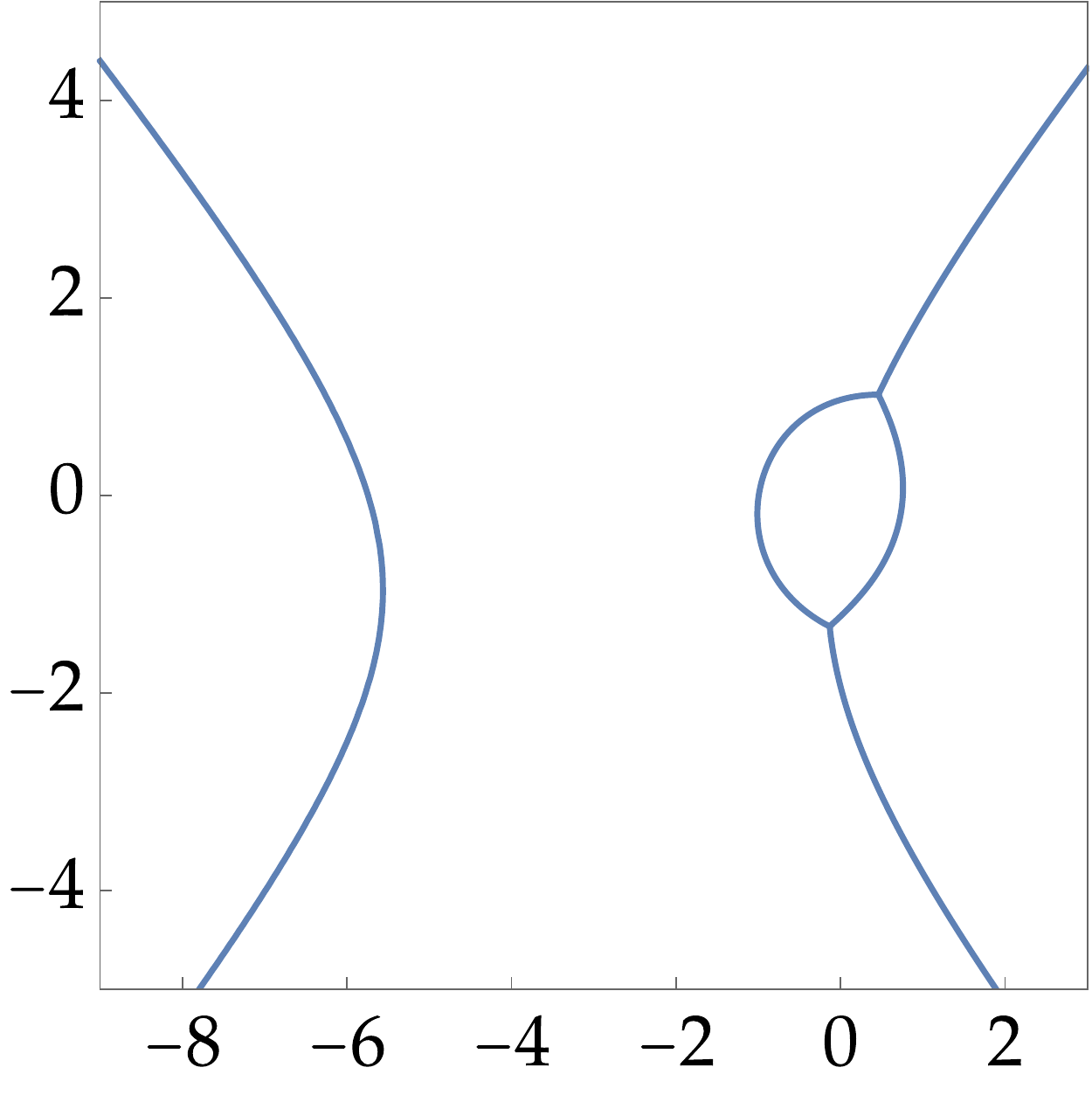}\\
\includegraphics[height=1.2in]{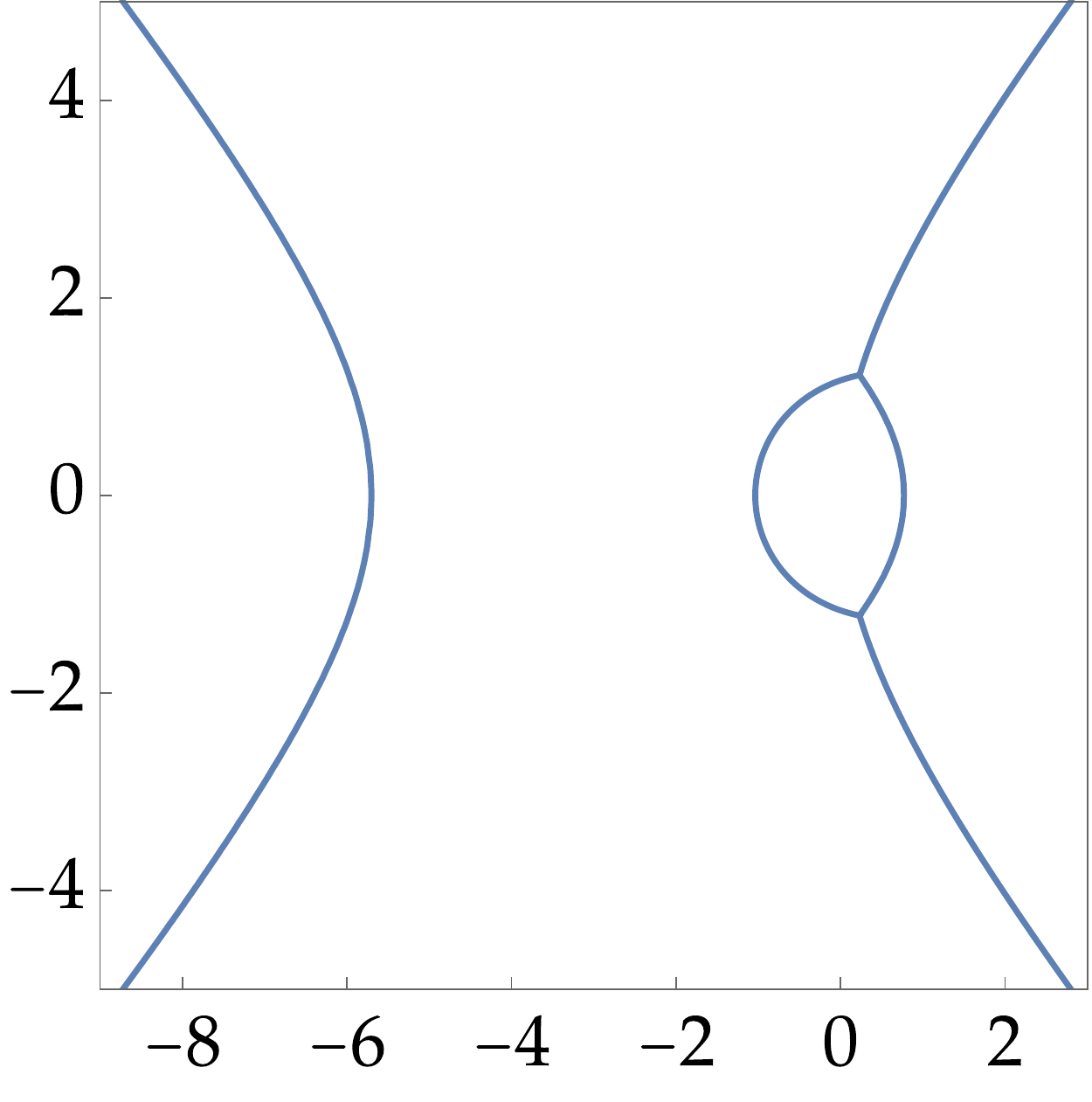}
\end{tabular}
\caption{Critical trajectories of $h'(z)^2\,\dd z^2$ emanating from (for generic $y_0$) simple roots of $P(z)$ for $\kappa=0$ for the gH family.  The same topological structure holds for $-1<\kappa<1$.  Counterclockwise from top left:  $y=1.5i$, $y\approx 1.0253i$, $y=0.5i$, $y=0$, $y=0.5$, $y\approx 1.0253$, $y=1.5$, $y=1.5+0.5i$, $y=1.5+i$, $y=1.5+1.5i$, $y=1+1.5i$, $y=0.5+1.5i$.  Inset:  Boundary of $\rectangle(0)$ in the $y$-plane.  The $y$-values corresponding to different trajectory plots are indicated by red dots. }
\label{fig:Trajectories-gH-k0}
\end{figure}

\section{Asymptotic analysis of $\mathbf{M}(z)$ for sufficiently large $|y|$:  gO case}
\label{sec:Exterior}
Fix $\kappa_\infty\in (-1,1)$, so that also $\kappa$ is bounded away from $\pm 1$ for $T$ sufficiently large.  We assume that for $|y|$ sufficiently large with $0\le\arg(y)\le\tfrac{1}{2}\pi$, the polynomial $P(z)$ is in case $\{211\}$ and we select the solution of the quartic \eqref{eq:gamma-eqn} that satisfies $\gamma=U_{0,\mathrm{gO}}(y;\kappa)=-\tfrac{2}{3}y+\bo(1)$ as $y\to\infty$ (see Section~\ref{sec:equilibrium}).  This solution can be analytically continued to a certain domain that we will describe more precisely later.  Corresponding to the asymptotic $\gamma=-\tfrac{2}{3}y+\bo(1)$, from \eqref{eq:case-iv-coefficient-match} we have (breaking permutation symmetry) $\alpha=-\tfrac{8}{3}y+\bo(1)$ and $\beta=-\tfrac{27}{2}y^{-3}+\bo(y^{-4})$ as $y\to\infty$.  

\begin{remark}
In Sections~\ref{sec:Exterior}, \ref{sec:Exterior-gH}, and \ref{sec:OutsideDomain}, we will set $\zeta=0$ and therefore identify $y_0$ with $y$.
\end{remark}

\subsection{Analysis of the exponent $h(z)$}
\label{sec:h-analysis}
The function $z\mapsto h'(z)=h^{(s,\kappa)\prime}(z;y)$ is well-defined for $z\neq 0$ up to a sign, and moreover the formula $Q(z):=h'(z)^2=\tfrac{1}{16}z^{-2}P(z)=\tfrac{1}{16}z^{-2}(z-\gamma)^2(z-\alpha)(z-\beta)$ shows that $h'(z)$ is meromorphic on a two-sheeted Riemann surface $\mathcal{R}$ over the $z$-plane having genus zero (a single branch cut connects $\alpha$ and $\beta$), with simple poles over $z=0$ and triple poles over $z=\infty$.  It is easily checked that since $\kappa\in\mathbb{R}$, the residues at all four poles are purely real, so since $\mathcal{R}$ has genus zero it follows that by integration that $\mathrm{Re}(h(z))$ is single-valued on $\mathcal{R}$ and harmonic away from the poles.  It is determined up to a real integration constant which we choose so that $\mathrm{Re}(h(\alpha))=0$.  Then $\mathrm{Re}(h(z))$ takes opposite signs on the two sheets of $\mathcal{R}$ at points corresponding to the same value of $z$ and so also $\mathrm{Re}(h(\beta))=0$ as $\alpha$ and $\beta$ are the only two points common to both sheets.

We need to determine the zero level curves of $\mathrm{Re}(h(z))$, which effectively lie on the $z$-plane by choice of integration constant.  Some of the level curves therefore coincide with the three v-trajectories\footnote{A v-trajectory of a quadratic differential $Q(z)\,\dd z^2$ is a maximal curve $\mathbb{R}\ni t\mapsto z(t)\in\mathbb{C}$, $|z'(t)|=1$, along which $Q(z(t))z'(t)^2<0$.  Such trajectories are frequently called ``vertical'' in the literature \cite{Jenkins58,Strebel84} because they are mapped by the primitive $z\mapsto w=\int^z \sqrt{Q(z')}\,\dd z'$ to vertical lines in the $w$-plane.  However these curves are rarely vertical in the native $z$-plane, so we opt for alternate terminology to avoid confusion.  Likewise maximal curves along which $Q(z)\,\dd z^2>0$ are ``horizontal'' and we will call them h-trajectories.  A trajectory is either a v-trajectory or an h-trajectory.} of the quadratic differential $Q(z)\,\dd z^2$, that by local analysis emanate from each of the branch points $z=\alpha$ and $z=\beta$ at equal angles of $\tfrac{2}{3}\pi$.  Since \emph{all} level curves of $\mathrm{Re}(h(z))$ are projections $\mathcal{R}\to\mathbb{C}$ of v-trajectories of $Q(z)\,\dd z^2$ on $\mathcal{R}$, and since this function is not identically constant on $\mathcal{R}$, there can be no ``divergent'' v-trajectories on $\mathcal{R}$, i.e., v-trajectories that are neither closed curves nor terminate in both directions at zeros or poles of $Q(z)\,\dd z^2$.  Indeed, according to \cite[Theorem 11.1, pg.\@ 48]{Strebel84}, any such v-trajectory is also ``recurrent'', and then by \cite[Corollary (1), pg.\@ 51]{Strebel84}, the limit set of the recurrent v-trajectory has a non-empty connected interior, i.e., a domain on $\mathcal{R}$ on which $\mathrm{Re}(h(z))$ is necessarily constant, yielding a contradiction as $\mathrm{Re}(h(z))$ is nonconstant and harmonic on $\mathcal{R}\setminus\{\text{poles}\}$.  Clearly any divergent v-trajectory of $Q(z)\,\dd z^2$ on $\mathbb{C}$ is the projection under $\mathcal{R}\to\mathbb{C}$ of a divergent v-trajectory on $\mathcal{R}$, so there can be no such v-trajectories in the complex $z$-plane either.  In this situation, the ``Basic Structure Theorem'' (see \cite[pg.\@ 37]{Jenkins58}) asserts that the closure $\critclosure $ of the union of the three v-trajectories emanating from each of $\alpha$ and $\beta$ together with the four v-trajectories emanating from the double zero $z=\gamma$ (the union of the so-called ``critical'' v-trajectories) has an empty interior and divides the $z$-sphere $\overline{\mathbb{C}}$ into finitely many domains, each of which is foliated by non-critical v-trajectories.  Each component of $\overline{\mathbb{C}}\setminus\critclosure $ has at least one of the critical points $z=\alpha,\beta,\gamma$ on its boundary, and on each component that has either $\alpha$ or $\beta$ on its boundary the strict inequality $|\mathrm{Re}(h(z))|>0$ holds.  Since $z=0$ is a simple pole of $h'(z)$ with real residue, the component of $\overline{\mathbb{C}}\setminus\critclosure $ containing the origin is a ``circle domain'' $\circledomain$ foliated by noncritical v-trajectories that are all Jordan curves enclosing the origin, and having at least one of $\alpha$, $\beta$, or $\gamma$ on its boundary.  Under the scaling $z=\beta Z$, we find that $Q(z)\,\dd z^2 = [(1-Z)Z^{-2}+\bo(y^{-4})]\,\dd Z^2$ where we have used $\alpha\beta\gamma^2=16$ (cf.\@ \eqref{eq:case-iv-coefficient-match}) and where the error term is uniform for bounded $Z$.  The simplified quadratic differential has only one critical point, $Z=1$ corresponding to $z=\beta$, so the latter critical point alone lies on the boundary of the circle domain containing the origin in the $z$-plane.  When $|y|$ is sufficiently large, this boundary therefore consists of a single v-trajectory that terminates at $z=\beta$ in both directions, leaving only one v-trajectory emanating from $z=\beta$ yet to be accounted for.

Suppose further that $0< \epsilon\le \arg(y)\le \tfrac{1}{2}\pi-\epsilon$.  Then for $|y|$ sufficiently large there can be no critical v-trajectory connecting $z=\gamma$ with either $z=\alpha$ or $z=\beta$.  Indeed if there were such a v-trajectory, then it would follow that $\mathrm{Re}(h(\gamma))=0$.  But a calculation shows that
\eq
\mathrm{Re}(h(\gamma))=
\mathrm{Re}\left(\int_{\alpha}^\gamma h'(z)\,\dd z\right) = 
\pm\frac{\mathrm{Re}(y)\mathrm{Im}(y)}{\sqrt{3}}+o(y^2),\quad y\to\infty,
\label{eq:genus-zero-h-of-gamma-Okamoto}
\endeq
which cannot vanish under the indicated condition on $\arg(y)$.  Therefore, the v-trajectory emanating from $z=\beta$ that does not return to $\beta$ either terminates at $z=\alpha$ or escapes to $z=\infty$.  Likewise, the three v-trajectories emanating from $z=\alpha$ either return to $z=\alpha$, terminate at $z=\beta$, or escape to $z=\infty$.  All four v-trajectories emanating from $z=\gamma$ return to $z=\gamma$ or escape to $z=\infty$.  
We may rule out the scenarios in which a v-trajectory from $z=\alpha$ or $z=\gamma$ returns to the same point by using Teichm\"uller's Lemma (see \cite[pg.\@ 71]{Strebel84}).  This is an index identity that applies to Jordan curves $\Gamma$ that are the unions of trajectories and junction points that can be poles or zeros of $Q(z)$ and that equates a left-hand side $L$ computed from data involving the orders of $Q$ at the junction points and the interior angles of $\Gamma$ at those points with a right-hand side $R$ computed from the orders of poles and zeros of $Q(z)$ in the interior of $\Gamma$.  The precise statement is the following.
\begin{lemma}[Teichm\"uller's Lemma]
Let $\Gamma$ be a Jordan curve that is the closure of the union of finitely many trajectories of a rational quadratic differential $Q(z)\,\dd z^2$, the endpoints of each of which are poles or zeros of $Q(z)$ forming the vertices of $\Gamma$.  Define indices $L$ and $R$ by
\eq
L:=\sum_{\text{vertices $j$}}\left(1-\theta_j\frac{n_j+2}{2\pi}\right)
\label{eq:Teichmueller-L}
\endeq
where $\theta_j$ is the interior angle of $\Gamma$ at the vertex and $n_j$ is the order of the rational function $Q(z)$ at the vertex (positive for zeros, negative for poles), and
\eq
R:=2+\sum_{\text{interior points $z$}} n(z)
\label{eq:Teichmueller-R}
\endeq
where $n(z)$ is the order of $Q(z)$ at a point $z$ ($n(z)=0$ if $z$ is not a zero or pole of $Q(z)$, hence the sum is finite).  Then $L=R$.
\label{lem:Teichmueller}
\end{lemma}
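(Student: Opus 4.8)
\emph{Plan.} The plan is to prove Lemma~\ref{lem:Teichmueller} by applying Gauss--Bonnet to the singular flat metric $\dd s^2 = |Q(z)|\,|\dd z|^2$ that the quadratic differential $Q(z)\,\dd z^2$ induces on a neighbourhood of the closed Jordan domain $D$ bounded by $\Gamma$. The key observation is that in this metric the trajectory arcs composing $\Gamma$ are geodesics, so $D$ is a geodesic polygon carrying cone points in its interior at the zeros and poles of $Q(z)$; the identity $L=R$ then becomes the Gauss--Bonnet formula for such a polygon, once the $z$-plane interior angles $\theta_j$ are converted into flat-metric angles.

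First I would record two local facts. (i) Along any maximal trajectory arc containing no zero or pole, the quantity $\arg Q(z)+2\arg(\dd z)$ is continuous with values in $\pi\mathbb{Z}$, hence locally constant; equivalently, the (locally defined, multivalued) natural parameter $w=\int\sqrt{Q(z)}\,\dd z$ maps the arc to a straight segment, so it is a geodesic of $\dd s^2$. (ii) Near a zero or pole $z_0$ of $Q$ of order $n\neq -2$ one has $Q(z)=c(z-z_0)^n(1+\bo(z-z_0))$, hence $w=\tfrac{2\sqrt c}{n+2}(z-z_0)^{(n+2)/2}(1+\bo(z-z_0))$; therefore a $z$-plane sector of opening $\theta$ at $z_0$ has flat opening $\tfrac{n+2}{2}\theta$, and a full loop about $z_0$ sweeps flat angle $(n+2)\pi$. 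Consequently an interior zero or pole of order $n(z)$ is a cone point of $\dd s^2$ with cone angle $(n(z)+2)\pi$, contributing concentrated curvature $2\pi-(n(z)+2)\pi=-\pi n(z)$, while the interior angle of the geodesic polygon $D$ at the boundary vertex $z_j$ equals $\tfrac{n_j+2}{2}\theta_j$.

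The main step is then Gauss--Bonnet for the flat geodesic polygon $D$ (a topological disk, geodesic boundary, interior cone points only):
\eq
\sum_{\text{interior zeros/poles }z}\bigl(2\pi-(n(z)+2)\pi\bigr)+\sum_{\text{vertices }j}\Bigl(\pi-\tfrac{n_j+2}{2}\theta_j\Bigr)=2\pi .
\endeq
Dividing by $\pi$ and rearranging yields $\sum_j\bigl(1-\theta_j\tfrac{n_j+2}{2\pi}\bigr)=2+\sum_z n(z)$, which is exactly $L=R$.

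\emph{Main obstacle.} The hard part will be the analytic bookkeeping at the vertices, and especially at poles. When $Q$ has a pole of order $\ge 2$ the flat metric is non-compact near that point (a half-infinite flat cylinder at a double pole, a divergent cone at order $\ge 3$), so the phrase ``cone angle'' must be interpreted through a limiting procedure: I would excise a small $\dd s$-metric neighbourhood of each such point, apply Gauss--Bonnet on the truncated domain whose boundary then includes short arcs of metric circles, and check that each excised piece contributes $2\pi-(n+2)\pi$ to the curvature balance in the limit, using the normal forms above together with the logarithmic normal form $w\sim\sqrt c\,\log(z-z_0)$ when $n=-2$. The same truncation covers a vertex $z_j$ that is itself a pole. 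One must also fix consistent branch and orientation conventions for the multivalued $w$ so that the angle identifications in (ii) carry the right sign; since only \emph{differences} of $\arg w$ enter this is harmless but should be stated. As an equivalent alternative I would mention replacing Gauss--Bonnet by the argument principle $\tfrac{1}{2\pi}\oint_{\Gamma}\dd\arg Q=\sum_{\text{interior}} n(z)$ together with the turning-tangent theorem and the relation $\dd\arg(\dd z)=-\tfrac12\,\dd\arg Q$ along trajectories; this trades the metric machinery for the identical vertex-indentation estimates.
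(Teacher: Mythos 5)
Your proposed proof is correct; note, however, that the paper does not prove this lemma at all --- it is quoted verbatim with a citation to Strebel \cite[pg.\@ 71]{Strebel84}, so there is no in-paper argument to compare against. Your Gauss--Bonnet computation checks out: the cone-angle conversion $\theta\mapsto\tfrac{n+2}{2}\theta$ at a singularity of order $n$, the concentrated curvature $-\pi n(z)$ at interior singular points, and the exterior-angle sum at the vertices combine exactly to $L=R$ after dividing by $\pi$. The one genuinely delicate point is the one you flag yourself: at poles of order $\ge 2$ the flat metric $|Q|\,|\dd z|^2$ has infinite area and the "cone angle" $(n+2)\pi$ is nonpositive, so the metric statement only makes sense through the truncation-and-limit procedure you sketch (and this matters here, since the paper applies the lemma with the order-$6$ pole at $z=\infty$ and the double pole at $z=0$ as a vertex or interior point). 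The argument-principle alternative you mention at the end --- $\tfrac{1}{2\pi}\oint_{\Gamma}\dd\arg Q=\sum n(z)$ over an indented contour, combined with the turning-tangent theorem and $\dd\arg(\dd z)=-\tfrac12\dd\arg Q$ along trajectories --- avoids all of these convergence issues and is in fact Strebel's own proof; if you write this up I would make that version the primary argument and relegate the metric picture to motivation.
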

To apply this result in the present context, note that if $\Gamma$ is the closure of a single trajectory that terminates at the same zero of $Q(z)$ in both directions, then $L\le 0$.  Since the only pole of $Q(z)$ in the finite $z$-plane is a double pole at the origin, $R\ge 0$ with equality if and only if $\Gamma$ encloses the origin but none of the zeros of $Q(z)$.  However this equality forces the closure of $\Gamma$ to be the boundary of the circle domain which we have shown contains $z=\beta$ but not $z=\alpha$ or $z=\gamma$.  Therefore, all four v-trajectories emanating from $z=\gamma$ escape to $z=\infty$, and either (a) the remaining v-trajectory emanating from $z=\beta$ terminates at $z=\alpha$ leaving two additional v-trajectories emanating from $z=\alpha$ that must escape to $z=\infty$ or (b) the remaining v-trajectory emanating from $z=\beta$ and all three v-trajectories emanating from $z=\alpha$ escape to $z=\infty$.  Note that a v-trajectory that escapes to $z=\infty$ must approach infinity asymptotically in one of the four directions $\arg(z)=\pm\tfrac{1}{4}\pi,\pm\tfrac{3}{4}\pi$.

To determine whether (a) or (b) holds, and also to determine the manner that the four v-trajectories emanating from $z=\gamma$ tend to infinity, let us further restrict attention to the case that $\arg(y)=\tfrac{1}{4}\pi$.  Then rescaling $z$ by $z=yZ$, we find that $Q(z)\,\dd z^2 = [\tfrac{1}{16}y^4(Z+\tfrac{8}{3})(Z+\tfrac{2}{3})^2Z^{-1} + \bo(y^3)]\,\dd Z^2$, where the error term is uniform for $Z$ and $Z^{-1}$ bounded.  Since $\arg(y)=\tfrac{1}{4}\pi$ implies that $y^4<0$, we see that to leading order as $|y|\to\infty$, the v-trajectories are Schwarz symmetric in the $Z$-plane, and the rays $Z<-\tfrac{8}{3}$ and $Z>0$ are themselves v-trajectories.  In the $z$-plane, this corresponds to reflection symmetry to leading order about the diagonal $\mathrm{Re}(z)=\mathrm{Im}(z)$.  In the case that also $\kappa=0$,  this symmetry is exact
for all $|y|>0$.  For $|y|$ large then, the remaining v-trajectory emanating from $z=\beta$ escapes to $z=\infty$ in the direction $\arg(z)=\tfrac{1}{4}\pi$, so (b) holds.  The same asymptotic symmetry shows that for large $|y|$ one v-trajectory emanating from $z=\alpha$ escapes to $z=\infty$ in the direction $\arg(z)=-\tfrac{3}{4}\pi$.  In the large $|y|$ limit the two other v-trajectories emanating from $z=\alpha$ and escaping to $z=\infty$ are reflections of each other through the diagonal and are therefore confined to the two half-planes separated by that diagonal because v-trajectories cannot intersect.  The last thing to determine is the direction of escape to infinity for these two v-trajectories.  In fact, the v-trajectory lying below (resp., above) the diagonal  must escape to infinity in the direction $\arg(z)=-\tfrac{1}{4}\pi$ (resp., $\arg(z)=\tfrac{3}{4}\pi$).  Indeed, if we suppose to the contrary that the v-trajectory below the diagonal escapes in the direction $\arg(z)=-\tfrac{3}{4}\pi$, then applying Teichm\"uller's Lemma on the $z$-sphere to the curve $\Gamma$ made up of this v-trajectory and the v-trajectory emanating from $z=\alpha$ and trapped along the diagonal in the direction $\arg(z)=-\tfrac{3}{4}\pi$ with interior angles $\theta=\tfrac{2}{3}\pi$ at $\alpha$ and $\theta=0$ at $\infty$, the left-hand side is $L=1$, but as there are no poles or zeros in the interior of $\Gamma$, $R=2$, a contradiction.  Likewise, if we suppose that the v-trajectory below the diagonal escapes to infinity in the direction $\arg(z)=\tfrac{1}{4}\pi$, then again taking $\Gamma$ to consist of the same v-trajectories making an interior angle of $\theta=\tfrac{2}{3}\pi$ at $\alpha$ and $\theta=\pi$ at $\infty$, we calculate that $L=3$ while again $R=2$, again a contradiction (here we need to use the fact that in the local coordinate $k=z^{-1}$ at infinity, $Q$ has a pole of order $6$ at $k=0$).  Therefore the only remaining direction of approach to infinity for the v-trajectory emanating from $z=\alpha$ into the half-plane below the diagonal is $\arg(z)=-\tfrac{1}{4}\pi$.  By reflection through the diagonal, the v-trajectory exiting $\alpha$ into the half-plane above the diagonal escapes in the direction $\arg(z)=\tfrac{3}{4}\pi$.     This settles the behavior of the critical v-trajectories emanating from $z=\alpha,\beta$ for $\arg(y)=\tfrac{1}{4}\pi$ and $|y|$ large.  Similar analysis applied to the v-trajectories emanating from $z=\gamma$, which lies asymptotically on the diagonal between $z=\alpha$ and $z=0$ shows that one v-trajectory escapes to infinity in the direction $\arg(z)=-\tfrac{1}{4}\pi$, another escapes to infinity in the direction $\arg(z)=\tfrac{3}{4}\pi$, and the remaining two escape to infinity in the same direction, $\arg(z)=\tfrac{1}{4}\pi$, but on either side of the circle domain containing $z=0$ and having $z=\beta$ on its boundary.

It follows that there are seven components of $\overline{\mathbb{C}}\setminus\critclosure $:  
\begin{itemize}
\item
One circle domain $\circledomain$ containing $z=0$ with $z=\beta$ on its boundary and excluding $z=\alpha,\gamma$.  The inequality $|\mathrm{Re}(h(z))|>0$ holds strictly on the interior and $\mathrm{Re}(h(z))=0$ on the boundary.  The boundary $\partial\circledomain$ is a Jordan curve.
\item 
Four ``end domains'' abutting the point at infinity:
\begin{itemize}
\item one bounded by the critical v-trajectories emanating from $z=\alpha$ and escaping to infinity in the directions $\arg(z)=\tfrac{3}{4}\pi$ and $\arg(z)=-\tfrac{3}{4}\pi$ and containing the direction $\arg(-z)=0$ for large $z$, 
\item one bounded by the critical v-trajectories emanating from $z=\alpha$ and escaping to infinity in the directions $\arg(z)=-\tfrac{1}{4}\pi$ and $\arg(z)=-\tfrac{3}{4}\pi$ and containing the direction $\arg(z)=-\tfrac{1}{2}\pi$ for large $z$.
\item one bounded by the two critical v-trajectories emanating from $z=\gamma$ into the half-plane above the diagonal and containing the direction $\arg(z)=\tfrac{1}{2}\pi$ for large $z$, and
\item one bounded by the two critical v-trajectories emanating from $z=\gamma$ into the half-plane below the diagonal and containing the direction $\arg(z)=0$ for large $z$.
\end{itemize}
The former two end domains are each mapped by an analytic branch of $h(z)$ onto the open right or left half-plane, and hence $|\mathrm{Re}(h(z))|>0$ holds strictly on each while $\mathrm{Re}(h(z))=0$ on the boundary.  However, the latter two end domains are each mapped onto a horizontal translation of the left or right half-plane, since $\mathrm{Re}(h(\gamma))\neq 0$.  
\item Two ``strip domains'':
\begin{itemize}
\item
one with $z=\alpha$ and $z=\gamma$ on its boundary, foliated by v-trajectories escaping to infinity in opposite directions $\arg(z)=-\tfrac{1}{4}\pi$ and $\arg(z)=\tfrac{3}{4}\pi$, and
\item
one with $z=\gamma$ and $z=\beta$ on its boundary (the latter actually being two points of the boundary), foliated by v-trajectories escaping in both directions to $z=\infty$ in the same direction $\arg(z)=\tfrac{1}{4}\pi$ and wrapping around the circle domain.
\end{itemize}
The strip domains are each mapped by an analytic branch of $h(z)$ to a true vertical strip with the imaginary axis as one boundary.  Hence $|\mathrm{Re}(h(z))|>0$ holds on the interior of each domain and $\mathrm{Re}(h(z))=0$ holds on the part of the boundary mapped to the imaginary axis.
\end{itemize}
Therefore, the only components of $\overline{\mathbb{C}}\setminus\critclosure $ that might contain points with $\mathrm{Re}(h(z))=0$ are the two end domains with $z=\gamma$ on their boundaries.  However, 
one can see that neither of these end domains is mapped by $h(z)$ onto a half-plane containing the imaginary axis, because $z=\gamma$ is a simple saddle point of $\mathrm{Re}(h(z))$ at which these end domains dovetail with the two strip domains at a common boundary where $\mathrm{Re}(h(z))\neq 0$.  It finally follows that the zero level set of $\mathrm{Re}(h(z))$ consists precisely of the two disjoint components of $\critclosure $ that contain $z=\alpha$ and $z=\beta$ respectively.  (The remaining component containing $z=\gamma$ necessarily lies on a different level of $\mathrm{Re}(h(z))$.)  See Figure~\ref{fig:GenusZero-y-Diag-Domains}.
\begin{figure}[h]
\begin{center}
\includegraphics{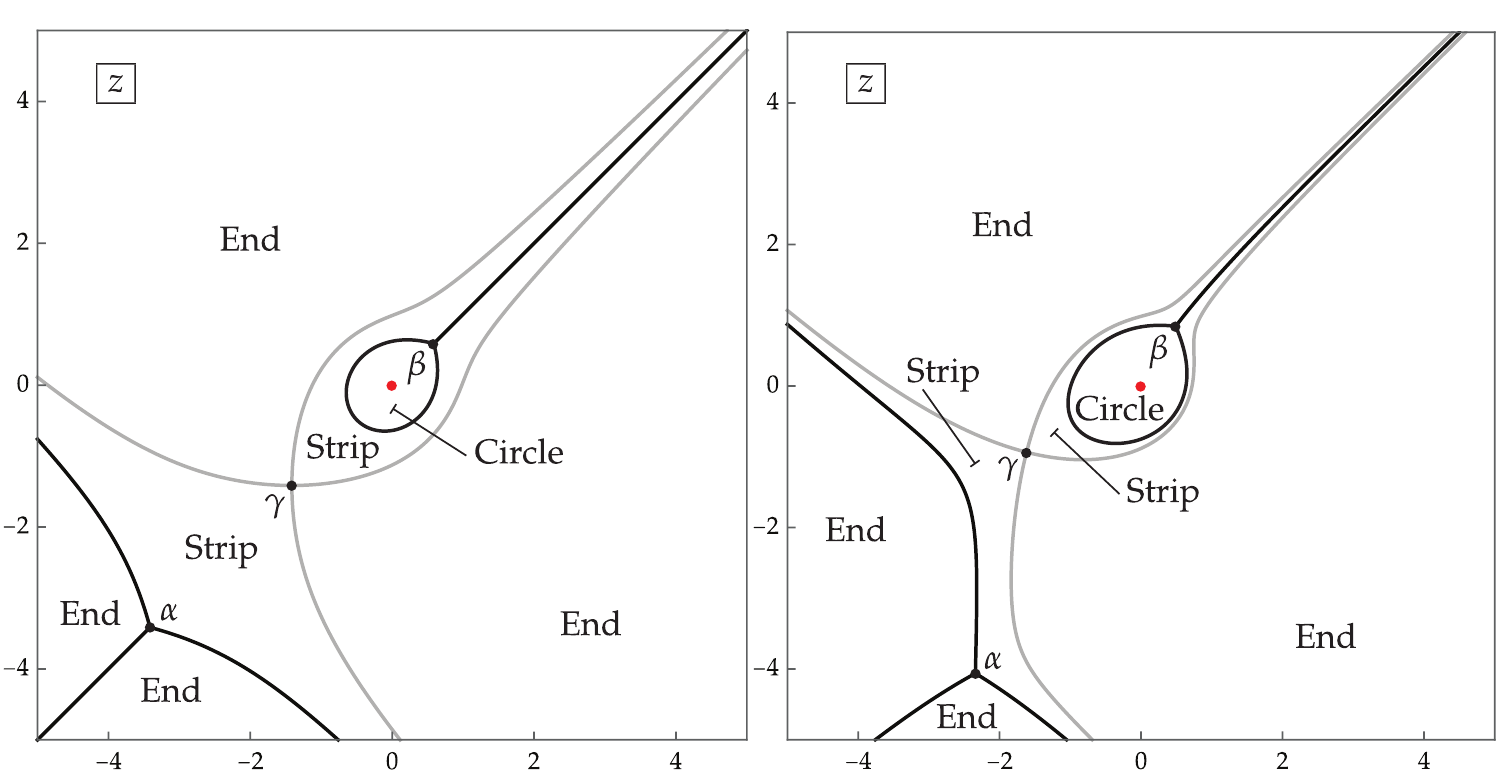}
\end{center}
\caption{The critical v-trajectories of $Q(z)\,\dd z^2$ and the seven domains into which they separate the $z$-sphere.  Left:  $\kappa=0$ and $y=2\ee^{\ii\pi/4}$ (exact symmetry through the diagonal).  Right:  $\kappa=0.5$ and $y=1.8\ee^{\ii\pi/4}$. The closure of the union of the black trajectories is the zero level set of $\mathrm{Re}(h(z))$.}
\label{fig:GenusZero-y-Diag-Domains}
\end{figure}

Having understood the global v-trajectory structure for large $|y|$ with $\arg(y)=\tfrac{1}{4}\pi$, we can analytically continue $\alpha$, $\beta$, and $\gamma$ as functions of $y$ along any path that avoids all eight branch points (roots $y$ of $B(y;\kappa)$ defined by \eqref{eq:branch-points}), and these three points will remain distinct.  Under such continuation, the global structure will remain topologically identical as long as $\mathrm{Re}(h(\gamma))=\mathrm{Re}(h^{(s,\kappa)}(\gamma;y))\neq 0$ for $\gamma=U_{0,\mathrm{gO}}(y;\kappa)$.  Note that while $\mathrm{Re}(h(z))$ is only determined up to a sign until a specific branch is selected (see below), the condition $\mathrm{Re}(h(\gamma))\neq 0$ is unambiguous.   

Now we explain how to properly define $h(z)$ as an analytic function in the $z$-plane.  First we choose branch cuts for $R(z)$ as illustrated in one of the two cases shown in Figure~\ref{fig:GenusZero-y-Diag-BranchCuts}.  
\begin{figure}[h]
\begin{center}
\includegraphics{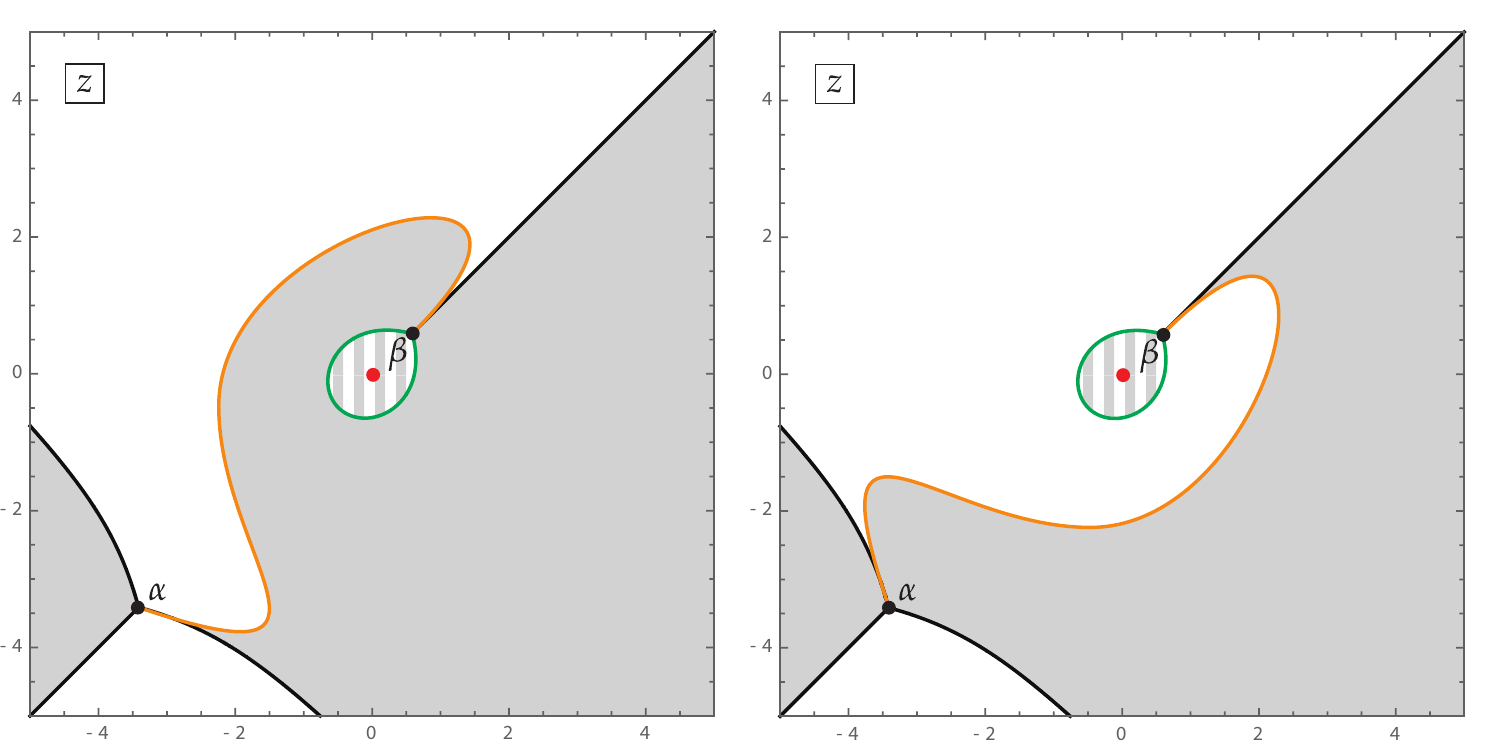}
\end{center}
\caption{Two ways to choose the branch cuts of $R(z)$ in the $z$-plane so that $h'(z)$ becomes an analytic function on the complement leading to $\mathrm{Re}(h(z))>0$ (resp., $\mathrm{Re}(h(z))<0$) in the unshaded (resp., shaded) regions, and $\mathrm{sgn}(\mathrm{Re}(h(z)))=s$ in the circle domain $\circledomain$ indicated with stripes.  In both cases the orange arc denoted $\Sigma_\mathrm{c}$ is included in the branch cut, and in the case illustrated in the left-hand (resp., right-hand) panel the 
closed component 
$\partial\circledomain$
of the level curve surrounding the origin is also part of the branch cut if and only if $s=-1$ (resp., $s=+1$).  The arc $\Sigma_\mathrm{c}$ lies in the interior of the closure of the union of end and strip domains that abut $z=\gamma$, and is tangent to the v-trajectories emanating from $\alpha$ and $\beta$ as indicated.}
\label{fig:GenusZero-y-Diag-BranchCuts}
\end{figure}
Then, with $R(z)$ well defined and analytic in the complement of its branch cut, we obtain 
$h'(z)$ from \eqref{eq:h-g-phi} as a function analytic in the same domain except for a simple pole at $z=0$.  We then choose a simple curve $\Sigma_0$ in the circle domain $\circledomain$ 
connecting the origin with $z=\beta$ and a point 
$z_+\in \partial\circledomain\setminus\{\beta\}$ 
such that for any simple arc $L$ from $z_+$ to $\beta$ via $z=0$ in the interior of $C$,
\eq
\mathrm{P.V.}\int_L h'(z)\,\dd z = 0.  
\label{eq:sum-of-BVs-on-Sigmac}
\endeq
The existence of such a point follows from the Intermediate Value Theorem.
Then: 
\begin{itemize}
\item
In the configuration shown in the left-hand panel of Figure~\ref{fig:GenusZero-y-Diag-BranchCuts}, we choose a continuation $\Sigma_{4,3}$ of $\Sigma_\mathrm{c}$ tangent to $\Sigma_\mathrm{c}$ at $z=\alpha$ and connecting $z=\alpha$ to $z=\infty$ in the 
shaded region
with asymptotic angle $\arg(-z)=0$.  Then we define $\Sigma_h$ as $\Sigma_h:=\Sigma_0\cup\Sigma_\mathrm{c}\cup\Sigma_{4,3}$ if $s=+1$ or 
$\Sigma_h:=\Sigma_0\cup\Sigma_\mathrm{c}\cup\Sigma_{4,3}\cup \partial\circledomain$ 
if $s=-1$.
\item 
In the configuration shown in the right-hand panel of Figure~\ref{fig:GenusZero-y-Diag-BranchCuts}, we choose a continuation $\Sigma_{4,1}$ of $\Sigma_\mathrm{c}$ tangent to $\Sigma_\mathrm{c}$ at $z=\alpha$ and connecting $z=\alpha$ to $z=\infty$ in the 
unshaded region
with asymptotic angle $\arg(z)=-\tfrac{1}{2}\pi$.  Then we define $\Sigma_h$ as $\Sigma_h:=\Sigma_0\cup\Sigma_\mathrm{c}\cup\Sigma_{4,1}$ if $s=-1$ or  
$\Sigma_h:=\Sigma_0\cup\Sigma_\mathrm{c}\cup\Sigma_{4,1}\cup \partial\circledomain$ 
if $s=+1$.
\end{itemize}
Finally, we define $h(z)=h^{(s,\kappa)}(z;y)$ for $z\in\mathbb{C}\setminus\overline{\Sigma_h}$ by integration of $h'(z)$ from $z=z_+$ along an arbitrary path in the same domain.
Then it follows that $\mathrm{Re}(h(z))$ is well-defined and continuous for $z\in\mathbb{C}\setminus(\{0\}\cup\Sigma_\mathrm{c})$, vanishing only along the black and green curves and elsewhere having the signs shown in Figure~\ref{fig:GenusZero-y-Diag-BranchCuts}.  It is harmonic in the same domain except for the closed curve $C$ (but only if the sign is the same in the interior and exterior; otherwise it is also harmonic on $C$).

The boundary values $h_+(z)$ and $h_-(z)$ taken by $h(z)=h^{(s,\kappa)}(z;y)$ on the arcs of its jump contour $\Sigma_h$ from the left and right, respectively, (according to the orientations shown in Figure~\ref{fig:GenusZero-y-Diag-JumpContours} below) are related as follows, recalling the notation in Section~\ref{sec:notation}.  
\begin{itemize}
\item
A residue calculation for the pole of $h'(z)$ at $z=0$ shows that
\eq
\Delta h(z)=-2\pi\ii s,\quad z\in\Sigma_0.
\label{eq:h-jump-Sigma0}
\endeq
\item 
The condition \eqref{eq:sum-of-BVs-on-Sigmac} guarantees that the sum of boundary values taken by $h(z)$ across $\Sigma_\mathrm{c}$ vanishes at the endpoint $z=\beta$.  Since $R(z)$ changes sign across $\Sigma_\mathrm{c}$ it then follows that
\eq
\langle h\rangle(z)=0,\quad z\in\Sigma_\mathrm{c}.
\label{eq:h-sum-Sigmac}
\endeq
\item
Calculating a residue of $h'(z)$ at $z=\infty$ shows that
\begin{itemize}
\item In the configuration shown in the left-hand panel of Figure~\ref{fig:GenusZero-y-Diag-BranchCuts},
\eq
\Delta h(z)=2\pi\ii\kappa,\quad z\in\Sigma_{4,3}.
\label{eq:h-jump-Sigma43}
\endeq
\item In the configuration shown in the right-hand panel of Figure~\ref{fig:GenusZero-y-Diag-BranchCuts},
\eq
\Delta h(z)=-2\pi\ii\kappa,\quad z\in\Sigma_{4,1}.
\label{eq:h-jump-Sigma41}
\endeq
\end{itemize}
\item
If $\partial\circledomain\subset\Sigma_h$, 
then $R(z)$ changes sign across 
$\partial\circledomain$
while $h(z_+)=0$ unambiguously, so
\eq
\langle h\rangle(z)=0,\quad z\in 
\partial\circledomain\subset\Sigma_h.
\label{eq:h-sum-C-in-Sigmah}
\endeq
Otherwise, $h(z)$ is analytic on 
$\partial\circledomain\setminus\{\beta\}$, 
across which curve $\mathrm{Re}(h(z))$ changes sign.
\end{itemize}
\begin{remark}
\label{rem:TwoConfigurations}
The reason for simultaneously introducing two different configurations for the branch cut $\Sigma_\mathrm{c}$ is ultimately to be able to obtain accurate asymptotics for $y$ on both sector boundaries of the first quadrant:  $\arg(y)=0$ and $\arg(y)=\tfrac{1}{2}\pi$.  While both configurations are valid for $\arg(y)\in (0,\tfrac{1}{2}\pi)$, on each boundary ray one is forced to work with a specific choice; see Section~\ref{sec:ExteriorAxes} for details.
\end{remark}

\subsection{Use of $g(z)$ to transform $\mathbf{M}(z)$ to $\mathbf{O}(z)$}
Next, depending upon which of the two configurations of branch cuts is selected, we choose either the ``leftward'' or ``downward'' deformation of the jump contour for $\mathbf{M}(z)$ and lay the jump contour over the sign chart for $\mathrm{Re}(h(z))$ as shown in the two panels of Figure~\ref{fig:GenusZero-y-Diag-JumpContours}.  In particular, we take the closed contour $C\subset\Sigma$ to coincide with $\partial\circledomain$.
\begin{figure}[h]
\begin{center}
\includegraphics{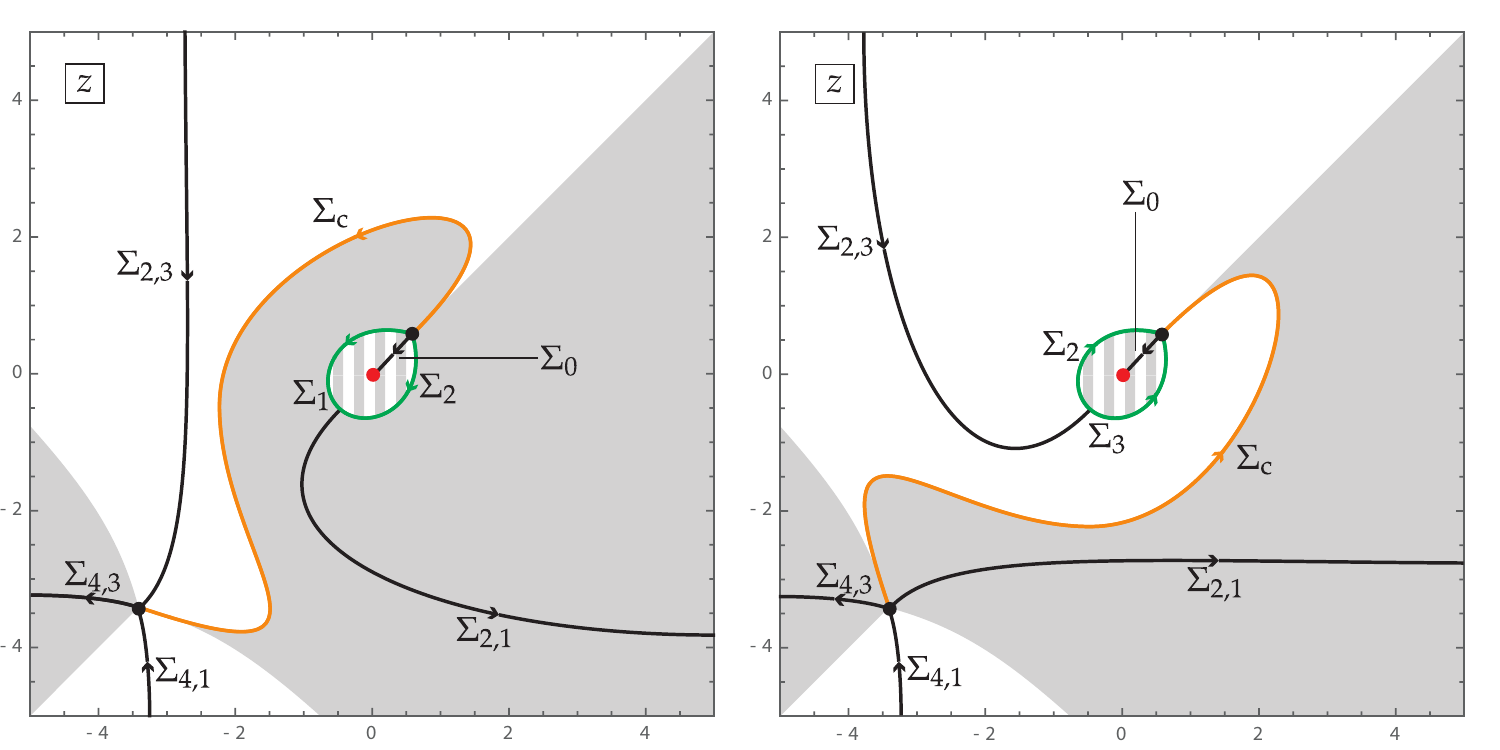}
\end{center}
\caption{Left:  the use of the ``leftward'' deformation of the jump contours for $\mathbf{M}(z)$.  Right:  the use of the ``downward'' deformation.  
The self-intersection points of $\Sigma$ lying on $C=\partial\circledomain$ are $z=\beta$ and $z=z_+$.  The sign of $\mathrm{Re}(h(z))$ is indicated by shading as in Figure~\ref{fig:GenusZero-y-Diag-BranchCuts}.}
\label{fig:GenusZero-y-Diag-JumpContours}
\end{figure}
Then, we introduce $g(z)=g^{(s,\kappa)}(z;y)=\phi(z;y)+h^{(s,\kappa)}(z;y)$ via \eqref{eq:NfromM}, in which
the constant $g_\infty$ is determined from the precise definition of $h(z)$; we will not need to know its exact value.
%
Using \eqref{eq:h-jump-Sigma0}, it is easy to check that $\mathbf{N}(z)=\mathbf{N}^{(T,s,\kappa)}(z;y)$ has no jump discontinuity across $\Sigma_0$; then since the residue of $g'(z)$ at $z=0$ is $-s$, the condition that $\mathbf{M}(z)z^{-sT\sigma_3}$ is bounded near the origin implies that $\mathbf{N}(z)$ has a removable singularity at $z=0$ and hence is analytic in the interior of $C$.  Also $\mathbf{N}(z)\to\mathbb{I}$ as $z\to\infty$.  The matrix function $z\mapsto\mathbf{N}(z)$ is analytic except on the arcs of $\Sigma\setminus\Sigma_0$, is normalized to the identity as $z\to\infty$, and is continuous up to the jump contour in each component of its complement.  Using \eqref{eq:h-sum-Sigmac}--\eqref{eq:h-sum-C-in-Sigmah}, the jump conditions satisfied by $\mathbf{N}(z)$ take the following forms.
\eq
\mathbf{N}_+(z)=\mathbf{N}_-(z)
\mathbf{U}(-\tfrac{1}{2}\ii\ee^{-2Th(z)}),
\quad z\in\Sigma_{2,3}.
\endeq
\eq
\mathbf{N}_+(z)=\mathbf{N}_-(z)
\mathbf{L}(2\ii\ee^{2Th(z)}),
\quad z\in\Sigma_{2,1}.
\endeq
In the ``leftward'' configuration where $h(z)$ has a jump across $\Sigma_{4,3}$, 
\eq
\mathbf{N}_+(z)=\mathbf{N}_-(z)
\mathbf{L}(2\ii\ee^{-2\pi\ii T\kappa}\ee^{2T\langle h\rangle(z)}),
\quad
z\in\Sigma_{4,3},\quad \text{``leftward'' configuration},
\endeq
while 
\eq
\mathbf{N}_+(z)=\mathbf{N}_-(z)
\mathbf{L}(2\ii\ee^{2Th(z)}),
\quad z\in\Sigma_{4,3},\quad \text{``downward'' configuration}.
\endeq
Similarly, 
\eq
\mathbf{N}_+(z)=\mathbf{N}_-(z)
\mathbf{U}(-\tfrac{1}{2}\ii\ee^{-2Th(z)}),
\quad z\in\Sigma_{4,1},\quad\text{``leftward'' configuration,}
\endeq
while as there is a jump for $h(z)$ across $\Sigma_{4,1}$ in the ``downward'' configuration,
\eq
\mathbf{N}_+(z)=\mathbf{N}_-(z)
\mathbf{U}(-\tfrac{1}{2}\ii\ee^{-2\pi\ii T\kappa}\ee^{-2T\langle h\rangle(z)}),
\quad z\in\Sigma_{4,1},\quad\text{``downward'' configuration.}
\endeq
In both configurations $h(z)$ has a jump across $\Sigma_\mathrm{c}$, and we find that
\eq
\mathbf{N}_+(z)=\mathbf{N}_-(z)
\ee^{-2\pi\ii T\kappa}
\mathbf{T}(2\ii),
\quad z\in\Sigma_\mathrm{c},\quad\text{``leftward'' configuration},
\label{eq:Sigma-c-left}
\endeq
while
\eq
\mathbf{N}_+(z)=\mathbf{N}_-(z)
\ee^{2\pi\ii T\kappa}
\mathbf{T}(-2\ii),
\quad z\in\Sigma_\mathrm{c},\quad\text{``downward'' configuration}.
\label{eq:Sigma-c-down}
\endeq
\begin{remark}
Note that \eqref{eq:Sigma-c-left}--\eqref{eq:Sigma-c-down} are essentially the same jump condition since in both cases $\Sigma_\mathrm{c}$ is a contour connecting the same points $\alpha$ and $\beta$ but with opposite orientations in the two configurations.  Also, the jump matrices in \eqref{eq:Sigma-c-left}-\eqref{eq:Sigma-c-down} both have unit determinant, because the scalar factor satisfies $\ee^{2\pi\ii T\kappa}=\ee^{-2\pi\ii T\kappa}$ since $\Theta_\infty=-T\kappa$ and $2\Theta_\infty\in\mathbb{Z}$ holds for $(\Theta_0,\Theta_\infty)\in\Lambda_\mathrm{gO}$.   
\label{rem:same-jump}
\end{remark}
On the arcs of $C$ it appears at first that one should get different jump conditions for $\mathbf{N}(z)$ depending on whether or not $C\subset\Sigma_h$, but if one uses the fact that $\Delta g(z)=\Delta h(z)$ and observes the dichotomy that either $\Delta h(z)=0$ or $\langle h\rangle(z)=0$, one can write the jump conditions in the same form for both cases:
\eq
\mathbf{N}_+(z)=\mathbf{N}_-(z)
\ee^{-Th_-(z)\sigma_3}\mathbf{V}_k\ee^{Th_+(z)\sigma_3},\quad z\in\Sigma_k,\quad k=1,2,3,
\endeq
where $\mathbf{V}_k$, $k=1,2,3$ are the gO connection matrices defined in \eqref{eq:gO-connection}.

To deal with the jump conditions on the arcs of $C$, we use the following factorizations, all of which are special cases of \eqref{eq:general-factorizations} with two of the three factors combined:
\eq
\begin{split}
\mathbf{V}_1 &= 
\mathbf{L}(2\ee^{\frac{\ii\pi}{6}})
\bpm\tfrac{1}{\sqrt{3}} & -\tfrac{1}{2}\\0 & \sqrt{3}\epm\quad\text{(``L(DU)'')}\\
&=
\mathbf{L}(-2\ee^{-\frac{\ii\pi}{6}})
\bpm\tfrac{1}{\sqrt{3}} & -\tfrac{1}{2}\\2 & 0\epm
\quad\text{(``L(TL)'')},
\end{split}
\label{eq:V1}
\endeq
\eq
\begin{split}
\mathbf{V}_2 &=\bpm \sqrt{3} & \tfrac{1}{2}\\0 & \tfrac{1}{\sqrt{3}}\epm
\mathbf{L}(-2\ee^{-\frac{\ii\pi}{6}})
\quad\text{(``(UD)L'')}\\
&=\bpm 0 & \tfrac{1}{2}\\-2 & \tfrac{1}{\sqrt{3}}\epm
\mathbf{L}(2\ee^{\frac{\ii\pi}{6}})
\quad\text{(``(LT)L'')}\\
&=\bpm \ee^{\frac{\ii\pi}{6}} & 0\\-\tfrac{2}{\sqrt{3}}\ee^{-\frac{\ii\pi}{6}} &\ee^{-\frac{\ii\pi}{6}}\epm
\mathbf{U}(\tfrac{1}{2}\ee^{-\frac{\ii\pi}{6}})
\quad\text{(``(LD)U'')}\\
&=\bpm \ee^{\frac{\ii\pi}{6}} & \tfrac{\sqrt{3}}{2}\ee^{\frac{\ii\pi}{6}}\\-\tfrac{2}{\sqrt{3}}\ee^{-\frac{\ii\pi}{6}} & 0\epm
\mathbf{U}(-\tfrac{1}{2}\ee^{\frac{\ii\pi}{6}})
\quad
\text{(``(UT)U'')},
\end{split}
\label{eq:V2}
\endeq
\eq
\begin{split}
\mathbf{V}_3&=
\mathbf{U}(\tfrac{1}{2}\ee^{-\frac{5\ii\pi}{6}})
\bpm\ee^{-\frac{\ii\pi}{6}} & 0\\\tfrac{2}{\sqrt{3}}\ee^{-\frac{\ii\pi}{6}} & \ee^{\frac{\ii\pi}{6}}\epm
\quad\text{(``U(DL)'')}\\
&=
\mathbf{U}(\tfrac{1}{2}\ee^{-\frac{\ii\pi}{6}})
\bpm 0 & -\tfrac{\sqrt{3}}{2}\ee^{\frac{\ii\pi}{6}}\\\tfrac{2}{\sqrt{3}}\ee^{-\frac{\ii\pi}{6}} & \ee^{\frac{\ii\pi}{6}}\epm\quad\text{(``U(TU)'')}.
\end{split}
\label{eq:V3}
\endeq
Based on these, we transform $\mathbf{N}(z)$ into $\mathbf{O}(z)=\mathbf{O}^{(T,s,\kappa)}(z;y)$ by making the following explicit substitutions in the  ``lens'' domains $\Lambda_1$ and $\Lambda_2$ (in case we are using the ``leftward'' configuration) or $\Lambda_2$ and $\Lambda_3$ (in case we are using the ``downward'' configuration) shown in Figure~\ref{fig:GenusZero-y-Diag-N-to-O} as well as in the interior of $C$. 
\begin{figure}[h]
\begin{center}
\includegraphics{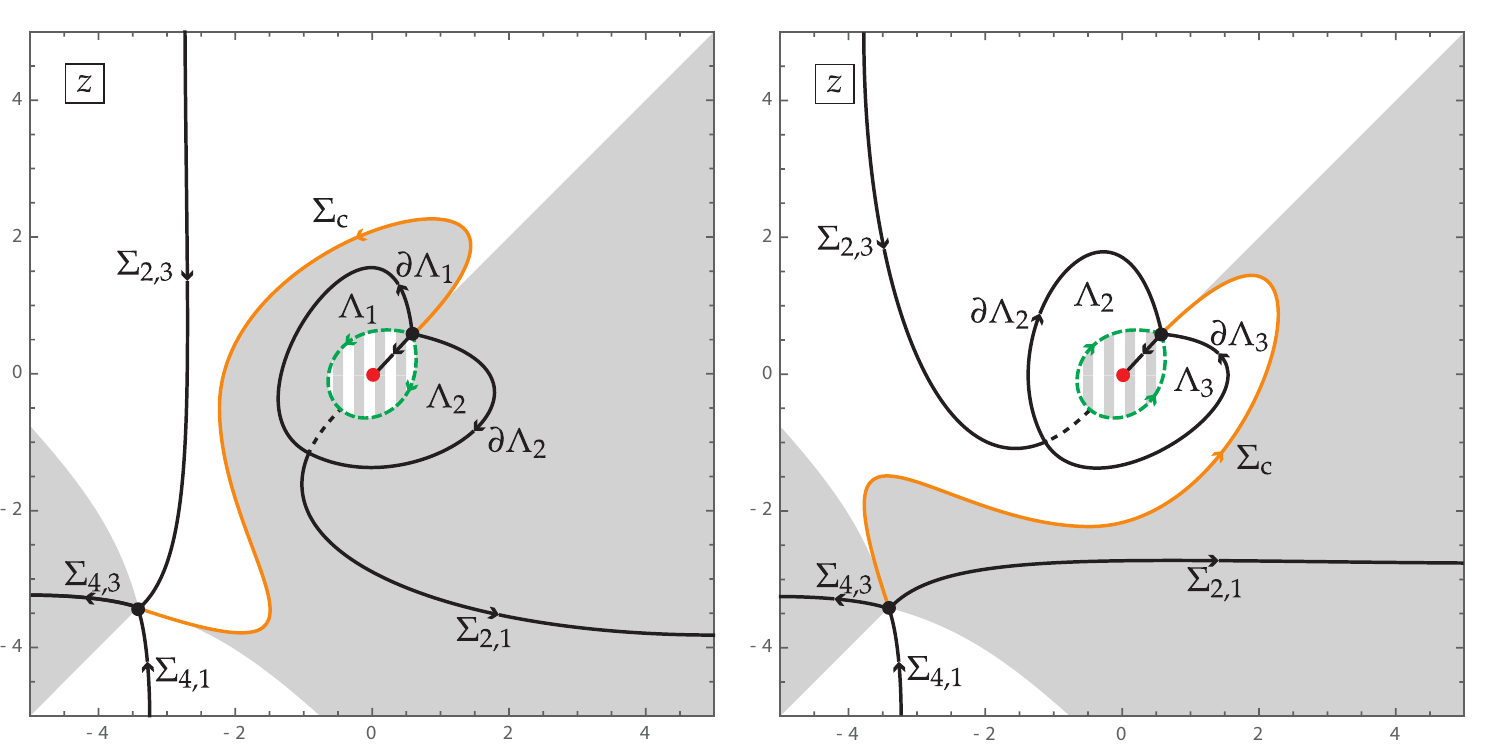}
\end{center}
\caption{The ``lens'' domains $\Lambda_j$, $j=1,2,3$ and their boundaries in the ``leftward'' configuration (left) and the ``downward'' configuration (right).  The indicated solid black and orange arcs form the final jump contour for $\mathbf{O}(z)$ (the dashed arcs have been removed by the transformation $\mathbf{N}(z)\mapsto\mathbf{O}(z)$).  Shading indicates the sign of $\mathrm{Re}(h(z))$ as in Figures~\ref{fig:GenusZero-y-Diag-BranchCuts} and \ref{fig:GenusZero-y-Diag-JumpContours}.}
\label{fig:GenusZero-y-Diag-N-to-O}
\end{figure}
\begin{itemize}
\item For the ``leftward'' configuration:
\begin{itemize}
\item If $C\not\subset\Sigma_h$ (i.e., $s=+1$), we combine the factorizations on the first lines of \eqref{eq:V1} and \eqref{eq:V2} to set
\eq
\mathbf{O}(z):=\mathbf{N}(z)
\mathbf{L}(2\ee^{\frac{\ii\pi}{6}}\ee^{2Th(z)}),
\quad z\in\Lambda_1,
\endeq
\eq
\mathbf{O}(z):=\mathbf{N}(z)
\mathbf{L}(2\ee^{-\frac{\ii\pi}{6}}\ee^{2Th(z)}),
\quad z\in\Lambda_2,
\endeq
\eq
\mathbf{O}(z):=\mathbf{N}(z)\bpm
\sqrt{3} & \tfrac{1}{2}\ee^{-2Th(z)}\\0 & \tfrac{1}{\sqrt{3}}\epm,\quad \text{$z$ in the interior of $C$.}
\label{eq:genus-zero-O-from-N-insideC-1}
\endeq
\item If $C\subset\Sigma_h$ (i.e., $s=-1$), we combine the factorizations on the second lines of \eqref{eq:V1} and \eqref{eq:V2} to set
\eq
\mathbf{O}(z):=\mathbf{N}(z)
\mathbf{L}(-2\ee^{-\frac{\ii\pi}{6}}\ee^{2Th(z)}),
\quad z\in\Lambda_1,
\endeq
\eq
\mathbf{O}(z):=\mathbf{N}(z)
\mathbf{L}(-2\ee^{\frac{\ii\pi}{6}}\ee^{2Th(z)}),
\quad z\in\Lambda_2,
\endeq
\eq
\mathbf{O}(z):=\mathbf{N}(z)\bpm
0 & \tfrac{1}{2}\\-2 & \tfrac{1}{\sqrt{3}}\ee^{2Th(z)}\epm,\quad\text{$z$ in the interior of $C$.}
\label{eq:genus-zero-O-from-N-insideC-2}
\endeq
\end{itemize}
\item For the ``downward'' configuration:
\begin{itemize}
\item If $C\not\subset\Sigma_h$ (i.e., $s=-1$), we combine the factorizations on the third line of \eqref{eq:V2} and the first line of \eqref{eq:V3} to set
\eq
\mathbf{O}(z):=\mathbf{N}(z)
\mathbf{U}(-\tfrac{1}{2}\ee^{-\frac{\ii\pi}{6}}\ee^{-2Th(z)}),
\quad
z\in\Lambda_2,
\endeq
\eq
\mathbf{O}(z):=\mathbf{N}(z)
\mathbf{U}(\tfrac{1}{2}\ee^{-\frac{5\ii\pi}{6}}\ee^{-2Th(z)}),
\quad
z\in\Lambda_3,
\endeq
\eq
\mathbf{O}(z):=\mathbf{N}(z)\bpm
\ee^{\frac{\ii\pi}{6}} & 0\\ -\tfrac{2}{\sqrt{3}}\ee^{-\frac{\ii\pi}{6}}\ee^{2Th(z)} & \ee^{-\frac{\ii\pi}{6}}\epm,\quad\text{$z$ in the interior of $C$.}
\label{eq:genus-zero-O-from-N-insideC-3}
\endeq
\item If $C\subset\Sigma_h$ (i.e., $s=+1$), we combine the factorizations on the fourth line of \eqref{eq:V2} and the second line of \eqref{eq:V3} to set
\eq
\mathbf{O}(z):=\mathbf{N}(z)
\mathbf{U}(\tfrac{1}{2}\ee^{\frac{\ii\pi}{6}}\ee^{-2Th(z)}),
\quad
z\in\Lambda_2,
\endeq
\eq
\mathbf{O}(z):=\mathbf{N}(z)
\mathbf{U}(\tfrac{1}{2}\ee^{-\frac{\ii\pi}{6}}\ee^{-2Th(z)}),
\quad z\in\Lambda_3,
\endeq
\eq
\mathbf{O}(z):=\mathbf{N}(z)\bpm
\ee^{\frac{\ii\pi}{6}}\ee^{-2Th(z)} & \tfrac{\sqrt{3}}{2}\ee^{\frac{\ii\pi}{6}}\\
-\tfrac{2}{\sqrt{3}}\ee^{-\frac{\ii\pi}{6}} & 0\epm,\quad\text{$z$ in the interior of $C$.}
\label{eq:genus-zero-O-from-N-insideC-4}
\endeq
\end{itemize}
\end{itemize}
In all cases, elsewhere that $\mathbf{N}(z)$ has a definite value we set
$\mathbf{O}(z):=\mathbf{N}(z)$.
Then $\mathbf{O}(z)$ no longer has any jump discontinuity across the arcs of $C$, and the jump across the arc of $\Sigma_{2,1}$ common to the boundary of the lens domains $\Lambda_1$ and $\Lambda_2$ is also removed in the ``leftward'' configuration, while the jump across the arc of $\Sigma_{2,3}$ common to the boundary of the lens domains $\Lambda_2$ and $\Lambda_3$ is also removed in the ``downward'' configuration.  The domain of analyticity of $\mathbf{O}(z)=\mathbf{O}^{(T,s,\kappa)}(z;y)$ is the complement of the jump contour shown with solid black and orange curves in the two panels of Figure~\ref{fig:GenusZero-y-Diag-N-to-O}, and the jump conditions satisfied by $\mathbf{O}(z)$ on the arcs $\Sigma_{2,3}$, $\Sigma_{4,3}$, $\Sigma_{4,1}$, $\Sigma_\mathrm{c}$, and the rest of $\Sigma_{2,1}$ (resp., $\Sigma_{2,1}$, $\Sigma_{4,3}$, $\Sigma_{4,1}$, $\Sigma_\mathrm{c}$, and the rest of $\Sigma_{2,3}$) for the ``leftward'' (resp., ``downward'') configuration are exactly the same as for $\mathbf{N}(z)$.  New jump conditions for $\mathbf{O}(z)$ appear on the lens boundaries and on $\Sigma_0$:
\begin{itemize}
\item
For the ``leftward'' configuration, we have
\eq
\mathbf{O}_+(z)=\mathbf{O}_-(z)
\mathbf{L}(2s\ee^{\frac{s\ii\pi}{6}}\ee^{2Th(z)}),
\quad z\in\partial\Lambda_1,
\endeq
\eq
\mathbf{O}_+(z)=\mathbf{O}_-(z)
\mathbf{L}(-2s\ee^{-\frac{s\ii\pi}{6}}\ee^{2Th(z)}),
\quad z\in\partial\Lambda_2,
\endeq
and
\eq
\begin{split}
\mathbf{O}_+(z)&=\mathbf{O}_-(z)
\mathbf{U}(\tfrac{1}{2\sqrt{3}}s(\ee^{-2Tsh_+(z)}-\ee^{-2Tsh_-(z)})),
\\
&=\mathbf{O}_-(z)
\mathbf{U}(-\tfrac{1}{2}\ii\ee^{-2\pi\ii T\kappa}\ee^{-2sT\langle h\rangle(z)}),
\quad z\in\Sigma_0.
\end{split}
\label{eq:O-Sigma0-leftward}
\endeq
\item
For the ``downward'' configuration, we have
\eq
\mathbf{O}_+(z)=\mathbf{O}_-(z)
\mathbf{U}(-\tfrac{1}{2}s\ee^{\frac{s\ii\pi}{6}}\ee^{-2Th(z)}),
\quad z\in\partial\Lambda_2,
\endeq
\eq
\mathbf{O}_+(z)=\mathbf{O}_-(z)
\mathbf{U}(\tfrac{1}{2}s\ee^{-\frac{s\ii\pi}{6}}\ee^{-2Th(z)}),
\quad
z\in\partial\Lambda_3,
\endeq
and
\eq
\begin{split}
\mathbf{O}_+(z)&=\mathbf{O}_-(z)
\mathbf{L}(\tfrac{2}{\sqrt{3}}s(\ee^{-2Tsh_+(z)}-\ee^{-2Tsh_-(z)}))
\\
&=\mathbf{O}_-(z)
\mathbf{L}(-2\ii\ee^{-2\pi\ii T\kappa}\ee^{-2sT\langle h\rangle(z)}),
\quad z\in\Sigma_0.
\end{split}
\label{eq:O-Sigma0-downward}
\endeq
\end{itemize}
To go from the first to the second line in \eqref{eq:O-Sigma0-leftward} and \eqref{eq:O-Sigma0-downward},
we use the identity $s(\ee^{-2Tsh_+(z)}-\ee^{-2Tsh_-(z)})=-\ii\sqrt{3}\ee^{-2\pi\ii T\kappa}\ee^{-2Ts\langle h\rangle(z)}$ which follows from $h_\pm(z)=\langle h\rangle(z)\pm\tfrac{1}{2}\Delta h(z)$, the jump condition \eqref{eq:h-jump-Sigma0}, the parametrization $(\Theta_0,\Theta_\infty)=(sT,-\kappa T)$, and the gO lattice conditions \eqref{eq:gO-lattice}.
As $\mathbf{O}(z)=\mathbf{N}(z)$ for large $|z|$, the normalization condition reads $\mathbf{O}(z)\to\mathbb{I}$ as $z\to\infty$.  

\subsection{Parametrix construction}
\label{sec:Genus-Zero-Parametrix}
Comparing the jump contours shown in Figure~\ref{fig:GenusZero-y-Diag-N-to-O} with the sign charts for $\mathrm{Re}(h(z))$ shown in Figure~\ref{fig:GenusZero-y-Diag-BranchCuts}, it is now clear that the jump matrix for $\mathbf{O}(z)$ decays exponentially to $\mathbb{I}$ as $T\to +\infty$ except on the arc $\Sigma_\mathrm{c}$ and neighborhoods of its endpoints $\alpha$ and $\beta$.  To deal with these, we construct outer and inner parametrices for $\mathbf{O}(z)$.

\subsubsection{Outer parametrix}
The outer parametrix $\dot{\mathbf{O}}^{\mathrm{out}}(z)$ is defined by the properties that 
\begin{itemize}
\item it is analytic for $z\in\mathbb{C}\setminus\Sigma_\mathrm{c}$,
\item it takes continuous boundary values on $\Sigma_\mathrm{c}$ except at $z=\alpha,\beta$, where negative one-fourth power singularities are admitted, 
\item the boundary values satisfy exactly the same jump condition on $\Sigma_\mathrm{c}$ as do those of $\mathbf{O}(z)$, and
\item it tends to $\mathbb{I}$ as $z\to\infty$.  
\end{itemize}
Since the jump conditions for $\mathbf{O}(z)$ on $\Sigma_\mathrm{c}$ match those for $\mathbf{N}(z)$ given in \eqref{eq:Sigma-c-left} or \eqref{eq:Sigma-c-down}, by Remark~\ref{rem:same-jump} we may diagonalize 
the constant jump matrix and hence see that the unique matrix function satisfying the above conditions can be written in the same form for both ``leftward'' and ``downward'' configurations:
\eq
\dot{\mathbf{O}}^{\mathrm{out}}(z)=\mathbf{S}_\mathrm{gO}j(z)^{\sigma_3}\mathbf{S}_\mathrm{gO}^{-1},\quad
\mathbf{S}_\mathrm{gO}:= \bpm \tfrac{1}{2}\ee^{-2\pi\ii T\kappa} & -\tfrac{1}{2}\ee^{-2\pi\ii T\kappa}\\1 & 1\epm,
\label{eq:OdotOut-genus-zero}
\endeq
where $j(z)$ is the unique function analytic for $z\in\mathbb{C}\setminus\Sigma_\mathrm{c}$ with the properties that $j(z)^4=(z-\alpha)/(z-\beta)$ and $j(z)\to 1$ as $z\to\infty$.
By expanding $j(z)$ for large $z$, it is easy to see that 
\eq
\dot{\mathbf{O}}^{\mathrm{out}}(z)=\mathbb{I}+z^{-1}\dot{\mathbf{O}}^{\mathrm{out}}_1 + 
\bo(z^{-2}),
\quad z\to\infty, 
\label{eq:dotOout-expansion-general}
\endeq
where
\eq
\dot{O}_{1,12}^{\mathrm{out}}=\frac{1}{8}(\beta-\alpha)\ee^{-2\pi\ii T\kappa}.
\label{eq:genus-zero-outer-infty}
\endeq
We may also unambiguously evaluate $\dot{\mathbf{O}}^{\mathrm{out}}(z)$ at $z=0$, because the branch cut $\Sigma_\mathrm{c}$ for $j(z)$ lies in the exterior of the Jordan curve $C$ enclosing $z=0$.  From the formula \eqref{eq:OdotOut-genus-zero} we get
\eq
\begin{split}
\dot{O}^\mathrm{out}_{11}(0)\dot{O}^\mathrm{out}_{12}(0)&=\frac{1}{8}\ee^{-2\pi\ii T\kappa}\left[j(0)^2-j(0)^{-2}\right],\\
\frac{\dot{O}^\mathrm{out}_{11}(0)}{\dot{O}^\mathrm{out}_{21}(0)}&=\frac{1}{2}\ee^{-2\pi\ii T\kappa}\frac{j(0)+j(0)^{-1}}{j(0)-j(0)^{-1}}=\frac{1}{2}\ee^{-2\pi\ii T\kappa}\frac{j(0)^2+j(0)^{-2}+2}{j(0)^2-j(0)^{-2}},\\
\frac{\dot{O}^\mathrm{out}_{12}(0)}{\dot{O}^\mathrm{out}_{22}(0)}&=\frac{1}{2}\ee^{-2\pi\ii T\kappa}\frac{j(0)-j(0)^{-1}}{j(0)+j(0)^{-1}}=\frac{1}{2}\ee^{-2\pi\ii T\kappa}\frac{j(0)^2+j(0)^{-2}-2}{j(0)^2-j(0)^{-2}}.
\end{split}
\label{eq:genus-zero-outer-zero}
\endeq
To simplify the expressions \eqref{eq:genus-zero-outer-zero} further, note that using the definition of $j(z)$ and the final identity in \eqref{eq:case-iv-coefficient-match},
\eq
\left(j(0)^2\pm j(0)^{-2}\right)^2=j(0)^4 + j(0)^{-4} \pm 2 = \frac{\alpha}{\beta}+\frac{\beta}{\alpha}\pm 2 = \frac{(\alpha\pm \beta)^2}{\alpha\beta} = \frac{\gamma^2 (\alpha\pm\beta)^2}{16}.
\label{eq:GenusZero-squares-identity}
\endeq
Now from the identity $j(0)^4=\alpha/\beta$ we get the asymptotic expansion $j(0)^4 = \frac{16}{81}y^4+\bo(y^3)$ as $y\to\infty$ with $0<\arg(y)<\tfrac{1}{2}\pi$.  From the plots in Figure~\ref{fig:GenusZero-y-Diag-JumpContours} or \ref{fig:GenusZero-y-Diag-N-to-O} which show the case of $\arg(y)=\tfrac{1}{4}\pi$, we see that the correct branch of the square root to take to calculate $j(0)^2$ depends on the configuration:
\eq
j(0)^2=\begin{cases}\tfrac{4}{9}y^2 + \bo(y),\quad &\text{in the ``leftward'' configuration}\\
-\tfrac{4}{9}y^2+\bo(y),\quad &\text{in the ``downward'' configuration}
\end{cases}
\endeq
in the limit $y\to\infty$.  So, since we also have the asymptotic expansion $\tfrac{1}{4}\gamma (\alpha\pm\beta)=\tfrac{4}{9}y^2+\bo(y)$ as $y\to\infty$, we find the exact identity 
\eq
j(0)^2\pm j(0)^{-2}=\begin{cases}\tfrac{1}{4}\gamma (\alpha\pm\beta),\quad&\text{in the ``leftward'' configuration}\\
-\tfrac{1}{4}\gamma(\alpha\pm\beta),\quad&\text{in the ``downward'' configuration}.
\end{cases}
\label{eq:f(0)squared-identity-gO}
\endeq
Using this result in \eqref{eq:genus-zero-outer-zero} and combining with \eqref{eq:genus-zero-outer-infty} and the identities in \eqref{eq:case-iv-coefficient-match} gives the following combinations that will be used in Section~\ref{sec:ExteriorFormulasOkamoto} below:
\eq
\frac{\dot{O}^\mathrm{out}_{11}(0)\dot{O}^\mathrm{out}_{12}(0)}{\dot{O}^\mathrm{out}_{1,12}}=
\begin{cases}
-\tfrac{1}{4}\gamma,\quad&\text{in the ``leftward'' configuration}\\
\tfrac{1}{4}\gamma,\quad&\text{in the ``downward'' configuration};
\end{cases}
\label{eq:genus-zero-outer-u}
\endeq
\eq
\frac{\dot{O}^\mathrm{out}_{1,12}\dot{O}^\mathrm{out}_{21}(0)}{\dot{O}^\mathrm{out}_{11}(0)} = 
\begin{cases} y+\tfrac{1}{2}\gamma + 2\gamma^{-1},\quad&\text{in the ``leftward'' configuration}\\
y+\tfrac{1}{2}\gamma-2\gamma^{-1},\quad&\text{in the ``downward'' configuration};
\end{cases}
\label{eq:genus-zero-outer-ucirc-1}
\endeq
\eq
\frac{\dot{O}^\mathrm{out}_{1,12}\dot{O}^\mathrm{out}_{22}(0)}{\dot{O}^\mathrm{out}_{12}(0)} = 
\begin{cases} y+\tfrac{1}{2}\gamma - 2\gamma^{-1},\quad&\text{in the ``leftward'' configuration}\\
y+\tfrac{1}{2}\gamma+2\gamma^{-1},\quad&\text{in the ``downward'' configuration}.
\end{cases}
\label{eq:genus-zero-outer-ucirc-2}
\endeq
To finish our discussion of the outer parametrix, we observe that $\dot{\mathbf{O}}^\mathrm{out}(z)$ has unit determinant and for $z$ bounded away from $z=\alpha$ and $z=\beta$, $\dot{\mathbf{O}}^\mathrm{out}(z)$ is uniformly bounded as $T\to +\infty$.

\subsubsection{Inner parametrices}
Inner parametrices are to be used in place of the outer parametrix in $T$-independent neighborhoods $D_\alpha$ and $D_\beta$ of $z=\alpha$ and $z=\beta$ respectively.  These are defined in terms of a $2\times 2$ matrix function $\mathbf{A}(\xi)$ satisfying the following Riemann-Hilbert conditions:
\begin{itemize}
\item
$\mathbf{A}(\xi)$ is analytic in the four sectors complementary to the four rays $\arg(\xi)=0$, $\arg(\xi)=\tfrac{2}{3}\pi$, $\arg(\xi)=-\tfrac{2}{3}\pi$, and $\arg(-\xi)=0$.
\item
$\mathbf{A}(\xi)$ takes continuous boundary values from each sector of analyticity on the union of rays forming the sector boundary.  Assuming the four rays are oriented in the direction away from the origin, the boundary values are related by the jump conditions
\eq
\mathbf{A}_+(\xi)=\mathbf{A}_-(\xi)
\mathbf{L}(\ii\ee^{-\xi^{3/2}}),
\quad\arg(\xi)=0,
\label{eq:Airy-jump-first}
\endeq
\eq
\mathbf{A}_+(\xi)=\mathbf{A}_-(\xi)
\mathbf{U}(\ii\ee^{\xi^{3/2}}),
\quad
\arg(\xi)=\pm\tfrac{2}{3}\pi,
\endeq
and
\eq
\mathbf{A}_+(\xi)=\mathbf{A}_-(\xi)
\mathbf{T}(-\ii),
\quad\arg(-\xi)=0.
\label{eq:Airy-jump-last}
\endeq
\item
$\mathbf{A}(\xi)$ is normalized by the condition
\eq
\mathbf{A}(\xi)\frac{1}{\sqrt{2}}\bpm 1&-1\\1&1\epm\xi^{-\sigma_3/4}=\bpm
1+\bo(\xi^{-3}) & \bo(\xi^{-1})\\\bo(\xi^{-2}) & 1+\bo(\xi^{-3})\epm,\quad\xi\to\infty.
\label{eq:Airy-Normalize}
\endeq
\end{itemize}
It is well known that there is a unique solution of these conditions, and the matrix $\mathbf{A}(\xi)$ can be explicitly written in terms of the Airy function $\mathrm{Ai}(\cdot)$ and its derivative \cite{DLMF}.  We will not need this formula, however.  The inner parametrices are defined as follows. 
Referring to the appropriate entry of Table~\ref{tab:Genus-Zero-Okamoto-Airy} corresponding to the disk $D_p$ ($p=\alpha,\beta$) and the ``leftward'' or ``downward'' configuration, we first fix the contour arc associated with $\arg(W)=0$ within $D_p$ so that a function $W(z)$ is well-defined as indicated on that arc by taking the $\tfrac{2}{3}$ power of a positive quantity.  This function can be analytically continued to the whole disk as a conformal mapping because $h'(z)^2$ has simple zeros at $z=\alpha,\beta$ and $\langle h\rangle(\alpha)=\langle h\rangle (\beta)=0$.  Once the conformal mapping $W$ is defined, we fix the remaining arcs of the jump contour within $D_p$ so that they are mapped by $W$ to the rays indicated in the table.  Noting the value of the constant matrix $\mathbf{C}$ defined in the table,
we then define a matrix $\mathbf{H}(z)$ by writing the outer parametrix in the form
\eq
\dot{\mathbf{O}}^\mathrm{out}(z)=\mathbf{H}(z)W(z)^{\frac{1}{4}\sigma_3}\frac{1}{\sqrt{2}}\bpm 1 & 1\\-1 & 1\epm\mathbf{C},\quad z\in D_p\setminus\Sigma_\mathrm{c}.
\label{eq:GenusZero-H-define}
\endeq
It is easy to see that $\mathbf{H}(z)$ has no jump across $\Sigma_\mathrm{c}\cap D_p$ and that any singularity at $W=0$ is removable.  Therefore $\mathbf{H}(z)$ is a holomorphic matrix function on $D_p$ with unit determinant, and it follows from the formula \eqref{eq:OdotOut-genus-zero} that the elements of $\mathbf{H}(z)$ are bounded on $D_p$ uniformly with respect to the large parameter $T$.  Using $\mathbf{H}(z)$ and $\mathbf{A}(\xi)$ we define an inner parametrix on $D_p$ by setting
\eq
\dot{\mathbf{O}}^{\mathrm{in},p}(z):=\mathbf{H}(z)T^{-\frac{1}{6}\sigma_3}\mathbf{A}(T^\frac{2}{3}W(z))\mathbf{C},\quad z\in D_p\setminus\{\arg(W(z))\in\{0,\pm\tfrac{2}{3}\pi,\pm\pi\}\}.
\label{eq:GenusZero-inner-parametrices-define}
\endeq
\begin{table}[h]
\caption{Data for defining the inner parametrices in $D_\alpha$ and $D_\beta$.}
\renewcommand{\arraystretch}{0.45}
\begin{tabular}{@{}|l|l|l|c|c|c|@{}}
\hline
\multirow{2}{*}{$p$}&\multirow{2}{*}{Configuration}&\multirow{2}{*}{Conformal map $W:D_p\to\mathbb{C}$}&\multicolumn{2}{c|}{Ray Preimages in $D_p$} & \multirow{2}{*}{Constant matrix $\mathbf{C}$}\\
\cline{4-5}
& &  & $\arg(W)$ & Preimage &  \\
\hline\hline
\multirow{4}{*}{$\alpha$} & \multirow{4}{*}{``leftward''} &\multirow{4}{*}{\makecell[l]{$(-2\langle h\rangle(z))^{\frac{2}{3}}$,\\ 
continued from $\Sigma_{4,3}$}} &
\shortstrut $0$ & $\Sigma_{4,3}$ & 
\multirow{4}{*}{$\ee^{-\ii\pi T\kappa\sigma_3}2^{\frac{1}{2}\sigma_3}$}  \\
\cline{4-5}
&&& \shortstrut $\tfrac{2}{3}\pi$ & $\Sigma_{4,1}$ & \\
\cline{4-5}
&&& \shortstrut $-\tfrac{2}{3}\pi$ & $\Sigma_{2,3}$ & \\
\cline{4-5}
&&& \shortstrut $\pm\pi$ & $\Sigma_\mathrm{c}$ & \\
\hline
\hline
\multirow{4}{*}{$\alpha$} & \multirow{4}{*}{``downward''} &\multirow{4}{*}{\makecell[l]{$(2\langle h\rangle(z))^{\frac{2}{3}}$,\\ 
continued from $\Sigma_{4,1}$}} &
\shortstrut $0$ & $\Sigma_{4,1}$ & 
\multirow{4}{*}{$\ii\sigma_1\ee^{\ii\pi T\kappa\sigma_3}2^{\frac{1}{2}\sigma_3}$}  \\
\cline{4-5}
&&& \shortstrut $\tfrac{2}{3}\pi$ & $\Sigma_{2,1}$ & \\
\cline{4-5}
&&& \shortstrut $-\tfrac{2}{3}\pi$ & $\Sigma_{4,3}$ & \\
\cline{4-5}
&&& \shortstrut $\pm\pi$ & $\Sigma_\mathrm{c}$ & \\
\hline
\hline
\multirow{4}{*}{$\beta$} & \multirow{4}{*}{``leftward''} &\multirow{4}{*}{\makecell[l]{$(2s\langle h\rangle(z))^{\frac{2}{3}}$,\\ 
continued from $\Sigma_0$}} &
\shortstrut $0$ & $\Sigma_0$ & 
\multirow{4}{*}{$\ii\sigma_1\ii^{\sigma_3}\ee^{\ii\pi T\kappa\sigma_3}2^{\frac{1}{2}\sigma_3}$}  \\
\cline{4-5}
&&& \shortstrut $\tfrac{2}{3}\pi$ & $\partial\Lambda_2$ & \\
\cline{4-5}
&&& \shortstrut $-\tfrac{2}{3}\pi$ & $\partial\Lambda_1$ & \\
\cline{4-5}
&&& \shortstrut $\pm\pi$ & $\Sigma_\mathrm{c}$ & \\
\hline
\hline
\multirow{4}{*}{$\beta$} & \multirow{4}{*}{``downward''} &\multirow{4}{*}{\makecell[l]{$(2s\langle h\rangle(z))^{\frac{2}{3}}$,\\ 
continued from $\Sigma_0$}} &
\shortstrut $0$ & $\Sigma_0$ & 
\multirow{4}{*}{$\ii^{\sigma_3}\ee^{-\ii\pi T\kappa\sigma_3}2^{\frac{1}{2}\sigma_3}$}  \\
\cline{4-5}
&&& \shortstrut $\tfrac{2}{3}\pi$ & $\partial\Lambda_3$ & \\
\cline{4-5}
&&& \shortstrut $-\tfrac{2}{3}\pi$ & $\partial\Lambda_2$ & \\
\cline{4-5}
&&& \shortstrut $\pm\pi$ & $\Sigma_\mathrm{c}$ & \\
\hline
\end{tabular}
\renewcommand{\arraystretch}{1}
\label{tab:Genus-Zero-Okamoto-Airy}
\end{table}
Recalling the parametrization $(\Theta_0,\Theta_\infty)=(sT,-s\kappa)$ and the gO lattice conditions \eqref{eq:gO-lattice},
in all four cases it follows from the sectorial analyticity of $\mathbf{A}(\xi)$ and the jump conditions \eqref{eq:Airy-jump-first}--\eqref{eq:Airy-jump-last} that $\dot{\mathbf{O}}^{\mathrm{in},\alpha}(z)$ (resp., $\dot{\mathbf{O}}^{\mathrm{in},\beta}(z)$) is analytic within $D_\alpha$ (resp., $D_\beta$) exactly where $\mathbf{O}(z)$ is, and satisfies exactly the same jump conditions.  
To show this it is helpful to recall the jump conditions satisfied by $h(z)$ (see \eqref{eq:h-jump-Sigma0}--\eqref{eq:h-sum-C-in-Sigmah}) and that, depending on the value of $s$, analytic continuation of $h$ across $C$ can introduce a change of sign.
Therefore, the inner parametrices are \emph{exact local solutions} of the Riemann-Hilbert conditions characterizing $\mathbf{O}(z)$.  Moreover, it follows from \eqref{eq:Airy-Normalize} and the fact that $z\in\partial D_{\alpha,\beta}$ bounds the conformal coordinate $W(z)$ away from zero that the following estimates hold uniformly with respect to $z$ on the indicated contours:
\eq
\begin{split}
\dot{\mathbf{O}}^{\mathrm{in},\alpha}(z)\dot{\mathbf{O}}^{\mathrm{out}}(z)^{-1}&=\mathbb{I}+\bo(T^{-1}),\quad z\in\partial D_\alpha,\\
\dot{\mathbf{O}}^{\mathrm{in},\beta}(z)\dot{\mathbf{O}}^{\mathrm{out}}(z)^{-1}&=\mathbb{I}+\bo(T^{-1}),\quad z\in\partial D_\beta.
\end{split}
\label{eq:GenusZero-Mismatch}
\endeq

\subsubsection{Global parametrix and error estimation}
\label{sec:GenusZero-error}
The \emph{global parametrix} for $\mathbf{O}(z)=\mathbf{O}^{(T,s,\kappa)}(z;y)$ is then defined as follows:
\eq
\dot{\mathbf{O}}(z):=\begin{cases}\dot{\mathbf{O}}^{\mathrm{in},p}(z),&\quad
z\in D_p,\quad p\in\{\alpha,\beta\},\\
\dot{\mathbf{O}}^{\mathrm{out}}(z),&\quad 
\text{elsewhere that $\dot{\mathbf{O}}^\mathrm{out}(z)$ is analytic}.
\end{cases}
\label{eq:GenusZero-global}
\endeq
The corresponding error in modeling $\mathbf{O}(z)$ with the global parametrix is the error $\mathbf{E}(z)=\mathbf{E}^{(T,s,\kappa)}(z;y)$ defined by 
\eq
\mathbf{E}(z):=\mathbf{O}(z)\dot{\mathbf{O}}(z)^{-1}.
\label{eq:GenusZeroE}
\endeq
Since both $\mathbf{O}(z)$ and its parametrix satisfy exactly the same jump conditions within $D_\alpha$ and $D_\beta$, and also along $\Sigma_\mathrm{c}$, $\mathbf{E}(z)$ can be viewed as an analytic function in the complex $z$-plane, with only the parts lying outside of the neighborhoods $D_\alpha$ and $D_\beta$ of the black arcs of the jump contour shown in the panels of Figure~\ref{fig:GenusZero-y-Diag-N-to-O} and the boundaries $\partial D_{\alpha,\beta}$ of the neighborhoods excluded.  We take the latter to have clockwise orientation.  Since $\dot{\mathbf{O}}^{\mathrm{out}}(z)$ and its inverse are uniformly bounded for $z\in\mathbb{C}\setminus (D_\alpha\cup D_\beta)$ and $T$ sufficiently large, it is easy to check that due to the exponentially rapid convergence to the identity of the jump matrices for $\mathbf{O}(z)$ on the parts of the jump contour for $\mathbf{E}(z)$ lying outside the closure of $D_\alpha\cup D_\beta$, we have also $\mathbf{E}_+(z)=\mathbf{E}_-(z)(\mathbb{I}+\text{exponentially small})$ as $T\to+\infty$ on those arcs.  For the closed curves $\partial D_{\alpha,\beta}$ it is easy to see from \eqref{eq:GenusZero-Mismatch} that $\mathbf{E}_+(z)=\mathbf{E}_-(z)(\mathbb{I}+\bo(T^{-1}))$ holds uniformly on $\partial D_{\alpha,\beta}$ as $T\to +\infty$.  Since it also follows from \eqref{eq:GenusZeroE} that $\mathbf{E}(z)\to\mathbb{I}$ as $z\to\infty$, we may apply small norm theory in the $L^2$ setting as described, for instance, in \cite[Appendix B]{BuckinghamM:2014} to conclude that 
\eq
\mathbf{E}(z)=\mathbb{I}+z^{-1}\mathbf{E}_1+\bo(z^{-2}),\quad z\to\infty,
\label{eq:GenusZero-E-expansion}
\endeq
where $\mathbf{E}_1=\mathbf{E}_1^{(T,s,\kappa)}(y)=\bo(T^{-1})$ and that $\mathbf{E}(0)=\mathbb{I}+\bo(T^{-1})$ as $T\to +\infty$.  

\subsection{Asymptotic formul\ae\ for the gO rational solutions of Painlev\'e-IV on the exterior domain}
\label{sec:ExteriorFormulasOkamoto}
Recalling the scaling relationships between $x$ and $y$ and between $\lambda$ and $z$ given in \eqref{eq:x-y-lambda-z} we see that for $|z|$ sufficiently large, 
\eq
\begin{split}
\mathbf{Y}(\lambda;x)&=\left(\tfrac{1}{2}T^{\frac{1}{2}}\right)^{\kappa T\sigma_3}\mathbf{M}(z)\\
&=\left(\tfrac{1}{2}T^{\frac{1}{2}}\right)^{\kappa T\sigma_3}\ee^{Tg_\infty\sigma_3}\mathbf{N}(z)\ee^{-Tg(z)\sigma_3}\\
&=\left(\tfrac{1}{2}T^{\frac{1}{2}}\right)^{\kappa T\sigma_3}\ee^{Tg_\infty\sigma_3}\mathbf{O}(z)\ee^{-Tg(z)\sigma_3}\\
&=\left(\tfrac{1}{2}T^{\frac{1}{2}}\right)^{\kappa T\sigma_3}\ee^{Tg_\infty\sigma_3}\mathbf{E}(z)\dot{\mathbf{O}}(z)\ee^{-Tg(z)\sigma_3}\\
&=\left(\tfrac{1}{2}T^{\frac{1}{2}}\right)^{\kappa T\sigma_3}\ee^{Tg_\infty\sigma_3}\mathbf{E}(z)\dot{\mathbf{O}}^{\mathrm{out}}(z)\ee^{-Tg(z)\sigma_3}.
\end{split}
\label{eq:genus-zero-Y-lambda-large}
\endeq
Now using the expansions \eqref{eq:GenusZero-E-expansion},
\eqref{eq:dotOout-expansion-general}, and $g(z)=-\kappa\log(z)+g_\infty+g_1z^{-1}+\bo(z^{-2})$ as $z\to\infty$, which implies that also
\eq
\ee^{-Tg(z)\sigma_3}=\left(\mathbb{I}-Tg_1z^{-1}\sigma_3+\bo(z^{-2})\right)
\cdot\ee^{-Tg_\infty\sigma_3}z^{T\kappa\sigma_3},\quad z\to\infty,
\endeq
upon again taking into account the scaling $\lambda=\tfrac{1}{2}T^{\frac{1}{2}}z$ one sees that the matrix element $Y^\infty_{1,12}(x)$ defined in \eqref{eq:Z0-Y1-define} can be written in the form
\eq
\begin{split}
Y^\infty_{1,12}(x)&=\left(\tfrac{1}{4}T\right)^{\kappa T}\ee^{2Tg_\infty}\tfrac{1}{2}T^{\frac{1}{2}}\left(\dot{O}^\mathrm{out}_{1,12}+E_{1,12}\right)\\
&=\left(\tfrac{1}{4}T\right)^{\kappa T}\ee^{2Tg_\infty}\tfrac{1}{2}T^{\frac{1}{2}}\left(\dot{O}^\mathrm{out}_{1,12}+\bo(T^{-1})\right),\quad T\to +\infty.
\end{split}
\label{eq:genus-zero-Y112}
\endeq
Now, the first two lines of \eqref{eq:genus-zero-Y-lambda-large} are also valid for $|\lambda|$ sufficiently small, so using \eqref{eq:gprime-asymp}, the matrix $\mathbf{Y}^0_0(x)$ defined in \eqref{eq:Z0-Y1-define} can be written as
\eq
\begin{split}
\mathbf{Y}^0_0(x)&=\lim_{z\to 0}\left(\tfrac{1}{2}T^{\frac{1}{2}}\right)^{\kappa T\sigma_3}\ee^{Tg_\infty\sigma_3}\mathbf{N}(z)\ee^{-Tg(z)\sigma_3}z^{-sT\sigma_3}\left(\tfrac{1}{2}T^\frac{1}{2}\right)^{-sT\sigma_3} \\ &= \left(\tfrac{1}{2}T^{\frac{1}{2}}\right)^{\kappa T\sigma_3}\ee^{Tg_\infty\sigma_3}\mathbf{N}(0)\ee^{-Tg_0\sigma_3}\left(\tfrac{1}{2}T^\frac{1}{2}\right)^{-sT\sigma_3}.
\end{split}
\label{eq:GenusZero-Z}
\endeq
Using \eqref{eq:genus-zero-O-from-N-insideC-1}, \eqref{eq:genus-zero-O-from-N-insideC-2}, \eqref{eq:genus-zero-O-from-N-insideC-3}, and \eqref{eq:genus-zero-O-from-N-insideC-4} in which the exponential factor vanishes at the origin $z=0$ in each case, we have
\eq
\mathbf{N}(0)=\begin{cases}
\mathbf{O}(0)\mathbf{D}(\tfrac{1}{\sqrt{3}}),&\quad\text{in the ``leftward'' configuration, for $C\not\subset\Sigma_h$ (i.e., $s=1$)}\\
\mathbf{O}(0)\mathbf{T}(2),&\quad\text{in the ``leftward'' configuration, for $C\subset\Sigma_h$ (i.e., $s=-1$)}\\
\mathbf{O}(0)\mathbf{D}(\ee^{-\frac{\ii\pi}{6}}),&\quad\text{in the ``downward'' configuration, for $C\not\subset\Sigma_h$ (i.e., $s=-1$)}\\
\mathbf{O}(0)\mathbf{T}(\tfrac{2}{\sqrt{3}}\ee^{-\frac{\ii\pi}{6}}),&\quad\text{in the ``downward'' configuration, for $C\subset\Sigma_h$ (i.e., $s=1$),}
\end{cases} 
\label{eq:GenusZero-N(0)}
\endeq
where we are using the compact notation $\mathbf{D}(a)$ and $\mathbf{T}(a)$ for diagonal and off-diagonal matrix factors defined in \eqref{eq:matrix-factors-notation}.  

Then since $\mathbf{O}(0)=\mathbf{E}(0)\dot{\mathbf{O}}(0)=\mathbf{E}(0)\dot{\mathbf{O}}^\mathrm{out}(0) = (\mathbb{I}+\bo(T^{-1}))\dot{\mathbf{O}}^\mathrm{out}(0)$ and $\dot{\mathbf{O}}^\mathrm{out}(0)$ is bounded,
\eq
Y^0_{0,11}(x)Y^0_{0,12}(x)=\begin{cases}
\left(\tfrac{1}{4}T\right)^{\kappa T}\ee^{2Tg_\infty}
(\dot{O}^\mathrm{out}_{11}(0)\dot{O}^\mathrm{out}_{12}(0)+\bo(T^{-1})),&\quad C\not\subset\Sigma_h\\
-\left(\tfrac{1}{4}T\right)^{\kappa T}\ee^{2Tg_\infty}
(\dot{O}^\mathrm{out}_{11}(0)\dot{O}^\mathrm{out}_{12}(0)+\bo(T^{-1})),&\quad C\subset\Sigma_h.
\end{cases}
\label{eq:genus-zero-product-Z}
\endeq
Therefore, recalling $\Theta_0=sT$ and using the definition \eqref{eq:u-ucirc} of $u(x)$, we combine \eqref{eq:genus-zero-Y112} and \eqref{eq:genus-zero-product-Z} with \eqref{eq:genus-zero-outer-u} to obtain, for integers $m,n$ such that $(\Theta_0,\Theta_\infty)=(\Theta_{0,\mathrm{gO}}^{[3]}(m,n),\Theta_{\infty,\mathrm{gO}}^{[3]}(m,n))$,
\eq
\begin{split}
u^{[3]}_\mathrm{gO}(x;m,n)=u(x)&=-2sT\frac{Y^0_{0,11}(x)Y^0_{0,12}(x)}{Y^\infty_{1,12}(x)}\\
&=T^{\frac{1}{2}}(\gamma + \bo(T^{-1}))\\
& = 
|\Theta_0|^\frac{1}{2}\left(U_{0,\mathrm{gO}}(y;\kappa)+\bo(|\Theta_0|^{-1})\right),\quad
y=\frac{x}{|\Theta_0|^{\frac{1}{2}}},\quad\kappa=-\frac{\Theta_\infty}{|\Theta_0|}.
\end{split}
\label{eq:genus-zero-Okamoto-u-asymp}
\endeq
The fact that the same formula results in all cases comes from the correlation between $s=\pm 1$ and whether or not $C\subset\Sigma_h$ in each configuration.  

Likewise, using also the fact that the matrix elements of $\dot{\mathbf{O}}^\mathrm{out}(0)$ are bounded away from zero, 
\eq
\frac{Y^0_{0,11}(x)}{Y^0_{0,21}(x)} = \begin{cases}
\displaystyle \left(\tfrac{1}{4}T\right)^{\kappa T}\ee^{2Tg_\infty}
\left(\frac{\dot{O}^\mathrm{out}_{11}(0)}{\dot{O}^\mathrm{out}_{21}(0)}+\bo(T^{-1})\right),&\quad
C\not\subset\Sigma_h\\
\displaystyle\left(\tfrac{1}{4}T\right)^{\kappa T}\ee^{2Tg_\infty}
\left(\frac{\dot{O}^\mathrm{out}_{12}(0)}{\dot{O}^\mathrm{out}_{22}(0)}+\bo(T^{-1})\right),&\quad
C\subset\Sigma_h.
\end{cases}
\label{eq:genus-zero-quotient-Z}
\endeq
Therefore, recalling the definition of $u_\tw(x)$ in \eqref{eq:u-ucirc} and combining \eqref{eq:genus-zero-Y112} and \eqref{eq:genus-zero-quotient-Z} with either \eqref{eq:genus-zero-outer-ucirc-1} or \eqref{eq:genus-zero-outer-ucirc-2} we obtain, now for integers $m,n$ such that $(\Theta_{0,\mathrm{gO}}^{[1]}(m,n),\Theta_{\infty,\mathrm{gO}}^{[1]}(m,n))=(\Theta_{0,\tw},\Theta_{\infty,\tw})$,
\eq
\begin{split}
u^{[1]}_\mathrm{gO}(x;m,n)=u_\tw(x)&=-2\frac{Y^0_{0,21}(x)Y^\infty_{1,12}(x)}{Y^0_{0,11}(x)}\\
&=T^{\frac{1}{2}}\left(-y-\tfrac{1}{2}\gamma - 2s\gamma^{-1} + \bo(T^{-1})\right)\\
&= |\Theta_0|^\frac{1}{2}\left(-y-\tfrac{1}{2}U_{0,\mathrm{gO}}(y;\kappa)-2sU_{0,\mathrm{gO}}(y;\kappa)^{-1} + \bo(|\Theta_0|^{-1})\right),
\end{split}
\label{eq:genus-zero-Okamoto-ucirc-asymp}
\endeq
where $y$ and $\kappa$ are exactly as in \eqref{eq:genus-zero-Okamoto-u-asymp}.
Here, the only evidence of the four-fold origin of this asymptotic formula is the sign $s=\pm 1$.

Now, in both asymptotic formul\ae\ \eqref{eq:genus-zero-Okamoto-u-asymp} and \eqref{eq:genus-zero-Okamoto-ucirc-asymp}, $\gamma=U_{0,\mathrm{gO}}(y;\kappa)$ is the branch of the solution of the quartic \eqref{eq:gamma-eqn} satisfying $U_{0,\mathrm{gO}}(y;\kappa)=-\tfrac{2}{3}y + \bo(1)$ as $y\to\infty$.  However, recalling Remark~\ref{rem:other-parameters}, the parameters $(\Theta_0,\Theta_\infty)\in \Lambda_\mathrm{gO}$ appearing in these formul\ae\ are those for which $u(x)$ satisfies the Painlev\'e-IV equation, while $u_\tw(x)$ solves the Painlev\'e-IV equation for different parameters $(\Theta_{0,\tw},\Theta_{\infty,\tw})$; see \eqref{eq:Baecklund-3-to-1}.  Since the latter definition implies that $|\Theta_{0,\tw}|=\tfrac{1}{2}|\Theta_0|(1-s\kappa)$ and $1-s\kappa\neq 0$ for $\kappa\in (-1,1)$, it makes sense to write \eqref{eq:genus-zero-Okamoto-ucirc-asymp} in the form
\eq
u^{[1]}_\mathrm{gO}(x;m,n)=u_\tw(x)=|\Theta_{0,\tw}|^\frac{1}{2}C_\tw\left(-\frac{y}{C_\tw}-\tfrac{1}{2}U_{0,\mathrm{gO}}\left(\frac{y}{C_\tw};\kappa\right)-2sU_{0,\mathrm{gO}}\left(\frac{y}{C_\tw};\kappa\right)^{-1} + \bo(|\Theta_{0,\tw}|^{-1})\right),
\endeq
i.e., replacing $y$ with $y/C_\tw$ and using $|\Theta_0|^\frac{1}{2}=|\Theta_{0,\tw}|^\frac{1}{2}C_\tw$, where now $y$ is related to $x$ differently:
\eq
y:=\frac{x}{|\Theta_{0,\tw}|^\frac{1}{2}}\quad \text{and}\quad C_\tw:=\sqrt{\frac{2}{1-s\kappa}}.
\endeq
Next, we observe the following.
\begin{lemma}
Fix $\kappa\in (-1,1)$ and suppose that $\gamma=\gamma(y)$ is a solution of the quartic equation \eqref{eq:gamma-eqn} analytic on a domain $D$, i.e., $y\mapsto \gamma(y)$ is analytic on $D$ and $Q(\gamma(y),y;\kappa)=0$ holds identically on $D$.  Set
\eq
\gamma_\tw(y):=
C_\tw\left(-\frac{y}{C_\tw}-\tfrac{1}{2}\gamma\left(\frac{y}{C_\tw}\right)-2s\gamma\left(\frac{y}{C_\tw}\right)^{-1}\right).
\label{eq:gamma-circ}
\endeq
Then for $s=\pm 1$,
\eq
Q\left(\gamma_\tw(y),y;-\frac{\kappa+3s}{1-s\kappa}\right)=0
\endeq
holds for $y\in C_\tw D$ (dilation of the domain $D$ by $C_\tw$).
\label{lem:gamma-identity}
\end{lemma}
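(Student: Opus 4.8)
The statement is, in essence, a polynomial identity, so the plan is to verify it by elimination. Fix $\kappa\in(-1,1)$ and $s=\pm1$, abbreviate $C_\tw=\sqrt{2/(1-s\kappa)}$ and $\tilde\kappa:=-(\kappa+3s)/(1-s\kappa)$, and for $y\in C_\tw D$ write $g:=\gamma(y/C_\tw)$ and $G:=\gamma_\tw(y)$. Since $Q(0,w;\kappa)=-\tfrac{16}{3}\neq 0$, the analytic function $\gamma$ never vanishes on $D$, so $\gamma_\tw$ given by \eqref{eq:gamma-circ} is a well-defined analytic function on $C_\tw D$, and $G$ and $g$ are tied together by the relation obtained from \eqref{eq:gamma-circ} after multiplying by $g$:
\eq
\tfrac{1}{2}C_\tw g^2+(G+C_\tw w)g+2sC_\tw=0,\qquad\text{equivalently}\qquad g+4s\,g^{-1}=-\tfrac{2}{C_\tw}(G+C_\tw w).
\label{eq:plan-quadratic}
\endeq
Throughout I would also use the parameter relations $C_\tw^2(1-s\kappa)=2$, $\;2\tilde\kappa=-C_\tw^2(\kappa+3s)$, and $s^2=1$ (so $s^{-1}=s$).

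The core step is to eliminate $g$ between the quartic $3Q(g,w;\kappa)=3g^4+8wg^3+4(w^2+2\kappa)g^2-16=0$ (which is \eqref{eq:gamma-eqn}) and the quadratic constraint \eqref{eq:plan-quadratic}. Concretely, I would form the Sylvester resultant $R(G,w):=\mathrm{Res}_g\!\big(3Q(g,w;\kappa),\,\tfrac12 C_\tw g^2+(G+C_\tw w)g+2sC_\tw\big)$, which is a polynomial in $(G,w)$ of degree $4$ in $G$ (one root of the quadratic grows like $-2G/C_\tw$, which the quartic sends to order $G^4$, while the other root stays bounded and contributes the factor $3Q(0,w;\kappa)=-16$), and show that, as polynomials in $G$,
\eq
R(G,w)=c_0\cdot Q\big(G,\,C_\tw w;\,\tilde\kappa\big),
\label{eq:plan-resultant}
\endeq
with $c_0$ a nonzero constant; matching the $G^4$ coefficients gives $c_0=-48$. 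Granting \eqref{eq:plan-resultant}, the lemma is immediate: for $y\in C_\tw D$ the pair $(g,G)=(\gamma(y/C_\tw),\gamma_\tw(y))$ is by construction a common root of the two polynomials whose resultant is $R$, hence $R(\gamma_\tw(y),y/C_\tw)=0$, and therefore $Q(\gamma_\tw(y),y;\tilde\kappa)=0$. In practice it may be cleaner to avoid the $6\times6$ Sylvester determinant and instead substitute \eqref{eq:gamma-circ} directly into $Q(\,\cdot\,,C_\tw w;\tilde\kappa)$, multiply through by $g^4$ to clear the $g^{-1}$ terms, and perform polynomial long division by $3Q(g,w;\kappa)$ in $g$; one then checks that the degree-$\le3$ remainder (four polynomials in $w$) vanishes identically once the parameter relations and $s^{-1}=s$ are imposed.

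It is worth recording why one expects \eqref{eq:plan-resultant} to hold, although this is only motivation. The map $\mathcal{S}_\tw$ of \eqref{eq:Baecklund-3-to-1} is a B\"acklund transformation of Painlev\'e-IV, hence carries equilibria of the autonomous model \eqref{eq:Approximating-ODE-Second-Order} to equilibria: feeding the ansatz $u\approx T^{1/2}\gamma$, $x=T^{1/2}w$, $\Theta_0=sT$, $u'\to0$ into \eqref{eq:Baecklund-3-to-1} and rescaling by $C_\tw$ (using $|\Theta_{0,\tw}|=\tfrac12 T(1-s\kappa)=T/C_\tw^2$ and $\Theta_{\infty,\tw}\sim-\tilde\kappa|\Theta_{0,\tw}|$) reproduces exactly the formula \eqref{eq:gamma-circ}, so consistency with Theorem~\ref{thm:OkamotoExterior} forces $\gamma_\tw$ to be an equilibrium for the twisted parameters. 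This dovetails with Remark~\ref{rem:equilibrium-rederive} and with the dilation symmetry in \eqref{eq:geometric-symmetries}, since $\tilde\kappa=I^{\mp}(\kappa)$ and $C_\tw=\sqrt{2/(1\mp\kappa)}$ for $s=\pm1$.

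I expect the only real obstacle to be bookkeeping: pinning down $c_0$ and keeping every sign correct so that one computation covers both $s=+1$ and $s=-1$. The safe approach is to retain $s$ as an abstract symbol with $s^2=1$ throughout, eliminate $C_\tw$ in favor of $(\kappa,s)$ only at the very end via $C_\tw^2=2/(1-s\kappa)$, and confirm symbolically that the remainder (equivalently, the difference $R(G,w)-c_0\,Q(G,C_\tw w;\tilde\kappa)$) is the zero polynomial.
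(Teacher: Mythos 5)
Your proposal is correct and is essentially the paper's argument: the paper proves the lemma by exhibiting the explicit polynomial factorization
$Q\bigl(C_\tw(-y-\tfrac{1}{2}\gamma-2s\gamma^{-1}),C_\tw y;-\tfrac{\kappa+3s}{1-s\kappa}\bigr)=\tfrac{\gamma^4-4(y^2+2\kappa)\gamma^2-32sy\gamma-48}{4\gamma^4(1-s\kappa)^2}\,Q(\gamma,y;\kappa)$,
which is exactly your ``clear the $\gamma^{-1}$ terms and divide by $Q(\gamma,y;\kappa)$ with zero remainder'' computation with the quotient written out explicitly. Your resultant packaging is equivalent (one has $\mathrm{Res}=-48\prod_i(G-G_i)$ over the four branches, consistent with your $c_0=-48$), so the proof goes through as planned.
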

\begin{proof}
This follows from the identity
\eq
Q\left(C_\tw (-y-\tfrac{1}{2}\gamma-2s\gamma^{-1}),C_\tw y;-\frac{\kappa+3s}{1-s\kappa}\right)=\frac{\gamma^4-4(y^2+2\kappa)\gamma^2-32sy\gamma-48}{4\gamma^4(1-s\kappa)^2}Q(\gamma,y;\kappa).
\endeq
\end{proof}
One can easily check that if $\gamma(y)=-\tfrac{2}{3}y+\bo(1)$ in \eqref{eq:gamma-circ}, then
also $\gamma_\tw(y)=-\frac{2}{3}y + \bo(1)$.  Therefore, using Lemma~\ref{lem:gamma-identity} we can finally express \eqref{eq:genus-zero-Okamoto-ucirc-asymp} in the form
\eq
u^{[1]}_\mathrm{gO}(x;m,n)=u_\tw(x)=|\Theta_{0,\tw}|^\frac{1}{2}\left(U_{0,\mathrm{gO}}(y;\kappa_\tw) + \bo(|\Theta_{0,\tw}|^{-1})\right),\quad
y:=\frac{x}{|\Theta_{0,\tw}|^\frac{1}{2}},
\quad
\kappa_\tw =  -\frac{\Theta_{\infty,\tw}}{|\Theta_{0,\tw}|},
\label{eq:genus-zero-Okamoto-ucirc-asymp-2}
\endeq
because $U_{0,\mathrm{gO}}(y;\kappa)$ is a simple root of the quartic \eqref{eq:gamma-eqn}, and 
\eq
-\frac{\Theta_{\infty,\tw}}{|\Theta_{0,\tw}|} = -\frac{3\Theta_0-\Theta_\infty+2}{|\Theta_0+\Theta_\infty|} = -\frac{\kappa+3s}{1-s\kappa}+\bo(|\Theta_{0,\tw}|^{-1}).
\endeq

According to \eqref{eq:symmetry-1-2}, to prove the asymptotic formula \eqref{eq:OkamotoExterior123} in Theorem~\ref{thm:OkamotoExterior}, it suffices to prove it just for types $j=1$ and $j=3$.  
Therefore, to
complete the proof of Theorem~\ref{thm:OkamotoExterior} it remains only to specify the precise values of $y$ where \eqref{eq:genus-zero-Okamoto-u-asymp} and \eqref{eq:genus-zero-Okamoto-ucirc-asymp-2} are valid respectively and to discuss the uniformity of convergence on the corresponding domains.  This will be done in Section~\ref{sec:OutsideDomain} below.

%

\section{Asymptotic analysis of $\mathbf{M}(z)$ for sufficiently large $|y|$:  gH case}
\label{sec:Exterior-gH}
Now we develop a simplified version of the analysis from Section~\ref{sec:Exterior} applicable to the gH family.  We take $(\Theta_0,\Theta_\infty)\in\Lambda_\mathrm{gH}^{[3]+}$ so that $s=\mathrm{sgn}(\Theta_0)=1$ and $\kappa=-\Theta_\infty/|\Theta_0|\in (-1,1)$, and we assume that for $|y|$ sufficiently large with $0\le\arg(y)\le\tfrac{1}{2}\pi$, the polynomial $P(z)$ is again in case $\{211\}$.  However, now we select the solution of the quartic \eqref{eq:gamma-eqn} that satisfies $\gamma=U_{0,\mathrm{gH}}^{[3]}(y;\kappa)=-2y + \bo(1)$ as $y\to\infty$ (see Section~\ref{sec:equilibrium}).  Solving for $\alpha$ and $\beta$ from \eqref{eq:alpha-beta-E} gives (breaking permutation symmetry) $\alpha = (-2\kappa+2\ii\sqrt{1-\kappa^2})y^{-1}+\bo(y^{-2})$ and $\beta=(-2\kappa-2\ii\sqrt{1-\kappa^2})y^{-1}+\bo(y^{-2})$ as $y\to\infty$.  It is convenient here to parametrize $\kappa\in (-1,1)$ by $\kappa=\sin(\tfrac{1}{2}\varphi)$, $\varphi\in (-\pi,\pi)$.  Then also $\sqrt{1-\kappa^2}=\cos(\tfrac{1}{2}\varphi)>0$, so $\alpha=2\ii\ee^{\frac{1}{2}\ii\varphi}y^{-1}+\bo(y^{-2})$ and $\beta=-2\ii\ee^{-\frac{1}{2}\ii\varphi}y^{-1}+\bo(y^{-2})$ as $y\to\infty$.

\subsection{Analysis of the exponent $h(z)$}
For large $|y|$, the quadratic differential $Q(z)\,\dd z^2$ for $Q(z)=\tfrac{1}{16}z^{-2}(z-\gamma)^2(z-\alpha)(z-\beta)$ can be written under the rescaling $Z=yz$ as $Q(z)\,\dd z^2 = \tfrac{1}{4}Z^{-2}((Z+2\kappa)^2+4(1-\kappa^2))(1+\bo(y^{-1}))\,\dd Z^2$, 
where the error term is uniform for bounded $Z$.  Neglecting the error term yields a quadratic differential in the $Z$-plane that has Schwarz symmetry and only two finite critical points; hence both finite critical points necessarily lie on the boundary of the circle domain $\circledomain$ containing $Z=0$.  This leading-order model resolves the limiting v-trajectories in the part of the $z$-plane that asymptotically contains $z=0$, $z=\alpha$, and $z=\beta$, while $z=\gamma$ is out of the picture.  Restoring the error term, one can show that for large $|y|$ this structure is preserved and hence both $\alpha$ and $\beta$ lie on $\partial\circledomain$ while $\gamma\in\mathbb{C}\setminus\overline{\circledomain}$.  Therefore, $\partial\circledomain$ is the closure of the union of two v-trajectories, each with endpoints $z=\alpha,\beta$.  From each of the latter critical points, exactly one additional v-trajectory emanates into the exterior of $\partial\circledomain$, and since there can be no divergent v-trajectories by the same argument as in Section~\ref{sec:h-analysis}, these two v-trajectories can either coincide, terminate at $z=\gamma$, or tend to $z=\infty$.  The scenario of coincidence would imply a closed loop formed of v-trajectories that can be easily ruled out by Lemma~\ref{lem:Teichmueller}.  Without loss of generality, we assume that $\mathrm{Re}(h(\alpha))=0$.   

Suppose that $\mathrm{Re}(h(\gamma))\neq 0$. It then follows that the v-trajectories emanating from $z=\alpha,\beta$ into the exterior of the circle domain both tend to $z=\infty$, and the exterior of $\partial\circledomain$ is divided by these trajectories into two disjoint components, exactly one of which must contain $z=\gamma$ and the four critical v-trajectories emanating from it.  Therefore, the complement of the closure $\critclosure$ of the union of critical v-trajectories of $Q(z)\,\dd z^2$ is the disjoint union of four end domains, one strip domain, and one circle domain, as shown in 
Figure~\ref{fig:GenusZeroGH-Stokes}.
\begin{figure}[h]
\begin{center}
\includegraphics{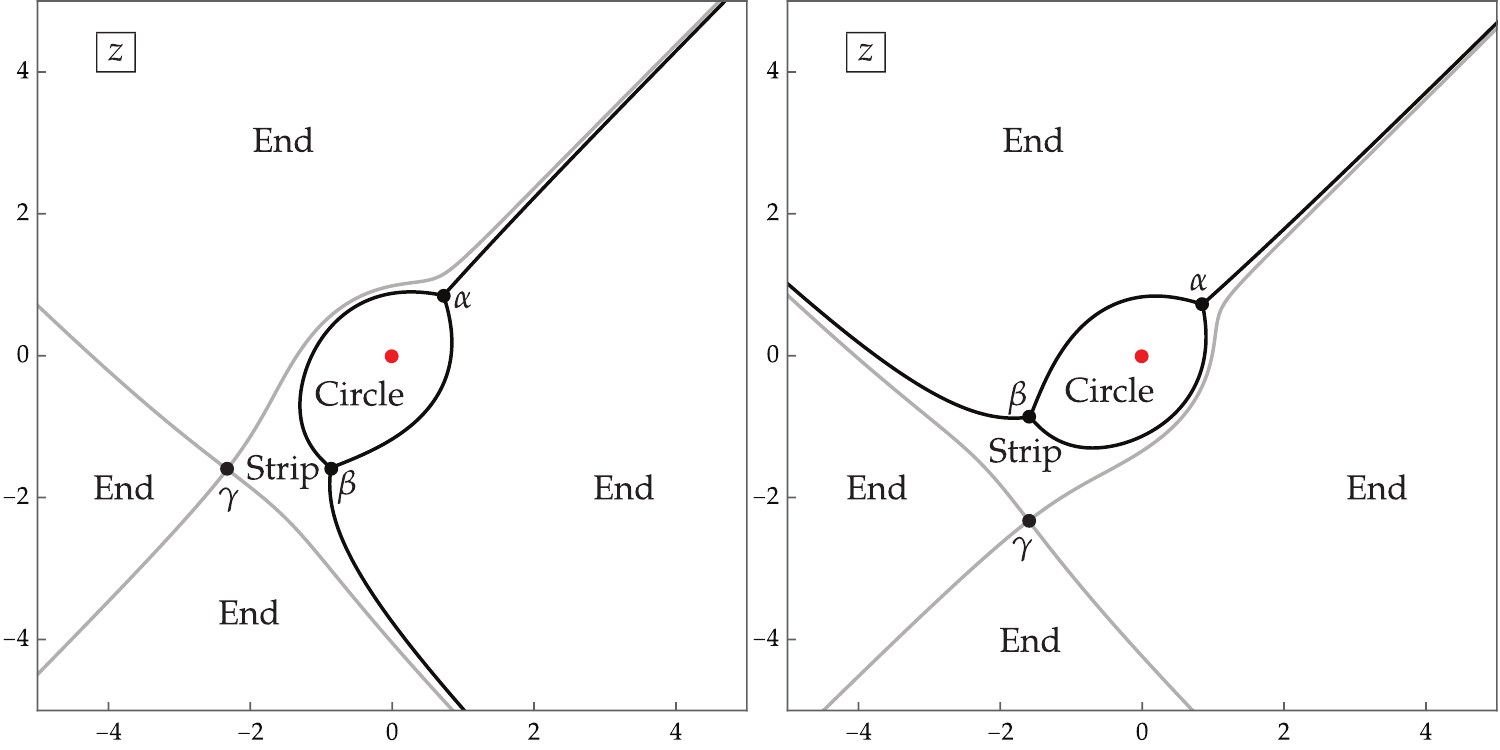}
\end{center}
\caption{For $\kappa=0$ and $y=1.198+0.983\ii$ (left) and $y=0.983+1.198\ii$ (right) ($\gamma=U_{0,\mathrm{gH}}^{[3]}(y;\kappa)$ analytically continued from large $y$), situations in which $\mathrm{Re}(h(\gamma))$ has opposite signs.  In both cases the critical v-trajectories divide the plane into four end domains, one strip domain, and one circle domain.}
\label{fig:GenusZeroGH-Stokes}
\end{figure} 

One important distinction from the gO case discussed in Section~\ref{sec:h-analysis} is that $\alpha$ and $\beta$ lie in the same connected component of $\critclosure$, as can be seen in 
Figure~\ref{fig:GenusZeroGH-Stokes}. 
Moreover $\alpha$ and $\beta$ are joined by two v-trajectories, which implies that in this case the component of $\critclosure$ containing $\alpha$ and $\beta$ is a strict subset of the level set $\mathrm{Re}(h(z))=0$, because the component contains only two unbounded arcs while the level set has four arcs that go to $z=\infty$ parallel to the four directions $\arg(z)=\pm\tfrac{1}{4}\pi,\pm\tfrac{3}{4}\pi$.  Since the two missing unbounded arcs of the level set tend to $z=\infty$ in different directions distinct from the direction of the unbounded arcs emanating from $\alpha$ and $\beta$ and cannot cross the arcs emanating from $z=\gamma$ because $\mathrm{Re}(h(\gamma))\neq 0$ by assumption, it follows that they are trapped within the end domain opposite $\gamma$ from $z=0\subset\circledomain$.  Since the end domain does not contain any critical points, the missing unbounded arcs of the level set actually form the same v-trajectory.  It can be seen near the left/bottom of the left-hand/right-hand panel of 
Figure~\ref{fig:GenusZeroGH-LevelAndCut},
\begin{figure}[h]
\begin{center}
\includegraphics{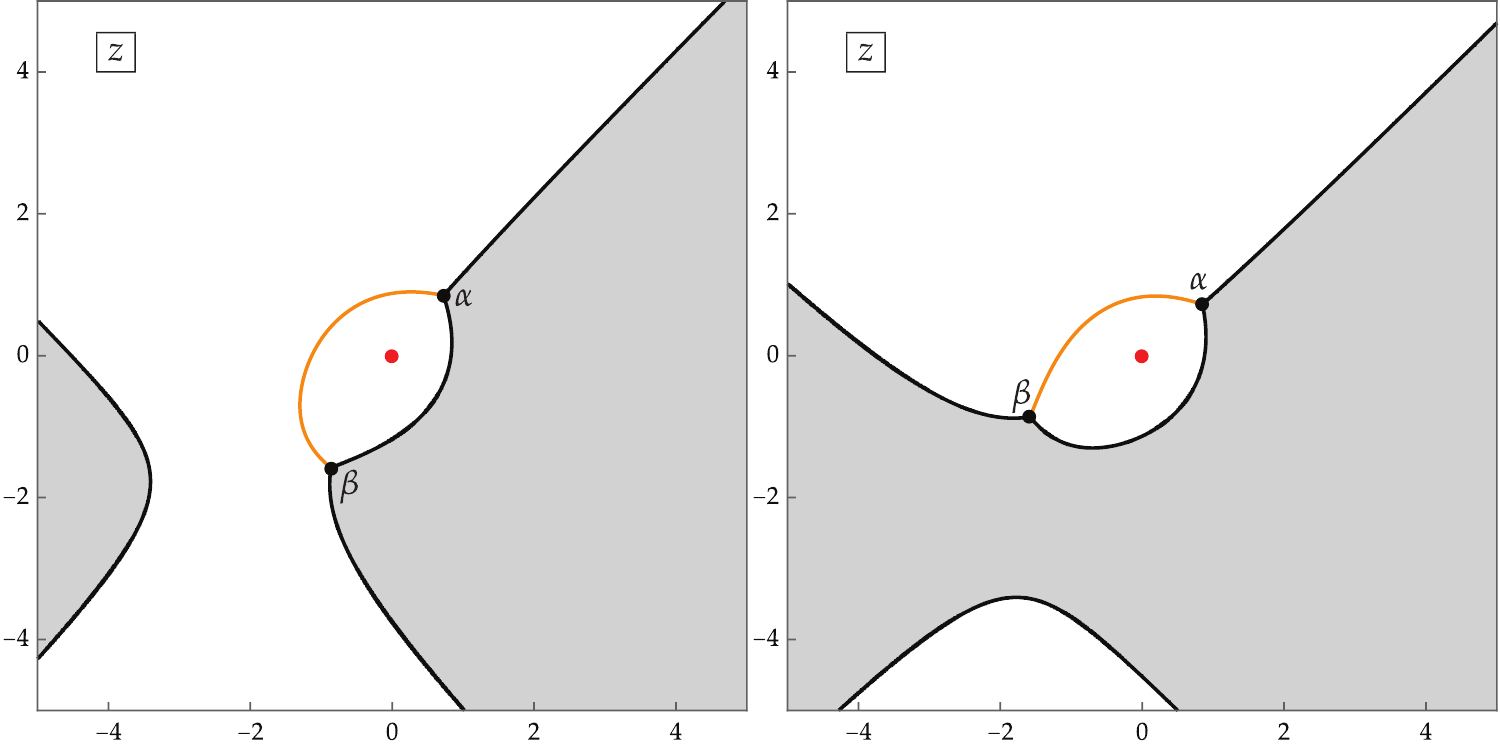}
\end{center}
\caption{For the same parameters as in the corresponding panels of Figure~\ref{fig:GenusZeroGH-Stokes}, the zero level set of $\mathrm{Re}(h(z))$ (black and orange curves), the branch cut for $h'(z)$ (orange), and sign of $\mathrm{Re}(h(z))$ (shaded for negative, unshaded for positive).}
\label{fig:GenusZeroGH-LevelAndCut}
\end{figure} 
and its disjoint union with the component of $\critclosure$ containing $\alpha$ and $\beta$ is precisely the level set $\mathrm{Re}(h(z))=0$.

Now we consider the possibility that $\mathrm{Re}(h(\gamma))=0$.  A calculation parallel to \eqref{eq:genus-zero-h-of-gamma-Okamoto} shows that
\eq
\mathrm{Re}(h(\gamma))=\mathrm{Re}\left(\int_{\alpha}^\gamma h'(z)\,\dd z\right)=\pm\frac{1}{2}\left(\mathrm{Re}(y)^2-\mathrm{Im}(y)^2\right)+o(y^2),\quad y\to\infty,
\label{eq:genus-zero-h-of-gamma-Hermite}
\endeq
so that $\mathrm{Re}(h(\gamma))$ cannot vanish for large $|y|$ unless $\arg(y)\approx \tfrac{1}{4}\pi\in [0,\tfrac{1}{2}\pi]$.  If $\mathrm{Re}(h(\gamma))=0$, then the topology of $\critclosure$ is different.  In particular $\critclosure$ becomes connected and it now coincides with the level set $\mathrm{Re}(h(z))=0$.  However we observe that if $|y|$ is large, then since $\gamma$ is large while $\circledomain$ is small, the condition $\mathrm{Re}(h(\gamma))=0$ can only occur \emph{without $\gamma$ lying on $\partial\circledomain$}.  Therefore, the topology of the level set $\mathrm{Re}(h(z))=0$ in a neighborhood of the circle domain $\circledomain$ is just as if $\mathrm{Re}(h(\gamma))\neq 0$.  We touch on this observation again later in Section~\ref{sec:ExteriorAxes}.

Given the structure of the level set $\mathrm{Re}(h(z))=0$ near $\circledomain$ as shown in 
Figure~\ref{fig:GenusZeroGH-LevelAndCut},
we now explain how to determine $h'(z)$ and then $h(z)$ precisely.  Unlike in the gO case discussed in Section~\ref{sec:h-analysis}, we can and will take the branch cut $\Sigma_\mathrm{c}$ of $R(z)$ to coincide with one of the two v-trajectories connecting $\alpha$ and $\beta$; because $s=1$, we need to select the specific v-trajectory to use so that $R(0)=4s=4$.
In the special case that $\kappa=0$ and $\arg(y)=\tfrac{1}{4}\pi$, it is easy to see that for $|y|$ sufficiently large $\alpha$, $\beta$, and $\gamma$ all lie on the diagonal line through the origin, with $\arg(\alpha)=\tfrac{1}{4}\pi$ and $\arg(\beta)=\arg(\gamma)=-\tfrac{3}{4}\pi$.  This implies that $R(z)^2$ is real for $z$ along the same diagonal line, and since by definition $R(z)=z^2+\bo(z)$ as $z\to\infty$, $R(z)$ is positive imaginary for $\arg(z)=\tfrac{1}{4}\pi$ and $|z|>|\alpha|$.  It then follows that to have $R(0)>0$ we must choose $\Sigma_\mathrm{c}$ to lie in the half-plane above the diagonal line: $\mathrm{Im}(z)\ge\mathrm{Re}(z)$.  This is the unique v-trajectory on the Jordan curve $\partial\circledomain$ that abuts a region exterior to $\partial\circledomain$ on which $\mathrm{Re}(h(z))>0$, and this topological characterization of $\Sigma_\mathrm{c}$ is robust as $\kappa$ and $y$ vary.  The branch cut $\Sigma_\mathrm{c}$ for $R(z)$ is shown with an orange curve in 
each panel of Figure~\ref{fig:GenusZeroGH-LevelAndCut}.
Once $R(z)$ is determined, then so is $h'(z)$ by \eqref{eq:h-g-phi}.  Accounting for the pole of $h'(z)$ at $z=0$, we choose the point $z=\beta$ to be the common endpoint of the arcs $\Sigma_0$ and $\Sigma_{4,3}$ and then we take the jump contour for $h(z)$ to be $\Sigma_h:=\Sigma_\mathrm{c}\cup\Sigma_0\cup\Sigma_{4,3}$.  Finally, we define $\mathbb{C}\setminus\Sigma_h\ni z\mapsto h(z)$ by integration of $h'(\cdot)$ from $\alpha$ to $z$ over any path lying in $\mathbb{C}\setminus\Sigma_h$.
Note that while in the gO case $\mathrm{Re}(h(z))$ exhibits a jump discontinuity across $\Sigma_\mathrm{c}$, in this case $\mathrm{Re}(h(z))$ extends to $\Sigma_\mathrm{c}$ as a continuous function as a consequence of choosing $\Sigma_\mathrm{c}$ as a zero level curve of $\mathrm{Re}(h(z))$.  The sign of $\mathrm{Re}(h(z))$ is as indicated with shading in 
Figure~\ref{fig:GenusZeroGH-LevelAndCut}.

The analytic function $h(z)$ defined in this way takes continuous boundary values on $\Sigma_h$ that are related by (with the orientation of the arcs $\Sigma_0$ and $\Sigma_{4,3}$ as indicated in Figure~\ref{fig:PIV-Sigma}) 
\eq
\begin{split}
\Delta h(z)&=-2\pi\ii,\quad z\in\Sigma_0,\\
\langle h\rangle (z)&= 0,\quad z\in\Sigma_\mathrm{c},\\
\Delta h(z)&=2\pi\ii\kappa,\quad z\in\Sigma_{4,3}.
\end{split}
\label{eq:h-jumps-GenusZeroGH}
\endeq

\subsection{Use of $g(z)$ to convert $\mathbf{M}(z)$ to $\mathbf{O}(z)$}
We proceed to lay the (suitably deformed and fixed in the $z$-plane) jump contour $\Sigma_\mathrm{gH}$ from Figure~\ref{fig:PIV-Sigma-Hermite} over the sign chart of $\mathrm{Re}(h(z))$ as shown in 
Figure~\ref{fig:GenusZeroGH-MContour}.
\begin{figure}[h]
\begin{center}
\includegraphics{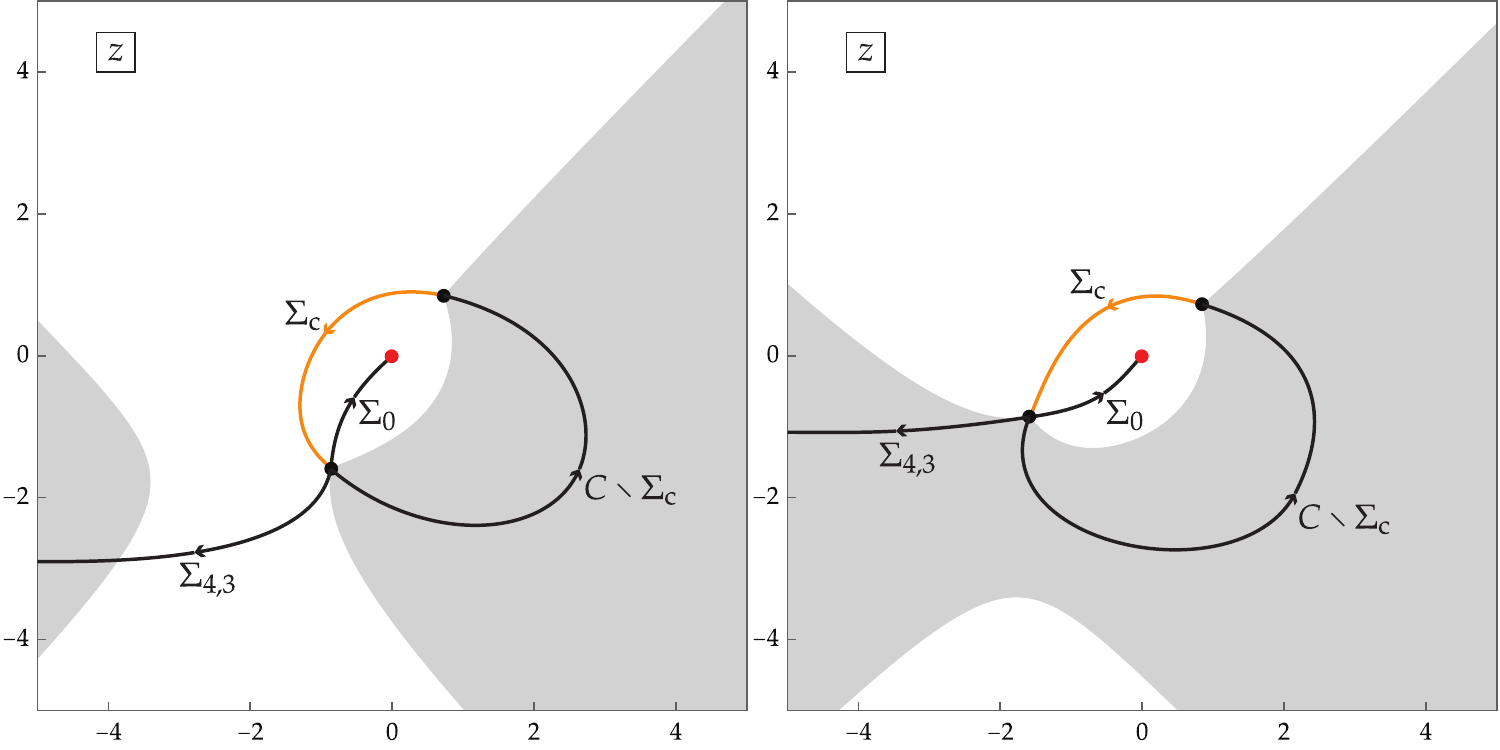}
\end{center}
\caption{For the same parameters as in the corresponding panels of Figures~\ref{fig:GenusZeroGH-Stokes} and \ref{fig:GenusZeroGH-LevelAndCut}, the jump contour for $\mathbf{M}(z)$.}
\label{fig:GenusZeroGH-MContour}
\end{figure}
In particular, we take $\Sigma_\mathrm{c}\subset C\subset\Sigma_\mathrm{gH}$ and insist that $\mathrm{Re}(h(z))<0$ holds on $C\setminus\Sigma_\mathrm{c}$.  Then we define $g(z)$ from $h(z)$ by \eqref{eq:h-g-phi} and use it to define $\mathbf{N}(z)$ from $\mathbf{M}(z)$ by \eqref{eq:NfromM}.  Using \eqref{eq:h-jumps-GenusZeroGH} it is easy to check that this transformation removes the jump discontinuities from the arcs $\Sigma_0\cup\Sigma_{4,3}$, so that $\mathbf{N}(z)$ is analytic for $z\in\mathbb{C}\setminus C$ and $\mathbf{N}(z)\to\mathbb{I}$ as $z\to\infty$.  Its jump conditions are
\eq
\mathbf{N}_+(z)=\mathbf{N}_-(z)\bpm \ee^{T\Delta h(z)} & 0\\\ee^{2T\langle h\rangle(z)} & \ee^{-T\Delta h(z)}\epm,\quad z\in\Sigma_\mathrm{c}
\endeq
and
\eq
\mathbf{N}_+(z)=\mathbf{N}_-(z)
\mathbf{L}(\ee^{2Th(z)}),
\quad z\in C\setminus\Sigma_\mathrm{c}.
\endeq
Applying a ``UTU'' factorization (see \eqref{eq:general-factorizations}) the jump matrix on $\Sigma_\mathrm{c}$ can be written in the form
\eq
\begin{split}
\bpm \ee^{T\Delta h(z)} & 0\\\ee^{2T\langle h\rangle(z)} & \ee^{-T\Delta h(z)}\epm = \ee^{-Th_-(z)\sigma_3}\bpm 1&0\\1&1\epm\ee^{Th_+(z)\sigma_3} &= \ee^{-Th_-(z)\sigma_3}\mathbf{U}(1)\mathbf{T}(1)\mathbf{U}(1)\ee^{Th_+(z)\sigma_3} \\ &= \mathbf{U}(\ee^{-2Th_-(z)})\mathbf{T}(\ee^{2T\langle h\rangle(z)})\mathbf{U}(\ee^{-2Th_+(z)}) \\ &= \mathbf{U}(\ee^{-2Th_-(z)})\mathbf{T}(1)\mathbf{U}(\ee^{-2Th_+(z)})
\end{split}
\endeq
where in the last step we used \eqref{eq:h-jumps-GenusZeroGH}.  Based on this factorization, we introduce lens domains $\Lambda^+$ and $\Lambda^-$ on the left and right, respectively, of $\Sigma_\mathrm{c}$ as shown in 
Figure~\ref{fig:GenusZeroGH-Lenses}.
\begin{figure}[h]
\begin{center}
\includegraphics{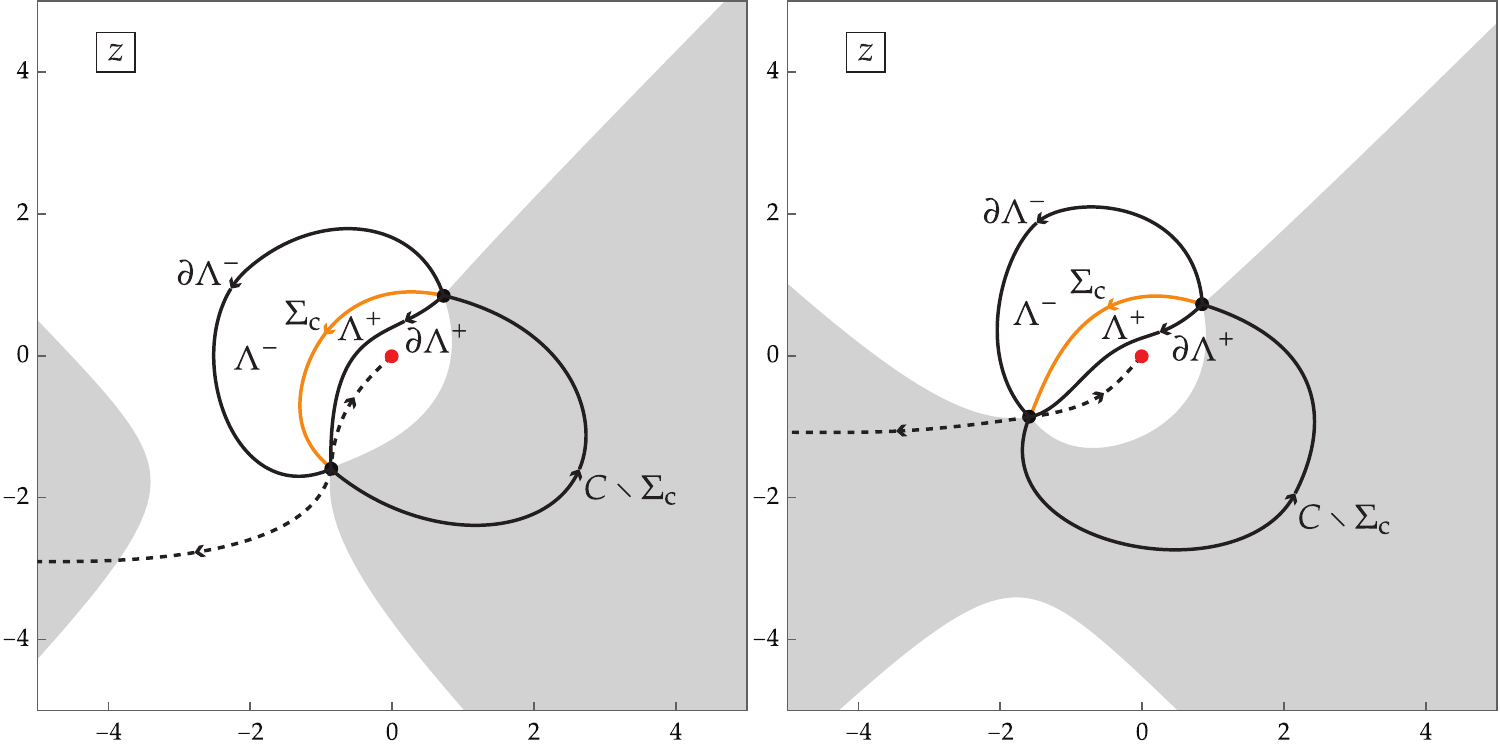}
\end{center}
\caption{For the same parameters as in the corresponding panels of Figures~\ref{fig:GenusZeroGH-Stokes}-- \ref{fig:GenusZeroGH-MContour}, the ``lens'' domains $\Lambda^\pm$ and the modified jump contour for $\mathbf{O}(z)$ (the dashed arcs have been removed by the transformation $\mathbf{M}(z)\mapsto \mathbf{N}(z)$).}
\label{fig:GenusZeroGH-Lenses}
\end{figure}
Then we define a new unknown matrix $\mathbf{O}(z)$ in terms of $\mathbf{N}(z)$ by setting
\eq
\mathbf{O}(z):=\begin{cases}
\mathbf{N}(z)\mathbf{U}(\ee^{-2Th(z)})^{-1},\quad & z\in\Lambda^+\\
\mathbf{N}(z)\mathbf{U}(\ee^{-2Th(z)}),\quad & z\in\Lambda^-\\
\mathbf{N}(z),\quad&\text{elsewhere},
\end{cases}
\endeq
and it follows that the three factors in the jump of $\mathbf{N}(z)$ across $\Sigma_\mathrm{c}$ are split into separate jumps of $\mathbf{O}(z)$ across three arcs with the same endpoints:
\eq
\mathbf{O}_+(z)=\mathbf{O}_-(z)\mathbf{U}(\ee^{-2Th(z)}),\quad z\in\partial\Lambda^\pm,
\endeq
and
\eq
\mathbf{O}_+(z)=\mathbf{O}_-(z)\mathbf{T}(1),\quad z\in\Sigma_\mathrm{c}.
\endeq 

\subsection{Parametrix construction}
\subsubsection{Outer parametrix}
Because $C\setminus\Sigma_\mathrm{c}$ lies in a region where $\mathrm{Re}(h(z))<0$ while the lens boundaries $\partial\Lambda^\pm$ both lie in regions where $\mathrm{Re}(h(z))>0$, it appears reasonable to neglect the jumps on these arcs.  We therefore define an outer parametrix $\dot{\mathbf{O}}^\mathrm{out}(z)$ to be analytic except on $\Sigma_\mathrm{c}$ across which it satisfies the same jump condition as does $\mathbf{O}(z)$, and we insist that $\dot{\mathbf{O}}^\mathrm{out}(z)$ be bounded except near $z=\alpha,\beta$ where negative one-fourth power divergences are admitted and that $\dot{\mathbf{O}}^\mathrm{out}(z)\to\mathbb{I}$ as $z\to\infty$.  Thus, the outer parametrix is explicitly given by (cf.\@ \eqref{eq:OdotOut-genus-zero})
\eq
\dot{\mathbf{O}}^\mathrm{out}(z)=\mathbf{S}_\mathrm{gH}j(z)^{\sigma_3}\mathbf{S}_\mathrm{gH}^{-1},\quad
\mathbf{S}_\mathrm{gH}:=\bpm \tfrac{1}{2} & \ii\\\tfrac{1}{2}\ii & 1\epm,
\label{eq:GenusZero-Outer-GH}
\endeq
where the function $j(z)$ is defined exactly as in Section~\ref{sec:Genus-Zero-Parametrix}.  This formula can be obtained from \eqref{eq:OdotOut-genus-zero} using a conjugation by a constant diagonal matrix, and therefore it follows immediately that all four equations in \eqref{eq:genus-zero-outer-infty}--\eqref{eq:genus-zero-outer-zero} are modified only by a common factor.  The identity \eqref{eq:GenusZero-squares-identity} holds exactly as written, but it requires a reinterpretation because $\alpha$, $\beta$, and $\gamma$ are different functions of $y$ in the gH and gO cases.  In the gH case, we have $j(0)^4=\alpha/\beta=-\ee^{\ii\varphi}+\bo(y^{-1})$ as $y\to\infty$.  To take the square root correctly it is easiest to take $\kappa=0$ and $\arg(y)=\tfrac{1}{4}\pi$ to arrange $\alpha$ and $\beta$ along the diagonal with the branch cut for $j(z)$ lying above the diagonal.  Then it is easy to see that $j(0)^2=-\ii\ee^{\frac{1}{2}\ii\varphi}+\bo(y^{-1})$ as $y\to\infty$.  Therefore $j(0)^2\pm j(0)^{-2}=-\ii\ee^{\frac{1}{2}\ii\varphi}\pm\ii\ee^{-\frac{1}{2}\ii\varphi}+\bo(y^{-1})$ for large $y$.  Since also from the large-$y$ expansions of $\alpha$, $\beta$, and $\gamma$ in the gH case we have $\tfrac{1}{4}\gamma(\alpha\pm\beta)=-\ii\ee^{\frac{1}{2}\ii\varphi}\pm\ii\ee^{-\frac{1}{2}\ii\varphi}+\bo(y^{-1})$, the exact identity $j(0)^2\pm j(0)^{-2}=\tfrac{1}{4}\gamma(\alpha\pm\beta)$ holds in place of the more complicated formula \eqref{eq:f(0)squared-identity-gO}.  It follows that 
both formul\ae\ \eqref{eq:genus-zero-outer-u}--\eqref{eq:genus-zero-outer-ucirc-1} are valid in the gH case as well if we take the first line corresponding to the ``leftward'' configuration.  

\subsubsection{Inner parametrices}
Fixing disks $D_\alpha$ and $D_\beta$ containing $z=\alpha$ and $z=\beta$ respectively, we can define conformal maps $z\mapsto W(z)$ on each as shown in Table~\ref{tab:Genus-Zero-Hermite-Airy}.  Taking the constant matrix $\mathbf{C}$ as given in the same table, we again use the formula \eqref{eq:GenusZero-H-define} to define from the conformal map $W(z)$ and the outer parametrix (now given by \eqref{eq:GenusZero-Outer-GH}) a holomorphic unit determinant matrix $\mathbf{H}(z)$ on each disk.  Then we use \eqref{eq:GenusZero-inner-parametrices-define} to define inner parametrices $\dot{\mathbf{O}}^{\mathrm{in},\alpha}(z)$ and $\dot{\mathbf{O}}^{\mathrm{in},\beta}(z)$ on $D_\alpha$ and $D_\beta$ respectively.  
\begin{table}[h]
\caption{Data for defining the inner parametrices in $D_\alpha$ and $D_\beta$.}
\renewcommand{\arraystretch}{0.45}
\begin{tabular}{@{}|l|l|c|c|c|@{}}
\hline
\multirow{2}{*}{$p$}&\multirow{2}{*}{Conformal map $W:D_p\to\mathbb{C}$}&\multicolumn{2}{c|}{Ray Preimages in $D_p$} & \multirow{2}{*}{Constant matrix $\mathbf{C}$}\\
\cline{3-4}
&  & $\arg(W)$ & Preimage &  \\
\hline\hline
\multirow{4}{*}{$\alpha$} &\multirow{4}{*}{\makecell[l]{$(-2h(z))^{\frac{2}{3}}$,\\ 
continued from $C\setminus\Sigma_\mathrm{c}$}} &
\shortstrut $0$ & $C\setminus\Sigma_\mathrm{c}$ & 
\multirow{4}{*}{$\ee^{\frac{\ii\pi}{4}\sigma_3}$}  \\
\cline{3-4}
&& \shortstrut $\tfrac{2}{3}\pi$ & $\partial\Lambda^-$ & \\
\cline{3-4}
&& \shortstrut $-\tfrac{2}{3}\pi$ & $\partial\Lambda^+$ & \\
\cline{3-4}
&& \shortstrut $\pm\pi$ & $\Sigma_\mathrm{c}$ & \\
\hline
\hline
\multirow{4}{*}{$\beta$} &\multirow{4}{*}{\makecell[l]{$(-2h(z))^{\frac{2}{3}}$,\\ 
continued from $C\setminus\Sigma_\mathrm{c}$}} &
\shortstrut $0$ & $C\setminus\Sigma_\mathrm{c}$ & 
\multirow{4}{*}{$\ee^{-\frac{\ii\pi}{4}\sigma_3}$}  \\
\cline{3-4}
&& \shortstrut $\tfrac{2}{3}\pi$ & $\partial\Lambda^+$ & \\
\cline{3-4}
&& \shortstrut $-\tfrac{2}{3}\pi$ & $\partial\Lambda^-$ & \\
\cline{3-4}
&& \shortstrut $\pm\pi$ & $\Sigma_\mathrm{c}$ & \\
\hline
\end{tabular}
\renewcommand{\arraystretch}{1}
\label{tab:Genus-Zero-Hermite-Airy}
\end{table}

Again, these inner parametrices are exact local solutions within $D_\alpha$ and $D_\beta$ of the analyticity and jump conditions to be satisfied by $\mathbf{O}(z)$, and it follows from the construction that the estimates \eqref{eq:GenusZero-Mismatch} are valid in the gH case as well.

\subsubsection{Global parametrix and error estimation}
We adopt the same definition \eqref{eq:GenusZero-global} as in the gO case for the global parametrix $\dot{\mathbf{O}}(z)$ in terms of the (slightly modified) outer and inner parametrices just described.  The analysis of the error $\mathbf{E}(z)$ given by \eqref{eq:GenusZeroE} outlined in Section~\ref{sec:GenusZero-error} is soft and it applies equally well in the present context with the same results; the expansion \eqref{eq:GenusZero-E-expansion} holds with $\mathbf{E}_1=\bo(T^{-1})$ and $\mathbf{E}(0)=\mathbb{I}+\bo(T^{-1})$.

\subsection{Asymptotic formul\ae\ for the gH rational solutions of Painlev\'e-IV on the exterior domain}
The relationship between $\mathbf{Y}(\lambda;x)$ and the product $\mathbf{E}(z)\dot{\mathbf{O}}^\mathrm{out}(z)$ for large $\lambda$ or large $z$ is exactly as in the gO case, and hence the formula \eqref{eq:genus-zero-Y112} for $Y^\infty_{1,12}(x)$ holds in the gH case as well.  Likewise, the exact formula \eqref{eq:GenusZero-Z} for $\mathbf{Y}^0_0(x)$ is valid; however in the gH case there is substantial simplification in expressing $\mathbf{N}(0)$ in terms of known or estimable quantities.  Indeed, in place of \eqref{eq:GenusZero-N(0)} we have simply $\mathbf{N}(0)=\mathbf{O}(0)$, and then as in the gO case, 
$\mathbf{O}(0)=(\mathbb{I}+\bo(T^{-1}))\dot{\mathbf{O}}^\mathrm{out}(0)$.  Hence the formula for $Y^0_{0,11}(x)Y^0_{0,12}(x)$ matches the first line of \eqref{eq:genus-zero-product-Z}; combining this with \eqref{eq:genus-zero-Y112}, using the first line of \eqref{eq:genus-zero-outer-u} and taking into account $s=1$ in the definition \eqref{eq:u-ucirc} we find
\eq
u^{[3]}_\mathrm{gH}(x;m,n)=u(x)=T^{\frac{1}{2}}(\gamma+\bo(T^{-1}))=|\Theta_0|^\frac{1}{2}(U_{0,\mathrm{gH}}^{[3]}(y;\kappa)+\bo(|\Theta_0|^{-1})),\quad y:=\frac{x}{|\Theta_0|^\frac{1}{2}},\quad\kappa:=-\frac{\Theta_\infty}{|\Theta_0|}.
\label{eq:genus-zero-Hermite-u-asymp}
\endeq
Likewise, the formula for the fraction $Y^0_{0,11}(x)/Y^0_{0,21}(x)$ matches the first line of \eqref{eq:genus-zero-quotient-Z}.  Combining this with \eqref{eq:genus-zero-Y112} and using the first line of \eqref{eq:genus-zero-outer-ucirc-1} in the definition \eqref{eq:u-ucirc} gives
\eq
\begin{split}
u^{[1]}_\mathrm{gH}(x;m,n)=u_\tw(x)&=T^\frac{1}{2}\left(-y-\tfrac{1}{2}\gamma-2\gamma^{-1}+\bo(T^{-1})\right)\\ & = 
|\Theta_0|^\frac{1}{2}\left(-y-\tfrac{1}{2}U_{0,\mathrm{gH}}^{[3]}(y;\kappa)-2U_{0,\mathrm{gH}}^{[3]}(y;\kappa)^{-1}+\bo(|\Theta_0|^{-1})\right).
\end{split}
\endeq
We can apply Lemma~\ref{lem:gamma-identity} in the case $s=1$ to write this formula in a more convenient form.  Noting that if $\gamma(y)$ is the branch of the quartic \eqref{eq:gamma-eqn} that behaves like $\gamma(y)=-2y + 2\kappa y^{-1}+\bo(y^{-3})$ then the related function $\gamma_\tw(y)$ defined by \eqref{eq:gamma-circ} obeys $\gamma_\tw(y)=2y^{-1}+\bo(y^{-3})$ as $y\to\infty$,
we arrive at the following:
\eq
u^{[1]}_\mathrm{gH}(x;m,n)=u_\tw(x)=|\Theta_{0,\tw}|^\frac{1}{2}\left(U_{0,\mathrm{gH}}^{[1]}(y;\kappa_\tw)+\bo(|\Theta_{0,\tw}|^{-1})\right),\quad
y:=\frac{x}{|\Theta_{0,\tw}|^{\frac{1}{2}}},\quad\kappa_\tw:=-\frac{\Theta_{\infty,\tw}}{|\Theta_{0,\tw}|},
\label{eq:genus-zero-Hermite-ucirc-asymp}
\endeq
where $\gamma=U_{0,\mathrm{gH}}^{[1]}(y;\kappa)$ is the solution of the quartic \eqref{eq:gamma-eqn} with asymptotic behavior $U_{0,\mathrm{gH}}^{[1]}(y;\kappa)=2y^{-1}+\bo(y^{-3})$ as $y\to\infty$.  See Section~\ref{sec:equilibrium}.

The asymptotic formul\ae\ \eqref{eq:genus-zero-Hermite-u-asymp} and \eqref{eq:genus-zero-Hermite-ucirc-asymp} provide the proofs of \eqref{eq:HermiteExterior123} for $j=3$ and $j=1$ respectively following nearly identical reasoning as described at the end of Section~\ref{sec:ExteriorFormulasOkamoto} (and by the symmetry \eqref{eq:symmetry-1-2} this is enough to establish also \eqref{eq:HermiteExterior123} for $j=2$).  This establishes Theorem~\ref{thm:HermiteExterior}, aside from the statements concerning the precise domains of validity and uniformity of the asymptotics, proofs of which are given next in Section~\ref{sec:OutsideDomain}.

\section{Domains of validity of the exterior asymptotic formul\ae}
\label{sec:OutsideDomain}
The exterior asymptotic formul\ae\ \eqref{eq:genus-zero-Okamoto-u-asymp}, \eqref{eq:genus-zero-Okamoto-ucirc-asymp}, \eqref{eq:genus-zero-Hermite-u-asymp}, and \eqref{eq:genus-zero-Hermite-ucirc-asymp} have been derived under the assumption that as $y$ is brought in from $|y|=\infty$ the level set $\mathrm{Re}(h(z))=0$ retains a suitable topological structure.  Since this structure is preserved as long as $\mathrm{Re}(h(\gamma))=0$ where $\gamma=U_{0,\mathrm{gO}}(y;\kappa)$ or $\gamma=U_{0,\mathrm{gH}}^{[3]}(y;\kappa)$ in the gO and gH cases respectively,  to determine the maximal domain where these results are valid we must therefore find the points $y\in\mathbb{C}$ with $0\le \arg(y)\le \tfrac{1}{2}\pi$ where $\mathrm{Re}(h(\gamma))=0$ for the indicated solution branches of the quartic $Q(\gamma,y;\kappa)=0$ defined in \eqref{eq:gamma-eqn}. Some, but not all, of these points will lie on the boundary of the region of validity of the exterior asymptotic formul\ae\ corresponding to the selected solution branch.

\subsection{Boundary curves}
Actually, we first consider the following generalization:  given $s=\pm 1$ and $\kappa\in (-1,1)$, find all points $y\in \mathbb{C}$ such that $\mathrm{Re}(h(\gamma))=0$ holds for \emph{any} solution $\gamma$ of the quartic equation \eqref{eq:gamma-eqn}.  Now at the beginning of  Section~\ref{sec:h-analysis} it was shown that for any fixed pair $(y,\gamma)$ on the Riemann surface of the equation \eqref{eq:gamma-eqn}, the function $z\mapsto \mathrm{Re}(h(z))$ is single-valued on a two-sheeted Riemann surface $\mathcal{R}$ over the $z$-plane.  Upon evaluating at $z=\gamma$ we obtain a single-valued function on a two-sheeted covering, denoted $\mathcal{Y}$, of the Riemann surface of the quartic \eqref{eq:gamma-eqn}.  

\subsubsection{Rational parametrization of the Riemann surface of \eqref{eq:gamma-eqn}}
In fact, $\mathcal{Y}$ can be identified as a two-sheeted covering of the Riemann sphere, because the quartic \eqref{eq:gamma-eqn} can be rationally parametrized, as we will now show.  Noting the symmetry $Q(-\gamma,-y;\kappa)=Q(\gamma,y;\kappa)$ of the polynomial in \eqref{eq:gamma-eqn}, we introduce the invariant quantities $p:=y\gamma$ and $q:=\gamma^2$ and hence \eqref{eq:gamma-eqn} becomes a bi-quadratic equation in $(p,q)$ which can be written in the form
\eq
\left(\frac{1}{2}q-2\kappa\right)^2-\left(p+q\right)^2=-4(1-\kappa^2)<0.
\label{eq:pq-biquadratic}
\endeq
Noting the sign of the right-hand side for $\kappa\in (-1,1)$, 
we use a rational parametrization based on stereographic projection via the identity
$(at-at^{-1})^2-(at+at^{-1})^2=-4a^2$ with $a=\sqrt{1-\kappa^2}>0$.  Hence we can identify $q$ and $p$ in \eqref{eq:pq-biquadratic} with
\eq
p=p(t)=-\sqrt{1-\kappa^2}(t-3t^{-1})-4\kappa\quad\text{and}\quad
q=q(t)=2\sqrt{1-\kappa^2}(t-t^{-1})+4\kappa.
\label{eq:pq-kappa-0}
\endeq
In particular, $y^2=p^2/q$ is explicitly given by
\eq
y^2 =y^2(t)= \frac{\sqrt{1-\kappa^2}}{2}\cdot \frac{(t^2+4wt-3)^2}{t(t^2+2wt-1)},\quad w:=\frac{\kappa}{\sqrt{1-\kappa^2}}.
\label{eq:ysquared-kappa-0}
\endeq
Given $t$, if $y^2\neq 0$ then from each choice of the square root to determine $y$ we obtain a unique corresponding value of $\gamma$ from $\gamma=p/y$.  Therefore each $t\in\mathbb{C}$ generates a symmetric pair of points $(y,\gamma)$ and $(-y,-\gamma)$ on the quartic curve $Q(\gamma,y;\kappa)=0$ in $\mathbb{C}^2$.  Note that $\kappa\mapsto w$ is a strictly increasing function of $(-1,1)$ onto $\mathbb{R}$.  Indeed, using the parametrization $\kappa=\sin(\tfrac{1}{2}\varphi)$, $\varphi\in(-\pi,\pi)$, we have simply $w=\tan(\tfrac{1}{2}\varphi)$.

\subsubsection{Relating the condition $\mathrm{Re}(h(\gamma))=0$ to the v-trajectories of a rational quadratic differential}
Now, $\Phi(t):=-4(h(\gamma)-h(\alpha))$ is a multivalued function on $\mathcal{Y}$ due to purely real residues at its poles and the ambiguity of integration contour, but locally
it can be written as 
\eq
\Phi(t)=\int_{\alpha(t)}^{\gamma(t)}\sqrt{(z-\alpha(t))(z-\beta(t))}\frac{z-\gamma(t)}{z}\,\dd z
\label{eq:Phi-t}
\endeq
for some branch of the square root that is continuous along the unspecified path of integration, and in which $\alpha$ and $\beta$ are determined up to permutation symmetry in terms of $(y,\gamma)$ from \eqref{eq:case-iv-coefficient-match}, and $(y,\gamma)$ are in turn related by \eqref{eq:gamma-eqn}.  Thus after rational parametrization and choice of square roots in obtaining $(y,\gamma)$ from $(p=y\gamma,q=\gamma^2)$, $\alpha$, $\beta$, and $\gamma$ become functions of $t$.
Differentiation with respect to $t$ using the fact that the integrand vanishes at $z=\alpha$ and $z=\gamma$ gives
\eq
\Phi'(t)=-\frac{1}{2}\int_{\alpha(t)}^{\gamma(t)}\frac{P_2(z;t)\,\dd z}{z\sqrt{(z-\alpha(t))(z-\beta(t))}},
\label{eq:Phi-prime-general}
\endeq
where $P_2(\cdot;t)$ is the quadratic polynomial
\eq
P_2(z;t):=\left[2\gamma'(t)+\Sigma'(t)\right]z^2 - \left[\Pi'(t)+\gamma(t)\Sigma'(t)+2\Sigma(t)\gamma'(t)\right]z + \gamma(t)\Pi'(t)+2\Pi(t)\gamma'(t),
\endeq
in which $\Sigma(t):=\alpha(t)+\beta(t)$ and $\Pi(t):=\alpha(t)\beta(t)$.  Implicit differentiation of the identity $\Pi=16\gamma^{-2}$ (cf.\@ \eqref{eq:alpha-beta-E}) proves that $P_2(0;t)$ vanishes identically, so $P_2(z;t)/z$ is the linear function
\eq
P_1(z;t):=\frac{P_2(z;t)}{z}=\left[2\gamma'(t)+\Sigma'(t)\right]z - \left[\Pi'(t)+\gamma(t)\Sigma'(t)+2\Sigma(t)\gamma'(t)\right].
\endeq
Now, the derivative with respect to $z$ of $(z-\alpha(t))(z-\beta(t))$ is also a linear function of $z$, namely $2z-\Sigma(t)$.  It turns out that $P_1(z;t)$ is proportional to the latter linear function, and this makes the integrand in \eqref{eq:Phi-prime-general} the $z$-derivative of an algebraic function, allowing the integral to be evaluated explicitly.  For this, it is sufficient to check that the root of $P_1(z;t)$ agrees with that of $2z-\Sigma(t)$, i.e., that $\delta(t)\equiv 0$, where 
\eq
\delta(t):=2(\Pi'(t)+\gamma(t)\Sigma'(t)+2\Sigma(t)\gamma'(t))-\Sigma(t)(2\gamma'(t)+\Sigma'(t)).
\endeq
Eliminating $\Pi'(t)$, $\Sigma(t)$, and $\Sigma'(t)$ in favor of $\gamma(t)$ and $y(t)$ and their derivatives using \eqref{eq:alpha-beta-E} and implicit differentiation yields
\eq
\delta(t)=-64\gamma(t)^{-3}\gamma'(t)-16\gamma(t)y'(t)-16y(t)\gamma'(t)-12\gamma(t)\gamma'(t)-16y(t)y'(t) 
\endeq
so $\delta(t)=F'(t)$ where $F(t)$ may be taken to be
\eq
F(t):=32\gamma(t)^{-2}-16y(t)\gamma(t)-6\gamma(t)^2-8y(t)^2 = -\frac{6}{\gamma(t)^2}\left[\gamma(t)^4+\frac{8}{3}y(t)\gamma(t)^3+\frac{4}{3}y(t)^2\gamma(t)^2-\frac{16}{3}\right].
\endeq
Therefore, using \eqref{eq:gamma-eqn} we find that in fact $F(t)=16\kappa$, so indeed $\delta(t)=F'(t)=0$ holds.  Therefore
\eq
P_1(z;t)=\frac{P_2(z;t)}{z}=\left(\gamma'(t)+\frac{1}{2}\Sigma'(t)\right)(2z-\Sigma(t)) = -2y'(t)(2z-\Sigma(t)),
\endeq
where in the last equality we have again used implicit differentiation in \eqref{eq:alpha-beta-E}.
Using this in \eqref{eq:Phi-prime-general} gives
\eq
\begin{split}
\Phi'(t)&=y'(t)\int_{\alpha(t)}^{\gamma(t)}\frac{2z-\Sigma(t)}{\sqrt{(z-\alpha(t))(z-\beta(t))}}\,\dd z\\
& = 
2y'(t)\int_{\alpha(t)}^{\gamma(t)}\frac{\dd}{\dd z}\sqrt{(z-\alpha(t))(z-\beta(t))}\,\dd z \\ &= 
2y'(t)\sqrt{(\gamma(t)-\alpha(t))(\gamma(t)-\beta(t))}.
\end{split}
\endeq
Therefore, $\Phi'(t)^2$ is well-defined in terms of $y'(t)^2$, $y(t)$ and $\gamma(t)$ by
\eq
\begin{split}
\Phi'(t)^2&=4y'(t)^2(\gamma(t)-\alpha(t))(\gamma(t)-\beta(t))\\
&=4y'(t)^2(\gamma(t)^2-\Sigma(t)\gamma(t)+\Pi(t))\\
&=4y'(t)^2(\gamma(t)^2+(4y(t)+2\gamma(t))\gamma(t)+16\gamma(t)^{-2})\\
&=4y'(t)^2(3\gamma(t)^2+4y(t)\gamma(t)+16\gamma(t)^{-2}),
\end{split}
\endeq
where on the penultimate line we used \eqref{eq:alpha-beta-E} to eliminate $\Sigma(t)$ and $\Pi(t)$.  It happens that $\Phi'(t)^2$ is actually a rational function of $t$.  To this end, we first express it as a rational function of $p(t):=y(t)\gamma(t)$ and $q(t):=\gamma(t)^2$:
\eq
\begin{split}
\Phi'(t)^2&=4y'(t)^2(3q(t)+4p(t)+16q(t)^{-1})\\
&=(2y(t)y'(t))^2y(t)^{-2}(3q(t)+4p(t)+16q(t)^{-1})\\
&=\left(\frac{\dd}{\dd t}y(t)^2\right)^2y(t)^{-2}(3q(t)+4p(t)+16q(t)^{-1})\\
&=\left(\frac{\dd}{\dd t}\frac{p(t)^2}{q(t)}\right)^2\frac{q(t)}{p(t)^2}(3q(t)+4p(t)+16q(t)^{-1})\\
&=\frac{\left(2q(t)p'(t)-p(t)q'(t)\right)^2(3q(t)^2+4p(t)q(t)+16)}{q(t)^4}.
\end{split}
\endeq
Finally, we introduce the rational expressions \eqref{eq:pq-kappa-0} for $p$ and $q$ in terms of $t$, which yields
\eq
\Phi'(t)^2 = (1-\kappa^2)\frac{(t^4+6t^2+8wt-3)^3}{t^4(t^2+2wt-1)^4},\quad w=\frac{\kappa}{\sqrt{1-\kappa^2}}.
\label{eq:PhiPrimeSquared}
\endeq
We therefore conclude that, since $\mathrm{Re}(h(\alpha))=0$, the curves on the $t$-sphere along which $\mathrm{Re}(h(\gamma))=0$ for any branch $\gamma$ of the quartic \eqref{eq:gamma-eqn} are v-trajectories of a rational quadratic differential $\Phi'(t)^2\,\dd t^2$ (i.e., curves in the $t$-sphere along which $\Phi'(t)^2\,\dd t^2<0$ holds).
%

\subsubsection{Critical points of $\Phi'(t)^2\,\dd t^2$ and the role of the critical v-trajectories}
\label{sec:K=LevelSet-t-plane}
To study the level curves of $\mathrm{Re}(\Phi)=0$ on the $t$-sphere, we recall the fact that $\mathrm{Re}(\Phi)$ is a single-valued function on $\mathcal{Y}$ that is non-constant and takes opposite values on the two sheets of $\mathcal{Y}$.  Moreover $\mathrm{Re}(\Phi)$ is harmonic on $\mathcal{Y}$ except at finitely many isolated singular points; hence by the same argument as in Section~\ref{sec:h-analysis} there can be no divergent v-trajectories on either $\mathcal{Y}$ or on the $t$-sphere (the latter being the projections of the former), and by the Basic Structure Theorem \cite[pg.\@ 37]{Jenkins58} the critical v-trajectories (i.e., those emanating from the zeros and simple poles (if any) of $\Phi'(t)^2$) divide the $t$-sphere into a finite union of end domains, circle domains, ring domains, and strip domains.  The immediate goal is to show that closure of the union of these critical v-trajectories is exactly the zero level set on the $t$-sphere of $\mathrm{Re}(\Phi(t))$.

First consider the zeros of $\Phi'(t)^2\,\dd t^2$, i.e., the roots of the polynomial $z(t):=t^4+6t^2+8wt-3$ that do not coincide with any zeros of the denominator.  For real $w$, $z(t)$
has real coefficients and its discriminant is proportional to $(w^2+1)^2$ which cannot vanish for any $w\in\mathbb{R}$.  For $w=0$, the roots are an opposite real pair $t=\pm\sqrt{2\sqrt{3}-3}$ a (purely imaginary) complex conjugate pair $t=\pm\ii\sqrt{2\sqrt{3}+3}$.  Since the roots must retain Schwarz symmetry and remain distinct as $w\in\mathbb{R}$ varies, and since no root can vanish for any $w$ because $z(0)<0$, this basic structure persists for all $w\in\mathbb{R}$.  We label the real roots as $a(w)<0<b(w)$ and the complex conjugate roots as $\tau(w)$ and $\tau(w)^*$ with $\mathrm{Im}(\tau(w))>0$.  Next consider the finite poles of $\Phi'(t)^2\,\dd t^2$, i.e., $t=0$ and the roots of $p(t):=t^2+2wt-1$ (since $\Phi'(t)^2$ has a finite nonzero limit as $t\to\infty$ there is also a pole at $t=\infty$ in the local coordinate $1/t$).   The roots of $p(t)$ are $t=t_\infty^\pm(w):=-w\pm\sqrt{w^2+1}$, and we compute that $z(t_\infty^\pm(w))=1+2w^2\pm 2w\sqrt{1+w^2}$.  Since $(1+2w^2)^2=(2w\sqrt{1+w^2})^2+1$ we have $z(t_\infty^\pm(w))>0$, so the poles $t=t_\infty^\pm(w)$ therefore lie outside the interval $[a(w),b(w)]$.  Moreover $t_\infty^+(w)t_\infty^-(w)=-1$, so we have the strict ordering $t^-_\infty(w)<a(w)<0<b(w)<t_\infty^+(w)$.  Since this shows that none of the four distinct roots of $z(t)$ coincides with a zero of the denominator of $\Phi'(t)^2$, these are all third-order zeros of $\Phi'(t)^2\,\dd t^2$.  Likewise, the four poles of $\Phi'(t)^2\,\dd t^2$ on the $t$-sphere are all fourth-order poles.

Upon taking a square root, we see that the four zeros and the four poles of $\Phi'(t)^2\,\dd t^2$ on the $t$-sphere are the only points of nonanalyticity of $\Phi'(t)$ and hence of any branch of $\Phi(t)$.  The four poles are clearly mapped out of the finite $y$-plane by \eqref{eq:ysquared-kappa-0},
while the four zeros are taken to finite values of $y$.  We claim that these finite values of $y$ are necessarily solutions of $B(y;\kappa)=0$ (cf.\@ \eqref{eq:branch-points}).
Indeed, if $t$ corresponds to a point on the Riemann surface of \eqref{eq:gamma-eqn} that is not a branch point (i.e., $\gamma(t)$ is not a double root of \eqref{eq:gamma-eqn} for $y=y(t)$), then the integral formula \eqref{eq:Phi-t} for $\Phi(t)$ obviously has an analytic $t$-derivative determined up to a sign (because $\gamma(t)$ is distinct from $\alpha(t)$ and $\beta(t)$ and the latter are analytic functions of $t$).  Therefore, if $z(t)=0$ making $\Phi'(t)$ nonanalytic, then $(y(t),\gamma(t))$ is a branch point of the Riemann surface of \eqref{eq:gamma-eqn} and hence $B(y(t);\kappa)=0$ for $y(t)=\pm\sqrt{y^2(t)}$ as $B(y;\kappa)$ is the discriminant of \eqref{eq:gamma-eqn}.

The condition $z(t)=0$ therefore implies that $\gamma(t)$ is a double root of \eqref{eq:gamma-eqn} for $y=y(t)$, and furthermore $\gamma(t)$ coincides with either $\alpha(t)$ or $\beta(t)$.   From the formula \eqref{eq:Phi-t} we obtain that $z(t)=0$ implies that $\mathrm{Re}(\Phi(t))=0$.  Therefore the four third-order zeros of $\Phi'(t)^2\,\dd t^2$ are all points on the zero level set of $\mathrm{Re}(\Phi(t))$, as are all points on the v-trajectories emanating from these points.  Since $\Phi'(t)^2\,\dd t^2$ has no simple poles to generate any further critical v-trajectories, the closure of the union of critical v-trajectories is contained within the zero level set of $\mathrm{Re}(\Phi(t))$.  

The complement in the $t$-sphere of the closure of the union of critical v-trajectories generally consists of finitely many disjoint end, strip, circle, and ring domains.  However there cannot be any strip or ring domains because by definition each such domain supports a single-valued branch of $\mathrm{Re}(\Phi(t))$ taking distinct values on disjoint components of its boundary, in contradiction to the assertion that $\mathrm{Re}(\Phi(t))=0$ holds unambiguously on all critical v-trajectories.  There are no circle domains either, because $\Phi'(t)^2\,\dd t^2$ has no double poles.  So all domains are end domains.  By definition, each end domain is mapped by a single-valued analytic branch of $\Phi(t)$ onto the open right or left half-plane.  Therefore there can be no interior points of any end domain with $\mathrm{Re}(\Phi(t))=0$, i.e., there are no components of the zero level set of $\mathrm{Re}(\Phi(t))$ other than the closure of the union of critical v-trajectories.

\subsubsection{Local structure of the critical v-trajectories}
Since the (two real and two complex conjugate) zeros of $\Phi'(t)^2\,\dd t^2$ are triple roots, there are five critical v-trajectories emanating from each at equal angles of $\tfrac{2}{5}\pi$.  For the real zeros we can say more:  by Schwarz symmetry there is exactly one of the five v-trajectories from each of $t=a(w)$ and $t=b(w)$ that is contained in the real line.  Since each of the four poles of $\Phi'(t)\,\dd t^2$ on the $t$-sphere is of order $4$, and since there are no strip domains in this problem, each pole has a neighborhood that is covered by the closure of the union of two disjoint end domains, and there are exactly two critical v-trajectories tending to each pole in opposite directions.  Since all four poles lie on the real equator of the $t$-sphere, by Schwarz symmetry these two critical v-trajectories are either contained in the equator or have tangents at the pole that are perpendicular to the equator.  

\subsubsection{Global structure of the critical v-trajectories}
Since exactly five critical v-trajectories emanate from each of four zeros of $\Phi'(t)^2\,\dd t^2$, the union of these consists of finitely many ($\le 20$) analytic arcs.  Since there are no divergent v-trajectories, each arc emanating from a zero terminates at a zero (the same one in a different direction, or another one) or at one of the poles (either from within the real $t$-axis/equator or perpendicular to it).  This system of arcs is symmetric under Schwarz reflection through the real $t$-axis/equator.  Note that $\Phi'(t)^2>0$ for $t<a(w)$ and $t>b(w)$ while $\Phi'(t)^2<0$ for $a(w)<t<b(w)$.  Therefore, the real intervals $(a(w),0)$ and $(0,b(w))$ are both critical v-trajectories and no other critical v-trajectories can terminate at the pole $t=0$.  Also, the critical v-trajectories that terminate at each of the three nonzero poles on the equator have tangents perpendicular to the equator.  Moreover,  fixing the positive square root of $\Phi'(t)^2$ in the interval $b(w)<t<t_\infty^+(w)$ and integrating from $t=b(w)$ one can easily see that no critical v-trajectory can cross the real axis in this interval because $\Phi(t)-\Phi(b(w))>0$.  Continuing $\Phi(t)$ around the pole at $t=t_\infty^+(w)$ one can find a point in the interval $t_\infty^+(w)<t<+\infty$ at which $\mathrm{Re}(\Phi(t))$ is finite; then by integration along the real line from this point one sees that $\mathrm{Re}(\Phi(t))$ is strictly monotone with range $\mathbb{R}$ on $t_\infty^+(w)<t<+\infty$ and therefore there is exactly one point $t_0^+(w)$ in this interval at which $\mathrm{Re}(\Phi(t))=0$ and hence a critical v-trajectory crosses the real axis exactly at this point and nowhere else in the interval.  Similarly, no critical v-trajectory can cross the real axis in the interval $t_\infty^-(w)<t<a(w)$ but there is a unique point $t_0^-(w)<t_\infty^-(w)$ such that a critical v-trajectory crosses the interval $t<t_\infty^-(w)$ at $t=t_0^-(w)$ and nowhere else.  

Now consider the critical v-trajectories emanating from zeros of $\Phi'(t)^2\,\dd t^2$ into the open upper half-plane/hemisphere.  There are two such v-trajectories emanating from each of the real zeros $t=a(w)$ and $t=b(w)$ (their Schwarz reflections enter the lower half-plane, and the remaining v-trajectory from each is real), and five emanating from $t=\tau(w)$ which lies in the open upper half-plane, for a grand total of nine arcs.  Exactly one of these arcs terminates on the boundary of the upper hemisphere at each of the three nonzero poles $t=t_\infty^-(w)$, $t=t_\infty^+(w)$, and $t=\infty$ lying on the equator.  Exactly two more arcs exit the upper hemisphere by crossing the equator at the points $t=t_0^-(w)$ and $t=t_0^+(w)$.  Therefore, there remain $9-5=4$ critical v-trajectory arcs that emanate into the upper hemisphere from one zero and terminate at the same or another zero of $\Phi'(t)^2\,\dd t^2$. An application of Lemma~\ref{lem:Teichmueller} shows that such an arc cannot originate from and return to the same zero without encircling a pole, hence without exiting the upper hemisphere since the poles lie on the equator.  Therefore the four arcs in question connect the three zeros of $\Phi'(t)^2\,\dd t^2$ in the closed upper hemisphere in pairs, i.e., there are really just two such critical v-trajectories, each with two distinct endpoints.  It is easy to see that the only possibilities are:
\begin{itemize}
\item $t=a(w)$ connected to both $t=\tau(w)$ and $t=b(w)$ with different v-trajectories;
\item $t=b(w)$ connected to both $t=a(w)$ and $t=\tau(w)$ with different v-trajectories;
\item $t=\tau(w)$ connected to both $t=a(w)$ and $t=b(w)$ with different v-trajectories.
\end{itemize}
Each of these gives rise to a Jordan curve composed of v-trajectories and their endpoints, but unfortunately applying Lemma~\ref{lem:Teichmueller} to this curve does not rule out any of these options.  

Consider then the special case $w=0$.  If $w=0$, then $\Phi'(t)^2$ is an even function of $t$ and this implies that the global structure of the critical v-trajectories is symmetric with respect to reflection through the origin $t\mapsto -t$ in addition to the Schwarz reflection through the real $t$-axis.  One then has $a(0)=-b(0)$, $t_0^-(0)=-t_0^+(0)$, and $t_\infty^-(0)=-t_\infty^+(0)$, and the existence of a v-trajectory connecting $t=\tau(0)$ with $t=a(0)$ implies the existence of a v-trajectory connecting $t=\tau(0)$ with $t=b(0)$ and vice-versa.  Hence only the third option is possible.  Lemma~\ref{lem:Teichmueller} then shows that the interior angles at the vertices $t=a(0)$, $t=\tau(0)$, and $t=b(0)$ of the critical v-trajectory ``triangle'' with real leg $(a(0),0)\cup (0,b(0))$ are all $\tfrac{2}{5}\pi$.  Since v-trajectories cannot cross at regular points, it is easy to see that the remaining v-trajectory entering the upper hemisphere from $t=b(0)$ terminates at the pole $t=t_\infty^+(0)$, the remaining v-trajectory entering the upper hemisphere from $t=a(0)$ terminates at the pole $t=t_\infty^-(0)$ (by symmetry), and two of the three remaining v-trajectories entering the upper hemisphere from $t=\tau(0)$ exit the hemisphere at the points $t=t_0^\pm(0)$ while the third terminates at the pole $t=\infty$ on the equator.  Applying Schwarz reflection to obtain the v-trajectories in the lower hemisphere, the global critical v-trajectory structure is therefore determined for the special case $w=0$.

We next show that the third option persists for all $w\in\mathbb{R}$.  To do this, we first observe that the condition that there is no critical v-trajectory connecting a given pair of zeros of $\Phi'(t)^2\,\dd t^2$, say $t=t_1(w)$ and $t=t_2(w)$ is open with respect to $w\in\mathbb{R}$.  Indeed, let $C\subset \mathbb{C}$ be a circle in the finite $t$-plane with $t_1(w)$ in its interior and $t_2(w)$ in its exterior, and assume that $C$ consists entirely of points $t$ with $\Phi'(t)^2$ finite and nonzero.  Let $A_j(w)$ denote the finite set of points on $C$ that lie on v-trajectories emanating from $t=t_j(w)$, $j=1,2$.  A v-trajectory connects $t_1(w)$ and $t_2(w)$ if and only if $A_1(w)\cap A_2(w)\neq\emptyset$.  Defining $d(w)$ as the minimum arc length distance along $C$ between points of $A_1(w)$ and $A_2(w)$ (and taking $d(w)=+\infty$ if either $A_1(w)$ or $A_2(w)$ is empty), we can see that $d(w)\ge 0$ and $d(w)=0$ if and only if $A_1(w)\cap A_2(w)\neq\emptyset$.  One can also show that $d(w)$ is lower semicontinuous on $\mathbb{R}$, and therefore the inverse image of $d>0$ is open, which proves the observation.  Now consider the open set $w\in U\subset\mathbb{R}$ for which $a(w)$ and $b(w)$ are not connected by a v-trajectory of $\Phi'(t)^2\,\dd t^2$.  Consider also the closed set $w\in F\subset\mathbb{R}$ for which there is a v-trajectory connecting $a(w)$ with $\tau(w)$ and another v-trajectory connecting $b(w)$ with $\tau(w)$.  An examination of the three options above shows that $F=U$, as both conditions correspond to the third option.  Moreover $w=0$ obviously belongs to $F=U$, so the latter is nonempty.  Since $\mathbb{R}$ is connected, it then follows that $F=U=\mathbb{R}$.  

The global structure of critical v-trajectories for $\Phi'(t)^2\,\dd t^2$ is illustrated for two values of $w\in\mathbb{R}$ in Figure~\ref{fig:t-plane-trajectories}.
\begin{figure}[h]
\begin{center}
\includegraphics{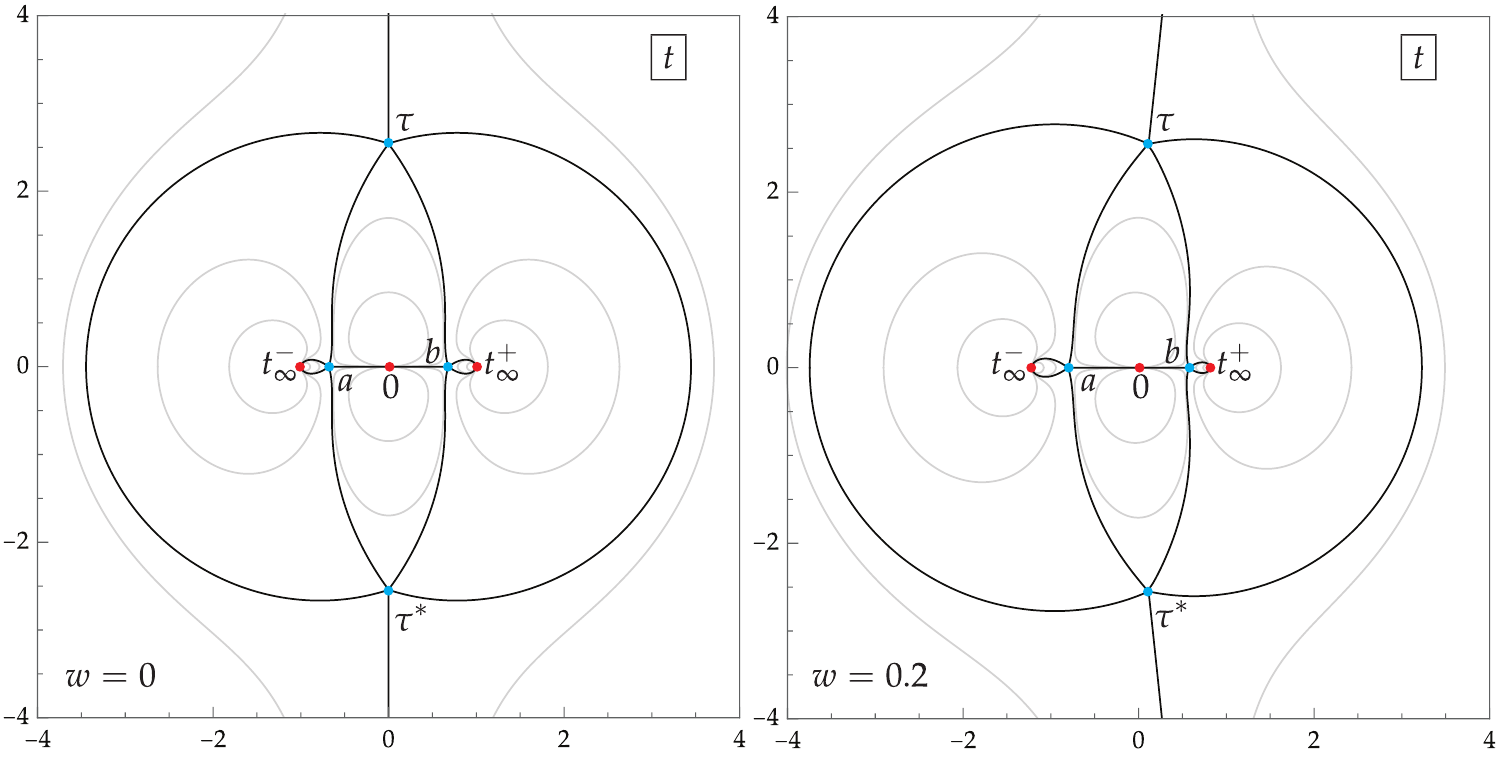}
\end{center}
\caption{The critical v-trajectories (black) and some noncritical v-trajectories (gray) of $\Phi'(t)^2\,\dd t^2$ (numerically generated) for $w=0$ (left panel) and $w=0.2$ (right panel).  Also shown are the zeros (cyan) and finite poles (red) of $\Phi'(t)^2$.}
\label{fig:t-plane-trajectories}
\end{figure}

\subsubsection{Image in the $y$-plane}
Recall that under \eqref{eq:ysquared-kappa-0} 
the points on the Riemann surface of \eqref{eq:gamma-eqn} that are mapped to $y=\infty$ correspond exactly to $t=t_\infty^\pm(w)$, $t=0$, and $t=\infty$.  It is easy to see by expanding $y^2$ and $q=\gamma^2$ for small $t$ that it is the point $t=0$ that is mapped to the point at $y=\infty$ on the branch $\gamma=U_{0,\mathrm{gO}}(y;\kappa)$ of \eqref{eq:gamma-eqn} for which $\gamma=-\tfrac{2}{3}y + \bo(1)$ as $y\to\infty$.   Every neighborhood of $t=0$ contains parts of the two vertical critical v-trajectories coinciding with the real intervals $a(w)<t<0$ and $0<t<b(w)$, and if the neighborhood is sufficiently small these are the only critical v-trajectories contained.  Recall that $t_\infty^-(w)<a(w)<0<b(w)<t_\infty^+(w)$ and that $a(w)$ and $b(w)$ are mapped by $y=\pm\sqrt{y^2(t)}$ to solutions of $B(y;\kappa)=0$ none of which can vanish.  Then, from \eqref{eq:ysquared-kappa-0} one sees that 
the real v-trajectory $a(w)<t<0$ is mapped bijectively onto $0<y^2(a(w))<y^2<+\infty$ while the real v-trajectory $0<t<b(w)$ is mapped bijectively onto $-\infty<y^2<y^2(b(w))<0$. 
Upon taking square roots and recalling the interpretation of critical v-trajectories, we find that $\mathrm{Re}(h(U_{0,\mathrm{gO}}(y;\kappa)))=0$ holds on the four coordinate axes in the $y$-plane outside of the four purely real and purely imaginary solutions of $B(y;\kappa)=0$.  Similarly, by expanding $y^2$ and $q=\gamma^2$ for large $t$ one sees that $t=\infty$ is mapped to $y=\infty$ on the branch $\gamma=U_{0,\mathrm{gH}}^{[3]}(y;\kappa)$ of \eqref{eq:gamma-eqn} for which $\gamma=-2y+\bo(1)$ as $y\to\infty$, and that the images of the two critical v-trajectories tending to $t=\infty$ parallel to the direction $\arg(t)=\pm\tfrac{1}{2}\pi$ are four unbounded curves in the $y$-plane with asymptotic arguments $\arg(y)=\pm\tfrac{1}{4}\pi,\pm\tfrac{3}{4}\pi$ along each of which $\mathrm{Re}(h(U_{0,\mathrm{gH}}^{[3]}(y;\kappa)))=0$.

Let us refer to the interior of the closure of the union of the two end domains abutting the pole at $t=0$ (resp., $t=t_\infty^\pm$, $t=\infty$) as $S_0$ (resp., $S_\pm$, $S_\infty$).  If we denote by $\partial S$ the closure in $\overline{\mathbb{C}}$ of the union of critical v-trajectories both of whose endpoints are zeros of $\Phi'(t)^2$, then the $t$-sphere is the disjoint union $S_0\sqcup S_+\sqcup S_-\sqcup S_\infty\sqcup \partial S$.  Each of the $S_j$ is a domain containing precisely one point that is mapped to $y^2=\infty$, and it is easy to see from \eqref{eq:ysquared-kappa-0} 
that the mapping $t\mapsto y^2$ is locally univalent near each of these singularities.  The two domains $S_0$ and $S_\infty$ are special in that $t\mapsto y^2$ is univalent on each.  The images of these domains in the $y$-plane correspond (as it turns out) to the exterior asymptotic regions for the gO and gH Painlev\'e-IV rational solutions respectively, so we may also write $S_0=S_\mathrm{gO}$ and $S_\infty=S_\mathrm{gH}$.  The univalence of $t\mapsto y^2$ on $S_\mathrm{gO}$ and $S_\mathrm{gH}$ will be proved in Section~\ref{sec:ExteriorOkamotoUnivalence} below.  

The critical points of the mapping $t\mapsto y^2$ are easily seen to be the four zeros of $\Phi'(t)^2$ as well as the two double roots of $y^2(t)$, which as solutions of $t^2+4wt-3=0$ are clearly real for all $w\in\mathbb{R}$.  All of the critical points are simple zeros of $\dd y^2/\dd t$.  Since the resultant of $t^2+4wt-3$ and $t^2+2wt-1$ is proportional to $w^2+1$ which does not vanish for any $w\in\mathbb{R}$, by checking the relationship between the double roots of $y^2(t)$ and the poles $t_\infty^\pm(w)$ for $w=0$ one determines that for all $w\in\mathbb{R}$ the roots of $y^2$ lie one on each side of the interval $[t_\infty^-(w),t_\infty^+(w)]$.  Therefore the only critical points of $y^2(t)$ that are not in $\partial S$ are one each within $S_\pm$.  Hence it is impossible for $t\mapsto y^2$ to be univalent on $S_+$ or $S_-$.  

\subsection{Exterior asymptotics for $y$ on the images of critical v-trajectories within $S_\mathrm{gO}$ or $S_\mathrm{gH}$}
\label{sec:ExteriorAxes}
The only points in the image of $S_\mathrm{gO}$ in the $y$-plane where $\mathrm{Re}(h(U_{0,\mathrm{gO}}(y;\kappa)))=0$ holds lie on the real and imaginary axes.  Here we explain why the gO exterior asymptotic formul\ae\ \eqref{eq:genus-zero-Okamoto-u-asymp} and $\eqref{eq:genus-zero-Okamoto-ucirc-asymp}$ are still valid for $\arg(y)=0$ and $\arg(y)=\tfrac{1}{2}\pi$ for sufficiently large $|y|$, even though the level set topology for $\mathrm{Re}(h(z))=0$ has to differ from the generic case in which $0<\arg(y)<\tfrac{1}{2}\pi$.   

The idea is that ``leftward'' and ``downward'' configurations are both simultaneously valid for asymptotic analysis as $T\to +\infty$ for any $y$ in the image of $S_\mathrm{gO}$ with $0<\arg(y)<\tfrac{1}{2}\pi$.  However, only the ``leftward'' configuration allows us to study $\arg(y)=0$ while only the ``downward'' configuration allows us to study $\arg(y)=\tfrac{1}{2}\pi$.  

The asymptotic behavior of $\alpha$, $\beta$, and $\gamma$ for large $y$ described in the first lines of Section~\ref{sec:Exterior} shows that as $\arg(y)\downarrow 0$, $\alpha$ and $\gamma$ approach the negative real $z$-axis from below while $\beta$ approaches the negative real axis from above.  Roughly speaking, the connected component of $\critclosure $ containing $\alpha$ rotates around the origin in the clockwise direction while the component containing $\beta$ rotates around the origin in the counterclockwise direction, and these trap the component containing $\gamma$; in the limit all three components become one (and $\gamma$ is then on the same level as $\alpha$ and $\beta$).  In the limit, the level curve emanating from $z=\alpha$ and escaping to $\infty$ in the direction $\arg(z)=-\tfrac{1}{4}\pi$ merges at the point $z=\gamma$, which now lies on the real axis between $\alpha$ and $\beta$, with the level curve emanating from $z=\beta$ and escaping to $\infty$ in the direction $\arg(z)=\tfrac{1}{4}\pi$.  This limit process can be seen in Figure~\ref{fig:Trajectories-gO-k0} as one follows the plots down the right-hand side toward the lower right-hand corner.  In the ``leftward'' configuration shown in the left-hand panel of Figure~\ref{fig:GenusZero-y-Diag-N-to-O} the only arc of the jump contour for $\mathbf{O}(z)$ that becomes constrained by this change in level set topology as $\arg(y)\downarrow 0$ is $\Sigma_\mathrm{c}$; however this arc carries a constant (in $z$) jump condition \eqref{eq:Sigma-c-left} whose analytical properties in the limit $T\to +\infty$ do not depend on the sign chart for $\mathrm{Re}(h(z))$.  By contrast, in the ``downward'' configuration shown in the right-hand panel of Figure~\ref{fig:GenusZero-y-Diag-N-to-O}, the arcs $\Sigma_{2,3}$ and $\Sigma_{2,1}$ are additional casualties of the change in level set topology as $\arg(y)\downarrow 0$, and unlike $\Sigma_\mathrm{c}$, these contours carry jump conditions with exponentials whose rapid decay to zero as $T\to +\infty$ is ruined in the limit, near $z=\gamma$ at least.

In a similar way, one can see that as $\arg(y)\uparrow\tfrac{1}{2}\pi$ it is the ``downward'' configuration that preserves the required exponential decay in all jump matrices in the limit. The corresponding degeneration can be observed by following the plots along the top edge of Figure~\ref{fig:Trajectories-gO-k0} toward the upper left-hand corner. We therefore conclude that the asymptotic formul\ae\ \eqref{eq:genus-zero-Okamoto-u-asymp} and \eqref{eq:genus-zero-Okamoto-ucirc-asymp} have been established for all $y$ in the image of $S_\mathrm{gO}$ with $0\le\arg(y)\le\tfrac{1}{2}\pi$ (in the case of \eqref{eq:genus-zero-Okamoto-ucirc-asymp} $y$ has been rescaled by positive factor depending on $s$ and $\kappa$).  Since $u$, $u_\tw$, and $U_{0,\mathrm{gO}}$ are all odd Schwarz-symmetric functions of arguments related by positive scaling factors, it then follows that the gO exterior asymptotic formul\ae\ 
\eqref{eq:genus-zero-Okamoto-u-asymp} and \eqref{eq:genus-zero-Okamoto-ucirc-asymp} are both valid for all $y$ in the image of $S_\mathrm{gO}$.

Turning now to the gH exterior asymptotic formul\ae\ \eqref{eq:genus-zero-Hermite-u-asymp} and \eqref{eq:genus-zero-Hermite-ucirc-asymp} for $y$ in the image of $S_\mathrm{gH}$ with $0\le\arg(y)\le\tfrac{1}{2}\pi$, the only points for which $\mathrm{Re}(h(U_{0,\mathrm{gH}}^{[3]}(y;\kappa)))=0$ lie along a single unbounded curve joining the unique root of $B(y;\kappa)=0$ in the open first quadrant with $y=\infty$ in the asymptotic direction $\arg(y)=\tfrac{1}{4}\pi$.  It is an image of the critical v-trajectory connecting $t=\tau(w)$ with $t=\infty$.  Appealing to the special case that $\kappa=0$, one can see that the root of $B(y;\kappa)=0$ of interest is on the line $\arg(y)=\tfrac{1}{4}\pi$ and moreover the image curve is the straight ray with $\arg(y)=\tfrac{1}{4}\pi$ that emanates from that root.  Back in the $z$-plane, one sees that $\gamma$ is connected to $\beta$ by a v-trajectory of $Q(z)\,\dd z^2$.  Referring to Figure~\ref{fig:GenusZeroGH-Stokes} showing the critical v-trajectories for points on opposite sides of the transitional ray, the condition $\mathrm{Re}(h(U_{0,\mathrm{gH}}^{[3]}(y;\kappa)))=0$ indicates the transit of $\gamma$ through the unbounded critical v-trajectory emanating from $\beta$, upon which event the domain $\mathrm{Re}(h(z))>0$ breaks into two disjoint components while the complementary domain $\mathrm{Re}(h(z))<0$ becomes connected (or vice-versa, depending on the direction of transit).  This change in the level curve $\mathrm{Re}(h(z))=0$ does not affect the topological sign structure of $\mathrm{Re}(h(z))$ in a neighborhood of the circle domain $\circledomain$, where the jump contour for $\mathbf{O}(z)$ lies as illustrated in 
the panels of Figure~\ref{fig:GenusZeroGH-Lenses}.  
Therefore the condition $\mathrm{Re}(h(U_{0,\mathrm{gH}}^{[3]}(y;\kappa)))=0$ is a ``false alarm'' whenever it occurs on a point in the image of $S_\mathrm{gH}$ and we conclude that the asymptotic formul\ae\ \eqref{eq:genus-zero-Hermite-u-asymp} and \eqref{eq:genus-zero-Hermite-ucirc-asymp} are valid for all such $y$.  This interesting but irrelevant (for our purposes) phenomenon can be seen in the plots near the upper right-hand corner of Figure~\ref{fig:Trajectories-gH-k0}.

This motivates the following definition, which takes into account the rescaling of $y$ needed to write the asymptotic formul\ae\ \eqref{eq:genus-zero-Okamoto-ucirc-asymp} and \eqref{eq:genus-zero-Hermite-ucirc-asymp}.
\begin{definition}[Exterior domains for the Painlev\'e-IV rationals]
If $\kappa\in (-1,1)$, $\OkamotoExterior(\kappa)$ and $\HermiteExterior(\kappa)$ are the images in the $y$-plane, under \eqref{eq:ysquared-kappa-0} 
followed by a double-valued square root, of the domains $S_0=S_\mathrm{gO}$ and $S_\infty=S_\mathrm{gH}$ respectively.  If instead $\pm\kappa>1$, then $\OkamotoExterior(\kappa)$ and $\HermiteExterior(\kappa)$ are defined by homothetic dilation of the definition on $(-1,1)$:
\eq
\OkamotoExterior(\kappa):=\sqrt{\frac{1\pm\kappa}{2}}\OkamotoExterior(I^\pm(\kappa))\quad\text{and}\quad
\HermiteExterior(\kappa):=\sqrt{\frac{1\pm\kappa}{2}}\HermiteExterior(I^\pm(\kappa)), \quad I^\pm(\kappa):=-\frac{\kappa\mp 3}{1\pm\kappa},
\quad\pm\kappa>1.
\label{eq:DomainExtend}
\endeq
Here the M\"obius transformations $I^\pm$ are both involutions ($I^\pm(I^\pm(\kappa))=\kappa$) and $I^+:(1,+\infty)\to(-1,1)$ 
 while $I^-:(-\infty,-1)\to (-1,1)$.
\label{def:ExteriorDomains}
\end{definition}
The asymptotic formul\ae\ \eqref{eq:genus-zero-Okamoto-u-asymp} and \eqref{eq:genus-zero-Hermite-u-asymp} are valid for each $y$ in $\OkamotoExterior(\kappa)$ and $\HermiteExterior(\kappa)$ respectively, where $\kappa\in (-1,1)$.  Likewise \eqref{eq:genus-zero-Okamoto-ucirc-asymp} and \eqref{eq:genus-zero-Hermite-ucirc-asymp}
hold for each $y$ in $\OkamotoExterior(\kappa_\tw)$ and $\HermiteExterior(\kappa_\tw)$ respectively, where $\kappa_\tw<-1$ if $s=1$ and $\kappa_\tw>1$ if $s=-1$.  In Section~\ref{sec:Exterior-Uniformity} below we indicate how the error estimates can be strengthened to obtain uniform convergence on closed subsets.  But first we show that $\OkamotoExterior(\kappa)$ and $\HermiteExterior(\kappa)$ are actually domains.

\subsection{Univalence of $t\mapsto y^2$ on $S_\mathrm{gO}$ and $S_\mathrm{gH}$ and properties of $\OkamotoExterior(\kappa)$ and $\HermiteExterior(\kappa)$}
\label{sec:ExteriorOkamotoUnivalence}
The validity of the gO rational asymptotic formula \eqref{eq:genus-zero-Okamoto-u-asymp} whenever $y\in\OkamotoExterior(\kappa)$ implies that the map $t\mapsto y^2$ given by \eqref{eq:ysquared-kappa-0}
is univalent on the domain $S_\mathrm{gO}$.  Likewise, the validity of \eqref{eq:genus-zero-Hermite-u-asymp} whenever $y\in\HermiteExterior(\kappa)$ implies univalence of the same map on $S_\mathrm{gH}$.  Indeed, different values of $t$ for the same value of $y^2$ parametrize different points on the Riemann surface of the quartic \eqref{eq:gamma-eqn} over the same point in the $y^2$-plane, and hence distinct values of $\gamma$.  But neither asymptotic formula \eqref{eq:genus-zero-Okamoto-u-asymp} nor \eqref{eq:genus-zero-Hermite-u-asymp} can hold at the same value of $y$ for two distinct values of $\gamma$.  Hence there is only one value of $t\in S_\mathrm{gO}$ corresponding to each point $y$ in its image under \eqref{eq:ysquared-kappa-0}
followed by a branch of the square root, and the same is true for $S_\mathrm{gH}$.  Since $S_\mathrm{gO}$ and $S_\mathrm{gH}$ are both simply connected domains, this proves that $\OkamotoExterior(\kappa)$ and $\HermiteExterior(\kappa)$ are also simply connected domains for each $\kappa\in (-1,1)$, and hence by dilation for all $\kappa\in\mathbb{R}\setminus\{-1,1\}$.  

In fact, the images of both boundaries $\partial S_\mathrm{gO}$ and $\partial S_\mathrm{gH}$ are Jordan curves.  Indeed, by univalence on either one of the domains $S_\mathrm{gO}$ or $S_\mathrm{gH}$ it follows that the image of the boundary cannot intersect itself transversely.  However there also cannot be any osculation points.  This can be proved in a similar way, using inconsistency for distinct values of $t$ mapped to the same value of $y$ of asymptotic formul\ae\ generalizing  \eqref{eq:genus-zero-Okamoto-u-asymp} and \eqref{eq:genus-zero-Hermite-u-asymp} to hold near boundary points of $\OkamotoExterior(\kappa)$ and $\HermiteExterior(\kappa)$ respectively.
The leading order terms in such generalized formul\ae\ take the form of a trigonometric perturbation of the ``background'' value given by \eqref{eq:genus-zero-Okamoto-u-asymp} and \eqref{eq:genus-zero-Hermite-u-asymp} respectively and hence determined by $t$.  We do not describe here this kind of ``edge'' asymptotic for the Painlev\'e-IV rational solutions, but the starting point is still the Riemann-Hilbert characterization of these solutions via Theorem~\ref{thm:gO-RHP} (for the gO case) or Theorem~\ref{thm:gH-RHP} (for the gH case).  The sort of analysis required has been carried out in detail for the rational Painlev\'e-II solutions; see \cite[Section 3]{BuckinghamM15}.
\label{page:Jordan}

The most important qualitative properties of $\partial\OkamotoExterior(\kappa)$ and $\partial\HermiteExterior(\kappa)$ are given in Proposition~\ref{prop:JordanCurves}, whose proof we now give.
\begin{proof}[Proof of Proposition~\ref{prop:JordanCurves}]
That $\partial\OkamotoExterior(\kappa)$ and $\partial\HermiteExterior(\kappa)$ are Jordan curves if $\kappa\in (-1,1)$ has been proved above.  Schwarz reflection symmetry of the preimage curves $\partial S_\mathrm{gO}$ and $\partial S_\mathrm{gH}$ is preserved under the map $t\mapsto y^2$ in \eqref{eq:ysquared-kappa-0}; then the square root map $y^2\mapsto y$ yields Schwarz symmetry in both real and imaginary axes for $\partial\OkamotoExterior(\kappa)$ and $\partial\HermiteExterior(\kappa)$, which proves the Schwarz reflection symmetries of these curves for $\kappa\in (-1,1)$ according to Definition~\ref{def:ExteriorDomains}.  Since the images of $t=a,b$ are the four solutions of $B(y;\kappa)=0$ on the real and imaginary axes while the images of $t=\tau,\tau^*$ are the four remaining solutions, the statements concerning the ``vertices'' of $\partial\OkamotoExterior(\kappa)$ and $\partial\HermiteExterior(\kappa)$ are proved for $\kappa\in (-1,1)$ as well.   To extend the above results to $\kappa>1$ or $\kappa<-1$, we just use Definition~\ref{def:ExteriorDomains} to relate these to curves with $\kappa\in (-1,1)$ by homothetic dilation (and use an easily verified dilation symmetry of the branch point equation $B(y;\kappa)=0$).    

The symmetries $\partial\OkamotoExterior(-\kappa)=\ii\partial\OkamotoExterior(\kappa)$ and $\partial\HermiteExterior(-\kappa)=\ii\partial\HermiteExterior(\kappa)$ for $\kappa\in (-1,1)$ follow from Definition~\ref{def:ExteriorDomains} because $\kappa\mapsto -\kappa$ is equivalent to $w\mapsto -w$.  Thus one sees from \eqref{eq:PhiPrimeSquared} that the union of v-trajectories of $\Phi'(t)^2\,\dd t^2$ is invariant under $(\kappa,t)\mapsto (-\kappa,-t)$, so $S_\mathrm{gO}$ and $S_\mathrm{gH}$ are reflected through the origin in the $t$-plane by $\kappa\mapsto -\kappa$.  But from \eqref{eq:ysquared-kappa-0} one can see that $y^2\mapsto -y^2$ when $(\kappa,t)\mapsto (-\kappa,-t)$ which proves the claim.  The remaining symmetries in \eqref{eq:geometric-symmetries} are direct consequences of Definition~\ref{def:ExteriorDomains}; these identities are simply the definition \eqref{eq:DomainExtend} rewritten using the involutive property $I^\pm(I^\pm(\kappa))=\kappa$.  The statement that the curves $\partial\OkamotoExterior(\kappa)$ and $\partial\HermiteExterior(\kappa)$ are invariant under rotation by $\tfrac{1}{2}\pi$ for $\kappa=-3,0,3$ then follows from the first symmetry in \eqref{eq:geometric-symmetries} for $\kappa=0$ and the fact that $I^\pm(0)=\pm 3$.

Finally, the analyticity of the indicated branches of the equilibrium equation \eqref{eq:equilibrium} on $\OkamotoExterior(\kappa)$ and $\HermiteExterior(\kappa)$ can be seen for $\kappa\in (-1,1)$ from the univalence of $t\mapsto y$ on $S_\mathrm{gO}$ and $S_\mathrm{gH}$ respectively.  For the corresponding results on $\kappa>1$ or $\kappa<-1$, we use Lemma~\ref{lem:gamma-identity}.
\end{proof}

\subsection{Uniformity of estimates}
\label{sec:Exterior-Uniformity}
So far, the asymptotic formul\ae\ in Theorems~\ref{thm:OkamotoExterior} and \ref{thm:HermiteExterior} have been proved to hold on the indicated domains in the sense of pointwise convergence for $y=y_0$ fixed.  However, it is a minor technical matter to strengthen the convergence to uniformity on closed subsets of the indicated (open) domains.  First suppose $y_0$ lies in a compact subset.  Then as $y_0$ varies in this subset the jump contours for the Riemann-Hilbert problem characterizing the error $\mathbf{E}(z)$ deform continuously, but by a further deformation argument one shows that one can take the jump contour to be locally independent of $y_0$.  By extracting a finite subcover of the compact set of $y_0$ one uses estimates for Cauchy integral operators on finitely many contours within the context of the small-norm theory to prove uniform convergence.  For uniformity as $y=y_0\to\infty$, one has to first perform a rescaling of the $z$-plane to fix the roots $z=\alpha$ and $z=\beta$ in the limit; however since the Cauchy integral operators commute with scaling it is then easy to adapt the uniform convergence argument to a neighborhood of $y_0=\infty$.  For details of these arguments in a similar context, see
\cite[pgs.\@ 2519--2520]{BuckinghamM:2014}.

This completes the proofs of Theorems~\ref{thm:HermiteExterior} and \ref{thm:OkamotoExterior}.

\section{Boutroux spectral curves and the Boutroux domains $\rectangle$, $\TR$, and $\TI$}
\label{sec:Boutroux}
We now begin our study of Riemann-Hilbert Problem~\ref{rhp:general} for $y$ in the bounded region complementary to $\OkamotoExterior(\kappa)$ or $\HermiteExterior(\kappa)$.  According to Theorems~\ref{thm:HermiteExterior} and \ref{thm:OkamotoExterior}, this bounded region must contain all of the poles and zeros of the rational solution in question when $T$ is large, because the equilibria characterized by \eqref{eq:equilibrium} are analytic and nonvanishing on the relevant unbounded domain $\OkamotoExterior(\kappa)$ or $\HermiteExterior(\kappa)$.  Therefore we expect non-equilibrium solutions of the approximating differential equation \eqref{eq:Approximating-ODE-Second-Order} may play a role in the asymptotic behavior and hence we replace $y$ with $y=y_0+T^{-1}\zeta$ for Sections~\ref{sec:Boutroux} and \ref{sec:G1}.  The polynomial $P(\cdot)$ defined in \eqref{eq:elliptic-ODE}, which also appears in the theory of spectral curves as explained in Section~\ref{sec:SpectralCurve}, depends parametrically on $y_0$, $\kappa$, and the integration constant $E$.  The purpose of this section is to describe in detail how $E$ is determined. 

Recalling the taxonomy of the quartic $P(z)$ presented in Section~\ref{sec:SpectralCurve}, we assume that $P(z)$ is in the most general case $\{1111\}$ where $y$ is replaced with $y_0$, having four distinct roots.  The rational equation $v^2=h'(z)^2=P(z)/(16z^2)$ then defines an elliptic curve $\mathcal{R}$ (genus $1$) parametrized by $y_0\in\mathbb{C}$, $\kappa\in(-1,1)$, and $E\in\mathbb{C}$.  We will be interested in curves satisfying the \emph{Boutroux conditions} (cf.\@ \eqref{eq:intro-Boutroux})
\eq
\mathrm{Re}\left(\oint_\mathfrak{a}v\,\dd z\right)=0\quad\text{and}\quad\mathrm{Re}\left(\oint_\mathfrak{b}v\,\dd z\right)=0
\label{eq:Boutroux}
\endeq
where $\mathfrak{a}$ and $\mathfrak{b}$ are representatives of a canonical basis of homology cycles on $\mathcal{R}$ chosen not to pass through any poles of $v$ (at the two points each over $z=0,\infty$).  Note that although the differential $v\,\dd z$ has residues at these four points, they are all purely real ($\pm 1$ for the points over $z=0$ and $\pm \kappa$ for the points over $z=\infty$) so the Boutroux conditions depend only on the homology classes of the cycles $\mathfrak{a}$ and $\mathfrak{b}$.   Moreover, since $(\mathfrak{a},\mathfrak{b})$ is a basis for the homology group of $\mathcal{R}$, the conditions \eqref{eq:Boutroux} taken together depend intrinsically on $\mathcal{R}$ itself.  We will determine $E$ as a function of $y_0$ and $\kappa$ such that $\mathcal{R}$ satisfies \eqref{eq:Boutroux}, making it a \emph{Boutroux curve}.  

\subsection{Boutroux domains}
\label{sec:BoutrouxDomains}
Letting $E_\mathrm{R}:=\mathrm{Re}(E)$ and $E_\mathrm{I}:=\mathrm{Im}(E)$, and defining 
\eq
f_{\mathfrak{a},\mathfrak{b}}(E_\mathrm{R},E_\mathrm{I};y_0,\kappa):=\mathrm{Re}\left(\oint_{\mathfrak{a},\mathfrak{b}}v\,\dd z\right),
\endeq
the Boutroux conditions \eqref{eq:Boutroux} read $f_\mathfrak{a}(E_\mathrm{R},E_\mathrm{I};y_0,\kappa)=f_\mathfrak{b}(E_\mathrm{R},E_\mathrm{I};y_0,\kappa)=0$.  A direct calculation then shows that 
\eq
\frac{\partial f_{\mathfrak{a},\mathfrak{b}}}{\partial E_\mathrm{R}}=\frac{1}{4}\mathrm{Re}\left(\oint_{\mathfrak{a},\mathfrak{b}}\frac{\dd z}{4zv}\right)\quad\text{and}\quad
\frac{\partial f_{\mathfrak{a},\mathfrak{b}}}{\partial E_\mathrm{I}}=-\frac{1}{4}\mathrm{Im}\left(\oint_{\mathfrak{a},\mathfrak{b}}\frac{\dd z}{4zv}\right).
\endeq
It follows that the Jacobian of $(f_\mathfrak{a},f_\mathfrak{b})$ with respect to $(E_\mathrm{R},E_\mathrm{I})$ is
\eq
\frac{\partial (f_\mathfrak{a},f_\mathfrak{b})}{\partial(E_\mathrm{R},E_\mathrm{I})}=\frac{1}{16}\mathrm{Im}(Z_\mathfrak{a}Z_\mathfrak{b}^*),\quad Z_\mathfrak{a}:=\oint_\mathfrak{a}\frac{\dd z}{4zv}\quad Z_\mathfrak{b}:=\oint_\mathfrak{b}\frac{\dd z}{4zv}.
\endeq
Since $\dd z/(4zv)$ is up to scaling the unique nonzero holomorphic differential on the elliptic curve $\mathcal{R}$, it follows from \cite[Corollary 1]{Dubrovin81} that $\partial (f_\mathfrak{a},f_\mathfrak{b})/\partial(E_\mathrm{R},E_\mathrm{I})$ is finite and nonzero whenever $\mathcal{R}$ is a smooth elliptic curve, i.e., whenever the quartic polynomial $P(z)$ is in case $\{1111\}$, having four distinct roots.  Therefore, starting from a known solution $(E_\mathrm{R},E_\mathrm{I})=(E_\mathrm{R}^\#,E_\mathrm{I}^\#)$ of the Boutroux equations \eqref{eq:Boutroux} for $y_0=y_0^\#$ and $-1<\kappa<1$ 
such that $\mathcal{R}$ is a nonsingular curve, the Implicit Function Theorem guarantees that we may uniquely continue this solution to $y_0\neq y_0^\#$ until $P(z)$ degenerates from case $\{1111\}$ to either case $\{31\}$ or $\{211\}$.  

Given a value of 
$\kappa\in (-1,1)$,
a maximal simply connected domain $\mathcal{B}=\mathcal{B}(\kappa)$ in the $y_0$-plane together with a smooth solution $E:\mathcal{B}\to\mathbb{R}^2$, $y_0\mapsto (E_\mathrm{R},E_\mathrm{I})$ of the Boutroux conditions \eqref{eq:Boutroux} for which $\mathcal{R}=\mathcal{R}(y_0)$ is a smooth elliptic curve (i.e., of class $\{1111\}$) will be called a \emph{Boutroux domain}.  Sometimes we will abuse terminology and refer to just the set $\mathcal{B}\subset\mathbb{C}$ as a Boutroux domain.  Note that in principle it is possible in applications of the Implicit Function Theorem to encounter isolated branch points in the $y_0$-plane from which an artificial branch cut must emanate to separate distinct branches of the solution; at nonterminal points of this branch cut the Jacobian is finite and nonzero (which is why the cut is artificial).  However, such phenomena cannot occur in the present context because different values of $E$ for the same value of $y_0$ would lead to inconsistent asymptotic behavior for the 
rational Painlev\'e-IV solutions.  Thus the conditions of the Implicit Function Theorem fail at every point of the boundary of a Boutroux domain $\mathcal{B}$, which justifies the assertion of uniqueness of the maximal domain.  

\subsection{Stokes graphs and abstract Stokes graphs for Boutroux curves}
\label{sec:StokesGraphsForBoutrouxCurves}
Let $\mathcal{R}$ be a Boutroux spectral curve of class $\{1111\}$, so $\mathcal{R}$ has genus one.  From the formula $Q(z):=h'(z)^2=\tfrac{1}{16}z^{-2}P(z)=\tfrac{1}{16}z^{-2}(z-\alpha)(z-\beta)(z-\gamma)(z-\delta)$ we see that $h'(z)$ may be considered to be a meromorphic function on $\mathcal{R}$ with purely real residues at the poles over $z=0,\infty$.  Therefore the only possible monodromy of $\mathrm{Re}(h(z))$ on $\mathcal{R}$ arises from the nontrivial homology of $\mathcal{R}$ as a curve of genus one.  But since $\mathcal{R}$ is Boutroux, there is no such monodromy and therefore $\mathrm{Re}(h(z))$ is determined up to an integration constant as a single-valued non-constant function on $\mathcal{R}$ that is harmonic away from the poles of $h'(z)$.  As in the beginning of Section~\ref{sec:h-analysis}, it follows that the quadratic differential $Q(z)\,\dd z^2$ has no divergent critical v-trajectories (i.e. v-trajectories having a zero of $Q(z)$ as an endpoint).   In the current setting we refer to the closure $\critclosure$ of the union of the critical v-trajectories of $Q(z)\,\dd z^2$ as the \emph{Stokes graph} of $\mathcal{R}$.  

The key feature contributed by the condition that $\mathcal{R}$ is Boutroux is that the Stokes graph of $\mathcal{R}$ coincides with the projection from $\mathcal{R}$ to the $z$-plane of the level curve $\mathrm{Re}(h(z))=\mathrm{Re}(h(\alpha))$ (or $=\mathrm{Re}(h(\beta))=\mathrm{Re}(h(\gamma))=\mathrm{Re}(h(\delta))$ by the Boutroux conditions \eqref{eq:Boutroux}).  To see this, we may follow the line of argument in Section~\ref{sec:K=LevelSet-t-plane}.  Clearly every point on the Stokes graph of $\mathcal{R}$ is on the level curve.  The complement of the Stokes graph is then the disjoint union of a single circle domain $\circledomain$ containing the double pole $z=0$ and finitely many end domains (strip domains and ring domains being excluded because they require any branch of $\mathrm{Re}(h(z))$ to take different values on different parts of their boundaries).  But each end domain is mapped conformally by a branch of $h(z)$ onto an open right or left half-plane while the circle domain is mapped conformally by a branch of $\ee^{h(z)}$ onto the interior or exterior of a circle.  Therefore in the interior of each end domain or circle domain, $\mathrm{Re}(h(z))$ is unequal to its constant value on the boundary.  So there are no points of the level set of $\mathrm{Re}(h(z))=\mathrm{Re}(h(\alpha))$ not contained in the Stokes graph of $\mathcal{R}$.

Since there are no strip domains and $Q(z)\,\dd z^2$ has a pole of order $6$ at $z=\infty$, a neighborhood of the point at infinity is covered by the disjoint union of four end domains and four critical v-trajectories separating them and approaching $z=\infty$ asymptotically in the diagonal directions $\arg(z)=\pm\tfrac{1}{4}\pi,\pm\tfrac{3}{4}\pi$.  These four are the only end domains since there are no other poles of $Q(z)\,\dd z^2$ of order greater than two, and the four end domains are mutually disjoint on the $z$-sphere (every end domain has exactly one such pole at a unique point on its boundary on the $z$-sphere).  On the $z$-sphere the Stokes graph is therefore a planar graph bounding exactly five faces (four end domains and one circle domain).  The vertices of the graph are the points $z=\alpha,\beta,\gamma,\delta,\infty$ of degrees $d(\alpha)=d(\beta)=d(\gamma)=d(\delta)=3$ and $d(\infty)=4$.  Based on the count of vertices and their degrees and the fact that $z=\infty$ is necessarily connected to a finite vertex, by enumeration it is easy to check that the Stokes graph of $\mathcal{R}$ is connected, and therefore by Euler's formula there are exactly 8 edges.  

Given the Stokes graph $\critclosure$ of $\mathcal{R}$, we may assume that $h'(z)$ is an analytic function in $\mathbb{C}\setminus\critclosure$ by suitably arranging the branch cut locus $B$ of $R(z)$ in the formula \eqref{eq:h-g-phi} to lie within $\critclosure$.  It then follows by integration along contours in $\mathbb{C}\setminus (B\cup\{0\})$ with fixed base point in the same domain that $\mathrm{Re}(h(z))$ becomes a well-defined single-valued function of $z$ that is harmonic on $\mathbb{C}\setminus (B\cup\{0\})$ and that extends continuously to $B$.  By choice of base point (integration constant) we may assume that $\mathrm{Re}(h(z))=0$ whenever $z\in \critclosure$, and then the sign of $\mathrm{Re}(h(z))$ will be well-defined in each component of $\mathbb{C}\setminus\critclosure$.  This essential property of $h(z)$ will play a key role in the steepest descent analysis in Section~\ref{sec:G1} below.

If we retain only the essential topological information, we associate to the Stokes graph of $\mathcal{R}$ an \emph{abstract Stokes graph}.  This is a kind of connected planar graph (allowing for loops) with four vertices of degree $3$ representing $z=\alpha,\beta,\gamma,\delta$ and four special vertices of degree $1$ representing the four diagonal directions at $z=\infty$.  We place the four latter vertices at the four corners of a bounding square (the edges of which are not considered as part of the graph) and require that all other edges lie within this square.  Each abstract Stokes graph has exactly $5$ faces on the square representing the four end domains and one circle domain, and by Euler's formula, there are exactly $8$ edges.  We identify two abstract Stokes graphs that are related by a homeomorphism of the bounding square and its interior that fixes the square.  Now since $\mathcal{R}$ depends on $y_0$ within a given Boutroux domain $\mathcal{B}$, the Stokes graph of $\mathcal{R}$ does as well.  However the abstract Stokes graph is an invariant, depending only on the choice of Boutroux domain $\mathcal{B}$.  This also implies that the branch points $\alpha$, $\beta$, $\gamma$, and $\delta$ may be consistently labeled on $\mathcal{B}$ by labeling them for just one value of $y_0\in\mathcal{B}$. 

\subsection{Degenerate Boutroux curves and their abstract Stokes graphs}
\label{sec:DegenerateStokesGraphs}
Let $\kappa\in (-1,1)$
be fixed, and let $\mathcal{B}$ be a Boutroux domain.  The points on the boundary $\partial\mathcal{B}$ correspond to singular spectral curves of class $\{31\}$ or $\{211\}$.  
We call these \emph{degenerate Boutroux curves}.  Letting $y_0\in\mathcal{B}$ approach the boundary, the generic (i.e., corresponding to case $\{211\}$) mechanism of degeneration is that a pair of simple roots of $P$ coalesces into a double root $\gamma$ and there remain two distinct simple roots $\alpha$ and $\beta$.  If $\mathfrak{a}$ is the cycle that encloses the roots that coalesce into $\gamma$, then it is obvious that $\oint_\mathfrak{a}v\,\dd z\to 0$ in the limit, so the only nontrivial Boutroux condition is the limiting form of $\mathrm{Re}(\oint_\mathfrak{b}v\,\dd z)=0$, which can be written as $\mathrm{Re}(\Phi(t))=0$ (cf.\@ \eqref{eq:Phi-t}) where $t$ parametrizes pairs of points $\pm(y=y_0,\gamma)$ on the Riemann surface of \eqref{eq:gamma-eqn}.

Therefore, the image in the $y=y_0$-plane of the closure of the union of critical v-trajectories of $\Phi'(t)^2\,\dd t^2$, studied in detail in Section~\ref{sec:OutsideDomain}, consists of the only possible points where a degenerate Boutroux curve may exist.  Since the images in the $y_0$-plane of the points $t=a(w),b(w),\tau(w),\tau(w)^*$ are precisely the branch points solving $B(y_0;\kappa)=0$, these are the only points that admit a degenerate Boutroux curve of class $\{31\}$.  It follows that the images of the open critical v-trajectories themselves are the possible points admitting a degenerate Boutroux curve of class $\{211\}$.  

The Stokes graph of a degenerate Boutroux curve $\mathcal{R}$ is still defined as the closure $\critclosure$ of the union of critical v-trajectories of $Q(z)\,\dd z^2$ with $Q(z):=\tfrac{1}{16}z^{-2}P(z)$ but now the quartic $P(z)$ has fewer than four roots, being in case $\{31\}$ (two roots, one being simple) or $\{211\}$ (three roots, two being simple).  In the latter (more general) case, $\critclosure$ connects the double zero $z=\gamma$ of $Q(z)\,\dd z^2$ to either the simple zero $z=\alpha$ or the simple zero $z=\beta$ (or both).  The corresponding abstract Stokes graph has two vertices of degree $3$ (the simple zeros), one vertex of degree $4$ (the double zero), and the four special vertices of degree $1$ for the four directions at $z=\infty$.  It is planar, and has $5$ faces just like in the nondegenerate case.  By Euler's formula there are exactly $7$ edges.  

There are ten such graphs, up to $\mathrm{Dih}_4$ symmetry of rotations and reflections that preserve the bounding square.  Representatives of each symmetry class are shown in Figure~\ref{fig:AbstractStokesGraphCatalog}.
\begin{figure}[h]
\begin{center}
\includegraphics{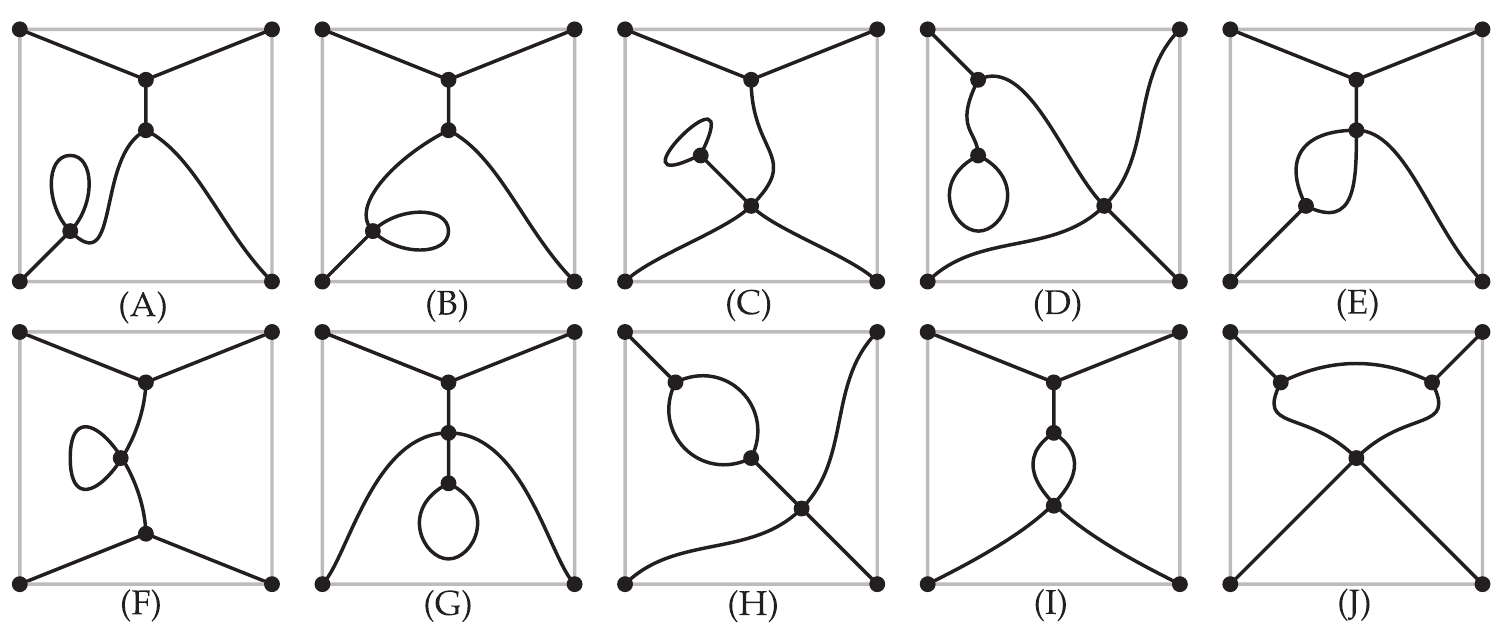}
\end{center}
\caption{The abstract Stokes graphs for degenerate Boutroux curves of class $\{211\}$, modulo $\mathrm{Dih}_4$ symmetry, drawn on their bounding squares (gray).  The graphs (A)--(E) in the first row have no symmetry (so $\mathrm{Dih}_4$ acts freely) and hence each generates $8$ abstract Stokes graphs each.  The graphs (F)--(J) in the second row have a reflection symmetry and hence each only generates $4$ abstract Stokes graphs by the $\mathrm{Dih}_4$ action.  The total number of abstract Stokes graphs is therefore $60$. 
The graphs (D), (F), and (I) can be ruled out for $t$ on critical v-trajectories of $\Phi'(t)^2\,\dd t^2$.} 
\label{fig:AbstractStokesGraphCatalog}
\end{figure}

Each critical v-trajectory of $\Phi'(t)^2\,\dd t^2$ carries a pair of abstract Stokes graphs for degenerate Boutroux curves of class $\{211\}$.  These two abstract Stokes graphs are related by reflection through the origin (this is the symmetry relating $(y_0,\gamma)$ and $(-y_0,-\gamma)$ both of which correspond to the same value of $t$).  We next determine which of the $60$ possible abstract Stokes graphs correspond to each critical v-trajectory of $\Phi'(t)^2\,\dd t^2$.  For this purpose, 
we divide the critical v-trajectories in the $t$-plane into groups:  the critical v-trajectories 
within 
each of the four domains $S_0=S_\mathrm{gO}$, $S_\infty=S_\mathrm{gH}$, $S_+$ and $S_-$ (two v-trajectories in each domain), the four v-trajectories on the boundary $\partial S_\mathrm{gO}$, and the two v-trajectories on the boundary $\partial S_\mathrm{gH}$.


\subsubsection{Abstract Stokes graphs for critical v-trajectories 
within 
$S_0=S_\mathrm{gO}$}
It is sufficient to consider $t$ small since the abstract Stokes graphs are the same for all points $t$ on a given v-trajectory.  When $t$ is small, $Q(z)$ has one small simple root, say $z=\alpha$, while the other simple root $z=\beta$ and the double root $z=\gamma$ are both large.  It can therefore be shown that the circle domain containing $z=0$ has only the small simple root $\alpha$ on its boundary, which implies that the face of the abstract Stokes graph that does not abut any edge of the bounding square is bounded by a loop edge connected to a vertex of degree $3$.  Moreover, since $t$ is real on the v-trajectories of interest, the Stokes graph in the $z$-plane has to be Schwarz symmetric either through the real or imaginary axes.  Hence the abstract Stokes graphs for the two v-trajectories 
within
$S_0=S_\mathrm{gO}$ are of the symmetry type of graph (G) in Figure~\ref{fig:AbstractStokesGraphCatalog}.

\subsubsection{Abstract Stokes graphs for critical v-trajectories 
within
$S_\infty=S_\mathrm{gH}$}
In this case it is sufficient to consider large $t$.  In this situation, both simple roots $z=\alpha,\beta$ of $Q(z)$ are small of the same order, while the double root $z=\gamma$ is large, so the face of the abstract Stokes graph that is separated from the bounding square is bounded by a cycle consisting of two edges joining the two small simple roots.  Hence the abstract Stokes graphs for the two v-trajectories 
within
$S_\infty=S_\mathrm{gH}$ are of the symmetry type of graph (H) in Figure~\ref{fig:AbstractStokesGraphCatalog}.

\subsubsection{Abstract Stokes graphs for critical v-trajectories 
within
$S_\pm$}
Now it is sufficient to consider $t$ near $t_\infty^\pm$.  In both limits we find that the double root $z=\gamma$ of $Q(z)$ is small while both simple roots $z=\alpha,\beta$ of $Q(z)$ are large, although their difference is comparable in magnitude to $1$.  It can be shown that the double root $z=\gamma$ is the only critical point on the boundary of the circle domain containing the origin in the $z$-plane, and that the two simple roots $z=\alpha,\beta$ are connected by a critical v-trajectory in the $z$-plane.  Hence the abstract Stokes graph has one edge connecting the two vertices of degree $3$, both of the remaining edges from one of those vertices and one of the remaining edges from the other join to three of the points at infinity, and the vertex of degree $4$ has a loop forming the face corresponding to the circle domain and two other edges going to the fourth point at infinity and to the second vertex of degree $3$.  Therefore, the abstract Stokes graphs for v-trajectories 
within
$S_\pm$ are of the same symmetry type 
as either graph (A) or (B) in Figure~\ref{fig:AbstractStokesGraphCatalog}.

\subsubsection{Abstract Stokes graphs for the v-trajectories of $\partial S_\mathrm{gO}$}
Next consider the four critical v-trajectories forming the boundary of the domain $S_0=S_\mathrm{gO}$.  To determine the possible abstract Stokes graphs associated with these, we start with a configuration of critical v-trajectories for the quadratic differential $Q(z)\,\dd z^2$ in the $z$-plane for a point $t$ in the interior of $S_\mathrm{gO}$ not on the real axis, as shown in Figure~\ref{fig:GenusZero-y-Diag-Domains}.  Then we consider the different ways that the double root  $z=\gamma$ can merge with the zero level set shown with black curves in that figure.  There are a total of six scenarios:  $\gamma$ can merge with either of the two accessible v-trajectories emanating from the simple root $z=\alpha$ and simultaneously merge with the loop $\partial\circledomain$ or with either side of the unbounded v-trajectory emanating from the other simple root $z=\beta$.   Two of these give abstract Stokes graphs in the symmetry class of graph (G) in Figure~\ref{fig:AbstractStokesGraphCatalog} and hence correspond to the point $t$ crossing the real axis instead of meeting the boundary of $S_\mathrm{gO}$.  The four remaining scenarios correspond to the boundary of $S_\mathrm{gO}$ and they give abstract Stokes graphs in the symmetry class of graphs (C) or (E) in Figure~\ref{fig:AbstractStokesGraphCatalog}.  

\subsubsection{Abstract Stokes graphs for the v-trajectories of $\partial S_\mathrm{gH}$}
Finally we consider the critical v-trajectories in the $t$-plane that form the boundary of $S_\infty=S_\mathrm{gH}$.  To determine the possible abstract Stokes graphs for these it is sufficient to take $t$ to to be real.  If $t>0$ is large, the two opposite values of the double root $z=\gamma$ of $Q(z)$ are large and real.  The simple roots $z=\alpha,\beta$ form a complex-conjugate pair of small magnitude connected by two critical v-trajectories that bound the circle domain containing $z=0$.  Finally, the v-trajectory structure in the $z$-plane is Schwarz symmetric with respect to the real axis.  It follows that when $t>0$ decreases to the value on $\partial S_\mathrm{gH}$, all three roots end up on the boundary of the circle domain, and we deduce that the abstract Stokes graph for the v-trajectory of $\partial S_\mathrm{gH}$ passing through the positive real $t$-axis is a rotation by $\pm\tfrac{1}{2}\pi$ of graph (J) in Figure~\ref{fig:AbstractStokesGraphCatalog}.  By \eqref{eq:ysquared-kappa-0} for $y=y_0$ this arc of $\partial S_\mathrm{gH}$ is mapped to the boundary arcs of $\HermiteExterior$ that cross the real axis in the $y_0$-plane.
If $t<0$ is large, the two opposite values of the double root $z=\gamma$ of $Q(z)$ are large and pure imaginary.  The simple roots $z=\alpha,\beta$ form a small anti-conjugate pair connected by v-trajectories forming the boundary of the circle domain containing $z=0$, and the v-trajectories in the $z$-plane are now Schwarz symmetric with respect to the imaginary axis.  Letting $t<0$ increase to the boundary of $S_\mathrm{gH}$ we see that the abstract Stokes graphs for the v-trajectory of $\partial S_\mathrm{gH}$ that crosses the negative real $t$-axis match graph (J) in Figure~\ref{fig:AbstractStokesGraphCatalog} or its reflection through the origin.  By \eqref{eq:ysquared-kappa-0} for $y=y_0$, this arc of $\partial S_\mathrm{gH}$ is mapped to the boundary arcs of $\HermiteExterior$ that cross the imaginary axis in the $y_0$-plane.

\subsection{The Boutroux domain $\rectangle(\kappa)$}
\label{sec:rectangle-domain}
We now begin to describe the various Boutroux domains in the $y_0$-plane, starting with a domain that contains the origin.
\subsubsection{Boutroux curves for $y_0=0$}  
\label{sec:Boutroux-curves-y0-zero}
We claim that if $y_0=0$, taking $E=0$ makes $\mathcal{R}$ a Boutroux curve 
for all $\kappa\in (-1,1)$.
Indeed, if $y_0=E=0$, then $P(z)=z^4+8\kappa z^2+16$,
the roots of which 
satisfy $z^2=-4\kappa\pm 4\ii\sqrt{1-\kappa^2}$, forming a complex-conjugate pair on the circle of radius $|z^2|=4$. 
Therefore,
the four roots of $P(z)$ form a quartet in the complex $z$-plane, symmetric under reflection in both the real and imaginary axes, and lying on the circle of radius $|z|=2$.  
Recalling the parametrization $\kappa=\sin(\tfrac{1}{2}\varphi)$, $\sqrt{1-\kappa^2}=\cos(\tfrac{1}{2}\varphi)$, $-\pi<\varphi<\pi$,
the 
roots of $P(z)$ 
are exactly $z=\alpha,\beta,\gamma,\delta$, where 
\eq
\alpha=2\ee^{\ii\mu},\quad\beta=-2\ee^{-\ii\mu},\quad \gamma=-2\ee^{\ii\mu},\quad
\text{and}\quad\delta=2\ee^{-\ii\mu},\quad\mu:=\tfrac{1}{4}(\varphi+\pi)\in (0,\tfrac{1}{2}\pi).
\endeq
These are arranged in counterclockwise order about the origin.
We consider concrete $\mathfrak{a}$ and $\mathfrak{b}$ cycles satisfying the canonical intersection condition $\mathfrak{a}\circ\mathfrak{b}=1$ such as illustrated in the left-hand panel of Figure~\ref{fig:a-b-cycles}.  
\begin{figure}[h]
\begin{center}
\includegraphics{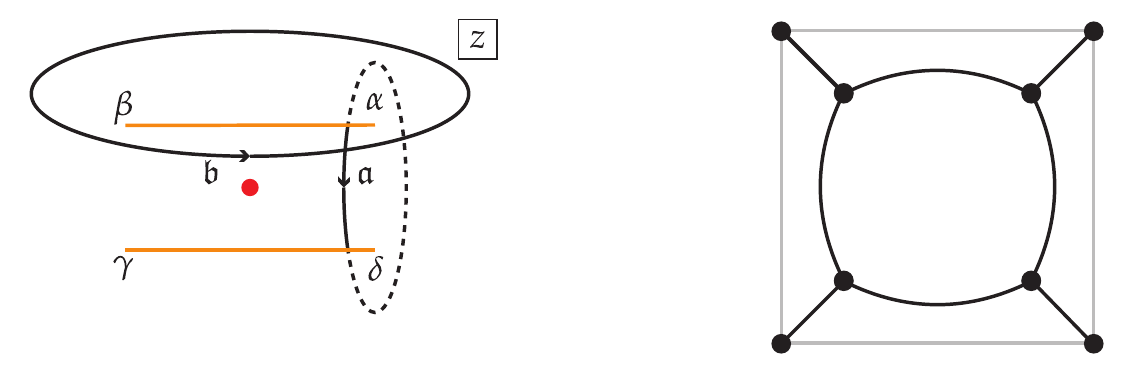}
\end{center}
\caption{Left panel:  the $\mathfrak{a}$ and $\mathfrak{b}$ cycles (solid black curves) on the sheet of $\mathcal{R}$ on which $v$ is a single-valued analytic function of $z$ with branch cuts (orange) connecting the branch points in pairs, and such that $v\sim \tfrac{1}{4}z$ as $z\to\infty$.  Part of the $\mathfrak{a}$ cycle (dashed) is on the other sheet, on which $v$ has the opposite sign.  The red dot is the origin.  
Right panel:  the abstract Stokes graph of the Boutroux domain $\rectangle$.
}
\label{fig:a-b-cycles}
\end{figure}
Clearly, $P(z)$ is an even function of $z$
and we may define 
$v(z)$ as a single-valued analytic function with horizontal branch cuts between $\alpha$ and $\beta$ and between $\gamma$ and $\delta$, and normalized so that $v(z)=\tfrac{1}{4}z+\bo(z^{-1})$ as $z\to\infty$, and then we take the integrand to be $v(z)$ (resp., $-v(z)$) on the solid arcs (resp., dashed arcs) in 
the left-hand panel of Figure~\ref{fig:a-b-cycles}.  

Now we compute the integrals in the Boutroux conditions \eqref{eq:Boutroux}.
To compute the integral over the cycle $\mathfrak{a}$ we use the fact that $v(z^*)=v(z)^*$.  
We
can choose $\mathfrak{a}$ to be Schwarz-symmetric up to orientation, and then it is obvious that $\oint_\mathfrak{a}v\,\dd z$ is purely imaginary.  
To compute the integral over the cycle $\mathfrak{b}$, we use the fact that $v(-z)=-v(z)$.  
We
observe that up to purely imaginary residue contributions we can identify $\oint_\mathfrak{b}v\,\dd z$ with $-\oint_{-\mathfrak{b}}v\,\dd z=-\oint_\mathfrak{b} v\,\dd z$ by Cauchy's Theorem, and we conclude that $\oint_\mathfrak{b}v\,\dd z$ is imaginary.  
Therefore, as both integrals are purely imaginary, $\mathcal{R}$ is indeed a Boutroux curve when $y_0=E=0$ 
for all $\kappa\in (-1,1)$.

\subsubsection{The Boutroux domain $\rectangle$ and its abstract Stokes graph}
\label{sec:AbstractStokesGraphsForRectangle}
Associated with the continuation of the Boutroux curve obtained for $y_0=0$ and arbitrary $\kappa\in (-1,1)$ in Section~\ref{sec:Boutroux-curves-y0-zero} is a Boutroux domain denoted $\rectangle=\rectangle(\kappa)$ containing $y_0=0$.  It is easy to see by the substitution $z\mapsto \ii z$ in the integrand that the Boutroux equations \eqref{eq:Boutroux} are invariant under $(y_0,\kappa,E)\mapsto (\ii y_0,-\kappa,-\ii E)$; since the base point $(y_0,E)=(0,0)$ is fixed by this symmetry, it follows by uniqueness of continuation that $E(y_0;-\kappa)=\ii E(\ii y_0;\kappa)$ holds for the family of functions $y_0\mapsto E$ defined by continuation on $\rectangle(\kappa)$ for all $\kappa\in (-1,1)$.  Our notation suggests the shape of this domain $\rectangle(\kappa)$ in the $y_0$-plane whose boundary we shall see is a curvilinear rectangle.  

Before describing that boundary, we work out the abstract Stokes graph for the Boutroux domain $\rectangle$.
We proceed by setting $y_0=0$ and deducing the Stokes graph in this special case.  This is sufficient to determine the abstract Stokes graph for all $y_0\in\rectangle$ as it is independent of $y_0$.  Recall that the Stokes graph $\critclosure$ is the closure of the union of the three critical v-trajectories emanating from each of the four zeros $z=\alpha,\beta,\gamma,\delta$.  Each of these v-trajectories must 
either terminate at one of the four zeros (possibly returning to the same zero a priori, although we will shortly rule that out) or escape to $z=\infty$ in one of the four directions $\arg(z)=\pm\tfrac{1}{4}\pi,\pm\tfrac{3}{4}\pi$.  Each of the latter directions accepts exactly one critical v-trajectory.  Also recall that exactly one of the five maximal connected components of $\overline{\mathbb{C}}\setminus\critclosure$ is a circle domain $\circledomain$ that has at least one of the four zeros $z=\alpha,\beta,\gamma,\delta$ on its boundary $\partial\circledomain$ that is a Jordan curve.  

Now observe that if $y_0=0$ and $E=0$ as in Section~\ref{sec:Boutroux-curves-y0-zero}, then the quadratic differential $Q(z)\,\dd z^2$ enjoys Schwarz reflection symmetry in both the real and imaginary $z$-axes.  
Since
the group of reflection symmetries acts freely on the four roots of $P(z)$ in this case, all four branch points lie on the boundary $\partial\circledomain$ of the circle domain $\circledomain$.  Thus, there are four critical v-trajectories whose closure is $\partial\circledomain$, and that connect the four zeros in pairs.  It remains to determine the fate of the remaining critical v-trajectory emanating from each of the four zeros.  By Lemma~\ref{lem:Teichmueller} none of these can coincide with any of the others (necessarily forming, with part of $\partial\circledomain$, a Jordan curve of v-trajectories and their endpoints with only regular points in its interior so that referring to \eqref{eq:Teichmueller-L}--\eqref{eq:Teichmueller-R} $L\le 0$ while $R=2$), so all four of them must go to $z=\infty$.  By symmetry, the v-trajectory emanating from each zero tends to infinity in the same quadrant that contains the zero.   This completes the qualitative description of the Stokes graph for $y_0=0$ with $E=0$ for all $\kappa\in (-1,1)$.  Numerical plots confirm this qualitative description; see the upper left-hand panel of Figure~\ref{fig:SignsContourLenses-y0-kappa0-s1} or Figure~\ref{fig:SignsContourLenses-y0-kappa0-sm1} below for the Stokes graph for $y_0=0$ and $\kappa=0$.  The corresponding abstract Stokes graph for the whole Boutroux domain $\rectangle$ is shown in the 
right-hand panel of Figure~\ref{fig:a-b-cycles}.

\subsubsection{The boundary of $\rectangle$}
Recall that the boundary of a Boutroux domain $\mathcal{B}$ consists of arcs that are the images in the $y=y_0$-plane of critical v-trajectories of $\Phi'(t)^2\,\dd t^2$ described in Section~\ref{sec:OutsideDomain}.  However, not all of these curves will be relevant to describe the boundary of the specific Boutroux domain $\rectangle$ containing $y_0=0$.  The reason that not every image of a critical v-trajectory is relevant is that points on the boundary of $\rectangle$ can only correspond to class $\{211\}$ degenerate Boutroux curves having Stokes graphs in the $z$-plane of certain specific symmetry types among the many types associated with all of the critical v-trajectories in the $t$-plane as described in Section~\ref{sec:DegenerateStokesGraphs}.  

Looking at 
the right-hand panel of Figure~\ref{fig:a-b-cycles}
and considering degenerations in which pairs of finite vertices fuse such that one edge disappears while the number of faces is preserved, we can deduce that exactly four abstract Stokes graphs can correspond to points on the boundary of $\rectangle$ other than the eight roots of $B(y_0;\kappa)=0$.
See Figure~\ref{fig:AbstractStokesGraphsRectangle-kappa0}; numerically-generated examples of actual degenerate Stokes graphs for points on the boundary of $\rectangle(\kappa)$ can be seen in Figures~\ref{fig:Trajectories-gO-k0}--\ref{fig:Trajectories-gH-k0}.
\begin{figure}[h]
\begin{center}
\includegraphics{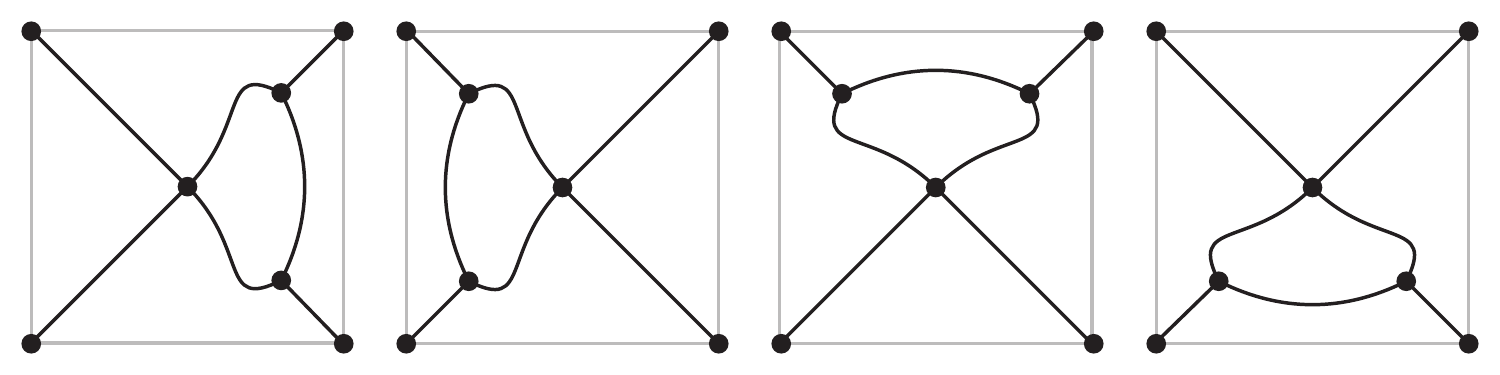}
\end{center}
\caption{The
four abstract Stokes graphs that can occur at boundary points of $\rectangle$ in the $y_0$-plane other than the branch points $B(y_0;\kappa)=0$.  These are all of the symmetry type of graph (J) in Figure~\ref{fig:AbstractStokesGraphCatalog}.}
\label{fig:AbstractStokesGraphsRectangle-kappa0}
\end{figure}
Since only graphs of symmetry type (J) appear,
the analysis in Section~\ref{sec:DegenerateStokesGraphs} shows that
the four $y_0$-plane images of the two critical v-trajectories of $\Phi'(t)^2\,\dd t^2$ on the boundary of $S_\infty=S_\mathrm{gH}$ form, with their endpoints, the boundary of the Boutroux domain $\rectangle$ containing $y_0=0$.  The closure of the union of these four image arcs is a Jordan curve because $y^2$ is univalent on $S_\mathrm{gH}$, and it is symmetric with respect to $y_0\mapsto y_0^*$ and $y_0\mapsto -y_0$.  $\rectangle$ is the interior of this Jordan curve, and its boundary is a curvilinear rectangle, as claimed earlier.  Moreover, recalling Definition~\ref{def:ExteriorDomains}, $\rectangle=\mathbb{C}\setminus\overline{\HermiteExterior(\kappa)}$.  See the upper right-hand panel of Figure~\ref{fig:Domains}.

Note that $\rectangle$ is also disjoint from $\OkamotoExterior(\kappa)$, the 
other ``exterior'' domain defined in Definition~\ref{def:ExteriorDomains}.  This is true because in Section~\ref{sec:G1} below we will prove the asymptotic formula \eqref{eq:Okamoto-elliptic} in Theorem~\ref{thm:Okamoto-elliptic} that describes the gO Painlev\'e-IV rational solutions for $y_0\in\rectangle$ and that is inconsistent with Theorem~\ref{thm:OkamotoExterior} if also $y=y_0\in\OkamotoExterior(\kappa)$.
%
In other words, $\rectangle\subset \mathbb{C}\setminus\overline{\OkamotoExterior}(\kappa)$.  Moreover, an argument like that mentioned 
in Section~\ref{sec:ExteriorOkamotoUnivalence}
shows that 
the only points common to $\partial\rectangle$ and the Jordan curve $\partial\OkamotoExterior(\kappa)$ passing through all eight solutions of $B(y_0;\kappa)=0$ are the four corner points of $\rectangle$, namely the images in the $y_0$-plane of $t=\tau,\tau^*$.  This makes the region in between $\partial\rectangle$ and $\partial\OkamotoExterior(\kappa)$ the disjoint union of four simply connected domains whose boundaries are curvilinear triangles.  Each of these triangles consists of three arcs that are images in the $y_0$-plane of the boundary arcs of $S_+$ 
(the triangles having a vertex on the real axis) 
and $S_-$ 
(the triangles having a vertex on the imaginary axis). 
We will next show that the interior of each of these four curvilinear triangles is a Boutroux domain, and we will find the abstract Stokes graphs for these domains. By the symmetries noted in Proposition~\ref{prop:FirstQuadrant}, it suffices to consider just two of the triangles, one each on the positive real and imaginary axes.

\subsection{The Boutroux domain $\TR(\kappa)$}
\label{sec:TR}
Let $\TR=\TR(\kappa)$ denote the maximal connected component of $\mathbb{C}\setminus\overline{\rectangle\cup\OkamotoExterior(\kappa)}$ that is entirely contained within the right half $y_0$-plane, a simply connected domain whose boundary is a curvilinear triangle with vertices agreeing with those of an equilateral triangle (cf.\@ Proposition~\ref{prop:triangles}), one vertex of which lies on the positive real axis. See the upper right-hand panel of Figure~\ref{fig:Domains}.  We claim that $\TR$ is a Boutroux domain.

To show this, we need to find a smooth mapping $\TR\ni y_0\mapsto (E_\mathrm{R},E_\mathrm{I})\in \mathbb{R}^2$ such that the quartic $P(z)$ determines a Boutroux curve $\mathcal{R}$ of class $\{1111\}$.  Unfortunately, the approach used in Section~\ref{sec:rectangle-domain} of guessing a solution of the Boutroux conditions \eqref{eq:Boutroux} for a convenient value of $y_0$ such as $y_0=0$ to perturb from using the Implicit Function Theorem is not helpful.  Instead, we devise a strategy based on single-variable calculus that allows us to obtain a Boutroux curve $\mathcal{R}$ for any $y_0$ confined to the real axis.   

\subsubsection{Boutroux curves for $y_0\in\mathbb{R}$}
\label{sec:R-to-TR}
Given $\kappa\in (-1,1)$,
we will show that whenever $y_0\in\mathbb{R}$ there is a unique (possibly degenerate) Boutroux curve $\mathcal{R}$ for which $E$ is real, i.e., $E_\mathrm{I}=0$.  Indeed, 
if both $y_0$ and $E$ are real
it follows easily that the quartic polynomial $P(z)$ is Schwarz symmetric:  $P(z^*)=P(z)^*$.  Let us assume for the moment that the spectral curve for such $y_0$ and $E$ is of class $\{1111\}$ so that $P(z)$ has four distinct roots.  These roots necessarily come in complex-conjugate or real pairs.  

We first simplify the Boutroux conditions \eqref{eq:Boutroux} under the additional assumption that there are no real roots, and without loss of generality we let $\alpha$ and $\beta$ denote the roots in the upper half-plane, and assume that $\gamma=\beta^*$ and that $\delta=\alpha^*$.  We define $v(z)$ as a function satisfying $v^2=P(z)/(16z^2)$ that is analytic apart from $z=0$ and a Schwarz-symmetric system of two branch cuts, one of which lies in the upper half-plane joining $\alpha$ and $\beta$, and that satisfies $v(z)=\tfrac{1}{4}z+\tfrac{1}{2}y_0 +\kappa z^{-1}+ \bo(z^{-2})$ as $z\to\infty$.  Then we may select a canonical homology basis that topologically matches the diagram in the left-hand panel of Figure~\ref{fig:a-b-cycles}.  Since $v(z^*)=v(z)^*$, by exactly the same argument as in Section~\ref{sec:Boutroux-curves-y0-zero} we deduce that $f_\mathfrak{a}(E_\mathrm{R},0;y_0,\kappa)=0$ holds automatically.  However unlike the case considered in Section~\ref{sec:Boutroux-curves-y0-zero}, the additional symmetry $v(-z)=-v(z)$ is generally no longer present so the remaining Boutroux condition is not automatically satisfied.   To simplify this condition,
we first apply Cauchy's Theorem to write
\eq
\oint_\mathfrak{b}v(z)\,\dd z = \oint_\mathfrak{b}\left[v(z)-\frac{1}{4}z-\frac{1}{2}y_0\right]\,\dd z.
\label{eq:Cauchy-identity}
\endeq
The integrand satisfies $v(z)-\tfrac{1}{4}z-\tfrac{1}{2}y_0 = \kappa z^{-1}+\bo(z^{-2})$ as $z\to\infty$ and has a simple pole at $z=0$ with real residue, so by enlarging the loop $\mathfrak{b}$ to consist of a path from $z=-L$ to $z=L$ indented above the pole at the origin together with a semicircular path from $z=L$ to $z=-L$ in the upper half-plane, and letting $L\to +\infty$ we find that 
\eq
\begin{split}
f_\mathfrak{b}(E_\mathrm{R},0;y_0,\kappa)&=\mathrm{P.V.}\int_\mathbb{R}\left[v(z)-\frac{1}{4}z-\frac{1}{2}y_0\right]\,\dd z \\ &= \mathrm{P.V.}\int_\mathbb{R}\left[\frac{\sqrt{P(z)}}{4z}-\frac{1}{4}z-\frac{1}{2}y_0\right]\,\dd z
\end{split}
\label{eq:Boutroux-b-real}
\endeq
where the positive square root is meant, and the principal value exclusions occur both at $z=0$ and $z=\infty$.  



Next suppose that $P(z)$ has exactly two real roots.
Then it follows from the fact that $P(0)=16\neq 0$ that both real roots have the same sign.  We define a single-valued function $v(z)$ by $v(z)^2=P(z)/(16z^2)$, $v(z)=\tfrac{1}{4}z + \tfrac{1}{2}y + \bo(z^{-1})$ as $z\to\infty$ with $v(z)$ analytic except for a branch cut chosen to connect the two real roots of $P(z)$ and another connecting the conjugate pair of roots of $P(z)$, symmetric with respect to Schwarz reflection, and crossing the real axis vertically at a unique point between the origin and the real pair of roots.  Then we equip $\mathcal{R}$ with a canonical homology basis $(\mathfrak{a},\mathfrak{b})$ with representatives taken as in the left-hand panel of Figure~\ref{fig:general-real-configurations}.
\begin{figure}[h]
\begin{center}
\includegraphics{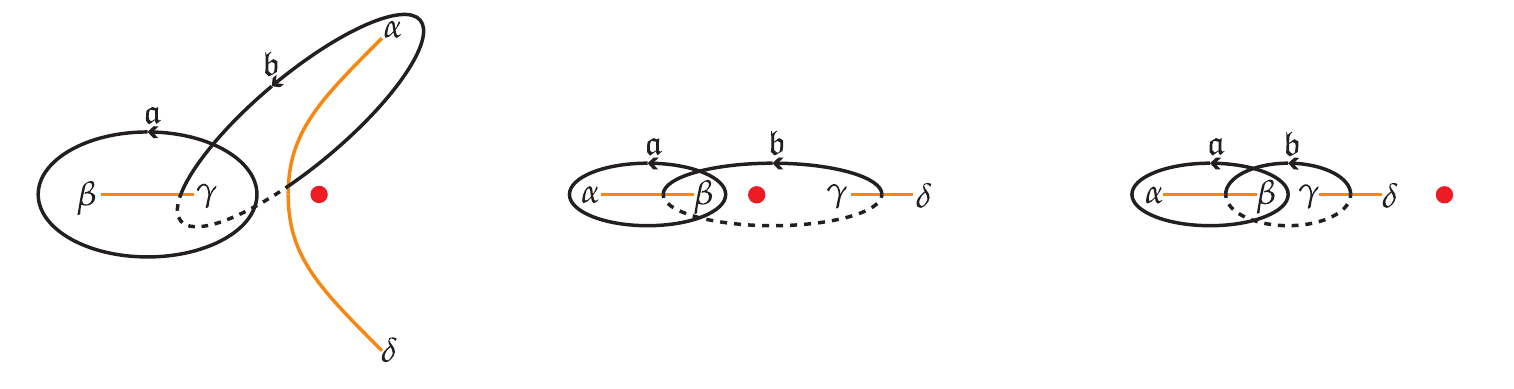}
\end{center}
\caption{As in the right-hand panel of Figure~\ref{fig:a-b-cycles} except for more general configurations of $\mathcal{R}$ occurring for $y_0\in\mathbb{R}$ and $E=E_\mathrm{R}\in\mathbb{R}$.  Left panel:  one real pair of roots.  Center panel:  two opposite-sign real pairs of roots.  Right panel:  two same-sign real pairs of roots.}
\label{fig:general-real-configurations}
\end{figure}
Since $v(z)=v(z^*)^*$ takes imaginary boundary values on the real branch cut $[\beta,\gamma]$, $\oint_\mathfrak{a}v\,\dd z$ is purely imaginary so once again the condition $f_\mathfrak{a}(E_\mathrm{R},0;y_0,\kappa)$ is satisfied automatically.  Again using the identity \eqref{eq:Cauchy-identity}, we deform the cycle $\mathfrak{b}$ partly toward $z=\infty$ in the upper half-plane and partly toward the real axis from the upper half-plane.  Using that the residues at $z=0$ and $z=\infty$ are purely real, upon taking the real part of $\oint_\mathfrak{b} v\,\dd z$ we deduce the expression on the second line of \eqref{eq:Boutroux-b-real} for $f_\mathfrak{b}(E_\mathrm{R},0;y_0,\kappa)$ in which the square root $\sqrt{P(z)}$ is multiplied by the characteristic function of $\mathbb{R}\setminus [\beta,\gamma]$, i.e., the real intervals over which $P(z)>0$ holds.

Finally, suppose that $P(z)$ has four real roots.
Then since $P(0)=16> 0$ there are either two roots of each sign, or all roots have the same sign; we take the branch cuts of $v(z)$ and homology basis representatives as shown in the center and right-hand panels of Figure~\ref{fig:general-real-configurations} respectively.  In both cases the fact that the boundary values of $v(z)=v(z^*)^*$ on the branch cut $[\alpha,\beta]$ are imaginary implies that the condition $f_\mathfrak{a}(E_\mathrm{R},0;y_0,\kappa)=0$ holds automatically.  And in both cases the use of \eqref{eq:Cauchy-identity} admits a deformation of $\mathfrak{b}$ toward $z=\infty$ in the upper half-plane and toward the real axis elsewhere.  Using reality of the residues at $z=0,\infty$ we arrive again at the expression in the second line of \eqref{eq:Boutroux-b-real} for $f_\mathfrak{b}(E_\mathrm{R},0;y_0,\kappa)$ in which the square root $\sqrt{P(z)}$ is multiplied by the characteristic function of the union of intervals $(-\infty,\alpha)\cup (\beta,\gamma)\cup (\delta,+\infty)$, i.e., where $P(z)>0$ holds on the real line.  

We conclude that in all cases that $\mathcal{R}$ is a class $\{1111\}$ spectral curve corresponding to real $y_0$ and $E$, the Boutroux conditions \eqref{eq:Boutroux} are satisfied provided only that $E=E_\mathrm{R}\in\mathbb{R}$ satisfies the real equation
\eq
f_\mathfrak{b}(E_\mathrm{R},0;y_0,\kappa)=\mathrm{P.V.}\int_\mathbb{R}
\left[\chi_{P(z)>0}(z)\frac{\sqrt{P(z)}}{4z}-\frac{1}{4}z-\frac{1}{2}y_0\right]\,\dd z=0.
\label{eq:fb-real-chi}
\endeq
Considering various limits in which a class $\{1111\}$ curve degenerates to class $\{31\}$ or class $\{211\}$, we see that \eqref{eq:fb-real-chi} is also necessary for the degeneration to satisfy the Boutroux conditions when $P(z)=P(z^*)^*$ has fewer than four distinct roots.  

Given $\kappa\in(-1,1)$ and $y_0\in\mathbb{R}$, $f_\mathfrak{b}(E_\mathrm{R},0;y_0,\kappa)$ is a continuous function of $E_\mathrm{R}\in\mathbb{R}$, in which the dependence on $E=E_\mathrm{R}$ in the integrand enters via $P(z)$ in the form \eqref{eq:elliptic-ODE}.  Moreover, it follows from \eqref{eq:fb-real-chi} that $f_\mathfrak{b}(E_\mathrm{R},0;y_0,\kappa)$ is differentiable with respect to $E_\mathrm{R}$ whenever the roots of $P$ are distinct, and that for such $E_\mathrm{R}$,
\eq
\frac{\partial f_\mathfrak{b}}{\partial E_\mathrm{R}}(E_\mathrm{R},0;y_0,\kappa)=\frac{1}{4}\int_\mathbb{R}\chi_{P(z)>0}(z)\frac{\dd z}{\sqrt{P(z)}},
\label{eq:Boutroux-b-real-derivative}
\endeq
where the integral is absolutely convergent and positive.  A simple discriminant calculation shows that points $E=E_\mathrm{R}\in\mathbb{R}$ for which the polynomial $P(z)$ has fewer than four distinct roots are isolated.  It is then clear that as $E_\mathrm{R}$ approaches any value that makes $\mathcal{R}$ degenerate, the derivative \eqref{eq:Boutroux-b-real-derivative} diverges to $+\infty$.  Thus for each and $\kappa\in (-1,1)$ and $y_0\in\mathbb{R}$, the function $E_\mathrm{R}\mapsto f_\mathfrak{b}$ is continuous and strictly monotone increasing on $\mathbb{R}$.  
Moreover, we have the following asymptotic behavior:
\begin{equation}
f_\mathfrak{b}(E_\mathrm{R},0;y_0,\kappa)=\pm M|E_\mathrm{R}|^{2/3}+o(|E_\mathrm{R}|^{2/3}),\quad E_\mathrm{R}\to \pm\infty,
\end{equation}
for all fixed real $y_0$ and $\kappa$, where the constant $M$ is given by (taking the square roots to be positive)
\eq
M:=
\int_{-\infty}^{-2^{1/3}}\left(-\sqrt{\frac{w^4+2w}{16w^2}}-\frac{w}{4}\right)\,\dd w
+\int_{-2^{1/3}}^0\left(-\frac{w}{4}\right)\,\dd w + \int_0^{+\infty}\left(\sqrt{\frac{w^4+2w}{16w^2}}-\frac{w}{4}\right)\,\dd w\approx 1.25203>0.
\endeq
Hence, by strict monotonicity and the Intermediate Value Theorem, there exists exactly one root $E_\mathrm{R}=E_\mathrm{R}(y_0,\kappa)\in\mathbb{R}$ for which $f_\mathfrak{b}(E_\mathrm{R},0;y_0,\kappa)=0$.  
Note that $E_\mathrm{R}(0,\kappa)=0$ holds for all $\kappa\in (-1,1)$, as was shown directly in Section~\ref{sec:Boutroux-curves-y0-zero}.  

By uniqueness, the solution obtained by the real-variable method based on the Intermediate Value Theorem agrees for sufficiently small real $y_0$ with that obtained by the application of the Implicit Function Theorem to the coupled system \eqref{eq:Boutroux} described in Section~\ref{sec:BoutrouxDomains}.  However, while the latter method fails at the boundary of the Boutroux domain $\rectangle$ where the spectral curve degenerates from class $\{1111\}$ to class $\{211\}$, the real $y_0$ method allows the continuation of the solution to larger real $y_0$.  Moreover, the real $y_0$ method makes clear that the mechanism of failure of the method of Section~\ref{sec:BoutrouxDomains} is not that the Jacobian of the equations \eqref{eq:Boutroux} vanishes, but rather that it blows up at the boundary of $\rectangle$.  

\subsubsection{Proof that $\TR$ is a Boutroux domain.  Abstract Stokes graphs of $\TR$}
\label{sec:AbstractStokesGraphsForTR}
It is now easy to prove that $\TR$ is a Boutroux domain.  Fix $\kappa\in (-1,1)$.  We choose any point $y_0=y_0^\#\in\TR\cap\mathbb{R}$
and use the Intermediate Value Theorem method of Section~\ref{sec:R-to-TR} to obtain the unique corresponding Boutroux curve $\mathcal{R}^\#$ with $E=E^\#=E_\mathrm{R}(y_0^\#,\kappa)\in\mathbb{R}$.  The curve $\mathcal{R}^\#$ is necessarily of class $\{1111\}$, because it is part of the definition of the domain $\TR$ that it is a component of the complement of the image in the $y_0$-plane of the critical set $\critclosure$ for the quadratic differential $\Phi'(t)^2\,\dd t^2$, where all degenerate Boutroux curves lie according to Section~\ref{sec:DegenerateStokesGraphs}.  So, taking $(y_0^\#,E^\#)$ as an initial condition, we apply the Implicit Function Theorem approach of Section~\ref{sec:BoutrouxDomains} to continue the solution of the Boutroux equations \eqref{eq:Boutroux} to the maximal Boutroux domain on which the Boutroux spectral curve is of class $\{1111\}$.  But by definition of $\TR$ once again, this maximal Boutroux domain coincides with $\TR$ itself.  The mapping $\TR\ni y_0\mapsto (E_\mathrm{R},E_\mathrm{I})\in\mathbb{R}^2$ obtained in this way coincides with the mapping $(\TR\cap\mathbb{R})\ni y_0\mapsto (E_\mathrm{R},0)\mapsto E_\mathrm{R}\in\mathbb{R}$ described in Section~\ref{sec:R-to-TR} by uniqueness.

To obtain the abstract Stokes graphs of $\TR$, it is sufficient to fix $\kappa\in (-1,1)$ and consider $y_0\in\TR\cap\mathbb{R}$.
The set $\TR\cap\mathbb{R}$ is a non-empty real interval $(y_0^\mathrm{e},y_0^\mathrm{c})$, where $y_0^\mathrm{e}$ is the real positive point on $\partial \rectangle$ (hence ``e'' for ``edge'') and $y_0^\mathrm{c}$ is the real positive corner point of $\TR$, i.e., the real positive root of $B(\cdot;\kappa)$.  The spectral curve is of class $\{211\}$ when $y_0=y_0^\mathrm{e}$, class $\{1111\}$ when $y_0\in (y_0^\mathrm{e},y_0^\mathrm{c})$, and class $\{31\}$ when $y_0=y_0^\mathrm{c}$, and in all cases since $y_0$ and $E=E_\mathrm{R}(y_0;\kappa)$ are real the spectral curve has Schwarz reflection symmetry in the real axis.  Finally, the coefficients of $P(z)$ are continuous functions of $y_0\in\mathbb{R}$, which implies continuity of the set of roots in $\mathbb{C}^4$ modulo permutations.  The spectral curve for $y_0=y_0^\mathrm{e}$ has the abstract Stokes graph matching either the first or third panel in Figure~\ref{fig:AbstractStokesGraphsRectangle-kappa0},
so in light of Schwarz symmetry $P(z)$ has one real double root and a conjugate pair of simple roots.  The question to be addressed is:  given that the real double root of $P(z)=P(z^*)^*$ for $y_0=y_0^\mathrm{e}$ has to split into two simple roots for $y_0\in (y_0^\mathrm{e},y_0^\mathrm{c})$ because the Boutroux spectral curve is not degenerate (class $\{1111\}$), do those simple roots form a real pair or a complex-conjugate pair?  This question can be answered by considering the limit $y_0\uparrow y_0^\mathrm{c}$, in which the spectral curve degenerates to class $\{31\}$ and hence in light of Schwarz symmetry has exactly two roots, both real, one of which is triple.  It follows that the double root for $y_0=y_0^\mathrm{e}$ has to split into a purely real pair of roots since otherwise the spectral curve of class $\{1111\}$ for $y_0\in (y_0^\mathrm{e},y_0^\mathrm{c})$ would have no real roots, making the appearance of a triple real root as $y_0\uparrow y_0^\mathrm{c}$ impossible.  
From this reasoning, it is now clear that 
the abstract Stokes graphs for the opposite Boutroux domains $\pm \TR$ are as shown in the left two panels of 
Figure~\ref{fig:AbstractStokesGraphsTriangles}.  Examples of actual Stokes graphs corresponding to points in $\TR(\kappa)$ can be seen in Figure~\ref{fig:Trajectories-gO-k0}.
\begin{figure}[h]
\begin{center}
\includegraphics{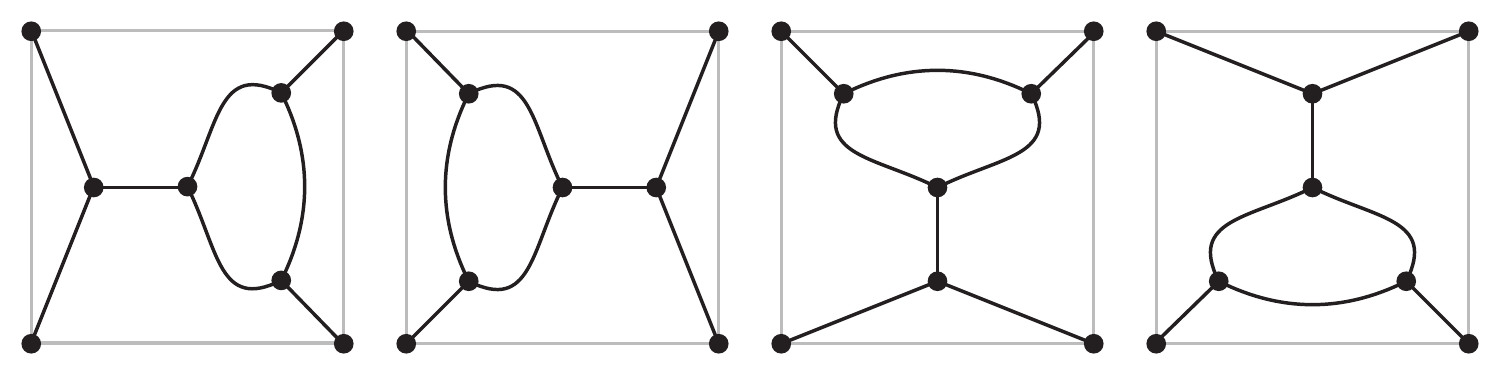}
\end{center}
\caption{Left two panels:  the abstract Stokes graphs for the Boutroux domains $\pm\TR$.  Right two panels:  the abstract Stokes graphs for the Boutroux domains $\pm\TI$.  Compare with Figure~\ref{fig:AbstractStokesGraphsRectangle-kappa0}.}
\label{fig:AbstractStokesGraphsTriangles}
\end{figure}

The value of $E$ as determined by the single-variable method of Section~\ref{sec:R-to-TR} varies continuously with $y_0\in\mathbb{R}$.  In fact, the smooth functions $y_0\mapsto E$ defined on $\rectangle(\kappa)$ and $\TR(\kappa)$ respectively both extend continuously from each side to the common boundary arc, on which the boundary values from each side agree.  The common boundary value of $E$ is exactly that which gives rise to a degenerate Boutroux curve with abstract Stokes graph
associated with the boundary arc.



\subsection{The Boutroux domain $\TI(\kappa)$}
\label{sec:TI}
Now let $\TI=\TI(\kappa)$ denote the maximal connected component of $\mathbb{C}\setminus\overline{\rectangle\cup\OkamotoExterior(\kappa)}$ that is entirely contained within the upper half $y_0$-plane (see the upper right-hand panel of Figure~\ref{fig:Domains}).  Again, $\TI$ is a simply connected domain whose boundary is a curvilinear triangle, the vertices of which are the vertices of an equilateral triangle, one of which lies on the positive imaginary axis.
We follow similar steps as in Section~\ref{sec:TR} to show that the domain $\TI$ is a Boutroux domain and to obtain its abstract Stokes graphs.  

Let $y_0=\ii w$ and $E=\ii E_\mathrm{I}$ both be purely imaginary.  Then it is clear that the polynomial $P(z)$ has Schwarz reflection symmetry about the imaginary axis in the $z$-plane:  $P(-z^*)=P(z)^*$.  Following a similar approach as in Section~\ref{sec:R-to-TR}, we find that one of the Boutroux conditions in \eqref{eq:Boutroux} is trivially satisfied, while the other gives a real relation between $w\in\mathbb{R}$ and $E_\mathrm{I}\in\mathbb{R}$ that can be solved uniquely for $E_\mathrm{I}$ by the Intermediate Value Theorem, yielding a continuous real map $w\mapsto E_\mathrm{I}$.  This allows us to find a unique Boutroux curve $\mathcal{R}$ of class $\{1111\}$ for each point $y_0\in \ii\mathbb{R}\cap \TI$.  Extending this solution of the Boutroux equations to all of $\TI$ by the method of Section~\ref{sec:BoutrouxDomains} we deduce that $\TI$ is indeed a Boutroux domain.  By uniqueness of both the real-variable and complex variable continuation methods and the symmetry $(y_0,\kappa,E)\mapsto (\ii y_0,-\kappa, -\ii E)$ of the Boutroux equations \eqref{eq:Boutroux}, it follows that the function $E(y_0;\kappa)$ defined on $\TI(\kappa)$ is related to $E(y_0;-\kappa)$ defined on $\TR(-\kappa)$ by $E(y_0;-\kappa)=\ii E(\ii y_0;\kappa)$ for all $\kappa\in (-1,1)$.  It then also follows that the function $y_0\mapsto E$ defined on $\TI(\kappa)$ matches that defined on $\rectangle(\kappa)$ when both are extended by continuity to their common boundary arc.  

Again considering $y_0$ to be a purely imaginary point in $\TI$, we can exploit the Schwarz symmetry of $P$ about the imaginary axis to obtain the 
abstract Stokes 
graph
for the Boutroux domain $\TI$.  Indeed, at any point common to $\partial\rectangle$ and $\partial\TI$, the abstract Stokes graphs are as shown in the second and fourth panels of Figure~\ref{fig:AbstractStokesGraphsRectangle-kappa0}. 
From these and the fact that at the positive imaginary corner point of $\TI$ the spectral curve is of class $\{31\}$, we can apply Schwarz symmetry to deduce the abstract Stokes graphs for the opposite Boutroux domains $\pm\TI$, which are shown in the right two panels of Figure~\ref{fig:AbstractStokesGraphsTriangles}.  See Figure~\ref{fig:Trajectories-gO-k0} for examples of actual Stokes graphs for $y_0\in\TI(\kappa)$.

\begin{remark}
The method of Section~\ref{sec:R-to-TR} allows the unique determination of a Boutroux spectral curve not only on the parts of the real and imaginary axes in the bounded region $\mathbb{C}\setminus\OkamotoExterior(\kappa)$, but also in the exterior domain $\OkamotoExterior(\kappa)$.  The existence of a Boutroux spectral curve along the axes in the exterior domain $\OkamotoExterior(\kappa)$ is of course consistent with the result that the class $\{211\}$ spectral curves for such $y=y_0$ coincide with degenerate Boutroux curves (these spectral curves are discussed in Section~\ref{sec:ExteriorAxes}).  Indeed, such $y_0$ lie on the images of the critical v-trajectories $(a(w),0)$ and $(0,b(w))$ within $S_\mathrm{gO}$.  Therefore for these values of $y_0$, the corresponding value of $E$ as determined from the Intermediate Value Theorem as described in Section~\ref{sec:R-to-TR} coincides with the value given explicitly in terms of $\gamma=U_{0,\mathrm{gO}}(y_0;\kappa)\sim -\tfrac{2}{3}y_0$ and $y_0$ by \eqref{eq:alpha-beta-E} in which $y$ is replaced with $y_0$ (see also \eqref{eq:E-for-double-roots}).
\end{remark}


\section{Asymptotic analysis of $\mathbf{M}(z)$ in the relevant Boutroux domains for the gO and gH cases}
\label{sec:G1}
As mentioned at the beginning of Section~\ref{sec:Boutroux}, for $T=|\Theta_0|\gg 1$, $s=\mathrm{sgn}(\Theta_0)=\pm 1$ we represent $x$ in the form (cf.\@ \eqref{eq:BasicScalings}) $x=T^{1/2}y_0+T^{-1/2}\zeta$, where we assume that $y_0\in\rectangle(\kappa)\cup\TR(\kappa)\cup\TI(\kappa)$ and $s=\pm 1$ (gO case) or $y_0\in\rectangle(\kappa)$ and $s=1$ (gH case), and that $\zeta\in\mathbb{C}$.  Recalling from Remark~\ref{rem:other-parameters} that the parameters $\Theta_0$ and $\Theta_\infty$ are associated with the function $u(x)=u^{[3]}_\mathrm{F}(x;m,n)$ for $\mathrm{F}=\mathrm{gH}$ or $\mathrm{F}=\mathrm{gO}$, 
we have $\kappa:=-\Theta_\infty/T\in (-1,1)$ differing from a limiting value in the same interval by $\bo(T^{-1})$.  The matrix $\mathbf{M}(z)=\mathbf{M}^{(T,s,\kappa)}(z;y)=\mathbf{M}^{(T,s,\kappa)}(z;y_0+T^{-1}\zeta)$ depends on $(y_0,\zeta)$ only through the linear combination $y=y_0+T^{-1}\zeta$, but later $(y_0,\zeta)$ will admit interpretation as independent variables.  

Indeed, the variables $(y_0,\zeta)$ immediately play distinguished roles, as the first step of our analysis of $\mathbf{M}(z)$ is to construct the Stokes graph in the $z$-plane determined by 
$\kappa\in (-1,1)$
and the value of $y_0$ alone.  As explained in Sections~\ref{sec:AbstractStokesGraphsForRectangle}, \ref{sec:AbstractStokesGraphsForTR}, and \ref{sec:TI}, the abstract Stokes graph is more robust, depending only on the Boutroux domain containing $y_0$; see the right-hand panel of Figure~\ref{fig:a-b-cycles} for $y_0\in\rectangle(\kappa)$, the two left-hand panels of Figure~\ref{fig:AbstractStokesGraphsTriangles} for $y_0\in\pm\TR(\kappa)$, and the two right-hand panels of Figure~\ref{fig:AbstractStokesGraphsTriangles} for $y_0\in\pm\TI(\kappa)$.
Numerically generated Stokes graphs for representative values of $y_0\in\rectangle(\kappa)\cup\TR(\kappa)\cup\TI(\kappa)$ and $\kappa=0$ are shown with black and orange arcs in the left-hand panels of Figures~\ref{fig:SignsContourLenses-y0-kappa0-s1}--\ref{fig:SignsContourLenses-y1p3i-kappa0-sm1} that can be found in Appendix~\ref{app:DiagramsAndTables}.  In these plots, the union of black and orange arcs form the same Stokes graph for both signs of $s=\pm 1$.  However, the orange arcs are distinguished as the branch cuts for $R(z)$ and hence $h'(z)$, and these necessarily differ for $s=1$ and $s=-1$.  As 
was shown in Section~\ref{sec:StokesGraphsForBoutrouxCurves}, as a consequence of the fact that $y_0$ lies in a Boutroux domain $\mathcal{B}=\rectangle(\kappa),\TR(\kappa),\TI(\kappa)$ and of the choice that the branch cuts of $h'(z)$ are taken to be arcs of the Stokes graph, $\mathrm{Re}(h(z))=\mathrm{Re}(h^{(s,\kappa)}(z;y_0))$ is well-defined as a continuous function of $z\in\mathbb{C}\setminus\{0\}$ that is harmonic except on the branch cuts of $h'(z)$ (orange arcs) and that vanishes exactly on the Stokes graph.  Hence it has a well-defined sign on each connected component of the complement of the Stokes graph, and these signs are indicated in all plots in the Appendix~\ref{app:DiagramsAndTables} with white for positive and shading for negative. 

Also indicated on the left-hand panels of Figures~\ref{fig:SignsContourLenses-y0-kappa0-s1}--\ref{fig:SignsContourLenses-y1p3i-kappa0-sm1} are two contours, $\ell_1$ and $\ell_2$ (in some cases $\ell_2$ is a union of two arcs), and invoking the Boutroux conditions \eqref{eq:Boutroux} we use these arcs to define two real constants, $R_1$ and $R_2$, by the formula
\eq
R_j:=-\ii\int_{\ell_j}h'(z)\,\dd z=\frac{1}{4}\ii\int_{\ell_j}\frac{R(z)}{z}\,\dd z\in\mathbb{R},\quad j=1,2.
\label{eq:BoutrouxConstants}
\endeq 

The next step is to place the jump contour $\Sigma$ appropriately relative to the Stokes graph in the $z$-plane determined by $\kappa$ and $y_0$.  We use 
\begin{itemize}
\item In the gO case:
\begin{itemize}
\item the basic configuration of $\Sigma$ (see Figure~\ref{fig:PIV-Sigma}, in the $z$-plane instead of the $\lambda$-plane) when $y_0\in\rectangle$,
\item the ``leftward'' deformation of $\Sigma$ (see the left-hand panel of Figure~\ref{fig:PIV-Sigma-Deform}) when $y_0\in\TR$, and
\item the ``downward'' deformation of $\Sigma$ (see the right-hand panel of Figure~\ref{fig:PIV-Sigma-Deform}) when $y_0\in\TI$;
\end{itemize}
\item In the gH case:  the simplified contour $\Sigma=\Sigma_\mathrm{gH}$ described in Remark~\ref{rem:gH-simplification} (see Figure~\ref{fig:PIV-Sigma-Hermite}, in the $z$-plane instead of the $\lambda$-plane, and denoting the deformed closed-loop component of $\Sigma_\mathrm{gH}$ by $C$).
\end{itemize}
We then arrange the arcs of the jump contour $\Sigma$ relative to the Stokes graph as shown in the  right-hand panels of Figures~\ref{fig:SignsContourLenses-y0-kappa0-s1}--\ref{fig:SignsContourLenses-y1p3i-kappa0-sm1}.  In these plots, an arc of $\Sigma$ is frequently split into sub-arcs that belong to different critical v-trajectories in the Stokes graph, in which case we use superscripts to distinguish the different sub-arcs that carry the same jump matrix for $\mathbf{M}(z)$.  In particular for the gO case, subarcs $\Sigma_j^k$ of $\Sigma_j$, $j=1,2,3,4$ will always be placed on $\partial\circledomain$ and the arc $\Sigma_\mathrm{c}$ will always be placed on another arc of the Stokes graph.  In both the gO and gH cases, the arcs of $\Sigma$ carrying triangular jump matrices satisfy the principle that those carrying lower-triangular (resp., upper-triangular) matrices should be placed such that the possibly non-strict inequality $\mathrm{Re}(h(z))\le 0$ (resp., $\mathrm{Re}(h(z))\ge 0$) holds, in order to avoid exponential growth of the off-diagonal matrix element.  Finally, the arc $\Sigma_0$ always carries a diagonal jump matrix, and it may be placed arbitrarily within the circle domain $\circledomain$ once its endpoints have been fixed by prior considerations (one of the endpoints is $z=0$).

\subsection{Steepest descent}
\label{sec:Genus-One-Steepest-Descent}
While $\mathrm{Re}(h(z))$ is well-defined as a continuous function on the $z$-plane, $h(z)$ itself has discontinuities along certain cuts due to the monodromy of $\mathrm{Im}(h(z))$ about (i) the branch cuts of $h'(z)$ as measured by the real constants $R_j$, $j=1,2$ defined in \eqref{eq:BoutrouxConstants}, and (ii) the poles of $h'(z)$ at $z=0$ and $z=\infty$ as measured by the real residues $-\kappa$ and $-s$, respectively. We will explain how we determine one or more purely imaginary integration constants to fix $h(z)$ later in Section~\ref{sec:GenusOneDefineh}.  However, we can say now that $h(z)$ will certainly be analytic except on some arcs of the jump contour $\Sigma$ for $\mathbf{M}(z)$ as already determined.  Therefore 
$g(z)=g^{(s,\kappa)}(z;y_0):=h^{(s,\kappa)}(z;y_0)+\phi(z;y_0)$ is analytic for $z\in\mathbb{C}\setminus\Sigma$
so
we may use the formula \eqref{eq:NfromM} to transform $\mathbf{M}(z)$ into $\mathbf{N}(z)$.  Because $\mathbf{M}(z)=\mathbf{M}^{(T,s,\kappa)}(z;y_0+T^{-1}\zeta)$ depends on $y=y_0+T^{-1}\zeta$ while $g$ is independent of $\zeta$, we write $\mathbf{N}(z)=\mathbf{N}^{(T,s,\kappa)}(z;y_0,\zeta)$ to distinguish the now-independent roles of the parameters $y_0\in\rectangle(\kappa)\cup\TR(\kappa)\cup\TI(\kappa)$ and $\zeta\in\mathbb{C}$.

The jump conditions for $\mathbf{M}(z)$ take the form $\mathbf{M}_+(z)=\mathbf{M}_-(z)\ee^{T\phi(z;y)\sigma_3}\mathbf{V}\ee^{-T\phi(z;y)\sigma_3}$ where $\mathbf{V}$ is a different constant matrix on different arcs of $\Sigma$.  Noting that $T\phi(z;y)=T\phi(z;y_0)+\tfrac{1}{2}\zeta z$, it then follows that the corresponding jump condition for $\mathbf{N}(z)$ on a given arc $A\subset\Sigma$ can be written as 
\eq
\mathbf{N}_+(z)=\mathbf{N}_-(z)\ee^{\tfrac{1}{2}\zeta z\sigma_3}\ee^{-Th_-(z)\sigma_3}\mathbf{V}\ee^{Th_+(z)\sigma_3}\ee^{-\tfrac{1}{2}\zeta z\sigma_3},\quad z\in A\subset\Sigma.
\endeq
Here $h_\pm(z)$ denote the boundary values taken on the arc $A$ by $h(z)$.  We then transform $\mathbf{N}(z)$ into $\mathbf{O}(z)$ by the following steepest-descent procedure, in which we ``open lenses'' of the jump contour about those arcs of $\Sigma$ lying on the Stokes graph, with the sole exception of $\Sigma_\mathrm{c}$ when $y_0\in\TR(\kappa)\cup\TI(\kappa)$.  We first factor the constant ``core'' jump matrix $\mathbf{V}$ for each such arc $A$ into a product $\mathbf{V}^-\mathbf{V}^0\mathbf{V}^+$ of three unit determinant constant factors using an appropriate choice among the identities \eqref{eq:general-factorizations}.  The factor $\mathbf{V}^+$ (resp., $\mathbf{V}^-$) will be associated with the region of $\mathbb{C}\setminus\Sigma$ lying immediately to the left (resp., right) of the arc $A$.  We require $\mathbf{V}^\pm$ to be lower (upper) triangular if the inequality $\mathrm{Re}(h(z))<0$ ($\mathrm{Re}(h(z))>0$) holds on the corresponding region of the $z$-plane; this criterion then determines exactly which of the identities \eqref{eq:general-factorizations} is to be used in each case.  
For the gO family, we will need to apply this procedure to the four ``core'' jump matrices 
$\mathbf{V}=\mathbf{V}_j$, $j=1,\dots,4$, as defined by \eqref{eq:gO-connection}
for which the factorizations in \eqref{eq:general-factorizations} take the form
\eq
\begin{split}
\mathbf{V}_1 &= \mathbf{L}(2\ee^{\frac{\ii\pi}{6}})\mathbf{D}(\tfrac{1}{\sqrt{3}})\mathbf{U}(-\tfrac{\sqrt{3}}{2})\quad \text{(``LDU'')}\\
&=\mathbf{L}(2\ee^{\frac{5\ii\pi}{6}})\mathbf{T}(2)\mathbf{L}(-\tfrac{2}{\sqrt{3}})\quad\text{(``LTL'')}\\
&=\mathbf{U}(\tfrac{1}{2}\ee^{-\frac{5\ii\pi}{6}})\mathbf{D}(\ee^{\frac{\ii\pi}{6}}) \mathbf{L}(\tfrac{2}{\sqrt{3}}\ee^{\frac{\ii\pi}{3}})\quad\text{(``UDL'')}\\
&=\mathbf{U}(\tfrac{1}{2}\ee^{-\frac{\ii\pi}{6}})\mathbf{T}(\tfrac{2}{\sqrt{3}}\ee^{\frac{\ii\pi}{6}})\mathbf{U}(\tfrac{\sqrt{3}}{2}\ee^{-\frac{\ii\pi}{3}})\quad\text{(``UTU'')},
\end{split}
\endeq
\eq
\begin{split}
\mathbf{V}_2=\mathbf{V}_1^{*-1}&=\mathbf{L}(\tfrac{2}{\sqrt{3}}\ee^{\frac{2\ii\pi}{3}})\mathbf{D}(\ee^{\frac{\ii\pi}{6}})\mathbf{U}(\tfrac{1}{2}\ee^{-\frac{\ii\pi}{6}})\quad\text{(``LDU'')}\\
&=\mathbf{L}(\tfrac{2}{\sqrt{3}})\mathbf{T}(-2)\mathbf{L}(2\ee^{\frac{\ii\pi}{6}})\quad\text{(``LTL'')}\\
&=\mathbf{U}(\tfrac{\sqrt{3}}{2})\mathbf{D}(\sqrt{3})\mathbf{L}(2\ee^{\frac{5\ii\pi}{6}})\quad\text{(``UDL'')}\\
&=\mathbf{U}(\tfrac{\sqrt{3}}{2}\ee^{-\frac{2\ii\pi}{3}})\mathbf{T}(\tfrac{2}{\sqrt{3}}\ee^{\frac{5\ii\pi}{6}})\mathbf{U}(\tfrac{1}{2}\ee^{-\frac{5\ii\pi}{6}})\quad\text{(``UTU'')},
\end{split}
\endeq
\eq
\begin{split}
\mathbf{V}_3&=\mathbf{L}(2\ee^{\frac{\ii\pi}{6}})\mathbf{D}(\tfrac{1}{\sqrt{3}}\ee^{-\frac{\ii\pi}{3}})\mathbf{U}(\tfrac{\sqrt{3}}{2}\ee^{-\frac{\ii\pi}{3}})\quad\text{(``LDU'')}\\
&=\mathbf{L}(2\ee^{\frac{5\ii\pi}{6}})\mathbf{T}(2\ee^{-\frac{\ii\pi}{3}})\mathbf{L}(\tfrac{2}{\sqrt{3}}\ee^{\frac{\ii\pi}{3}})\quad\text{(``LTL'')}\\
&=\mathbf{U}(\tfrac{1}{2}\ee^{-\frac{5\ii\pi}{6}})\mathbf{D}(\ee^{-\frac{\ii\pi}{6}})\mathbf{L}(\tfrac{2}{\sqrt{3}}\ee^{-\frac{\ii\pi}{3}})\quad\text{(``UDL'')}\\
&=\mathbf{U}(\tfrac{1}{2}\ee^{-\frac{\ii\pi}{6}})\mathbf{T}(\tfrac{2}{\sqrt{3}}\ee^{-\frac{\ii\pi}{6}})\mathbf{U}(\tfrac{\sqrt{3}}{2}\ee^{\frac{\ii\pi}{3}})\quad\text{(``UTU'')},
\end{split}
\endeq
\eq
\begin{split}
\mathbf{V}_4=\mathbf{V}_3^{*-1}&=\mathbf{L}(\tfrac{2}{\sqrt{3}}\ee^{-\frac{2\ii\pi}{3}})\mathbf{D}(\ee^{-\frac{\ii\pi}{6}})\mathbf{U}(\tfrac{1}{2}\ee^{-\frac{\ii\pi}{6}})\quad\text{(``LDU'')}\\
&=\mathbf{L}(\tfrac{2}{\sqrt{3}}\ee^{\frac{2\ii\pi}{3}})\mathbf{T}(2\ee^{-\frac{2\ii\pi}{3}})\mathbf{L}(2\ee^{\frac{\ii\pi}{6}})\quad\text{(``LTL'')}\\
&=\mathbf{U}(\tfrac{\sqrt{3}}{2}\ee^{-\frac{2\ii\pi}{3}})\mathbf{D}(\sqrt{3}\ee^{-\frac{\ii\pi}{3}})\mathbf{L}(2\ee^{\frac{5\ii\pi}{6}})\quad\text{(``UDL'')}\\
&=\mathbf{U}(\tfrac{\sqrt{3}}{2}\ee^{\frac{2\ii\pi}{3}})\mathbf{T}(\tfrac{2}{\sqrt{3}}\ee^{-\frac{5\ii\pi}{6}})\mathbf{U}(\tfrac{1}{2}\ee^{-\frac{5\ii\pi}{6}})\quad\text{(``UTU'')}.
\end{split}
\endeq
For the gH family, the only ``core'' jump matrix that requires any factoring is the matrix $\mathbf{L}(1)$ and just the ``UTU'' factorization suffices:
\eq
\mathbf{L}(1)=\mathbf{U}(1)\mathbf{T}(1)\mathbf{U}(1).
\endeq

Now let $A^+$ (resp., $A^-$) denote an arc with the same endpoints and orientation as $A$ but lying in the region to the left (resp., right) of $A$; thus $A^\pm$ form a ``lens'' about the central arc $A$.  Then, we set
\eq
\mathbf{O}(z):=\mathbf{N}(z)\ee^{\tfrac{1}{2}\zeta z\sigma_3}\ee^{-Th(z)\sigma_3}(\mathbf{V}^\pm)^{\mp 1}\ee^{Th(z)\sigma_3}\ee^{-\tfrac{1}{2}\zeta z\sigma_3},\quad \text{for $z$ between $A$ and $A^\pm$}.
\endeq
Repeating this substitution for each arc $A\subset\Sigma$, $A\neq\Sigma_\mathrm{c}$, lying on the Stokes graph and elsewhere defining $\mathbf{O}(z):=\mathbf{N}(z)$, we arrive at an equivalent unknown matrix $\mathbf{O}(z)=\mathbf{O}^{(T,s,\kappa)}(z;y_0,\zeta)$.  The matrix function $z\mapsto\mathbf{O}(z)$ is analytic except on the contour $\Sigma$ augmented with the lens boundaries $A^\pm$ and omitting $\Sigma_0$ and, in the gH case, $\Sigma_{4,3}$ (the omitted arcs are removed already by the substitution $\mathbf{M}(z)\mapsto\mathbf{N}(z)$).  Also $\mathbf{O}(z)$ satisfies the normalization condition $\mathbf{O}(z)\to\mathbb{I}$ as $z\to\infty$.  The jump conditions for $\mathbf{O}(z)$ are illustrated for each case in Figures~\ref{fig:JustLenses-y0-kappa0-s1}--\ref{fig:JustLenses-y1p3i-kappa0-sm1}, in which we specify the matrix $\mathbf{W}$ for each arc such that $\widetilde{\mathbf{O}}_+(z)=\widetilde{\mathbf{O}}_-(z)\mathbf{W}$, where
\eq
\widetilde{\mathbf{O}}(z):=\mathbf{O}(z)\ee^{\tfrac{1}{2}\zeta z\sigma_3}\ee^{-Th(z)\sigma_3}.
\endeq
Note that $\mathbf{W}$ is a ``core'' jump matrix for $\mathbf{O}(z)$ on an arc of the modified jump contour in the same way that $\mathbf{V}$ is a ``core'' jump matrix for $\mathbf{N}(z)$ on an arc of $\Sigma$.

Some further simplification of the jump contour for $\mathbf{O}(z)$ is easily performed.  Indeed, one may observe that in any situation that two lenses share a common endpoint $z_0\in\partial\circledomain$ that is not one of the four distinguished points $z=\alpha,\beta,\gamma,\delta$, the (two or three) contours approaching $z_0$ either from within or from without $\circledomain$ can be fused together into a single arc across which $\mathbf{O}(z)$ experiences no jump at all.  One then checks further that the remaining jumps for $\mathbf{O}(z)$ on the two remaining arcs of $\Sigma$ that meet at $z_0$ are consistent.  
\begin{example}
In the configuration for the gO family with $y_0\in\rectangle(\kappa)$ and $s=1$ as depicted in Figure~\ref{fig:SignsContourLenses-y0-kappa0-s1}, we can regard the jump contour for $\mathbf{O}(z)$ as having just one lens joining the pair of points $z=\alpha,\beta$ and consisting of 
\begin{itemize}
\item 
an arc inside $\circledomain$ from $\alpha$ to $\beta$ carrying the ``core'' jump matrix $\mathbf{W}=\mathbf{U}(\tfrac{\sqrt{3}}{2}\ee^{\frac{\ii\pi}{3}})$, 
\item
an arc along $\partial \circledomain$ from $\alpha$ to $\beta$ carrying the ``core'' jump matrix $\mathbf{W}=\mathbf{T}(\tfrac{2}{\sqrt{3}}\ee^{-\frac{\ii\pi}{6}})$, and 
\item
two arcs outside $\circledomain$ from an arbitrary point on $\Sigma_{2,3}$ away from $\partial\circledomain$ to $\alpha$ and to $\beta$ carrying ``core'' jump matrices $\mathbf{W}=\mathbf{U}(\tfrac{1}{2}\ee^{-\frac{5\ii\pi}{6}})$ and $\mathbf{W}=\mathbf{U}(\tfrac{1}{2}\ee^{-\frac{\ii\pi}{6}})$ respectively.  
\end{itemize}
Completely analogous deformations produce single lenses joining the pairs $z=\alpha,\delta$ and $z=\gamma,\delta$.  With the additional use of jump identities for the function $h(z)$ near the junction point between $\Sigma_{4,3}$ and $\Sigma_0$ (as shown for the case at hand in the left-hand panel of Figure~\ref{fig:hJAO-y0-kappa0-s1}), one achieves a similar result and obtains a single lens connecting $z=\beta,\gamma$, but the simplification occurs only for the matrix $\mathbf{O}(z)$ and not also $\widetilde{\mathbf{O}}(z)$, and it hinges on $sT=\Theta_0$, $T\kappa=-\Theta_\infty$, and the gO lattice conditions 
$(\Theta_0,\Theta_\infty)\in\Lambda_\mathrm{gO}$ (see \eqref{eq:gO-lattice}).
\end{example}
In each case, the fact that such simplification is possible stems from the fact that the jump conditions of Riemann-Hilbert Problem~\ref{rhp:general} are consistent at all self-intersection points.  By analyticity of the exponent function $\lambda^2+2x\lambda$, this consistency makes the locations of these junction points somewhat arbitrary.

\subsection{Specification of $h(z)$}
\label{sec:GenusOneDefineh}
We now resolve all remaining ambiguity about the function $h(z)$.  As mentioned at the beginning of Section~\ref{sec:Genus-One-Steepest-Descent}, since $h'(z)$ is well-defined with branch cuts on the orange arcs of the Stokes graph as shown in the left-hand panels of Figures~\ref{fig:SignsContourLenses-y0-kappa0-s1}--\ref{fig:SignsContourLenses-y1p3i-kappa0-sm1}, all that remains to fully specify $h(z)$ is to complete its jump contour with additional arcs to handle the monodromy about the poles and then to give values for one or more integration constants.  The real parts of these constants are determined so that the level set $\mathrm{Re}(h(z))=0$ coincides with the Stokes graph determined by 
$\kappa\in (-1,1)$
and $y_0\in\rectangle(\kappa)\cup\TR(\kappa)\cup\TI(\kappa)$.  
We complete the specification of $h(z)$ first for the following three gO cases:  (a) $y_0\in\rectangle(\kappa)$,  $s=1$, (b) $y_0\in\TR(\kappa)$, $s=1$, and (c) $y_0\in\TI(\kappa)$, $s=-1$.
We take additional cuts as shown for cases (a), (b), and (c) respectively in the left-hand panels of Figures~\ref{fig:hJAO-y0-kappa0-s1}, 
\ref{fig:hJAO-y1p3-kappa0-s1}, 
and \ref{fig:hJAO-y1p3i-kappa0-sm1} so that $h(z)$ is analytic in a simply connected domain $\Omega_h$, and therefore just one integration constant needs to be determined.  We fix it by setting
\eq
h(z)=\int_\delta^z h'(w)\,\dd w -\frac{1}{2}\ii R_1,\quad z\in\Omega_h
\label{eq:Genus-One-h-define}
\endeq
where the value of the integral is independent of any path of integration taken in the domain $\Omega_h$, 
and where $R_1\in\mathbb{R}$ is defined in \eqref{eq:BoutrouxConstants}.  Using the fact that $h'(z)$ has residues of $-s$ and $-\kappa$ at $z=0$ and $z=\infty$ respectively, and again taking note of \eqref{eq:BoutrouxConstants} referring to the location of the integration contours $\ell_1$ and $\ell_2$ as shown on the Stokes graph plots in the left-hand panels of Figures~\ref{fig:SignsContourLenses-y0-kappa0-s1}--\ref{fig:SignsContourLenses-y1p3i-kappa0-sm1}, one checks that the function $h(z)$ defined by \eqref{eq:Genus-One-h-define} satisfies the jump conditions on the sum and difference of boundary values as shown in the left-hand panels of Figures~\ref{fig:hJAO-y0-kappa0-s1}, 
\ref{fig:hJAO-y1p3-kappa0-s1}, 
and \ref{fig:hJAO-y1p3i-kappa0-sm1}.  To define $h(z)$ in the 
three remaining gO cases, (d) $y_0\in\rectangle(\kappa)$, $s=-1$, (e) $y_0\in\TR(\kappa)$, $s=-1$, and (f) $y_0\in\TI(\kappa)$, $s=1$,
we note that this requires changing the sign of $s$ from cases (a), (b), and (c) respectively, which also means that we change the sign of $h'(z)$ in the circle domain $\circledomain$ while leaving $h'(z)$  unchanged in the exterior of $\circledomain$.  Therefore it seems natural to also define $h(z)$ by starting with the formula \eqref{eq:Genus-One-h-define} and simply changing the sign of $h(z)$ within $\circledomain$.  We follow this approach and proceed with the implied choice of integration constants.  
Finally, for the only gH case ($y_0\in\rectangle(\kappa)$ and $s=1$), we assume that $h(z)$ is defined exactly as for the gO case with $y_0\in\rectangle(\kappa)$ and $s=1$.  It is then straightforward to verify the jump conditions satisfied by $h(z)$ in these four remaining cases as shown in the left-hand panels of Figures~\ref{fig:hJAO-y0-kappa0-s1-gH}, \ref{fig:hJAO-y0-kappa0-sm1}, 
\ref{fig:hJAO-y1p3-kappa0-sm1}, 
and \ref{fig:hJAO-y1p3i-kappa0-s1}.

In general, the constants $R_1$ and $R_2$ defined by \eqref{eq:BoutrouxConstants} are independent.  However, if $y_0\in(\rectangle(\kappa)\cup\TR(\kappa))\cap\mathbb{R}$, then with the indicated choice of integration constant we recover a Schwarz symmetry:  $h'(z^*)=h'(z)^*$.  Considering the difference of loop integrals of $h'(z)$ around $z=0,\infty$ computed by residues on one hand and by \eqref{eq:BoutrouxConstants} on the other, this symmetry in turn implies the identity
\eq
R_2=\tfrac{1}{2}\pi (1-\kappa),\quad y_0\in\rectangle(\kappa)\cap\mathbb{R},\quad\kappa\in (-1,1),
\label{eq:R2-rectangle-y0-real}
\endeq
and, by a simpler computation,
\eq
R_2=0,\quad y_0\in\TR(\kappa)\cap\mathbb{R},\quad \kappa\in (-1,1).
\label{eq:R2-TR-y0-real}
\endeq
Similarly, if $y_0\in(\rectangle(\kappa)\cup\TI(\kappa))\cap\ii\mathbb{R}$, then $h'(-z^*)=h'(z)^*$, which implies the identities
\eq
R_1=-\tfrac{1}{2}\pi(1+\kappa),\quad y_0\in\rectangle(\kappa)\cap\ii\mathbb{R},\quad \kappa\in (-1,1),\quad\text{and}
\label{eq:R1-rectangle-y0-imaginary}
\endeq
\eq
R_2=0,\quad y_0\in\TI(\kappa)\cap\ii\mathbb{R},\quad\kappa\in (-1,1).
\label{eq:R2-TI-y0-imaginary}
\endeq

\subsection{Parametrix construction}
\label{sec:G1-parametrix}
We now show how to build an approximation of the matrix function $z\mapsto\mathbf{O}(z)$ the accuracy of which can be controlled.  
With $h(z)$ defined precisely as described in Section~\ref{sec:GenusOneDefineh}, we observe that due to the position of the jump contour for $\mathbf{O}(z)$ relative to the sign chart of $\mathrm{Re}(h(z))$, the jump matrix for $\mathbf{O}(z)$ is an exponentially small perturbation of the identity matrix for $T\gg 1$ wherever the jump matrix is upper or lower triangular.  

\subsubsection{Outer parametrix}
If we simply neglect these jumps and use known information about the boundary values of $h(z)$ in the remaining diagonal and off-diagonal jump matrices, we arrive at a modified Riemann-Hilbert problem for a formal approximation called the \emph{outer parametrix}.  More precisely, the outer parametrix
$\dot{\mathbf{O}}^\mathrm{out}(z)$ is defined as the matrix analytic in the complement of the jump contour shown in the right-hand panels of Figures~\ref{fig:hJAO-y0-kappa0-s1}--\ref{fig:hJAO-y1p3i-kappa0-sm1}, with the indicated jump matrix on each arc, normalized to the identity as $z\to\infty$, and continuous up to the jump contour with the exception of the four points $z=\alpha,\beta,\gamma,\delta$ at each of which a negative one-fourth root divergence is admitted to account for the discontinuity of the jump matrix.    

While the details are different in each case, the conditions characterizing the outer parametrix can be easily mapped to a single universal form.  
Indeed, we will
define a new unknown $\dot{\mathbf{P}}^\mathrm{out}(z)$ in terms of $\dot{\mathbf{O}}^\mathrm{out}(z)$ in different regions of the $z$-plane according to Table~\ref{tab:outer-uniformize}.
\begin{table}[h]
\caption{The relation between $\dot{\mathbf{O}}^\mathrm{out}(z)$ and $\dot{\mathbf{P}}^\mathrm{out}(z)$ in different domains of the $z$-plane.}
\begin{tabular}{@{}|l|c|c|c|c|@{}}
\hline
$\mathcal{B}$ & family & $s$ & Domain & $\dot{\mathbf{P}}^\mathrm{out}(z)$ \\
\hline
\hline
\multirow{5}{*}{$\rectangle$} & \multirow{2}{*}{gO} & \multirow{2}{*}{$1$} & $\circledomain$ &
\shortstrut $\mathbf{D}(\sqrt{2}\ee^{\frac{5\ii\pi}{12}}\ee^{\frac{1}{2}\ii TR_1})\dot{\mathbf{O}}^\mathrm{out}(z)
\mathbf{D}(\sqrt{\tfrac{3}{2}}\ee^{-\frac{5\ii\pi}{12}}\ee^{-\frac{1}{2}\ii TR_1})$ \\
\cline{4-5}
&&& $\mathbb{C}\setminus\overline{\circledomain}$ & 
\shortstrut $\mathbf{D}(\sqrt{2}\ee^{\frac{5\ii\pi}{12}}\ee^{\frac{1}{2}\ii TR_1})\dot{\mathbf{O}}^\mathrm{out}(z)\mathbf{D}(\tfrac{1}{\sqrt{2}}\ee^{-\frac{5\ii\pi}{12}}\ee^{-\frac{1}{2}\ii TR_1})$ \\
\cline{2-5}
&gH & $1$ & $\mathbb{C}$ & \shortstrut $\mathbf{D}(\ee^{-\frac{\ii\pi}{2}}\ee^{\frac{1}{2}\ii TR_1})\dot{\mathbf{O}}^\mathrm{out}(z)\mathbf{D}(\ee^{\frac{\ii\pi}{2}}\ee^{-\frac{1}{2}\ii TR_1})$ \\
\cline{2-5}
&gO& \multirow{2}{*}{$-1$} &  $\circledomain$ &  \shortstrut $\mathbf{D}(\sqrt{2}\ee^{\frac{\ii\pi}{12}}\ee^{\frac{1}{2}\ii TR_1})\dot{\mathbf{O}}^\mathrm{out}(z)\mathbf{T}(\sqrt{2}\ee^{\frac{11\ii\pi}{12}}\ee^{-\frac{1}{2}\ii TR_1}\ee^{-\zeta z})$ \\
\cline{4-5}
&&& $\mathbb{C}\setminus\overline{\circledomain}$ &
\shortstrut $\mathbf{D}(\sqrt{2}\ee^{\frac{\ii\pi}{12}}\ee^{\frac{1}{2}\ii TR_1})\dot{\mathbf{O}}^\mathrm{out}(z)
\mathbf{D}(\tfrac{1}{\sqrt{2}}\ee^{-\frac{\ii\pi}{12}}\ee^{-\frac{1}{2}\ii TR_1})$ \\
\hline
\hline
\multirow{4}{*}{$\TR$} & \multirow{4}{*}{gO} & \multirow{2}{*}{$1$} & $\circledomain$ &
\shortstrut $\mathbf{D}(\sqrt{2}\ee^{\frac{\ii\pi}{4}}\ee^{-\ii\pi\Theta_\infty}\ee^{\frac{1}{2}\ii TR_2})\dot{\mathbf{O}}^\mathrm{out}(z)\mathbf{T}(\sqrt{\tfrac{2}{3}}\ee^{\frac{11\ii\pi}{12}}\ee^{-\ii T(R_1+\frac{1}{2}R_2)}\ee^{-\zeta z})$ \\
\cline{4-5}
&&& $\mathbb{C}\setminus\overline{\circledomain}$ & 
\shortstrut $\mathbf{D}(\sqrt{2}\ee^{\frac{\ii\pi}{4}}\ee^{-\ii\pi\Theta_\infty}\ee^{\frac{1}{2}\ii TR_2})\dot{\mathbf{O}}^\mathrm{out}(z)\mathbf{D}(\tfrac{1}{\sqrt{2}}\ee^{-\frac{\ii\pi}{4}}\ee^{\ii\pi\Theta_\infty}\ee^{-\frac{1}{2}\ii TR_2})$
\\
\cline{3-5}
&& \multirow{2}{*}{$-1$} &  $\circledomain$ &  \shortstrut $\mathbf{D}(\sqrt{2}\ee^{\frac{\ii\pi}{4}}\ee^{-\ii\pi\Theta_\infty}\ee^{\frac{1}{2}\ii TR_2})\dot{\mathbf{O}}^\mathrm{out}(z)\mathbf{D}(\tfrac{1}{\sqrt{2}}\ee^{-\frac{5\ii\pi}{12}}\ee^{\ii\pi\Theta_\infty}\ee^{-\ii T(R_1+\frac{1}{2}R_2)})$ \\
\cline{4-5}
&&& $\mathbb{C}\setminus\overline{\circledomain}$ &
\shortstrut $\mathbf{D}(\sqrt{2}\ee^{\frac{\ii\pi}{4}}\ee^{-\ii\pi\Theta_\infty}\ee^{\frac{1}{2}\ii TR_2})\dot{\mathbf{O}}^\mathrm{out}(z)\mathbf{D}(\tfrac{1}{\sqrt{2}}\ee^{-\frac{\ii\pi}{4}}\ee^{\ii\pi\Theta_\infty}\ee^{-\frac{1}{2}\ii TR_2})$ \\
\hline
\hline
\multirow{4}{*}{$\TI$} & \multirow{4}{*}{gO} & 
\multirow{2}{*}{$1$} & $\circledomain$ & \shortstrut $\mathbf{D}(\sqrt{2}\ee^{\frac{\ii\pi}{4}}\ee^{\ii\pi\Theta_\infty}\ee^{\frac{1}{2}\ii TR_2})\dot{\mathbf{O}}^\mathrm{out}(z)\mathbf{D}(\sqrt{\tfrac{3}{2}}\ee^{-\frac{\ii\pi}{4}}\ee^{-\ii\pi\Theta_\infty}\ee^{-\ii T(R_1+\frac{1}{2}R_2)})$ \\
\cline{4-5}
&&& $\mathbb{C}\setminus\overline{\circledomain}$ & \shortstrut $\mathbf{D}(\sqrt{2}\ee^{\frac{\ii\pi}{4}}\ee^{\ii\pi\Theta_\infty}\ee^{\frac{1}{2}\ii TR_2})\dot{\mathbf{O}}^\mathrm{out}(z)\mathbf{D}(\tfrac{1}{\sqrt{2}}\ee^{-\frac{\ii\pi}{4}}\ee^{-\ii\pi\Theta_\infty}\ee^{-\frac{1}{2}\ii TR_2})$ \\
\cline{3-5}
&&\multirow{2}{*}{$-1$} & $\circledomain$ & \shortstrut $\mathbf{D}(\sqrt{2}\ee^{\frac{\ii\pi}{4}}\ee^{\ii\pi\Theta_\infty}\ee^{\frac{1}{2}\ii TR_2})\dot{\mathbf{O}}^\mathrm{out}(z)\mathbf{T}(\sqrt{2}\ee^{\frac{3\ii\pi}{4}}\ee^{-\ii\pi\Theta_\infty}\ee^{-\ii T(R_1+\frac{1}{2}R_2)}\ee^{-\zeta z})$ \\
\cline{4-5}
&&& $\mathbb{C}\setminus\overline{\circledomain}$ & \shortstrut $\mathbf{D}(\sqrt{2}\ee^{\frac{\ii\pi}{4}}\ee^{\ii\pi\Theta_\infty}\ee^{\frac{1}{2}\ii TR_2})\dot{\mathbf{O}}^\mathrm{out}(z)\mathbf{D}(\tfrac{1}{\sqrt{2}}\ee^{-\frac{\ii\pi}{4}}\ee^{-\ii\pi\Theta_\infty}\ee^{-\frac{1}{2}\ii TR_2})$\\
\hline
\end{tabular}
\label{tab:outer-uniformize}
\end{table}

The jump contour for $\dot{\mathbf{P}}^\mathrm{out}(z)$ is a simple curve consisting of three consecutive arcs:  an arc $B_1$ from $z=\alpha$ to $z=\beta$, an arc $G$ from $z=\beta$ to $z=\gamma$, and an arc $B_2$ from $z=\gamma$ to $z=\delta$.  We think of these arcs as two ``bands'' ($B_1$ and $B_2$) separated by a ``gap'' $G$ (see the left-hand panel of Figure~\ref{fig:OuterParametrixContours}).  Defining real phases $C_\mathrm{G}$ and $C_\mathrm{B}$ as shown for each case in 
Table~\ref{tab:outer-uniformize-phases}, it is straightforward to check that $\dot{\mathbf{P}}^\mathrm{out}(z)$ is the necessarily unique solution of the following Riemann-Hilbert problem.
\begin{table}[h]
\caption{The phases $C_\mathrm{G}$ and $C_\mathrm{B}$, and the sign $\nu=\pm 1$.}
\begin{tabular}{@{}|l|c|c|c|c|c|@{}}
\hline
$\mathcal{B}$ & family & $s$ & $C_\mathrm{G}\pmod{2\pi}$ & $C_\mathrm{B}\pmod{2\pi}$ & $\nu$\\
\hline
\hline
\multirow{3}{*}{$\rectangle$} & gO & $1$ & $-2TR_2-\tfrac{1}{3}\pi$ &
\shortstrut $-2TR_1+\tfrac{1}{3}\pi$ & $1$\\
\cline{2-6}
& gH & $1$ & \shortstrut $-2TR_2$ & $-2TR_1$ & $1$\\
\cline{2-6}
& gO & $-1$ &  $-2TR_2 +\tfrac{1}{3}\pi$ &  \shortstrut $-2TR_1-\tfrac{1}{3}\pi$ & $-1$\\
\hline
\hline
\multirow{2}{*}{$\TR$} & \multirow{2}{*}{gO} & $1$ & $2TR_1-\tfrac{1}{3}\pi$ &
\shortstrut $-T(R_1+R_2)+2\pi(\Theta_\infty+\tfrac{1}{3})$ & $-1$ \\
\cline{3-6}
&& $-1$ &  $2TR_1+\tfrac{1}{3}\pi$ &  \shortstrut $-T(R_1+R_2)+2\pi(\Theta_\infty-\tfrac{1}{3})$ & $1$ \\
\hline
\hline
\multirow{2}{*}{$\TI$} & \multirow{2}{*}{gO} & 
$1$ & $2TR_1+\tfrac{1}{3}\pi$ & \shortstrut $-T(R_1+R_2)-2\pi(\Theta_\infty+\tfrac{1}{3})$ & $1$\\
\cline{3-6}
&& $-1$ & $2TR_1-\tfrac{1}{3}\pi$ & \shortstrut $-T(R_1+R_2)-2\pi(\Theta_\infty-\tfrac{1}{3})$ & $-1$ \\
\hline
\end{tabular}
\label{tab:outer-uniformize-phases}
\end{table}

\begin{rhp}[Uniformized Outer Parametrix]
Let $\zeta\in\mathbb{C}$ and real constants $C_\mathrm{G}$ and $C_\mathrm{B}$ be given.  Seek a $2\times 2$ matrix function $z\mapsto\dot{\mathbf{P}}^\mathrm{out}(z;\zeta)$ with the following properties:
\begin{itemize}
\item\textbf{Analyticity:}  $\dot{\mathbf{P}}^\mathrm{out}(z;\zeta)$ is an analytic function of $z$ in the domain $z\in\mathbb{C}\setminus (B_1\cup G\cup B_2)$.
\item\textbf{Jump conditions:}  $\dot{\mathbf{P}}^\mathrm{out}(z;\zeta)$ assumes continuous boundary values on its jump contour from either side except at the four points $p=\alpha,\beta,\gamma,\delta$, where $(z-p)^{\frac{1}{4}}\dot{\mathbf{P}}^\mathrm{out}(z;\zeta)$ is bounded.  The boundary values are related on each arc of $B_1\cup G\cup B_2$ by the jump conditions
\eq
\dot{\mathbf{P}}^\mathrm{out}_+(z;\zeta)=\dot{\mathbf{P}}^\mathrm{out}_-(z;\zeta)\mathbf{T}(-\ee^{-\zeta z}),\quad z\in B_1,
\label{eq:Pout-jump-1}
\endeq
\eq
\dot{\mathbf{P}}^\mathrm{out}_+(z;\zeta)=\dot{\mathbf{P}}^\mathrm{out}_-(z;\zeta)\mathbf{D}(\ee^{\ii C_\mathrm{G}}),\quad z\in G,\quad\text{and}
\label{eq:Pout-jump-2}
\endeq
\eq
\dot{\mathbf{P}}^\mathrm{out}_+(z;\zeta)=\dot{\mathbf{P}}^\mathrm{out}_-(z;\zeta)\mathbf{T}(-\ee^{\ii C_\mathrm{B}}\ee^{-\zeta z}),\quad z\in B_2.
\label{eq:Pout-jump-3}
\endeq
\item\textbf{Normalization:}  $\dot{\mathbf{P}}^\mathrm{out}(z;\zeta)\to\mathbb{I}$ as $z\to\infty$.
\end{itemize}
\label{rhp:Pout}
\end{rhp}
Given existence of a solution of Riemann-Hilbert Problem~\ref{rhp:Pout}, uniqueness is straightforward to establish.  
This is an algebro-geometric problem that can be solved in terms of the function theory of the elliptic spectral curve $\mathcal{R}$ associated with the quartic polynomial $P(z)=(z-\alpha)(z-\beta)(z-\gamma)(z-\delta)$ (cf.\@ \eqref{eq:elliptic-ODE}).  To develop the solution in concrete terms, we introduce a branch of $\sqrt{P(z)}$ adapted to the jump contour at hand; 
let $r(z)$ denote the function analytic for $z\in\mathbb{C}\setminus B_1\cup B_2$ that satisfies 
$r(z)^2=P(z)$ and has asymptotic behavior $r(z)= z^2 + \bo(z)$ as $z\to\infty$.  Its domain of analyticity is the complement of the orange arcs in the right-hand panel of Figure~\ref{fig:OuterParametrixContours}.
Comparing with the orange arcs in the left-hand panels of Figures~\ref{fig:SignsContourLenses-y0-kappa0-s1}--\ref{fig:SignsContourLenses-y1p3i-kappa0-sm1} we may relate $r(z)$ with $R(z)$ explicitly:
\eq
r(z)=\begin{cases}
\nu R(z),&\quad z\in\circledomain\\
R(z),&\quad z\in\mathbb{C}\setminus\overline{\circledomain},
\end{cases}
\label{eq:r-to-R}
\endeq
where $\nu=\pm 1$ is the sign indicated for each case in Table~\ref{tab:outer-uniformize-phases}.
We then define 
\eq
F(z):=-\frac{1}{2}\zeta z+C_\mathrm{G}\frac{r(z)}{2\pi}\int_G\frac{\dd s}{r(s)(s-z)} + C_\mathrm{B}
\frac{r(z)}{2\pi}\int_{B_2}\frac{\dd s}{r_+(s)(s-z)},\quad z\in\mathbb{C}\setminus (B_1\cup G\cup B_2).
\endeq
The function $F(z)$ is analytic and bounded on its domain of definition, and its boundary values satisfy
the jump conditions 
\eq
\begin{split}
\langle F\rangle(z) & =  -\tfrac{1}{2} \zeta z, \quad  z\in B_1, \\
\Delta F(z)& = \ii C_\mathrm{G}, \quad z\in G, \\
\langle F\rangle(z) & =  -\tfrac{1}{2}\zeta z+\tfrac{1}{2}\ii C_\mathrm{B}, \quad  z\in B_2.
\end{split}
\label{eq:F-jumps}
\endeq
Also, $F(z)$ is analytic for large $|z|$ and has an expansion of the form
\eq
F(z) = F_1z + F_0 + \bo(z^{-1}),\quad z\to\infty,
\label{eq:F-expand}
\endeq
where $F_1$ and $F_0$ are independent of $z$.  We will not need the explicit form of $F_0$, but $F_1$ is given by
\eq
F_1:=-\frac{1}{2}\zeta-\frac{C_\mathrm{G}}{2\pi}\int_G\frac{\dd z}{r(z)} -\frac{C_\mathrm{B}}{2\pi}\int_{B_2}\frac{\dd z}{r_+(z)}.
\label{eq:F1-1}
\endeq
Then define 
$\dot{\mathbf{Q}}^{\mathrm{out}}(z)$ via
\eq
\dot{\mathbf{Q}}^{\mathrm{out}}(z):=\ee^{F_0\sigma_3}\dot{\mathbf{P}}^{\mathrm{out}}(z)\ee^{-F(z)\sigma_3},\quad z\in\mathbb{C}\setminus (B_1\cup G\cup B_2).
\label{Qdot-from-Pdot}
\endeq
Clearly $\dot{\mathbf{Q}}^\mathrm{out}(z)$ is analytic at least for $z\in\mathbb{C}\setminus(B_1\cup G\cup B_2)$, and its boundary values are continuous except at $p=\alpha,\beta,\gamma,\delta$ where $(z-p)^{\frac{1}{4}}\dot{\mathbf{Q}}^\mathrm{out}(z)$ is bounded.  Using \eqref{eq:F-jumps} in \eqref{eq:Pout-jump-2}, and applying Morera's Theorem shows that $G$ may be removed from the jump contour, i.e., $\dot{\mathbf{Q}}^{\mathrm{out}}(z)$ is analytic for $z\in\mathbb{C}\setminus (B_1\cup B_2)$.  Using \eqref{eq:F-jumps} in \eqref{eq:Pout-jump-1} and \eqref{eq:Pout-jump-3} then shows that 
$\dot{\mathbf{Q}}^{\mathrm{out}}(z)$ satisfies jump conditions on $B_1\cup B_2$ of a universal form:
$\dot{\mathbf{Q}}^\mathrm{out}_+(z)=\dot{\mathbf{Q}}^\mathrm{out}_-(z)\mathbf{T}(-1)$, where $\mathbf{T}(-1)$ is an elementary ``twist'' matrix defined in \eqref{eq:matrix-factors-notation}.  Finally, from the normalization condition on $\dot{\mathbf{P}}^\mathrm{out}(z)$ and the expansion \eqref{eq:F-expand} one sees that $\dot{\mathbf{Q}}^\mathrm{out}(z)\ee^{F_1z\sigma_3}\to\mathbb{I}$ as $z\to\infty$.
 
These conditions on $\dot{\mathbf{Q}}^\mathrm{out}(z)$ are standard; for example, they are directly analogous to conditions defining 
the function $\mathbf{S}(\lambda)$ in \cite[Section 4]{BilmanBW:2019}.  We now summarize the construction of $\dot{\mathbf{Q}}^\mathrm{out}(z)$.  

Define a basis of homology cycles $\mathfrak{a}$ and $\mathfrak{b}$ as in the right-hand panel of Figure~\ref{fig:OuterParametrixContours}.
\begin{figure}[h]
\begin{center}
\includegraphics{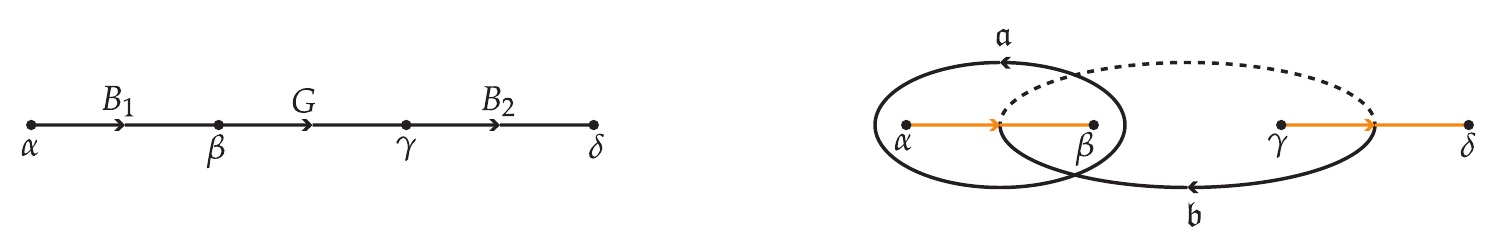}
\end{center}
\caption{Left:  topologically accurate representation of the jump contour of Riemann-Hilbert Problem~\ref{rhp:Pout} and of the Abel map $a(z)$.  Right:  corresponding jump contour for $r(z)$ and for $\dot{\mathbf{Q}}^{\mathrm{out}}(z)$, and homology cycles $\mathfrak{a}$ and $\mathfrak{b}$ on $\mathcal{R}$.}
\label{fig:OuterParametrixContours}
\end{figure}
The basic normalized holomorphic differential on $\mathcal{R}$ is $\omega(z)\,\dd z$, where on one sheet of $\mathcal{R}$ modeled as two copies of the complex $z$-plane cut and identified along $B_1\cup B_2$,
\eq
\omega(z):=\frac{1}{c}\cdot\frac{2\pi\ii}{r(z)}\quad\text{where}\quad c:=\oint_\mathfrak{a}\frac{\dd s}{r(s)}\quad\implies\quad \oint_\mathfrak{a}\omega(z)\,\dd z = 2\pi\ii.
\label{eq:omega}
\endeq
Note that $c$ is a concrete form of the period $Z_\mathfrak{a}$ defined in \eqref{eq:elliptic-periods}.
A meromorphic differential on $\mathcal{R}$ is $\Omega(z)\,\dd z$, where on the same sheet that \eqref{eq:omega} holds,
\eq
\label{one-form}
\Omega(z):=\frac{z^2-\tfrac{1}{2}(\alpha+\beta+\gamma+\delta)z}{r(z)}-C\omega(z),\quad
C:=\frac{1}{2\pi\ii}\oint_\mathfrak{a}\frac{z^2-\tfrac{1}{2}(\alpha+\beta+\gamma+\delta)z}{r(z)}\,\dd z\quad\implies\quad \oint_\mathfrak{a}\Omega(z)\,\dd z = 0.
\endeq
Note that $\Omega(z)\,\dd z$ has double poles at the points over $z=\infty$ on $\mathcal{R}$, but no residues.  The $\mathfrak{b}$-period of the $\mathfrak{a}$-normalized holomorphic differential $\omega(z)\,\dd z$ is
\eq \label{B-cycle}
H_\omega:=\oint_{\mathfrak{b}}\omega(z)\,\dd z=
-2\int_G\omega(z)\,\dd z,
\endeq
and that of the $\mathfrak{a}$-normalized meromorphic differential $\Omega(z)\,\dd z$ is
\eq
\label{U-def}
H_\Omega:=\oint_{\mathfrak{b}}\Omega(z)\,\dd z = -2\int_G\Omega(z)\,\dd z.
\endeq
Here the first formula in each case is a contour integral on the Riemann surface $\mathcal{R}$, and the second formula is a contour integral over the gap $G$ where the integrand as defined in \eqref{eq:omega}
and \eqref{one-form} respectively has a definite value.  A fundamental fact of the theory is that $\mathrm{Re}(H_\omega)<0$.  Note also that $H_\omega$ is a concrete version of the period ratio $2\pi\ii Z_\mathfrak{b}/Z_\mathfrak{a}$; see \eqref{eq:elliptic-periods}.
The Abel map $a(z)$ is defined by
\eq \label{Abel-map}
a(z):=\int_\alpha^z\omega(s)\,\dd s,\quad z\in\mathbb{C}\setminus(B_1\cup G\cup B_2),
\endeq
where the integral is independent of path taken in the indicated domain, on which $z\mapsto a(z)$ is holomorphic.  The analogous integral with the meromorphic differential $\Omega(z)\,\dd z$ in place of $\omega(z)\,\dd z$ is
\eq
A(z):=\int_\alpha^z\Omega(s)\,\dd s,\quad z\in\mathbb{C}\setminus(B_1\cup G\cup B_2),
\endeq
again independent of path in the indicated domain (there is no residue at $z=\infty$) and defining an analytic function on that domain.  Absence of a residue at $z=\infty$ also implies existence of the limit
\eq
\label{J-def}
J := \lim_{z \to \infty}\left(z-A(z)\right).
\endeq
It is straightforward to confirm that $a(z)$ and $A(z)$ satisfy the following jump conditions:
\eq
\begin{split}
\langle a\rangle (z) = 0\quad &\text{and}\quad \langle A\rangle (z) = 0,\quad z\in B_1,\\
\Delta a(z) = -2\pi\ii\quad &\text{and}\quad \Delta A(z) = 0,\quad z\in G,\\
\langle a\rangle (z) = -\tfrac{1}{2}H_\omega \quad &\text{and}\quad\langle A\rangle (z) = -\tfrac{1}{2}H_\Omega,\quad z\in B_2.
\label{eq:aA-jumps}
\end{split}
\endeq
Note that using \eqref{eq:omega} and \eqref{B-cycle} in \eqref{eq:F1-1}, we can rewrite $F_1$ equivalently in the form
\eq
\begin{split}
F_1&=-\frac{\zeta}{2} -\frac{C_\mathrm{G}c}{4\pi^2\ii}\int_G\omega(z)\,\dd z -\frac{C_\mathrm{B}c}{4\pi^2\ii}\int_{B_2}\omega_+(z)\,\dd z\\
&=-\frac{\zeta}{2}+\frac{C_\mathrm{G}cH_\omega}{8\pi^2\ii}-\frac{C_\mathrm{B}c}{4\pi}.
\end{split}
\label{eq:F1-2}
\endeq
A further useful identity can be found by integrating the differential $a(z)\Omega(z)\,\dd z$ around the boundary of the canonical dissection of $\mathcal{R}$ and expressing the integral alternately via residues at the two poles and via periods occurring on the four edges of the boundary.  The result is that $H_\Omega$ defined in \eqref{U-def} can be expressed in terms of $c$ defined in \eqref{eq:omega} as
\eq
H_\Omega=\frac{4\pi\ii}{c}
\label{eq:U-alt}
\endeq
See \cite{Dubrovin81} or \cite[Lemma B.1]{BuckinghamM13} for details.

The analogue of the function $j(z)$ from Section~\ref{sec:Genus-Zero-Parametrix} is here defined as the unique function analytic for $z\in\mathbb{C}\setminus (B_1\cup B_2)$ satisfying the conditions
\eq
j(z)^4 := \frac{(z-\alpha)(z-\gamma)}{(z-\beta)(z-\delta)}\quad\text{and}\quad \lim_{z\to\infty}j(z)=1.
\endeq
This function satisfies the jump condition 
$j_+(z)=-\ii j_-(z)$ for $z\in B_1\cup B_2$.  Further define
\eq
f^\mathrm{D}(z) := \frac{j(z)+j(z)^{-1}}{2},\quad f^\mathrm{OD}(z) := \frac{j(z)-j(z)^{-1}}{2\ii}
\label{eq:fD-fOD-define}
\endeq
with jump conditions 
$f_+^\mathrm{D}(z)=f_-^\mathrm{OD}(z)$ and $f_+^\mathrm{OD}(z)=-f_-^\mathrm{D}(z)$ for 
$z\in B_1\cup B_2$ and large-$z$ asymptotic expansions $f^\mathrm{D}(z)=1+\bo(z^{-1})$ and $f^\mathrm{OD}(z)=\tfrac{1}{4}\ii(\alpha-\beta+\gamma-\delta)z^{-1}+\bo(z^{-2})$.  Clearly the product $r(z)f^\mathrm{D}(z)f^\mathrm{OD}(z)$ is an entire function with $r(z)f^\mathrm{D}(z)f^\mathrm{OD}(z)=\bo(z)$ as $z\to\infty$ and hence is a linear function. 
Let $z=z_0$ denote the unique root of this function (possibly $z_0=\infty$ if and only if $f^\mathrm{OD}(z)=\bo(z^{-2})$ as $z\to\infty$).  Explicitly,
\eq
r(z)f^\mathrm{D}(z)f^\mathrm{OD}(z)=-\frac{1}{4\ii}\left([\alpha-\beta+\gamma-\delta]z-[\alpha\gamma-\beta\delta]\right)\quad\implies\quad
z_0:=\frac{\alpha\gamma-\beta\delta}{\alpha-\beta+\gamma-\delta}.
\label{eq:z0-define}
\endeq
A direct computation shows that $z_0$ cannot coincide with any of the roots $z=\alpha,\beta,\gamma,\delta$ provided the latter are distinct.  Therefore the simple root $z_0$ belongs to exactly one of the two factors $f^\mathrm{D}(z)$ or $f^\mathrm{OD}(z)$. 
We assume in what follows that $f^\mathrm{OD}(z_0)=0$; 
the modifications necessary to handle the other case $f^\mathrm{D}(z_0)=0$ are 
explained in \cite[Section 4.4.2]{BothnerM:2018}.  

Finally, to construct $\dot{\mathbf{Q}}^\mathrm{out}(z)$ and hence also $\dot{\mathbf{P}}^\mathrm{out}(z)$, we introduce
the Riemann theta function of the elliptic curve $\mathcal{R}$ for the homology basis $(\mathfrak{a},\mathfrak{b})$ defined by 
\eq 
\Theta(z) :=\sum_{k\in \mathbb{Z}}\ee^{\frac{1}{2}H_\omega k^2}\ee^{kz}.
\label{theta-function}
\endeq
This is an entire function of $z$ that satisfies 
\eq
\Theta(-z)=\Theta(z),\quad \Theta(z+2\pi \ii)=\Theta(z),\quad \Theta(z+H_\omega)=\ee^{-\frac{1}{2}H_\omega}\ee^{-z}\Theta(z)
\label{eq:theta-identities}
\endeq
and has simple zeros only at the points of a $\mathbb{Z}^2$-lattice:
\eq
\Theta(z)=0 \quad \text{if and only if} \quad z= K + 2\pi \ii k + H_\omega \ell \quad \text{for} \quad k,\ell\in\mathbb{Z},
\label{eq:Theta-divisor}
\endeq
where $K=K(H_\omega):=\ii\pi +\tfrac{1}{2}H_\omega$ is one of the zeros.
See \cite[Chapter 20]{DLMF}, which uses Jacobi's notation $\Theta(z;H_\omega)=\theta_3(w|\tau)=\theta_3(w,q)$ where $z=2\ii w$, $H_\omega=2\pi\ii\tau$, and $q=\ee^{\ii\pi\tau}$.

The final result is that the matrix elements of  
$\dot{\mathbf{Q}}^{\mathrm{out}}(z)$ are
\eq 
\begin{split}
\dot{Q}^\mathrm{out}_{11}(z) & := f^\text{D}(z)\frac{\Theta(a(\infty)+a(z_0)+K)\Theta(a(z)+a(z_0)+K-F_1H_\Omega)}{\Theta(a(\infty)+a(z_0)+K-F_1H_\Omega)\Theta(a(z)+a(z_0)+K)} \ee^{-F_1[J+A(z)]}, \\
\dot{Q}^\mathrm{out}_{12}(z) & := -f^\text{OD}(z)\frac{\Theta(a(\infty)+a(z_0)+K)\Theta(a(z)-a(z_0)-K+F_1H_\Omega)}{\Theta(a(\infty)+a(z_0)+K-F_1H_\Omega)\Theta(a(z)-a(z_0)-K)} \ee^{-F_1[J-A(z)]}, \\
\dot{Q}^\mathrm{out}_{21}(z) & := f^\text{OD}(z)\frac{\Theta(a(\infty)+a(z_0)+K)\Theta(a(z)-a(z_0)-K-F_1H_\Omega)}{\Theta(a(\infty)+a(z_0)+K+F_1H_\Omega)\Theta(a(z)-a(z_0)-K)} \ee^{F_1[J-A(z)]},\\
\dot{Q}^\mathrm{out}_{22}(z) & := f^\text{D}(z)\frac{\Theta(a(\infty)+a(z_0)+K)\Theta(a(z)+a(z_0)+K+F_1H_\Omega)}{\Theta(a(\infty)+a(z_0)+K+F_1H_\Omega)\Theta(a(z)+a(z_0)+K)} \ee^{F_1[J+A(z)]}.
\end{split}
\label{Qdot-entries}
\endeq
From \eqref{Qdot-from-Pdot} and 
\eqref{Qdot-entries}, we then obtain a formula for the solution $\dot{\mathbf{P}}^\mathrm{out}(z)$ of Riemann-Hilbert Problem~\ref{rhp:Pout}.  Note that the quantity $F_1H_\Omega$ appearing in the arguments of the theta functions can be expressed via \eqref{eq:F1-2}--\eqref{eq:U-alt} as
\eq
F_1H_\Omega=\ii\varphi,\quad \varphi:=-\frac{2\pi\zeta}{c}-\xi,\quad\xi:=C_\mathrm{B}-C_\mathrm{G}\frac{H_\omega}{2\pi\ii}.
\label{eq:F1-U}
\endeq
The assumption that $f^\mathrm{D}(z)$ is nonvanishing while $f^\mathrm{OD}(z)=0$ only vanishes at $z=z_0$ to first order implies that $\Theta(a(z)+a(z_0)+K)$ is nonvanishing (even in the limit $z\to\infty$) and that the unique simple zero of $\Theta(a(z)-a(z_0)-K)$, located at $z=z_0$ by Riemann's Theorem, is cancelled by that of $f^\mathrm{OD}(z)$.
Therefore, it is obvious that $\dot{\mathbf{P}}^\mathrm{out}(z)$ exists if and only if
\eq
\Theta(a(\infty)+a(z_0)+K-\ii\varphi)\Theta(a(\infty)+a(z_0)+K+\ii\varphi)\neq 0.
\label{eq:Outer-solvable}
\endeq
\begin{lemma}
$\Theta(a(\infty)+a(z_0)+K-\ii\varphi)=0$ if and only if $\Theta(a(\infty)+a(z_0)+K+\ii\varphi)=0$.
\label{lem:SameDivisor}
\end{lemma}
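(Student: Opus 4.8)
The plan is to exploit the symmetry properties of the Riemann theta function recorded in \eqref{eq:theta-identities}, together with the constraint that the quantities appearing in the argument of $\Theta$ are built from a \emph{real} phase $\varphi$ and from Abel-map values that enjoy compatible reality structure. First I would observe that $K = \ii\pi + \tfrac{1}{2}H_\omega$ and that the lattice of zeros of $\Theta$ is $K + 2\pi\ii\mathbb{Z} + H_\omega\mathbb{Z}$, so the statement $\Theta(a(\infty)+a(z_0)+K-\ii\varphi)=0$ is equivalent to the assertion that
\eq
a(\infty)+a(z_0)-\ii\varphi \in 2\pi\ii\mathbb{Z} + H_\omega\mathbb{Z},
\label{eq:lattice-membership}
\endeq
and similarly for the $+\ii\varphi$ version. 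Thus the lemma reduces to showing that the set $2\pi\ii\mathbb{Z} + H_\omega\mathbb{Z}$ is invariant under the map $\mu \mapsto -\mu - 2(a(\infty)+a(z_0))$; equivalently, that $a(\infty)+a(z_0)$ is, modulo the period lattice, a \emph{half-period} of $\mathcal{R}$, i.e.\@ $2(a(\infty)+a(z_0)) \in 2\pi\ii\mathbb{Z}+H_\omega\mathbb{Z}$. If that holds, then membership of $a(\infty)+a(z_0)-\ii\varphi$ in the lattice is equivalent, after adding the lattice element $2(a(\infty)+a(z_0))$ and negating (the lattice being a subgroup of $\mathbb{C}$), to membership of $a(\infty)+a(z_0)+\ii\varphi$ in the lattice, which is exactly the claim.

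The key step is therefore to prove that $a(\infty)+a(z_0)$ is a half-period. The natural route is via Abel's theorem: one shows that the degree-zero divisor $(\infty) + (z_0) - (p_1) - (p_2)$, for a suitable choice of points $p_1,p_2$, is principal, where the candidate points are the images of $\infty$ and $z_0$ under the hyperelliptic involution $\sigma$ of $\mathcal{R}$, or points differing from them by branch points. Concretely, recall from \eqref{eq:z0-define} that $z_0$ is characterized as the unique zero of the entire linear function $r(z)f^\mathrm{D}(z)f^\mathrm{OD}(z)$, and that $f^\mathrm{D}(z)f^\mathrm{OD}(z) = \tfrac{1}{4}(j(z)^2-j(z)^{-2})/\ii$ where $j(z)^4 = (z-\alpha)(z-\gamma)/[(z-\beta)(z-\delta)]$. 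I would argue that $j(z)^2 - j(z)^{-2}$, as a function on $\mathcal{R}$ (where $j(z)^2$ is single-valued), has its divisor supported at the four branch points and at the two points over $z_0$ and $z=\infty$; tracking zeros and poles of $j(z)^2 \pm 1$ and using $r(z)j(z)^2 = (z-\alpha)(z-\gamma)\cdot(\text{analytic nonvanishing})$ one extracts that $(\infty^+)+(z_0^+) \sim (\alpha)+(\beta)$ or an equivalent branch-point combination, up to the involution. Since any sum of two branch points is a half-period (the branch points are exactly the half-periods of $\mathcal{R}$ relative to $(\alpha)$ with $a(\alpha)=0$), this gives $a(\infty)+a(z_0) \equiv a(\text{branch pt}_1)+a(\text{branch pt}_2)$, a half-period, modulo the lattice. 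One must also account for the fact that $a(\infty)$ here denotes the Abel map evaluated at $z=\infty$ on the physical sheet (the normalization forcing $r(z)\sim z^2$), and that the formula \eqref{Qdot-entries} uses this specific sheet consistently.

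I expect the main obstacle to be the bookkeeping of \emph{which} sheet of $\mathcal{R}$ and which branch-cut configuration is in force, since the branch cuts $B_1, B_2$ and the sign $\nu$ relating $r(z)$ to $R(z)$ (Table~\ref{tab:outer-uniformize-phases}) vary across the six cases $\mathcal{B}=\rectangle,\TR,\TI$ and $s=\pm 1$. The cleanest way to sidestep case analysis is to phrase everything intrinsically on $\mathcal{R}$: the lemma is really the statement that the divisor class of $(\infty)+(z_0)$ is a half-period, and this is a property of the pair $(\mathcal{R}, \{\alpha,\beta,\gamma,\delta,\infty\text{-points}\})$ that can be checked once from the algebraic identity \eqref{eq:z0-define} without reference to the steepest-descent contours. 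An alternative, if the divisor computation proves delicate, is a direct computation: using the quasi-periodicity \eqref{eq:theta-identities} one has $\Theta(a(\infty)+a(z_0)+K+\ii\varphi) = \Theta(-(a(\infty)+a(z_0))-K-\ii\varphi)$ by evenness, and then one needs $-(a(\infty)+a(z_0))-K \equiv a(\infty)+a(z_0)-K \pmod{2\pi\ii\mathbb{Z}+H_\omega\mathbb{Z}}$, which is again precisely the half-period condition; I would present whichever of the two derivations of that condition (Abel's theorem versus direct manipulation of $j$) turns out to be shorter in the surrounding notation.
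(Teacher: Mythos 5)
Your proposal is correct and follows the same overall strategy as the paper: both arguments reduce the lemma, via $\Theta(-z)=\Theta(z)$ and the zero lattice $K+2\pi\ii\mathbb{Z}+H_\omega\mathbb{Z}$, to the claim that $2(a(\infty)+a(z_0))$ is a lattice vector, and both establish that claim by Abel--Jacobi, i.e.\ by exhibiting a meromorphic function on $\mathcal{R}$ with a prescribed principal divisor. Where you differ is in the choice of witness. The paper works with the function $k(Q)=\bigl([z(Q)^2-z_0^2]-\tfrac{1}{2}(\alpha+\beta+\gamma+\delta)[z(Q)-z_0]-[\widetilde{R}(Q)-\widetilde{R}(Q^+(z_0))]\bigr)/(z(Q)-z_0)$, whose divisor is $(Q^+(\infty))+(Q^+(z_0))-(Q^-(\infty))-(Q^-(z_0))$; verifying that $k(Q^+(z_0))=0$ requires checking the sign in $2z_0-\tfrac{1}{2}(\alpha+\beta+\gamma+\delta)=R'(z_0)$, which the paper does by evaluation at one point per Boutroux domain plus continuation. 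Your route through $j(z)^2\pm 1$ also closes: since $j^4=1$ exactly at $z=z_0$ and $j^2\to\pm 1$ at the two points over $\infty$, one finds $\mathrm{div}(j^2-1)=(z_0^{(1)})+(\infty^+)-(\beta)-(\delta)$ where $z_0^{(1)}$ is the point over $z_0$ with $j^2=1$; the standing hypothesis $f^\mathrm{OD}(z_0)=0$ says precisely that this is the principal-sheet point at which $a(z_0)$ is evaluated, so $a(\infty)+a(z_0)\equiv a(\beta)+a(\delta)$, a sum of two branch-point images and hence a half-period. (Your guess $(\alpha)+(\beta)$ is off by which pair of branch points appears, but as you note any pair works.) The modest advantage of your version is that the sheet identification is absorbed directly into the dichotomy $f^\mathrm{OD}(z_0)=0$ versus $f^\mathrm{D}(z_0)=0$ rather than requiring a separate sign verification; the paper's version, on the other hand, produces the sharper statement that $a(\infty)+a(z_0)$ is a genuine half-period and not a full period (needed later in Lemma~\ref{lem:DistinctDivisors}), which your branch-point representation also yields but which you would need to record explicitly.
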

\begin{proof}
Since $\Theta(-z)=\Theta(z)$, $\Theta(K)=0$, and $2K$ is a (quasi-)period, it is sufficient to prove that there are integers $N_\mathfrak{a}$ and $N_\mathfrak{b}$ such that $2a(\infty)+2a(z_0)=2\pi\ii N_\mathfrak{a} + H_\omega N_\mathfrak{b}$.  Introducing a second auxiliary copy of the complex $z$-plane on which the Abel mapping $a(z)$ is defined with the opposite sign compared to the principal $z$-plane, we obtain a two-sheeted model for the Riemann surface $\mathcal{R}$.  Denoting by $Q^+(z)$ (resp., $Q^-(z)$) the point $Q\in\mathcal{R}$ over $z\in\mathbb{C}$ on the principal (resp., auxiliary) sheet, we have therefore extended the Abel mapping to $\mathcal{R}$ by the definition $\widetilde{a}(Q^\pm(z))=\pm a(z)$.  With this notation, we want to show that $\widetilde{a}(Q^+(\infty))+\widetilde{a}(Q^+(z_0))-\widetilde{a}(Q^-(\infty))-\widetilde{a}(Q^-(z_0))=2\pi\ii N_\mathfrak{a}+H_\omega N_\mathfrak{b}$.  But by the Abel-Jacobi Theorem, this identity will hold for some integers $N_\mathfrak{a}$ and $N_\mathfrak{b}$ if there exists a nonzero meromorphic function $k(Q)$ defined on $\mathcal{R}$ with simple poles at the points $Q^-(\infty)$ and $Q^-(z_0)$ only and vanishing at the points $Q^+(\infty)$ and $Q^+(z_0)$.  Similarly extending $R(z)$ to the Riemann surface $\mathcal{R}$ by $\widetilde{R}(Q^\pm(z))=\pm R(z)$, we can exhibit this function $k(Q)$ explicitly in the form
\eq
k(Q)=\frac{[z(Q)^2-z_0^2]-\tfrac{1}{2}(\alpha+\beta+\gamma+\delta)[z(Q)-z_0]-[\widetilde{R}(Q)-\widetilde{R}(Q^+(z_0))]}{z(Q)-z_0},
\endeq
where $z(Q)$ is the coordinate (sheet projection) function satisfying $z(Q^\pm(z))=z$ for all $z\in\mathbb{C}$.  It is easy to verify from this formula that $k(Q)$ has simple poles only at the points $Q=Q^-(\infty)$ and $Q=Q^-(z_0)$, and that $k(Q)$ has a simple zero at $Q=Q^+(\infty)$.  The fact that $k(Q^+(z_0))=0$ as well amounts to the condition that the derivative of the numerator vanishes at $Q=Q^+(z_0)$, which reads
\eq
2z_0-\tfrac{1}{2}(\alpha+\beta+\gamma+\delta)=R'(z_0).
\endeq
The \emph{squares} of both sides of this equation are equal, as can be verified easily using the definition \eqref{eq:z0-define} of $z_0$.  That the equation itself holds can be proved by verifying it (i.e., checking the signs of both sides) for just one suitably chosen point $y_0$ in each domain $\mathcal{B}=\rectangle(\kappa)$ or $\TR(\kappa)$ or $\TI(\kappa)$ and invoking a continuation argument (applying under the same conditions that $z_0$ is a root of $f^\mathrm{OD}(z)$ and not of $f^\mathrm{D}(z)$, and requiring similar modifications to the proof as indicated should these conditions fail).  For this purpose, it is  convenient to consider $y_0$ on the real (resp., imaginary) axis very close to the boundary between $\rectangle(\kappa)$ and $\TR(\kappa)$ (resp., $\TI(\kappa)$), and applying a perturbation argument to open the pair of branch points that merge as $y_0$ approaches the boundary.
For a completely analogous result, see \cite[Lemma 1]{BothnerM:2018}.  Note that it is not possible that both of the integers $N_\mathfrak{a}$ and $N_\mathfrak{b}$ are even, as that would mean that $a(\infty)+a(z_0)$ is an integer linear combination of periods, which by Abel-Jacobi would imply the existence of a nonzero meromorphic function with just one simple pole at $Q=Q^-(z_0)$ that vanishes at $Q=Q^+(\infty)$.  But having at most one simple pole on $\mathcal{R}$, the function is a constant, and vanishing at any point forces it to vanish identically.
\end{proof}
In light of this result, Riemann-Hilbert Problem~\ref{rhp:Pout} is solvable if $d>0$ holds, where (considering the first factor in \eqref{eq:Outer-solvable} and using \eqref{eq:Theta-divisor} and \eqref{eq:F1-U})
\eq
d:=\inf_{(N_\mathfrak{a},N_\mathfrak{b})\in\mathbb{Z}^2}\left|a(\infty)+a(z_0)+\frac{2\pi\ii\zeta}{c}+\left[N_\mathfrak{a}+\frac{C_\mathrm{B}}{2\pi}\right]2\pi\ii + 
\left[N_\mathfrak{b}-\frac{C_\mathrm{G}}{2\pi}\right]H_\omega\right|.
\label{eq:non-Malgrange}
\endeq
The quantity $d$ measures the distance to the (common, by Lemma~\ref{lem:SameDivisor}) theta divisor for the two factors in \eqref{eq:Outer-solvable}.  
It depends on the data (distinct roots $\alpha,\beta,\gamma,\delta$ of the quartic $P(z)$, complex constant $\zeta$, and real constants $C_\mathrm{G}$ and $C_\mathrm{B}$ modulo $2\pi$) in the formulation of Riemann-Hilbert Problem~\ref{rhp:Pout}.  For given data, the infimum in \eqref{eq:non-Malgrange} is clearly attained, as the absolute value grows with $(N_\mathfrak{a},N_\mathfrak{b})$ because $\mathrm{Re}(H_\omega)<0$, so it is really a minimum over finitely many lattice points.    
Recalling the interpretation of the parameters of Riemann-Hilbert Problem~\ref{rhp:Pout} in terms of the outer parametrix $\dot{\mathbf{O}}^\mathrm{out}(z)$, we see that for given $\kappa\in(-1,1)$ and $s=\pm 1$, $d$ is a function of $y_0$ in the Boutroux domain $\mathcal{B}$ of interest (which determines $a(\infty)+a(z_0)$, $c$, $H_\omega$, and the constants $R_1$ and $R_2$ appearing in $C_\mathrm{G}$ and $C_\mathrm{B}$, via the roots of $P(z)$), $\zeta\in\mathbb{C}$, and $T>0$.  In this context, given $\epsilon>0$ we set
\eq
\mathcal{S}(\epsilon):=\left\{ y_0\in \mathcal{B}, \zeta\in\mathbb{C}, T>0:  d\ge \epsilon\right\}.
\label{eq:Malgrange-cheese}
\endeq
Since $C_\mathrm{G}$ and $C_\mathrm{B}$ are affine linear in $T$, which does not otherwise appear in the data for Riemann-Hilbert Problem~\ref{rhp:Pout}, the set $\mathcal{S}(\epsilon)$ contains arbitrarily large values of $T$.
Expressing $\dot{\mathbf{O}}^\mathrm{out}(z)$ in terms of $\dot{\mathbf{P}}^\mathrm{out}(z)$ using Table~\ref{tab:outer-uniformize} we have the following result.
\begin{proposition}
The outer parametrix $\dot{\mathbf{O}}^\mathrm{out}(z)$ exists with $\det(\dot{\mathbf{O}}^\mathrm{out}(z))=1$ if $d>0$ holds, in which case for every $\mu>0$,
\eq
\mathcal{M}(\mu):=\mathop{\sup_{|z-p|>\mu}}_{p=\alpha,\beta,\gamma,\delta}\|\dot{\mathbf{O}}^\mathrm{out}(z)\|
\endeq
is finite, where $\|\cdot\|$ denotes any matrix norm.
If $\zeta$ is bounded, $y_0$ lies in a compact subset of the relevant Boutroux domain, and for some $\epsilon>0$, $(y_0,\zeta,T)\in \mathcal{S}(\epsilon)$, then for every $\mu>0$,
$\mathcal{M}(\mu)$ is uniformly bounded even as $T\to+\infty$.  
\label{prop:G1-outer}
\end{proposition}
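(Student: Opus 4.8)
The plan is to work throughout from the explicit theta‑function formula for the outer parametrix. Much of the first assertion is already in hand: by the discussion preceding Lemma~\ref{lem:SameDivisor} the matrix $\dot{\mathbf{P}}^{\mathrm{out}}(z)$ built from \eqref{Qdot-entries} and \eqref{Qdot-from-Pdot} solves Riemann-Hilbert Problem~\ref{rhp:Pout} if and only if \eqref{eq:Outer-solvable} holds, and by Lemma~\ref{lem:SameDivisor} together with \eqref{eq:Theta-divisor} and \eqref{eq:F1-U} this is implied by $d>0$. I would then record that $\det\dot{\mathbf{P}}^{\mathrm{out}}(z)\equiv1$ by the usual Liouville argument on $\dot{\mathbf{Q}}^{\mathrm{out}}(z)$ — which has no jump across $B_1\cup B_2$ (the twist $\mathbf{T}(-1)$ being unimodular), whose determinant is bounded near $z=\alpha,\beta,\gamma,\delta$ by the standard local analysis at the branch points and hence extends holomorphically there, and which satisfies $\dot{\mathbf{Q}}^{\mathrm{out}}(z)\ee^{F_1z\sigma_3}\to\mathbb{I}$ — combined with unimodularity of $\ee^{\pm F_0\sigma_3}$ and $\ee^{\pm F(z)\sigma_3}$. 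Then $\dot{\mathbf{O}}^{\mathrm{out}}(z)$ is recovered from $\dot{\mathbf{P}}^{\mathrm{out}}(z)$ via the $z$-independent relations of Table~\ref{tab:outer-uniformize}, each a left/right conjugation by elementary matrices $\mathbf{D}(\cdot)$ or $\mathbf{T}(\cdot)$ of unit determinant (see \eqref{eq:matrix-factors-notation}), so existence and $\det\dot{\mathbf{O}}^{\mathrm{out}}(z)\equiv1$ follow; uniqueness given $d>0$ is the standard Liouville argument for Riemann-Hilbert Problem~\ref{rhp:Pout}. Finiteness of $\mathcal{M}(\mu)$ is then read off the explicit formula: $\dot{\mathbf{O}}^{\mathrm{out}}(z)$ extends continuously to the compact set $\{z\in\overline{\mathbb{C}}:|z-p|\ge\mu,\ p=\alpha,\beta,\gamma,\delta\}$, with value $\mathbb{I}$ at $z=\infty$ and continuous boundary values on its jump contour, so $\mathcal{M}(\mu)$, a supremum over a subset of this compact set, is finite.

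The heart of the matter is the uniformity as $T\to+\infty$, and here the crucial structural observation is that the jump matrices of Riemann-Hilbert Problem~\ref{rhp:Pout} in \eqref{eq:Pout-jump-1}--\eqref{eq:Pout-jump-3} depend on $T$ \emph{only} through $\ee^{\ii C_\mathrm{G}}$ and $\ee^{\ii C_\mathrm{B}}$ — the phases $C_\mathrm{G},C_\mathrm{B}$ being real and affine in $T$ by Table~\ref{tab:outer-uniformize-phases} — while the remaining data are $\zeta$, which enters only through the uniformly bounded factors $\ee^{-\zeta z}$ with $z$ on the bounded contour $B_1\cup G\cup B_2$, and the roots $\alpha,\beta,\gamma,\delta$, equivalently $y_0$ in the relevant Boutroux domain $\mathcal{B}$ and $\kappa$, none of which carry $T$-dependence. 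By uniqueness $\dot{\mathbf{P}}^{\mathrm{out}}(z)$, and hence $\dot{\mathbf{O}}^{\mathrm{out}}(z)$, therefore depends on $T$ only through $(\ee^{\ii C_\mathrm{G}},\ee^{\ii C_\mathrm{B}})\in S^1\times S^1$, even though $F_0$, $F_1$, and $F(z)$ individually grow linearly in $T$; their growth cancels between the three exponentials in \eqref{Qdot-from-Pdot} and the theta quasi-periodicity \eqref{eq:theta-identities}. Absorbing integer shifts of $C_\mathrm{B}/(2\pi)$ and of $C_\mathrm{G}/(2\pi)$ into $N_\mathfrak{a},N_\mathfrak{b}$ in \eqref{eq:non-Malgrange} shows moreover that $d$ itself is a continuous function of $(y_0,\zeta,\ee^{\ii C_\mathrm{G}},\ee^{\ii C_\mathrm{B}})$ alone, the infimum there being over finitely many lattice points since $\mathrm{Re}(H_\omega)<0$.

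With this in hand I would fix a compact set $\mathcal{Z}$ of $\zeta$-values and a compact subset $\mathcal{K}$ of the relevant Boutroux domain, so that the data range over a subset of the compact space $\mathcal{K}\times\mathcal{Z}\times S^1\times S^1$, and the hypothesis $(y_0,\zeta,T)\in\mathcal{S}(\epsilon)$ (see \eqref{eq:Malgrange-cheese}) confines the data to the \emph{closed}, hence compact, subset on which $d\ge\epsilon$. It then remains to bound $\mathcal{M}(\mu)$ uniformly over this compact data set. Fixing $\mu>0$, near any admissible data point one can take a neighborhood on which $\alpha,\beta,\gamma,\delta$ and $z_0$ move by less than $\tfrac{1}{2}\mu$ and the theta denominators stay bounded away from zero, so that the explicit formula is jointly continuous, hence bounded, on $\{(z,\mathrm{data}):|z-p_{\mathrm{ref}}|\ge\tfrac{1}{2}\mu\}$ with data in the neighborhood; since that $z$-set contains every $z$ with $|z-p|\ge\mu$ for the actual branch points, a finite subcover of $\{d\ge\epsilon\}$ furnishes a bound on $\mathcal{M}(\mu)$ independent of $T$. (Equivalently one may route this through the singular integral operator $I-C_{\mathbf{W}}$ of Riemann-Hilbert Problem~\ref{rhp:Pout}, Fredholm of index zero and invertible precisely when $d>0$, with $\|(I-C_{\mathbf{W}})^{-1}\|$ continuous on the invertibility locus after fixing the contour locally by a $y_0$-deformation.) Finally the transfer from $\dot{\mathbf{P}}^{\mathrm{out}}(z)$ to $\dot{\mathbf{O}}^{\mathrm{out}}(z)$ preserves the uniform bound, since the conjugating matrices in Table~\ref{tab:outer-uniformize} have entries equal to unit-modulus phases (such as $\ee^{\pm\tfrac{1}{2}\ii TR_1}$, $\ee^{\pm\ii\pi\Theta_\infty}$, $\ee^{\pm\ii T(R_1+\tfrac{1}{2}R_2)}$) times fixed constants, or — inside the \emph{bounded} circle domain $\circledomain$ — times $\ee^{-\zeta z}$, which is bounded for $z\in\circledomain$ and $\zeta\in\mathcal{Z}$. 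The one genuinely delicate point I anticipate is this last uniformity step: making ``locally bounded, uniformly in $z$'' precise while the branch points $\alpha,\beta,\gamma,\delta$ themselves vary with $y_0$, which is why the argument proceeds by the covering device above rather than by a single global estimate.
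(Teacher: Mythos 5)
Your proposal is correct and follows essentially the same route as the paper: reduce to $\dot{\mathbf{P}}^\mathrm{out}(z)$ via the determinant-preserving, uniformly bounded relations of Table~\ref{tab:outer-uniformize}, note that existence with unit determinant under $d>0$ is already established, and obtain the $T$-uniform bound not from the explicit theta formula (whose arguments grow linearly in $T$) but from the fact that Riemann-Hilbert Problem~\ref{rhp:Pout} sees $T$ only through the unit-modulus exponentials $\ee^{\ii C_\mathrm{G}}$ and $\ee^{\ii C_\mathrm{B}}$. Your added detail (the Liouville argument for the determinant and the explicit compactness covering making "uniformly bounded" precise) fills in steps the paper leaves as "easy to check," but introduces no new idea.
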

\begin{proof}
It suffices to replace $\dot{\mathbf{O}}^\mathrm{out}(z)$ with $\dot{\mathbf{P}}^\mathrm{out}(z)$, because the transformation relating them given in Table~\ref{tab:outer-uniformize} always exists, preserves determinants, and is uniformly bounded.
It is easy to check from the conditions of Riemann-Hilbert Problem~\ref{rhp:Pout} that if $\dot{\mathbf{P}}^\mathrm{out}(z)$ exists, it must have unit determinant, and the fact that existence is guaranteed by the condition $d>0$ has already been proven above.  That $\mathcal{M}(\mu)<\infty$ for $\mu>0$ then follows by the maximum modulus principle.  Existence of a uniform bound of $\mathcal{M}(\mu)$ depending only on the compact set containing $y_0$, the bound for $\zeta$, and the value of $\epsilon>0$ is not obvious from the explicit construction of the solution, because the arguments of the theta functions have real parts proportional to the large parameter $T$.  However the existence of such a bound can be seen easily from the conditions of Riemann-Hilbert Problem~\ref{rhp:Pout}, in which the dependence on $T$ entering through the real phases $C_\mathrm{G}$ and $C_\mathrm{B}$ linear in $T$ is controlled because only the bounded exponentials $\ee^{\ii C_\mathrm{G}}$ and $\ee^{\ii C_\mathrm{B}}$ appear in the problem.
\end{proof}

We conclude this discussion of the outer parametrix by giving some formulas for the quantities extracted from $\dot{\mathbf{O}}^\mathrm{out}(z)$ needed to write approximate formul\ae\ for $u(x)$ and $u_\tw(x)$ in Section~\ref{sec:G1-asymptotic-formulae} below.
Expanding for large $z$ we obtain the convergent Laurent expansion
\eq
\dot{\mathbf{O}}^\mathrm{out}(z)=\mathbb{I}+\sum_{k=1}^\infty z^{-k}\dot{\mathbf{O}}^\mathrm{out}_k,
\label{eq:OdotOut-elliptic-z-large}
\endeq
in which
\eq
\dot{O}^\mathrm{out}_{1,12}=\eta \dot{P}^\mathrm{out}_{1,12},\quad \dot{P}^\mathrm{out}_{1,12}:=\lim_{z\to\infty}z\dot{P}^\mathrm{out}_{12}(z)
\label{eq:OP-infty-12}
\endeq
where $\eta$ is determined from the diagonal $z$-independent conjugation relating $\dot{\mathbf{O}}^\mathrm{out}(z)$ and $\dot{\mathbf{P}}^\mathrm{out}(z)$ for large $z$ given in Table~\ref{tab:outer-uniformize}:
\eq
\eta:=\begin{cases}
\tfrac{1}{2}\ee^{-\frac{5\ii\pi}{6}}\ee^{-\ii TR_1},&\quad\text{$y_0\in\rectangle(\kappa)$, gO case, $s=1$}\\
-\ee^{-\ii TR_1},&\quad\text{$y_0\in\rectangle(\kappa)$, gH case, $s=1$}\\
\tfrac{1}{2}\ee^{-\frac{\ii\pi}{6}}\ee^{-\ii TR_1},&\quad\text{$y_0\in\rectangle(\kappa)$, gO case, $s=-1$}\\
\tfrac{1}{2}\ee^{-\frac{\ii\pi}{2}}\ee^{2\pi\ii\Theta_\infty}\ee^{-\ii TR_2},&\quad
\text{$y_0\in\TR(\kappa)$, gO case, $s=\pm 1$}\\
\tfrac{1}{2}\ee^{-\frac{\ii\pi}{2}}\ee^{-2\pi\ii\Theta_\infty}\ee^{-\ii TR_2},&\quad
\text{$y_0\in\TI(\kappa)$, gO case, $s=\pm 1$}.
\end{cases}
\endeq
Also, evaluation at $z=0$ gives 
\eq
\dot{O}^\mathrm{out}_{11}(0)\dot{O}^\mathrm{out}_{12}(0)=\nu\eta\dot{P}^\mathrm{out}_{11}(0)\dot{P}^\mathrm{out}_{12}(0)
\label{eq:OP-zero-product}
\endeq
and
\eq
\frac{\dot{O}^\mathrm{out}_{21}(0)}{\dot{O}^\mathrm{out}_{11}(0)}=\frac{1}{\eta}\begin{cases}
\displaystyle
\frac{\dot{P}^\mathrm{out}_{21}(0)}{\dot{P}^\mathrm{out}_{11}(0)},&\quad\nu=1\\
\displaystyle
\frac{\dot{P}^\mathrm{out}_{22}(0)}{\dot{P}^\mathrm{out}_{12}(0)},&\quad\nu=-1,
\end{cases}
\endeq
where the sign $\nu=\pm 1$ is as given in Table~\ref{tab:outer-uniformize-phases}.
Using the explicit solution of Riemann-Hilbert Problem~\ref{rhp:Pout}, we have 
\eq
\dot{P}^\mathrm{out}_{1,12}=\mathcal{N}(y_0)\frac{\Theta(a(\infty)-a(z_0)-K+\ii\varphi)}{\Theta(a(\infty)+a(z_0)+K-\ii\varphi)} 
\ee^{-2F_1J}
\label{eq:P-infty-12}
\endeq
and
\eq
\begin{split}
\dot{P}^\mathrm{out}_{11}(0) & = \mathcal{N}_{11}(y_0)\frac{\Theta(a(0)+a(z_0)+K-\ii\varphi)}{\Theta(a(\infty)+a(z_0)+K-\ii\varphi)} \ee^{-F_1[J+A(0)]}, \\
\dot{P}^\mathrm{out}_{12}(0) & = \mathcal{N}_{12}(y_0)\frac{\Theta(a(0)-a(z_0)-K+\ii\varphi)}{\Theta(a(\infty)+a(z_0)+K-\ii\varphi)} \ee^{-F_1[J-A(0)]}, \\
\dot{P}^\mathrm{out}_{21}(0) & = \mathcal{N}_{21}(y_0)\frac{\Theta(a(0)-a(z_0)-K-\ii\varphi)}{\Theta(a(\infty)+a(z_0)+K+\ii\varphi)}\ee^{F_1[J-A(0)]},\\
\dot{P}^\mathrm{out}_{22}(0) & = \mathcal{N}_{22}(y_0)\frac{\Theta(a(0)+a(z_0)+K+\ii\varphi)}{\Theta(a(\infty)+a(z_0)+K+\ii\varphi)}
\ee^{F_1[J+A(0)]},
\end{split}
\label{eq:P-zero}
\endeq
in which
\eq
\mathcal{N}(y_0):=\frac{\alpha-\beta+\gamma-\delta}{4\ii}\frac{\Theta(a(\infty)+a(z_0)+K)}{\Theta(a(\infty)-a(z_0)-K)}\ee^{-2F_0},
\label{eq:calN-define}
\endeq
and
\eq
\begin{split}
\mathcal{N}_{11}(y_0)&:=f^\mathrm{D}(0)\frac{\Theta(a(\infty)+a(z_0)+K)}{\Theta(a(0)+a(z_0)+K)}\ee^{F(0)-F_0},\\
\mathcal{N}_{12}(y_0)&:=-f^\mathrm{OD}(0)\frac{\Theta(a(\infty)+a(z_0)+K)}{\Theta(a(0)-a(z_0)-K)}\ee^{-F(0)-F_0},\\
\mathcal{N}_{21}(y_0)&:=f^\mathrm{OD}(0)\frac{\Theta(a(\infty)+a(z_0)+K)}{\Theta(a(0)-a(z_0)-K)}\ee^{F(0)+F_0},\\
\mathcal{N}_{22}(y_0)&:=f^\mathrm{D}(0)\frac{\Theta(a(\infty)+a(z_0)+K)}{\Theta(a(0)+a(z_0)+K)}\ee^{-F(0)+F_0},
\end{split}
\label{eq:calN-jk-define}
\endeq
are all finite and nonzero (all apparent singularities on the parameter space are removable).

\subsubsection{Inner parametrices}
The approximation of the jump matrices for $\mathbf{O}(z)$ by those of its outer parametrix $\dot{\mathbf{O}}^\mathrm{out}(z)$ fails to be uniformly accurate when $z$ is near the four points $z=\alpha,\beta,\gamma,\delta$.  To deal with this nonuniformity and also to avoid the problematic divergence of $\dot{\mathbf{O}}^\mathrm{out}(z)$ at these four points, we define instead four \emph{inner parametrices}.

Let $D_p$, $p\in\{\alpha,\beta,\gamma,\delta\}$ be fixed small disks in the $z$-plane with center $z=p$.  Within each disk we define a conformal map $W:D_p\to\mathbb{C}$ such that $W(p)=0$, as indicated in the Tables~\ref{tab:Inner-y0-kappa0-s1}--\ref{tab:Inner-y1p3i-kappa0-sm1} in Appendix~\ref{app:DiagramsAndTables}.  It is assumed that certain contour arcs are first ``fused'' as indicated in these tables with the ``\&'' notation, and the fact that $W(z)$ is conformal follows from the explicit formula $h'(z)^2=P(z)/(16z^2)$ and the fact that $z=p$ is in each case a simple root of the polynomial $P(z)$.   
Then, for $z\in D_p$, we define a matrix $\mathbf{P}^p(z)$ by the formula
\eq
\mathbf{P}^p(z):=\mathbf{O}(z)\ee^{(-Th(z)+\tfrac{1}{2}\zeta z)\sigma_3}\mathbf{C}(z)\ee^{-\tfrac{1}{2}TW(z)^{3/2}\sigma_3},\quad z\in D_p,
\label{eq:Airy-O-to-P}
\endeq
where $\mathbf{C}(z)$ is a different piecewise-constant matrix function on each disk as indicated in the same series of tables.  It is easy to check that $\mathbf{P}^p(z)$ satisfies exactly the same jump and analyticity conditions as does the matrix $\mathbf{A}(T^{2/3}W(z))$, where $\mathbf{A}(\xi)$ is defined in Section~\ref{sec:Genus-Zero-Parametrix}.  If we replace $\mathbf{O}(z)$ with $\dot{\mathbf{O}}^\mathrm{out}(z)$ in \eqref{eq:Airy-O-to-P}, then we get instead a matrix function analytic in $D_p$ except where $\xi=T^{2/3}W(z)<0$, across which arc the jump condition \eqref{eq:Airy-jump-last} is satisfied.  It follows that the function 
\eq
\mathbf{H}^p(z):=\dot{\mathbf{O}}^\mathrm{out}(z)\ee^{(-Th(z)+\tfrac{1}{2}\zeta z)\sigma_3}\mathbf{C}(z)\ee^{-\tfrac{1}{2}TW(z)^{3/2}\sigma_3}\frac{1}{\sqrt{2}}\bpm 1 & -1\\1 & 1\epm W(z)^{-\sigma_3/4},\quad z\in D_p
\label{eq:HOut-def}
\endeq
can be extended to $W(z)<0$ from both sides so as to become an analytic function on $D_p$ (any apparent singularity at $z=p$ is easily seen to be removable due to the allowed growth condition on $\dot{\mathbf{O}}^\mathrm{out}(z)$ as $z\to p$).  From Tables~\ref{tab:Inner-y0-kappa0-s1}--\ref{tab:Inner-y1p3i-kappa0-sm1} one can check that the matrix product $\ee^{-Th(z)\sigma_3}\mathbf{C}(z)\ee^{-\tfrac{1}{2}TW(z)^{3/2}\sigma_3}$ is independent of $z$ on each subdomain of $D_p$ on which $\mathbf{C}(z)$ itself is constant, and that this matrix product is oscillatory and uniformly bounded in $T$ since $p\in\{\alpha,\beta,\gamma,\delta\}$ lies on the Stokes graph so $\mathrm{Re}(h(p))=0$ unambiguously.  Therefore, 
if $\zeta\in\mathbb{C}$ is bounded, $y_0$ lies in a given compact subset of the relevant Boutroux domain, and the parameters satisfy $(y_0,\zeta,T)\in\mathcal{S}(\epsilon)$ for some $\epsilon>0$ (see \eqref{eq:Malgrange-cheese}), then it follows from Proposition~\ref{prop:G1-outer} that
$\mathbf{H}^p(z)$ is uniformly bounded on $D_p$ independently of $T>0$.  We define an inner parametrix for $\mathbf{O}(z)$ on $D_p$ by setting
\eq
\dot{\mathbf{O}}^{\mathrm{in},p}(z):=\mathbf{H}^p(z)T^{-\sigma_3/6}\mathbf{A}(T^{2/3}W(z))\ee^{\frac{1}{2}TW(z)^{3/2}\sigma_3}\mathbf{C}(z)^{-1}\ee^{(Th(z)-\frac{1}{2}\zeta z)\sigma_3},\quad z\in D_p.
\endeq
This function satisfies exactly the same analyticity and jump conditions within $D_p$ as does $\mathbf{O}(z)$ itself.  Moreover, by using \eqref{eq:HOut-def} to express $\dot{\mathbf{O}}^\mathrm{out}(z)$ in terms of $\mathbf{H}^p(z)$, from \eqref{eq:Airy-Normalize} we see that 
\eq
\begin{split}
\dot{\mathbf{O}}^{\mathrm{in},p}(z)\dot{\mathbf{O}}^\mathrm{out}(z)^{-1}&=\mathbf{H}^p(z)T^{-\sigma_3/6}
\mathbf{A}(T^{2/3}W(z))\frac{1}{\sqrt{2}}\bpm 1&-1\\1&1\epm (TW(z))^{-\sigma_3/4}T^{\sigma_3/6}\mathbf{H}^p(z)^{-1}\\ &=\mathbf{H}^p(z)\bpm 1+\bo(T^{-2}) & \bo(T^{-1})\\\bo(T^{-1}) & 1+\bo(T^{-2})\epm\mathbf{H}^p(z)^{-1} \\ &= \mathbb{I}+\bo(T^{-1}),\quad z\in\partial D_p,
\end{split}
\label{eq:Genus-One-Airy-Match}
\endeq
since $W(z)$ is independent of $T$ and is bounded away from zero on $\partial D_p$.

\subsubsection{Global parametrix and error estimation}
The \emph{global parametrix} for $\mathbf{O}(z)$ is defined in terms of the outer and inner parametrices as
\eq
\dot{\mathbf{O}}(z):=\begin{cases}\dot{\mathbf{O}}^{\mathrm{in},p}(z),&\quad z\in D_p,\quad p\in \{\alpha,\beta,\gamma,\delta\},\\
\dot{\mathbf{O}}^\mathrm{out}(z),&\quad\text{elsewhere that $\dot{\mathbf{O}}^\mathrm{out}(z)$ is analytic},
\end{cases}
\endeq
which is just the direct analogue of \eqref{eq:GenusZero-global} in the case of four simple roots of the quartic $P(z)$.  The error matrix $\mathbf{E}(z)$ is then defined in terms of 
$\mathbf{O}(z)$ and $\dot{\mathbf{O}}(z)$ by \eqref{eq:GenusZeroE}, exactly as in the simpler setting considered in Section~\ref{sec:GenusZero-error}.  The analysis of the error described in Section~\ref{sec:GenusZero-error} applies nearly verbatim, with only the additional hypothesis that the parameters admit a uniform bound for $\dot{\mathbf{O}}^\mathrm{out}(z)$ when $z\in\mathbb{C}\setminus (D_\alpha\cup D_\beta\cup D_\gamma\cup D_\delta)$.  However, according to Proposition~\ref{prop:G1-outer}, such a bound is guaranteed by a uniform upper bound on $|\zeta|$, confinement of $y_0$ to a fixed compact subset of the Boutroux domain of interest, and the condition $(y_0,\zeta,T)\in\mathcal{S}(\epsilon)$ for some $\epsilon>0$.  Therefore, under these conditions the expansion \eqref{eq:GenusZero-E-expansion} holds with $\mathbf{E}_1=\bo(T^{-1})$ and $\mathbf{E}(0)=\mathbb{I}+\bo(T^{-1})$.

\subsection{Asymptotic formul\ae\ for the rational solutions of Painlev\'e-IV on Boutroux domains}
\label{sec:G1-asymptotic-formulae}
Unraveling the explicit transformations relating $\mathbf{Y}(\lambda)$ and $\mathbf{O}(z)=\mathbf{E}(z)\dot{\mathbf{O}}(z)$ and recalling the definition \eqref{eq:u-ucirc} of $u(x)$ gives, for both families $\mathrm{F}=\mathrm{gH}$ and $\mathrm{F}=\mathrm{gO}$,
\eq
u_\mathrm{F}^{[3]}(x;m,n)=u(x)=T^{\frac{1}{2}}U_\mathrm{F}^{[3]},\quad U_\mathrm{F}^{[3]}=-4s\frac{(\mathbf{E}(0)\dot{\mathbf{O}}^\mathrm{out}(0))_{11}(\mathbf{E}(0)\dot{\mathbf{O}}^\mathrm{out}(0))_{12}}{E_{1,12}+\dot{O}^\mathrm{out}_{1,12}},\quad x=T^{\frac{1}{2}}y_0+T^{-\frac{1}{2}}\zeta.
\endeq
Neglecting the small error terms, we therefore define an approximation for $U_\mathrm{F}^{[3]}$ by the formula
\eq
\begin{split}
\dot{U}^{[3]}_\mathrm{F}=\dot{U}^{[3]}_\mathrm{F}(\zeta;y_0):=&-4s\frac{\dot{O}^\mathrm{out}_{11}(0)\dot{O}^\mathrm{out}_{12}(0)}{\dot{O}^\mathrm{out}_{1,12}}\\
=&-4s\nu\frac{\dot{P}^\mathrm{out}_{11}(0)\dot{P}^\mathrm{out}_{12}(0)}{\dot{P}^\mathrm{out}_{1,12}},
\end{split}
\endeq
where in the second line we have used \eqref{eq:OP-infty-12} and \eqref{eq:OP-zero-product}.
Defining complex phase shifts by 
\eq
\begin{split}
\mathfrak{z}_1^{[3]}&:=-\ii a(0)-\ii a(z_0)\\
\mathfrak{z}_2^{[3]}&:=\ii a(0)-\ii a(z_0)\\
\mathfrak{p}_1^{[3]}&:=-\ii a(\infty)-\ii a(z_0)\\
\mathfrak{p}_2^{[3]}&:=\ii a(\infty)-\ii a(z_0),
\end{split}
\label{eq:dotU-phases}
\endeq
we use \eqref{eq:P-infty-12}--\eqref{eq:P-zero} and $\Theta(-z)=\Theta(z)$ to write $\dot{U}^{[3]}_\mathrm{F}(\zeta;y_0)$ in the form
\eq
\begin{split}
\dot{U}^{[3]}_\mathrm{F}(\zeta;y_0)&=
\psi^{[3]}_\mathrm{F}(y_0)
\frac{\Theta(a(0)+a(z_0)+K-\ii\varphi)\Theta(a(0)-a(z_0)-K+\ii\varphi)}{\Theta(a(\infty)+a(z_0)+K-\ii\varphi)\Theta(a(\infty)-a(z_0)-K+\ii\varphi)}\\
&=\psi_\mathrm{F}^{[3]}(y_0)\frac{\Theta(K-\ii(\varphi-\mathfrak{z}_1^{[3]}))\Theta(K-\ii(\varphi-\mathfrak{z}_2^{[3]}))}
{\Theta(K-\ii(\varphi-\mathfrak{p}_1^{[3]}))\Theta(K-\ii(\varphi-\mathfrak{p}_2^{[3]}))},
\end{split}
\label{eq:U-dot-define}
\endeq
which has the form \eqref{eq:intro-elliptic-theta} after using \eqref{eq:F1-U}, 
where
\eq
\psi_\mathrm{F}^{[3]}(y_0):=-4s\nu\frac{\mathcal{N}_{11}(y_0)\mathcal{N}_{12}(y_0)}{\mathcal{N}(y_0)}.  
\label{eq:psi-def}
\endeq
Note that using \eqref{eq:r-to-R}, \eqref{eq:z0-define}, \eqref{eq:calN-define}--\eqref{eq:calN-jk-define}, and the fact that $R(0)=4s$, the factor $\psi_\mathrm{F}^{[3]}(y_0)$ can be simplified to
\eq
\psi_\mathrm{F}^{[3]}(y_0)=z_0
\frac{\Theta(a(\infty)+a(z_0)+K)\Theta(a(\infty)-a(z_0)-K)}{\Theta(a(0)+a(z_0)+K)\Theta(a(0)-a(z_0)-K)}.
\label{eq:psi-simpler}
\endeq
It will be shown in Section~\ref{sec:Elliptic-DiffEq} below that $\dot{U}_\mathrm{F}^{[3]}(\zeta;y_0)$ is an elliptic function of $\zeta$.  We will have $U_\mathrm{F}^{[3]}=\dot{U}_\mathrm{F}^{[3]}+\bo(T^{-1})$ where $\dot{U}_\mathrm{F}^{[3]}$ is bounded and $1/U_\mathrm{F}^{[3]}=1/\dot{U}_\mathrm{F}^{[3]}+\bo(T^{-1})$ where $1/\dot{U}_\mathrm{F}^{[3]}$ is bounded provided that $(y_0,\zeta,T)\in\mathcal{S}(\epsilon)$ with $y_0$ in a compact subset of the relevant Boutroux domain, say $\mathcal{B}(\kappa)$, and that $\zeta$ is bounded (conditions guaranteeing the error estimates on $\mathbf{E}(z)$ according to Proposition~\ref{prop:G1-outer}).

Similarly, for the rational function $u_\tw(x)$ we find that
\eq
u_\mathrm{F}^{[1]}(x;m,n)=u_\tw(x)=|\Theta_{0,\tw}|^\frac{1}{2}U_\mathrm{F}^{[1]},\quad
U_\mathrm{F}^{[1]}=
-\frac{T^\frac{1}{2}}{|\Theta_{0,\tw}|^\frac{1}{2}}\frac{(\mathbf{E}(0)\dot{\mathbf{O}}^\mathrm{out}(0))_{21}(E_{1,12}+\dot{O}^\mathrm{out}_{1,12})}{(\mathbf{E}(0)\dot{\mathbf{O}}^\mathrm{out}(0))_{11}}.
\endeq
To introduce the appropriate analogue of $\dot{U}_\mathrm{F}^{[3]}$ for this case, recall that $T=|\Theta_0|$ and
that, according to \eqref{eq:Baecklund-3-to-1}, $|\Theta_{0,\tw}|=\tfrac{1}{2}|\Theta_0|(1-s\kappa)$ with $1-s\kappa\neq 0$ for $\kappa\in (-1,1)$.  Therefore, we define
\eq
\dot{U}_\mathrm{F}^{[1]} := -\sqrt{\frac{2}{1-s\kappa}}\frac{\dot{O}^\mathrm{out}_{21}(0)\dot{O}^\mathrm{out}_{1,12}}{\dot{O}^\mathrm{out}_{11}(0)},
\label{eq:dotUtwist}
\endeq
which takes different forms depending on the sign $\nu=\pm 1$ in Table~\ref{tab:outer-uniformize-phases}: 
\eq
\begin{split}
\dot{U}_\mathrm{F}^{[1]}&=-\sqrt{\frac{2}{1-s\kappa}}\frac{\dot{P}^\mathrm{out}_{21}(0)\dot{P}^\mathrm{out}_{1,12}}{\dot{P}^\mathrm{out}_{11}(0)}\\
&=
-\sqrt{\frac{2}{1-s\kappa}}\frac{\mathcal{N}_{21}(y_0)\mathcal{N}(y_0)}{\mathcal{N}_{11}(y_0)}
\frac{\Theta(a(0)-a(z_0)-K-\ii\varphi)\Theta(a(\infty)-a(z_0)-K+\ii\varphi)}
{\Theta(a(0)+a(z_0)+K-\ii\varphi)\Theta(a(\infty)+a(z_0)+K+\ii\varphi)},\quad \nu=1;
\end{split}
\label{eq:dotUtwist-nu-positive}
\endeq
\eq
\begin{split}
\dot{U}_\mathrm{F}^{[1]} &=-\sqrt{\frac{2}{1-s\kappa}}\frac{\dot{P}^\mathrm{out}_{22}(0)\dot{P}^\mathrm{out}_{1,12}}{\dot{P}^\mathrm{out}_{12}(0)}\\
&=
-\sqrt{\frac{2}{1-s\kappa}}\frac{\mathcal{N}_{22}(y_0)\mathcal{N}(y_0)}{\mathcal{N}_{12}(y_0)}
\frac{\Theta(a(0)+a(z_0)+K+\ii\varphi)\Theta(a(\infty)-a(z_0)-K+\ii\varphi)}
{\Theta(a(0)-a(z_0)-K+\ii\varphi)\Theta(a(\infty)+a(z_0)+K+\ii\varphi)},\quad\nu=-1.
\end{split}
\label{eq:dotUtwist-nu-negative}
\endeq
In both cases, by similar arguments as used above to approximate $U_\mathrm{F}^{[3]}$ by $\dot{U}_\mathrm{F}^{[3]}$, we will have $U_\mathrm{F}^{[1]}=\dot{U}_\mathrm{F}^{[1]} + \bo(T^{-1})$ where $\dot{U}_\mathrm{F}^{[1]}$ is bounded and $1/U_\mathrm{F}^{[1]}=1/\dot{U}_\mathrm{F}^{[1]}+\bo(T^{-1})$ where $1/\dot{U}_\mathrm{F}^{[1]}$ is bounded, provided that $y_0$ lies in a compact subset of a Boutroux domain $\mathcal{B}(\kappa)$ for the given value $\kappa\in (-1,1)$, that $\zeta$ is bounded, and that $(y_0,\zeta,T)\in\mathcal{S}(\epsilon)$. The formul\ae\ \eqref{eq:dotUtwist-nu-positive}--\eqref{eq:dotUtwist-nu-negative} can be simplified and put into a universal form as follows.  Defining complex phase shifts by
\eq
\begin{split}
\mathfrak{z}_1^{[1]}&:= -\ii\nu a(0)+\ii a(z_0)\\
\mathfrak{z}_2^{[1]}&:= \ii a(\infty)-\ii a(z_0)\\
\mathfrak{p}_1^{[1]}&:= -\ii\nu a(0)-\ii a(z_0)\\
\mathfrak{p}_2^{[1]}&:= \ii a(\infty)+\ii a(z_0),
\end{split}
\label{eq:dotUtwist-phases}
\endeq
using $\Theta(-z)=\Theta(z)$ we can write
\eq
\begin{split}
\dot{U}_\mathrm{F}^{[1]} 
&= M \frac{\Theta(\nu a(0)-a(z_0)-K-\ii\varphi)\Theta(a(\infty)-a(z_0)-K+\ii\varphi)}{\Theta(\nu a(0)+a(z_0)+K-\ii\varphi)\Theta(a(\infty)+a(z_0)+K+\ii\varphi)}\\
&=M\frac{\Theta(K+\ii(\varphi-\mathfrak{z}_1^{[1]}))
\Theta(K-\ii(\varphi-\mathfrak{z}_2^{[1]}))}
{\Theta(K-\ii(\varphi-\mathfrak{p}_1^{[1]}))\Theta(K+\ii(\varphi-\mathfrak{p}_2^{[1]}))},
\end{split}
\endeq
where, using \eqref{eq:calN-define}--\eqref{eq:calN-jk-define},
\eq
M := -\sqrt{\frac{2}{1-s\kappa}}\frac{\alpha-\beta+\gamma-\delta}{4\ii}\frac{\Theta(a(\infty)+a(z_0)+K)}{\Theta(a(\infty)-a(z_0)-K)}\begin{cases}
\displaystyle \frac{f^\mathrm{OD}(0)}{f^\mathrm{D}(0)}\frac{\Theta(a(0)+a(z_0)+K)}{\Theta(a(0)-a(z_0)-K)},&\quad \nu=1\\
\displaystyle -\frac{f^\mathrm{D}(0)}{f^\mathrm{OD}(0)}\frac{\Theta(a(0)-a(z_0)-K)}{\Theta(a(0)+a(z_0)+K)},&\quad \nu=-1.
\end{cases}
\endeq
It is straightforward to use the definitions \eqref{eq:fD-fOD-define} and $j(z)^2(z-\beta)(z-\delta)=r(z)$ to confirm the identities
\eq
\frac{f^\mathrm{OD}(z)}{f^\mathrm{D}(z)}=\ii\frac{2r(z)-(z-\alpha)(z-\gamma)-(z-\beta)(z-\delta)}{(z-\alpha)(z-\gamma)-(z-\beta)(z-\delta)}\quad\text{and}\quad
\frac{f^\mathrm{D}(z)}{f^\mathrm{OD}(z)}=\ii\frac{2r(z)+(z-\alpha)(z-\gamma)+(z-\beta)(z-\delta)}{(z-\alpha)(z-\gamma)-(z-\beta)(z-\delta)}.
\endeq
Therefore, setting $z=0$, recalling \eqref{eq:z0-define} and using $r(0)=4s\nu$ we put $M$ in the universal form
\eq
M = -\sqrt{\frac{2}{1-s\kappa}}\frac{8s-(\alpha\gamma+\beta\delta)}{4z_0}
\frac{\Theta(a(\infty)+a(z_0)+K)\Theta(\nu a(0)+a(z_0)+K)}{\Theta(a(\infty)-a(z_0)-K)\Theta(\nu a(0)-a(z_0)-K)}.
\label{eq:psi-twist-simpler}
\endeq
Finally, using the identities in \eqref{eq:theta-identities} and $2K=2\pi\ii + H_\omega$ we have
\eq
\dot{U}^{[1]}_\mathrm{F}=\psi_\mathrm{F}^{[1]}\frac{\Theta(K-\ii(\varphi-\mathfrak{z}_1^{[1]}))
\Theta(K-\ii(\varphi-\mathfrak{z}_2^{[1]}))}
{\Theta(K-\ii(\varphi-\mathfrak{p}_1^{[1]}))\Theta(K-\ii(\varphi-\mathfrak{p}_2^{[1]}))},\quad
\psi_\mathrm{F}^{[1]}:=\ee^{\ii(\mathfrak{z}_1^{[1]}-\mathfrak{p}_2^{[1]})}M.
\label{eq:U-dot-twist-final}
\endeq
This also has the form \eqref{eq:intro-elliptic-theta} except that the theta function has the same parameter as in the approximation of the type-$3$ rational solutions of Painlev\'e-IV whereas we instead expect to see the theta function for a different elliptic curve associated to the type-$1$ parameters.  This final point will be clarified in Section~\ref{sec:related-Boutroux-curves} below; see \eqref{eq:equal-H-omegas}.
To use $\dot{U}_\mathrm{F}^{[1]}$ as an approximation of $U_\mathrm{F}^{[1]}=|\Theta_{0,\tw}|^{-\frac{1}{2}}u_\mathrm{F}^{[1]}(x;m,n)=|\Theta_{0,\mathrm{F}}^{[1]}(m,n)|^{-\frac{1}{2}}u_\mathrm{F}^{[1]}(x;m,n)$ for the type-$1$ function in the family $\mathrm{F}=\mathrm{gH}$ or $\mathrm{F}=\mathrm{gO}$, the variables and parameters in $\dot{U}_\mathrm{F}^{[1]}$ need to be carefully interpreted.  Here we recall Remark~\ref{rem:other-parameters}, which specifies that the parameters $T$, $s$, and $\kappa$ should be expressed in terms of the indices $(m,n)$ by \eqref{eq:gH-type1-RHP-parameters} or \eqref{eq:gO-type1-RHP-parameters} for the gH and gO families respectively.  Also, the variables $y_0$ and $\zeta$ should be rescaled by making the replacements \eqref{eq:y0zeta-type1-substitutions}--\eqref{eq:y0zeta-type1-substitutions-factors}.  Then finally we have a well-defined function $\dot{U}_\mathrm{F}^{[1]}=\dot{U}_\mathrm{F}^{[1]}(\zeta;y_0)$ where the arguments $\zeta$ and $y_0$ refer to the variables \emph{after} the indicated replacements have been made, and $\dot{U}_\mathrm{F}^{[1]}(\zeta;y_0)$ is an accurate approximation of $|\Theta_{0,\mathrm{F}}^{[1]}(m,n)|^{-\frac{1}{2}}u_\mathrm{F}^{[1]}(x;m,n)$ when $x=|\Theta_{0,\mathrm{F}}^{[1]}(m,n)|^\frac{1}{2}y_0+|\Theta_{0,\mathrm{F}}^{[1]}(m,n)|^{-\frac{1}{2}}\zeta$.

The original variable $y_0$ lies in a Boutroux domain $\mathcal{B}(\kappa)$ for $\kappa\in (-1,1)$, but after replacing the variables $y_0$ and $\zeta$ by their rescaled versions the approximation $U_\mathrm{F}^{[1]}=\dot{U}_\mathrm{F}^{[1]}(\zeta;y_0)+\bo(T^{-1})$ holds for $y_0$ in a homothetic dilation of $\mathcal{B}(\kappa)$.  Moreover, the rescaled domain should properly be associated to the leading term $I^{-s}(\kappa)=-(\kappa+3s)/(1-s\kappa)$ of $\kappa_\tw=-\Theta_{\infty,\tw}/|\Theta_{0,\tw}|=-\Theta_{\infty,\mathrm{F}}^{[1]}(m,n)/|\Theta_{0,\mathrm{F}}^{[1]}(m,n)|$, as the latter is the natural value of $\kappa$ associated to the rational solution $u^{[1]}_\mathrm{F}(x;m,n)$ according to the scalings in \eqref{eq:Thetas-scaling}.  Note that $\kappa\in (-1,1)$ implies that $|I^{-s}(\kappa)|>1$.
Hence, referring to Definition~\ref{def:ExteriorDomains} we make the following definition.
\begin{definition}
Let $\rectangle=\rectangle(\kappa)$, $\TR=\TR(\kappa)$, $\TI=\TI(\kappa)$ be the Boutroux domains for $\kappa\in (-1,1)$ as explained in Sections~\ref{sec:rectangle-domain}, \ref{sec:TR}, and \ref{sec:TI} respectively.  If $\pm\kappa>1$, then $\rectangle(\kappa)$, $\TR(\kappa)$, and $\TI(\kappa)$ are defined instead by
\eq
\rectangle(\kappa):=\sqrt{\frac{1\pm\kappa}{2}}\rectangle(I^\pm(\kappa)),\quad
\TR(\kappa):=\sqrt{\frac{1\pm\kappa}{2}}\TR(I^\pm(\kappa)),\quad\text{and}\quad
\TI(\kappa):=\sqrt{\frac{1\pm\kappa}{2}}\TI(I^\pm(\kappa)),
\endeq
where the M\"obius transformations $I^\pm$ are given in \eqref{eq:DomainExtend} and $I^\pm(\kappa)\in (-1,1)$ for $\pm\kappa>1$.
\label{def:BoutrouxDomainsExtend}
\end{definition}

%
%

\subsection{Differential equations satisfied by the approximations}
\label{sec:Elliptic-DiffEq}
\subsubsection{Derivation of the differential equation for $\dot{U}_\mathrm{F}^{[3]}(\zeta;y_0)$}
We now show that the function $\dot{U}(\zeta)=\dot{U}_\mathrm{F}^{[3]}(\zeta;y_0)$ defined by \eqref{eq:U-dot-define} satisfies exactly the differential equation \eqref{eq:elliptic-ODE} in which $E$ depends on $y_0$ via the Boutroux conditions \eqref{eq:Boutroux}.  Evaluating $\dot{\mathbf{O}}^\mathrm{out}(z)$ at $z=0$ yields a matrix function of $\zeta$ that we will write as $\dot{\mathbf{O}}^\mathrm{out}(0)=\mathbf{Z}(\zeta)$ in this section.  Likewise, to emphasize the dependence on $\zeta$ in the expansion coefficients in \eqref{eq:OdotOut-elliptic-z-large} we will write $\dot{\mathbf{O}}^\mathrm{out}_k=\dot{\mathbf{O}}^\mathrm{out}_k(\zeta)$.  

Fixing $y_0\in \rectangle\cup\TR\cup\TI$, we observe that the matrix $\mathbf{F}(\zeta;z):=\dot{\mathbf{O}}^\mathrm{out}(z)\ee^{\zeta z\sigma_3/2}$ has jump matrices that are independent of $\zeta$, so since $\det(\mathbf{F}(\zeta;z))=1$, $\mathbf{F}'(\zeta;z)\mathbf{F}(\zeta;z)^{-1}$ is an entire function of $z$.  Using the convergent Laurent expansion \eqref{eq:OdotOut-elliptic-z-large} and its term-by-term derivative with respect to $\zeta$ shows that in fact this entire function is linear in $z$:  $\mathbf{F}'(\zeta;z)\mathbf{F}(\zeta;z)^{-1}=\tfrac{1}{2}z\sigma_3+\tfrac{1}{2}[\dot{\mathbf{O}}^\mathrm{out}_1(\zeta),\sigma_3]$.  Equivalently, for each $z$ not on the jump contour, the outer parametrix satisfies the differential equation
\eq
\frac{\dd\dot{\mathbf{O}}^\mathrm{out}}{\dd\zeta}=\frac{1}{2}z[\sigma_3,\dot{\mathbf{O}}^\mathrm{out}]+\frac{1}{2}[\dot{\mathbf{O}}^\mathrm{out}_1(\zeta),\sigma_3]\dot{\mathbf{O}}^\mathrm{out}.
\label{eq:OdotOut-DE}
\endeq
In particular, upon setting $z=0$, $\dot{\mathbf{O}}^\mathrm{out}$ becomes $\mathbf{Z}(\zeta)$, and we deduce that \eqref{eq:OdotOut-DE} implies that
\eq
\frac{\dd}{\dd\zeta}(Z_{11}(\zeta)Z_{12}(\zeta))=-\dot{O}^\mathrm{out}_{1,12}(\zeta)(Z_{11}(\zeta)Z_{22}(\zeta)+Z_{12}(\zeta)Z_{21}(\zeta)).
\label{eq:OdotOut-zero-DE}
\endeq
Similarly, using again the Laurent expansion \eqref{eq:OdotOut-elliptic-z-large} and taking the terms in \eqref{eq:OdotOut-DE} proportional to $z^{-1}$ we find that
\eq
\frac{\dd\dot{O}^\mathrm{out}_{1,12}(\zeta)}{\dd \zeta}=\dot{O}^\mathrm{out}_{2,12}(\zeta)-\dot{O}^\mathrm{out}_{1,12}(\zeta)\dot{O}^\mathrm{out}_{1,22}(\zeta).
\label{eq:OdotOut-infty-DE}
\endeq

Next, we make the following observation:  if $\mathbf{V}(z)$ is the jump matrix for the outer parametrix $\dot{\mathbf{O}}^\mathrm{out}(z)$, then on arcs of the jump contour where $R(z)$ is continuous we have the form $\mathbf{V}(z)=\mathbf{D}(a)$ for some constant $a\neq 0$ and therefore $\mathbf{V}(z)\sigma_3\mathbf{V}(z)^{-1}=\sigma_3$, while on arcs of the jump contour across which $R(z)$ changes sign we have instead the $\mathbf{V}(z)=\mathbf{T}(a\ee^{-\zeta z})$ for some constant $a\neq 0$ and therefore $\mathbf{V}(z)\sigma_3\mathbf{V}(z)^{-1}=-\sigma_3$. It follows that the matrix function 
\eq
\mathbf{G}(z):= R(z)\dot{\mathbf{O}}^{\mathrm{out}}(z)\sigma_3\dot{\mathbf{O}}^{\mathrm{out}}(z)^{-1}
\label{eq:G-matrix-def}
\endeq
is analytic except possibly on the jump contour, on which it is continuous except possibly for the endpoints of each maximal arc.  Those endpoints are the roots of the quartic polynomial $P(z)=R(z)^2$, and since $\dot{\mathbf{O}}^{\mathrm{out}}(z)$ blows up at these points like a negative one-fourth power while $\det(\dot{\mathbf{O}}^{\mathrm{out}}(z))=1$, it follows by Morera's Theorem that $\mathbf{G}(z)$ is an entire function of $z$.  From the asymptotic behavior of the factors (see \eqref{eq:OdotOut-elliptic-z-large}), it is clear that $\mathbf{G}(z)$ is in fact a quadratic matrix-valued polynomial in $z$.  Using \eqref{eq:OdotOut-elliptic-z-large} and the expansion $R(z)=z^2 + 2y_0z+4\kappa + \bo(z^{-1})$ as $z\to\infty$ to calculate the polynomial part of the right-hand side of \eqref{eq:G-matrix-def} gives the representation
\eq
\mathbf{G}(z)=\sigma_3 z^2 + (2y_0\sigma_3+[\dot{\mathbf{O}}^\mathrm{out}_1(\zeta),\sigma_3])z + \mathbf{G}(0),
\label{eq:G-poly}
\endeq
where
\eq
\mathbf{G}(0):=4\kappa\sigma_3+2y_0[\dot{\mathbf{O}}^\mathrm{out}_1(\zeta),\sigma_3]+[\sigma_3,\dot{\mathbf{O}}^\mathrm{out}_1(\zeta)]\dot{\mathbf{O}}^\mathrm{out}_1(\zeta) + [\dot{\mathbf{O}}^\mathrm{out}_2(\zeta),\sigma_3].
\label{eq:G(0)-z-large}
\endeq
On the other hand, setting $z=0$ on the right-hand side of \eqref{eq:G-matrix-def} and using $R(0)=4s$ 
gives an equivalent representation for $\mathbf{G}(0)$:
\eq
\mathbf{G}(0)=4s\mathbf{Z}(\zeta)\sigma_3\mathbf{Z}(\zeta)^{-1}.
\label{eq:G(0)-z-small}
\endeq
Comparing the $(1,2)$-entry in the equivalent representations \eqref{eq:G(0)-z-large}--\eqref{eq:G(0)-z-small} gives the identity 
\eq
\dot{O}^\mathrm{out}_{2,12}(\zeta)-\dot{O}^\mathrm{out}_{1,12}(\zeta)\dot{O}^\mathrm{out}_{1,22}(\zeta)=4sZ_{11}(\zeta)Z_{12}(\zeta)-2y_0\dot{O}^\mathrm{out}_{1,12}(\zeta).
\endeq
Using this identity and combining \eqref{eq:OdotOut-zero-DE}--\eqref{eq:OdotOut-infty-DE} with the definition \eqref{eq:U-dot-define} shows that
\eq
\begin{split}
\dot{U}'(\zeta)&=\dot{U}(\zeta)^2 + 2y_0\dot{U}(\zeta)+4s(Z_{11}(\zeta)Z_{22}(\zeta)+Z_{12}(\zeta)Z_{21}(\zeta))\\
&= \dot{U}(\zeta)^2 + 2y_0\dot{U}(\zeta) + G_{11}(0),
\end{split}
\label{eq:U-dot-DE-1}
\endeq
where on the second line we used the $(1,1)$-entry of \eqref{eq:G(0)-z-small}.  Therefore also
\eq
\begin{split}
\dot{U}'(\zeta)^2 &= \dot{U}(\zeta)^4 + 4y_0\dot{U}(\zeta)^3 +(4y_0^2+2G_{11}(0))\dot{U}(\zeta)^2 + 4y_0G_{11}(0)\dot{U}(\zeta)+G_{11}(0)^2\\
&=\dot{U}(\zeta)^4 + 4y_0\dot{U}(\zeta)^3 +(4y_0^2+2G_{11}(0))\dot{U}(\zeta)^2 + 4y_0G_{11}(0)\dot{U}(\zeta)+16-G_{12}(0)G_{21}(0),
\end{split}
\label{eq:U-dot-DE-2}
\endeq
where on the second line we used that $\mathrm{tr}(\mathbf{G}(0))=0$ and $\det(\mathbf{G}(0))=-16$, both of which follow from \eqref{eq:G(0)-z-small}.

Now, since $\sigma_3^2=\mathbb{I}$ the definition \eqref{eq:G-matrix-def} shows that $\mathbf{G}(z)^2$ is a \emph{scalar} polynomial, namely $\mathbf{G}(z)^2=R(z)^2\mathbb{I}=P(z)\mathbb{I}$.  Squaring \eqref{eq:G-poly} and taking (without loss of generality) the $(1,1)$-entry gives
\begin{multline}
P(z)=z^4 + 4y_0z^3 + (4y_0^2+2G_{11}(0)-4\dot{O}^\mathrm{out}_{1,12}(\zeta)\dot{O}^\mathrm{out}_{1,21}(\zeta))z^2 \\{}+ (4y_0G_{11}(0)+2\dot{O}^\mathrm{out}_{1,21}(\zeta)G_{12}(0)-2\dot{O}^\mathrm{out}_{1,12}(\zeta)G_{21}(0))z+16.
\end{multline}
Substituting $z=\dot{U}(\zeta)$ and subtracting from \eqref{eq:U-dot-DE-2} gives
\eq
\dot{U}'(\zeta)^2-P(\dot{U}(\zeta))=4\dot{O}^\mathrm{out}_{1,12}(\zeta)\dot{O}^\mathrm{out}_{1,21}(\zeta)\dot{U}(\zeta)^2+2(\dot{O}^\mathrm{out}_{1,12}(\zeta)G_{21}(0)-\dot{O}^\mathrm{out}_{1,21}(\zeta)G_{12}(0))\dot{U}(\zeta) - G_{12}(0)G_{21}(0).
\endeq
Recalling the definition \eqref{eq:U-dot-define} of $\dot{U}(\zeta)$ and using the matrix elements of \eqref{eq:G(0)-z-small} then shows that the right-hand side vanishes identically in $\zeta$, which completes the proof of the claim.  Hence $\dot{U}(\zeta)=\dot{U}^{[3]}_\mathrm{F}(\zeta;y_0)$ can be written in the form $f(\zeta-\zeta_0)$ for some $\zeta_0$ independent of $\zeta$, where $f(\zeta)$ is the unique solution of the differential equation \eqref{eq:elliptic-ODE} satisfying $f(0)=0$ and $f'(0)=4$.

\subsubsection{Derivation of the differential equation for $\dot{U}_\mathrm{F}^{[1]}(\zeta;y_0)$}
For convenience, let us relabel the arguments of $\dot{U}_\mathrm{F}^{[1]}$ as $\zeta_\tw$ and $y_{0,\tw}$.
We now show that $\dot{U}_\tw(\zeta_\tw)=\dot{U}_\mathrm{F}^{[1]}(\zeta_\tw;y_{0,\tw})$ is also an elliptic function of its argument $\zeta_\tw$, solving a closely related differential equation.  To see this, we start from the definition \eqref{eq:U-dot-define} of $\dot{U}(\zeta)=\dot{U}^{[3]}_\mathrm{F}(\zeta;y_0)$ and use \eqref{eq:OdotOut-zero-DE} (with $\det(\mathbf{Z}(\zeta))=1$ on the right-hand side to eliminate $Z_{11}(\zeta)Z_{22}(\zeta)$) and \eqref{eq:OdotOut-infty-DE} to find the differential identity
\eq
\frac{1}{2\dot{U}(\zeta)}\frac{\dd\dot{U}}{\dd\zeta}(\zeta)-\frac{2s}{\dot{U}(\zeta)}-y_0-\frac{1}{2}\dot{U}(\zeta) = -\frac{Z_{21}(\zeta)\dot{O}^\mathrm{out}_{1,12}(\zeta)}{Z_{11}(\zeta)} = 
\sqrt{\frac{1-s\kappa}{2}}\dot{U}_\tw(\zeta_\tw),
\label{eq:dotU-dotUtwist}
\endeq
where in the second equality we used the definition \eqref{eq:dotUtwist}.  Now, using the fact shown above that $\dot{U}(\zeta)$ satisfies the first-order equation \eqref{eq:elliptic-ODE} and hence (by isolating the constant $E$ and taking a derivative) the second-order equation \eqref{eq:Approximating-ODE-Second-Order}, it is straightforward to check that $\dot{U}_\tw$ also satisfies a related second-order equation:
\eq
\frac{\dd^2\dot{U}_\tw}{\dd\zeta^2}=\frac{1}{2\dot{U}_\tw}\left(\frac{\dd\dot{U}_\tw}{\dd\zeta}\right)^2 +\frac{3}{4}(1-s\kappa)\dot{U}_\tw^3+4y_0\sqrt{\frac{1-s\kappa}{2}}\dot{U}_\tw^2 + (2y_0^2-6s-2\kappa)\dot{U}_\tw - \frac{4(1-s\kappa)}{\dot{U}_\tw}.
\endeq
But now recall that $\dot{U}_\tw$ should be considered as a function of $\zeta_\tw=\sqrt{\tfrac{1}{2}(1-s\kappa)}\zeta$, so by the chain rule,
\eq
\frac{\dd^2\dot{U}_\tw}{\dd\zeta_\tw^2}=\frac{1}{2\dot{U}_\tw}
\left(\frac{\dd\dot{U}_\tw}{\dd\zeta_\tw}\right)^2 + \frac{3}{2}\dot{U}_\tw^3 + 4y_0
\sqrt{\frac{2}{1-s\kappa}}
\dot{U}_\tw^2 + \left(2y_0^2
\frac{2}{1-s\kappa}
+4\left[-\frac{\kappa+3s}{1-s\kappa}\right]\right)\dot{U}_\tw-\frac{8}{\dot{U}_\tw}.
\label{eq:twisted-second-order}
\endeq
The differential equation \eqref{eq:twisted-second-order} matches exactly the form of \eqref{eq:Approximating-ODE-Second-Order} in which only 
$y_0$ is replaced with $y_{0,\tw}$ determined by the relation $y_0=\sqrt{\tfrac{1}{2}(1-s\kappa)}y_{0,\tw}$ and 
$\kappa$ is replaced with with $I^{-s}(\kappa)=-(\kappa+3s)/(1-s\kappa)$.  Therefore, as in Section~\ref{sec:nonequilibrium}, this equation can be integrated up to the form
\eq
\left(\frac{\dd\dot{U}_\tw}{\dd\zeta_\tw}\right)^2=P_\tw(\dot{U}_\tw):=\dot{U}_\tw^4 + 4
y_{0,\tw}
\dot{U}_\tw^3 + 2(2
y_{0,\tw}^2
 + 4I^{-s}(\kappa))\dot{U}_\tw^2 + 2E_\tw\dot{U}_\tw + 16,
\label{eq:elliptic-ODE-twist}
\endeq
which should be compared with \eqref{eq:elliptic-ODE}.  Here $E_\tw$ is a constant of integration.  Solving for $E_\tw$, eliminating $\dot{U}_\tw$ and its derivative in favor of derivatives of $\dot{U}(\zeta)$ using \eqref{eq:dotU-dotUtwist}, and finally using the differential equations \eqref{eq:Approximating-ODE-Second-Order} and \eqref{eq:elliptic-ODE} satisfied by $\dot{U}(\zeta)$ (and the chain rule again) one finds the explicit relation between $E_\tw$ and $E$:
\eq
E_\tw = \left(\frac{2}{1-s\kappa}\right)^\frac{3}{2}\left[E-4y_0(\kappa+s)\right].
\label{eq:E-twist}
\endeq

\begin{remark}
Observe that the relation \eqref{eq:dotU-dotUtwist} expressing $\dot{U}_\tw$ explicitly in terms of $\dot{U}$ is a limiting form of the non-isomonodromic B\"acklund transformation \eqref{eq:Baecklund-3-to-1} expressing $u_\tw(x)$ explicitly in terms of $u(x)$.
\label{rem:limitingBaecklund}
\end{remark}

\subsubsection{Relation between the corresponding spectral curves}
\label{sec:related-Boutroux-curves}
If we use the differential equation \eqref{eq:elliptic-ODE} to eliminate the derivative in \eqref{eq:dotU-dotUtwist}, we obtain a non-differential relation between $\dot{U}$ and $\dot{U}_\tw$:
\eq
\dot{U}_\tw=-\sqrt{\frac{2}{1-s\kappa}}\left(\frac{1}{2}\dot{U}+y_0+\frac{2s}{\dot{U}}+\frac{1}{2}\frac{w}{\dot{U}}\right),\quad w^2=P(\dot{U}).
\label{eq:dotUtwist-algebraic}
\endeq
Isolating $w$ and squaring leads to the bi-quadratic relation
\eq
2\sqrt{\frac{1-s\kappa}{2}}\dot{U}_\tw\dot{U}^2 + \left[4s(1-s\kappa)+4y_0\sqrt{\frac{1-s\kappa}{2}}\dot{U}_\tw+(1-s\kappa)\dot{U}_\tw^2\right]\dot{U} + \left[8s\sqrt{\frac{1-s\kappa}{2}}\dot{U}_\tw + 8sy_0 - E\right]=0,
\endeq
where we used the fact that $\dot{U}$ does not vanish identically to cancel a common factor.  Solving now instead for $\dot{U}$ and using \eqref{eq:E-twist} and $y_0=\sqrt{\tfrac{1}{2}(1-s\kappa)}y_{0,\tw}$, we obtain
\eq
\dot{U}=-\sqrt{\frac{1-s\kappa}{2}}\left(\frac{1}{2}\dot{U}_\tw+y_{0,\tw} + \frac{2s}{\dot{U}_\tw} +\frac{1}{2}\frac{w_\tw}{\dot{U}_\tw}\right),\quad w_\tw^2=P_\tw(\dot{U}_\tw).
\label{eq:dotU-algebraic}
\endeq
Solving \eqref{eq:dotU-algebraic} for $w_\tw$,  eliminating $\dot{U}_\tw$ using \eqref{eq:dotUtwist-algebraic} and expressing $y_{0,\tw}$ in terms of $y_0$ gives
\eq
w_\tw=-\frac{1}{2(1-s\kappa)\dot{U}^2}\left[w^2-3\dot{U}^4-2w\dot{U}^2-8y_0\dot{U}^3 + 8sw-4(y_0^2+2\kappa)\dot{U}^2+16\right].
\label{eq:wtwist-algebraic}
\endeq
Likewise, solving \eqref{eq:dotUtwist-algebraic} for $w$, eliminating $\dot{U}$ using \eqref{eq:dotU-algebraic} and expressing $y_0$ in terms of $y_{0,\tw}$ gives
\eq
w=-\frac{1-s\kappa}{8\dot{U}_\tw^2}\left[w_\tw^2-3\dot{U}_\tw^4-2w_\tw\dot{U}_\tw^2-8y_{0,\tw}\dot{U}_\tw^3+8sw_\tw-4(y_{0,\tw}^2+2I^{-s}(\kappa))\dot{U}_\tw^2 + 16\right].
\label{eq:w-algebraic}
\endeq
Using $s=\pm 1$, it is straightforward to check that the equations \eqref{eq:dotUtwist-algebraic} and \eqref{eq:wtwist-algebraic} constitute a birational transformation $\mathcal{T}:\mathbb{C}^2\to\mathbb{C}^2$ with action $(\dot{U}_\tw,w_\tw)=\mathcal{T}(\dot{U},w)$ and with explicit inverse given by \eqref{eq:dotU-algebraic} and \eqref{eq:w-algebraic}.  Restricting to the Riemann surface $\mathcal{R}$ shows that $\mathcal{T}:\mathcal{R}\to\mathcal{R}_\tw$ and that $\mathcal{T}^{-1}:\mathcal{R}_\tw\to\mathcal{R}$.  Therefore the Riemann surfaces $\mathcal{R}$ and $\mathcal{R}_\tw$ are conformally equivalent.  In particular, the pull-back under $\mathcal{T}$ of the holomorphic differential $w_\tw^{-1}\dd\dot{U}_\tw$ on $\mathcal{R}_\tw$ is a holomorphic differential on $\mathcal{R}$:  taking the differential of \eqref{eq:dotUtwist-algebraic} and using $2w\,\dd w=P'(\dot{U})\,\dd\dot{U}$ and $\dot{U}P'(\dot{U})=w^2 + 3\dot{U}^4+8y_0\dot{U}^3+4(y_0^2+2\kappa)\dot{U}^2-16$ gives
\eq
\begin{split}
\mathcal{T}^*\frac{\dd\dot{U}_\tw}{w_\tw}&=-\sqrt{\frac{2}{1-s\kappa}}\left(\frac{1}{2}-\frac{2s}{\dot{U}^2}-\frac{w}{2\dot{U}^2}+\frac{P'(\dot{U})}{4w\dot{U}}\right)\,\frac{\dd\dot{U}}{w_\tw}\\
&=\sqrt{\frac{2}{1-s\kappa}}\frac{w^2-3\dot{U}^4-2w\dot{U}^2-8y_0\dot{U}^3+8sw-4(y_0^2+2\kappa)\dot{U}^2+16}{4ww_\tw\dot{U}^2}\,\dd\dot{U}\\
&= -\sqrt{\frac{1-s\kappa}{2}}\frac{\dd\dot{U}}{w}
\end{split}
\endeq
after comparing with \eqref{eq:wtwist-algebraic}.  Therefore, if $(\mathfrak{a},\mathfrak{b})$ is a canonical homology basis on $\mathcal{R}$, then $(\mathfrak{a}_\tw,\mathfrak{b}_\tw)=(\mathcal{T}\mathfrak{a},\mathcal{T}\mathfrak{b})$ is a canonical homology basis on $\mathcal{R}_\tw$, and the corresponding theta function parameters are equal:
\eq
H_{\omega,\tw}=2\pi\ii\frac{\displaystyle\oint_{\mathfrak{b}_\tw}\frac{\dd\dot{U}_\tw}{w_\tw}}{
\displaystyle\oint_{\mathfrak{a}_\tw}\frac{\dd\dot{U}_\tw}{w_\tw}} = 
2\pi\ii\frac{\displaystyle\oint_\mathfrak{b}\frac{\dd\dot{U}}{w}}{\displaystyle\oint_\mathfrak{a}\frac{\dd\dot{U}}{w}}=H_\omega.
\label{eq:equal-H-omegas}
\endeq

Likewise, if $C$ denotes any cycle on the Riemann surface $\mathcal{R}$ of the equation $w^2=P(\dot{U})$, then by integration by parts
\eq
\oint_C \dot{U}_\tw\,\dd \dot{U} = -\oint_{C_\tw}\dot{U}\,\dd \dot{U}_\tw,
\endeq
where $C_\tw$ is the corresponding cycle on the Riemann surface $\mathcal{R}_\tw$ of $w_\tw^2=P_\tw(\dot{U}_\tw)$.  Therefore, using \eqref{eq:dotUtwist-algebraic} and \eqref{eq:dotU-algebraic} we get
\eq
-\sqrt{\frac{2}{1-s\kappa}}\oint_C\left(\frac{1}{2}\dot{U}+y_0+\frac{2s}{\dot{U}}+\frac{1}{2}\frac{w}{\dot{U}}\right)\dd\dot{U}=
\sqrt{\frac{1-s\kappa}{2}}\oint_{C_\tw}\left(\frac{1}{2}\dot{U}_\tw+y_{0,\tw}+\frac{2s}{\dot{U}_\tw}
+\frac{1}{2}\frac{w_\tw}{\dot{U}_\tw}\right)\,\dd\dot{U}_\tw.
\endeq
The first two terms in the integrand on each side of this equation contribute nothing by Cauchy's Theorem.  The third term on each side can only contribute a purely imaginary quantity as the residue of the pole is real in each case.  Therefore, taking the real part we obtain
\eq
\sqrt{\frac{2}{1-s\kappa}}\mathrm{Re}\left(\oint_C\frac{w}{\dot{U}}\,\dd\dot{U}\right)=-\sqrt{\frac{1-s\kappa}{2}}\mathrm{Re}\left(\oint_{C_\tw}\frac{w_\tw}{\dot{U}_\tw}\,\dd\dot{U}_\tw\right).
\label{eq:twist-Boutroux-identity}
\endeq
It follows that $\mathcal{R}$ with parameter $E$ is a Boutroux curve if and only if $\mathcal{R}_\tw$ with parameter $E_\tw$ is a Boutroux curve.  This finally motivates the following definition of $E=E(y_0;\kappa)$ for all $\kappa\in\mathbb{R}\setminus\{-1,1\}$.
\begin{definition}[Parameter $E=E(y_0;\kappa)$ for Boutroux curves]
If $\kappa\in (-1,1)$, $E=E(y_0;\kappa)$ is defined as explained in Sections~\ref{sec:rectangle-domain}--\ref{sec:TI}, extended to $y_0\in (-\TR(\kappa))\cup(-\TI(\kappa))$ by odd reflection.  If instead $\pm\kappa>1$, then $E(y_0;\kappa)$ is defined by
\eq
E(y_0;\kappa):=\left(\frac{2}{1\pm\kappa}\right)^\frac{3}{2}\left[E\left(\sqrt{\tfrac{1}{2}(1\pm\kappa)}y_0;I^\pm(\kappa)\right)-4(\kappa\mp 1)\sqrt{\tfrac{1}{2}(1\pm\kappa)}y_0\right],\quad I^\pm(\kappa):=-\frac{\kappa\mp 3}{1\pm\kappa},\quad\pm\kappa>1.
\endeq
The M\"obius transformations $I^\pm$ are both involutions and $I^+:(1,+\infty)\to (-1,1)$ while $I^-:(-\infty,-1)\to (-1,1)$.
\label{def:E}
\end{definition}
We now give the proof of Proposition~\ref{prop:E}.
\begin{proof}[Proof of Proposition~\ref{prop:E}]
That $E(y_0;\kappa)$ satisfies the Boutroux equations \eqref{eq:intro-Boutroux} if $\kappa\in (-1,1)$ has already been established in Sections~\ref{sec:rectangle-domain}--\ref{sec:TI}.  The identity \eqref{eq:twist-Boutroux-identity} then shows that when the definition of $E$ is extended to $\pm \kappa>1$ as indicated in Definition~\ref{def:E} the same holds for $|\kappa|>1$.  Similarly, the smoothness and continuity properties of $E$ already established for $\kappa\in (-1,1)$ directly extend to $|\kappa|>1$ by the definition.  The symmetry \eqref{eq:E-twist-symmetry} is just a direct consequence of Definition~\ref{def:E}, while \eqref{eq:E-flip-symmetry} holds for $\kappa\in (-1,1)$ as was shown in Sections~\ref{sec:rectangle-domain}--\ref{sec:TI} and then extends to $|\kappa|>1$ again just from the definition.
\end{proof}

Finally, we have shown that, like $\dot{U}^{[3]}_\mathrm{F}(\zeta;y_0)$, $\dot{U}^{[1]}_\mathrm{F}(\zeta;y_0)$ satisfies the differential equation \eqref{eq:elliptic-ODE} for a value of $\kappa$ very close (see Remark~\ref{rem:SlightlyDeformedSpectralCurve} below) to its ``native'' value of $-\Theta_{\infty,\mathrm{F}}^{[1]}(m,n)/|\Theta_{0,\mathrm{F}}^{[1]}(m,n)|$ with corresponding parameter $E=E(y_0;\kappa)$ chosen so that the underlying Riemann surface is a Boutroux curve.  Hence it can be written in the form $f(\zeta-\zeta_0)$ for $\zeta_0$ independent of $\zeta$, where again $f(\zeta)$ is the unique solution of \eqref{eq:elliptic-ODE} with $f(0)=0$ and $f'(0)=4$.
Note that in the gO case, the approximations defined above for $y_0\in\TR(\kappa)$ or $y_0\in\TI(\kappa)$ are easily extended to $-y_0\in\TR(\kappa)$ and $-y_0\in\TI(\kappa)$ by odd reflection:  $(\dot{U},y_0,\zeta)\mapsto (-\dot{U},-y_0,-\zeta)$, which is consistent with the exact symmetry of the rational solutions as indicated in Proposition~\ref{prop:FirstQuadrant}.
To get an approximation in the only remaining case of type $j=2$, we use the symmetry \eqref{eq:symmetry-1-2} to express $u^{[2]}_\mathrm{F}(\cdot;m,n)$ in terms of $u^{[1]}_\mathrm{F}(\cdot;n,m)$, rotating both $y_0$ and $\zeta$ by a quarter turn in the complex plane:  $\dot{U}^{[2]}_\mathrm{F}(\zeta;y_0):=\ii\dot{U}^{[1]}_\mathrm{F}(-\ii\zeta;-\ii y_0)$.  Note that the approximating differential equation \eqref{eq:elliptic-ODE} is invariant under $(f,\zeta,y_0,\kappa)\mapsto (-\ii f,-\ii\zeta,-\ii y_0,-\kappa)$ given the symmetry \eqref{eq:E-flip-symmetry} in Proposition~\ref{prop:E}.  Hence the type $j=2$ approximation is also an elliptic function of the form $f(\zeta-\zeta_0)$.  

\begin{remark}
A subtle point is that when $\kappa$ and $s$ are defined by \eqref{eq:gH-type1-RHP-parameters} or \eqref{eq:gO-type1-RHP-parameters} as necessary to study $u_\mathrm{gH}^{[1]}(x;m,n)$ or $u_\mathrm{gO}^{[1]}(x;m,n)$ respectively according to Remark~\ref{rem:other-parameters}, then the parameter $I^{-s}(\kappa)$ appearing in the differential equation 
\eqref{eq:elliptic-ODE-twist} satisfied by the approximation $\dot{U}^{[1]}_\mathrm{F}(\zeta;y_0)$ is not exactly equal to the value of $\kappa$ for which an elliptic function solution of the differential equation \eqref{eq:elliptic-ODE} is asserted in Theorems~\ref{thm:Hermite-elliptic} and \ref{thm:Okamoto-elliptic} as a valid approximation.  The latter value is the ratio $-\Theta_{\infty,\mathrm{F}}^{[1]}(m,n)/|\Theta_{0,\mathrm{F}}^{[1]}(m,n)|$.  In fact for the gH case we have $s=1$ and taking $\kappa$ in terms of $m,n$ from \eqref{eq:gH-type1-RHP-parameters},
\eq
-\frac{\Theta_{\infty,\mathrm{gH}}^{[1]}(m,n)}{|\Theta_{0,\mathrm{gH}}^{[1]}(m,n)|}=I^{-s}(\kappa)-\frac{2}{n}
\endeq
while for the gO case we may take either sign for $s$ and taking $\kappa$ from \eqref{eq:gO-type1-RHP-parameters},
\eq
-\frac{\Theta_{\infty,\mathrm{gO}}^{[1]}(m,n)}{|\Theta_{0,\mathrm{gO}}^{[1]}(m,n)|}=I^{-s}(\kappa)-\frac{2s}{n-\frac{1}{3}}.
\endeq
Arriving at the precise statements in Theorems~\ref{thm:Hermite-elliptic} and \ref{thm:Okamoto-elliptic} respectively therefore involves an additional approximation in which one elliptic function is exchanged for another with periods differing by $\bo(n^{-1})$ which is equivalent to $\bo(T^{-1})$.  Since $\zeta$ is bounded, this perturbation can be absorbed into the error terms, although the phase $\zeta_0$ generally needs to be shifted by an amount that does not tend to zero in the limit $T\to\infty$ because it involves terms proportional to $T$.  Note however that the statements of these two theorems do not precisely specify the value of $\zeta_0$.  A similar minor discrepancy arises in the approximation of $u_\mathrm{F}^{[2]}(x;m,n)$ because the symmetry $\kappa\mapsto -\kappa$ does not exactly correspond to the Boiti-Pempinelli symmetry (see \eqref{eq:symmetry-1-2}) because the latter is a reflection through the horizontal line $\Theta_\infty=\tfrac{1}{2}$ instead of through $\Theta_\infty=0$.  This discrepancy is dealt with in exactly the same way.
\label{rem:SlightlyDeformedSpectralCurve}
\end{remark}

To complete the proof of Theorems~\ref{thm:Hermite-elliptic} and \ref{thm:Okamoto-elliptic}, it only remains to note that suitable uniformity of the error estimates follows from similar arguments as given in Section~\ref{sec:Exterior-Uniformity} and then to explain why the condition that $(y_0,\zeta,T)\in\mathcal{S}(\epsilon)$ for some $\epsilon>0$ can be dropped.   This second point will be justified at the end of Section~\ref{sec:MalgrangeResidues} below.

\subsection{Zeros and poles of the approximations}
The approximations $\dot{U}^{[j]}_\mathrm{F}(\zeta;y_0)$ of the rational solutions of type $j=1,3$ in the family $\mathrm{F}=\mathrm{gH}$ or $\mathrm{F}=\mathrm{gO}$ vanish whenever
\eq
\ii(\varphi-\mathfrak{z}_1^{[j]})= 2\pi\ii N_\mathfrak{a} + H_\omega N_\mathfrak{b} \quad\text{or}\quad
\ii(\varphi-\mathfrak{z}_2^{[j]})= 2\pi\ii N_\mathfrak{a}+ H_\omega N_\mathfrak{b}
\label{eq:zero-conditions}
\endeq
and blow up whenever
\eq
\ii(\varphi-\mathfrak{p}_1^{[j]}) = 2\pi\ii N_\mathfrak{a} + H_\omega N_\mathfrak{b}\quad\text{or}\quad
\ii(\varphi-\mathfrak{p}_2^{[j]}) = 2\pi\ii N_\mathfrak{a}+ H_\omega N_\mathfrak{b},
\label{eq:pole-conditions}
\endeq
where $N_\mathfrak{a}$ and $N_\mathfrak{b}$ are arbitrary integers.
\begin{lemma}
For the approximations $\dot{U}^{[j]}_\mathrm{F}(\zeta;y_0)$, $j=1,3$, no two of the four conditions in \eqref{eq:zero-conditions}--\eqref{eq:pole-conditions} can hold simultaneously.  
\label{lem:DistinctDivisors}
\end{lemma}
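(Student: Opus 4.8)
The plan is to recast the lemma as a non-coincidence statement for the four phases and then obtain any coincidence as a contradiction against the analytic structure of solutions of the approximating equation. Since $\Theta(w)$ vanishes exactly on $w\in K+L$ with $L:=2\pi\ii\mathbb{Z}+H_\omega\mathbb{Z}$, each of the four conditions in \eqref{eq:zero-conditions}--\eqref{eq:pole-conditions} asserts that $\varphi$ lies in one of the cosets $\mathfrak{z}_1^{[j]}+\Lambda$, $\mathfrak{z}_2^{[j]}+\Lambda$, $\mathfrak{p}_1^{[j]}+\Lambda$, $\mathfrak{p}_2^{[j]}+\Lambda$, where $\Lambda:=-\ii L=2\pi\mathbb{Z}-\ii H_\omega\mathbb{Z}$. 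Two of the conditions hold for a common $\varphi$ precisely when the difference of the corresponding phases lies in $\Lambda$, so the assertion to be proved is equivalent to: for $j=1,3$ (and for each sign $s$, and each $\nu=\pm 1$ in \eqref{eq:dotUtwist-phases}) the four phases $\mathfrak{z}_1^{[j]},\mathfrak{z}_2^{[j]},\mathfrak{p}_1^{[j]},\mathfrak{p}_2^{[j]}$ are pairwise incongruent modulo $\Lambda$. I would run the rest of the argument uniformly in $j$, $s$, $\nu$.

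The key input is the analytic description of $\dot{U}^{[j]}_\mathrm{F}(\zeta;y_0)$ obtained in Section~\ref{sec:Elliptic-DiffEq}: it equals $f(\zeta-\zeta_0)$ for a $\zeta$-independent $\zeta_0$, where $f$ is the canonical solution of \eqref{eq:elliptic-ODE} (for $j=3$) or of its twisted counterpart \eqref{eq:elliptic-ODE-twist} (for $j=1$) with $f(0)=0$, $f'(0)=4$, the relevant quartic being a Boutroux curve of class $\{1111\}$. In particular $f$ is nonconstant, since $f'(0)=4\neq 0$; and by the discussion in Section~\ref{sec:nonequilibrium}, every nonconstant solution of \eqref{eq:elliptic-ODE} has, in each fundamental period parallelogram, exactly two simple zeros (one with derivative $+4$ and one with $-4$) and exactly two simple poles (one of residue $+1$ and one of residue $-1$), and these four points are distinct. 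On the other hand, the representations \eqref{eq:U-dot-define} and \eqref{eq:U-dot-twist-final}, whose prefactors $\psi_\mathrm{F}^{[j]}$ are finite and nonvanishing, write $\dot{U}^{[j]}_\mathrm{F}$ as a nonzero constant times a ratio of two products of $\Theta$-factors; because $\Theta$ has only simple zeros, as a function of $\zeta$ (equivalently of $\varphi$) the numerator vanishes exactly where $\varphi\equiv\mathfrak{z}_1^{[j]}$ or $\mathfrak{z}_2^{[j]}\pmod{\Lambda}$ and the denominator exactly where $\varphi\equiv\mathfrak{p}_1^{[j]}$ or $\mathfrak{p}_2^{[j]}\pmod{\Lambda}$.

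The conclusion then follows by checking the finitely many ways two of the four phases could be congruent mod $\Lambda$. If $\mathfrak{z}_i^{[j]}\equiv\mathfrak{p}_k^{[j]}$ for some $i,k$, the corresponding $\Theta$-factors cancel, so $\dot{U}^{[j]}_\mathrm{F}$ has at most one pole per period; but a nonconstant elliptic function has degree at least two, and the constant case is excluded, a contradiction. If instead $\mathfrak{z}_1^{[j]}\equiv\mathfrak{z}_2^{[j]}$ and no such cancellation occurs, then $\dot{U}^{[j]}_\mathrm{F}$ has a zero of order two, contradicting that its zeros are simple; likewise $\mathfrak{p}_1^{[j]}\equiv\mathfrak{p}_2^{[j]}$ without cancellation forces a double pole, contradicting that its poles are at worst simple. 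The remaining sub-cases, in which two or more coincidences occur at once, are subsumed: each again produces either a cancellation reducing the degree below two or else identical numerator and denominator (a constant), both impossible. Hence the four phases are pairwise incongruent modulo $\Lambda$, which is exactly the lemma.

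I expect the only delicate part to be the bookkeeping in this last step, namely verifying that every possible coincidence pattern among $\{\mathfrak{z}_1^{[j]},\mathfrak{z}_2^{[j]},\mathfrak{p}_1^{[j]},\mathfrak{p}_2^{[j]}\}$ genuinely violates one of the two structural facts used (simplicity of zeros/poles, or degree at least two of a nonconstant elliptic function). No computation with the Abel map or period lattice is required in this approach, which is what makes it work uniformly: the more elementary route, namely showing directly that the combinations of $a(0)$, $a(\infty)$, $a(z_0)$ appearing in the phase differences avoid the lattice $L$, succeeds cleanly for $j=3$ by injectivity of the Abel--Jacobi map of the genus-one curve (the points of $\mathcal{R}$ over $z=0,\infty$ being four distinct non-branch points), but for $j=1$ one of the required combinations is not of a form that injectivity rules out automatically, so relying on the structure of the approximating differential equation is the natural way to treat both types at once.
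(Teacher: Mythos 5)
Your proof is correct, but it is not the route the paper takes: you have carried out in full the ``alternative indirect proof'' that the paper's own argument only mentions in its closing sentence. The paper's primary proof works divisor by divisor with the Abel--Jacobi Theorem: for each pair of phases it interprets $\ii$ times their difference as a difference of Abel maps of distinct points of $\mathcal{R}$ and rules out membership in the period lattice by showing no nonzero meromorphic function with the corresponding divisor exists. For $j=3$ and for four of the six pairs when $j=1$ this is immediate (a function with a single simple pole is constant), but the two remaining $j=1$ differences, $\ii(\mathfrak{z}_1^{[1]}-\mathfrak{z}_2^{[1]})$ and $\ii(\mathfrak{p}_1^{[1]}-\mathfrak{p}_2^{[1]})$, reduce via Lemma~\ref{lem:SameDivisor} to showing $\nu a(0)\pm a(\infty)$ and $\nu a(0)+3a(\infty)$ avoid the lattice, and the latter forces an explicit construction of a candidate function $k(Q)$ with a double pole at $Q^-(\infty)$ whose existence is shown to require $\kappa=\pm 1$. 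Your argument replaces all of this with the structural facts from Sections~\ref{sec:nonequilibrium} and \ref{sec:Elliptic-DiffEq}: $\dot{U}^{[j]}_\mathrm{F}$ is a nonconstant solution of \eqref{eq:elliptic-ODE} (or \eqref{eq:elliptic-ODE-twist}, whose quartic is also of class $\{1111\}$ since $\mathcal{T}$ is birational), hence a degree-two elliptic function with simple zeros and simple poles, which is incompatible with any coincidence pattern among the four theta divisors. This is uniform in $j$, $s$, and $\nu$ and avoids the Abel-map computations entirely, at the cost of relying on the differential-equation results of Section~\ref{sec:Elliptic-DiffEq} (which do precede the lemma, so there is no circularity) and on the nonvanishing of the prefactors $\psi_\mathrm{F}^{[j]}$, which the paper establishes. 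The paper's approach is more self-contained within the function theory of $\mathcal{R}$ and, as a by-product of the $j=1$ case, pinpoints exactly where the hypothesis $\kappa\neq\pm 1$ enters; yours makes the geometric content (distinctness of the zero and pole divisors of a degree-two elliptic function) transparent. Your closing remark correctly identifies why the naive injectivity argument fails for $j=1$: the combination $\nu a(0)+3a(\infty)$ is not a difference of Abel maps of two points, which is precisely why the paper needs the explicit construction there.
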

\begin{proof}
As in the proof of Lemma~\ref{lem:SameDivisor} we use the Abel-Jacobi Theorem.  For $j=3$, we notice that taking $\ii$ times the differences of any two of the phase shifts in \eqref{eq:dotU-phases} gives a difference of Abel maps evaluated at two distinct points of $\mathcal{R}$.  The corresponding conditions will hold simultaneously if and only if there is a nonzero meromorphic function on $\mathcal{R}$ with a simple pole at one of these points and vanishing at the other point.  But having only one simple pole on $\mathcal{R}$, this function must be a constant, and hence to vanish anywhere it must vanish identically.  For $j=1$, taking into account that $z_0$ cannot equal any of the four roots $\alpha,\beta,\gamma,\delta$ so that $2a(z_0)$ can be written as a difference of Abel maps of distinct points of $\mathcal{R}$ over $z_0$, the same argument applies to any of the four differences $\ii (\mathfrak{z}_k^{[1]}-\mathfrak{p}_\ell^{[1]})$ for phase shifts defined in \eqref{eq:dotUtwist-phases}.  For the remaining two differences $\ii (\mathfrak{z}_1^{[1]}-\mathfrak{z}_2^{[1]}) = \nu a(0)- 2a(z_0) + a(\infty)$ and $\ii(\mathfrak{p}_1^{[1]}-\mathfrak{p}_2^{[1]})=\nu a(0)+2a(z_0)+a(\infty)$, we note that the proof of Lemma~\ref{lem:SameDivisor} showed that $2a(\infty)+2a(z_0) = 2\pi\ii N_\mathfrak{a}+H_\omega N_\mathfrak{b}$ for some integers $N_\mathfrak{a}$ and $N_\mathfrak{b}$.  Therefore, it suffices to prove that neither $\nu a(0)+3a(\infty)$ nor $\nu a(0)- a(\infty)$ can be an integer linear combination of $2\pi\ii$ and $H_\omega$.  For $\nu a(0)-a(\infty)$, the same argument as above applies.  For $\nu a(0)+3a(\infty)$ we move two of the factors of $a(\infty)$ ``onto the other sheet'' by using the Abel mapping on the Riemann surface $\mathcal{R}$ defined in the proof of Lemma~\ref{lem:SameDivisor} to write $\nu a(0)+3a(\infty)=\widetilde{a}(Q^\nu(0))+\widetilde{a}(Q^+(\infty))-2\widetilde{a}(Q^-(\infty))$.  Thus applying the Abel-Jacobi Theorem it is sufficient to prove that there can be no meromorphic function $k(Q)$ on $\mathcal{R}$ that does not vanish identically and has only one double pole at $Q^-(\infty)$ and simple zeros at $Q^\nu(0)$ and $Q^+(\infty)$.  Using the ``coordinate'' meromorphic functions $z(Q)$ and  $\widetilde{R}(Q)$ defined on $\mathcal{R}$ as in the proof of Lemma~\ref{lem:SameDivisor}, every nonzero function with a double pole at $Q^-(\infty)$ and at worst a simple pole at $Q^+(\infty)$ is necessarily a nonzero multiple of 
\eq
k(Q)=z(Q)^2-\widetilde{R}(Q) + c_1z(Q)+c_2
\endeq
for constants $c_1$ and $c_2$.  Since $z(Q^\nu(0))=0$ and $\widetilde{R}(Q^\nu(0))=\nu R(0)=4\nu s$, enforcing the condition $k(Q^\nu(0))=0$ requires taking $c_2=4\nu s=\pm 4$.  Now, the expansion $R(z)=z^2+2y_0z+4\kappa + \bo(z^{-1})$ as $z\to\infty$ implies also that $\widetilde{R}(Q)=z(Q)^2+2y_0z(Q) + 4\kappa + \bo(z(Q)^{-1})$ as $Q\to Q^+(\infty)$. Therefore, enforcing the condition that $k(Q)$ be analytic at $Q=Q^+(\infty)$ requires taking $c_1=2y_0$, and then demanding further that $k(Q^+(\infty))=0$ requires taking $c_2=4\kappa$.  But since $\kappa\neq\pm 1$, this is a contradiction with $c_2=4\nu s=\pm 4$.  Hence no such function $k(Q)$ exists, and the proof is finished.  An alternative indirect proof can be based on the differential equation \eqref{eq:elliptic-ODE} and the results of Section~\ref{sec:Elliptic-DiffEq}.
\end{proof}

Using the fact that $\mathrm{Re}(H_\omega)<0$ (implying that $2\pi\ii$ and $H_\omega$ are linearly independent over $\mathbb{R}$), we can solve for $N_\mathfrak{a}$ and $N_\mathfrak{b}$ and hence express these conditions as ``quantization rules''.  Thus, 
$\dot{U}^{[j]}_\mathrm{F}(\zeta;y_0)$ vanishes if and only if
\eq
\left(\mathfrak{Z}_{1,\mathfrak{a}}^{[j]}=N_\mathfrak{a}\in\mathbb{Z}\;\text{and}\;
\mathfrak{Z}_{1,\mathfrak{b}}^{[j]}=N_\mathfrak{b}\in\mathbb{Z}\right)\quad\text{or}\quad
\left(\mathfrak{Z}_{2,\mathfrak{a}}^{[j]}=N_\mathfrak{a}\in\mathbb{Z}\;\text{and}\;
\mathfrak{Z}_{2,\mathfrak{b}}^{[j]}=N_\mathfrak{b}\in\mathbb{Z}\right)
\label{eq:zero-conditions-2}
\endeq
and blows up if and only if
\eq
\left(\mathfrak{P}_{1,\mathfrak{a}}^{[j]}=N_\mathfrak{a}\in\mathbb{Z}\;\text{and}\;
\mathfrak{P}_{1,\mathfrak{b}}^{[j]}=N_\mathfrak{b}\in\mathbb{Z}\right)\quad\text{or}\quad
\left(\mathfrak{P}_{2,\mathfrak{a}}^{[j]}=N_\mathfrak{a}\in\mathbb{Z}\;\text{and}\;
\mathfrak{P}_{2,\mathfrak{b}}^{[j]}=N_\mathfrak{b}\in\mathbb{Z}\right)
\label{eq:pole-conditions-2}
\endeq
where
\eq
\begin{split}
\mathfrak{Z}_{k,\mathfrak{a}}^{[j]}:=\frac{\mathrm{Re}(H_\omega^*(\varphi-\mathfrak{z}_k^{[j]}))}{2\pi\mathrm{Re}(H_\omega)}\quad &\text{and}\quad
\mathfrak{Z}_{k,\mathfrak{b}}^{[j]}:=-\frac{\mathrm{Im}(\varphi-\mathfrak{z}_k^{[j]})}{\mathrm{Re}(H_\omega)}\\
\mathfrak{P}_{k,\mathfrak{a}}^{[j]}:=\frac{\mathrm{Re}(H_\omega^*(\varphi-\mathfrak{p}_k^{[j]}))}{2\pi\mathrm{Re}(H_\omega)}\quad &\text{and}\quad
\mathfrak{P}_{k,\mathfrak{b}}^{[j]}:=-\frac{\mathrm{Im}(\varphi-\mathfrak{p}_k^{[j]})}{\mathrm{Re}(H_\omega)}.
\end{split}
\endeq
Using \eqref{eq:F1-U} and expressing $c$ and $H_\omega$ in terms of the elliptic periods $Z_\mathfrak{a,b}$ by $c=Z_\mathfrak{a}$ and $H_\omega=2\pi\ii Z_\mathfrak{b}/Z_\mathfrak{a}$, these expressions are exactly the left-hand sides in \eqref{eq:intro-zeros-quantize}--\eqref{eq:intro-poles-quantize}.  

\subsection{Residues of the approximations at the Malgrange divisor}
\label{sec:MalgrangeResidues}
The goal of this section is explain what happens to the approximations of the rational functions near points in the parameter space where the approximations fail to exist.  On one hand, this will allow us to explain how B\"acklund transformations can be used to circumvent the non-existence issue.  On the other hand, we will then be able to identify zeros of a particular theta-function factor with approximations of zeros of the gH and gO polynomials themselves and thus prove Corollary~\ref{cor:polynomial-zeros}.  

The \emph{Malgrange divisor} of Riemann-Hilbert Problem~\ref{rhp:Pout} is the set of parameter values for which there is no solution of that problem.  By Lemma~\ref{lem:SameDivisor}, it is equivalently characterized by either of the conditions $\Theta(a(\infty)+a(z_0)+K\mp\ii\varphi)=0$.  According to the formul\ae\ \eqref{eq:U-dot-define} and \eqref{eq:U-dot-twist-final} and Lemma~\ref{lem:DistinctDivisors}, the Malgrange divisor gives rise to exactly one simple pole per period parallelogram in the $\zeta$-plane of each of the elliptic functions $\dot{U}_\mathrm{F}^{[3]}(\zeta;y_0)$ and $\dot{U}_\mathrm{F}^{[1]}(\zeta;y_0)$.  Each of these functions has one additional simple pole (of opposite residue) and two simple zeros in each parallelogram.  From the differential equations \eqref{eq:elliptic-ODE} and \eqref{eq:elliptic-ODE-twist} satisfied by $\dot{U}_\mathrm{F}^{[3]}(\zeta;y_0)$ and $\dot{U}_\mathrm{F}^{[1]}(\zeta;y_0)$ respectively, it is easy to see that each simple pole has residue $\pm 1$.  

\subsubsection{Malgrange residues of $\dot{U}_\mathrm{F}^{[3]}$}
We now calculate the sign of the residue for the pole of $\dot{U}_\mathrm{F}^{[3]}(\zeta;y_0)$ at the Malgrange divisor.  We do this by applying a homotopy argument:  the desired residue is a continuous function of the parameters in the formula for $\dot{U}_\mathrm{F}^{[3]}(\zeta;y_0)$ such as $y_0$, provided the spectral curve remains nondegenerate, i.e., of class $\{1,1,1,1\}$.  In fact, the formula for $\dot{U}_\mathrm{F}^{[3]}(\zeta;y_0)$ depends on these parameters through (i) the phase $\xi$ (see \eqref{eq:F1-U}) and (ii) the distinct roots $\alpha$, $\beta$, $\gamma$, and $\delta$.  If the latter parameters are given instead, then $\dot{U}_\mathrm{F}^{[3]}(\zeta;y_0)$ is determined as an elliptic function of $\zeta$.  In other words, neither the explicit formula for $\dot{U}_\mathrm{F}^{[3]}(\zeta;y_0)$ nor the argument that it satisfies the differential equation \eqref{eq:elliptic-ODE} with $P(z)$ taken in the form $P(z)=(z-\alpha)(z-\beta)(z-\gamma)(z-\delta)$ requires the specific relation between $E$, $y_0$, and $\kappa$ determined by the Boutroux conditions \eqref{eq:Boutroux}.  Indeed, the latter relation is needed only to control the approximation of the matrix $\mathbf{O}(z)$ by its parametrix $\dot{\mathbf{O}}(z)$.  Therefore, we will write $\dot{U}_\mathrm{F}^{[3]}(\zeta)$ for $\dot{U}_\mathrm{F}^{[3]}(\zeta;y_0)$ and compute the desired residue as a continuous function of the parameters $\xi$, $\alpha$, $\beta$, $\gamma$, and $\delta$ by a suitable (generally artificial) homotopy that need not be consistent with varying $y_0$ and solving \eqref{eq:Boutroux} for $E$.  That said, since the residue is either $1$ or $-1$, it will remain constant along such a homotopy.  The homotopy we select is to fix $\xi$ and to deform the given roots $(\alpha,\beta,\gamma,\delta)$ to the points $(2\ee^{\frac{\ii\pi}{4}},2\ee^{\frac{3\ii\pi}{4}},2\ee^{\frac{5\ii\pi}{4}},2\ee^{\frac{7\ii\pi}{4}})$ without allowing any intermediate degeneration.  (Note that the target configuration is the actual root configuration for $y_0=0$ and $\kappa=0$, which also yields $E=0$; hence it is reachable by homotopy in $y_0$ if initially $y_0\in\rectangle(\kappa)$.  It is not clear whether it can be reached by such a homotopy if initially $y_0\in\TR(\kappa)\cup\TI(\kappa)$, but it is not necessary either.)  In the final configuration we have, from \eqref{eq:z0-define}, that $z_0=\infty$.  By expanding the Abel map $a(z_0)$ for large $z_0$, one sees that as the target configuration is approached in the parameter space,
\eq
z_0\Theta(a(\infty)-a(z_0)-K)\to -\frac{2\pi\ii}{c}\Theta'(K)
\endeq
where the constants $c$ and $K$ are determined from the target configuration.  Therefore, from \eqref{eq:U-dot-define} and \eqref{eq:psi-simpler} we have, in the target configuration
\eq
\dot{U}^{[3]}_\mathrm{F}(\zeta)=-\frac{2\pi\ii}{c}\frac{\Theta(2a(\infty)+K)\Theta'(K)}{\Theta(a(0)+a(\infty)+K)\Theta(a(0)-a(\infty)-K)}\cdot\frac{\Theta(a(0)+a(\infty)+K-\ii\varphi)\Theta(a(0)-a(\infty)-K+\ii\varphi)}{\Theta(2a(\infty)+K-\ii\varphi)\Theta(\ii\varphi-K)}.
\endeq
Now we compute the residue at a zero of the factor $\Theta(2a(\infty)+K-\ii\varphi)$.  Taking into account \eqref{eq:F1-U} for the $\zeta$-dependence of $\varphi$, and evaluating the residue at $\ii\varphi=2a(\infty)$ gives
\eq
\mathop{\mathrm{Res}}_{\zeta:\ii\varphi=2a(\infty)}\dot{U}_\mathrm{F}^{[3]}(\zeta)=-\frac{\Theta(2a(\infty)+K)}{\Theta(a(0)+a(\infty)+K)\Theta(a(0)-a(\infty)-K)}\cdot
\frac{\Theta(a(0)-a(\infty)+K)\Theta(a(0)+a(\infty)-K)}{\Theta(2a(\infty)-K)}.
\endeq
Finally, using \eqref{eq:theta-identities} and $K=-K + 2\pi\ii + H_\omega$ we get
\eq
\mathop{\mathrm{Res}}_{\zeta:\ii\varphi=2a(\infty)}\dot{U}_\mathrm{F}^{[3]}(\zeta)=-\ee^{-\frac{1}{2}H_\omega}\ee^{K} = 1
\endeq
in the target configuration.  Applying the homotopy argument then shows that the residue is also $+1$ in the arbitrary initial configuration.

\subsubsection{Malgrange residues of $\dot{U}_\mathrm{F}^{[1]}$ and $\dot{U}_\mathrm{F}^{[2]}$}
With this result established, we now use the differential relation \eqref{eq:dotU-dotUtwist} where $\dot{U}(\zeta)=\dot{U}^{[3]}_\mathrm{F}(\zeta)$ and $\dot{U}_\tw(\zeta_\tw)=\dot{U}^{[1]}_\mathrm{F}(\zeta_\tw)$ with $\zeta_\tw=\sqrt{\tfrac{1}{2}(1-s\kappa)}\zeta$ to find that at the simple poles corresponding to the Malgrange divisor,
\eq
\mathop{\mathrm{Res}}\dot{U}^{[1]}_\mathrm{F}(\zeta;y_0)=-1.
\endeq
Finally, it follows directly that the residue of $\dot{U}^{[2]}_\mathrm{F}(\zeta;y_0)=\ii\dot{U}^{[1]}_\mathrm{F}(-\ii\zeta;-\ii y_0)$ at the Malgrange divisor is
\eq
\mathop{\mathrm{Res}}\dot{U}^{[2]}_\mathrm{F}(\zeta;y_0)=1.
\endeq

\subsubsection{Removal of the condition $(y_0,\zeta,T)\in\mathcal{S}(\epsilon)$}
The final step in proving Theorems~\ref{thm:Hermite-elliptic} and \ref{thm:Okamoto-elliptic} is to remove the condition that $(y_0,\zeta,T)\in\mathcal{S}(\epsilon)$, which bounds $\zeta$ away from the Malgrange divisor.  For each $(y_0,T)$, the Malgrange divisor is a uniform lattice $\Lambda_\mathrm{M}$ in the $\zeta$-plane with lattice vectors determined from $y_0$ and $\kappa$ and an offset involving $T$.  The lattice $\Lambda_\mathrm{M}$ consists of all poles of the elliptic function $\dot{U}^{[1]}_\mathrm{F}(\zeta)$ of residue $-1$ (equivalently all poles of the elliptic function $\dot{U}^{[3]}_\mathrm{F}(\zeta)$ of residue $1$).  The poles of $\dot{U}^{[1]}_\mathrm{F}(\zeta)$ of residue $1$ form a lattice $\Lambda^{[1]}_+$ congruent to $\Lambda_\mathrm{M}$ but, by Lemma~\ref{lem:DistinctDivisors}, having a different offset.  Likewise, the poles of $\dot{U}^{[3]}_\mathrm{F}(\zeta)$ of residue $-1$ form another congruent lattice $\Lambda^{[3]}_-$ disjoint from $\Lambda_\mathrm{M}$.  Let $\delta>0$ be sufficiently small given $\epsilon>0$ that a $\delta$-neighborhood of $\Lambda^{[1]}_+$ and $\Lambda^{[3]}_-$ is contained within $\mathcal{S}(\epsilon)$.  Then $u^{[1]}_\mathrm{F}(T^\frac{1}{2}y_0+T^{-\frac{1}{2}}\zeta;m,n)^{-1}=T^{-\frac{1}{2}}(\dot{U}^{[1]}_\mathrm{F}(\zeta)^{-1}+\bo(T^{-1}))$ holds uniformly for bounded $\zeta$ in a $\delta$-neighborhood of $\Lambda^{[1]}_+$ and $u^{[3]}_\mathrm{F}(T^\frac{1}{2}y_0+T^{-\frac{1}{2}}\zeta;m,n)^{-1}=T^{-\frac{1}{2}}(\dot{U}^{[3]}_\mathrm{F}(\zeta)^{-1}+\bo(T^{-1}))$ holds uniformly for bounded $\zeta$ in a $\delta$-neighborhood of $\Lambda^{[3]}_-$.  Applying a standard perturbation argument based on the Analytic Implicit Function Theorem, one sees that each point of $\Lambda^{[1]}_+$ attracts exactly one simple pole of $u^{[1]}_\mathrm{F}(T^\frac{1}{2}y_0+T^{-\frac{1}{2}}\zeta;m,n)$ and each point of $\Lambda^{[3]}_-$ attracts one simple pole of $u^{[3]}_\mathrm{F}(T^\frac{1}{2}y_0+T^{-\frac{1}{2}}\zeta;m,n)$, of positive and negative residue, respectively.  The attracted poles lie within a distance of $\bo(T^{-1})$ from the corresponding attracting lattice points in any given bounded region of the $\zeta$-plane.  Every pole of the opposite residue is necessarily attracted to a point of the Malgrange divisor lattice $\Lambda_\mathrm{M}$, but since the error terms cannot be controlled near this lattice we cannot say for sure whether there might be clusters of additional poles attracted to these lattice points as well.  We may calculate the winding number index of the rational solution about a circle of radius $\epsilon$ centered at a point of $\Lambda_\mathrm{M}$, but this only shows that any excess poles must be paired with an equal number of zeros;  computing the integral of the rational solution around this circle and applying the Residue Theorem shows that further there must be an equal number of excess poles of opposite residues.  

Recall that the Boiti-Pempinelli symmetry $\mathcal{S}_\updownarrow$ discussed in Section~\ref{sec:intro-Baecklund} yields the identity $u^{[3]}_\mathrm{F}(x;m,n)=\ii u^{[3]}_\mathrm{F}(-\ii x;n,m)$ (cf.\@ \eqref{eq:symmetry-1-2}).  It follows easily that poles of residue $\pm 1$ of $u^{[3]}_\mathrm{F}(\cdot;m,n)$ correspond under rotation of the argument by $\tfrac{1}{2}\pi$ to poles of residue $\mp 1$ of $u^{[3]}_\mathrm{F}(\cdot;n,m)$.  The poles of $u^{[3]}(\cdot;m,n)$ of residue $-1$ whose isolated images in the $\zeta$-plane lie close to the lattice $\Lambda^{[3]}_-$ are now poles of residue $1$ of $u^{[3]}_\mathrm{F}(\cdot;n,m)$.  Being as those poles are isolated in the (rotated) $\zeta$-plane, a residue integral calculation shows that each one must lie close to a lattice point at which the elliptic function approximation of $u^{[3]}_\mathrm{F}(\cdot;n,m)$ has a pole of residue $1$.  That approximation can be obtained by a different case of the asymptotic analysis of $u^{[3]}_\mathrm{F}(\cdot;m,n)$ in which the indices $(m,n)$ are permuted, corresponding in the limit to a change of sign of $\kappa_\infty$.  But in the approximation of any rational solution of type $3$, poles of the approximating function of residue $1$ form the Malgrange divisor of the corresponding instance of Riemann-Hilbert Problem~\ref{rhp:Pout}.  This is an indirect proof that in fact each such pole attracts exactly one pole of the same residue, even though the error has not been controlled directly near the Malgrange divisor.  However, to provide this control is now easy, since $u^{[3]}_\mathrm{F}(T^\frac{1}{2}y_0+T^{-\frac{1}{2}}\zeta;m,n)^{-1}$ and its elliptic function approximation are both analytic in an $\epsilon$-neighborhood of $\Lambda_\mathrm{M}$.  Indeed, it follows that the error term is also analytic on this neighborhood, so applying the maximum modulus principle on each $\epsilon$-disk centered at a point of $\Lambda_\mathrm{M}$ proves that the error term is uniformly small on the disk since it is small on the boundary by the primary estimate valid on $\mathcal{S}(\epsilon)$.  

To deal with the function $u^{[1]}_\mathrm{F}(x;m,n)$, we compose $\mathcal{S}_\updownarrow$ with the symmetry $\mathcal{S}_\tw$ also discussed in Section~\ref{sec:intro-Baecklund}, which maps rational solutions of type $1$ to solutions of type $3$ (cf.\@ \eqref{eq:symmetry-1-from-3}).    It is easy to check that the composite transformation $\mathcal{S}_\tw\circ\mathcal{S}_\updownarrow\circ\mathcal{S}_\tw^{-1}$ maps $u^{[1]}_\mathrm{F}(x;m,n)$ to another function of type $1$ with different (large) indices, and again the transformation has the effect of rotation in the complex $x$-plane that swaps the signs of the residues without introducing any new poles or removing any preexisting ones.  So the same argument applies again to show that $u^{[1]}_\mathrm{F}(T^\frac{1}{2}y_0+T^{-\frac{1}{2}}\zeta;m,n)^{-1}=T^{-\frac{1}{2}}(\dot{U}^{[1]}_\mathrm{F}(\zeta)+\bo(T^{-1}))$ holds uniformly near all poles for $\zeta$ bounded, whether they lie near the Malgrange divisor or not.  

At last this completes the proof of Theorems~\ref{thm:Hermite-elliptic} and \ref{thm:Okamoto-elliptic}.

\subsubsection{Accurate approximation of poles and zeros of rational Painlev\'e-IV solutions.  Zeros of the gH and gO polynomials.}
We are now also in a position to give the proof of Corollary~\ref{cor:poles-and-zeros}.
\begin{proof}[Proof of Corollary~\ref{cor:poles-and-zeros}]
Setting $\zeta=0$, let $y_0$ be a value satisfying one of the conditions in \eqref{eq:zero-conditions-2} or \eqref{eq:pole-conditions-2}.  Fixing this value of $y_0$, and letting $\zeta$ be free, either $f(\zeta-\zeta_0)$ in the former case or $f(\zeta-\zeta_0)^{-1}$ in the latter case has a simple zero at $\zeta=0$.  Putting an artificial coefficient $\delta\in [0,1]$ on the $\bo(T^{-1})$ perturbing term in Theorem~\ref{thm:Hermite-elliptic} or \ref{thm:Okamoto-elliptic}, it then follows from the Analytic Implicit Function Theorem that there is a unique simple zero of either $u(x)$ or its reciprocal that depends on $\delta$ and persists up to $\delta=1$ when $(m,n)$ are sufficiently large and hence $T^{-1}$ is sufficiently small.  As an analytic function of $\delta$, this zero obviously satisfies $\zeta=\bo(T^{-1})$, which under $x=T^\frac{1}{2}y_0+T^{-\frac{1}{2}}\zeta$ can also be interpreted as a perturbation of $y_0$ of order $\bo(T^{-2})$ for $\zeta=0$.


Note that we actually cannot fix a value $y_0$, because the possible values of $y_0$ for $\zeta=0$ depend strongly on $(m,n)$.  But we can always select a sequence of values of $y_0$ depending on $(m,n)$ large and lying within the fixed compact set $C$ and perform the computation separately for each $(m,n)$.  Since by Theorems~\ref{thm:Hermite-elliptic} and \ref{thm:Okamoto-elliptic} the size of the $\bo(T^{-1})$ error term is uniform given $C$, the proof is complete.
\end{proof}

The proof of Corollary~\ref{cor:polynomial-zeros} then follows easily.
\begin{proof}[Proof of Corollary~\ref{cor:polynomial-zeros}]
According to the logarithmic derivative formul\ae\ \eqref{eq:gH-tau} and \eqref{eq:gO-tau} and the expressions for the type-$1$ tau functions $\tau^{[1]}_\mathrm{F}(x;m,n)$ given for families $\mathrm{F}=\mathrm{gH}$ and $\mathrm{F}=\mathrm{gO}$ in Tables~\ref{tab:gH} and \ref{tab:gO} respectively, the roots of the polynomials $H_{m,n}(x)$ and $Q_{m,n}(x)$ are precisely the poles of residue $-1$ of $u_\mathrm{F}^{[1]}(x;m,n)$.  According to Corollary~\ref{cor:poles-and-zeros}, these are approximated by the points in the Malgrange divisor for Riemann-Hilbert Problem~\ref{rhp:Pout}, which one can check satisfy \eqref{eq:pole-conditions-2} for $k=2$.
\end{proof}

\appendix

\section{Isomonodromy theory for rational solutions of Painlev\'e-IV}
\label{app:Isomonodromy}
In Sections \ref{sec:Okamoto-RHP-basic-properties} and \ref{sec:Schlesinger} below, we develop several aspects of the isomonodromy method for the Painlev\'e-IV equation \eqref{p4}.  While much of this theory is in the literature (see, for instance \cite[Sections 5.1 and 6.3]{FokasIKN:2006}), to deal with the rational solutions we will add some important details by
\begin{itemize}
\item implementing the isomonodromy method for resonant Fuchsian singular points such as for the gH solutions because $\Theta_0\in\tfrac{1}{2}\mathbb{Z}$;
\item identifying non-differential formul\ae\ for the solution $u(x)$;
\item identifying a formula for the related solution $u_\tw(x)$;
\item determining conditions where various discrete isomonodromic transformations are well-defined.
\end{itemize}
Then, in Sections~\ref{sec:RHPgO} and \ref{sec:RHPgH} we apply the method as described in the beginning of Section~\ref{sec:rational-isomonodromy}, leveraging the $x$-equation in the Lax pair to compute monodromy data and then applying Schlesinger transformations to prove Theorems~\ref{thm:gO-RHP} and \ref{thm:gH-RHP}.

\subsection{Basic properties of Riemann-Hilbert Problem~\ref{rhp:general}}
\label{sec:Okamoto-RHP-basic-properties}
We start with the following statement asserting that uniqueness of the solution of Riemann-Hilbert Problem~\ref{rhp:general} is automatic, and existence is an ``almost all or nothing'' phenomenon with respect to $x\in\mathbb{C}$.
\begin{proposition}
The following statements are true:
\begin{itemize}
\item Given $x\in\mathbb{C}$, every solution of Riemann-Hilbert Problem~\ref{rhp:general} satisfies $\det(\mathbf{Y}(\lambda;x))=1$ for all $\lambda\in\mathbb{C}\setminus\Sigma$.
\item There exists at most one solution of Riemann-Hilbert Problem~\ref{rhp:general} for each $x\in\mathbb{C}$.
\item Either 
\begin{itemize}
\item Riemann-Hilbert Problem~\ref{rhp:general} has no solution for any $x\in\mathbb{C}$, or 
\item Riemann-Hilbert Problem~\ref{rhp:general} has a solution for $x\in\mathbb{C}\setminus\mathcal{D}$, where $\mathcal{D}\subset\mathbb{C}$ is a discrete set, for each $\lambda\in\mathbb{C}\setminus\Sigma$ the solution is an analytic function of $x\in\mathbb{C}\setminus\mathcal{D}$, and the solution admits an asymptotic expansion to all orders as $\lambda\to\infty$ in the sense that
\eq
\mathbf{Y}(\lambda;x)\lambda^{\Theta_\infty\sigma_3}\sim\mathbb{I}+\sum_{k=1}^\infty\mathbf{Y}^\infty_k(x)\lambda^{-k},\quad\lambda\to\infty,
\label{eq:Y-infty-expand}
\endeq
and a convergent expansion in a neighborhood of $\lambda=0$ of the form
\eq
\mathbf{Y}(\lambda;x)\lambda^{-\Theta_0\sigma_3}=\sum_{k=0}^\infty\mathbf{Y}^0_k(x)\lambda^k 
\label{eq:Y-zero-expand}
\endeq
in which all matrix coefficients $\mathbf{Y}^\infty_k(x)$ for $k\in\mathbb{Z}_{>0}$ and $\mathbf{Y}^0_k(x)$ for $k\in\mathbb{Z}_{\ge 0}$ are meromorphic functions of $x$ with poles in $\mathcal{D}$.  Both expansions \eqref{eq:Y-infty-expand} and \eqref{eq:Y-zero-expand} are differentiable term-by-term with respect to both $\lambda$ and $x$.
\end{itemize}
\end{itemize}
\label{prop:RHP-basic}
\end{proposition}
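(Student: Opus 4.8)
\textbf{Proof proposal for Proposition~\ref{prop:RHP-basic}.}

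The plan is to establish the three bulleted assertions in the stated order, each building on the previous one. First, for the determinant: given any solution $\mathbf{Y}(\lambda;x)$, the scalar function $\lambda\mapsto\det(\mathbf{Y}(\lambda;x))$ is analytic on $\mathbb{C}\setminus\Sigma$, takes continuous boundary values on $\Sigma$, and across each arc its jump is multiplication by $\det(\ee^{(\frac{1}{2}\lambda^2+x\lambda)\sigma_3}\mathbf{V}\ee^{-(\frac{1}{2}\lambda^2+x\lambda)\sigma_3})=\det(\mathbf{V})=1$ by hypothesis (all $\mathbf{V}_{j,k}$, $\mathbf{V}_j$ and $\mathbf{V}_0=\ee^{2\pi\ii\Theta_0\sigma_3}$ have unit determinant). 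Hence $\det(\mathbf{Y})$ extends to an entire function; near $\lambda=0$ it is bounded because $\mathbf{Y}(\lambda;x)\lambda^{-\Theta_0\sigma_3}$ is bounded and $\det(\lambda^{\Theta_0\sigma_3})=1$, and near $\lambda=\infty$ it tends to $\det(\mathbb{I})=1$ because $\mathbf{Y}(\lambda;x)\lambda^{\Theta_\infty\sigma_3}\to\mathbb{I}$ and $\det(\lambda^{-\Theta_\infty\sigma_3})=1$. By Liouville's theorem $\det(\mathbf{Y}(\lambda;x))\equiv 1$. For uniqueness, suppose $\mathbf{Y}$ and $\widetilde{\mathbf{Y}}$ both solve the problem for a given $x$; since $\det\widetilde{\mathbf{Y}}=1$, the matrix $\mathbf{R}(\lambda):=\mathbf{Y}(\lambda;x)\widetilde{\mathbf{Y}}(\lambda;x)^{-1}$ is well-defined, analytic on $\mathbb{C}\setminus\Sigma$, and has trivial jump on each arc of $\Sigma$ because the jump matrices of $\mathbf{Y}$ and $\widetilde{\mathbf{Y}}$ coincide; it therefore extends across $\Sigma$ (including the origin, where the two $\lambda^{\pm\Theta_0\sigma_3}$ singularities cancel and the product is bounded) to an entire function tending to $\mathbb{I}$ at $\infty$, whence $\mathbf{R}\equiv\mathbb{I}$ by Liouville.

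For the third, dichotomous assertion I would invoke the standard framework that converts Riemann-Hilbert Problem~\ref{rhp:general} into a singular integral equation on $L^2(\Sigma)$ (after the usual conjugation removing the endpoint power singularities at $\lambda=0$ and the normalization at $\infty$, exactly as in \cite[Appendix B]{BuckinghamM:2014} referenced in the excerpt). The associated Cauchy-type operator $\mathbb{I}-\mathcal{C}$ has the form identity-minus-compact with kernel depending analytically on $x$ (the jump matrices are entire in $\lambda$ and polynomial-in-$x$ exponentials), so by the analytic Fredholm alternative either $\mathbb{I}-\mathcal{C}$ is not invertible for any $x\in\mathbb{C}$ — giving the first alternative, no solution ever exists — or it is invertible off a discrete set $\mathcal{D}\subset\mathbb{C}$, with the resolvent, and hence the solution density and thus $\mathbf{Y}(\lambda;x)$ itself for each fixed $\lambda\in\mathbb{C}\setminus\Sigma$, analytic in $x$ on $\mathbb{C}\setminus\mathcal{D}$ and meromorphic with poles in $\mathcal{D}$. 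The asymptotic expansion \eqref{eq:Y-infty-expand} then follows from expanding the Cauchy integral representation of $\mathbf{Y}$ in inverse powers of $\lambda$ (each coefficient $\mathbf{Y}^\infty_k(x)$ being a moment of the solution density, hence meromorphic in $x$ with poles in $\mathcal{D}$), while the convergent expansion \eqref{eq:Y-zero-expand} near $\lambda=0$ comes from the fact that $\mathbf{Y}(\lambda;x)\lambda^{-\Theta_0\sigma_3}$ is analytic at $\lambda=0$ (it is bounded there by hypothesis and analytic in a punctured neighborhood since $\Sigma\cap\{|\lambda|<1\}$ consists only of $\Sigma_0$ and arcs of the unit circle, across which one checks the boundary values glue analytically by a local argument using $\mathbf{V}_0=\ee^{2\pi\ii\Theta_0\sigma_3}$ and $\mathbf{V}_1,\dots,\mathbf{V}_4$). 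Term-by-term differentiability in $\lambda$ is automatic for analytic functions, and in $x$ it follows from differentiating the integral equation, whose solution depends analytically on the parameter $x$.

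The main obstacle I anticipate is the behavior at the origin $\lambda=0$, which is genuinely special because the Fuchsian exponents $\pm\Theta_0$ may differ by an integer (the resonant case relevant to the gH family, $\Theta_0\in\tfrac{1}{2}\mathbb{Z}$), so the usual nonresonant local-parametrix argument does not directly apply; one must verify that the boundary-value gluing across the arcs of the unit circle near $\lambda=0$ still produces a single-valued analytic germ for $\mathbf{Y}(\lambda;x)\lambda^{-\Theta_0\sigma_3}$ rather than merely a bounded one with a logarithmic term. I would handle this by a direct local computation: the jump across $\Sigma_0$ is the constant diagonal $\mathbf{V}_0=\ee^{2\pi\ii\Theta_0\sigma_3}$, and combined with the connection matrices $\mathbf{V}_1,\dots,\mathbf{V}_4$ on the circular arcs via the consistency relations \eqref{eq:consistency}, the monodromy of $\mathbf{Y}\lambda^{-\Theta_0\sigma_3}$ around $\lambda=0$ is trivial, so the boundedness hypothesis upgrades to analyticity and hence to the convergent Taylor expansion \eqref{eq:Y-zero-expand} — this is precisely the mechanism by which the resonant singular point becomes apparent, as discussed in Section~\ref{sec:rational-isomonodromy}. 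The rest of the proof is a routine application of the analytic Fredholm theory and Cauchy-integral estimates.
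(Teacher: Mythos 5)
Your proposal is correct and follows essentially the same route as the paper: Liouville's theorem for the determinant and uniqueness statements, and the Analytic Fredholm Theorem applied to the equivalent singular integral equation (with analytic parameter dependence on $x$) for the existence dichotomy, meromorphy of the coefficients, and the expansions at $\lambda=\infty$ and $\lambda=0$ — details the paper delegates to \cite{FokasIKN:2006}. Your local argument at the origin is also the right one: the jump $\mathbf{V}_0=\ee^{2\pi\ii\Theta_0\sigma_3}$ across $\Sigma_0$ exactly cancels the branch cut of $\lambda^{-\Theta_0\sigma_3}$, so the stipulated boundedness upgrades to analyticity by removability, independently of whether the Fuchsian point is resonant.
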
 
\begin{proof}
The first and second statements are both simple consequences of Liouville's Theorem.  In light of the identities \eqref{eq:consistency}, the third statement is a consequence of the Analytic Fredholm Theorem applied to a system of singular integral equations equivalent to Riemann-Hilbert Problem~\ref{rhp:general}.  See \cite{FokasIKN:2006} for details.
\end{proof} 
From the uniqueness one easily obtains the following.
\begin{corollary}
Suppose that the arcwise-constant matrix $\mathbf{V}(\lambda)$ satisfies $\mathbf{V}(\lambda^*)^*=\mathbf{V}(\lambda)^{-1}$, where $\mathbf{V}^*$ denotes elementwise complex conjugation.  If Riemann-Hilbert Problem~\ref{rhp:general} has a solution $\mathbf{Y}(\lambda;x)$ for some $x\in\mathbb{C}$, then it does also for $x^*$, and $\mathbf{Y}(\lambda^*;x^*)=\mathbf{Y}(\lambda;x)^*$. 
\label{cor:Schwarz}
\end{corollary}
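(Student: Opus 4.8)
The plan is to exploit uniqueness of the solution of Riemann-Hilbert Problem~\ref{rhp:general}, established in Proposition~\ref{prop:RHP-basic}, together with the symmetry assumption on the jump matrix. First I would suppose $\mathbf{Y}(\lambda;x)$ solves Riemann-Hilbert Problem~\ref{rhp:general} for some fixed $x\in\mathbb{C}$, and define the candidate matrix $\widetilde{\mathbf{Y}}(\lambda):=\mathbf{Y}(\lambda^*;x^*)^*$, where the outer star is elementwise complex conjugation. The goal is to check that $\widetilde{\mathbf{Y}}$ satisfies all four conditions (analyticity, jump condition, behavior near the origin, normalization) of Riemann-Hilbert Problem~\ref{rhp:general} with the \emph{same} monodromy data but with $x$ replaced by $x^*$; uniqueness then forces $\widetilde{\mathbf{Y}}(\lambda)=\mathbf{Y}(\lambda;x^*)$, which is exactly the claimed identity $\mathbf{Y}(\lambda^*;x^*)=\mathbf{Y}(\lambda;x)^*$ after relabeling.

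The key steps, in order: (i) Analyticity: the map $\lambda\mapsto\lambda^*$ is an anti-holomorphic involution, so $\lambda\mapsto\mathbf{Y}(\lambda^*;x^*)$ is anti-holomorphic wherever $\lambda^*\notin\Sigma$, and composing with elementwise conjugation restores holomorphy; here I would use that the contour $\Sigma$ (four rays along $\arg(\lambda)\in\{0,\pm\tfrac12\pi,\pi\}$, the segment $\Sigma_0=(-1,0)$, and four symmetric arcs of the unit circle) is invariant under $\lambda\mapsto\lambda^*$, so $\widetilde{\mathbf{Y}}$ is analytic on $\mathbb{C}\setminus\Sigma$. (ii) Jump condition: on an arc of $\Sigma$, conjugating the jump relation $\mathbf{Y}_+(\lambda;x^*)=\mathbf{Y}_-(\lambda;x^*)\ee^{(\frac12\lambda^2+x^*\lambda)\sigma_3}\mathbf{V}(\lambda)\ee^{-(\frac12\lambda^2+x^*\lambda)\sigma_3}$ evaluated at $\lambda^*$ and using $(\lambda^*)^2{}^*=\lambda^2$, $(x^*\lambda^*)^*=x\lambda$, the reality of $\sigma_3$, and the hypothesis $\mathbf{V}(\lambda^*)^*=\mathbf{V}(\lambda)^{-1}$, one must also keep careful track that $\lambda\mapsto\lambda^*$ reverses orientation along each arc, which swaps the roles of the $+$ and $-$ boundary values; the orientation reversal and the inversion $\mathbf{V}(\lambda^*)^*=\mathbf{V}(\lambda)^{-1}$ cancel, yielding precisely the jump relation with jump $\mathbf{V}(\lambda)$ and parameter $x^*$. (iii) Behavior near $\lambda=0$: since $0^*=0$ and the principal branch of $\lambda^{\Theta_0\sigma_3}$ has real exponents $\pm\Theta_0$ (the parameters being real in the relevant cases, though this is not even needed), $\widetilde{\mathbf{Y}}(\lambda)\lambda^{-\Theta_0\sigma_3}=\big(\mathbf{Y}(\lambda^*;x^*)(\lambda^*)^{-\Theta_0\sigma_3}\big)^*$ is bounded as $\lambda\to0$ because the corresponding quantity for $\mathbf{Y}$ is. (iv) Normalization: similarly $\widetilde{\mathbf{Y}}(\lambda)\lambda^{\Theta_\infty\sigma_3}=\big(\mathbf{Y}(\lambda^*;x^*)(\lambda^*)^{\Theta_\infty\sigma_3}\big)^*\to\mathbb{I}^*=\mathbb{I}$ as $\lambda\to\infty$, using again that the power function has real exponents under the principal branch and that $\mathbb{C}\setminus\Sigma$ is invariant. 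Once all four properties are verified, uniqueness from Proposition~\ref{prop:RHP-basic} closes the argument.

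The main obstacle I anticipate is the bookkeeping in step (ii): one must correctly compose the orientation reversal of the anti-holomorphic map with the involution $\mathbf{V}\mapsto\mathbf{V}^{-1}$ supplied by the hypothesis, and confirm that these two sign/inversion effects are consistent across \emph{all} arc types of $\Sigma$ simultaneously — the Stokes arcs carrying triangular $\mathbf{V}_{j,k}$, the connection arcs carrying $\mathbf{V}_j$, and the segment $\Sigma_0$ carrying the diagonal $\mathbf{V}_0=\ee^{2\pi\ii\Theta_0\sigma_3}$ (for which $\mathbf{V}_0^*=\mathbf{V}_0^{-1}$ automatically when $\Theta_0\in\mathbb{R}$, in particular when $\Theta_0\in\tfrac12\mathbb{Z}$). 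A secondary point worth stating explicitly is that the hypothesis $\mathbf{V}(\lambda^*)^*=\mathbf{V}(\lambda)^{-1}$ is exactly the condition needed for the transformed problem to have the same monodromy data as the original (and in particular for the consistency relations \eqref{eq:consistency} to be preserved under conjugation), so no separate verification of \eqref{eq:consistency} is required.
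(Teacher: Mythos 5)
Your approach is exactly the paper's: the paper offers no proof beyond ``from the uniqueness one easily obtains the following,'' and your verification of the four conditions under Schwarz conjugation (including the key observation that the orientation reversal of each arc under $\lambda\mapsto\lambda^*$ swaps the $\pm$ boundary values and is compensated by the inversion $\mathbf{V}(\lambda^*)^*=\mathbf{V}(\lambda)^{-1}$) is correct in substance. The one defect is the direction of your construction: you define the candidate as $\widetilde{\mathbf{Y}}(\lambda):=\mathbf{Y}(\lambda^*;x^*)^*$, which presupposes the existence of a solution at $x^*$ — precisely the half of the corollary that must be proved. The candidate should instead be built from the solution you are given, $\widetilde{\mathbf{Y}}(\lambda):=\mathbf{Y}(\lambda^*;x)^*$, and shown by your steps (i)--(iv) to satisfy all conditions of Riemann-Hilbert Problem~\ref{rhp:general} with parameter $x^*$; this simultaneously establishes existence at $x^*$ and, via the uniqueness of Proposition~\ref{prop:RHP-basic}, the identity $\mathbf{Y}(\lambda;x^*)=\mathbf{Y}(\lambda^*;x)^*$, which is equivalent to the stated one. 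With that reorientation the argument is complete.
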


The next result shows that simultaneous differential equations of the form \eqref{eq:PIV-Lax-Pair} can be deduced directly from the conditions of Riemann-Hilbert Problem~\ref{rhp:general}.
\begin{proposition}
Suppose that Riemann-Hilbert Problem~\ref{rhp:general} has a solution for $x\in\mathbb{C}\setminus\mathcal{D}$, where $\mathcal{D}$ is a discrete set.  Then $\mathbf{\Psi}(\lambda,x):=\mathbf{Y}(\lambda;x)\ee^{(\frac{1}{2}\lambda^2+x\lambda)\sigma_3}$ satisfies the Lax equations
\eq
\frac{\partial\mathbf{\Psi}}{\partial x}(\lambda,x)=\left(\lambda\sigma_3 + [\mathbf{Y}^\infty_1(x),\sigma_3]\right)\mathbf{\Psi}(\lambda,x)
\label{eq:Lax-x}
\endeq
and
\eq
\frac{\partial\mathbf{\Psi}}{\partial\lambda}(\lambda,x)=
\left(\lambda\sigma_3 + x\sigma_3+[\mathbf{Y}^\infty_1(x),\sigma_3]+\lambda^{-1}\mathbf{\Lambda}_1(x)\right)\mathbf{\Psi}(\lambda,x)
\label{eq:Lax-lambda}
\endeq
for each $(\lambda,x)\in(\mathbb{C}\setminus\Sigma)\times (\mathbb{C}\setminus\mathcal{D})$, where the coefficient matrix $\mathbf{\Lambda}_1(x)$ has two equivalent representations:
\eq
\begin{split}
\mathbf{\Lambda}_1(x)&=x[\mathbf{Y}^\infty_1(x),\sigma_3]+[\mathbf{Y}^\infty_2(x),\sigma_3]-[\mathbf{Y}^\infty_1(x),\sigma_3]\mathbf{Y}^\infty_1(x)-\Theta_\infty\sigma_3\\
&=\Theta_0\mathbf{Y}^0_0(x)\sigma_3\mathbf{Y}^0_0(x)^{-1}.
\end{split}
\label{eq:M-define}
\endeq
\label{prop:LaxEquations}
\end{proposition}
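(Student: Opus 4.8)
\textbf{Proof strategy for Proposition~\ref{prop:LaxEquations}.}
The plan is to recover the Lax pair from the Riemann-Hilbert data by the standard device of differentiating $\mathbf{Y}$ and using the fact that the jump matrices for $\mathbf{Y}(\lambda;x)$ (after the conjugation that defines $\mathbf{\Psi}$) are independent of both $x$ and $\lambda$ where they matter. Concretely, set $\mathbf{\Psi}(\lambda,x):=\mathbf{Y}(\lambda;x)\ee^{(\frac{1}{2}\lambda^2+x\lambda)\sigma_3}$ and observe from the jump conditions in Riemann-Hilbert Problem~\ref{rhp:general} that $\mathbf{\Psi}_+(\lambda;x)=\mathbf{\Psi}_-(\lambda;x)\mathbf{V}$ on each arc of $\Sigma$ with $\mathbf{V}$ a \emph{constant} matrix (on $\Sigma_0$ the jump is $\ee^{2\pi\ii\Theta_0\sigma_3}$, also constant). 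Hence the logarithmic derivatives $\mathbf{\Psi}_x\mathbf{\Psi}^{-1}$ and $\mathbf{\Psi}_\lambda\mathbf{\Psi}^{-1}$ have no jump across any arc of $\Sigma$; since $\mathbf{Y}$ is analytic off $\Sigma$ with $\det(\mathbf{Y})=1$ (Proposition~\ref{prop:RHP-basic}), each of these logarithmic derivatives extends to a function analytic on all of $\mathbb{C}\setminus\{0\}$, with at worst a pole at $\lambda=0$ (controlled by the behavior $\mathbf{Y}(\lambda;x)\lambda^{-\Theta_0\sigma_3}$ bounded) and polynomial growth at $\lambda=\infty$ (controlled by the expansion \eqref{eq:Y-infty-expand}). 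By a Liouville argument each is therefore a rational function of $\lambda$ with a single pole at $0$ in the $\lambda$-equation and no finite pole in the $x$-equation.

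For the $x$-equation: term-by-term differentiability of \eqref{eq:Y-infty-expand} with respect to $x$ gives, as $\lambda\to\infty$,
\eq
\mathbf{\Psi}_x\mathbf{\Psi}^{-1}=\lambda\sigma_3+\left([\mathbf{Y}^\infty_1(x),\sigma_3]\right)+\bo(\lambda^{-1}),
\endeq
where the $\lambda\sigma_3$ term comes from differentiating $\ee^{x\lambda\sigma_3}$ and the $\bo(1)$ term from the leading correction in \eqref{eq:Y-infty-expand}; since $\mathbf{\Psi}_x\mathbf{\Psi}^{-1}$ is entire and has this growth it must be exactly $\lambda\sigma_3+[\mathbf{Y}^\infty_1(x),\sigma_3]$, which is \eqref{eq:Lax-x}. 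For the $\lambda$-equation: near $\lambda=\infty$, differentiating $\mathbf{\Psi}=\mathbf{Y}\ee^{(\frac12\lambda^2+x\lambda)\sigma_3}$ and using the expansion \eqref{eq:Y-infty-expand} yields
\eq
\mathbf{\Psi}_\lambda\mathbf{\Psi}^{-1}=\lambda\sigma_3+x\sigma_3+[\mathbf{Y}^\infty_1(x),\sigma_3]-\Theta_\infty\sigma_3+\bo(\lambda^{-1}),
\endeq
the $-\Theta_\infty\sigma_3$ coming from differentiating the sub-dominant factor $\lambda^{-\Theta_\infty\sigma_3}$ implicit in the normalization; near $\lambda=0$, using \eqref{eq:Y-zero-expand} one finds $\mathbf{\Psi}_\lambda\mathbf{\Psi}^{-1}=\lambda^{-1}\Theta_0\mathbf{Y}^0_0(x)\sigma_3\mathbf{Y}^0_0(x)^{-1}+\bo(1)$. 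So $\mathbf{\Psi}_\lambda\mathbf{\Psi}^{-1}-\lambda\sigma_3$ is a rational function with a simple pole at $0$ and a constant value at $\infty$; matching these two expansions gives \eqref{eq:Lax-lambda} with $\mathbf{\Lambda}_1(x)$ given by the first line of \eqref{eq:M-define} from the large-$\lambda$ expansion and by the second line from the small-$\lambda$ expansion, the two expressions agreeing because they are Laurent coefficients of the same rational function.

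The one point requiring a little care — and the main obstacle — is the passage to the coefficient $[\mathbf{Y}^\infty_1(x),\sigma_3]$ in the $\bo(1)$ term of $\mathbf{\Psi}_\lambda\mathbf{\Psi}^{-1}$ and the cancellation of the would-be $\Theta_\infty\lambda^{-1}$ term: one must expand $\mathbf{Y}(\lambda;x)\lambda^{\Theta_\infty\sigma_3}=\mathbb{I}+\mathbf{Y}^\infty_1\lambda^{-1}+\mathbf{Y}^\infty_2\lambda^{-2}+\cdots$, write $\mathbf{Y}=(\mathbb{I}+\mathbf{Y}^\infty_1\lambda^{-1}+\cdots)\lambda^{-\Theta_\infty\sigma_3}$, differentiate, and carefully collect the $\lambda^{-1}$ and $\lambda^0$ coefficients of $\mathbf{Y}_\lambda\mathbf{Y}^{-1}$, using that the conjugation by $\ee^{(\frac12\lambda^2+x\lambda)\sigma_3}$ shifts these by the polynomial part $\lambda\sigma_3+x\sigma_3$. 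This is a routine but slightly delicate bookkeeping of Laurent coefficients; its validity rests on the term-by-term differentiability of both \eqref{eq:Y-infty-expand} and \eqref{eq:Y-zero-expand} guaranteed by Proposition~\ref{prop:RHP-basic}. Everything else is an application of Liouville's theorem once the no-jump property of the logarithmic derivatives is observed, so the proof is short.
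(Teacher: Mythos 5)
Your overall strategy is exactly the paper's: the conjugated matrix $\mathbf{\Psi}$ has jump matrices independent of $\lambda$ and $x$, so the logarithmic derivatives $\mathbf{\Psi}_x\mathbf{\Psi}^{-1}$ and $\mathbf{\Psi}_\lambda\mathbf{\Psi}^{-1}$ extend analytically to $\mathbb{C}\setminus\{0\}$, and Liouville's theorem plus the expansions \eqref{eq:Y-infty-expand} and \eqref{eq:Y-zero-expand} pin them down. The $x$-equation is handled correctly. However, your displayed large-$\lambda$ expansion for the $\lambda$-equation is wrong in a way that contradicts the statement you are proving. Writing $\mathbf{Y}=\mathbf{W}(\lambda)\,\lambda^{-\Theta_\infty\sigma_3}$ with $\mathbf{W}(\lambda)=\mathbb{I}+\mathbf{Y}^\infty_1\lambda^{-1}+\cdots$, the derivative of the factor $\lambda^{-\Theta_\infty\sigma_3}$ contributes $-\Theta_\infty\sigma_3\lambda^{-1}$, i.e.\ it enters $\mathbf{\Psi}_\lambda\mathbf{\Psi}^{-1}$ at order $\lambda^{-1}$, not at order $\lambda^0$. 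The correct constant term is $x\sigma_3+[\mathbf{Y}^\infty_1(x),\sigma_3]$ (as in \eqref{eq:Lax-lambda}), and the $-\Theta_\infty\sigma_3$ belongs inside the residue $\mathbf{\Lambda}_1(x)$, where it appears explicitly on the first line of \eqref{eq:M-define}. Relatedly, there is no ``cancellation of the would-be $\Theta_\infty\lambda^{-1}$ term'' to arrange: that term survives and is part of $\mathbf{\Lambda}_1$. If you matched your (incorrect) constant term against the small-$\lambda$ expansion, you would obtain a $\lambda$-equation with an extra $-\Theta_\infty\sigma_3$ in the polynomial part and a residue missing that same term.

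A second, smaller point: the first representation of $\mathbf{\Lambda}_1(x)$ in \eqref{eq:M-define} is the $\lambda^{-1}$ Laurent coefficient of $\mathbf{\Psi}_\lambda\mathbf{\Psi}^{-1}$ at infinity, so it cannot be read off from an expansion that stops at $\bo(\lambda^{-1})$; you need to carry the expansion one order further, computing
$(\lambda+x)\,\mathbf{W}\sigma_3\mathbf{W}^{-1}=(\lambda+x)\sigma_3+(1+x\lambda^{-1})[\mathbf{Y}^\infty_1,\sigma_3]+\lambda^{-1}\bigl([\mathbf{Y}^\infty_2,\sigma_3]-[\mathbf{Y}^\infty_1,\sigma_3]\mathbf{Y}^\infty_1\bigr)+\bo(\lambda^{-2})$,
and then adding $-\Theta_\infty\sigma_3\lambda^{-1}$ to recover the first line of \eqref{eq:M-define}. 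You defer this to ``bookkeeping,'' which is fair, but as written the proof never actually produces that formula, and the equality of the two lines of \eqref{eq:M-define} is precisely the Liouville identification of the $\lambda^{-1}$ coefficient computed from infinity with the residue computed from zero. Once the $-\Theta_\infty\sigma_3$ term is moved to the right order and the $\lambda^{-1}$ coefficient is computed, your argument coincides with the paper's.
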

\begin{proof}
The function $\mathbf{\Psi}(\lambda,x)$ is analytic in both variables $(\lambda,x)$ exactly where $\mathbf{Y}(\lambda;x)$ is, and it satisfies jump conditions analogous to those satisfied by $\mathbf{Y}(\lambda;x)$ but with the exponential factors omitted.  These jump conditions are independent of both $\lambda$ and $x$, so the partial derivatives on the left-hand sides of \eqref{eq:Lax-x} and \eqref{eq:Lax-lambda} satisfy the same jump conditions as does $\mathbf{\Psi}(\lambda,x)$ itself.  Taking into account the classical nature of the boundary values on the jump contour $\Sigma$ and the unit determinant of $\mathbf{\Psi}(\lambda,x)$ guaranteed by the first property of Proposition~\ref{prop:RHP-basic}, it then follows easily that $\mathbf{X}(\lambda,x):=\partial_x\mathbf{\Psi}(\lambda,x)\cdot\mathbf{\Psi}(\lambda,x)^{-1}$ and $\mathbf{\Lambda}(\lambda,x):=\partial_\lambda\mathbf{\Psi}(\lambda,x)\cdot\mathbf{\Psi}(\lambda,x)^{-1}$ are both analytic functions of $\lambda\in\mathbb{C}\setminus\{0\}$ for every $x\in \mathcal{D}$.  Using \eqref{eq:Y-infty-expand} and its derivatives with respect to $x$ and $\lambda$ shows that 
\eq
\mathbf{X}(\lambda,x)=\lambda\sigma_3 + [\mathbf{Y}^\infty_1(x),\sigma_3] + \bo(\lambda^{-1}),\quad\lambda\to\infty
\label{eq:X-infty-expand}
\endeq
and
\eq
\mathbf{\Lambda}(\lambda,x)=\lambda\sigma_3 + x\sigma_3 + [\mathbf{Y}^\infty_1(x),\sigma_3] +
\lambda^{-1}\mathbf{\Lambda}_1(x) + \bo(\lambda^{-2}),\quad\lambda\to\infty
\label{eq:Lambda-infty-expand}
\endeq
where $\mathbf{\Lambda}_1(x)$ is given by the first line of \eqref{eq:M-define}.  Likewise, using \eqref{eq:Y-zero-expand} and its derivatives with respect to $x$ and $\lambda$ shows that
\eq
\mathbf{X}(\lambda,x)=\bo(1),\quad\lambda\to 0
\endeq
and
\eq
\mathbf{\Lambda}(\lambda,x)=\lambda^{-1}\Theta_0\mathbf{Y}^0_0(x)\sigma_3\mathbf{Y}^0_0(x)^{-1} + \bo(1),\quad\lambda\to 0.
\label{eq:Lambda-zero-expand}
\endeq
Note that $\det(\mathbf{Y}^0_0(x))=1$ according to the first statement of Proposition~\ref{prop:RHP-basic}.
Hence by Liouville's Theorem, $\mathbf{X}(\lambda,x)$ is a linear function of $\lambda$ given by the two explicit terms on the right-hand side of \eqref{eq:X-infty-expand}, while $\mathbf{\Lambda}(\lambda,x)$ is a Laurent polynomial of degree $(1,1)$ given by the four explicit terms on the right-hand side of \eqref{eq:Lambda-infty-expand}.  Comparing the latter with \eqref{eq:Lambda-zero-expand} then gives the second equality in \eqref{eq:M-define} and completes the proof.
\end{proof}

\begin{proposition}
Suppose that Riemann-Hilbert Problem~\ref{rhp:general} has a solution for $x\in\mathbb{C}\setminus\mathcal{D}$, where $\mathcal{D}$ is a discrete set.  Set
\eq
\begin{split}
y(x)&:=-2Y^\infty_{1,12}(x)\\ z(x)&:=\Theta_0+\Theta_\infty-2Y^\infty_{1,12}(x)Y^\infty_{1,21}(x)=-2\Theta_0Y^0_{0,12}(x)Y^0_{0,21}(x)\\
u(x)&:=2Y^\infty_{1,22}(x)-2x-2\frac{Y^\infty_{2,12}(x)}{Y^\infty_{1,12}(x)}=-2\Theta_0\frac{Y^0_{0,11}(x)Y^0_{0,12}(x)}{Y^\infty_{1,12}(x)}.
\end{split}
\label{eq:yzu-define}
\endeq
The definition of $u(x)$ is determinate if and only if $Y^\infty_{1,12}(x)$ does not vanish identically.
In this case, $y(x)$, $z(x)$, and $u(x)$ are meromorphic on $\mathbb{C}\setminus\mathcal{D}$; more precisely $y(x)$ is analytic and not identically vanishing, $z(x)$ is analytic, and $u(x)$ has a discrete set of poles in $\mathbb{C}\setminus\mathcal{D}$ corresponding to the necessarily isolated zeros of the analytic function $Y^\infty_{1,12}(x)$ proportional to $y(x)$.
Assuming furthermore that $u(x)$ does not vanish identically, the Lax equations \eqref{eq:Lax-x} and \eqref{eq:Lax-lambda} take exactly the form \eqref{eq:PIV-Lax-Pair} subject to \eqref{eq:JM-A}--\eqref{eq:JM-U}.  Furthermore, the differential identities
\eq
\begin{split}
y'&=-(2x+u)y\\
z'&=(\Theta_0+\Theta_\infty)u+4\Theta_0\frac{z}{u}-uz-2\frac{z^2}{u}\\
u'&=4\Theta_0+2xu+u^2-4z
\end{split}
\label{eq:PIV-system}
\endeq
hold for every $x\in\mathbb{C}\setminus\mathcal{D}$ that is not a pole or zero of $u(x)$, and $u(x)$ is a meromorphic solution of the Painlev\'e-IV equation in the form \eqref{p4}.
\label{prop:DiffEqs}
\end{proposition}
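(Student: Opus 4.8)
The plan is to verify Proposition~\ref{prop:DiffEqs} by leveraging Proposition~\ref{prop:LaxEquations}, which already establishes that $\mathbf{\Psi}(\lambda,x):=\mathbf{Y}(\lambda;x)\ee^{(\frac{1}{2}\lambda^2+x\lambda)\sigma_3}$ solves a Lax system of the form \eqref{eq:Lax-x}--\eqref{eq:Lax-lambda} with coefficient matrix $\mathbf{\Lambda}_1(x)$ having the two equivalent representations in \eqref{eq:M-define}. First I would show that the three analyticity/meromorphy claims follow immediately from Proposition~\ref{prop:RHP-basic}: $y(x)=-2Y^\infty_{1,12}(x)$ is analytic on $\mathbb{C}\setminus\mathcal{D}$ by the differentiability of the expansion \eqref{eq:Y-infty-expand}; that $y(x)$ is not identically vanishing follows from $\det(\mathbf{Y}^0_0(x))=1$ together with the second representation of $\mathbf{\Lambda}_1(x)$, since if $Y^\infty_{1,12}\equiv 0$ then the equivalence of the two lines of \eqref{eq:M-define} forces a degeneracy incompatible with unit determinant and the prescribed exponents $\pm\Theta_0$ at $\lambda=0$ (I expect the cleanest route is to observe that $\mathbf{\Lambda}_1(x)$ has eigenvalues $\pm\Theta_0$ and trace zero, hence $\Lambda_{1,12}(x)=-\tfrac{1}{2}u(x)y(x)$ must be a well-defined meromorphic object, which forces $y$ to be nontrivial when $\Theta_0\neq 0$). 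The determinacy statement for $u(x)$ is then tautological: $u(x)$ as defined in \eqref{eq:yzu-define} has a factor $Y^\infty_{1,12}(x)$ in the denominator.

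Next I would identify the coefficients. The second representation $\mathbf{\Lambda}_1(x)=\Theta_0\mathbf{Y}^0_0(x)\sigma_3\mathbf{Y}^0_0(x)^{-1}$ directly gives the entries of $\mathbf{\Lambda}_1$ in terms of $\mathbf{Y}^0_0$; expanding this $2\times 2$ conjugation and matching against the Jimbo--Miwa form $\mathbf{\Lambda}_1(x)=\bbm \Theta_0-z & -\tfrac{1}{2}uy\\ 2y^{-1}u^{-1}z(z-2\Theta_0) & z-\Theta_0\ebm$ from \eqref{eq:JM-A} yields precisely the defining formulas for $z(x)$ and $u(x)$ in \eqref{eq:yzu-define} (using $\det \mathbf{Y}^0_0=1$ and $y=-2Y^\infty_{1,12}$). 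Simultaneously, the first representation of $\mathbf{\Lambda}_1(x)$ in \eqref{eq:M-define} in terms of $\mathbf{Y}^\infty_1,\mathbf{Y}^\infty_2$ reproduces the alternate formulas (the ones involving $Y^\infty_{2,12}$) — equating the two representations is where the nontrivial identities $z=\Theta_0+\Theta_\infty-2Y^\infty_{1,12}Y^\infty_{1,21}$ etc. come from, and this is essentially a bookkeeping exercise in the Laurent coefficients. For the $x$-equation, comparing $[\mathbf{Y}^\infty_1(x),\sigma_3]$ in \eqref{eq:Lax-x} with $\mathbf{X}_0(x)=\bbm 0 & y\\ 2y^{-1}(z-\Theta_0-\Theta_\infty) & 0\ebm$ in \eqref{eq:JM-U} gives the off-diagonal entries; the $(2,1)$ entry match is exactly the middle line of \eqref{eq:yzu-define} rearranged.

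The differential identities \eqref{eq:PIV-system} would then come from the compatibility condition $\mathbf{\Lambda}_x-\mathbf{X}_\lambda+[\mathbf{\Lambda},\mathbf{X}]=0$ of the now-identified Lax pair \eqref{eq:PIV-Lax-Pair}, read off at the various powers of $\lambda$ (the $\lambda^{-1}$ coefficient gives a matrix ODE for $\mathbf{\Lambda}_1(x)$ whose entries unpack into the three scalar equations). Alternatively, and perhaps more transparently, one differentiates the explicit formulas \eqref{eq:yzu-define} using the $x$-derivative of the expansions \eqref{eq:Y-infty-expand}, \eqref{eq:Y-zero-expand} term-by-term together with \eqref{eq:Lax-x}; this yields $y'=-(2x+u)y$ immediately and the other two after modest algebra. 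Finally, eliminating $z$ from the system \eqref{eq:PIV-system} (solve the third equation for $z$, substitute into the first and second) produces a second-order ODE for $u$, which one checks coincides with \eqref{p4}; this last elimination is a standard but slightly lengthy computation. I expect the main obstacle to be not any single calculation but rather handling the case distinctions carefully: one must confirm that $y(x)$ being analytic and non-vanishing is genuinely forced (not merely generic) so that the substitutions $y^{-1}$, $u^{-1}$ in the coefficient matrices are legitimate away from the discrete pole/zero sets, and one must verify the identities at \emph{resonant} Fuchsian points (where $\Theta_0\in\tfrac12\mathbb{Z}$, relevant for the gH family) where $\mathbf{Y}^0_0$ may have extra freedom — here the key point is that for an apparent singularity the formulas \eqref{eq:yzu-define}, being ratios that are invariant under the residual gauge freedom, remain well-defined, and I would spell out that invariance explicitly.
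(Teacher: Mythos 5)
Your architecture is the same as the paper's: invoke Proposition~\ref{prop:RHP-basic} for analyticity and Proposition~\ref{prop:LaxEquations} for the Lax system, read \eqref{eq:yzu-define} as a parametrization of the coefficient matrices matching the Jimbo--Miwa forms \eqref{eq:JM-A}--\eqref{eq:JM-U} (with the alternate expressions for $z$ and $u$ coming from equating the two representations of $\mathbf{\Lambda}_1(x)$ in \eqref{eq:M-define}), obtain \eqref{eq:PIV-system} from the zero-curvature condition by separating powers of $\lambda$, and eliminate $z$ via the third equation to arrive at \eqref{p4}. That is exactly the paper's proof.

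One step in your write-up is wrong, though it does not sink the argument because the step is not actually needed. You propose to show that $y(x)=-2Y^\infty_{1,12}(x)$ is \emph{forced} to be not identically vanishing, via unit determinant of $\mathbf{Y}^0_0$ and the exponents $\pm\Theta_0$. This cannot work: $Y^\infty_{1,12}(x)\equiv 0$ is genuinely possible (e.g.\ in the gH setting the solution $\mathbf{Y}(\lambda;x)$ can be lower-triangular, as happens for the $N=0$ pseudo-orthogonal polynomial case discussed after \eqref{eq:W-norm}), and in that situation $\mathbf{\Lambda}_1(x)=\Theta_0\mathbf{Y}^0_0(x)\sigma_3\mathbf{Y}^0_0(x)^{-1}$ is still perfectly well defined with trace zero and determinant $-\Theta_0^2$; it is only the parametrization \eqref{eq:JM-A}, which requires $y\neq 0$, that breaks down. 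The proposition does not claim unconditional nonvanishing: the phrase ``in this case'' makes everything after the first sentence conditional on the hypothesis that $Y^\infty_{1,12}(x)$ does not vanish identically, so the nonvanishing of $y$ is the hypothesis restated, not something to prove. Similarly, your worry about residual gauge freedom at resonant Fuchsian points is moot at this stage: the quantities in \eqref{eq:yzu-define} are built from $\mathbf{Y}(\lambda;x)$, which is the \emph{unique} solution of Riemann-Hilbert Problem~\ref{rhp:general} by Proposition~\ref{prop:RHP-basic}; the freedom in $\mathbf{\Psi}^{(0)}$ is fixed before the Riemann--Hilbert problem is posed. With the false unconditional claim deleted and replaced by the conditional reading, your proof is correct.
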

\begin{proof}
The definitions \eqref{eq:yzu-define} amount to nothing more than a parametrization of the coefficients in the Lax equations \eqref{eq:Lax-x} and \eqref{eq:Lax-lambda} from Proposition~\ref{prop:LaxEquations}, subject to the condition following from the second line of \eqref{eq:M-define} that $\mathbf{\Lambda}_1(x)$ has trace zero and determinant $-\Theta_0^2$; the alternate forms of $z(x)$ and $u(x)$ then follow from comparing the two different representations of $\Lambda_{1,11}(x)$ and $\Lambda_{1,12}(x)$ given on the two lines of \eqref{eq:M-define}.  The relation between the basic analyticity properties of these functions and the statement that $Y^\infty_{1,12}(x)$ is an analytic function on $\mathbb{C}\setminus\mathcal{D}$ that does not vanish identically then follow from Proposition~\ref{prop:RHP-basic}.  The three equations \eqref{eq:PIV-system} are exactly those arising from the zero-curvature compatibility condition
$\mathbf{\Lambda}_x-\mathbf{X}_\lambda + [\mathbf{\Lambda},\mathbf{X}]=\mathbf{0}$ upon separating out the coefficients of the different powers of $\lambda$ that appear, and these make sense provided neither $y(x)$ nor $u(x)$ vanishes identically.   Finally, elimination of $z$ in favor of $u$ by using the third equation of \eqref{eq:PIV-system} in the second yields the Painlev\'e-IV equation \eqref{p4} on $u(x)$.
\end{proof}

\begin{remark}
\label{rem:alternate}
The formul\ae\ for $u(x)$ given in \eqref{eq:yzu-define} are particularly useful to us because they do not require differentiation; however a more compact formula obtained by combining the definition of $y(x)$ in \eqref{eq:yzu-define} with the first differential equation in \eqref{eq:PIV-system} is simply (see \cite[Eqn.\@ (5.1.13)]{FokasIKN:2006})
\eq
u(x)=-2x-\frac{\dd}{\dd x}\log(Y^\infty_{1,12}(x)).
\endeq
Note also that up to a constant factor, the alternate representation of $u(x)$ given in \eqref{eq:yzu-define} above (see also \eqref{eq:u-ucirc}) is the reciprocal of the formula for extracting the solution of the Painlev\'e-III equation from its Riemann-Hilbert problem (see \cite[Eqn.\@ 18]{BothnerMS18}, which corrects a corresponding formula in \cite[Theorem 5.4]{FokasIKN:2006}).
\end{remark}

Most of these results can be found in \cite[Chapter 5, Section 1]{FokasIKN:2006}, however another observation about Riemann-Hilbert Problem~\ref{rhp:general} that is not as well-known but that we will find extremely useful is that it contains also the solution of \emph{another} Painlev\'e-IV equation:
\begin{proposition}
Suppose that $\Theta_0\neq 0$, that Riemann-Hilbert Problem~\ref{rhp:general} has a solution for $x\in\mathbb{C}\setminus\mathcal{D}$, where $\mathcal{D}$ is a discrete set, and that the function $u(x)$ given by \eqref{eq:yzu-define} is well-defined as a meromorphic function on $\mathbb{C}\setminus\mathcal{D}$.  Let $u_\tw(x)$ be defined in terms of $\mathbf{Y}(\lambda;x)$ by \eqref{eq:u-ucirc}.
Then $u_\tw(x)$ is a meromorphic solution of the Painlev\'e-IV equation \eqref{p4} with modified parameters $(\Theta_{0,\tw},\Theta_{\infty,\tw})$ given in \eqref{eq:Baecklund-3-to-1}, 
and $u_\tw(x)$ is explicitly related to $u(x)$ by the B\"acklund transformation indicated also in \eqref{eq:Baecklund-3-to-1}.
Thus the same Riemann-Hilbert problem encodes explicitly a solution $(\Theta_0,\Theta_\infty,u(x))$ and its image under the symmetry $\mathcal{S}_\tw$.
\label{prop:vBaecklund}
\end{proposition}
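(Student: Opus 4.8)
\textbf{Proof proposal for Proposition~\ref{prop:vBaecklund}.}

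The plan is to proceed by a coefficient-extraction argument parallel to the proof of Proposition~\ref{prop:DiffEqs}, but now using simultaneously the Lax equations \eqref{eq:Lax-x}--\eqref{eq:Lax-lambda} at $\lambda=\infty$ and the expansion \eqref{eq:Y-zero-expand} at $\lambda=0$, together with the identity $\mathbf{\Lambda}_1(x)=\Theta_0\mathbf{Y}^0_0(x)\sigma_3\mathbf{Y}^0_0(x)^{-1}$ from \eqref{eq:M-define}. First I would show that both the spectral equation \eqref{eq:Lax-lambda} and the deformation equation \eqref{eq:Lax-x} admit a local analysis at the Fuchsian point $\lambda=0$: since $\mathbf{Y}(\lambda;x)\lambda^{-\Theta_0\sigma_3}$ is analytic and invertible near $\lambda=0$ (its determinant is $1$), the matrix $\mathbf{Z}(\lambda,x):=\mathbf{Y}(\lambda;x)\lambda^{-\Theta_0\sigma_3}$ satisfies an $x$-equation whose coefficient is analytic at $\lambda=0$, obtained by conjugating \eqref{eq:Lax-x} by $\lambda^{-\Theta_0\sigma_3}$; evaluating at $\lambda=0$ gives an ODE in $x$ for $\mathbf{Y}^0_0(x)$ of the form $\partial_x\mathbf{Y}^0_0=[\mathbf{Y}^\infty_1,\sigma_3]\mathbf{Y}^0_0 + \mathbf{Y}^0_1\cdot 0$ — more precisely the $\bo(1)$ term of the conjugated $x$-connection matrix acting on $\mathbf{Y}^0_0$. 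Extracting the individual matrix entries of this relation, together with the $\bo(\lambda)$ term, yields differential identities for the quantities $Y^0_{0,jk}(x)$ analogous to \eqref{eq:PIV-system}. In parallel, from the large-$\lambda$ side I already have the parametrization \eqref{eq:yzu-define} and the identities \eqref{eq:PIV-system} linking $y,z,u$ to the $\mathbf{Y}^\infty_k$.

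The second step is to verify algebraically that the quantity $u_\tw(x)$ defined in \eqref{eq:u-ucirc} equals the explicit right-hand side of the Lukashevich--Gromak expression in \eqref{eq:Baecklund-3-to-1}, namely $u'(x)/(2u(x)) - 2\Theta_0/u(x) - x - \tfrac{1}{2}u(x)$. To do this I would use the two representations of $\mathbf{\Lambda}_1(x)$ in \eqref{eq:M-define} to express $Y^0_{0,21}(x)/Y^0_{0,11}(x)$ in terms of $z(x)/u(x)$ and the other auxiliary variables, substitute the first equation of \eqref{eq:PIV-system} ($y'=-(2x+u)y$, equivalently $u = -2x - y'/y = -2x - (\log Y^\infty_{1,12})'$) and the third equation ($u' = 4\Theta_0 + 2xu + u^2 - 4z$, so $(u' - 2xu - u^2 - 4\Theta_0)/(-4) = z$), and simplify. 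The product $Y^0_{0,21}(x)Y^\infty_{1,12}(x)/Y^0_{0,11}(x)$ should then collapse to the claimed combination; the key algebraic inputs are $Y^0_{0,11}Y^0_{0,22} - Y^0_{0,12}Y^0_{0,21}=1$ and the two forms of $z(x)$ in \eqref{eq:yzu-define}. This is essentially the limiting form of the computation in \eqref{eq:dotU-dotUtwist}, done at the level of the exact Riemann-Hilbert data rather than the outer parametrix, and I expect it to be bookkeeping-heavy but mechanical.

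The third step is to check that $u_\tw(x)$, once known to equal that explicit expression in terms of a genuine Painlev\'e-IV solution $u(x)$ with parameters $(\Theta_0,\Theta_\infty)$, solves Painlev\'e-IV with the shifted parameters $(\Theta_{0,\tw},\Theta_{\infty,\tw}) = (-\tfrac{1}{2}(\Theta_0+\Theta_\infty), \tfrac{3}{2}\Theta_0 - \tfrac{1}{2}\Theta_\infty + 1)$. Here I can cite the classical fact (Lukashevich~\cite{Lukashevich:1967}, Gromak~\cite{Gromak:1987}; see also \cite[Section 2]{Clarkson:2006} and \cite[\S 32.7(iv)]{DLMF}) that the map $\mathscr{T}_1^\pm$ is a B\"acklund transformation of \eqref{p4} with precisely this action on the parameters; alternatively one re-derives it directly by substituting the expression into \eqref{p4} and using the system \eqref{eq:PIV-system} for $u$. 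Meromorphy of $u_\tw(x)$ on $\mathbb{C}\setminus\mathcal{D}$ follows from meromorphy of $\mathbf{Y}^0_0(x)$ and $Y^\infty_{1,12}(x)$ guaranteed by Proposition~\ref{prop:RHP-basic}, together with the fact that $Y^0_{0,11}(x)$ does not vanish identically (otherwise the first column of $\mathbf{Y}^0_0$ would be proportional to $(0,1)^{\mathsf T}$, forcing $\det\mathbf{Y}^0_0=0$, a contradiction). The statement that ``the same Riemann-Hilbert problem encodes a solution and its image under $\mathcal{S}_\tw$'' is then immediate from the explicit formulas \eqref{eq:u-ucirc}.

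I expect the main obstacle to be the second step: pinning down the exact normalization and sign so that the ratio $-2Y^0_{0,21}Y^\infty_{1,12}/Y^0_{0,11}$ matches $u'/(2u) - 2\Theta_0/u - x - \tfrac{1}{2}u$ on the nose, rather than up to a constant. This requires being careful about which of the two forms of $z(x)$ and $u(x)$ in \eqref{eq:yzu-define} one uses at each stage and about the $x$-derivative identities for the $\lambda=0$ data, which are less standard in the literature than those for the $\lambda=\infty$ data. A clean way to organize this is to introduce $\mathbf{B}(x):=\lim_{\lambda\to 0}\lambda^{\Theta_0\sigma_3}(\partial_\lambda\mathbf{Z})\mathbf{Z}^{-1}\lambda^{-\Theta_0\sigma_3}$-type coefficients alongside $\mathbf{\Lambda}_1(x)$ and read off the needed entries systematically; the residual check that the degenerate (resonant) case $\Theta_0\in\tfrac12\mathbb Z$ causes no trouble is handled exactly as in the apparent-singularity discussion of Section~\ref{sec:rational-isomonodromy}, since by hypothesis $\Theta_0\neq 0$ and the coefficients in the Lax pair correspond to an actual (rational) solution.
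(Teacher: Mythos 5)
Your overall strategy for the core identity is sound and matches the paper's in its essentials: identify $u_\tw(x)$ with $-2z(x)/u(x)$ and then eliminate $z$ via the third equation of \eqref{eq:PIV-system} to recover the Lukashevich--Gromak formula in \eqref{eq:Baecklund-3-to-1}, after which the shifted parameters follow from the classical B\"acklund theory (or a direct substitution). However, you substantially overestimate the difficulty of your ``second step,'' and the extra machinery of your first step is not needed. The identity $u_\tw(x)=-2z(x)/u(x)$ is an immediate one-line consequence of dividing the two \emph{alternate} formulas in \eqref{eq:yzu-define}: since $z(x)=-2\Theta_0Y^0_{0,12}Y^0_{0,21}$ and $u(x)=-2\Theta_0Y^0_{0,11}Y^0_{0,12}/Y^\infty_{1,12}$, the factors $-2\Theta_0Y^0_{0,12}$ cancel and one gets $z/u=Y^0_{0,21}Y^\infty_{1,12}/Y^0_{0,11}$, which is exactly $-\tfrac{1}{2}u_\tw$ by \eqref{eq:u-ucirc}. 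No conjugated $x$-equation at $\lambda=0$, no differential identities for the $Y^0_{0,jk}$, and no use of $\det\mathbf{Y}^0_0=1$ or of the two representations of $\mathbf{\Lambda}_1$ are required; the only differential input is the third equation of \eqref{eq:PIV-system}, which you correctly identified.

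There is one genuine error: your argument that $Y^0_{0,11}(x)$ does not vanish identically is wrong. If $Y^0_{0,11}\equiv 0$, the determinant is $-Y^0_{0,12}Y^0_{0,21}$, which can perfectly well equal $1$; having a column with vanishing first entry does not force $\det\mathbf{Y}^0_0=0$. The correct argument uses the hypothesis $\Theta_0\neq 0$ in an essential way: by Proposition~\ref{prop:DiffEqs} the function $u(x)$ is a meromorphic solution of \eqref{p4}, and because of the term $-8\Theta_0^2/u$ the equation \eqref{p4} with $\Theta_0\neq 0$ does not admit the identically vanishing solution; hence $u(x)\not\equiv 0$, hence the product $Y^0_{0,11}Y^0_{0,12}$ (and in particular $Y^0_{0,11}$) does not vanish identically, and $u_\tw(x)$ is a well-defined meromorphic function. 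This is also where the hypothesis $\Theta_0\neq 0$ enters the proof, which your write-up never actually uses except in passing.
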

\begin{proof}
Without the hypothesis that $\Theta_0\neq 0$, by Proposition~\ref{prop:DiffEqs}, the function $u(x)$ given by the alternate expression in \eqref{eq:yzu-define} is 
a meromorphic solution of the Painlev\'e-IV equation \eqref{p4}.  However since $\Theta_0\neq 0$ it follows easily that \eqref{p4} does not admit the identically vanishing solution, so it follows in particular that $Y^0_{0,11}(x)$ does not vanish identically.  Therefore, $u_\tw(x)$ is indeed a meromorphic function, and comparing with the alternate definitions of $z(x)$ and $u(x)$ in \eqref{eq:yzu-define} one
sees that $u_\tw(x)=-2z(x)/u(x)$.  Eliminating $z(x)$ using the last equation in the system \eqref{eq:PIV-system} yields the B\"acklund transformation formula given in \eqref{eq:Baecklund-3-to-1}.  It is then straightforward to deduce the Painlev\'e-IV equation satisfied by $u_\tw(x)$ from that satisfied by $u(x)$.
\end{proof}

\begin{corollary}
Under the same additional conditions as in Corollary~\ref{cor:Schwarz}, it follows that 
$y(x^*)=y(x)^*$, $z(x^*)=z(x)^*$, $u(x^*)=u(x)^*$, and $u_\tw(x^*)=u_\tw(x)^*$.
\end{corollary}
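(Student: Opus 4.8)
The plan is to derive each of the four conjugation identities directly from the two Schwarz-symmetry facts already established: Corollary~\ref{cor:Schwarz}, which under the hypothesis $\mathbf{V}(\lambda^*)^*=\mathbf{V}(\lambda)^{-1}$ gives $\mathbf{Y}(\lambda^*;x^*)=\mathbf{Y}(\lambda;x)^*$, and the explicit rational-expression definitions of $y$, $z$, $u$ in \eqref{eq:yzu-define} together with the definition of $u_\tw$ in \eqref{eq:u-ucirc}. The whole argument is essentially bookkeeping: take complex conjugates of the defining formulas and track how conjugation interacts with the expansion coefficients $\mathbf{Y}^\infty_k(x)$ and $\mathbf{Y}^0_k(x)$.

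The first step is to record how the Schwarz symmetry of $\mathbf{Y}$ descends to the expansion coefficients. From \eqref{eq:Y-infty-expand}, writing $\mathbf{Y}(\lambda;x)\lambda^{\Theta_\infty\sigma_3}\sim\mathbb{I}+\sum_k\mathbf{Y}^\infty_k(x)\lambda^{-k}$ and using that $\Theta_\infty\in\mathbb{R}$ (so $\lambda^{\Theta_\infty\sigma_3}$ conjugates to $(\lambda^*)^{\Theta_\infty\sigma_3}$ on the principal branch, with the conjugation of $\lambda\to\infty$ being $\lambda^*\to\infty$), I would conclude $\mathbf{Y}^\infty_k(x^*)=\mathbf{Y}^\infty_k(x)^*$ for all $k$. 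Similarly from \eqref{eq:Y-zero-expand} and $\Theta_0\in\mathbb{R}$ one gets $\mathbf{Y}^0_k(x^*)=\mathbf{Y}^0_k(x)^*$ for all $k$; here one must note that the branch cut of $\lambda^{-\Theta_0\sigma_3}$ lies on the negative real axis, which is itself Schwarz-symmetric, so conjugating a neighborhood of $\lambda=0$ omitting that cut is consistent. These are the only two ingredients that require any care with branches.

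The second step is then a one-line substitution into each formula. From $y(x)=-2Y^\infty_{1,12}(x)$ we get $y(x^*)=-2Y^\infty_{1,12}(x)^*=y(x)^*$; since the scalar coefficient $-2$ is real this is immediate. From $z(x)=\Theta_0+\Theta_\infty-2Y^\infty_{1,12}(x)Y^\infty_{1,21}(x)$, using reality of $\Theta_0,\Theta_\infty$ and the coefficient conjugation, we get $z(x^*)=z(x)^*$. From the formula $u(x)=-2\Theta_0 Y^0_{0,11}(x)Y^0_{0,12}(x)/Y^\infty_{1,12}(x)$ (well-defined by hypothesis on a cofinite set, as the zeros of $Y^\infty_{1,12}$ are isolated by Proposition~\ref{prop:RHP-basic}) we get $u(x^*)=u(x)^*$; one passes to the conjugate at points where $u$ is finite, and the identity extends to the poles by continuity of the meromorphic extension. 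Finally from \eqref{eq:u-ucirc}, $u_\tw(x)=-2 Y^0_{0,21}(x)Y^\infty_{1,12}(x)/Y^0_{0,11}(x)$, the same substitution and the fact (from Proposition~\ref{prop:vBaecklund}, using $\Theta_0\neq 0$) that $Y^0_{0,11}$ does not vanish identically gives $u_\tw(x^*)=u_\tw(x)^*$ off a discrete set, again extended by meromorphicity.

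I do not expect a genuine obstacle here; the "hard part" is only the mild care needed in Step one to verify that complex conjugation commutes with the branch choices for $\lambda^{\pm\Theta_\infty\sigma_3}$ and $\lambda^{\pm\Theta_0\sigma_3}$, which hinges on $\Theta_0,\Theta_\infty\in\mathbb{R}$ and on the branch cut being the (Schwarz-invariant) negative real $\lambda$-axis. Once the coefficient identities $\mathbf{Y}^\infty_k(x^*)=\mathbf{Y}^\infty_k(x)^*$ and $\mathbf{Y}^0_k(x^*)=\mathbf{Y}^0_k(x)^*$ are in hand, the four claimed identities follow by direct substitution with no further analysis, and the statement about meromorphic continuation across the discrete exceptional sets is handled by the identity theorem for meromorphic functions.
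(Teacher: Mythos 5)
Your argument is correct and is exactly the standard route the paper intends (it states this corollary without proof as an immediate consequence of Corollary~\ref{cor:Schwarz}): conjugation symmetry of $\mathbf{Y}$ passes to the expansion coefficients $\mathbf{Y}^\infty_k$ and $\mathbf{Y}^0_k$ because $\Theta_0,\Theta_\infty\in\mathbb{R}$ and the branch cut on the negative real $\lambda$-axis is Schwarz-invariant, and then \eqref{eq:yzu-define} and \eqref{eq:u-ucirc} are rational expressions with real coefficients in those entries. Your handling of the branch choices and the extension across the isolated zeros by meromorphy is exactly the right level of care; no gap.
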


\begin{remark}
The Painlev\'e-IV equation involves the parameter $\Theta_0$ only in the form of its square $\Theta_0^2$, but the Lax pair and Riemann-Hilbert problem break the symmetry of $\Theta_0\mapsto -\Theta_0$.  However, symmetry in the Lax pair is easily restored with the use of the quantity $\widehat{z}(x):=z(x)-\Theta_0$ in place of $z(x)$.  Then the matrices $\mathbf{X}$ and $\mathbf{\Lambda}$ are written in terms of $u$, $y$, $\widehat{z}$, $\Theta_\infty$, and $\Theta_0^2$, but $\Theta_0$ alone does not appear.  From \eqref{eq:yzu-define} one sees that the value of $\Theta_0$ is not required to express $\widehat{z}(x)$ in terms of the solution of the Riemann-Hilbert problem, and making the substitution $z(x)=\Theta_0+\widehat{z}(x)$ shows that the first order system on $y$, $\widehat{z}$, and $u$ implied by \eqref{eq:PIV-system} only involves $\Theta_0$ via its square.  Finally, making the substitution $\mathbf{Y}(\lambda;x)\mapsto \mathbf{Y}(\lambda;x)\ee^{(\frac{1}{2}\lambda^2+x\lambda)\sigma_3}\ii\sigma_1\ee^{-(\frac{1}{2}\lambda^2+x\lambda)\sigma_3}$ for $|\lambda|<1$ one obtains an equivalent Riemann-Hilbert problem in which $\Theta_0$ is replaced by $-\Theta_0$, while $Y^0_{0,11}(x)Y^0_{0,12}(x)\mapsto -Y^0_{0,12}(x)Y^0_{0,11}(x)$ leaving the alternate expression for $u(x)$ in \eqref{eq:yzu-define} (see also \eqref{eq:u-ucirc}) invariant as well.  However, the expression for $u_\tw(x)$ in \eqref{eq:u-ucirc} is \emph{not} invariant, and indeed the target parameters $\Theta_{0,\tw}^2$ and $\Theta_{\infty,\tw}$ for which $u_\tw(x)$ solves \eqref{p4} depend on $\Theta_0$ and not its square.
\label{rem:Theta0-minus}
\end{remark}

Note that making the substitution $\mathbf{\Psi}(\lambda,x)=\mathbf{Y}(\lambda;x)\ee^{(\frac{1}{2}\lambda^2+x\lambda)\sigma_3}$ with either the expansion \eqref{eq:Y-infty-expand} or \eqref{eq:Y-zero-expand} into the Lax equation $\mathbf{\Psi}_\lambda=\mathbf{\Lambda}\mathbf{\Psi}$ with coefficient matrix given by \eqref{eq:JM-A}, and separating out the powers of $\lambda$ yields from each expansion an infinite hierarchy of algebraic identities such as (from \eqref{eq:Y-zero-expand}):
\eq
\mathbf{Y}^0_1(x)+\Theta_0\mathbf{Y}^0_1(x)\sigma_3-\mathbf{\Lambda}_1(x)\mathbf{Y}^0_1(x) + x\mathbf{Y}^0_0(x)\sigma_3-\mathbf{\Lambda}_0(x)\mathbf{Y}^0_0(x)=\mathbf{0}.
\label{eq:zero-1-identity}
\endeq

\subsection{Isomonodromic Schlesinger transformations}
\label{sec:Schlesinger}
The parameter lattices $\Lambda_\mathrm{gH}^{[3]+}$ and $\Lambda_\mathrm{gO}$ are both generated from a given point by linear combinations over integers of diagonal lattice vectors $(\tfrac{1}{2},\pm\tfrac{1}{2})$.  If we think of fixing the monodromy data $\mathbf{V}$ in Riemann-Hilbert Problem~\ref{rhp:general} (up to a sign for $\mathbf{V}_{4,3}$) and letting the parameters $(\Theta_0,\Theta_\infty)$ vary, then it is possible to explicitly relate the solutions for two instances of this problem when the parameters differ by $(\tfrac{1}{2},\pm\tfrac{1}{2})$, via a certain left-multiplier (i.e., gauge transformation of the Lax pair) called a \emph{Schlesinger transformation}.  Since the monodromy data is unchanged, these transformations are \emph{isomonodromic}.  When we use formul\ae\ such as \eqref{eq:u-ucirc} to extract $u(x)$ and $u_\tw(x)$ from these related problems, we obtain explicit relations between these functions for related parameter values, which are \emph{isomonodromic B\"acklund transformations}.

\subsubsection{Basic Schlesinger transformations}
\label{sec:sub-Schlesinger}
Suppose $\mathbf{Y}(\lambda;x)$ is the solution of Riemann-Hilbert Problem~\ref{rhp:general} for parameters $(\Theta_0,\Theta_\infty)$.  There are four basic transformations $\mathcal{T}$ of $\mathbf{Y}(\lambda;x)$ we will develop, all of which are based on the same kind of gauge transformation formula:
\eq
\mathcal{T}\mathbf{Y}(\lambda;x):=\mathbf{G}(\lambda;x)\mathbf{Y}(\lambda;x),\quad
\mathbf{G}(\lambda;x)=\mathbf{G}^+(x)\lambda^{1/2} +\mathbf{G}^-(x)\lambda^{-1/2},
\label{eq:gauge-general}
\endeq
where the power functions refer to the principal branches, cut along $\lambda<0$, and the matrix coefficients $\mathbf{G}^\pm(x)$ are to be determined so that $\mathcal{T}\mathbf{Y}(\lambda;x)$ solves a Riemann-Hilbert problem closely related to Riemann-Hilbert Problem~\ref{rhp:general}.
Indeed, from \eqref{eq:gauge-general} it is straightforward to check that domains of analyticity of $\mathbf{Y}(\lambda;x)$ and $\mathcal{T}\mathbf{Y}(\lambda;x)$ agree, and that the jump conditions are preserved except on $\Sigma_{4,3}$, where $\mathbf{V}_{4,3}$ is replaced with $-\mathbf{V}_{4,3}$. 
So $\mathbf{G}^\pm(x)$ are to be chosen so that the conditions specifying the behavior of the solution at $\lambda=\infty$ and $\lambda=0$ hold in some form. 

To replace $(\Theta_0,\Theta_\infty)$ by $(\Theta_0+\tfrac{1}{2},\Theta_\infty+\tfrac{1}{2})$, we require that $\mathbf{Y}_\nearrow(\lambda;x)=\mathcal{T}\mathbf{Y}(\lambda;x)$ satisfy the conditions
\eq
\begin{split}
\mathbf{Y}_\nearrow(\lambda;x)\lambda^{(\Theta_\infty+\tfrac{1}{2})\sigma_3}&=\mathbb{I} + \mathbf{Y}^\infty_{\nearrow,1}(x)\lambda^{-1} + \bo(\lambda^{-2}),\quad\lambda\to\infty\\
\mathbf{Y}_\nearrow(\lambda;x)\lambda^{-(\Theta_0+\tfrac{1}{2})\sigma_3}&=\mathbf{Y}^0_{\nearrow,0}(x)+ \bo(\lambda),\quad\lambda\to 0
\end{split}
\label{eq:conditions-ne}
\endeq
for some matrices $\mathbf{Y}^\infty_{\nearrow,1}(x)$ and $\mathbf{Y}^0_{\nearrow,0}(x)$.  Using the expansions \eqref{eq:Y-infty-expand} and \eqref{eq:Y-zero-expand} it is straightforward to see that, assuming the matrix element $Y^0_{0,21}(x)$ in \eqref{eq:Y-zero-expand} is a meromorphic function that does not vanish identically, these relations will hold if and only if
\eq
\mathbf{G}^+(x)=\mathbf{G}^+_\nearrow:=\bpm 0&0\\0 & 1\epm\quad\text{and}\quad
\mathbf{G}^-(x)=\mathbf{G}^-_\nearrow(x):=\bpm 1 & -Y^0_{0,21}(x)^{-1}Y^0_{0,11}(x)\\-Y^\infty_{1,21}(x) & Y^\infty_{1,21}(x)Y^0_{0,21}(x)^{-1}Y^0_{0,11}(x)\epm,
\label{eq:gauge-ne-coeffs}
\endeq
defining $\mathcal{T}=\mathcal{T}_\nearrow$ as a gauge transformation that exists except at the isolated zeros of $Y^0_{0,21}(x)$.  Note that, using \eqref{eq:yzu-define}, we may write $\mathbf{G}^-_\nearrow(x)$ in the form
\eq
\mathbf{G}^-_\nearrow(x)=\bpm 1 & \tfrac{1}{2}u(x)y(x)z(x)^{-1}\\(\Theta_0+\Theta_\infty-z(x))y(x)^{-1} &
\tfrac{1}{2}(\Theta_0+\Theta_\infty-z(x))u(x)z(x)^{-1}\epm.
\label{eq:D-ne}
\endeq
It is straightforward to check that subject to \eqref{eq:gauge-ne-coeffs}, $\det(\mathbf{G}(\lambda;x))=1$, so this transformation preserves determinants.  The gauge matrix $\mathbf{G}(\lambda;x)$ in this case is denoted $\mathbf{R}_3$ in \cite{FokasMA88}.  See also \cite[Section 2.1]{BassomCH95}.

To replace $(\Theta_0,\Theta_\infty)$ by $(\Theta_0-\tfrac{1}{2},\Theta_\infty-\tfrac{1}{2})$, we 
require that $\mathbf{Y}_\swarrow(\lambda;x)=\mathcal{T}\mathbf{Y}(\lambda;x)$ satisfy 
\eq
\begin{split}
\mathbf{Y}_\swarrow(\lambda;x)\lambda^{(\Theta_\infty-\tfrac{1}{2})\sigma_3}&=\mathbb{I} + \mathbf{Y}^\infty_{\swarrow,1}(x)\lambda^{-1} + \bo(\lambda^{-2}),\quad\lambda\to\infty\\
\mathbf{Y}_\swarrow(\lambda;x)\lambda^{-(\Theta_0-\tfrac{1}{2})\sigma_3}&=\mathbf{Y}^0_{\swarrow,0}(x)+ \bo(\lambda),\quad\lambda\to 0
\end{split}
\label{eq:conditions-sw}
\endeq
for some matrices $\mathbf{Y}^\infty_{\swarrow,1}(x)$ and $\mathbf{Y}^0_{\swarrow,0}(x)$.  Again using the expansions \eqref{eq:Y-infty-expand} and \eqref{eq:Y-zero-expand}, assuming now that $Y^0_{0,12}(x)$ does not vanish identically, the above conditions will hold if and only if
\eq
\mathbf{G}^+(x)=\mathbf{G}^+_\swarrow:=\bpm 1&0\\0 &0\epm\quad\text{and}\quad
\mathbf{G}^-(x)=\mathbf{G}^-_\swarrow(x):=\bpm Y^\infty_{1,12}(x)Y^0_{0,12}(x)^{-1}Y^0_{0,22}(x) &-Y^\infty_{1,12}(x)\\
-Y^0_{0,12}(x)^{-1}Y^0_{0,22}(x) & 1\epm,
\label{eq:gauge-sw-coeffs}
\endeq
defining $\mathcal{T}=\mathcal{T}_\swarrow$ as a gauge transformation that exists except at the isolated zeros of $Y^0_{0,12}(x)$ and preserves determinants.  Using \eqref{eq:yzu-define} we can also write
\eq
\mathbf{G}^-_\swarrow(x)=\bpm (z(x)-2\Theta_0)u(x)^{-1} & \tfrac{1}{2}y(x)\\
2(z(x)-2\Theta_0)u(x)^{-1}y(x)^{-1} & 1\epm.
\label{eq:D-sw}
\endeq
The gauge matrix $\mathbf{G}(\lambda;x)$ in this case is denoted $\mathbf{R}_4$ in \cite{FokasMA88}.

To replace $(\Theta_0,\Theta_\infty)$ with $(\Theta_0+\tfrac{1}{2},\Theta_\infty-\tfrac{1}{2})$ we insist that $\mathbf{Y}_\searrow(\lambda;x)=\mathcal{T}\mathbf{Y}(\lambda;x)$ satisfy the conditions
\eq
\begin{split}
\mathbf{Y}_\searrow(\lambda;x)\lambda^{(\Theta_\infty-\tfrac{1}{2})\sigma_3}&=\mathbb{I}+\mathbf{Y}^\infty_{\searrow,1}(x)\lambda^{-1}+\bo(\lambda^{-2}),\quad\lambda\to\infty\\
\mathbf{Y}_\searrow(\lambda;x)\lambda^{-(\Theta_0+\tfrac{1}{2})\sigma_3}&=\mathbf{Y}^0_{\searrow,0}(x)+\bo(\lambda),\quad\lambda\to 0
\end{split}
\label{eq:conditions-se}
\endeq
for some matrices $\mathbf{Y}^\infty_{\searrow,1}(x)$ and $\mathbf{Y}^0_{\searrow,0}(x)$.  Assuming that $Y^0_{0,11}(x)$ does not vanish identically, these conditions hold if and only if
\eq
\mathbf{G}^+(x)=\mathbf{G}^+_\searrow:=\bpm 1&0\\0&0\epm\quad\text{and}\quad
\mathbf{G}^-(x)=\mathbf{G}^-_\searrow:=\bpm Y^\infty_{1,12}(x)Y^0_{0,11}(x)^{-1}Y^0_{0,21}(x) & -Y^\infty_{1,12}(x)\\
-Y^0_{0,11}(x)^{-1}Y^0_{0,21}(x) & 1\epm,
\label{eq:gauge-se-coeffs}
\endeq
defining $\mathcal{T}=\mathcal{T}_\searrow$ as a gauge transformation existing except at the isolated zeros of $Y^0_{0,11}(x)$ and preserving determinants.  From \eqref{eq:yzu-define} we also have
\eq
\mathbf{G}^-_\searrow(x)=\bpm
z(x)u(x)^{-1} & \tfrac{1}{2}y(x)\\ 2z(x)u(x)^{-1}y(x)^{-1} & 1
\epm.
\label{eq:D-se}
\endeq
The gauge matrix $\mathbf{G}(\lambda;x)$ for this case is denoted $\mathbf{R}_2$ in \cite{FokasMA88}.

Finally, to replace $(\Theta_0,\Theta_\infty)$ with $(\Theta_0-\tfrac{1}{2},\Theta_0+\tfrac{1}{2})$ we insist that $\mathbf{Y}_\nwarrow(\lambda;x)=\mathcal{T}\mathbf{Y}(\lambda;x)$ satisfy the conditions
\eq
\begin{split}
\mathbf{Y}_\nwarrow(\lambda;x)\lambda^{(\Theta_\infty+\tfrac{1}{2})\sigma_3}&=\mathbb{I}+\mathbf{Y}^\infty_{\nwarrow,1}(x)\lambda^{-1}+\bo(\lambda^{-2}),\quad\lambda\to\infty\\
\mathbf{Y}_\nwarrow(\lambda;x)\lambda^{-(\Theta_0-\tfrac{1}{2})\sigma_3}&=\mathbf{Y}^0_{\nwarrow,0}(x)+\bo(\lambda),\quad\lambda\to 0
\end{split}
\label{eq:conditions-nw}
\endeq
for some matrices $\mathbf{Y}^\infty_{\nwarrow,1}(x)$ and $\mathbf{Y}^0_{\nwarrow,0}(x)$.  Assuming that $Y^0_{0,22}(x)$ does not vanish identically, these conditions hold if and only if
\eq
\mathbf{G}^+(x)=\mathbf{G}^+_\nwarrow=\bpm 0 & 0\\0 & 1\epm\quad\text{and}\quad
\mathbf{G}^-(x)=\mathbf{G}^-_\nwarrow(x)=\bpm 1 & -Y^0_{0,22}(x)^{-1}Y^0_{0,12}(x)\\
-Y^\infty_{1,21}(x) & Y^\infty_{1,21}(x)Y^0_{0,22}(x)^{-1}Y^0_{0,12}(x)\epm,
\label{eq:gauge-nw-coeffs}
\endeq
defining $\mathcal{T}=\mathcal{T}_\nwarrow$ as a gauge transformation existing except at the isolated zeros of $Y^0_{0,22}(x)$ and preserving determinants.  From \eqref{eq:yzu-define} we can write
\eq
\mathbf{G}^-_\nwarrow(x)=\bpm 1 & \tfrac{1}{2}u(x)y(x)(z(x)-2\Theta_0)^{-1}\\
(\Theta_0+\Theta_\infty-z(x))y(x)^{-1} & \tfrac{1}{2}u(x)(\Theta_0+\Theta_\infty-z(x))(z(x)-2\Theta_0)^{-1}
\epm.
\label{eq:D-nw}
\endeq
The gauge matrix $\mathbf{G}(\lambda;x)$ for this case is denoted $\mathbf{R}_1$ in \cite{FokasMA88}.

We have therefore established the following:
\begin{proposition}
Suppose that $\mathbf{Y}(\lambda;x)$ is the solution of Riemann-Hilbert Problem~\ref{rhp:general}.  
\begin{itemize}
\item
If $Y^0_{0,21}(x)$ does not vanish identically, then $\mathbf{Y}_\nearrow(\lambda;x)$ defined by \eqref{eq:gauge-general} with \eqref{eq:gauge-ne-coeffs} solves an analogous Riemann-Hilbert problem in which parameters $(\Theta_0,\Theta_\infty)$ are replaced with $(\Theta_0+\tfrac{1}{2},\Theta_\infty+\tfrac{1}{2})$ and the sign of the matrix $\mathbf{V}_{4,3}$ is changed.
\item
If $Y^0_{0,12}(x)$ does not vanish identically, then $\mathbf{Y}_\swarrow(\lambda;x)$ defined by \eqref{eq:gauge-general} with \eqref{eq:gauge-sw-coeffs} solves an analogous Riemann-Hilbert problem in which parameters $(\Theta_0,\Theta_\infty)$ are replaced by $(\Theta_0-\tfrac{1}{2},\Theta_\infty-\tfrac{1}{2})$ and the sign of the matrix $\mathbf{V}_{4,3}$ is changed.  
\item
If $Y^0_{0,11}(x)$ does not vanish identically, then $\mathbf{Y}_\searrow(\lambda;x)$ defined by \eqref{eq:gauge-general} with \eqref{eq:gauge-se-coeffs} solves an analogous Riemann-Hilbert problem in which parameters $(\Theta_0,\Theta_\infty)$ are replaced by $(\Theta_0+\tfrac{1}{2},\Theta_\infty-\tfrac{1}{2})$ and the sign of the matrix $\mathbf{V}_{4,3}$ is changed.  
\item
If $Y^0_{0,22}(x)$ does not vanish identically, then $\mathbf{Y}_\nwarrow(\lambda;x)$ defined by \eqref{eq:gauge-general} with \eqref{eq:gauge-nw-coeffs} solves an analogous Riemann-Hilbert problem in which parameters $(\Theta_0,\Theta_\infty)$ are replaced by $(\Theta_0-\tfrac{1}{2},\Theta_\infty+\tfrac{1}{2})$ and the sign of the matrix $\mathbf{V}_{4,3}$ is changed.  
\end{itemize}
\label{prop:Schlesinger}
\end{proposition}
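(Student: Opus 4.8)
\textbf{Proof proposal for Proposition~\ref{prop:Schlesinger}.}

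The plan is to verify each of the four bulleted claims by the same three-step argument, treating $\mathcal{T}_\nearrow$ in full and indicating the parallel modifications for $\mathcal{T}_\swarrow$, $\mathcal{T}_\searrow$, $\mathcal{T}_\nwarrow$. First I would address the analyticity and jump structure. Since $\mathbf{G}(\lambda;x)=\mathbf{G}^+(x)\lambda^{1/2}+\mathbf{G}^-(x)\lambda^{-1/2}$ with principal branches cut along $\lambda<0$, the left factor $\mathbf{G}(\lambda;x)$ is analytic on $\mathbb{C}\setminus(-\infty,0]$ and has a jump across the negative real axis equal to right-multiplication by $\ee^{2\pi\ii\tfrac{1}{2}\sigma_3}=-\mathbb{I}$ (thinking of $\lambda^{\pm 1/2}$ separately). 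Because this cut overlaps only with the arcs $\Sigma_0$ and $\Sigma_{4,3}$ of $\Sigma$, and the factor $-\mathbb{I}$ is central, a routine check shows that $\mathcal{T}\mathbf{Y}(\lambda;x)=\mathbf{G}(\lambda;x)\mathbf{Y}(\lambda;x)$ has exactly the same jump contour $\Sigma$, with all jump matrices unchanged except that on $\Sigma_{4,3}$ the jump $\mathbf{V}_{4,3}=\ee^{2\pi\ii\Theta_\infty\sigma_3}$ is multiplied by $-\mathbb{I}$, i.e.\ becomes $-\mathbf{V}_{4,3}$ (consistent with the shift $\Theta_\infty\mapsto\Theta_\infty+\tfrac12$ in the exponent since $\ee^{2\pi\ii(\Theta_\infty+\frac12)\sigma_3}=-\ee^{2\pi\ii\Theta_\infty\sigma_3}$). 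I would also note that $\det(\mathbf{G}(\lambda;x))=1$ is an identity one verifies directly from the explicit coefficients \eqref{eq:gauge-ne-coeffs}, so $\det(\mathcal{T}\mathbf{Y}(\lambda;x))=\det(\mathbf{Y}(\lambda;x))=1$ by the first item of Proposition~\ref{prop:RHP-basic}.

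Second, I would check the behavior at $\lambda=\infty$. Substituting the expansion \eqref{eq:Y-infty-expand} into $\mathcal{T}\mathbf{Y}(\lambda;x)=(\mathbf{G}^+_\nearrow\lambda^{1/2}+\mathbf{G}^-_\nearrow(x)\lambda^{-1/2})\mathbf{Y}(\lambda;x)$ and multiplying on the right by $\lambda^{(\Theta_\infty+\frac12)\sigma_3}$, one gets a Laurent series in $\lambda$; the choice $\mathbf{G}^+_\nearrow=\mathrm{diag}(0,1)$ kills the would-be $\bo(\lambda^{1/2})$ growth in the $(1,1)$ and $(1,2)$ entries, while the specific off-diagonal and $(2,2)$ entries of $\mathbf{G}^-_\nearrow(x)$ in \eqref{eq:gauge-ne-coeffs} are forced precisely so that the $\bo(\lambda^0)$ term is the identity and the remainder is $\bo(\lambda^{-1})$ — this is exactly the content of requiring the first line of \eqref{eq:conditions-ne}. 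The only subtlety is that this forcing is possible if and only if the relevant matrix element that appears in the denominator of $\mathbf{G}^-_\nearrow(x)$, namely $Y^0_{0,21}(x)$ (which enters through the $\lambda=0$ normalization, see below), does not vanish identically; this is the hypothesis of the first bullet. Third, I would check the behavior at $\lambda=0$: substituting \eqref{eq:Y-zero-expand} into the same formula and multiplying on the right by $\lambda^{-(\Theta_0+\frac12)\sigma_3}$, the $\lambda^{-1/2}$ part of $\mathbf{G}$ interacts with the $\lambda^{0}$ coefficient $\mathbf{Y}^0_0(x)$, and the $\lambda^{1/2}$ part with $\mathbf{Y}^0_1(x)$; demanding that the product be $\mathbf{Y}^0_{\nearrow,0}(x)+\bo(\lambda)$ (no negative powers) is precisely what pins down the first column of $\mathbf{G}^-_\nearrow(x)$ and the relation to $Y^\infty_{1,21}(x)$, using the algebraic identity \eqref{eq:zero-1-identity} relating $\mathbf{Y}^0_1(x)$, $\mathbf{Y}^0_0(x)$, $\mathbf{\Lambda}_0(x)$, $\mathbf{\Lambda}_1(x)$, together with the second representation of $\mathbf{\Lambda}_1(x)$ in \eqref{eq:M-define}. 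The upshot is that the two sets of conditions \eqref{eq:conditions-ne} are simultaneously consistent with a single gauge matrix exactly when $\mathbf{G}^\pm_\nearrow$ are as in \eqref{eq:gauge-ne-coeffs}, so $\mathbf{Y}_\nearrow(\lambda;x)$ solves the stated modified Riemann-Hilbert problem wherever $Y^0_{0,21}(x)\neq 0$; poles of $\mathbf{G}^-_\nearrow$ occur only at the isolated zeros of this meromorphic, not-identically-zero function. The remaining three cases are identical in structure, with $\mathbf{G}^+$ the complementary rank-one projector ($\mathrm{diag}(1,0)$ for $\swarrow,\searrow$ and $\mathrm{diag}(0,1)$ for $\nwarrow$) and $\mathbf{G}^-$ determined by the corresponding matrix element ($Y^0_{0,12}$, $Y^0_{0,11}$, $Y^0_{0,22}$ respectively), and the parameter shift read off the exponents in \eqref{eq:conditions-sw}, \eqref{eq:conditions-se}, \eqref{eq:conditions-nw}.

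The main obstacle is the bookkeeping in the $\lambda\to 0$ step: one must be careful that $\mathbf{G}(\lambda;x)$ carries half-integer powers of $\lambda$ while the behavior at $\lambda=0$ is stated in terms of integer powers via the factor $\lambda^{-\Theta_0\sigma_3}$, so the genuinely nontrivial check is that the $\mathbf{G}^+(x)\lambda^{1/2}\cdot\mathbf{Y}^0_0(x)\lambda^{\Theta_0\sigma_3}$ and $\mathbf{G}^-(x)\lambda^{-1/2}\cdot\mathbf{Y}^0_1(x)\lambda^{1+\Theta_0\sigma_3}$ contributions recombine with the dominant $\mathbf{G}^-(x)\lambda^{-1/2}\cdot\mathbf{Y}^0_0(x)\lambda^{\Theta_0\sigma_3}$ term so that, after right-multiplication by $\lambda^{-(\Theta_0+\frac12)\sigma_3}$, no negative or fractional powers survive. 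This is where the precise entries of $\mathbf{G}^-$ are needed and where the identity \eqref{eq:zero-1-identity} (equivalently the structure of $\mathbf{\Lambda}_1(x)=\Theta_0\mathbf{Y}^0_0(x)\sigma_3\mathbf{Y}^0_0(x)^{-1}$) does the work; it also explains why only the one matrix element of $\mathbf{Y}^0_0(x)$ needs to be nonvanishing. Everything else — the determinant computation, the jump-contour argument, and the $\lambda\to\infty$ matching — is a direct substitution using Proposition~\ref{prop:RHP-basic} and Proposition~\ref{prop:LaxEquations}, and the resulting formulas can be cross-checked against the alternative expressions \eqref{eq:D-ne}, \eqref{eq:D-sw}, \eqref{eq:D-se}, \eqref{eq:D-nw} written in terms of $(u,y,z)$ via \eqref{eq:yzu-define}, and against the gauge matrices $\mathbf{R}_1,\dots,\mathbf{R}_4$ of \cite{FokasMA88}.
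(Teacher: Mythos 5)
Your strategy coincides with the paper's (which itself only sketches these verifications): analyze the jump introduced by the branch cut of $\lambda^{\pm 1/2}$, then match the prescribed behavior at $\lambda=\infty$ and $\lambda=0$ to pin down $\mathbf{G}^{\pm}(x)$. Two details, however, are off and should be corrected. First, the cut of $\lambda^{\pm 1/2}$ lies along $\Sigma_0\cup\Sigma_{4,3}$, so the scalar factor $-1$ multiplies the jump on \emph{both} arcs, not only on $\Sigma_{4,3}$; your assertion that all other jump matrices are unchanged is false as stated. The sign change on $\Sigma_0$ is not a nuisance but the whole point there: $-\ee^{2\pi\ii\Theta_0\sigma_3}=\ee^{2\pi\ii(\Theta_0+\frac{1}{2})\sigma_3}$ is exactly the $\Sigma_0$ jump of the analogous problem with $\Theta_0\mapsto\Theta_0+\tfrac12$. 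Also, $\mathbf{V}_{4,3}$ is not $\ee^{2\pi\ii\Theta_\infty\sigma_3}$ in general (that expression is the cyclic product $\mathbf{V}_{2,3}\mathbf{V}_{2,1}^{-1}\mathbf{V}_{4,1}\mathbf{V}_{4,3}^{-1}$ in \eqref{eq:consistency}); what one should check instead is that the last identity in \eqref{eq:consistency} survives when $\mathbf{V}_{4,3}\mapsto-\mathbf{V}_{4,3}$ and $\Theta_\infty\mapsto\Theta_\infty+\tfrac12$.

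Second, you have the roles of the two asymptotic conditions reversed, and you invoke \eqref{eq:zero-1-identity} where it is not needed. Since $\lambda^{-1/2}\lambda^{(\Theta_\infty+\frac12)\sigma_3}=\mathrm{diag}(1,\lambda^{-1})$, the second column of $\mathbf{G}^-(x)$ is invisible at $O(1)$ as $\lambda\to\infty$; the condition at $\infty$ therefore fixes $\mathbf{G}^+_\nearrow=\mathrm{diag}(0,1)$ together with the \emph{first} column $(1,-Y^\infty_{1,21}(x))^T$ of $\mathbf{G}^-_\nearrow(x)$. The condition at $\lambda=0$ then reduces to the single linear requirement that $\mathbf{G}^-_\nearrow(x)$ annihilate the first column of $\mathbf{Y}^0_0(x)$ (the $\mathbf{Y}^0_1$ terms already enter at nonnegative powers), which determines the \emph{second} column of $\mathbf{G}^-_\nearrow(x)$ and is solvable precisely when $Y^0_{0,21}(x)\not\equiv 0$ — this is the only place the hypothesis is used, and no identity among the expansion coefficients is required. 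The identity \eqref{eq:zero-1-identity} enters only afterwards, in Section~\ref{sec:sub-Baecklund}, when one evaluates $\mathbf{Y}^0_{\nearrow,0}(x)$ explicitly to obtain the B\"acklund formula \eqref{eq:u-ne}. With these corrections your argument goes through and agrees with the paper's.
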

The arrow subscript notation is doubly useful.  On one hand, it indicates the direction of the step in the $(\Theta_0,\Theta_\infty)$-plane.  On the other hand, if one centers the arrow on the matrix $\mathbf{Y}^0_0(x)$ as it is usually written in terms of its elements on the page, the arrow begins at the matrix element required to not vanish identically for the transformation to be defined.  We can easily give conditions on the parameters $(\Theta_0,\Theta_\infty)$ sufficient to guarantee existence of the transformation in each case:
%
\begin{proposition}
Suppose that Riemann-Hilbert Problem~\ref{rhp:general} has a solution for $x\in\mathbb{C}\setminus\mathcal{D}$, where $\mathcal{D}$ is a discrete set.  
\begin{itemize}
\item
The Schlesinger transformations $\mathbf{Y}(\lambda;x)\mapsto \mathbf{Y}_\nearrow(\lambda;x)$ and $\mathbf{Y}(\lambda;x)\mapsto\mathbf{Y}_\swarrow(\lambda;x)$ are well-defined provided that $\Theta_0(\Theta_\infty+\Theta_0)\neq 0$.
\item
The Schlesinger transformations $\mathbf{Y}(\lambda;x)\mapsto \mathbf{Y}_\searrow(\lambda;x)$ and $\mathbf{Y}(\lambda;x)\mapsto\mathbf{Y}_\nwarrow(\lambda;x)$ are well-defined provided that $\Theta_0(\Theta_\infty-\Theta_0)\neq 0$.
\end{itemize}
%
\label{prop:nonzero}
\end{proposition}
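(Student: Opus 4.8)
The plan is to reduce each required non-vanishing of a matrix entry of $\mathbf{Y}^0_0(x)$ appearing in Proposition~\ref{prop:Schlesinger} to a single scalar non-vanishing statement about the analytic function $z(x)$, and then to exclude the exceptional cases using the Lax equations. First I would use the alternate representation $z(x) = -2\Theta_0 Y^0_{0,12}(x)Y^0_{0,21}(x)$ in \eqref{eq:yzu-define} together with $\det(\mathbf{Y}^0_0(x)) = 1$ (Proposition~\ref{prop:RHP-basic}) to obtain also $z(x) - 2\Theta_0 = -2\Theta_0 Y^0_{0,11}(x)Y^0_{0,22}(x)$. By Proposition~\ref{prop:RHP-basic} all four entries $Y^0_{0,jk}(x)$ are meromorphic on $\mathbb{C}\setminus\mathcal{D}$, and a product of meromorphic functions vanishes identically if and only if one factor does; hence, assuming $\Theta_0\ne 0$, the transformations $\mathcal{T}_\nearrow$ and $\mathcal{T}_\swarrow$ (requiring $Y^0_{0,21}\not\equiv 0$ and $Y^0_{0,12}\not\equiv 0$ respectively) are well-defined as soon as $z(x)\not\equiv 0$, while $\mathcal{T}_\searrow$ and $\mathcal{T}_\nwarrow$ (requiring $Y^0_{0,11}\not\equiv 0$ and $Y^0_{0,22}\not\equiv 0$) are well-defined as soon as $z(x)\not\equiv 2\Theta_0$.

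It therefore suffices to prove: $\Theta_0\ne 0$ and $z\equiv 0$ imply $\Theta_0+\Theta_\infty = 0$; and $\Theta_0\ne 0$ and $z\equiv 2\Theta_0$ imply $\Theta_0-\Theta_\infty = 0$. I would argue at the level of the Lax coefficient matrices rather than through the first-order system \eqref{eq:PIV-system}, which may be unavailable precisely in these degenerate situations (where $u(x)$ can vanish identically). Separating the zero-curvature identity $\mathbf{\Lambda}_x-\mathbf{X}_\lambda+[\mathbf{\Lambda},\mathbf{X}]=\mathbf{0}$ (Proposition~\ref{prop:LaxEquations}) into powers of $\lambda$, the coefficient of $\lambda^{-1}$ yields $\mathbf{\Lambda}_{1,x}+[\mathbf{\Lambda}_1,\mathbf{X}_0]=\mathbf{0}$, where $\mathbf{X}_0=[\mathbf{Y}^\infty_1,\sigma_3]$ is off-diagonal with entries $X_{0,12}=y(x)$ and $X_{0,21}=2(z(x)-\Theta_0-\Theta_\infty)/y(x)$, and $y(x)=-2Y^\infty_{1,12}(x)$ is not identically zero by Proposition~\ref{prop:DiffEqs}. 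If $z\equiv 0$, then by \eqref{eq:M-define} the matrix $\mathbf{\Lambda}_1=\Theta_0\mathbf{Y}^0_0\sigma_3(\mathbf{Y}^0_0)^{-1}$ is triangular with diagonal $\Theta_0\sigma_3$: if moreover $Y^0_{0,12}\equiv 0$ it is lower triangular, and the $(1,2)$ component of $\mathbf{\Lambda}_{1,x}+[\mathbf{\Lambda}_1,\mathbf{X}_0]=\mathbf{0}$ reads $2\Theta_0\,y(x)=0$, forcing $\Theta_0=0$, contrary to hypothesis; hence $Y^0_{0,12}\not\equiv 0$, and since $z\equiv 0$ this forces $Y^0_{0,21}\equiv 0$, so $\mathbf{\Lambda}_1$ is upper triangular and the $(2,1)$ component reads $2\Theta_0\,X_{0,21}=0$, giving $X_{0,21}\equiv 0$ and thus $\Theta_0+\Theta_\infty=0$. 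The case $z\equiv 2\Theta_0$ is entirely parallel, with $\mathbf{\Lambda}_1$ now triangular with diagonal $-\Theta_0\sigma_3$: $Y^0_{0,11}\equiv 0$ again forces $\Theta_0=0$ and is excluded, so $Y^0_{0,22}\equiv 0$, $\mathbf{\Lambda}_1$ is upper triangular, the $(2,1)$ component gives $X_{0,21}\equiv 0$, and now $X_{0,21}=2(\Theta_0-\Theta_\infty)/y(x)$ yields $\Theta_0-\Theta_\infty=0$. Taking contrapositives gives the two claims and hence the proposition.

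The step I expect to be the main obstacle is this last one: one must resist running the argument through the system \eqref{eq:PIV-system}, since that system can break down in exactly the degenerate configurations at issue, and instead keep careful track at the matrix level of which of the two triangular shapes $\mathbf{\Lambda}_1$ actually takes, observing that one of the two sub-cases is already incompatible with $\Theta_0\ne 0$. The rest is routine $2\times 2$ matrix bookkeeping, and the identification of the non-vanishing conditions with $z\not\equiv 0$ and $z\not\equiv 2\Theta_0$ is immediate from \eqref{eq:yzu-define} and $\det(\mathbf{Y}^0_0(x))=1$.
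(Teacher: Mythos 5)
Your reduction of the four non-vanishing conditions of Proposition~\ref{prop:Schlesinger} to the two scalar conditions $z\not\equiv 0$ and $z\not\equiv 2\Theta_0$ (via $\det\mathbf{Y}^0_0\equiv 1$) is correct and efficient, and the residue identity $\mathbf{\Lambda}_{1,x}+[\mathbf{\Lambda}_1,\mathbf{X}_0]=\mathbf{0}$ is a legitimate extract of the zero-curvature condition. The gap is in the final step. From the $(1,2)$ entry you obtain $2\Theta_0\,y(x)\equiv 0$ and conclude ``$\Theta_0=0$, contrary to hypothesis,'' thereby excluding the sub-case $Y^0_{0,12}\equiv 0$ altogether. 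That conclusion presupposes $y\not\equiv 0$, and your appeal to Proposition~\ref{prop:DiffEqs} for this is circular: that proposition asserts $y$ is not identically vanishing \emph{only under the hypothesis} that $Y^\infty_{1,12}$ is not identically zero. In the degenerate configurations at issue, $y\equiv 0$ is exactly what happens (the paper's own proof derives $U=X_{0,12}\equiv 0$ when $\Theta_0\neq 0$, and the discussion of the Fokas--Its--Kitaev problem in Section~\ref{sec:universal-RHPs} exhibits solvable cases with $N=\Theta_0+\Theta_\infty=0$ in which $\mathbf{Y}$ is lower triangular, so that $Y^0_{0,12}\equiv Y^\infty_{1,12}\equiv 0$). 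Note that if your exclusion were valid it would prove that $\mathcal{T}_\swarrow$ is well defined whenever $\Theta_0\neq 0$, which is strictly stronger than the proposition and false. The same unjustified division by $y$ appears in your treatment of the remaining sub-cases through the parametrization $X_{0,21}=2(z-\Theta_0-\Theta_\infty)/y$.

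The repair is available within your own framework and keeps your argument genuinely different from the paper's. From $2\Theta_0\,y\equiv 0$ and $\Theta_0\neq 0$ conclude $Y^\infty_{1,12}\equiv 0$ (not $\Theta_0=0$); then the \emph{other} representation of $z$ in \eqref{eq:yzu-define}, $z=\Theta_0+\Theta_\infty-2Y^\infty_{1,12}Y^\infty_{1,21}$, gives $z\equiv\Theta_0+\Theta_\infty$, which combined with $z\equiv 0$ yields $\Theta_0+\Theta_\infty=0$. In the sub-case $Y^0_{0,21}\equiv 0$ use $X_{0,21}=2Y^\infty_{1,21}$ directly (no division by $y$): $Y^\infty_{1,21}\equiv 0$ again forces $z\equiv\Theta_0+\Theta_\infty$, hence $\Theta_0+\Theta_\infty=0$; the case $z\equiv 2\Theta_0$ is handled identically and yields $\Theta_\infty=\Theta_0$. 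With this correction your route is a clean alternative to the paper's, which instead integrates the decoupled triangular Lax pair explicitly ($\Psi_{11}=c\,\ee^{\frac{1}{2}\lambda^2+x\lambda}\lambda^{\Theta_0}$) and invokes the normalization of $\mathbf{Y}$ at $\lambda=\infty$ to force $\Theta_0+\Theta_\infty=0$; your version replaces that analytic step by the purely algebraic comparison of the two representations of $\mathbf{\Lambda}_1$ in \eqref{eq:M-define}.
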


\begin{remark}
The conditions on $(\Theta_0,\Theta_\infty)$ in Proposition~\ref{prop:nonzero} imply the corresponding conditions on the elements of $\mathbf{Y}^0_0(x)$ in Proposition~\ref{prop:Schlesinger} but not necessarily vice-versa.  We will encounter a situation in which $\Theta_\infty=\Theta_0$ but the Schlesinger transformation $\mathbf{Y}(\lambda;x)\mapsto \mathbf{Y}_\searrow(\lambda;x)$ exists nonetheless, because $Y^0_{0,11}(x)$ does not vanish identically.  The utility of Proposition~\ref{prop:nonzero} lies in the simplicity of its conditions, which do not depend on the choice of solution of Painlev\'e-IV for the given parameters $(\Theta_0,\Theta_\infty)$.
\label{rem:weaker}
\end{remark}

\begin{proof}
Suppose first that the transformation $\mathbf{Y}(\lambda;x)\mapsto\mathbf{Y}_\swarrow(\lambda;x)$ is undefined.  Therefore, by Proposition~\ref{prop:Schlesinger} $Y^0_{0,12}(x)\equiv 0$ on some open subset of 
$(\lambda,x)\in (\mathbb{C}\setminus\Sigma)\times (\mathbb{C}\setminus\mathcal{D})$.  Since $\det(\mathbf{Y}^0_0(x))\equiv 1$ by the first statement of Proposition~\ref{prop:RHP-basic}, it then follows that also $Y^0_{0,11}(x)Y^0_{0,22}(x)\equiv 1$ so the matrix $\mathbf{\Lambda}_1(x)$ as given by the second line of \eqref{eq:M-define} can be written in the form
\eq
\mathbf{\Lambda}_1(x)=\bpm \Theta_0 & 0\\ V(x) & -\Theta_0\epm
\endeq
for some analytic function $V(x)$.  Therefore, the Lax equation \eqref{eq:Lax-x} takes the form
\eq
\frac{\partial\mathbf{\Psi}}{\partial x}(\lambda,x)=\bpm\lambda & U(x)\\W(x) & -\lambda\epm
\mathbf{\Psi}(\lambda,x)
\label{eq:Lax-x-case-2}
\endeq
for some analytic functions $U(x)$ and $W(x)$, while the Lax equation \eqref{eq:Lax-lambda} becomes
\eq
\frac{\partial\mathbf{\Psi}}{\partial\lambda}(\lambda,x)=\bpm \lambda + x + \lambda^{-1}\Theta_0 & U(x)\\W(x)+\lambda^{-1}V(x) & -\lambda-x-\lambda^{-1}\Theta_0\epm\mathbf{\Psi}(\lambda,x).
\label{eq:Lax-lambda-case-2}
\endeq
Compatibility of the equations \eqref{eq:Lax-x-case-2} and \eqref{eq:Lax-lambda-case-2} implies that either $\Theta_0=0$ or $U(x)\equiv 0$.  If $\Theta_0\neq 0$, the alternative $U(x)\equiv 0$ implies, via \eqref{eq:Lax-x-case-2} and \eqref{eq:Lax-lambda-case-2} that 
\eq
\Psi_{11}(\lambda,x)=c\ee^{\frac{1}{2}\lambda^2 +x\lambda}\lambda^{\Theta_0}, 
\endeq
where $c$ is a constant that can take different values in each of the five components of $\mathbb{C}\setminus\Sigma$.  Consider letting $\lambda\to\infty$ in any one of the four unbounded components of $\mathbb{C}\setminus\Sigma$.  Then from the normalization condition in Riemann-Hilbert Problem~\ref{rhp:general} and the relation $\mathbf{\Psi}(\lambda,x)=\mathbf{Y}(\lambda;x)\ee^{(\frac{1}{2}\lambda^2+x\lambda)\sigma_3}$ we arrive at a contradiction unless $\Theta_0+\Theta_\infty=0$.  

Now suppose instead that the Schlesinger transformation $\mathbf{Y}(\lambda;x)\mapsto\mathbf{Y}_\nearrow(\lambda;x)$ is undefined, which by Proposition~\ref{prop:Schlesinger} means that $Y^0_{0,21}(x)\equiv 0$. Following similar reasoning as above, we then arrive at \eqref{eq:Lax-x-case-2} and \eqref{eq:Lax-lambda-case-2} in which the coefficient matrices are replaced by their transposes; compatibility implies again either $\Theta_0=0$ or $U(x)\equiv 0$, which in turn implies that $\Psi_{22}(\lambda,x)=c\ee^{-(\frac{1}{2}\lambda^2+x\lambda)}\lambda^{-\Theta_0}$ for a different constant $c$ in each component of $\mathbb{C}\setminus\Sigma$.  Once again, consistency with the normalization condition in Riemann-Hilbert Problem~\ref{rhp:general} leads to a contradiction unless $\Theta_0+\Theta_\infty=0$.

To obtain the corresponding results for the Schlesinger transformations $\mathbf{Y}(\lambda;x)\mapsto\mathbf{Y}_\searrow(\lambda;x)$ and $\mathbf{Y}(\lambda;x)\mapsto\mathbf{Y}_\nwarrow(\lambda;x)$, we can apply similar reasoning as in Remark~\ref{rem:Theta0-minus} to first replace $\Theta_0$ with $-\Theta_0$ at the cost of essentially swapping the columns of the matrix $\mathbf{Y}^0_0(x)$.  It then follows from the above arguments that these transformations will be defined unless either $\Theta_0=0$ or $\Theta_\infty-\Theta_0=0$.  
\end{proof}

\subsubsection{Corresponding B\"acklund transformations}
\label{sec:sub-Baecklund}
Now we suppose that $\mathbf{Y}(\lambda;x)$ solves Riemann-Hilbert Problem~\ref{rhp:general} and that $u(x)$ given in terms of the solution by \eqref{eq:yzu-define} is well-defined.  According to Proposition~\ref{prop:DiffEqs}, $u(x)$ is a meromorphic solution of the Painlev\'e-IV equation in the form \eqref{p4} for parameters $(\Theta_0,\Theta_\infty)$.  We now deduce from the Schlesinger transformations summarized in Proposition~\ref{prop:Schlesinger} the corresponding solutions $u_\nearrow(x)$, $u_\swarrow(x)$, $u_\searrow(x)$, and $u_\nwarrow(x)$ of \eqref{p4} for the modified parameters indicated in Proposition~\ref{prop:Schlesinger}.

If $Y^0_{0,21}(x)$ is not identically zero, then $\mathbf{Y}_\nearrow(\lambda;x)$ exists, and it generates a solution of \eqref{p4} for parameters $(\Theta_0+\tfrac{1}{2},\Theta_\infty+\tfrac{1}{2})$ given by (cf.\@ \eqref{eq:yzu-define})
\eq
u_\nearrow(x):=-2(\Theta_0+\tfrac{1}{2})\frac{Y^0_{\nearrow,0,11}(x)Y^0_{\nearrow,0,12}(x)}{Y^\infty_{\nearrow,1,12}(x)}
\endeq
provided the latter expression is determinate.  The formul\ae\ for the matrices $\mathbf{Y}^\infty_{\nearrow,1}(x)$ and $\mathbf{Y}^0_{\nearrow,0}(x)$ appearing in \eqref{eq:conditions-ne} are
\eq
\begin{split}
\mathbf{Y}^\infty_{\nearrow,1}(x)&=\mathbf{G}^+_\nearrow\mathbf{Y}^\infty_2(x)\bpm 1&0\\0&0\epm +
\mathbf{G}^+_\nearrow\mathbf{Y}^\infty_1(x)\bpm 0 & 0\\0 & 1\epm +\mathbf{G}^-_\nearrow(x)\mathbf{Y}^\infty_1(x)\bpm 1&0\\ 0 & 0\epm +\mathbf{G}^-_\nearrow(x)\bpm 0 & 0\\0 & 1\epm,\\
\mathbf{Y}^0_{\nearrow,0}(x)&=\mathbf{G}^+_\nearrow\mathbf{Y}^0_0(x)\bpm 1 & 0\\0 & 0\epm 
+\mathbf{G}^-_\nearrow(x)\mathbf{Y}^0_1(x)\bpm 1 & 0\\0 & 0\epm+\mathbf{G}^-_\nearrow(x)\mathbf{Y}^0_0(x)\bpm 0 & 0\\0 & 1\epm.
\end{split}
\endeq
Using \eqref{eq:gauge-ne-coeffs}--\eqref{eq:D-ne} in these, along with the identity \eqref{eq:zero-1-identity} to eliminate elements of the matrix $\mathbf{Y}^0_1(x)$ and the definitions \eqref{eq:yzu-define} as well as the first-order system \eqref{eq:PIV-system}, we may express $u_\nearrow(x)$ explicitly in terms of $u(x)$ and $u'(x)$ as:
\eq
u_\nearrow(x)=\frac{16\Theta_0^2+8(\Theta_0+\Theta_\infty)u(x)^2-4x^2u(x)^2-4xu(x)^3-u(x)^4-8\Theta_0u'(x)+u'(x)^2}{2u(x)(4\Theta_0+2xu(x)+u(x)^2-u'(x))}.
\label{eq:u-ne}
\endeq

If $Y^0_{0,12}(x)$ does not vanish identically, we can apply similar reasoning to extract a solution $u_\swarrow(x)$ of Painlev\'e-IV \eqref{p4} for parameters $(\Theta_0-\tfrac{1}{2},\Theta_\infty-\tfrac{1}{2})$ from $u(x)$.  The starting point is
\eq
u_\swarrow(x):=-2(\Theta_0-\tfrac{1}{2})\frac{Y^0_{\swarrow,0,11}(x)Y^0_{\swarrow,0,12}(x)}{Y^\infty_{\swarrow,1,12}(x)},
\endeq
assuming this expression is determinate, where the matrices $\mathbf{Y}^\infty_{\swarrow,1}(x)$ and $\mathbf{Y}^0_{\swarrow,0}(x)$ in \eqref{eq:conditions-sw} are given by
\eq
\begin{split}
\mathbf{Y}^\infty_{\swarrow,1}(x)&= \mathbf{G}^+_\swarrow\mathbf{Y}^\infty_2(x)\bpm 0&0\\0&1\epm+\mathbf{G}^+_\swarrow\mathbf{Y}^\infty_1(x)\bpm 1&0\\0 & 0\epm + \mathbf{G}^-_\swarrow(x)\mathbf{Y}^\infty_1(x)\bpm 0&0\\0&1\epm
+\mathbf{G}^-_\swarrow\bpm 1&0\\0&0\epm,\\
\mathbf{Y}^0_{\swarrow,0}(x)&= \mathbf{G}^+_\swarrow\mathbf{Y}^0_0(x)\bpm 0&0\\0&1\epm + 
\mathbf{G}^-_\swarrow(x)\mathbf{Y}^0_1(x)\bpm 0&0\\0&1\epm +\mathbf{G}^-_\swarrow(x)\mathbf{Y}^0_0(x)
\bpm 1&0\\0&0\epm.
\end{split}
\endeq
Using \eqref{eq:gauge-sw-coeffs}--\eqref{eq:D-sw} in these, together with \eqref{eq:yzu-define}, \eqref{eq:PIV-system}, and \eqref{eq:zero-1-identity}, we find
\eq
u_\swarrow(x)=\frac{16\Theta_0^2+8(\Theta_0+\Theta_\infty-1)u(x)^2-4x^2u(x)^2-4xu(x)^3-u(x)^4+8\Theta_0u'(x)+u'(x)^2}{2u(x)(4\Theta_0+2xu(x)+u(x)^2+u'(x))}.
\label{eq:u-sw}
\endeq

If $Y^0_{0,11}(x)$ does not vanish identically, then we can extract from $\mathbf{Y}_\searrow(\lambda;x)$ a solution $u_\searrow(x)$ of \eqref{p4} for parameters $(\Theta_0+\tfrac{1}{2},\Theta_\infty-\tfrac{1}{2})$ by starting from the expression
\eq
u_\searrow(x):=-2(\Theta_0+\tfrac{1}{2})\frac{Y^0_{\searrow,0,11}(x)Y^0_{\searrow,0,12}(x)}{Y^\infty_{\searrow,1,12}(x)}
\endeq 
provided it is determinate.  The matrices $\mathbf{Y}^\infty_{\searrow,1}(x)$ and $\mathbf{Y}^0_{\searrow,0}(x)$ from \eqref{eq:conditions-se} are given by
\eq
\begin{split}
\mathbf{Y}^\infty_{\searrow,1}(x)&=\mathbf{G}^+_\searrow\mathbf{Y}^\infty_2(x)\bpm 0&0\\0&1\epm+\mathbf{G}^+_\searrow\mathbf{Y}^\infty_1(x)\bpm 1&0\\0&0\epm + \mathbf{G}^-_\searrow(x)\mathbf{Y}^\infty_1(x)\bpm 0&0\\0&1\epm +
\mathbf{G}^-_\searrow(x)\bpm 1&0\\0&1\epm,\\
\mathbf{Y}^0_{\searrow,0}(x)&=\mathbf{G}^+_\searrow\mathbf{Y}^0_0(x)\bpm 1&0\\0&0\epm + \mathbf{G}^-_\searrow(x)
\mathbf{Y}^0_1(x)\bpm 1&0\\0&0\epm + \mathbf{G}^-_\searrow(x)\mathbf{Y}^0_0(x)\bpm 0&0\\0&1\epm.
\end{split}
\endeq
Recalling \eqref{eq:gauge-se-coeffs}--\eqref{eq:D-se},
eliminating the elements of the first column of $\mathbf{Y}^0_1(x)$ using \eqref{eq:zero-1-identity}, and then using the definitions \eqref{eq:yzu-define} and the differential equations \eqref{eq:PIV-system},
we obtain
\eq
u_\searrow(x)=\frac{16\Theta_0^2+8(\Theta_\infty-\Theta_0-1)u(x)^2-4x^2u(x)^2-4xu(x)^3-u(x)^4-8\Theta_0u'(x)+u'(x)^2}{2u(x)(-4\Theta_0+2xu(x)+u(x)^2+u'(x))}.
\label{eq:u-se}
\endeq

Finally, if $Y^0_{0,22}(x)$ does not vanish identically, we can extract from $\mathbf{Y}_\nwarrow(\lambda;x)$ a solution $u_\nwarrow(x)$ of the Painlev\'e-IV equation \eqref{p4} for parameters $(\Theta_0-\tfrac{1}{2},\Theta_\infty+\tfrac{1}{2})$ in the form
\eq
u_\nwarrow(x):=-2(\Theta_0-\tfrac{1}{2})\frac{Y^0_{\nwarrow,0,11}(x)Y^0_{\nwarrow,0,12}(x)}{Y^\infty_{\nwarrow,1,12}(x)}
\endeq
provided it is determinate.  Here the matrices $\mathbf{Y}^\infty_{\nwarrow,1}(x)$ and $\mathbf{Y}^0_{\nwarrow,0}(x)$ from \eqref{eq:conditions-nw} are given by
\eq
\begin{split}
\mathbf{Y}^\infty_{\nwarrow,1}(x)&=\mathbf{G}^+_\nwarrow\mathbf{Y}^\infty_2(x)\bpm 1&0\\0&0\epm +\mathbf{G}^+_\nwarrow\mathbf{Y}^\infty_1(x)\bpm 0&0\\0&1\epm +\mathbf{G}^-_\nwarrow(x)\mathbf{Y}^\infty_1(x)\bpm 1&0\\0&0\epm +
\mathbf{G}^-_\nwarrow(x)\bpm 0&0\\0&1\epm,\\
\mathbf{Y}^0_{\nwarrow,0}(x)&=\mathbf{G}^+_\nwarrow\mathbf{Y}^0_0(x)\bpm 0&0\\0&1\epm +
\mathbf{G}^-_\nwarrow(x)\mathbf{Y}^0_1(x)\bpm 0&0\\0&1\epm +
\mathbf{G}^-_\nwarrow(x)\mathbf{Y}^0_0(x)\bpm 1&0\\0&0\epm.
\end{split}
\endeq
Recalling \eqref{eq:gauge-nw-coeffs}--\eqref{eq:D-nw}, using \eqref{eq:zero-1-identity} to eliminate the second column of $\mathbf{Y}^0_1(x)$, and appealing to the definitions \eqref{eq:yzu-define} and the differential system \eqref{eq:PIV-system} yields
\eq
u_\nwarrow(x)=\frac{16\Theta_0^2+8(\Theta_\infty-\Theta_0)u(x)^2-4x^2u(x)^2-4xu(x)^3-u(x)^4+8\Theta_0u'(x)+u'(x)^2}{2u(x)(-4\Theta_0+2xu(x)+u(x)^2-u'(x))}.
\label{eq:u-nw}
\endeq

Even if the Schlesinger transformation exists and is applied to a solution $\mathbf{Y}(\lambda;x)$ of Riemann-Hilbert Problem~\ref{rhp:general} for which $u(x)$ is well-defined, the corresponding B\"acklund transformation formula may be indeterminate.  To detect the latter issue, we may observe first that if $\Theta_0\neq 0$, the Painlev\'e-IV equation \eqref{p4} does not admit the vanishing solution $u(x)\equiv 0$, so the problem reduces question of the existence of simultaneous solutions $u(x)$ of \eqref{p4} and of the Riccati equation obtained by setting to zero the other factor in the denominator of each of the formul\ae\ \eqref{eq:u-ne}, \eqref{eq:u-sw}, \eqref{eq:u-se}, and \eqref{eq:u-nw}.  In each case, this amounts to a condition on the parameters $(\Theta_0,\Theta_\infty)$, as summarized in the following proposition.
\begin{proposition}
Let $u(x)$ be a solution of the Painlev\'e-IV equation \eqref{p4} for parameters $(\Theta_0,\Theta_\infty)$.  
\begin{itemize}
\item The B\"acklund transformation \eqref{eq:u-ne} taking $u(x)$ to $u_\nearrow(x)$ solving \eqref{p4} for shifted parameters $(\Theta_0+\tfrac{1}{2},\Theta_\infty+\tfrac{1}{2})$ is determinate provided that $\Theta_0(\Theta_\infty+\Theta_0)\neq 0$.
\item The B\"acklund transformation \eqref{eq:u-sw} taking $u(x)$ to $u_\swarrow(x)$ solving \eqref{p4} for shifted parameters $(\Theta_0-\tfrac{1}{2},\Theta_\infty-\tfrac{1}{2})$ is determinate provided that $\Theta_0(\Theta_\infty+\Theta_0-1)\neq 0$.
\item
The B\"acklund transformation \eqref{eq:u-se} taking $u(x)$ to $u_\searrow(x)$ solving \eqref{p4} for shifted parameters $(\Theta_0+\tfrac{1}{2},\Theta_\infty-\tfrac{1}{2})$ is determinate provided that $\Theta_0(\Theta_\infty-\Theta_0-1)\neq 0$.
\item 
The B\"acklund transformation \eqref{eq:u-nw} taking $u(x)$ to $u_\nwarrow(x)$ solving \eqref{p4} for shifted parameters $(\Theta_0-\tfrac{1}{2},\Theta_\infty+\tfrac{1}{2})$ is determinate provided that $\Theta_0(\Theta_\infty-\Theta_0)\neq 0$.
\end{itemize}
\label{prop:Baecklund}
\end{proposition}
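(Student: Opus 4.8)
The plan is to exploit the reduction already noted just before the statement: since $\Theta_0\neq 0$ the Painlev\'e-IV equation \eqref{p4} has no solution $u(x)\equiv 0$, so for a given solution $u(x)$ of \eqref{p4} the formula \eqref{eq:u-ne} (say) fails to define a meromorphic function precisely when the \emph{second} factor in its denominator vanishes identically in $x$, i.e.\ when $u(x)$ also satisfies the Riccati equation $u'=u^2+2xu+4\Theta_0$. Thus for each of the four arrows it suffices to prove that the associated Riccati equation is incompatible with \eqref{p4} unless the corresponding excluded parameter relation holds. Reading off the linear-in-$u'$ factors from the denominators of \eqref{eq:u-ne}, \eqref{eq:u-sw}, \eqref{eq:u-se}, \eqref{eq:u-nw}, the four Riccati equations to consider are $u'=u^2+2xu+4\Theta_0$, $u'=-u^2-2xu-4\Theta_0$, $u'=-u^2-2xu+4\Theta_0$, and $u'=u^2+2xu-4\Theta_0$, respectively.

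For a single fixed Riccati equation the computation is standard: differentiate it once and use the Riccati relation itself to write $u''$ as a polynomial in $u$ and $x$; independently substitute the Riccati into the right-hand side of \eqref{p4}, where $(u')^2$ is a perfect square so that $(u')^2/(2u)$ equals a polynomial in $u$ and $x$ plus a residual $8\Theta_0^2/u$ term that cancels the $-8\Theta_0^2/u$ term of \eqref{p4}. Equating the two resulting expressions for $u''$, the terms cubic, quadratic and independent in $u$ cancel identically, leaving a single relation linear in $u$ whose coefficient must vanish. One checks that this coefficient vanishes iff $\Theta_\infty+\Theta_0=0$ for \eqref{eq:u-ne}; iff $\Theta_\infty+\Theta_0-1=0$ for \eqref{eq:u-sw}; iff $\Theta_\infty-\Theta_0-1=0$ for \eqref{eq:u-se}; and iff $\Theta_\infty-\Theta_0=0$ for \eqref{eq:u-nw}. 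Consequently, whenever the stated condition $\Theta_0(\cdots)\neq 0$ holds in each case, no solution of \eqref{p4} can satisfy the corresponding Riccati equation, so the second denominator factor is not identically zero, and the B\"acklund formula defines $u_\nearrow$, $u_\swarrow$, $u_\searrow$, or $u_\nwarrow$ as a meromorphic function, solving \eqref{p4} for the shifted parameters as already established in Section~\ref{sec:sub-Baecklund}.

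The amount of computation can be cut by symmetry. Since \eqref{p4} involves $\Theta_0$ only through $\Theta_0^2$, the substitution $\Theta_0\mapsto -\Theta_0$ carries the $\nearrow$-Riccati to the $\nwarrow$-Riccati and the $\swarrow$-Riccati to the $\searrow$-Riccati, interchanging the parameter relations $\Theta_\infty+\Theta_0=0\leftrightarrow\Theta_\infty-\Theta_0=0$ and $\Theta_\infty+\Theta_0-1=0\leftrightarrow\Theta_\infty-\Theta_0-1=0$; and conjugating the $\nearrow$ transformation by the Boiti--Pempinelli symmetry $\mathcal{S}_\updownarrow$ of \eqref{eq:rotation-symmetry} (which fixes $\Theta_0$ and sends $\Theta_\infty\mapsto 1-\Theta_\infty$, hence turns a $+\tfrac12$ shift of $\Theta_\infty$ into a $-\tfrac12$ shift) produces the $\searrow$ transformation and turns the relation $\Theta_\infty+\Theta_0=0$ into $\Theta_\infty-\Theta_0-1=0$. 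So only one genuine compatibility computation is needed, the other three conditions following by these symmetries.

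I do not anticipate a real obstacle here; the only point that needs care is the precise meaning of ``indeterminate'', namely that the denominator vanishes as a function of $x$ rather than at isolated points, which is exactly what legitimizes the reduction to the Riccati equation and means only the forward implication (common solution $\Rightarrow$ parameter relation) is required, not its converse.
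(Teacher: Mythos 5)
Your proposal is correct and follows essentially the same route as the paper: reduce determinacy to the incompatibility of \eqref{p4} with the Riccati equation obtained from the second denominator factor (the factor $2u$ being handled by $\Theta_0\neq 0$), and check that compatibility forces the excluded parameter relation in each case. The paper merely asserts the Riccati compatibility conditions as well known, whereas you carry out the computation for one arrow and derive the rest via the $\Theta_0\mapsto-\Theta_0$ and Boiti--Pempinelli symmetries, which is a clean and valid way to complete the argument.
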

As is well-known, the four lines in the parameter space $\Theta_\infty\pm\Theta_0=0$ and $\Theta_\infty\pm\Theta_0=1$ give precisely the parameter values where the Painlev\'e-IV equation \eqref{p4} admits solutions in terms of classical special functions, namely those solving the linear second-order equation related in the usual way to the Riccati equation consistent with \eqref{p4}.

\subsection{Riemann-Hilbert representation of gO rationals}
\label{sec:RHPgO}
Now we carry out the program outlined at the beginning of Section~\ref{sec:rational-isomonodromy} to arrive at a Riemann-Hilbert representation of the rational solutions of the Painlev\'e-IV equation \eqref{p4} in the gO family.  The procedure begins with the selection of a seed solution for the family, which we take to correspond to the
special point $(\Theta_0,\Theta_\infty)=(\tfrac{1}{6},\tfrac{1}{2})\in\Lambda_\mathrm{gO}$.  The rational solution of \eqref{p4} may be obtained equivalently from any row of Table~\ref{tab:gO} for $m=n=0$, which gives simply $u(x)=-\tfrac{2}{3}x$. 

\subsubsection{Sowing the seed:  solving the direct monodromy problem and formulating the inverse monodromy problem}
\label{sec:gO-sowing}
The Lax pair equations \eqref{eq:PIV-Lax-Pair} for the seed solution involve $\Theta_0=\tfrac{1}{6}$, $\Theta_\infty=\tfrac{1}{2}$, $u(x)=-\tfrac{2}{3}x$, a nontrivial solution $y(x)$ of the first-order linear equation \eqref{eq:yODE} that we take without loss of generality to be $y(x)=\ee^{-\frac{2}{3}x^2}$, and $z(x)=\tfrac{1}{4}(-u'(x)+u(x)^2+2xu(x)+4\Theta_0)=\tfrac{1}{3}-\tfrac{2}{9}x^2$ (see \eqref{eq:LaxPair-z-define}).
In particular, the $x$-equation $\mathbf{\Psi}_x=\mathbf{X\Psi}$ in the Lax pair \eqref{eq:PIV-Lax-Pair} takes the form
\eq
{\bf\Psi}_x=\bpm \lambda & \ee^{-\frac{2}{3}x^2}\\
(-\frac{2}{3}-\frac{4}{9}x^2)\ee^{\frac{2}{3}x^2} & -\lambda\epm{\bf\Psi}.
\endeq
The exponential factors in the coefficient matrix are easily removed with the help of a gauge transformation:  $\mathbf{\Psi}=\ee^{-\frac{1}{3}x^2\sigma_3}\mathbf{\Psi}^{[1]}$,
which 
leads to the equivalent system
\eq
{\bf\Psi}^{[1]}_x=
\bpm \lambda+\frac{2}{3}x & 1\\
-\frac{2}{3}-\frac{4}{9}x^2 & -\lambda-\frac{2}{3}x\epm \mathbf{\Psi}^{[1]}.
\label{eq:Phi-x}
\endeq
Using the first equation in this system to explicitly eliminate the second row yields a closed equation on elements $\Psi^{[1]}_{1k}$ of the first row:
\eq
\Psi^{[1]}_{1k,xx}=\left(\lambda^2+\tfrac{4}{3}\lambda x\right)\Psi^{[1]}_{1k},\quad k=1,2.
\endeq
This is easily transformed into Airy's equation.  Indeed, setting $w=(\tfrac{4}{3}\lambda)^{\frac{1}{3}}(x+\tfrac{3}{4}\lambda)$ for fixed $\lambda$ we arrive at
\eq
\Psi^{[1]}_{1k,ww}=w\Psi^{[1]}_{1k},\quad k=1,2.
\endeq
Once the first row is determined, the elements of the second row follow from the relation
\eq
\Psi^{[1]}_{2k} = \Psi^{[1]}_{1k,x} - \left(\lambda+\tfrac{2}{3}x\right)\Psi^{[1]}_{1k},\quad k=1,2.
\label{eq:phi2-phi1}
\endeq
Note that in terms of $\mathbf{\Psi}^{[1]}$, the $\lambda$-equation in the Lax pair \eqref{eq:PIV-Lax-Pair} for the gO seed solution takes the form
\eq
\mathbf{\Psi}^{[1]}_\lambda=
\bpm
\lambda+x+\lambda^{-1}(\tfrac{2}{9}x^2-\tfrac{1}{6}) & 1+\tfrac{1}{3}\lambda^{-1}x\\
-\tfrac{4}{9}x^2-\tfrac{2}{3} +\lambda^{-1}x(\tfrac{2}{9}-\tfrac{4}{27}x^2) & -\lambda-x-\lambda^{-1}(\tfrac{2}{9}x^2-\tfrac{1}{6})\epm \mathbf{\Psi}^{[1]}.
\label{eq:Phi-lambda}
\endeq

The above calculations suggest the utility of the independent variables $w$ and $\mu:=\lambda$ in place of $(x,\lambda)$.  The differentiation formulas needed to effect the change of variables are
\eq
\frac{\partial}{\partial x}=\left(\tfrac{4}{3}\mu\right)^{\frac{1}{3}}\frac{\partial}{\partial w}\quad\text{and}\quad
\frac{\partial}{\partial\lambda}=\frac{\partial}{\partial\mu}+\left(\tfrac{1}{3}\frac{w}{\mu}+\tfrac{3}{4}\left(\tfrac{4}{3}\mu\right)^{\frac{1}{3}}\right)\frac{\partial}{\partial w}.
\label{eq:Jacobian}
\endeq
Next, it is convenient to introduce a subsequent gauge transformation in order to arrive at a $w$-equation for which the second row can be solved in terms of functions of $w$ alone (rather than also involving $\mu$).  Noting that the relation \eqref{eq:phi2-phi1} implies that also
\eq
\Psi^{[1]}_{2k}=\left(\tfrac{4}{3}\mu\right)^{\frac{1}{3}}\Psi^{[1]}_{1k,w}-\left(\tfrac{1}{2}\mu+\tfrac{2}{3}\left(\tfrac{4}{3}\mu\right)^{-\frac{1}{3}}w\right)\Psi^{[1]}_{1k}, \quad k=1,2,
\endeq
we introduce the ``shearing'' transformation
\eq
\mathbf{\Psi}^{[1]} = \mathbf{G}(w,\mu)\mathbf{\Psi}^{[2]},\quad
\mathbf{G}(w,\mu):=\bpm 1 & 0\\-\left(\tfrac{1}{2}\mu +\tfrac{2}{3}\left(\tfrac{4}{3}\mu\right)^{-\frac{1}{3}}w\right) & \left(\tfrac{4}{3}\mu\right)^{\frac{1}{3}}\epm.
\label{eq:shear}
\endeq
After some computation, it then follows from \eqref{eq:Phi-x}, \eqref{eq:Jacobian}, and \eqref{eq:shear} that
\eq
\mathbf{\Psi}^{[2]}_w = 
\bpm 0&1\\w & 0\epm
\mathbf{\Psi}^{[2]}.
\label{eq:Gamma-X}
\endeq
Similarly, combining this result with \eqref{eq:Phi-lambda}, \eqref{eq:Jacobian}, and \eqref{eq:shear},
\eq
\mathbf{\Psi}^{[2]}_\mu = 
-\frac{1}{6\mu}
\mathbf{\Psi}^{[2]}.
\label{eq:Gamma-Lambda}
\endeq
After this simplification, it is completely clear that every simultaneous fundamental solution matrix of 
\eqref{eq:Gamma-X} and \eqref{eq:Gamma-Lambda} has the form
\eq
\mathbf{\Psi}^{[2]}=\mu^{-\frac{1}{6}}\bpm f_1(w) & f_2(w)\\ f_1'(w) & f_2'(w)\epm \mathbf{C}
\endeq
where $\mathbf{C}$ is a matrix independent of both $w$ and $\mu$ with $\det(\mathbf{C})\neq 0$ and $f_j(w)$, $j=1,2$ form a fundamental pair of solutions of Airy's equation $f''(w)=wf(w)$.  Putting the pieces together we find the following result.
\begin{lemma}
Fix a fundamental pair $f_1(\cdot)$, $f_2(\cdot)$ of solutions of Airy's equation $f''(w)=wf(w)$, a simply connected domain $D\subset\mathbb{C}\setminus\{0\}$ and arbitrary branches of $\lambda^{\frac{4}{3}}$ and $\lambda^{-\frac{1}{6}}$ analytic on $D$.  For $\lambda\in D$ and $x\in\mathbb{C}$, define the matrix function
\eq
\mathbf{F}(\lambda,x):=
\lambda^{-\frac{1}{6}}\ee^{-\frac{1}{3}x^2\sigma_3}\bpm 1 & 0\\-\lambda-\tfrac{2}{3}x & (\tfrac{4}{3}\lambda)^{\frac{1}{3}}
\epm\bpm f_1((\tfrac{3}{4})^{\frac{2}{3}}\lambda^{\frac{4}{3}}(1+\tfrac{4}{3}\lambda^{-1}x)) & f_2((\tfrac{3}{4})^{\frac{2}{3}}\lambda^{\frac{4}{3}}(1+\tfrac{4}{3}\lambda^{-1}x))\\
f_1'((\tfrac{3}{4})^{\frac{2}{3}}\lambda^{\frac{4}{3}}(1+\tfrac{4}{3}\lambda^{-1}x)) & f_2'((\tfrac{3}{4})^{\frac{2}{3}}\lambda^{\frac{4}{3}}(1+\tfrac{4}{3}\lambda^{-1}x))\epm.
\endeq
Let $\Theta_0=\tfrac{1}{6}$, $\Theta_\infty=\tfrac{1}{2}$, and consider the exact solution $u(x)=-\tfrac{2}{3}x$ of the corresponding Painlev\'e-IV equation \eqref{p4}.  If $y(x)=\ee^{-\frac{2}{3}x^2}$, then the Lax pair equations \eqref{eq:PIV-Lax-Pair} are simultaneously solvable for all $(\lambda,x)\in D\times\mathbb{C}$, and every simultaneous solution matrix has the form
\eq
\mathbf{\Psi}(\lambda,x)=\mathbf{F}(\lambda,x)\mathbf{C},\quad (\lambda,x)\in D\times\mathbb{C},
\label{eq:general-solution}
\endeq
where $\mathbf{C}$ is a matrix independent of both $x$ and $\lambda$.
\label{lem:gO-direct-general}
\end{lemma}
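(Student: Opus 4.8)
\textbf{Proof proposal for Lemma~\ref{lem:gO-direct-general}.}
The plan is to verify directly that the two Lax pair equations \eqref{eq:PIV-Lax-Pair} with coefficient matrices given by \eqref{eq:JM-A}--\eqref{eq:JM-U} reduce, under the explicit sequence of substitutions derived above, to the trivial pair \eqref{eq:Gamma-X}--\eqref{eq:Gamma-Lambda}, and then to unwind the substitutions to obtain the stated closed-form solution matrix. First I would assemble the coefficient matrices for the seed: with $\Theta_0=\tfrac{1}{6}$, $\Theta_\infty=\tfrac{1}{2}$, $u(x)=-\tfrac{2}{3}x$, $y(x)=\ee^{-\frac{2}{3}x^2}$, and $z(x)=\tfrac{1}{3}-\tfrac{2}{9}x^2$ computed from \eqref{eq:LaxPair-z-define}, one writes down $\mathbf{X}$ from \eqref{eq:JM-U} and $\mathbf{\Lambda}=\lambda\sigma_3+\mathbf{\Lambda}_0(x)+\lambda^{-1}\mathbf{\Lambda}_1(x)$ from \eqref{eq:JM-A} explicitly; these are exactly the matrices appearing in the $x$- and $\lambda$-equations \eqref{eq:Phi-x} (after the gauge $\mathbf{\Psi}=\ee^{-\frac{1}{3}x^2\sigma_3}\mathbf{\Psi}^{[1]}$) and \eqref{eq:Phi-lambda}. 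The compatibility of \eqref{eq:PIV-Lax-Pair} for the seed is then automatic, since the seed triple $(\Theta_0,\Theta_\infty,u)$ satisfies \eqref{p4} and \eqref{eq:yODE} (this is the content of Step~1 of the method in Section~\ref{sec:rational-isomonodromy}); alternatively one may just check the zero-curvature relation $\mathbf{\Lambda}_x-\mathbf{X}_\lambda+[\mathbf{\Lambda},\mathbf{X}]=\mathbf{0}$ directly by substitution, which is a short computation.

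Next I would carry out, in order, the three transformations already indicated: (i) reduce the first-order $2\times 2$ $x$-system to a scalar second-order equation for the first row via \eqref{eq:phi2-phi1}, recognize it as Airy's equation in the shifted and scaled variable $w=(\tfrac{4}{3}\lambda)^{1/3}(x+\tfrac{3}{4}\lambda)$; (ii) change independent variables from $(x,\lambda)$ to $(w,\mu)$ with $\mu=\lambda$ using the chain-rule formulas \eqref{eq:Jacobian}, and apply the shearing gauge transformation \eqref{eq:shear} to obtain the $w$-system; (iii) verify that the $w$-equation becomes precisely \eqref{eq:Gamma-X} and that the $\mu$-equation becomes precisely \eqref{eq:Gamma-Lambda}. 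Each of these is a routine but somewhat lengthy matrix computation; the key checks are that the shearing matrix $\mathbf{G}(w,\mu)$ in \eqref{eq:shear} is exactly the one that clears the $\mu$-dependence from the lower row of the transformed $w$-system, and that the transformed $\lambda$-equation collapses to the scalar $-\tfrac{1}{6}\mu^{-1}$ times the identity once one substitutes the $w$-equation to eliminate $w$-derivatives. Having reached \eqref{eq:Gamma-X}--\eqref{eq:Gamma-Lambda}, the general simultaneous solution is immediate: \eqref{eq:Gamma-Lambda} forces $\mathbf{\Psi}^{[2]}=\mu^{-1/6}\mathbf{\Gamma}(w)$ for some matrix $\mathbf{\Gamma}$ depending only on $w$, and then \eqref{eq:Gamma-X} says each column of $\mathbf{\Gamma}(w)$ is $(f(w),f'(w))^{\mathsf T}$ for a solution $f$ of Airy's equation, so $\mathbf{\Gamma}(w)=\begin{bmatrix} f_1 & f_2\\ f_1' & f_2'\end{bmatrix}\mathbf{C}$ with $\mathbf{C}$ constant and $\det\mathbf{C}\neq0$ iff $\{f_1,f_2\}$ is a fundamental pair. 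Finally I would unwind $\mathbf{\Psi}^{[1]}=\ee^{\frac{1}{3}x^2\sigma_3}\mathbf{\Psi}$ back through $\mathbf{\Psi}=\ee^{-\frac{1}{3}x^2\sigma_3}\mathbf{\Psi}^{[1]}$ and $\mathbf{\Psi}^{[1]}=\mathbf{G}(w,\mu)\mathbf{\Psi}^{[2]}$, substitute $w=(\tfrac34)^{2/3}\lambda^{4/3}(1+\tfrac43\lambda^{-1}x)$ and $\mu=\lambda$, and collect terms to recover the claimed formula $\mathbf{\Psi}(\lambda,x)=\mathbf{F}(\lambda,x)\mathbf{C}$; here one checks that $\ee^{-\frac{1}{3}x^2\sigma_3}$ times the explicit $\mathbf{G}(w,\mu)$ reproduces the prefactor $\lambda^{-1/6}\ee^{-\frac13 x^2\sigma_3}\begin{bmatrix}1&0\\-\lambda-\tfrac23 x & (\tfrac43\lambda)^{1/3}\end{bmatrix}$ displayed in $\mathbf{F}$, and that the scaling $(\tfrac34)^{2/3}\lambda^{4/3}=(\tfrac34\lambda)^{2/3}\cdot(\tfrac34)^{2/3}\lambda^{2/3}$ etc.\ matches the Airy argument. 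The branch choices of $\lambda^{4/3}$ and $\lambda^{-1/6}$ on the simply connected $D$ enter only through the scaling and prefactor and are harmless provided they are fixed consistently.

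The main obstacle I anticipate is purely computational: the intermediate $\lambda$-equation \eqref{eq:Phi-lambda} has a fairly complicated coefficient matrix (with $\lambda^{-1}$ terms involving $x$ and $x^2$), and verifying that the combined change of variables \eqref{eq:Jacobian} plus the shearing gauge \eqref{eq:shear} collapses it all the way down to $-\tfrac{1}{6}\mu^{-1}\mathbb{I}$ in \eqref{eq:Gamma-Lambda} requires careful bookkeeping — in particular one must repeatedly use \eqref{eq:Gamma-X} (i.e.\ substitute second-order Airy behavior) to eliminate $w$-derivatives that the chain rule introduces, and one must track how the $\tfrac13\tfrac{w}{\mu}\partial_w$ and $\tfrac34(\tfrac43\mu)^{1/3}\partial_w$ pieces of $\partial_\lambda$ interact with the shearing matrix's $\mu$-dependence. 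There is no conceptual difficulty: the structure is exactly that of the Airy parametrix reductions familiar from the isomonodromy literature (e.g.\ the analogous seed computations in \cite{BuckinghamM12,BothnerMS18}), and since we already know a priori that the Lax pair is compatible, any algebra error would be caught by the requirement that the final $\mu$-system be $w$-independent. A secondary, minor point is to confirm that $\det\mathbf{F}(\lambda,x)$ is a nonzero function of $\lambda$ alone (it equals $\lambda^{-1/3}(\tfrac43\lambda)^{1/3}$ times the Wronskian of $f_1,f_2$, hence constant in $x$), which ensures $\mathbf{F}$ is invertible and that \eqref{eq:general-solution} indeed captures \emph{every} simultaneous solution, not just a subspace.
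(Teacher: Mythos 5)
Your proposal is correct and follows essentially the same route as the paper: gauge away the exponential via $\mathbf{\Psi}=\ee^{-\frac{1}{3}x^2\sigma_3}\mathbf{\Psi}^{[1]}$, reduce the $x$-equation to Airy's equation in $w=(\tfrac{4}{3}\lambda)^{\frac{1}{3}}(x+\tfrac{3}{4}\lambda)$, pass to the variables $(w,\mu)$ with the shearing transformation \eqref{eq:shear} to reach the trivial system \eqref{eq:Gamma-X}--\eqref{eq:Gamma-Lambda}, and unwind. Your closing observation that $\det\mathbf{F}(\lambda,x)$ is a nonzero function of $\lambda$ alone (the Wronskian being constant) is a worthwhile explicit confirmation that \eqref{eq:general-solution} captures every simultaneous solution.
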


\begin{remark}
In \cite[Section 3]{NovokshenovS14}, the authors obtain analogues of these results by a different method.  Namely, they observe that by fixing $x=0$, the equation $\mathbf{\Psi}_\lambda=\mathbf{\Lambda}\mathbf{\Psi}$ (cf.\@ \eqref{eq:PIV-Lax-Pair}--\eqref{eq:JM-A}), which is usually intractable from the point of view of classical special functions, reduces for the gO seed to a specific confluent hypergeometric equation solvable in terms of Whittaker functions.  For the specific parameters involved, the Whittaker functions reduce to Airy functions, see \cite[eqn.\@ 13.18.10]{DLMF}.  This results in a computation of the Stokes matrices that agrees with our calculations written in \eqref{eq:gO-Stokes} (and derived below) up to a constant diagonal conjugation.  No attempt is made in \cite{NovokshenovS14} to calculate any connection matrices.
\label{rem:NovokshenovS14}
\end{remark}

With this result in hand, we now consider how to choose the matrices $\mathbf{C}=\mathbf{C}^{(\infty)}_j$, $j=1,2,3,4$, corresponding to the four solutions $\mathbf{\Psi}=\mathbf{\Psi}^{(\infty)}_j(\lambda;x)$ to achieve the normalization condition \eqref{eq:infty-asymp} for each Stokes sector abutting the irregular singular point at $\lambda=\infty$.
To make this calculation precise, we interpret all fractional powers of $\lambda$ appearing in \eqref{eq:general-solution} as principal branches:  $\lambda^p=\mathrm{e}^{p\log(\lambda)}$, $-\pi<\mathrm{Im}(\log(\lambda))<\pi$.  This means in particular that $\lambda^p$ has in general a different meaning on the common boundary of $S_3$ and $S_4$, depending on which of those two sectors is under consideration.  
For each sector $S_j$ in turn, we shall choose first for $f_1$ and $f_2$ a specific fundamental pair of solutions of $f''(w)=wf(w)$ that exhibits no Stokes phenomenon as $\lambda\to\infty$ in the sector.  Then we use well-known asymptotic formul\ae\ for Airy functions of large argument to determine the corresponding matrix $\mathbf{C}^{(\infty)}_j$.  Once $\mathbf{\Psi}_j^{(\infty)}(\lambda,x)$ has been determined for $j=1,\dots,4$, we will build a solution $\mathbf{\Psi}^{(0)}(\lambda,x)$ that satisfies the condition \eqref{eq:zero-asymp}.

\subsubsection*{The solution $\mathbf{\Psi}^{(\infty)}_1(\lambda,x)$}  
If $\lambda$ is large in the sector $S_1$, then $w=(\tfrac{3}{4})^{\frac{2}{3}}\lambda^{\frac{4}{3}}(1+\tfrac{4}{3}\lambda^{-1}x)$ is also large, and $-\tfrac{1}{2}\pi\le\arg(\lambda)\le 0$ implies that $-\frac{2}{3}\pi-\epsilon\le\arg(w)\le \epsilon$ holds for every $\epsilon>0$ if $\lambda$ is large enough given $\epsilon$.   The solutions $f_1(w)=\mathrm{Ai}(w)$ and $f_2(w)=\mathrm{Ai}(\ee^{\frac{2\ii\pi}{3}}w)$ do not exhibit Stokes phenomenon in this sector as $w\to\infty$ for $\epsilon$ small enough.  Using \cite[Eqns. 9.7.5 \& 9.7.6]{DLMF} and composing with the definition of $w$ gives
\eq
f_1(w)=\frac{\ee^{-\frac{1}{3}x^2}}{2\sqrt{\pi}}\left(\tfrac{4}{3}\right)^{\frac{1}{6}}\lambda^{-\frac{1}{3}}\ee^{-(\frac{1}{2}\lambda^2+x\lambda)}(1+(\tfrac{2}{27}x^3-\tfrac{1}{3}x)\lambda^{-1}+\bo(\lambda^{-2})),\quad\lambda\to\infty,\quad\lambda\in S_1,
\label{eq:f1-S1}
\endeq
\eq
f_1'(w)=-\frac{\ee^{-\frac{1}{3}x^2}}{2\sqrt{\pi}}\left(\tfrac{3}{4}\right)^{\frac{1}{6}}\lambda^{\frac{1}{3}}\ee^{-(\frac{1}{2}\lambda^2+x\lambda)}(1+(\tfrac{2}{27}x^3+\tfrac{1}{3}x)\lambda^{-1}+\bo(\lambda^{-2})),\quad\lambda\to\infty,\quad\lambda\in S_1,
\label{eq:f1prime-S1}
\endeq
\eq
f_2(w)=\ee^{-\frac{\ii\pi}{6}}\frac{\ee^{\frac{1}{3}x^2}}{2\sqrt{\pi}}\left(\tfrac{4}{3}\right)^{\frac{1}{6}}\lambda^{-\frac{1}{3}}\ee^{\frac{1}{2}\lambda^2+x\lambda}(1-(\tfrac{2}{27}x^3+\tfrac{1}{3}x)\lambda^{-1}+\bo(\lambda^{-2})),\quad\lambda\to\infty,\quad\lambda\in S_1,
\endeq
and
\eq
f_2'(w)=\ee^{-\frac{\ii\pi}{6}}\frac{\ee^{\frac{1}{3}x^2}}{2\sqrt{\pi}}\left(\tfrac{3}{4}\right)^{\frac{1}{6}}\lambda^{\frac{1}{3}}\ee^{\frac{1}{2}\lambda^2+x\lambda}(1-(\tfrac{2}{27}x^3-\tfrac{1}{3}x)\lambda^{-1}+\bo(\lambda^{-2})),\quad\lambda\to\infty,\quad\lambda\in S_1.
\endeq
A straightforward computation then shows that if $\mathbf{\Psi}_1^{(\infty)}(\lambda,x)$ has the form \eqref{eq:general-solution} with the above choices for $f_1(w)$ and $f_2(w)$, and with the constant matrix $\mathbf{C}=\mathbf{C}_1^{(\infty)}$, then using $\Theta_\infty=\tfrac{1}{2}$,
\begin{multline}
\mathbf{\Psi}_1^{(\infty)}(\lambda,x)\lambda^{\Theta_\infty\sigma_3}\ee^{-(\frac{1}{2}\lambda^2+x\lambda)\sigma_3}=
\bpm
\bo(\lambda^{-1}) & \displaystyle\tfrac{1}{2\sqrt{\pi}}\left(\tfrac{4}{3}\right)^{\frac{1}{6}}\ee^{-\frac{\ii\pi}{6}}+\bo(\lambda^{-1})\\
\displaystyle -\tfrac{1}{\sqrt{\pi}}\left(\tfrac{4}{3}\right)^{\frac{1}{6}} + \bo(\lambda^{-1}) & \bo(\lambda^{-1})
\epm \\
{}\cdot\lambda^{\frac{1}{2}\sigma_3}\ee^{-(\frac{1}{2}\lambda^2+x\lambda)\sigma_3}\mathbf{C}^{(\infty)}_1\ee^{-(\frac{1}{2}\lambda^2+x\lambda)\sigma_3}\lambda^{\frac{1}{2}\sigma_3},\quad\lambda\to\infty,\quad\lambda\in S_1.
\end{multline}
Therefore to achieve the desired asymptotic normalization condition \eqref{eq:infty-asymp} for $j=1$, we must take
\eq
\mathbf{C}_1^{(\infty)}:=\bpm 0 & \displaystyle -\sqrt{\pi}\left(\tfrac{3}{4}\right)^{\frac{1}{6}}\\
\displaystyle 2\sqrt{\pi}\left(\tfrac{3}{4}\right)^{\frac{1}{6}}\ee^{\frac{\ii\pi}{6}} & 0\epm.
\endeq
This completes the determination of the normalized simultaneous fundamental solution matrix $\mathbf{\Psi}_1^{(\infty)}(\lambda,x)$.

\subsubsection*{The solution $\mathbf{\Psi}^{(\infty)}_2(\lambda,x)$}  
When $\lambda\in S_2$, we have $-\epsilon\le\arg(w)\le \frac{2}{3}\pi +\epsilon$ as $\lambda\to\infty$ and hence also $w\to\infty$.
In this case, to avoid Stokes phenomenon we choose solutions $f_1(w)=\mathrm{Ai}(w)$ and $f_2(w)=\mathrm{Ai}(\ee^{-\frac{2\ii\pi}{3}}w)$.
Thus, again composing the definition of $w$ with \cite[Eqns. 9.7.5 \& 9.7.6]{DLMF}, the expansions \eqref{eq:f1-S1}--\eqref{eq:f1prime-S1} are also valid as $\lambda\to\infty$ for $\lambda\in S_2$ and we have
\eq
f_2(w)=\ee^{\frac{\ii\pi}{6}}\frac{\ee^{\frac{1}{3}x^2}}{2\sqrt{\pi}}\left(\tfrac{4}{3}\right)^{\frac{1}{6}}\lambda^{-\frac{1}{3}}\ee^{\frac{1}{2}\lambda^2+x\lambda}(1-(\tfrac{2}{27}x^3+\tfrac{1}{3}x)\lambda^{-1}+\bo(\lambda^{-2})),\quad\lambda\to\infty,\quad\lambda\in S_2
\label{eq:f2-S2}
\endeq
and
\eq
f_2'(w)=\ee^{\frac{\ii\pi}{6}}\frac{\ee^{\frac{1}{3}x^2}}{2\sqrt{\pi}}\left(\tfrac{3}{4}\right)^{\frac{1}{6}}\lambda^{\frac{1}{3}}\ee^{\frac{1}{2}\lambda^2+x\lambda}(1-(\tfrac{2}{27}x^3-\tfrac{1}{3}x)\lambda^{-1}+\bo(\lambda^{-2})),\quad\lambda\to\infty,\quad\lambda\in S_2.
\label{eq:f2prime-S2}
\endeq
Taking these choices for $f_1(w)$ and $f_2(w)$ in \eqref{eq:general-solution} with $\mathbf{\Psi}=\mathbf{\Psi}_2^{(\infty)}(\lambda,x)$ and $\mathbf{C}=\mathbf{C}^{(\infty)}_2$, for $\Theta_\infty=\tfrac{1}{2}$,
\begin{multline}
\mathbf{\Psi}_2^{(\infty)}(\lambda,x)\lambda^{\Theta_\infty\sigma_3}\ee^{-(\frac{1}{2}\lambda^2+x\lambda)\sigma_3}=
\bpm \bo(\lambda^{-1}) & \displaystyle\tfrac{1}{2\sqrt{\pi}}\left(\tfrac{4}{3}\right)^{\frac{1}{6}}\ee^{\frac{\ii\pi}{6}} + \bo(\lambda^{-1})\\
\displaystyle -\tfrac{1}{\sqrt{\pi}}\left(\tfrac{4}{3}\right)^{\frac{1}{6}}+\bo(\lambda^{-1}) & \bo(\lambda^{-1})\epm\\
{}\cdot\lambda^{\frac{1}{2}\sigma_3}\ee^{-(\frac{1}{2}\lambda^2+x\lambda)\sigma_3}\mathbf{C}^{(\infty)}_2
\ee^{-(\frac{1}{2}\lambda^2+x\lambda)\sigma_3}\lambda^{\frac{1}{2}\sigma_3},\quad\lambda\to\infty,\quad\lambda\in S_2.
\end{multline}
Therefore, to achieve the desired normalization condition \eqref{eq:infty-asymp} we must take
\eq
\mathbf{C}^{(\infty)}_2:=\bpm 0 & \displaystyle -\sqrt{\pi}\left(\tfrac{3}{4}\right)^{\frac{1}{6}}\\\displaystyle 2\sqrt{\pi}\left(\tfrac{3}{4}\right)^{\frac{1}{6}}\ee^{-\frac{\ii\pi}{6}} & 0\epm,
\endeq
which completes the construction of $\mathbf{\Psi}^{(\infty)}_2(\lambda,x)$.  We observe here that $\mathbf{C}^{(\infty)}_2=\mathbf{C}_1^{(\infty)*}$ (element-wise complex conjugation), which is consistent with the fact that for $\mathbf{\Psi}_2^{(\infty)}$ we selected a basis $(f_1(w),f_2(w))$ whose elements are the Schwarz reflections of the basis elements selected to construct $\mathbf{\Psi}_1^{(\infty)}$.

\subsubsection*{The solution $\mathbf{\Psi}^{(\infty)}_3(\lambda,x)$}  
The sector $S_3$ corresponds to $\tfrac{2}{3}\pi-\epsilon\le\arg(w)\le\tfrac{4}{3}\pi +\epsilon$ as $\lambda\to\infty$.  We hence choose the basis $f_1(w):=\mathrm{Ai}(\ee^{-\frac{2\ii\pi}{3}}w)$ and $f_2(w):=\mathrm{Ai}(\ee^{-\frac{4\ii\pi}{3}}w)=\mathrm{Ai}(\ee^{\frac{2\ii\pi}{3}}w)$ to avoid Stokes phenomenon.  It follows that the expansions \eqref{eq:f2-S2}--\eqref{eq:f2prime-S2} are also valid (for $f_1(w)$ and $f_1'(w)$ in place of $f_2(w)$ and $f_2'(w)$ on the left-hand sides) as $\lambda\to\infty$ for $\lambda\in S_3$, and that
\eq
f_2(w)=\ee^{\frac{\ii\pi}{3}}\frac{\ee^{-\frac{1}{3}x^2}}{2\sqrt{\pi}}(\tfrac{4}{3})^{\frac{1}{6}}\lambda^{-\frac{1}{3}}\ee^{-(\frac{1}{2}\lambda^2+x\lambda)}(1+(\tfrac{2}{27}x^3-\tfrac{1}{3}x)\lambda^{-1}+\bo(\lambda^{-2})),\quad\lambda\to\infty,\quad\lambda\in S_3
\endeq
and
\eq
f_2'(w)=-\ee^{\frac{\ii\pi}{3}}\frac{\ee^{-\frac{1}{3}x^2}}{2\sqrt{\pi}}(\tfrac{3}{4})^{\frac{1}{6}}\lambda^{\frac{1}{3}}\ee^{-(\frac{1}{2}\lambda^2+x\lambda)}(1+(\tfrac{2}{27}x^3+\tfrac{1}{3}x)\lambda^{-1}+\bo(\lambda^{-2})),\quad\lambda\to\infty,\quad\lambda\in S_3.
\endeq
With these choices for $f_1(w)$ and $f_2(w)$ in \eqref{eq:general-solution} with $\mathbf{\Psi}=\mathbf{\Psi}_3^{(\infty)}(\lambda,x)$ and $\mathbf{C}=\mathbf{C}^{(\infty)}_3$, we find that
\begin{multline}
\mathbf{\Psi}_3^{(\infty)}(\lambda,x)\lambda^{\Theta_\infty\sigma_3}\ee^{-(\frac{1}{2}\lambda^2+x\lambda)\sigma_3}=\bpm
\displaystyle
\tfrac{1}{2\sqrt{\pi}}\left(\tfrac{4}{3}\right)^{\frac{1}{6}}\ee^{\frac{\ii\pi}{6}}+\bo(\lambda^{-1}) & \bo(\lambda^{-1})\\
\bo(\lambda^{-1}) & \displaystyle \tfrac{1}{\sqrt{\pi}}\left(\tfrac{4}{3}\right)^{\frac{1}{6}}\ee^{-\frac{2\ii\pi}{3}}+\bo(\lambda^{-1})\epm\\
{}\cdot\lambda^{-\frac{1}{2}\sigma_3}\ee^{(\frac{1}{2}\lambda^2+x\lambda)\sigma_3}\mathbf{C}^{(\infty)}_3
\ee^{-(\frac{1}{2}\lambda^2+x\lambda)\sigma_3}\lambda^{\frac{1}{2}\sigma_3},\quad\lambda\to\infty,\quad\lambda\in S_3.
\end{multline}
Therefore, to achieve the desired normalization condition \eqref{eq:infty-asymp} for $j=3$ we must take
\eq
\mathbf{C}^{(\infty)}_3:=\bpm \displaystyle 2\sqrt{\pi}\left(\tfrac{3}{4}\right)^{\frac{1}{6}}\ee^{-\frac{\ii\pi}{6}} & 0\\0 & \displaystyle \sqrt{\pi}\left(\tfrac{3}{4}\right)^{\frac{1}{6}}\ee^{\frac{2\ii\pi}{3}}\epm.
\endeq
This completes the construction of $\mathbf{\Psi}_3^{(\infty)}(\lambda,x)$.

\subsubsection*{The solution $\mathbf{\Psi}^{(\infty)}_4(\lambda,x)$}  
The sector $S_4$ corresponds to $-\tfrac{4}{3}\pi-\epsilon\le\arg(w)\le-\tfrac{2}{3}\pi +\epsilon$ as $\lambda\to\infty$.  We choose the basis $f_1(w):=\mathrm{Ai}(\ee^{\frac{2\ii\pi}{3}}w)$ and $f_2(w):=\mathrm{Ai}(\ee^{\frac{4\ii\pi}{3}}w)=\mathrm{Ai}(\ee^{-\frac{2\ii\pi}{3}}w)$ to avoid Stokes phenomenon.  Observing that these basis functions are the Schwarz reflections of those selected to construct $\mathbf{\Psi}_3^{(\infty)}$, one can check that taking $\mathbf{\Psi}=\mathbf{\Psi}_4^{(\infty)}(\lambda,x)$ and $\mathbf{C}=\mathbf{C}^{(\infty)}_4$ in \eqref{eq:general-solution}, the normalization condition \eqref{eq:infty-asymp} holds provided that
\eq
\mathbf{C}^{(\infty)}_4:=\mathbf{C}_3^{(\infty)*} = \bpm \displaystyle 2\sqrt{\pi}\left(\tfrac{3}{4}\right)^{\frac{1}{6}}\ee^{\frac{\ii\pi}{6}} & 0\\0 & \displaystyle \sqrt{\pi}\left(\tfrac{3}{4}\right)^{\frac{1}{6}}\ee^{-\frac{2\ii\pi}{3}}\epm.
\endeq
This completes the construction of $\mathbf{\Psi}^{(\infty)}_4(\lambda,x)$.

\subsubsection*{The solution $\mathbf{\Psi}^{(0)}(\lambda,x)$.}  
Finally, we determine the constant matrix $\mathbf{C}=\mathbf{C}^{(0)}$ in \eqref{eq:general-solution} so that with $\mathbf{\Psi}=\mathbf{\Psi}^{(0)}(\lambda,x)$ and $\Theta_0=\tfrac{1}{6}$ the condition \eqref{eq:zero-asymp} holds.
This is possible because the Fuchsian singularity at $\lambda=0$ of $\mathbf{\Psi}_\lambda=\mathbf{\Lambda\Psi}$ is nonresonant for $\Theta_0=\tfrac{1}{6}$.  For this calculation we may choose any basis of solutions of Airy's equation, so we select the same basis as in the definition of $\mathbf{\Psi}^{(\infty)}_1(\lambda,x)$, namely $f_1(w)=\mathrm{Ai}(w)$ and $f_2(w)=\mathrm{Ai}(\ee^{\frac{2\ii\pi}{3}}w)$.  This will make it easiest to relate $\mathbf{\Psi}^{(0)}(\lambda,x)$ explicitly to $\mathbf{\Psi}^{(\infty)}_1(\lambda,x)$.
We begin with the following elementary observation:  
it can be shown that with the above definitions, 
\eq
\tilde{f}_1(w):=f_1(w)-f_2(w)
\endeq
is a nontrivial solution of Airy's equation whose Taylor expansion at the origin contains only terms proportional to $w^{3n+1}$, $n=0,1,2,\dots$.  Likewise
\eq
\tilde{f}_2(w):=f_1(w)-\ee^{-\frac{2\ii\pi}{3}}f_2(w)
\endeq
is a nontrivial solution of Airy's equation whose Taylor expansion at the origin contains only terms proportional to $w^{3n}$, $n=0,1,2,\dots$.  Composing with $w=(\tfrac{4}{3}\lambda)^{\frac{1}{3}}(x+\tfrac{3}{4}\lambda)$ shows that $\lambda^{-\frac{1}{3}}\tilde{f}_1(w)$ and $\tilde{f}_2(w)$ are both analytic functions of $\lambda$ at $\lambda=0$.  Taking into account that $\Theta_0=\tfrac{1}{6}$ then shows that $\mathbf{\Psi}^{(0)}(\lambda,x)\lambda^{-\Theta_0\sigma_3}$ will be analytic at $\lambda=0$ if we take $\mathbf{\Psi}=\mathbf{\Psi}^{(0)}(\lambda,x)$ in the form \eqref{eq:general-solution} with the above choice of basis $f_1(w)$ and $f_2(w)$ and insist that $\mathbf{C}=\mathbf{C}^{(0)}$ has the form
\eq
\mathbf{C}^{(0)}:=\bpm 1 & 1\\ -1 & \ee^{\frac{\ii\pi}{3}}\epm\mathbf{D}
\endeq
where $\mathbf{D}$ is any constant invertible diagonal matrix.  Modulo the choice of $\mathbf{D}$, which we will make concrete below (see \eqref{eq:D-def}), this completes the construction of $\mathbf{\Psi}^{(0)}(\lambda,x)$.

The last step of sowing the seed is to formulate the inverse monodromy problem, which simply amounts to the calculation of the constant matrices relating the five simultaneous fundamental solution matrices of the Lax pair \eqref{eq:PIV-Lax-Pair}.  For the Stokes matrices $\mathbf{V}_{2,1}$, $\mathbf{V}_{2,3}$, $\mathbf{V}_{4,3}$, and $\mathbf{V}_{4,1}$ defined by \eqref{eq:V21-def}--\eqref{eq:V41-def}, we use the identity $\mathrm{Ai}(w) + \ee^{\frac{2\ii\pi}{3}}\mathrm{Ai}(\ee^{\frac{2\ii\pi}{3}}w) + \ee^{-\frac{2\ii\pi}{3}}\mathrm{Ai}(\ee^{-\frac{2\ii\pi}{3}}w)=0$ (see \cite[Eqn.\@ 9.2.12]{DLMF}) to explicitly relate the simultaneous solutions $\mathbf{\Psi}_j^{(\infty)}(\lambda,x)$, $j=1,\dots,4$, leading to \eqref{eq:gO-Stokes} in which 
$\Theta_\infty=\tfrac{1}{2}$.
The computation of the Stokes matrix $\mathbf{V}_{4,3}$ requires more care than the others, because one must take into account the jump in the principal branch of $\lambda^p$ across the negative real axis.  
For the connection matrices defined by \eqref{eq:V1V3-def}--\eqref{eq:V2V4-def}, first note that
since the basis of Airy functions $f_1(w)$ and $f_2(w)$ in the formula \eqref{eq:general-solution} is exactly the same for $\mathbf{\Psi}=\mathbf{\Psi}_1^{(\infty)}(\lambda,x)$ and $\mathbf{\Psi}=\mathbf{\Psi}^{(0)}(\lambda,x)$, \eqref{eq:V1V3-def} gives $\mathbf{V}_1=\mathbf{C}_1^{(\infty)-1}\mathbf{C}^{(0)}$.
Therefore, choosing without loss of generality that 
\eq
\mathbf{D}:=\bpm -\tfrac{2}{\sqrt{3}}(\tfrac{3}{4})^{\frac{1}{6}}\sqrt{\pi} & 0\\0 & -(\tfrac{3}{4})^{\frac{1}{6}}\sqrt{\pi}
\epm\ee^{\frac{\ii\pi}{6}\sigma_3} = -(\tfrac{4}{3})^{\tfrac{1}{12}}\sqrt{\pi}((\tfrac{4}{3})^{\frac{1}{4}}\ee^{\frac{\ii\pi}{6}})^{\sigma_3},
\label{eq:D-def}
\endeq
we obtain the unimodular connection matrix $\mathbf{V}_1$ from \eqref{eq:V1V3-def}, after which it is easiest to obtain $\mathbf{V}_2$, $\mathbf{V}_3$, and $\mathbf{V}_4$ by combining the Stokes matrices \eqref{eq:gO-Stokes} with the first three identities in \eqref{eq:consistency}.  In this way, we obtain the connection matrices given in \eqref{eq:gO-connection}.
From these formul\ae\ one can observe that $\mathbf{V}_1^*=\mathbf{V}_2^{-1}$ and $\mathbf{V}_3^*=\mathbf{V}_4^{-1}$, a useful symmetry in light of Corollary~\ref{cor:Schwarz} that explains our choice of the diagonal constant matrix $\mathbf{D}$ in \eqref{eq:D-def}.

\subsubsection{Reaping the harvest:  use of Schlesinger transformations to span the gO parameter lattice}
The above arguments have shown that Riemann-Hilbert Problem~\ref{rhp:general} obviously has a solution when $(\Theta_0,\Theta_\infty)=(\tfrac{1}{6},\tfrac{1}{2})$ and the piecewise constant matrix $\mathbf{V}$ is defined on the jump contour $\Sigma$ in terms of the Stokes matrices \eqref{eq:gO-Stokes} and the connection matrices \eqref{eq:gO-connection}, namely the solution given by the formula \eqref{eq:Y-Psi}.  One can check that the function $u(x)$ returned from this solution via the formula \eqref{eq:yzu-define} is well-defined and coincides with the seed $u(x)=-\tfrac{2}{3}x$ from which we began.  Noting that the gO rational solution parameter lattice $\Lambda_\mathrm{gO}$ may be written as the set of points $(\Theta_0,\Theta_\infty)$ of the form $(\Theta_0,\Theta_\infty)=(\tfrac{1}{6},\tfrac{1}{2})+\mathbb{Z}(\tfrac{1}{2},\tfrac{1}{2}) + \mathbb{Z}(\tfrac{1}{2},-\tfrac{1}{2})$, none of which satisfy any of the conditions $\Theta_0=0$, $\Theta_\infty\pm\Theta_0=0$, or $\Theta_\infty\pm\Theta_0=1$, arbitrary iterations of the four Schlesinger transformations developed in Section~\ref{sec:sub-Schlesinger} and their coincident B\"acklund transformations from Section~\ref{sec:sub-Baecklund} can be applied to the seed to reach any point of $\Lambda_\mathrm{gO}$.  Since the only effect on the jump conditions of these Schlesinger transformations is to change the sign of $\mathbf{V}_{4,3}$ with each iteration, and since the B\"acklund transformations obviously map rational solutions to rational solutions, which are necessarily unique for given parameter values, we have arrived at the Riemann-Hilbert representation of the gO rational solutions of Painlev\'e-IV given in Theorem~\ref{thm:gO-RHP}, the proof of which we now complete.

\begin{proof}[Proof of Theorem~\ref{thm:gO-RHP}]
If $(\Theta_0,\Theta_\infty)=(\tfrac{1}{6},\tfrac{1}{2})$, then the statement is true with $u(x)=-\tfrac{2}{3}x$ and $u_\tw(x)=x^{-1}-\tfrac{2}{3}x$, the latter solving \eqref{p4} for parameters $(\Theta_{0,\tw},\Theta_{\infty,\tw})=(-\tfrac{1}{3},1)$.  By Proposition~\ref{prop:nonzero} we can apply the four Schlesinger transformations $\mathbf{Y}(\lambda;x)\mapsto\mathbf{Y}_\nearrow(\lambda;x)$, $\mathbf{Y}(\lambda;x)\mapsto\mathbf{Y}_{\swarrow}(\lambda;x)$, $\mathbf{Y}(\lambda;x)\mapsto\mathbf{Y}_\searrow(\lambda;x)$, and/or $\mathbf{Y}(\lambda;x)\mapsto\mathbf{Y}_\nwarrow(\lambda;x)$
to increment and/or decrement $\Theta_0$ and $\Theta_\infty$ by half-integers iteratively to reach any point in $\Lambda_\mathrm{gO}$ (any path from $(\tfrac{1}{6},\tfrac{1}{2})$ to $(\Theta_0,\Theta_\infty)\in\Lambda_\mathrm{gO}$ through $\Lambda_\mathrm{gO}$ suffices, because each path produces a solution of the same Riemann-Hilbert problem, which must be unique by Proposition~\ref{prop:RHP-basic}).  Each step in the lattice introduces a sign change in $\mathbf{V}_{4,3}$, which is automatically taken into account in the definition (see \eqref{eq:gO-Stokes}).  Next, we observe that the corresponding B\"acklund transformations 
$u(x)\mapsto u_\nearrow(x)$, $u(x)\mapsto u_\swarrow(x)$, $u(x)\mapsto u_\searrow(x)$, and $u(x)\mapsto u_\nwarrow(x)$
induced by the Schlesinger transformations all map rational functions to rational functions.
By uniqueness of rational solutions for Painlev\'e-IV it follows that $u(x)$ extracted from Riemann-Hilbert Problem~\ref{rhp:general} is the unique rational solution of \eqref{p4} with arbitrary parameters $(\Theta_0,\Theta_\infty)\in\Lambda_\mathrm{gO}$.  Since $u_\tw(x)$ is related to the rational function $u(x)$ by the B\"acklund transformation in \eqref{eq:Baecklund-3-to-1}, it is well-defined (since $u(x)$ cannot vanish identically as $\Theta_0\neq 0$ in the gO parameter lattice) and rational.  Hence it is the unique rational solution of \eqref{p4} for parameters $(\Theta_{0,\tw},\Theta_{\infty,\tw})$ defined by \eqref{eq:Baecklund-3-to-1} (these also lie in $\Lambda_\mathrm{gO}$).
\end{proof}

\subsection{Riemann-Hilbert representation of gH rationals}
\label{sec:RHPgH}
It is straightforward to check that if $\Theta_0=\Theta_\infty=\tfrac{1}{2}$ in \eqref{p4}, then $u(x)=-2x$ is an exact solution; this corresponds to taking $m=n=0$ in the type-$3$ row of Table~\ref{tab:gH}.  We shall use it as a seed in the same way that $u(x)=-\tfrac{2}{3}x$ was used in Section~\ref{sec:RHPgO} to derive a Riemann-Hilbert representation of the generalized Hermite rational solutions of Painlev\'e-IV.  

\subsubsection{Sowing the seed:  solving the direct monodromy problem and formulating the inverse monodromy problem}
When $(\Theta_0,\Theta_\infty,u(x))=(\tfrac{1}{2},\tfrac{1}{2},-2x)$, the differential equation \eqref{eq:yODE} has the general solution $y(x)=y_0$, a constant (assumed nonzero), and the quantity $z(x)$ defined in \eqref{eq:LaxPair-z-define} is $z(x)\equiv 1$.  Without loss of generality, we take $y_0=2$.  Therefore the Lax pair equations \eqref{eq:PIV-Lax-Pair} take a particularly simple form in this case, because the coefficient matrices defined generally in \eqref{eq:JM-A}--\eqref{eq:JM-U} are now upper triangular:
\eq
(\Theta_0,\Theta_\infty,u(x))=(\tfrac{1}{2},\tfrac{1}{2},-2x)\quad\implies\quad
\mathbf{\Lambda}=\bpm\lambda+x-\tfrac{1}{2}\lambda^{-1} & 2+2x\lambda^{-1}\\0 & -\lambda-x+\tfrac{1}{2}\lambda^{-1}\epm,\quad
\mathbf{X}=\bpm\lambda & 2\\0 & -\lambda\epm.
\endeq
Therefore, the second row elements of a simultaneous matrix solution $\mathbf{\Psi}$
satisfy a compatible first-order scalar system, whose general solution is easily seen to be
$\Psi_{2j}=c\ee^{-(\frac{1}{2}\lambda^2+x\lambda)}\lambda^{\frac{1}{2}}$, where $c$ is an integration constant (independent of both $x$ and $\lambda$).  Using this result, the first row elements $\Psi_{1j}$ then satisfy their own compatible scalar system, which can be solved with the help of an integrating factor proportional to $\Psi_{2j}$.  The result of these completely elementary calculations is the following.
\begin{lemma}
Fix a simply connected domain $D\subset\mathbb{C}\setminus\{0\}$ and a branch of $\lambda^\frac{1}{2}$ analytic on $D$.  Let $\Theta_0=\Theta_\infty=\tfrac{1}{2}$, and consider the exact solution $u(x)=-2x$ of the corresponding Painlev\'e-IV equation \eqref{p4}.  If $y(x)=2$, then the Lax pair equations \eqref{eq:PIV-Lax-Pair} are simultaneously solvable for all $(\lambda,x)\in D\times\mathbb{C}$, and every 
simultaneous solution matrix has the form
\eq
\mathbf{\Psi}(\lambda,x)=\bpm
\lambda^{-\frac{1}{2}}\ee^{\frac{1}{2}\lambda^2+x\lambda} & -\lambda^{-\frac{1}{2}}\ee^{-(\frac{1}{2}\lambda^2+x\lambda)}\\ 0 & \lambda^\frac{1}{2}\ee^{-(\frac{1}{2}\lambda^2 + x\lambda)}
\epm
\mathbf{C},\quad (\lambda,x)\in D\times\mathbb{C},
\label{eq:Hermite-general-solution-Lax-pair}
\endeq
where $\mathbf{C}$ is a matrix independent of both $x$ and $\lambda$.
\label{lem:gH-direct-general}
\end{lemma}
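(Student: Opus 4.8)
\textbf{Proof proposal for Lemma~\ref{lem:gH-direct-general}.}
The plan is to mirror the elementary integration carried out for the gO seed, but exploiting the much simpler (upper-triangular) structure of the Lax pair coefficient matrices for the gH seed. First I would substitute $(\Theta_0,\Theta_\infty,u(x))=(\tfrac{1}{2},\tfrac{1}{2},-2x)$, $y(x)=2$, $z(x)\equiv 1$ into the general coefficient matrices \eqref{eq:JM-A}--\eqref{eq:JM-U}, checking that $z(x)\equiv 1$ is indeed the value forced by \eqref{eq:LaxPair-z-define} and that with these values both $\mathbf{\Lambda}$ and $\mathbf{X}$ become upper triangular, as displayed just above the lemma. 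Then I would verify directly that the zero-curvature compatibility condition $\mathbf{\Lambda}_x-\mathbf{X}_\lambda+[\mathbf{\Lambda},\mathbf{X}]=\mathbf{0}$ holds for these matrices (it must, since $u(x)=-2x$ solves \eqref{p4}, but it is a one-line check here), which guarantees the existence of simultaneous solutions on any simply connected $D\times\mathbb{C}$ avoiding $\lambda=0$.

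Next I would integrate the system explicitly. Because $\mathbf{\Lambda}$ and $\mathbf{X}$ are upper triangular, the second row of any simultaneous solution $\mathbf{\Psi}$ decouples: $\partial_x\Psi_{2j}=-\lambda\Psi_{2j}$ and $\partial_\lambda\Psi_{2j}=(-\lambda-x+\tfrac{1}{2}\lambda^{-1})\Psi_{2j}$, whose common solution is $\Psi_{2j}=c\,\lambda^{1/2}\ee^{-(\frac{1}{2}\lambda^2+x\lambda)}$ with $c$ independent of $(x,\lambda)$ (consistency of the two exponents uses $\Theta_0=\tfrac12$). Feeding this back, the first row satisfies $\partial_x\Psi_{1j}=\lambda\Psi_{1j}+2\Psi_{2j}$ and $\partial_\lambda\Psi_{1j}=(\lambda+x-\tfrac12\lambda^{-1})\Psi_{1j}+(2+2x\lambda^{-1})\Psi_{2j}$; using the integrating factor $\lambda^{-1/2}\ee^{\frac12\lambda^2+x\lambda}$ one finds $\Psi_{1j}=\lambda^{-1/2}\ee^{\frac12\lambda^2+x\lambda}(\tilde c_j - c_j\,\lambda^{-1/2}\cdot\lambda^{1/2}\ee^{-(\lambda^2+2x\lambda)})$ — i.e. $\Psi_{1j}$ is a linear combination of $\lambda^{-1/2}\ee^{\frac12\lambda^2+x\lambda}$ and $-\lambda^{-1/2}\ee^{-(\frac12\lambda^2+x\lambda)}$. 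Assembling the two independent pairs of integration constants into the columns of a constant matrix $\mathbf{C}$, one obtains the asserted form \eqref{eq:Hermite-general-solution-Lax-pair}, namely $\mathbf{\Psi}(\lambda,x)=\mathbf{F}_{\mathrm{gH}}(\lambda,x)\mathbf{C}$ with the explicit fundamental matrix $\mathbf{F}_{\mathrm{gH}}$ shown. Conversely, since $\det\mathbf{F}_{\mathrm{gH}}(\lambda,x)\equiv 1\neq 0$, every solution is of this form, and the lemma follows.

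There is essentially no hard part here: the only things to be careful about are bookkeeping of branches (the branch of $\lambda^{1/2}$ is fixed on the simply connected $D$, so $\lambda^{\pm 1/2}$ are single-valued there) and making sure the exponents arising from the $x$- and $\lambda$-equations agree, which is exactly where $\Theta_0=\Theta_\infty=\tfrac12$ enters. The mild subtlety worth flagging — the analogue of the resonance issue mentioned in the main text — is that the Fuchsian point $\lambda=0$ has equal exponents $\pm\Theta_0=\pm\tfrac12$ modulo $\mathbb{Z}$ only in a trivial sense here; but since $\mathbf{F}_{\mathrm{gH}}$ is manifestly explicit and unit-determinant, one does not need the method of Frobenius at all, and the logarithm-free structure is visible directly. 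This explicit solution is then the starting point (exactly as in Section~\ref{sec:gO-sowing}) for choosing the constant matrices $\mathbf{C}=\mathbf{C}_j^{(\infty)}$ and $\mathbf{C}^{(0)}$ that normalize the five simultaneous solutions, computing the (trivial, by the triangular structure) Stokes matrices \eqref{eq:gH-Stokes} and the connection matrices \eqref{eq:gH-connection}, and ultimately proving Theorem~\ref{thm:gH-RHP} via Schlesinger transformations in the gH type-$3$ sector.
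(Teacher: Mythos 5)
Your proposal is correct and follows essentially the same route as the paper: substitute the seed data to see that $\mathbf{\Lambda}$ and $\mathbf{X}$ become upper triangular, integrate the decoupled second row to get $\Psi_{2j}=c_j\lambda^{1/2}\ee^{-(\frac{1}{2}\lambda^2+x\lambda)}$, and then solve the first row by an integrating factor to obtain the fundamental matrix in \eqref{eq:Hermite-general-solution-Lax-pair}. The compatibility check and the unit-determinant argument for the ``every solution'' claim are left implicit in the paper but are harmless additions.
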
 

The simplest invertible choice for $\mathbf{C}$ is simply $\mathbf{C}=\mathbb{I}$.  Assuming this and also taking the principal branch for $\lambda^\frac{1}{2}$, consider imposing the condition \eqref{eq:infty-asymp} in one of the Stokes sectors near $\lambda=\infty$.  Using $\Theta_\infty=\tfrac{1}{2}$ we find that
\eq
\mathbf{C}=\mathbb{I}\quad\implies\quad \mathbf{\Psi}(\lambda,x)\lambda^{\Theta_\infty\sigma_3}\ee^{-(\frac{1}{2}\lambda^2+x\lambda)\sigma_3} = \bpm 1 & -\lambda^{-1}\\0 & 1\epm
\endeq
which tends to $\mathbb{I}$ as $\lambda\to\infty$ regardless of choice of Stokes sector.  Therefore, there is \emph{no Stokes phenomenon} about the irregular singular point $\lambda=\infty$ in this case, and the normalized solutions associated with the four Stokes sectors are all the same:
\eq
\mathbf{\Psi}^{(\infty)}_j(\lambda,x)=\mathbf{\Psi}^{(\infty)}(\lambda,x):=\bpm
\lambda^{-\frac{1}{2}}\ee^{\frac{1}{2}\lambda^2+x\lambda} & -\lambda^{-\frac{1}{2}}\ee^{-(\frac{1}{2}\lambda^2+x\lambda)}\\ 0 & \lambda^\frac{1}{2}\ee^{-(\frac{1}{2}\lambda^2 + x\lambda)}
\epm,\quad j=1,\dots,4,\quad \arg(\lambda)\in (-\pi,\pi).
\label{eq:gH-matrix-infty}
\endeq

The Fuchsian singular point $\lambda=0$ is resonant in the case $\Theta_0=\tfrac{1}{2}$ under consideration, but the singularity is also apparent as is clear from the absence of logarithms in the general solution \eqref{eq:Hermite-general-solution-Lax-pair}.  Therefore it is possible to choose the matrix $\mathbf{C}=\mathbf{C}^{(0)}$ to define a solution $\mathbf{\Psi}=\mathbf{\Psi}^{(0)}(\lambda,x)$ so that the condition \eqref{eq:zero-asymp} for $\Theta_0=\tfrac{1}{2}$ holds.  This condition requires analyticity of the following expression, which generally has a simple pole at $\lambda=0$:
\eq
\mathbf{\Psi}^{(0)}(\lambda,x)\lambda^{-\Theta_0\sigma_3} = \bpm C^{(0)}_{11}-C^{(0)}_{21} & 0\\0 & 0\epm\lambda^{-1} + \text{holomorphic}.
\endeq
Therefore, the only condition imposed on $\mathbf{C}=\mathbf{C}^{(0)}$ by \eqref{eq:zero-asymp} is that 
$C^{(0)}_{11}=C^{(0)}_{21}$.  Compared with the nonresonant case discussed in Section~\ref{sec:gO-sowing}, the resonant but apparent case here allows for an additional degree of freedom.  It is also convenient to assume that $\det(\mathbf{C}^{(0)})=1$ which in turn guarantees that $\det(\mathbf{\Psi}^{(0)}(\lambda,x))=1$.  Therefore, we will take $\mathbf{C}^{(0)}$ to be a matrix of the form
\eq
\mathbf{C}^{(0)} = \bpm a & b\\a & b+a^{-1}\epm,\quad a\neq 0\quad\implies\quad
\mathbf{\Psi}^{(0)}(\lambda,x)=\bpm
\lambda^{-\frac{1}{2}}\ee^{\frac{1}{2}\lambda^2+x\lambda} & -\lambda^{-\frac{1}{2}}\ee^{-(\frac{1}{2}\lambda^2+x\lambda)}\\ 0 & \lambda^\frac{1}{2}\ee^{-(\frac{1}{2}\lambda^2 + x\lambda)}
\epm\bpm a & b\\a & b+a^{-1}\epm.
\label{eq:gH-matrix-zero}
\endeq
For convenience we pick $a=1$ and $b=0$, which completes the construction of the five canonical simultaneous solutions of the Lax pair for $(\Theta_0,\Theta_\infty)=(\tfrac{1}{2},\tfrac{1}{2})$ and $u(x)=-2x$.  

It is now straightforward to compute the Stokes matrices for the irregular singular point at $\lambda=\infty$ by using \eqref{eq:V21-def}--\eqref{eq:V41-def} and \eqref{eq:gH-matrix-infty}, and the results are trivial except for the contribution of the branch cut of $\lambda^\frac{1}{2}$ along the negative real line, leading directly to \eqref{eq:gH-Stokes} in which
$\Theta_\infty=\tfrac{1}{2}$.  Likewise, we compute the connection matrices using \eqref{eq:V1V3-def}--\eqref{eq:V2V4-def}, \eqref{eq:gH-matrix-infty}, and \eqref{eq:gH-matrix-zero} with $a=1$ and $b=0$ to arrive at \eqref{eq:gH-connection}.

\subsubsection{Reaping the harvest:  use of Schlesinger transformations to span $\Lambda_\mathrm{gH}^{[3]+}$}
The formula \eqref{eq:Y-Psi} clearly gives a solution of Riemann-Hilbert Problem~\ref{rhp:general} for $(\Theta_0,\Theta_\infty)=(\tfrac{1}{2},\tfrac{1}{2})$ when the piecewise constant matrix $\mathbf{V}$ is defined on the jump contour $\Sigma$ with the use of the Stokes matrices \eqref{eq:gH-Stokes} and the connection matrices \eqref{eq:gH-connection}, and this solution returns the rational solution $u(x)=-2x$
of Painlev\'e-IV that started the calculation.  We also observe that, using \eqref{eq:gH-matrix-zero} in \eqref{eq:Y-Psi} and referring to the expansion \eqref{eq:Y-zero-expand}, we have
\eq
\mathbf{Y}^0_0(x)=\bpm 2x & -1\\1 & 0\epm\quad\text{for $(\Theta_0,\Theta_\infty)=(\tfrac{1}{2},\tfrac{1}{2})$.}
\label{eq:Z0-at-gH-seed}
\endeq
The component $\Lambda^{[3]+}_\mathrm{gH}$ of the gH parameter lattice contains the point $(\Theta_0,\Theta_\infty)=(\tfrac{1}{2},\tfrac{1}{2})$ and can be viewed as the set of points $(\Theta_0,\Theta_\infty)$ of the form $(\Theta_0,\Theta_\infty)=(\tfrac{1}{2},\tfrac{1}{2}) + \mathbb{Z}_{\ge 0}(\tfrac{1}{2},\tfrac{1}{2}) + \mathbb{Z}_{\ge 0}(\tfrac{1}{2},-\tfrac{1}{2})$.  
Using Proposition~\ref{prop:Schlesinger}, we see from \eqref{eq:Z0-at-gH-seed} that the only basic Schlesinger transformation that is possibly undefined at $(\Theta_0,\Theta_\infty)=(\tfrac{1}{2},\tfrac{1}{2})\in\Lambda^{[3]+}_\mathrm{gH}$ is $\mathbf{Y}(\lambda;x)\mapsto\mathbf{Y}_\nwarrow(\lambda;x)$, and from Proposition~\ref{prop:Baecklund} we see that of the three that are certainly defined, only $\mathbf{Y}(\lambda;x)\mapsto\mathbf{Y}_\nearrow(\lambda;x)$ and $\mathbf{Y}(\lambda;x)\mapsto\mathbf{Y}_\searrow(\lambda;x)$ are guaranteed to give determinate B\"acklund transformations of $u(x)$.  So, from $(\tfrac{1}{2},\tfrac{1}{2})\in\Lambda_\mathrm{gH}^{[3]+}$ we may certainly step to the nearest neighbor points $(1,1)$ and $(1,0)$ in the same lattice.
Without using specific information such as \eqref{eq:Z0-at-gH-seed}, from Propositions~\ref{prop:nonzero} and \ref{prop:Baecklund}, we observe that
\begin{itemize}
\item The Schlesinger transformation $\mathbf{Y}(\lambda;x)\mapsto\mathbf{Y}_\nearrow(\lambda;x)$ is defined and yields a determinate B\"acklund transformation $u(x)\mapsto u_\nearrow(x)$ at each point of $\Lambda_\mathrm{gH}^{[3]+}$.
\item The Schlesinger transformation $\mathbf{Y}(\lambda;x)\mapsto\mathbf{Y}_\swarrow(\lambda;x)$ is defined at each point of $\Lambda_\mathrm{gH}^{[3]+}$.  Furthermore it is guaranteed to yield a determinate B\"acklund transformation $u(x)\mapsto u_\swarrow(x)$ except possibly at those points of the form $(\Theta_0,\Theta_\infty)=(\tfrac{1}{2},\tfrac{1}{2})+\mathbb{Z}_{\ge 0}(\tfrac{1}{2},-\tfrac{1}{2})$, which satisfy $\Theta_\infty+\Theta_0=1$.
\item The Schlesinger transformation $\mathbf{Y}(\lambda;x)\mapsto\mathbf{Y}_\searrow(\lambda;x)$ is defined at each point of $\Lambda_\mathrm{gH}^{[3]+}$ except possibly at those points of the form $(\Theta_0,\Theta_\infty)=(\tfrac{1}{2},\tfrac{1}{2})+\mathbb{Z}_{\ge 0}(\tfrac{1}{2},\tfrac{1}{2})$ where $\Theta_\infty-\Theta_0=0$.  The corresponding B\"acklund transformation $u(x)\mapsto u_\searrow(x)$ is determinate at every point of $\Lambda_\mathrm{gH}^{[3]+}$, including those points at which the Schlesinger transformation from which it is derived is not guaranteed by Proposition~\ref{prop:nonzero} to be defined.
\item The Schlesinger transformation $\mathbf{Y}(\lambda;x)\mapsto\mathbf{Y}_\nwarrow(\lambda;x)$ is defined at each point of $\Lambda_\mathrm{gH}^{[3]+}$ except possibly at those points of the form $(\Theta_0,\Theta_\infty)=(\tfrac{1}{2},\tfrac{1}{2})+\mathbb{Z}_{\ge 0}(\tfrac{1}{2},\tfrac{1}{2})$ where $\Theta_\infty-\Theta_0=0$.  The corresponding B\"acklund transformation $u(x)\mapsto u_\nwarrow(x)$ is guaranteed to be determinate except possibly at the same excluded points.
\end{itemize}
Therefore, starting from the seed $(\Theta_0,\Theta_\infty)=(\tfrac{1}{2},\tfrac{1}{2})$, we may arrive at an arbitrary point in $\Lambda_\mathrm{gH}^{[3]+}$ by iteratively applying Schlesinger transformations resulting in determinate B\"acklund transformations according to the following principles.
\begin{itemize}
\item To reach a point of the form $(\Theta_0,\Theta_\infty)=(n+1)(\tfrac{1}{2},\tfrac{1}{2})\in\Lambda_\mathrm{gH}^{[3]+}$, $n=0,1,2,\dots$, we apply $\mathbf{Y}(\lambda;x)\mapsto\mathbf{Y}_\nearrow(\lambda;x)$ $n$ times.
\item To reach any other point of $\Lambda_\mathrm{gH}^{[3]+}$, we first step from $(\tfrac{1}{2},\tfrac{1}{2})$ to $(1,0)$ using $\mathbf{Y}(\lambda;x)\mapsto \mathbf{Y}_\searrow(\lambda;x)$.  Then we choose any path in $\Lambda_\mathrm{gH}^{[3]+}$ from $(1,0)$ to the target point that contains only points with $\Theta_\infty<\Theta_0$, and iteratively apply the Schlesinger transformations associated with the steps in the selected path.
\end{itemize}
We have thus arrived at the Riemann-Hilbert representation in Theorem~\ref{thm:gH-RHP} of the gH rational solutions of Painlev\'e-IV for parameters in $\Lambda_\mathrm{gH}^{[3]+}$, and via the non-isomonodromic B\"acklund transformation 
$u(x)\mapsto u_\tw(x)$, in $\Lambda_\mathrm{gH}^{[1]-}$.  The rest of its proof 
is nearly the same as that of Theorem~\ref{thm:gO-RHP} in Section~\ref{sec:RHPgO} except that the Schlesinger transformation steps should be taken to follow the above principles.
Note also that the arguments given in Section~\ref{sec:universal-RHPs} connecting Riemann-Hilbert Problem~\ref{rhp:general} in the gH case with pseudo-orthogonal polynomials explain why Schlesinger transformations do not allow one to escape from the set $\Lambda_\mathrm{gH}^{[3]+}$ into any larger lattice spanned by the same lattice vectors.  This is a special phenomenon of the gH rational solutions, since we have seen that the entire $\mathbb{Z}\times\mathbb{Z}$ gO lattice is accessible by Schlesinger transformations from any given lattice point.

\subsubsection{Monodromy data for other families of gH rational solutions}
To further emphasize the fact that the gH rational solutions for $(\Theta_0,\Theta_\infty)\in\Lambda_\mathrm{gH}^{[1]-}\cup\Lambda_\mathrm{gH}^{[2]-}$ cannot be obtained from those for $(\Theta_0,\Theta_\infty)\in\Lambda_\mathrm{gH}^{[3]+}$ by means of isomonodromic transformations, we give some information about the monodromy data for rational solutions with parameters in $\Lambda_\mathrm{gH}^{[1]-}$.  

Applying the transformation $\mathcal{S}_\tw$ to the seed solution $u(x)=-2x$ for $(\Theta_0,\Theta_\infty)=(\tfrac{1}{2},\tfrac{1}{2})\in\Lambda_\mathrm{gH}^{[3]+}$, we obtain the solution $u(x)=x^{-1}$ for parameters $(\Theta_0,\Theta_\infty)=(-\tfrac{1}{2},\tfrac{3}{2})\in\Lambda_\mathrm{gH}^{[1]-}$.  Without loss of generality taking the corresponding solution of \eqref{eq:yODE} to be $y(x)=x^{-1}\ee^{-x^2}$, and noting that from \eqref{eq:LaxPair-z-define} we get $z(x)=1+\tfrac{1}{2}x^{-2}$, the Lax pair matrix coefficients $\mathbf{\Lambda}_0(x)$, $\mathbf{\Lambda}_1(x)$, and $\mathbf{X}_0(x)$ defined in \eqref{eq:JM-A}--\eqref{eq:JM-U} become
\eq
\mathbf{\Lambda}_0(x)=\bpm x & x^{-1}\ee^{-x^2}\\ \ee^{x^2}(x^{-1}-2x) & -x\epm,\;
\mathbf{\Lambda}_1(x)=\bpm -\tfrac{1}{2}(1+x^{-2}) & -\tfrac{1}{2}x^{-2}\ee^{-x^2}\\\ee^{x^2}(1+\tfrac{1}{2}x^{-2}) & \tfrac{1}{2}(1+x^{-2})\epm,\;
\mathbf{X}_0(x)= \mathbf{\Lambda}_0(x)-x\sigma_3.
\endeq
Following the same strategy we have used twice before, we look first for solutions of $\mathbf{\Psi}_x=\mathbf{X\Psi}$, and some experimentation shows that a particular solution is given in closed form by $\Psi_{1j}=f_1(x,\lambda):=x^{-1}\ee^{-x^2-x\lambda}$ and $\Psi_{2j}=f_2(x,\lambda):=-(2x+2\lambda+x^{-1})\ee^{-x\lambda}$.  Applying reduction of order to obtain the general solution, and then determining the dependence of constants of integration on $\lambda$ so that the other Lax equation $\mathbf{\Psi}_\lambda=\mathbf{\Lambda\Psi}$ holds, we obtain the following analogue of Lemma~\ref{lem:gO-direct-general} and Lemma~\ref{lem:gH-direct-general}.
\begin{lemma}
Fix a simply connected domain $D\subset\mathbb{C}\setminus\{0\}$ and a branch of $\lambda^\frac{1}{2}$ analytic on $D$.  Let $\Theta_0=-\tfrac{1}{2}$ and $\Theta_\infty=\tfrac{3}{2}$, and consider the exact solution $u(x)=x^{-1}$ of the corresponding Painlev\'e-IV equation \eqref{p4}.  If $y(x)=x^{-1}\ee^{-x^2}$, then the Lax pair equations \eqref{eq:PIV-Lax-Pair} are simultaneously solvable for all $(\lambda,x)\in D\times(\mathbb{C}\setminus\{0\})$, and every simultaneous solution matrix has the form
\eq
\mathbf{\Psi}(\lambda,x)=\bpm
\lambda^{\frac{1}{2}}\ee^{-\frac{1}{2}\lambda^2}f_1(x,\lambda)
& \tfrac{1}{2}\lambda^{-\frac{1}{2}}x^{-1}\ee^{\frac{1}{2}(x+\lambda)^2-\frac{1}{2}x^2}-\lambda^\frac{1}{2}x^{-1}\ee^{-\frac{1}{2}(x+\lambda)^2-\frac{1}{2}x^2}\int_0^{x+\lambda}\ee^{z^2}\,\dd z \\
\lambda^\frac{1}{2}\ee^{-\frac{1}{2}\lambda^2}f_2(x,\lambda)
& 
-(\lambda^\frac{1}{2}+\tfrac{1}{2}\lambda^{-\frac{1}{2}}x^{-1})\ee^{\frac{1}{2}(x+\lambda)^2+\frac{1}{2}x^2}+(2x+2\lambda+x^{-1})\lambda^\frac{1}{2}\ee^{-\frac{1}{2}(x+\lambda)^2+\frac{1}{2}x^2}\int_0^{x+\lambda}\ee^{z^2}\,\dd z
\epm\mathbf{C},
\endeq
where $\mathbf{C}$ is a matrix independent of both $x$ and $\lambda$.
\label{lem:other}
\end{lemma}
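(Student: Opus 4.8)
\textbf{Proof proposal for Lemma~\ref{lem:other}.}
The plan is to follow exactly the same two-step strategy used to prove Lemma~\ref{lem:gO-direct-general} and Lemma~\ref{lem:gH-direct-general}: first exhibit a particular simultaneous solution of the Lax pair \eqref{eq:PIV-Lax-Pair} with the indicated coefficient matrices by direct verification, then build up the general solution by reduction of order, tracking how the integration ``constants'' must depend on $\lambda$ in order for the second Lax equation to hold. Since the claimed formula for $\mathbf{\Psi}(\lambda,x)$ is already written down explicitly, the bulk of the proof reduces to verification, which is mechanical: one differentiates each column of the stated matrix (before the trailing constant matrix $\mathbf{C}$) with respect to $x$ and $\lambda$ and checks against $\mathbf{X}\mathbf{\Psi}$ and $\mathbf{\Lambda}\mathbf{\Psi}$. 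The first column, built from $f_1(x,\lambda)=x^{-1}\ee^{-x^2-x\lambda}$ and $f_2(x,\lambda)=-(2x+2\lambda+x^{-1})\ee^{-x\lambda}$, is elementary to check. The second column involves the incomplete error-function-type integral $\int_0^{x+\lambda}\ee^{z^2}\,\dd z$, and here the key point is that $\partial_x$ and $\partial_\lambda$ of this integral both produce the same factor $\ee^{(x+\lambda)^2}$, which then cancels against the Gaussian prefactors and reproduces the algebraic/exponential pieces; this is where one uses that the particular solution $f_1,f_2$ of the $x$-equation was correct.

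First I would state clearly that it suffices to verify the two Lax equations for the given matrix, together with the observation that $\mathrm{tr}(\mathbf{\Lambda})=\mathrm{tr}(\mathbf{X})=0$ (so that Abel's Theorem gives constant determinant, nonzero by inspection at a convenient point such as $\lambda\to\infty$ along a ray), and that every other simultaneous solution differs by right multiplication by a constant invertible $\mathbf{C}$ — this last point is the standard fact that the ratio of two fundamental solution matrices of a compatible overdetermined linear system is locally constant, hence globally constant on the simply connected $D\times(\mathbb{C}\setminus\{0\})$. The excision of $x=0$ is forced by the $x^{-1}$ and $x^{-2}$ entries in $u(x)$, $y(x)$, $z(x)$ and the Lax coefficients, and by the $x^{-1}$ factors in $\mathbf{\Psi}$ itself; the excision of $\lambda=0$ is the usual Fuchsian-singularity exclusion together with the branch of $\lambda^{1/2}$. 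I would note that, as in Lemma~\ref{lem:gH-direct-general}, the compatibility is guaranteed in advance because $u(x)=x^{-1}$, $y(x)=x^{-1}\ee^{-x^2}$, $z(x)=1+\tfrac12 x^{-2}$ genuinely satisfy the first-order system \eqref{eq:PIV-system} for $(\Theta_0,\Theta_\infty)=(-\tfrac12,\tfrac32)$ (equivalently, $u(x)=x^{-1}$ solves \eqref{p4} for these parameters, which in turn follows from applying $\mathcal{S}_\tw$ of \eqref{eq:Baecklund-3-to-1} to the seed $u(x)=-2x$), so no consistency obstruction can arise in the verification.

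The main obstacle — really the only nontrivial point — is organizing the verification of the $\lambda$-equation on the second column cleanly, since naive differentiation produces a proliferation of terms mixing $\ee^{\pm\frac12(x+\lambda)^2\pm\frac12 x^2}$, $\lambda^{\pm1/2}$, $x^{-1}$, and the integral $\int_0^{x+\lambda}\ee^{z^2}\,\dd z$. The efficient route is to introduce the shorthand $I:=\int_0^{x+\lambda}\ee^{z^2}\,\dd z$ with $\partial_x I=\partial_\lambda I=\ee^{(x+\lambda)^2}$, write the second column as a linear combination $P(x,\lambda)+Q(x,\lambda)I$ with $P,Q$ elementary, and verify the two equations by matching the coefficient of $I$ (which uses only that $(P_\bullet,Q_\bullet)$ reproduces the $f_1,f_2$ behavior) and the $I$-free part separately; the $\ee^{(x+\lambda)^2}$ terms coming from differentiating $I$ cancel against the $I$-free contributions precisely because of the relations already checked for the first column. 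I would present this reduction and the cancellation structure but relegate the remaining bookkeeping to ``a direct computation,'' exactly as the analogous lemmas do. Finally I would remark, parallel to the comment after Lemma~\ref{lem:gH-direct-general} and to Remark~\ref{rem:NovokshenovS14}, that the appearance of a genuine (non-special-function) integral $\int_0^{x+\lambda}\ee^{z^2}\,\dd z$ here — as opposed to pure Airy functions in the gO seed or pure elementary functions in the gH type-$3$ seed — reflects that the monodromy data for $(\Theta_0,\Theta_\infty)\in\Lambda_\mathrm{gH}^{[1]-}$ is genuinely different, which is the whole point of recording this lemma.
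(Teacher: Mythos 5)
Your proposal is correct and follows essentially the same route as the paper: the paper likewise first finds the particular solution $f_1(x,\lambda)=x^{-1}\ee^{-x^2-x\lambda}$, $f_2(x,\lambda)=-(2x+2\lambda+x^{-1})\ee^{-x\lambda}$ of $\mathbf{\Psi}_x=\mathbf{X}\mathbf{\Psi}$, obtains the second column by reduction of order, and then fixes the $\lambda$-dependence of the integration constants so that $\mathbf{\Psi}_\lambda=\mathbf{\Lambda}\mathbf{\Psi}$ also holds, with the remaining bookkeeping left as a direct computation. (Only a cosmetic remark: constancy of $\mathbf{C}$ needs only connectedness of $D\times(\mathbb{C}\setminus\{0\})$, not simple connectedness, since all entries are single-valued there.)
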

Absence of logarithms again indicates that although the Fuchsian singularity of $\mathbf{\Psi}_\lambda=\mathbf{\Lambda\Psi}$ is resonant, it is also apparent, so it is possible to choose $\mathbf{C}=\mathbf{C}^{(0)}$ to define a solution for $|\lambda|<1$ to satisfy \eqref{eq:zero-asymp}.  For solutions defined to satisfy \eqref{eq:infty-asymp} in the four Stokes sectors we determine $\mathbf{C}=\mathbf{C}^{(\infty)}_j$, $j=1,\dots,4$ by asymptotic analysis of the general solution given in Lemma~\ref{lem:other} for large $\lambda$.  The results are
\eq
\mathbf{C}_2^{(\infty)}=\mathbf{C}_3^{(\infty)}=\bpm \ii\sqrt{\pi} & -\tfrac{1}{2}\\ 2 & 0\epm\quad\text{and}\quad
\mathbf{C}_1^{(\infty)}=\mathbf{C}_4^{(\infty)}=\bpm -\ii\sqrt{\pi} & -\tfrac{1}{2}\\ 2 & 0\epm.
\endeq
Therefore, there is no Stokes phenomenon between sectors $S_2$ and $S_3$ or between $S_4$ and $S_1$ (i.e., upon crossing the imaginary Stokes rays), and the corresponding Stokes matrices $\mathbf{V}_{2,3}=\mathbf{V}_{4,1}=\mathbb{I}$ defined in \eqref{eq:V23-def} and \eqref{eq:V41-def} respectively are trivial.  On the other hand, from \eqref{eq:V21-def} and \eqref{eq:V43-def} we get
\eq
\mathbf{V}_{2,1}=\bpm 1 & 0\\ -4\ii\sqrt{\pi} & 1\epm\quad\text{and}\quad
\mathbf{V}_{4,3} = \bpm -1 & 0\\-4\ii\sqrt{\pi} & -1\epm.
\endeq
Therefore, unlike the gH rationals in the parameter lattice $\Lambda_\mathrm{gH}^{[3]+}$, the rationals in the parameter lattice $\Lambda_\mathrm{gH}^{[1]-}$ have nontrivial Stokes matrices on the real rays.  Similar analysis shows that gH rational solutions in the parameter lattice $\Lambda_\mathrm{gH}^{[2]-}$ have instead nontrivial Stokes phenomenon (only) across the imaginary rays.

\section{Symmetries of branch points of equilibria:  Proof of Proposition~\ref{prop:triangles}}
\label{app:Equilateral}
Recall that Proposition~\ref{prop:triangles} asserts that the three roots of the equation $B(y_0;\kappa)=0$ (see \eqref{eq:branch-points}) lying in any coordinate half-plane ($\pm\mathrm{Re}(y_0)>0$ or $\pm\mathrm{Im}(y_0)>0$) are the vertices of an equilateral triangle.  We now prove this proposition.
\begin{proof}
Any configuration of eight points symmetric with respect to the real and imaginary axes and consisting of the vertices of two opposite equilateral triangles in the open right and left half-planes together with a conjugate pair of purely imaginary points must be the roots of a polynomial $b(y_0;\alpha,\beta,\gamma)$ of the form
\eq
b(y_0;\alpha,\beta,\gamma):=((y_0-\alpha)^3-\beta^3)((y_0+\alpha)^3+\beta^3)(y_0^2+\gamma^2)
\label{eq:by0}
\endeq
for real parameters $\alpha$ (the centers of the triangles of vertices in the open right and left half-planes are $y_0=\pm\alpha$), $\beta$ (the distance from the center of each triangle to any of its vertices is $|\beta|$), and $\gamma$ (the purely imaginary conjugate pair of roots is $y_0=\pm\ii\gamma$).  Equating the coefficients of powers of $y_0$ between $B(y_0;\kappa)$ given by \eqref{eq:branch-points} and $b(y_0;\alpha,\beta,\gamma)$ given by \eqref{eq:by0},
we see that $B(y_0;\kappa)$ can be written in the form $b(y_0;\alpha,\beta,\gamma)$ provided that $\gamma^2=3\alpha^2$ (matching the coefficients of $y_0^6$) and that after eliminating $\gamma^2$,
\eq
\alpha^4+\alpha\beta^3=4(\kappa^2+3) \quad\text{(matching the coefficients of $y_0^4$),}
\label{eq:y0fourth}
\endeq
\eq
8\alpha^6-20\alpha^3\beta^3-\beta^6=-64\kappa(\kappa^2-9)\quad\text{(matching the coefficients of $y_0^2$), and}
\label{eq:y0second}
\endeq
\eq
\alpha^8+2\alpha^5\beta^3+\alpha^2\beta^6=16(\kappa^2+3)^2 \quad\text{(matching the constant terms).}  
\label{eq:y0zeroth}
\endeq
Obviously \eqref{eq:y0fourth} implies \eqref{eq:y0zeroth}, so there are only two conditions:  \eqref{eq:y0fourth} and \eqref{eq:y0second}, which amount to two equations on the two remaining unknowns $\alpha$ and $\beta$.  
We can eliminate $\beta^3$ explicitly using \eqref{eq:y0fourth}:
\eq
\beta^3=\frac{4(\kappa^2+3)-\alpha^4}{\alpha}.
\label{eq:beta-alpha-c}
\endeq
Using this in 
\eqref{eq:y0second}
one arrives at an 8th-degree polynomial equation for $\alpha$.  Comparing with \eqref{eq:branch-points}, it is easy to see that the equation on $\alpha$ is exactly $B(\sqrt{3}\ii\alpha;\kappa)=0$.  For all $\kappa\in\mathbb{R}\setminus\{-1,1\}$ we can therefore determine a unique positive solution $\alpha=\alpha(\kappa)>0$ that corresponds to the unique positive imaginary root of $B(\cdot;\kappa)$.  Therefore, with $\beta^3$ determined from $\alpha(\kappa)>0$ by \eqref{eq:beta-alpha-c} and with $\gamma^2=3\alpha(\kappa)^2$, we have the identity $B(y_0;\kappa)=b(y_0;\alpha,\beta,\gamma)$ which proves that $B(\cdot;\kappa)$ has two opposite triads of roots forming the vertices of equilateral triangles with centers $\pm\alpha(\kappa)\neq 0$, along with the purely imaginary pair $y_0=\pm\ii\gamma=\pm \sqrt{3}\ii\alpha(\kappa)$.  

We next check that $\beta>0$ (which ensures that the real vertex of each triangle is further from the origin than the center) and that $\beta<2\alpha$ (which ensures that all three vertices of each triangle lie in the same right or left half-plane as the center).  But these inequalities hold for $\kappa=1\pm\epsilon$ and $\epsilon>0$ sufficiently small by the perturbation theory described above, which in particular localizes the triangles near $y_0=\pm 2$.  Putting $\beta=0$ into \eqref{eq:y0fourth}--\eqref{eq:y0second} and eliminating $\alpha$ yields $\kappa=\pm 1$, so $\beta>0$ for $\kappa=1\pm\epsilon$ 
with $\epsilon>0$ small implies that $\beta>0$ for all $\kappa\in (-1,1)$ and all $\kappa>1$.  Likewise, putting $\beta=2\alpha$ into \eqref{eq:y0fourth}--\eqref{eq:y0second} and eliminating $\alpha$ yields again $\kappa=\pm 1$, so $\beta<2\alpha$ for $\kappa=1\pm\epsilon$ with $\epsilon>0$ small implies that $\beta<2\alpha$ holds for all $\kappa\in (-1,1)$ and all $\kappa>1$.  This finally shows that if $\kappa\in (-1,1)$ or $\kappa>1$, the roots of $B(y_0;\kappa)$ in the open right (left) half-plane form the vertices of an equilateral triangle symmetric with respect to reflection in the real axis and with its real vertex lying to the right (left) of its center.  By $B(y_0;\kappa)=B(\ii y_0;-\kappa)$ a similar statement governs the roots of $B(y_0;\kappa)$ in the open upper/lower half-planes for $\kappa<-1$.

For $\kappa\in (-1,1)$ or $\kappa> 1$, the remaining two roots form a purely imaginary pair $y_0=\pm \sqrt{3}\ii\alpha(\kappa)$ following from 
the identity $\gamma^2=3\alpha^2$.  Some simple trigonometry illustrated in Figure~\ref{fig:OkamotoStar}
\begin{figure}[h]
\begin{center}
\includegraphics[width=4 in]{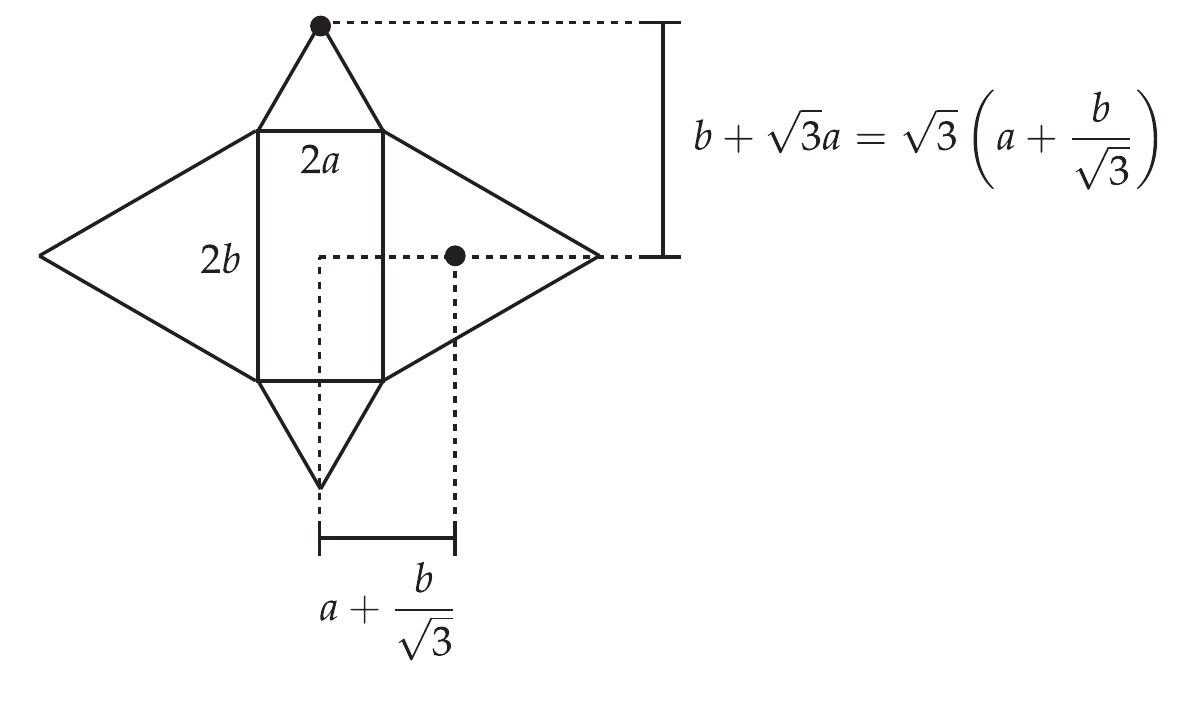}
\end{center}
\caption{For any configuration of equilateral triangles attached to the edges of a rectangle centered at the origin, the distance of the extremal vertex of any triangle from the origin is proportional by $\sqrt{3}$ to the distance of the center of either neighboring triangle to the origin.}
\label{fig:OkamotoStar}
\end{figure}
then shows that the triads of roots in the open upper and lower half-planes also form the vertices of opposite equilateral triangles with their imaginary vertices further from the origin than their centers.  (Alternatively, the whole argument of equating $B(y_0;\kappa)$ with $b(y_0;\alpha,\beta,\gamma)$ can be repeated replacing $b(y_0;\alpha,\beta,\gamma)$ with a polynomial of the form $((y_0-\ii\alpha)+\ii\beta^3)((y_0+\ii\alpha)-\ii\beta^3)(y_0^2-\gamma^2)$ for real $\alpha$, $\beta$, and $\gamma$, modeling a pair of opposite real roots and two opposite equilateral triangles of roots in the upper and lower half-planes.)  For $\kappa<-1$ the rotation symmetry $B(y_0;\kappa)=B(\ii y_0;-\kappa)$ shows that the triads of roots in the right/left half-planes form the vertices of equilateral triangles.
\end{proof}

\clearpage
\section{Diagrams and tables for steepest-descent analysis on Boutroux domains}
\label{app:DiagramsAndTables}
Here we gather $z$-plane diagrams and tables referred to in Section~\ref{sec:G1}.  In all plots gray shading means $\mathrm{Re}(h(z))<0$ and white background means $\mathrm{Re}(h(z))>0$.
\subsection{The gO case with $y_0\in\rectangle(\kappa)$ and $s=1$}
Here we use representative values of $y_0=0$ and $\kappa=0$.  
\begin{figure}[h!]
\begin{subfigure}{\textwidth}
\begin{center}
\includegraphics[width=5in]{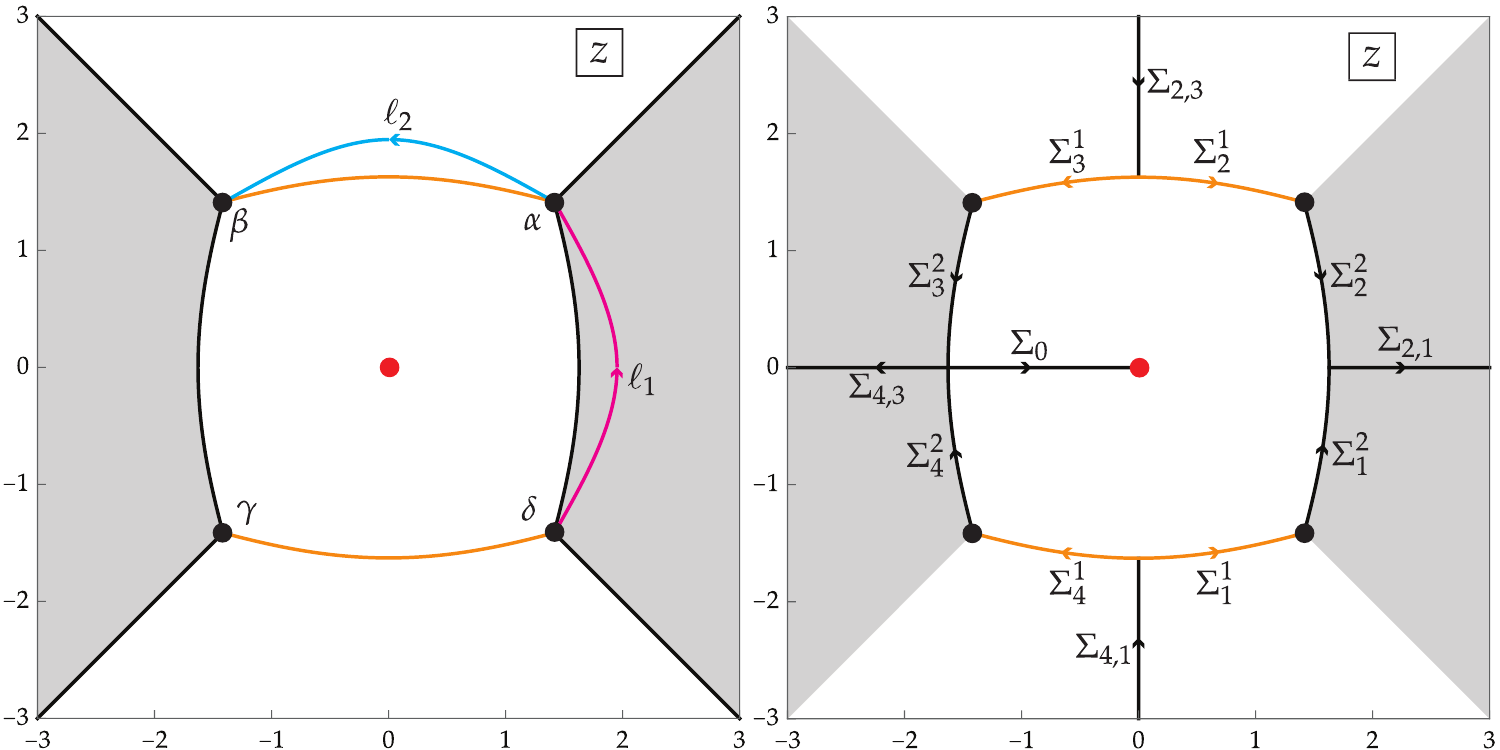}
\end{center}
\subcaption{Left:
Stokes graph including branch cuts (orange) for $h'(z)$ and contours $\ell_1$ and $\ell_2$ appearing in \eqref{eq:BoutrouxConstants}.  Right:
arcs of the jump contour $\Sigma$ for $\mathbf{M}(z)$; note that the arcs $\Sigma_j^k$, $j=1,\dots,4$ and $k=1,2$, lie on the Stokes graph. }
\label{fig:SignsContourLenses-y0-kappa0-s1}
\end{subfigure}
\medskip
\begin{subfigure}{\textwidth}
\begin{center}
\includegraphics[width=4 in]{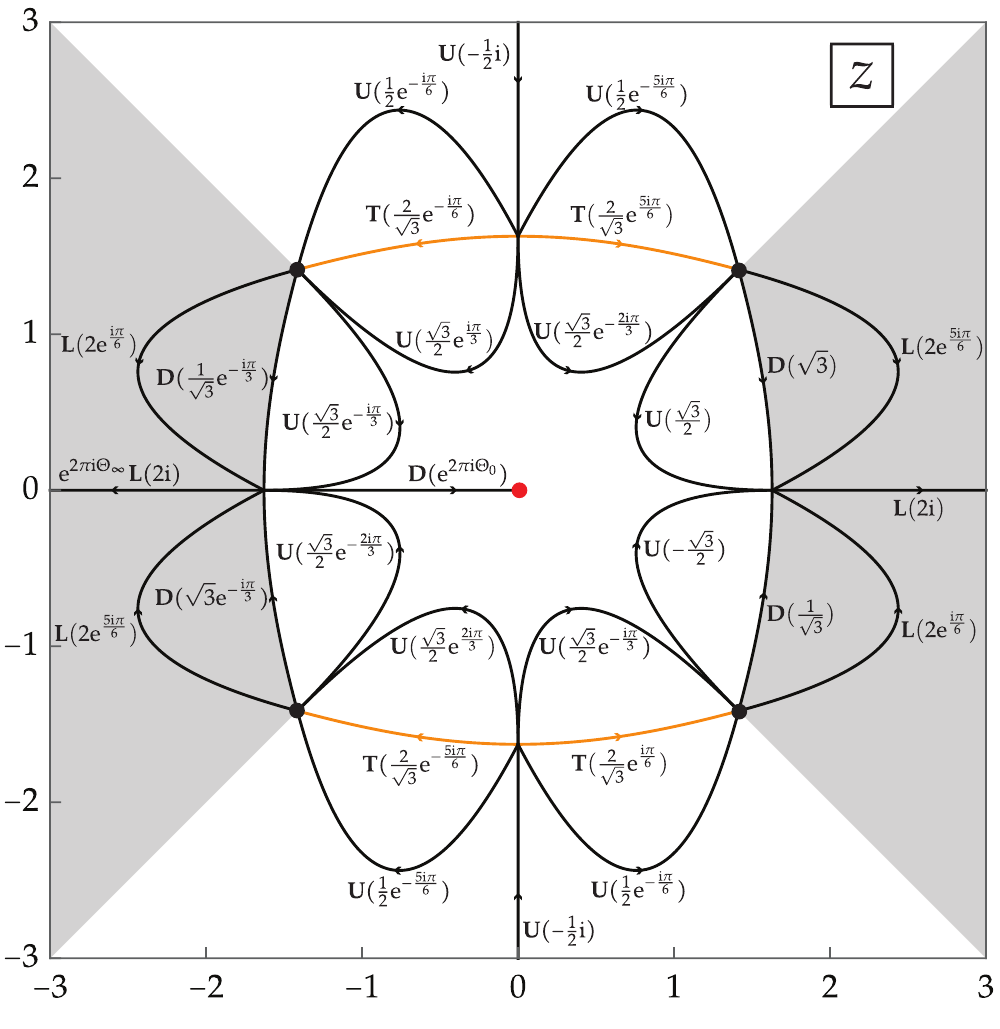}
\end{center}
\subcaption{The jump matrix $\mathbf{W}$ such that $\widetilde{\mathbf{O}}_+(z)=\widetilde{\mathbf{O}}_-(z)\mathbf{W}$.}
\label{fig:JustLenses-y0-kappa0-s1}
\end{subfigure}
\caption{Diagrams for the gO case with $y_0\in\rectangle(\kappa)$ and $s=1$.}
\end{figure}
\begin{figure}[h]\ContinuedFloat
\begin{subfigure}{\textwidth}
\begin{center}
\includegraphics[width=5 in]{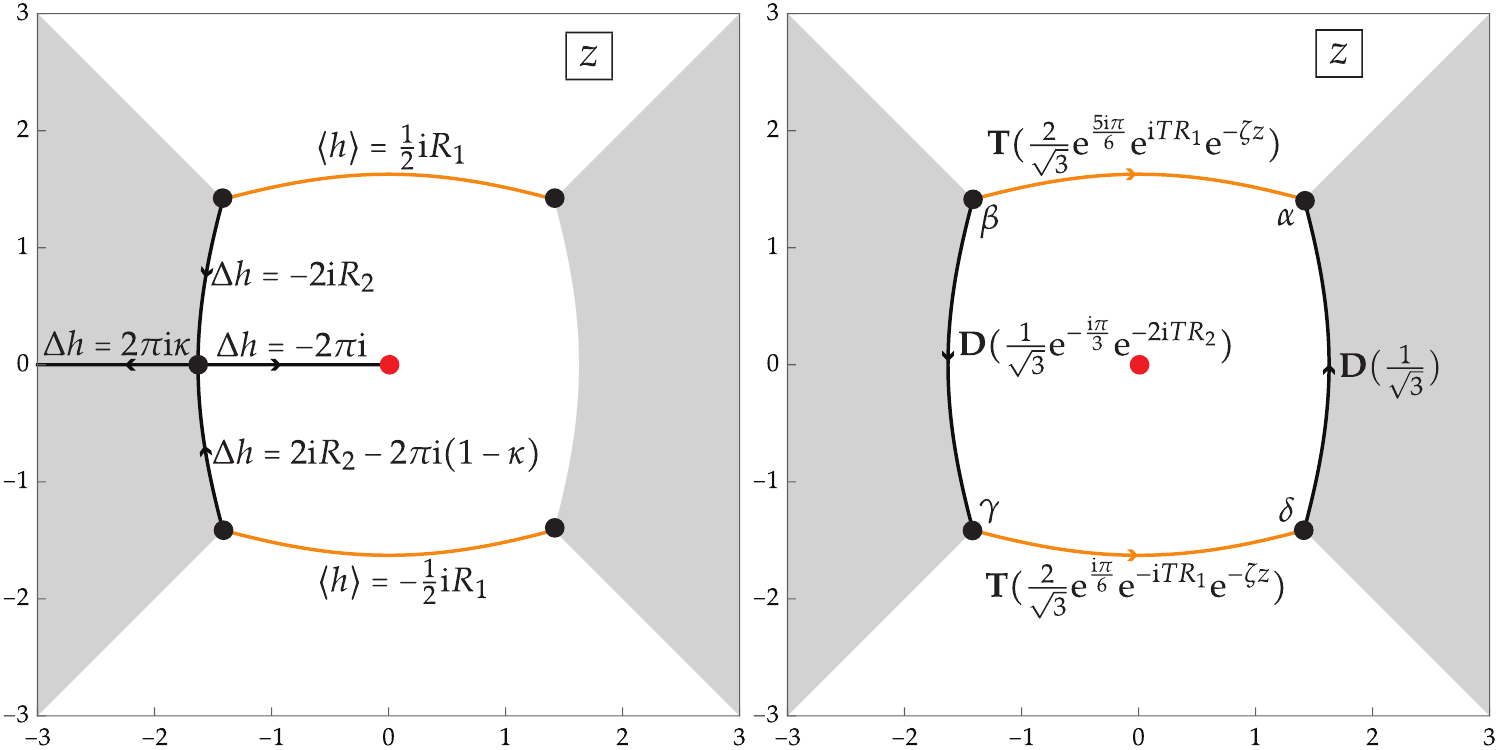}
\end{center}
\subcaption{Left:  the jump conditions satisfied by $h(z)$ with a suitable choice of integration constant.  Right:  the resulting jump conditions for the outer parametrix $\dot{\mathbf{O}}^{\mathrm{out}}(z)$.}
\label{fig:hJAO-y0-kappa0-s1}
\end{subfigure}
\caption{Diagrams for the gO case with $y_0\in\rectangle(\kappa)$ and $s=1$ (contd.)}
\end{figure}
\begin{table}
\caption{Inner parametrix data for the gO case with $y_0\in\rectangle(\kappa)$ and $s=1$.}
\renewcommand{\arraystretch}{0.45}
\begin{center}
\begin{tabular}{@{}|l|l|c|c|c|c|@{}}
\hline
\multirow{2}{*}{$p$}&\multirow{2}{*}{Conformal map $W:D_p\to\mathbb{C}$}&\multicolumn{2}{c|}{Ray Preimages in $D_p$} & \multicolumn{2}{c|}{$\mathbf{C}(z)$ in $D_p$}\\
\cline{3-6}
& & $\arg(W)$ & Preimage & Value $\mathbf{C}$ & Subdomain of $D_p$ \\
\hline\hline
\multirow{4}{*}{$\alpha$} & \multirow{4}{*}{\makecell[l]{$(2 h(z)-2 h(\alpha))^{2/3}$,\\ 
continued from $\Sigma_2^{1+}$
}} &
\shortstrut $0$ & $\Sigma_2^{1+}$ & 
\multirow{2}{*}{$\mathbf{D}(\sqrt{\tfrac{3}{2}}\ee^{\frac{5\ii\pi}{6}})$} & 
\multirow{2}{*}{$D_\alpha\cap\circledomain$} \\
\cline{3-4}
&& \shortstrut $\tfrac{2}{3}\pi$ & $\Sigma_2^{1-}$ \& $\Sigma_2^{2-}$ && \\
\cline{3-4}\cline{5-6}
&& \shortstrut $-\tfrac{2}{3}\pi$ & $\Sigma_2^{2+}$ & 
\multirow{2}{*}{$\mathbf{T}(\sqrt{2}\ee^{\frac{2\ii\pi}{3}})$} & 
\multirow{2}{*}{$D_\alpha\setminus\circledomain$} \\
\cline{3-4}
&& \shortstrut $\pm\pi$ & $\Sigma_2^2$ && \\
\hline
\hline
\multirow{4}{*}{$\beta$} & \multirow{4}{*}{\makecell[l]{$(2h(z)-2h(\beta))^{2/3}$,\\
continued from $\Sigma_3^{1-}$; $h(\beta)$\\
defined by limit along $\Sigma_3^{1-}$}} & \shortstrut $0$ & $\Sigma_3^{1-}$ &
\multirow{2}{*}{$\mathbf{D}(\sqrt{\tfrac{3}{2}}\ii)$} & 
\multirow{2}{*}{$D_\beta\cap\circledomain$} \\
\cline{3-4}
&& \shortstrut $\tfrac{2}{3}\pi$ & $\Sigma_3^{2-}$ && \\
\cline{3-4}\cline{5-6}
&& \shortstrut $-\tfrac{2}{3}\pi$ & $\Sigma_3^{1+}$ \& $\Sigma_3^{2+}$ & 
\multirow{2}{*}{$\mathbf{T}(\sqrt{2}\ee^{\frac{\ii\pi}{3}})$} & 
\multirow{2}{*}{$D_\beta\setminus\circledomain$} \\
\cline{3-4}
&& \shortstrut $\pm\pi$ & $\Sigma_3^2$ && \\
\hline
\hline
\multirow{4}{*}{$\gamma$} & \multirow{4}{*}{\makecell[l]{$(2h(z)-2h(\gamma))^{2/3}$,\\
continued from $\Sigma_4^{1+}$; $h(\gamma)$ \\
defined by limit along $\Sigma_4^{1+}$}} & \shortstrut $0$ & $\Sigma_4^{1+}$ &
\multirow{2}{*}{$\mathbf{D}(\sqrt{\tfrac{3}{2}}\ii)$} & 
\multirow{2}{*}{$D_\gamma\cap\circledomain$} \\
\cline{3-4}
&& \shortstrut $\tfrac{2}{3}\pi$ & $\Sigma_4^{1-}$ \& $\Sigma_4^{2-}$ && \\
\cline{3-4}\cline{5-6}
&& \shortstrut $-\tfrac{2}{3}\pi$ & $\Sigma_4^{2+}$ & 
\multirow{2}{*}{$\mathbf{T}(\sqrt{2}\ee^{\frac{2\ii\pi}{3}})$} & 
\multirow{2}{*}{$D_\gamma\setminus\circledomain$} \\
\cline{3-4}
&& \shortstrut $\pm\pi$ & $\Sigma_4^2$ && \\
\hline
\hline
\multirow{4}{*}{$\delta$} & \multirow{4}{*}{\makecell[l]{$(2h(z)-2h(\delta))^{2/3}$,\\
continued from $\Sigma_1^{1-}$ 
}} & \shortstrut $0$ & $\Sigma_1^{1-}$ &
\multirow{2}{*}{$\mathbf{D}(\sqrt{\tfrac{3}{2}}\ee^{\frac{\ii\pi}{6}})$} & 
\multirow{2}{*}{$D_\delta\cap\circledomain$} \\
\cline{3-4}
&& \shortstrut $\tfrac{2}{3}\pi$ & $\Sigma_1^{2-}$ && \\
\cline{3-4}\cline{5-6}
&& \shortstrut $-\tfrac{2}{3}\pi$ & $\Sigma_1^{1+}$ \& $\Sigma_1^{2+}$ &
\multirow{2}{*}{$\mathbf{T}(\sqrt{2}\ee^{\frac{\ii\pi}{3}})$} &
\multirow{2}{*}{$D_\delta\setminus\circledomain$} \\
\cline{3-4}
&& \shortstrut $\pm\pi$ & $\Sigma_1^2$ && \\
\hline
\end{tabular}
\end{center}
\renewcommand{\arraystretch}{1}
\label{tab:Inner-y0-kappa0-s1}
\end{table}
\clearpage
\subsection{The gH case with $y_0\in\rectangle(\kappa)$ and $s=1$}
Here we use representative values of $y_0=0$ and $\kappa=0$.
\begin{figure}[h!]
\begin{subfigure}{\textwidth}
\begin{center}
\includegraphics[width=5in]{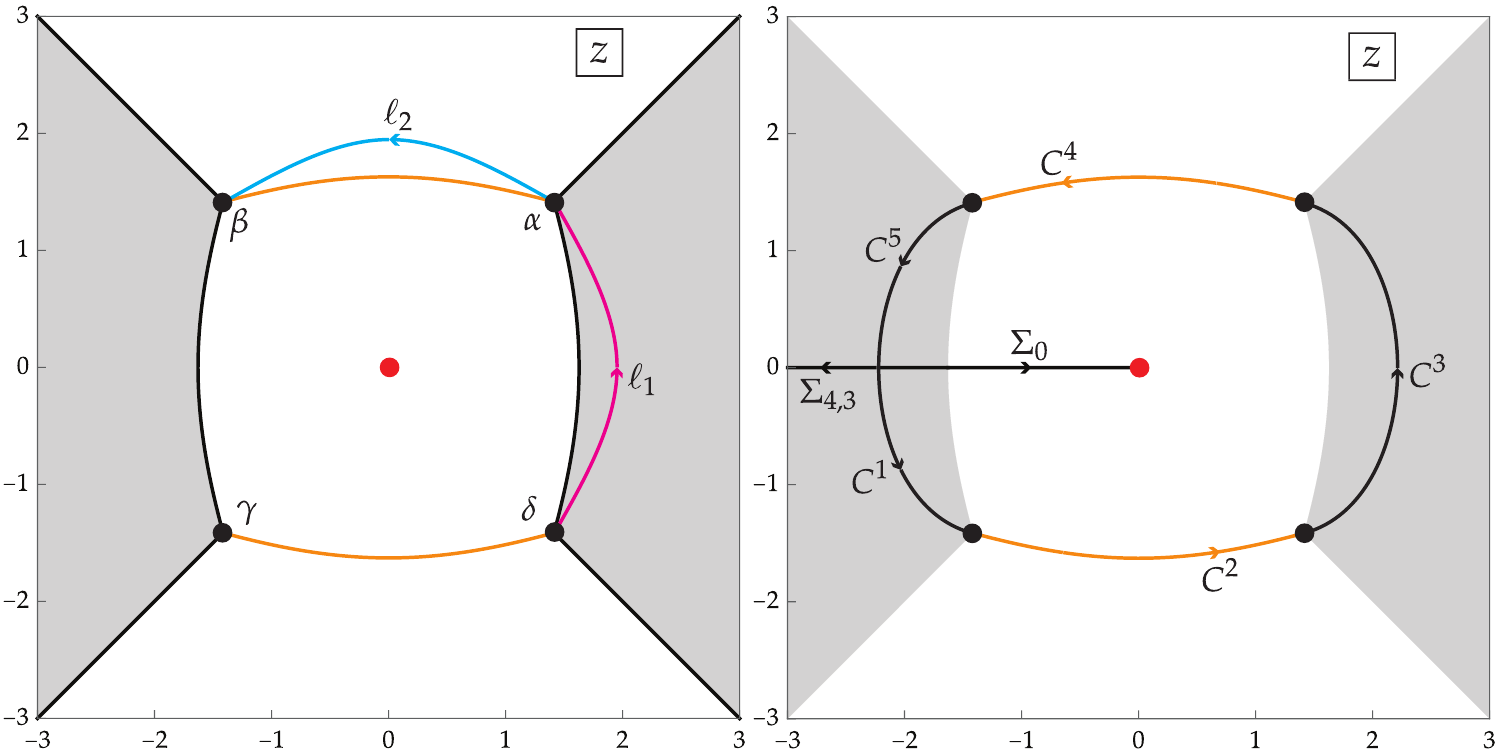}
\end{center}
\subcaption{Left:  Stokes graph including branch cuts (orange) for $h'(z)$ and contours $\ell_1$ and $\ell_2$ appearing in \eqref{eq:BoutrouxConstants}.  Right:  arcs of the jump contour $\Sigma$ for $\mathbf{M}(z)$; note that the arcs $C^2$ and $C^4$ lie on the Stokes graph.}
\label{fig:SignsContourLenses-y0-kappa0-s1-gH}
\end{subfigure}
\medskip
\begin{subfigure}{\textwidth}
\begin{center}
\includegraphics[width=4 in]{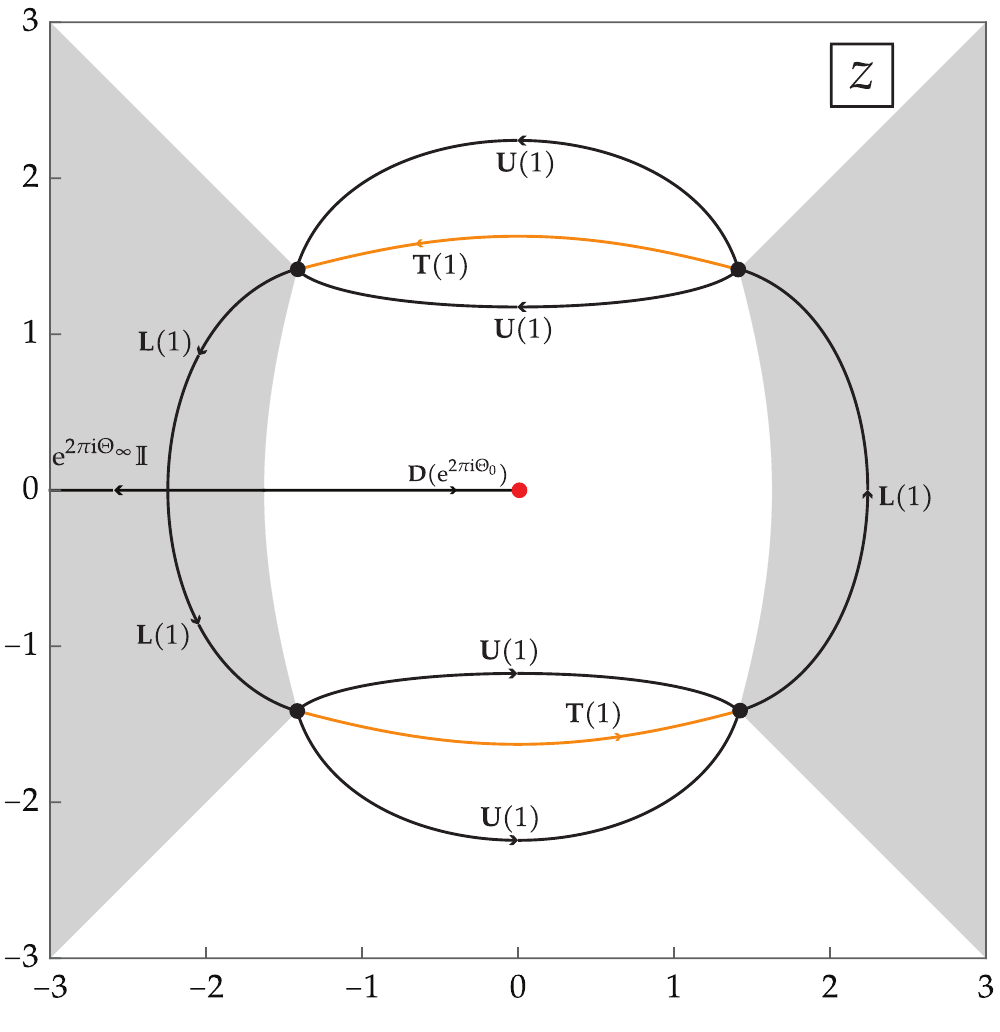}
\end{center}
\subcaption{The jump matrix $\mathbf{W}$ such that $\widetilde{\mathbf{O}}_+(z)=\widetilde{\mathbf{O}}_-(z)\mathbf{W}$.}
\label{fig:JustLenses-y0-kappa0-s1-gH}
\end{subfigure}
\caption{Diagrams for the (only) gH case with $y_0\in\rectangle(\kappa)$ and $s=1$.}
\end{figure}
\begin{figure}[h]\ContinuedFloat
\begin{subfigure}{\textwidth}
\begin{center}
\includegraphics[width=5 in]{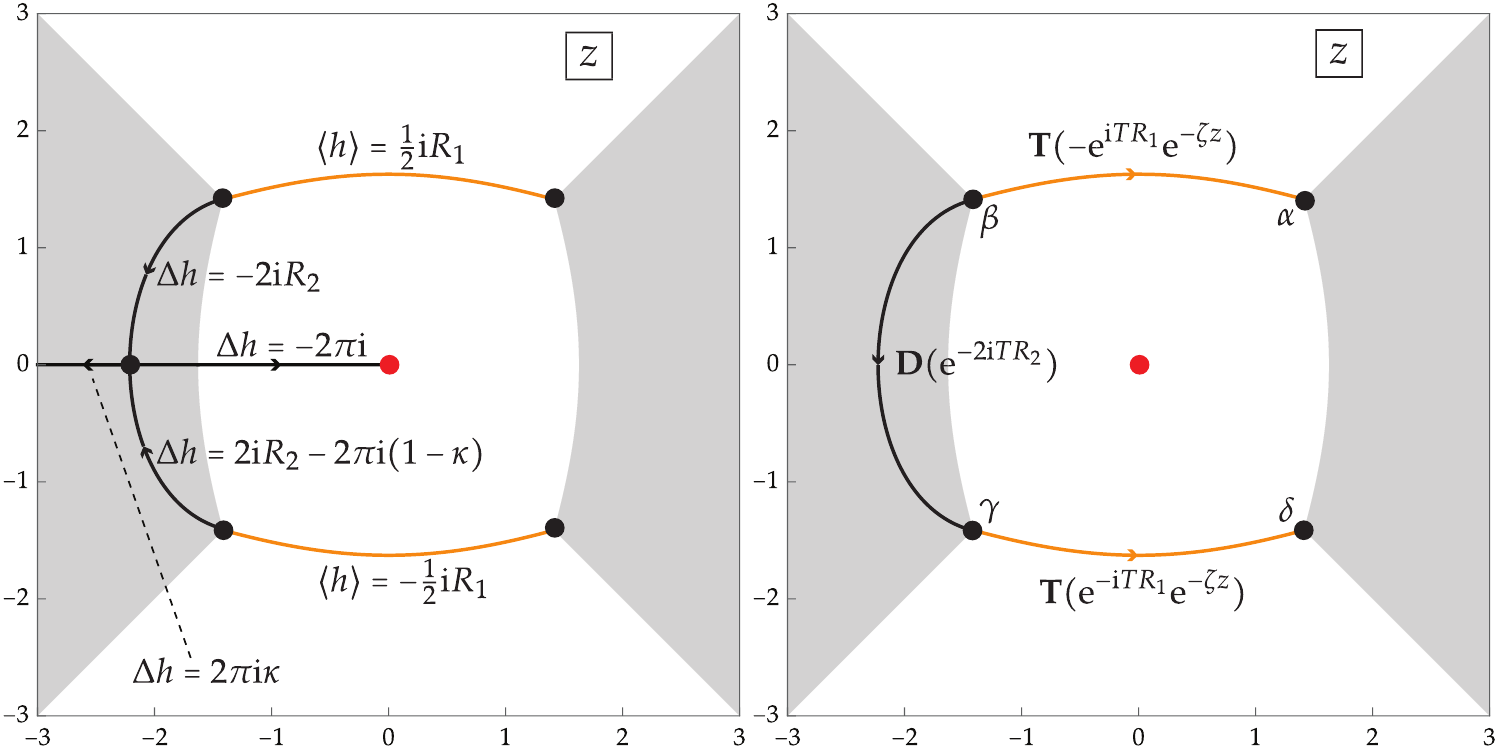}
\end{center}
\subcaption{Left:  the jump conditions satisfied by $h(z)$ with a suitable choice of integration constant.  Right:  the resulting jump conditions for the outer parametrix $\dot{\mathbf{O}}^{\mathrm{out}}(z)$.}
\label{fig:hJAO-y0-kappa0-s1-gH}
\end{subfigure}
\caption{Diagrams for the (only) gH case with $y_0\in\rectangle(\kappa)$ and $s=1$ (contd.)}
\end{figure}
\begin{table}
\caption{Inner parametrix data for the (only) gH case with $y_0\in\rectangle(\kappa)$ and $s=1$.}
\renewcommand{\arraystretch}{0.45}
\begin{center}
\begin{tabular}{@{}|l|l|c|c|c|c|@{}}
\hline
\multirow{2}{*}{$p$}&\multirow{2}{*}{Conformal map $W:D_p\to\mathbb{C}$}&\multicolumn{2}{c|}{Ray Preimages in $D_p$} & \multicolumn{2}{c|}{$\mathbf{C}(z)$ in $D_p$}\\
\cline{3-6}
& & $\arg(W)$ & Preimage & Value $\mathbf{C}$ & Subdomain of $D_p$ \\
\hline\hline
\multirow{4}{*}{$\alpha$} & \multirow{4}{*}{\makecell[l]{$(2 h(\alpha)-2 h(z))^{2/3}$,\\ 
continued from $C^3$}} &
\shortstrut $0$ & $C^3$ & 
\multirow{4}{*}{$\mathbf{D}(\ee^{-\frac{\ii\pi}{4}})$} & 
\multirow{4}{*}{$D_\alpha$} \\
\cline{3-4}
&& \shortstrut $\tfrac{2}{3}\pi$ & $C^{4-}$ && \\
\cline{3-4}
&& \shortstrut $-\tfrac{2}{3}\pi$ & $C^{4+}$ & 
& 
\\
\cline{3-4}
&& \shortstrut $\pm\pi$ & $C^4$ && \\
\hline
\hline
\multirow{4}{*}{$\beta$} & \multirow{4}{*}{\makecell[l]{$(2\langle h\rangle(\beta)-2\langle h\rangle(z))^{2/3}$,\\
continued from $C^5$}} & \shortstrut $0$ & $C^5$ &
\multirow{4}{*}{$\mathbf{D}(\ee^{\frac{\ii\pi}{4}})$} & 
\multirow{4}{*}{$D_\beta$} \\
\cline{3-4}
&& \shortstrut $\tfrac{2}{3}\pi$ & $C^{4+}$ && \\
\cline{3-4}
&& \shortstrut $-\tfrac{2}{3}\pi$ & $C^{4-}$ & 
 & \\
\cline{3-4}
&& \shortstrut $\pm\pi$ & $C^4$ && \\
\hline
\hline
\multirow{4}{*}{$\gamma$} & \multirow{4}{*}{\makecell[l]{$(2\langle h\rangle(\gamma)-2\langle h\rangle(z))^{2/3}$,\\
continued from $C^1$}} & \shortstrut $0$ & $C^1$ &
\multirow{4}{*}{$\mathbf{D}(\ee^{-\frac{\ii\pi}{4}})$} & 
\multirow{4}{*}{$D_\gamma$} \\
\cline{3-4}
&& \shortstrut $\tfrac{2}{3}\pi$ & $C^{2-}$ && \\
\cline{3-4}
&& \shortstrut $-\tfrac{2}{3}\pi$ & $C^{2+}$ & 
 & 
 \\
\cline{3-4}
&& \shortstrut $\pm\pi$ & $C^2$ && \\
\hline
\hline
\multirow{4}{*}{$\delta$} & \multirow{4}{*}{\makecell[l]{$(2h(\delta)-2h(z))^{2/3}$,\\
continued from $C^3$}} & \shortstrut $0$ & $C^3$ &
\multirow{4}{*}{$\mathbf{D}(\ee^{\frac{\ii\pi}{4}})$} & 
\multirow{4}{*}{$D_\delta\cap\circledomain$} \\
\cline{3-4}
&& \shortstrut $\tfrac{2}{3}\pi$ & $C^{2+}$ && \\
\cline{3-4}
&& \shortstrut $-\tfrac{2}{3}\pi$ & $C^{2-}$ &
 &
 \\
\cline{3-4}
&& \shortstrut $\pm\pi$ & $C^2$ && \\
\hline
\end{tabular}
\renewcommand{\arraystretch}{1}
\end{center}
\label{tab:Inner-y0-kappa0-s1-gH}
\end{table}
\clearpage

\subsection{The gO case with $y_0\in\rectangle(\kappa)$ and $s=-1$}
Here we use representative values of $y_0=0$ and $\kappa=0$.
\begin{figure}[h!]
\begin{subfigure}{\textwidth}
\begin{center}
\includegraphics[width=5 in]{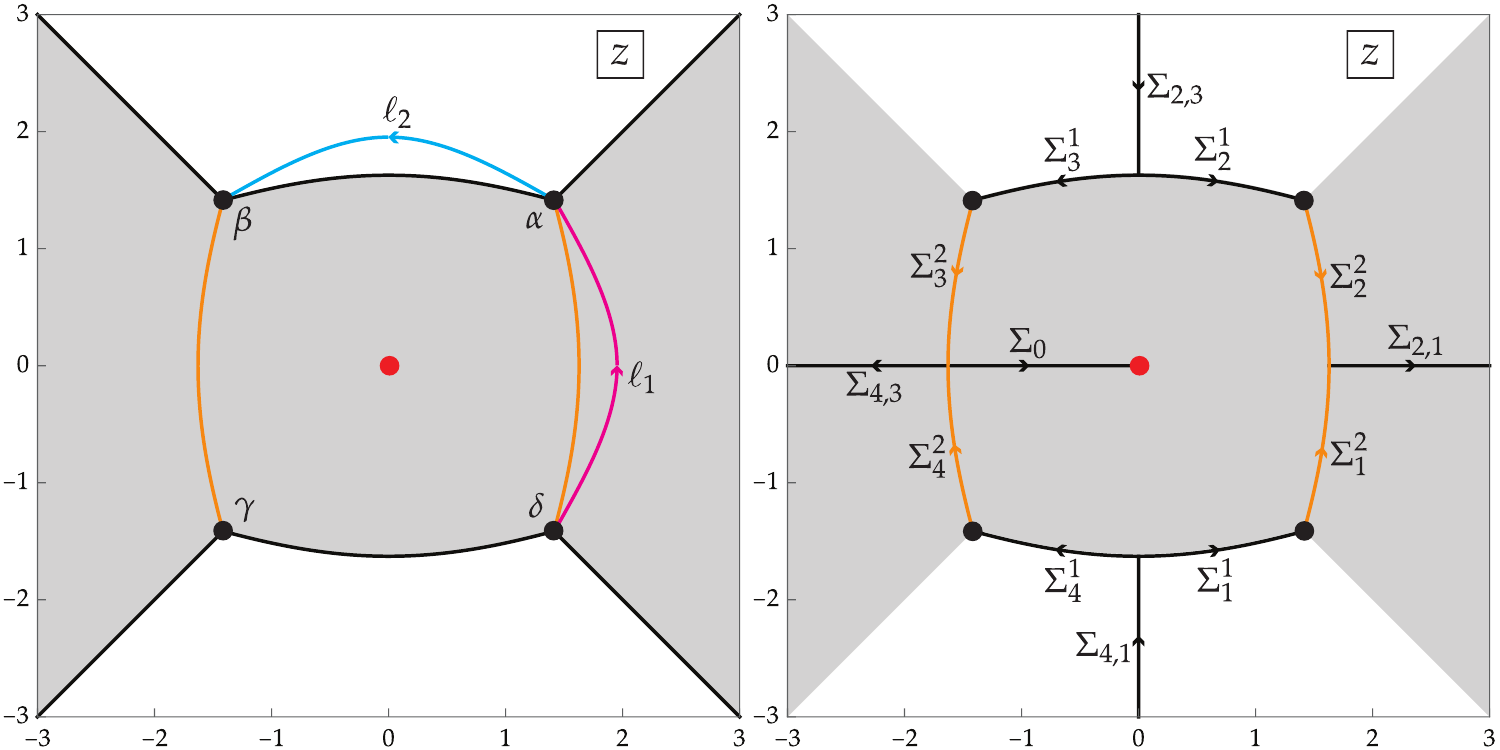}
\end{center}
\subcaption{Left:  Stokes graph including branch cuts (orange) for $h'(z)$ and contours $\ell_1$ and $\ell_2$ appearing in \eqref{eq:BoutrouxConstants}.  Right:
arcs of the jump contour $\Sigma$ for $\mathbf{M}(z)$; note that the arcs $\Sigma_j^k$, $j=1,\dots,4$ and $k=1,2$, lie on the Stokes graph. }
\label{fig:SignsContourLenses-y0-kappa0-sm1}
\end{subfigure}
\medskip
\begin{subfigure}{\textwidth}
\begin{center}
\includegraphics[width=4 in]{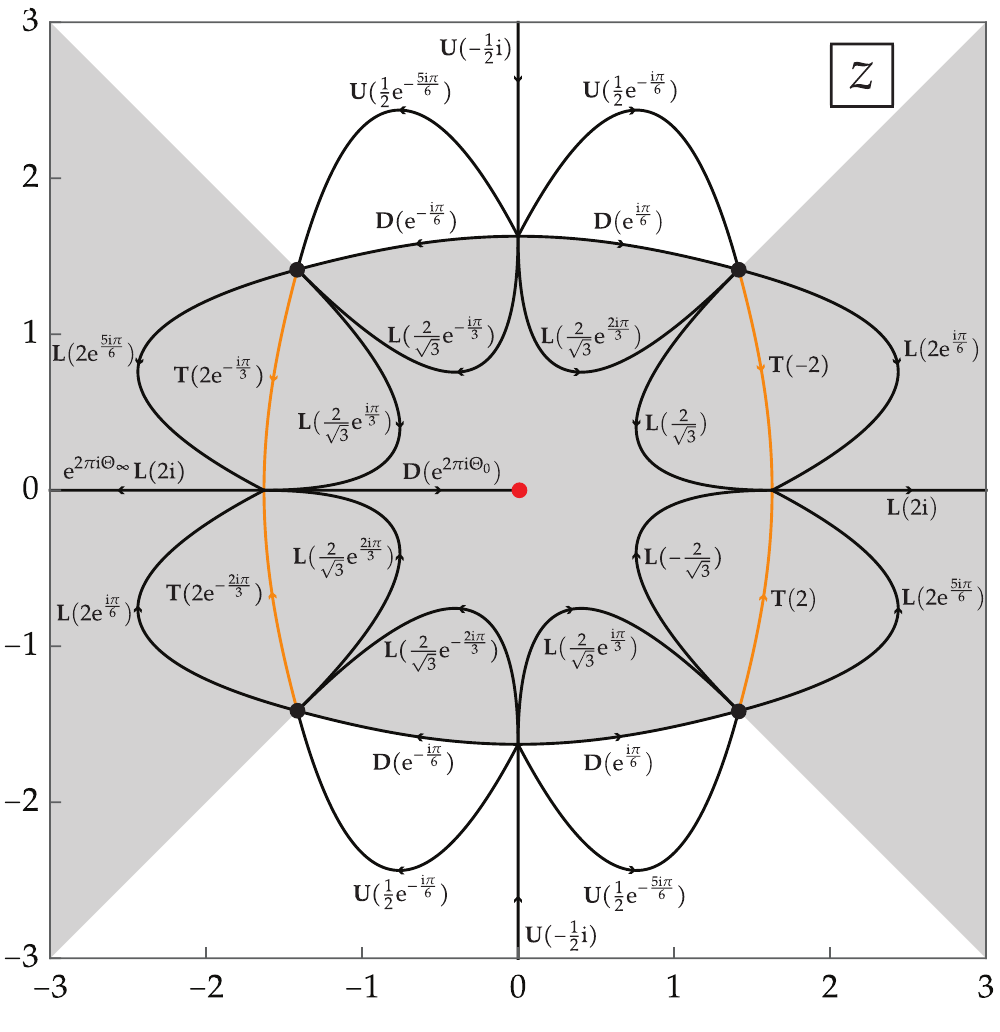}
\end{center}
\subcaption{The jump matrix $\mathbf{W}$ such that $\widetilde{\mathbf{O}}_+(z)=\widetilde{\mathbf{O}}_-(z)\mathbf{W}$.}
\label{fig:JustLenses-y0-kappa0-sm1}
\end{subfigure}
\caption{Diagrams for the gO case with $y_0\in\rectangle(\kappa)$ and $s=-1$.}
\end{figure}
\begin{figure}[h]\ContinuedFloat
\begin{subfigure}{\textwidth}
\begin{center}
\includegraphics[width=5 in]{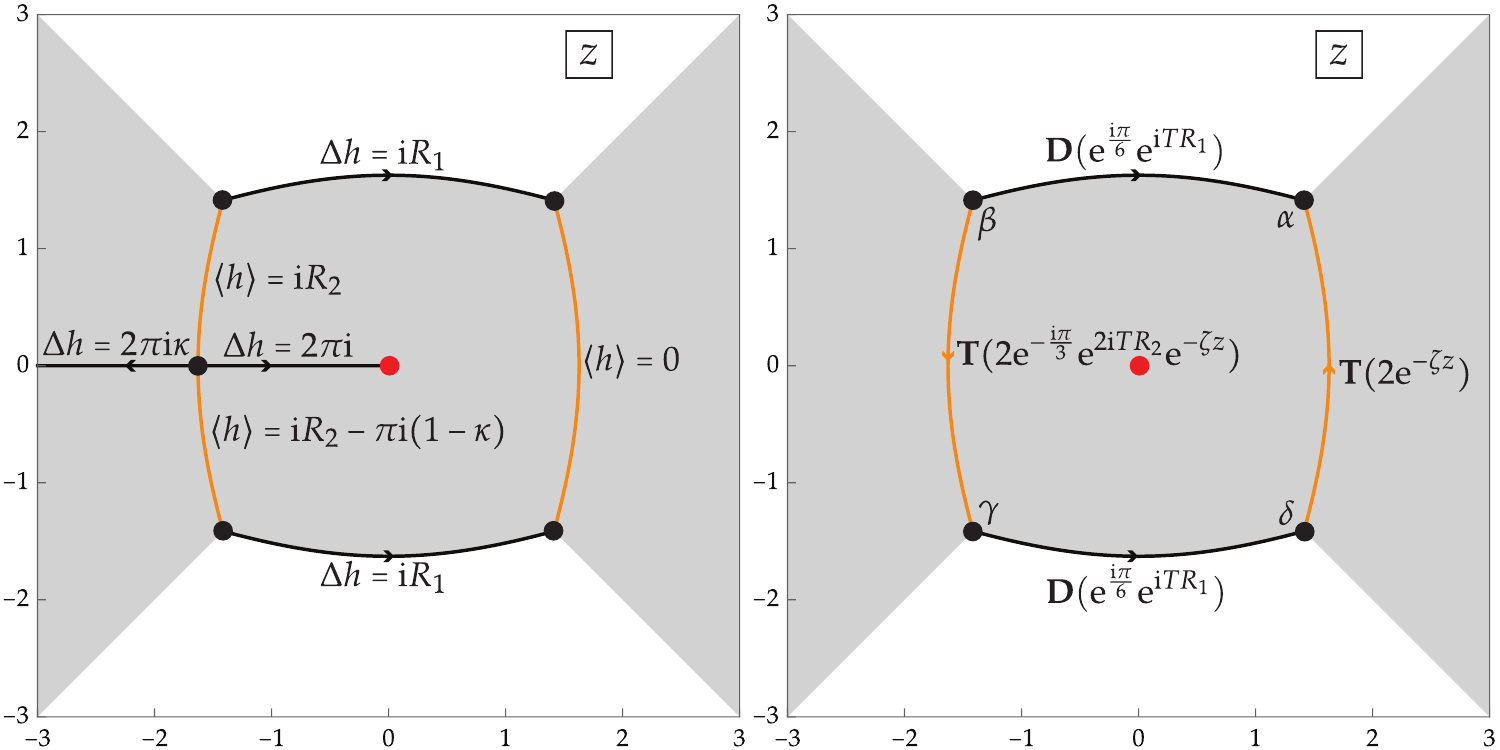}
\end{center}
\subcaption{Left:  the jump conditions satisfied by $h(z)$ with a suitable choice of integration constant.  Right:  the resulting jump conditions for the outer parametrix $\dot{\mathbf{O}}^{\mathrm{out}}(z)$.}
\label{fig:hJAO-y0-kappa0-sm1}
\end{subfigure}
\caption{Diagrams for the gO case with $y_0\in\rectangle(\kappa)$ and $s=-1$ (contd.)}
\end{figure}
\begin{table}
\caption{Inner parametrix data for the gO case with $y_0\in\rectangle(\kappa)$ and $s=-1$.}
\renewcommand{\arraystretch}{0.45}
\begin{center}
\begin{tabular}{@{}|l|l|c|c|c|c|@{}}
\hline
\multirow{2}{*}{$p$}&\multirow{2}{*}{Conformal map $W:D_p\to\mathbb{C}$}&\multicolumn{2}{c|}{Ray Preimages in $D_p$} & \multicolumn{2}{c|}{$\mathbf{C}(z)$ in $D_p$}\\
\cline{3-6}
& & $\arg(W)$ & Preimage & Value $\mathbf{C}$ & Subdomain of $D_p$ \\
\hline\hline
\multirow{4}{*}{$\alpha$} & \multirow{4}{*}{\makecell[l]{$(2 h(z)-2 h(\alpha))^{2/3}$,\\ 
continued from $\Sigma_2^{1+}$; $h(\alpha)$\\ defined by limit along $\Sigma_2^{1+}$}} &
\shortstrut $0$ & $\Sigma_2^{1+}$ & 
\multirow{2}{*}{$\mathbf{T}(\sqrt{2}\ee^{\frac{\ii\pi}{6}})$} & 
\multirow{2}{*}{$D_\alpha\cap\circledomain$} \\
\cline{3-4}
&& \shortstrut $\tfrac{2}{3}\pi$ & $\Sigma_2^{1-}$ \& $\Sigma_2^{2-}$ && \\
\cline{3-4}\cline{5-6}
&& \shortstrut $-\tfrac{2}{3}\pi$ & $\Sigma_2^{2+}$ & 
\multirow{2}{*}{$\mathbf{T}(\sqrt{2}\ee^{\frac{\ii\pi}{3}})$} & 
\multirow{2}{*}{$D_\alpha\setminus\circledomain$} \\
\cline{3-4}
&& \shortstrut $\pm\pi$ & $\Sigma_2^2$ && \\
\hline
\hline
\multirow{4}{*}{$\beta$} & \multirow{4}{*}{\makecell[l]{$(2h(z)-2h(\beta))^{2/3}$,\\
continued from $\Sigma_3^{1-}$; $h(\beta)$\\
defined by limit along $\Sigma_3^{1-}$}} & \shortstrut $0$ & $\Sigma_3^{1-}$ &
\multirow{2}{*}{$\mathbf{T}(\sqrt{2}\ii)$} & 
\multirow{2}{*}{$D_\beta\cap\circledomain$} \\
\cline{3-4}
&& \shortstrut $\tfrac{2}{3}\pi$ & $\Sigma_3^{2-}$ && \\
\cline{3-4}\cline{5-6}
&& \shortstrut $-\tfrac{2}{3}\pi$ & $\Sigma_3^{1+}$ \& $\Sigma_3^{2+}$ & 
\multirow{2}{*}{$\mathbf{T}(\sqrt{2}\ee^{\frac{2\ii\pi}{3}})$} & 
\multirow{2}{*}{$D_\beta\setminus\circledomain$} \\
\cline{3-4}
&& \shortstrut $\pm\pi$ & $\Sigma_3^2$ && \\
\hline
\hline
\multirow{4}{*}{$\gamma$} & \multirow{4}{*}{\makecell[l]{$(2h(z)-2h(\gamma))^{2/3}$,\\
continued from $\Sigma_4^{1+}$; $h(\gamma)$ \\
defined by limit along $\Sigma_4^{1+}$}} & \shortstrut $0$ & $\Sigma_4^{1+}$ &
\multirow{2}{*}{$\mathbf{T}(\sqrt{2}\ii)$} & 
\multirow{2}{*}{$D_\gamma\cap\circledomain$} \\
\cline{3-4}
&& \shortstrut $\tfrac{2}{3}\pi$ & $\Sigma_4^{1-}$ \& $\Sigma_4^{2-}$ && \\
\cline{3-4}\cline{5-6}
&& \shortstrut $-\tfrac{2}{3}\pi$ & $\Sigma_4^{2+}$ & 
\multirow{2}{*}{$\mathbf{T}(\sqrt{2}\ee^{\frac{\ii\pi}{3}})$} & 
\multirow{2}{*}{$D_\gamma\setminus\circledomain$} \\
\cline{3-4}
&& \shortstrut $\pm\pi$ & $\Sigma_4^2$ && \\
\hline
\hline
\multirow{4}{*}{$\delta$} & \multirow{4}{*}{\makecell[l]{$(2h(z)-2h(\delta))^{2/3}$,\\
continued from $\Sigma_1^{1-}$; $h(\delta)$ \\
defined by limit along $\Sigma_1^{1-}$}} & \shortstrut $0$ & $\Sigma_1^{1-}$ &
\multirow{2}{*}{$\mathbf{T}(\sqrt{2}\ee^{\frac{5\ii\pi}{6}})$} & 
\multirow{2}{*}{$D_\delta\cap\circledomain$} \\
\cline{3-4}
&& \shortstrut $\tfrac{2}{3}\pi$ & $\Sigma_1^{2-}$ && \\
\cline{3-4}\cline{5-6}
&& \shortstrut $-\tfrac{2}{3}\pi$ & $\Sigma_1^{1+}$ \& $\Sigma_1^{2+}$ &
\multirow{2}{*}{$\mathbf{T}(\sqrt{2}\ee^{\frac{2\ii\pi}{3}})$} &
\multirow{2}{*}{$D_\delta\setminus\circledomain$} \\
\cline{3-4}
&& \shortstrut $\pm\pi$ & $\Sigma_1^2$ && \\
\hline
\end{tabular}
\end{center}
\renewcommand{\arraystretch}{1}
\label{tab:Inner-y0-kappa0-sm1}
\end{table}
\clearpage


\subsection{The gO case with $y_0\in\TR(\kappa)$ and $s=1$}
Here we use representative values of $y_0=1.3$ and $\kappa=0$.
\begin{figure}[h!]
\begin{subfigure}{\textwidth}
\begin{center}
\includegraphics[width=5 in]{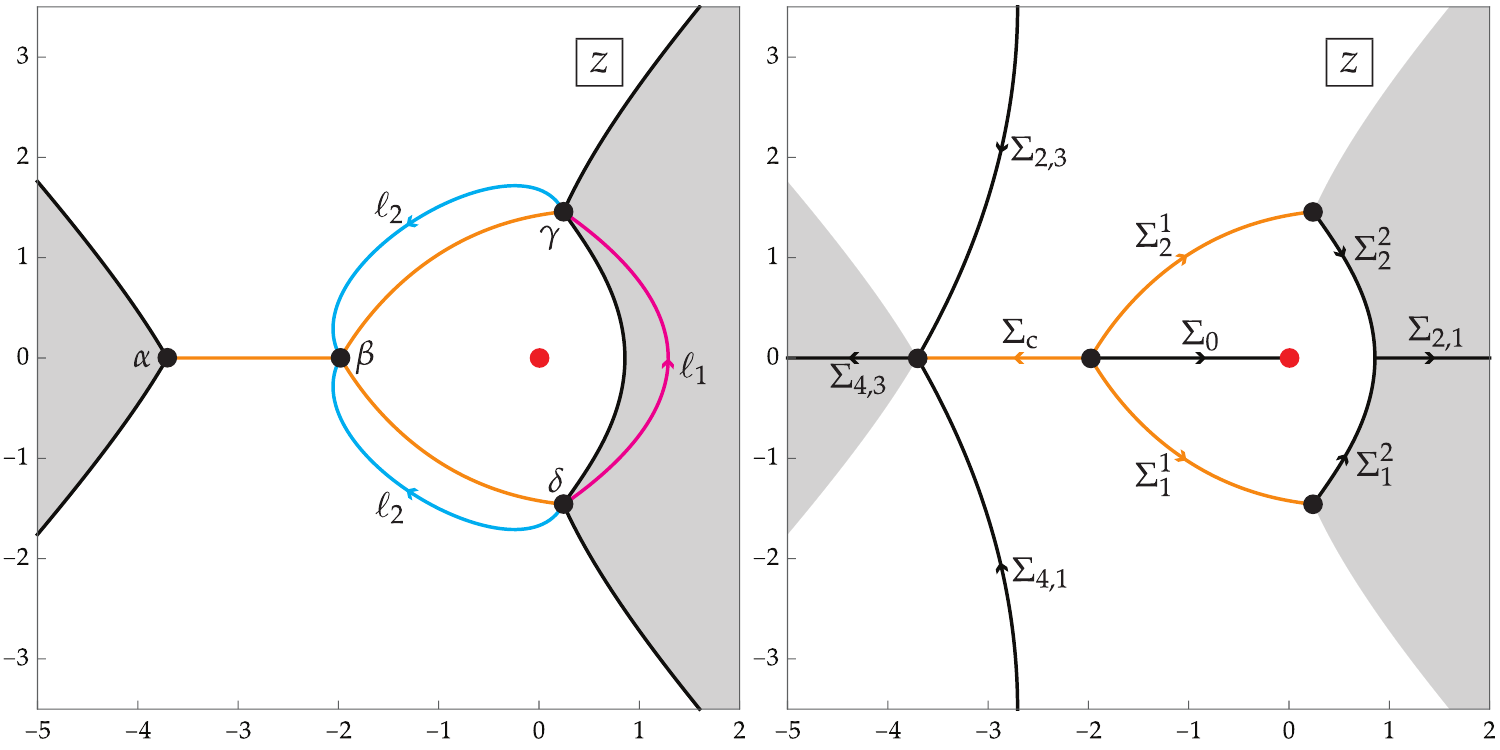}
\end{center}
\subcaption{Left:  Stokes graph including branch cuts (orange) for $h'(z)$ and contours $\ell_1$ and $\ell_2$ appearing in \eqref{eq:BoutrouxConstants}.  Right:  arcs of the jump contour $\Sigma$ for $\mathbf{M}(z)$; note that the arcs $\Sigma_j^k$, $j=1,2$ and $k=1,2$, and $\Sigma_\mathrm{c}$ lie on the Stokes graph.}
\label{fig:SignsContourLenses-y1p3-kappa0-s1}
\end{subfigure}
\medskip
\begin{subfigure}{\textwidth}
\begin{center}
\includegraphics[width=4 in]{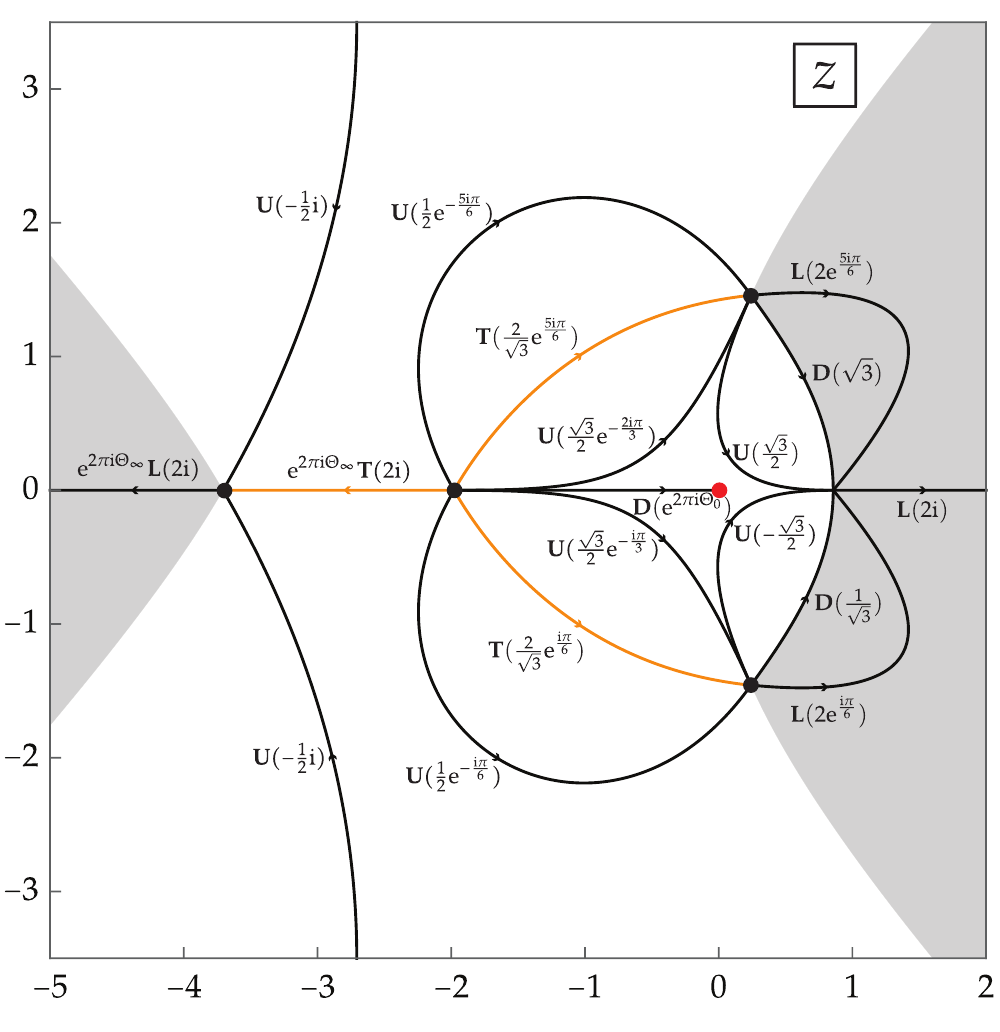}
\end{center}
\subcaption{The jump matrix $\mathbf{W}$ such that $\widetilde{\mathbf{O}}_+(z)=\widetilde{\mathbf{O}}_-(z)\mathbf{W}$.}
\label{fig:JustLenses-y1p3-kappa0-s1}
\end{subfigure}
\caption{Diagrams for the gO case with $y_0\in\TR(\kappa)$ and $s=1$.}
\end{figure}
\begin{figure}[h]\ContinuedFloat
\begin{subfigure}{\textwidth}
\begin{center}
\includegraphics[width=5 in]{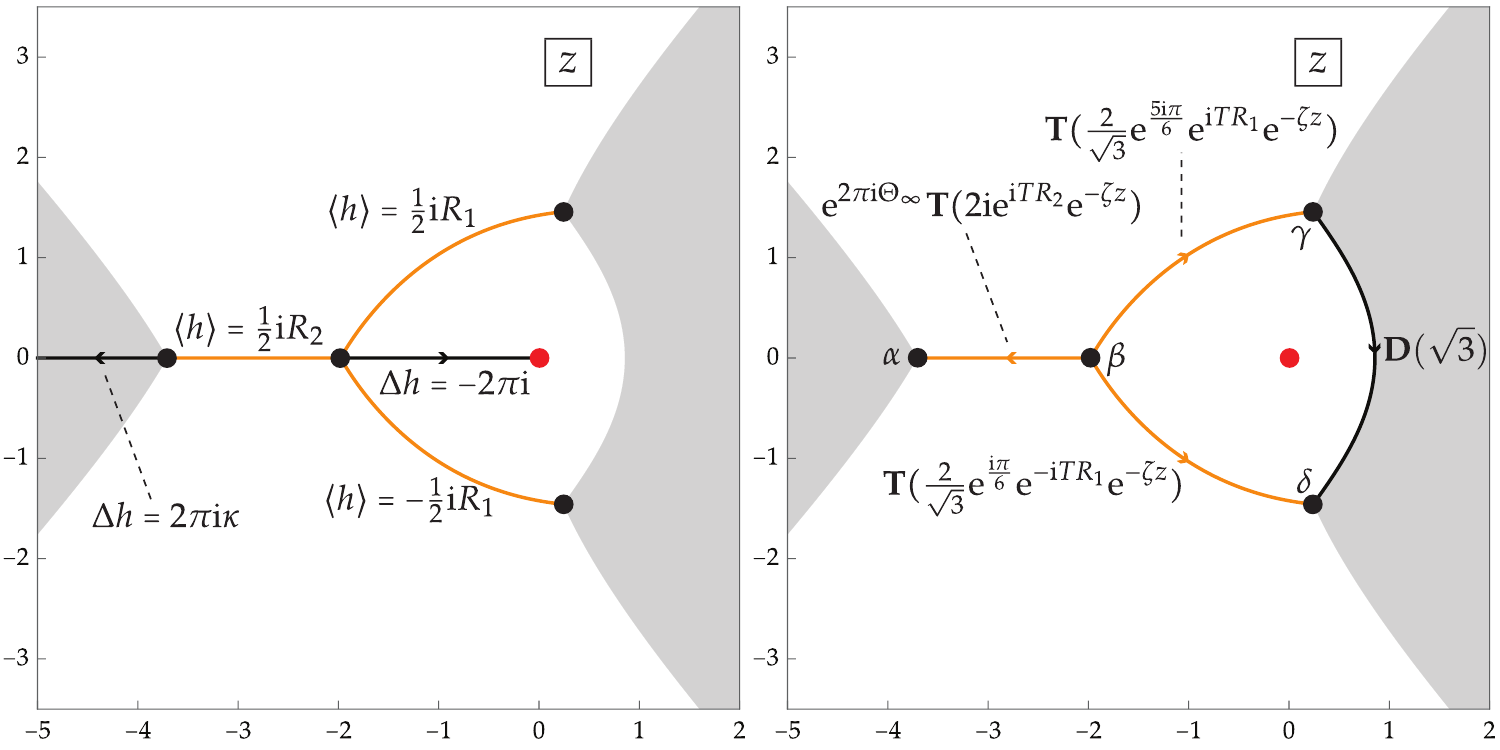}
\end{center}
\subcaption{Left:  the jump conditions satisfied by $h(z)$ with a suitable choice of integration constant.  Right:  the resulting jump conditions for the outer parametrix $\dot{\mathbf{O}}^{\mathrm{out}}(z)$.}
\label{fig:hJAO-y1p3-kappa0-s1}
\end{subfigure}
\caption{Diagrams for the gO case with $y_0\in\TR(\kappa)$ and $s=1$ (contd.)}
\end{figure}
\begin{table}
\caption{Inner parametrix data for the gO case with $y_0\in\TR(\kappa)$ and $s=1$.}
\renewcommand{\arraystretch}{0.45}
\begin{center}
\begin{tabular}{@{}|l|l|c|c|c|c|@{}}
\hline
\multirow{2}{*}{$p$}&\multirow{2}{*}{Conformal map $W:D_p\to\mathbb{C}$}&\multicolumn{2}{c|}{Ray Preimages in $D_p$} & \multicolumn{2}{c|}{$\mathbf{C}(z)$ in $D_p$}\\
\cline{3-6}
& & $\arg(W)$ & Preimage & Value $\mathbf{C}$ & Subdomain of $D_p$ \\
\hline\hline
\multirow{4}{*}{$\alpha$} & \multirow{4}{*}{\makecell[l]{$(2\langle h\rangle(\alpha)-2\langle h\rangle(z))^{2/3}$,\\ 
continued from $\Sigma_{4,3}$
}} &
\shortstrut $0$ & $\Sigma_{4,3}$ & 
\multirow{2}{*}{$\mathbf{D}(\tfrac{1}{\sqrt{2}})$} & 
\multirow{2}{*}{$D_\alpha$, left of $\Sigma_{4,3}$ \& $\Sigma_\mathrm{c}$} \\
\cline{3-4}
&& \shortstrut $\tfrac{2}{3}\pi$ & $\Sigma_{4,1}$ && \\
\cline{3-4}\cline{5-6}
&& \shortstrut $-\tfrac{2}{3}\pi$ & $\Sigma_{2,3}$ & 
\multirow{2}{*}{$\ee^{2\pi\ii\Theta_\infty}\mathbf{D}(\tfrac{1}{\sqrt{2}})$} & 
\multirow{2}{*}{$D_\alpha$, right of $\Sigma_{4,3}$ \& $\Sigma_\mathrm{c}$} \\
\cline{3-4}
&& \shortstrut $\pm\pi$ & $\Sigma_\mathrm{c}$ && \\
\hline
\hline
\multirow{4}{*}{$\beta$} & \multirow{4}{*}{\makecell[l]{$(2\langle h\rangle(z)-2\langle h\rangle(\beta))^{2/3}$,\\
continued from $\Sigma_0$; $\langle h\rangle(\beta)$\\
defined by limit along $\Sigma_0$}} & \shortstrut $0$ & $\Sigma_1^{1+}$, $\Sigma_0$, \& $\Sigma_2^{1-}$ &
$\mathbf{T}(\sqrt{\tfrac{2}{3}}\ee^{-\frac{5\ii\pi}{6}})$ & $D_\beta\cap\circledomain$, left of $\Sigma_0$ \\
\cline{3-4}\cline{5-6}
&& \shortstrut $\tfrac{2}{3}\pi$ & $\Sigma_2^{1+}$ & $\mathbf{T}(\sqrt{\tfrac{2}{3}}\ee^{-2\pi\ii\Theta_0-\frac{5\ii\pi}{6}})$ & $D_\beta\cap\circledomain$, right of $\Sigma_0$ \\
\cline{3-4}\cline{5-6}
&& \shortstrut $-\tfrac{2}{3}\pi$ & $\Sigma_1^{1-}$ & $\mathbf{D}(\tfrac{1}{\sqrt{2}}\ee^{-2\pi\ii\Theta_0})$ & $D_\beta\setminus\circledomain$, left of $\Sigma_\mathrm{c}$ \\
\cline{3-4}\cline{5-6}
&& \shortstrut $\pm\pi$ & $\Sigma_\mathrm{c}$ & $\mathbf{D}(\tfrac{1}{\sqrt{2}}\ee^{\frac{\ii\pi}{3}})$ &
$D_\beta\setminus\circledomain$, right of $\Sigma_\mathrm{c}$ \\
\hline
\hline
\multirow{4}{*}{$\gamma$} & \multirow{4}{*}{\makecell[l]{$(2h(z)-2h(\gamma))^{2/3}$,\\
continued from $\Sigma_2^{1+}$
}} & \shortstrut $0$ & $\Sigma_2^{1+}$ &
\multirow{2}{*}{$\mathbf{D}(\sqrt{\tfrac{3}{2}}\ee^{\frac{5\ii\pi}{6}})$} & 
\multirow{2}{*}{$D_\gamma\cap\circledomain$} \\
\cline{3-4}
&& \shortstrut $\tfrac{2}{3}\pi$ & $\Sigma_2^{1-}$ \& $\Sigma_2^{2-}$ && \\
\cline{3-4}\cline{5-6}
&& \shortstrut $-\tfrac{2}{3}\pi$ & $\Sigma_2^{2+}$ & 
\multirow{2}{*}{$\mathbf{T}(\sqrt{2}\ee^{\frac{2\ii\pi}{3}})$} & 
\multirow{2}{*}{$D_\gamma\setminus\circledomain$} \\
\cline{3-4}
&& \shortstrut $\pm\pi$ & $\Sigma_2^2$ && \\
\hline
\hline
\multirow{4}{*}{$\delta$} & \multirow{4}{*}{\makecell[l]{$(2h(z)-2h(\delta))^{2/3}$,\\
continued from $\Sigma_1^{1-}$
}} & \shortstrut $0$ & $\Sigma_1^{1-}$ &
\multirow{2}{*}{$\mathbf{D}(\sqrt{\tfrac{3}{2}}\ee^{\frac{\ii\pi}{6}})$} & 
\multirow{2}{*}{$D_\delta\cap\circledomain$} \\
\cline{3-4}
&& \shortstrut $\tfrac{2}{3}\pi$ & $\Sigma_1^{2-}$ && \\
\cline{3-4}\cline{5-6}
&& \shortstrut $-\tfrac{2}{3}\pi$ & $\Sigma_1^{1+}$ \& $\Sigma_1^{2+}$ &
\multirow{2}{*}{$\mathbf{T}(\sqrt{2}\ee^{\frac{\ii\pi}{3}})$} &
\multirow{2}{*}{$D_\delta\setminus\circledomain$} \\
\cline{3-4}
&& \shortstrut $\pm\pi$ & $\Sigma_1^2$ && \\
\hline
\end{tabular}
\end{center}
\renewcommand{\arraystretch}{1}
\label{tab:Inner-y1p3-kappa0-s1}
\end{table}
\clearpage

\subsection{The gO case with $y_0\in\TR(\kappa)$ and $s=-1$}
Here we use representative values of $y_0=1.3$ and $\kappa=0$.
\begin{figure}[h!]
\begin{subfigure}{\textwidth}
\begin{center}
\includegraphics[width=5 in]{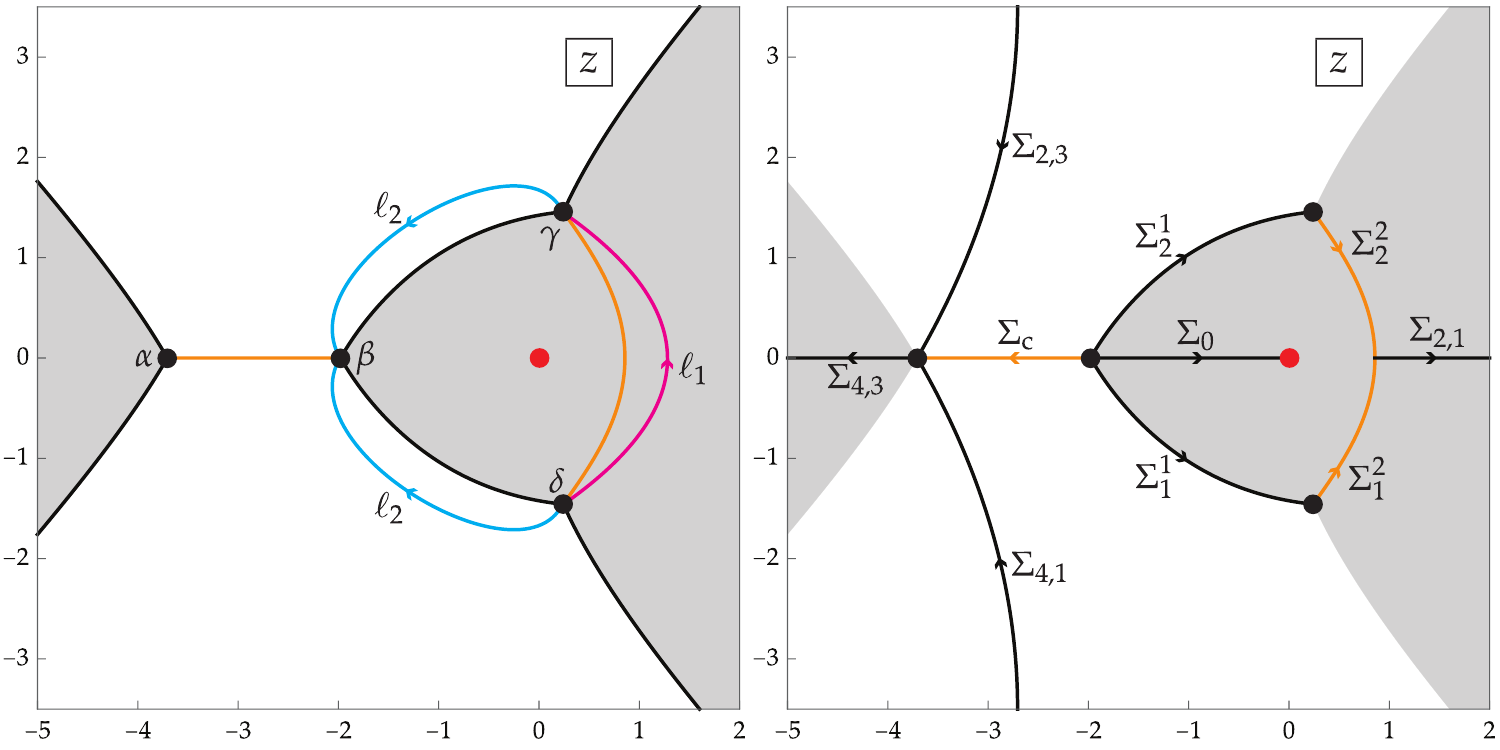}
\end{center}
\subcaption{Left:  Stokes graph including branch cuts (orange) for $h'(z)$ and contours $\ell_1$ and $\ell_2$ appearing in \eqref{eq:BoutrouxConstants}.  Right:  arcs of the jump contour $\Sigma$ for $\mathbf{M}(z)$; note that the arcs $\Sigma_j^k$, $j=1,2$ and $k=1,2$, and $\Sigma_\mathrm{c}$ lie on the Stokes graph.}
\label{fig:SignsContourLenses-y1p3-kappa0-sm1}
\end{subfigure}
\medskip
\begin{subfigure}{\textwidth}
\begin{center}
\includegraphics[width=4 in]{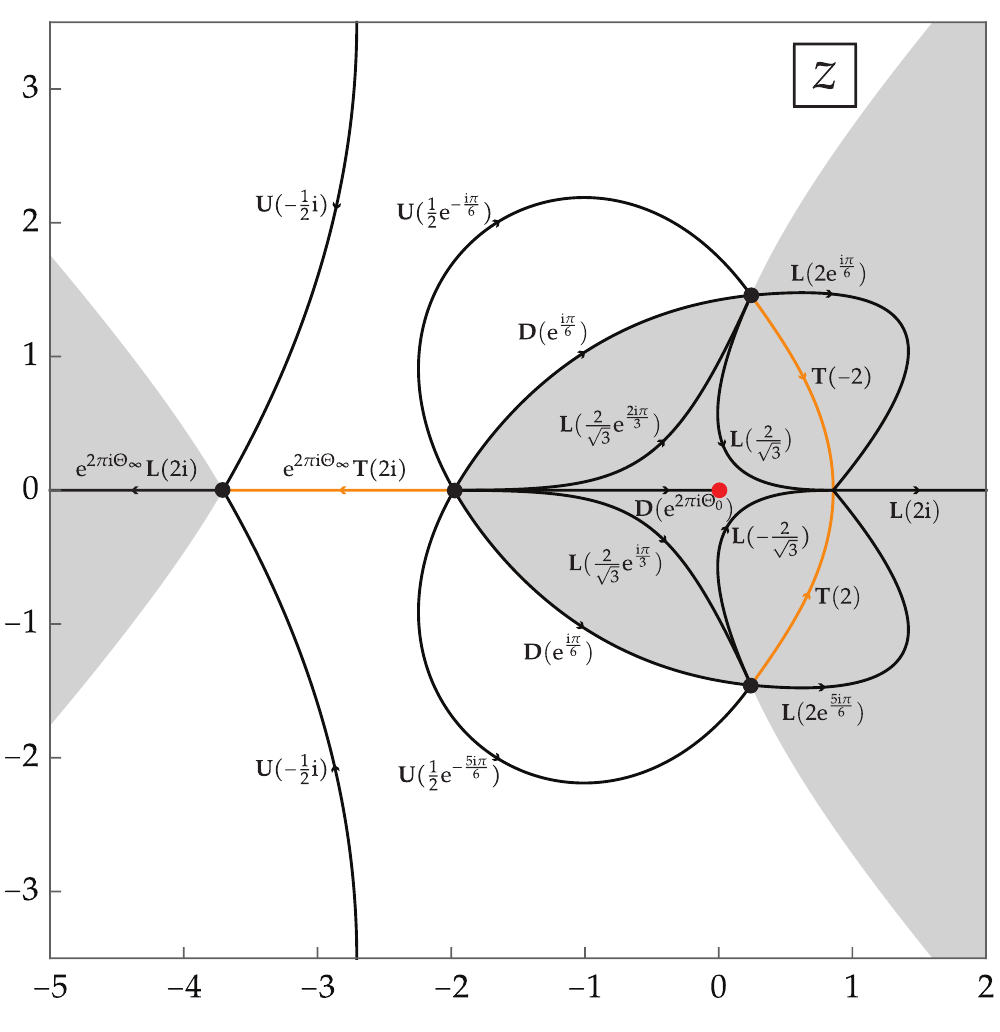}
\end{center}
\subcaption{The jump matrix $\mathbf{W}$ such that $\widetilde{\mathbf{O}}_+(z)=\widetilde{\mathbf{O}}_-(z)\mathbf{W}$.}
\label{fig:JustLenses-y1p3-kappa0-sm1}
\end{subfigure}
\caption{Diagrams for the gO case with $y_0\in\TR(\kappa)$ and $s=-1$.}
\end{figure}
\begin{figure}[h]\ContinuedFloat
\begin{subfigure}{\textwidth}
\begin{center}
\includegraphics[width=5 in]{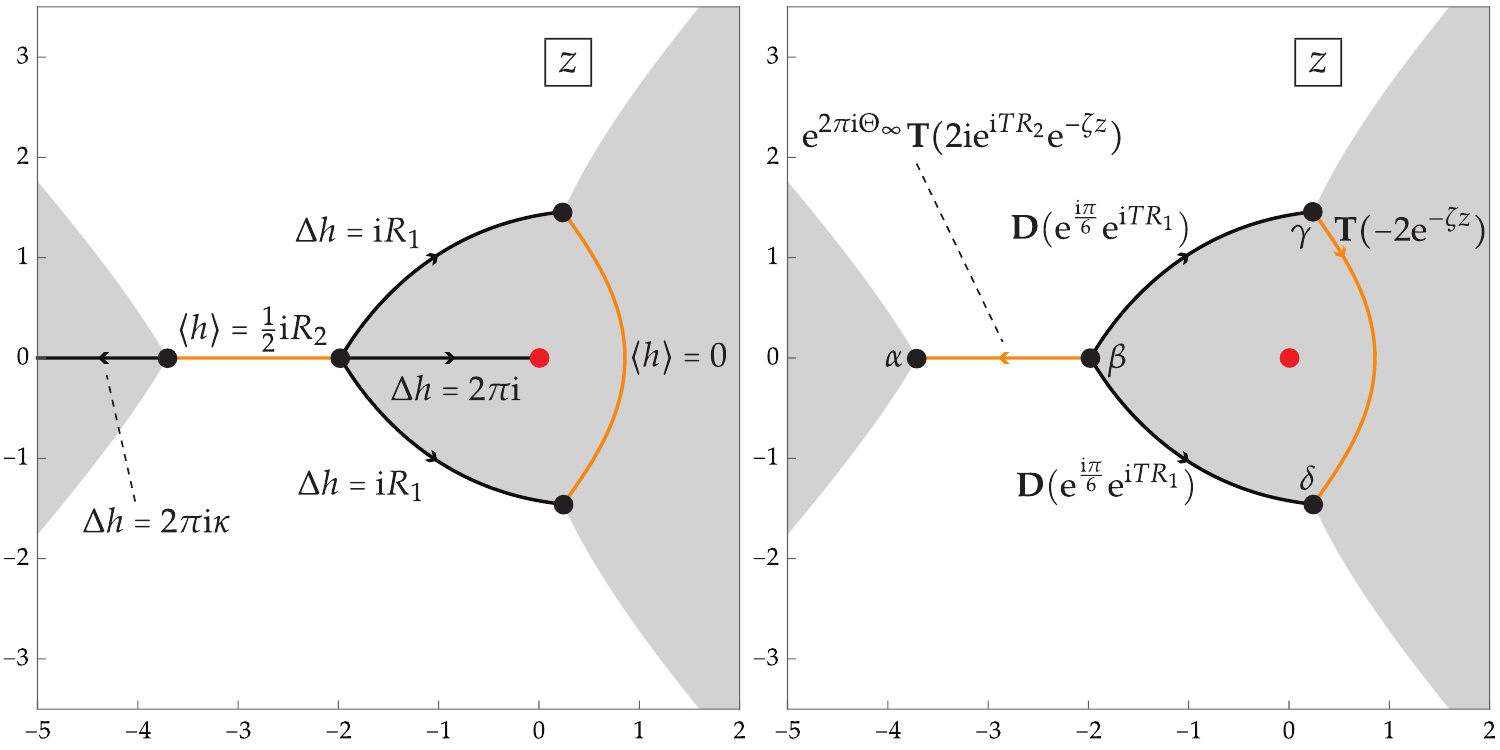}
\end{center}
\subcaption{Left:  the jump conditions satisfied by $h(z)$ with a suitable choice of integration constant.  Right:  the resulting jump conditions for the outer parametrix $\dot{\mathbf{O}}^{\mathrm{out}}(z)$.}
\label{fig:hJAO-y1p3-kappa0-sm1}
\end{subfigure}
\caption{Diagrams for the gO case with $y_0\in\TR(\kappa)$ and $s=-1$ (contd.)}
\end{figure}
\begin{table}
\caption{Inner parametrix data for the gO case with $y_0\in\TR(\kappa)$ and $s=-1$.}
\renewcommand{\arraystretch}{0.45}
\begin{center}
\begin{tabular}{@{}|l|l|c|c|c|c|@{}}
\hline
\multirow{2}{*}{$p$}&\multirow{2}{*}{Conformal map $W:D_p\to\mathbb{C}$}&\multicolumn{2}{c|}{Ray Preimages in $D_p$} & \multicolumn{2}{c|}{$\mathbf{C}(z)$ in $D_p$}\\
\cline{3-6}
& & $\arg(W)$ & Preimage & Value $\mathbf{C}$ & Subdomain of $D_p$ \\
\hline\hline
\multirow{4}{*}{$\alpha$} & \multirow{4}{*}{\makecell[l]{$(2\langle h\rangle(\alpha)-2\langle h\rangle(z))^{2/3}$,\\ 
continued from $\Sigma_{4,3}$
}} &
\shortstrut $0$ & $\Sigma_{4,3}$ & 
\multirow{2}{*}{$\mathbf{D}(\tfrac{1}{\sqrt{2}})$} & 
\multirow{2}{*}{$D_\alpha$, left of $\Sigma_{4,3}$ \& $\Sigma_\mathrm{c}$} \\
\cline{3-4}
&& \shortstrut $\tfrac{2}{3}\pi$ & $\Sigma_{4,1}$ && \\
\cline{3-4}\cline{5-6}
&& \shortstrut $-\tfrac{2}{3}\pi$ & $\Sigma_{2,3}$ & 
\multirow{2}{*}{$\ee^{2\pi\ii\Theta_\infty}\mathbf{D}(\tfrac{1}{\sqrt{2}})$} & 
\multirow{2}{*}{$D_\alpha$, right of $\Sigma_{4,3}$ \& $\Sigma_\mathrm{c}$} \\
\cline{3-4}
&& \shortstrut $\pm\pi$ & $\Sigma_\mathrm{c}$ && \\
\hline
\hline
\multirow{4}{*}{$\beta$} & \multirow{4}{*}{\makecell[l]{$(2\langle h\rangle(\beta)-2\langle h\rangle(z))^{2/3}$,\\
continued from $\Sigma_0$; $\langle h\rangle(\beta)$\\
defined by limit along $\Sigma_0$}} & \shortstrut $0$ & $\Sigma_1^{1+}$, $\Sigma_0$, \& $\Sigma_2^{1-}$ &
$\mathbf{D}(\tfrac{1}{\sqrt{2}}\ee^{-\frac{\ii\pi}{6}})$ & $D_\beta\cap\circledomain$, left of $\Sigma_0$ \\
\cline{3-4}\cline{5-6}
&& \shortstrut $\tfrac{2}{3}\pi$ & $\Sigma_2^{1+}$ & $\mathbf{D}(\tfrac{1}{\sqrt{2}}\ee^{2\pi\ii\Theta_0-\frac{\ii\pi}{6}})$ & $D_\beta\cap\circledomain$, right of $\Sigma_0$ \\
\cline{3-4}\cline{5-6}
&& \shortstrut $-\tfrac{2}{3}\pi$ & $\Sigma_1^{1-}$ & $\mathbf{D}(\tfrac{1}{\sqrt{2}}\ee^{2\pi\ii\Theta_0})$ & $D_\beta\setminus\circledomain$, left of $\Sigma_\mathrm{c}$ \\
\cline{3-4}\cline{5-6}
&& \shortstrut $\pm\pi$ & $\Sigma_\mathrm{c}$ & $\mathbf{D}(\tfrac{1}{\sqrt{2}}\ee^{-\frac{\ii\pi}{3}})$ &
$D_\beta\setminus\circledomain$, right of $\Sigma_\mathrm{c}$ \\
\hline
\hline
\multirow{4}{*}{$\gamma$} & \multirow{4}{*}{\makecell[l]{$(2h(z)-2h(\gamma))^{2/3}$,\\
continued from $\Sigma_2^{1+}$; $h(\gamma)$ \\
defined by limit along $\Sigma_2^{1+}$}} & \shortstrut $0$ & $\Sigma_2^{1+}$ &
\multirow{2}{*}{$\mathbf{T}(\sqrt{2}\ee^{\frac{\ii\pi}{6}})$} & 
\multirow{2}{*}{$D_\gamma\cap\circledomain$} \\
\cline{3-4}
&& \shortstrut $\tfrac{2}{3}\pi$ & $\Sigma_2^{1-}$ \& $\Sigma_2^{2-}$ && \\
\cline{3-4}\cline{5-6}
&& \shortstrut $-\tfrac{2}{3}\pi$ & $\Sigma_2^{2+}$ & 
\multirow{2}{*}{$\mathbf{T}(\sqrt{2}\ee^{\frac{\ii\pi}{3}})$} & 
\multirow{2}{*}{$D_\gamma\setminus\circledomain$} \\
\cline{3-4}
&& \shortstrut $\pm\pi$ & $\Sigma_2^2$ && \\
\hline
\hline
\multirow{4}{*}{$\delta$} & \multirow{4}{*}{\makecell[l]{$(2h(z)-2h(\delta))^{2/3}$,\\
continued from $\Sigma_1^{1-}$; $h(\delta)$ \\
defined by limit along $\Sigma_1^{1-}$}} & \shortstrut $0$ & $\Sigma_1^{1-}$ &
\multirow{2}{*}{$\mathbf{T}(\sqrt{2}\ee^{\frac{5\ii\pi}{6}})$} & 
\multirow{2}{*}{$D_\delta\cap\circledomain$} \\
\cline{3-4}
&& \shortstrut $\tfrac{2}{3}\pi$ & $\Sigma_1^{2-}$ && \\
\cline{3-4}\cline{5-6}
&& \shortstrut $-\tfrac{2}{3}\pi$ & $\Sigma_1^{1+}$ \& $\Sigma_1^{2+}$ &
\multirow{2}{*}{$\mathbf{T}(\sqrt{2}\ee^{\frac{2\ii\pi}{3}})$} &
\multirow{2}{*}{$D_\delta\setminus\circledomain$} \\
\cline{3-4}
&& \shortstrut $\pm\pi$ & $\Sigma_1^2$ && \\
\hline
\end{tabular}
\end{center}
\renewcommand{\arraystretch}{1}
\end{table}

\clearpage
\subsection{The gO case with $y_0\in\TI(\kappa)$ and $s=1$}
Here we use representative values of $y_0=1.3\ii$ and $\kappa=0$.
\begin{figure}[h!]
\begin{subfigure}{\textwidth}
\begin{center}
\includegraphics[width=5 in]{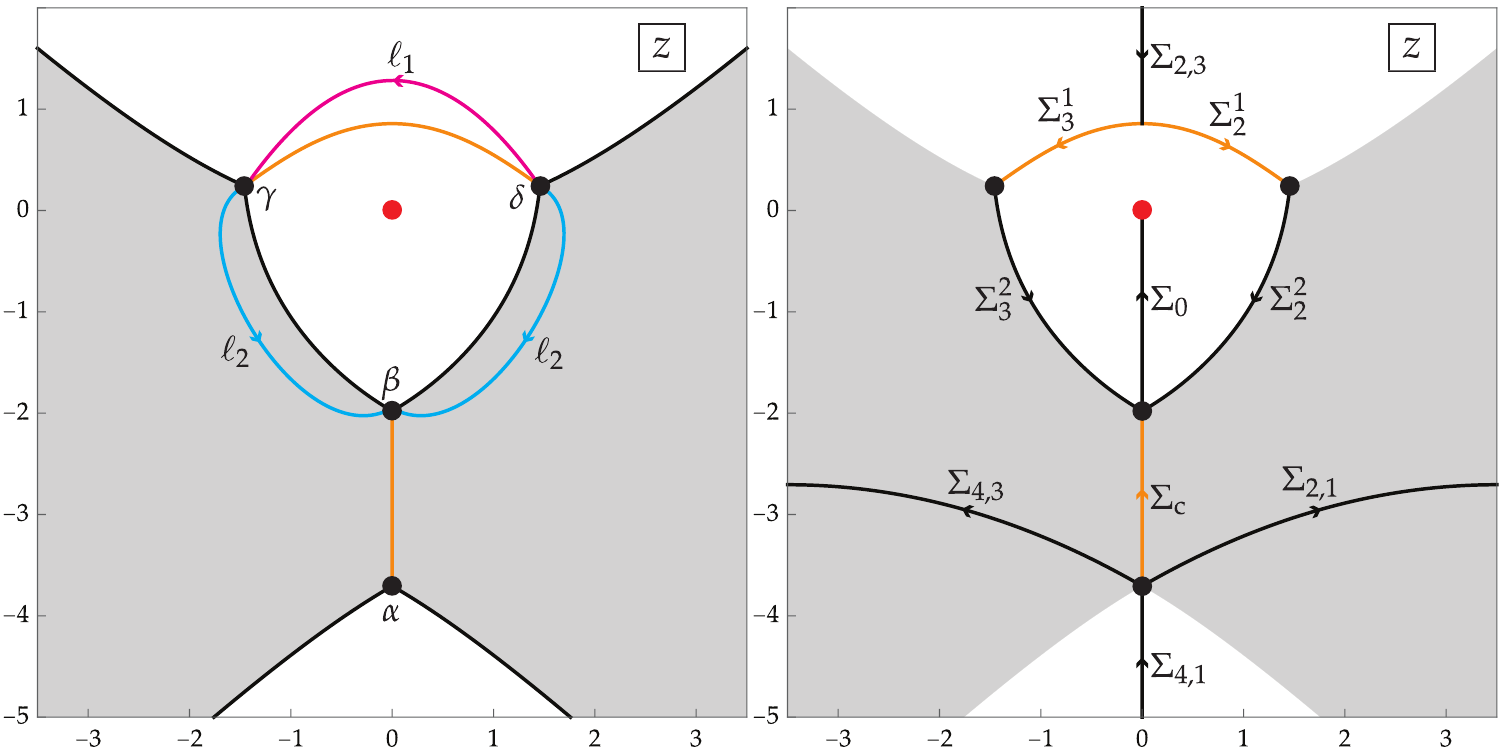}
\end{center}
\subcaption{Left:  Stokes graph including branch cuts (orange) for $h'(z)$ and contours $\ell_1$ and $\ell_2$ appearing in \eqref{eq:BoutrouxConstants}.  Right:  arcs of the jump contour $\Sigma$ for $\mathbf{M}(z)$; note that the arcs $\Sigma_j^k$, $j=2,3$ and $k=1,2$, and $\Sigma_\mathrm{c}$ lie on the Stokes graph.}
\label{fig:SignsContourLenses-y1p3i-kappa0-s1}
\end{subfigure}
\medskip
\begin{subfigure}{\textwidth}
\begin{center}
\includegraphics[width=4 in]{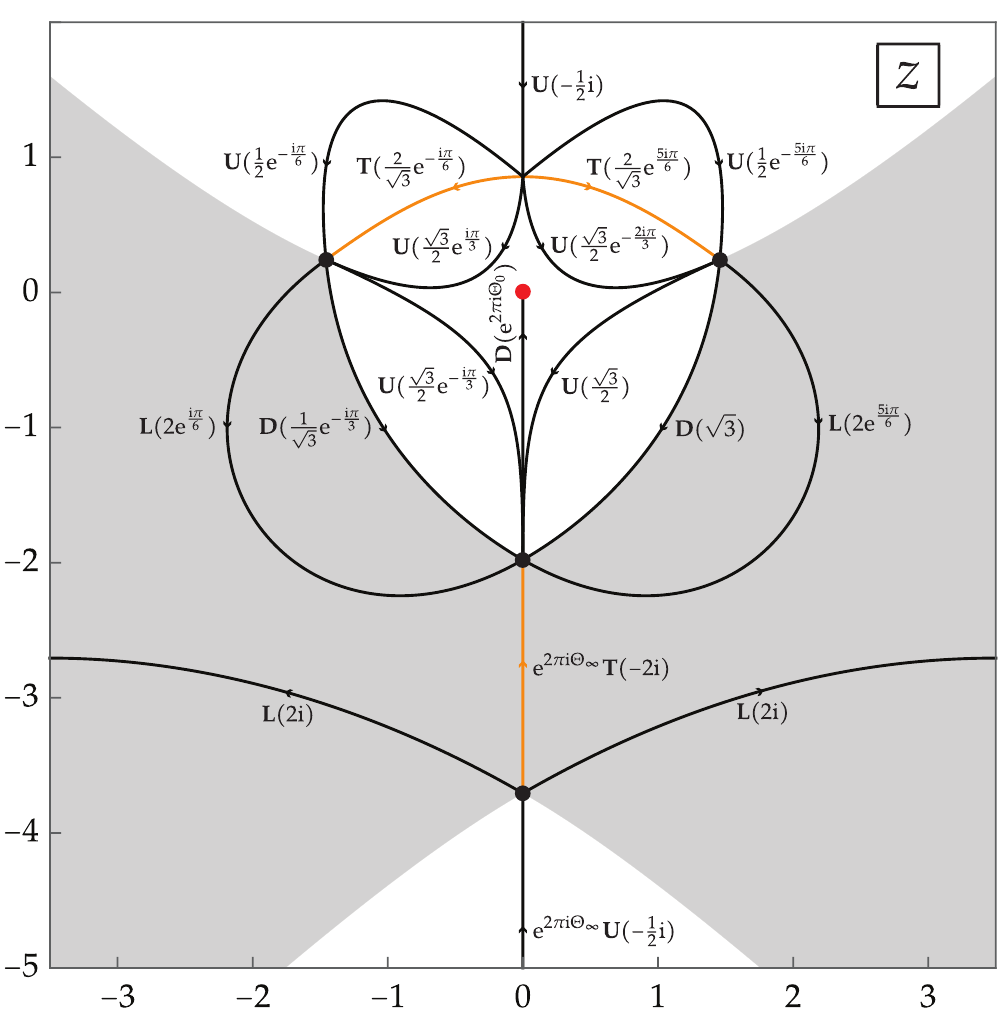}
\end{center}
\subcaption{The jump matrix $\mathbf{W}$ such that $\widetilde{\mathbf{O}}_+(z)=\widetilde{\mathbf{O}}_-(z)\mathbf{W}$.}
\label{fig:JustLenses-y1p3i-kappa0-s1}
\end{subfigure}
\caption{Diagrams for the gO case with $y_0\in\TI(\kappa)$ and $s=1$.}
\end{figure}
\begin{figure}[h]\ContinuedFloat
\begin{subfigure}{\textwidth}
\begin{center}
\includegraphics[width=5 in]{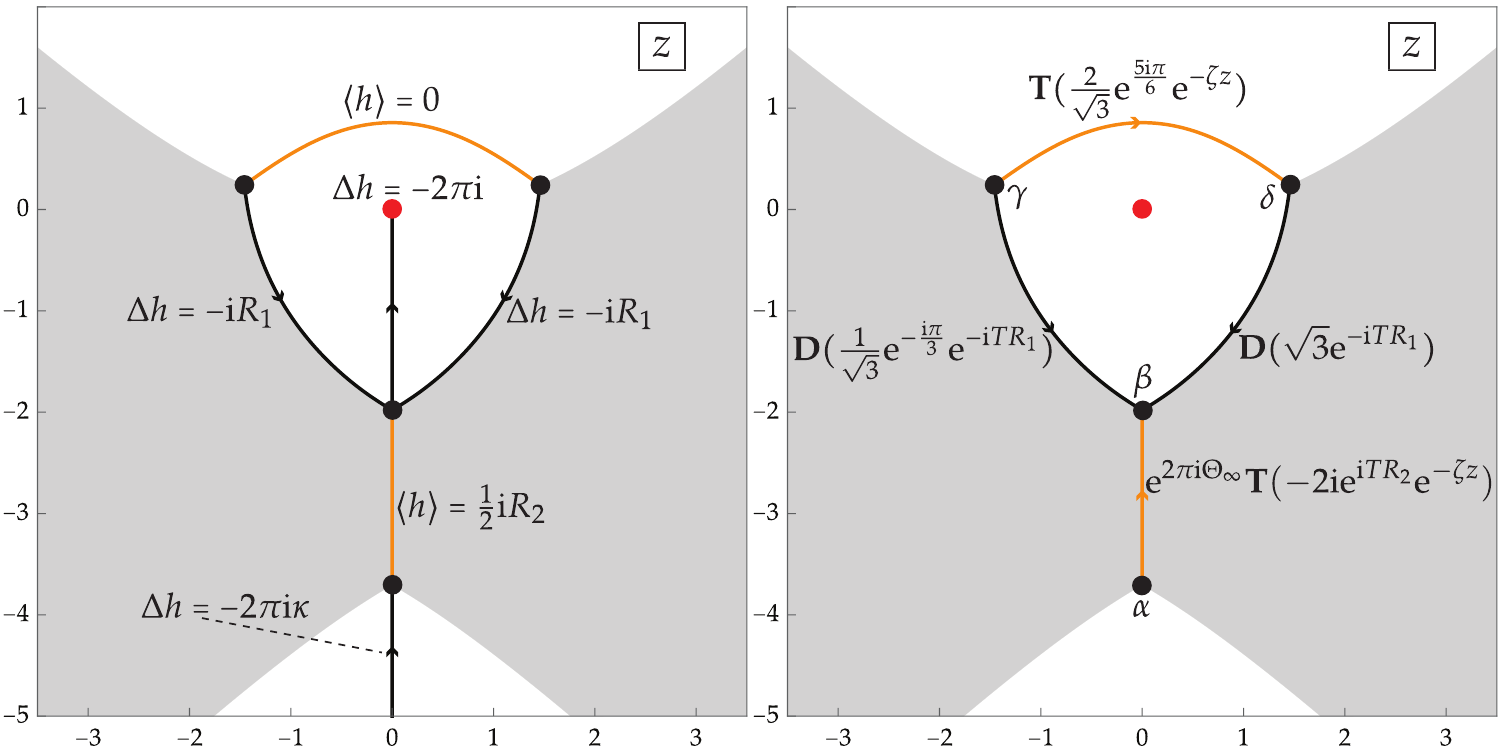}
\end{center}
\subcaption{Left:  the jump conditions satisfied by $h(z)$ with a suitable choice of integration constant.  Right:  the resulting jump conditions for the outer parametrix $\dot{\mathbf{O}}^{\mathrm{out}}(z)$.}
\label{fig:hJAO-y1p3i-kappa0-s1}
\end{subfigure}
\caption{Diagrams for the gO case with $y_0\in\TI(\kappa)$ and $s=1$ (contd.)}
\end{figure}
\begin{table}
\caption{Inner parametrix data for the gO case with $y_0\in\TI(\kappa)$ and $s=1$.}
\renewcommand{\arraystretch}{0.45}
\begin{center}
\begin{tabular}{@{}|l|l|c|c|c|c|@{}}
\hline
\multirow{2}{*}{$p$}&\multirow{2}{*}{Conformal map $W:D_p\to\mathbb{C}$}&\multicolumn{2}{c|}{Ray Preimages in $D_p$} & \multicolumn{2}{c|}{$\mathbf{C}(z)$ in $D_p$}\\
\cline{3-6}
& & $\arg(W)$ & Preimage & Value $\mathbf{C}$ & Subdomain of $D_p$ \\
\hline\hline
\multirow{4}{*}{$\alpha$} & \multirow{4}{*}{\makecell[l]{$(2\langle h\rangle(z)-2\langle h\rangle(\alpha))^{2/3}$,\\ 
continued from $\Sigma_{4,1}$
}} &
\shortstrut $0$ & $\Sigma_{4,1}$ & 
\multirow{2}{*}{$\mathbf{T}(\sqrt{2}\ii)$} & 
\multirow{2}{*}{$D_\alpha$, left of $\Sigma_{4,1}$ \& $\Sigma_\mathrm{c}$} \\
\cline{3-4}
&& \shortstrut $\tfrac{2}{3}\pi$ & $\Sigma_{2,1}$ && \\
\cline{3-4}\cline{5-6}
&& \shortstrut $-\tfrac{2}{3}\pi$ & $\Sigma_{4,3}$ & 
\multirow{2}{*}{$\ee^{2\pi\ii\Theta_\infty}\mathbf{T}(\sqrt{2}\ii)$} & 
\multirow{2}{*}{$D_\alpha$, right of $\Sigma_{4,1}$ \& $\Sigma_\mathrm{c}$} \\
\cline{3-4}
&& \shortstrut $\pm\pi$ & $\Sigma_\mathrm{c}$ && \\
\hline
\hline
\multirow{4}{*}{$\beta$} & \multirow{4}{*}{\makecell[l]{$(2\langle h\rangle(z)-2\langle h\rangle(\beta))^{2/3}$,\\
continued from $\Sigma_0$; $\langle h\rangle(\beta)$\\
defined by limit along $\Sigma_0$}} & \shortstrut $0$ & $\Sigma_2^{2-}$, $\Sigma_0$, \& $\Sigma_3^{2+}$ &
$\mathbf{T}(\sqrt{\tfrac{2}{3}}\ii)$ & $D_\beta\cap\circledomain$, left of $\Sigma_0$ \\
\cline{3-4}\cline{5-6}
&& \shortstrut $\tfrac{2}{3}\pi$ & $\Sigma_3^{2-}$ & $\mathbf{T}(\sqrt{\tfrac{2}{3}}\ii\ee^{-2\pi\ii\Theta_0})$ & $D_\beta\cap\circledomain$, right of $\Sigma_0$ \\
\cline{3-4}\cline{5-6}
&& \shortstrut $-\tfrac{2}{3}\pi$ & $\Sigma_2^{2+}$ & $\mathbf{T}(\sqrt{2}\ee^{\frac{5\ii\pi}{6}})$ & $D_\beta\setminus\circledomain$, left of $\Sigma_\mathrm{c}$ \\
\cline{3-4}\cline{5-6}
&& \shortstrut $\pm\pi$ & $\Sigma_\mathrm{c}$ & $\mathbf{T}(\sqrt{2}\ii\ee^{-2\pi\ii\Theta_0})$ &
$D_\beta\setminus\circledomain$, right of $\Sigma_\mathrm{c}$ \\
\hline
\hline
\multirow{4}{*}{$\gamma$} & \multirow{4}{*}{\makecell[l]{$(2h(z)-2h(\gamma))^{2/3}$,\\
continued from $\Sigma_3^{1-}$; $h(\gamma)$ \\
defined by limit along $\Sigma_3^{1-}$}} & \shortstrut $0$ & $\Sigma_3^{1-}$ &
\multirow{2}{*}{$\mathbf{D}(\sqrt{\tfrac{3}{2}}\ii)$} & 
\multirow{2}{*}{$D_\gamma\cap\circledomain$} \\
\cline{3-4}
&& \shortstrut $\tfrac{2}{3}\pi$ & $\Sigma_3^{2-}$ && \\
\cline{3-4}\cline{5-6}
&& \shortstrut $-\tfrac{2}{3}\pi$ & $\Sigma_3^{1+}$ \& $\Sigma_3^{2+}$ & 
\multirow{2}{*}{$\mathbf{T}(\sqrt{2}\ee^{\frac{\ii\pi}{3}})$} & 
\multirow{2}{*}{$D_\gamma\setminus\circledomain$} \\
\cline{3-4}
&& \shortstrut $\pm\pi$ & $\Sigma_3^2$ && \\
\hline
\hline
\multirow{4}{*}{$\delta$} & \multirow{4}{*}{\makecell[l]{$(2h(z)-2h(\delta))^{2/3}$,\\
continued from $\Sigma_2^{1+}$; $h(\delta)$ \\
defined by limit along $\Sigma_2^{1+}$}} & \shortstrut $0$ & $\Sigma_2^{1+}$ &
\multirow{2}{*}{$\mathbf{D}(\sqrt{\frac{3}{2}}\ee^{\frac{5\ii\pi}{6}})$} & 
\multirow{2}{*}{$D_\delta\cap\circledomain$} \\
\cline{3-4}
&& \shortstrut $\tfrac{2}{3}\pi$ & $\Sigma_2^{1-}$ \& $\Sigma_2^{2-}$ && \\
\cline{3-4}\cline{5-6}
&& \shortstrut $-\tfrac{2}{3}\pi$ & $\Sigma_2^{2+}$ &
\multirow{2}{*}{$\mathbf{T}(\sqrt{2}\ee^{\frac{2\ii\pi}{3}})$} &
\multirow{2}{*}{$D_\delta\setminus\circledomain$} \\
\cline{3-4}
&& \shortstrut $\pm\pi$ & $\Sigma_2^2$ && \\
\hline
\end{tabular}
\end{center}
\renewcommand{\arraystretch}{1}
\label{tab:Inner-y1p3i-kappa0-s1}
\end{table}
\clearpage

\subsection{The gO case with $y_0\in\TI(\kappa)$ and $s=-1$}
Here we use representative values of $y_0=1.3\ii$ and $\kappa=0$.
\begin{figure}[h!]
\begin{subfigure}{\textwidth}
\begin{center}
\includegraphics[width=5 in]{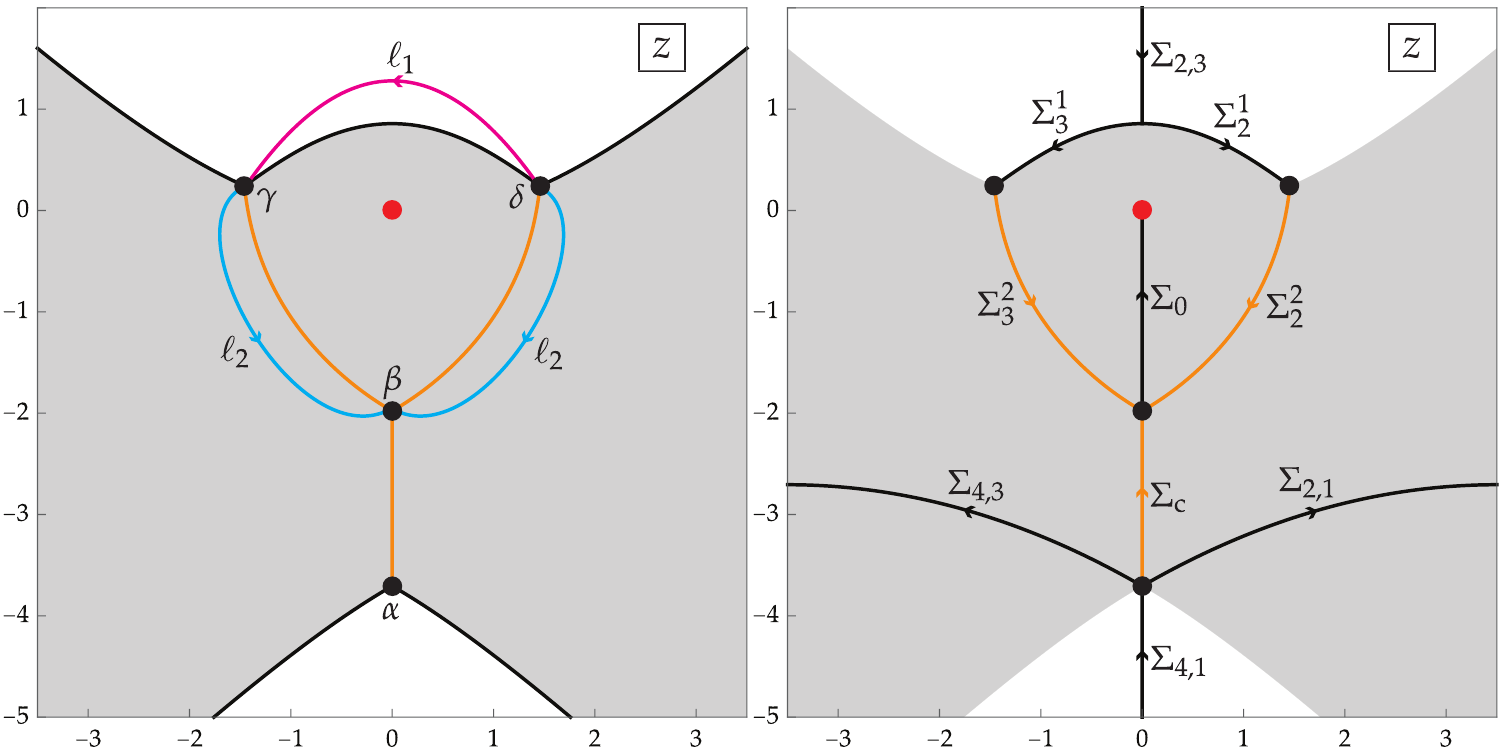}
\end{center}
\subcaption{Left:  Stokes graph including branch cuts (orange) for $h'(z)$ and contours $\ell_1$ and $\ell_2$ appearing in \eqref{eq:BoutrouxConstants}.  Right:  arcs of the jump contour $\Sigma$ for $\mathbf{M}(z)$; note that the arcs $\Sigma_j^k$, $j=2,3$ and $k=1,2$, and $\Sigma_\mathrm{c}$ lie on the Stokes graph.}
\label{fig:SignsContourLenses-y1p3i-kappa0-sm1}
\end{subfigure}
\medskip
\begin{subfigure}{\textwidth}
\begin{center}
\includegraphics[width=4 in]{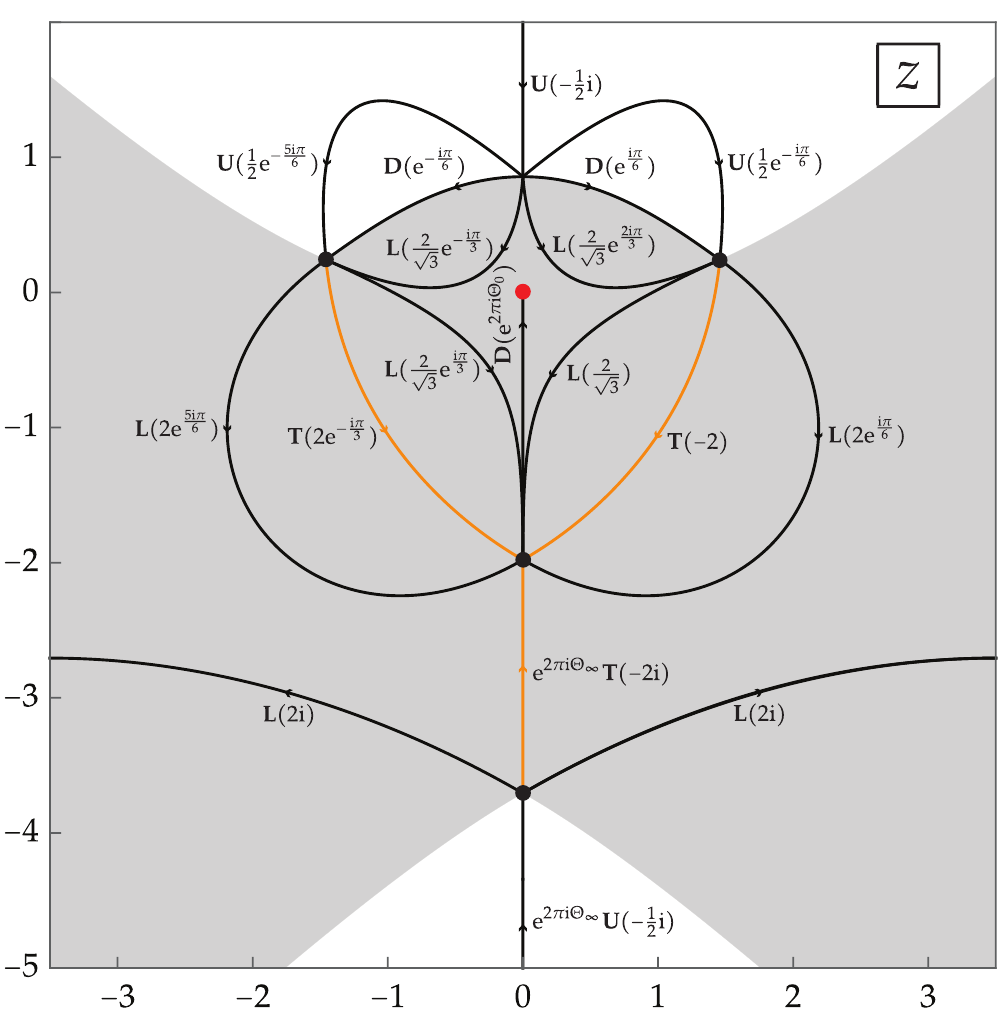}
\end{center}
\subcaption{The jump matrix $\mathbf{W}$ such that $\widetilde{\mathbf{O}}_+(z)=\widetilde{\mathbf{O}}_-(z)\mathbf{W}$.}
\label{fig:JustLenses-y1p3i-kappa0-sm1}
\end{subfigure}
\caption{Diagrams for the gO case with $y_0\in\TI(\kappa)$ and $s=-1$.}
\end{figure}
\begin{figure}[h]\ContinuedFloat
\begin{subfigure}{\textwidth}
\begin{center}
\includegraphics[width=5 in]{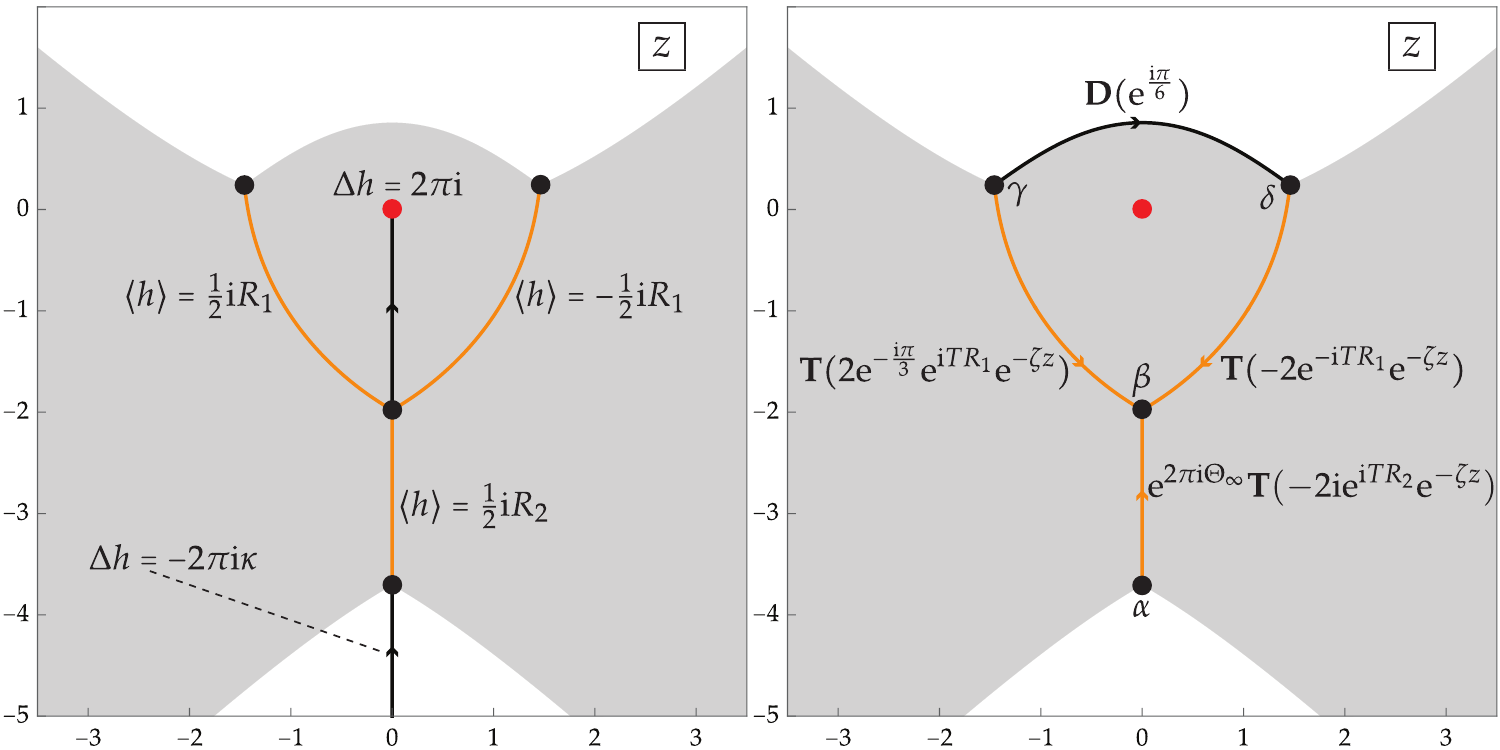}
\end{center}
\subcaption{Left:  the jump conditions satisfied by $h(z)$ with a suitable choice of integration constant.  Right:  the resulting jump conditions for the outer parametrix $\dot{\mathbf{O}}^{\mathrm{out}}(z)$.}
\label{fig:hJAO-y1p3i-kappa0-sm1}
\end{subfigure}
\caption{Diagrams for the gO case with $y_0\in\TI(\kappa)$ and $s=-1$ (contd.)}
\end{figure}
\begin{table}
\caption{Inner parametrix data for the gO case with $y_0\in\TI(\kappa)$ and $s=-1$.}
\renewcommand{\arraystretch}{0.45}
\begin{center}
\begin{tabular}{@{}|l|l|c|c|c|c|@{}}
\hline
\multirow{2}{*}{$p$}&\multirow{2}{*}{Conformal map $W:D_p\to\mathbb{C}$}&\multicolumn{2}{c|}{Ray Preimages in $D_p$} & \multicolumn{2}{c|}{$\mathbf{C}(z)$ in $D_p$}\\
\cline{3-6}
& & $\arg(W)$ & Preimage & Value $\mathbf{C}$ & Subdomain of $D_p$ \\
\hline\hline
\multirow{4}{*}{$\alpha$} & \multirow{4}{*}{\makecell[l]{$(2\langle h\rangle(z)-2\langle h\rangle(\alpha))^{2/3}$,\\ 
continued from $\Sigma_{4,1}$
}} &
\shortstrut $0$ & $\Sigma_{4,1}$ & 
\multirow{2}{*}{$\mathbf{T}(\sqrt{2}\ii)$} & 
\multirow{2}{*}{$D_\alpha$, left of $\Sigma_{4,1}$ \& $\Sigma_\mathrm{c}$} \\
\cline{3-4}
&& \shortstrut $\tfrac{2}{3}\pi$ & $\Sigma_{2,1}$ && \\
\cline{3-4}\cline{5-6}
&& \shortstrut $-\tfrac{2}{3}\pi$ & $\Sigma_{4,3}$ & 
\multirow{2}{*}{$\ee^{2\pi\ii\Theta_\infty}\mathbf{T}(\sqrt{2}\ii)$} & 
\multirow{2}{*}{$D_\alpha$, right of $\Sigma_{4,1}$ \& $\Sigma_\mathrm{c}$} \\
\cline{3-4}
&& \shortstrut $\pm\pi$ & $\Sigma_\mathrm{c}$ && \\
\hline
\hline
\multirow{4}{*}{$\beta$} & \multirow{4}{*}{\makecell[l]{$(2\langle h\rangle(\beta)-2\langle h\rangle(z))^{2/3}$,\\
continued from $\Sigma_0$; $\langle h\rangle(\beta)$\\
defined by limit along $\Sigma_0$}} & \shortstrut $0$ & $\Sigma_2^{2-}$, $\Sigma_0$, \& $\Sigma_3^{2+}$ &
$\mathbf{D}(\tfrac{1}{\sqrt{2}}\ii)$ & $D_\beta\cap\circledomain$, left of $\Sigma_0$ \\
\cline{3-4}\cline{5-6}
&& \shortstrut $\tfrac{2}{3}\pi$ & $\Sigma_3^{2-}$ & $\mathbf{D}(\tfrac{1}{\sqrt{2}}\ii\ee^{2\pi\ii\Theta_0})$ & $D_\beta\cap\circledomain$, right of $\Sigma_0$ \\
\cline{3-4}\cline{5-6}
&& \shortstrut $-\tfrac{2}{3}\pi$ & $\Sigma_2^{2+}$ & $\mathbf{T}(\sqrt{2}\ee^{\frac{\ii\pi}{6}})$ & $D_\beta\setminus\circledomain$, left of $\Sigma_\mathrm{c}$ \\
\cline{3-4}\cline{5-6}
&& \shortstrut $\pm\pi$ & $\Sigma_\mathrm{c}$ & $\mathbf{T}(\sqrt{2}\ii\ee^{2\pi\ii\Theta_0})$ &
$D_\beta\setminus\circledomain$, right of $\Sigma_\mathrm{c}$ \\
\hline
\hline
\multirow{4}{*}{$\gamma$} & \multirow{4}{*}{\makecell[l]{$(2h(z)-2h(\gamma))^{2/3}$,\\
continued from $\Sigma_3^{1-}$
}} & \shortstrut $0$ & $\Sigma_3^{1-}$ &
\multirow{2}{*}{$\mathbf{T}(\sqrt{2}\ii)$} & 
\multirow{2}{*}{$D_\gamma\cap\circledomain$} \\
\cline{3-4}
&& \shortstrut $\tfrac{2}{3}\pi$ & $\Sigma_3^{2-}$ && \\
\cline{3-4}\cline{5-6}
&& \shortstrut $-\tfrac{2}{3}\pi$ & $\Sigma_3^{1+}$ \& $\Sigma_3^{2+}$ & 
\multirow{2}{*}{$\mathbf{T}(\sqrt{2}\ee^{\frac{2\ii\pi}{3}})$} & 
\multirow{2}{*}{$D_\gamma\setminus\circledomain$} \\
\cline{3-4}
&& \shortstrut $\pm\pi$ & $\Sigma_3^2$ && \\
\hline
\hline
\multirow{4}{*}{$\delta$} & \multirow{4}{*}{\makecell[l]{$(2h(z)-2h(\delta))^{2/3}$,\\
continued from $\Sigma_2^{1+}$
}} & \shortstrut $0$ & $\Sigma_2^{1+}$ &
\multirow{2}{*}{$\mathbf{T}(\sqrt{2}\ee^{\frac{\ii\pi}{6}})$} & 
\multirow{2}{*}{$D_\delta\cap\circledomain$} \\
\cline{3-4}
&& \shortstrut $\tfrac{2}{3}\pi$ & $\Sigma_2^{1-}$ \& $\Sigma_2^{2-}$ && \\
\cline{3-4}\cline{5-6}
&& \shortstrut $-\tfrac{2}{3}\pi$ & $\Sigma_2^{2+}$ &
\multirow{2}{*}{$\mathbf{T}(\sqrt{2}\ee^{\frac{\ii\pi}{3}})$} &
\multirow{2}{*}{$D_\delta\setminus\circledomain$} \\
\cline{3-4}
&& \shortstrut $\pm\pi$ & $\Sigma_2^2$ && \\
\hline
\end{tabular}
\end{center}
\renewcommand{\arraystretch}{1}
\label{tab:Inner-y1p3i-kappa0-sm1}
\end{table}
\clearpage

\section{Effective approximation of rational solutions on Boutroux domains}
\label{app:Effective}
The approximation most directly adapted to our analysis of Riemann-Hilbert Problem~\ref{rhp:general} is that of the rational functions $u^{[3]}_\mathrm{gO}(x;m,n)$ and $u^{[3]}_\mathrm{gH}(x;m,n)$.  The basic approximation formula for these functions reads:
\begin{multline}
u_\mathrm{F}^{[3]}\left(|\Theta_{0,\mathrm{F}}^{[3]}(m,n)|^\frac{1}{2}y_0+|\Theta_{0,\mathrm{F}}^{[3]}(m,n)|^{-\frac{1}{2}}\zeta;m,n\right)^\chi=|\Theta_{0,\mathrm{F}}^{[3]}(m,n)|^{\frac{1}{2}\chi}\left(f(\zeta-\zeta_0)^\chi +O\left(|\Theta_{0,\mathrm{F}}^{[3]}(m,n)|^{-1}\right)\right),\\
f(\zeta-\zeta_0)=\dot{U}^{[3]}_\mathrm{F}(\zeta;y_0):=\psi_\mathrm{F}^{[3]}(y_0)\frac{\Theta(K-\ii(\varphi-\mathfrak{z}_1^{[3]}))\Theta(K-\ii(\varphi-\mathfrak{z}_2^{[3]}))}
{\Theta(K-\ii(\varphi-\mathfrak{p}_1^{[3]}))\Theta(K-\ii(\varphi-\mathfrak{p}_2^{[3]}))}.
\label{eq:Type3-summary}
\end{multline}
Assuming that $\chi=-\mathrm{sgn}(\ln |f(\zeta-\zeta_0)|)$, 
the error term is uniform for bounded $\zeta$ and for $y_0$ in a compact subset of the selected Boutroux domain $\mathcal{B}$.
Computing this leading term consists of the following steps. 
\begin{enumerate}
\item Define the parameters $T>0$, $s=\pm 1$, and $\kappa\in (-1,1)$ in terms of $(m,n)$ by \eqref{eq:gH-type3-RHP-parameters} (resp., by \eqref{eq:gO-type3-RHP-parameters}) for the gH family (resp., for the gO family).
\item Select a Boutroux domain $\mathcal{B}=\TR(\kappa)$, $\mathcal{B}=\TI(\kappa)$ (both for the gO family only), or $\mathcal{B}=\rectangle(\kappa)$, and ensure that $y_0\in\mathcal{B}$ (the boundaries of the domains can be numerically computed given $\kappa$ using \eqref{eq:intro-arcs}).
\item Using numerical root finding and continuation methods informed by the relevent section among Sections~\ref{sec:rectangle-domain}, \ref{sec:TR}, and \ref{sec:TI}, determine the value of $E=E(y_0;\kappa)$ for which the Boutroux equations \eqref{eq:Boutroux} hold.  
\item With $E$ determined, the polynomial $P(z)$ given by \eqref{eq:elliptic-ODE} is now known.  Find its roots $\alpha$, $\beta$, $\gamma$, and $\delta$ and order them according to the Stokes graph as illustrated in Figures~\ref{fig:SignsContourLenses-y0-kappa0-s1}--\ref{fig:SignsContourLenses-y1p3i-kappa0-sm1} relevant for the family, Boutroux domain, and sign $s$ under consideration (recall that while these figures are for special cases of the parameters, the abstract Stokes graph depends only on the selected Boutroux domain).  Then using the contours $\ell_1$ and $\ell_2$ and branch cuts for $R(z)$ illustrated in the same plots, numerically compute the real constants $R_1$ and $R_2$ given by \eqref{eq:BoutrouxConstants} (one can use this computation as an opportunity to verify the accuracy of the determination of $E$, which should force the imaginary parts of $R_1$ and $R_2$ to vanish to machine precision).
\item Matching the topological representation of $\mathfrak{a}$ and $\mathfrak{b}$ cycles shown in Figure~\ref{fig:OuterParametrixContours} with the actual ordering of the points obtained in the previous step, and using the relationship \eqref{eq:r-to-R}, numerically calculate the constant $c$ given in \eqref{eq:omega} and the constant $H_\omega$ given in \eqref{B-cycle}.  Then taking into account the value of $z_0$ given in terms of the well-defined branch points $\alpha$, $\beta$, $\gamma$, $\delta$ by \eqref{eq:z0-define}, numerically evaluate the integrals $a(0)$, $a(\infty)$, and $a(z_0)$ (see \eqref{Abel-map}).  Using these, define the phase shifts $\mathfrak{z}_j^{\smash{[3]}}$ and $\mathfrak{p}^{\smash{[3]}}_j$, $j=1,2$, by \eqref{eq:dotU-phases}.
\item
Using Table~\ref{tab:outer-uniformize-phases} to determine the phases $C_\mathrm{G}$ and $C_\mathrm{B}$ relevant to the case at hand (noting also $\Theta_\infty=-\kappa T$), define the aggregate phase $\varphi$ by \eqref{eq:F1-U}.
\item 
Recalling the definition \eqref{theta-function} of $\Theta(z)$ given $H_\omega$ (possibly implementing this definition using built-in function calls to Jacobi theta functions) and using $K=\ii\pi +\tfrac{1}{2}H_\omega$, calculate the complex amplitude factor $\psi^{[3]}_\mathrm{F}(y_0)$ from \eqref{eq:psi-simpler}.  Then bring in $\varphi$ and the four phase shifts to finish the rest of the calculation using \eqref{eq:Type3-summary}.
\end{enumerate}
The other approximation proved by analyzing the same Riemann-Hilbert Problem applies to the rational functions $u_\mathrm{gO}^{[1]}(x;m,n)$ and $u_\mathrm{gH}^{[1]}(x;m,n)$, and it reads
\begin{multline}
u_\mathrm{F}^{[1]}\left(|\Theta_{0,\mathrm{F}}^{[1]}(m,n)|^\frac{1}{2}\widehat{y_0}+|\Theta_{0,\mathrm{F}}^{[1]}(m,n)|^{-\frac{1}{2}}\widehat{\zeta};m,n\right)^\chi=|\Theta_{0,\mathrm{F}}^{[1]}(m,n)|^{\frac{1}{2}\chi}\left(f(\zeta-\zeta_0)^\chi +O\left(|\Theta_{0,\mathrm{F}}^{[1]}(m,n)|^{-1}\right)\right),\\
f(\widehat{\zeta}-\widehat{\zeta_0})=\dot{U}^{[1]}_\mathrm{F}(\widehat{\zeta};\widehat{y_0}):=\psi_\mathrm{F}^{[1]}(\widehat{y_0})\frac{\Theta(K-\ii(\varphi-\mathfrak{z}_1^{[1]}))\Theta(K-\ii(\varphi-\mathfrak{z}_2^{[1]}))}
{\Theta(K-\ii(\varphi-\mathfrak{p}_1^{[1]}))\Theta(K-\ii(\varphi-\mathfrak{p}_2^{[1]}))}.
\label{eq:Type1-summary}
\end{multline}
Again taking $\chi=-\mathrm{sgn}(\ln|f(\widehat{\zeta}-\widehat{\zeta_0})|)$,
the error term is uniform for bounded $\widehat{\zeta}$ and for $\widehat{y_0}$ in a compact subset of the chosen Boutroux domain $\mathcal{B}$.
To compute this leading term, we modify the above steps as follows.
\begin{enumerate}
\item Define the parameters $T>0$, $s=\pm 1$, and $\kappa\in(-1,1)$ in terms of $(m,n)$ by \eqref{eq:gH-type1-RHP-parameters} (resp., by \eqref{eq:gO-type1-RHP-parameters}) for the gH family (resp., for the gO family).  These are not directly related to the ``native'' parameters $\Theta_{0,\mathrm{F}}^{[1]}(m,n)$ and $\Theta_{\infty,\mathrm{F}}^{[1]}(m,n)$, but they are the correct values to use for the remaining steps of the calculation.  From the given values of the variables $\widehat{\zeta}$ and $\widehat{y_0}$, define scaled versions needed for the subsequent steps by setting  
\eq
\zeta:=\sqrt{\frac{2}{1-s\kappa}}\widehat{\zeta}\quad\text{and}\quad y_0:=\sqrt{\frac{1-s\kappa}{2}}\widehat{y_0}.
\endeq
\item 
As above.
\item
As above.
\item
As above.
\item
As above, but instead calculate the phase shifts $\mathfrak{z}_j^{\smash{[1]}}$ and $\mathfrak{p}_j^{\smash{[1]}}$ from \eqref{eq:dotUtwist-phases}.
\item
As above for the indicated parameters.
\item
As above for the indicated parameters, but now define the complex amplitude $\psi_\mathrm{F}^{[1]}(\widehat{y_0})=\ee^{\ii(\mathfrak{z}_1^{[1]}-\mathfrak{p}_2^{[1]})}M$ in terms of $M$ given in \eqref{eq:psi-twist-simpler}, and finish the calculation using \eqref{eq:Type1-summary}.  
\end{enumerate}
We do not obtain any approximations for the rational solutions $u_\mathrm{gO}^{[2]}(x;m,n)$ or $u_\mathrm{gH}^{[2]}(x;m,n)$ directly from analysis of Riemann-Hilbert Problem~\ref{rhp:general}, but we can apply the exact symmetry \eqref{eq:symmetry-1-2} to obtain the following result.
\begin{multline}
u_\mathrm{F}^{[2]}\left(|\Theta_{0,\mathrm{F}}^{[2]}(m,n)|^\frac{1}{2}y_0+|\Theta_{0,\mathrm{F}}^{[2]}(m,n)|^{-\frac{1}{2}}\zeta;m,n\right)^\chi=|\Theta_{0,\mathrm{F}}^{[2]}(m,n)|^{\frac{1}{2}\chi}\left(f(\zeta-\zeta_0)^\chi +O\left(|\Theta_{0,\mathrm{F}}^{[2]}(m,n)|^{-1}\right)\right),\\
f(\zeta-\zeta_0)=\dot{U}^{[2]}_\mathrm{F}(\zeta;y_0):=\ii\dot{U}^{[1]}_\mathrm{F}(-\ii\zeta;-\ii y_0).
\end{multline}
Taking $\chi=-\mathrm{sgn}(\ln|f(\zeta-\zeta_0)|)$, this formula is also uniformly valid for bounded $\zeta$ and $y_0$ in a compact subset of the chosen Boutroux domain $\mathcal{B}$, provided that $\zeta-\zeta_0$ is bounded away from $\mathcal{M}^{[2]}$.  The leading term can obviously be computed by adapting the above procedure to variables rotated by $-\ii$ in the complex plane.

\section{Alternate approach to $u^{[1]}_\mathrm{gO}(x;m,n)$ and $u^{[1]}_\mathrm{gH}(x;m,n)$}
\label{app:Alternate}
The basic approach we have followed in this paper is to use the fact that the Painlev\'e-IV rational solutions of type $1$, which correspond to values of $\kappa$ outside of the basic interval $(-1,1)$, can be extracted via the formula \eqref{eq:u-ucirc} for $u_\tw(x)$ from Riemann-Hilbert Problem~\ref{rhp:general} formulated for the rational solutions of type $3$, which correspond instead to $\kappa\in (-1,1)$.  Of course another approach to the rational solutions of type $1$ (and also type $2$) is to represent these solutions as $u(x)$ instead of $u_\tw(x)$ in \eqref{eq:u-ucirc} and solve Riemann-Hilbert Problem~\ref{rhp:general} for parameters covering all lattice points far from the origin in Figure~\ref{fig:ThetasPlane}.  The latter approach avoids the complication of the changes of variables associated with the B\"acklund transformation $u(x)\mapsto u_\tw(x)$, but it leads to many additional cases for matrix factorizations and parametrix constructions, as one must consider spectral curves for $\kappa<-1$ and $\kappa>1$, as well as for $\kappa\in(-1,1)$.  

Just to give a flavor of the differences that can arise for $|\kappa|>1$, we present here the analogues of Figures~\ref{fig:Trajectories-gO-k0} and \ref{fig:Trajectories-gH-k0} in which we display the critical v-trajectories of the quadratic differential $h'(z)^2\,\dd z^2$ emanating from (generically) simple roots of the quartic $P(z)$.
\begin{figure}[h]
\hspace{-0.2in}
\begin{tabular}{c}
\includegraphics[height=1.2in]{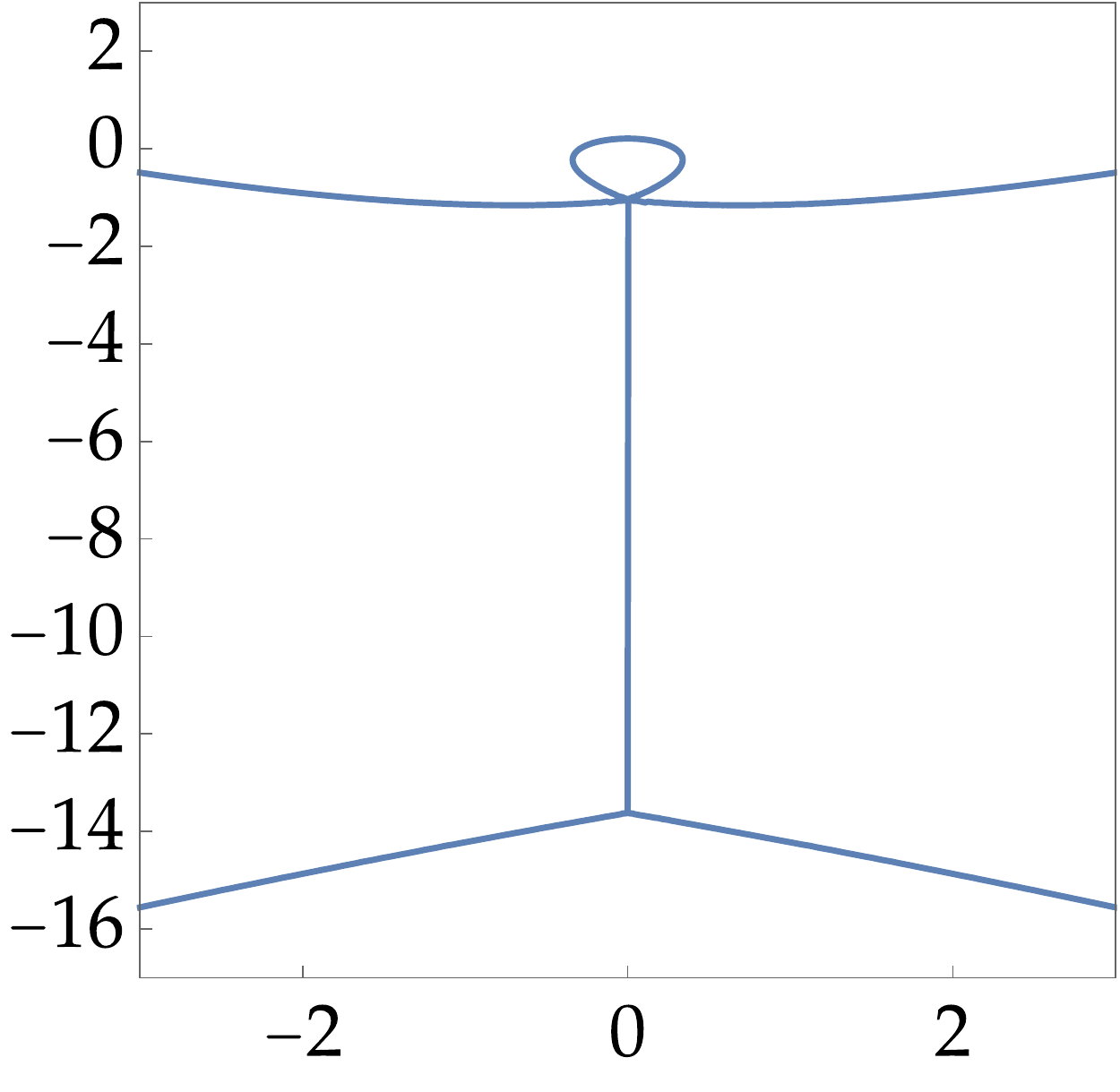}\\
\includegraphics[height=1.2in]{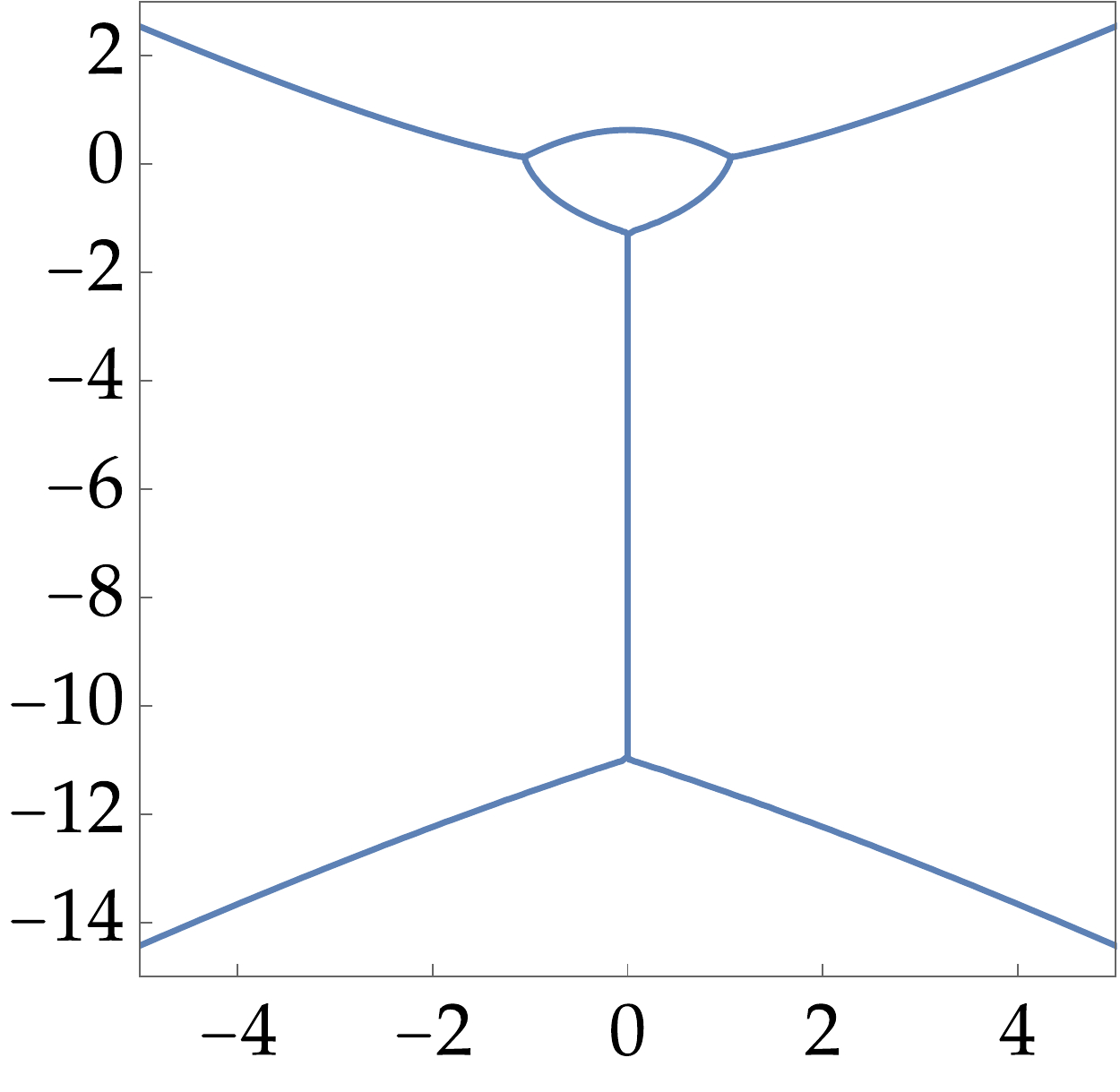}\\
\includegraphics[height=1.2in]{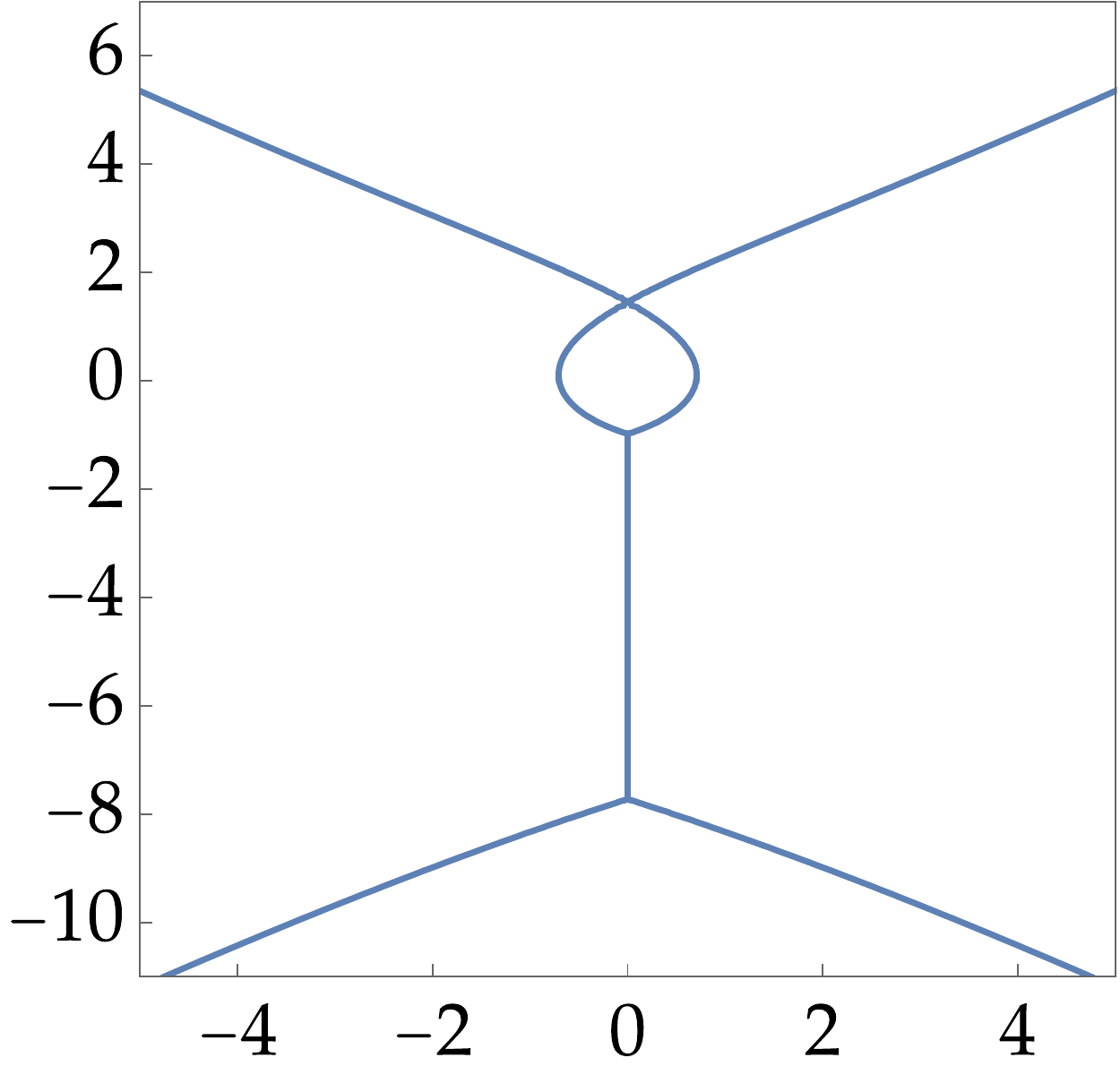}\\
\includegraphics[height=1.2in]{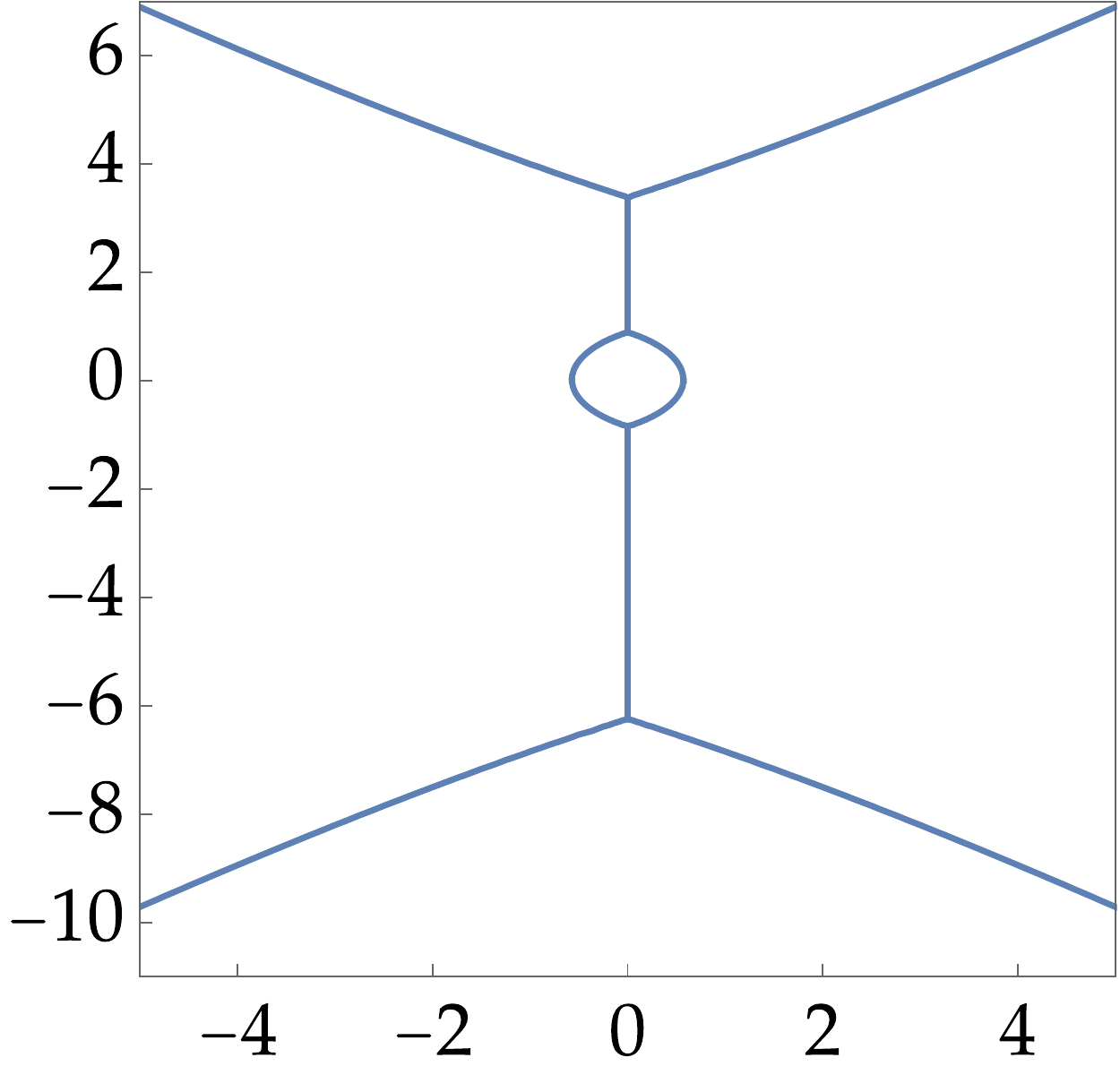}\\
\includegraphics[height=1.2in]{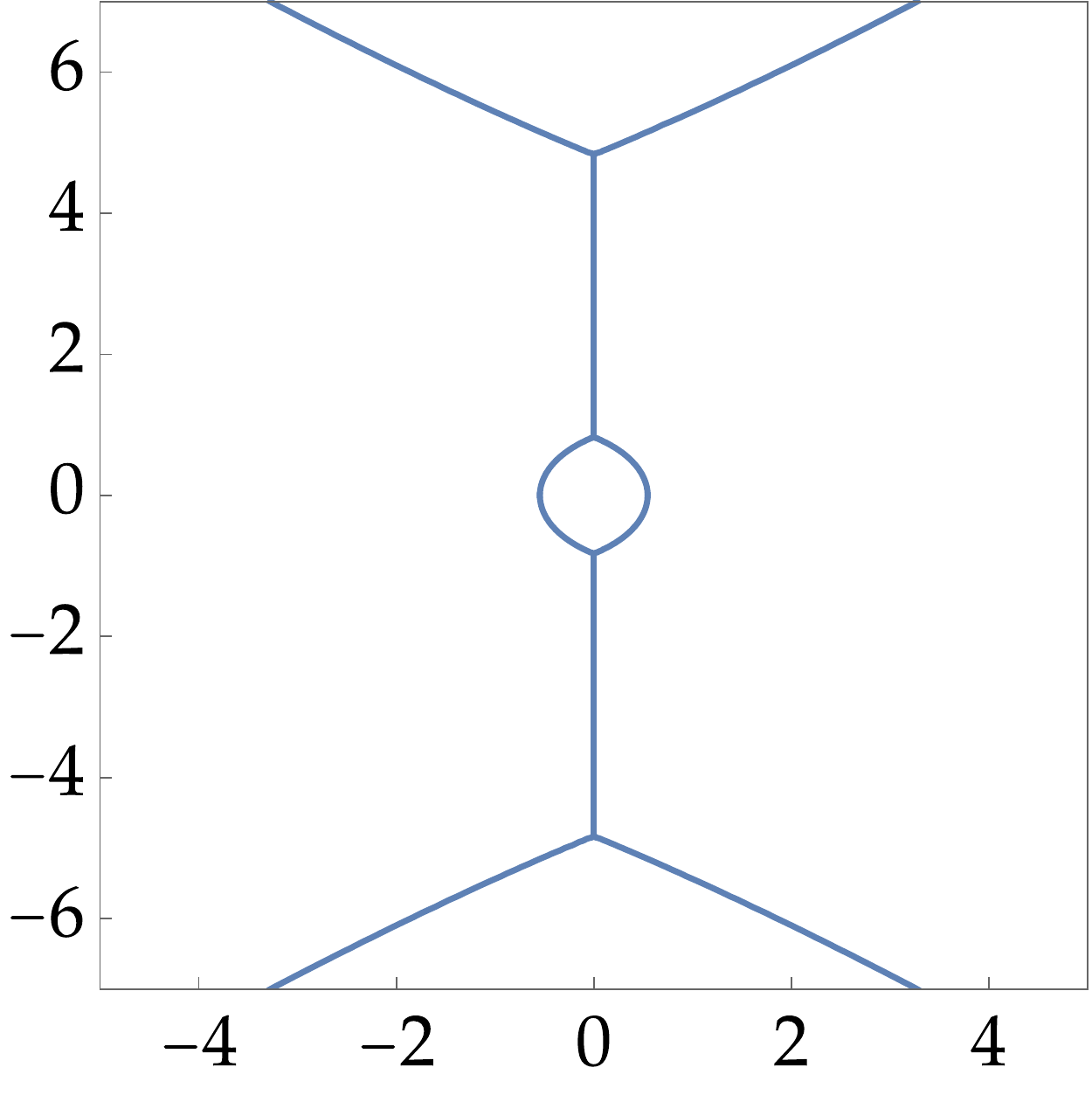}\\
\end{tabular}
\hspace{-.18in}
\begin{tabular}{c}
\includegraphics[height=1.2in]{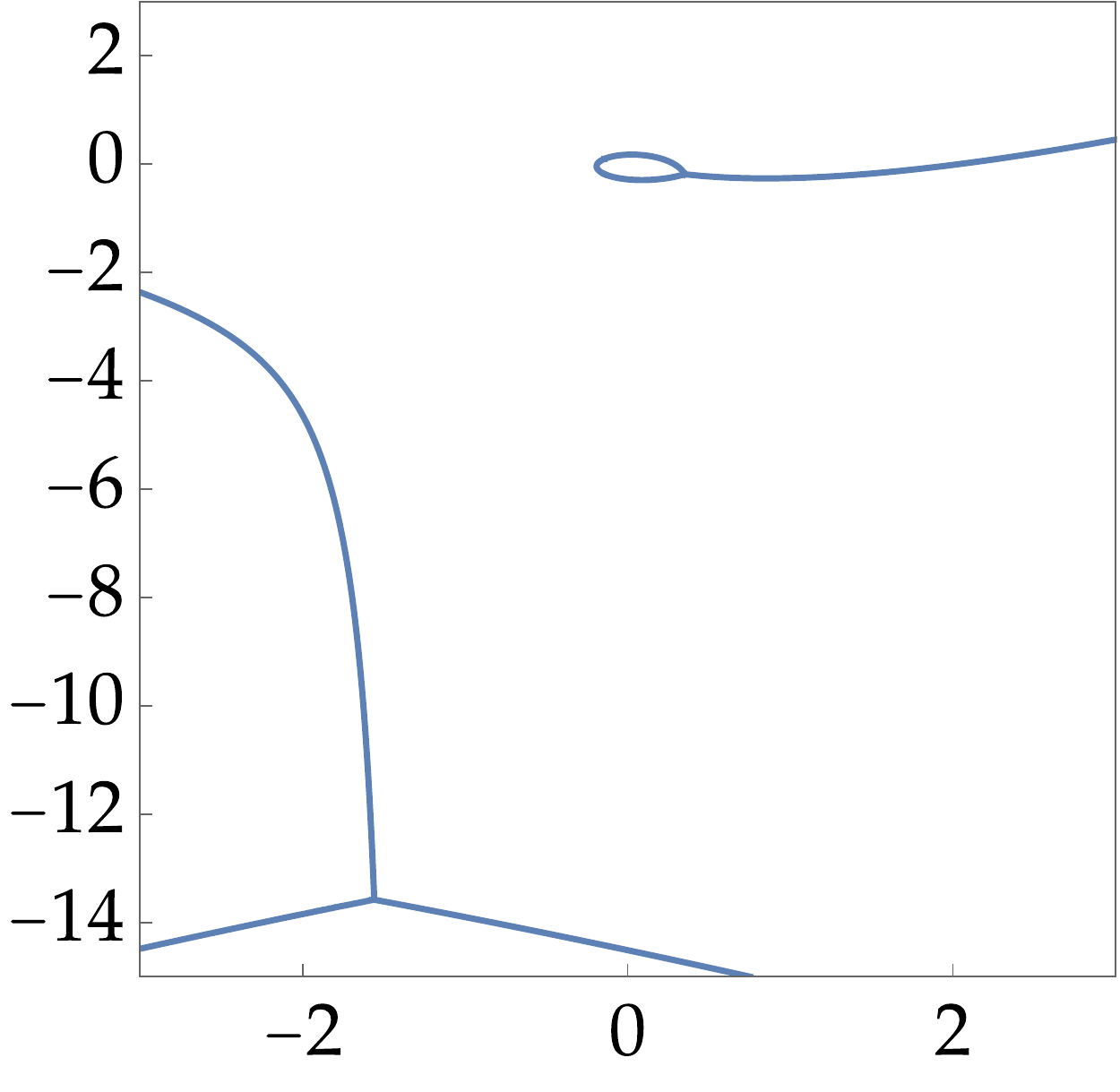}
\includegraphics[height=1.2in]{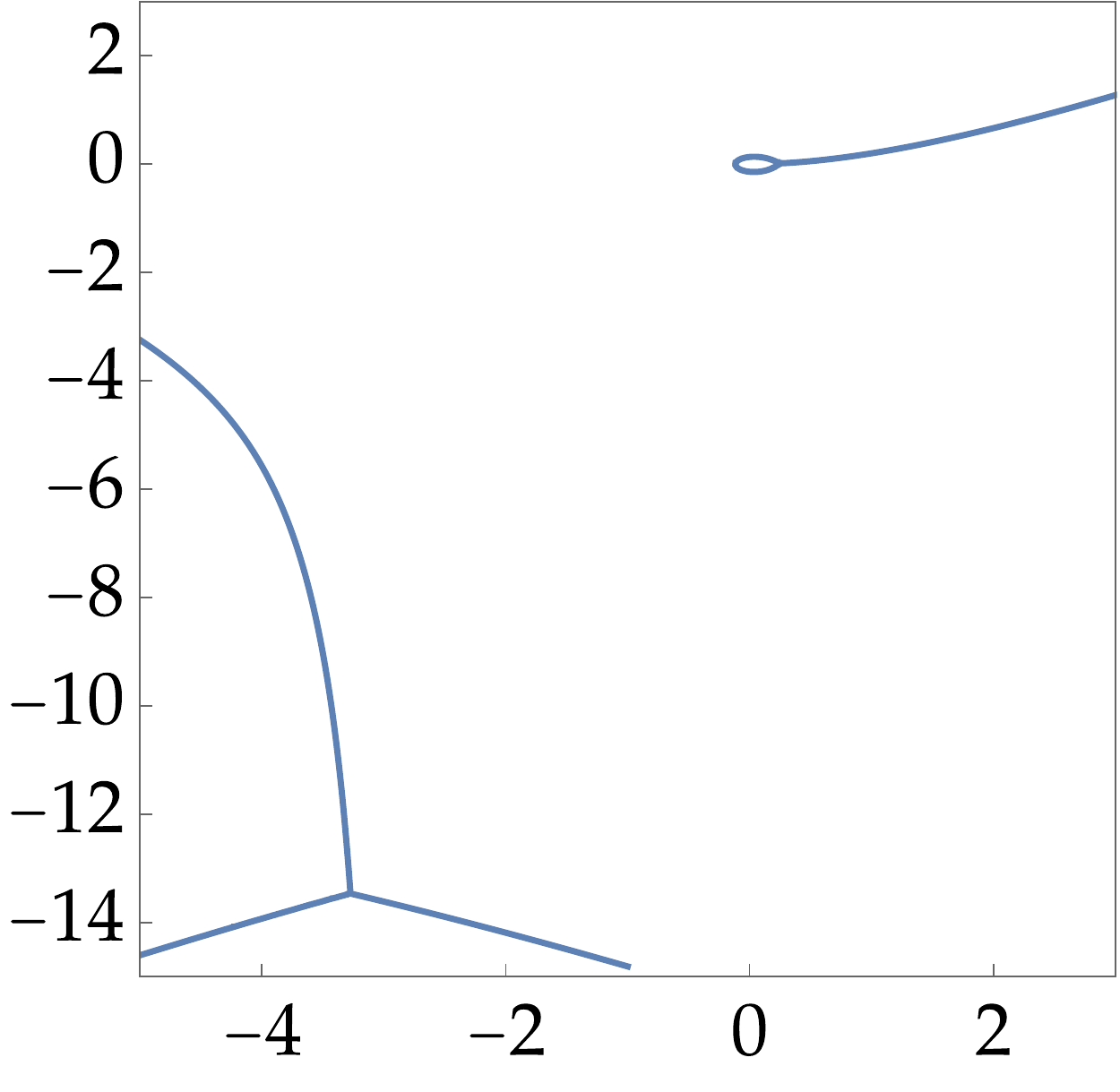}
\includegraphics[height=1.2in]{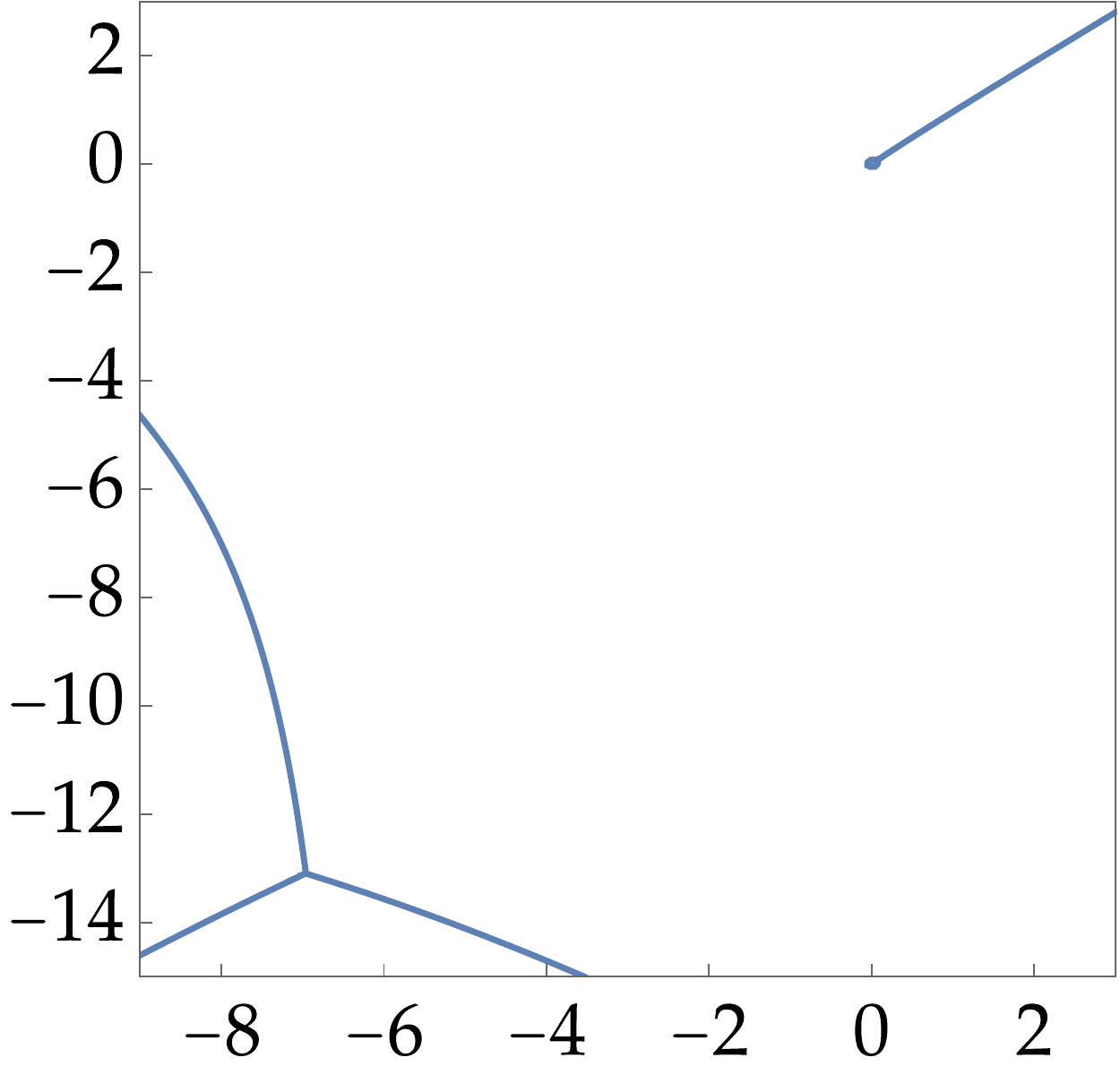}\\
\vspace{.18in}\\
\includegraphics[width=3in]{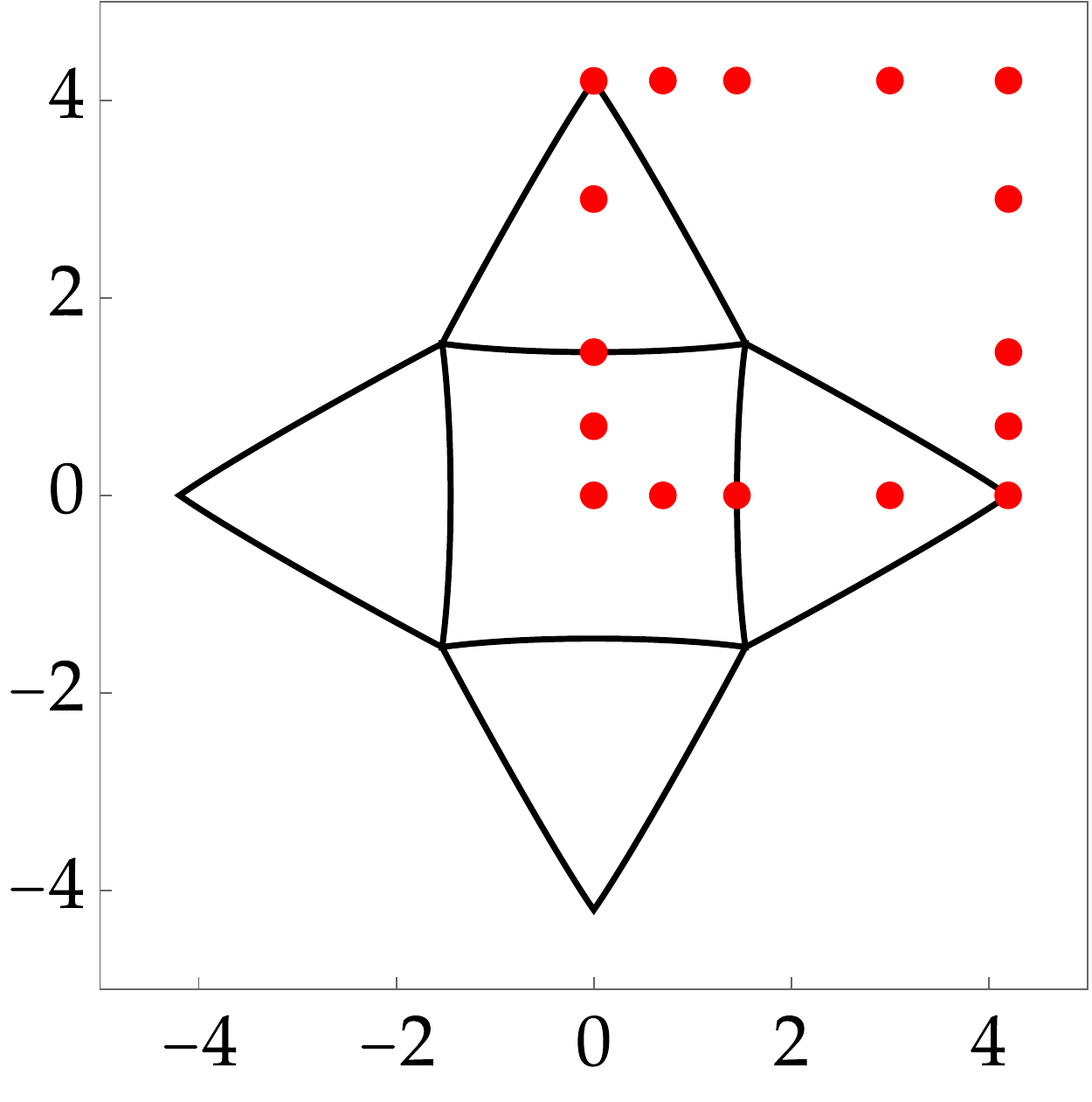}\\
\vspace{.18in}\\
\includegraphics[height=1.2in]{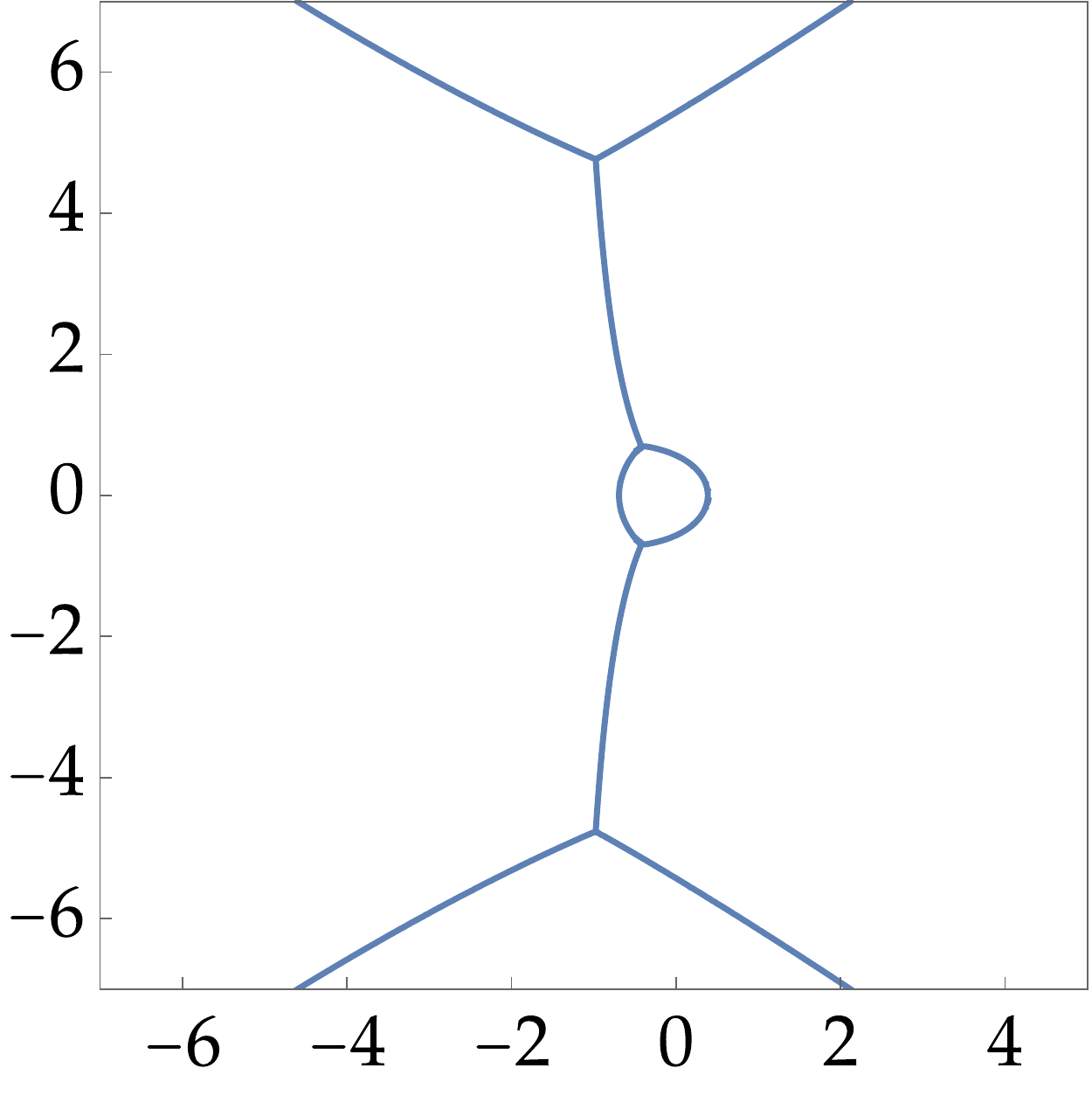}
\includegraphics[height=1.2in]{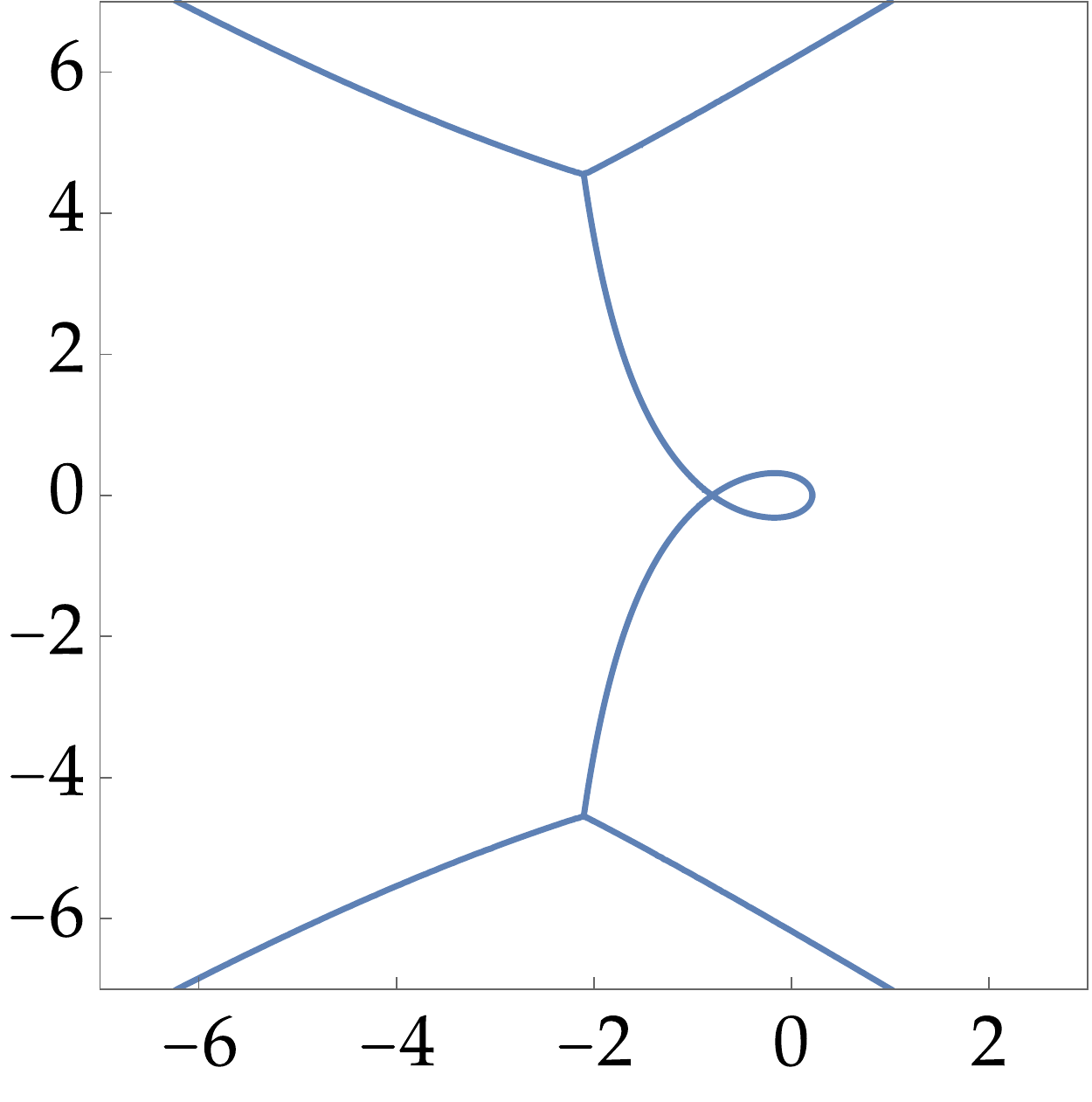}
\includegraphics[height=1.2in]{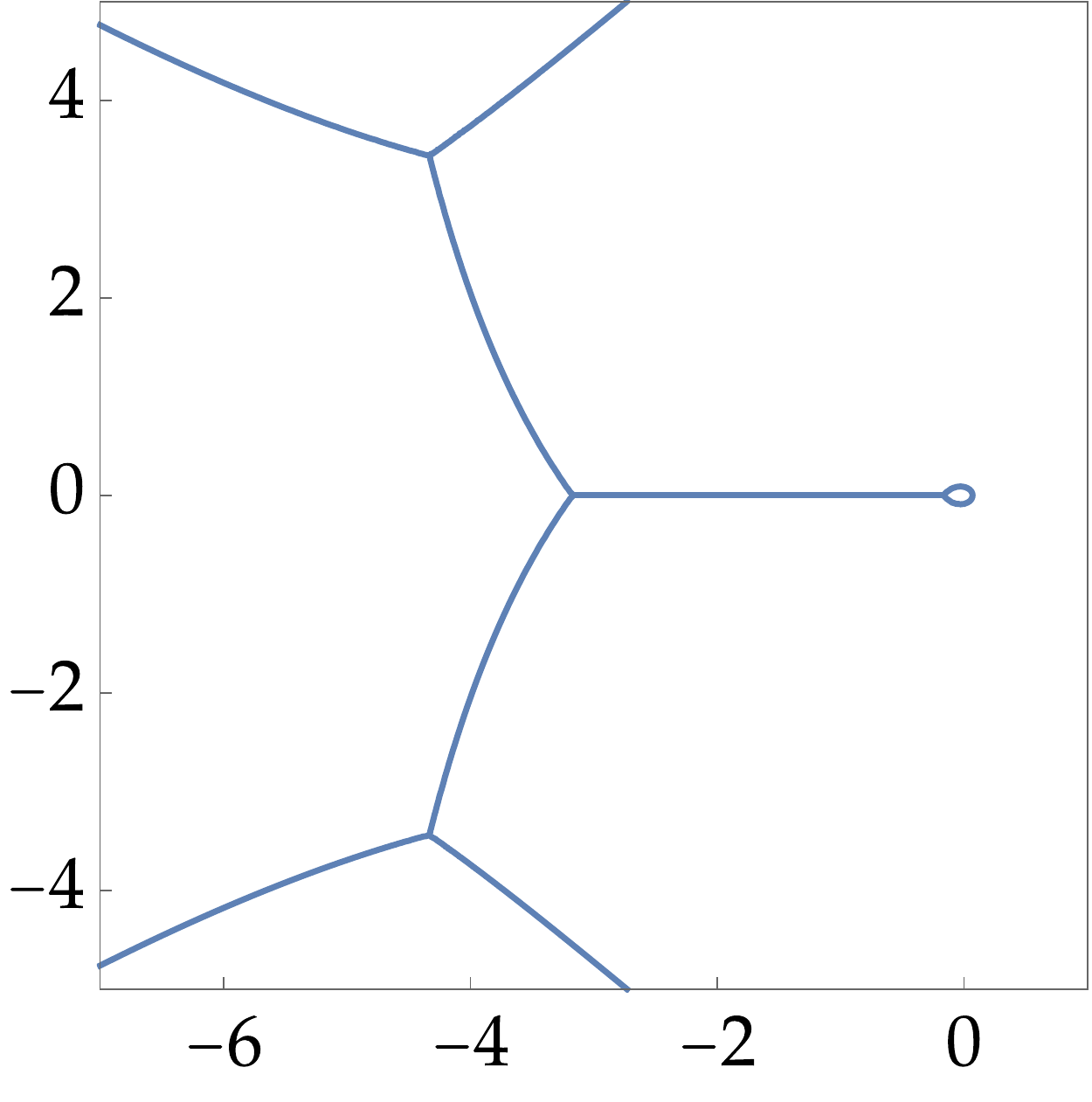}\\
\end{tabular}
\hspace{-.18in}
\begin{tabular}{c}
\includegraphics[height=1.2in]{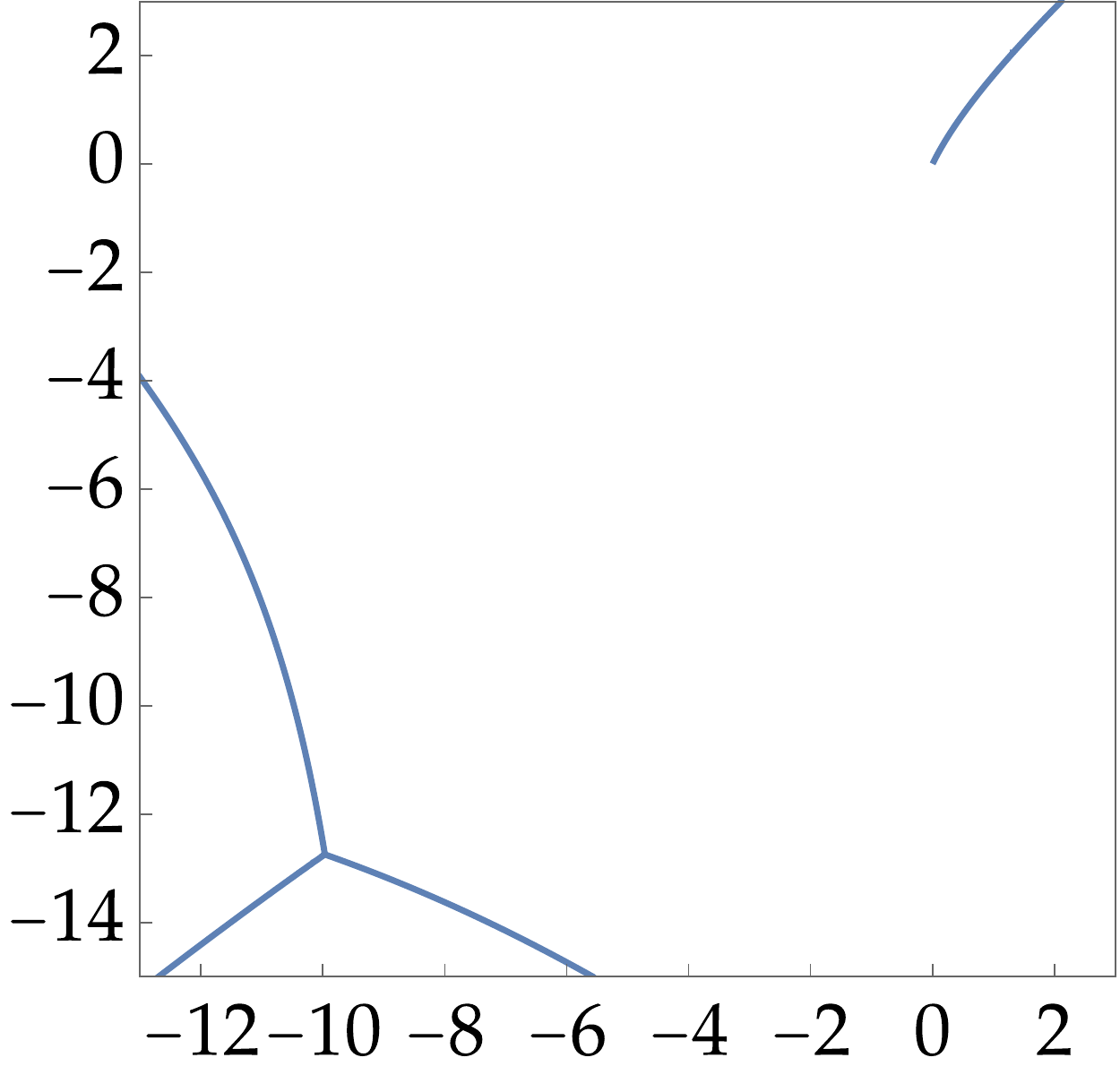}\\
\includegraphics[height=1.2in]{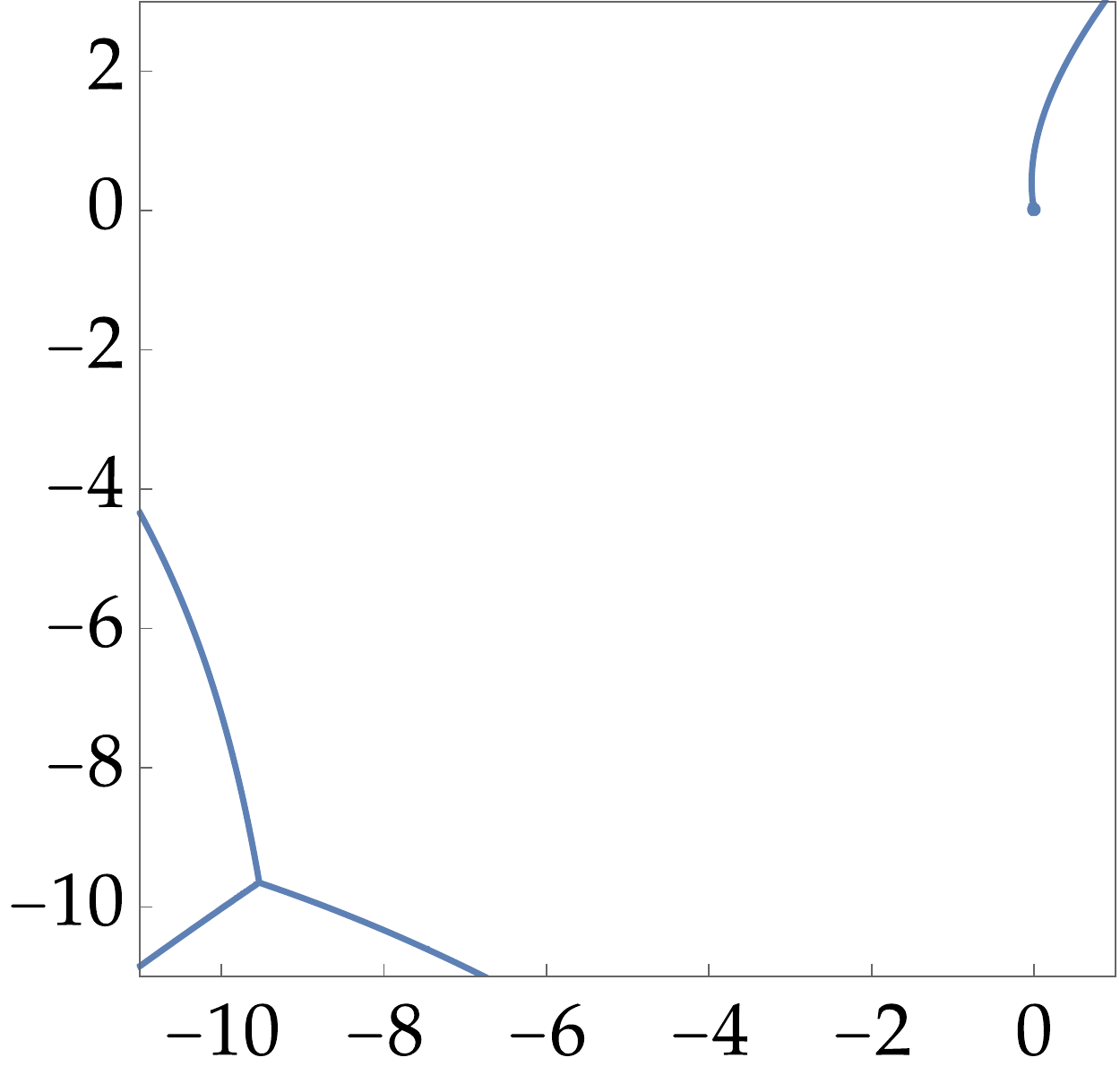}\\
\includegraphics[height=1.2in]{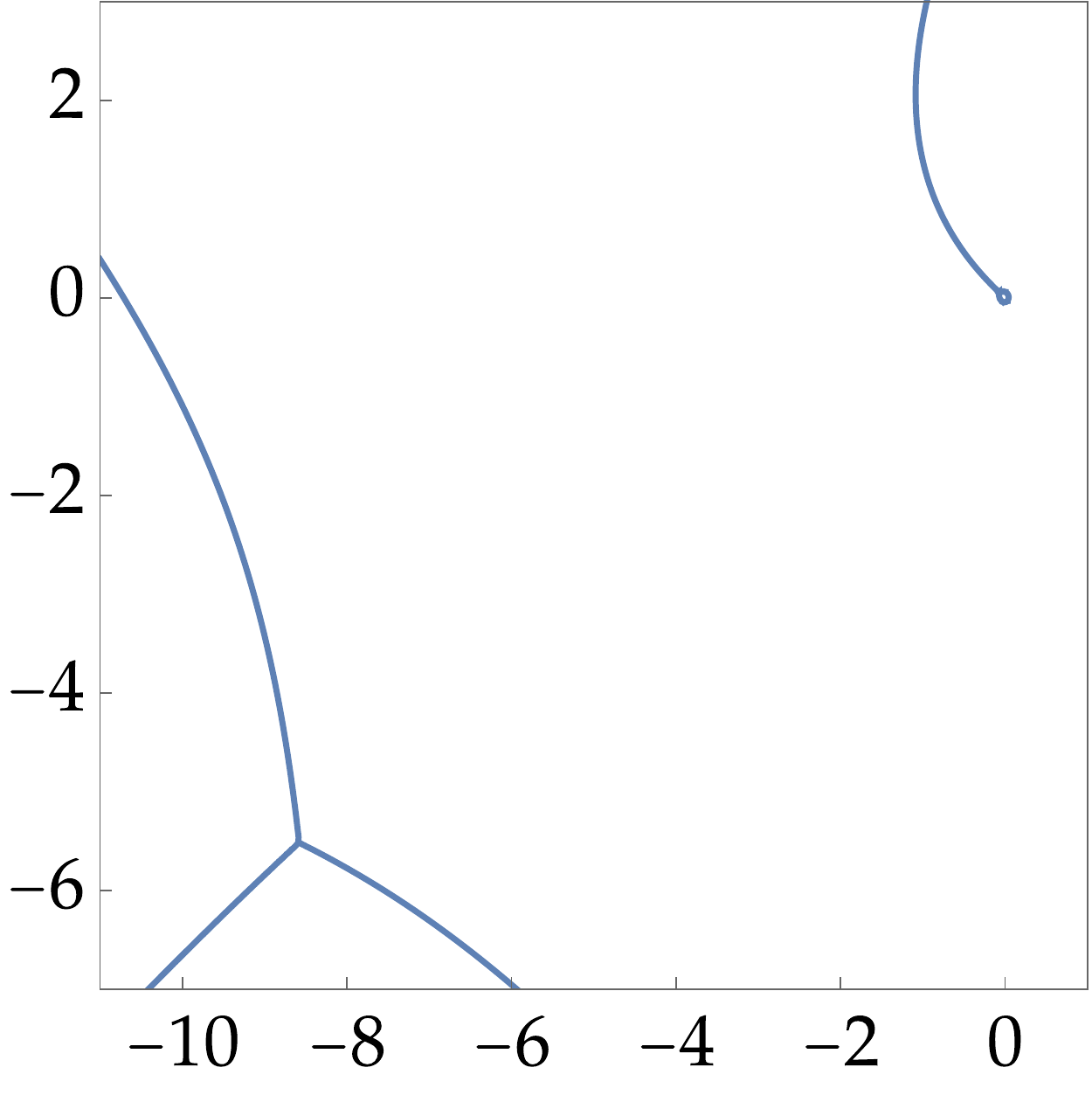}\\
\includegraphics[height=1.2in]{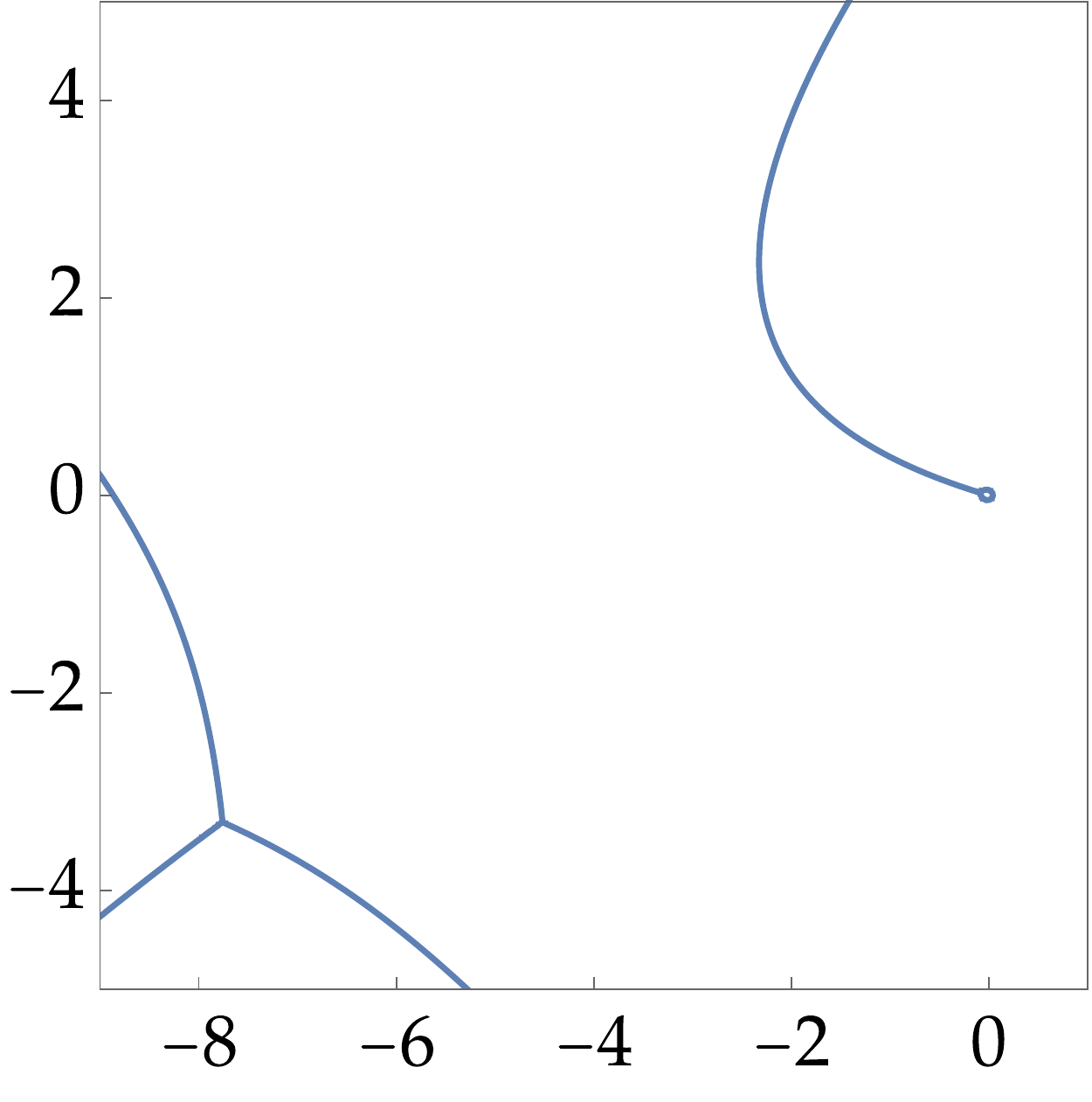}\\
\includegraphics[height=1.2in]{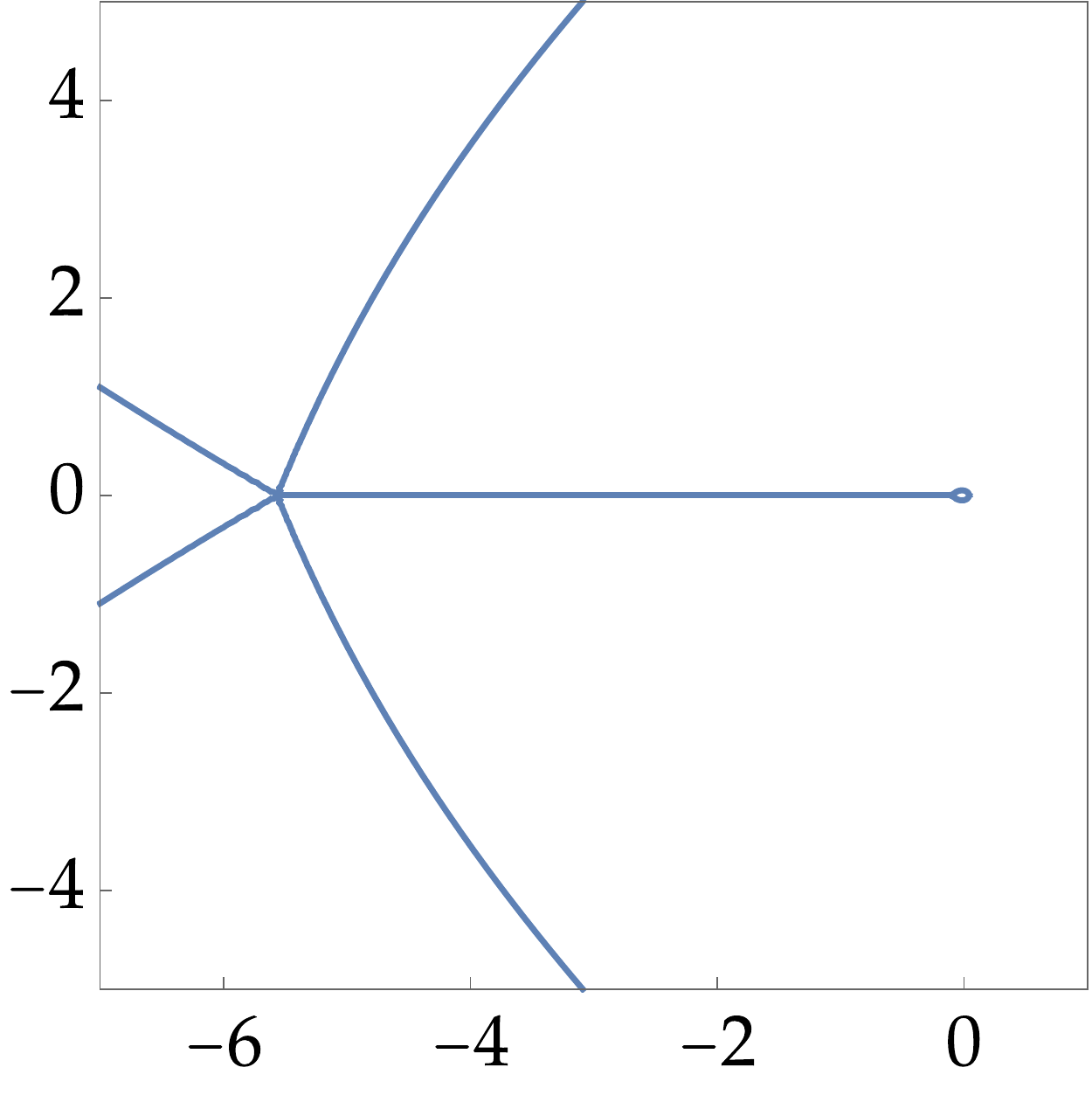}\\
\end{tabular}
\caption{Critical trajectories of $h'(z)^2\,\dd z^2$ emanating from simple roots of $P(z)$ for $\kappa=3$ for the gO family (for nongeneric values of $y_0$ on boundary curves we show all critical trajectories).    The same topological structure holds for $\kappa>1$.  Counterclockwise from top left:  $y\approx 4.19698i$, $y=3i$, $y\approx 1.45i$, $y=0.7i$, $y=0$, $y=0.7$, $y\approx 1.45$, $y=3$, $y\approx 4.19698$, $y=4.2+0.7i$, $y=4.2+1.45i$, $y=4.2+3i$, $y=4.2+4.2i$, $y=3+4.2i$, $y=1.45+4.2i$, $y=0.7+4.2i$.  Inset:  Boundaries of the regions $\rectangle(3)$, $\pm\TR(3)$, and $\pm\TI(3)$ in the $y$-plane.  The $y$-values corresponding to different trajectory plots are indicated by red dots.}
\label{fig:Trajectories-gO-k3}
\end{figure}

\begin{figure}[h]
\hspace{-0.2in}
\begin{tabular}{c}
\includegraphics[height=1.2in]{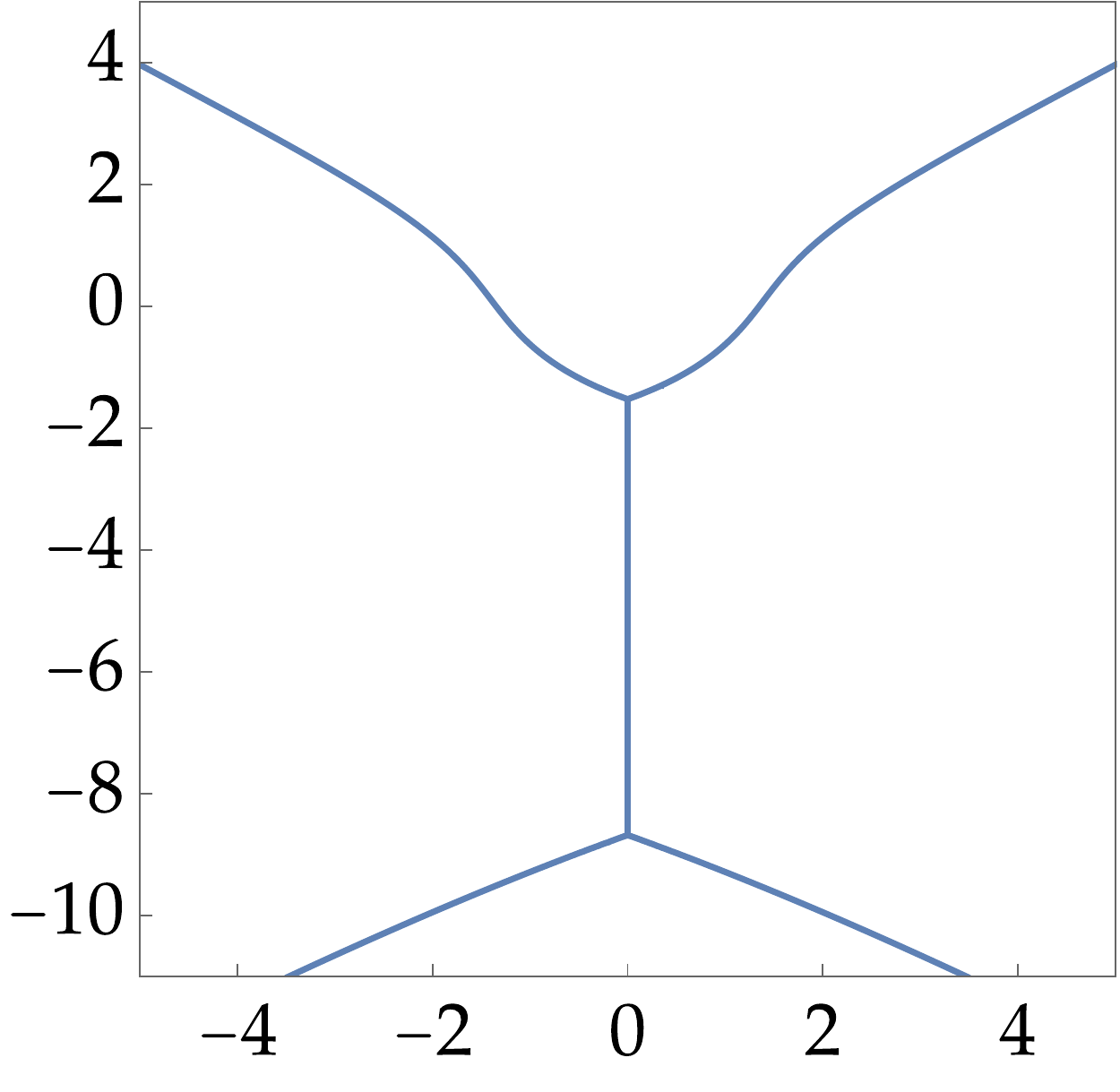}\\
\includegraphics[height=1.2in]{stokes-k3-y1p45i.pdf}\\
\includegraphics[height=1.2in]{stokes-k3-y0p7i.pdf}\\
\includegraphics[height=1.2in]{stokes-k3-y0.pdf}\\
\end{tabular}
\hspace{-.18in}
\begin{tabular}{c}
\includegraphics[height=1.2in]{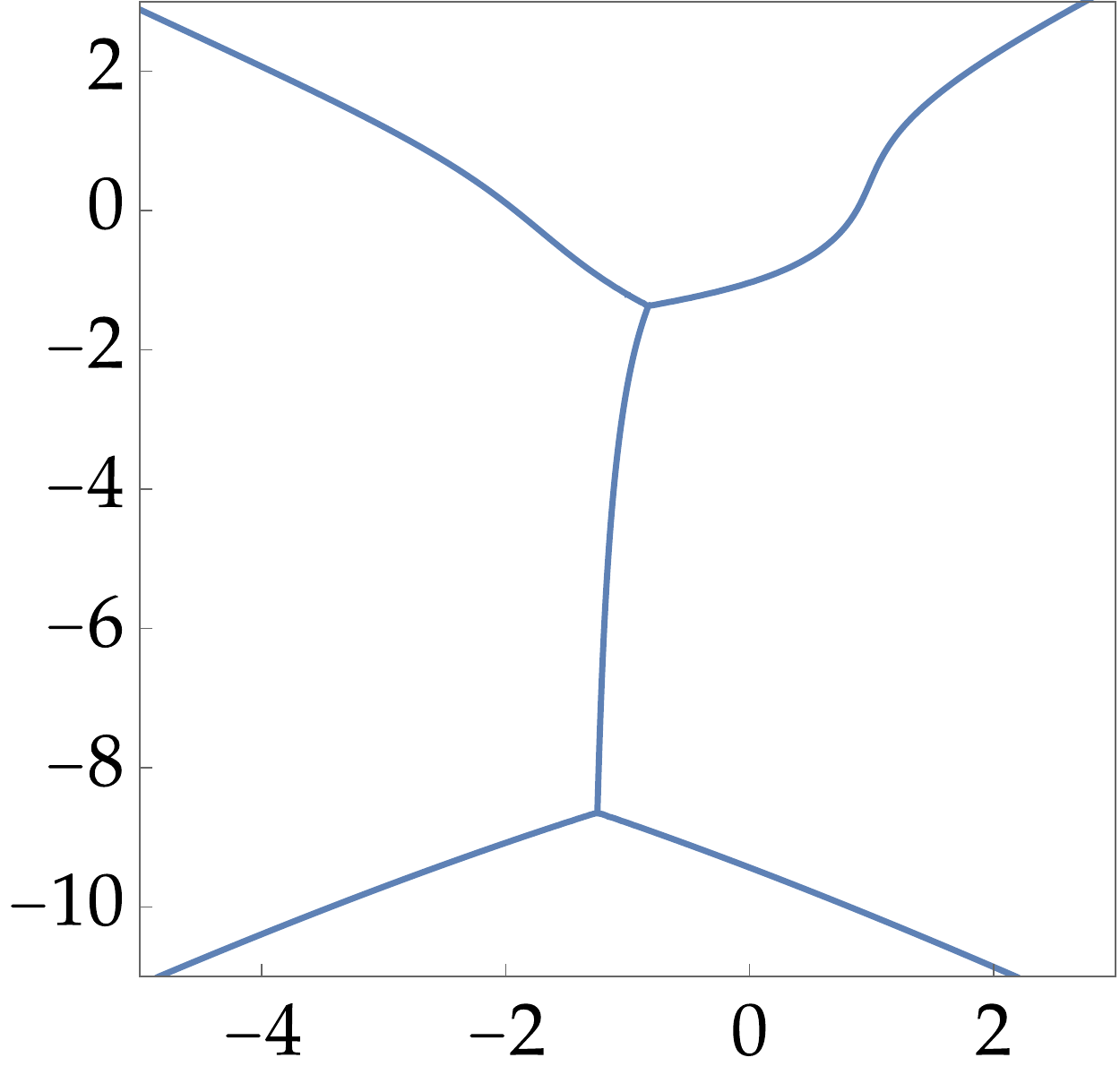}
\includegraphics[height=1.2in]{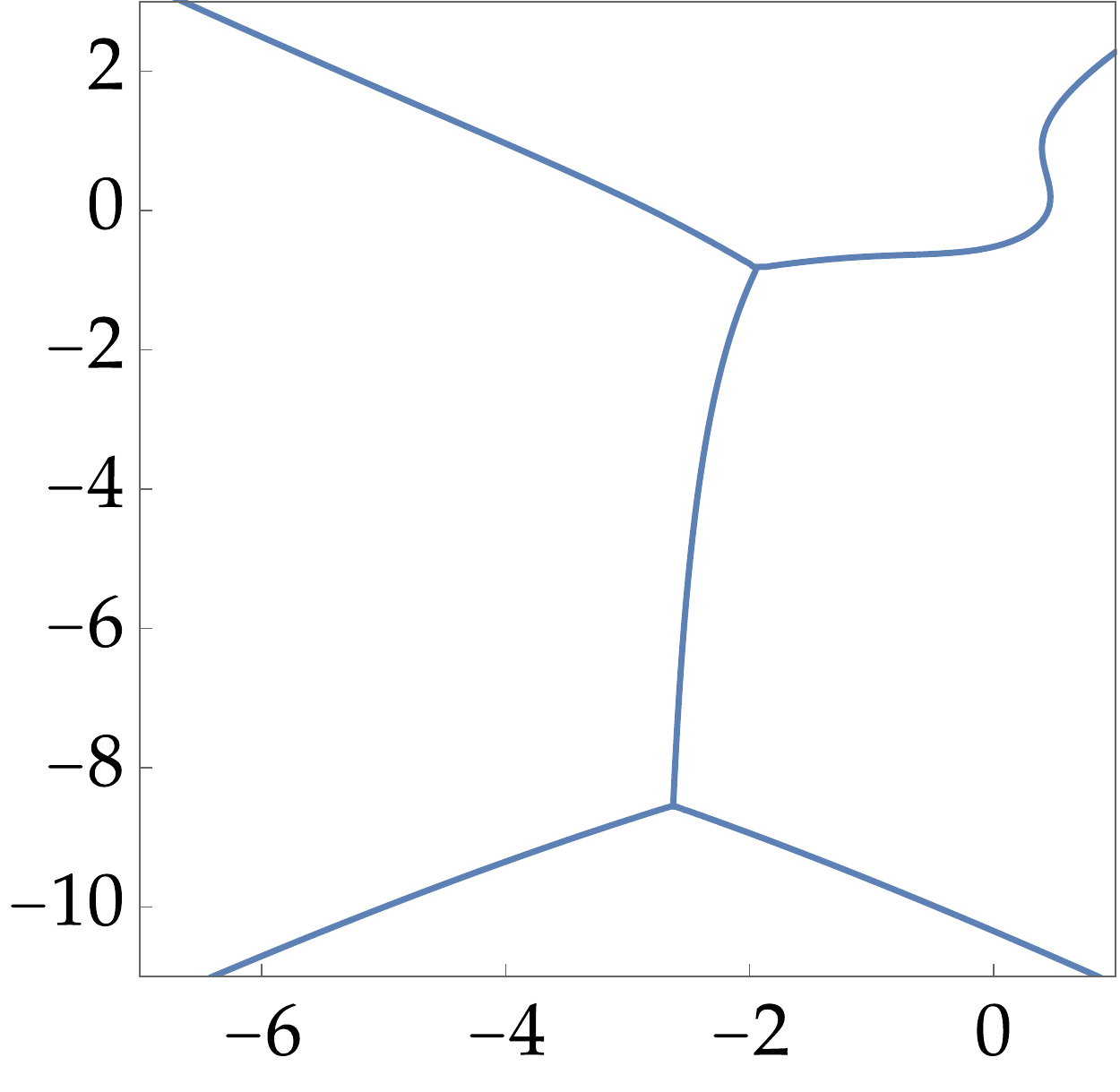}\\
\vspace{-.13in}\\
\includegraphics[width=2.3in]{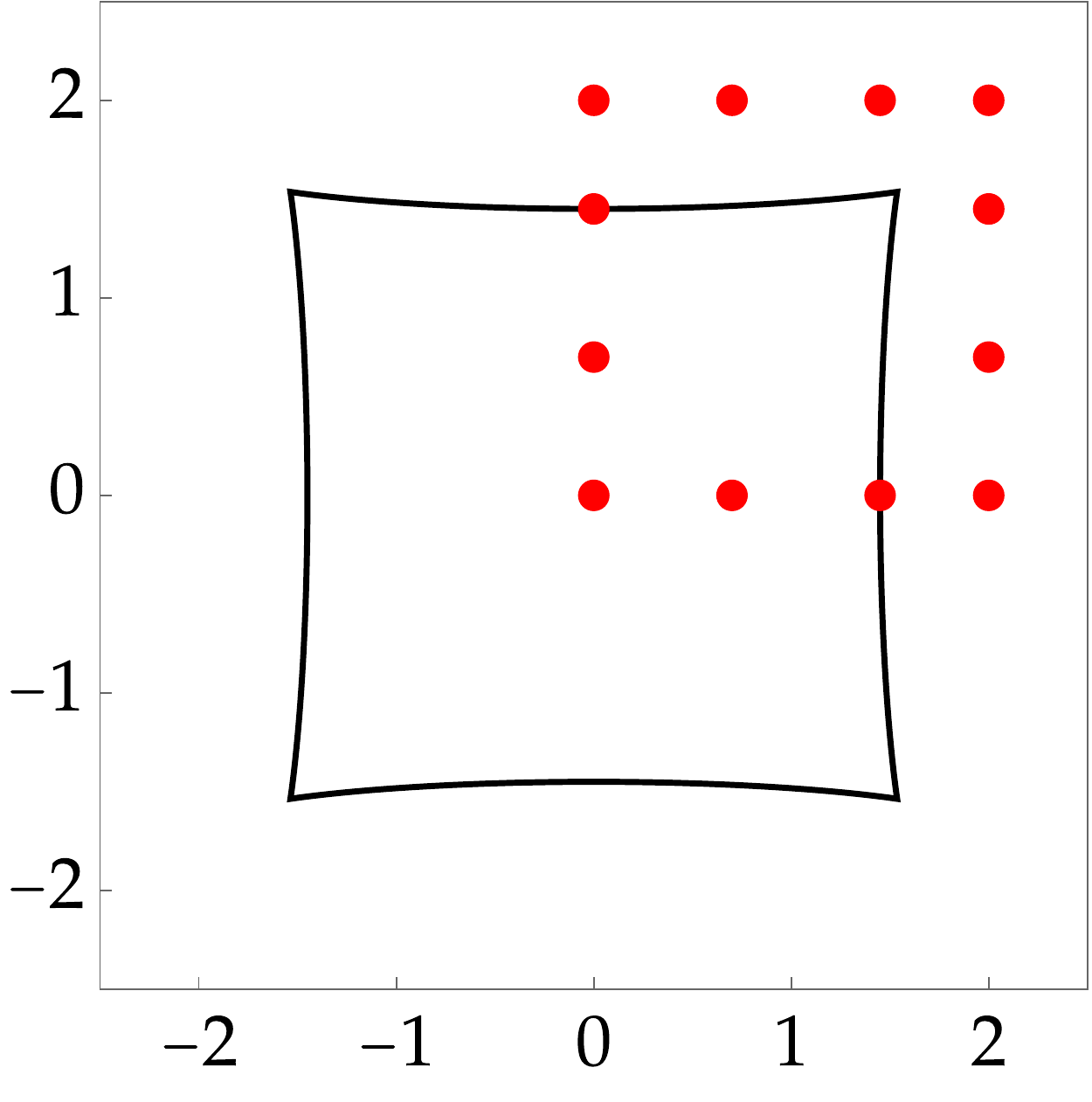}\\
\vspace{-.13in}\\
\includegraphics[height=1.2in]{stokes-k3-y0p7.pdf}
\includegraphics[height=1.2in]{stokes-k3-y1p45.pdf}
\end{tabular}
\hspace{-.18in}
\begin{tabular}{c}
\includegraphics[height=1.2in]{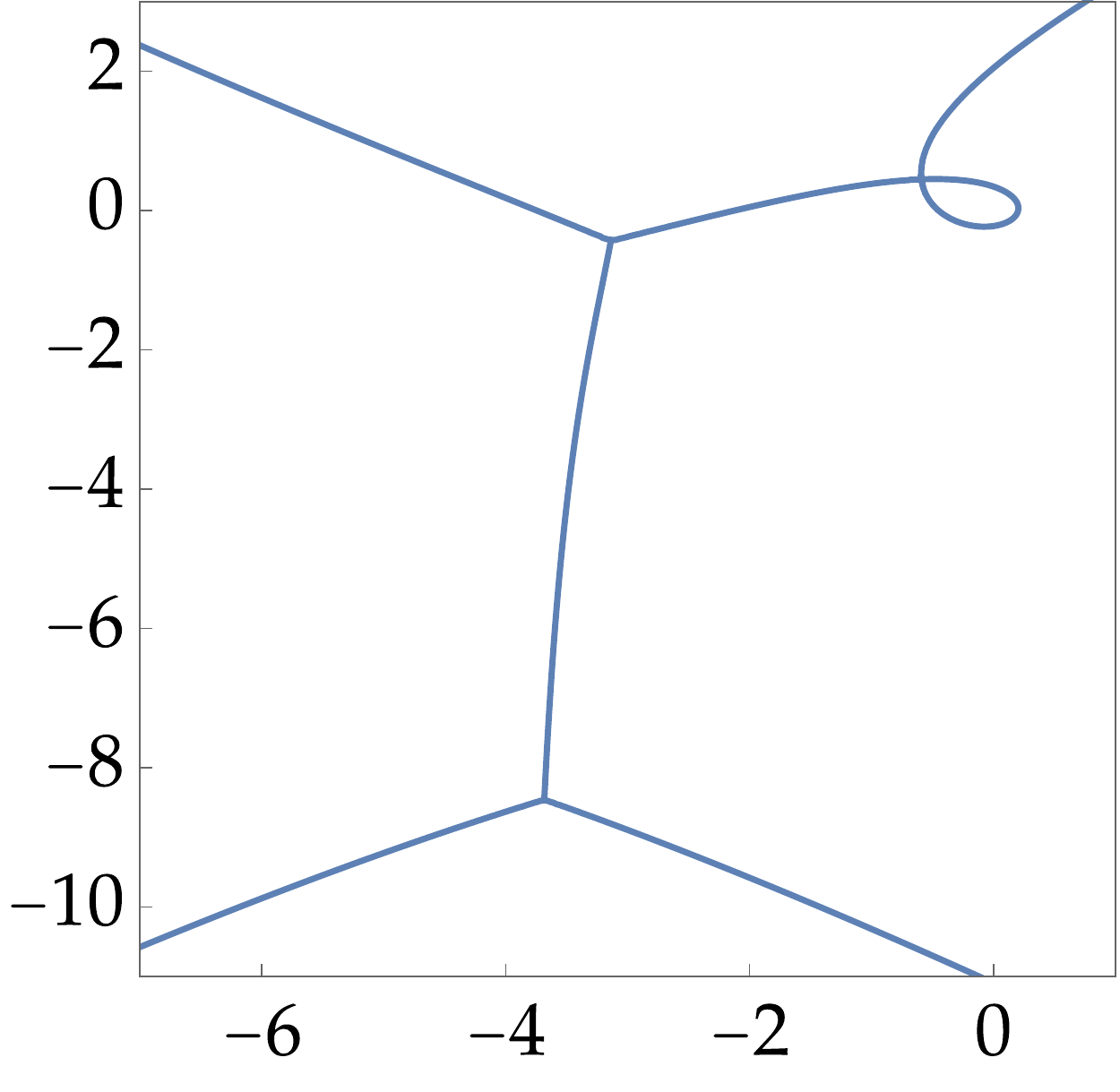}\\
\includegraphics[height=1.2in]{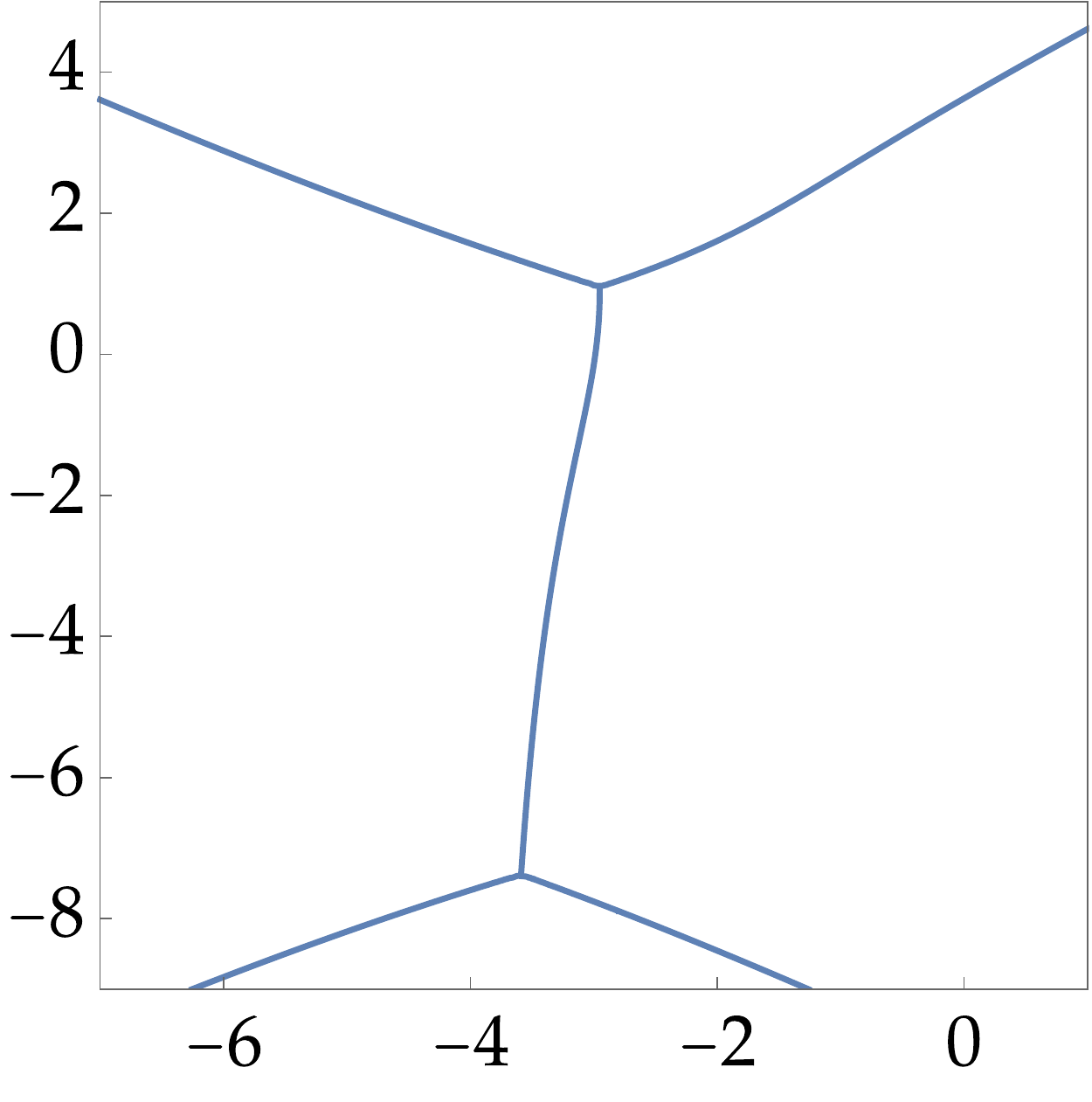}\\
\includegraphics[height=1.2in]{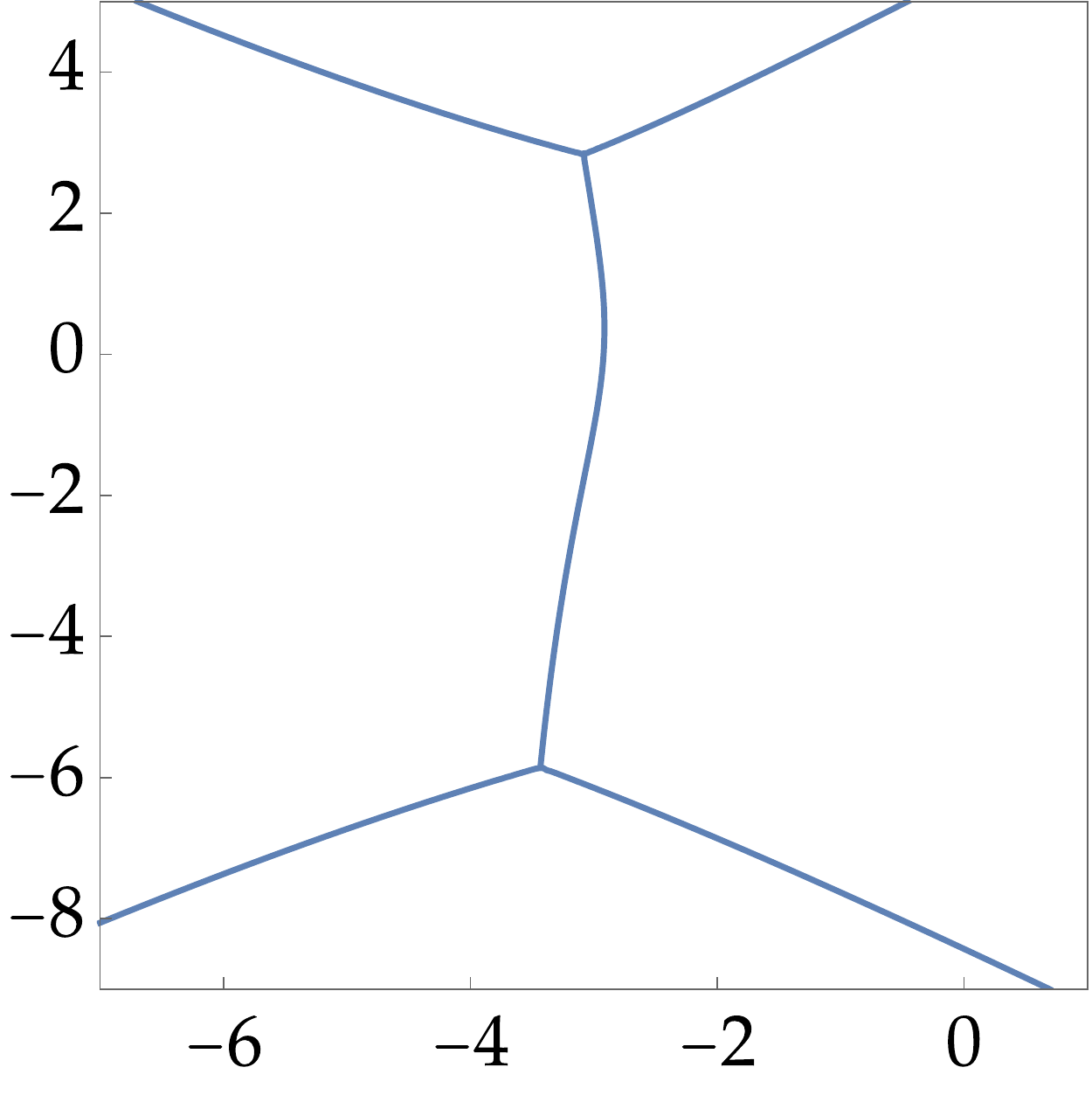}\\
\includegraphics[height=1.2in]{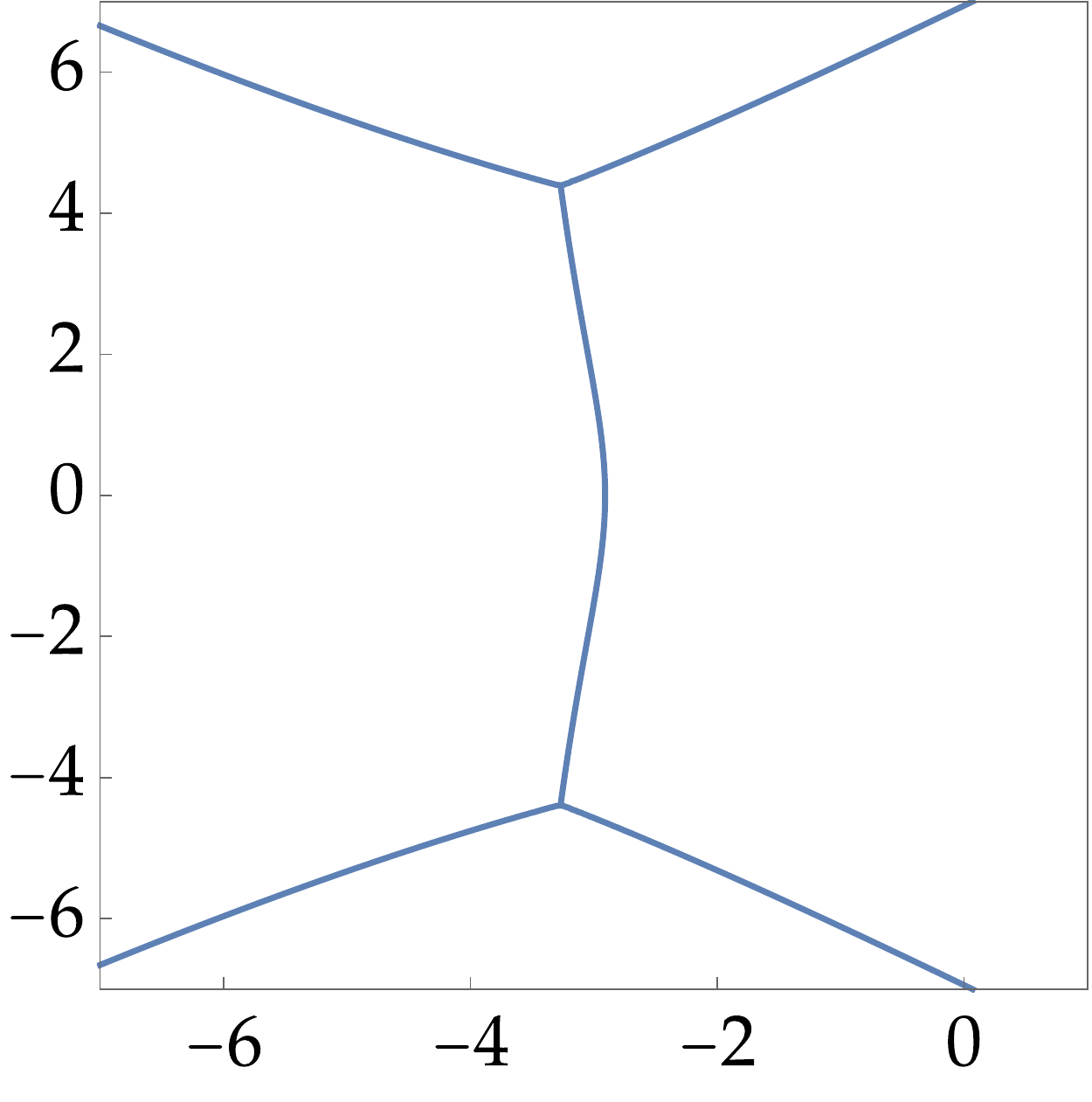}\\
\end{tabular}
\caption{Critical trajectories of $h'(z)^2\,\dd z^2$ emanating from (for generic $y_0$) simple roots of $P(z)$ for $\kappa=3$ for the gH family.  The same topological structure holds for $\kappa>1$.  Counterclockwise from top left:  $y=2i$, $y\approx 1.45i$, $y=0.7i$, $y=0$, $y=0.7$, $y\approx 1.45$, $y=2$, $y=2+0.7i$, $y=2+1.45i$, $y=2+2i$, $y=1.45+2i$, $y=0.7+2i$.  Inset:  Boundary of $\rectangle(3)$ in the $y$-plane.  The $y$-values corresponding to different trajectory plots are indicated by red dots.}
\label{Trajectories-gH-k3}
\end{figure}

\clearpage

\end{document}